\newcommand{\+}{\protect\nobreakdash-}
\renewcommand{\:}{\colon}
\newcommand{\rarrow}{\longrightarrow}
\newcommand{\larrow}{\longleftarrow}
\newcommand{\ot}{\otimes}
\newcommand{\os}{\mathbin{\lozenge}}
\newcommand{\ocn}{\odot}
\newcommand{\Ocn}{\circledcirc}
\newcommand{\lrarrow}{\mskip.5\thinmuskip\relbar\joinrel\relbar\joinrel
 \rightarrow\mskip.5\thinmuskip\relax}
\newcommand{\llarrow}{\mskip.5\thinmuskip\leftarrow\joinrel\relbar
 \joinrel\relbar\mskip.5\thinmuskip\relax}
\newcommand{\ct}{\mathbin{\text{\textcircled
 {\smaller\smaller\smaller\smaller\smaller T}}}}
\newcommand{\bt}{\mathbin{\text{\smaller$\boxtimes$}}}
\newcommand{\subbt}{\mathbin{\text{\smaller$\scriptstyle\boxtimes$}}}
\newcommand{\oc}{\mathbin{\text{\smaller$\square$}}}
\newcommand{\bu}{{\text{\smaller\smaller$\scriptstyle\bullet$}}}
\newcommand{\subbu}{{\text{\smaller\smaller\smaller
     $\scriptstyle\bullet$}}}
\newcommand{\tim}{\rightthreetimes}
\newcommand{\ilim}
 {\mathop{\text{\normalfont``$\varinjlim$''\!\!}}\nolimits}
\DeclareMathOperator{\Mor}{Mor}
\DeclareMathOperator{\Hom}{Hom}
\DeclareMathOperator{\Ext}{Ext}
\DeclareMathOperator{\Sym}{Sym}
\DeclareMathOperator{\Spec}{Spec}
\DeclareMathOperator{\Mat}{Mat}
\DeclareMathOperator{\GL}{GL}
\DeclareMathOperator{\Spi}{Spi}
\DeclareMathOperator{\Proj}{Proj}
\DeclareMathOperator{\Tor}{Tor}
\DeclareMathOperator{\SemiTor}{SemiTor}
\DeclareMathOperator{\coker}{coker}
\DeclareMathOperator{\cHom}{\mathcal H\mathit{om}}
\DeclareMathOperator{\fHom}{\mathfrak{Hom}}
\DeclareMathOperator{\cSym}{\mathcal S\mskip-.5\thinmuskip\mathit{ym}}
\newcommand{\Id}{\mathrm{Id}}
\newcommand{\id}{\mathrm{id}}
\newcommand{\Sets}{\mathsf{Sets}}
\newcommand{\Sch}{\mathsf{Sch}}
\newcommand{\CSch}{\mathsf{CSch}}
\newcommand{\AffSch}{\mathsf{AffSch}}
\newcommand{\CRings}{\mathsf{CRings}}
\newcommand{\Ind}{\mathsf{Ind}}
\newcommand{\sop}{{\mathsf{op}}}
\newcommand{\co}{{\mathsf{co}}}
\newcommand{\si}{{\mathsf{si}}}
\newcommand{\abs}{{\mathsf{abs}}}
\newcommand{\bb}{{\mathsf{b}}}
\newcommand{\inj}{{\mathsf{inj}}}
\newcommand{\proj}{{\mathsf{proj}}}
\newcommand{\bco}{{\mathsf{bco}}}
\newcommand{\tors}{{\operatorname{\mathsf{--tors}}}}
\newcommand{\qcoh}{{\operatorname{\mathsf{--qcoh}}}}
\newcommand{\tcoh}{{\operatorname{\mathsf{-tcoh}}}}
\newcommand{\syst}{{\operatorname{\mathsf{--syst}}}}
\newcommand{\pro}{{\operatorname{\mathsf{--pro}}}}
\renewcommand{\flat}{{\operatorname{\mathsf{--flat}}}}
\newcommand{\discr}{{\operatorname{\mathsf{--discr}}}}
\newcommand{\modl}{{\operatorname{\mathsf{--mod}}}}
\newcommand{\vect}{{\operatorname{\mathsf{--vect}}}}
\newcommand{\comodl}{{\operatorname{\mathsf{--comod}}}}
\newcommand{\comodr}{{\operatorname{\mathsf{comod--}}}}
\newcommand{\simodl}{{\operatorname{\mathsf{--simod}}}}
\newcommand{\simodr}{{\operatorname{\mathsf{simod--}}}}
\newcommand{\comodrinj}{{\operatorname{\mathsf{comod_{inj}--}}}}
\newcommand{\contra}{{\operatorname{\mathsf{--contra}}}}
\newcommand{\separ}{{\operatorname{\mathsf{--separ}}}}
\newcommand{\sicntr}{{\operatorname{\mathsf{--sicntr}}}}
\newcommand{\qc}{{\operatorname{-qc}}}
\newcommand{\dinj}{{\operatorname{\mathsf{-inj}}}}
\newcommand{\dproj}{{\operatorname{\mathsf{-proj}}}}
\newcommand{\fA}{{\mathfrak A}}
\newcommand{\fB}{{\mathfrak B}}
\newcommand{\fC}{{\mathfrak C}}
\newcommand{\fF}{{\mathfrak F}}
\newcommand{\fG}{{\mathfrak G}}
\newcommand{\fH}{{\mathfrak H}}
\newcommand{\fI}{{\mathfrak I}}
\newcommand{\fJ}{{\mathfrak J}}
\newcommand{\fM}{{\mathfrak M}}
\newcommand{\fN}{{\mathfrak N}}
\newcommand{\fO}{{\mathfrak O}}
\newcommand{\fP}{{\mathfrak P}}
\newcommand{\fQ}{{\mathfrak Q}}
\newcommand{\fR}{{\mathfrak R}}
\newcommand{\fS}{{\mathfrak S}}
\newcommand{\fT}{{\mathfrak T}}
\newcommand{\U}{{\mathfrak U}}
\newcommand{\W}{{\mathfrak W}}
\newcommand{\X}{{\mathfrak X}}
\newcommand{\Y}{{\mathfrak Y}}
\newcommand{\Z}{{\mathfrak Z}}
\newcommand{\p}{{\mathfrak p}}
\newcommand{\cA}{{\mathcal A}}
\newcommand{\cB}{{\mathcal B}}
\newcommand{\D}{{\mathcal D}}
\newcommand{\C}{{\mathcal C}}
\newcommand{\E}{{\mathcal E}}
\newcommand{\F}{{\mathcal F}}
\newcommand{\G}{{\mathcal G}}
\newcommand{\cH}{{\mathcal H}}
\newcommand{\I}{{\mathcal I}}
\newcommand{\J}{{\mathcal J}}
\newcommand{\K}{{\mathcal K}}
\newcommand{\cL}{{\mathcal L}}
\newcommand{\M}{{\mathcal M}}
\newcommand{\N}{{\mathcal N}}
\newcommand{\cO}{{\mathcal O}}
\newcommand{\cP}{{\mathcal P}}
\newcommand{\cQ}{{\mathcal Q}}
\newcommand{\cS}{{\mathcal S}}
\newcommand{\cT}{{\mathcal T}}
\newcommand{\rC}{{\mathscr C}}
\newcommand{\rD}{{\mathscr D}}
\newcommand{\rE}{{\mathscr E}}
\newcommand{\rJ}{{\mathscr J}}
\newcommand{\rK}{{\mathscr K}}
\newcommand{\rL}{{\mathscr L}}
\newcommand{\rM}{{\mathscr M}}
\newcommand{\rN}{{\mathscr N}}
\newcommand{\bPhi}{\boldsymbol{\Phi}}
\newcommand{\bY}{{\boldsymbol{\mathfrak Y}}} 
\newcommand{\bW}{{\boldsymbol{\mathfrak W}}}
\newcommand{\bV}{{\boldsymbol{\mathfrak V}}}
\newcommand{\bfA}{{\boldsymbol{\mathfrak A}}}
\newcommand{\bfF}{{\boldsymbol{\mathfrak F}}}
\newcommand{\bfG}{{\boldsymbol{\mathfrak G}}}
\newcommand{\bfH}{{\boldsymbol{\mathfrak H}}}
\newcommand{\bfP}{{\boldsymbol{\mathfrak P}}}
\newcommand{\bfQ}{{\boldsymbol{\mathfrak Q}}}
\newcommand{\bfR}{{\boldsymbol{\mathfrak R}}}
\newcommand{\bfS}{{\boldsymbol{\mathfrak S}}}
\newcommand{\brF}{{\pmb{\mathscr F}}}
\newcommand{\brG}{{\pmb{\mathscr G}}}
\newcommand{\brE}{{\pmb{\mathscr E}}}
\newcommand{\bJ}{{\pmb{\mathscr J}}}
\newcommand{\bK}{{\pmb{\mathscr K}}}
\newcommand{\bL}{{\pmb{\mathscr L}}}
\newcommand{\bM}{{\pmb{\mathscr M}}}
\newcommand{\bN}{{\pmb{\mathscr N}}} 
\newcommand{\bS}{{\pmb{\mathscr S}}}
\newcommand{\bnG}{{\mathbf G}}
\newcommand{\bnL}{{\mathbf L}}
\newcommand{\bnM}{{\mathbf M}}
\newcommand{\bnN}{{\mathbf N}}
\newcommand{\bnS}{{\mathbf S}}
\newcommand{\bnT}{{\mathbf T}}
\newcommand{\bnY}{{\mathbf Y}}
\newcommand{\bnW}{{\mathbf W}}
\newcommand{\bnV}{{\mathbf V}}
\newcommand{\bnU}{{\mathbf U}}
\newcommand{\bcF}{{\boldsymbol{\mathcal F}}}
\newcommand{\bcG}{{\boldsymbol{\mathcal G}}}
\newcommand{\bcH}{{\boldsymbol{\mathcal H}}}
\newcommand{\bcJ}{{\boldsymbol{\mathcal J}}}
\newcommand{\bcK}{{\boldsymbol{\mathcal K}}}
\newcommand{\bcN}{{\boldsymbol{\mathcal N}}}
\newcommand{\bcQ}{{\boldsymbol{\mathcal Q}}}
\newcommand{\sA}{{\mathsf A}}
\newcommand{\sB}{{\mathsf B}}
\newcommand{\sC}{{\mathsf C}}
\newcommand{\sD}{{\mathsf D}}
\newcommand{\sE}{{\mathsf E}}
\newcommand{\sF}{{\mathsf F}}
\newcommand{\sK}{{\mathsf K}}
\newcommand{\sS}{{\mathsf S}}
\newcommand{\boD}{{\mathbb D}}
\newcommand{\boL}{{\mathbb L}}
\newcommand{\boM}{{\mathbb M}}
\newcommand{\boN}{{\mathbb N}}
\newcommand{\boZ}{{\mathbb Z}}
\newcommand{\boQ}{{\mathbb Q}}
\newcommand{\boR}{{\mathbb R}}
\newcommand{\kk}{\Bbbk}
\theoremstyle{plain}
\newtheorem{thm}{Theorem}[section]
\newtheorem{prop}[thm]{Proposition}
\newtheorem{lem}[thm]{Lemma}
\newtheorem{cor}[thm]{Corollary}
\theoremstyle{definition}
\newtheorem{qst}[thm]{Question}
\newtheorem{rem}[thm]{Remark}
\newtheorem{rems}[thm]{Remarks}
\newtheorem{ex}[thm]{Example}
\newtheorem{exs}[thm]{Examples}
\newcommand{\Section}[1]{\bigskip\section{#1}\medskip}
\begin{document}

\title{Quasi-coherent torsion sheaves, the semiderived category,
and the semitensor product
\\[15pt] \smaller\smaller\smaller\normalfont\itshape
Semi-infinite Algebraic Geometry of Quasi-Coherent Sheaves \\
on Ind-Schemes}

\author{Leonid Positselski}

\address{Institute of Mathematics of the Czech Academy of Sciences \\
\v Zitn\'a~25, 115~67 Praha~1 (Czech Republic); and
\newline\indent Laboratory of Algebra and Number Theory \\
Institute for Information Transmission Problems \\
Moscow 127051 (Russia)}

\email{positselski@math.cas.cz}

\begin{abstract}
 We construct the semi-infinite tensor structure on the semiderived
category of quasi-coherent torsion sheaves on an ind-scheme endowed
with a flat affine morphism into an ind-Noetherian ind-scheme with
a dualizing complex.
 The semitensor product is ``a mixture of'' the cotensor product
along the base and the derived tensor product along the fibers.
 The inverse image of the dualizing complex is the unit object.
 This construction is a partial realization of the Semi-infinite
Algebraic Geometry program, as outlined in the introduction
to~\cite{Pfp}.
\end{abstract}

\maketitle

\tableofcontents

\section*{Introduction}
\medskip

\setcounter{subsection}{-1}
\subsection{{}}
 The aim of this paper is to extend the Semi-infinite Homological
Algebra, as developed in the author's monograph~\cite{Psemi}, to
the realm of Algebraic Geometry.
 According to the philosophy elaborated in the preface to~\cite{Psemi},
semi-infinite homological theories are assigned to ``semi-infinite
algebrac and geometric objects''.
 A detailed explanation of what should be understood by
a ``semi-infinite algebraic variety'' was suggested in the introduction
to the author's paper~\cite{Pfp} (see also
the presentation~\cite{Psli2}).
 In the present work, we develop a part of the Semi-infinite Algebraic
Geometry program along the lines of~\cite{Pfp,Psli2}.

 To be more precise, following the approach of~\cite{Psemi}, one has to
distinguish between the semi-infinite \emph{homology} and
\emph{cohomology} theories.
 The semi-infinite homology groups (SemiTor) are assigned to a pair of
\emph{semimodules}, which means roughly ``comodules along a half of
the variables and modules along the other half''.
 The semi-infinite cohomology groups (SemiExt) are assigned to one
semimodule and one \emph{semicontramodule}; the latter means
``a contramodule along a half of the variables and a module along
the other half''.

 In the context of algebraic geometry, quasi-coherent sheaves on
nonaffine schemes and quasi-coherent torsion sheaves on ind-schemes
play the role of the ``comodules along some of the variables'',
while for contramodules one has to consider
the \emph{contraherent cosheaves}~\cite{Pcosh}.
 In the present paper, we restrict ourselves to the semi-infinite
homology, the SemiTor; so contraherent cosheaves do not appear.

 According to the philosophy of~\cite{Psemi}, the key technical
concept for semi-infinite homological algebra is the \emph{semiderived
category}.
 This means ``the coderived category along a half of the variables
mixed with the derived category along the other half''.
 One is supposed to take the coderived category along the coalgebra
variables and the derived category along the ring/algebra variables.

 How does one interpret this prescription in the context of algebraic
geometry, or more specifically, e.~g., quasi-coherent sheaves on
algebraic varieties?
 Is one supposed to take the coderived or the derived category of
quasi-coherent sheaves, or how does it depend on the nature of
the variety at hand?
 This paper grew out of the author's attempts to find an answer to
this question, which spanned more than a decade since about~2009.
 The paper~\cite{Pfp} and the presentation~\cite{Psli2} were the first
formulations of the conclusions I~had arrived at.

\subsection{{}}
 So, what is semi-infinite geometry?
 Before attempting to answer this question, let us discuss
\emph{semi-infinite set theory} first.

 Let $S$ be a set.
 A \emph{semi-infinite structure} on $S$ is the datum of a set of
\emph{semi-infinite subsets} $S^+$ in $S$ such that, for any two
semi-infinite subsets $S^+{}'$ and $S^+{}''\subset S$,
the set-theoretic difference $S^+{}'\setminus S^+{}''$ is
a \emph{finite set}.
 Furthermore, if a subset $S^+\subset S$ is semi-infinite and
$S^+{}'\subset S$ is a subset for which both the sets
$S^+{}'\setminus S^+$ and $S^+\setminus S^+{}'$ are finite,
then $S^+{}'$ should be also a semi-infinite subset.

 So, in order to specify a semi-infinite structure on $S$, it suffices
to point out one particular semi-infinite subset in~$S$; then it
becomes clear what the other semi-infinite subsets are.
 In the \emph{standard semi-infinite structure} on the set of integers
$\boZ$, the subset of positive integers is a semi-infinite subset.

 Given a semi-infinite structure on $S$, the \emph{dual} semi-infinite
structure is formed by the set of all subsets $S^-=S\setminus S^+$,
where $S^+\subset S$ are the semi-infinite subsets.
 The subsets $S^-\subset S$ are called \emph{co-semi-infinite}.
 Every infinite set has the \emph{trivial} semi-infinite structure,
in which the semi-infinite subsets are precisely the finite subsets,
and the \emph{cotrivial} semi-infinite structure, in which
the co-semi-infinite subsets are precisely the finite ones.
 On a finite set, the trivial and cotrivial semi-infinite structures
coincide, and there are no other semi-infinite structures; but any
infinite set admits infinitely many semi-infinite structures.

 The datum of a semi-infinite structure on $S$ is equivalent to
the datum of a compact, Hausdorff topology on the set
$S_\infty=S\sqcup\{-\infty,+\infty\}$ for which the induced topology on
the subset $S\subset S_\infty$ is discrete.
 A subset $S_+\subset S$ contains a semi-infinite subset
if and only if $S_+\sqcup\{+\infty\}$ is a neighborhood of
the point~$+\infty$ in $S_\infty$, while a subset $S_-\subset S$
contains a co-semi-infinite subset if and only if $S_-\sqcup\{-\infty\}$
is a neighborhood of the point~$-\infty\in S_\infty$.
 In other words, a semi-infinite structure on a set is the same thing
as the datum of a two-point compactification.

 Notice that there is a notion of ``relative cardinality'',
a well-defined integer, for a pair of semi-infinite subsets in~$S$.
 Given two semi-infinite subsets $S^+{}'$ and $S^+{}''\subset S$,
put ``$|S^+{}'|-|S^+{}''|$''${}=|\widetilde S^+\setminus S^+{}''|-
|\widetilde S^+\setminus S^+{}'|=|S^+{}'\setminus\overline S^+|-
|S^+{}''\setminus\overline S^+|\in\boZ$, where $\widetilde S^+$ and
$\overline S^+$ are any semi-infinite subsets in $S$ such that
$\overline S\subset S^+{}'\cap S^+{}''$ and
$S^+{}'\cup S^+{}''\subset\widetilde S^+$.
 So it makes sense to say that ``there are more elements in $S^+{}'$
than in $S^+{}''$\,'' (and how many more, precisely), even though none
of the two sets may be contained in the other one, and their
(infinite) cardinalities are the same.

\subsection{{}}
 The notion of a \emph{locally linearly compact} topological vector
space (otherwise known as a \emph{Tate vector space}) is the central
concept of \emph{semi-infinite linear algebra}.

 Here is the formal definition: a complete, separated topological
vector space $W$ over a field~$\kk$ is said to be \emph{linearly
compact} if open vector subspaces of finite codimension form a base
of neighborhoods of zero in~$W$.
 Equivalently, this means that $W$ is isomorphic to the projective
limit of a (directed, if one wishes) projective system of discrete
finite-dimensional vector spaces, endowed with the projective limit
topology; or the product of discrete one-dimensional vector spaces,
endowed with the product topology.
 A topological vector space $V$ is said to be \emph{locally linearly
compact} if it has a linearly compact open subspace.

 Informally, one can say a locally linearly compact vector space $V$ is
a topological vector space whose topological basis is indexed by
a set $S$ with a natural semi-infinite structure.
 A topological basis of a linearly compact open subspace $W\subset V$
is a semi-infinite subset in $S$, while a basis of the discrete
quotient space $V/W$ is the complementary co-semi-infinite subset
in~$S$.

 To a set $S$ with a semi-infinite structure, one can assign
the locally linearly compact topological vector space
$V=\bigoplus_{t\in S\setminus S^+}\kk t\oplus \prod_{s\in S^+}\kk s$,
where $S^+\subset S$ is a semi-infinite subset.
 Here the topology is discrete on the first direct summand and linearly
compact on the second one.
 Obviously, the (topological) vector space $V$ does not depend on
the choice of a particular semi-infinite subset $S^+$ within
the given semi-infinite structure on the set~$S$.

 Similarly to the relative cardinality of a pair of semi-infinite
subsets, one can speak about the ``relative dimension'' (a well-defined
integer) for a pair of linearly compact open subspaces $W'$ and
$W''\subset V$ in a given locally linearly compact topological vector
space~$V$.
 There is also the notion of a ``relative determinant''
(a functorially defined one-dimensional $\kk$\+vector space)
for $W'$ and~$W''$.

 The topological $\kk$\+vector space of formal Laurent power series
$\kk((t))$ in one variable~$t$ with the coefficients in~$\kk$ is
the thematic example of a locally linearly compact topological
vector space.
 The subspace of formal Taylor power series $\kk[[t]]\subset\kk((t))$
is a linearly compact open subspace.

\subsection{{}}
 Now we can offer the following very rough and imprecise definition of
a \emph{semi-infinite geometric object}, or a ``semi-infinitely
structured space''.
 A semi-infinitely structured space is an (infinite-dimensional) space
$Y$ with local or global coordinates $(x_s)_{s\in S}$ indexed by
a set~$S$ with a natural semi-infinite structure.

 It seems to makes sense to require that, for every point $p\in Y$,
the subset $S(p)\subset S$ of all indices $s\in S$ for which
$x_s(p)\ne0$ be contained in a semi-infinite subset, $S(p)\subset S^+$,
in the set~$S$.
 So all the coordinates may vanish simulaneously, but at most
a semi-infinite subset of the coordinates may be simultaneously nonzero
at any given point on~$Y$.

 Then one can say that a closed subvariety $Y^+\subset Y$ is
a \emph{semi-infinite subvariety} if in local coordinates it can be
defined by a system of equations prescribing a fixed value to every
coordinate~$x_s$ with $s\in S^-$, for some co-semi-infinite
subset $S^-\subset S$.
 One can think of a ``semi-infinite homology theory'', in which
semi-infinite subvarieties would be cycles of dimension (homological
degree) $\infty/2+n$, \,$n\in\boZ$.
 Here one postulates $|S^+|=\infty/2$ for one fixed semi-infinite subset
$S^+\subset S$, and then has $|S^+{}'|\in\infty/2+\boZ$ for every other
semi-infinite subset $S^+{}'\subset S$.

 One can think of a ``semi-infinite intersection theory'', in which
subvarieties of finite codimension in $Y$ would form a graded ring,
with respect to the (properly understood) intersection, and
semi-infinite subvarieties would form a $\boZ$\+graded (or
a ``$(\infty/2+\nobreak\boZ)$\+graded'') module over this graded ring.
 The intersection of a semi-infinite subvariety with a subvariety
of finite codimension would produce another semi-infinite subvariety,
interpreted as their product.

 This is the kind of geometric speculation which inspired the present
work.
 Moving closer to the setting in this paper, one can consider
a particular case when the coordinates $x_s$ with $s\in S^-$ can be
somehow globally separated and grouped together, producing
a fibration $\pi\:Y\rarrow X$.
 So the local coordinates on $X$ are indexed by a co-semi-infinite
subset of the varibles, while the local coordinates on the fibers
are indexed by a semi-infinite subset (thus the fibers $Y_q=
\pi^{-1}(q)$, \,$q\in X$, are ``semi-infinite subvarieties'' in $Y$
in the above sense).

\subsection{{}}
 The cornerstone technical, homological principle of our approach to
semi-infinite algebra and geometry is that one is supposed to work
with the \emph{semiderived category}.
 This means a mixture of the derived category along a semi-infinite
subset of the variables and the coderived (or contraderived) category
along a co-semi-infinite subset of the variables.

 One reason why this is important is because the delicate choice of
an exotic derived category to work with dictates the derived functors
which are naturally produced or well-defined.
 One notices this when one starts working systematically with doubly
unbounded complexes.
 The left derived tensor product is well-behaved on the derived 
category, while the right derived cotensor product behaves well
as a functor on the coderived category.
 In the context of algebraic geometry, this means that the left derived
functor of $*$\+restriction onto a closed subscheme is well-defined
on the derived category, while the right derived functor of
$!$\+restriction (or, in our notation, $+$\+restriction) onto
a locally closed subscheme is well-behaved as a functor between
the coderived categories.

 How does one know along which coordinates the derived category needs
to be taken, and along which ones the coderived category?
 One answer to this question is that, given a fibration, there is
a natural way to define a semiderived category that is a mixture of
the coderived category along the base and the derived category along
the fibers.
 Another heuristic is that, given an infinite set of coordinates which
are allowed to be nonzero all simultaneously, one should take
the derived category along these; but if the condition is imposed that
only a finite subset of the coordinates may differ from zero
at any given point, one should take the coderived category.

\subsection{{}}
 So, what is the coderived category?
 There are several alternative answers to this question offered in
the contemporary literature.
 The simplest definition says the the coderived category of
an abelian or exact category $\sE$ is the \emph{homotopy category of
unbounded complexes of injective objects} in~$\sE$.
 It is presumed that there are enough injective objects in~$\sE$.
 This approach was initiated by Krause~\cite{Kra} and developed by
Becker~\cite{Bec} (see also~\cite{Neem2}, \cite{Sto}, and~\cite{PS5}).

 In the present author's work, the coderived categories first appeared
in connection with derived nonhomogeneous Koszul duality~\cite{Pkoszul}
and were subsequently used for the purposes of semi-infinite
homological algebra in~\cite{Psemi} (see also~\cite{Pfp,Pps,PS2}).
 Our approach emphasizes a construction of the coderived category as
the \emph{quotient category} of the homotopy category by what we call
the full subcategory of \emph{coacyclic} complexes.
 The coacyclic complexes are defined as the ones that can be obtained
from the \emph{totalizations of short exact sequences of complexes}
using the operations of cone and infinite coproduct.
 Any coacyclic complex (in an abelian/exact category with exact
coproducts) is acyclic, but the converse is not generally  true.

 The example of the abelian category $X\qcoh$ of quasi-coherent sheaves
on a semi-separated Noetherian scheme $X$ is instructive.
 For this class of schemes, both the derived category $\sD(X\qcoh)$ and
the coderived category $\sD^\co(X\qcoh)$ are perfectly well-behaved.
 In fact, both of them are compactly generated.
 The compact objects in $\sD(X\qcoh)$ are the \emph{perfect complexes},
while the compact objects in $\sD^\co(X\qcoh)$ are all the bounded
complexes of coherent sheaves.
 In this connection, the coderived category $\sD^\co(X\qcoh)$ has been
called the category of ``ind-coherent sheaves'' in~\cite{Gai}.
 Both the derived and the coderived category can be properly considered
for much wider classes of algebro-geometric objects, but the natural
directions for generalization differ: while the derived category
$\sD(X\qcoh)$ makes perfect sense for an arbitrary quasi-compact
semi-separated scheme~$X$, the natural generality for the coderived
category is that of an ind-Noetherian ind-scheme (or ind-stack)~$\X$.

\subsection{{}}
 As we have mentioned above, one important way to think of
the distinction between the derived and the coderived category in
the algebraic geometry context is in connection with
the \emph{left derived inverse image functor} vs.\
the \emph{extraordinary inverse image functor}.
 This observation seems to be due to Gaitsgory~\cite{Gai}.

 Given a morphism of (good enough) schemes $f\:Y\rarrow X$,
the direct image functor $f_*\:Y\qcoh\rarrow X\qcoh$ has finite
homological dimension, so its right derived functor is equally
well-defined on the unbounded derived and the coderived categories,
$\boR f_*\:\sD(Y\qcoh)\rarrow\sD(X\qcoh)$ and $\boR f_*\:
\sD^\co(Y\qcoh)\rarrow\sD^\co(X\qcoh)$.
 The inverse image functor $f^*\:X\qcoh\rarrow Y\qcoh$, however, has
infinite homological dimension in general.
 As infinite left resolutions are problematic in the coderived
categories, the left derived inverse image $\boL f^*\:\sD(X\qcoh)
\rarrow\sD(Y\qcoh)$ is well-defined on the convenional unbouded
derived categories, but \emph{not} on the coderived categories
(unless the morphism~$f$ has finite Tor-dimension).
 Whenever it exists, the left derived inverse image $\boL f^*$ is
left adjoint to~$\boR f_*$.

 The functor $\boR f_*$ preserves coproducts both in the unbounded
derived and the coderived categories; so, by the Brown representability
theorem, it has a right adjoint functor in both the contexts.
 This right adjoint functor, which we, following the terminology
of~\cite{Pcosh}, call the \emph{extraordinary inverse image functor
in the sense of Neeman} (with the reference to~\cite{Neem-bb}) and
denote by~$f^!$, may be not as important for algebraic geometry as
the \emph{extraordinary inverse image functor in the sense of Deligne},
which we denote by~$f^+$ (it is denoted by~$f^!$ in~\cite{Hart}
and Deligne's appendix to~\cite{Hart}).

 The functor~$f^+$ is defined by the conditions that $(fg)^+\simeq
g^+f^+$ for any pair of composable morphisms $f$ and~$g$; one has
$f^+=f^!$ for a proper morphism~$f$; and one has $j^+=j^*$ for
an open immersion~$j$.
 The functor $f^+\:\sD^\co(X\qcoh)\rarrow\sD^\co(Y\qcoh)$ is
well-defined on the coderived categories~\cite[Section~5.16]{Pcosh},
but \emph{not} on the unbounded derived categories.
 In particular, even for locally closed immersions~$f$, it is
\emph{impossible} to define a functor $f^+\:\sD(X\qcoh)\rarrow
\sD(Y\qcoh)$ in such a way that one would have $j^+=j^*$ for open
immersions, while $i^+=\boR i^!$ would be the functor of right derived
restriction with supports for closed immersions~$i$, and $(fg)^+
\simeq g^+f^+$ for all composable pairs of
morphisms~\cite[Example~6.5]{Neem-bb}.
 The punchline: \emph{the unbounded derived category works well with
the left derived inverse image~$\boL f^*$; the coderived category
words well with the extraordinary inverse image~$f^+$}.

\subsection{{}}
 What is the semiderived category?
 To answer this question properly, it is better to step back and ask
the most basic question: \emph{What is the derived category?}
 The derived category is usually defined as the result of localizing
the homotopy category of complexes by the class of quasi-isomorphisms.
 What are the quasi-isomorphisms?
 Let $R$ be an associative algebra over a field~$\kk$, and let
$f\:C^\bu\rarrow D^\bu$ be a morphism of complexes of $A$\+modules.
 What does it mean that $f$~is a quasi-isomorphism?
 Here is the answer which we suggest: let us apply the forgetful
functor and view~$f$ as a morphism of complexes of $\kk$\+vector
spaces.
 The map~$f$ is a quasi-isomorphism of complexes of $R$\+modules
\emph{if and only if} it is a homotopy equivalence of complexes
of $\kk$\+vector spaces.
 In other words, a complex of $R$\+modules is acyclic if and only if
its underlying complex of $\kk$\+vector spaces is contractible.

 The previous paragraph implies that one should think of the derived
category in the context of a forgetful functor.
 But one does not have to go all the way and forget the whole action
of the algebra $R$, staying only with vector spaces.
 One can forget the action of a half of the variables in $R$, while
keeping the other half.
 Let $A\rarrow R$ be a ring homomorphism.
 One defines the \emph{$R/A$\+semiderived category} of $R$\+modules
$\sD^\si_A(R\modl)$ as the quotient category of the homogopy category
of (unbounded complexes of) $R$\+modules by the thick subcategory of
those complexes which are \emph{coacyclic as complexes of $A$\+modules}.
 This definition of the semiderived category can be found
in~\cite[Section~5]{Pfp} (see~\cite[Section~2.3]{Psemi} for
the original definition of the \emph{semiderived category of
semimodules}).
 Thus the semiderived category is a mixture of the ``coderived category
along~$A$'' and the ``derived category in the direction of $R$
relative to~$A$''.
 This is the way to mix the coderived category with the derived category
alluded to several paragraphs above.

 In the context of algebraic geometry, the forgetful functor
$R\modl\rarrow A\modl$ is interpreted geometrically as
the direct image functor.
 To have the direct image functor exact and faithful (as needed for
the definition of a semiderived category), it is simplest to assume
the geometric morphism in question to be \emph{affine}.
 Thus one can speak of the \emph{semiderived category of quasi-coherent
sheaves} $\sD_X^\si(\bnY\qcoh)$ for an affine morphism of schemes
$\pi\:\bnY\rarrow X$.
 More specifically, following the discussion above, one may want
to restrict generality to Noetherian schemes $X$, and then expand it
to ind-Noetherian ind-schemes~$\X$.
 Then one assumes $\pi\:\bY\rarrow\X$ to be a morphism of ind-schemes
with (possibly infinite-dimensional) affine scheme fibers, and
considers the semiderived category of what we call \emph{quasi-coherent
torsion sheaves on\/~$\bY$}.

\subsection{{}}
 What can one do with the semiderived category of quasi-coherent
torsion sheaves?
 Our suggestion is to construct a tensor category structure on it.
 We start with defining the \emph{cotensor product} operation on
the coderived category $\sD^\co(\X\tors)$ of quasi-coherent torsion
sheaves on~$\X$, and proceed further to construct the \emph{semitensor
product} on the $\bY/\X$\+semiderived category $\sD_\X^\si(\bY\tors)$
of quasi-coherent torsion sheaves on~$\bY$.
 For this purpose, we need to choose a \emph{dualizing complex}
on~$\X$.

 The notion of a dualizing complex of quasi-coherent sheaves on
an ind-scheme is itself a perfect illustration of the importance of
the coderived categories.
 Notice that there is some affinity between the dualizing complexes
and the extraordinary inverse images in the sense of Deligne: for
a morphism of schemes $f\:Y\rarrow X$, given a dualizing complex
$\D^\bu_X$ on $X$, the complex $f^+(\D^\bu_X)$ is a dualizing complex
on~$Y$.
 A dualizing complex on a scheme, however, is bounded; so one does not
feel the distinction between the derived and the coderived category
in connection with it (in fact, there is no difference between
the derived and the coderived category for bounded below complexes).
 A dualizing complex on an ind-scheme, on the other hand, is usually
\emph{not} bounded below; so it is important that a dualizing complex
of quasi-coherent torsion sheaves $\rD^\bu_\X$ on an ind-scheme $\X$ has
to be viewed as an object of the coderived category $\sD^\co(\X\tors)$.
 We explain in Section~\ref{tate-space-subsecn}(7) that, for about
the simplest example of an infinite-dimensional ind-scheme $\X$
(the ``discrete affine space over a field''), the dualizing complex
$\rD^\bu_\X$ on $\X$ is an acyclic complex!

\subsection{{}}
 Let us have a more detailed discussion of the cotensor and semitensor
products, whose definitions are the main objectives of this paper.
 The functor $\Tor^\boZ_1$, known classically as the \emph{torsion
product} of abelian groups~\cite{McL,Nun,Kee}, defines a tensor
structure on the category of torsion abelian groups with $\boQ/\boZ$ as
the unit object.
 The torsion product of $p$-primary abelian groups, for a fixed
prime number~$p$, is very similar to the cotensor product of
comodules over the coalgebra $\rC$ dual to the topological algebra
$\kk[[z]]$ of formal Taylor power series in one variable over a field.

 The first aim of this paper is to extend these constructions to
complexes of quasi-coherent torsion sheaves on an ind-Noetherian
ind-scheme $\X$ with a dualizing complex.
 The dualizing complex $\rD_\X^\bu$ is the unit object of this
tensor category structure, which is defined on the coderived
category~\cite{Pkoszul} \,$\sD^\co(\X\tors)$ of quasi-coherent
torsion sheaves on~$\X$.
 The case of a Noetherian scheme $X$ with a dualizing complex
$\D_X^\bu$ was considered by Murfet in~\cite[Proposition~B.6]{Mur}
and in our paper~\cite[Section~B.2.5]{EP}, and the case of
an ind-affine ind-Noetherian (or ind-coherent) $\aleph_0$\+ind-scheme
$\X$ with a dualizing complex, in~\cite[Section~D.3]{Pcosh}.

 Furthermore, pursuing the Semi-infinite Algebraic Geometry program
as outlined in the introduction to the paper~\cite{Pfp}
and in the presentation~\cite{Psli2}, we consider a flat affine
morphism of ind-schemes $\pi\:\bY\rarrow\X$.
 The fibers of the morphism~$\pi$ are, generally speaking,
infinite-dimensional affine schemes.
 Then the semiderived category~\cite{Psemi,Pfp}
\,$\sD^\si_\X(\bY\tors)$ of the abelian category of quasi-coherent
torsion sheaves on $\bY$ relative to the direct image functor
$\pi_*\:\bY\tors\rarrow\X\tors$ carries the operation of
\emph{semitensor product}, making it a tensor category whose unit
object is the inverse image $\pi^*\rD_\X^\bu$ of the dualizing complex
on $\X$ to the ind-scheme~$\bY$.
 The case of an affine Noetherian (or coherent) scheme $\Spec A$
in the role of $\X$ and an affine scheme $\Spec R$ in the role
of $\bY$ was considered in~\cite[Section~6]{Pfp}.

 The cotensor product $\oc_{\rD_\X^\bu}$ of complexes of quasi-coherent
torsion sheaves on $\X$ is similar to a right derived functor, in that
(under mild assumptions on~$\rD_\X^\bu$) the cotensor product of two
bounded complexes is a complex bounded from below.
 The semitensor product $\os_{\pi^*\rD_\X^\bu}$ of complexes of
quasi-coherent torsion sheaves of $\bY$ resembles a double-sided
derived functor, as the semitensor product of two bounded complexes
is, generally speaking, a doubly unbounded complex.
 Thus our semitensor product construction can be viewed as a version
of \emph{semi-infinite homology theory} for quasi-coherent sheaves;
and indeed, it reduces to a particular case of the semi-infinite
homology functor $\SemiTor$ from the book~\cite{Psemi} in the case of
an ind-zero-dimensional (ind-Artinian) ind-scheme $\X$ of ind-finite
type over a field.

\subsection{{}}
 From the perspective of algebraic geometry, the most natural way to
think about the conventional left derived functor of tensor product
of quasi-coherent sheaves on a scheme may be the following one.
 Let $Y$ be a scheme over a field~$\kk$, and let $\M^\bu$ and $\N^\bu$
be two complexes of quasi-coherent sheaves over it.
 Consider the Cartesian product $Y\times_\kk Y$, and consider
the external tensor product $\M^\bu\bt_\kk\N^\bu$ of the complexes
$\M^\bu$ and $\N^\bu$; this is a complex of quasi-coherent sheaves
on $Y\times_\kk Y$.
 Let $\Delta_Y\:Y\rarrow Y\times_\kk Y$ be the diagonal morphism.
 Then the derived tensor product of $\M^\bu$ and $\N^\bu$ on $Y$ can
be defined as the left derived restriction of the external tensor
product to the diagonal, $\M^\bu\ot_{\cO_Y}^\boL\N^\bu=
\boL\Delta_Y^*(\M^\bu\bt_\kk\N^\bu)$.

 In this spirit, if one is interested in alternative tensor product
operations on quasi-coherent sheaves, the most natural approach may be
to keep the external tensor product unchanged, but tweak
the restriction to the diagonal.
 Following the discussion above, one is supposed to take $\boR\Delta^!$
(or~$\Delta^+$) for the cotensor product and some ``combination of
$\boL\Delta^*$ with $\boR\Delta^!$\,'' for the semitensor product.

 It was shown already in~\cite[end of Section~B.2.5]{EP} that
Murfet's tensor structure~\cite{Mur} (which we call the \emph{cotensor
product}) on the coderived category of quasi-coherent sheaves on
a separated scheme $X$ of finite type over a field~$\kk$ can be
computed as $\M^\bu\oc_{\D_X^\bu}\N^\bu\simeq
\boR\Delta_X^!(\M^\bu\bt_\kk\N^\bu)$ for any two complexes of
quasi-coherent sheaves $\M^\bu$ and $\N^\bu$ on~$X$.
 This presumes the choice of the dualizing complex
$\D_X^\bu=p^+(\kk)$ on the scheme $X$, where $p\:X\rarrow\Spec\kk$ is
the morphism of finite type (this is called the \emph{rigid
dualizing complex} in~\cite{YZ,Yek,Sha}).
 In this paper, we extend this computation to ind-separated
ind-schemes $\X$ of ind-finite type, and further to
the relative/semi-infinite context of an affine morphism
$\pi\:\bY\rarrow\X$.

\subsection{{}}
 The first three sections of this paper develop the basic language of
ind-schemes, quasi-coherent torsion sheaves, and pro-quasi-coherent
pro-sheaves.
 The pro-quasi-coherent pro-sheaves are used in the subsequent sections
as a key technical tool for our constructions involving quasi-coherent
torsion sheaves.
 In the next three sections, we consider an ind-Noetherian ind-scheme
$\X$, the coderived category $\sD^\co(\X\tors)$, and the cotensor
product operation~$\oc_{\rD_\X^\bu}$ on it.
 In the last five sections, we study the relative (or properly
semi-infinite) situation: a flat affine morphism $\pi\:\bY\rarrow\X$,
the semiderived category $\sD_\X^\si(\bY\tors)$, and the semitensor
product functor~$\os_{\pi^*\rD_\X^\bu}$.

 We discuss the basics of the theory of (strict ind-quasi-compact
ind-quasi-separated) ind-schemes in Section~\ref{ind-schemes-secn}.
 The additive/exact/abelian categories of module objects over
ind-schemes (namely, the quasi-coherent torsion sheaves and
the pro-quasi-coherent pro-sheaves) are defined and discussed in
Sections~\ref{torsion-sheaves-secn}\+-\ref{flat-pro-sheaves-secn}.
 In particular, we show that quasi-coherent torsion sheaves on
a reasonable ind-scheme (in the sense of~\cite{BD2}) form
a Grothendieck category.

 The equivalence between the coderived category of quasi-coherent
torsion sheaves and the derived category of flat pro-quasi-coherent
pro-sheaves on an ind-semi-separated ind-Noetherian ind-scheme with
a dualizing complex is constructed in
Section~\ref{dualizing-on-ind-Noetherian-secn}.
 The cotensor product functor over such an ind-scheme is defined and
many (particularly ind-Artinian) examples of it are considered in
Section~\ref{cotensor-secn}.
 In particular, we establish the comparisons with the derived cotensor
product of comodules over a cocommutative coalgebra over a field and
the torsion product of torsion modules over a Dedekind domain.

 The particular case of an ind-separated ind-scheme $\X$ of ind-finite
type over a field~$\kk$ is considered in
Section~\ref{ind-finite-type-secn}.
 In this setting, we compute the cotensor product of complexes of
quasi-coherent torsion sheaves as the right derived
$!$\+restriction of the external tensor product to the diagonal
closed immersion $\Delta_\X\:\X\rarrow\X\times_\kk\X$.

 In the relative context of a flat affine morphism $\pi\:\bY\rarrow\X$
into an ind-semi-separated ind-Noetherian ind-scheme with a dualizing
complex, the equivalence between the semiderived category of
quasi-coherent torsion sheaves on $\bY$ relative to $\X$ and
the derived category of $\X$\+flat pro-quasi-coherent pro-sheaves
on $\bY$ is constructed in Section~\ref{X-flat-on-Y-secn}.
 In particular, the semiderived category $\sD_\X^\si(\bY\tors)$ itself
is introduced in Section~\ref{semiderived-subsecn}.
 It is explained in Remark~\ref{semiderived-compact-generators}
that the semiderived category is compactly generated.
 The functor of semitensor product of complexes of quasi-coherent
torsion sheaves over $\bY$ relative to $\X$ is defined in
Section~\ref{semitensor-secn}.
 The special case of a flat affine morphism $\bY\rarrow\X$ with
an ind-zero-dimensional ind-scheme of ind-finite type $\X$ over
a field~$\kk$ is considered in Section~\ref{ind-artinian-base-subsecn},
and the comparison with the functor SemiTor from
the book~\cite{Psemi} is discussed.

 In Section~\ref{flat-affine-over-ind-finite-type-secn}
we compute the semiderived product as a combination of two kinds of
derived restrictions to the diagonal.
 Let $\X$ be an ind-separated ind-scheme of ind-finite type over
a field~$\kk$, and let $\pi\:\bY\rarrow\X$ be a flat affine morphism.
 The diagonal map $\Delta_\bY\:\bY\rarrow\bY\times_\kk\bY$ factorizes
naturally into two ``partial diagonals'' $\bY\overset\delta\rarrow
\bY\times_\X\bY\overset\eta\rarrow\bY\times_\kk\bY$;
both~$\delta$ and~$\eta$ are closed immersions.
 Let $\rD_\X^\bu$ be a rigid dualizing complex on~$\X$.
 For any two complexes $\bM^\bu$ and $\bN^\bu$ of quasi-coherent
torsion sheaves on $\bY$, we construct a natural isomorphism
$\bM^\bu\os_{\pi^*\rD_\X^\bu}\bN^\bu\simeq\boL\delta^*\,\boR\eta^!
(\bM^\bu\bt_\kk\bN^\bu)$ in $\sD_\X^\si(\bY\tors)$.
 In Section~\ref{weakly-smooth-postcomposition-secn} we show that both
the semiderived category and the semitensor product operation on it are
preserved when a flat affine morphism $\pi\:\bY\rarrow\X$ is replaced
with a flat affine morphism $\pi'\:\bY\rarrow\X'$ such that
$\pi=\tau\pi'$, where $\tau\:\X'\rarrow\X$ is a smooth (or ``weakly
smooth'') affine morphism of finite type between ind-semi-separated
ind-Noetherian ind-schemes.

 Several simple, but geometrically nontrivial examples of flat
affine morphisms of ind-schemes $\pi\:\bY\rarrow\X$ with
an ind-separated ind-scheme $\X$ of ind-finite type over a field
are considered in Section~\ref{geometric-examples-secn}.
 In particular, in Section~\ref{tate-space-subsecn}, we discuss
the example of the flat affine morphism of ind-affine ind-schemes
$\bY\rarrow\X$ corresponding to a surjective open linear map of
locally linearly compact topological vector spaces $V\rarrow V/W$
with a discrete target $V/W$ and a linearly compact kernel $W$,
such as $\kk((t))\rarrow\kk((t))/\kk[[t]]$.
 Here $\kk((t))$ is the topological vector space of formal Laurent
power series in a variable~$t$ over a field~$\kk$, and
$\kk[[t]]\subset\kk((t))$ is the open vector subspace
of Taylor power series.
 It follows from the results of
Section~\ref{weakly-smooth-postcomposition-secn} that the semiderived
category $\sD_\X^\si(\bY\tors)$ in this example does \emph{not} depend
on the choice of a linearly compact open subspace $W\subset V$, but is
entirely determined by the locally linearly compact vector space~$V$.
 The semitensor product operation on $\sD_\X^\si(\bY\tors)$ also
does not depend on~$W$ up to a dimensional cohomological shift and
a determinantal twist.
 A generalization of this example to the ind-schemes of formal loops
taking values in affine algebraic groups is very briefly discussed
in Section~\ref{loop-group-subsecn}. 

 This paper does not strive for maximal natural generality.
 Instead, our aim is to demonstrate certain phenomena in a suitable
context where they manifest themselves in a fully nontrivial way.
 If one is interested in generalizations, one of the first ideas
would be to replace ind-schemes with ind-stacks.
 Another and perhaps even more important direction for possible
generalization is to replace an affine morphism $\pi\:\bY\rarrow\X$
with a quasi-compact, semi-separated one.
 We attempt to make the first step in this direction in the appendix,
where a definition of the semiderived category $\sD_\X^\si(\bY\tors)$
for a nonaffine morphism~$\pi$ is worked out.
 
\subsection*{Acknowledgement}
 Boris Feigin shared his vision of semi-infinite geometry and topology
with his students at the Independent University of Moscow, including me,
around 1994--95.
 These conversations became a major source of inspiration for me
ever since.
 I~am grateful to Sergey Arkhipov for numerous illuminating discussions
of semi-infinite algebra and geometry, particularly in late '00s and
early '10s, which influenced my thinking on the subject and eventually
this work.
 I~also wish to thank Michal Hrbek for helpful conversations, and
Liran Shaul, Tsutomu Nakamura, Alexei Pirkovskii, and Sergey Arkhipov 
for suggesting useful references.
 The author is supported by the GA\v CR project 20-13778S and
research plan RVO:~67985840.

\Section{Ind-Schemes and their Morphisms}  \label{ind-schemes-secn}

\subsection{Ind-objects}  \label{ind-objects-subsecn}
 Let $\sK$ be a small category.
 Consider the category $\Sets^{\sK^\sop}$ of contravariant functors
from $\sK$ to the category of sets.
 The category $\Ind(\sK)$ of \emph{ind-objects in\/~$\sK$} can be
defined as the full subcategory in $\Sets^{\sK^\sop}$ consisting of
the (filtered) direct limits of representable functors
$\Mor_\sK({-},K)$, \ $K\in\sK$.

 Explicitly, this means that the objects of $\Ind(\sK)$ are represented
by inductive systems $(K_\gamma\in\sK)_{\gamma\in\Gamma}$ indexed by
directed posets~$\Gamma$.
 For any $\gamma'$, $\gamma''\in\Gamma$ there exists $\gamma\in\Gamma$
such that $\gamma'\le\gamma$, \,$\gamma''\le\gamma$; and for every
$\beta<\gamma\in\Gamma$ the transition morphism $K_\beta\rarrow
K_\gamma$ in $\sK$ is given in such a way that the triangle diagrams
$K_\alpha\rarrow K_\beta\rarrow K_\gamma$ are commutative for all
$\alpha<\beta<\gamma\in\Gamma$.
 The object of $\Ind(\sK)$ represented by an inductive system
$(K_\gamma)_{\gamma\in\Gamma}$ is denoted by
$\ilim_{\gamma\in\Gamma}K_\gamma\in\Ind(\sK)$.
 The set of morphisms in $\Ind(\sK)$ between the two objects
represented by two inductive systems $(K_\gamma)_{\gamma\in\Gamma}$
and $(L_\delta)_{\delta\in\Delta}$ is computed by the formula
\begin{equation} \label{ind-objects-mor}
 \Mor_{\Ind(\sK)}(\ilim_{\gamma\in\Gamma}K_\gamma,\,
 \ilim_{\delta\in\Delta}L_\delta)\,=\,
 \varprojlim\nolimits_{\gamma\in\Gamma}^{\Sets}\,
 \varinjlim\nolimits_{\delta\in\Delta}^{\Sets}\,
 \Mor_\sK(K_\gamma,L_\delta),
\end{equation}
where the inductive and projective limits in the right-hand side
are taken in the category of sets.

 One can (and we will) consider $\sK$ as a full subcategory in
$\Ind(\sK)$, embedded by the functor assigning to an object $M\in\sK$
the related inductive system $(M_\epsilon)_{\epsilon\in\mathrm{E}}$
indexed by the singleton poset $\mathrm{E}=\{*\}$ with $M_*=M$.
 Then the formula~\eqref{ind-objects-mor} essentially means that,
firstly, $\ilim_{\gamma\in\Gamma}K_\gamma=
\varinjlim_{\gamma\in\Gamma}^{\Ind(\sK)}K_\gamma$, and secondly,
$\Mor_{\Ind(\sK)}(K,\ilim_{\delta\in\Delta}L_\delta)=
\varinjlim_{\delta\in\Delta}^{\Sets}\Mor_\sK(K,L_\delta)$ for all
$K\in\sK$ and $\ilim_{\gamma\in\Gamma}K_\gamma$,
$\ilim_{\delta\in\Delta}L_\delta\in\Ind(\sK)$.

 One can drop the assumption that the category $\sK$ be small and,
for any category $\sK$, define the category of ind-objects
$\Ind(\sK)$ by saying that the objects of $\Ind(\sK)$ correspond to
directed inductive systems in $\sK$ and the morphisms are given by
the formula~\eqref{ind-objects-mor}.

 Let $\sK$ be a category with fibered products (i.~e., every pair of
morphisms with the same target $K\rarrow M$ and $L\rarrow M$ has
a pullback in~$\sK$).
 Then the category $\Ind(\sK)$ also has fibered products, which can be
constructed as follows.

 Let $f\:\ilim_{\gamma\in\Gamma}K_\gamma\rarrow
\ilim_{\epsilon\in\mathrm{E}}M_\epsilon$ and
$g\:\ilim_{\delta\in\Delta}L_\delta\rarrow
\ilim_{\epsilon\in\mathrm{E}}M_\epsilon$ be a pair of morphisms
with the same target in $\Ind(\sK)$.
 Denote by $\Xi$ the set of all quintuples
$(\gamma',\delta',\epsilon',f_{\gamma'\epsilon'},g_{\delta'\epsilon'})$
such that $\gamma'\in\Gamma$, $\delta'\in\Delta$, $\epsilon'\in
\mathrm{E}$, while $f_{\gamma'\epsilon'}\:K_{\gamma'}
\rarrow M_{\epsilon'}$ and $g_{\delta'\epsilon'}\:L_{\delta'}\rarrow
M_{\epsilon'}$ are morphisms in $\sK$ forming two commutative square
diagrams with the morphisms $f$, $g$ and the natural morphisms
$K_{\gamma'}\rarrow\ilim_{\gamma\in\Gamma}K_\gamma$, \
$L_{\delta'}\rarrow\ilim_{\delta\in\Delta}L_\delta$, \
$M_{\epsilon'}\rarrow\ilim_{\epsilon\in\mathrm{E}}M_\epsilon$
in $\Ind(\sK)$.

 The set $\Xi$ is directed in the natural partial order, and
the projection maps $\Xi\rarrow\Gamma$, $\Xi\rarrow\Delta$,
$\Xi\rarrow\mathrm{E}$ are cofinal maps of directed posets.
 Put $K_\xi=K_{\gamma'}$, \ $L_\xi=L_{\delta'}$, \ $M_\xi=M_{\epsilon'}$
for $\xi=(\gamma',\delta',\epsilon',
f_{\gamma'\epsilon'},g_{\delta'\epsilon'})\in\Xi$.
 Then our pair of morphisms with the same target $(f,g)$ in $\Ind(\sK)$
can be obtained by applying the functor $\ilim_{\xi\in\Xi}$ to
the $\Xi$\+indexed inductive system of pairs of morphisms with
the same target $f'_\xi\:K_\xi\rarrow M_\xi$ and $g'_\xi\:L_\xi
\rarrow M_\xi$ in~$\sK$ (where $f'_\xi=f_{\gamma'\epsilon'}$
and $g'_\xi=g_{\delta'\epsilon'}$).

 Finally, let $N_\xi\in\sK$ denote the fibered product of the pair of
morphisms $f'_\xi$ and~$g'_\xi$.
 Then $\ilim_{\xi\in\Xi}N_\xi\in\Ind(\sK)$ is the fibered product of
the pair of morphisms $f$ and~$g$.
 In other words, for any directed poset $\Xi$,
the functor $\ilim_{\xi\in\Xi}\:\sK^\Xi\rarrow\Ind(\sK)$
(acting from the category $\sK^\Xi$ of $\Xi$\+indexed inductive
systems in $\sK$ to the category of ind-objects) preserves fibered
products, as one can easily see.

 In fact, stronger assertions hold.
 First of all, for any category $\sK$, the category $\Ind(\sK)$ can be
equivalently defined using inductive systems indexed by filtered
categories rather than filtered posets~\cite[Theorem~1.5]{AR}.
 Furthermore, the full subcategory $\Ind(\sK)$ is closed under
(filtered) direct limits in $\Sets^{\sK^\sop}$
\,\cite[Theorem~6.1.8]{KS} (so direct limits exist in $\Ind(\sK)$).
 Returning to the particular case of a category $\sK$ with fibered
products, one observes that $\Ind(\sK)$ is closed under fibered
products in $\Sets^{\sK^\sop}$ in this case.
 Since finite limits commute with (filtered) direct limits in
$\Sets^{\sK^\sop}$, it follows that fibered products commute with
direct limits in $\Ind(\sK)$.

\subsection{Ind-schemes}  \label{ind-schemes-subsecn}
 Ind-schemes are the main object of study in this paper.
 Contrary to what the name seems to suggest, in the advanced
contemporary point of view ind-schemes are \emph{not} defined as
ind-objects in the category of schemes (or in any other category).
 Rather, one considers the category of affine schemes $\AffSch$
(so $\AffSch\simeq\CRings^\sop$, where $\CRings$ is the category of
commutative rings).
 Then the ind-schemes are those contravariant functors $\AffSch^\sop
\rarrow\Sets$ (``presheaves of sets'') on the category of affine
schemes which can be obtained as direct limits of functors
representable by schemes~\cite[Section~7.11]{BD2},
\cite[Section~1]{Rich}.
 So the category of ind-schemes is a certain full subcategory in
the category of presheaves of sets on $\AffSch$.

 Let $\Sch$ denote the category of schemes, and let $\CSch\subset\Sch$
be the full subcategory of concentrated (that is, quasi-compact
quasi-separated) schemes.
 Then, for any $X\in\Sch$, the functor $\Mor_{\Sch}({-},X)$ is actually
a \emph{sheaf} of sets in the Zariski topology on $\Sch$.
 In particular, the restrictions of $\Mor_{\Sch}({-},X)$ to $\CSch$
and $\AffSch$ are sheaves in the Zariski topology on the categories
of concentrated schemes and affine schemes, respectively.
 Furthermore, the filtered direct limit functors in the categories
of presheaves and Zariski sheaves of sets on $\CSch$ agree, due to
the finite nature of the Zariski topology on a concentrated scheme.
 The same applies to the prescheaves and Zariski sheaves on $\AffSch$.
 For these reasons, one can consider ind-schemes as Zariski sheaves
of sets on $\AffSch$, which are the same thing as Zariski sheaves of
sets on $\CSch$.
 These arguments explain that ind-schemes can be equivalently viewed as
functors $\CSch^\sop\rarrow\Sets$ instead of $\AffSch^\sop\rarrow\Sets$.

 In view of the previous paragraph, one can develop the following
simplified, na\"\i ve approach to the definition of
\emph{ind-concentrated ind-schemes}, which we will use.
 An (\emph{ind-concentrated}) \emph{ind-scheme} $\X$ is defined as
an ind-object in the category of concentrated schemes $\CSch$.
 In the rest of this paper, all ``schemes'' will be presumed
concentrated, and all ``ind-schemes'' will be ind-concentrated.
 An ind-scheme is said to be \emph{strict} if it can be represented by
an inductive system of closed immersions, i.~e.,
$\X=\ilim_{\gamma\in\Gamma}X_\gamma$, where the morphism of schemes
$X_\gamma\rarrow X_\delta$ in the inductive system is a closed
immersion for every $\gamma<\delta\in\Gamma$.

 An \emph{$\aleph_0$\+ind-scheme} $\X$ is an ind-scheme which can be 
represented by a countable inductive system of schemes, i.~e.,
$\X=\ilim_{\gamma\in\Gamma}X_\gamma$, where the poset $\Gamma$ is
has (at most) countable cartinality.
 It is not difficult to see that any strict $\aleph_0$\+ind-scheme $\X$
can be represented by a sequence of closed immersions indexed by
the natural numbers, $\X=\ilim\,(X_0\to X_1\to X_2\to\dotsb)$, where
the transition morphisms $X_n\rarrow X_{n+1}$ are closed immersions
of schemes.
 The fact that closed immersions are monomorphisms in $\Sch$ is
helpful to keep in mind here (and elsewhere below).

 Following the discussion in Section~\ref{ind-objects-subsecn},
fibered products exist in the category of (ind-concentrated) ind-schemes
(because they exist in $\CSch$, as the full subcategory $\CSch$ is
closed under fibered products in $\Sch$).
 Furthermore, the full subcategory of strict ind-schemes is closed under
fibered products in the category of ind-schemes.
 Moreover, looking into the above construction of the fibered product
in the case of three ind-objects $\ilim_{\gamma\in\Gamma}K_\gamma$,
\,$\ilim_{\delta\in\Delta}L_\delta$, and $\ilim_{\epsilon\in\mathrm{E}}
M_\epsilon$ represented by inductive systems of monomorphisms, one can
see that the morphisms $f_{\gamma'\epsilon'}$ and $g_{\delta'\epsilon'}$
are determined by the indices $\gamma$', $\delta'$, and $\epsilon'$;
so the set $\Xi$ is at most countable whenever the sets
$\Gamma$, $\Delta$, and $\mathrm{E}$ are.
 Hence the full subcategory of strict $\aleph_0$\+ind-schemes
is closed under fibered products in the category of strict ind-schemes.

 It is also useful to observe that, considering $\CSch$ as a full
subcategory in strict ind-schemes, the fibered product of any two
schemes over a strict ind-scheme is a scheme.
 Indeed, given a strict ind-scheme $\X=\ilim_{\gamma\in\Gamma}X_\gamma$
with closed immersions $X_\gamma\rarrow X_\delta$ for $\gamma<\delta
\in\Gamma$, for any two schemes $Y$ and $Z$ and morphisms
$Y\rarrow\X\larrow Z$, the fibered product $Y\times_\X Z$ is isomorphic
to $Y\times_{X_\gamma}Z$, where $\gamma\in\Gamma$ is any index for
which both the morphisms $Y\rarrow\X$ and $Z\rarrow\X$ factorize
through $X_\gamma\rarrow\X$.

\begin{rem} \label{strict-representation-uniqueness}
 The definition of a strict ind-scheme raises an obvious question.
 Let $\X\in\Ind(\CSch)$ be an ind-scheme which can be represented by
an inductive system of closed immersions in $\CSch$.
 In what sense is such a representation unique if it exists?
 Suppose that $\X\simeq\ilim_{\gamma\in\Gamma}Y_\gamma\simeq
\ilim_{\delta\in\Delta}Z_\delta$, where $(Y_\gamma)_{\gamma\in\Gamma}$
and $(Z_\delta)_{\delta\in\Delta}$ are inductive systems of closed
immersions of schemes.
 In what sense are these two ``strict'' representations of $\X$
equivalent?

 A more advanced approach is to consider the poset $\mathrm{E}=
\Gamma\times\Delta$ with the product order, and write
$\X\simeq\ilim_{(\gamma,\delta)\in\mathrm{E}}W_{\gamma,\delta}$,
where $W_{\gamma,\delta}=Y_\gamma\times_{\X}Z_\delta$.
 Still one may be interested in having a straightforward answer to
the following straightforward question.
 Suppose that, for some $\gamma\in\Gamma$ and $\delta\in\Delta$,
the morphism $Y_\gamma\rarrow\X$ factorizes as $Y_\gamma\rarrow
Z_\delta\rarrow\X$.
 How do we know that $Y_\gamma\rarrow Z_\delta$ is a closed immersion?

 The next two lemmas (in which the schemes do not need to be
concentrated) provide an explanation in a natural generality.
\end{rem}

\begin{lem} \label{closed-immersion-two-morphisms-lemma}
\textup{(a)} Let $X\rarrow Y\rarrow Z$ be morphisms of schemes
such that the composition $X\rarrow Z$ is a locally closed immersion.
 Then the morphism $X\rarrow Y$ is a locally closed immersion. \par
\textup{(b)} Let $X\rarrow Y\rarrow Z$ be morphisms of schemes
such that the composition $X\rarrow Y\rarrow Z$ is a closed immersion
and the morphism $Y\rarrow Z$ is separated.
 Then $X\rarrow Y$ is a closed immersion. \par
\textup{(c)} In particular, if the composition $X\rarrow Y\rarrow Z$
is a closed immersion and $Y$ is a separated scheme, then the morphism
$X\rarrow Y$ is a closed immersion.
\end{lem}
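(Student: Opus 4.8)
The plan is to reduce all three parts to the standard behaviour of the diagonal morphism, by means of the graph construction. Write $f\:X\rarrow Y$ and $g\:Y\rarrow Z$ for the two given morphisms and $h=gf\:X\rarrow Z$ for their composition. The graph morphism $\Gamma_f=(\id_X,f)\:X\rarrow X\times_Z Y$ yields a factorization $f=p_2\circ\Gamma_f$, where $p_2\:X\times_Z Y\rarrow Y$ is the second projection. First I would record two base-change identifications. The projection $p_2$ is the pullback of $h\:X\rarrow Z$ along $g\:Y\rarrow Z$, directly by the definition of the fibered product. The graph $\Gamma_f$ is the pullback of the diagonal $\Delta_{Y/Z}\:Y\rarrow Y\times_Z Y$ along the morphism $X\times_Z Y\rarrow Y\times_Z Y$ acting by $(x,y)\mapsto(f(x),y)$; this is seen by computing $(X\times_Z Y)\times_{Y\times_Z Y}Y$, whose points are the pairs $((x,y),y')$ with $f(x)=y'=y$, i.e.\ the points $(x,f(x))$ forming a copy of $X$.

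Granting these two Cartesian squares, part~(a) follows at once. The diagonal $\Delta_{Y/Z}$ of any morphism of schemes is a locally closed immersion, hence so is its base change $\Gamma_f$; and since $h$ is assumed to be a locally closed immersion, so is its base change $p_2$. As locally closed immersions are stable under composition, $f=p_2\circ\Gamma_f$ is a locally closed immersion.

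For part~(b), the separatedness hypothesis on $g$ upgrades $\Delta_{Y/Z}$ to a \emph{closed} immersion, so that $\Gamma_f$ becomes a closed immersion; and $h$ being a closed immersion makes $p_2$ a closed immersion. Since closed immersions compose to closed immersions, $f$ is a closed immersion. Finally, part~(c) is the special case of~(b) in which the target of the separatedness is left implicit: a separated scheme $Y$ is by definition separated over the terminal scheme $\Spec\boZ$, and the cancellation property of separated morphisms (if $vu$ is separated then $u$ is separated) shows that $Y\rarrow Z$ is separated for every $Z$, whereupon~(b) applies.

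The argument is essentially formal once the two base-change squares are in place, so the closed and open/locally closed cases are handled uniformly. The one point demanding genuine care is the behaviour in the non-separated setting of part~(a): there $\Delta_{Y/Z}$ is only a locally closed immersion rather than a closed one, so I must rely on the (standard but slightly delicate) stability of locally closed immersions under composition, as opposed to the entirely routine stability enjoyed by closed immersions used in~(b).
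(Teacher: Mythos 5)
Your proposal is correct and follows essentially the same route as the paper's proof: the paper factors $X\rarrow Y$ as $X=X\times_YY\rarrow X\times_ZY\rarrow Y$, which is exactly your factorization $f=p_2\circ\Gamma_f$, with $X\times_YY\rarrow X\times_ZY$ identified as a base change of the relative diagonal $\Delta_{Y/Z}$ (an immersion in general, a closed immersion when $Y\rarrow Z$ is separated) and $X\times_ZY\rarrow Y$ as a base change of $X\rarrow Z$, followed by stability of (locally) closed immersions under composition. Your treatment of part~(c) via the cancellation property of separated morphisms is likewise the same as the paper's appeal to the fact that any morphism from a separated scheme is separated.
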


\begin{proof}
 Parts~(a) and~(b) are~\cite[Tag~07RK]{SP}.
 The morphism $X\rarrow Y$ is the composition $X=X\times_YY\rarrow
X\times_ZY\rarrow Y$, where the morphism $X\times_ZY\rarrow Y$ is
a base change of the morphism $X\rarrow Z$
(by the morphism $Y\rarrow Z$).
 If $X\rarrow Z$ is a locally closed immersion, then $X\times_ZY\rarrow
Y$ is a locally closed immersion; and if $X\rarrow Z$ is a closed
immersion, then $X\times_ZY\rarrow Y$ is a closed
immersion~\cite[Tag~01JU]{SP}.
 The morphism $X\times_YY\rarrow X\times_ZY$ is a locally closed
immersion; and moreover if the morphism $Y\rarrow Z$ is separated,
then $X\times_YY\rarrow X\times_ZY$ is a closed immersion,
by~\cite[Tag~01KR]{SP}.
 Finally, a composition of locally closed immersions is a locally
closed immersion, and a composion of closed immersions is a closed
immersion~\cite[Tag~02V0]{SP}.
 Part~(c) follows from~(b), since any scheme morphism from a separated
scheme is separated~\cite[Tag~01KV]{SP}.
\end{proof}

\begin{lem} \label{closed-immersion-three-morphisms-lemma}
 Let $X\rarrow Y\rarrow Z\rarrow W$ be morphisms of schemes such that
the compositions $X\rarrow Z$ and $Y\rarrow W$ are closed immersions.
 Then the morphism $X\rarrow Y$ is a closed immersion.
\end{lem}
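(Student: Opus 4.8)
The plan is to deduce this from part~(b) of Lemma~\ref{closed-immersion-two-morphisms-lemma}, applied to the composable pair $X\rarrow Y\rarrow Z$. That part requires two hypotheses: that the composition $X\rarrow Z$ be a closed immersion, which is given directly; and that the intermediate morphism $Y\rarrow Z$ be separated. So essentially the whole content of the proof is to establish separatedness of $Y\rarrow Z$. The fourth scheme $W$ and the hypothesis that $Y\rarrow W$ is a closed immersion are present precisely in order to supply this missing input.

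First I would note that the closed immersion $Y\rarrow W$ (which here is the composite $Y\rarrow Z\rarrow W$) is in particular a separated morphism, i.e.\ $Y$ is separated as a scheme over the base~$W$. Now both $Y$ and $Z$ are schemes over~$W$, and $Y\rarrow Z$ is a morphism over~$W$. The permanence property of separatedness --- a morphism out of a scheme that is separated over a base is itself separated, \cite[Tag~01KV]{SP} (the same reference already invoked for part~(c) of the preceding lemma) --- then shows that $Y\rarrow Z$ is separated. Equivalently, one may cite the cancellation form directly: whenever a composition $g\circ f$ is separated, the first morphism~$f$ is separated, applied here to $Y\rarrow Z\rarrow W$.

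With $Y\rarrow Z$ now known to be separated and $X\rarrow Z$ a closed immersion by hypothesis, Lemma~\ref{closed-immersion-two-morphisms-lemma}(b) applies verbatim to $X\rarrow Y\rarrow Z$ and yields that $X\rarrow Y$ is a closed immersion, as desired. The only step that requires any thought is the separatedness/cancellation step of the second paragraph; everything else is a direct invocation of the previous lemma, so I do not expect a genuine obstacle here, merely the need to identify the correct permanence statement for separated morphisms.
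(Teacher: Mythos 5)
Your proof is correct and follows essentially the same route as the paper: both arguments establish separatedness of $Y\rarrow Z$ from the hypothesis that the composition $Y\rarrow W$ is a closed immersion, and then conclude by applying Lemma~\ref{closed-immersion-two-morphisms-lemma}(b) to the pair $X\rarrow Y\rarrow Z$. The only (immaterial) difference is in that intermediate step: the paper first deduces from Lemma~\ref{closed-immersion-two-morphisms-lemma}(a) that $Y\rarrow Z$ is a locally closed immersion and then uses that locally closed immersions are separated, whereas you invoke the cancellation property of separated morphisms (applied to $Y\rarrow Z\rarrow W$) directly.
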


\begin{proof}
 By Lemma~\ref{closed-immersion-two-morphisms-lemma}(a) applied to
the pair of morphisms $Y\rarrow Z\rarrow W$, the morphism $Y\rarrow Z$
is a locally closed immersion.
 Any locally closed immersion is a separated morphism; so
Lemma~\ref{closed-immersion-two-morphisms-lemma}(b) is applicable
to the pair of morphisms $X\rarrow Y\rarrow Z$, proving that
$X\rarrow Y$ is a closed immersion.
\end{proof}

 Returning to the question posed in
Remark~\ref{strict-representation-uniqueness}, one can choose
an index $\epsilon\in\Gamma$ such that the morphism $Z_\delta\rarrow\X$
factorizes as $Z_\delta\rarrow Y_\epsilon\rarrow\X$, and then one
can choose an index $\eta\in\Delta$ such that the morphism
$Y_\epsilon\rarrow\X$ factorizes as $Y_\epsilon\rarrow Z_\eta\rarrow\X$.
 The triangle diagrams $Y_\gamma\rarrow Z_\delta\rarrow Y_\epsilon$
and $Z_\delta\rarrow Y_\epsilon\rarrow Z_\eta$ are commutative, because
the transition morphisms in both the inductive systems
are monomorphisms.
 By Lemma~\ref{closed-immersion-three-morphisms-lemma}, it follows
that $Y_\gamma\rarrow Z_\delta$ is a closed immersion.

\subsection{Morphisms of ind-schemes}
\label{morphisms-of-ind-schemes-subsecn}
 \emph{In the sequel, we will presume all our ind-schemes to be
(ind-concentrated and) strict.}
 Let us start from the observation that any morphism of ind-schemes
can be obtained by applying the functor $\ilim\,\,$ to some morphism
of inductive systems of (closed immersions of) schemes.

 Indeed, let $f\:\ilim_{\delta\in\Delta}Y_\delta=\Y\rarrow\X=
\ilim_{\gamma\in\Gamma}X_\gamma$ be a morphism of ind-schemes
represented by inductive systems of closed immersions
$(Y_\delta)_{\delta\in\Delta}$ and $(X_\gamma)_{\gamma\in\Gamma}$.
 Arguing as in Section~\ref{ind-objects-subsecn} and keeping in
mind that closed immersions are monomorphisms in $\CSch$,
one can consider the directed poset $\Xi$ of all pairs
$(\delta\in\Delta,\,\gamma\in\Gamma)$ such that the composition
$Y_\delta\rarrow\Y\rarrow\X$ factorizes through the morphism
$X_\gamma\rarrow\X$.
 Then one has $f=\ilim_{\xi\in\Xi}f_\xi$, where $f_\xi\:Y_\xi\rarrow
X_\xi$ is the related morphism between $Y_\xi=Y_\delta$ and
$X_\xi=X_\gamma$.

 Alternatively, consider the product of two posets
$\mathrm{E}=\Delta\times\Gamma$ with the product order, and
for every $\epsilon=(\delta,\gamma)\in\mathrm{E}$ put
$Y_\epsilon=Y_\delta\times_\X X_\gamma$ and $X_\epsilon=X_\gamma$.
 Then $f=\ilim_{\epsilon\in\mathrm{E}}f_\epsilon$, where
$f_\epsilon\:Y_\epsilon\rarrow X_\epsilon$.

 The following representation of morphisms of schemes may be even
more useful.
 Following~\cite[Lemma~1.7 and Definition~1.8]{Rich}, one says that
a morphism of ind-schemes $f\:\Y\rarrow\X$ is ``representable by
schemes'' if, for every scheme $T$ and a morphism of ind-schemes
$T\rarrow\X$, the fibered product $\Y\times_\X T$ is a scheme.
 In this case, let $\X=\ilim_{\gamma\in\Gamma}X_\gamma$ be
a representation of $\X$ by an inductive system of closed immersions
of schemes.
 Put $Y_\gamma=\Y\times_\X X_\gamma$.
 Then $\Y=\ilim_{\gamma\in\Gamma}Y_\gamma$ is a representation of $\Y$
by an inductive system of closed immersions of schemes, and
the morphism $f$ is represented as $f=\ilim_{\gamma\in\Gamma}f_\gamma$,
where $f_\gamma\:Y_\gamma\rarrow X_\gamma$.

 A morphism of ind-schemes $f\:\Y\rarrow\X$ is said to be
\emph{affine} if, for any scheme $T$ and a morphism of ind-schemes
$T\rarrow\X$, the ind-scheme $\Y\times_\X T$ is a scheme \emph{and}
the morphism of schemes $\Y\times_\X T\rarrow T$ is affine.
 Equivalently (in view of~\cite[Lemma~1.7]{Rich}), this means that
$\Y\times_\X T$ is an affine scheme whenever $T$ is an affine scheme.
 Let $\X=\ilim_{\gamma\in\Gamma}X_\gamma$ be an ind-scheme represented
by an inductive system of closed immersions of schemes.
 Then a morphism of ind-schemes $f\:\Y\rarrow\X$ is affine if and only
if, for every $\gamma\in\Gamma$, the fibered product
$\Y\times_\X X_\gamma$ is a scheme and the morphism of schemes
$\Y\times_\X X_\gamma\rarrow X_\gamma$ is affine.

 A morphism of ind-schemes $\Z\rarrow\X$ is said to be a \emph{closed
immersion} if, for every scheme $T$ and a morphism of ind-schemes
$T\rarrow\X$, the ind-scheme $\Z\times_\X T$ is a scheme \emph{and}
the morphism $\Z\times_\X T\rarrow T$ is a closed immersion of schemes.
 Obviously, any closed immersion of ind-schemes is an affine morphism.

 In particular, given a scheme $Z$ and an ind-scheme $\X$, a morphism
of ind-schemes $Z\rarrow\X$ is said to be a closed immersion if, for
every scheme $T$ and a morphism of ind-schemes $T\rarrow\X$,
the morphism of schemes $Z\times_\X T\rarrow T$ is a closed immersion.
 In this case, one says that the morphism $Z\rarrow\X$ makes $Z$
a \emph{closed subscheme} in~$\X$.

 Let $\X=\ilim_{\gamma\in\Gamma}X_\gamma$ be an ind-scheme represented
by an inductive system of closed immersions of schemes $X_\gamma\rarrow
X_\delta$, \ $\gamma<\delta\in\Gamma$.
 Let $i\:Z\rarrow\X$ be a morphism into $\X$ from a scheme $Z$, and let
$\gamma\in\Gamma$ be an index such that $i$~factorizes as
$Z\rarrow X_\gamma\rarrow\X$.
 Then the morphism $i$~is a closed immersion if and only if
the morphism $Z\rarrow X_\gamma$ is a closed immersion of schemes.
 This observation provides another way to answer the question from
Remark~\ref{strict-representation-uniqueness}.

 A morphism of ind-schemes $f\:\Y\rarrow\X$ is said to be \emph{flat}
if, for any scheme $T$ and a morphism of ind-schemes $T\rarrow\X$,
the fibered product $\Y\times_\X T$ is a scheme \emph{and}
the morphism of schemes $\Y\times_\X T\rarrow T$ is flat.
 Let $\X=\ilim_{\gamma\in\Gamma}X_\gamma$ be an ind-scheme represented
by an inductive system of closed immersions of schemes.
 Then a morphism $f\:\Y\rarrow\X$ is flat if and only if, for every
$\gamma\in\Gamma$, the fibered product $\Y\times_\X X_\gamma$
is a scheme and the morphism of schemes $\Y\times_\X X_\gamma
\rarrow X_\gamma$ is flat.

\subsection{Ind-affine examples}
 An ind-scheme is said to be \emph{ind-affine} if it can be represented
by an inductive system of affine schemes.
 It follows from Lemma~\ref{closed-immersion-two-morphisms-lemma}(c)
(with an affine scheme $Y$) that any closed subscheme of a strict
ind-affine scheme is affine.
 Thus any (strict) ind-affine ind-scheme can be represented by
an inductive system of closed immersions of affine schemes.

\begin{exs} \label{Z-hat-ind-scheme}
 (1)~Consider the directed poset of all positive integers $\boZ_{>0}$
in the divisibility order.
 To every $n\in\boZ_{>0}$, assign the affine scheme
$X_n=\Spec \boZ/n\boZ$.
 Whenever $m$ divides~$n$, there is a unique (surjective)
ring homomorphism $\boZ/n\boZ\rarrow\boZ/m\boZ$, and accordingly
a unique morphism (closed immersion) of affine schemes $X_m\rarrow X_n$.
 The inductive system of schemes $(X_n)_{n\in\boZ_{>0}}$ represents
a strict ind-affine $\aleph_0$\+ind-scheme $\X$, which we will denote
by $\X=\Spi\widehat\boZ$.
 Here $\widehat\boZ=\prod_p\boZ_p$ is the profinite completion of
the ring of integers, or equivalently, the product of the rings of
$p$-adic integers $\boZ_p$ taken over all the prime numbers~$p$.

\smallskip
 (2)~Choose a prime number~$p$, and consider the directed poset of all
nonnegative integers $\boZ_{\ge0}$ in the usual linear order.
 To every $r\in\boZ_{\ge0}$, assign the affine scheme $X_r=\Spec
\boZ/p^r\boZ$.
 Whenever $r\le s$, there is a unique (surjective) ring homomorphism
$\boZ/p^s\boZ\rarrow\boZ/p^r\boZ$, and accordingly a unique morphism
(closed immersion) of affine schemes $X_r\rarrow X_s$.
 The inductive system of schemes $(X_r)_{r\in\boZ_{\ge0}}$ represents
a strict ind-affine $\aleph_0$\+ind-scheme $\X$, which we will denote
by $\X=\Spi\boZ_p$.

 The ind-scheme $\Spi\widehat\boZ$ is the coproduct (``disjoint union'') 
of the ind-schemes $\Spi\boZ_p$ in the category of ind-schemes, taken
over all the prime numbers~$p$.
\end{exs}

\begin{exs} \label{coalgebra-ind-scheme}
 (1)~Pick a field~$\kk$, and consider the directed poset of all
nonnegative integers $\boZ_{\ge0}$ in the usual linear order.
 To every $r\in\boZ_{\ge0}$, assign the affine scheme $X_r=\Spec
\kk[x]/x^r\kk[x]$.
 Whenever $r\le s$, there is a unique (surjective) homomorphism of
$\kk$\+algebras $\kk[x]/x^s\kk[x]\rarrow\kk[x]/x^r\kk[x]$ taking
the coset $x+x^s\kk[x]$ to the coset $x+x^r\kk[x]$.
 Let $X_r\rarrow X_s$ be the related closed immersion of affine schemes.
 The inductive system of schemes $(X_r)_{r\in\boZ_{\ge0}}$ represents
a strict ind-affine $\aleph_0$\+ind-scheme $\X$, which we will denote
by $\X=\Spi\kk[[x]]$.
 This ind-scheme comes endowed with a morphism of ind-schemes
$\Spi\kk[[x]]\rarrow\Spec\kk$.

 Example~\ref{coalgebra-ind-scheme}(1) is a close analogue of
Example~\ref{Z-hat-ind-scheme}(2).

\smallskip
 (2)~Let $\rC$ be a coassociative, cocommutative, counital coalgebra
over a field~$\kk$.
 Any coassociative coalgebra over a field is the union of its
finite-dimensional subcoalgebras (which form a directed poset by
inclusion).
 All the subcoalgebras of $\rC$ are also coassociative, cocommutative,
and counital.
 Let $\Gamma$ denote the poset of all finite-dimensional subcoalgebras
of $\rC$ in the inclusion order.
 For a finite-dimensional subcoalgebra $\E\subset\rC$ (so $\E\in
\Gamma$), the dual vector space $\E^*$ is an associative, commutative,
and unital finite-dimensional $\kk$\+algebra.

 For every pair of finite-dimensional subcoalgebras $\E'$,
$\E''\subset\rC$ such that $\E'\subset\E''$, the dual map $\E''{}^*
\rarrow\E'{}^*$ to the inclusion $\E'\rarrow\E''$ is a surjective
homomorphism of commutative rings.
 Consider the related closed immersion of affine schemes
$\Spec\E'{}^*\rarrow\Spec\E''{}^*$.
 The inductive system of schemes $(\Spec\E^*)_{\E\in\Gamma}$ represents
a strict ind-affine ind-scheme $\X$, which we will denote by
$\X=\Spi\rC^*$.
 This ind-scheme comes endowed wih a morphism of ind-schemes
$\Spi\rC^*\rarrow\Spec\kk$.

 Here the notation $\rC^*$ stands for the associative, commutative, and
unital topological $\kk$\+algebra $\rC^*=\varprojlim_{\E\in\Gamma}\E^*$,
with the topology of projective limit of discrete finite-dimensional
vector spaces/algebras~$\E^*$.
 Example~\ref{coalgebra-ind-scheme}(1) is the particular case of
Example~\ref{coalgebra-ind-scheme}(2) corresponding to the choice
of the coalgebra $\rC$ such that $\rC^*=\kk[[x]]$ (with the $x$\+adic
topology on~$\kk[[x]])$.
 The coalgebra $\rC$ is a $\kk$\+vector space with the basis $\{1^*,
x^*,x^2{}^*,\dotsc,x^n{}^*,\dotsc\}$, \ $n\in\boZ_{\ge0}$, with
the counit map $\rC\ni1^*\longmapsto 1\in\kk$, \ $x^n{}^*\longmapsto0$
for $n>0$, and the comultiplication map $\rC\ni x^n{}^*\longmapsto
\sum_{p+q=n} x^p{}^*\ot x^q{}^*\in\rC\ot_\kk\rC$.
\end{exs}

\begin{exs} \label{topological-ring-ind-scheme}
 This example is taken from~\cite[Example~7.11.2(i)]{BD2}.

 (1)~Let $\fR$ be an associative, commutative, unital ring endowed with
a complete, separated topology with a base of neighborhoods of zero
consisting of open ideals.
 For every open ideal $\fI\subset\fR$, consider the quotient ring
$\fR/\fI$ and the affine scheme $\Spec\fR/\fI$.
 Whenever $\fI$, $\fJ\subset\fR$ are open ideals such that
$\fJ\subset\fI$, there exists a unique (surjective) morphism of
commutative rings $\fR/\fJ\rarrow\fR/\fI$ making the triangle diagram
$\fR\rarrow\fR/\fJ\rarrow\fR/\fI$ commutative.
 Let $\Spec\fR/\fI\rarrow\Spec\fR/\fJ$ be the related closed immersion
of affine schemes.
 The inductive system of schemes $(\Spec\fR/\fI)_{\fI\subset\fR}$
indexed by the directed poset of open ideals in $\fR$ with the reverse
inclusion order represents a strict ind-affine ind-scheme $\X$,
which we will denote by $\Spi\fR$.
 For any base of neighborhoods of zero $B$ consisting of open ideals
in $\fR$, one has $\Spi\fR=\ilim_{\fI\in B}\Spec\fR/\fI$ (where $B$ is
viewed as a directed poset in the reverse inclusion order).

\smallskip
 (2)~In particular, assume in the context of~(1) that $\fR$ has
a \emph{countable} base of neighborhoods of zero.
 Then one can choose a countable base of neighborhoods of zero $B$
consisting of open ideals in $\fR$, hence $\Spi\fR=\ilim_{\fI\in B}
\Spi\fR/\fI$ is an ind-affine $\aleph_0$\+ind-scheme.
 The functor $\fR\longmapsto\Spi\fR$ establishes an anti-equivalence
between the category of topological commutative rings with a countable
neighborhood of zero consisting of open ideals and the category of
ind-affine $\aleph_0$\+ind-schemes.

\smallskip
 In the more general context of~(1), the assignment
$\fR\longmapsto\Spi\fR$ is a \emph{fully faithful} contravariant
functor from the category of topological commutative rings where open
ideals form a base of neighborhoods of zero to the category of
ind-affine ind-schemes.
 Its essential image consists of all the ind-affine ind-schemes
$\ilim_{\gamma\in\Gamma}\Spec R_\gamma$ for which the projection map
$\varprojlim_{\gamma\in\Gamma}R_\gamma\rarrow R_\delta$ is surjective
for all $\delta\in\Gamma$.

 Examples~\ref{Z-hat-ind-scheme} and~\ref{coalgebra-ind-scheme}(1)
are particular cases of Example~\ref{topological-ring-ind-scheme}(2).
 In particular, the ind-affine ind-scheme $\Spi\widehat\boZ$ in
Example~\ref{Z-hat-ind-scheme}(1) corresponds to the topological
ring $\fR=\widehat\boZ$ with its profinite topology, while
the ind-affine ind-scheme $\Spi\boZ_p$ in
Example~\ref{Z-hat-ind-scheme}(2) corresponds to the topological
ring $\fR=\widehat\boZ_p$ with its $p$\+adic topology.
 The ind-affine ind-scheme $\Spi\kk[[x]]$ in
Example~\ref{coalgebra-ind-scheme}(1) corresponds to the topological
ring or $\kk$\+algebra $\fR=\kk[[x]]$ with its $x$\+adic topology.

 Example~\ref{coalgebra-ind-scheme}(2) is a particular case of
Example~\ref{topological-ring-ind-scheme}(1) corresponding to
the topological ring $\fR=\rC^*$ with its profinite-dimensional
(linearly compact) topology.
 The ind-affine ind-scheme $\Spec\rC^*$ in
Example~\ref{coalgebra-ind-scheme}(2) corresponds to a topological
ring with a countable base of neighborhoods of zero (i.~e., $\rC^*$
has a countable base of neighborhoods of zero) if and only if
$\Spi\rC^*$ is an $\aleph_0$\+ind-scheme, and if and only if
the underlying vector space of the coalgebra $\rC$ has at most
countable dimension over~$\kk$.
\end{exs}

 For a further discussion of specific examples of ind-affine
ind-schemes, see Section~\ref{tate-space-subsecn} below.

\Section{Quasi-Coherent Torsion Sheaves} \label{torsion-sheaves-secn}

 In this section and below, as mentioned in
Sections~\ref{ind-schemes-subsecn}\+-%
\ref{morphisms-of-ind-schemes-subsecn}, all the \emph{schemes} are
concentrated, and all the \emph{ind-schemes} are ind-concentrated
and strict.

\subsection{Reasonable ind-schemes} \label{reasonable-subsecn}
 Here we largely follow~\cite[Section~7.11.1]{BD2}.

 Let $X$ be a concentrated scheme with the structure sheaf $\cO_X$.
 Notice that any closed subscheme of a concentrated scheme is
concentrated.
 Let $Z\subset X$ be a closed subscheme, and let $\I_{Z,X}\subset\cO_X$
denote the quasi-coherent sheaf of ideals in $\cO_X$ corresponding
to~$Z$.
 So $\I_{Z,X}$ is the kernel of the natural surjective morphism of
quasi-coherent sheaves $\cO_X\rarrow k_*\cO_Z$, where $k\:Z\rarrow X$
is the closed immersion.
 The closed subscheme $Z\subset X$ is said to be \emph{reasonable} if
$\I_{Z,X}$ is generated (as a quasi-coherent sheaf on~$X$) by a finite
set of local sections.

 Let $Y\subset X$ be a closed subscheme such that $Z\subset Y\subset X$.
 Then $Z$ is also a closed subscheme in~$Y$.
 Denote the closed immersion morphism by $i\:Y\rarrow X$.
 Then the natural surjective morphism $\cO_X\rarrow i_*\cO_Y$
of quasi-coherent sheaves on~$X$ restricts to a surjective morphism
$\I_{Z,X}\rarrow i_*\I_{Z,Y}$.
 It follows that $Z$ is a reasonable closed subscheme in $Y$ whenever
$Z$ is a reasonable closed subcheme in~$X$.

 Part~(a) of the following lemma is more general.
 
\begin{lem} \label{base-change-composition-reasonable}
\textup{(a)} Let $Y\rarrow X$ be a morphism of schemes and $Z\subset X$
be a reasonable closed subscheme.
 Then $Z\times_XY$ is a reasonable closed subscheme in~$Y$. \par
\textup{(b)} Let $Y$ and $Z$ be closed subschemes in a scheme $X$
such that $Z\subset Y\subset X$.
 Assume that $Z$ is a reasonable closed subscheme in $Y$, and $Y$ is
a reasonable closed subscheme in~$X$.
 Then $Z$ is a reasonable closed subscheme in~$X$.
\end{lem}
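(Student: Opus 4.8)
The plan is to observe first that, because every scheme here is concentrated, the condition defining ``reasonable'' can be restated as: the quasi-coherent ideal sheaf $\I_{Z,X}$ is \emph{of finite type} (locally generated by finitely many sections). Indeed, a finite set of local generators is a fortiori a local finite generation, and conversely a finite affine cover of the quasi-compact scheme, together with the finitely many generators over each chart, assembles into a single finite set of local sections generating the sheaf. Having made this reformulation, I would prove both parts by the standard stability properties of finite-type quasi-coherent sheaves, checked locally on affine charts.

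For part~(a), I would recall that the ideal sheaf of the scheme-theoretic preimage $Z\times_X Y$ inside $Y$ is the image of the canonical map $f^*\I_{Z,X}\rarrow f^*\cO_X=\cO_Y$, where $f\:Y\rarrow X$ is the given morphism. Since $f^*$ is right exact and sends $\cO_X$ to $\cO_Y$, a finite generating set of local sections of $\I_{Z,X}$ pulls back to a finite generating set of $f^*\I_{Z,X}$, and the images of these sections under the surjection onto $\I_{Z\times_XY,\,Y}$ then generate that ideal sheaf. Thus finitely many local sections suffice, so $Z\times_XY$ is reasonable in~$Y$. I note that this argument uses nothing about $f$ beyond right exactness of $f^*$, which is why part~(a) holds for an arbitrary morphism $Y\rarrow X$.

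For part~(b), the key object is the short exact sequence of quasi-coherent sheaves on $X$
\[
 0\rarrow\I_{Y,X}\rarrow\I_{Z,X}\rarrow i_*\I_{Z,Y}\rarrow0 ,
\]
where $i\:Y\rarrow X$ is the closed immersion. This is precisely the surjection $\I_{Z,X}\rarrow i_*\I_{Z,Y}$ recorded just before the lemma, whose kernel is $\I_{Y,X}$ because $Z\subset Y$ forces the inclusion $\I_{Y,X}\subset\I_{Z,X}$. Now $\I_{Y,X}$ is of finite type since $Y$ is reasonable in $X$; and $i_*\I_{Z,Y}$ is of finite type on $X$ because $\I_{Z,Y}$ is of finite type on $Y$ (as $Z$ is reasonable in $Y$) and the direct image of a finite-type quasi-coherent sheaf along a closed immersion remains of finite type --- locally on an affine $\Spec A\subset X$ with $Y\cap\Spec A=\Spec A/J$, a finitely generated $A/J$-module is in particular a finitely generated $A$-module. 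Finally, finite type is stable under extensions (over a small enough affine, lift the finitely many generators of the quotient and adjoin the generators of the subsheaf), so $\I_{Z,X}$ is of finite type and $Z$ is reasonable in~$X$.

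The only genuinely delicate point I anticipate is the bookkeeping translating ``finite set of local sections'' into ``finite type'' and back: it is quasi-compactness of the concentrated schemes that makes this passage legitimate and turns the pushforward and extension steps into uniform finite statements rather than merely pointwise ones. Once that equivalence is in hand, the remaining verifications are routine local module computations on affine charts.
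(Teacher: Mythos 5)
Your proposal is correct and takes essentially the same route as the paper's proof: both rest on identifying ``reasonable'' with finite type of the ideal sheaf (legitimate on concentrated schemes, and implicit in the paper's reduction), and both then reduce to the affine statements that the ideal $IS\subset S$ is finitely generated when $I\subset R$ is, and that an ideal $K\supseteq J$ in $A$ is finitely generated when $J$ and $K/J$ are. Your short exact sequence $0\to\I_{Y,X}\to\I_{Z,X}\to i_*\I_{Z,Y}\to 0$ and the stability of finite type under pullback, pushforward along a closed immersion, and extension are just the sheafified packaging of the same local computations the paper records.
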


\begin{proof}
 Both the assertions are essentially local (as the schemes are presumed
to be concentrated) and reduce to the affine case.
 Part~(a): let $R\rarrow S$ be a homomorphism of commutative rings and
$I\subset R$ be a finitely generated ideal.
 Then the claim is that the kernel of the surjective map
$S\rarrow R/I\ot_RS$ is a finitely generated ideal in~$S$.
 Part~(b): let $R\rarrow S\rarrow T$ be surjective homomorphisms of
commutative rings such that the kernel of $R\rarrow S$ is a finitely
generated ideal in $R$ and the kernel of $S\rarrow T$ is a finitely
generated ideal in~$T$.
 Then the claim is that the kernel of $R\rarrow T$ is a finitely
generated ideal in~$R$. 
\end{proof}

 Let $\X=\ilim_{\gamma\in\Gamma}X_\gamma$ be an ind-scheme represented
by an inductive system of closed immersions of schemes.
 A closed subscheme $Z\subset\X$ is said to be \emph{reasonable} if,
for every closed subscheme $Y\subset\X$ such that $Z\subset Y$,
the closed subcheme $Z$ in $Y$ is reasonable.
 A closed subscheme $Z\subset\X$ is reasonable if and only if, for
every index $\gamma\in\Gamma$ such that $Z\subset X_\gamma$,
the closed subcheme $Z$ in $X_\gamma$ is reasonable.

 An ind-scheme $\X$ is said to be \emph{reasonable} if it is a filtered
direct limit of its reasonable closed subschemes.
 Equivalently, $\X$ is reasonable if and only if there exists
a (filtered) inductive system of closed immersions of schemes
$(X_\gamma)_{\gamma\in\Gamma}$ such that, for every $\gamma<\delta\in
\Gamma$, the closed subscheme $X_\gamma$ in $X_\delta$ is reasonable.
 Clearly, in the latter case, $X_\gamma$ is a reasonable closed
subscheme in $\X$ for every $\gamma\in\Gamma$.

 Let $\Y\rarrow\X$ be a morphism of ind-schemes which is
``representable by schemes'' in the sense of
Section~\ref{morphisms-of-ind-schemes-subsecn}.
 Then it follows from Lemma~\ref{base-change-composition-reasonable}(a)
that, for any reasonable closed subscheme $Z\subset\X$, the fibered
product $Z\times_\X\Y$ is a reasonable closed subscheme in~$\Y$.
 Therefore, the ind-scheme $\Y$ is reasonable if the ind-scheme $\X$ is.

\subsection{Quasi-coherent sheaves and functors}
\label{qcoh-sheaves-and-functors-subsecn}
 Given a scheme $X$, we denote by $X\qcoh$ the abelian (Grothendieck)
category of quasi-coherent sheaves on~$X$.
 For every morphism of (concentrated) schemes $f\:Y\rarrow X$, we
have the direct and inverse image functors $f_*\:Y\qcoh\rarrow X\qcoh$
and $f^*\:X\qcoh\rarrow Y\qcoh$; the functor $f^*$ is left adjoint
to~$f_*$.
 The functors $f_*$ and $f^*$ are the restrictions (to the full
subcategories of quasi-coherent sheaves) of the similar functors
acting between the ambient abelian categories of sheaves of
$\cO_X$\+modules and sheaves of $\cO_Y$\+modules.

 The category $X\qcoh$ is a tensor subcategory of the (associative,
commutative, and unital) tensor category of sheaves of $\cO_X$\+modules,
with respect to the tensor product functor ${-}\ot_{\cO_X}{-}$.
 The structure sheaf $\cO_X$ is the unit object.
 The inverse image $f^*\:X\qcoh\rarrow Y\qcoh$ is a tensor functor.
 The next lemma is very well-known; it is called the ``projection
formula''.

\begin{lem}  \label{projection-formula}
 Let $f\:Y\rarrow X$ be a morphism of (concentrated) schemes, $\M$ be
a quasi-coherent sheaf on $X$, and $\N$ be a quasi-coherent sheaf
on~$Y$.
 Then there is a natural morphism of quasi-coherent sheaves on~$X$
$$
 \M\ot_{\cO_X}f_*\N\lrarrow f_*(f^*\M\ot_{\cO_Y}\N),
$$
which is an isomorphism if the morphism~$f$ is affine.
\end{lem}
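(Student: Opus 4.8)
The plan is to construct the natural morphism first and then verify it is an isomorphism in the affine case by a local computation.

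To build the natural morphism, I would exploit the adjunction $(f^*,f_*)$ together with the tensor structure. Given $\M$ on $X$ and $\N$ on $Y$, I want a map $\M\ot_{\cO_X}f_*\N\rarrow f_*(f^*\M\ot_{\cO_Y}\N)$. By adjunction it suffices to produce a morphism $f^*(\M\ot_{\cO_X}f_*\N)\rarrow f^*\M\ot_{\cO_Y}\N$ on $Y$. Since $f^*$ is a tensor functor, the left-hand side is naturally isomorphic to $f^*\M\ot_{\cO_Y}f^*f_*\N$. Then I apply the counit of the adjunction $f^*f_*\N\rarrow\N$ in the second tensor factor, keeping $f^*\M$ fixed. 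This produces the desired morphism on $Y$, and transposing back across the adjunction yields the asserted natural morphism on $X$. Naturality in both $\M$ and $\N$ follows from naturality of the unit, the counit, and the tensor structure maps, so this step is essentially formal.

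For the claim that the morphism is an isomorphism when $f$ is affine, I would reduce to the affine case of both source and target, as is standard for quasi-coherent sheaves. Because the statement is local on $X$, I may assume $X=\Spec R$ is affine; since $f$ is affine, $Y=\Spec S$ is then affine as well, with $S$ an $R$\+algebra via the ring homomorphism corresponding to $f$. Under the equivalence between quasi-coherent sheaves on an affine scheme and modules over its coordinate ring, $\M$ corresponds to an $R$\+module $M$, \,$\N$ corresponds to an $S$\+module $N$, the functor $f_*$ becomes restriction of scalars along $R\rarrow S$, the functor $f^*$ becomes $S\ot_R{-}$, and $\ot_{\cO}$ becomes the tensor product of modules. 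The morphism in question then becomes the canonical map $M\ot_R N\rarrow M\ot_R N$ arising from the identification $S\ot_R M\ot_S N\simeq M\ot_R N$, i.e.\ the standard associativity/base-change isomorphism of tensor products of modules.

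The one point deserving care — and the closest thing to an obstacle — is that the general, non-affine morphism $M\ot_{\cO_X}f_*\N\rarrow f_*(f^*\M\ot_{\cO_Y}\N)$ restricts correctly to this module-level map, so that checking bijectivity of the module map genuinely settles the sheaf statement. This is handled by noting that both sides are quasi-coherent sheaves on the affine scheme $X$, the functor of global sections is exact and faithful there, and the natural morphism is compatible with the equivalence of categories; thus the sheaf morphism is an isomorphism if and only if the corresponding module morphism is. Since the module map $M\ot_R N\rarrow M\ot_R N$ is literally the identity under the canonical identifications, it is an isomorphism, completing the argument. The only substantive input is that $f^*$ genuinely computes $S\ot_R{-}$ and that $f_*$ is exact and faithful for affine $f$, both of which are standard for affine morphisms.
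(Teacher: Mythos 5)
Your proposal is correct and follows essentially the same route as the paper's proof: the morphism is constructed as the adjoint of the composition $f^*(\M\ot_{\cO_X}f_*\N)\simeq f^*\M\ot_{\cO_Y}f^*f_*\N\rarrow f^*\M\ot_{\cO_Y}\N$ using the counit, and the affine case is settled by localizing to $X=\Spec R$, \,$Y=\Spec S$ and identifying the map with the standard module isomorphism $M\ot_RN\simeq(S\ot_RM)\ot_SN$. The only difference is expository: you spell out why the module-level verification suffices, which the paper leaves implicit.
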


\begin{proof}
 The morphism in question is adjoint to the morphism
$f^*(\M\ot_{\cO_X}f_*\N)\simeq f^*\M\ot_{\cO_Y}f^*f_*\N\rarrow
f^*\M\ot_{\cO_Y}\N$ induced by the adjunction morphism $f^*f_*\N
\rarrow\N$ in $Y\qcoh$.
 The second assertion is local in $X$, so it reduces to the case of
affine schemes, for which it means the following.
 Let $R\rarrow S$ be a commutative ring homomorphism, $M$ be
an $R$\+module, and $N$ be an $S$\+module.
 Then there is a natural isomorphism of $R$\+modules $M\ot_RN\simeq
(S\ot_RM)\ot_SN$.
\end{proof}

 For any two sheaves of $\cO_X$\+modules $\M$ and $\N$, we denote by
$\cHom_{\cO_X}(\M,\N)$ the sheaf of $\cO_X$\+modules with the modules
of sections $\cHom_{\cO_X}(\M,\N)(U)=\Hom_{\cO_U}(\M|_U,\N|_U)$ for
all open subschemes $U\subset X$.
 The sheaf of $\cO_X$\+modules $\Hom_{\cO_X}(\M,\N)$ is quasi-coherent
whenever the sheaf $\N$ is quasi-coherent and the sheaf $\M$ is
locally (i.~e., in restriction to a small enough Zariski neighborhood
of every point of~$X$) the cokernel of a morphism between finite direct
sums of copies of the structure sheaf~$\cO$.

 Let $Z\subset X$ be a closed subscheme with the closed immersion
morphism $i\:Z\rarrow X$.
 For any sheaf of $\cO_X$\+modules $\M$, denote by $i^!\M$ the sheaf
of $\cO_Z$\+modules defined by the property that $i_*i^!\M=
\cHom_{\cO_X}(i_*\cO_Z,\M)$ is the subsheaf of $\M$ consisting of all
the local sections annihilated by~$\I_{Z,X}$.
 The sheaf of $\cO_Z$\+modules $i^!\M$ is quasi-coherent whenever
the sheaf of $\cO_X$\+modules $\M$ is \emph{and} the closed subscheme
$Z\subset X$ is reasonable.
 In this case, the functor $i^!\:X\qcoh\rarrow Z\qcoh$ is right
adjoint to the direct image functor $i_*\:Z\qcoh\rarrow X\qcoh$.

\begin{lem}  \label{reasonable-base-change}
 Let $f\:Y\rarrow X$ be a morphism of (concentrated) schemes and
$Z\subset X$ be a reasonable closed subscheme with the closed
immersion $i\:Z\rarrow X$.
 Consider the pullback diagram
$$
\xymatrix{
 Z\times_XY \ar[r]^-k \ar[d]^-g & Y \ar[d]^-f \\
 Z \ar[r]^-i & X
}
$$
 Then there are natural isomorphisms \par
\textup{(a)} $i^!f_*\simeq g_*k^!$ of functors $Y\qcoh\rarrow Z\qcoh$;
\par
\textup{(b)} $f^*i_*\simeq k_*g^*$ of functors $Z\qcoh\rarrow Y\qcoh$.
\end{lem}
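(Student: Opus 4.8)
The plan is to prove part~(b) first, by reduction to the affine case, and then to deduce part~(a) formally by passing to right adjoint functors.

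For part~(b), I would begin by constructing the canonical base-change morphism $f^* i_* \to k_* g^*$. Since $fk = ig$, there is a natural isomorphism $k^* f^* \simeq g^* i^*$, and composing with the counit $i^* i_* \to \Id$ of the $(i^*, i_*)$-adjunction yields a morphism $g^* i^* i_* \to g^*$, hence $k^* f^* i_* \to g^*$, which corresponds under the adjunction $(k^*, k_*)$ to the desired map $f^* i_* \to k_* g^*$. The assertion that this map is an isomorphism is local on $X$ and on $Y$, so I would cover $X$ by affine opens and $Y$ by affine opens mapping into them, reducing to the situation $X = \Spec R$, \ $Y = \Spec S$, \ $Z = \Spec R/I$, and $Z\times_X Y = \Spec S/IS$. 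There the claim becomes the natural isomorphism of $S$-modules $S \otimes_R M \simeq (S/IS)\otimes_{R/I} M$ for an $R/I$-module $M$, which is just associativity of the tensor product together with $S\otimes_R (R/I) \simeq S/IS$; notice that no flatness of~$f$ is needed. This settles~(b).

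For part~(a), the key observation is that every functor occurring is half of an adjoint pair with both sides landing in quasi-coherent categories. Indeed, $f^*$ is left adjoint to $f_*$ and $g^*$ is left adjoint to $g_*$ for arbitrary morphisms of concentrated schemes; \ $i_*$ is left adjoint to $i^!$ because $Z$ is a reasonable closed subscheme of~$X$; and $k_*$ is left adjoint to $k^!$ because $Z\times_X Y$ is a reasonable closed subscheme of~$Y$, which holds by Lemma~\ref{base-change-composition-reasonable}(a). The right adjoint of the composite $f^* i_*$ is therefore $i^! f_*$, while the right adjoint of $k_* g^*$ is $g_* k^!$. As passing to right adjoints is functorial and sends a natural isomorphism of functors to a natural isomorphism of their right adjoints, the isomorphism $f^* i_* \simeq k_* g^*$ of part~(b) induces a natural isomorphism $i^! f_* \simeq g_* k^!$, which is exactly part~(a).

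The main point to handle with care is ensuring that all four adjunctions genuinely live within the quasi-coherent sheaf categories: this rests on the preservation of quasi-coherence by $f_*$ and $g_*$ (valid since the schemes are concentrated) and, crucially, on the reasonableness of the pulled-back closed subscheme $Z\times_X Y \subset Y$, which is what makes $k^!$ preserve quasi-coherence and supplies the adjunction $(k_*, k^!)$. Once these are in place, the deduction of~(a) from~(b) is purely formal, and the only computational content of the lemma is the elementary module isomorphism in~(b). An alternative, more hands-on route for~(a) would reduce to $X = \Spec R$ and identify both $i^! f_*\mathcal P$ and $g_* k^!\mathcal P$ with the $I$-torsion submodule of $\Gamma(Y,\mathcal P)$; but this forces one to track global sections over a possibly non-affine~$Y$, whereas the adjunction argument sidesteps $Y$ entirely and is the route I would take.
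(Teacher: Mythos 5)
Your proposal is correct and follows essentially the same route as the paper: the paper's proof observes that parts~(a) and~(b) are adjoint to each other (so it suffices to verify one of them), and then reduces to the affine case, where~(b) becomes the tensor identity $S\ot_RM\simeq (S/IS)\ot_{R/I}M$ — exactly your computation. Your added care about why the adjunction $(k_*,k^!)$ lives in the quasi-coherent categories (reasonableness of $Z\times_XY\subset Y$ via Lemma~\ref{base-change-composition-reasonable}(a)) is a point the paper uses implicitly, and it is the correct justification.
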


\begin{proof}
 Parts~(a) and~(b) are adjoint to each other, so it suffices to check
any one of them.
 Both the assertions (particularly clearly~(b))
are essentially local and reduce to the case of affine schemes, for
which they mean the following.
 Let $R\rarrow S$ be a commutative ring homomorphism and
$I\subset R$ be a finitely generated ideal.
 Then (a)~for any $S$\+module $N$, there is a natural isomorphism of
$R/I$\+modules $\Hom_R(R/I,N)\simeq\Hom_S(S/IS,N)$; and
(b)~for any $R/I$\+module $M$, there is a natural isomorphism of
$S$\+modules $S\ot_RM\simeq S/IS\ot_{R/I}M$.

 The assumption of the closed subscheme $Z\subset X$ being reasonable
is only needed in part~(a).
 For a further generalization of part~(b), see
Lemma~\ref{affine-flat-base-change}(a) below.
\end{proof}

 More generally, for an arbitrary closed immersion of schemes
$i\:Z\rarrow X$, the direct image functor $i_*\:Z\qcoh\rarrow X\qcoh$
has a right adjoint (whose existence follows already from the facts
that $Z\qcoh$ is a Grothendieck abelian category and $i_*$~is an exact
functor preserving coproducts).
 This functor, which could be properly called
the ``quasi-coherent~$i^!$'', can be constructed by applying
the \emph{coherator} functor~\cite[Sections~B.12--B.14]{TT} to
the ``$\cO$\+module~$i^!$'' mentioned above.
 In fact, a more specific description is
available~\cite[Tag~01R0]{SP}.

 In the sequel, the notation~$i^!$ for a closed immersion~$i$ will
always stand for the ``quasi-coherent~$i^!$''.
 For the closed immersion of a reasonable closed subscheme
$i\:Z\rarrow X$, the ``$\cO$\+module~$i^!$'' and
the ``quasi-coherent~$i^!$'' agree.

 Notice that, for any closed immersion~$i$, the direct image
functor~$i_*$ is fully faithful; so the adjunction morphism
$\N\rarrow i^!i_*\N$ is an isomorphism for all $\N\in Z\qcoh$.
 The adjunction morphism $i_*i^!\M\rarrow\M$ is a monomorphism for
all $\M\in X\qcoh$ (cf.\
Lemma~\ref{closed-under-subquotients-characterized} below).

\subsection{Quasi-coherent torsion sheaves}
\label{qcoh-torsion-sheaves-subsecn}
 We follow~\cite[Sections~7.11.3--4]{BD2}.
 Let $\X$ be a reasonable ind-scheme.
 A \emph{quasi-coherent torsion sheaf} $\rM$ on $\X$ (called
an ``$\cO^!$\+module'' in~\cite{BD2}) is the following set of data:
\begin{enumerate}
\renewcommand{\theenumi}{\roman{enumi}}
\item to every reasonable closed subscheme $Y\subset\X$,
a quasi-coherent sheaf $\rM_{(Y)}$ on $Y$ is assigned;
\item to every pair of reasonable closed subschemes $Y$, $Z\subset\X$,
\ $Z\subset Y$ with the closed immersion morphism $i_{ZY}\:Z\rarrow Y$,
a morphism $i_{ZY}{}_*\rM_{(Z)}\rarrow\rM_{(Y)}$ of quasi-coherent
sheaves on $Y$ is assigned;
\item such that the corresponding morphism $\rM_{(Z)}\rarrow
i_{ZY}^!\rM_{(Y)}$ of quasi-coherent sheaves on $Z$ is an isomorphism;
\item and, for every triple of reasonable closed subschemes
$Y$, $Z$, $W\subset\X$, \ $W\subset Z\subset Y$, the triangle diagram
$i_{WY}{}_*\rM_{(W)}\rarrow i_{ZY}{}_*\rM_{(Z)}\rarrow\rM_{(Y)}$ is
commutative in $Y\qcoh$.
\end{enumerate}

 Let $\X=\ilim_{\gamma\in\Gamma}X_\gamma$ be a representation of $\X$
by an inductive system of reasonable closed subschemes.
 Then, in order to construct a quasi-coherent torsion sheaf $\rM$ on
$\X$, it suffices to specify the quasi-coherent sheaves
$\rM_{(X_\gamma)}\in X_\gamma\qcoh$ for every $\gamma\in\Gamma$
and the morphisms $i_{X_\gamma X_\delta}{}_*\rM_{(X_\gamma)}
\rarrow\rM_{(X_\delta)}$ for every $\gamma<\delta\in\Gamma$
satisfying conditions~(iii\+-iv) for $W=X_\beta$, $Z=X_\gamma$,
$Y=X_\delta$, \ $\beta<\gamma<\delta\in\Gamma$.
 The quasi-coherent sheaves $\rM_{(Y)}$ for all the other reasonable
closed subschemes $Y\subset\X$ and the related morphisms~(ii) can
then be uniquely recovered so that conditions~(iii\+-iv) are satisfied
for all reasonable closed subschemes in~$\X$.

 \emph{Morphisms of quasi-coherent torsion sheaves} $f\:\rM\rarrow\rN$
on $\X$ are defined in the obvious way.
 We denote the category of quasi-coherent torsion sheaves on $\X$ by
$\X\tors$.
 In the rest of Section~\ref{torsion-sheaves-secn}, our main aim is
to prove the following theorem.

\begin{thm} \label{torsion-sheaves-abelian}
 For any reasonable strict ind-concentrated ind-scheme\/ $\X$,
the category of quasi-coherent torsion sheaves\/ $\X\tors$ is
a Grothendieck abelian category.
\end{thm}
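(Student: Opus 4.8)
The plan is to realize $\X\tors$ as an exact reflective localization of an auxiliary Grothendieck category of (not necessarily cartesian) compatible systems, so that all four Grothendieck axioms are inherited from localization theory. First I would fix a representation $\X=\ilim_{\gamma\in\Gamma}X_\gamma$ by an inductive system of reasonable closed immersions $i_{\gamma\delta}\colon X_\gamma\rarrow X_\delta$ (which exists because $\X$ is reasonable) and pass to the equivalent description of a torsion sheaf over this cofinal system. I then introduce the ambient category $\mathsf{Sys}$ whose objects are families $(\M_\gamma\in X_\gamma\qcoh)_{\gamma\in\Gamma}$ equipped with structure maps $\M_\gamma\rarrow i_{\gamma\delta}^!\M_\delta$ satisfying the cocycle identity, but \emph{without} requiring these maps to be isomorphisms; thus $\X\tors$ is exactly the full subcategory of \emph{cartesian} systems (all structure maps invertible). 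The category $\mathsf{Sys}$ is Grothendieck by general principles: kernels, cokernels, and all (co)limits are computed termwise in the Grothendieck categories $X_\gamma\qcoh$; filtered colimits are hence exact; and a generator is assembled from the left adjoints $F_\gamma$ to the evaluation functors $\mathrm{ev}_\gamma\colon\mathsf{Sys}\rarrow X_\gamma\qcoh$ (free systems generated in a single degree) applied to generators of the $X_\gamma\qcoh$.

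The decisive input is the behaviour of $i^!$ for a reasonable closed immersion. Since the corresponding ideal is finitely generated, affine-locally $i^!$ is the functor $M\mapsto\Hom_R(R/I,M)$ with $R/I$ finitely presented; hence $i^!$ is left exact and commutes with filtered colimits and with coproducts. From this I extract two consequences. First, kernels of morphisms between cartesian systems are again cartesian (left exactness of $i^!$), and coproducts and filtered colimits of cartesian systems, computed termwise in $\mathsf{Sys}$, remain cartesian (because $i^!$ commutes with them). Second, cokernels are the obstruction: as $i^!$ is not right exact, the termwise cokernel of a map of cartesian systems need not be cartesian. This is the heart of the matter.

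To repair cokernels I construct the reflector $L\colon\mathsf{Sys}\rarrow\X\tors$ by the formula $L(\M)_\gamma=\varinjlim_{\delta\ge\gamma}i_{\gamma\delta}^!\M_\delta$. That $L(\M)$ is cartesian follows from the fact that $i^!$ commutes with filtered colimits together with the cofinality of $\{\delta\ge\gamma'\}$ in $\{\delta\ge\gamma\}$ for $\gamma\le\gamma'$, and this step is exactly where the reasonableness hypothesis is indispensable. The coprojections at $\delta=\gamma$ assemble into a unit $\M\rarrow L(\M)$, and checking the universal property against a cartesian target shows that $L$ is left adjoint to the inclusion; thus $\X\tors$ is a reflective subcategory of $\mathsf{Sys}$. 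Moreover $L$ is exact: it preserves cokernels as a left adjoint, and it preserves kernels because it is built from the left-exact functors $i^!$ followed by exact filtered colimits.

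Finally I conclude. An exact reflector with fully faithful inclusion exhibits $\X\tors$ as the Gabriel quotient $\mathsf{Sys}/\ker L$ of the Grothendieck category $\mathsf{Sys}$ by the localizing subcategory $\ker L=\{\M\colon L\M=0\}$ (which is Serre because $L$ is exact and closed under coproducts because $L$ preserves them). Such a localization of a Grothendieck category is again a Grothendieck abelian category: it is abelian with cokernels computed as $L$ of the termwise cokernels, it has a generator obtained by applying $L$ to a generator of $\mathsf{Sys}$, and it satisfies AB5 because its filtered colimits coincide with the (exact, termwise) filtered colimits of $\mathsf{Sys}$. I expect the single genuinely delicate point to be the well-definedness and exactness of the reflector $L$ — equivalently, the verification that the $i^!$-stabilized colimit yields a cartesian system — which hinges precisely on the finite generation of the defining ideals encoded in reasonableness; the remaining verifications (the cocycle bookkeeping, the universal property of $L$, and the generator) are routine.
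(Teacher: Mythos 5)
Your proposal is correct and takes essentially the same route as the paper: your ambient category $\mathsf{Sys}$ is the paper's category of $\Gamma$\+systems $(\X,\Gamma)\syst$, your reflector $L$ is the paper's functor $\boM\longmapsto\boM^+$ (whose components are computed by the same formula $\varinjlim_{\delta\ge\gamma}i_{\gamma\delta}^!\boM_{(\delta)}$), and both arguments conclude by exhibiting $\X\tors$ as a Giraud subcategory, i.e., an exact reflective localization, of this Grothendieck category. Reasonableness enters both proofs at exactly the same point, namely through the left exactness of~$i^!$ and its commutation with filtered colimits.
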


 The proof of Theorem~\ref{torsion-sheaves-abelian} will be given at
the end of Section~\ref{Gamma-systems-subsecn}.

\subsection{Ind-affine examples} \label{torsion-ind-affine-subsecn}
 (1)~Let $\X=\Spi\widehat\boZ$ be the ind-affine ind-scheme from
Example~\ref{Z-hat-ind-scheme}(1).
 Then $\X$ is a reasonable ind-scheme, and the category $\X\tors$ is
equivalent to the category of torsion abelian groups.

 Indeed, the closed subschemes of $\Spi\widehat\boZ$ are precisely
the schemes $X_n=\Spec\boZ/n\boZ$.
 All of them are reasonable.
 Let $M$ be a torsion abelian group; then the corresponding
quasi-coherent torsion sheaf $\rM$ on $\Spi\widehat\boZ$ is defined by
the rule that $\rM_{(X_n)}\in X_n\qcoh$ is the quasi-coherent sheaf
corresponding to the $\boZ/n\boZ$\+module $M_{(n)}\subset M$ of
all the elements annihilated by~$n$ in~$M$.
 The morphisms of quasi-coherent sheaves $i_{ZY}{}_*\rM_{(Z)}
\rarrow\rM_{(Y)}$ correspond to the inclusion maps of abelian groups
$M_{(m)}\rarrow M_{(n)}$ for $m$ dividing~$n$.
 Conversely, given a quasi-coherent torsion sheaf $\rM$ on
$\Spi\widehat\boZ$, the related torsion abelian group $M$ is
the direct limit $M=\varinjlim_{n\in\boZ_{>0}}\rM_{(X_n)}(X_n)$.

\smallskip
 (2)~Let $\X=\Spi\boZ_p$ be the ind-affine ind-scheme from
Example~\ref{Z-hat-ind-scheme}(2).
 Then $\X$ is a reasonable ind-scheme, and the category $\X\tors$ is
equivalent to the category of $p$\+primary (torsion) abelian groups.

 Indeed, the closed subschemes of $\Spi\widehat\boZ$ are precisely
the schemes $X_r=\Spec\boZ/p^r\boZ$.
 All of them are reasonable.
 Let $M$ be a $p$\+primary abelian group; then the corresponding
quasi-coherent torsion sheaf $\rM$ on $\Spi\boZ_p$ is defined by
the rule that $\rM_{(X_r)}\in X_r\qcoh$ is the quasi-coherent sheaf
corresponding to the $\boZ/p^r\boZ$\+module $M_{(p^r)}\subset M$ of
all the elements annihilated by~$p^r$ in~$M$.
 Conversely, given a quasi-coherent torsion sheaf $\rM$ on
$\Spi\boZ_p$, the related $p$\+primary abelian group $M$ is
the direct limit $M=\varinjlim_{r\in\boZ_{\ge0}}\rM_{(X_r)}(X_r)$.

\smallskip
 (3)~Let $\X=\Spi\kk[[x]]$ be the ind-affine ind-scheme from
Example~\ref{coalgebra-ind-scheme}(1).
 Then, similarly to~(2), one can describe the closed subschemes in
$\X$ and see that all of them are reasonable.
 The category $\X\tors$ is equivalent to the category of $x$\+primary
torsion $\kk[x]$\+modules, i.~e., the full subcategory in the abelian
category of all $\kk[x]$\+modules consisting of all
the $\kk[x]$\+modules $M$ such that for every element $b\in M$ there
exists an integer $r\ge1$ for which $x^rb=0$ in~$M$.

\smallskip
 (4)~Let $\X=\Spi\rC^*$ be the ind-affine ind-scheme from
Example~\ref{coalgebra-ind-scheme}(2).
 Then $\X$ is a reasonable ind-scheme, and the category $\X\tors$ is
equivalent to the category of comodules over the coalgebra~$\rC$.

 Indeed, the closed subchemes of $\Spi\rC^*$ are precisely
the schemes $X_\E=\Spec\E^*$, where $\E\subset\rC$ are
the finite-dimensional subcoalgebras.
 All the closed subschemes in $\X$ are reasonable.
 For any $\rC$\+comodule $M$, denote by $M_{(\E)}\subset M$
the maximal $\rC$\+subcomodule of $M$ whose $\rC$\+comodule structure
comes from an $\E$\+comodule structure.
 Simply put, $M_{(\E)}$ is the kernel of the composition
$M\rarrow\rC\ot_\kk M\rarrow\rC/\E\ot_\kk M$ of the coaction map
$M\rarrow\rC\ot_\kk M$ with the map $\rC\ot_\kk M\rarrow\rC/\E\ot_\kk M$
induced by the natural surjection $\rC\rarrow\rC/\E$.

 Notice that the category of $\E$\+comodules is naturally equivalent
to the category of $\E^*$\+modules.
 The quasi-coherent torsion sheaf $\rM$ on $\Spi\rC^*$ corresponding
to a $\rC$\+comodule $M$ is defined by the rule that
$\rM_{(X_\E)}\in X_\E\qcoh$ is the quasi-coherent sheaf corresponding
to the $\E^*$\+module~$M_{(\E)}$.
 The morphisms of quasi-coherent sheaves $i_{X_{\E'}X_{E''}}{}_*
\rM_{(X_{\E'})}\rarrow\rM_{(X_{\E''})}$ for $\E'\subset\E''$ correspond
to the inclusion maps $M_{(\E')}\rarrow M_{(\E'')}$.
 Conversely, given a quasi-coherent torsion sheaf $\rM$ on $\Spi\rC^*$,
the related $\rC$\+comodule $M$ is the direct limit
$M=\varinjlim_{\E\subset\rC}\rM_{(X_{\E})}(X_{\E})$.

\smallskip
 (5)~Let $\fR$ be a complete, separated topological commutative ring
where open ideals form a base of neighborhoods of zero, as in
Example~\ref{topological-ring-ind-scheme}(1).
 We will say that an open ideal $\fI\subset\fR$ is \emph{reasonable}
(with respect to the given topology on~$\fR$) if, for any open ideal
$\fJ\subset\fR$ such that $\fJ\subset\fI$, the kernel of the natural
surjective ring homomorphism $\fR/\fJ\rarrow\fR/\fI$ is a finitely
generated ideal in the discrete ring~$\fR/\fJ$.

 A topological ring $\fR$ is said to be \emph{reasonable} if reasonable
open ideals form a base of neighborhoods of zero in~$\fR$.
 Equivalently, $\fR$ is reasonable if and only if there exists
a base of neighborhoods of zero $B$ consisting of open ideals in $\fR$
such that for all $\fJ\subset\fI\in B$ the kernel of the natural
surjective ring homomorphism $\fR/\fJ\rarrow\fR/\fI$ is a finitely
generated ideal in~$\fR/\fJ$.

 Let $\X=\Spi\fR$ be the ind-affine ind-scheme from
Example~\ref{topological-ring-ind-scheme}(1).
 Then $\X$ is a reasonable ind-scheme if and only if $\fR$ is
a reasonable topological ring.

\smallskip
 (6)~Let $\fR$ be a reasonable topological commutative ring; so
$\Spi\fR$ is a reasonable ind-scheme.
 Then the category $\X\tors$ is equivalent to the category $\fR\discr$
of \emph{discrete\/ $\fR$\+modules}.
 This means the full subcategory $\fR\discr\subset\fR\modl$
in the category of abelian category of $\fR$\+modules $\fR\modl$
consisting of all the $\fR$\+modules $M$ such that for every
element $b\in M$ the annihilator of~$b$ is an open ideal in~$\fR$.

 Indeed, the closed subschemes of $\Spi\fR$ are precisely the schemes
$X_\fI=\Spec\fR/\fI$, where $\fI\subset\fR$ ranges over the open ideals.
 Let us say that $\fI\subset\fR$ is a \emph{reasonable open ideal} if
the closed subscheme $X_\fI\subset\X$ is reasonable.
 This means that, for every open ideal $\fJ\subset\fI\subset\fR$,
the kernel of the map $\fR/\fJ\rarrow\fR/\fI$ is a finitely
generated ideal.

 For any discrete $\fR$\+module $M$, denote by $M_{(\fI)}\subset M$
the submodule of all elements annihilated by~$\fI$.
 Then the quasi-coherent torsion sheaf $\rM$ on $\Spi\fR$ corresponding
to $M$ is defined by the rule that, for any reasonable open ideal
$\fI\subset\fR$, the quasi-coherent torsion sheaf $\rM_{(X_\fI)}\in
X_\fI\qcoh$ corresponds to the $\fR/\fI$\+module $M_{(\fI)}$.
 The morphisms of quasi-coherent sheaves $i_{X_\fI X_\fJ}{}_*
\rM_{(X_\fI)}\rarrow\rM_{(X_\fJ)}$ for reasonable open ideals
$\fJ\subset\fI\subset\fR$ correspond to the inclusion maps
$M_{(\fJ)}\rarrow M_{(\fI)}$.
 Conversely, given a quasi-coherent torsion sheaf $\rM$ on $\Spi\fR$,
the related discrete $\fR$\+module $M$ is the direct limit
$M=\varinjlim_{\fI\subset\fR}\rM_{(X_\fI)}(X_\fI)$ taken over
the directed poset of reasonable open ideals $\fI\subset\fR$
(in the reverse inclusion order).

\smallskip
 The above examples explain the terminology ``quasi-coherent torsion
sheaves''.
 One can also observe that, while in every one of the examples~(1--5)
the category $\X\tors$ is indeed abelian as
Theorem~\ref{torsion-sheaves-abelian} claims, the forgetful functors
$\X\tors\rarrow Z\qcoh$ assigning to a quasi-coherent torsion sheaf
$\rM$ on $\X$ the quasi-coherent sheaf $\rM_{(Z)}\in Z\qcoh$ for
a reasonable closed subscheme $Z\subset\X$ are \emph{not} exact.
 In fact, the functors $\rM\longmapsto\rM_{(Z)}\:\X\tors\rarrow Z\qcoh$
are left, but not necessarily right exact (see
Section~\ref{torsion-inverse-images-subsecn} below 
for a further discussion).
 This explains why Theorem~\ref{torsion-sheaves-abelian} is nontrivial
and its proof is not straightforward.

\subsection{Direct limits} \label{torsion-direct-limits-subsecn}
 Recall that the functor of global sections of quasi-coherent sheaves
over a concentrated scheme preserves (filtered) direct limits.
 It follows that so does the direct image functor~$f_*$ for a morphism
of concentrated schemes $f\:Y\rarrow X$.
 The inverse image functor~$f^*$, being a left adjoint, obviously
preserves direct limits.
 Furthermore, for any reasonable closed subscheme $Z\subset X$ with
the closed immersion morphism $i\:Z\rarrow X$, the functor $i^!\:
X\qcoh\rarrow Z\qcoh$ preserves direct limits.

 Let $\X$ be a reasonable ind-scheme and
$(\rM_\theta)_{\theta\in\Theta}$ be an inductive system of
quasi-coherent torsion sheaves on $\X$, indexed by a directed
poset~$\Theta$.
 For every reasonable closed subscheme $Z\subset\X$, put
$\rM_{(Z)}=\varinjlim_{\theta\in\Theta}(\rM_\theta)_{(Z)}$
(where the direct limit is taken in the category of quasi-coherent
sheaves on~$Z$).
 Then the collection of quasi-coherent sheaves $\rM_{(Z)}$ with
the obvious maps $i_{ZY}{}_*\rM_{(Z)}\rarrow\rM_{(Y)}$ for
$Z\subset Y\subset\X$ is a quasi-coherent torsion sheaf $\rM$ on~$\X$
(as one can see from the previous paragraph).
 One has $\rM=\varinjlim_{\theta\in\Theta}\rM_\theta$ in
the category $\X\tors$.

\subsection{Direct images} \label{torsion-direct-images-subsecn}
 Let $\X$ be a reasonable ind-scheme and $f\:\Y\rarrow\X$ be a morphism
of ind-schemes which is ``representable by schemes''.
 According to Section~\ref{reasonable-subsecn}, the ind-scheme $\Y$ is
also reasonable.

 Let $\rN$ be a quasi-coherent torsion sheaf on~$\Y$.
 For every reasonable closed subscheme $Z\subset\X$, put
$\rM_{(Z)}=f_Z{}_*(\rN_{(W)})\in Z\qcoh$, where $f_Z$~is the morphism
$W=Z\times_\X\Y\rarrow Z$ and $f_Z{}_*\:W\qcoh\rarrow Z\qcoh$ is
the direct image functor of quasi-coherent sheaves.
 Then it is clear from Lemma~\ref{reasonable-base-change}(a) that
the collection of quasi-coherent sheaves $\rM_{(Z)}$ with the natural
maps $i_{Z'Z''}{}_*\rM_{(Z')}\rarrow\rM_{(Z'')}$ for $Z'\subset Z''
\subset\X$ is a quasi-coherent torsion sheaf $\rM$ on~$\X$.
 
 Put $f_*\rN=\rM$.
 This construction defines the functor of \emph{direct image of
quasi-coherent torsion sheaves} $f_*\:\Y\tors\rarrow\X\tors$.

\subsection{$\Gamma$-systems} \label{Gamma-systems-subsecn}
 Let $\X=\ilim_{\gamma\in\Gamma}X_\gamma$ be a reasonable ind-scheme
represented by an inductive system of closed immersions of reasonable
closed subschemes.
 The definition of what we will call a \emph{$\Gamma$\+system} on $\X$
(which is a shorthand for ``$(X_\gamma)_{\gamma\in\Gamma}$\+system
of quasi-coherent sheaves'') is obtained from the definition of
a quasi-coherent torsion sheaf in
Section~\ref{qcoh-torsion-sheaves-subsecn} by restricting
the reasonable subschemes under consideration to those belonging
to the inductive system $(X_\gamma)_{\gamma\in\Gamma}$ \emph{and}
dropping the condition~(iii).

 In other words, a $\Gamma$\+system $\boM$ on $\X$ is the following
set of data:
\begin{enumerate}
\renewcommand{\theenumi}{\roman{enumi}}
\item to every index $\gamma\in\Gamma$, a quasi-coherent sheaf
$\boM_{(\gamma)}$ on $X_\gamma$ is assigned;
\item to every pair of indices $\gamma<\delta\in\Gamma$ with
the related transition morphism $i_{\gamma\delta}\:X_\gamma\rarrow
X_\delta$, a morphism $i_{\gamma\delta}{}_*\boM_{(\gamma)}\rarrow
\boM_{(\delta)}$ of quasi-coherent sheaves on $X_\delta$, or
equivalently, a morphism $\boM_{(\gamma)}\rarrow i_{\gamma\delta}^!
\boM_{(\delta)}$ of quasi-coherent sheaves on $X_\gamma$, is assigned;
\setcounter{enumi}{3}
\item such that for every triple of indices $\beta<\gamma<\delta$,
the triangle diagram $i_{\beta\delta}{}_*\boM_{(\beta)}\rarrow
i_{\gamma\delta}{}_*\boM_{(\gamma)}\rarrow\boM_\delta$ is commutative
in $X_\delta\qcoh$.
\end{enumerate}

 \emph{Morphisms of $\Gamma$\+systems} $f\:\boM\rarrow\boN$ on $\X$
are defined in the obvious way.
 We denote the category of $\Gamma$\+systems on $\X$ by
$(\X,\Gamma)\syst$.

 For every $\gamma\in\Gamma$, denote by $i_\gamma\:X_\gamma\rarrow\X$
the natural closed immersion.
 As a particular case of the construction of
Section~\ref{torsion-direct-images-subsecn}, we have the direct image
functor $i_\gamma{}_*\:X_\gamma\qcoh=X_\gamma\tors\rarrow\X\tors$.
 For every $\Gamma$\+system $\boM$ on $\X$, we put
$$
 \boM^+=\varinjlim\nolimits_{\gamma\in\Gamma}
 i_\gamma{}_*\boM_{(\gamma)} \,\in\, \X\tors,
$$
where $\boM_{(\gamma)}\in X_\gamma\qcoh$, \
$i_\gamma{}_*\boM_{(\gamma)}\in\X\tors$, and the direct limit is taken
in the category $\X\tors$.
 Notice that, according to Section~\ref{torsion-direct-limits-subsecn},
all (filtered) direct limits exist in $\X\tors$.

 The quasi-coherent torsion sheaf $\boM^+$ on $\X$ can be described more
explicitly as follows.
 For any reasonable closed subscheme $Z\subset\X$, one has
$$
 (\boM^+)_{(Z)}=\varinjlim\nolimits_{\gamma\in\Gamma: Z\subset X_\gamma}
 i_{Z,\gamma}^!\boM_{(\gamma)},
$$
where the direct limit in $Z\qcoh$ is taken over the cofinal subset
of all $\gamma\in\Gamma$ such that $Z\subset X_\gamma$, and
$i_{Z,\gamma}\:Z\rarrow X_\gamma$ is the closed immersion.

 Conversely, given a quasi-coherent torsion sheaf $\rM$ on $\X$,
the rule $\boM_{(\gamma)}=\rM_{(X_\gamma)}$ defines a $\Gamma$\+system
$\boM$ on $\X$, which we will denote by $\rM|_\Gamma=\boM$.

\begin{lem} \label{torsion-sheaves-Gamma-systems-adjunction}
 The functor\/ $\boM\longmapsto\boM^+\:(\X,\Gamma)\syst\rarrow
\X\tors$ is left adjoint to the functor $\rM\longmapsto\rM|_\Gamma\:
\X\tors\rarrow(\X,\Gamma)\syst$.
\end{lem}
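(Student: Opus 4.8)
The plan is to reduce the desired adjunction to a pointwise one for each index $\gamma\in\Gamma$ and then assemble the pieces using the colimit presentation $\boM^+=\varinjlim_{\gamma\in\Gamma}i_\gamma{}_*\boM_{(\gamma)}$. Since $\Hom$ out of a filtered colimit is the corresponding limit, and since the transition maps in the inductive system $(i_\gamma{}_*\boM_{(\gamma)})_{\gamma\in\Gamma}$ are obtained by applying $i_\delta{}_*$ to the structure morphisms $i_{\gamma\delta}{}_*\boM_{(\gamma)}\rarrow\boM_{(\delta)}$ of the $\Gamma$\+system (using $i_\delta{}_*i_{\gamma\delta}{}_*=i_\gamma{}_*$ and condition~(iv) for functoriality over $\Gamma$), I would first write
\begin{equation*}
 \Hom_{\X\tors}(\boM^+,\rN)\,\simeq\,
 \varprojlim\nolimits_{\gamma\in\Gamma}
 \Hom_{\X\tors}(i_\gamma{}_*\boM_{(\gamma)},\rN).
\end{equation*}
Everything then hinges on identifying each term on the right with $\Hom_{X_\gamma\qcoh}(\boM_{(\gamma)},\rN_{(X_\gamma)})$ compatibly with the transition maps.

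The key step, which I expect to be the main obstacle, is the pointwise adjunction: for $N\in X_\gamma\qcoh$ and $\rN\in\X\tors$, there is a natural isomorphism
\begin{equation*}
 \Hom_{\X\tors}(i_\gamma{}_*N,\rN)\,\simeq\,
 \Hom_{X_\gamma\qcoh}(N,\rN_{(X_\gamma)}),
\end{equation*}
that is, the functor $\rN\longmapsto\rN_{(X_\gamma)}$ is right adjoint to $i_\gamma{}_*$. To prove this I would restrict attention to the cofinal family of reasonable closed subschemes $Z\subset\X$ with $X_\gamma\subset Z$ (recall that $X_\gamma$, being reasonable in $\X$, is reasonable in each such $Z$, and that a morphism of torsion sheaves is determined by its components on a cofinal family). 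For such $Z$ the fibered product $Z\times_\X X_\gamma$ equals $X_\gamma$, so by the construction of the direct image in Section~\ref{torsion-direct-images-subsecn} one has $(i_\gamma{}_*N)_{(Z)}=i_{X_\gamma,Z}{}_*N$, where $i_{X_\gamma,Z}\:X_\gamma\rarrow Z$ is the closed immersion. A morphism $i_\gamma{}_*N\rarrow\rN$ thus amounts to a compatible family of scheme-level morphisms $i_{X_\gamma,Z}{}_*N\rarrow\rN_{(Z)}$, which by the adjunction $i_{X_\gamma,Z}{}_*\dashv i_{X_\gamma,Z}^!$ of Section~\ref{qcoh-sheaves-and-functors-subsecn} correspond to maps $N\rarrow i_{X_\gamma,Z}^!\rN_{(Z)}$. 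Here condition~(iii) in the definition of the quasi-coherent torsion sheaf $\rN$ supplies canonical isomorphisms $i_{X_\gamma,Z}^!\rN_{(Z)}\simeq\rN_{(X_\gamma)}$, compatible across the various $Z$, so that the whole family collapses to a single morphism $N\rarrow\rN_{(X_\gamma)}$. Checking that this collapsing is a bijection, naturally in $N$ and $\rN$, is the technical heart of the argument, and it is exactly condition~(iii) that makes $i_\gamma^!\rN$ equal to $\rN_{(X_\gamma)}$ independently of the choice of $Z$.

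Finally, I would assemble the pointwise adjunctions. Substituting the second display into the first gives $\Hom_{\X\tors}(\boM^+,\rN)\simeq\varprojlim_{\gamma\in\Gamma}\Hom_{X_\gamma\qcoh}(\boM_{(\gamma)},\rN_{(X_\gamma)})$, and it remains to recognize the right-hand side as $\Hom_{(\X,\Gamma)\syst}(\boM,\rN|_\Gamma)$. An element of this limit is a family of morphisms $g_\gamma\:\boM_{(\gamma)}\rarrow\rN_{(X_\gamma)}$ such that $g_\gamma$ agrees with $g_\delta$ after precomposition with the transition map $i_\gamma{}_*\boM_{(\gamma)}\rarrow i_\delta{}_*\boM_{(\delta)}$; unwinding the adjunction (using its naturality) shows that this compatibility is precisely the commutativity of the squares built from the transition morphisms of $\boM$ and of $\rN|_\Gamma$, which is exactly the condition defining a morphism of $\Gamma$\+systems $\boM\rarrow\rN|_\Gamma$. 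This matching of compatibility conditions is a routine diagram chase once the pointwise adjunction is known to be natural, and it yields the natural bijection asserting the adjunction, together with its unit $\boM\rarrow(\boM^+)|_\Gamma$ and counit $(\rN|_\Gamma)^+\rarrow\rN$.
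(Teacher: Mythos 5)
Your proof is correct, but it takes a genuinely different route from the paper's. The paper argues componentwise: using the explicit formula $(\boM^+)_{(Z)}=\varinjlim_{\gamma\in\Gamma:\,Z\subset X_\gamma}i_{Z,\gamma}^!\boM_{(\gamma)}$, it identifies $\Hom_{\X\tors}(\boM^+,\rN)$ with the set of compatible collections of morphisms $i_{Z,\gamma}^!\boM_{(\gamma)}\rarrow\rN_{(Z)}$ indexed by pairs $(Z,\gamma)$ with $Z\subset X_\gamma$, and then observes that such a collection is uniquely recovered from its members with $Z=X_\gamma$ by applying the functor~$i_{Z,\gamma}^!$ and invoking condition~(iii) for~$\rN$; no Hom-out-of-colimit step and no intermediate adjunction are needed. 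You instead work with the object-level colimit $\boM^+=\varinjlim_{\gamma}i_\gamma{}_*\boM_{(\gamma)}$ in $\X\tors$, and factor the argument through the pointwise adjunction $i_\gamma{}_*\dashv({-})_{(X_\gamma)}$ --- which is precisely (a special case of) the adjunction $i_*\dashv i^!$ for closed immersions of ind-schemes that the paper establishes only \emph{afterwards}, in Section~\ref{torsion-inverse-images-subsecn}, and by an argument of the same kind as yours: restriction to the cofinal family of reasonable closed subschemes $Z\supset X_\gamma$, the scheme-level adjunction of Section~\ref{qcoh-sheaves-and-functors-subsecn}, and condition~(iii). In effect you reverse the paper's logical order: you prove the closed-immersion adjunction first and deduce the Lemma from it by a formal limit computation, while the paper proves the Lemma directly and keeps that adjunction as a separate, later statement. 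There is no circularity in your version, since the ingredients you use (the cofinality principle of Section~\ref{qcoh-torsion-sheaves-subsecn}, the construction of direct images in Section~\ref{torsion-direct-images-subsecn}, and the scheme-level adjunction) all precede the Lemma; what your organization buys is a reusable standalone adjunction and a purely formal assembly step, at the cost of being somewhat longer than the paper's direct matching of the two Hom sets.
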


\begin{proof}
 Let $\boM$ be a $\Gamma$\+system and $\rN$ be a quasi-coherent torsion
sheaf on~$\X$.
 Then the abelian group of morphisms $\boM^+\rarrow\rN$ in $\X\tors$ is
isomorphic to the group of all compatible collections of morphisms
$i_{Z,\gamma}^!\boM_{(\gamma)}\rarrow\rN_{(Z)}$ in $Z\qcoh$, defined
for all reasonable closed subschemes $Z\subset\X$ and indices
$\gamma\in\Gamma$ such that $Z\subset X_\gamma$.
 On the other hand, the abelian groups of morphisms $\boM\rarrow
\rN|_\Gamma$ in $(\X,\Gamma)\syst$ is isomorphic to the group of all
compatible collections of morphisms $\boM_{(\gamma)}\rarrow
\rN_{(X_\gamma)}$ in $X_\gamma\qcoh$, defined for all $\gamma\in\Gamma$.
 These are equivalent sets of data, as the morphism
$i_{Z,\gamma}^!\boM_{(\gamma)}\rarrow\rN_{(Z)}$ is uniquely recoverable
by applying the functor $i_{Z,\gamma}^!\:X_\gamma\qcoh \rarrow Z\qcoh$
to the morphism $\boM_{(\gamma)}\rarrow\rN_{(X_\gamma)}$.
\end{proof}

\begin{prop} \label{Gamma-systems-abelian}
 The category $(\X,\Gamma)\syst$ of\/ $\Gamma$\+systems on\/ $\X$ is
a Grothendieck abelian category.
\end{prop}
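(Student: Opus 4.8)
The plan is to verify directly that $(\X,\Gamma)\syst$ satisfies the Grothendieck axioms, reducing everything to the levelwise categories $X_\gamma\qcoh$, which are already known to be Grothendieck abelian. The one structural fact that makes this reduction work is that, for a closed immersion of schemes $i_{\gamma\delta}\:X_\gamma\rarrow X_\delta$, the direct image functor $i_{\gamma\delta}{}_*\:X_\gamma\qcoh\rarrow X_\delta\qcoh$ is exact and preserves (filtered) direct limits (the latter recalled in Section~\ref{torsion-direct-limits-subsecn}, the former being classical for closed immersions). First I would check that $(\X,\Gamma)\syst$ is abelian with kernels and cokernels computed levelwise: given $f\:\boM\rarrow\boN$, set $K_{(\gamma)}=\ker(f_{(\gamma)})$ and $C_{(\gamma)}=\coker(f_{(\gamma)})$ in $X_\gamma\qcoh$. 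Because $i_{\gamma\delta}{}_*$ is exact, it carries $K_{(\gamma)}$ and $C_{(\gamma)}$ onto the kernel and cokernel of $i_{\gamma\delta}{}_*f_{(\gamma)}$, so the transition maps of $\boM$ and $\boN$ induce canonical transition maps making the levelwise kernels and cokernels into $\Gamma$\+systems, with condition~(iv) inherited by functoriality. All the abelian-category axioms (existence of kernels and cokernels, and the normality of monomorphisms and epimorphisms) can then be checked at each level, so $(\X,\Gamma)\syst$ is abelian and the forgetful functor $(\X,\Gamma)\syst\rarrow\prod_{\gamma\in\Gamma}X_\gamma\qcoh$ is faithful and exact.

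Next I would treat coproducts and filtered colimits, again levelwise. Since $i_{\gamma\delta}{}_*$ preserves direct limits, for an inductive system $(\boM^\theta)_{\theta\in\Theta}$ the levelwise colimits $(\varinjlim_\theta\boM^\theta_{(\gamma)})_{\gamma\in\Gamma}$ acquire transition maps $i_{\gamma\delta}{}_*\varinjlim_\theta\boM^\theta_{(\gamma)}\simeq\varinjlim_\theta i_{\gamma\delta}{}_*\boM^\theta_{(\gamma)}\rarrow\varinjlim_\theta\boM^\theta_{(\delta)}$, and condition~(iv) passes to the colimit; the same applies to arbitrary coproducts. This object represents $\varinjlim_\theta\boM^\theta$ in $(\X,\Gamma)\syst$, so the forgetful functor also creates colimits and $(\X,\Gamma)\syst$ is cocomplete. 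Exactness of filtered colimits (AB5) is then immediate: a short sequence is exact in $(\X,\Gamma)\syst$ precisely when it is exact at each level, and each $X_\gamma\qcoh$ is AB5.

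The main work is the existence of a generator. For each $\gamma$ I would construct a left adjoint $\Lambda_\gamma\:X_\gamma\qcoh\rarrow(\X,\Gamma)\syst$ to the evaluation functor $\mathrm{ev}_\gamma\:\boM\mapsto\boM_{(\gamma)}$, given by $(\Lambda_\gamma\F)_{(\delta)}=i_{\gamma\delta}{}_*\F$ for $\delta\ge\gamma$ and $(\Lambda_\gamma\F)_{(\delta)}=0$ otherwise, with the evident transition maps (the canonical isomorphisms $i_{\delta\delta'}{}_*i_{\gamma\delta}{}_*\F\simeq i_{\gamma\delta'}{}_*\F$ where applicable, and zero maps elsewhere). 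A direct check using condition~(iv) shows that a morphism $\Lambda_\gamma\F\rarrow\boN$ is uniquely determined by its component $\F\rarrow\boN_{(\gamma)}$, and conversely, yielding the adjunction isomorphism $\Hom_{(\X,\Gamma)\syst}(\Lambda_\gamma\F,\boN)\simeq\Hom_{X_\gamma\qcoh}(\F,\boN_{(\gamma)})$. Now, if $\G_\gamma$ is a generator of the Grothendieck category $X_\gamma\qcoh$, I claim the set $\{\Lambda_\gamma\G_\gamma\}_{\gamma\in\Gamma}$ generates $(\X,\Gamma)\syst$, whence the coproduct $\bigoplus_{\gamma\in\Gamma}\Lambda_\gamma\G_\gamma$ is a single generator. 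Indeed, a morphism $\boM\rarrow\boM'$ vanishes iff each $\boM_{(\gamma)}\rarrow\boM'_{(\gamma)}$ vanishes (levelwise faithfulness) iff each induced map $\Hom(\G_\gamma,\boM_{(\gamma)})\rarrow\Hom(\G_\gamma,\boM'_{(\gamma)})$ vanishes (as $\G_\gamma$ generates), i.e.\ iff $\Hom(\Lambda_\gamma\G_\gamma,\boM)\rarrow\Hom(\Lambda_\gamma\G_\gamma,\boM')$ vanishes by the adjunction; so the family is faithful, hence generating. The one delicate point to get right is precisely this generator step --- both pinning down that $\Lambda_\gamma\F$ is a legitimate $\Gamma$\+system (where the asymmetry of the poset and the need for values at indices not comparable to $\gamma$ must be handled) and verifying the adjunction compatibly with condition~(iv). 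The remaining axioms are purely levelwise and formal once the exactness and colimit-preservation of the direct images $i_{\gamma\delta}{}_*$ are in hand.
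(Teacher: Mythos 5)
Your proposal is correct and follows essentially the same route as the paper: the abelian structure and exact direct limits are checked levelwise (resting on the exactness and colimit-preservation of the direct image functors $i_{\gamma\delta}{}_*$), and your objects $\Lambda_\gamma\F$ are exactly the paper's generating $\Gamma$\+systems $\boM(\gamma,\K)$, defined by $i_{\gamma\delta}{}_*\K$ for $\delta\ge\gamma$ and $0$ otherwise. The only (harmless) embellishment is that you verify the generating property via the adjunction $\Hom_{(\X,\Gamma)\syst}(\Lambda_\gamma\F,\boN)\simeq\Hom_{X_\gamma\qcoh}(\F,\boN_{(\gamma)})$, a point the paper asserts without spelling out.
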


\begin{proof}
 The assertion that $(\X,\Gamma)\syst$ is an abelian category with
exact direct limit functors is straightforward.
 Moreover, the forgetful functor $(\X,\Gamma)\syst\rarrow X_\gamma
\qcoh$ taking a $\Gamma$\+system $\boM$ to the quasi-coherent sheaf
$\boM_{(\gamma)}$ preserves the kernels, cokernels, and direct limits.
 To show that the category $(\X,\Gamma)\syst$ has a set of generators,
choose for every $\gamma\in\Gamma$ a set of generators $\sS_\gamma
\subset X_\gamma\qcoh$ in the Grothendieck category of quasi-coherent
sheaves on~$X_\gamma$.
 For every quasi-coherent sheaf $\K$ on $X_\gamma$, define
the $\Gamma$\+system $\boM(\gamma,\K)$ by the rules
$\boM(\gamma,\K)_{(\delta)}=i_{\gamma\delta}{}_*\K$ for $\gamma\le
\delta\in\Gamma$ and $\boM(\gamma,\K)_{(\delta)}=0$ for
$\gamma\not\le\delta\in\Gamma$.
 Then all the objects of the form $\boM(\gamma,S)$ with
$\gamma\in\Gamma$ and $S\in\sS_\gamma$ form a set of generators of
the category $(\X,\Gamma)\syst$.
\end{proof}

\begin{lem} \label{torsion-sheaves-as-Giraud-subcat}
 The functor $\rM\longmapsto\rM|_\Gamma\:\X\tors\rarrow(\X,\Gamma)\syst$
is fully faithful.
 The endofunctor\/ $\boM\longmapsto\boM^+|_\Gamma\:(\X,\Gamma)\syst
\rarrow(\X,\Gamma)\syst$ is left exact.
\end{lem}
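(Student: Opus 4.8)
The plan is to deduce both assertions from the adjunction of Lemma~\ref{torsion-sheaves-Gamma-systems-adjunction}. Write $F=({-})^+\colon(\X,\Gamma)\syst\rarrow\X\tors$ and $G=({-})|_\Gamma\colon\X\tors\rarrow(\X,\Gamma)\syst$, so that $F\dashv G$. The endofunctor in the second assertion is then precisely the monad $GF$, namely $\boM\mapsto\boM^+|_\Gamma$, while full faithfulness of the right adjoint $G$ is governed by the counit $\varepsilon\colon FG\rarrow\Id_{\X\tors}$, whose component at $\rM$ is the canonical morphism $\varepsilon_\rM\colon(\rM|_\Gamma)^+\rarrow\rM$. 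Recall the standard fact that a right adjoint is fully faithful if and only if its counit is an isomorphism; this is the form of the first statement I would prove.

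Since a morphism of quasi-coherent torsion sheaves is an isomorphism as soon as it is one in restriction to every reasonable closed subscheme, it suffices to check that $\varepsilon_\rM$ becomes an isomorphism after evaluation at each reasonable $Z\subset\X$. The explicit description of $({-})^+$ from Section~\ref{Gamma-systems-subsecn} gives
\[
 \bigl((\rM|_\Gamma)^+\bigr)_{(Z)}=\varinjlim\nolimits_{\gamma\in\Gamma:\,Z\subset X_\gamma}i_{Z,\gamma}^!\,\rM_{(X_\gamma)},
\]
and condition~(iii) in the definition of a quasi-coherent torsion sheaf supplies, for each such $\gamma$, a canonical isomorphism $\rM_{(Z)}\simeq i_{Z,\gamma}^!\rM_{(X_\gamma)}$. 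The compatibility recorded in conditions~(iii)--(iv), together with the functoriality of $i^!$ along composites of closed immersions, shows that these isomorphisms intertwine the transition maps of the direct system; hence the colimit is essentially constant and equal to $\rM_{(Z)}$, so $\varepsilon_\rM$ is an isomorphism at $Z$. Checking this last compatibility carefully — that the structure maps of $\rM|_\Gamma$ become identities under the identifications of condition~(iii) — is the one place requiring genuine attention, though it is forced by the cocycle-type conditions already imposed.

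For left exactness of $\boM\mapsto\boM^+|_\Gamma$ I would argue termwise. By Proposition~\ref{Gamma-systems-abelian} the evaluation functors $\boM\mapsto\boM_{(\gamma)}$ preserve kernels, so kernels in $(\X,\Gamma)\syst$ are computed at each $\gamma$; it therefore suffices to prove that every functor $\boM\mapsto(\boM^+|_\Gamma)_{(\gamma)}$, valued in $X_\gamma\qcoh$, is left exact. Evaluating the explicit formula at $Z=X_\gamma$ yields
\[
 (\boM^+|_\Gamma)_{(\gamma)}=\varinjlim\nolimits_{\delta\ge\gamma}i_{\gamma\delta}^!\,\boM_{(\delta)},
\]
where each functor $\boM\mapsto i_{\gamma\delta}^!\boM_{(\delta)}$ is left exact, being the composite of the exact evaluation $\boM\mapsto\boM_{(\delta)}$ with the functor $i_{\gamma\delta}^!$, which is left exact as a right adjoint to $i_{\gamma\delta}{}_*$.

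Finally, the indexing poset $\{\delta\ge\gamma\}$ is directed, so $(\boM^+|_\Gamma)_{(\gamma)}$ is a filtered direct limit of left-exact functors into the Grothendieck category $X_\gamma\qcoh$. Because filtered direct limits are exact there, passing to the limit preserves the left-exact sequences produced by the individual functors, and hence $\boM\mapsto(\boM^+|_\Gamma)_{(\gamma)}$ is left exact for every $\gamma$; left exactness of $\boM\mapsto\boM^+|_\Gamma$ follows. I expect the main obstacle to be conceptual rather than computational: the left adjoint $({-})^+$ is only right exact a priori, so left exactness of the composite $GF$ is \emph{not} formal and genuinely rests on the left exactness of each $i_{\gamma\delta}^!$ combined with the exactness of filtered colimits in $X_\gamma\qcoh$.
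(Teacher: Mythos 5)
Your proof is correct and follows essentially the same route as the paper's: for the second assertion you use the identical formula $(\boM^+|_\Gamma)_{(\gamma)}=\varinjlim_{\delta\ge\gamma}i_{\gamma\delta}^!\boM_{(\delta)}$ together with left exactness of each $i_{\gamma\delta}^!$ (a right adjoint), the termwise computation of kernels of $\Gamma$\+systems, and exactness of filtered colimits in $X_\gamma\qcoh$, which is exactly the paper's argument with the colimit-exactness step made explicit. Your treatment of the first assertion via the counit criterion is a formal repackaging of the paper's remark that $\rM$ and morphisms between torsion sheaves are uniquely recovered from the restrictions to the $X_\gamma$: in both cases the essential content is that condition~(iii) makes the direct system $\varinjlim_{\gamma\colon Z\subset X_\gamma}i_{Z,\gamma}^!\rM_{(X_\gamma)}$ essentially constant with value $\rM_{(Z)}$.
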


\begin{proof}
 The first assertion holds essentially because one can equivalently
define a quasi-coherent torsion sheaf on $\X$ as a collection of
quasi-coherent sheaves on the schemes $X_\gamma$, \,$\gamma\in\Gamma$,
endowed with the usual maps and satisfying the usual conditions, as
per the discussion in Section~\ref{qcoh-torsion-sheaves-subsecn}.
 To check the second assertion, one computes that
$$
 (\boM^+|_\Gamma)_{(\gamma)}=
 \varinjlim\nolimits_{\delta\in\Gamma:\gamma\le\delta}
 i^!_{\gamma\delta}\boM_{(\delta)}
$$
and recalls that the functor $i^!_{\gamma\delta}$ is left exact
(since it is a right adjoint).
 It is important here that the forgetful functor assigning to
a $\Gamma$\+system $\boN$ the collection of quasi-coherent sheaves
$\boN_{(\gamma)}$ (viewed as an object of the Cartesian product of
the categories $X_\gamma\qcoh$) is exact and faithful.
\end{proof}

\begin{prop} \label{Giraud-subcategory-abstract-prop}
 Let\/ $\sB$ be an abelian category and\/ $\sA\subset\sB$ be a full
subcategory whose inclusion functor $G\:\sA\rarrow\sB$ has a left
adjoint functor $F\:\sB\rarrow\sA$.
 Assume that the composition $GF\:\sB\rarrow\sB$ is a left
exact functor.
 Then the category\/ $\sA$ is abelian and the functor $F$ is exact.
 If\/ $\sB$ is a Grothendieck category, then so is\/~$\sA$.
\end{prop}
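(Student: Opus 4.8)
The plan is to run the standard argument for a reflective subcategory with left exact reflector. Write $\eta\:\id_\sB\rarrow GF$ and $\epsilon\:FG\rarrow\id_\sA$ for the unit and counit of the adjunction $F\dashv G$, and put $T=GF$. Since $G$ is fully faithful, $\epsilon$ is an isomorphism; the triangle identity $G\epsilon\circ\eta G=\id_G$ then shows that $\eta_{GA}\:GA\rarrow TGA$ is an isomorphism for every $A\in\sA$, so every object in the essential image of $G$ is \emph{$T$\+local}. Conversely, $\eta_B$ invertible gives $B\simeq TB=GFB$, so the image of $G$ is precisely the full subcategory of $T$\+local objects. I also record the formal facts that $F$ (a left adjoint) preserves all colimits, in particular cokernels and coproducts, while $G$ (a right adjoint) preserves all limits, in particular kernels and products; the hypothesis adds that $T$ is moreover left exact.

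First I would compute kernels in $\sA$. For $f\:A\rarrow A'$ in $\sA$, let $K=\ker(Gf)$ in $\sB$, so that $0\rarrow K\rarrow GA\rarrow GA'$ is exact. Applying the left exact functor $T$ and comparing the result with the original sequence through $\eta$ produces a morphism of left exact sequences whose two right vertical maps, $\eta_{GA}$ and $\eta_{GA'}$, are isomorphisms; the five lemma then forces $\eta_K$ to be an isomorphism, so $K$ is $T$\+local and hence $K\simeq G\ker_\sA f$. Thus \emph{kernels in $\sA$ are the kernels computed in $\sB$.} Dually, since $F$ preserves cokernels and $FG\simeq\id_\sA$, one has $\coker_\sA f\simeq F\coker(Gf)$. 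The same input yields the exactness of $F$: for an exact sequence $0\rarrow B'\rarrow B\rarrow B''$ in $\sB$, the previous computation identifies the kernel of $FB\rarrow FB''$ in $\sA$ with $\ker_\sB(TB\rarrow TB'')=TB'=GFB'$, that is, with $FB'$, the inclusion being $F$ of $B'\rarrow B$; so $F$ is left exact, and being also right exact, it is exact. This five lemma step --- the only place where the hypothesis on $T$ is genuinely used --- is the crux of the whole proposition; everything after it is formal adjunction bookkeeping.

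With $F$ exact in hand, the abelianness of $\sA$ is routine. The category $\sA$ is additive: as a full preadditive subcategory of $\sB$ it is closed under finite products (the product $GA\times GA'$ is $T$\+local because $T$ preserves products), so it has biproducts, and it has kernels and cokernels by the previous step. For any $f\:A\rarrow A'$ the exact functor $F$ carries the image factorization of $Gf$ in $\sB$ to a factorization of $f\simeq FGf$; tracing through the identifications above shows that both $\operatorname{coim}_\sA f$ and $\operatorname{im}_\sA f$ are naturally isomorphic to $F(\operatorname{im}Gf)$, and that the canonical comparison map between them is $F$ applied to the isomorphism $\operatorname{coim}Gf\xrightarrow{\sim}\operatorname{im}Gf$ in the abelian category $\sB$. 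Hence $\operatorname{coim}_\sA f\rarrow\operatorname{im}_\sA f$ is an isomorphism and $\sA$ is abelian.

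Finally, suppose $\sB$ is Grothendieck. Small colimits in $\sA$ exist and are computed as $\varinjlim_\sA D\simeq F(\varinjlim_\sB GD)$, using that $F$ preserves colimits and $FG\simeq\id_\sA$. To verify exactness of filtered colimits, I would observe that a monomorphism of filtered diagrams in $\sA$ is sent by $G$ to a monomorphism in $\sB$ (kernels in $\sA$ being kernels in $\sB$), whose filtered colimit is monic by AB5 in $\sB$; applying the exact functor $F$ preserves monicity, so filtered colimits are exact in $\sA$. And if $U$ is a generator of $\sB$, then $FU$ is a generator of $\sA$, since $\Hom_\sA(FU,{-})\simeq\Hom_\sB(U,G{-})$ and $G$ is faithful. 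Therefore $\sA$ is a Grothendieck abelian category, completing the proposal.
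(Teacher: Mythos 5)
Your proposal is correct, but it takes a genuinely different route from the paper's proof. The paper treats the statement as an instance of the classical theory of Giraud subcategories and essentially cites Stenstr\"om: the substance of its argument is that $F$ exhibits $\sA$ as the Serre quotient category of $\sB$ by the Serre subcategory of objects annihilated by $F$, after which exactness of $F$, abelianness of $\sA$, and (using closure of the kernel of $F$ under coproducts) exactness of direct limits and the existence of generators all follow from the general localization formalism. You instead verify everything by hand: identifying $\sA$ with the $GF$\+local objects, showing that kernels in $\sA$ are computed in $\sB$ (the kernel/five-lemma step, which is indeed the only place where left exactness of $GF$ genuinely enters), obtaining cokernels as $F$ of cokernels in $\sB$, deducing exactness of $F$, getting the coimage--image comparison by applying the exact functor $F$ to the image factorization in $\sB$, and checking AB5 and the generator directly. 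Both arguments are sound; what they buy is different. Your argument is self-contained, needs no external theory, and makes transparent where the hypothesis on $GF$ is used. The paper's argument is shorter, places the proposition in the context of its motivating examples (sheafification, Gabriel--Popescu), and --- importantly for this paper --- produces the explicit description of $F$ as a Serre quotient functor, which is not part of the statement but is cited and re-used in several later proofs (e.g., in Section~\ref{pro-in-torsion-action-subsecn} and in the proofs of Lemma~\ref{affine-torsion-direct-image} and Lemma~\ref{torsion-external-tensor-exact}); your proof, as written, would not by itself support those later references without a supplementary observation that the objects annihilated by $F$ form a Serre subcategory and that $F$ induces an equivalence $\sB/\ker F\simeq\sA$.
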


\begin{proof}
 This well-known result describes a familiar setting which occurs,
e.~g., when $\sA$ is a sheaf category and $\sB$ is the related
presheaf category (so $G$ is the inclusion of the sheaves into
the presheaves and $F$ is the sheafification), or when an arbitrary
Grothendieck category $\sA$ is represented as a localization of
a module category $\sB$ via the Gabriel--Popescu theorem.
 The claim is that, in the context of the proposition, the functor
$F$ represents $\sA$ as a quotient category of $\sB$ by its Serre
subcategory of all objects annihilated by~$F$.
 Moreover, if $\sA$ has coproducts, then the subcategory of objects
annihilated by $F$ is closed under coproducts; and it follows that
the direct limits are exact in $\sA$ whenever they are exact in
$\sB$, and a set of generators exists in $\sA$ whenever such a set
exists in~$\sB$.
 The full subcategory $\sA\subset\sB$ is called a \emph{Giraud
subcategory}~\cite[Section~X.1]{St}; notice that the inclusion functor
$G\:\sA\rarrow\sB$ is left exact, but \emph{not} exact in general.
\end{proof}

\begin{proof}[{Proof of Theorem~\ref{torsion-sheaves-abelian}}]
 Put $\sA=\X\tors$ and $\sB=(\X,\Gamma)\syst$.
 Furthermore, put $G(\rM)=\rM|_\Gamma$ and $F(\boM)=\boM^+$.
 Then the category $\sB$ is Grothendieck by
Proposition~\ref{Gamma-systems-abelian}, the functor $F$ is left
adjoint to $G$ by Lemma~\ref{torsion-sheaves-Gamma-systems-adjunction},
the functor $G$ is fully faithful by
Lemma~\ref{torsion-sheaves-as-Giraud-subcat}, and
the functor $GF$ is left exact by the same
Lemma~\ref{torsion-sheaves-as-Giraud-subcat}.
 Thus Proposition~\ref{Giraud-subcategory-abstract-prop} is applicable,
implying that $\sA$ is a Grothendieck category.
\end{proof}

\begin{qst}
 Is the reasonableness assumption needed for the validity of
Theorem~\ref{torsion-sheaves-abelian}\,?
 Is the category of quasi-coherent torsion sheaves on an arbitrary
(strict) ind-scheme abelian?
 Notice that the category of discrete $\fR$\+modules, as defined in
Section~\ref{torsion-ind-affine-subsecn}(6), is a Grothendieck
abelian category for any topological ring~$\fR$.
\end{qst}

\subsection{Inverse images} \label{torsion-inverse-images-subsecn}
 Let $\X=\ilim_{\gamma\in\Gamma}X_\gamma$ be a reasonable ind-scheme
represented by an inductive system of closed immersions of reasonable
closed subschemes.
 Let $f\:\Y\rarrow\X$ be a morphism of ind-schemes which is
``representable by schemes''.
 Put $Y_\gamma=\Y\times_\X X_\gamma$; then $Y_\gamma$ are reasonable
closed subschemes in $\Y$ and $\Y=\ilim_{\gamma\in\Gamma}Y_\gamma$.

 Let $\boM=(\boM_{(\gamma)}\in X_\gamma\qcoh)_{\gamma\in\Gamma}$
be a $\Gamma$\+system on~$\X$.
 For every $\gamma\in\Gamma$, put $\boN_{(\gamma)}=f_\gamma^*
\boM_{(\gamma)}\in Y_\gamma\qcoh$, where $f_\gamma\:Y_\gamma\rarrow
X_\gamma$.
 Then it is clear from
Lemma~\ref{reasonable-base-change}(b) that the collection of
quasi-coherent cosheaves $\boN_{(\gamma)}\in Y_\gamma\qcoh$ has
a natural structure of a $\Gamma$\+system $\boN$ on~$\Y$.
 We put $f^*\boM=\boN$.

 The functor of inverse image of quasi-coherent torsion sheaves
$f^*\:\X\tors\rarrow\Y\tors$ is defined by the rule
$$
 f^*(\rM)=(f^*(\rM|_\Gamma))^+.
$$
 One check directly or deduce from
Lemma~\ref{torsion-direct-inverse-adjunction}(b) that this construction
of the functor $f^*\:\X\tors\rarrow\Y\tors$ does not depend on
the choice of a representation of a reasonable ind-scheme $\X$ by
an inductive system of closed immersions of reasonable
closed subschemes $(X_\gamma)_{\gamma\in\Gamma}$.
 (See Remark~\ref{simplified-flat-torsion-inverse-image} below for
a simpler construction of the functor~$f^*$ in the case of a flat
morphism~$f$.)

 Let us also define the functor of direct image of $\Gamma$\+systems
$f_*\:(\Y,\Gamma)\syst\rarrow(\X,\Gamma)\syst$.
 Put $f_*\boN=\boM$, where $\boN_{(\gamma)}=f_*\boM_{(\gamma)}$ for all
$\gamma\in\Gamma$.
 It is clear that the direct images of torsion sheaves and
$\Gamma$\+systems agree in the sense that
$(f_*\rN)|_\Gamma\simeq f_*(\rN|_\Gamma)$ for all $\rN\in\Y\tors$
(cf.\ Section~\ref{torsion-direct-images-subsecn}).

\begin{lem} \label{torsion-direct-inverse-adjunction}
\textup{(a)} The functor $f^*\:(\X,\Gamma)\syst\rarrow(\Y,\Gamma)\syst$
is left adjoint to the functor $f_*\:(\Y,\Gamma)\syst\rarrow
(\X,\Gamma)\syst$. \par
\textup{(b)} The functor $f^*\:\X\tors\rarrow\Y\tors$ is left adjoint
to the functor $f_*\:\Y\tors\rarrow\X\tors$.
\end{lem}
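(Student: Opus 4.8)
The plan is to prove part~(a) by reduction to the componentwise adjunction $f_\gamma^*\dashv f_{\gamma*}$ of quasi-coherent sheaves on the schemes $X_\gamma$ and $Y_\gamma=\Y\times_\X X_\gamma$, and then to obtain part~(b) formally by composing adjunctions. For part~(a), recall that a morphism of $\Gamma$-systems is exactly a collection of morphisms of its components that is compatible with the transition maps. So, given $\boM\in(\X,\Gamma)\syst$ and $\boN\in(\Y,\Gamma)\syst$, a morphism $f^*\boM\rarrow\boN$ is a collection of morphisms $\phi_\gamma\:f_\gamma^*\boM_{(\gamma)}\rarrow\boN_{(\gamma)}$ in $Y_\gamma\qcoh$ satisfying the transition compatibility; applying the componentwise adjunction $f_\gamma^*\dashv f_{\gamma*}$, each $\phi_\gamma$ corresponds bijectively to a morphism $\psi_\gamma\:\boM_{(\gamma)}\rarrow f_{\gamma*}\boN_{(\gamma)}$ in $X_\gamma\qcoh$, and such collections $(\psi_\gamma)$ are precisely the collections of component morphisms $\boM\rarrow f_*\boN$. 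What remains is to show that $(\phi_\gamma)$ is compatible with the transition maps of $f^*\boM$ and $\boN$ if and only if $(\psi_\gamma)$ is compatible with the transition maps of $\boM$ and $f_*\boN$.

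The transition maps of $f^*\boM$ (for $\gamma<\delta$) are built from those of $\boM$ via the base-change isomorphism $f_\delta^*i_{\gamma\delta*}\simeq j_{\gamma\delta*}f_\gamma^*$ of Lemma~\ref{reasonable-base-change}(b), applied to the Cartesian square whose horizontal arrows are the transition closed immersions $i_{\gamma\delta}\:X_\gamma\rarrow X_\delta$ and $j_{\gamma\delta}\:Y_\gamma\rarrow Y_\delta$ and whose vertical arrows are $f_\gamma$, $f_\delta$ (here $Y_\gamma\simeq X_\gamma\times_{X_\delta}Y_\delta$ by associativity of fibered products). The transition maps of $f_*\boN$, on the other hand, are built from those of $\boN$ via the identity $i_{\gamma\delta*}f_{\gamma*}=f_{\delta*}j_{\gamma\delta*}$ arising from composition of direct images. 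The required equivalence of the two compatibility squares is then a calculus-of-mates (Beck--Chevalley) computation: it amounts to the statement that the base-change isomorphism of Lemma~\ref{reasonable-base-change}(b) is the mate, under the adjunctions $f_\gamma^*\dashv f_{\gamma*}$ and $f_\delta^*\dashv f_{\delta*}$, of the isomorphism of Lemma~\ref{reasonable-base-change}(a)---the two parts of that lemma being adjoint to one another, as noted in its proof. \textbf{This is the main, and only nontrivial, step of part~(a):} once it is checked, the bijection $\phi_\gamma\leftrightarrow\psi_\gamma$ carries morphisms of $\Gamma$-systems to morphisms of $\Gamma$-systems, and naturality in $\boM$ and $\boN$ is automatic from naturality of the componentwise adjunctions.

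For part~(b) I would simply compose adjunctions. By definition $f^*\rM=(f^*(\rM|_\Gamma))^+$, where on the right $f^*$ is the inverse-image functor on $\Gamma$-systems and $(-)^+$ is the left adjoint to restriction $(-)|_\Gamma$ from Lemma~\ref{torsion-sheaves-Gamma-systems-adjunction}. For $\rM\in\X\tors$ and $\rN\in\Y\tors$, I then chain the natural isomorphisms
\begin{align*}
 \Hom_{\Y\tors}(f^*\rM,\rN)
 &\simeq\Hom_{(\Y,\Gamma)\syst}(f^*(\rM|_\Gamma),\,\rN|_\Gamma) \\
 &\simeq\Hom_{(\X,\Gamma)\syst}(\rM|_\Gamma,\,f_*(\rN|_\Gamma)) \\
 &\simeq\Hom_{(\X,\Gamma)\syst}(\rM|_\Gamma,\,(f_*\rN)|_\Gamma) \\
 &\simeq\Hom_{\X\tors}(\rM,\,f_*\rN),
\end{align*}
using in turn the adjunction $(-)^+\dashv(-)|_\Gamma$ of Lemma~\ref{torsion-sheaves-Gamma-systems-adjunction}, part~(a), the compatibility $(f_*\rN)|_\Gamma\simeq f_*(\rN|_\Gamma)$ recorded just before the lemma, and the full faithfulness of $(-)|_\Gamma$ from Lemma~\ref{torsion-sheaves-as-Giraud-subcat}. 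This yields the adjunction $f^*\dashv f_*$ on torsion sheaves, and all the isomorphisms being natural, the proof of part~(b) is complete. Thus the only real work lies in the single Beck--Chevalley verification in part~(a); the rest is either an unwinding of the definition of a $\Gamma$-system or a formal composition of adjunctions.
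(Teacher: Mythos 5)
Your proof is correct and follows essentially the same route as the paper: part~(a) by reduction to the componentwise adjunctions $f_\gamma^*\dashv f_\gamma{}_*$, and part~(b) by exactly the paper's chain of isomorphisms, combining the adjunction of Lemma~\ref{torsion-sheaves-Gamma-systems-adjunction}, part~(a), the compatibility $(f_*\rN)|_\Gamma\simeq f_*(\rN|_\Gamma)$, and the full faithfulness of the restriction functor from Lemma~\ref{torsion-sheaves-as-Giraud-subcat}. In part~(a) the paper simply declares the two compatible collections of morphisms to be ``equivalent sets of data'', so your mates verification fills in a detail the paper suppresses; however, the precise mate statement you invoke is misidentified. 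What the verification actually needs is that the mate of the base-change isomorphism $f_\delta^*i_{\gamma\delta}{}_*\simeq j_{\gamma\delta}{}_*f_\gamma^*$ of Lemma~\ref{reasonable-base-change}(b) under the two \emph{vertical} adjunctions $f_\gamma^*\dashv f_\gamma{}_*$ and $f_\delta^*\dashv f_\delta{}_*$ is the canonical identification $i_{\gamma\delta}{}_*f_\gamma{}_*\simeq f_\delta{}_*j_{\gamma\delta}{}_*$ coming from $f_\delta j_{\gamma\delta}=i_{\gamma\delta}f_\gamma$; this holds by the triangle identities once the base-change isomorphism is taken to be the canonical comparison morphism (which the paper implicitly does). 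By contrast, the mate of Lemma~\ref{reasonable-base-change}(a) under those same two vertical adjunctions is the comparison morphism $f_\gamma^*i_{\gamma\delta}^!\rarrow j_{\gamma\delta}^!f_\delta^*$ appearing in Lemma~\ref{reasonable-shriek-flat-star-commutation}, \emph{not} the isomorphism of part~(b); part~(a) of that lemma is recovered from part~(b) only after a second mate, taken along the horizontal adjunctions $i_{\gamma\delta}{}_*\dashv i_{\gamma\delta}^!$ and $j_{\gamma\delta}{}_*\dashv j_{\gamma\delta}^!$ (this is the sense in which the paper calls the two parts ``adjoint to each other''). With this correction your argument goes through unchanged.
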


\begin{proof}
 Part~(a): let $\boM$ be a $\Gamma$\+system on $\X$ and $\boN$ be
a $\Gamma$\+system on~$\Y$.
 Then the group of morphisms $\boM\rarrow f_*\boN$ in $(\X,\Gamma)\syst$
is isomorphic to the group of all compatible collections of
morphisms $\boM_{(\gamma)}\rarrow f_\gamma{}_*\boN_{(\gamma)}$ in
$X_\gamma\qcoh$, defined for all $\gamma\in\Gamma$, while
the group of morphism $f^*\boM\rarrow\boN$ in $(\Y,\Gamma)\syst$
is isomorphic to the group of all compatible collections of morphisms
$f_\gamma^*\boM_{(\gamma)}\rarrow\boN_{(\gamma)}$ in $Y_\gamma\qcoh$,
defined for all $\gamma\in\Gamma$.
 In view of the adjunction of the functors $f_\gamma{}_*\:Y_\gamma\qcoh
\rarrow X_\gamma\qcoh$ and $f_\gamma^*\:X_\gamma\qcoh\rarrow
Y_\gamma\qcoh$, these are two equivalent sets of data.

 Part~(b): let $\rM$ be a quasi-coherent torsion sheaf on $\X$ and $\rN$
be a quasi-coherent torsion sheaf on~$\Y$.
 Then we have
\begin{multline*}
 \Hom_{\X\tors}(\rM,f_*\rN)\simeq\Hom_{(\X,\Gamma)\syst}(\rM|_\Gamma,
 (f_*\rN)|_\Gamma) \\
 \simeq\Hom_{(\X,\Gamma)\syst}(\rM|_\Gamma,f_*(\rN|_\Gamma))
 \simeq\Hom_{(\Y,\Gamma)\syst}(f^*(\rM|_\Gamma),\rN|_\Gamma) \\
 \simeq\Hom_{\Y\tors}(f^*(\rM|_\Gamma)^+,\rN)=
 \Hom_{\Y\tors}(f^*\rM,\rN),
\end{multline*}
where the first isomorphism holds by
Lemma~\ref{torsion-sheaves-as-Giraud-subcat}, the middle one by
part~(a), and the next one by
Lemma~\ref{torsion-sheaves-Gamma-systems-adjunction}.
\end{proof}

\begin{lem} \label{torsion-inverse-image-commutes-with-plus}
 The functors~$({-})^+$ commute with inverse images; in other words,
for any\/ $\Gamma$\+system\/ $\boM$ on\/ $\X$ there is a natural
isomorphism of quasi-coherent torsion sheaves $f^*(\boM^+)\simeq
(f^*\boM)^+$ on\/~$\Y$.
\end{lem}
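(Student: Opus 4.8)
The plan is to prove the isomorphism by the uniqueness of adjoint functors, thereby avoiding the iterated direct limits that would arise from computing both sides directly. The two functors to be compared, $f^*\circ({-})^+$ and $({-})^+\circ f^*$, both go from $(\X,\Gamma)\syst$ to $\Y\tors$, and each is a composition of two left adjoints. Indeed, $f^*\circ({-})^+$ is $({-})^+\colon(\X,\Gamma)\syst\rarrow\X\tors$ followed by $f^*\colon\X\tors\rarrow\Y\tors$, while $({-})^+\circ f^*$ is $f^*\colon(\X,\Gamma)\syst\rarrow(\Y,\Gamma)\syst$ followed by $({-})^+\colon(\Y,\Gamma)\syst\rarrow\Y\tors$. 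Both composites are therefore left adjoints, and it suffices to identify their right adjoints and show these agree.

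First I would compute the two right adjoints using Lemmas~\ref{torsion-sheaves-Gamma-systems-adjunction} and~\ref{torsion-direct-inverse-adjunction}. Since the right adjoint of a composite of left adjoints is the composite of the right adjoints in the reverse order, the right adjoint of $f^*\circ({-})^+$ is the functor $\rN\longmapsto(f_*\rN)|_\Gamma$ (apply the direct image $f_*\colon\Y\tors\rarrow\X\tors$ of torsion sheaves, then restrict to $\Gamma$ on $\X$), whereas the right adjoint of $({-})^+\circ f^*$ is $\rN\longmapsto f_*(\rN|_\Gamma)$ (restrict to $\Gamma$ on $\Y$, then apply the direct image $f_*\colon(\Y,\Gamma)\syst\rarrow(\X,\Gamma)\syst$ of $\Gamma$-systems). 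Here one must take care to keep the two meanings of $f_*$ separate, and to apply the reversal rule in the correct order.

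The crux is then the natural isomorphism $(f_*\rN)|_\Gamma\simeq f_*(\rN|_\Gamma)$ already recorded in Section~\ref{torsion-inverse-images-subsecn}: by construction, both sides have the same $\gamma$-component $f_{\gamma*}(\rN_{(Y_\gamma)})$ for each $\gamma\in\Gamma$, where $f_\gamma\colon Y_\gamma=\Y\times_\X X_\gamma\rarrow X_\gamma$. Granting that this identification is natural in $\rN$, the two right adjoints coincide, and uniqueness of adjoints yields the desired isomorphism $f^*(\boM^+)\simeq(f^*\boM)^+$, natural in $\boM$.

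The main difficulty is thus not any hard computation but the bookkeeping of adjunctions: one must verify that all four factors genuinely possess the claimed right adjoints (which is precisely the content of the cited lemmas), that the reversal rule is applied consistently, and that the agreement of the two direct images is natural in $\rN$ rather than merely objectwise. A direct verification is also available---evaluating both sides at a reasonable closed subscheme $W\subset\Y$ and commuting $f^*$ past $i^!$ via the base-change Lemma~\ref{reasonable-base-change}(b) and past filtered direct limits---but this route requires a cofinality argument to match the resulting iterated colimits and is considerably messier, so I would retain the adjoint argument as the actual proof.
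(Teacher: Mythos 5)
Your proposal is correct and is essentially the paper's own argument: the paper's proof also invokes the adjunctions of Lemmas~\ref{torsion-sheaves-Gamma-systems-adjunction} and~\ref{torsion-direct-inverse-adjunction} and deduces the isomorphism as the one adjoint to the natural identification $(f_*\rN)|_\Gamma\simeq f_*(\rN|_\Gamma)$ of right adjoints recorded in Section~\ref{torsion-inverse-images-subsecn}. Your write-up just spells out the bookkeeping (composites of left adjoints, reversal of order for right adjoints) that the paper leaves implicit.
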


\begin{proof}
 In view of the adjunctions of
Lemma~\ref{torsion-direct-inverse-adjunction}, the desired
natural isomorphism is adjoint to the natural isomorphism of
$(f_*\rN)|_\Gamma\simeq(f_*\rN|_\Gamma)$ of $\Gamma$\+systems on $\X$
for a quasi-coherent torsion sheaf $\rN$ on~$\Y$.
\end{proof}

 Let $i\:\Z\rarrow\X$ be a closed immersion of ind-schemes.
 Assume that $\X$ is a reasonable ind-scheme; then so is~$\Z$.
 The functor $i^!\:\X\tors\rarrow\Z\tors$ is defined by the rule
$(i^!\rM)_{(W)}=k^!(\rM_{(Y)})$ for all $\rM\in\X\tors$, where
$W\subset\Z$ is an arbitrary reasonable closed subscheme and
$Y\subset\X$ is a reasonable closed subscheme such that
the composition $W\rarrow\Z\overset i\rarrow\X$ factorizes as
$W\overset k\rarrow Y\rarrow\X$.

 Let $\X=\ilim_{\gamma\in\Gamma}X_\gamma$ be a representation of $\X$
by an inductive system of closed immersions of reasonable closed
subschemes.
 Put $Z_\gamma=\Z\times_\X X_\gamma$; then $Z_\gamma$ are reasonable
closed subschemes in $\Z$ and $\Z=\ilim_{\gamma\in\Gamma}Z_\gamma$.
 The functor $i^!\:\X\tors\rarrow\Z\tors$ can be described in these
terms by the rule $(i^!\rM)_{(Z_\gamma)}=i_\gamma^!\rM_{(X_\gamma)}$,
where $i_\gamma$~denotes the closed immersion of schemes
$i_\gamma\:Z_\gamma\rarrow X_\gamma$.

\begin{lem}
 The functor $i^!\:\X\tors\rarrow\Z\tors$ is right adjoint to
the direct image functor $i_*\:\Z\tors\rarrow\X\tors$.
\end{lem}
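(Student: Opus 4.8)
The plan is to deduce this adjunction from the componentwise scheme-level adjunction between $i_\gamma{}_*$ and $i_\gamma^!$, in exactly the way that Lemma~\ref{torsion-direct-inverse-adjunction}(b) was deduced from the schemewise adjunction between $f_\gamma{}_*$ and $f_\gamma^*$. Fix a representation $\X=\ilim_{\gamma\in\Gamma}X_\gamma$ by reasonable closed subschemes and put $Z_\gamma=\Z\times_\X X_\gamma$, so that $Z_\gamma\subset X_\gamma$ is a reasonable closed subscheme with closed immersion $i_\gamma\:Z_\gamma\rarrow X_\gamma$, and $\Z=\ilim_{\gamma\in\Gamma}Z_\gamma$. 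Since $Z_\gamma$ is reasonable in $X_\gamma$, the functor $i_\gamma^!\:X_\gamma\qcoh\rarrow Z_\gamma\qcoh$ is right adjoint to $i_\gamma{}_*$ (Section~\ref{qcoh-sheaves-and-functors-subsecn}). Recall also that $(i_*\rN)|_\Gamma\simeq i_*(\rN|_\Gamma)$ for $\rN\in\Z\tors$ (Section~\ref{torsion-inverse-images-subsecn}), where on the right $i_*$ is the $\Gamma$\+system direct image with components $i_\gamma{}_*$, and that $(i^!\rM)|_\Gamma$ is the $\Gamma$\+system on $\Z$ with components $(i^!\rM)_{(Z_\gamma)}=i_\gamma^!\rM_{(X_\gamma)}$, as recorded above.

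Using the full faithfulness of $({-})|_\Gamma$ on both $\X$ and $\Z$ (Lemma~\ref{torsion-sheaves-as-Giraud-subcat}), I would then assemble the chain of natural isomorphisms
\begin{multline*}
 \Hom_{\X\tors}(i_*\rN,\rM)
 \simeq\Hom_{(\X,\Gamma)\syst}(i_*(\rN|_\Gamma),\,\rM|_\Gamma) \\
 \simeq\Hom_{(\Z,\Gamma)\syst}(\rN|_\Gamma,\,(i^!\rM)|_\Gamma)
 \simeq\Hom_{\Z\tors}(\rN,i^!\rM).
\end{multline*}
The outer isomorphisms are immediate from the previous paragraph, so everything reduces to the middle one, i.e.\ to the adjunction between $i_*$ and $i^!$ at the level of $\Gamma$\+systems. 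Here a morphism $i_*(\rN|_\Gamma)\rarrow\rM|_\Gamma$ is a family of morphisms $i_\gamma{}_*\rN_{(Z_\gamma)}\rarrow\rM_{(X_\gamma)}$ in $X_\gamma\qcoh$ commuting with the transition maps, while a morphism $\rN|_\Gamma\rarrow(i^!\rM)|_\Gamma$ is a family $\rN_{(Z_\gamma)}\rarrow i_\gamma^!\rM_{(X_\gamma)}$ in $Z_\gamma\qcoh$ commuting with transition maps; the schemewise adjunction $i_\gamma{}_*\dashv i_\gamma^!$ furnishes a bijection between these two families componentwise and naturally in $\rN$ and~$\rM$.

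The one point that genuinely requires checking — and the main obstacle — is that these componentwise bijections carry \emph{compatible} families to \emph{compatible} families, i.e.\ that they respect the transition-map conditions. For $\gamma<\delta$ the Cartesian square $Z_\gamma=Z_\delta\times_{X_\delta}X_\gamma$ gives $i_{\gamma\delta}i_\gamma=i_\delta j_{\gamma\delta}$, where $j_{\gamma\delta}\:Z_\gamma\rarrow Z_\delta$, and uniqueness of right adjoints yields the canonical identification $i_\gamma^!i_{\gamma\delta}^!\simeq(i_{\gamma\delta}i_\gamma)^!=(i_\delta j_{\gamma\delta})^!\simeq j_{\gamma\delta}^!i_\delta^!$; under it, the transition map of $(i^!\rM)|_\Gamma$ is precisely $i_\gamma^!$ applied to the transition map $\rM_{(X_\gamma)}\rarrow i_{\gamma\delta}^!\rM_{(X_\delta)}$ of $\rM|_\Gamma$ (this is where the base-change Lemma~\ref{reasonable-base-change} underlies the very torsion-sheaf structure of $i^!\rM$). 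Granting this, the compatibility becomes a diagram chase with the naturality of the adjunction $i_\gamma{}_*\dashv i_\gamma^!$ and the triangle identities, matching the two transition squares to one another. I expect this bookkeeping — keeping the various $!$\+functors for the closed immersions $i_\gamma$, $i_{\gamma\delta}$, $j_{\gamma\delta}$, $i_\delta$ consistently identified — to be the only real work; once it is done, the middle isomorphism follows, and with it the asserted adjunction.
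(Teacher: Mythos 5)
Your proposal is correct and follows essentially the same route as the paper, whose entire proof is the remark that the argument is ``similar to the proof of Lemma~\ref{torsion-direct-inverse-adjunction}(a)'': identify morphisms of quasi-coherent torsion sheaves with compatible collections of componentwise morphisms (equivalently, morphisms of the underlying $\Gamma$\+systems) and apply the schemewise adjunction $i_\gamma{}_*\dashv i_\gamma^!$ term by term. The compatibility bookkeeping you single out --- matching the transition maps under the identifications $j_{\gamma\delta}^!\,i_\delta^!\simeq i_\gamma^!\,i_{\gamma\delta}^!$ --- is exactly the detail that the paper's ``similar to'' elides.
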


\begin{proof}
 Similar to the proof of
Lemma~\ref{torsion-direct-inverse-adjunction}(a).
\end{proof}

 In particular, let $Z\subset\X$ be a reasonable closed subscheme
with the closed immersion morphism $i\:Z\rarrow\X$.
 Then, by the definition, one has $i^!\rM=\rM_{(Z)}\in Z\qcoh$
for every $\rM\in\X\tors$.

 Notice that the functor $i^!\:\X\tors\rarrow Z\qcoh$ preserves
direct limits when $Z$ is a reasonable closed subscheme in~$\X$
(as it is clear from the discussion
in Section~\ref{torsion-direct-limits-subsecn}).
 For nonreasonable closed subschemes $Z\subset\X$, this is not
true in general.

\subsection{Injective quasi-coherent torsion sheaves}
 Let $\X=\ilim_{\gamma\in\Gamma}X_\gamma$ be a reasonable ind-scheme
represented by an inductive system of closed immersions of reasonable
closed subschemes.
 Let $i_\gamma\:X_\gamma\rarrow\X$ denote the natural closed immersions.

 According to Theorem~\ref{torsion-sheaves-abelian}, the category
of quasi-coherent torsion sheaves $\X\tors$ is a Grothendieck abelian
category; so it has enough injective objects.
 Let us describe these injectives.
 
\begin{lem} \label{torsion-monomorphisms-characterized}
\textup{(a)} A morphism $f\:\rM\rarrow\rN$ in the abelian category\/ 
$\X\tors$ is a monomorphism if and only if, for every $\gamma\in\Gamma$,
the morphism $i_\gamma^!f\:i_\gamma^!\rM\rarrow i_\gamma^!\rN$ is
a monomorphism in the abelian category $X_\gamma\qcoh$. \par
\textup{(b)} A morphism $f\:\rM\rarrow\rN$ in the abelian category\/ 
$\X\tors$ is an epimorphism whenever, for every $\gamma\in\Gamma$,
the morphism $i_\gamma^!f\:i_\gamma^!\rM\rarrow i_\gamma^!\rN$ is
an epimorphism in the abelian category $X_\gamma\qcoh$.
\end{lem}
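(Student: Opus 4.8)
The plan is to transfer everything to the category of $\Gamma$\+systems via the embedding $G=({-})|_\Gamma\:\X\tors\rarrow(\X,\Gamma)\syst$, where by construction $(\rM|_\Gamma)_{(\gamma)}=\rM_{(X_\gamma)}=i_\gamma^!\rM$, so that the $\gamma$\+component of $G(f)=f|_\Gamma$ is precisely $i_\gamma^!f$. The foundational input I would extract is from the proof of Proposition~\ref{Gamma-systems-abelian}: the forgetful functors $(\X,\Gamma)\syst\rarrow X_\gamma\qcoh$, \,$\boM\mapsto\boM_{(\gamma)}$, preserve kernels and cokernels and are jointly faithful. Hence kernels and cokernels in $(\X,\Gamma)\syst$ are computed componentwise, and a morphism $\phi$ of $\Gamma$\+systems is a monomorphism (resp.\ an epimorphism) if and only if each $\phi_{(\gamma)}$ is one in $X_\gamma\qcoh$. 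Applied to $\phi=f|_\Gamma$, this says: $f|_\Gamma$ is mono (resp.\ epi) if and only if every $i_\gamma^!f$ is mono (resp.\ epi).

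The second ingredient is a purely formal property of $G$. It is faithful, indeed fully faithful, by Lemma~\ref{torsion-sheaves-as-Giraud-subcat}; and a faithful (additive) functor reflects both monomorphisms and epimorphisms. Moreover $G$ is a right adjoint by Lemma~\ref{torsion-sheaves-Gamma-systems-adjunction}, hence left exact, hence preserves monomorphisms.

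With these in hand, part~(a) is immediate: $f$ is a monomorphism in $\X\tors$ if and only if $G(f)$ is a monomorphism in $(\X,\Gamma)\syst$ (``only if'' because $G$ is left exact and preserves monos, ``if'' because $G$ is faithful and reflects them), which by the componentwise criterion holds if and only if every $i_\gamma^!f$ is a monomorphism. For part~(b) I would only use the reflecting half: if every $i_\gamma^!f$ is an epimorphism, then $G(f)=f|_\Gamma$ is an epimorphism in $(\X,\Gamma)\syst$ by the componentwise criterion, and since $G$ is faithful it reflects this epimorphism down to $f$ in $\X\tors$.

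The main point to be careful about---and the reason part~(b) is stated as a one-sided implication rather than an equivalence---is that $G$, being the inclusion of a Giraud subcategory, is left exact but \emph{not} right exact, so it does not preserve epimorphisms; thus no converse is available by this method. Indeed an epimorphism $f$ in $\X\tors$ may well fail to have every $i_\gamma^!f$ epic, because the cokernel of $f$ in $\X\tors$ is obtained by applying the exact reflector $({-})^+$ (exact by Proposition~\ref{Giraud-subcategory-abstract-prop}) to the componentwise cokernel, and the latter $\Gamma$\+system can be nonzero while its reflection vanishes. Granting the componentwise computation of kernels and cokernels in $(\X,\Gamma)\syst$, both assertions then follow formally.
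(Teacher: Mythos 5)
Your proof is correct. It runs in the same circle of ideas as the paper's---both arguments live entirely inside the adjunction of Lemma~\ref{torsion-sheaves-Gamma-systems-adjunction} and the componentwise computation of kernels and cokernels of $\Gamma$\+systems from the proof of Proposition~\ref{Gamma-systems-abelian}---but the finishing moves differ. For part~(a), the paper argues directly in $\X\tors$: it takes the kernel $\rK$ of~$f$, uses left exactness of each~$i_\gamma^!$ to identify $i_\gamma^!\rK$ with $\ker(i_\gamma^!f)=0$, and concludes $\rK=0$; your version replaces this by the formal principle that the faithful embedding $({-})|_\Gamma$ reflects monomorphisms, which is the same content in different packaging. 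The genuine divergence is in part~(b): after the common first step (componentwise epi implies $f|_\Gamma$ is an epimorphism of $\Gamma$\+systems), the paper applies the right exact reflector $({-})^+$ and uses the counit isomorphism $(f|_\Gamma)^+\simeq f$, whereas you invoke only the elementary fact that a faithful functor reflects epimorphisms. Your finish is more economical---it needs neither the exactness of $({-})^+$ nor the counit isomorphism, just faithfulness from Lemma~\ref{torsion-sheaves-as-Giraud-subcat}---while the paper's version keeps the Giraud-subcategory mechanism of Proposition~\ref{Giraud-subcategory-abstract-prop} in view, which is also what explains, as your closing remark correctly observes, why no converse to~(b) is available: the inclusion $({-})|_\Gamma$ is left exact but not right exact, so it does not preserve epimorphisms.
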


\begin{proof}
 Part~(a): for any closed immersion of ind-schemes $i\:\Z\rarrow\X$,
the functor $i^!\:\X\tors\rarrow\Z\tors$ is left exact, since it has
a left adjoint functor $i_*\:\Z\tors\rarrow\X\tors$.
 In particular, for a closed subscheme $Z$ in $\X$ with the closed
immersion morphism $i\:Z\rarrow\X$, the functor $i^!\:\X\tors\rarrow
Z\qcoh$ is left exact.
 This proves the ``only if'' assertion.

 To prove the ``if'', assume that the morphism $i_\gamma^!f$ is
a monomorphism in $X_\gamma\qcoh$ for every $\gamma\in\Gamma$.
 Let $\rK$ be the kernel of~$f$ in $\X\tors$.
 Then $i_\gamma^!\rK$ is the kernel of the morphism $i_\gamma^!f$
in $X_\gamma\qcoh$, since the functor~$i_\gamma^!$ is left exact.
 So we have $\rK_{(X_\gamma)}=i_\gamma^!\rK=0$ for all
$\gamma\in\Gamma$, and it follows immediately that $\rK=0$.

 Part~(b): assume that the morphism $i_\gamma^!f$ is
an epimorphism in $X_\gamma\qcoh$ for every $\gamma\in\Gamma$.
 This means that $f|_\Gamma\:\rM|_\Gamma\rarrow\rN|_\Gamma$ is
an epimorphism of $\Gamma$\+systems on~$\X$.
 Since the functor $({-})^+\:(\X,\Gamma)\syst\rarrow\X\tors$ is (right)
exact, it follows that $(f_\Gamma)^+\:(\rM|_\Gamma)^+\rarrow
(\rN|_\Gamma)^+$ is an epimorphism in $\X\tors$.
 It remains to recall that the adjunction $(f_\Gamma)^+\rarrow f$ is
an isomorphism.
\end{proof}

\begin{lem} \label{closed-under-subquotients-characterized}
 Let\/ $\sA$, $\sB$ be an abelian categories and $F\:\sA\rarrow\sB$ be
a fully faithful exact functor which has a right adjoint functor
$H\:\sB\rarrow\sA$.
 Then the essential image $F(\sA)\subset\sB$ is a full subcategory
closed under subobjects and quotients in\/ $\sB$ if and only if
the adjunction morphism $FH(B)\rarrow B$ is a monomorphism in\/ $\sB$
for every $B\in\sB$.
\end{lem}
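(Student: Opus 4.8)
The plan is to work throughout with the unit $\eta\:\id_\sA\rarrow HF$ and the counit $\epsilon\:FH\rarrow\id_\sB$ of the adjunction, and to begin by recording the standard formal consequences of $F$ being a fully faithful, exact left adjoint. Since $F$ is fully faithful, the unit $\eta$ is an isomorphism; the triangle identity $H\epsilon\circ\eta H=\id_H$ then shows that $H\epsilon_B\:HFH(B)\rarrow H(B)$ is an isomorphism for every $B\in\sB$, while dually $\epsilon_{F(A)}$ is an isomorphism for every $A\in\sA$. Consequently an object $B\in\sB$ lies in the essential image $F(\sA)$ \emph{if and only if} $\epsilon_B$ is an isomorphism. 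Finally, since $F$ is exact and faithful it reflects zero objects, monomorphisms, and epimorphisms: if $F(X)=0$ then $X=0$, and if $F(u)$ is epi (resp.\ mono) then so is $u$, as one sees by applying the exact functor $F$ to $\coker u$ (resp.\ $\ker u$) and using faithfulness. I would establish these four facts first.

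For the ``only if'' direction I only use closure under subobjects. Fix $B\in\sB$ and let $K=\ker\epsilon_B$, a subobject of $FH(B)\in F(\sA)$; closure under subobjects gives $K\in F(\sA)$. Applying the left exact functor $H$ to the exact sequence $0\rarrow K\rarrow FH(B)\rarrow B$ (last map $\epsilon_B$) and using that $H\epsilon_B$ is an isomorphism, hence a monomorphism, I find $H(K)=\ker(H\epsilon_B)=0$. But $K\in F(\sA)$ means $\epsilon_K\:FH(K)\rarrow K$ is an isomorphism, and $FH(K)=F(0)=0$, whence $K=0$ and $\epsilon_B$ is a monomorphism.

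For the ``if'' direction, assume $\epsilon_B$ is mono for all $B$; I treat quotients first, then subobjects (and may write any object of $F(\sA)$ as $F(A)$). Given an epimorphism $q\:F(A)\twoheadrightarrow Q$, naturality of $\epsilon$ gives $q\circ\epsilon_{F(A)}=\epsilon_Q\circ FH(q)$; the left-hand side is an epimorphism, so $\epsilon_Q$ is epi, and together with the hypothesis it is an isomorphism, i.e.\ $Q\in F(\sA)$. Now let $\iota\:B'\rarrow F(A)$ be a monomorphism, put $\bar B=F(A)/B'$ with projection $q\:F(A)\twoheadrightarrow\bar B$, so that $\bar B\in F(\sA)$ by the quotient case and $\epsilon_{\bar B}$ is an isomorphism. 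The decisive point is that $H(q)$ is an epimorphism: the naturality argument shows $FH(q)=F(Hq)$ is epi, and $F$ reflects epimorphisms, so $Hq$ is epi. Applying the left exact $H$ to $0\rarrow B'\rarrow F(A)\rarrow\bar B\rarrow0$ therefore produces a \emph{short} exact sequence $0\rarrow H(B')\rarrow HF(A)\rarrow H(\bar B)\rarrow0$, and applying the exact functor $F$ yields a short exact sequence $0\rarrow FH(B')\rarrow FHF(A)\rarrow FH(\bar B)\rarrow0$. The counit maps $\epsilon_{B'},\epsilon_{F(A)},\epsilon_{\bar B}$ form a morphism from this short exact sequence to $0\rarrow B'\rarrow F(A)\rarrow\bar B\rarrow0$ (commutativity is naturality of $\epsilon$ with respect to $\iota$ and $q$), and its middle and right vertical maps are isomorphisms; the five lemma (equivalently, the snake lemma applied to this ladder) then forces $\epsilon_{B'}$ to be an isomorphism, so $B'\in F(\sA)$.

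The main obstacle is precisely the closure under subobjects in the ``if'' direction. The naive attempt to prove directly that $\epsilon_{B'}$ is an epimorphism stalls, because $H$ is merely left exact and $H(\coker\epsilon_{B'})$ retains too little information to conclude that the cokernel vanishes. The idea that unlocks it is to \emph{bootstrap from the easy closure under quotients}: passing to $\bar B=F(A)/B'\in F(\sA)$ recasts the subobject problem as a comparison of two short exact sequences via the counit, to which the five lemma applies cleanly; the one genuinely nontrivial technical input is that $H$ preserves the epimorphism $q$, which is exactly where the hypotheses that $F$ be faithful and exact are used.
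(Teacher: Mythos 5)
Your proof is correct, but it takes a genuinely different route from the paper's at both of the nontrivial points. The paper opens with the observation that the essential image of a fully faithful exact functor between abelian categories is automatically closed under kernels and cokernels of morphisms between its objects, so that closure under subobjects is \emph{equivalent} to closure under quotients; this reduces the ``if'' direction to the quotient case (which both proofs handle identically, via naturality of the counit) and makes your five-lemma ladder unnecessary. Indeed, once you know $\bar B\in F(\sA)$, the subobject $B'=\ker(q)$ is the kernel of a morphism between objects of $F(\sA)$: writing $q$, up to isomorphism, as $F(u)$ by full faithfulness and using exactness gives $B'\simeq F(\ker u)\in F(\sA)$ at once, with no need to verify that $H(q)$ is an epimorphism. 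For the ``only if'' direction, the paper takes the \emph{image} $C$ of the counit $c\:FH(B)\rarrow B$, notes $C\in F(\sA)$ by closure under quotients, and invokes the universal property of the counit to conclude that the epimorphism $FH(B)\rarrow C$ is an isomorphism, so that $c$ is a monomorphism; you instead take the \emph{kernel} $K$ of the counit, use closure under subobjects to get $K\in F(\sA)$, and kill $K$ by the computation $K\simeq FH(K)=F(0)=0$ based on left exactness of $H$ and invertibility of $H\epsilon_B$. Both arguments are sound: the paper's kernel/cokernel observation is the more economical tool, while your version is more self-contained at the level of diagram chases and makes explicit exactly which formal consequences of the adjunction (reflection of epimorphisms by $F$, invertibility of $H\epsilon$ and of $\epsilon F$) are being used.
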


\begin{proof}
 Notice that the essential image of a fully faithful exact functor
between abelian categories is always a full subcategory closed under
kernels and cokernels.
 Hence $f(\sA)$ is closed under subobjects in $\sB$ \emph{if and only
if} it is closed under quotients.
 Now we can proceed with a proof of the lemma.

 ``If'': let $A\in F(\sA)$ be an object and $A\rarrow B$ be
an epimorphism in~$\sB$.
 Then the adjunction morphism $FH(A)\rarrow A$ is an isomorphism,
and it follows from commutativity of the obvious diagram that
the adjunction morphism $FH(B)\rarrow B$ is an epimorphism.
 Since the morphism $FH(B)\rarrow B$ is a monomorphism by assumption,
it is an isomorphism.
 Thus $B\in F(\sA)$.
 
 ``Only if'': let $B\in\sB$ be an object and $c\:FH(B)\rarrow B$ be
the adjunction morphism.
 Let $C\in\sB$ be the image of~$c$.
 Then $C$ is a quotient object of an object from $F(\sA)$; by
assumption, it follows that $C\in F(\sA)$.
 Given an object $B\in\sB$, the object $FH(B)\in F(\sA)$ together with
the morphism~$c$ is characterized by the universal property that
any morphism into $B$ from an object of $F(\sA)$  factorizes uniquely
through~$c$.
 Now the monomorphism $C\rarrow B$ has the same universal property;
hence the epimorphism $FH(B)\rarrow C$ is an isomorphism and $c$~is
a monomorphism.
\end{proof}

\begin{lem} \label{injectives-in-Grothendieck-characterized}
 Let\/ $\sA$ be a Grothendieck abelian category with a set of
generators\/ $\sS\subset\sA$.
 Then an object $J\in\sA$ is injective if and only if any morphism
into $J$ from a subobject of any object $S\in\sS$ can be extended
to a morphism $S\rarrow J$ in\/~$\sA$.
\end{lem}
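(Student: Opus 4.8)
The plan is to prove a categorical version of Baer's criterion, with the generators $S\in\sS$ playing the role of the ring $R$ and their subobjects playing the role of ideals. The ``only if'' direction is immediate from the definition of injectivity: if $J$ is injective and $K\rarrow S$ is a monomorphism with $S\in\sS$, then any morphism $K\rarrow J$ extends along $K\rarrow S$, which is a special case of the extension property. For the ``if'' direction, suppose the stated extension property holds; I want to show that any morphism $f\:A\rarrow J$ from a subobject $A$ of an arbitrary object $B\in\sA$ extends to~$B$. First I would invoke Zorn's lemma on the set $P$ of all pairs $(A',f')$, where $A\subseteq A'\subseteq B$ is an intermediate subobject and $f'\:A'\rarrow J$ restricts to $f$ on $A$, ordered by extension. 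This $P$ is a genuine set because a Grothendieck category is well-powered and locally small; and every chain has an upper bound, since the union of an increasing chain of subobjects of $B$ is again a subobject of $B$ (here one uses that filtered colimits are exact in $\sA$, so the colimit of monomorphisms into $B$ is a monomorphism), while the component morphisms glue to a single morphism out of the colimit. Let $(A_0,f_0)$ be a maximal element; the goal is to show $A_0=B$.

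Suppose for contradiction that $A_0\subsetneq B$ is a proper subobject. Since $\sS$ is a set of generators, the morphisms $S\rarrow B$ with $S\in\sS$ are jointly epimorphic; as $A_0\subsetneq B$ is proper, at least one such morphism $g\:S\rarrow B$ does not factor through $A_0\rarrow B$. Form the pullback $K=A_0\times_B S$; the projection $K\rarrow S$ is a monomorphism (being a base change of the monomorphism $A_0\rarrow B$), exhibiting $K$ as a subobject of the generator $S$, and there is an induced composite morphism $K\rarrow A_0\rarrow J$ obtained from the other projection followed by~$f_0$. By hypothesis this composite extends to a morphism $h\:S\rarrow J$, so that $h$ restricted to $K$ equals $f_0$ composed with the projection $K\rarrow A_0$. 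Now consider the subobject $A_1=A_0+\mathrm{im}(g)\subseteq B$; it properly contains $A_0$ precisely because $g$ does not factor through $A_0$. The decisive point is that, in the abelian category $\sA$, the square with corners $K$, $S$, $A_0$, $A_1$ is bicartesian, so $A_1$ is the pushout of $A_0\larrow K\rarrow S$. Since $f_0$ and $h$ agree on $K$ by construction, the pushout property produces a morphism $f_1\:A_1\rarrow J$ restricting to $f_0$ on $A_0$. Then $(A_1,f_1)>(A_0,f_0)$ in $P$, contradicting maximality; hence $A_0=B$, the morphism $f$ extends to $B$, and $J$ is injective.

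I expect the main obstacle to be the gluing step: verifying that the two morphisms $f_0\:A_0\rarrow J$ and $h\:S\rarrow J$ assemble into a single well-defined morphism on the enlarged subobject~$A_1$. This rests on the fact that the commutative square
$$
\xymatrix{
 K \ar[r] \ar[d] & S \ar[d]^-g \\
 A_0 \ar[r] & A_1
}
$$
with $K$ the pullback $A_0\times_B S$ and $A_1$ the image of the morphism $A_0\oplus S\rarrow B$ is simultaneously a pullback and a pushout in~$\sA$ --- a standard but essential feature of abelian categories --- combined with the compatibility of $h$ and $f_0$ on $K$ coming from the way $h$ was chosen to extend the composite $K\rarrow A_0\rarrow J$. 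The remaining ingredients, namely the Zorn's lemma set-up and the extraction of a non-factoring morphism out of a generator, are routine consequences of $\sA$ being a Grothendieck category with the generating set~$\sS$.
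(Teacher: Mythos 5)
Your proof is correct and is exactly the argument the paper has in mind: the paper's proof consists of the single remark that this is ``a categorical version of the Baer criterion of injectivity of modules, provable in the same way using the Zorn lemma,'' and your write-up (Zorn's lemma on partial extensions, then enlarging a maximal one via a morphism from a generator not factoring through it, glued along the pullback/pushout square) is precisely that argument with the details filled in. No gaps; the bicartesian-square gluing step you flag as the main obstacle is indeed valid in any abelian category, since $A_1=\mathrm{im}(A_0\oplus S\to B)$ is the cokernel of $K\to A_0\oplus S$ and hence the pushout of $A_0\larrow K\rarrow S$.
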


\begin{proof}
 This is a categorical version of the Baer criterion of injectivity
of modules, provable in the same way using the Zorn lemma.
\end{proof}

\begin{lem} \label{closed-subschemes-torsion-subcategories}
\textup{(a)} For any closed subscheme $Z\subset\X$ with the closed
immersion morphism $i\:Z\rarrow\X$, the direct image functor
$i_*\:Z\qcoh\rarrow\X\tors$ is exact and fully faithful.
 Its essential image is closed under subobjects and quotients in
the abelian category\/ $\X\tors$. \par
\textup{(b)} For every\/ $\gamma\in\Gamma$, choose a set of generators\/
$\sS_\gamma\subset X_\gamma\qcoh$ of the abelian category of
quasi-coherent sheaves on~$X_\gamma$.
 Then the set\/ $\sS\subset\X\tors$ of all quasi-coherent torsion
sheaves of the form $i_\gamma{}_*\cS$, where\/ $\gamma\in\Gamma$ and
$\cS\in\sS_\gamma$, is a set of generators of the abelian category\/
$\X\tors$.
\end{lem}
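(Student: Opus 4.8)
The plan is to reduce both statements to the level of the schemes $X_\gamma$ through the functors $i_\gamma^!=({-})_{(X_\gamma)}\:\X\tors\rarrow X_\gamma\qcoh$ and the monomorphism criterion of Lemma~\ref{torsion-monomorphisms-characterized}(a). Since $Z$ is a scheme, the morphism $Z\rarrow\X$ factorizes through some index $\gamma_0\in\Gamma$, and I write $i$ as a composition $Z\overset j\rarrow X_{\gamma_0}\rarrow\X$ with $j$ a closed immersion of schemes. For part~(a), the functor $i_*$ has the right adjoint $i^!$, so it is right exact; it remains to prove that it preserves monomorphisms. Fixing $\gamma\in\Gamma$ and forming the pullback $W=X_\gamma\times_\X Z$ with closed immersions $g\:W\rarrow X_\gamma$ and $k\:W\rarrow Z$, the definition of the direct image of torsion sheaves (Section~\ref{torsion-direct-images-subsecn}) gives $i_\gamma^!(i_*\M)=(i_*\M)_{(X_\gamma)}=g_*(\M_{(W)})=g_*k^!\M$ for every $\M\in Z\qcoh$, where $W$ is reasonable in $Z$ by Lemma~\ref{base-change-composition-reasonable}(a). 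As $g_*$ is exact and $k^!$ is left exact, the composite $g_*k^!$ is left exact, so Lemma~\ref{torsion-monomorphisms-characterized}(a) shows that $i_*$ preserves monomorphisms; being also right exact, it is exact.

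For full faithfulness I would compute the adjunction unit $\M\rarrow i^!i_*\M$. Here $i^!\rM=j^!(\rM_{(X_{\gamma_0})})$, and for every $\gamma\ge\gamma_0$ the fact that $X_\gamma\rarrow\X$ is a monomorphism through which $Z\rarrow\X$ factors yields $W=X_\gamma\times_\X Z\simeq Z$; hence $i^!i_*\M\simeq j^!j_*\M$ and the unit is identified with the scheme-level unit $\M\rarrow j^!j_*\M$, which is an isomorphism for the closed immersion $j$ (Section~\ref{qcoh-sheaves-and-functors-subsecn}). To see that the essential image of $i_*$ is closed under subobjects and quotients, I would apply Lemma~\ref{closed-under-subquotients-characterized} to $F=i_*$ and $H=i^!$; what remains is to check that the counit $i_*i^!\rM\rarrow\rM$ is a monomorphism. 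Applying $i_\gamma^!$ for the cofinal family of indices $\gamma\ge\gamma_0$ and using $W\simeq Z$ again, this counit is identified on each $X_\gamma$ with the scheme-level counit $j_{\gamma*}j_\gamma^!(\rM_{(X_\gamma)})\rarrow\rM_{(X_\gamma)}$ for the closed immersion $j_\gamma\:Z\rarrow X_\gamma$, which is a monomorphism (Section~\ref{qcoh-sheaves-and-functors-subsecn}). A morphism in $\X\tors$ that becomes a monomorphism under $i_\gamma^!$ on a cofinal set of indices is itself a monomorphism, because $i_{\gamma\delta}^!$ is left exact; so Lemma~\ref{torsion-monomorphisms-characterized}(a) finishes part~(a).

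For part~(b) I would verify the generation criterion: for every $\rM\in\X\tors$ and every proper subobject $\rK\subsetneq\rM$, some object of $\sS$ admits a morphism to $\rM$ not factoring through $\rK$. Since each $i_\gamma^!$ is left exact and the functor $\rM\mapsto\rM|_\Gamma$ is fully faithful (Lemma~\ref{torsion-sheaves-as-Giraud-subcat}), the properness of $\rK\subsetneq\rM$ forces $i_\gamma^!\rK\subsetneq i_\gamma^!\rM$ to be proper for at least one $\gamma\in\Gamma$. As $\sS_\gamma$ generates $X_\gamma\qcoh$, there are $\cS\in\sS_\gamma$ and a morphism $\phi\:\cS\rarrow i_\gamma^!\rM$ not factoring through $i_\gamma^!\rK$; the adjoint morphism $\tilde\phi\:i_{\gamma*}\cS\rarrow\rM$ then cannot factor through $\rK$, since by naturality of the adjunction $(i_{\gamma*},i_\gamma^!)$ such a factorization would produce a factorization of $\phi$ through $i_\gamma^!\rK$. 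Thus $\tilde\phi$ witnesses generation and $\sS\subset\X\tors$ is a set of generators.

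I expect the main obstacle to be the bookkeeping in part~(a): identifying, on each $X_\gamma$, the torsion-sheaf adjunction unit and counit for $(i_*,i^!)$ with the scheme-level data for $(j_{\gamma*},j_\gamma^!)$. This rests on the base-change identity $(i_*\M)_{(X_\gamma)}=g_*k^!\M$ and on the simplification $X_\gamma\times_\X Z\simeq Z$ for $\gamma\ge\gamma_0$; once these compatibilities are recorded, Lemma~\ref{torsion-monomorphisms-characterized}(a) reduces the exactness, full faithfulness, and subobject-closure assertions to standard properties of closed immersions of schemes.
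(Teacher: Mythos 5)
Your proof is correct. In part~(a) you follow essentially the paper's own route: both arguments reduce everything to the schemes $X_\gamma$, identify the components of the unit and counit of the adjunction $(i_*,i^!)$ with the scheme-level unit and counit for closed immersions of schemes, and conclude via Lemma~\ref{torsion-monomorphisms-characterized}(a) together with Lemma~\ref{closed-under-subquotients-characterized}; the differences are cosmetic (the paper works with an arbitrary closed immersion of ind-schemes $\Z\rarrow\X$ and checks the component maps for \emph{all} reasonable closed subschemes $Y\subset\X$ at once, obtaining exactness of $i_*$ for free from the existence of adjoints on both sides, $i^*$ and~$i^!$, whereas you fix a factorization through $X_{\gamma_0}$, argue by cofinality, and prove preservation of monomorphisms by hand from the formula $i_\gamma^!\,i_*=g_*k^!$). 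In part~(b) your route genuinely differs: you verify the subobject-detection criterion for generators directly, using left exactness of~$i_\gamma^!$, full faithfulness of $\rM\longmapsto\rM|_\Gamma$ (Lemma~\ref{torsion-sheaves-as-Giraud-subcat}) to find an index~$\gamma$ with $i_\gamma^!\rK$ a proper subobject of $i_\gamma^!\rM$, and the adjunction $(i_\gamma{}_*,i_\gamma^!)$ to produce a morphism from an object of $\sS$ not factoring through~$\rK$. The paper instead transfers the set of generators of $(\X,\Gamma)\syst$ constructed in Proposition~\ref{Gamma-systems-abelian} along the essentially surjective, exact, coproduct-preserving functor $\boM\longmapsto\boM^+$, or alternatively exhibits every $\rM$ as a quotient of $\coprod_{\gamma\in\Gamma}i_\gamma{}_*i_\gamma^!\rM$ using Lemma~\ref{torsion-monomorphisms-characterized}(b). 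Your Hom-level argument avoids coproducts entirely and yields generation in the jointly-faithful sense; the paper's argument is more structural and produces the quotient-of-a-coproduct form of generation directly. Since $\X\tors$ is already known to be a Grothendieck category at this point, the two characterizations of a set of generators agree, so both proofs establish the same statement.
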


\begin{proof}
 Part~(a): more generally, let $i\:\Z\rarrow\X$ be a closed immersion
of (reasonable) ind-schemes.
 Then, following the discussion in
Section~\ref{torsion-inverse-images-subsecn}, the direct image functor
$i_*\:\Z\tors\rarrow\X\tors$ has adjoints on both sides, $i^*$
and~$i^!$; so $i_*$~is an exact functor.

 Furthermore, let $\rN$ be a quasi-coherent torsion sheaf on~$\Z$.
 Then, following the construction in
Section~\ref{torsion-direct-images-subsecn}, the quasi-coherent
torsion sheaf $i_*\rN$ on $\X$ is defined by the rule
$(i_*\rN)_{(Y)}=i_Y{}_*(\rN_{(W)})$ for all reasonable closed
subschemes $Y\subset\X$, where $i_Y\:W=Y\times_\X\Z\rarrow Y$.
 The quasi-coherent torsion sheaf $i^!i_*\rN$ on $\Z$ is described by
the rule $(i^!i_*\rN)_{(W')}=k^!i_Y{}_*(\rN_{(W)})$ for all reasonable
closed subschemes $W'\subset W$, where $k\:W'\rarrow Y$ is
the composition $W'\rarrow W\rarrow Y$ of the closed immersion
$W'\rarrow W$ and the morphism $i_Y\:W\rarrow Y$ (which is also
a closed immersion, by the definition of a closed immersion of
ind-schemes).
 Clearly, the adjunction morphism $\rN\rarrow i^!i_*\rN$ is
an isomorphism in $\Z\tors$ (because the adjunction morphism
$\rN_{(W)}\rarrow i_Y^!i_Y{}_*\rN_{(W)}$ is an isomorphism in $W\qcoh$
for every~$Y$; cf.\ Section~\ref{qcoh-sheaves-and-functors-subsecn}).
 It follows that the direct image functor~$i_*$ is fully faithful.

 Finally, let $\rM$ be a quasi-coherent torsion sheaf on~$X$.
 Then the quasi-coherent torsion sheaf $i^!\rM$ on $\Z$ is defined as
spelled out in Section~\ref{torsion-inverse-images-subsecn}.
 Hence the quasi-coherent torsion sheaf $i_*i^!\rM$ on $\X$ is
described by the rule $(i_*i^!\rM)_{(Y)}=i_Y{}_*i_Y^!\rM_{(Y)}$
for all reasonable closed subschemes $Y\subset\X$, where $i_Y\:W=
Y\times_\X\Z\rarrow Y$.
 The adjunction morphisms $i_Y{}_*i_Y^!\rM_{(Y)}\rarrow\rM_{(Y)}$ are
monomorphisms in $Y\qcoh$ for all~$Y$ (since $i_Y$~is a closed
immersion of schemes).
 By Lemma~\ref{torsion-monomorphisms-characterized}(a), it follows that
the adjunction morphism $i_*i^!\rM\rarrow\rM$ is a monomorphism in
$\X\tors$.
 It remains to apply Lemma~\ref{closed-under-subquotients-characterized}
in order to conclude that the essential image of the functor
$i_*\:\Z\tors\rarrow\X\tors$ is a full subcategory closed under
subobjects and quotients.

 Part~(b): a set of generators of the abelian category
$(\X,\Gamma)\syst$ was constructed in the proof of
Proposition~\ref{Gamma-systems-abelian}.
 The subset $\sS\subset\X\tors$ is the image of this set of generators
under the functor $\boM\longmapsto\boM^+\:(X,\Gamma)\syst\rarrow
\X\tors$.
 This functor is essentially surjective on objects, exact, and preserves
coproducts (being a left adjoint); hence the image of any set of
generators under this functor is a set of generators.
 (See the arguments in Section~\ref{Gamma-systems-subsecn} for
the details.)

 Alternatively, one can notice that, for every quasi-coherent torsion
sheaf $\rM\in\X\tors$, the natural morphism
$$
 \coprod\nolimits_{\gamma\in\Gamma}i_\gamma{}_*i_\gamma^!\rM
 \lrarrow\rM
$$
is an epimorphism in $\X\tors$ (by
Lemma~\ref{torsion-monomorphisms-characterized}(b)).
 Then the assertion easily follows.
\end{proof}

\begin{prop} \label{torsion-injectives-characterized}
\textup{(a)} For any closed subscheme $Z\subset\X$ with the closed
immersion morphism $i\:Z\rarrow\X$, the functor $i^!\:\X\tors\rarrow
Z\qcoh$ takes injective objects to injective objects. \par
\textup{(b)} A quasi-coherent torsion sheaf $\rJ\in\X\tors$ is
an injective object in\/ $\X\tors$ if and only if, for every\/
$\gamma\in\Gamma$, the quasi-coherent sheaf $i_\gamma^!\rJ\in
X_\gamma\qcoh$ is an injective object in $X_\gamma\qcoh$.
\end{prop}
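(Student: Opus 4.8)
The plan is to derive both parts from the single fact, recorded in Lemma~\ref{closed-subschemes-torsion-subcategories}(a), that for a closed subscheme $Z\subset\X$ the direct image $i_*\:Z\qcoh\rarrow\X\tors$ is exact and admits $i^!$ as a right adjoint. For part~(a) I would apply the standard principle that a right adjoint of an exact functor between abelian categories preserves injectives: for any $\rJ\in\X\tors$ the adjunction gives a natural isomorphism $\Hom_{Z\qcoh}({-},i^!\rJ)\simeq\Hom_{\X\tors}(i_*({-}),\rJ)$, and when $\rJ$ is injective the right-hand side is an exact functor of its argument (since $i_*$ is exact and $\Hom_{\X\tors}({-},\rJ)$ is exact), so $i^!\rJ$ is injective in $Z\qcoh$. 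The ``only if'' direction of part~(b) is then the special case $Z=X_\gamma$.

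For the ``if'' direction of part~(b), assume each $i_\gamma^!\rJ$ is injective and test injectivity of $\rJ$ through the Baer-type criterion of Lemma~\ref{injectives-in-Grothendieck-characterized}, applied to the set of generators $\sS$ of $\X\tors$ produced in Lemma~\ref{closed-subschemes-torsion-subcategories}(b), whose elements are the sheaves $i_\gamma{}_*\cS$ with $\cS\in\sS_\gamma$. Thus it suffices to extend any morphism $\rK\rarrow\rJ$ out of a subobject $\rK\subset i_\gamma{}_*\cS$ to a morphism $i_\gamma{}_*\cS\rarrow\rJ$. The crucial structural input is that, again by Lemma~\ref{closed-subschemes-torsion-subcategories}(a), the essential image of $i_\gamma{}_*$ is closed under subobjects; combined with full faithfulness this identifies $\rK$ with $i_\gamma{}_*\K$ for a uniquely determined subobject $\K\subset\cS$ in $X_\gamma\qcoh$. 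Passing the morphism $i_\gamma{}_*\K\rarrow\rJ$ across the adjunction $i_\gamma{}_*\dashv i_\gamma^!$ gives $\K\rarrow i_\gamma^!\rJ$, which extends to $\cS\rarrow i_\gamma^!\rJ$ by injectivity of $i_\gamma^!\rJ$; transporting back produces the required extension $i_\gamma{}_*\cS\rarrow\rJ$, and naturality of the adjunction ensures it restricts to the original map on $i_\gamma{}_*\K$.

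The step I expect to be the only genuine obstacle is the identification of an arbitrary subobject $\rK\subset i_\gamma{}_*\cS$ with the image under $i_\gamma{}_*$ of a subobject of $\cS$; everything surrounding it is a formal adjunction computation. This identification rests on the closure of the essential image of $i_\gamma{}_*$ under subobjects, which was derived in Lemma~\ref{closed-subschemes-torsion-subcategories}(a) from Lemma~\ref{closed-under-subquotients-characterized} together with the fact that the adjunction morphism $i_\gamma{}_*i_\gamma^!\rM\rarrow\rM$ is a monomorphism (Lemma~\ref{torsion-monomorphisms-characterized}(a)).
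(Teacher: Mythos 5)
Your proposal is correct and follows essentially the same route as the paper's own proof: part~(a) via the right-adjoint-of-an-exact-functor principle, and the ``if'' direction of part~(b) via the Baer criterion of Lemma~\ref{injectives-in-Grothendieck-characterized} applied to the generators $i_\gamma{}_*\cS$, with the key step of identifying a subobject of $i_\gamma{}_*\cS$ as $i_\gamma{}_*\K$ for $\K\subset\cS$ supplied by Lemma~\ref{closed-subschemes-torsion-subcategories}(a), followed by the adjunction transport and extension over $i_\gamma^!\rJ$.
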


\begin{proof}
 Part~(a): more generally, for any closed immersion of (reasonable)
ind-schemes $i\:\Z\rarrow\X$, the functor $i_*\:\Z\tors\rarrow\X\tors$
is exact, as explained in the proof of
Lemma~\ref{closed-subschemes-torsion-subcategories}(a).
 The functor $i^!\:\X\tors\rarrow\Z\tors$ is right adjoint to~$i_*$;
so it takes injectives to injectives.
 Part~(b): the ``only if'' assertion is provided by part~(a).
 To prove the ``if'', one can apply
Lemma~\ref{injectives-in-Grothendieck-characterized}
to the set of generators of the Grothendieck category $\X\tors$
provided by Lemma~\ref{closed-subschemes-torsion-subcategories}(b).
 This shows that a quasi-coherent torsion sheaf $\rJ$ on $\X$ is
injective whenever, for any $\M\in X_\gamma\qcoh$ and a subobject
$\rK\subset i_\gamma{}_*\M$, \ $\rK\in\X\tors$, any morphism
$\rK\rarrow\rJ$ can be extended to a morphism $i_\gamma{}_*\M
\rarrow\rJ$ in $\X\tors$.
 By Lemma~\ref{closed-subschemes-torsion-subcategories}(a), there is
a quasi-coherent subsheaf $\N\subset\M$ on $X_\gamma$ such that
$\rK=i_\gamma{}_*\N$.
 Now it suffices to extend a given morphism $\N\rarrow i_\gamma^!\rJ$ to
a morphism $\M\rarrow i_\gamma^!\rJ$ in $X_\gamma\qcoh$.
\end{proof}

\Section{Flat Pro-Quasi-Coherent Pro-Sheaves}
\label{flat-pro-sheaves-secn}

 In this section we continue to follow~\cite[Sections~7.11.3\+-4]{BD2}.

\subsection{Pro-quasi-coherent pro-sheaves}  \label{pro-sheaves-subsecn}
 Let $\X$ be an ind-scheme.
 A \emph{pro-quasi-coherent pro-sheaf} $\fP$ on $\X$ (called
an ``$\cO^p$\+module'' in~\cite{BD2}) is the following set of data:
\begin{enumerate}
\renewcommand{\theenumi}{\roman{enumi}}
\item to every closed subscheme $Y\subset\X$, a quasi-coherent sheaf
$\fP^{(Y)}$ on $Y$ is assigned;
\item to every pair of closed subschemes $Y$, $Z\subset\X$, \
$Z\subset Y$ with the closed immersion morphism $i_{ZY}\:Z\rarrow Y$,
a morphism $\fP^{(Y)}\rarrow i_{ZY}{}_*\fP^{(Z)}$ of quasi-coherent
sheaves on $Y$ is assigned;
\item such that the corresponding morphism $i_{ZY}^*\fP^{(Y)}\rarrow
\fP^{(Z)}$ of quasi-coherent sheaves on $Z$ is an isomorphism;
\item and, for every triple of closed subschemes $Y$, $Z$, $W\subset\X$,
\ $W\subset Z\subset Y$, the triangle diagram $\fP^{(Y)}\rarrow
i_{ZY}{}_*\fP^{(Z)}\rarrow i_{WY}{}_*\fP^{(W)}$ is commutative in
$Y\qcoh$.
\end{enumerate}

 Let $\X=\ilim_{\gamma\in\Gamma}X_\gamma$ be a representation of $\X$
by a inductive system of closed immersions of schemes.
 Then, in order to construct a pro-quasi-coherent pro-sheaf $\fP$ on
$\X$, it suffices to specify the quasi-coherent sheaves
$\fP^{(X_\gamma)}\in X_\gamma\qcoh$ for every $\gamma\in\Gamma$ and
the morphisms $\fP^{(X_\delta)}\rarrow i_{X_\gamma X_\delta}{}_*
\fP^{(X_\gamma)}$ for every $\gamma<\delta\in\Gamma$ satisfying
conditions~(iii\+-iv) for $W=X_\beta$, $Z=X_\gamma$, $Y=X_\delta$,
\ $\beta<\gamma<\delta\in\Gamma$.
 The quasi-coherent sheaves $\fP^{(Y)}$ for all the other closed
subsechemes $Y\subset\X$ and the related morphisms~(ii) can then be
uniquely recovered so that conditions~(iii\+-iv) are satisfied for
all closed subschemes in~$\X$.

 \emph{Morphisms of pro-quasi-coherent pro-sheaves} $f\:\fP\rarrow\fQ$
on $\X$ are defined in the obvious way.
 We denote the additive category of quasi-coherent torsion sheaves
on $\X$ by $\X\pro$.
 The following example shows that the category $\X\pro$ is usually
\emph{not} abelian, and generally not homologically well-behaved.
 In this paper, we will be interested in certain (better behaved)
full subcategories in $\X\pro$.

\begin{ex} \label{pro-sheaves-not-abelian-ex}
 Let $\X=\Spi\boZ_p$ be the ind-affine ind-scheme from
Example~\ref{Z-hat-ind-scheme}(2).
 Then the category $\X\pro$ is equivalent to the category of
$p$\+adically separated and complete abelian groups
(cf.\ Section~\ref{torsion-ind-affine-subsecn}(2)).
 Here an abelian group $P$ is said to be \emph{$p$\+adically
separated and complete} if its natural map to its $p$\+adic
completion $P\rarrow\varprojlim_{r\ge0}P/p^rP$ is an isomorphism.
 The equivalence of categories assigns to every $p$\+adically
separated and complete abelian group $P$ the pro-quasi-coherent
pro-sheaf $\fP$ with the quasi-coherent sheaf $\fP^{(X_r)}$
corresponding to the $\boZ/p^r\boZ$\+module $P/p^rP$ (for
the closed subscheme $X_r=\Spec\boZ/p^r\boZ\subset\Spi\boZ_p=\X$).
 Conversely, to every pro-quasi-coherent pro-sheaf $\fP$ on
$\Spi\boZ_p$, the $p$\+adically separated and complete abelian group
$P=\varprojlim_{r\ge0}\fP^{(X_r)}(X_r)$ is assigned.

 The category of $p$\+adically separated and complete abelian groups
(known also as \emph{separated $p$\+contramodules}) is
\emph{not} abelian~\cite[Example~2.7(1)]{Pcta}.
 So the category $(\Spi\boZ_p)\pro$ is not abelian.
 Similarly, the category $(\Spi\kk[[x]])\pro$ (for the ind-affine
ind-scheme $\X=\Spi\kk[[x]]$ from
Example~\ref{coalgebra-ind-scheme}(1)) is \emph{not} abelian, either.
\end{ex}

 On the other hand, for any ind-scheme $\X$, the category $\X\pro$ has
a natural (associative, commutative, and unital) tensor category
structure.
 The tensor product $\fP\ot^\X\fQ\in\X\pro$ of two pro-quasi-coherent
pro-sheaves $\fP$ and $\fQ\in\X\pro$ is defined by the rule
$(\fP\ot^\X\fQ)^{(Z)}=\fP^{(Z)}\ot_{\cO_Z}\fQ^{(Z)}\in Z\qcoh$ for all
closed subschemes $Z\subset\X$.
 As the inverse images of quasi-coherent sheaves preserve their tensor
products, the construction of the structure (iso)morphism~(ii\+-iii)
for $\fP\ot^\X\fQ$ is obvious.
 The unit object of this tensor structure is the ``pro-structure
pro-sheaf'' $\fO_\X\in\X\pro$, defined by the rule
$(\fO_\X)^{(Z)}=\cO_Z$ for all closed subschemes $Z\subset\X$.

 The aim of the next Section~\ref{pro-in-torsion-action-subsecn} is
to construct a structure of module category over $\X\pro$ on
the category of quasi-coherent torsion sheaves $\X\tors$.

\subsection{Action of pro-sheaves in torsion sheaves}
\label{pro-in-torsion-action-subsecn}
 Let $\X=\ilim_{\gamma\in\Gamma}X_\gamma$ be a reasonable ind-scheme
represented by an inductive system of closed immersions of reasonable
closed subschemes.
 We start with considering the category $(\X,\Gamma)\syst$ of
$\Gamma$\+systems on $\X$ (as defined in
Section~\ref{Gamma-systems-subsecn}) and constructing a structure of
module category over $\X\pro$ on $(\X,\Gamma)\syst$.

 Let $\fP\in\X\pro$ be a pro-quasi-coherent pro-sheaf
and $\boM\in(\X,\Gamma)\syst$ be a $\Gamma$\+system on~$\X$.
 The $\Gamma$\+system $\fP\ot_\X\boM$ on $\X$ is defined by the rule
$$
 (\fP\ot_\X\boM)_{(\gamma)}=
 \fP^{(X_\gamma)}\ot_{\cO_{X_\gamma}}\boM_{(\gamma)}.
$$
 The structure morphism $i_{\gamma\delta}{}_*
((\fP\ot_\X\boM)_{(\gamma)})\rarrow(\fP\ot_\X\boM)_{(\delta)}$ from
Section~\ref{Gamma-systems-subsecn}, item~(ii), is constructed as
the composition
\begin{multline*}
 i_{\gamma\delta}{}_*
 (\fP^{(X_\gamma)}\ot_{\cO_{X_\gamma}}\boM_{(\gamma)})
 \,\simeq\, i_{\gamma\delta}{}_*(i_{\gamma\delta}^*\fP^{(X_\delta)}
 \ot_{\cO_{X_\gamma}}\boM_{(\gamma)}) \\ \,\simeq\,
 \fP^{(X_\delta)}\ot_{\cO_{X_\delta}}i_{\gamma\delta}{}_*\boM_{(\gamma)}
 \lrarrow\fP^{(X_\delta)}\ot_{\cO_{X_\delta}}\boM_{(\delta)}
\end{multline*}
of the isomorphism $i_{\gamma\delta}{}_*(\fP^{(X_\gamma)}
\ot_{\cO_{X_\gamma}}\boM_{(\gamma)})\simeq i_{\gamma\delta}{}_*
(i_{\gamma\delta}^*\fP^{(X_\delta)}\ot_{\cO_{X_\gamma}}\boM_{(\gamma)})$
induced by the structure isomorphism $\fP^{(X_\gamma)}\simeq
i_{\gamma\delta}^*\fP^{(X_\delta)}$, the ``projection formula''
isomorphism $i_{\gamma\delta}{}_*(i_{\gamma\delta}^*\fP^{(X_\delta)}
\ot_{\cO_{X_\gamma}}\boM_{(\gamma)}) \simeq \fP^{(X_\delta)}
\ot_{\cO_{X_\delta}}i_{\gamma\delta}{}_*\boM_{(\gamma)}$, and
the morphism $\fP^{(X_\delta)}\ot_{\cO_{X_\delta}}
i_{\gamma\delta}{}_*\boM_{(\gamma)}\rarrow\fP^{(X_\delta)}
\ot_{\cO_{X_\delta}}\boM_{(\delta)}$ induced by the structure morphism
$i_{\gamma\delta}{}_*\boM_{(\gamma)}\rarrow\boM_{(\delta)}$.

 The tensor product functor
$$
 \ot_\X\:\X\pro\times(\X,\Gamma)\syst\lrarrow(\X,\Gamma)\syst
$$
endows the category of $\Gamma$\+systems on $\X$ with the structure
of an associative, unital \emph{module category} over the tensor
category of pro-quasi-coherent pro-sheaves $\X\pro$.

 The following lemma plays a key role.

\begin{lem} \label{tensor-null-Gamma-systems}
 Let\/ $\fP\in\X\pro$ be a pro-quasi-coherent prosheaf and\/ $\boM$
be a\/ $\Gamma$\+system on\/ $\X$ whose associated quasi-coherent
torsion sheaf vanishes, $\boM^+=0$.
 Then the quasi-coherent torsion sheaf associated with
the\/ $\Gamma$\+system $\fP\ot_\X\boM$ also vanishes,
$(\fP\ot_\X\boM)^+=0$.
\end{lem}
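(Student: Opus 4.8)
The plan is to reduce the assertion to a statement about stalks of modules and then settle it by an iterated use of the equational criterion for the vanishing of an element in a tensor product. First I would use that a quasi-coherent torsion sheaf $\rN$ on $\X$ is zero as soon as $i_\beta^!\rN=\rN_{(X_\beta)}=0$ for all $\beta\in\Gamma$ (by condition~(iii) of Section~\ref{qcoh-torsion-sheaves-subsecn}); applied to $\rN=(\fP\ot_\X\boM)^+$ together with the formula for $({-})^+|_\Gamma$ from Lemma~\ref{torsion-sheaves-as-Giraud-subcat}, the claim becomes the vanishing
$$
 \varinjlim\nolimits_{\gamma\ge\beta} i^!_{\beta\gamma}\bigl(\fP^{(X_\gamma)}\ot_{\cO_{X_\gamma}}\boM_{(\gamma)}\bigr)=0 \quad\text{in } X_\beta\qcoh, \text{ for each } \beta\in\Gamma,
$$
where $i_{\beta\gamma}\:X_\beta\rarrow X_\gamma$ are the closed immersions. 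Because $X_\beta$ is a reasonable closed subscheme of $X_\gamma$, the ideal sheaf of $X_\beta$ in $X_\gamma$ is finitely generated, so $i^!_{\beta\gamma}$ commutes with stalks; as tensor products and filtered direct limits do as well, I would verify the vanishing stalkwise at each $z\in X_\beta$. Writing $R$ for the stalk $\cO_{X_\gamma,z}$, \ $I=(a_1,\dots,a_s)\subset R$ for the (finitely generated) stalk of that ideal, and $P_\gamma$, $M_\gamma$ for the stalks of $\fP^{(X_\gamma)}$, $\boM_{(\gamma)}$, the functor $i^!_{\beta\gamma}$ becomes $N\mapsto\{n\in N:In=0\}$, and the hypothesis $\boM^+=0$ says exactly that every element of $M_\gamma$ annihilated by $I$ is sent to $0$ by some transition map $M_\gamma\rarrow M_\delta$, \,$\delta\ge\gamma$.

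The heart of the argument is the following algebraic claim: any $x=\sum_{k=1}^n p_k\ot m_k\in P_\gamma\ot_R M_\gamma$ with $Ix=0$ can be rewritten as a finite sum $x=\sum_j \pi_j\ot w_j$ in which each $w_j$ is annihilated by the \emph{entire} ideal $I$. I would prove this by running over the generators $a_1,\dots,a_s$ one at a time, accumulating annihilators. Assume inductively that $x=\sum_l p_l\ot w_l$ with each $w_l$ annihilated by $a_1,\dots,a_{t-1}$. Since $a_tx=\sum_l p_l\ot a_tw_l=0$, the equational criterion for vanishing in a tensor product (a standard fact; see, e.g.,~\cite[Tag~00HK]{SP}) yields $\pi_j\in P_\gamma$ and $e_{lj}\in R$ with $p_l=\sum_j e_{lj}\pi_j$ and $\sum_l e_{lj}\,a_tw_l=0$. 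Putting $w'_j=\sum_l e_{lj}w_l$ gives $x=\sum_j \pi_j\ot w'_j$, and now $a_tw'_j=0$, while for $i<t$ one still has $a_iw'_j=\sum_l e_{lj}\,a_iw_l=0$ since each $w_l$ was already annihilated by $a_i$. After $s$ steps every coefficient $w_j$ is annihilated by all of $I$.

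With $x=\sum_j\pi_j\ot w_j$ and each $w_j$ annihilated by $I$, the hypothesis provides for every $j$ an index $\delta_j\ge\gamma$ with $w_j\mapsto0$ in $M_{\delta_j}$; choosing $\delta$ above the finitely many $\delta_j$, all the $w_j$ die at stage $\delta$ simultaneously. The transition map of the $\Gamma$\+system $\fP\ot_\X\boM$ --- built in Section~\ref{pro-in-torsion-action-subsecn} from the projection-formula isomorphism and the map $\id_{\fP^{(X_\delta)}}\ot(\boM_{(\gamma)}\rarrow\boM_{(\delta)})$ --- carries $\sum_j\pi_j\ot w_j$ to $\sum_j\tilde\pi_j\ot(\text{image of }w_j)=0$; hence $x$ already maps to zero in the stalk of $\fP^{(X_\delta)}\ot_{\cO_{X_\delta}}\boM_{(\delta)}$, and therefore in the direct limit. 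This yields the stalkwise vanishing, and with it the lemma.

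I expect the only real obstacle to be the non-flatness of $\fP$, which is precisely what prevents $i^!$ from commuting with $\fP^{(X_\gamma)}\ot_{\cO_{X_\gamma}}({-})$: when $\fP$ is flat one has $i^!_{\beta\gamma}(\fP^{(X_\gamma)}\ot_{\cO_{X_\gamma}}\boM_{(\gamma)})\simeq\fP^{(X_\beta)}\ot_{\cO_{X_\beta}}i^!_{\beta\gamma}\boM_{(\gamma)}$, and the conclusion follows at once from $\boM^+=0$ and the fact that tensoring commutes with direct limits. All the work is therefore concentrated in the general case, where the iterated equational-criterion rewriting of the preceding paragraph serves as a replacement for flatness; keeping the bookkeeping of the accumulating annihilators straight is the one point that needs care.
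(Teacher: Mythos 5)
Your reduction to the components over $X_\beta$ and then to stalks is sound, but the algebraic claim you identify as the heart of the argument is false, and the proof collapses there. Counterexample: let $R$ be any commutative ring, $a\in R$ a nonzerodivisor with $aR\ne R$, and take $I=(a)$, \,$P=R/aR$, \,$M=R$, \,$x=\bar 1\ot 1\in P\ot_RM\cong R/aR$. Then $Ix=0$ (since $\bar1\ot a=\bar a\ot 1=0$), yet the only element of $M$ annihilated by $I$ is $0$, so $x$ admits no expression $\sum_j\pi_j\ot w_j$ with $Iw_j=0$. Your induction step fails at exactly this example: the ``equational criterion'' is not valid in the form you use it. One may rewrite the elements of one factor, $p_l=\sum_je_{lj}\pi_j$, with the relations $\sum_le_{lj}v_l=0$ holding among specified elements $v_l$ of the other factor only when the $v_l$ form a generating family of that module (this is Bourbaki's form of the lemma), or when $P_\gamma$ is flat (the equational criterion of flatness); for arbitrary elements $v_l=a_tw_l$ it fails --- in the example above it would force $e_ja=0$, hence $e_j=0$, hence $\bar1=0$ in $R/aR$, a contradiction. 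So the difficulty you flag in your final paragraph (non-flatness of $\fP$) is real, and the accumulating-annihilators rewriting does not circumvent it.

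Fortunately, the lemma does not need any of this, because you are using much less of the hypothesis than it provides. You fixed $\beta$ and invoked $\boM^+=0$ only through $(\boM^+)_{(X_\beta)}=0$, i.e.\ only for elements of $M_\gamma$ annihilated by $I_{\beta\gamma}$. But the hypothesis also gives $(\boM^+)_{(X_\gamma)}=0$, that is $\varinjlim_{\delta\ge\gamma}i^!_{\gamma\delta}\boM_{(\delta)}=0$; since the initial term of this filtered system is $i^!_{\gamma\gamma}\boM_{(\gamma)}=\boM_{(\gamma)}$ itself, \emph{every} stalk element $m$ of $\boM_{(\gamma)}$ --- with no annihilation condition whatsoever --- is killed by the transition map $\boM_{(\gamma)}\rarrow\boM_{(\delta)}$ for some $\delta\ge\gamma$. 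Given this, the proof is immediate (and this is surely the ``straightforward'' argument the paper has in mind, its proof consisting of that one word): write $x=\sum_{k=1}^np_k\ot m_k$, use directedness of $\Gamma$ to choose a single $\delta$ killing all of the finitely many $m_k$, and recall that the transition map of the $\Gamma$-system $\fP\ot_\X\boM$ constructed in Section~\ref{pro-in-torsion-action-subsecn} is, modulo the structure and projection-formula isomorphisms, just $\id_{\fP^{(X_\delta)}}$ tensored with $\boM_{(\gamma)}\rarrow\boM_{(\delta)}$, so it sends $x$ to~$0$. Thus every stalk element of every term of the system $\fP\ot_\X\boM$ dies at a later stage, which is precisely the statement $(\fP\ot_\X\boM)^+=0$; the torsion condition $Ix=0$, and with it the whole stalkwise rewriting apparatus, never has to be used.
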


\begin{proof}
 The proof is straightforward.
\end{proof}

 Recall from the proof of
Proposition~\ref{Giraud-subcategory-abstract-prop} that the functor
$\boM\longmapsto\boM^+\:(\X,\Gamma)\syst\allowbreak\rarrow\X\tors$
represents the category of quasi-coherent torsion sheaves $\X\tors$ as
the abelian quotient category of the category of $\Gamma$\+systems
$(\X,\Gamma)\syst$ by the Serre subcategory of all $\Gamma$\+systems
annihilated by this functor.
 In view of Lemma~\ref{tensor-null-Gamma-systems}, it follows that,
for any $\fP\in\X\pro$, the tensor product functor
$\fP\ot_\X{-}\,\:(\X,\Gamma)\syst\rarrow(\X,\Gamma)\syst$
descends uniquely along the functor $\boM\rarrow\boM^+$, leading to
a tensor product functor $\X\tors\rarrow\X\tors$, which we denote by
the same symbol $\fP\ot_\X{-}\,$.

 Explicitly, for any $\fP\in\X\pro$ and $\rM\in\X\tors$ we put
$$
 \fP\ot_\X\rM=(\fP\ot_\X\rM|_\Gamma)^+=
 \varinjlim\nolimits_{\gamma\in\Gamma}i_\gamma{}_*
 (\fP^{(X_\gamma)}\ot_{\cO_{X_\gamma}}\rM_{(X_\gamma)})\in\X\tors,
$$
where $i_\gamma\:X_\gamma\rarrow\X$ is the closed immersion morphism
and $i_\gamma{}_*\:X_\gamma\qcoh\rarrow\X\tors$ is the direct image
functor.
 Clearly, this construction of the tensor product of
a pro-quasi-coherent pro-sheaf and a quasi-coherent torsion sheaf
does not depend on the choice of a representation of a reasonable
ind-scheme $\X$ by an inductive system $(X_\gamma)_{\gamma\in\Gamma}$
of closed immersions of reasonable closed subschemes.

 The resulting tensor product functor
$$
 \ot_\X\:\X\pro\times\X\tors\lrarrow\X\tors
$$
endows the category of quasi-coherent torsion sheaves $\X\tors$ with
the structure of an associative, unital module category over the tensor
category of pro-quasi-coherent pro-sheaves $\X\pro$.

 In the sequel, we will sometimes switch the two arguments
of the functor~$\ot_\X$, writing
$\ot_\X\:\X\tors\times\X\pro\rarrow\X\tors$.

\subsection{Inverse and direct images}
\label{pro-sheaves-inverse-direct-subsecn}
 Let $f\:\Y\rarrow\X$ be a morphism of ind-schemes.
 The functor of inverse image of pro-quasi-coherent pro-sheaves
$f^*\:\X\pro\rarrow\Y\pro$ is defined by the rule $(f^*\fP)^{(W)}=
g^*(\fP^{(Z)})$ for all $\fP\in\X\pro$, where $W\subset\Y$ in
an arbitrary closed subscheme and $Z\subset\X$ is a closed subscheme
such that the composition $W\rarrow\Y\overset f\rarrow\X$ factorizes
as $W\overset g\rarrow Z\rarrow\X$.

 The inverse image functor
$$
 f^*\:\X\pro\lrarrow\Y\pro
$$
is a tensor functor between the tensor categories $\X\pro$ and
$\Y\pro$, taking the unit object $\fO_\X\in\X\pro$ to the unit object
$\fO_\Y\in\Y\pro$.

 In particular, if $Y\subset\X$ is a closed subscheme with the closed
immersion morphism $i\:Y\rarrow\X$, then one has $i^*\fP=\fP^{(Y)}$
in $Y\qcoh$ for every $\fP\in\X\pro$.

 Part~(a) of the following lemma is a generalization of
Lemma~\ref{reasonable-base-change}(b) (with the roles of the schemes
$Y$ and $Z$ switched).

\begin{lem} \label{affine-flat-base-change}
 Let $f\:Y\rarrow X$ and $h\:Z\rarrow X$ be morphisms of
(concentrated) schemes.
 Consider the pullback diagram
$$
\xymatrix{
 Z\times_XY \ar[r]^-k \ar[d]^-g & Y \ar[d]^-f \\
 Z \ar[r]^-h & X
}
$$
 Assume that either \par
\textup{(a)} the morphism~$f$ is affine, or \par
\textup{(b)} the morphism~$h$ is flat. \par
\noindent Then there is a natural isomorphism $h^*f_*\simeq g_*k^*$
of functors $Y\qcoh\rarrow Z\qcoh$.
\end{lem}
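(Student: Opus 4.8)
The plan is to first exhibit a base-change morphism $\beta_\N\:h^*f_*\N\rarrow g_*k^*\N$ that exists with no hypotheses on the diagram, and then to verify it is an isomorphism separately under~(a) and~(b), in both cases by localizing down to affine schemes. To construct~$\beta$, note that $fk=hg$, so $k^*f^*=(fk)^*=(hg)^*=g^*h^*$; I would then define $\beta_\N$ as the morphism adjoint, under the adjunction $g^*\dashv g_*$, to the composite $g^*h^*f_*\N\simeq k^*f^*f_*\N\rarrow k^*\N$, the last arrow being $k^*$ applied to the counit $f^*f_*\N\rarrow\N$. This is natural in~$\N$.

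Next I would reduce to the affine case. For an open immersion $j\:U\hookrightarrow X$ one has $(f_*\N)|_U\simeq(f|_{f^{-1}U})_*(\N|_{f^{-1}U})$ tautologically from the definition of the direct image of a quasi-coherent sheaf, and $\beta$ is compatible with this restriction; since being an isomorphism of quasi-coherent sheaves on $Z$ can be checked over the cover $\{h^{-1}(U)\}$ of~$Z$ obtained from an affine cover $\{U\}$ of~$X$, I may assume $X=\Spec R$ is affine. Checking that $\beta_\N$ is an isomorphism is local on~$Z$ as well, so I may further assume $Z=\Spec T$ is affine, with $h$ corresponding to a ring homomorphism $R\rarrow T$ (which is flat in case~(b)).

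In case~(a), since $f$ is affine and $X$ is affine, $Y=\Spec S$ is affine, a quasi-coherent sheaf $\N$ on~$Y$ is an $S$\+module~$N$, and $Z\times_XY=\Spec(T\ot_RS)$. Unwinding the construction, $\beta_\N$ becomes the canonical isomorphism $T\ot_RN\simeq(T\ot_RS)\ot_SN$ of $T$\+modules given by associativity of the tensor product; no flatness is used. (This is the generalization of Lemma~\ref{reasonable-base-change}(b) noted above the statement of the present lemma.) In case~(b), $Y$ is only concentrated over $\Spec R$ and $T$ is flat over~$R$; here $f_*\N$ is the $R$\+module $\Gamma(Y,\N)$ and $g_*k^*\N$ is the $T$\+module $\Gamma(Y_T,k^*\N)$ with $Y_T=Z\times_XY$, so $\beta_\N$ becomes the natural map $T\ot_R\Gamma(Y,\N)\rarrow\Gamma(Y_T,k^*\N)$. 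Because $Y$ is quasi-compact I can cover it by finitely many affine opens $U_i$, and because $Y$ is quasi-separated each $U_i\cap U_j$ is again quasi-compact, hence a finite union of affines; the sheaf axiom then gives an exact sequence
\[
 0\rarrow\Gamma(Y,\N)\rarrow\prod\nolimits_i\Gamma(U_i,\N)
 \rarrow\prod\nolimits_{i,j}\Gamma(U_i\cap U_j,\N),
\]
and the analogous sequence computes $\Gamma(Y_T,k^*\N)$ from the affine pieces $U_{i,T}$ (to which the affine base change of case~(a) applies). As $T$ is flat over~$R$, the functor $T\ot_R({-})$ is exact and commutes with these finite products and with the kernel, identifying $T\ot_R\Gamma(Y,\N)$ with $\Gamma(Y_T,k^*\N)$.

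The formal ingredients, namely the existence of $\beta$ and case~(a), are routine. I expect the real obstacle to be case~(b): the statement that taking global sections over a concentrated scheme commutes with flat base change. The point that must be handled with care is the reduction to a finite affine cover, which is exactly where the quasi-compactness and quasi-separatedness of~$Y$ are used, together with the observation that flatness of $T$ over $R$ is precisely what makes $T\ot_R({-})$ commute with the finite limits computing~$\Gamma$.
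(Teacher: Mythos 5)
Your construction of the base-change morphism and your treatment of case~(a) coincide with the paper's proof, which likewise reduces by locality in $X$ and $Z$ to the affine identity $T\ot_RN\simeq(T\ot_RS)\ot_SN$ (and cites~\cite[Tag~02KG]{SP}). For case~(b), however, the paper simply invokes~\cite[Tag~02KH]{SP}, whereas you attempt a self-contained \v Cech argument. The strategy is legitimate, but as written it has a gap at the double intersections.

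Namely, after reducing to affine $X=\Spec R$ and $Z=\Spec T$, you compare the two left-exact sequences coming from a finite affine cover $Y=\bigcup_i U_i$ and from its pullback cover of $Y_T=Z\times_XY$. Flatness of $T$ does let $T\ot_R({-})$ pass through the kernel and the finite products, and case~(a) identifies the middle terms $T\ot_R\Gamma(U_i,\N)\simeq\Gamma(U_{i,T},k^*\N)$. But identifying the two kernels also requires control of the rightmost vertical map, the one comparing $T\ot_R\Gamma(U_i\cap U_j,\N)$ with $\Gamma((U_i\cap U_j)_T,k^*\N)$: an isomorphism $B\simeq B'$ of middle terms alone does not identify $\ker(B\rarrow C)$ with $\ker(B'\rarrow C')$. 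Since the paper's ``concentrated'' means quasi-compact quasi-separated, \emph{not} semi-separated, the intersections $U_i\cap U_j$ need not be affine, so case~(a) does not apply to them, and you have not established even injectivity of this rightmost map, which is the minimum the kernel comparison needs. The repair is the standard bootstrap, and you already hold the ingredient: $U_i\cap U_j$ is quasi-compact and open in the affine scheme $U_i$, hence separated, hence covered by finitely many affines whose pairwise intersections are themselves affine. So first run your argument for separated quasi-compact schemes (where the double intersections are affine and the right vertical map is an isomorphism), and then for quasi-separated $Y$ (where the double intersections are separated and quasi-compact). Alternatively, observe that for any scheme admitting a finite affine cover the base-change map is injective---embed $\Gamma$ into the finite product of sections over the affine pieces and tensor with the flat module $T$---and injectivity on the right together with the isomorphism in the middle suffices.
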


\begin{proof}
 Part~(a) is~\cite[Tag~02KG]{SP}.
 The assertion is local in $X$ and $Z$, so it reduces to
the case of affine schemes, for which it means the following.
 Let $R\rarrow S$ and $R\rarrow T$ be homomorphisms of commutative
rings.
 Then, for any $S$\+module $N$, there is a natural isomorphism of
$T$\+modules $T\ot_RN\simeq(T\ot_RS)\ot_SN$.
 Part~(b) is a particular case of~\cite[Tag~02KH]{SP}.
\end{proof}

 Let $f\:\Y\rarrow\X$ be an affine morphism of ind-schemes
(as defined in Section~\ref{morphisms-of-ind-schemes-subsecn}).
 Let $\fQ$ be a pro-quasi-coherent pro-sheaf on~$\Y$.
 For every closed subscheme $Z\subset\X$, put $\fP^{(Z)}=
f_Z{}_*(\fQ^{(W)})\in Z\qcoh$, where $f_Z$ is the affine morphism
of schemes $W=Z\times_\X\Y\rarrow Z$.
 Then it is clear from Lemma~\ref{affine-flat-base-change}(a)
that the collection of quasi-coherent sheaves $\fP^{(Z)}$ with
the natural maps $\fP^{(Z'')}\rarrow i_{Z'Z''}{}_*\fP^{(Z')}$
for $Z'\subset Z''\subset\X$ is a pro-quasi-coherent pro-sheaf
$\fP$ on~$\X$.

 Put $f_*\fQ=\fP$.
 This construction defines the functor of direct image of
pro-quasi-coherent pro-sheaves $f_*\:\Y\pro\rarrow\X\pro$ with
respect to an affine morphism of ind-schemes $f\:\Y\rarrow\X$.
 The functor~$f_*$ is right adjoint to the inverse image
functor $f^*\:\X\pro\rarrow\Y\pro$ (as one can show similarly to
the proof of Lemma~\ref{torsion-direct-inverse-adjunction}(a)).

 Furthermore, for any affine morphism of ind-schemes $f\:\Y\rarrow\X$,
the following projection formula isomorphism holds naturally
\begin{equation} \label{pro-sheaves-projection-formula-eqn}
 f_*(f^*\fP\ot^\Y\fQ)\simeq\fP\ot^\X f_*\fQ
\end{equation}
for all $\fP\in\X\pro$ and $\fQ\in\Y\pro$.

\begin{lem} \label{inverse-images-preserve-torsion-pro-tensor}
 Let $f\:\Y\rarrow\X$ be a morphism of reasonable ind-schemes which is
``representable by schemes''.
 Let\/ $\fP$ a pro-quasi-coherent pro-sheaf on\/ $\X$ and $\rM$
be a quasi-coherent torsion sheaf on\/~$\X$.
 Then there is a natural isomorphism
$$
 f^*(\fP\ot_\X\rM)\simeq f^*\fP\ot_\Y f^*\rM
$$
of quasi-coherent torsion sheaves on\/~$\Y$.
\end{lem}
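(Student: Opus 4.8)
The plan is to reduce the claim to a componentwise statement about $\Gamma$\+systems and then transport it to quasi-coherent torsion sheaves by applying the functor $({-})^+$. Fix a representation $\X=\ilim_{\gamma\in\Gamma}X_\gamma$ by closed immersions of reasonable closed subschemes, put $Y_\gamma=\Y\times_\X X_\gamma$ with the induced morphisms $f_\gamma\:Y_\gamma\rarrow X_\gamma$, so that $\Y=\ilim_{\gamma\in\Gamma}Y_\gamma$ as in Section~\ref{torsion-inverse-images-subsecn}, and write $\boM=\rM|_\Gamma$. Recall that, by their very definitions, one has $\fP\ot_\X\rM=(\fP\ot_\X\boM)^+$, that the inverse image acts on $\Gamma$\+systems by $(f^*\boN)_{(\gamma)}=f_\gamma^*\boN_{(\gamma)}$, and that $(f^*\fP)^{(Y_\gamma)}=f_\gamma^*(\fP^{(X_\gamma)})$ by the definition of the pro-sheaf inverse image in Section~\ref{pro-sheaves-inverse-direct-subsecn}.

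First I would establish a natural isomorphism of $\Gamma$\+systems on $\Y$,
$$
 f^*(\fP\ot_\X\boM)\;\simeq\;f^*\fP\ot_\Y f^*\boM.
$$
This is checked componentwise: the $\gamma$\+component of the left-hand side is $f_\gamma^*(\fP^{(X_\gamma)}\ot_{\cO_{X_\gamma}}\boM_{(\gamma)})$, while that of the right-hand side is $f_\gamma^*\fP^{(X_\gamma)}\ot_{\cO_{Y_\gamma}}f_\gamma^*\boM_{(\gamma)}$, and the two are identified by the canonical isomorphism expressing that $f_\gamma^*\:X_\gamma\qcoh\rarrow Y_\gamma\qcoh$ is a tensor functor. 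The point that requires care is that this family of componentwise isomorphisms be compatible with the structure morphisms~(ii) of the two $\Gamma$\+systems.

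With the $\Gamma$\+system isomorphism in hand, the two sides of the asserted identity are reached as follows. For the left-hand side, Lemma~\ref{torsion-inverse-image-commutes-with-plus} gives
$$
 f^*(\fP\ot_\X\rM)=f^*((\fP\ot_\X\boM)^+)\simeq(f^*(\fP\ot_\X\boM))^+.
$$
For the right-hand side, recall from Section~\ref{pro-in-torsion-action-subsecn} that the action of a pro-quasi-coherent pro-sheaf on torsion sheaves is obtained by descending the action on $\Gamma$\+systems along $({-})^+$; by Lemma~\ref{tensor-null-Gamma-systems} this descent yields $f^*\fP\ot_\Y(\boN^+)\simeq(f^*\fP\ot_\Y\boN)^+$ for every $\Gamma$\+system $\boN$ on $\Y$. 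Taking $\boN=f^*\boM$ and using $f^*\rM=(f^*\boM)^+$ gives $f^*\fP\ot_\Y f^*\rM\simeq(f^*\fP\ot_\Y f^*\boM)^+$. Applying $({-})^+$ to the $\Gamma$\+system isomorphism of the second paragraph now identifies the two expressions, and naturality in $\fP$ and $\rM$ is inherited from the naturality of all the isomorphisms involved.

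The only genuinely non-formal step is the compatibility, in the second paragraph, of the componentwise tensor-functor isomorphisms with the structure morphisms of the $\Gamma$\+systems; I expect this to be the main (though still routine) obstacle. It unwinds to the naturality of the projection-formula isomorphism (Lemma~\ref{projection-formula}) together with affine base change for the pullback squares with vertical maps $f_\gamma$, $f_\delta$ and horizontal closed immersions $X_\gamma\rarrow X_\delta$: since closed immersions are affine, Lemma~\ref{affine-flat-base-change}(a) applies and supplies the required comparison $f_\delta^*\,i_{\gamma\delta}{}_*\simeq j_{\gamma\delta}{}_*\,f_\gamma^*$, where $j_{\gamma\delta}\:Y_\gamma\rarrow Y_\delta$. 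Everything else is a formal consequence of Lemmas~\ref{torsion-inverse-image-commutes-with-plus} and~\ref{tensor-null-Gamma-systems}.
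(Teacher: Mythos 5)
Your proposal is correct and follows essentially the same route as the paper's proof: reduce to the componentwise isomorphism of $\Gamma$\+systems and then transport it to torsion sheaves using Lemma~\ref{torsion-inverse-image-commutes-with-plus} and the fact that the tensor products commute with the functors~$({-})^+$ (the descent property coming from Lemma~\ref{tensor-null-Gamma-systems}). The only difference is one of detail: the paper dismisses the $\Gamma$\+system isomorphism as "obvious from the definitions," whereas you correctly identify that its compatibility with the structure morphisms rests on the naturality of the projection formula (Lemma~\ref{projection-formula}) and the affine base change isomorphism of Lemma~\ref{affine-flat-base-change}(a).
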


\begin{proof}
 The isomorphism $f^*(\fP\ot_\X\boM)\simeq f^*\fP\ot_\Y f^*\boM$
of $\Gamma$\+systems on $\Y$ for any $\Gamma$\+system $\boM$ on $\X$
is obvious from the definitions.
 Now we have
\begin{multline*}
 f^*(\fP\ot_\X\rM)=f^*((\fP\ot_\X\rM|_\Gamma)^+)\simeq
 (f^*(\fP\ot_\X\rM|_\Gamma))^+ \\ \simeq
 (f^*\fP\ot_\Y f^*(\rM|_\Gamma))^+ \simeq
 f^*\fP\ot_\Y (f^*(\rM|_\Gamma))^+ \\ \simeq
 f^*\fP\ot_\Y f^*((\rM|_\Gamma)^+)\simeq
 f^*\fP\ot_\Y f^*\rM
\end{multline*}
by the definition of the functors $\ot_\X\:\X\pro\times\X\tors
\rarrow\X\tors$ and $\ot_\Y\:\Y\pro\times\Y\tors\rarrow\Y\tors$,
and by Lemma~\ref{torsion-inverse-image-commutes-with-plus}.
 The point is that both the inverse image and the tensor product
functors in question commute with the functors~$({-})^+$.
\end{proof}

 For two versions of projection formula related to
Lemma~\ref{inverse-images-preserve-torsion-pro-tensor},
see Lemmas~\ref{torsion-pro-projection-formula-lemma}
and~\ref{torsion-pro-projection-formula-second} below.

\subsection{Flat pro-quasi-coherent pro-sheaves}
\label{flat-pro-sheaves-subsecn}
 For any scheme $X$, let us denote by $X\flat$ the full subcategory of
flat quasi-coherent sheaves in $X\qcoh$.
 Then, for any morphism of schemes $k\:Z\rarrow X$, the inverse
images functor $k^*\:X\qcoh\rarrow Z\qcoh$ takes $X\flat$ into
$Z\flat$.
 Furthermore, for any short exact sequence $0\rarrow\F\rarrow\G
\rarrow\cH\rarrow0$ in $X\qcoh$ with $\F$, $\G$, $\cH\in X\flat$,
the short sequence $0\rarrow k^*\F\rarrow k^*\G\rarrow k^*\cH\rarrow0$
is exact in $Z\qcoh$.

 Let $\X=\ilim_{\gamma\in\Gamma}X_\gamma$ be an ind-scheme represented
by an inductive system of closed immersions of schemes.
 A pro-quasi-coherent pro-sheaf $\fF$ on $\X$ is said to be
\emph{flat} if the quasi-coherent sheaf $\fF^{(Z)}$ on $Z$ is flat
for every closed subscheme $Z\subset\X$.
 It is clear from the previous paragraph that $\fF$ is flat whenever
the quasi-coherent sheaf $\fF^{(X_\gamma)}$ on $X_\gamma$ is flat for
every $\gamma\in\Gamma$.
 We denote the full subcategory of flat pro-quasi-coherent pro-sheaves
by $\X\flat\subset\X\pro$.

 Let $0\rarrow\fF\rarrow\fG\rarrow\fH\rarrow0$ be a short sequence
of flat pro-quasi-coherent pro-sheaves on~$\X$.
 We say that this is an (\emph{admissible}) short \emph{exact}
sequence in $\X\flat$ if, for every closed subscheme $Z\subset\X$,
the sequence of quasi-coherent sheaves $0\rarrow\fF^{(Z)}\rarrow
\fG^{(Z)}\rarrow\fH^{(Z)}\rarrow0$ is exact in the abelian category
$Z\qcoh$.
 It suffices to check this condition for the closed subschemes
$Z=X_\gamma$, \,$\gamma\in\Gamma$, belonging to any chosen
representation of $\X$ by an inductive system of closed immersions
of schemes.

 An \emph{exact category} (in the sense of Quillen) is an additive
category endowed with a class of \emph{admissible short exact
sequences} (also called \emph{conflations}) satisfying natural axioms.
 For a reference, see~\cite{Bueh} or~\cite[Appendix~A]{Partin}.

\begin{prop} \label{flat-pro-sheaves-exact-category}
 The category\/ $\X\flat$ of flat pro-quasi-coherent pro-sheaves on\/
$\X$, endowed with the class of admissible short exact sequences
defined above, is an exact category in the sense of Quillen.
\end{prop}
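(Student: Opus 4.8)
The plan is to realize $\X\flat$ as a full additive, extension-closed subcategory of an ambient abelian category in which the prescribed admissible sequences are genuine short exact sequences, and then to invoke the standard fact that such a subcategory inherits a Quillen exact structure \cite{Bueh}, \cite[Appendix~A]{Partin}. For the ambient category, let $\sB$ denote the additive category of \emph{lax pro-systems}: an object is a collection of quasi-coherent sheaves $\fP_\gamma\in X_\gamma\qcoh$, \,$\gamma\in\Gamma$, together with morphisms $\fP_\delta\rarrow i_{\gamma\delta}{}_*\fP_\gamma$ for all $\gamma<\delta$ satisfying the cocycle conditions of Section~\ref{pro-sheaves-subsecn}(iii--iv), but \emph{without} requiring the adjoint morphisms $i_{\gamma\delta}^*\fP_\delta\rarrow\fP_\gamma$ to be isomorphisms. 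Thus $\X\pro$ is the full subcategory of $\sB$ cut out by the cartesian condition (the adjoint structure maps are isomorphisms), and $\X\flat\subset\X\pro\subset\sB$ is the further full subcategory of objects with flat terms~$\fP_\gamma$.

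First I would check that $\sB$ is abelian with the termwise exact structure. Since the direct image functors $i_{\gamma\delta}{}_*$ along closed immersions are exact, the forgetful functor $\sB\rarrow\prod_{\gamma}X_\gamma\qcoh$ creates kernels and cokernels termwise and is conservative; by the same argument as in the proof of Proposition~\ref{Gamma-systems-abelian}, this shows $\sB$ to be abelian and shows that a sequence is short exact in $\sB$ if and only if it is short exact in $X_\gamma\qcoh$ for every~$\gamma$. With this description, the admissible short exact sequences of Section~\ref{flat-pro-sheaves-subsecn} are \emph{precisely} the short exact sequences of $\sB$ all of whose three terms lie in $\X\flat$, so it only remains to prove that $\X\flat$ is closed under extensions in~$\sB$. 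Granting this, the cited general result furnishes the exact structure on $\X\flat$ and identifies its conflations with exactly the admissible sequences above.

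So let $0\rarrow\fF\rarrow\fG\rarrow\fH\rarrow0$ be short exact in $\sB$ with $\fF,\fH\in\X\flat$; I must show $\fG\in\X\flat$. Termwise this is a short exact sequence $0\rarrow\fF_\gamma\rarrow\fG_\gamma\rarrow\fH_\gamma\rarrow0$ in $X_\gamma\qcoh$ with $\fF_\gamma$ and $\fH_\gamma$ flat, and since flatness of quasi-coherent sheaves is closed under extensions, each $\fG_\gamma$ is flat. To see that $\fG$ is cartesian, fix $\gamma<\delta$ and apply $i_{\gamma\delta}^*$ to the termwise sequence over $X_\delta$: because this is a short exact sequence of flat sheaves, the result $0\rarrow i_{\gamma\delta}^*\fF_\delta\rarrow i_{\gamma\delta}^*\fG_\delta\rarrow i_{\gamma\delta}^*\fH_\delta\rarrow0$ is again exact, by the exactness property of inverse images on flats recalled at the beginning of Section~\ref{flat-pro-sheaves-subsecn}. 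Comparing it with the sequence over $X_\gamma$ through the adjoint structure maps produces a commutative ladder whose outer vertical arrows are isomorphisms (as $\fF$ and $\fH$ are cartesian); the five lemma then forces the middle arrow $i_{\gamma\delta}^*\fG_\delta\rarrow\fG_\gamma$ to be an isomorphism, so $\fG\in\X\flat$. The only step using the geometric hypotheses is this extension-closure argument, and I expect it to be the main obstacle: its crux is exactly that $i_{\gamma\delta}^*$ carries short exact sequences of flat quasi-coherent sheaves to short exact sequences. Everything else—the abelianness of $\sB$ and the matching of the two exact structures—is formal.
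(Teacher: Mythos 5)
Your proof is correct, but it follows a genuinely different route from the paper's. You relax the cartesian condition (iii) of Section~\ref{pro-sheaves-subsecn} while keeping the transitivity condition (iv), obtaining an \emph{abelian} ambient category of lax pro-systems (with termwise kernels and cokernels, exactly as for the $\Gamma$\+systems of Proposition~\ref{Gamma-systems-abelian}), and then cut out $\X\flat$ by closure under \emph{extensions}, where the cartesianness of the middle term of an extension is recovered by the short five lemma from the fact that $i_{\gamma\delta}^*$ preserves exactness of short exact sequences of flat sheaves. The paper does the opposite: it keeps the cartesian condition (iii) and drops the transitivity condition (iv), obtaining an ambient category of ``flat generalized pro-quasi-coherent pro-sheaves'' which is exact but \emph{not} abelian (built by the iso-comma-category construction of~\cite[Example~A.5(4)]{Partin} from the products $\prod_Z Z\flat$), and then cuts out $\X\flat$ by closure under admissible subobjects and admissible quotients, invoking~\cite[Example~A.5(3)(b)]{Partin}. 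The two arguments consume the same geometric inputs (extension-closure of $X\flat$ in $X\qcoh$, and exactness of $k^*$ on short exact sequences of flats), but deploy them at different places. Your version buys elementarity: the inheritance lemma is used in its most standard form (extension-closed subcategory of an abelian category), and the conflations of $\X\flat$ are exhibited as honest short exact sequences of an abelian category, at the modest cost of fixing a representation $\X=\ilim_{\gamma\in\Gamma}X_\gamma$ and appealing to the paper's remark that pro-sheaves, their flatness, and the admissible sequences can all be checked on the $X_\gamma$'s alone. The paper's version works uniformly over all closed subschemes, never has to verify abelianness of an auxiliary diagram category, and keeps the structure maps invertible throughout, at the cost of using two somewhat less standard exact-category constructions.
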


\begin{proof}
 First of all, in the particular case when $\X=X$ is a scheme,
the full subcategory $X\flat\subset X\qcoh$, endowed with the class
of all short sequences that are exact in the abealian category
$X\qcoh$, is an exact category, since $X\flat$ is closed under
extensions in $X\qcoh$ (see, e.~g., \cite[Lemma~10.20]{Bueh}
or~\cite[Example~A.5(3)(a)]{Partin}).
 Moreover, the restriction functor $k^*\:X\flat\rarrow Z\flat$ is
exact (i.~e., takes admissible short exact sequences to admissible
short exact sequences) for any morphism of schemes $k\:Z\rarrow X$.

 To prove the assertion for an ind-scheme $\X$, consider the category
of ``generalized pro-quasi-coherent pro-sheaves'' on $\X$, defined
by items~(i\+-iii) of Section~\ref{pro-sheaves-subsecn} (with
the transitivity condition~(iv) dropped).
 The definitions of a flat generalized pro-quasi-coherent pro-sheaf
and a short exact sequence of flat generalized pro-quasi-coherent
pro-sheaves are similar to the above.
 Then the category of flat generalized pro-quasi-coherent pro-sheaves
is exact, since it can be obtained by the construction
of~\cite[Example~A.5(4)]{Partin} applied to the following pair
of exact functors
$$
 \prod\nolimits_{Z\subset\X}Z\flat\lrarrow
 \prod\nolimits_{Z\subset Y\subset\X}Z\flat\times
 \prod\nolimits_{Z\subset Y\subset\X}Z\flat\llarrow
 \prod\nolimits_{Z\subset Y\subset\X}Z\flat.
$$
 Here $Z$ and $Y$ range over the closed subschemes in~$\X$.
 All the categories in the diagram are exact, with termwise
exact structures on the Cartesian products.
 The first component $\prod_{Z\subset\X}Z\flat\rarrow
\prod_{Z\subset Y\subset\X}Z\flat$ of the leftmost functor assigns
to a collection of flat quasi-coherent sheaves $(\F_Z)_{Z\subset\X}$
the collection of flat quasi-coherent sheaves
$(\F_Z)_{Z\subset Y\subset\X}$.
 The second component $\prod_{Z\subset\X}Z\flat\rarrow
\prod_{Z\subset Y\subset\X}Z\flat$ of the leftmost functor assigns
to a collection $(\F_Z)_{Z\subset\X}$ the collection of flat
quasi-coherent sheaves $(i_{ZY}^*\F_Y)_{Z\subset Y\in\X}$.
 The rightmost functor is the diagonal one.

 Finally, the full subcategory of flat pro-quasi-coherent pro-sheaves
is closed under admissible subobjects and admissible quotients in
the ambient exact category of flat generalized pro-quasi-coherent
pro-sheaves.
 So this full subcategory inherits an exact category structure
by~\cite[Example~A.5(3)(b)]{Partin}).
\end{proof}

 Let $f\:\Y\rarrow\X$ be a morphism of ind-schemes.
 Then the inverse image functor $f^*\:\X\pro\rarrow\Y\pro$ takes
the full subcategory $\X\flat\subset\X\pro$ into the full subcategory
$\Y\flat\subset\Y\pro$.
 The functor
$$
 f^*\:\X\flat\lrarrow\Y\flat
$$
is exact with respect to the exact category structures of $\X\flat$
and $\Y\flat$ (defined above).

 Let $f\:\Y\rarrow\X$ be a flat, affine morphism of ind-schemes
(as defined in Section~\ref{morphisms-of-ind-schemes-subsecn}).
 Then the direct image functor $f_*\:\Y\pro\rarrow\X\pro$ takes
the full subcategory $\Y\flat\subset\Y\pro$ into the full subcategory
$\X\flat\subset\X\pro$.
 The functor
$$
 f_*\:\Y\flat\rarrow\X\flat
$$
is exact with respect to the exact category structures on $\Y\flat$
and $\X\flat$.

 The tensor product of two flat pro-quasi-coherent pro-sheaves is flat.
 So the tensor product functor $\ot^\X\:\X\pro\times\X\pro\rarrow\X\pro$
restricts to a functor
$$
 \ot^\X\:\X\flat\times\X\flat\lrarrow\X\flat,
$$
defining a tensor category structure on $\X\flat$.
 Notice that the ``pro-structure pro-sheaf'' $\fO_\X$ (which is
the unit of the tensor structure on $\X\pro$) belongs to $\X\flat$.
 The tensor product $\ot^\X$ of flat pro-quasi-coherent pro-sheaves
on $\X$ is an exact functor with respect to the exact category
structure of $\X\flat$.

 Assume that the ind-scheme $\X$ is reasonable.
 Then, restricting the tensor product functor $\X\pro\times\X\tors
\rarrow\X\tors$ to flat pro-quasi-coherent pro-sheaves, one obtains
a tensor product functor
$$
 \ot_\X\:\X\flat\times\X\tors\rarrow\X\tors,
$$
which is exact with respect to the exact category structure
of $\X\flat$ and the abelian exact structure on $\X\tors$.

\begin{lem} \label{shriek-star-tensor}
 Let $i\:Z\rarrow X$ be a closed immersion of schemes and $\F$, $\M$
be quasi-coherent sheaves on~$X$.
 Then there is a natural morphism of quasi-coherent sheaves on~$Z$
$$
 i^*\F\ot_{\cO_Z}i^!\M\lrarrow i^!(\F\ot_{\cO_X}\M),
$$
which is an isomorphism whenever $i(Z)$ is a reasonable closed subscheme
in $X$ and $\F$ is a flat quasi-coherent sheaf on~$X$.
\end{lem}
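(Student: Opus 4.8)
The plan is to construct the morphism by adjunction and the projection formula, and then to verify that it is an isomorphism by reducing to a computation in commutative algebra.

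First I would build the natural transformation. Since $i$ is a closed immersion it is affine, and by our convention $i^!$ is the right adjoint of the direct image functor $i_*\:Z\qcoh\rarrow X\qcoh$. Hence, to specify a morphism $i^*\F\ot_{\cO_Z}i^!\M\rarrow i^!(\F\ot_{\cO_X}\M)$ in $Z\qcoh$ it suffices, by the $(i_*,i^!)$\+adjunction, to specify a morphism $i_*(i^*\F\ot_{\cO_Z}i^!\M)\rarrow\F\ot_{\cO_X}\M$ in $X\qcoh$. For this I would compose the projection formula isomorphism
$$
 i_*(i^*\F\ot_{\cO_Z}i^!\M)\,\simeq\,\F\ot_{\cO_X}i_*i^!\M
$$
of Lemma~\ref{projection-formula} (an isomorphism because $i$ is affine) with the morphism $\F\ot_{\cO_X}i_*i^!\M\rarrow\F\ot_{\cO_X}\M$ obtained by tensoring $\F$ with the adjunction counit $i_*i^!\M\rarrow\M$. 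Naturality in both $\F$ and $\M$ is immediate from this construction, and nothing beyond $i$ being affine is used, so the morphism exists for an arbitrary closed immersion.

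To prove that it is an isomorphism under the stated hypotheses, I would note that the assertion is local on $X$, and hence on $Z$: the functors $i^*$, $i_*$, and $\ot$ all commute with restriction to open subschemes, as does $i^!$ on a reasonable closed subscheme (Lemma~\ref{reasonable-base-change}(a)). Thus I may assume $X=\Spec R$ and $Z=\Spec R/I$, where $I\subset R$ is a \emph{finitely generated} ideal (finite generation being exactly the reasonableness hypothesis), with $\F$ corresponding to a flat $R$\+module $F$ and $\M$ to an $R$\+module $M$. In these terms $i^*\F$ is $F\ot_R R/I$ and $i^!\M$ is the submodule $\Hom_R(R/I,M)$ of elements annihilated by~$I$, so the source becomes $(F\ot_R R/I)\ot_{R/I}\Hom_R(R/I,M)\simeq F\ot_R\Hom_R(R/I,M)$, the target is $\Hom_R(R/I,\,F\ot_R M)$, and the constructed map sends $f\ot m$ to $f\ot m$.

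Finally, choosing generators $a_1,\dots,a_n$ of $I$, the module $\Hom_R(R/I,M)$ is the kernel of the map $M\rarrow M^n$ given by $m\longmapsto(a_1m,\dots,a_nm)$. Since $F$ is flat, tensoring this left exact sequence with $F$ keeps it exact, identifying $F\ot_R\Hom_R(R/I,M)$ with the kernel of $F\ot_R M\rarrow(F\ot_R M)^n$, $\xi\longmapsto(a_1\xi,\dots,a_n\xi)$; and that kernel is precisely $\Hom_R(R/I,\,F\ot_R M)$. A direct check shows this identification coincides with the map constructed above, which completes the proof. The crux is exactly this interchange of $i^!=\Hom_R(R/I,{-})$ with $F\ot_R{-}$, which fails in general: it is the finite generation of $I$ (reasonableness) that presents $\Hom_R(R/I,M)$ as a \emph{finite} kernel, and the flatness of $F$ that lets $F\ot_R{-}$ commute with that kernel.
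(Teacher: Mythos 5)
Your proof is correct and follows essentially the same route as the paper's: the morphism is obtained from the $(i_*,i^!)$-adjunction by composing the projection formula isomorphism with the counit $i_*i^!\M\rarrow\M$, and the isomorphism assertion is checked by localizing to the affine case $X=\Spec R$, $Z=\Spec R/I$ with $I$ finitely generated and $F$ flat. The only difference is cosmetic: where the paper simply quotes the standard fact that $F\ot_R\Hom_R(S,M)\rarrow\Hom_R(S,\,F\ot_RM)$ is an isomorphism for $S$ finitely presented and $F$ flat, you prove it by exhibiting $\Hom_R(R/I,M)$ as a finite kernel and using flatness to commute $F\ot_R{-}$ with it.
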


\begin{proof}
 When $i(Z)$ is a reasonable closed subscheme in $X$, both
the assertions are local in~$X$.
 So they reduce to the case of affine schemes, for which they mean
the following.
 Let $R\rarrow S$ be a surjective homomorphism of commutative rings
and $F$, $M$ be $R$\+modules.
 Then there is a natural homomorphism of $S$\+modules
$$
 (S\ot_RF)\ot_S\Hom_R(S,M)=F\ot_R\Hom_R(S,M)\lrarrow
 \Hom_R(S,\>F\ot_RM)
$$
which is an isomorphism whenever $S$ is a finitely presented $R$\+module
and $F$ is a flat $R$\+module.

 To construct the desired morphism in the general case, let $\cL$ be
an arbitrary quasi-coherent sheaf on $Z$, and let
$\cL\rarrow i^*\F\ot_{\cO_Z}i^!\M$ be a morphism in $Z\qcoh$.
 Applying~$i_*$, we produce a morphism $i_*\cL\rarrow
i_*(i^*\F\ot_{\cO_Z}i^!\M)\simeq\F\ot_{\cO_X}i_*i^!\M$ in $X\qcoh$,
where the isomorphism holds by Lemma~\ref{projection-formula}.
 Composing with the morphism $\F\ot_{\cO_X}i_*i^!\M\rarrow
\F\ot_{\cO_X}\M$ induced by the adjunction morphism $i_*i^!\M\rarrow\M$,
we obtain a morphism $i_*\cL\rarrow\F\ot_{\cO_X}\M$ in $X\qcoh$, which
corresponds by adjunction to a morphism
$\cL\rarrow i^!(\F\ot_{\cO_X}\M)$ in $Z\qcoh$.
\end{proof}

\begin{prop} \label{flat-torsion-tensor-prop}
 Let\/ $\X$ be a reasonable ind-scheme, and let $Z\subset\X$ be
a reasonable closed subscheme with the closed immersion morphism
$i\:Z\rarrow X$.
 Let\/ $\fF$ be flat pro-quasi-coherent pro-sheaf on\/ $\X$ and $\rM$ be
a quasi-coherent torsion sheaf on\/~$\X$.
 Then there is a natural isomorphism $i^!(\fF\ot_\X\rM)\simeq
i^*\fF\ot_{\cO_Z}i^!\rM=\fF^{(Z)}\ot_{\cO_Z}\rM_{(Z)}$ in $Z\qcoh$.
\end{prop}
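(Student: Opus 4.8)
The plan is to restrict everything to the reasonable closed subscheme $Z$ and reduce to the termwise assertion already isolated in Lemma~\ref{shriek-star-tensor}. First I would unwind the definitions. Since $Z\subset\X$ is reasonable, we have $i^!\rM=\rM_{(Z)}$ and $i^*\fF=\fF^{(Z)}$, so the claim is that $(\fF\ot_\X\rM)_{(Z)}\simeq\fF^{(Z)}\ot_{\cO_Z}\rM_{(Z)}$. Writing $\boM=\fF\ot_\X\rM|_\Gamma$ for the $\Gamma$\+system with $\boM_{(\gamma)}=\fF^{(X_\gamma)}\ot_{\cO_{X_\gamma}}\rM_{(X_\gamma)}$, the defining formula $\fF\ot_\X\rM=\boM^+$ together with the explicit description of $\boM^+$ from Section~\ref{Gamma-systems-subsecn} yields
$$
 i^!(\fF\ot_\X\rM)=(\boM^+)_{(Z)}=
 \varinjlim\nolimits_{\gamma\in\Gamma:\,Z\subset X_\gamma}
 i_{Z,\gamma}^!\bigl(\fF^{(X_\gamma)}\ot_{\cO_{X_\gamma}}\rM_{(X_\gamma)}\bigr),
$$
the direct limit being over the cofinal (hence directed and nonempty) set of indices~$\gamma$ with $Z\subset X_\gamma$.

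Next I would evaluate each term of this inductive system. As $Z$ is reasonable in $\X$, it is a reasonable closed subscheme of every such $X_\gamma$, and $\fF^{(X_\gamma)}$ is flat because $\fF$ is a flat pro-quasi-coherent pro-sheaf; hence the isomorphism case of Lemma~\ref{shriek-star-tensor}, applied to the closed immersion $i_{Z,\gamma}\:Z\rarrow X_\gamma$, gives
$$
 i_{Z,\gamma}^!\bigl(\fF^{(X_\gamma)}\ot_{\cO_{X_\gamma}}\rM_{(X_\gamma)}\bigr)
 \simeq i_{Z,\gamma}^*\fF^{(X_\gamma)}\ot_{\cO_Z}i_{Z,\gamma}^!\rM_{(X_\gamma)}
 \simeq\fF^{(Z)}\ot_{\cO_Z}\rM_{(Z)},
$$
the last step using the structure isomorphisms $i_{Z,\gamma}^*\fF^{(X_\gamma)}\simeq\fF^{(Z)}$ (condition~(iii) for the pro-sheaf~$\fF$) and $\rM_{(Z)}\simeq i_{Z,\gamma}^!\rM_{(X_\gamma)}$ (condition~(iii) for the torsion sheaf~$\rM$). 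So every term of the system is identified with the single quasi-coherent sheaf $\fF^{(Z)}\ot_{\cO_Z}\rM_{(Z)}$.

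It then remains to collapse the direct limit, for which it suffices to show that its transition maps are isomorphisms. For $\gamma<\delta$ the transition map is $i_{Z,\gamma}^!$ applied to the $\Gamma$\+system structure morphism $\boM_{(\gamma)}\rarrow i_{\gamma\delta}^!\boM_{(\delta)}$ (in its adjoint form). I claim this structure morphism---assembled in Section~\ref{pro-in-torsion-action-subsecn} out of the projection-formula isomorphism, the structure isomorphism of~$\fF$, and the structure map of~$\rM$---is exactly the morphism of Lemma~\ref{shriek-star-tensor} for the closed immersion $i_{\gamma\delta}\:X_\gamma\rarrow X_\delta$, under the identifications of its source and target with $\fF^{(X_\gamma)}\ot_{\cO_{X_\gamma}}\rM_{(X_\gamma)}$ and $i_{\gamma\delta}^!\boM_{(\delta)}$. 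Since $X_\gamma$ is a reasonable closed subscheme of $X_\delta$ (as $\X$ is reasonable) and $\fF^{(X_\delta)}$ is flat, that morphism is an isomorphism by Lemma~\ref{shriek-star-tensor}, and hence so is its image under the functor~$i_{Z,\gamma}^!$. A filtered direct limit all of whose transition morphisms are isomorphisms equals any one of its terms, which is $\fF^{(Z)}\ot_{\cO_Z}\rM_{(Z)}$.

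The hard part will be the verification in the previous paragraph that the tensor-product $\Gamma$\+system's adjoint structure morphism genuinely coincides with the canonical morphism of Lemma~\ref{shriek-star-tensor}. Both are produced by adjunction from the projection formula of Lemma~\ref{projection-formula}, so this is a diagram chase invoking naturality and the transitivity conditions~(iv) for~$\fF$ and~$\rM$; it is routine but is the one step that needs genuine care. Note that one must work with the \emph{adjoint} form $\boM_{(\gamma)}\rarrow i_{\gamma\delta}^!\boM_{(\delta)}$ of the structure map: the direct, push-forward form $i_{\gamma\delta}{}_*\boM_{(\gamma)}\rarrow\boM_{(\delta)}$ is \emph{not} an isomorphism in general.
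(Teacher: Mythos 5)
Your proof is correct and takes essentially the same route as the paper's: the paper observes, via the isomorphism case of Lemma~\ref{shriek-star-tensor}, that the rule $\rN_{(Z)}=\fF^{(Z)}\ot_{\cO_Z}\rM_{(Z)}$ defines a quasi-coherent torsion sheaf $\rN$ on $\X$ (this is exactly your claim that the adjoint-form structure maps of the $\Gamma$\+system $\fF\ot_\X\rM|_\Gamma$ are isomorphisms), and then concludes $\fF\ot_\X\rM=(\rN|_\Gamma)^+\simeq\rN$ by full faithfulness of $({-})|_\Gamma$, which is what your collapsing of the filtered colimit with isomorphic transition maps accomplishes. Your explicit computation of $(\boM^+)_{(Z)}$, including the flagged diagram chase identifying the $\Gamma$\+system structure morphism with the morphism of Lemma~\ref{shriek-star-tensor} (both being adjoint to the projection formula followed by the counit $i_*i^!\rM_{(X_\delta)}\rarrow\rM_{(X_\delta)}$), is just a detailed unwinding of what the paper declares ``clear.''
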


\begin{proof}
 It follows from Lemma~\ref{shriek-star-tensor} that the rule
$\rN_{(Z)}=\fF^{(Z)}\ot\rM_{(Z)}$ defines a quasi-coherent torsion
sheaf $\rN$ on~$\X$.
 Now it is clear that $\fF\ot_\X\rM\simeq\rN$.
\end{proof}

\begin{exs} \label{countable-flat-pro-sheaves-flat-contramodules}
 (1)~This is a generalization of
Example~\ref{pro-sheaves-not-abelian-ex}.
 Let $\fR$ be complete, separated topological commutative ring with
a countable base of neighborhoods of zero consisting of open ideals,
and let $\X=\Spi\fR$ be the related ind-affine $\aleph_0$\+ind-scheme,
as in Example~\ref{topological-ring-ind-scheme}(2).
 Then the category $\X\pro$ is equivalent to the category of
\emph{separated\/ $\fR$\+contramodules}, as defined
in~\cite[Section~1.2]{Pweak}, \cite[Sections~1.2 and~5]{PR},
or~\cite[Section~6.2]{PS}, and discussed in~\cite[Section~D.1]{Pcosh}.
 The category of separated $\fR$\+contramodules $\fR\separ$ is a full
subcategory in the abelian category of $\fR$\+contramodules
$\fR\contra$; so we have $\Spi\fR\pro\simeq\fR\separ\subset\fR\contra$.

 The equivalence assigns to a (separated) $\fR$\+contramodule $\fC$
the pro-quasi-coherent pro-sheaf $\fP$ with the components
$\fP^{(X_\fI)}=\fC/(\fI\tim\fC)$, and conversely, to
a pro-quasi-coherent pro-sheaf $\fP$ the separated $\fR$\+contramodule
$\fC=\varprojlim_{\fI\subset\fR}\fP^{(X_\fI)}$ is assigned
(where $\fI$ ranges over the open ideals in~$\fR$, and the projective
limit is taken in the category of $\fR$\+contramodules, which agrees
with the projective limit in the category of abelian groups).
 It is clear from~\cite[Lemma~D.1.3]{Pcosh} (see
also~\cite[Lemma~6.3(a,c)]{PR}) that this is an equivalence of
categories.

 The generality level in~\cite[Section~D.1]{Pcosh} is that of
complete, separated topological associative rings with a countable
base of neighborhoods of zero consisting of open two-sided ideals.
 In~\cite[Section~6]{PR}, only a (countable) base of open \emph{right} 
ideals is assumed, which makes the exposition more complicated.

\smallskip
 (2)~The equivalence of categories described in~(1) restricts to
an equivalence between their full subcategories of flat
pro-quasi-coherent pro-sheaves and \emph{flat\/ $\fR$\+contramodules}.
 Notice that any flat $\fR$\+contramodule is
separated~\cite[Corollary~D.1.7]{Pcosh} (see
also~\cite[Corollary~6.15]{PR}).
 The full subcategory of flat contramodules $\fR\flat$ (\emph{unlike}
the larger full subcategory of separated ones) is closed under
extensions in the abelian category of $\fR$\+contramodules $\fR\contra$
\,\cite[Lemma~D.1.5]{Pcosh} (see also~\cite[Corollary~6.13
or Corollary~7.1(b)]{PR}), so it inherits an exact category structure
from the abelian exact category structure on $\fR\contra$.
 The equivalence $\Spi\fR\flat\simeq\fR\flat$ is an equivalence of
exact categories~\cite[Lemma~D.1.4]{Pcosh} (see
also~\cite[Lemma~6.7 or~6.10]{PR}).

\smallskip
 (3)~In the context of~(1), assume that $\fR$ is a reasonable
topological ring in the sense of
Section~\ref{torsion-ind-affine-subsecn}(5).
 Then, according to Section~\ref{torsion-ind-affine-subsecn}(6),
the category of quasi-coherent torsion sheaves $\Spi\fR\tors$
is equivalent to the category of discrete $\fR$\+modules $\fR\discr$.
 This equivalence of categories, together with the equivalence
$\Spi\fR\pro\simeq\fR\separ$ from~(1), transforms the tensor product
functor $\ot_\X\:\Spi\fR\pro\times\Spi\fR\tors\rarrow\Spi\fR\tors$
into the functor of \emph{contratensor product} $\ocn_\fR\:\fR\contra
\times\fR\discr \rarrow\fR\discr$ (restricted to separated
$\fR$\+contramodules).

 We refer to~\cite[Section~D.2]{Pcosh}, \cite[Definition~5.4]{PR},
or~\cite[Section~7.2]{PS} for the definition of the contratensor
product of a discrete module and a contramodule over a topological ring.
 In our context, the contratensor product takes values in
discrete $\fR$\+modules (rather than just abelian groups) because
the ring $\fR$ is commutative.

\smallskip
 (4)~The \emph{contramodule tensor product} functor
$\ot^\fR\:\fR\contra\times\fR\contra\rarrow\fR\contra$ on the category
of contramodules over a commutative topological ring $\fR$ was defined
in~\cite[Section~1.6]{Pweak}.
 According to~\cite[Lemma~D.3.1]{Pcosh} (which presumes a countable
base of neighborhoods of zero in~$\fR$), this functor restricts to
an exact functor $\ot^\fR\:\fR\flat\times\fR\flat\rarrow\fR\flat$,
which agrees with the functor of tensor product of flat
pro-quasi-coherent pro-sheaves $\ot^\X\:\X\flat\times\X\flat\rarrow
\X\flat$ under the equivalence of (exact) categories
$\X\flat\simeq\fR\flat$.
\end{exs}

\subsection{Coproducts and colimits}
\label{colimits-of-pro-sheaves-subsecn}
 Let $\X$ be an ind-scheme.
 The coproduct of any family of objects exists in the category
$\X\pro$.
 Specifically, let $(\fP_\theta)_{\theta\in\Theta}$ be a family of
pro-quasi-coherent pro-sheaves on~$X$, indexed by a set~$\Theta$.
 For every closed subscheme $Z\subset\X$, put $\fQ^{(Z)}=
\bigoplus_{\theta\in\Theta}\fP_\theta^{(Z)}$ (where the direct sum
is taken in the category of quasi-coherent sheaves on~$Z$).
 Then the collection of quasi-coherent sheaves $\fQ^{(Z)}$ with
the obvious isomorphisms $i_{ZY}^*\fQ^{(Y)}\simeq\fQ^{(Z)}$ for
$Z\subset Y\subset\X$ is a pro-quasi-coherent pro-sheaf $\fQ$ on~$\X$.
 One has $\fQ=\coprod_{\theta\in\Theta}\fP_\theta$ in the category
$\X\pro$.

 More generally, the colimit of any diagram of objects, indexed by
a small category, exists in $\X\pro$.
 Let $(\fP_\theta)_{\theta\in\Theta}$ be an inductive system of
pro-quasi-coherent pro-sheaves on~$X$, indexed by a small
category~$\Theta$.
 For every closed subscheme $Z\subset\X$, put $\fQ^{(Z)}=
\varinjlim_{\theta\in\Theta}\fP_\theta^{(Z)}$ (where the colimit
is taken in the category of quasi-coherent sheaves $Z\qcoh$).
 Notice that the inverse image functor with respect to any morphism
of schemes preserves all colimits of quasi-coherent sheaves.
 So the collection of quasi-coherent sheaves $\fQ^{(Z)}$ with
the induced isomorphisms $i_{ZY}^*\fQ^{(Y)}\simeq\fQ^{(Z)}$ for
$Z\subset Y\subset\X$ is a pro-quasi-coherent pro-sheaf $\fQ$ on~$\X$.
 One has $\fQ=\varinjlim_{\theta\in\Theta}\fP_\theta$ in $\X\pro$.

 Both the tensor product functor $\ot^\X\:\X\pro\times\X\pro\rarrow
\X\pro$ (on any ind-scheme~$\X$) and the action functor $\ot_\X\:
\X\pro\times\X\tors\rarrow\X\tors$ (on a reasonable ind-scheme~$\X$)
preserve coproducts in both the categories.
 In fact, they even preserve all colimits (as the action functor
for $\Gamma$\+systems $\ot_\X\:\X\pro\times(\X,\Gamma)\syst\rarrow
(\X,\Gamma)\syst$ can be easily seen to preserve colimits).

 The full subcategory $\X\flat$ is closed under coproducts in
$\X\pro$.
 In fact, it is closed under all direct limits (of inductive systems
indexed by directed posets), because flatness of quasi-coherent
sheaves is preserved by such direct limits.
 So all coproducts and direct limits exist in $\X\flat$.
 The direct limit (in particular, coproduct) functors are exact in
the exact category $\X\flat$.

 The inverse image functor $f^*\:\X\pro\rarrow\Y\pro$ with respect
to any morphism of ind-schemes $f\:\Y\rarrow\X$ preserves all colimits.
 So does the direct image functor $f_*\:\Y\pro\rarrow\X\pro$ with
respect to an affine morphism of ind-schemes~$f$.
 In particular, both the functors preserve coproducts.

\subsection{Pro-quasi-coherent commutative algebras}
\label{pro-qcoh-algebras-subsecn}
 Let $X$ be a scheme.
 A \emph{quasi-coherent sheaf of algebras} (or a \emph{quasi-coherent
algebra} for brevity) $\cA$ over $X$ is an (associative and unital)
algebra object in the tensor category $X\qcoh$.
 In other words, $\cA$ is a quasi-coherent sheaf on $X$ endowed
with morphisms of quasi-coherent sheaves $\cO_X\rarrow\cA$ and
$\cA\ot_{\cO_X}\cA\rarrow\cA$ satisfying the usual associativity
and unitality equations.
 The underlying sheaf of abelian groups of a quasi-coherent algebra
has a natural structure of a sheaf of rings.

 Let $f\:Y\rarrow X$ be a morphism of (concentrated) schemes.
 Then the inverse image $f^*\:X\qcoh\rarrow Y\qcoh$ is a tensor
functor, so the inverse image $f^*\cA$ of any quasi-coherent sheaf
of algebras $\cA$ on $X$ is a quasi-coherent sheaf of algebras
on~$Y$.
 
 Furthermore, for any quasi-coherent sheaves $\M$ and $\N$ on $Y$,
there is a natural morphism of quasi-coherent sheaves
$f_*\M\ot_{\cO_X}f_*\N\rarrow f_*(\M\ot_{\cO_Y}\nobreak\N)$ on $X$,
corresponding by adjunction to the morphism of quasi-coherent sheaves
$f^*(f_*\M\ot_{\cO_X}\nobreak f_*\N)\simeq
f^*f_*\M\ot_{\cO_Y}f^*f_*\N\rarrow\M\ot_{\cO_Y}\N$ on $Y$ induced by
the adjunction morphisms $f^*f_*\M\rarrow\M$ and $f^*f_*\N\rarrow\N$.
 In particular, for any quasi-coherent sheaf of algebras $\cB$ on $Y$,
the composition $f_*\cB\ot_{\cO_X}f_*\cB\rarrow f_*(\cB\ot_{\cO_Y}\cB)
\rarrow f_*\cB$ endows the direct image $f_*\cB$
of the quasi-coherent sheaf $\cB$ with the structure of
a quasi-coherent sheaf of algebras on~$X$.

 A \emph{quasi-coherent commutative algebra} (or a \emph{quasi-coherent
sheaf of commutative algebras}) on $X$ is a commutative (associative,
and unital) algebra object in $X\qcoh$.
 Clearly, the inverse and direct images preserve commutativity of
quasi-coherent algebras.
 The next lemma is quite standard and well-known.

\begin{lem} \label{affine-morphisms-qcoh-algebras}
 For any scheme $X$, there is a natural anti-equivalence between
the category of schemes $Y$ endowed with an affine morphism
$Y\rarrow X$ and the category of quasi-coherent commutative
algebras on~$X$.
 To an affine morphism $f\:Y\rarrow X$, the quasi-coherent commutative
algebra $f_*\cO_Y$ on $X$ is assigned.
\end{lem}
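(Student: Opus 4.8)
The plan is to construct the inverse functor explicitly as the \emph{relative spectrum} $\cA\longmapsto\Spec_X\cA$ and then to verify that it is quasi-inverse to the functor $f\longmapsto f_*\cO_Y$, so that the two functors constitute mutually inverse anti-equivalences. Both verifications are local on $X$ and reduce to the case of an affine base scheme, where the statement is the classical anti-equivalence between commutative $R$\+algebras and affine schemes over $\Spec R$. I would also record at the outset the (contravariant) functoriality: a morphism $g\:Y'\rarrow Y$ over $X$ (so $f'=fg$) induces $\cO_Y\rarrow g_*\cO_{Y'}$, hence after applying~$f_*$ a morphism of quasi-coherent algebras $f_*\cO_Y\rarrow f'_*\cO_{Y'}$, which reverses the direction of the arrow.

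First I would treat the affine case. When $X=\Spec R$, a quasi-coherent commutative algebra $\cA$ on $X$ is the same datum as a commutative $R$\+algebra $A$, and a scheme $Y$ with an affine morphism $f\:Y\rarrow X$ is the same as $Y=\Spec A$ with the structure morphism induced by the ring homomorphism $R\rarrow A$. Under these identifications $f_*\cO_Y$ corresponds to $A$ regarded as an $R$\+module and algebra, and an $X$\+morphism $Y'\rarrow Y$ corresponds to an $R$\+algebra homomorphism $A\rarrow A'$ in the opposite direction. This is precisely the anti-equivalence $\CRings^\sop\simeq\AffSch$ relativized over $R$, so the affine case is immediate.

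Next I would globalize by gluing. Choosing an affine open cover $X=\bigcup_i U_i$ with $U_i=\Spec R_i$, the restriction $\cA|_{U_i}$ is a commutative $R_i$\+algebra $A_i$, and I set $Y_i=\Spec A_i$ with its affine morphism to $U_i$. The compatibility of $\Spec$ with localization together with the quasi-coherence of $\cA$ furnishes canonical isomorphisms between the restrictions of $Y_i$ and $Y_j$ over $U_i\cap U_j$ satisfying the cocycle condition; hence the $Y_i$ glue to a scheme $\Spec_X\cA$ equipped with an affine morphism to~$X$. This is the standard relative spectrum construction, which one may also simply invoke from~\cite[Tag~01LL]{SP} and the section on affine morphisms~\cite[Tag~01S5]{SP} (or~\cite[Exercise~II.5.17]{Hart}).

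Finally I would exhibit the two natural isomorphisms $\Spec_X(f_*\cO_Y)\simeq Y$ (of schemes over $X$) for an affine morphism $f$, and $(\Spec_X\cA\rarrow X)_*\cO\simeq\cA$ (of quasi-coherent algebras on $X$), and check their naturality in $Y$ and in $\cA$ respectively. Both isomorphisms are local on $X$ and reduce to the affine identifications $Y=\Spec\bigl((f_*\cO_Y)(U)\bigr)$ and $A=(\Spec A\rarrow\Spec R)_*\cO$ established above. The one point requiring care is the bookkeeping of gluing data and the compatibility of these local isomorphisms on overlaps; this is the main, though entirely routine, obstacle, and it is the reason it is cleanest to cite the established relative spectrum construction rather than rebuild it. Once this is in place, the two functors are mutually quasi-inverse, which proves the asserted anti-equivalence.
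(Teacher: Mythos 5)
Your proposal is correct and follows essentially the same route as the paper: the paper's proof also constructs the inverse functor by gluing the affine schemes $\Spec\cA(U)$ over affine open subschemes $U\subset X$, with the quasi-coherence isomorphism $\cO_X(V)\ot_{\cO_X(U)}\cA(U)\simeq\cA(V)$ serving as the key compatibility on overlaps (the paper likewise leaves the remaining bookkeeping to the reader). Your additional remarks on contravariant functoriality and the reduction of the quasi-inverse verification to the affine case are implicit in the paper's argument.
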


\begin{proof}
 The inverse functor assigns to a quasi-coherent sheaf of algebras
$\cA$ on $X$ the scheme $Y$ covered by the following affine schemes.
 For any affine open subscheme $U\subset X$, the affine scheme
$\Spec\cA(U)$ is an open subscheme in~$Y$; in fact,
one has $\Spec\cA(U)=U\times_XY$, so $\Spec\cA(U)$ is the preimage
of $U$ in $Y$ under~$f$.
 The map of the rings of sections $\cO_X(U)\rarrow\cA(U)$ induced by
the unit morphism $\cO_X\rarrow\cA$ of the quasi-coherent sheaf of
algebras $\cA$ induces the morphism of schemes $\Spec\cA(U)\rarrow U$.
 For any pair of affine open subschemes $U$ and $V$ in $X$ such that
$V\subset U$, the restriction map of the rings of sections
$\cA(U)\rarrow\cA(V)$ of the sheaf of rings $\cA$ on $X$ induces
an open immersion $\Spec\cA(V)\rarrow\Spec\cA(U)$; this is the map
$V\times_XY\rarrow U\times_XY$ induced by the open immersion
$V\rarrow U$.
 The key observation is that $V\times_U\Spec\cA(U)\simeq\Spec\cA(V)$
(as it should be), since $\cO_X(V)\ot_{\cO_X(U)}\cA(U)\simeq\cA(V)$.
 We leave further details to the reader.
\end{proof}

 We will denote the affine scheme over $X$ corresponding to
a quasi-coherent algebra $\cA$ under the construction of
Lemma~\ref{affine-morphisms-qcoh-algebras} by $Y=\Spec_X\cA$.
 Notice that an affine morphism of schemes $f\:Y\rarrow X$ is flat
if and only if the quasi-coherent sheaf $\cA=f_*\cO_Y$ on $X$ is flat.

\begin{lem} \label{qcoh-algebra-pullback}
 Let $h\:Z\rarrow X$ be a morphism of schemes and $\cA$ be
a quasi-coherent commutative algebra over~$X$.
 Then there is a natural isomorphism of schemes\/ $\Spec_Zh^*\cA
\simeq Z\times_X\Spec_X\cA$.
 In other words, there is a natural pullback diagram of schemes
$$
\xymatrix{
 \Spec_Z h^*\cA \ar[rr]^-k \ar[d]^-g && \Spec_X\cA \ar[d]^-f \\
 Z \ar[rr]^-h && X
}
$$
where $f$ and~$g$ are the structure morphisms of the schemes\/
$\Spec_X\cA$ and\/ $\Spec_Z h^*\cA$ affine over the schemes
$X$ and $Z$, respectively.
\end{lem}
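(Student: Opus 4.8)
The plan is to identify both sides as affine schemes over $Z$ and to compare the quasi-coherent commutative algebras on $Z$ to which they correspond under the anti-equivalence of Lemma~\ref{affine-morphisms-qcoh-algebras}. Write $Y=\Spec_X\cA$ and let $f\:Y\rarrow X$ be its structure morphism, so that $f_*\cO_Y\simeq\cA$ as quasi-coherent algebras on~$X$. Form the fibered product $P=Z\times_X Y$ with projections $p\:P\rarrow Z$ and $q\:P\rarrow Y$. Since $f$ is affine, its base change $p$ is affine as well, so $P$ is an affine $Z$\+scheme; and $\Spec_Z h^*\cA$ is an affine $Z$\+scheme by construction. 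By Lemma~\ref{affine-morphisms-qcoh-algebras}, to produce a natural isomorphism $\Spec_Z h^*\cA\simeq P$ over $Z$ it suffices to produce a natural isomorphism of the associated quasi-coherent commutative algebras $h^*\cA\simeq p_*\cO_P$ on~$Z$.

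Such an isomorphism is supplied by affine base change. Applying the natural isomorphism $h^*f_*\simeq p_*q^*$ of Lemma~\ref{affine-flat-base-change}(a) to the structure sheaf $\cO_Y$, and using $q^*\cO_Y=\cO_P$ together with $f_*\cO_Y\simeq\cA$, I obtain
\[
 h^*\cA\,\simeq\,h^*f_*\cO_Y\,\simeq\,p_*q^*\cO_Y\,=\,p_*\cO_P ,
\]
where the first isomorphism is the identification $f_*\cO_Y\simeq\cA$, the second is the base-change isomorphism, and the last equality is the definition of the structure sheaf of a pullback. Invoking Lemma~\ref{affine-morphisms-qcoh-algebras} once more, this isomorphism of structure algebras on $Z$ arises from a unique isomorphism of schemes $\Spec_Z h^*\cA\simeq P=Z\times_X\Spec_X\cA$ commuting with the structure morphisms to~$Z$ (which I then relabel as~$g$). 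Transporting the projection $q\:P\rarrow Y=\Spec_X\cA$ across this identification furnishes the morphism $k$, and yields the asserted pullback square; naturality in $h$ and $\cA$ is inherited from the naturality of the base-change isomorphism.

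The one point requiring care is that the base-change isomorphism of Lemma~\ref{affine-flat-base-change}(a), stated there merely as an isomorphism of quasi-coherent sheaves, is in fact an isomorphism of quasi-coherent commutative algebras when evaluated on $\cO_Y$, and that the resulting scheme isomorphism is compatible with the projections to $Y$ (so that the square is genuinely Cartesian rather than an abstract isomorphism). Both checks are routine once the framework is in place, and they become transparent under the local reduction that I would offer as an alternative: the statement is local on $X$ and on $Z$, so it reduces to the case $X=\Spec R$, \,$Z=\Spec S$, with $\cA$ given by an $R$\+algebra $A$ and $h^*\cA$ by the $S$\+algebra $S\ot_R A$, where it amounts to the standard identity $\Spec(S\ot_R A)\simeq\Spec S\times_{\Spec R}\Spec A$ together with the evident multiplicative compatibility of $S\ot_R A$.
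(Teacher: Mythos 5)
Your proof is correct, but it takes a more structural route than the paper, whose entire argument is the one you relegate to an ``alternative'': the statement is local in $X$ and $Z$, so it reduces to affine schemes, where it is the standard identity $\Spec(S\ot_RA)\simeq\Spec S\times_{\Spec R}\Spec A$. Your primary argument instead works globally: you invoke the anti-equivalence of Lemma~\ref{affine-morphisms-qcoh-algebras} to reduce the scheme isomorphism to an isomorphism of quasi-coherent commutative algebras on $Z$, and then produce the latter from the affine base-change isomorphism $h^*f_*\simeq g_*k^*$ of Lemma~\ref{affine-flat-base-change}(a) applied to $\cO_Y$. This is a legitimate and arguably more functorial packaging (naturality in $h$ and $\cA$ comes for free from the naturality of base change, and the Cartesian property follows since your target $P$ is the fibered product by definition), but it purchases this at the cost of the two checks you correctly flag: that the base-change isomorphism is multiplicative when evaluated on the structure sheaf, and that the induced scheme isomorphism is compatible with the projection to $Y$. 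Neither check is automatic from the cited lemmas as stated, and verifying them honestly again sends you to the affine local picture -- which is why the paper's one-line local reduction is not just shorter but also self-contained; your version is fine provided those two verifications are actually carried out rather than only asserted to be routine. There is no circularity in your use of Lemma~\ref{affine-flat-base-change}(a), since it precedes the present lemma in the paper and is proved independently.
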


\begin{proof}
 The question is essentially local in $X$ and $Z$, so it reduces to
the case of affine schemes, for which it is obvious from
the constructions.
\end{proof}

 Let $\cA$ be a quasi-coherent algebra on a scheme~$X$.
 Then a \emph{quasi-coherent module} (or a \emph{quasi-coherent sheaf
of modules}) over $\cA$ is a module object over the algebra object
$\cA$ in the tensor category $X\qcoh$.
 In other words, a quasi-coherent module $\M$ over $\cA$ is
a quasi-coherent sheaf $\M\in X\qcoh$ endowed with a morphism of
quasi-coherent sheaves $\cA\ot_{\cO_X}\M\rarrow\M$ satisfying the usual
associativity and unitality equations.
 The underlying sheaf of abelian groups of a quasi-coherent module $\M$
over $\cA$ is a sheaf of modules over the underlying sheaf of rings
of~$\cA$.

 Let $f\:Y\rarrow X$ be a morphism of schemes.
 Then, for any quasi-coherent module $\M$ over a quasi-coherent
algebra $\cA$ on $X$, the inverse image $f^*\M$ is a quasi-coherent
module over the quasi-coherent algebra $f^*\cA$ on~$Y$.
 For any quasi-coherent module $\N$ over a quasi-coherent
algebra $\cB$ on $Y$, the direct image $f_*\N$ is a quasi-coherent
module over the quasi-coherent algebra~$f_*\cB$ on~$X$.
 The constructions are similar to the above constructions for algebras.

 Furthermore, let $i\:Z\rarrow X$ be a closed immersion of schemes,
and let $\M$ be a quasi-coherent module over a quasi-coherent
algebra $\cA$ on~$X$.
 Then the composition $i^*\cA\ot_{\cO_Z}i^!\M\rarrow
i^!(\cA\ot_{\cO_X}\M)\rarrow i^!\M$ of the morphism
$i^*\cA\ot_{\cO_Z}i^!\M\rarrow i^!(\cA\ot_{\cO_X}\nobreak\M)$ from
Lemma~\ref{shriek-star-tensor} and the morphism
$i^!(\cA\ot_{\cO_X}\M)\rarrow i^!\M$ induced by the structure morphism
$\cA\ot_{\cO_X}\M\rarrow\M$ endows the quasi-coherent sheaf $i^!\M$
with the structure of a quasi-coherent module over the quasi-coherent
algebra $i^*\cA$ on~$Z$.

\begin{lem} \label{qcoh-modules-via-qcoh-algebras-described}
 Let $f\:Y\rarrow X$ be an affine morphism of schemes.
 Then the category of quasi-coherent sheaves on $Y$ is equivalent to
the category of quasi-coherent modules over the quasi-coherent algebra
$f_*\cO_Y$ on~$X$.
 To a quasi-coherent sheaf\/ $\N$ on $Y$, the quasi-coherent
$f_*\cO_Y$\+module $f_*\N\in X\qcoh$ is assigned.
\end{lem}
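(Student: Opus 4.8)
The plan is to reduce to the affine case and then invoke the classical equivalence between modules over a commutative ring and quasi-coherent sheaves on its spectrum. The functor in the statement, $\N\longmapsto f_*\N$, is defined globally; I would produce a candidate quasi-inverse $G$ and then check that the two adjunction morphisms are isomorphisms, a verification that is local on~$X$.

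First I would construct~$G$. Recall from Lemma~\ref{affine-morphisms-qcoh-algebras} that $Y=\Spec_X\cA$ with $\cA=f_*\cO_Y$, so that for every affine open $U=\Spec R\subset X$ the preimage $f^{-1}(U)$ is the affine scheme $\Spec\cA(U)=U\times_XY$. Given a quasi-coherent $\cA$\+module $\M$, its module of sections $\M(U)$ is a module over the $R$\+algebra $S=\cA(U)$, hence defines a quasi-coherent sheaf on $\Spec S=f^{-1}(U)$. For a smaller affine open $V\subset U$, quasi-coherence of $\M$ and of $\cA$ yields $\M(V)\simeq\cO_X(V)\ot_{\cO_X(U)}\M(U)$ and $\cA(V)\simeq\cO_X(V)\ot_{\cO_X(U)}\cA(U)$ (the key compatibility already exploited in the proof of Lemma~\ref{affine-morphisms-qcoh-algebras}), so the locally defined quasi-coherent sheaves agree on overlaps and glue to a quasi-coherent sheaf $G(\M)$ on~$Y$.

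Next I would reduce the verification to the affine case. For an open immersion $j\:U\rarrow X$, which is flat, the flat base change isomorphism of Lemma~\ref{affine-flat-base-change}(b) gives $j^*f_*\N\simeq f_{U*}{j'}^*\N$, where $j'\:f^{-1}(U)\rarrow Y$ and $f_U\:f^{-1}(U)\rarrow U$; likewise the $\cA$\+module structure and the functor $G$ are compatible with restriction to~$U$. Since the adjunction (co)unit morphisms $G(f_*\N)\rarrow\N$ and $\M\rarrow f_*(G(\M))$ are morphisms of quasi-coherent sheaves, respectively of $\cA$\+modules, checking that they are isomorphisms is local on~$X$, so I may assume $X=\Spec R$ affine.

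Finally I would dispose of the affine case. When $X=\Spec R$, affineness of~$f$ gives $Y=\Spec S$ for the commutative $R$\+algebra $S=\cA(X)=\Gamma(Y,\cO_Y)$, and $\cA$ is the quasi-coherent algebra on $\Spec R$ attached to~$S$. Under the equivalences $Y\qcoh\simeq S\modl$ and (quasi-coherent $\cA$\+modules on $X$)\;$\simeq S\modl$ furnished by global sections, the functor $f_*$ becomes the identity on $S\modl$: an $S$\+module viewed as a quasi-coherent sheaf on $\Spec S$ is sent to the same $S$\+module viewed as an $R$\+module equipped with its $\cA=S$\+action. Hence $f_*$ is an equivalence. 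The argument has no genuinely hard step; the main labor is the gluing in the construction of~$G$ and the compatibility of all constructions with Zariski restriction that licenses the reduction to the affine case.
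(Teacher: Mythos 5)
Your proposal is correct and follows essentially the same route as the paper: the paper's proof likewise constructs the quasi-inverse by assigning to a quasi-coherent $\cA$\+module $\M$ the sheaf on $Y$ whose restriction to $f^{-1}(U)=\Spec\cA(U)$ corresponds to the $\cA(U)$\+module $\M(U)$, and omits the remaining details. Your gluing argument and the reduction of the adjunction-isomorphism check to the affine case via flat base change are precisely the details the paper leaves to the reader.
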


\begin{proof}
 Put $\cA=f_*\cO_Y$.
 The inverse functor assigns to a quasi-coherent $\cA$\+module $\M$
on $X$ the following quasi-coherent sheaf $\N$ on~$Y$.
 For any affine open subscheme $U\subset X$, the restriction of $\N$ to
the affine open subscheme $U\times_XY=\Spec\cA(U)$ in $Y$ is
the quasi-coherent sheaf on $\Spec\cA(U)$ corresponding to
the $\cA(U)$\+module~$\M(U)$.
 We omit further details.
\end{proof}

 Let $\X$ be an ind-scheme.
 A \emph{pro-quasi-coherent pro-sheaf of algebras} (or
a \emph{pro-quasi-coherent algebra}) $\fA$ over $\X$ is algebra
object in the tensor category $\X\pro$.
 In other words, $\fA$ is a pro-quasi-coherent pro-sheaf on $\X$
endowed with morphisms of pro-quasi-coherent pro-sheaves
$\fO_\X\rarrow\fA$ and $\fA\ot_\X\fA\rarrow\fA$ satisfying the usual
(associativity and unitality, and if mentioned, commutativity)
equations.

 It is clear from the construction of the tensor product of
pro-quasi-coherent pro-sheaves in Section~\ref{pro-sheaves-subsecn}
that the datum of a pro-quasi-coherent algebra $\fA$ on $\X$ is
equivalent to the datum of a quasi-coherent algebra $\fA^{(Z)}$ on
every closed subscheme $Z\subset\X$ together with a compatible
system of isomorphisms of quasi-coherent algebras $\fA^{(Z')}\simeq
i_{Z'Z''}^*\fA^{(Z'')}$ for every pair of closed subschemes $Z'
\subset Z''\subset\X$ with the closed immersion
$i_{Z'Z''}\:Z''\rarrow Z''$.

 Let $f\:\Y\rarrow\X$ be a morphism of ind-schemes and $\fA$ be
a pro-quasi-coherent algebra on~$\X$.
 Then the inverse image $f^*\:\X\pro\rarrow\Y\pro$ is a tensor
functor, hence $f^*\fA$ is a pro-quasi-coherent algebra on~$\Y$.

 Let $f:\Y\rarrow\X$ be an affine morphism of ind-schemes, and let
$\fB$ be a pro-quasi-coherent algebra on~$\Y$.
 Then an adjunction argument similar to the above one for schemes
shows how to construct an algebra structure on the pro-quasi-coherent
pro-sheaf $f_*\fB$ on~$\X$.

\begin{prop} \label{ind-schemes-affine-morphisms-pro-qcoh-algebras}
 For any ind-scheme\/ $\X$, there is a natural anti-equivalence between
the category of ind-schemes\/ $\Y$ endowed with an affine morphism\/
$\Y\rarrow\X$ and the category of pro-quasi-coherent commutative
algebras on\/~$\X$.
 To an affine morphism $f\:\Y\rarrow\X$, the pro-quasi-coherent
commutative algebra $f_*\fO_\Y$ on\/ $\X$ is assigned.
\end{prop}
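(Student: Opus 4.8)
The plan is to reduce everything to the scheme-level anti-equivalence of Lemma~\ref{affine-morphisms-qcoh-algebras}, working over a representation $\X=\ilim_{\gamma\in\Gamma}X_\gamma$ by closed immersions of schemes. First, given an affine morphism $f\:\Y\rarrow\X$, I would put $Y_\gamma=\Y\times_\X X_\gamma$; by the definition of an affine morphism in Section~\ref{morphisms-of-ind-schemes-subsecn}, each $Y_\gamma$ is a scheme, the induced $f_\gamma\:Y_\gamma\rarrow X_\gamma$ is an affine morphism of schemes, and $\Y=\ilim_{\gamma\in\Gamma}Y_\gamma$. Lemma~\ref{affine-morphisms-qcoh-algebras} then produces quasi-coherent commutative algebras $\cA_\gamma=f_{\gamma*}\cO_{Y_\gamma}$ on $X_\gamma$. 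For $\gamma<\delta$ the square $Y_\gamma\simeq Y_\delta\times_{X_\delta}X_\gamma$ is Cartesian, so Lemma~\ref{qcoh-algebra-pullback} supplies canonical isomorphisms $\cA_\gamma\simeq i_{\gamma\delta}^*\cA_\delta$ of quasi-coherent algebras. These are precisely the structure isomorphisms~(iii) from Section~\ref{pro-sheaves-subsecn}, so the collection $(\cA_\gamma)_\gamma$ assembles into a pro-quasi-coherent commutative algebra, which is exactly $f_*\fO_\Y$ as computed in Section~\ref{pro-sheaves-inverse-direct-subsecn}.

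In the opposite direction, given a pro-quasi-coherent commutative algebra $\fA$ on $\X$, I would set $Y_\gamma=\Spec_{X_\gamma}\fA^{(X_\gamma)}$. Combining the structure isomorphism $\fA^{(X_\gamma)}\simeq i_{\gamma\delta}^*\fA^{(X_\delta)}$ with Lemma~\ref{qcoh-algebra-pullback}, I obtain $Y_\gamma\simeq X_\gamma\times_{X_\delta}Y_\delta$, which exhibits the transition morphism $Y_\gamma\rarrow Y_\delta$ as the base change of the closed immersion $i_{\gamma\delta}$ along the structure morphism $Y_\delta\rarrow X_\delta$, hence as a closed immersion. Thus $\Y=\ilim_{\gamma\in\Gamma}Y_\gamma$ is a (strict) ind-scheme equipped with a morphism $f\:\Y\rarrow\X$. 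To see that $f$ is affine, I would test against an arbitrary morphism $T\rarrow\X$ from a scheme $T$: such a morphism factors through some $X_\gamma$, so $\Y\times_\X T\simeq Y_\gamma\times_{X_\gamma}T$ is affine over $T$ because $Y_\gamma\rarrow X_\gamma$ is affine.

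It then remains to check that the two constructions are mutually quasi-inverse and natural. On objects this is immediate from the levelwise anti-equivalence of Lemma~\ref{affine-morphisms-qcoh-algebras}, since $f_*\fO_\Y$ has components $f_{\gamma*}\cO_{Y_\gamma}$ while the $\Spec$ construction recovers $Y_\gamma$. On morphisms, a morphism $\fA\rarrow\fB$ in $\X\pro$ (resp.\ a morphism $\Y\rarrow\Y'$ over $\X$) amounts, by the definition of $\X\pro$ and the description of morphisms of ind-schemes in Section~\ref{morphisms-of-ind-schemes-subsecn}, to a compatible family of morphisms $\fA^{(X_\gamma)}\rarrow\fB^{(X_\gamma)}$ (resp.\ $Y_\gamma\rarrow Y'_\gamma$ over $X_\gamma$); the bijection between such families is given levelwise by Lemma~\ref{affine-morphisms-qcoh-algebras}, and compatibility with the transition maps is automatic because the transition morphisms in both systems are monomorphisms. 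Finally, since $f_*\fO_\Y$ and the $\Spec$ construction are both defined intrinsically over all closed subschemes $Z\subset\X$, and not only those in the chosen system, the anti-equivalence is independent of the representation. The step to watch is the reverse construction: one must confirm that the transition morphisms $Y_\gamma\rarrow Y_\delta$ are genuinely closed immersions, so that $\Y$ is a legitimate strict ind-scheme, and that $f$ is affine. Both follow from the base-change identity of Lemma~\ref{qcoh-algebra-pullback}, but they require keeping careful track of the Cartesian squares, which is the main point of the argument.
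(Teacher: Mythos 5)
Your proposal is correct and follows essentially the same route as the paper: the inverse functor is built levelwise via $Y_\gamma=\Spec_{X_\gamma}\fA^{(X_\gamma)}$ using the scheme-level anti-equivalence of Lemma~\ref{affine-morphisms-qcoh-algebras}, with Lemma~\ref{qcoh-algebra-pullback} supplying both the closed-immersion transition morphisms and the affineness of $\Y\rarrow\X$. The paper's proof is just a terser version of this; your additional checks (functoriality on morphisms, independence of the chosen representation) are details the paper leaves implicit.
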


\begin{proof}
 Let $\X=\ilim_{\gamma\in\Gamma}X_\gamma$ be an inductive system of
closed immersions of schemes representing\/~$\X$.
 The inverse functor to the one mentioned in the proposition assigns
to a pro-quasi-coherent algebra $\fA$ on $\X$ the ind-scheme $\Y$
represented by the inductive system of closed immersions of schemes
$\Y=\ilim_{\gamma\in\Gamma}Y_\gamma$, where $Y_\gamma=\Spec_{X_\gamma}
\fA^{(X_\gamma)}$ (see Lemma~\ref{affine-morphisms-qcoh-algebras}).
 The transition morphisms $Y_\gamma\rarrow Y_\delta$ for
$\gamma<\delta\in\Gamma$ are provided by
Lemma~\ref{qcoh-algebra-pullback}, and it is also clear from
Lemma~\ref{qcoh-algebra-pullback} that the morphism
of ind-schemes $\Y\rarrow\X$ constructed in this way is affine.
\end{proof}

 We will denote the affine ind-scheme over $\X$ corresponding to
a pro-quasi-coherent algebra $\fA$ under the construction of
Proposition~\ref{ind-schemes-affine-morphisms-pro-qcoh-algebras}
by $\Y=\Spi_\X\fA$.
 Notice that an affine morphism of ind-schemes $f\:\Y\rarrow\X$ is
flat if and only the pro-quasi-coherent pro-sheaf $\fA=f_*\fO_Y$
on $\X$ is flat.

 Let $\fA$ be a pro-quasi-coherent algebra over an ind-scheme~$\X$.
 Then a \emph{pro-quasi-coherent module} (or a \emph{pro-quasi-coherent
pro-sheaf of modules}) over $\fA$ is a module over the algebra
object $\fA$ in the tensor category $\X\pro$.
 In other words, a pro-quasi-coherent module $\fM$ over $\fA$ is
a pro-quasi-coherent pro-sheaf $\fM\in\X\pro$ endowed with a morphism
of pro-quasi-coherent pro-sheaves $\fA\ot^\X\fM\rarrow\fM$ satisfying
the usual associativity and unitality equations.

 The datum of a pro-quasi-coherent module $\fM$ over
a given pro-quasi-coherent algebra $\fA$ on $\X$ is equivalent to
the datum of a quasi-coherent module $\fM^{(Z)}$ over
the quasi-coherent algebra $\fA^{(Z)}$ on every closed subscheme
$Z\subset\X$ together with a compatible system of isomorphisms of
quasi-coherent modules $\fM^{(Z')}\simeq i_{Z'Z''}^*\fM^{(Z'')}$ for
every pair of closed subschemes $Z'\subset Z''$.

 Let $\X$ be a reasonable ind-scheme and $\fA$ be a pro-quasi-coherent
algebra over~$\X$.
 Then a \emph{quasi-coherent torsion module} (or a \emph{quasi-coherent
torsion sheaf of modules}) over $\fA$ is a module object in the module
category $\X\tors$ over the algebra object $\fA$ in the tensor category
$\X\pro$.
 In other words, a quasi-coherent torsion module $\rM$ over $\fA$ is
a quasi-coherent torsion sheaf $\rM\in\X\tors$ endowed with a morphism
of quasi-coherent torsion sheaves $\fA\ot_\X\rM\rarrow\rM$ satisfying
the usual associativity and unitality equations.

 Let $\X=\ilim_{\gamma\in\Gamma}X_\gamma$ be a representation of $\X$
by an inductive system of closed immersions of reasonable closed
subschemes.
 Then, for any quasi-coherent sheaf $\rM$ on $\X$, one has
\begin{multline*}
 \Hom_{\X\tors}(\fA\ot_\X\rM,\>\rM)\simeq
 \Hom_{\X\tors}((\fA\ot_\X\rM|_\Gamma)^+,\>\rM) \\ \simeq
 \Hom_{(\X,\Gamma)\syst}(\fA\ot_\X\rM|_\Gamma,\>\rM|_\Gamma).
\end{multline*}
 Hence the datum of a quasi-coherent torsion module $\rM$ over a given
quasi-coherent algebra $\fA$ on $\X$ is equivalent to the datum of
a quasi-coherent module $\rM_{(X_\gamma)}$ over the quasi-coherent
algebra $\fA^{(X_\gamma)}$ on the scheme $X_\gamma$ for every
$\gamma\in\Gamma$, together with a compatible system of
quasi-isomorphisms of quasi-coherent modules $\rM_{(X_\gamma)}
\simeq i_{\gamma\delta}^!\rM_{(X_\delta)}$ for every $\gamma<\delta
\in\Gamma$ (where $i_{\gamma\delta}\:X_\gamma\rarrow X_\delta$ is
the closed immersion).

 Let $f\:\Y\rarrow\X$ be a morphism of ind-schemes.
 Let $\fA$ be a pro-quasi-coherent algebra on $\X$ and $\fM$ be
a pro-quasi-coherent module over~$\fA$.
 Then the inverse image $f^*\fM$ is a pro-quasi-coherent module over
the pro-quasi-coherent algebra $f^*\fM$ on~$\Y$.
 Furthermore, assume that  $f$~is ``representable by schemes'' and
$\X$ is a reasonable ind-scheme, and let $\rM$ be a quasi-coherent 
torsion module over~$\fA$.
 Then it is clear from
Lemma~\ref{inverse-images-preserve-torsion-pro-tensor} that
the inverse image $f^*\rM$ is a quasi-coherent torsion module
over the pro-quasi-coherent algebra~$f^*\fA$.

 Let $f\:\Y\rarrow\X$ be an affine morphism of ind-schemes.
 Let $\fB$ be a pro-quasi-coherent algebra on $\Y$ and $\fN$ be
a pro-quasi-coherent module over~$\fB$.
 Then the direct image $f_*\fN$ is a pro-quasi-coherent module over
the pro-quasi-coherent algebra $f_*\fB$ on~$\X$.
 Furthermore, assume that $\X$ is a reasonable ind-scheme, and let
$\rN$ be a quasi-coherent torsion modules over~$\fB$.
 Then the direct image $f_*\rN$ is a quasi-coherent torsion module
over the pro-quasi-coherent algebra~$f_*\fB$.

\begin{prop} \label{pro-torsion-modules-via-pro-algebras-described}
 Let\/ $f\:\Y\rarrow\X$ be an affine morphism of ind-schemes.
 Then \par
\textup{(a)} the category of pro-quasi-coherent pro-sheaves on\/ $\Y$
is equivalent to the category of pro-quasi-coherent modules over
the pro-quasi-coherent algebra $f_*\fO_\Y$ on\/~$\X$; \par
\textup{(b)} if the ind-scheme\/ $\X$ is reasonable, then the category
of quasi-coherent torsion sheaves on\/ $\Y$ is equivalent to
the category of quasi-coherent torsion modules over
the pro-quasi-coherent algebra $f_*\fO_\Y$ on\/~$\X$.
\end{prop}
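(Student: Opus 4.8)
The plan is to show that in both parts the desired equivalence is realized by the direct image functor $f_*$ (of pro-sheaves in part~(a), of torsion sheaves in part~(b)), and to verify that it is an equivalence by reducing everything to the scheme-level statement of Lemma~\ref{qcoh-modules-via-qcoh-algebras-described} evaluated over each term of a defining inductive system. Fix a representation $\X=\ilim_{\gamma\in\Gamma}X_\gamma$ by closed immersions of (in part~(b), reasonable) schemes, and write $\fA=f_*\fO_\Y$. By Proposition~\ref{ind-schemes-affine-morphisms-pro-qcoh-algebras} together with Lemma~\ref{qcoh-algebra-pullback}, this induces a representation $\Y=\ilim_{\gamma\in\Gamma}Y_\gamma$ with $Y_\gamma=\Spec_{X_\gamma}\fA^{(X_\gamma)}$ and affine morphisms $f_\gamma\:Y_\gamma\rarrow X_\gamma$ sitting in Cartesian squares with the closed immersions $i_{\gamma\delta}\:X_\gamma\rarrow X_\delta$ and $k_{\gamma\delta}\:Y_\gamma\rarrow Y_\delta$ (since $Y_\gamma\simeq Y_\delta\times_{X_\delta}X_\gamma$). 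Recall from Section~\ref{pro-qcoh-algebras-subsecn} that a pro-quasi-coherent $\fA$\+module amounts to quasi-coherent $\fA^{(X_\gamma)}$\+modules $\fM^{(X_\gamma)}$ with structure isomorphisms $\fM^{(X_\gamma)}\simeq i_{\gamma\delta}^*\fM^{(X_\delta)}$, while a quasi-coherent torsion $\fA$\+module amounts to quasi-coherent $\fA^{(X_\gamma)}$\+modules $\rM_{(X_\gamma)}$ with isomorphisms $\rM_{(X_\gamma)}\simeq i_{\gamma\delta}^!\rM_{(X_\delta)}$.

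For part~(a), to a pro-quasi-coherent pro-sheaf $\fQ$ on $\Y$ I would assign $f_*\fQ$, whose components are $f_{\gamma*}\fQ^{(Y_\gamma)}$. By Lemma~\ref{qcoh-modules-via-qcoh-algebras-described} each $f_{\gamma*}\fQ^{(Y_\gamma)}$ is a quasi-coherent $\fA^{(X_\gamma)}$\+module, and $\fQ^{(Y_\gamma)}\mapsto f_{\gamma*}\fQ^{(Y_\gamma)}$ is an equivalence of $Y_\gamma\qcoh$ with the category of $\fA^{(X_\gamma)}$\+modules. The required structure isomorphism $f_{\gamma*}\fQ^{(Y_\gamma)}\simeq i_{\gamma\delta}^*f_{\delta*}\fQ^{(Y_\delta)}$ is obtained by combining the pro-sheaf structure isomorphism $\fQ^{(Y_\gamma)}\simeq k_{\gamma\delta}^*\fQ^{(Y_\delta)}$ with the affine base-change isomorphism $i_{\gamma\delta}^*f_{\delta*}\simeq f_{\gamma*}k_{\gamma\delta}^*$ of Lemma~\ref{affine-flat-base-change}(a). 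The quasi-inverse is assembled from the levelwise inverse equivalences of Lemma~\ref{qcoh-modules-via-qcoh-algebras-described}.

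For part~(b), the same scheme is carried out with $f_*$ now the direct image of torsion sheaves from Section~\ref{torsion-direct-images-subsecn}, whose components are $(f_*\rN)_{(X_\gamma)}=f_{\gamma*}\rN_{(Y_\gamma)}$. Here the compatibility isomorphism $f_{\gamma*}\rN_{(Y_\gamma)}\simeq i_{\gamma\delta}^!f_{\delta*}\rN_{(Y_\delta)}$ comes from the torsion structure isomorphism $\rN_{(Y_\gamma)}\simeq k_{\gamma\delta}^!\rN_{(Y_\delta)}$ (condition~(iii) of Section~\ref{qcoh-torsion-sheaves-subsecn}) combined with the base-change isomorphism $i_{\gamma\delta}^!f_{\delta*}\simeq f_{\gamma*}k_{\gamma\delta}^!$ of Lemma~\ref{reasonable-base-change}(a). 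This is exactly the point where the reasonableness of $\X$, hence of each closed immersion $X_\gamma\rarrow X_\delta$, is needed, since the $i^!$\+base-change identity requires the closed subscheme to be reasonable.

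The remaining work is coherence bookkeeping, which I expect to be the main (though routine) obstacle: one must check that the structure isomorphisms produced above satisfy the transitivity condition over triples $\beta<\gamma<\delta$, so that $f_*\fQ$ (resp.\ $f_*\rN$) is genuinely a pro-quasi-coherent (resp.\ quasi-coherent torsion) module, and that the functor so defined and its levelwise inverse compose to the identity as functors rather than merely objectwise. This reduces to verifying that the base-change isomorphisms of Lemmas~\ref{affine-flat-base-change}(a) and~\ref{reasonable-base-change}(a) are natural and compatible with compositions of closed immersions, which follows from their construction by adjunction; no new input beyond the scheme-level equivalence and the two base-change lemmas is required.
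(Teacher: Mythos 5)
Your proposal is correct and takes essentially the same route as the paper: both use the direct image functor $f_*$ as the equivalence, construct the quasi-inverse levelwise from Lemma~\ref{qcoh-modules-via-qcoh-algebras-described}, and obtain the compatibility isomorphisms from Lemma~\ref{affine-flat-base-change}(a) in part~(a) and Lemma~\ref{reasonable-base-change}(a) in part~(b). The only cosmetic difference is that the paper phrases the construction over arbitrary (reasonable) closed subschemes $Z\subset\X$ rather than over a fixed inductive system $(X_\gamma)_{\gamma\in\Gamma}$, which is equivalent by the discussion in Sections~\ref{pro-sheaves-subsecn} and~\ref{qcoh-torsion-sheaves-subsecn}.
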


\begin{proof}
 In part~(a), to a pro-quasi-coherent pro-sheaf $\fN$ on $\Y$,
the pro-quasi-coherent $f_*\fO_\Y$\+module $f_*\fN$ is assigned.
 To construct the inverse functor, let $\fM$ be a pro-quasi-coherent
module over $f_*\fO_\Y$ on $\X$, and let $Z\subset\X$ be a closed
subscheme; put $W=Z\times_\X\Y$, and denote by $f_Z\:W\rarrow Z$
the natural morphism.
 Then, for the corresponding pro-quasi-coherent pro-sheaf $\fN$ on $\Y$,
the quasi-coherent sheaf $\fN^{(W)}\in W\qcoh$ corresponds to
the quasi-coherent module $\fM^{(Z)}$ over the quasi-coherent
algebra $f_Z{}_*\cO_W\simeq (f_*\fO_\Y)^{(Z)}$ on $Z$ under
the equivalence of categories from
Lemma~\ref{qcoh-modules-via-qcoh-algebras-described}.
 One can use Lemma~\ref{affine-flat-base-change}(a) to construct
the compatibility isomorphisms related to pairs of closed subschemes
$Z'\subset Z''\subset\X$.

 In part~(b), to a quasi-coherent torsion sheaf $\rN$ on $Y$,
the quasi-coherent torsion module $f_*\rN$ over the pro-quasi-coherent
algebra $f_*\fO_\Y$ is assigned.
 To construct the inverse functor, let $\rM$ be a quasi-coherent torsion
module over the quasi-coherent algebra $f_*\fO_Y$ on $\X$, and let
$Z\subset\X$ be a reasonable closed subscheme.
 We keep the same notation $f_Z\:W\rarrow Z$ as above.
 Then, for the corresponding quasi-coherent torsion sheaf $\rN$ on $\Y$,
the quasi-coherent sheaf $\rN_{(W)}\in W\qcoh$ corresponds to
the quasi-coherent module $\rM_{(Z)}$ over the quasi-coherent algebra
$f_Z{}_*\cO_W\simeq (f_*\fO_\Y)^{(Z)}$ on~$Z$.
 One can use Lemma~\ref{reasonable-base-change}(a) to construct
the compatibility isomorphisms related to pairs of reasonable
closed subschemes $Z'\subset Z''\subset\X$.
\end{proof}

\Section{Dualizing Complexes on Ind-Noetherian Ind-Schemes}
\label{dualizing-on-ind-Noetherian-secn}

 For any additive category $\sA$, we denote by $\sC(\sA)$ the category
of complexes in $\sA$ and by $\sK(\sA)$ the homotopy category of
(complexes in)~$\sA$.
 The notation $\sC^+(\sA)$, \ $\sC^-(\sA)$, and $\sC^\bb(\sA)\subset
\sC(\sA)$ stands for the categories of bounded below, bounded
above, and bounded (on both sides) complexes in~$\sA$, as usual; and
similarly for $\sK^+(\sA)$, \ $\sK^-(\sA)$, and $\sK^\bb(\sA)\subset
\sK(\sA)$. 
 For an abelian (or exact) category $\sA$, the full subcategory of
injective objects in $\sA$ is denoted by $\sA_\inj\subset\sA$.

\subsection{Ind-Noetherian ind-schemes}
 An ind-scheme is said to be \emph{ind-Noetherian} if it can be
represented by an inductive system of Noetherian schemes.
 It follows from Lemma~\ref{closed-immersion-two-morphisms-lemma}(a)
that any closed subscheme of a (strict) ind-Noetherian ind-scheme is
Noetherian (since any locally closed subscheme of a Noetherian scheme
is Noetherian).
 Thus any ind-Noetherian ind-scheme can be represented by an inductive
system of closed immersions of Noetherian schemes.

 Let $S$ be a Noetherian scheme and $\X\rarrow S$ be a morphism of
ind-schemes.
 One says that $\X$ is an ind-scheme of \emph{ind-finite type} over
$S$ if $\X$ can be represented by an inductive system of schemes of
finite type over~$S$.
 Similarly to the previous paragraph, any closed subscheme of
an ind-scheme of ind-finite type is a scheme of finite type
(over the Noetherian base scheme~$S$).
 Thus any ind-scheme of ind-finite type can be represented by
an inductive system of closed immersions of schemes of finite type.

 Let $\kk$~be a Noetherian commutative ring.
 Speaking about schemes of finite type and ind-schemes of ind-finite
type over $\Spec\kk$, we will say simply ``over~$\kk$'' instead of
``over $\Spec\kk$'', for brevity.

\begin{exs} \label{ind-Artinian-ind-schemes}
 (1)~A scheme $X$ is said to be \emph{Artinian} if it has a finite open
covering by spectra of Artinian rings; equivalently, this means that
$X$ is the spectum of an Artinian ring.
 So any Artinian scheme is affine and Noetherian.

 An ind-scheme is said to be \emph{ind-Artinian} if it can be
represented by an inductive system of Artinian schemes.
 Any closed subscheme of an ind-Artinian ind-scheme is Artinian;
so an ind-Artinian ind-scheme can be represented by an inductive system
of closed immersions of Artinian schemes.
 Any ind-Artinian ind-scheme is ind-affine and ind-Noetherian.

\smallskip
 (2)~Let $\kk$~be a field.
 Notice that any Artinian scheme of finite type over~$\kk$ is finite
over~$\kk$ (in other words, any finitely generated commutative
Artinian $\kk$\+algebra is finite-dimensional over~$\kk$).

 The construction of Example~\ref{coalgebra-ind-scheme}(2)
establishes an equivalence between the category of ind-Artinian
ind-schemes of ind-finite type over~$\kk$ and the category of
cocommutative coalgebras over~$\kk$.
 To a (coassociative, counital) cocommutative coalgebra $\rC$
over~$\kk$, the ind-Artinian ind-scheme of ind-finite type $\Spi\rC^*$
is assigned.

\smallskip
 (3)~The ind-schemes $\Spi\widehat\boZ$ and $\Spi\boZ_p$ from
Examples~\ref{Z-hat-ind-scheme} are ind-Artinian.

 Quite generally, the category of ind-Artinian ind-schemes is
anti-equivalent to the category of \emph{pro-Artinian topological
commutative rings} in the sense of~\cite[Section~1.1]{Pweak}.
 The functor $\fR\longmapsto\Spi\fR$ from
Example~\ref{topological-ring-ind-scheme}(1) restricts to
the desired anti-equivalence.
 It is important here that, for any directed projective system of
$(R_\gamma)_{\gamma\in\Gamma}$ of Artinian commutative rings
$R_\gamma$ and surjective maps between them, the projection map
$\fR=\varprojlim_{\gamma\in\Gamma}R_\gamma\rarrow R_\delta$ is
surjective for all $\delta\in\Gamma$ \,\cite[Corollary~A.2.1]{Pweak}.
\end{exs}

\subsection{Definition of a dualizing complex}
\label{dualizing-definition-subsecn}
 Notice that any closed subscheme of a Noetherian scheme is reasonable
(in the sense of Section~\ref{reasonable-subsecn}).
 Hence any ind-Noetherian ind-scheme is reasonable, and any closed
subscheme of an ind-Noetherian ind-scheme is reasonable.

 Let $X$ be a scheme.
 Recall the notation $\cHom_{\cO_X}({-},{-})$ for the internal Hom of
sheaves of $\cO_X$\+modules (see
Section~\ref{qcoh-sheaves-and-functors-subsecn}).
 The \emph{quasi-coherent internal Hom} of quasi-coherent sheaves on
$X$ is defined as follows.
 For any two quasi-coherent sheaves $\M$ and $\N\in X\qcoh$,
the quasi-coherent sheaf $\cHom_{X\qc}(\M,\N)\in X\qcoh$ is the object
for which a natural isomorphism of the abelian groups of morphisms
$$
 \Hom_{X\qcoh}(\cL,\cHom_{X\qc}(\M,\N))\simeq
 \Hom_{X\qcoh}(\cL\ot_{\cO_X}\M,\>\N)
$$
holds for all $\cL\in X\qcoh$.
 The quasi-coherent sheaf $\cHom_{X\qc}(\M,\N)$ can be obtained by
applying the \emph{coherator} functor~\cite[Sections~B.12--B.14]{TT}
to the sheaf of $\cO_X$\+modules $\cHom_{\cO_X}(\M,\N)$.
 In particular, one has $\cHom_{X\qc}(\M,\N)=\cHom_{\cO_X}(\M,\N)$
whenever the sheaf of $\cO_X$\+modules $\cHom_{\cO_X}(\M,\N)$ is
quasi-coherent (e.~g., this holds when the scheme $X$ is Noetherian
and the sheaf $\M$ is coherent).

 For any two complexes $\M^\bu$ and $\N^\bu$ of quasi-coherent sheaves
on $X$, the complex $\cHom_{X\qc}(\M^\bu,\N^\bu)$ of quasi-coherent
sheaves on $X$ is constructed by totalizing the bicomplex of
quasi-coherent sheaves with the components $\cHom_{X\qc}(\M^p,\N^q)$,
\ $p$, $q\in\boZ$, by taking infinite products in the Grothendieck
category of quasi-coherent sheaves $X\qcoh$ along the diagonals of
the bicomplex.

\begin{lem} \label{qcoh-internal-Hom-projection}
 Let $f\:Y\rarrow X$ be a morphism of schemes, $\M$ be a quasi-coherent
sheaf on $X$, and $\N$ be a quasi-coherent sheaf on~$Y$.
 Then there is a natural isomorphism of quasi-coherent sheaves on~$X$
$$
 f_*\cHom_{Y\qc}(f^*\M,\N)\simeq
 \cHom_{X\qc}(\M,f_*\N).
$$
\end{lem}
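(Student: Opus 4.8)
The plan is to prove the isomorphism via the Yoneda lemma, showing that both sides corepresent the same functor on the Grothendieck category $X\qcoh$. The essential inputs are the defining universal property of the quasi-coherent internal Hom, the adjunction between $f^*$ and $f_*$, and the fact that $f^*$ is a tensor functor (as recalled in Section~\ref{qcoh-sheaves-and-functors-subsecn}).

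First I would fix an arbitrary quasi-coherent sheaf $\cL\in X\qcoh$ and compute the abelian group of morphisms from $\cL$ into each side. Starting from the right-hand side, the defining property of $\cHom_{X\qc}$ gives a natural isomorphism
$$
 \Hom_{X\qcoh}(\cL,\cHom_{X\qc}(\M,f_*\N))\simeq
 \Hom_{X\qcoh}(\cL\ot_{\cO_X}\M,\>f_*\N).
$$
Applying the $f^*$--$f_*$ adjunction, followed by the tensor-functor isomorphism $f^*(\cL\ot_{\cO_X}\M)\simeq f^*\cL\ot_{\cO_Y}f^*\M$, I would rewrite the right-hand side as $\Hom_{Y\qcoh}(f^*\cL\ot_{\cO_Y}f^*\M,\>\N)$.

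Next I would unwind the left-hand object in the opposite direction: the defining property of $\cHom_{Y\qc}$ identifies the previous group with $\Hom_{Y\qcoh}(f^*\cL,\>\cHom_{Y\qc}(f^*\M,\N))$, and a second application of the adjunction turns this into $\Hom_{X\qcoh}(\cL,\>f_*\cHom_{Y\qc}(f^*\M,\N))$. Chaining these natural isomorphisms establishes
$$
 \Hom_{X\qcoh}(\cL,\cHom_{X\qc}(\M,f_*\N))\simeq
 \Hom_{X\qcoh}(\cL,\>f_*\cHom_{Y\qc}(f^*\M,\N))
$$
naturally in $\cL$, so the Yoneda lemma yields the asserted isomorphism of quasi-coherent sheaves on~$X$.

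The argument is entirely formal, so I do not expect a genuine obstacle; the only point requiring care is checking that each isomorphism is natural in $\cL$ and that the chain composes coherently, which is routine since every step is an adjunction isomorphism or the tensor-structure isomorphism, all natural by construction. I would also note that the corepresenting objects $\cHom_{X\qc}(\M,f_*\N)$ and $\cHom_{Y\qc}(f^*\M,\N)$ exist as \emph{quasi-coherent} sheaves by the coherator construction recalled just before the lemma, so both sides genuinely live in the relevant categories of quasi-coherent sheaves.
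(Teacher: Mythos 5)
Your proof is correct and is essentially the same as the paper's: both arguments test against an arbitrary $\cL\in X\qcoh$ and chain the identical five natural isomorphisms (universal property of $\cHom_{Y\qc}$, the $f^*$--$f_*$ adjunction, the tensor-functor isomorphism $f^*(\cL\ot_{\cO_X}\M)\simeq f^*\cL\ot_{\cO_Y}f^*\M$, the adjunction again, and the universal property of $\cHom_{X\qc}$), concluding by Yoneda. The only difference is that you traverse the chain from the right-hand side to the left-hand side, which is immaterial.
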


\begin{proof}
 Let $\cL$ be an arbitrary quasi-coherent sheaf on~$X$.
 Then we have
\begin{multline*}
 \Hom_X(\cL,f_*\cHom_{Y\qc}(f^*\M,\N))\simeq
 \Hom_Y(f^*\cL,\cHom_{Y\qc}(f^*\M,\N)) \\ \simeq
 \Hom_Y(f^*\cL\ot_{\cO_Y}f^*\M,\>\N)\simeq
 \Hom_Y(f^*(\cL\ot_{\cO_X}\M),\>\N) \\ \simeq
 \Hom_X(\cL\ot_{\cO_X}\M,\>f_*\N)\simeq
 \Hom_X(\cL,\cHom_{X\qc}(\M,f_*\N)),
\end{multline*}
where $\Hom_X({-},{-})$ is a shorthand notation for the abelian group
$\Hom_{X\qcoh}({-},{-})$, and similarly for~$Y$.
\end{proof}

\begin{lem} \label{qcoh-internal-Hom-of-complexes-projection}
 Let $f\:Y\rarrow X$ be a morphism of schemes, $\M^\bu$ be
a complex of quasi-coherent sheaves on $X$, and $\N^\bu$ be a complex
of quasi-coherent sheaves on~$Y$.
 Then there is a natural isomorphism of complexes of quasi-coherent
sheaves on~$X$
$$
 f_*\cHom_{Y\qc}(f^*\M^\bu,\N^\bu)\simeq
 \cHom_{X\qc}(\M^\bu,f_*\N^\bu).
$$
\end{lem}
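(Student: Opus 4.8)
The plan is to deduce this complex-level statement from the single-sheaf isomorphism of Lemma~\ref{qcoh-internal-Hom-projection} by arguing componentwise and then verifying that $f_*$ commutes with the totalization.

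First I would unwind the definitions recalled just above: $\cHom_{Y\qc}(f^*\M^\bu,\N^\bu)$ is the totalization of the bicomplex on $Y$ with $(p,q)$-term $\cHom_{Y\qc}(f^*\M^p,\N^q)$, formed by taking products along the diagonals in the Grothendieck category $Y\qcoh$; likewise $\cHom_{X\qc}(\M^\bu,f_*\N^\bu)$ is the totalization of the bicomplex on $X$ with $(p,q)$-term $\cHom_{X\qc}(\M^p,f_*\N^q)$, using products in $X\qcoh$. Lemma~\ref{qcoh-internal-Hom-projection} provides, for each pair $(p,q)$, an isomorphism $f_*\cHom_{Y\qc}(f^*\M^p,\N^q)\simeq\cHom_{X\qc}(\M^p,f_*\N^q)$; because the proof of that lemma realizes the map as a composite of adjunction isomorphisms that are natural in every argument, this isomorphism is natural in $\M^p$ and in $\N^q$. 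Applying $f_*$ termwise therefore transports the bicomplex on $Y$ to a bicomplex on $X$ isomorphic, compatibly with both the horizontal and the vertical differentials, to the one computing $\cHom_{X\qc}(\M^\bu,f_*\N^\bu)$.

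The only substantive point is that $f_*$ commutes with the totalization. Each diagonal of the total complex on $Y$ is an infinite product $\prod_{p+q=n}\cHom_{Y\qc}(f^*\M^p,\N^q)$ computed in $Y\qcoh$, and the total differential is the usual sign-twisted sum of the two bicomplex differentials. Since $f_*\:Y\qcoh\rarrow X\qcoh$ is right adjoint to $f^*$ (Section~\ref{qcoh-sheaves-and-functors-subsecn}), it preserves all limits, and in particular these infinite products, in the quasi-coherent categories; being additive, it also respects the total differential. Hence $f_*$ carries the totalization on $Y$ to the totalization on $X$, and the componentwise isomorphisms of the previous paragraph assemble into the asserted natural isomorphism of complexes. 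The one thing to be careful about here is that the relevant products are those of the quasi-coherent categories, i.e.\ the images under the coherator of the products of $\cO$-modules, rather than naive products of sheaves of $\cO$-modules; it is exactly the former that the right adjoint $f_*$ preserves, so no separate local computation is needed.
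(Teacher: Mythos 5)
Your proof is correct and follows exactly the paper's own argument: the paper's proof is precisely the observation that the statement "follows from Lemma~\ref{qcoh-internal-Hom-projection} and the fact that the direct image functor~$f_*$, being a right adjoint, preserves infinite products of quasi-coherent sheaves." Your write-up simply makes explicit the termwise application, the naturality needed for compatibility with the differentials, and the (correct) point that the relevant products are those of the quasi-coherent categories.
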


\begin{proof}
 Follows from Lemma~\ref{qcoh-internal-Hom-projection} and the fact
that the direct image functor~$f_*$, being a right adjoint, preserves
infinite products of quasi-coherent sheaves.
\end{proof}

\begin{lem} \label{hom-tensor-flats-injectives}
\textup{(a)} For any injective quasi-coherent sheaf $\J$ over
a quasi-compact semi-separated scheme $X$, the functor\/
$\cHom_{X\qc}({-},\J)\:X\qcoh\rarrow X\qcoh$ is exact. \par
\textup{(b)} For any flat quasi-coherent sheaf $\F$ and injective
quasi-coherent sheaf $\J$ over a Noetherian scheme $X$,
the quasi-coherent sheaf $\F\ot_{\cO_X}\J$ is injective. \par
\textup{(c)} For any flat quasi-coherent sheaf $\F$ and injective
quasi-coherent sheaf $\J$ over a scheme $X$, the quasi-coherent
sheaf\/ $\cHom_{X\qc}(\F,\J)$ is injective. \par
\textup{(d)} For any injective quasi-coherent sheaves $\J'$ and $\J$
over a semi-separated Noetherian scheme $X$, the quasi-coherent
sheaf\/ $\cHom_{X\qc}(\J',\J)$ is flat.
\end{lem}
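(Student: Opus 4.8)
The plan is to reduce each of the four assertions to a classical statement about modules over a commutative ring and then to globalize, using the projection-type formulas already at our disposal (Lemmas~\ref{projection-formula} and~\ref{qcoh-internal-Hom-projection}) together with the semi-separatedness hypotheses. I begin with part~(c), which is the one requiring no reduction. The object $\cHom_{X\qc}(\F,\J)$ is characterized by the natural isomorphism $\Hom_{X\qcoh}(\cL,\cHom_{X\qc}(\F,\J))\simeq\Hom_{X\qcoh}(\cL\ot_{\cO_X}\F,\J)$ for $\cL\in X\qcoh$. Viewed as a functor of~$\cL$, the right-hand side is the composite of $\cL\mapsto\cL\ot_{\cO_X}\F$, exact since $\F$ is flat, with $\Hom_{X\qcoh}({-},\J)$, exact since $\J$ is injective. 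Thus $\Hom_{X\qcoh}({-},\cHom_{X\qc}(\F,\J))$ is exact, i.e. $\cHom_{X\qc}(\F,\J)$ is injective, on an arbitrary scheme.

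For parts~(a) and~(d) I would fix a finite affine open covering $X=\bigcup_{\alpha}U_\alpha$; semi-separatedness guarantees that each inclusion $j_\alpha\:U_\alpha\rarrow X$ is an affine morphism and that $U_\alpha\cap U_\beta$ is affine. The unit map $\J\rarrow\bigoplus_\alpha j_{\alpha*}j_\alpha^*\J$ is a monomorphism (a section vanishing on every $U_\alpha$ is zero), so for injective $\J$ it splits, exhibiting $\J$ as a direct summand of $\bigoplus_\alpha j_{\alpha*}\J_\alpha$ with $\J_\alpha:=j_\alpha^*\J$. Applying $\cHom_{X\qc}({-},{-})$ and invoking Lemma~\ref{qcoh-internal-Hom-projection} for the affine morphisms $j_\alpha$, one finds that $\cHom_{X\qc}({-},\J)$ is a retract of the functor ${-}\longmapsto\bigoplus_\alpha j_{\alpha*}\cHom_{U_\alpha\qc}(j_\alpha^*({-}),\J_\alpha)$. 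Over the affine $U_\alpha$ the quasi-coherent internal Hom is ordinary module Hom, $\cHom_{U_\alpha\qc}(\widetilde M,\widetilde N)\simeq\widetilde{\Hom_{R_\alpha}(M,N)}$, and $\J_\alpha$ is injective (restriction to an affine open preserves injectivity on a semi-separated scheme). For part~(a) the module fact is that $\Hom_{R_\alpha}({-},\J_\alpha)$ is exact, so the displayed functor is exact and therefore so is its retract $\cHom_{X\qc}({-},\J)$. For part~(d) one instead checks flatness, which is local: restricting the same retract to $U_\gamma$ and using flat base change (Lemma~\ref{affine-flat-base-change}(b)) for the open immersions $U_\gamma\cap U_\alpha\rarrow U_\gamma$, one reduces to the statement that, over a Noetherian ring, $\Hom_{R}(J',J)$ is flat for injective $J'$, $J$ — proved by writing $J'$ as a direct sum of injective hulls $E(R/\p)$, using that products of flat modules over a Noetherian ring are flat, and appealing to Matlis theory.

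Part~(b) is handled by locality of injectivity rather than by the internal Hom. Over a Noetherian ring $R$ the assertion is that $F\ot_RJ$ is injective for $F$ flat and $J$ injective: writing $F=\varinjlim_iF_i$ as a filtered colimit of finite free modules (Lazard) gives $F\ot_RJ\simeq\varinjlim_iJ^{\oplus n_i}$, a filtered colimit of injectives, which is injective because $R$ is Noetherian. To globalize over a (locally) Noetherian scheme~$X$, I would use that $\F\ot_{\cO_X}\J$ restricts on each affine open $U$ to $\F|_U\ot_{\cO_U}\J|_U$ (tensor product commutes with restriction), that $\J|_U$ is again injective, and that on a locally Noetherian scheme injectivity of a quasi-coherent sheaf is a local property (a standard consequence of the structure theory of injective quasi-coherent sheaves); each restriction is then the tensor product of a flat and an injective sheaf over an affine Noetherian scheme, hence injective, and so is $\F\ot_{\cO_X}\J$.

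The step I expect to be the main obstacle is the globalization, which rests on two facts about the passage from affine charts to the whole scheme: that restriction to an affine open carries injective quasi-coherent sheaves to injectives, and — more delicate — that the coherator-defined internal Hom is compatible enough with the covering to run the retract argument, the quasi-coherent internal Hom being \emph{not} local in the naive sense. Both are exactly where semi-separatedness (making the charts affine morphisms with affine overlaps) is indispensable, and Lemma~\ref{qcoh-internal-Hom-projection} together with the projection formula of Lemma~\ref{projection-formula} is the tool that makes them tractable; the one genuinely classical module-theoretic input that is not elementary is the flatness of $\Hom_R(J',J)$ for injective modules over a Noetherian ring needed in part~(d).
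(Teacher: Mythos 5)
Your parts (b) and (c) are correct --- (c) especially, since the adjunction argument is soft and valid over an arbitrary scheme, exactly as the statement requires --- and your chart-based strategy for (a) and (d) is the same as in the sources the paper actually defers to for this lemma (the paper's own ``proof'' is just a citation to \cite[Lemma~8.7]{Mur} and \cite[Lemma~2.5]{EP}). But part~(a) has a genuine gap. Your retract argument needs $\Hom_{R_\alpha}({-},\J_\alpha)$ to be exact, i.e., it needs the restriction $\J_\alpha=j_\alpha^*\J$ to be injective in $U_\alpha\qcoh$, and your justification (``restriction to an affine open preserves injectivity on a semi-separated scheme'') is false in the generality of part~(a): there $X$ is only quasi-compact and semi-separated, not Noetherian, and over non-Noetherian rings localization need not preserve injectivity of modules (examples of E.~C.~Dade, \emph{Localization of injective modules}, Journ.\ of Algebra \textbf{69}, 1981); taking $X=\Spec R$ and $U_\alpha$ a principal affine open turns such an example into a counterexample to your claim. (The claim is true on locally Noetherian schemes by \cite[Proposition~II.7.17 and Theorem~II.7.18]{Hart}, which is why the same step is harmless in part~(d).) The repair is small: choose monomorphisms $j_\alpha^*\J\rarrow\K_\alpha$ into injective quasi-coherent sheaves on $U_\alpha$; then the composite $\J\rarrow\bigoplus_\alpha j_\alpha{}_*j_\alpha^*\J\rarrow\bigoplus_\alpha j_\alpha{}_*\K_\alpha$ is a monomorphism (the covering is finite, $j_\alpha{}_*$ is left exact, and direct sums of monomorphisms are monomorphisms), it splits because $\J$ is injective, and your retract argument then runs verbatim with $\K_\alpha$ in place of $\J_\alpha$: each functor $\M\longmapsto j_\alpha{}_*\cHom_{U_\alpha\qc}(j_\alpha^*\M,\K_\alpha)$ is exact because $j_\alpha^*$ is exact, $\K_\alpha$ is genuinely injective, and $j_\alpha{}_*$ is exact ($j_\alpha$~being an affine morphism thanks to semi-separatedness).

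A secondary weak point is the module-theoretic input for part~(d). The fact you need --- $\Hom_R(J',J)$ is flat when $J'$, $J$ are injective over a Noetherian ring $R$ --- is true and standard, but your sketch does not establish it: after writing $J'=\bigoplus_i E(R/\p_i)$ and using that products of flat modules over a Noetherian ring are flat, the remaining step is flatness of $\Hom_R(E(R/\p),J)$, and here decomposing $J$ into a direct sum of indecomposable injectives does not help, since $\Hom_R(E(R/\p),{-})$ does not commute with infinite direct sums ($E(R/\p)$ is not finitely generated). A clean argument avoiding Matlis theory altogether: for any finitely generated $R$-module $N$, computing with a resolution of $N$ by finitely generated free modules and using the exactness of $\Hom_R({-},J)$ gives a natural isomorphism $\Tor^R_i(N,\Hom_R(J',J))\simeq\Hom_R(\Ext^i_R(N,J'),J)$; taking $N=R/I$ for an ideal $I\subset R$ and $i=1$, the right-hand side vanishes because $J'$ is injective, so $\Tor^R_1(R/I,\Hom_R(J',J))=0$ for all ideals $I$ and $\Hom_R(J',J)$ is flat. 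With these two repairs your proposal becomes a complete proof, following essentially the same route as the references the paper cites.
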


\begin{proof}
 This is~\cite[Lemma~8.7]{Mur} or~\cite[Lemma~2.5]{EP}.
\end{proof}

 Let $X$ be a semi-separated Noetherian scheme.
 For us, a \emph{dualizing complex} $\D^\bu$ on $X$ is a complex of
injective quasi-coherent sheaves, $\D^\bu\in\sC(X\qcoh_\inj)$,
satisfying the following conditions:
\begin{enumerate}
\renewcommand{\theenumi}{\roman{enumi}}
\item the complex $\D^\bu$ is homotopy equivalent to a bounded
complex of injective quasi-coherent sheaves on~$X$;
\item the cohomology sheaves of the complex $\D^\bu$ are coherent
sheaves on~$X$;
\item the natural morphism of complexes of quasi-coherent sheaves
$\cO_X\rarrow\cHom_{X\qc}(\D^\bu,\D^\bu)$ is a quasi-isomorphism
(of complexes in the abelian category $X\qcoh$).
\end{enumerate}

 This definition is equivalent to the one in~\cite[Section~V.2]{Hart},
with the only difference that a dualizing complex is viewed as
a derived category object in~\cite{Hart}, while we presume a complex
of injectives representing this derived category object to be chosen.
 The complex of injectives $\D^\bu$ does not have to be bounded, but
it must be homotopy equivalent to a bounded complex of injectives.
 In view of the semi-separatedness assumption in
Lemma~\ref{hom-tensor-flats-injectives}(a), we are imposing
the semi-separatedness assumption on the scheme $X$ in the definition
above in order to be able to use the quasi-coherent internal Hom in
the formulation of condition~(iii).
 In particular, being a dualizing complex is a local property of
a complex of quasi-coherent sheaves.

\begin{lem} \label{support-restriction-of-dualizing-complex}
 Let $i\:Z\rarrow X$ be a closed immersion of semi-separated Noetherian
schemes and $\D^\bu\in\sC(X\qcoh_\inj)$ be a dualizing complex on~$X$.
 Then $i^!\D^\bu\in\sC(Z\qcoh_\inj)$ is a dualizing complex on~$Z$.
\end{lem}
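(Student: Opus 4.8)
The plan is to verify the three defining conditions of Section~\ref{dualizing-definition-subsecn} for $i^!\D^\bu$ on $Z$, after noting that the statement is well-posed. Since $i_*\:Z\qcoh\rarrow X\qcoh$ is exact and $i^!\:X\qcoh\rarrow Z\qcoh$ is its right adjoint (the closed subscheme $Z$ being reasonable, as $X$ is Noetherian), the functor $i^!$ sends injective quasi-coherent sheaves to injectives; hence applying $i^!$ termwise produces $i^!\D^\bu\in\sC(Z\qcoh_\inj)$. Being additive, $i^!$ preserves homotopy equivalences and carries a bounded complex of injectives on $X$ to one on $Z$, so condition~(i) is immediate. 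Moreover, replacing $\D^\bu$ by a homotopy-equivalent bounded complex of injectives (which is again dualizing, conditions~(ii--iii) being invariant under homotopy equivalence, and whose $i^!$ is homotopy equivalent to $i^!\D^\bu$), I may and will assume $\D^\bu$ bounded for the rest.

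For condition~(ii) I would work locally on $X$. By the construction of Section~\ref{qcoh-sheaves-and-functors-subsecn}, $i_*i^!\D^\bu=\cHom_{X\qc}(i_*\cO_Z,\D^\bu)$, and since $i_*$ is exact it carries the cohomology sheaves of $i^!\D^\bu$ to the sheaves $\mathcal{E}xt^n_{X\qc}(i_*\cO_Z,\D^\bu)$. Here $i_*\cO_Z$ is coherent and $\D^\bu$ is a bounded complex of injectives with coherent cohomology over the Noetherian scheme $X$, so these $\mathcal{E}xt$-sheaves are coherent (locally this is the standard fact that $\Ext^n_R(R/I,{-})$ preserves finite generation over a Noetherian ring; see \cite[Section~V.2]{Hart} or \cite{SP}). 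As they are $\cO_Z$-modules supported on $Z$, this gives coherence of the cohomology of $i^!\D^\bu$ on $Z$.

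The heart of the matter is condition~(iii). I would first record the duality isomorphism
\begin{equation*}
 i_*\cHom_{Z\qc}(\N,i^!\M)\,\simeq\,\cHom_{X\qc}(i_*\N,\M),
\end{equation*}
natural in $\N\in\sC(Z\qcoh)$ and $\M\in\sC(X\qcoh)$, which follows from the adjunctions $(i^*,i_*)$ and $(i_*,i^!)$ together with the projection-formula isomorphism $\cL\ot_{\cO_X}i_*\N\simeq i_*(i^*\cL\ot_{\cO_Z}\N)$ of Lemma~\ref{projection-formula} (the closed immersion $i$ being affine). Taking $\N=i^!\D^\bu$, \,$\M=\D^\bu$ and substituting $i_*i^!\D^\bu=\cHom_{X\qc}(i_*\cO_Z,\D^\bu)$ yields
\begin{equation*}
 i_*\cHom_{Z\qc}(i^!\D^\bu,i^!\D^\bu)\,\simeq\,
 \cHom_{X\qc}\bigl(\cHom_{X\qc}(i_*\cO_Z,\D^\bu),\,\D^\bu\bigr).
\end{equation*}
Since $X$ is semi-separated, $\cHom_{X\qc}({-},\J)$ is exact for each injective $\J$ by Lemma~\ref{hom-tensor-flats-injectives}(a), so $\cHom_{X\qc}({-},\D^\bu)$ with $\D^\bu$ a bounded complex of injectives computes the derived functor $\boR\cHom$; the right-hand side is thus the double dual $\boR\cHom_X(\boR\cHom_X(i_*\cO_Z,\D^\bu),\D^\bu)$ of the coherent sheaf $i_*\cO_Z$.

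It remains to invoke biduality: the evaluation map $\F\rarrow\boR\cHom_X(\boR\cHom_X(\F,\D^\bu),\D^\bu)$ is an isomorphism in the derived category for every coherent $\F$. This is the crucial property of a dualizing complex; the case $\F=\cO_X$ is precisely condition~(iii), and the general case is deduced from it by the way-out functor argument (the objects of $\sD^\bb_{\mathrm{coh}}$ on which the evaluation is an isomorphism form a thick triangulated subcategory containing $\cO_X$, hence all coherent sheaves), for which I would cite \cite[Proposition~V.2.1]{Hart}. Applied to $\F=i_*\cO_Z$ this gives $i_*\cHom_{Z\qc}(i^!\D^\bu,i^!\D^\bu)\simeq i_*\cO_Z$, and a naturality check identifies this with the image under $i_*$ of the canonical morphism $\cO_Z\rarrow\cHom_{Z\qc}(i^!\D^\bu,i^!\D^\bu)$. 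Since $i_*$ is exact and fully faithful, it reflects quasi-isomorphisms, so the canonical morphism is itself a quasi-isomorphism, establishing condition~(iii). The main obstacle is exactly this last step, and within it the upgrade of biduality from the axiom for $\cO_X$ to the coherent sheaf $i_*\cO_Z$, which is the one genuinely nontrivial input, supplied by the classical way-out argument.
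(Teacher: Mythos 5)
Your proof is correct in substance, and what it really does is unfold the citation that constitutes the paper's proof. The paper disposes of the lemma in two lines: $i^!$ takes injectives to injectives (being right adjoint to the exact functor~$i_*$), and ``the rest is \cite[Proposition~V.2.4]{Hart}'' --- Hartshorne's statement that the upper-shriek of a dualizing complex along a finite morphism is again dualizing, which is precisely the assertion at hand. You instead verify the three defining conditions directly: the same injectivity observation, the reduction to a bounded complex, coherence of cohomology via $i_*i^!\D^\bu=\cHom_{X\qc}(i_*\cO_Z,\D^\bu)$, and the homothety condition via the duality isomorphism $i_*\cHom_{Z\qc}(\N,i^!\M)\simeq\cHom_{X\qc}(i_*\N,\M)$ combined with biduality for coherent sheaves, citing \cite[Proposition~V.2.1]{Hart} rather than V.2.4. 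This is essentially Hartshorne's own proof of V.2.4 written out in the paper's notation; what it buys is self-containedness (only the biduality theorem is taken as a black box), at the cost of length.

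One caveat: your parenthetical gloss of the way-out argument is wrong as stated. The objects of $\sD^\bb_{\mathrm{coh}}(X\qcoh)$ on which the evaluation map $\F\rarrow\boR\cHom_X(\boR\cHom_X(\F,\D^\bu),\D^\bu)$ is an isomorphism do form a thick triangulated subcategory, but the thick subcategory generated by $\cO_X$ consists only of the \emph{perfect} complexes, which on a non-regular scheme is strictly smaller than $\sD^\bb_{\mathrm{coh}}$; so ``thick subcategory containing $\cO_X$, hence all coherent sheaves'' is not a valid inference. The actual argument behind \cite[Proposition~V.2.1]{Hart} needs more: the statement is local, the double-dual functor is way-out in both directions (this is where the finite injective dimension of $\D^\bu$ enters), so the way-out lemma \cite[Proposition~I.7.1]{Hart} reduces to a single coherent sheaf, which is then resolved locally by finite free modules, the truncation being controlled by the bounded amplitude. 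Since you rest the step on the citation to V.2.1 rather than on the gloss, your proof stands; but the gloss on its own would not.
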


\begin{proof}
 First of all, the functor $i^!\:X\qcoh\rarrow Z\qcoh$ is right adjoint
to an exact functor~$i_*$; so $i^!$~takes injectives to injectives.
 The rest is~\cite[Proposition~V.2.4]{Hart}.
 (Cf.\ Lemmas~\ref{star-qc-Hom-shriek-lemma}
and~\ref{dualizing-homothety-quasi-isomorphism-as-flats} below.)
\end{proof}

 An ind-scheme is said to be \emph{ind-semi-separated} if it can be
represented by an inductive system of semi-separated schemes.
 It follows from Lemma~\ref{closed-immersion-two-morphisms-lemma}(a)
that any closed subscheme of an ind-semi-separated ind-scheme is
ind-semi-separated (since any locally closed subscheme of
a semi-separated scheme is semi-separated).
 Thus any ind-semi-separated ind-scheme can be represented by
an inductive system of closed immersions of semi-separated schemes.
 Moreover, any ind-semi-separated ind-Noetherian ind-scheme can be
represented by an inductive system of closed immersions of
semi-separated Noetherian schemes.

 Similarly, an ind-scheme is said to be \emph{ind-separated} if
it can be represented by an inductive system of separated schemes.
 Any closed subscheme of an ind-separated ind-scheme is ind-separated.

 Let $\X$ be an ind-semi-separated ind-Noetherian ind-scheme.
 A \emph{dualizing complex} $\rD^\bu$ on $\X$ is a complex of
quasi-coherent torsion sheaves, $\rD^\bu\in\sC(\X\tors)$, satisfying
the following condition:
\begin{enumerate}
\renewcommand{\theenumi}{\roman{enumi}}
\setcounter{enumi}{3}
\item for every closed subscheme $Z\subset\X$ with the closed immersion
morphism $i\:Z\rarrow\X$, the complex $i^!\rD^\bu\in\sC(Z\qcoh)$ is
a dualizing complex on~$Z$.
\end{enumerate}
 Here the functor $i^!\:\X\tors\rarrow Z\qcoh$ is applied to the complex
$\rD^\bu\in\sC(\X\tors)$ termwise (no derived functor is presumed
in our notation).
 In view of condition~(i) for $i^!\rD^\bu\in\sC(Z\qcoh)$ and
Proposition~\ref{torsion-injectives-characterized}(b), it follows
from condition~(iv) that $\rD^\bu\in\sC(\X\tors)$ is actually a complex
of injective quasi-coherent torsion sheaves;
so $\rD^\bu\in\sC(\X\tors_\inj)$.

 Let $\X=\ilim_{\gamma\in\Gamma}X_\gamma$ be a representation of $\X$
by an inductive system of closed immersions of semi-separated
Noetherian schemes.
 Then, in view of Lemma~\ref{support-restriction-of-dualizing-complex},
it suffices to check condition~(iv) for the closed subschemes belonging
to the inductive system $(X_\gamma)_{\gamma\in\Gamma}$; so one can
assume $Z=X_\gamma$ for some $\gamma\in\Gamma$.

\begin{rem}
 A more common point of view is to consider a dualizing complex on
a scheme $X$ as an object of the derived category $\sD(X\qcoh)$, or
more specifically, of the bounded derived category $\sD^\bb(X\qcoh)$.
 A similar point of view on dualizing complexes on ind-schemes is also
possible, but it prescribes viewing a dualizing complex $\rD^\bu$
on $\X$ as an object of the \emph{coderived category}
$\sD^\co(\X\tors)$, as defined in Section~\ref{coderived-subsecn} below.
 Indeed, the homotopy category of complexes of injective quasi-coherent
torsion sheaves $\sK(\X\tors_\inj)$ is equivalent to the coderived
category by Corollary~\ref{ind-Noetherian-coderived-cor}.

 For schemes, one does not feel the difference between the derived
and the coderived category in this context, because there is no such
difference for complexes bounded below; see the discussion in
Remark~\ref{cotensor-right-derived-remark}(1).
 The dualizing complex on an ind-scheme is often bounded above
(see Remarks~\ref{cotensor-right-derived-remark}(3\+-5)), but it is
usually not bounded below (unless the whole ind-scheme is
finite-dimensional).

 In fact, a dualizing complex $\rD^\bu$ on an ind-scheme $\X$ can well
be an \emph{acyclic} complex, and in some very simple examples it is;
see Section~\ref{tate-space-subsecn}(7).
\end{rem}

\begin{ex} \label{aleph-zero-dualizing-glueing}
 Let $\X=\ilim\,(X_0\to X_1\to X_2\to\dotsb)$ be an ind-semi-separated
ind-Noetherian $\aleph_0$\+ind-scheme represented by an inductive
system of closed immersions of (semi-separated Noetherian) schemes
indexed by the poset of nonnegative integers.
 Let $i_n\:X_n\rarrow X_{n+1}$ denote the closed immersion morphisms in
the inductive system.
 Suppose that we are given a dualizing complex $\D_n^\bu$ on the scheme
$X_n$ for every $n\ge0$ together with homotopy equivalences
$\D_n^\bu\rarrow i_n^!\D_{n+1}^\bu$ of complexes of (injective)
quasi-coherent sheaves on~$X_n$.
 Then there are the related morphisms $i_n{}_*\D_n^\bu\rarrow
\D_{n+1}^\bu$ of complexes of quasi-coherent sheaves on~$X_{n+1}$.

 Let $k_n\:X_n\rarrow\X$ be the natural closed immersions.
 Consider the inductive system $k_0{}_*\D_0^\bu\rarrow k_1{}_*\D_1^\bu
\rarrow\dotsb$ of complexes of quasi-coherent torsion sheaves on
the ind-scheme $\X$, and put $\rD^\bu=\varinjlim_{n\ge0}k_n{}_*\D_n^\bu
\in\sC(\X\tors)$.
 Then $\rD^\bu$ is a dualizing complex on the ind-scheme~$\X$.

 Indeed, for every $m\ge0$ we have $k_m^!\rD^\bu=\varinjlim_{n\ge0}
k_m^!k_n{}_*\D_n^\bu=\varinjlim_{n\ge m} i_m^!\dotsm i_{n-1}^!
\D_n^\bu\in\sC(X_m\qcoh)$.
 Since the class of all injective quasi-coherent sheaves on $X_m$ is
closed under direct limits (the scheme $X_m$ being Noetherian), it
follows that $k_m^!\rD^\bu$ is a complex of injective quasi-coherent
sheaves on~$X_m$.
 Now we claim that the natural morphism $\D_m^\bu\rarrow k_m^!\rD^\bu$
is a homotopy equivalence of complexes of (injective) quasi-coherent
sheaves on~$X_m$.
 As a complex of injective quasi-coherent sheaves homotopy equivalent
to a dualizing complex is also a dualizing complex, this suffices to
prove the desired assertion.

 The morphisms of complexes $\D_m^\bu\rarrow i_m^!\dotsm i_{n-1}^!
\D_n^\bu$ are homotopy equivalences by assumption.
 So it remains to observe that, for any sequence $\J_0^\bu\rarrow
\J_1^\bu\rarrow\J_2^\bu\rarrow\dotsb$ of homotopy equivalences of
complexes of injective quasi-coherent sheaves on a Noetherian
scheme $X$, the natural morphism $\J_0^\bu\rarrow
\varinjlim_{n\ge0}\J_n^\bu$ is a homotopy equivalence.
 Indeed, the telescope short exact sequence $0\rarrow\bigoplus_{n\ge0}
\J_n^\bu\rarrow\bigoplus_{n\ge0}\J_n^\bu\rarrow\varinjlim_{n\ge0}
\J_n^\bu\rarrow0$ is a short exact sequence of complexes of injectives,
so it is termwise split.
 Hence $\varinjlim_{n\ge0}\J_n^\bu$ is the homotopy colimit of
the sequence $\J_0^\bu\rarrow\J_1^\bu\rarrow\J_2^\bu\rarrow\dotsb$
in the homotopy category $\sK(X\qcoh)$.
 It remains to apply~\cite[Lemma~1.6.6]{Neem}.

 Proposition~\ref{aleph-zero-torsion-morphism-strictification-prop}
or Lemma~\ref{aleph-zero-torsion-morphism-strictification-lemma}
below, combined with Lemma~\ref{dualizing-no-negative-selfext}, show
that this construction of a dualizing complex $\rD^\bu$ on $\X$ does
not depend on the arbitrary choices of the morphisms of complexes
$i_n{}_*\D_n^\bu\rarrow\D_{n+1}^\bu$ in the given homotopy classes
of such morphisms.
\end{ex}

\begin{exs} \label{ind-closed-immersion}
 (1)~A morphism of ind-schemes $k\:\Z\rarrow\X$ is said to be
an \emph{ind-closed immersion} if, for every closed subscheme
$Z\subset\Z$, the composition $Z\rarrow\Z\rarrow\X$ is a closed
immersion.
 Let $k\:\Z\rarrow\X$ be an ind-closed immersion of reasonable
ind-schemes, and let $\rM$ be a quasi-coherent torsion sheaf on~$\X$.
 For every reasonable closed subscheme $Z\subset\Z$, put
$\rN_{(Z)}=k_Z^!\rM$, where $k_Z\:Z\rarrow\X$.
 Then the collection of quasi-coherent sheaves $\rN_{(Z)}\in Z\qcoh$
defines a quasi-coherent torsion sheaf $\rN$ on~$\Z$.
 Put $k^!\rM=\rN$; this rule defines a functor $k^!\:\X\tors\rarrow
\Z\tors$.
 This construction generalizes the construction of the functor~$i^!$
for a closed immersion of ind-schemes $i\:\Z\rarrow\X$ in
Section~\ref{torsion-inverse-images-subsecn}.

\smallskip
 (2)~Let $\X$ be an ind-semi-separated ind-Noetherian ind-scheme,
and let $k\:\Z\rarrow\X$ be an ind-closed immersion of schemes.
 Let $\rD^\bu$ be a dualizing complex on~$\X$.
 Then it is clear from the definitions and
Lemma~\ref{support-restriction-of-dualizing-complex} that $k^!\rD^\bu$
is a dualizing complex on~$\Z$.

\smallskip
 (3)~In particular, let $X$ be a semi-separated Noetherian scheme,
and let $\X$ be an ind-scheme endowed with an ind-closed immersion
of ind-schemes $k\:\X\rarrow X$.
 This means that $\X=\ilim_{\gamma\in\Gamma}X_\gamma$, where
$(X_\gamma)_{\gamma\in\Gamma}$ is an inductive system of closed
subschemes in~$X$.
 Let $\D^\bu$ be a dualizing complex on~$X$.
 Denoting by $k_\gamma\:X_\gamma\rarrow X$ the composition
$X_\gamma\rarrow\X\rarrow X$, put $\D_\gamma^\bu=k_\gamma^!\D^\bu\in
\sC(X_\gamma\qcoh_\inj)$.
 Then, by Lemma~\ref{support-restriction-of-dualizing-complex},
\,$\D_\gamma^\bu$ is a dualizing complex on~$X_\gamma$.
 Denoting by~$i_{\gamma\delta}$ the closed immersions $X_\gamma
\rarrow X_\delta$, we have $\D_\gamma^\bu\simeq
i_{\gamma\delta}^!\D_\delta^\bu$ for all $\gamma<\delta\in\Gamma$.
 So the collection of complexes of injective quasi-coherent sheaves
$\D_\gamma^\bu$ on $X_\gamma$ defines an complex of injective
quasi-coherent torsion sheaves $\rD^\bu$ on~$\X$
(by Proposition~\ref{torsion-injectives-characterized}(b)).
 By the definition, $\rD^\bu$ is a dualizing complex on~$\X$.
\end{exs}

\subsection{Derived categories of flat sheaves and flat pro-sheaves}
 We refer to~\cite{Neem0}, \cite[Section~10]{Bueh},
or~\cite[Section~A.7]{Partin} for the definition of the \emph{derived
category} $\sD(\sE)$ of an exact category~$\sE$.
 The bounded below, bounded above, and bounded (on both sides) versions
of the derived category are denoted, as usually, by
$\sD^+(\sE)$, $\sD^-(\sE)$, and $\sD^\bb(\sE)\subset\sD(\sE)$;
these are full triangulated subcategories of the triangulated
category~$\sD(\sE)$.

 In particular, let $X$ be a scheme.
 Then the full subcategory $X\flat\subset X\qcoh$ of flat
quasi-coherent sheaves inherits an exact category structure from
the ambient abelian category of quasi-coherent sheaves on~$X$.
 So one can consider the derived category $\sD(X\flat)$ alongside
with the derived category $\sD(X\qcoh)$.

 A complex of flat quasi-coherent sheaves which vanishes as an object
of $\sD(X\qcoh)$ need \emph{not} vanish as an object of
$\sD(X\flat)$, generally speaking.
 Rather, a complex $\F^\bu$ of flat quasi-coherent sheaves vanishes
as an object of $\sD(X\flat)$ (``is acyclic with respect to
the exact category $X\flat$\,'') if and only if $\F^\bu$ is
acyclic as a complex of quasi-coherent sheaves on $X$ \emph{and}
all the quasi-coherent sheaves of cocycles of the complex $\F^\bu$
are flat.

 Let $X$ be a quasi-compact semi-separated scheme.
 A quasi-coherent sheaf $\cP$ on $X$ is said to be \emph{cotorsion}
if $\Ext^1_{X\qcoh}(\F,\cP)=0$ for all flat quasi-coherent sheaves
$\F$ on~$X$.

\begin{lem} \label{flat-and-cotorsion-lemma}
\textup{(a)} For any quasi-coherent sheaf $\M$ and any injective
quasi-coherent sheaf $\J$ on a quasi-compact semi-separated
scheme $X$, the quasi-coherent sheaf\/ $\cHom_{X\qc}(\M,\J)$ is
cotorsion. \par
\textup{(b)} For any flat quasi-coherent sheaf $\F$ and any cotorsion
quasi-coherent sheaf\/ $\cP$ on a quasi-compact semi-separated scheme
$X$, the quasi-coherent sheaf\/ $\cHom_{X\qc}(\F,\cP)$ is cotorsion.
{\hbadness=1650\par}
\textup{(c)} For any family $(\cP_\xi)_{\xi\in\Xi}$ of flat cotorsion
quasi-coherent sheaves on a semi-separated Noetherian scheme $X$,
the quasi-coherent sheaf\/ $\prod_{\xi\in\Xi}\cP_\xi$ on $X$ is flat
cotorsion.  (Here the product is taken in the Grothendieck category
$X\qcoh$.)
\end{lem}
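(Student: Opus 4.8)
The plan is to prove all three parts by reducing each cotorsion assertion to the vanishing of a single $\Ext^1$ against a flat sheaf, and to exploit the tensor--Hom adjunction $\Hom_{X\qcoh}(\cL\ot_{\cO_X}\M,\>\J)\simeq\Hom_{X\qcoh}(\cL,\cHom_{X\qc}(\M,\J))$ throughout. For part~(a), I would start from an arbitrary extension $0\rarrow\cHom_{X\qc}(\M,\J)\rarrow\E\rarrow\F\rarrow0$ with $\F$ flat, and tensor it with $\M$ over $\cO_X$. Since $\F$ is flat we have $\Tor_1^{\cO_X}(\F,\M)=0$, so the resulting sequence $0\rarrow\cHom_{X\qc}(\M,\J)\ot_{\cO_X}\M\rarrow\E\ot_{\cO_X}\M\rarrow\F\ot_{\cO_X}\M\rarrow0$ is still short exact. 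Applying the exact functor $\Hom_{X\qcoh}({-},\J)$ (here the injectivity of $\J$ is used) shows that the evaluation map $\cHom_{X\qc}(\M,\J)\ot_{\cO_X}\M\rarrow\J$, which corresponds to the identity under the adjunction, extends over $\E\ot_{\cO_X}\M$; transporting this extension back through the adjunction yields a retraction $\E\rarrow\cHom_{X\qc}(\M,\J)$, so the extension splits and $\Ext^1_{X\qcoh}(\F,\cHom_{X\qc}(\M,\J))=0$.

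Part~(b) follows the same pattern with $\F$ flat and $\cP$ cotorsion in place of $\M$ and $\J$. Given an extension $0\rarrow\cHom_{X\qc}(\F,\cP)\rarrow\E\rarrow\G\rarrow0$ with $\G$ flat, tensoring with $\F$ again produces a short exact sequence because $\Tor_1^{\cO_X}(\G,\F)=0$, and now the cokernel term is $\G\ot_{\cO_X}\F$, which is flat as a tensor product of two flat sheaves. The obstruction to extending the evaluation map $\cHom_{X\qc}(\F,\cP)\ot_{\cO_X}\F\rarrow\cP$ over $\E\ot_{\cO_X}\F$ lies in $\Ext^1_{X\qcoh}(\G\ot_{\cO_X}\F,\>\cP)$, which vanishes precisely because $\G\ot_{\cO_X}\F$ is flat and $\cP$ is cotorsion. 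The adjunction then converts the extended map into the desired splitting, giving $\Ext^1_{X\qcoh}(\G,\cHom_{X\qc}(\F,\cP))=0$.

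For part~(c) I would treat the two conclusions separately. That $\prod_\xi\cP_\xi$ is cotorsion is formal and needs no hypothesis on $X$: given an extension by a flat $\F$, pushing out along each projection $\prod_\xi\cP_\xi\rarrow\cP_\eta$ gives an extension of $\F$ by the cotorsion sheaf $\cP_\eta$, which splits; the resulting retractions $\E\rarrow\cP_\eta$ assemble, by the universal property of the product, into a retraction $\E\rarrow\prod_\xi\cP_\xi$. The flatness of $\prod_\xi\cP_\xi$ is where the Noetherian hypothesis enters and is the main obstacle. Flatness is local, so I would check it on the affine opens $U=\Spec R$ of a cover of $X$; the point is that the product taken in $X\qcoh$ restricts on such a $U$ to the ordinary $R$-module product $\prod_\xi\cP_\xi(U)$ --- this uses that the coherator commutes with restriction to affine opens on a quasi-compact quasi-separated scheme, and that on an affine scheme the coherator of the sheaf-theoretic product is the module product. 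Once this local identification is in place, flatness reduces to the classical fact that an arbitrary product of flat modules over a Noetherian (indeed coherent) ring is again flat. The care required in comparing the categorical product in $X\qcoh$ with the naive sheaf product --- the two genuinely differ, already on $\Spec\boZ$ --- is the delicate part of the argument.
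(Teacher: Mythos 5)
Your parts~(a) and~(b) are correct, and they run on the same ingredients as the paper's proof: the adjunction $\Hom_{X\qcoh}(\cL\ot_{\cO_X}\M,\>\J)\simeq\Hom_{X\qcoh}(\cL,\cHom_{X\qc}(\M,\J))$, exactness of tensoring a short exact sequence with flat cokernel term, and injectivity (resp.\ cotorsionness) to lift the evaluation map. The paper packages this as the criterion that $\cQ$ is cotorsion if and only if $\Hom_{X\qcoh}({-},\cQ)$ takes short exact sequences of flat sheaves to short exact sequences (which uses the existence of enough flat quasi-coherent sheaves); your direct extension-splitting is an equivalent formulation. The cotorsion half of part~(c) is also correct: your pushout-and-assemble argument is the standard proof that $\Ext^1_{X\qcoh}(\F,\prod_\xi\cP_\xi)$ embeds into $\prod_\xi\Ext^1_{X\qcoh}(\F,\cP_\xi)$; it needs no Noetherian hypothesis and is in fact more economical than the paper's treatment, which derives both halves of~(c) from a structure theorem.

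The flatness half of~(c), however, has a genuine gap, sitting exactly where you flagged the delicacy. Your reduction rests on the identification of $(\prod_\xi\cP_\xi)|_U$ with the quasi-coherent sheaf on $U=\Spec R$ associated to the $R$-module $\prod_\xi\cP_\xi(U)$, justified by the claim that the coherator commutes with restriction to affine opens. That claim is false, and it fails even for families of flat cotorsion sheaves, because localization of modules does not commute with infinite products. Concretely, take $X=\mathbb{P}^1_\kk$ with charts $U_0=\Spec\kk[t]$ and $U_1=\Spec\kk[t^{-1}]$, and let $\cP_n=j_0{}_*\cQ$ for $n\ge0$, where $j_0\:U_0\rarrow X$ is the open immersion and $\cQ$ is the flat cotorsion quasi-coherent sheaf on $U_0$ corresponding to the $\kk[t]$-module $\kk[[t]]$. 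Since $j_0{}_*$ is a right adjoint, $\prod_n\cP_n\simeq j_0{}_*\prod_n\cQ$, whose module of sections over $U_1$ is $\bigl(\prod_n\kk[[t]]\bigr)[t^{-1}]$; on the other hand, the product in $U_1\qcoh$ of the restrictions $\cP_n|_{U_1}$ has module of sections $\prod_n\kk((t))$ over $U_1$. The natural comparison map $\bigl(\prod_n\kk[[t]]\bigr)[t^{-1}]\rarrow\prod_n\kk((t))$ is not surjective: the sequence $(t^{-n})_{n\ge0}$, having unbounded pole orders, is not in its image. So restriction to the charts of your covering does not compute the product termwise, and the reduction to the classical fact that an infinite product of flat modules over a Noetherian ring is flat never gets started. (In this example the restriction happens to be flat anyway, so the lemma itself is not contradicted; it is only your route to it that breaks.)

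This is exactly why the paper's proof of~(c) goes through \cite[Lemma~4.1.12]{Pcosh}: every flat cotorsion quasi-coherent sheaf on a semi-separated Noetherian scheme $X$ is a direct summand of a finite direct sum $\bigoplus_\alpha j_\alpha{}_*\cQ_\alpha$, with $\cQ_\alpha$ flat cotorsion on affine opens $U_\alpha$. Hence $\prod_\xi\cP_\xi$ is a direct summand of $\bigoplus_\alpha j_\alpha{}_*\bigl(\prod_\xi\cQ_{\alpha,\xi}\bigr)$ --- this uses only that the right adjoints $j_\alpha{}_*$ preserve products and that finite direct sums commute with products --- and the latter sheaf is flat cotorsion by the module-theoretic facts over the Noetherian rings $\cO_X(U_\alpha)$. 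Any repair of your argument needs this structure theorem, or some substitute for it, rather than a commutation of the coherator with restriction to opens.
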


\begin{proof}
 Parts~(a\+-b): the key fact is that there are enough flat
quasi-coherent sheaves on any quasi-compact semi-separated scheme,
i.~e., any quasi-coherent sheaf is a quotient of a flat one
(see~\cite[Section~2.4]{Mur0} or~\cite[Lemma~A.1]{EP}).
 Consequently, a quasi-coherent sheaf $\cQ$ on $X$ is cotorsion if
and only if, for every short exact sequence of flat quasi-coherent
sheaves $0\rarrow\F'\rarrow\F\rarrow\F''\rarrow0$ on $X$, the short
sequence of abelian groups $0\rarrow\Hom_X(\F'',\cQ)\rarrow
\Hom_X(\F,\cQ)\rarrow\Hom_X(\F',\cQ)\rarrow0$ is exact.
 Here $\Hom_X$ is a shorthand for $\Hom_{X\qcoh}$.

 With this criterion, the assertions of both~(a) and~(b) follow
straightforwardly from the universal property definition of
$\cHom_{X\qc}$.

 Part~(c): let $X=\bigcup_\alpha U_\alpha$ be a finite affine open
covering of a quasi-compact semi-separated scheme~$X$.
 Denote by $j_\alpha\:U_\alpha\rarrow X$ the open immersion morphisms.
 According to~\cite[Lemma~4.1.12]{Pcosh}, the flat cotorsion
quasi-coherent sheaves on $X$ are precisely the direct summands of
finite direct sums of the form $\bigoplus_\alpha j_\alpha{}_*
\cQ_\alpha$, where $\cQ_\alpha$ are flat cotorsion quasi-coherent
sheaves on~$U_\alpha$.
 Notice that the direct image functors~$j_\alpha{}_*$ (being right
adjoint to the inverse image functors~$j_\alpha^*$) preserve
infinite products of quasi-coherent sheaves.

 This reduces the question to the particular case of an affine
scheme~$X$.
 It remains to recall that, over any (commutative) ring, the class of
cotorsion modules is closed under infinite products; and over
a Noetherian (or coherent) ring, the class of flat modules is closed
under infinite products as well.
\end{proof}

\begin{lem} \label{internal-hom-of-complexes-flat}
 For any complexes of injective quasi-coherent sheaves\/ ${}'\!\J^\bu$
and $\J^\bu$ on a semi-separated Noetherian scheme $X$,
the complex\/ $\cHom_{X\qc}({}'\!\J^\bu,\J^\bu)$ is a complex of
flat cotorsion quasi-coherent sheaves. 
\end{lem}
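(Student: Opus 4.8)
The plan is to reduce the statement to an entry-wise computation on the bicomplex together with the stability of the flat cotorsion class under products. First I would recall, from the construction of the internal Hom of complexes in Section~\ref{dualizing-definition-subsecn}, that the $n$-th term of $\cHom_{X\qc}({}'\!\J^\bu,\J^\bu)$ is the infinite product
$$
 \cHom_{X\qc}({}'\!\J^\bu,\J^\bu)^n=
 \prod\nolimits_{q-p=n}\cHom_{X\qc}({}'\!\J^p,\J^q),
$$
taken in the Grothendieck category $X\qcoh$ along the $n$-th diagonal of the bicomplex whose $(p,q)$\+entry is $\cHom_{X\qc}({}'\!\J^p,\J^q)$.

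Next I would verify that every entry of this bicomplex is flat cotorsion. Flatness of $\cHom_{X\qc}({}'\!\J^p,\J^q)$ is exactly Lemma~\ref{hom-tensor-flats-injectives}(d), which applies because both ${}'\!\J^p$ and $\J^q$ are injective quasi-coherent sheaves on the semi-separated Noetherian scheme $X$. The cotorsion property is Lemma~\ref{flat-and-cotorsion-lemma}(a), since the quasi-coherent internal Hom of any quasi-coherent sheaf into an injective one is cotorsion. Thus each $\cHom_{X\qc}({}'\!\J^p,\J^q)$ is simultaneously flat and cotorsion. Finally, to conclude that each diagonal product is again flat cotorsion, I would invoke Lemma~\ref{flat-and-cotorsion-lemma}(c), which states precisely that an arbitrary product of flat cotorsion quasi-coherent sheaves on a semi-separated Noetherian scheme is flat cotorsion. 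Applying this to the product displayed above, for every $n\in\boZ$, shows that $\cHom_{X\qc}({}'\!\J^\bu,\J^\bu)$ is a complex of flat cotorsion quasi-coherent sheaves, as desired.

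There is no genuine obstacle here: the whole argument is a bookkeeping combination of three previously established lemmas with the definition of the internal Hom of complexes as a product totalization. The only point deserving attention is the role of the Noetherian hypothesis, which is used twice---in Lemma~\ref{hom-tensor-flats-injectives}(d) to guarantee flatness of the internal Hom of two injective sheaves, and in Lemma~\ref{flat-and-cotorsion-lemma}(c) to guarantee that the flat class is closed under infinite products (a property that fails over non-coherent rings). Since $X$ is assumed semi-separated and Noetherian throughout, both hypotheses are satisfied and the assembly goes through without further complications.
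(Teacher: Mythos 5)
Your proof is correct and coincides with the paper's own (primary) argument, which likewise combines Lemma~\ref{hom-tensor-flats-injectives}(d) with Lemma~\ref{flat-and-cotorsion-lemma}(a,c) applied to the diagonal products of the bicomplex; you have merely spelled out the bookkeeping that the paper leaves implicit. (The paper also sketches an alternative route, replacing the product of entries by a single object $\cHom_{X\qc}\bigl(\bigoplus_p{}'\!\J^p,\>\prod_q\J^q\bigr)$ of which all terms are direct summands, thereby avoiding the closure-under-products Lemma~\ref{flat-and-cotorsion-lemma}(c), but your version is equally valid.)
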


\begin{proof}
 The assertion follows from Lemma~\ref{hom-tensor-flats-injectives}(d)
combined with Lemma~\ref{flat-and-cotorsion-lemma}(a,c).
 Alternatively, one can observe that both the quasi-coherent sheaves
${}'\K=\bigoplus_{p\in\boZ}{}'\!\J^p$ and $\K=\prod_{q\in\boZ}\J^q$
are injective and all the terms of the complex
$\cHom_{X\qc}({}'\!\J^\bu,\J^\bu)$ are direct summands of
the quasi-coherent sheaf $\cHom_{X\qc}({}'\K,\K)$.
 Then it remains to apply Lemma~\ref{hom-tensor-flats-injectives}(d)
and Lemma~\ref{flat-and-cotorsion-lemma}(a).
\end{proof}

\begin{lem} \label{dualizing-homothety-quasi-isomorphism-as-flats}
 For any dualizing complex\/ $\D^\bu$ on a semi-separated Noetherian
scheme $X$, the natural morphism $\cO_X\rarrow\cHom_{X\qc}(\D^\bu,
\D^\bu)$ from condition~(iii) in
Section~\ref{dualizing-definition-subsecn} is a quasi-isomorphism
of complexes in the exact category $X\flat$.
\end{lem}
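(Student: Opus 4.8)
The plan is to reduce the assertion to a statement about the flat cocycle sheaves of a bounded acyclic complex. First I would observe that the morphism in question is genuinely a morphism in $\sC(X\flat)$: the sheaf $\cO_X$ is flat, and $\cHom_{X\qc}(\D^\bu,\D^\bu)$ is a complex of flat quasi-coherent sheaves by Lemma~\ref{internal-hom-of-complexes-flat}. By condition~(iii) in Section~\ref{dualizing-definition-subsecn}, the homothety $\cO_X\rarrow\cHom_{X\qc}(\D^\bu,\D^\bu)$ is already a quasi-isomorphism in the abelian category $X\qcoh$. Recalling the criterion recorded earlier in this section — a complex of flat quasi-coherent sheaves is acyclic with respect to the exact category $X\flat$ if and only if it is acyclic in $X\qcoh$ and all of its cocycle sheaves are flat — the task becomes to show that the cone $C^\bu$ of the homothety, which is automatically acyclic in $X\qcoh$, has flat cocycle sheaves.

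Next I would reduce to the case of a bounded dualizing complex. By condition~(i), the complex $\D^\bu$ is homotopy equivalent to a bounded complex of injective quasi-coherent sheaves $\E^\bu$. Since the functor $\cHom_{X\qc}({-},{-})$ preserves chain homotopies in each argument, this induces a homotopy equivalence $\cHom_{X\qc}(\D^\bu,\D^\bu)\simeq\cHom_{X\qc}(\E^\bu,\E^\bu)$ that is compatible, up to homotopy, with the homothety morphisms out of $\cO_X$. As homotopy equivalences are isomorphisms both in $\sD(X\qcoh)$ and in $\sD(X\flat)$, it is enough to treat $\E^\bu$; thus I may assume $\D^\bu$ is concentrated in finitely many degrees. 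Then $\cHom_{X\qc}(\D^\bu,\D^\bu)$, and hence the cone $C^\bu$, is a bounded complex of flat quasi-coherent sheaves, acyclic in $X\qcoh$.

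The heart of the argument is then the claim that a bounded acyclic complex of flat quasi-coherent sheaves has flat cocycle sheaves. I would prove this by downward induction starting from the top nonzero degree, using the local homological fact that in a short exact sequence $0\rarrow\cA\rarrow\cB\rarrow\cC\rarrow0$ of quasi-coherent sheaves with $\cB$ and $\cC$ flat, the subsheaf $\cA$ is also flat (this is checked on an affine open cover via the vanishing of the relevant $\Tor_1$). Writing the cocycle sheaves $Z^n$ of $C^\bu$ into the exact sequences $0\rarrow Z^n\rarrow C^n\rarrow Z^{n+1}\rarrow0$ supplied by acyclicity, the topmost one gives $Z^b=C^b$, which is flat; and whenever $Z^{n+1}$ is already known to be flat, the flatness of $C^n$ together with the cited fact forces $Z^n$ to be flat. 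The induction terminates because the complex is bounded below, so every cocycle sheaf is flat.

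Combining these steps, the cone $C^\bu$ is acyclic in $X\qcoh$ with flat cocycle sheaves, hence acyclic with respect to the exact category $X\flat$; therefore the homothety $\cO_X\rarrow\cHom_{X\qc}(\D^\bu,\D^\bu)$ is a quasi-isomorphism in $X\flat$, as claimed. The one genuinely delicate point is the reduction to the bounded case: one must make sure that replacing $\D^\bu$ by the homotopy-equivalent bounded complex is legitimate at the level of $\sD(X\flat)$ and not merely of $\sD(X\qcoh)$. Everything else is routine, with the two-out-of-three property of flatness in the displayed short exact sequence doing the real work.
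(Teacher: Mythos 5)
Your proof is correct and takes essentially the same route as the paper's: reduce to a bounded dualizing complex using the homotopy equivalence from condition~(i), then invoke the fact that a bounded (above) acyclic complex of flat quasi-coherent sheaves has flat cocycle sheaves, so that quasi-isomorphism in $X\qcoh$ implies quasi-isomorphism in $X\flat$. The only difference is that you spell out the downward induction proving that last fact (and the legitimacy of the homotopy-equivalence reduction in $\sD(X\flat)$), which the paper states only parenthetically.
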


\begin{proof}
 First of all, $\cHom_{X\qc}(\D^\bu,\D^\bu)$ is a complex of flat
quasi-coherent sheaves by Lemma~\ref{internal-hom-of-complexes-flat}.
 Now the point is that, by the definition, any dualizing complex
$\D^\bu$ on $X$ is homotopy equivalent to a bounded dualizing
complex~${}'\D^\bu$.
 Hence the complex $\cHom_{X\qc}(\D^\bu,\D^\bu)$ is homotopy equivalent
to the complex $\cHom_{X\qc}({}'\D^\bu,{}'\D^\bu)$, which is a bounded
complex of flat quasi-coherent sheaves.
 Finally, a morphism of bounded (above) complexes of flat quasi-coherent
sheaves is a quasi-isomorphism of complexes in $X\flat$ if and only if
it is a quasi-isomorphism of complexes in $X\qcoh$ (because any bounded
above complex of flat quasi-coherent sheaves that is acyclic as
a complex of quasi-coherent sheaves has flat sheaves of cocycles).
\end{proof}

\begin{lem} \label{dualizing-no-negative-selfext}
 For any dualizing complex\/ $\D^\bu$ on a semi-separated Noetherian
scheme $X$, one has $\Hom_{\sK(X\qcoh_\inj)}(\D^\bu,\D^\bu[n])=0$
for all $n<0$.
\end{lem}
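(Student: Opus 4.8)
The plan is to translate the vanishing of negative self-maps in $\sK(X\qcoh_\inj)$ into a statement about the Zariski cohomology of $\cO_X$, by way of the quasi-coherent internal $\cHom$. First I would observe that, as $\sK(X\qcoh_\inj)$ is a full subcategory of $\sK(X\qcoh)$, the group $\Hom_{\sK(X\qcoh_\inj)}(\D^\bu,\D^\bu[n])$ is computed as $H^n$ of the total $\Hom$ complex, whose component in cohomological degree~$n$ is the product $\prod_p\Hom_{X\qcoh}(\D^p,\D^{p+n})$. Applying the defining adjunction of $\cHom_{X\qc}$ with the test object $\cL=\cO_X$, one has $\Hom_{X\qcoh}(\D^p,\D^q)\simeq\Gamma(X,\cHom_{X\qc}(\D^p,\D^q))$; and since $\Gamma(X,{-})$ preserves the infinite products used to totalize both complexes, I would identify the total $\Hom$ complex with $\Gamma(X,\cHom_{X\qc}(\D^\bu,\D^\bu))$. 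The task then reduces to showing that $H^n(\Gamma(X,\cHom_{X\qc}(\D^\bu,\D^\bu)))=0$ for $n<0$.

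The obstacle here is that $\D^\bu$, and therefore $\cHom_{X\qc}(\D^\bu,\D^\bu)$, may be unbounded, which blocks a direct appeal to hypercohomology. To get around this I would use condition~(i): $\D^\bu$ is homotopy equivalent to a bounded complex of injectives ${}'\D^\bu$. As established in the proof of Lemma~\ref{dualizing-homothety-quasi-isomorphism-as-flats}, this yields a homotopy equivalence $\cHom_{X\qc}(\D^\bu,\D^\bu)\simeq\F^\bu$, where $\F^\bu:=\cHom_{X\qc}({}'\D^\bu,{}'\D^\bu)$ is a \emph{bounded} complex. Since $\Gamma(X,{-})$ is additive, it preserves this homotopy equivalence, so it suffices to compute $H^n(\Gamma(X,\F^\bu))$. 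Two features of $\F^\bu$ are relevant: its terms are flat cotorsion quasi-coherent sheaves by Lemma~\ref{internal-hom-of-complexes-flat}, and, by condition~(iii) of Section~\ref{dualizing-definition-subsecn} together with the homotopy equivalence just used, the cohomology sheaves of $\F^\bu$ are $\cO_X$ in degree~$0$ and vanish elsewhere; that is, $\F^\bu\simeq\cO_X$ in $\sD^\bb(X\qcoh)$.

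Next I would verify that every flat cotorsion quasi-coherent sheaf $\cP$ on $X$ is $\Gamma(X,{-})$\+acyclic. By \cite[Lemma~4.1.12]{Pcosh}, such a $\cP$ is a direct summand of a finite direct sum $\bigoplus_\alpha j_{\alpha*}\cQ_\alpha$ associated with a finite affine open covering $X=\bigcup_\alpha U_\alpha$, where $j_\alpha\:U_\alpha\rarrow X$ are the open immersions. As $X$ is semi-separated, each $j_\alpha$ is affine, so $R^{>0}j_{\alpha*}=0$ and $H^i(X,j_{\alpha*}\cQ_\alpha)\simeq H^i(U_\alpha,\cQ_\alpha)=0$ for $i>0$, the scheme $U_\alpha$ being affine. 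Cohomology on the Noetherian scheme $X$ commutes with finite direct sums, and $\Gamma$\+acyclicity passes to direct summands, giving $H^{>0}(X,\cP)=0$.

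Finally, since $\F^\bu$ is a bounded complex of $\Gamma$\+acyclic objects, applying $\Gamma(X,{-})$ termwise computes the hyperderived functor $R\Gamma(X,\F^\bu)$; and because $\F^\bu\simeq\cO_X$ in $\sD^\bb(X\qcoh)$, I obtain $\Gamma(X,\F^\bu)\simeq R\Gamma(X,\cO_X)$ in the derived category of abelian groups. Taking cohomology gives $H^n(\Gamma(X,\F^\bu))\simeq H^n(X,\cO_X)$, which vanishes for every $n<0$. This completes the plan; the one genuinely delicate point, namely the unboundedness of $\D^\bu$, is precisely what the reduction to the homotopy equivalent bounded complex $\F^\bu$ disposes of.
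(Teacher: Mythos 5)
Your proof is correct, and its opening reduction coincides with the paper's: both exploit the adjunction $\Hom_{X\qcoh}(\D^p,\D^q)\simeq\Hom_{X\qcoh}(\cO_X,\cHom_{X\qc}(\D^p,\D^q))$ (so that the total Hom complex becomes $\Gamma(X,\cHom_{X\qc}(\D^\bu,\D^\bu))$, or in the paper's phrasing, $\Hom$ from $\cO_X$ in $\sK(X\flat)$), and both reduce via condition~(i) of Section~\ref{dualizing-definition-subsecn} to the bounded complex $\cHom_{X\qc}({}'\D^\bu,{}'\D^\bu)$, whose cohomology is $\cO_X$ concentrated in degree~$0$ by condition~(iii). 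Where you genuinely diverge is in the key vanishing ingredient. The paper stays inside cotorsion theory: its terms are cotorsion by Lemma~\ref{flat-and-cotorsion-lemma}(a), and since there are enough flat quasi-coherent sheaves on a quasi-compact semi-separated scheme, one has $\Ext^m_{X\qcoh}(\F,\cP)=0$ for every flat $\F$, cotorsion $\cP$, and $m>0$; applied to $\F=\cO_X$ and combined with the acyclicity of the complex in negative degrees, this kills the relevant homotopy classes. You instead prove that flat cotorsion quasi-coherent sheaves are $\Gamma(X,{-})$\+acyclic, using the structure theorem of~\cite[Lemma~4.1.12]{Pcosh} (direct summands of finite sums of pushforwards along the affine open immersions $j_\alpha$, which are affine morphisms precisely because $X$ is semi-separated) together with Serre vanishing on affines, and then conclude by the standard hypercohomology comparison $H^n(\Gamma(X,\F^\bu))\simeq H^n(X,\cO_X)=0$ for $n<0$. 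The two vanishing statements carry the same content---both say that the terms of the bounded complex are acyclic for $\Hom_{X\qcoh}(\cO_X,{-})=\Gamma(X,{-})$---but the mechanisms differ: the paper's argument needs only cotorsionness of the terms (flatness plays no role in its proof of this lemma) plus the abstract Ext-orthogonality coming from the existence of enough flats, whereas yours also uses flatness of the terms (essential for invoking the structure theorem, and supplied by Lemma~\ref{internal-hom-of-complexes-flat}) but replaces the abstract orthogonality by a concrete geometric computation; this makes your route more classical and self-contained on the sheaf-cohomology side, at the price of relying on the nontrivial classification of flat cotorsion sheaves.
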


\begin{proof}
 By the adjunction property defining the quasi-coherent internal Hom,
we have
$$
 \Hom_{\sK(X\qcoh_\inj)}(\D^\bu,\D^\bu[n])=
 \Hom_{\sK(X\flat)}(\cO_X,\cHom_{X\qc}(\D^\bu,\D^\bu)[n]).
$$
 Without loss of generality, we can assume the complex $\D^\bu$ to
be a bounded complex of injective quasi-coherent sheaves.
 Set $\cP^\bu=\cHom_{X\qc}(\D^\bu,\D^\bu)$; then $\cP^\bu$ is a bounded
complex as well.
 All the quasi-coherent sheaves $\cP^n$ on $X$ are cotorsion by
Lemma~\ref{flat-and-cotorsion-lemma}(a).
 Since there are enough flat quasi-coherent sheaves on
a quasi-compact semi-separated scheme~$X$, one has
$\Ext_{X\qcoh}^m(\F,\cP)=0$ for all $\F\in X\flat$, all cotorsion
quasi-coherent sheaves $\cP$ on $X$, and all $m>0$; in particular,
in the situation at hand $\Ext_{X\qcoh}^m(\cO_X,\cP^n)=0$ for all
$m>0$ and $n<0$.
 As the complex $\cP^\bu$ in $X\qcoh$ also has vanishing
cohomology sheaves in the negative cohomological degrees, it follows
that $\Hom_{\sK(X\flat)}(\cO_X,\cHom_{X\qc}(\D^\bu,\D^\bu)[n])=0$
for $n<0$.
\end{proof}

 Let $\X=\ilim_{\gamma\in\Gamma}X_\gamma$ be an ind-scheme represented
by an inductive system of closed immersions of schemes.
 The construction of Proposition~\ref{flat-pro-sheaves-exact-category}
defines an exact category structure on the category of flat
pro-quasi-coherent pro-sheaves $\X\flat$.
 Hence the related derived category $\sD(\X\flat)$.

\begin{lem} \label{flat-pro-sheaves-complex-acyclicity-criterion}
 A complex of flat pro-quasi-coherent pro-sheaves\/ $\fF^\bu$ is
acyclic (as a complex in $\X\flat$) if and only if, for every\/
$\gamma\in\Gamma$, the complex of flat quasi-coherent sheaves\/
$\fF^\bu{}^{(X_\gamma)}$ is acyclic (as a complex in $X_\gamma\flat$).
\end{lem}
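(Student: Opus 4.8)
The plan is to prove the two implications separately, the backward direction being the substantial one. For the ``only if'' direction, I would invoke the observation from Section~\ref{flat-pro-sheaves-subsecn} that, for each $\gamma\in\Gamma$, the evaluation functor $\fP\longmapsto\fP^{(X_\gamma)}$ (which is the inverse image $i_\gamma^*$ along the closed immersion $i_\gamma\:X_\gamma\rarrow\X$) is an exact functor of exact categories $\X\flat\rarrow X_\gamma\flat$. Indeed, this is immediate from the very definition of the admissible short exact sequences in $\X\flat$ recalled in Section~\ref{flat-pro-sheaves-subsecn} and Proposition~\ref{flat-pro-sheaves-exact-category}: these are precisely the short sequences that become exact after applying every functor $({-})^{(X_\gamma)}$. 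Since an exact functor between exact categories takes admissible monomorphisms, admissible epimorphisms, and conflations to the same, it preserves acyclicity of complexes; hence $\fF^\bu$ being acyclic in $\X\flat$ forces each $\fF^{\bu\,(X_\gamma)}$ to be acyclic in $X_\gamma\flat$.

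For the ``if'' direction I would reconstruct the conflations exhibiting $\fF^\bu$ as an acyclic complex directly from the cocycle sheaves on the schemes $X_\gamma$. Assuming each $\fF^{\bu\,(X_\gamma)}$ is acyclic in $X_\gamma\flat$, the sheaves of cocycles $\mathcal Z^n_\gamma=\ker(d^n\:\fF^{n\,(X_\gamma)}\rarrow\fF^{n+1\,(X_\gamma)})$ are flat quasi-coherent sheaves on $X_\gamma$ and fit into short exact sequences $0\rarrow\mathcal Z^n_\gamma\rarrow\fF^{n\,(X_\gamma)}\rarrow\mathcal Z^{n+1}_\gamma\rarrow0$ of flat quasi-coherent sheaves on $X_\gamma$. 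The crux is to promote the collection $(\mathcal Z^n_\gamma)_{\gamma\in\Gamma}$ to a flat pro-quasi-coherent pro-sheaf $\fB^n$ on $\X$. For this I would use that the inverse image functor $i_{\gamma\delta}^*$ takes short exact sequences of flat quasi-coherent sheaves to short exact sequences (Section~\ref{flat-pro-sheaves-subsecn}): applying $i_{\gamma\delta}^*$ to the above sequence for $\delta$ and comparing with the structure isomorphism $i_{\gamma\delta}^*\fF^{n\,(X_\delta)}\simeq\fF^{n\,(X_\gamma)}$ identifies $i_{\gamma\delta}^*\mathcal Z^n_\delta$ with the kernel of $d^n$ on $X_\gamma$, that is with $\mathcal Z^n_\gamma$. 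These compatibility isomorphisms, together with the recipe of Section~\ref{pro-sheaves-subsecn} for building a pro-sheaf from data on the $X_\gamma$ alone and the flatness criterion of Section~\ref{flat-pro-sheaves-subsecn} (flatness is checked on the $X_\gamma$), produce the desired flat pro-sheaves $\fB^n\in\X\flat$.

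Finally, I would observe that, by construction, the short sequences $0\rarrow\fB^n\rarrow\fF^n\rarrow\fB^{n+1}\rarrow0$ restrict on every $X_\gamma$ to the cocycle short exact sequences above, so they are admissible short exact sequences in $\X\flat$ by the definition of the exact structure in Proposition~\ref{flat-pro-sheaves-exact-category}. This exhibits each differential of $\fF^\bu$ as the composition of an admissible epimorphism $\fF^n\twoheadrightarrow\fB^{n+1}$ with an admissible monomorphism $\fB^{n+1}\rightarrowtail\fF^{n+1}$, with consecutive conflations $0\rarrow\fB^n\rarrow\fF^n\rarrow\fB^{n+1}\rarrow0$, which is exactly the statement that $\fF^\bu$ is acyclic in the exact category $\X\flat$.

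I expect the main obstacle to be the second step of the ``if'' direction: verifying that the termwise cocycle sheaves genuinely cohere into a pro-sheaf, i.e.\ that the inverse-image compatibility isomorphisms $i_{\gamma\delta}^*\mathcal Z^n_\delta\simeq\mathcal Z^n_\gamma$ hold and satisfy the transitivity condition~(iv) of Section~\ref{pro-sheaves-subsecn}. This is exactly where the \emph{exactness} of $i^*$ on short exact sequences of flat quasi-coherent sheaves (rather than the mere right exactness of $i^*$ on all quasi-coherent sheaves) is essential, and where one must keep careful track of the identifications of $\mathcal Z^n_\gamma$ with subobjects of $\fF^{n\,(X_\gamma)}$.
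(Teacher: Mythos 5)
Your proof is correct, and it is precisely the ``straightforward'' argument that the paper leaves to the reader (its entire proof of Lemma~\ref{flat-pro-sheaves-complex-acyclicity-criterion} reads ``The proof is straightforward''). In particular, you correctly identify and handle the one point that makes the ``if'' direction work: the cocycle sheaves $\mathcal Z^n_\gamma$ cohere into a flat pro-quasi-coherent pro-sheaf because the functors $i_{\gamma\delta}^*$ preserve short exact sequences of \emph{flat} quasi-coherent sheaves, which identifies $i_{\gamma\delta}^*\mathcal Z^n_\delta$ and $\mathcal Z^n_\gamma$ as the same kernel subobject of $\fF^{n\,(X_\gamma)}$ and thereby makes the transitivity condition automatic.
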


\begin{proof}
 The proof is straightforward.
\end{proof}

\subsection{Coderived category of torsion sheaves}
\label{coderived-subsecn}
 Let $\sE$ be an exact category.
 A complex in $\sE$ is said to be \emph{absolutely acyclic} if it
belongs to the minimal thick subcategory of $\sK(\sE)$ containing
the totalizations of short exact sequences of complexes in~$\sE$.
 Here a short sequence of complexes in $\sE$ is said to be \emph{exact}
if it is termwise exact (i.~e., exact in every degree), and 
``totalization'' of a short sequence of complexes means taking
the total complex of a bicomplex with three rows.
 The triangulated quotient category of $\sK(\sE)$ by the thick
subcategory of absolutely acyclic complexes is called
the \emph{absolute derived category} of an exact category $\sE$
and denoted by $\sD^\abs(\sE)$.

 Let $\sE$ be an exact category in which infinite coproducts exist
and the class of all short exact sequences is closed under coproducts
(in this case, we will say that $\sE$ has \emph{exact coproducts}).
 Then a complex in $\sE$ is said to be \emph{coacyclic} if it belongs
to the minimal triangulated subcategory of $\sK(\sE)$ containing
the totalizations of short exact sequences in $\sC(\sE)$ and
closed under coproducts.
 The \emph{coderived category} $\sD^\co(\sE)$ is defined as
the triangulated quotient category of the homotopy category
$\sK(\sE)$ by the thick subcategory of coacyclic complexes.

 The reader is referred to~\cite[Section~2.1]{Psemi},
\cite[Sections~3\+-4]{Pkoszul}, \cite[Section~1.3]{EP},
\cite[Appendix~A]{Pcosh}, \cite[Section~2]{Pfp} for a discussion
of these definitions.

 An exact category $\sE$ is said to have \emph{homological
dimension~$\le d$} (where $d\ge-1$ is an integer)
if $\Ext_\sE^{d+1}(E,F)=0$ for all $E$, $F\in\sE$.

\begin{lem} \label{three-derived-categories-lemma}
\textup{(a)} In any exact category\/ $\sE$, any absolutely acyclic
complex is acyclic; so there is a natural triangulated Verdier
quotient functor\/ $\sD^\abs(\sE)\rarrow\sD(\sE)$. \par
\textup{(b)} In any exact category\/ $\sE$ with exact coproducts,
any absolutely acyclic complex is coacyclic, and any coacyclic
complex is acyclic; so there are natural triangulated Verdier
quotient functors\/ $\sD^\abs(\sE)\rarrow\sD^\co(\sE)\rarrow
\sD(\sE)$. \par
\textup{(c)} In an exact category $\sE$ of finite homological
dimension, any acyclic complex is absolutely acyclic.
\end{lem}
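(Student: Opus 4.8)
The three parts all come down to comparing three full subcategories of the homotopy category $\sK(\sE)$ --- the absolutely acyclic, the coacyclic, and the acyclic complexes --- and the asserted Verdier quotient functors are then formal: once the inclusions (abs.\ acyclic) $\subseteq$ (coacyclic) $\subseteq$ (acyclic) are known, the localization $\sK(\sE)\rarrow\sD(\sE)$ annihilates the smaller classes, so by the universal property of Verdier localization it factors as $\sD^\abs(\sE)\rarrow\sD^\co(\sE)\rarrow\sD(\sE)$. The one tool I would set up first is a \emph{two-out-of-three principle}: for a termwise exact short sequence of complexes $0\rarrow{}'K^\bu\rarrow K^\bu\rarrow{}''K^\bu\rarrow0$, the total complex $T$ of the associated three-row bicomplex is absolutely acyclic by definition, and $T$ is linked to $K^\bu$, ${}'K^\bu$, ${}''K^\bu$ by the (always distinguished) termwise-split mapping-cone triangles in $\sK(\sE)$; hence, if ${}'K^\bu$ and ${}''K^\bu$ both lie in one of the three classes, so does $K^\bu$ (and the analogous two-out-of-three statements hold as well).

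For part~(a) I would first check that the totalization of a termwise exact short sequence of complexes is acyclic: the three-row bicomplex has exact columns, so (with only three rows, and no convergence issue) its total complex is acyclic in $\sE$. I would then invoke the standard fact that the acyclic complexes form a \emph{thick} subcategory of $\sK(\sE)$ --- triangulated and closed under direct summands --- as in the references on derived categories of exact categories cited above. Since the absolutely acyclic complexes are by definition the \emph{minimal} thick subcategory containing the totalizations of short exact sequences, and the acyclic complexes form one such thick subcategory, we obtain (abs.\ acyclic) $\subseteq$ (acyclic) and hence the functor $\sD^\abs(\sE)\rarrow\sD(\sE)$.

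For part~(b), the coacyclic complexes are by definition a \emph{triangulated} subcategory closed under coproducts; by the Eilenberg swindle (the B\"okstedt--Neeman argument), any triangulated subcategory of a triangulated category with countable coproducts that is itself closed under countable coproducts is automatically thick. So the coacyclic complexes form a thick subcategory containing the short-exact-sequence totalizations, whence it contains the minimal such subcategory, giving (abs.\ acyclic) $\subseteq$ (coacyclic). Conversely, the acyclic complexes form a triangulated subcategory (as in~(a)) which, \emph{because $\sE$ has exact coproducts}, is closed under coproducts --- a coproduct of acyclic complexes is acyclic, the defining short exact sequences staying exact after coproduct --- and contains the totalizations; therefore it contains the minimal triangulated coproduct-closed subcategory, i.e.\ (coacyclic) $\subseteq$ (acyclic). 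The nesting supplies both quotient functors.

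Part~(c) is where the real work lies, and I expect it to be the main obstacle. The easy half is that any \emph{bounded} acyclic complex is absolutely acyclic: arguing by induction on the number of nonzero terms, I would peel off the leftmost three-term exact complex $[A^a\rarrow A^{a+1}\rarrow Z^{a+2}]$ (with $Z^{a+2}$ the relevant cocycle) as a termwise exact subsequence whose quotient is a shorter acyclic complex, and close the induction by the two-out-of-three principle, the base case being a contractible two-term complex. The genuine difficulty is the unbounded case, and this is exactly where finite homological dimension is indispensable: without it, a bounded-below acyclic complex need only be \emph{coacyclic}, not absolutely acyclic (it is the homotopy colimit of its cocycle-capped bounded truncations, a construction that uses an infinite coproduct available in $\sD^\co(\sE)$ but \emph{not} in $\sD^\abs(\sE)$). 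The plan is to reduce a general acyclic complex to half-bounded ones via the termwise exact sequence $0\rarrow K^\bu\rarrow A^\bu\rarrow P^\bu\rarrow0$ obtained by capping a brutal truncation with the cocycle $Z^n$ (so $K^\bu$ is bounded above, $P^\bu$ bounded below, both acyclic), and then to treat a half-bounded acyclic complex using the hypothesis $\Ext_\sE^{d+1}({-},{-})=0$: each $(d+2)$-term exact stretch $0\rarrow Z^n\rarrow A^n\rarrow\dotsb\rarrow A^{n+d}\rarrow Z^{n+d+1}\rarrow0$ represents a Yoneda class in an $\Ext$-group of degree $>d$, which therefore vanishes. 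The delicate core --- the step I expect to be hardest --- is to convert this vanishing into a \emph{finite} assembly of the complex from bounded absolutely acyclic pieces inside the thick subcategory, replacing the homotopy-colimit argument that is valid only for coacyclicity. Once this finitization is in place, Step~1 and the two-out-of-three principle conclude the proof, and together with~(b) it yields the coincidence of all three exotic derived categories in finite homological dimension.
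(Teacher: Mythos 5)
Your parts~(a) and~(b) are correct, and they supply exactly what the paper leaves unsaid: the paper's entire proof of this lemma is the single sentence that parts~(a) and~(b) are straightforward and that part~(c) is \cite[Remark~2.1]{Psemi}. Your two-out-of-three principle for termwise exact sequences, the thickness of the class of acyclic complexes, the B\"okstedt--Neeman observation that a coproduct-closed triangulated subcategory is automatically thick (giving absolutely acyclic $\subseteq$ coacyclic), and the use of exact coproducts to see that the acyclic complexes contain the coacyclic ones are all the intended arguments. The one caveat, which the paper suppresses as well, is that in a completely general exact category the acyclic complexes are closed under cones and direct summands only when $\sE$ is weakly idempotent complete; this point is handled in the references the paper cites for derived categories of exact categories, so I do not count it against you.

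Part~(c) is where your proposal has a genuine gap, and you acknowledge it yourself. Everything up to and including the reduction to one-sided complexes and the vanishing of the Yoneda class of each stretch $0\rarrow Z^n\rarrow A^n\rarrow\dotsb\rarrow A^{n+d}\rarrow Z^{n+d+1}\rarrow0$ in $\Ext^{d+1}_\sE(Z^{n+d+1},Z^n)$ is correct but costs nothing: bounded-below acyclic complexes are \emph{always} coacyclic (this is \cite[Lemma~2.1]{Psemi}, invoked elsewhere in the paper), so after your reduction the problem of upgrading coacyclicity to absolute acyclicity is exactly as hard as before; and the Ext-vanishing by itself yields only local statements --- for instance, one can show that each bounded subquotient $[A^n\rarrow A^{n+1}\rarrow\dotsb\rarrow A^m]$ with $m-n\ge d$ splits in $\sD^\abs(\sE)$ as $Z^n[-n]\oplus Z^{m+1}[-m]$, because its connecting morphism is the image of a vanishing Yoneda class. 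What is missing is precisely the step you defer, the ``finitization'': a finite d\'evissage assembling the whole of $A^\bu$ from absolutely acyclic pieces out of such local data. This is not a routine verification, and the obvious gluing procedures (exhausting $A^\bu$ by cocycle-capped truncations, or filtering by brutal truncations) are infinitary and can only ever reprove coacyclicity, as you correctly note. Since this missing step is the entire content of part~(c) --- it is exactly the point at which the paper itself stops arguing and cites \cite[Remark~2.1]{Psemi} --- the proposal as written proves (a) and (b) but does not prove (c); completing your plan would in effect amount to reconstructing the argument of that Remark.
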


\begin{proof}
 Parts~(a\+-b) are straightforward; part~(c)
is~\cite[Remark~2.1]{Psemi}.
\end{proof}

\begin{prop} \label{coderived-and-homotopy-of-injectives}
\textup{(a)} For any exact category\/ $\sE$ with exact coproducts,
the composition\/ $\sK(\sE_\inj)\rarrow\sK(\sE)\rarrow\sD^\co(\sE)$
of the inclusion functor $\sK(\sE_\inj)\rarrow\sE(\sE)$ (induced by
the inclusion $\sE_\inj\rarrow\sE$) with the Verdier quotient functor\/
$\sK(\sE)\rarrow\sD^\co(\sE)$ is a fully faithful triangulated functor\/
$\sK(\sE_\inj)\rarrow\sD^\co(\sE)$. \par
\textup{(b)} Let\/ $\sE$ be an exact category with infinite coproducts
and enough injective objects.
 Assume that the class of all injectives\/ $\sE_\inj\subset\sE$ is
closed under coproducts in\/~$\sE$.
 Then the triangulated functor\/ $\sK(\sE_\inj)\rarrow\sD^\co(\sE)$
from part~\textup{(a)} is an equivalence of triangulated categories.
 Moreover, for any complex $E^\bu$ in\/ $\sE$ there exists a complex
$J^\bu$ in\/ $\sE_\inj$ together with a morphism of complexes
$E^\bu\rarrow J^\bu$ whose cone is a coacyclic complex in\/~$\sE$.
\end{prop}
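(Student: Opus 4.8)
The plan is to prove~(a) as a right-orthogonality statement and then to deduce~(b) by constructing a ``coderived injective resolution'', the second assertion of~(b) being exactly what yields essential surjectivity. For part~(a) I would first prove the vanishing $\Hom_{\sK(\sE)}(A^\bu,J^\bu)=0$ for every coacyclic complex $A^\bu$ and every $J^\bu\in\sC(\sE_\inj)$. Let $\mathcal Z\subseteq\sK(\sE)$ be the class of complexes $A^\bu$ for which the total Hom complex of abelian groups $\Hom^\bu(A^\bu,J^\bu)$ is acyclic for all $J^\bu\in\sC(\sE_\inj)$. Since $\Hom^\bu(-,J^\bu)$ sends distinguished triangles to distinguished triangles and carries coproducts in the first argument to products, and since a product of acyclic complexes of abelian groups is acyclic, the class $\mathcal Z$ is a triangulated subcategory of $\sK(\sE)$ closed under coproducts. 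The only point to check is that $\mathcal Z$ contains the totalization of every termwise short exact sequence $0\rarrow K^\bu\rarrow L^\bu\rarrow M^\bu\rarrow0$; but $\Hom(-,J^q)$ is exact for $J^q$ injective, so applying it degreewise turns this into a termwise short exact sequence of complexes of abelian groups, whose totalization is acyclic. Hence every coacyclic complex lies in $\mathcal Z$, and taking $H^0$ gives $\Hom_{\sK(\sE)}(A^\bu,J^\bu)=0$.

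Granting this, full faithfulness of $\sK(\sE_\inj)\rarrow\sD^\co(\sE)$ follows from the standard fact that a full triangulated subcategory which is \emph{right-orthogonal} to the kernel of a Verdier localization embeds fully faithfully. Concretely, a morphism $J_1^\bu\rarrow J_2^\bu$ in $\sD^\co(\sE)$ is represented by a roof $J_1^\bu\larrow X^\bu\rarrow J_2^\bu$ whose left leg $s$ has coacyclic cone; the vanishing of $\Hom_{\sK(\sE)}(-,J_2^\bu)$ on both $\mathrm{cone}(s)$ and $\mathrm{cone}(s)[-1]$ forces, via the triangle of $s$, the induced map $\Hom_{\sK(\sE)}(J_1^\bu,J_2^\bu)\rarrow\Hom_{\sK(\sE)}(X^\bu,J_2^\bu)$ to be an isomorphism. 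Thus every roof is equivalent to a genuine homotopy class $J_1^\bu\rarrow J_2^\bu$, and two such classes agree in the quotient if and only if they are already homotopic; this gives both fullness and faithfulness, proving~(a).

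For part~(b) it suffices, by~(a), to prove the final assertion, since a morphism $E^\bu\rarrow J^\bu$ with coacyclic cone becomes an isomorphism in $\sD^\co(\sE)$ and hence realizes $E^\bu$ as an object of the essential image of $\sK(\sE_\inj)$. Using that $\sE$ has enough injectives, I would embed $E^\bu$ termwise and assemble the usual contractible complex of injectives $J_0^\bu$, with $J_0^n=I^n\oplus I^{n+1}$ and a termwise admissible monomorphism $E^\bu\hookrightarrow J_0^\bu$; iterating on the cokernel produces an exact sequence of complexes $0\rarrow E^\bu\rarrow J_0^\bu\rarrow J_1^\bu\rarrow\dotsb$ in which each $J_i^\bu$ is a complex of injectives. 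I then set $J^\bu=\mathrm{Tot}_\oplus(J_0^\bu\rarrow J_1^\bu\rarrow\dotsb)$, formed by taking coproducts along the diagonals. Here the hypothesis that $\sE_\inj$ be closed under coproducts is essential: it guarantees that each $J^n=\bigoplus_i J_i^{n-i}$ is injective, so $J^\bu\in\sC(\sE_\inj)$, and the augmentation supplies a chain map $E^\bu\rarrow J^\bu$.

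The main obstacle is to show that the cone of $E^\bu\rarrow J^\bu$ is coacyclic. Up to shift this cone is the coproduct-totalization $A^\bu$ of the exact complex of complexes $0\rarrow E^\bu\rarrow J_0^\bu\rarrow J_1^\bu\rarrow\dotsb$, so the task reduces to the standard lemma that the coproduct-totalization of an exact complex of complexes, bounded below in the resolution direction, is coacyclic. I would prove this lemma by d\'evissage: split the resolution into the short exact sequences of complexes $0\rarrow Z_i^\bu\rarrow J_i^\bu\rarrow Z_{i+1}^\bu\rarrow0$ of cocycles in the resolution direction (with $Z_0^\bu=E^\bu$), whose totalizations are coacyclic by the very definition of coacyclicity, and then recover $A^\bu$ from these pieces by a combination of iterated cones and a single countable coproduct (a homotopy-colimit step), using that the coacyclic complexes form a triangulated subcategory closed under coproducts. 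The delicate bookkeeping — matching the coproduct-totalization with a homotopy colimit of partial totalizations and identifying the successive layers with the coacyclic complexes $\mathrm{Tot}(0\rarrow Z_i^\bu\rarrow J_i^\bu\rarrow Z_{i+1}^\bu\rarrow0)$ — is the heart of the argument, and it is precisely where the exact-coproducts hypothesis on $\sE$ is indispensable, since it is what keeps a coproduct of coacyclic complexes coacyclic.
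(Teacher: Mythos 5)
Your argument is correct and is essentially the proof the paper points to: the paper's own proof of this proposition consists of references to \cite[Section~3.5]{Pkoszul} (or \cite[Lemma~A.1.3(a)]{Pcosh}) for part~(a) and \cite[Section~3.7]{Pkoszul} for part~(b), and the arguments there are exactly yours --- the semiorthogonality computation $\Hom_{\sK(\sE)}(A^\bu,J^\bu)=0$ for coacyclic $A^\bu$ and $J^\bu\in\sC(\sE_\inj)$, followed by the termwise injective resolution of the complex whose coproduct-totalization supplies the morphism with coacyclic cone. The totalization lemma you single out as the heart of part~(b) is \cite[Lemma~2.1]{Psemi}, which the paper itself invokes for precisely this purpose elsewhere (e.g., in the proofs of Proposition~\ref{over-ind-Noetherian-semiderived-prop} and Lemma~\ref{derived-supports-external-product-lemma}), and your telescope/d\'evissage sketch is the standard proof of it.
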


\begin{proof}
 It should be noticed that in any exact category with infinite
coproducts and enough injective objects the infinite coproduct
functors are exact.
 Part~(a) is~\cite[Section~3.5]{Pkoszul}
or~\cite[Lemma~A.1.3(a)]{Pcosh}.
 Part~(b) is~\cite[Section~3.7]{Pkoszul}; for a far-reaching
generalization, see~\cite[Proposition~2.1]{Pfp}.
\end{proof}

 A Grothendieck abelian category is said to be \emph{locally Noetherian}
if it has a set of generators consisting of Noetherian objects.
 Equivalently, a Grothendieck abelian category $\sA$ is locally
Noetherian if and only if every object of $\sA$ is the union of its
Noetherian subobjects.

\begin{lem} \label{locally-Noetherian-coproducts-injective}
 In any locally Noetherian Grothendieck category, the class of all
injective objects is closed under coproducts.
\end{lem}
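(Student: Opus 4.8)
The plan is to reduce the statement to the categorical Baer criterion of Lemma~\ref{injectives-in-Grothendieck-characterized}. Let $\sA$ be a locally Noetherian Grothendieck category; by definition it admits a set of generators $\sS\subset\sA$ consisting of Noetherian objects. Given a family of injective objects $(J_i)_{i\in I}$, I want to show that the coproduct $J=\coprod_{i\in I}J_i$ is injective. By Lemma~\ref{injectives-in-Grothendieck-characterized}, it suffices to prove that every morphism $f\:K\rarrow J$ from a subobject $K\subset S$ of a generator $S\in\sS$ extends to a morphism $S\rarrow J$.

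The key reduction is to a finite subcoproduct. First I would observe that, as a subobject of the Noetherian object $S$, the object $K$ is itself Noetherian, and in particular \emph{finitely generated} in the sense that whenever $K$ is the union of a directed family of subobjects, it already equals one of them (this follows from the ascending chain condition by extracting a maximal member of the family). Next, writing $C_F=\coprod_{i\in F}J_i$ for finite subsets $F\subset I$, the coproduct $J=\coprod_{i\in I}J_i$ is the directed union $\bigcup_F C_F$ of its finite subcoproducts, the transition maps being the (split monomorphic) coproduct insertions. Pulling this directed union back along $f$ and using that filtered colimits are exact in the Grothendieck category $\sA$ (so that inverse image commutes with directed unions of subobjects), I get $K=\bigcup_F f^{-1}(C_F)$ as a directed union of subobjects of $K$. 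Since $K$ is finitely generated, there is a single finite subset $F$ with $K=f^{-1}(C_F)$, i.e.\ the morphism $f$ factors as $K\rarrow C_F\rarrow J$.

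It then remains to extend the factored morphism $K\rarrow C_F$ to $S$. Here I would use that a finite coproduct of injective objects is injective: in an additive category $C_F=\coprod_{i\in F}J_i$ coincides with the finite product $\prod_{i\in F}J_i$, and a product of injectives is injective in any abelian category. Consequently the monomorphism $K\rarrow S$ together with the injectivity of $C_F$ produces an extension $S\rarrow C_F$, whose composition with the insertion $C_F\rarrow J$ is the desired extension of $f$. By Lemma~\ref{injectives-in-Grothendieck-characterized}, this shows that $J$ is injective, i.e.\ the class of injective objects is closed under coproducts.

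The step I expect to be the main obstacle is the factorization through a finite subcoproduct: one has to justify carefully that a Noetherian object is finitely generated in the colimit sense, that the coproduct really is the directed union of its finite subcoproducts with monomorphic transition maps, and that taking inverse images commutes with this directed union — the last point being exactly where the exactness of filtered colimits in a Grothendieck category is used.
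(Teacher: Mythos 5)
Your proof is correct and follows exactly the route the paper intends: the paper's own proof is the one-line remark that the lemma ``follows from Lemma~\ref{injectives-in-Grothendieck-characterized}'' (the categorical Baer criterion), and your argument—factoring a map from a Noetherian subobject of a generator through a finite subcoproduct via AB5, then extending using injectivity of finite products—is precisely the standard way that deduction is carried out. No gaps; the details you flag (Noetherian implies finitely generated, directed union of finite subcoproducts, compatibility of preimages with directed unions) are all handled correctly.
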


\begin{proof}
 Follows from Lemma~\ref{injectives-in-Grothendieck-characterized}.
\end{proof}

\begin{prop} \label{ind-Noetherian-torsion-locally-Noetherian}
 For any ind-Noetherian ind-scheme\/ $\X$, the category\/ $\X\tors$ of
quasi-coherent torsion sheaves on\/ $\X$ is a locally Noetherian
Grothendieck category.
 The direct images of coherent sheaves from closed subschemes of\/ $\X$
are the Noetherian objects of\/ $\X\tors$.
\end{prop}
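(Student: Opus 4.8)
The plan is to exhibit a set of Noetherian generators for $\X\tors$ and at the same time to identify the Noetherian objects. Since $\X$ is ind-Noetherian, I would fix a representation $\X=\ilim_{\gamma\in\Gamma}X_\gamma$ by closed immersions of Noetherian schemes and work with the direct image functors $i_\gamma{}_*\:X_\gamma\qcoh\rarrow\X\tors$. The category $\X\tors$ is already known to be Grothendieck by Theorem~\ref{torsion-sheaves-abelian}, so the two things to prove are that a generating set consisting of Noetherian objects exists and that the Noetherian objects are precisely the direct images of coherent sheaves from closed subschemes.

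First I would show that, for any closed immersion $i\:Z\rarrow\X$ from a (necessarily Noetherian) closed subscheme $Z\subset\X$ and any coherent sheaf $\cS$ on $Z$, the object $i_*\cS\in\X\tors$ is Noetherian. The key input is Lemma~\ref{closed-subschemes-torsion-subcategories}(a): the functor $i_*$ is exact and fully faithful, and its essential image is closed under subobjects and quotients. From these properties I would deduce that the poset of subobjects of $i_*\cS$ in $\X\tors$ is isomorphic to the poset of subobjects of $\cS$ in $Z\qcoh$. Indeed, any subobject of $i_*\cS$ lies in the essential image, hence has the form $i_*\cN$, and by full faithfulness together with exactness (which makes $i_*$ conservative, so that it reflects monomorphisms) the inclusion comes from a monomorphism $\cN\rarrow\cS$; conversely, subobjects of $\cS$ push forward to subobjects of $i_*\cS$. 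Since $\cS$ is a coherent sheaf on a Noetherian scheme, it is a Noetherian object of $Z\qcoh$, and the ascending chain condition transfers across this order isomorphism, so $i_*\cS$ is Noetherian. Taking, for each $\gamma$, the coherent sheaves on $X_\gamma$ as the set of generators $\sS_\gamma$ in Lemma~\ref{closed-subschemes-torsion-subcategories}(b) then yields a generating set of $\X\tors$ consisting of Noetherian objects, proving that $\X\tors$ is locally Noetherian.

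The converse --- that every Noetherian object is a direct image of a coherent sheaf --- is the step I expect to require the most care. Let $\rM\in\X\tors$ be Noetherian, hence finitely generated. Using the generating set above I would obtain an epimorphism $\coprod_{j\in J}i_{\gamma_j}{}_*\cS_j\rarrow\rM$ with each $\cS_j$ coherent; finite generation lets me pass to a finite subcoproduct, so that already $\bigoplus_{j=1}^n i_{\gamma_j}{}_*\cS_j\rarrow\rM$ is an epimorphism. Choosing $\delta\in\Gamma$ with $\delta\ge\gamma_1,\dots,\gamma_n$ and writing $i_{\gamma_j}=i_\delta\circ i_{\gamma_j\delta}$, I would replace each summand by $i_\delta{}_*(i_{\gamma_j\delta}{}_*\cS_j)$, where $i_{\gamma_j\delta}{}_*\cS_j$ is again coherent (direct image of a coherent sheaf under a closed immersion of Noetherian schemes). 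Setting $\cS=\bigoplus_{j=1}^n i_{\gamma_j\delta}{}_*\cS_j$, a single coherent sheaf on $X_\delta$, I get an epimorphism $i_\delta{}_*\cS\rarrow\rM$. Finally, since the essential image of $i_\delta{}_*$ is closed under quotients (Lemma~\ref{closed-subschemes-torsion-subcategories}(a)), the object $\rM$ lies in that essential image, so $\rM\simeq i_\delta{}_*\cT$ for a quotient $\cT$ of $\cS$ in $X_\delta\qcoh$; and $\cT$ is coherent because $X_\delta$ is Noetherian. This realizes $\rM$ as the direct image of a coherent sheaf from the closed subscheme $X_\delta\subset\X$, completing the characterization.
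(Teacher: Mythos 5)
Your proof is correct and follows essentially the same route as the paper's own (very terse) argument: Grothendieck-ness from Theorem~\ref{torsion-sheaves-abelian}, Noetherian generators via Lemma~\ref{closed-subschemes-torsion-subcategories} together with the fact that coherent sheaves are the Noetherian objects of $X_\gamma\qcoh$, and the characterization of Noetherian objects deduced from the closure of the essential image of $i_\delta{}_*$ under subobjects and quotients. You have simply written out in full the subobject-poset transfer and the finite-generation/directedness argument that the paper compresses into ``the description of the Noetherian objects easily follows.''
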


\begin{proof}
 The category $\X\tors$ is Grothendieck by
Theorem~\ref{torsion-sheaves-abelian}, and it has a set of
Noetherian generators by
Lemma~\ref{closed-subschemes-torsion-subcategories} (because
the category of quasi-coherent sheaves on a Noetherian scheme is
locally Noetherian and the coherent sheaves are its Noetherian objects).
 The description of the Noetherian objects in $\X\tors$ easily follows.
\end{proof}

\begin{cor} \label{ind-Noetherian-coderived-cor}
 For any ind-Noetherian ind-scheme\/ $\X$, the coderived category of
quasi-coherent torsion sheaves is naturally equivalent to the homotopy
category of injective quasi-coherent torsion sheaves,
$\sK(\X\tors_\inj)\simeq\sD^\co(\X\tors)$.
\end{cor}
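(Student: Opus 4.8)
The plan is to obtain this corollary purely formally, as a specialization of the abstract equivalence in Proposition~\ref{coderived-and-homotopy-of-injectives}(b) to the exact (in fact abelian) category $\sE=\X\tors$. So the whole task reduces to checking the three hypotheses of that proposition for $\X\tors$: that infinite coproducts exist, that there are enough injective objects, and that the full subcategory of injectives is closed under coproducts.

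First I would invoke Proposition~\ref{ind-Noetherian-torsion-locally-Noetherian}, which tells us that for an ind-Noetherian ind-scheme $\X$ the category $\X\tors$ is a locally Noetherian Grothendieck category. Being a Grothendieck abelian category, $\X\tors$ automatically has all (small) coproducts and enough injective objects (and, since it satisfies AB5, its coproducts are exact, so that the coderived category $\sD^\co(\X\tors)$ is defined in the first place). This already secures two of the three hypotheses. The remaining hypothesis, that the full subcategory $\X\tors_\inj$ of injective objects is closed under coproducts in $\X\tors$, is exactly the conclusion of Lemma~\ref{locally-Noetherian-coproducts-injective}, applied to the locally Noetherian Grothendieck category $\X\tors$.

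With all three conditions verified, Proposition~\ref{coderived-and-homotopy-of-injectives}(b) applies verbatim with $\sE=\X\tors$ and yields that the natural triangulated functor $\sK(\X\tors_\inj)\rarrow\sD^\co(\X\tors)$ is an equivalence of categories, which is precisely the assertion. There is essentially no obstacle to surmount here: all the substantive work has been carried out in the cited results (the locally Noetherian property of $\X\tors$ in Proposition~\ref{ind-Noetherian-torsion-locally-Noetherian}, the closure of injectives under coproducts in Lemma~\ref{locally-Noetherian-coproducts-injective}, and the general coderived-category statement in Proposition~\ref{coderived-and-homotopy-of-injectives}), and the proof of the corollary is merely the assembly of these pieces. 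If anything warranted a comment, it would be the observation that the Grothendieck (AB5) structure guarantees exactness of coproducts, so that $\X\tors$ indeed qualifies as an exact category with exact coproducts as required by the definition of $\sD^\co$.
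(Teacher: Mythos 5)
Your proposal is correct and matches the paper's own proof exactly: the paper also combines Proposition~\ref{ind-Noetherian-torsion-locally-Noetherian}, Lemma~\ref{locally-Noetherian-coproducts-injective}, and Proposition~\ref{coderived-and-homotopy-of-injectives}(b) in precisely this way. Your additional remark about AB5 guaranteeing exact coproducts is a fine (if routine) elaboration of hypotheses the paper leaves implicit.
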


\begin{proof}
 Combine Proposition~\ref{ind-Noetherian-torsion-locally-Noetherian},
Lemma~\ref{locally-Noetherian-coproducts-injective},
and Proposition~\ref{coderived-and-homotopy-of-injectives}(b).
\end{proof}

\begin{rem} \label{coderived-compact-generators}
 Let $\X$ be an ind-Noetherian ind-scheme.
 One can say that a quasi-coherent torsion sheaf $\rM$ on $\X$ is
\emph{coherent} if there exists a closed subscheme $Z\subset\X$ with
the closed immersion morphism $i\:Z\rarrow\X$ and a coherent sheaf
$\M$ on $Z$ such that $\rM\simeq i_*\M$.
 Then the full subcategory of coherent torsion sheaves $\X\tcoh\subset
\X\tors$ is an abelian Serre subcategory;
by Proposition~\ref{ind-Noetherian-torsion-locally-Noetherian}, it is
the full subcategory of Noetherian objects in the locally Noetherian
category $\X\tors$.
 It follows that the coderived category $\sD^\co(\X\tors)$ is
compactly generated and the bounded derived category $\sD^\bb(\X\tcoh)$
is the full subcategory of compact objects in $\sD^\co(\X\tors)$.
\end{rem}

\begin{lem} \label{complex-of-injectives-contractible}
 Let\/ $\sA$ be a Grothendieck abelian category and\/ $\sS\subset\sA$
be a class of objects, closed under quotients and containing a set of
generators of\/~$\sA$.
 Let $J^\bu\in\sK(\sA_\inj)$ be a complex of injective objects in\/
$\sA$ such that for every object $S\in\sS$ one has\/
$\Hom_{\sK(\sA)}(S,J^\bu)=0$.
 Then the complex $J^\bu$ is contractible.
\end{lem}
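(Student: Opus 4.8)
The plan is to show that $J^\bu$ is \emph{acyclic} and that all of its objects of cocycles $Z^n=\ker(d^n\:J^n\to J^{n+1})$ are \emph{injective}, and then to invoke the standard fact that an acyclic complex of objects with injective cocycles is contractible: acyclicity turns the canonical sequences into short exact sequences $0\to Z^n\to J^n\to Z^{n+1}\to0$, injectivity of $Z^n$ splits each of them, and the resulting splittings assemble into a contracting homotopy. Throughout I use the orthogonality relation $\Hom_{\sK(\sA)}(S,J^\bu[n])=0$ for all $S\in\sS$ and all $n\in\boZ$ (the argument genuinely uses this in every degree $n$, not only in degree $n=0$).

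The main step, and the step I expect to be the substantive obstacle, is the injectivity of each $Z^n$. I would verify it by the Baer-type criterion of Lemma~\ref{injectives-in-Grothendieck-characterized}, applied to the set of generators contained in $\sS$: it suffices to extend to all of $S$ an arbitrary morphism $\alpha\:T\to Z^n$ defined on a subobject $T\subset S$ of a generator $S$. Since $J^n$ is injective, $\alpha$ first extends to some $\beta\:S\to J^n$. Then $d^n\beta$ vanishes on $T$, hence factors as $d^n\beta=\bar\gamma\pi$ through the quotient projection $\pi\:S\to S/T$; moreover $d^n\beta$ takes values in $B^{n+1}=\operatorname{im}d^n\subset Z^{n+1}$, so $\bar\gamma\:S/T\to Z^{n+1}$ is a genuine chain map, representing a class in $\Hom_{\sK(\sA)}(S/T,J^\bu[n+1])$. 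The two decisive points are that this obstruction automatically lands in the coboundaries (so that it is detected in the homotopy category), and that $S/T$, being a quotient of $S\in\sS$, again lies in $\sS$ by closure under quotients. Therefore $\Hom_{\sK(\sA)}(S/T,J^\bu[n+1])=0$, whence $\bar\gamma=d^n\bar\delta$ for some $\bar\delta\:S/T\to J^n$, and $\beta-\bar\delta\pi\:S\to Z^n$ is the desired extension of $\alpha$.

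It remains to establish acyclicity, which is then formal. As every $Z^k$ is injective, the sequence $0\to Z^{n-1}\to J^{n-1}\to B^n\to0$ splits, so each coboundary object $B^n=\operatorname{im}d^{n-1}$ is injective, and consequently $0\to B^n\to Z^n\to H^n\to0$ splits as well, where $H^n=Z^n/B^n$ is the $n$\+th cohomology object. To see that $H^n=0$ it suffices, since the generators jointly reflect the zero object, to show $\Hom_\sA(S,H^n)=0$ for every generator $S$. Given $\phi\:S\to H^n$, lift it along the split epimorphism $Z^n\to H^n$ to a morphism $\tilde\phi\:S\to Z^n$; regarded as a chain map $S\to J^\bu[n]$, this $\tilde\phi$ is null\+homotopic by the orthogonality assumption, which forces $\tilde\phi$ to factor through $B^n$ and hence forces $\phi=0$. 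Thus $J^\bu$ is acyclic with injective cocycles, and the splitting argument of the first paragraph shows that $J^\bu$ is contractible.
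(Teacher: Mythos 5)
Your proof is correct and follows essentially the same strategy as the paper's: both reduce contractibility to showing that $J^\bu$ is acyclic with injective objects of cocycles, using the generators contained in $\sS$ for acyclicity, and the Baer-type criterion of Lemma~\ref{injectives-in-Grothendieck-characterized} together with closure of $\sS$ under quotients for injectivity of the~$Z^n$. The only notable difference is the order of the two steps: the paper proves acyclicity first and then deduces injectivity from the vanishing of $\Ext^1_\sA(S,Z^n)$ computed via the resulting short exact sequences, whereas you prove injectivity of the cocycles by a direct extension argument that does not presuppose acyclicity and then exploit the resulting splittings to prove acyclicity --- both orderings work.
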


\begin{proof}
 Using the assumption that $\sS$ contains a set of generators for $\sA$,
one proves that the complex $J^\bu$ is acyclic in~$\sA$.
 Since $\sS$ is also closed under quotients,
Lemma~\ref{injectives-in-Grothendieck-characterized} shows that any
object $K\in\sA$ for which $\Ext^1_\sA(S,K)=0$ for all $S\in\sS$ is
injective.
 From this observation one deduces injectivity of the cocycle objects
of the complex~$J^\bu$.
\end{proof}

\begin{lem} \label{complex-of-injective-torsion-contractible}
 Let\/ $\X=\ilim_{\gamma\in\Gamma}X_\gamma$ be a reasonable ind-scheme
represented by an inductive system of closed immersions of reasonable
closed subschemes, and let $i_\gamma\:X_\gamma\rarrow\X$ be the natural
closed immersions.
 Let $\rJ^\bu\in\sK(\X\tors_\inj)$ be a complex of injective
quasi-coherent torsion sheaves on\/~$\X$.
 Assume that the complex of injective quasi-coherent sheaves
$i_\gamma^!\rJ^\bu$ on $X_\gamma$ is contractible for every\/
$\gamma\in\Gamma$.
 Then the complex of injective quasi-coherent torsion sheaves $\rJ^\bu$
on\/ $\X$ is contractible as well.
\end{lem}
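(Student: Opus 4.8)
The plan is to deduce the contractibility of $\rJ^\bu$ from Lemma~\ref{complex-of-injectives-contractible}, applied to the Grothendieck category $\sA=\X\tors$ (which is Grothendieck by Theorem~\ref{torsion-sheaves-abelian}). For the class $\sS\subset\X\tors$ I would take all quasi-coherent torsion sheaves of the form $i_\gamma{}_*\M$ with $\gamma\in\Gamma$ and $\M\in X_\gamma\qcoh$. By Lemma~\ref{closed-subschemes-torsion-subcategories}(a) the essential image of each functor $i_\gamma{}_*\:X_\gamma\qcoh\rarrow\X\tors$ is closed under quotients, so a quotient of $i_\gamma{}_*\M$ again lies in $\sS$; hence the class $\sS$ (a union of these essential images over all $\gamma$) is closed under quotients. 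By Lemma~\ref{closed-subschemes-torsion-subcategories}(b), choosing a set of generators $\sS_\gamma\subset X_\gamma\qcoh$ for each~$\gamma$, the objects $i_\gamma{}_*\cS$ with $\cS\in\sS_\gamma$ form a set of generators of $\X\tors$ contained in $\sS$. Thus the hypotheses of Lemma~\ref{complex-of-injectives-contractible} on $\sS$ are met, and $\rJ^\bu$ is a complex of injectives by assumption.

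It then remains to verify the vanishing condition $\Hom_{\sK(\X\tors)}(i_\gamma{}_*\M,\rJ^\bu)=0$ for every $\gamma\in\Gamma$ and every $\M\in X_\gamma\qcoh$. Here I would invoke the adjunction $i_\gamma{}_*\dashv i_\gamma^!$ between $i_\gamma{}_*\:X_\gamma\qcoh\rarrow\X\tors$ and $i_\gamma^!\:\X\tors\rarrow X_\gamma\qcoh$ of Section~\ref{torsion-inverse-images-subsecn} (recalling that $i_\gamma^!\rM=\rM_{(X_\gamma)}$ for a reasonable closed subscheme). Being an adjunction of additive functors, it is natural in both arguments and therefore induces an isomorphism of Hom-complexes $\Hom^\bu_{\X\tors}(i_\gamma{}_*\M,\rJ^\bu)\simeq\Hom^\bu_{X_\gamma\qcoh}(\M,i_\gamma^!\rJ^\bu)$; passing to $0$-th cohomology (that is, to homotopy classes of chain maps out of the complex $\M$ concentrated in degree~$0$) gives
\[
 \Hom_{\sK(\X\tors)}(i_\gamma{}_*\M,\rJ^\bu)\simeq
 \Hom_{\sK(X_\gamma\qcoh)}(\M,i_\gamma^!\rJ^\bu).
\]
Since $i_\gamma^!\rJ^\bu$ is contractible by hypothesis, it is a zero object of the homotopy category $\sK(X_\gamma\qcoh)$, so the right-hand group vanishes. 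Hence the left-hand group vanishes for all $\gamma$ and all~$\M$, and Lemma~\ref{complex-of-injectives-contractible} yields the contractibility of $\rJ^\bu$.

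The argument is essentially formal once these inputs are assembled, so I do not anticipate a serious obstacle; the two points requiring care are that the class $\sS$ really is closed under quotients (which rests on Lemma~\ref{closed-subschemes-torsion-subcategories}(a)) and that the degreewise adjunction isomorphism is compatible with the differentials and null-homotopies, so that it genuinely descends to the homotopy categories. I would note in passing that $i_\gamma^!\rJ^\bu$ is in fact a complex of injective quasi-coherent sheaves by Proposition~\ref{torsion-injectives-characterized}(a); this is not needed for the vanishing above, but it confirms that the contractibility hypothesis on $i_\gamma^!\rJ^\bu$ is the natural one.
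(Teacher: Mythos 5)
Your proof is correct and follows essentially the same route as the paper's: the paper likewise deduces the statement from Lemma~\ref{complex-of-injectives-contractible} combined with Lemma~\ref{closed-subschemes-torsion-subcategories}, using the adjunction isomorphism $\Hom_{\sK(\X\tors)}(i_\gamma{}_*\M^\bu,\rJ^\bu)\simeq\Hom_{\sK(X_\gamma\qcoh)}(\M^\bu,i_\gamma^!\rJ^\bu)=0$. Your write-up merely spells out in more detail the verification that the class $\sS$ is closed under quotients and contains a set of generators, which the paper leaves implicit.
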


\begin{proof}
 For every complex of quasi-coherent sheaves $\M^\bu$ on $X_\gamma$,
\,$\gamma\in\Gamma$, we have $\Hom_{\sK(\X\tors)}(i_\gamma{}_*\M^\bu,
\rJ^\bu)\simeq\Hom_{\sK(X_\gamma\qcoh)}(\M^\bu,i_\gamma^!\rJ^\bu)=0$.
 In view of Lemmas~\ref{complex-of-injectives-contractible}
and~\ref{closed-subschemes-torsion-subcategories}, it follows that
the complex $\rJ^\bu$ is contractible.
\end{proof}

%

\subsection{The triangulated equivalence}
\label{ind-Noetherian-triangulated-equivalence-subsecn}
 To begin with, we recall the triangulated equivalence for
a semi-separated Noetherian scheme with a dualizing complex.

\begin{thm} \label{Noetherian-scheme-triangulated-equivalence}
 Let\/ $X$ be a semi-separated Noetherian scheme with a dualizing
complex\/~$\D^\bu$.
 Then there is a natural equivalence of triangulated categories\/
$\sD^\co(X\qcoh)\simeq\sD(X\flat)$, provided by mutually inverse
triangulated functors\/ $\cHom_{X\qc}(\D^\bu,{-})\:\sK(X\qcoh_\inj)
\rarrow\sD(X\flat)$ and\/ $\D^\bu\ot_{\cO_X}{-}\,\:\sD(X\flat)\rarrow
\sK(X\qcoh_\inj)$.
\end{thm}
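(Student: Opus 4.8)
The plan is to reduce the statement to the assertion that the two functors define mutually inverse equivalences between $\sK(X\qcoh_\inj)$ and $\sD(X\flat)$, since the identification $\sD^\co(X\qcoh)\simeq\sK(X\qcoh_\inj)$ is already available: the category $X\qcoh$ is locally Noetherian (coherent sheaves generate), so injectives are closed under coproducts by Lemma~\ref{locally-Noetherian-coproducts-injective}, and Proposition~\ref{coderived-and-homotopy-of-injectives}(b) applies. Write $\Phi=\cHom_{X\qc}(\D^\bu,{-})$ and $\Psi=\D^\bu\ot_{\cO_X}{-}$. I would then proceed in three stages: check that both functors are well defined and triangulated, exhibit them as an adjoint pair, and prove that the unit and counit of this adjunction are isomorphisms.

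First I would verify well-definedness. Since $\D^\bu$ is a complex of injectives, Lemma~\ref{internal-hom-of-complexes-flat} shows $\Phi(\J^\bu)$ is a complex of flat (cotorsion) sheaves; as $\cHom_{X\qc}(\D^\bu,{-})$ respects homotopies and sends contractible complexes to contractible ones, $\Phi$ descends to a triangulated functor $\sK(X\qcoh_\inj)\rarrow\sK(X\flat)\rarrow\sD(X\flat)$. For $\Psi$, Lemma~\ref{hom-tensor-flats-injectives}(b) shows $\D^\bu\ot_{\cO_X}\F^\bu$ is a complex of injectives. To see that $\Psi$ factors through the localization $\sD(X\flat)$, I would reduce $\D^\bu$ to a bounded complex of injectives (condition~(i)) and prove that $\Psi$ carries every complex acyclic in $X\flat$ to a contractible complex: such an $\F^\bu$ is acyclic as a complex of quasi-coherent sheaves with flat cocycles, so the short exact sequences $0\rarrow Z^n\rarrow\F^n\rarrow Z^{n+1}\rarrow0$ of flats remain exact after $\D^p\ot_{\cO_X}{-}$ and have injective terms, whence each $\D^p\ot_{\cO_X}\F^\bu$ is acyclic with injective cocycles and hence contractible; the finite stupid filtration of the bounded $\D^\bu$ then exhibits $\D^\bu\ot_{\cO_X}\F^\bu$ as a finite extension of contractibles.

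Next, the tensor--hom adjunction built into the universal property of $\cHom_{X\qc}$ exhibits $\Psi$ as left adjoint to $\Phi$, with counit the evaluation $\D^\bu\ot_{\cO_X}\cHom_{X\qc}(\D^\bu,\J^\bu)\rarrow\J^\bu$ and unit the coevaluation $\F^\bu\rarrow\cHom_{X\qc}(\D^\bu,\D^\bu\ot_{\cO_X}\F^\bu)$; it remains to prove these are isomorphisms. For the counit, both sides are complexes of injectives (flat-times-injective is injective), so its cone is a complex of injectives, and by Lemma~\ref{complex-of-injectives-contractible}, taking $\sS$ to be the coherent sheaves (closed under quotients and generating $X\qcoh$), it suffices to show this cone is left-orthogonal to coherent sheaves in $\sK(X\qcoh)$. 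After reducing $\D^\bu$ to bounded, one uses that for a \emph{coherent} (hence finitely presented) sheaf $\M$ and a flat sheaf $\F$ the projection morphism $\cHom_{X\qc}(\M,{-})\ot_{\cO_X}\F\rarrow\cHom_{X\qc}(\M,{-}\ot_{\cO_X}\F)$ \emph{is} an isomorphism; this collapses the test to the homothety quasi-isomorphism $\cO_X\rarrow\cHom_{X\qc}(\D^\bu,\D^\bu)$ of condition~(iii), with Lemma~\ref{dualizing-no-negative-selfext} used to pin down the relevant homotopy classes. For the unit, I would again reduce $\D^\bu$ to bounded and show the cone is acyclic \emph{in} $X\flat$ — that is, acyclic as a complex of quasi-coherent sheaves and with flat cocycles — combining the factorization through $\F^\bu\ot_{\cO_X}\cHom_{X\qc}(\D^\bu,\D^\bu)$, Lemma~\ref{dualizing-homothety-quasi-isomorphism-as-flats}, and the bi-exactness of $\ot_{\cO_X}$ on $X\flat$ with a local, residue-field computation.

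The hard part is exactly the last step. The ``projection formula'' morphism $\cHom_{X\qc}(\D^\bu,\D^\bu)\ot_{\cO_X}\F\rarrow\cHom_{X\qc}(\D^\bu,\D^\bu\ot_{\cO_X}\F)$ is \emph{not} a termwise isomorphism once $\D^\bu$ or $\F$ is unbounded or non–finitely presented: already over $\Spec\boZ$, with $\D^\bu$ an injective resolution of $\boZ$ and $\F=\boQ$, the individual $\cHom$ sheaves disagree (e.g.\ $\cHom(\boQ/\boZ,\boQ/\boZ)\ot\boQ\ne0$ while $\cHom(\boQ/\boZ,\boQ/\boZ\ot\boQ)=0$), the discrepancy cancelling only after passage to cohomology. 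Hence the argument cannot be run componentwise; it must first replace $\D^\bu$ by a bounded complex and test the identities against finitely presented (coherent) objects, where the projection morphism holds, after which conditions~(iii) and Lemma~\ref{dualizing-homothety-quasi-isomorphism-as-flats} supply the quasi-isomorphisms. Interwoven with this is the second subtlety particular to the flat side: for the unit one must certify not merely acyclicity but acyclicity \emph{with flat cocycles}, i.e.\ vanishing in $\sD(X\flat)$ rather than in $\sD(X\qcoh)$ — precisely the feature that distinguishes these two categories — and this is where the boundedness reduction for $\D^\bu$ does the essential work.
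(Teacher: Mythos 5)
Your overall architecture --- pass through $\sK(X\qcoh_\inj)\simeq\sD^\co(X\qcoh)$, check well-definedness of the two functors, exhibit the adjunction, then verify the unit and counit --- is the right one, and your first three stages are correct (they are the same reductions that appear in the paper and in the proof of Theorem~\ref{ind-Noetherian-triangulated-equivalence-thm}). But note what you are measuring yourself against: the paper does not prove this theorem. After recording that a dualizing complex forces finite Krull dimension, hence finite homological dimension of $X\flat$ and $\sD(X\flat)=\sD^\co(X\flat)=\sD^\abs(X\flat)$ (Lemma~\ref{three-derived-categories-lemma}), and invoking Corollary~\ref{ind-Noetherian-coderived-cor}, it cites Murfet~\cite{Mur} and~\cite{EP} for exactly the two assertions you call ``the hard part''. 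Your proposal is an attempt to reprove that cited content, and it is precisely there that it has genuine gaps.

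For the counit, your projection-formula step is valid, but it does not do what you claim. For coherent $\M$ it identifies $\Hom_{\sK(X\qcoh)}(\M,\>\D^\bu\ot_{\cO_X}\cHom_{X\qc}(\D^\bu,\J^\bu))$ with the cohomology of $\Gamma(X,\>\cHom_{X\qc}(\M,\D^\bu)\ot_{\cO_X}\cHom_{X\qc}(\D^\bu,\J^\bu))$; what then remains is to show that the composition map $\cHom_{X\qc}(\M,\D^\bu)\ot_{\cO_X}\cHom_{X\qc}(\D^\bu,\J^\bu)\rarrow\cHom_{X\qc}(\M,\J^\bu)$ is a quasi-isomorphism for every coherent $\M$ and every \emph{unbounded} complex of injectives $\J^\bu$. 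This is the theorem itself tested at $\M$; it is not the homothety quasi-isomorphism, nor a formal consequence of condition~(iii). Condition~(iii) gives biduality $\M\simeq\cHom_{X\qc}(\cHom_{X\qc}(\M,\D^\bu),\D^\bu)$ for coherent $\M$, but converting that into the displayed quasi-isomorphism runs into exactly the coproduct-versus-product discrepancy you describe, unless $\cHom_{X\qc}(\M,\D^\bu)$ were a perfect complex, which it is not in general. What is actually needed is (a)~the case of a single injective sheaf $\J$, which is local (Matlis) duality --- the structure theory of injectives on a Noetherian scheme, using condition~(i) essentially --- and (b)~a d\'evissage from single injectives to unbounded $\J^\bu$ via the termwise-stabilizing colimit $\J^\bu=\varinjlim_n\sigma_{\geq-n}\J^\bu$; neither appears in your sketch. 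For the unit the gap is sharper: the projection formula you invoke has the \emph{coherent} sheaf as the source of the $\cHom$, whereas the map whose cone you must prove acyclic in $X\flat$, namely $\F^\bu\ot_{\cO_X}\cHom_{X\qc}(\D^\bu,\D^\bu)\rarrow\cHom_{X\qc}(\D^\bu,\>\D^\bu\ot_{\cO_X}\F^\bu)$, has the non-coherent $\D^\bu$ as source. Testing its cone for pure acyclicity by tensoring with a coherent $\M$ requires commuting $\ot_{\cO_X}\M$ past $\cHom_{X\qc}(\D^\bu,{-})$ --- a projection formula of precisely the kind your own $\boQ/\boZ$ example refutes termwise --- so the ingredients you list (condition~(iii), Lemma~\ref{dualizing-homothety-quasi-isomorphism-as-flats}, bi-exactness, residue fields) cannot close the argument. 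A complete proof has to bring in the input your sketch omits: either compact generation of both categories plus Grothendieck--Serre duality on compact objects (Murfet's route in~\cite{Mur}), or finite homological dimension of $X\flat$ together with the local-duality computation on injectives (the route of~\cite{EP}). It is telling that your proposal never uses finite Krull dimension at all, which is the first thing the paper's proof records.
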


\begin{proof}
 Notice first of all that any Noetherian scheme with a dualizing
complex has finite Krull dimension~\cite[Corollary~V.7.2]{Hart};
hence the exact category $X\flat$ has finite homological
dimension~\cite[Corollaire~II.3.2.7]{RG}, \cite[Lemma~5.4.1]{Pcosh}
(for a direct argument showing that existence of a dualizing complex
implies finite projective dimension of flat modules,
see~\cite[Proposition~1.5]{CFH}, \cite[Corollary~B.4.2]{Pcosh},
or~\cite[Proposition~4.3]{Pfp}).
 By Lemma~\ref{three-derived-categories-lemma}, it follows that
$\sD(X\flat)=\sD^\co(X\flat)=\sD^\abs(X\flat)$.

 Furthermore, the equivalence $\sD^\co(X\qcoh)\simeq
\sK(X\qcoh_\inj)$ is a particular case of
Corollary~\ref{ind-Noetherian-coderived-cor}.

 The assertion of the theorem is a result of Murfet~\cite[Theorem~8.4
and Proposition~8.9]{Mur}; for a different argument, which is much
closer to our exposition below, see~\cite[Theorem~2.5]{EP}
(which is stated and proved in the more complicated context of
matrix factorizations).
 Cf.~\cite[Theorem~5.7.1]{Pcosh} and~\cite[Theorem~4.5]{Pfp}.
 
 The dualizing complex $\D^\bu$ on $X$ is assumed to be a bounded
complex of injectives in the above references; here we assume it to be
homotopy equivalent to bounded, which is essentially the same.
 For a generalization to Noetherian schemes of infinite Krull
dimension with pointwise dualizing complexes of infinite injective
dimension, see~\cite[Corollary~3.10]{Neem-BIMS}.
\end{proof}

 The following theorem is the main result of
Section~\ref{dualizing-on-ind-Noetherian-secn}.

\begin{thm} \label{ind-Noetherian-triangulated-equivalence-thm}
 Let\/ $\X$ be an ind-semi-separated ind-Noetherian ind-scheme with
a dualizing complex\/~$\rD^\bu$.
 Then there is a natural equivalence of triangulated categories\/
$\sD^\co(\X\tors)\simeq\sD(\X\flat)$, provided by mutually inverse
triangulated functors\/ $\fHom_{\X\qc}(\rD^\bu,{-})\:\sK(\X\tors_\inj)
\rarrow\sD(\X\flat)$ and\/ $\rD^\bu\ot_\X{-}\,\:\sD(\X\flat)\rarrow
\sK(\X\tors_\inj)$.
\end{thm}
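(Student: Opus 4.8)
The plan is to reduce the statement to the scheme-level equivalence of Theorem~\ref{Noetherian-scheme-triangulated-equivalence} by gluing over a representation $\X=\ilim_{\gamma\in\Gamma}X_\gamma$ by closed immersions of semi-separated Noetherian schemes. Write $i_\gamma\:X_\gamma\rarrow\X$ for the closed immersions and put $\D_\gamma^\bu=i_\gamma^!\rD^\bu$; by condition~(iv) in Section~\ref{dualizing-definition-subsecn} together with Lemma~\ref{support-restriction-of-dualizing-complex}, each $\D_\gamma^\bu$ is a dualizing complex on the semi-separated Noetherian scheme $X_\gamma$, so that Theorem~\ref{Noetherian-scheme-triangulated-equivalence} is available termwise. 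The two restriction operations $i_\gamma^!\:\sK(\X\tors_\inj)\rarrow\sK(X_\gamma\qcoh_\inj)$ and $\fF\longmapsto\fF^{(X_\gamma)}$ on pro-sheaves are the bridges that will carry all verifications down to the individual~$X_\gamma$.

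First I would pin down the two functors and their targets. The functor $\rD^\bu\ot_\X{-}$ is obtained by totalizing the exact action bifunctor $\ot_\X\:\X\flat\times\X\tors\rarrow\X\tors$ of Section~\ref{flat-pro-sheaves-subsecn}. By Proposition~\ref{flat-torsion-tensor-prop}, applied termwise and using that $i_\gamma^!$ preserves coproducts, one has $i_\gamma^!(\rD^\bu\ot_\X\fF^\bu)\simeq\D_\gamma^\bu\ot_{\cO_{X_\gamma}}\fF^\bu{}^{(X_\gamma)}$, which is a complex of injective quasi-coherent sheaves on $X_\gamma$ by Lemma~\ref{hom-tensor-flats-injectives}(b); hence $\rD^\bu\ot_\X\fF^\bu$ is a complex of injective torsion sheaves by Proposition~\ref{torsion-injectives-characterized}(b). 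Dually, I would define $\fHom_{\X\qc}(\rD^\bu,\rJ^\bu)$ as the pro-quasi-coherent pro-sheaf with components $\cHom_{Z\qc}(i_Z^!\rD^\bu,\,i_Z^!\rJ^\bu)$ over closed subschemes $Z\subset\X$; these are complexes of flat (cotorsion) quasi-coherent sheaves by Lemma~\ref{internal-hom-of-complexes-flat}, so the result lies in $\sC(\X\flat)$. I expect the main obstacle to be precisely the verification that these components carry the pro-sheaf structure isomorphisms~(iii) of Section~\ref{pro-sheaves-subsecn}, i.e.\ natural isomorphisms $i_{\gamma\delta}^*\cHom_{X_\delta\qc}(\D_\delta^\bu,\rJ_{(X_\delta)}^\bu)\simeq\cHom_{X_\gamma\qc}(\D_\gamma^\bu,\rJ_{(X_\gamma)}^\bu)$: this amounts to a compatibility of the quasi-coherent internal Hom with $*$\+pullback and $!$\+restriction along closed immersions, which I would establish in the spirit of Lemma~\ref{qcoh-internal-Hom-of-complexes-projection} and which also yields independence of the chosen representation of~$\X$.

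Next I would check that both functors descend to the indicated quotient categories. If $\fF^\bu$ is acyclic in $\X\flat$, then by Lemma~\ref{flat-pro-sheaves-complex-acyclicity-criterion} each $\fF^\bu{}^{(X_\gamma)}$ is acyclic in $X_\gamma\flat$, so each $i_\gamma^!(\rD^\bu\ot_\X\fF^\bu)\simeq\D_\gamma^\bu\ot_{\cO_{X_\gamma}}\fF^\bu{}^{(X_\gamma)}$ is contractible, this being exactly the statement that the scheme-level functor $\D_\gamma^\bu\ot_{\cO_{X_\gamma}}{-}$ lands in $\sK(X_\gamma\qcoh_\inj)$ in Theorem~\ref{Noetherian-scheme-triangulated-equivalence}; Lemma~\ref{complex-of-injective-torsion-contractible} then forces $\rD^\bu\ot_\X\fF^\bu$ to be contractible. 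Thus $\rD^\bu\ot_\X{-}$ is well defined as a functor $\sD(\X\flat)\rarrow\sK(\X\tors_\inj)$, and the symmetric argument (contractible complexes go to $\X\flat$\+acyclic ones) shows that $\fHom_{\X\qc}(\rD^\bu,{-})\:\sK(\X\tors_\inj)\rarrow\sD(\X\flat)$ is well defined. Both functors are triangulated, being additive and compatible with shifts and cones at the level of complexes.

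Finally, I would exhibit the evaluation and coevaluation natural morphisms $\rD^\bu\ot_\X\fHom_{\X\qc}(\rD^\bu,\rJ^\bu)\rarrow\rJ^\bu$ and $\fF^\bu\rarrow\fHom_{\X\qc}(\rD^\bu,\,\rD^\bu\ot_\X\fF^\bu)$ and prove they are isomorphisms. Applying $i_\gamma^!$ to the first, via Proposition~\ref{flat-torsion-tensor-prop} and the structure isomorphisms of $\fHom$ above, identifies it with the scheme-level morphism $\D_\gamma^\bu\ot_{\cO_{X_\gamma}}\cHom_{X_\gamma\qc}(\D_\gamma^\bu,i_\gamma^!\rJ^\bu)\rarrow i_\gamma^!\rJ^\bu$, a homotopy equivalence by Theorem~\ref{Noetherian-scheme-triangulated-equivalence}; Lemma~\ref{complex-of-injective-torsion-contractible} upgrades this to an isomorphism in $\sK(\X\tors_\inj)$. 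Evaluating the second morphism at each $X_\gamma$ reduces it likewise to the scheme isomorphism, and Lemma~\ref{flat-pro-sheaves-complex-acyclicity-criterion} shows that its cone is acyclic in $\X\flat$. This gives the equivalence $\sK(\X\tors_\inj)\simeq\sD(\X\flat)$; composing with the identification $\sD^\co(\X\tors)\simeq\sK(\X\tors_\inj)$ of Corollary~\ref{ind-Noetherian-coderived-cor} yields the theorem.
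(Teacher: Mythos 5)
Your proposal is correct and follows essentially the same route as the paper's proof: the same two componentwise-defined functors, the same verification that they land in $\sC(\X\tors_\inj)$ and $\sC(\X\flat)$ via Proposition~\ref{flat-torsion-tensor-prop}, Lemma~\ref{hom-tensor-flats-injectives}(b), Proposition~\ref{torsion-injectives-characterized}(b) and Lemma~\ref{internal-hom-of-complexes-flat}, the same descent arguments via Lemmas~\ref{complex-of-injective-torsion-contractible} and~\ref{flat-pro-sheaves-complex-acyclicity-criterion}, and the same reduction of the two adjunction morphisms to Theorem~\ref{Noetherian-scheme-triangulated-equivalence} together with Corollary~\ref{ind-Noetherian-coderived-cor} at the end.

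One remark on the step you yourself flag as the main obstacle. The pro-sheaf structure isomorphisms $i_{\gamma\delta}^*\cHom_{X_\delta\qc}(\D_\delta^\bu,\rJ^\bu_{(X_\delta)})\simeq\cHom_{X_\gamma\qc}(\D_\gamma^\bu,\rJ^\bu_{(X_\gamma)})$ are indeed the crux, and the paper proves them as Lemma~\ref{star-qc-Hom-shriek-for-complexes}; but this statement is not of the same nature as the projection formula of Lemma~\ref{qcoh-internal-Hom-of-complexes-projection}, which is an unconditional adjunction-type identity. What is needed here is a $*$/$!$\+compatibility of the quasi-coherent internal Hom along a closed immersion, which fails for general complexes: it requires $\rJ^\bu$ to be a complex of injectives, the ambient scheme to be semi-separated, and the closed subscheme to be reasonable (Lemma~\ref{star-qc-Hom-shriek-lemma}). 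Moreover, since $\cHom_{X\qc}({-},{-})$ of complexes is totalized using infinite products, one also needs Lemma~\ref{star-product-lemma}, asserting that $i^*$ commutes with infinite products of flat cotorsion quasi-coherent sheaves; this is where Lemma~\ref{flat-and-cotorsion-lemma} and the cotorsion half of Lemma~\ref{internal-hom-of-complexes-flat} enter. So the step you defer does need its own nontrivial argument rather than following ``in the spirit of'' the projection formula; with that lemma supplied, your proof is the paper's proof.
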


 The notation $\fHom_{\X\qc}({-},{-})$ will be explained below, and
the proof of Theorem~\ref{ind-Noetherian-triangulated-equivalence-thm}
will be given below in this
Section~\ref{ind-Noetherian-triangulated-equivalence-subsecn}.

\begin{lem}  \label{reasonable-shriek-flat-star-commutation}
 Let $f\:Y\rarrow X$ be a flat morphism of schemes and $Z\subset X$ be
a reasonable closed subscheme with the closed immersion $i\:Z\rarrow X$.
 Consider the pullback diagram
$$
\xymatrix{
 Z\times_XY \ar[r]^-k \ar[d]^-g & Y \ar[d]^-f \\
 Z \ar[r]^-i & X
}
$$
 Put $W=Z\times_XY$.
 Then there is a natural isomorphism $g^*i^!\simeq k^!f^*$ of
functors $X\qcoh\rarrow W\qcoh$.
\end{lem}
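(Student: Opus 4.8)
The plan is to first produce a canonical natural transformation $g^*i^!\rarrow k^!f^*$ and then verify that it is an isomorphism, a property that is local on the base and hence reduces to a statement about modules over commutative rings. Before anything else I would check that $k^!$ genuinely lands in $W\qcoh$: since $Z\subset X$ is a reasonable closed subscheme, its base change $W=Z\times_XY$ is a reasonable closed subscheme in $Y$ by Lemma~\ref{base-change-composition-reasonable}(a), so $k^!\:Y\qcoh\rarrow W\qcoh$ is the quasi-coherent right adjoint to $k_*$ (as in Section~\ref{qcoh-sheaves-and-functors-subsecn}). Both composites $g^*i^!$ and $k^!f^*$ are thus well-defined functors $X\qcoh\rarrow W\qcoh$, as claimed.

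To build the natural morphism I would combine the base-change isomorphism $f^*i_*\simeq k_*g^*$ of Lemma~\ref{reasonable-base-change}(b) with the counit $i_*i^!\rarrow\Id$. For $\M\in X\qcoh$ this yields a chain $k_*g^*i^!\M\simeq f^*i_*i^!\M\rarrow f^*\M$, whose adjunct under the $(k_*,k^!)$ adjunction is the desired morphism $g^*i^!\M\rarrow k^!f^*\M$, natural in~$\M$.

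Since being an isomorphism of quasi-coherent sheaves is local, I would then restrict to an affine open $\Spec R\subset X$, over which $f$ corresponds to a flat ring homomorphism $R\rarrow S$ and the reasonable closed subscheme $Z$ to a finitely generated ideal $I\subset R$. Here $Y=\Spec S$, \ $Z=\Spec R/I$, and $W=\Spec S/IS$, with $g$ corresponding to $R/I\rarrow S/IS$ and $k$ to $S\rarrow S/IS$. For an $R$-module $M$ one computes $g^*i^!M=S/IS\ot_{R/I}\Hom_R(R/I,M)$ and $k^!f^*M=\Hom_S(S/IS,\>S\ot_RM)$, so the assertion becomes a natural isomorphism $S/IS\ot_{R/I}\Hom_R(R/I,M)\simeq\Hom_S(S/IS,\>S\ot_RM)$, and one checks (a routine compatibility diagram) that the morphism constructed above realizes it.

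This last isomorphism is an instance of flat base change for $\Hom$: because $I$ is finitely generated, $R/I$ is a finitely presented $R$-module, and flatness of $S$ over $R$ gives $S\ot_R\Hom_R(R/I,M)\simeq\Hom_S(S\ot_RR/I,\>S\ot_RM)=\Hom_S(S/IS,\>S\ot_RM)$; rewriting the left side as $S/IS\ot_{R/I}\Hom_R(R/I,M)$ (valid since $\Hom_R(R/I,M)$ is an $R/I$-module and $S\ot_R({-})\simeq S/IS\ot_{R/I}({-})$ on $R/I$-modules) finishes the computation. The main obstacle, and the only place the hypotheses are essential, is precisely this step: the flat base-change isomorphism for $\Hom_R(R/I,{-})$ fails for non-finitely-generated $I$, so the reasonableness of $Z$ (finite generation of $I$) and the flatness of $f$ are both used here in an essential way, exactly as in Lemmas~\ref{reasonable-base-change} and~\ref{affine-flat-base-change}.
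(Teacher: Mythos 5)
Your proposal is correct and follows essentially the same route as the paper's proof: reduce by locality to the affine case $R\rarrow S$ flat, $Z=\Spec R/I$ with $I$ finitely generated, where the claim becomes the flat base-change isomorphism $S\ot_R\Hom_R(R/I,M)\simeq\Hom_S(S/IS,\>S\ot_RM)$, valid precisely because $R/I$ is finitely presented and $S$ is $R$-flat. The only difference is that you spell out the global construction of the natural transformation (via $f^*i_*\simeq k_*g^*$, the counit $i_*i^!\rarrow\Id$, and the $(k_*,k^!)$ adjunction), which the paper leaves implicit; this is a harmless elaboration, not a different argument.
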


\begin{proof}
 The assertion is local in $X$ and reduces to the case of affine
schemes, for which it means the following.
 Let $R\rarrow S$ be a homomorphism of commutative rings such that
$S$ is a flat $R$\+module, and let $R\rarrow T$ be a surjective
homomorphism of commutative rings with a finitely generated
kernel ideal.
 Let $M$ be an $R$\+module.
 Then the natural map
\begin{multline*}
 (S\ot_RT)\ot_T\Hom_R(T,M)\simeq S\ot_R\Hom_R(T,M) \\
 \lrarrow \Hom_R(T,\>S\ot_RM)\simeq\Hom_S(S\ot_RT,\>S\ot_RM)
\end{multline*}
is an isomorphism of $(S\ot_RT)$\+modules.
\end{proof}

\begin{lem} \label{star-qc-Hom-shriek-lemma}
 Let $i\:Z\rarrow X$ be a closed immersion of schemes, and let
$\M$, $\K$ be quasi-coherent sheaves on~$X$.
 Then there is a natural morphism
$$
 i^*\cHom_{X\qc}(\M,\K)\lrarrow\cHom_{Z\qc}(i^!\M,i^!\K)
$$
of quasi-coherent sheaves on~$Z$.
 This morphism is an isomorphism whenever the scheme $X$ is
(quasi-compact and) semi-separated, $i(Z)$ is a reasonable closed
subscheme in $X$, and $\K$ is an injective quasi-coherent sheaf on~$X$.
\end{lem}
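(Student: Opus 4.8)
The plan is to first produce the morphism by adjunction and then verify that it is an isomorphism by reducing to an affine, purely module-theoretic computation. To construct the natural morphism, I would use the universal property of the quasi-coherent internal Hom: giving a morphism $i^*\cHom_{X\qc}(\M,\K)\rarrow\cHom_{Z\qc}(i^!\M,i^!\K)$ in $Z\qcoh$ is the same as giving a morphism $i^*\cHom_{X\qc}(\M,\K)\ot_{\cO_Z}i^!\M\rarrow i^!\K$. I would take the composite
\[
 i^*\cHom_{X\qc}(\M,\K)\ot_{\cO_Z}i^!\M \lrarrow
 i^!\bigl(\cHom_{X\qc}(\M,\K)\ot_{\cO_X}\M\bigr)\lrarrow i^!\K,
\]
where the first arrow is the morphism of Lemma~\ref{shriek-star-tensor} applied to $\F=\cHom_{X\qc}(\M,\K)$, and the second is $i^!$ applied to the evaluation morphism $\cHom_{X\qc}(\M,\K)\ot_{\cO_X}\M\rarrow\K$ (the counit of the defining adjunction of $\cHom_{X\qc}$). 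This is manifestly natural in $\M$ and $\K$.

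For the isomorphism, the assertion is local on $Z$, hence on $X$, so I would reduce to the case $X=\Spec R$ affine. Here the quasi-compactness and semi-separatedness of $X$ enter only to guarantee that the quasi-coherent internal Hom exists and is compatible with restriction to affine opens (via the coherator of~\cite{TT}); on an affine scheme $\cHom_{X\qc}(\M,\K)$ is simply the $R$\+module $\Hom_R(M,K)$. Writing $S=R/I$ with $I\subset R$ the finitely generated ideal cut out by the reasonable closed subscheme $Z$, the source becomes $S\ot_R\Hom_R(M,K)=\Hom_R(M,K)/I\Hom_R(M,K)$, while $i^!\M=\Hom_R(S,M)=M[I]$ and $i^!\K=K[I]$, so the target is $\Hom_S(M[I],K[I])$. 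Using the coinduction adjunction $\Hom_S(N,\Hom_R(S,K))\simeq\Hom_R(N,K)$ for $S$\+modules $N$, I identify the target with $\Hom_R(M[I],K)$, and a direct check shows the constructed morphism becomes the map induced by restriction along the inclusion $M[I]\hookrightarrow M$, which indeed factors through $\Hom_R(M,K)/I\Hom_R(M,K)$ since any $\phi$ satisfies $(a\phi)|_{M[I]}=0$ for $a\in I$.

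It then remains to prove the algebraic statement that, for $K$ injective and $I$ finitely generated, the restriction map
\[
 \Hom_R(M,K)/I\Hom_R(M,K)\lrarrow\Hom_R(M[I],K)
\]
is an isomorphism. Injectivity of $K$ makes $\Hom_R({-},K)$ exact, so restriction $\Hom_R(M,K)\rarrow\Hom_R(M[I],K)$ is surjective with kernel the submodule of homomorphisms vanishing on $M[I]$; the only content is that this kernel equals $I\Hom_R(M,K)$. The inclusion $I\Hom_R(M,K)\subseteq\ker$ is immediate. For the reverse, writing $I=(a_1,\dots,a_n)$, I would use that the map $m\mapsto(a_1m,\dots,a_nm)$ identifies $M/M[I]$ with a submodule of $M^{\oplus n}$; given $\phi$ vanishing on $M[I]$, injectivity of $K$ extends the induced map $M/M[I]\rarrow K$ to a morphism $\tilde\phi=(\psi_1,\dots,\psi_n)\:M^{\oplus n}\rarrow K$, whence $\phi(m)=\sum_j\psi_j(a_jm)=\sum_j a_j\psi_j(m)$, i.e.\ $\phi=\sum_j a_j\psi_j\in I\Hom_R(M,K)$.

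The main obstacle is not the algebra, which is elementary once the two hypotheses are isolated, but rather the bookkeeping behind the reduction to the affine case: one must be sure that $\cHom_{X\qc}$, together with $i^*$, $i^!$, and $\ot$, all commute with restriction to affine opens so that the global morphism really localizes to the restriction map above. This is exactly where semi-separatedness (needed for the coherator to behave well, as in Lemma~\ref{hom-tensor-flats-injectives}) and the reasonableness of $Z$ (so that $i^!=\cHom_{\cO_X}(i_*\cO_Z,{-})$ is the local, quasi-coherent $i^!$) are used; once this compatibility is in hand, the finitely generated ideal and the injectivity of $K$ supply precisely the two inputs the computation requires.
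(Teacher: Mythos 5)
Your construction of the comparison morphism is essentially the paper's own (both rest on the morphism of Lemma~\ref{shriek-star-tensor} together with the adjunction defining the quasi-coherent internal Hom), and your closing affine computation---that for a finitely generated ideal $I\subset R$ and an injective $R$\+module $K$ the restriction map $\Hom_R(M,K)/I\Hom_R(M,K)\rarrow\Hom_R(\Hom_R(R/I,M),K)$ is an isomorphism---is correct, and in fact proves the statement that the paper leaves as an unproved assertion. The genuine gap is the step connecting the two: you claim that the quasi-coherent internal Hom is ``compatible with restriction to affine opens'', so that the isomorphism assertion is local on $X$. This is false. The functor $\cHom_{X\qc}$ is the coherator applied to the sheaf Hom, and the coherator does not commute with restriction to open subschemes when $\M$ is not finitely presented; neither semi-separatedness of $X$ nor injectivity of $\K$ repairs this. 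Already for $X=\Spec\boZ$, \,$\M=\boQ$, \,$\K=\boQ/\boZ$ and the open subscheme $U=\Spec\boZ[1/p]$, the natural map $\Hom_\boZ(\boQ,\boQ/\boZ)[1/p]\rarrow\Hom_{\boZ[1/p]}(\boQ,(\boQ/\boZ)[1/p])$ is not injective: the source is the ring of finite adeles (a $\boQ$\+vector space, so inverting~$p$ changes nothing), the target is its prime-to-$p$ restricted product, and the map is the projection killing a copy of the field of $p$\+adic numbers. Consequently, the restriction of your global morphism to an affine open trace $U\cap Z$ is \emph{not} the comparison morphism for the restricted data $(\M|_U,\K|_U)$, and the affine computation cannot be invoked the way you invoke it.

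This difficulty is exactly what the paper's proof is organized around, and it avoids restricting $\cHom_{X\qc}$ altogether. Instead it uses the structure of injectives: on a quasi-compact scheme, every injective quasi-coherent sheaf is a direct summand of a finite direct sum of sheaves $j_*\J$, where $j\:U\rarrow X$ is the immersion of an affine open subscheme and $\J$ is injective on $U$ (semi-separatedness enters to make $j$~affine). Then the projection formula of Lemma~\ref{qcoh-internal-Hom-projection}, \,$\cHom_{X\qc}(\M,j_*\J)\simeq j_*\cHom_{U\qc}(j^*\M,\J)$, together with the base-change isomorphisms $i^*j_*\simeq g_*k^*$, \ $i^!j_*\simeq g_*k^!$, and $g^*i^!\simeq k^!j^*$ (Lemmas~\ref{affine-flat-base-change}(a), \ref{reasonable-base-change}(a), and~\ref{reasonable-shriek-flat-star-commutation}), transports both sides of the comparison morphism to the affine scheme $U$---using only direct-image compatibilities of the internal Hom, never inverse-image ones. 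After that, your affine argument finishes the proof. So your proposal can be repaired by replacing the localization step with this reduction, but as written it does not go through.
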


\begin{proof}
 To prove the first assertion, it suffices to construct a natural
morphism $\cHom_{X\qc}(\M,\K)\rarrow i_*\cHom_{Z\qc}(i^!\M,i^!\K)$
of quasi-coherent sheaves on~$X$.
 Let $\cL$ be an arbitrary quasi-coherent sheaf on~$X$.
 Then morphisms $\cL\rarrow\cHom_{X\qc}(\M,\K)$ correspond bijectively
to morphisms $\cL\ot_{\cO_X}\M\rarrow\K$, while morphisms
$\cL\rarrow i_*\cHom_{Z\qc}(i^!\M,i^!\K)$ correspond bijectively to
morphisms $i^*\cL\ot_{\cO_Z}i^!\M\rarrow i^!\K$.
 Applying~$i^!$ to a morphism $\cL\ot_{\cO_X}\M\rarrow\K$ and
precomposing with the natural morphism from
Lemma~\ref{shriek-star-tensor}, one obtains a morphism
$i^*\cL\ot_{\cO_Z}i^!\M\rarrow i^!\K$.

 For the second assertion, notice that any injective quasi-coherent
sheaf on a quasi-compact scheme $X$ is a direct summand of a finite
direct direct sum of direct images of injective quasi-coherent sheaves
from affine open subschemes of $X$ (as there are enough injective
objects of this form in $X\qcoh$).
 Let $U\subset X$ be an affine open subscheme and $j\:U\rarrow X$ be
the open immersion morphism.
 We can assume that $\K=j_*\J$, where $\J$ is an injective
quasi-coherent sheaf on~$U$.

 Now
$$
 i^*\cHom_{X\qc}(\M,j_*\J)\simeq i^*j_*\cHom_{U\qc}(j^*\M,\J)
 \simeq g_*k^*\cHom_{U\qc}(j^*\M,\J),
$$
where $k\:W=Z\times_XU\rarrow U$, \ $g\:W\rarrow Z$, the first
isomorphism holds by Lemma~\ref{qcoh-internal-Hom-projection}, and
the second one by Lemma~\ref{affine-flat-base-change}(a).
 (The assumption that $X$ is semi-separated is used here, as we need
$j$~to be an affine morphism.)

 On the other hand, we have $i^!j_*\J\simeq g_*k^!\J$ by
Lemma~\ref{reasonable-base-change}(a) and
$g^*i^!\M\simeq k^!j^*\M$ by
Lemma~\ref{reasonable-shriek-flat-star-commutation}.
 Hence
\begin{multline*}
 \cHom_{Z\qc}(i^!\M,i^!j_*\J)\simeq\cHom_{Z\qc}(i^!\M,g_*k^!\J)
 \\ \simeq g_*\cHom_{W\qc}(g^*i^!\M,k^!\J)\simeq
 g_*\cHom_{W\qc}(k^!j^*\M,k^!\J).
\end{multline*}

 This reduces the second assertion of the lemma to the particular case
of an affine scheme $U$ with a reasonable closed subscheme
$k(W)\subset U$, with the quasi-coherent sheaf $j^*\M$ and
the injective quasi-coherent sheaf $\J$ on~$U$.
 In the affine case, the assertion means the following.
 Let $R\rarrow S$ be a surjective ring homomorphism with a finitely
generated kernel ideal, let $M$ be an $R$\+module, and let $J$ be
an injective $R$\+module.
 Then the natural morphism of $S$\+modules
$$
 S\ot_R\Hom_R(M,J)\lrarrow\Hom_R(\Hom_R(S,M),J)\simeq
 \Hom_S(\Hom_R(S,M),\Hom_R(S,J))
$$
is an isomorphism.
\end{proof}

\begin{lem} \label{star-product-lemma}
 Let $i\:Z\rarrow X$ be a closed immersion of schemes, and let
$(\cP_\xi)_{\xi\in\Xi}$ be a family of quasi-coherent sheaves on~$X$.
 Then there is a natural morphism
$$
 i^*\prod\nolimits_{\xi\in\Xi}\cP_\xi
 \lrarrow\prod\nolimits_{\xi\in\Xi}i^*\cP_\xi
$$
of quasi-coherent sheaves on~$Z$.
 This morphism is an isomorphism whenever the scheme $X$ is
(quasi-compact and) semi-separated, $i(Z)$ is a reasonable closed
subscheme in $X$, and\/ $\cP_\xi$ are flat cotorsion quasi-coherent
sheaves on~$X$.
\end{lem}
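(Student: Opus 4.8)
The plan is to first construct the natural morphism, and then to prove that it is an isomorphism under the stated hypotheses by transporting the computation onto affine open subschemes. For the morphism, I would take, for each $\eta\in\Xi$, the projection $\prod_\xi\cP_\xi\rarrow\cP_\eta$ in $X\qcoh$, apply the functor $i^*$ to obtain $i^*\prod_\xi\cP_\xi\rarrow i^*\cP_\eta$, and then assemble these maps, using the universal property of the product $\prod_\xi i^*\cP_\xi$ in $Z\qcoh$, into a morphism $i^*\prod_\xi\cP_\xi\rarrow\prod_\xi i^*\cP_\xi$. This construction is manifestly natural in the family $(\cP_\xi)$.

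For the isomorphism, the essential difficulty is that the product in $X\qcoh$ is not the product of $\cO_X$\+modules (the coherator intervenes), so I would use the structure theory of flat cotorsion sheaves to reduce to affine opens. Fix a finite affine open covering $X=\bigcup_\alpha U_\alpha$ with open immersions $j_\alpha\:U_\alpha\rarrow X$; since $X$ is semi-separated, each $j_\alpha$ is an affine morphism. By \cite[Lemma~4.1.12]{Pcosh} (already invoked in the proof of Lemma~\ref{flat-and-cotorsion-lemma}(c)), every flat cotorsion $\cP_\xi$ is a direct summand of a finite direct sum $\bigoplus_\alpha j_\alpha{}_*\cQ_{\xi,\alpha}$ with $\cQ_{\xi,\alpha}$ flat cotorsion on the affine scheme $U_\alpha$. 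Because both $\prod_\xi$ and $i^*$ preserve direct summands, the morphism for $(\cP_\xi)$ is a retract of the morphism for $(\bigoplus_\alpha j_\alpha{}_*\cQ_{\xi,\alpha})$, and a retract of an isomorphism is an isomorphism; hence it suffices to treat the case $\cP_\xi=\bigoplus_\alpha j_\alpha{}_*\cQ_{\xi,\alpha}$.

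In that case I would compute both sides explicitly. Since the direct sum over $\alpha$ is finite it is also a product, so products commute with it, and $j_\alpha{}_*$, being a right adjoint, commutes with products; thus $\prod_\xi\cP_\xi\simeq\bigoplus_\alpha j_\alpha{}_*(\prod_\xi\cQ_{\xi,\alpha})$ in $X\qcoh$, the inner product being the honest product of modules on the affine scheme $U_\alpha$. Applying $i^*$ (which commutes with finite direct sums) together with the affine base-change isomorphism $i^*j_\alpha{}_*\simeq g_{\alpha*}k_\alpha^*$ of Lemma~\ref{affine-flat-base-change}(a), where $k_\alpha\:W_\alpha=Z\times_X U_\alpha\rarrow U_\alpha$ and $g_\alpha\:W_\alpha\rarrow Z$, reduces everything to the affine statement: for the closed immersion $k_\alpha$ of affine schemes, $k_\alpha^*\prod_\xi\cQ_{\xi,\alpha}\simeq\prod_\xi k_\alpha^*\cQ_{\xi,\alpha}$. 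On rings this asserts that for a surjection $R\rarrow T$ with kernel $I$ the functor $T\ot_R{-}=({-})/I({-})$ commutes with arbitrary products; writing $T\ot_R M=\coker(M^n\rarrow M)$ using $n$ generators of $I$, this follows from the facts that the image of a product of maps is the product of the images and that quotients commute with products of modules. The hypothesis that $i(Z)$ is reasonable guarantees, via Lemma~\ref{base-change-composition-reasonable}(a), that $W_\alpha$ is a reasonable closed subscheme of $U_\alpha$, i.e.\ that $I$ is finitely generated. Finally $g_{\alpha*}$ preserves products, and running the base-change isomorphism backwards identifies the outcome with $\prod_\xi i^*\cP_\xi$; a routine check of naturality confirms that the resulting composite isomorphism is the morphism constructed above.

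The step I expect to be the main obstacle is the reduction itself: controlling the product in $X\qcoh$ rather than the product of $\cO_X$\+modules, and verifying that it is genuinely assembled from the affine pieces $U_\alpha$. Once the structure theorem for flat cotorsion sheaves is in hand, the remaining affine computation is elementary and uses only finite generation of the defining ideal, so flatness and the cotorsion property enter solely through the invocation of \cite[Lemma~4.1.12]{Pcosh}.
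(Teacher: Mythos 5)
Your proposal is correct and takes essentially the same route as the paper's own proof: the same reduction via \cite[Lemma~4.1.12]{Pcosh} to direct images of flat cotorsion sheaves from affine open subschemes, the same use of the base-change isomorphism of Lemma~\ref{affine-flat-base-change}(a) (with $j_\alpha$ affine by semi-separatedness, and reasonableness transported by Lemma~\ref{base-change-composition-reasonable}(a)), and the same final affine statement that $S\ot_R{-}$ commutes with products when the kernel of $R\rarrow S$ is finitely generated. The only differences are cosmetic: you carry the finite direct sum over~$\alpha$ through the product explicitly rather than reducing to a single affine open, and you spell out the elementary cokernel argument for the affine claim, which the paper states without proof.
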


\begin{proof}
 The first assertion holds for any functor between categories with
products (in the role of~$i^*$).
 The result of~\cite[Lemma~4.1.12]{Pcosh}, as restated in
the proof of Lemma~\ref{flat-and-cotorsion-lemma}(c), reduces
the second assertion to the particular case when $\cP_\xi=j_*\cQ_\xi$
for every $\xi\in\Xi$, where $j\:U\rarrow X$ is the immersion
of an affine open subscheme and $\cQ_\xi$ are (flat cotorsion)
quasi-coherent sheaves on~$U$.

 Put $W=Z\times_XU$, and denote by $k\:W\rarrow U$ and $g\:W\rarrow Z$
the natural morphisms.
 In view of Lemma~\ref{affine-flat-base-change}, we have
$$
 i^*\prod\nolimits_{\xi\in\Xi}j_*\cQ_\xi\simeq
 i^*j_*\prod\nolimits_{\xi\in\Xi}\cQ_\xi\simeq
 g_*k^*\prod\nolimits_{\xi\in\Xi}\cQ_\xi,
$$
while
$$
 \prod\nolimits_{\xi\in\Xi}i^*j_*\cQ_\xi\simeq
 \prod\nolimits_{\xi\in\Xi}g_*k^*\cQ_\xi\simeq
 g_*\prod\nolimits_{\Xi\in\Xi}k^*\cQ_\xi.
$$

 Now we have reduced the second assertion of the lemma to the particular
case of an affine scheme $U$ with a reasonable closed subscheme
$k(W)\subset U$ and the family of (flat cotorsion) quasi-coherent
sheaves $\cQ_\xi$ on~$U$.
 In the affine case, the assertion means the following.
 Let $R\rarrow S$ be a surjective ring homomorphism with a finitely
generated kernel ideal, and let $(Q_\xi)_{\xi\in\Xi}$ be a family of
(flat cotorsion) $R$\+modules.
 Then the natural morphism of $S$\+modules
$$
 S\ot_R\prod\nolimits_{\xi\in\Xi}Q_\xi\lrarrow
 \prod\nolimits_{\xi\in\Xi}S\ot_RQ_\xi
$$
is an isomorphism.
 The assumption that the $R$\+modules $Q_\xi$ are flat cotorsion is not
needed here; it was only used in order to reduce the question to
the affine case.
\end{proof}

\begin{lem} \label{star-qc-Hom-shriek-for-complexes}
 Let $i\:Z\rarrow X$ be a closed immersion of schemes, and let
$\M^\bu$, $\K^\bu\in\sC(X\qcoh)$ be complexes of quasi-coherent
sheaves on~$X$.
 Then there is a natural morphism
$$
 i^*\cHom_{X\qc}(\M^\bu,\K^\bu)\lrarrow
 \cHom_{Z\qc}(i^!\M^\bu,i^!\K^\bu)
$$
of complexes of quasi-coherent sheaves on~$Z$.
 This morphism is an isomorphism whenever the scheme $X$ is
(quasi-compact and) semi-separated, $i(Z)$ is a reasonable closed
subscheme in $X$, and $\K^\bu$ is a complex of injective quasi-coherent
sheaves on~$X$.
\end{lem}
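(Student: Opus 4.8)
The plan is to build the morphism by termwise application of the single-object morphism from Lemma~\ref{star-qc-Hom-shriek-lemma}, and then verify that it is an isomorphism under the stated hypotheses by reducing to that lemma degree by degree — with the crucial caveat that the internal Hom of complexes involves infinite products along the diagonals, so I must control how $i^*$ interacts with those products.

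First I would construct the morphism. Recall that $\cHom_{X\qc}(\M^\bu,\K^\bu)$ is the totalization of the bicomplex with components $\cHom_{X\qc}(\M^p,\K^q)$, formed by taking infinite products along the diagonals in $X\qcoh$. Applying $i^*$ and using the single-term morphism $i^*\cHom_{X\qc}(\M^p,\K^q)\rarrow\cHom_{Z\qc}(i^!\M^p,i^!\K^q)$ from Lemma~\ref{star-qc-Hom-shriek-lemma} in each bidegree, together with the canonical map $i^*\prod\rarrow\prod i^*$ from Lemma~\ref{star-product-lemma} that lets $i^*$ pass inside the diagonal products, I obtain a morphism of bicomplexes and hence, after totalizing, the desired morphism of complexes. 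One must check that these termwise maps are compatible with the two differentials of the bicomplex; this is a routine naturality verification, since both the morphism of Lemma~\ref{star-qc-Hom-shriek-lemma} and the product comparison map are natural in their arguments.

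Next I would prove it is an isomorphism under the hypotheses that $X$ is semi-separated, $i(Z)\subset X$ is reasonable, and $\K^\bu$ is a complex of injectives. Since a morphism of complexes is an isomorphism if and only if it is so in each cohomological degree, and the components of the totalization are diagonal products, it suffices to show that in each bidegree $(p,q)$ the comparison map $i^*\cHom_{X\qc}(\M^p,\K^q)\rarrow\cHom_{Z\qc}(i^!\M^p,i^!\K^q)$ is an isomorphism, and that $i^*$ commutes with the relevant infinite products. The first point is exactly Lemma~\ref{star-qc-Hom-shriek-lemma}, whose hypotheses (semi-separated $X$, reasonable $i(Z)$, injective $\K^q$) are met. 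For the product comparison, I would invoke Lemma~\ref{star-product-lemma}: its second assertion requires the factors to be flat cotorsion quasi-coherent sheaves, and indeed by Lemma~\ref{internal-hom-of-complexes-flat} — or more directly by Lemma~\ref{flat-and-cotorsion-lemma}(a) applied to $\cHom_{X\qc}(\M^p,\K^q)$ with $\K^q$ injective — the internal Hom sheaves $\cHom_{X\qc}(\M^p,\K^q)$ are cotorsion, so the hypothesis of Lemma~\ref{star-product-lemma} is available.

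The main obstacle I anticipate is precisely the interaction between $i^*$ and the infinite diagonal products that define the totalization: $i^*$ is a left adjoint and hence does \emph{not} commute with infinite products in general, so the passage from the termwise statement to the statement for complexes is the one nonformal step. This is resolved only because the individual internal-Hom terms are flat cotorsion, which is what Lemma~\ref{star-product-lemma} needs; without semi-separatedness and reasonableness (so that Lemma~\ref{flat-and-cotorsion-lemma} and Lemma~\ref{star-product-lemma} apply) the argument breaks down, and this explains why exactly these hypotheses reappear here. Once that commutation is in hand, the remainder is a degreewise reduction to Lemma~\ref{star-qc-Hom-shriek-lemma} and poses no difficulty.
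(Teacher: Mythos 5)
Your construction of the morphism is exactly the paper's: degreewise, compose the product-comparison map of Lemma~\ref{star-product-lemma} (whose first assertion holds for any family) with the product of the termwise maps from Lemma~\ref{star-qc-Hom-shriek-lemma}. The gap is in your isomorphism argument, at precisely the step you yourself flag as the nonformal one. The second assertion of Lemma~\ref{star-product-lemma} requires the factors to be \emph{flat} cotorsion, but what you actually establish---via Lemma~\ref{flat-and-cotorsion-lemma}(a)---is only that the sheaves $\cHom_{X\qc}(\M^p,\K^q)$ are cotorsion. Flatness of such an internal Hom is a genuinely stronger statement: by Lemma~\ref{hom-tensor-flats-injectives}(d) it holds when $\M^p$ is injective \emph{and} $X$ is Noetherian, and neither hypothesis is available here ($\M^\bu$ is an arbitrary complex of quasi-coherent sheaves, and the lemma does not assume $X$ Noetherian). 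For the same reason your appeal to Lemma~\ref{internal-hom-of-complexes-flat} is inapplicable: that lemma concerns $\cHom_{X\qc}({}'\!\J^\bu,\J^\bu)$ with \emph{both} arguments complexes of injectives on a semi-separated \emph{Noetherian} scheme. So the commutation of $i^*$ with the diagonal products is not justified; the flat-cotorsion route proves the lemma only under the additional hypotheses that $X$ is Noetherian and $\M^\bu$ is a complex of injectives, which is exactly the limited extent to which the paper itself uses that route.

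The paper closes the gap in the general case by a device that avoids commuting $i^*$ past products of non-flat sheaves altogether. Put $\N=\bigoplus_{p\in\boZ}\M^p$ and $\cL=\prod_{q\in\boZ}\K^q$; then $\N$ is a quasi-coherent sheaf and $\cL$ is an injective quasi-coherent sheaf on $X$, so the one-sheaf Lemma~\ref{star-qc-Hom-shriek-lemma} applies and gives that $i^*\cHom_{X\qc}(\N,\cL)\rarrow\cHom_{Z\qc}(i^!\N,i^!\cL)$ is an isomorphism. Since $i^!$ preserves infinite products (being a right adjoint) and coproducts (because $i(Z)$ is reasonable), each degree-$n$ component of the morphism of complexes is a direct summand of this single isomorphism, hence itself an isomorphism. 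Alternatively, the paper observes that the proof of Lemma~\ref{star-product-lemma} applies verbatim to any family of sheaves each of which is a direct summand of a finite direct sum of direct images of quasi-coherent sheaves from affine open subschemes in a fixed finite affine open covering of $X$, and $\cHom_{X\qc}(\M^p,\K^q)$ with $\K^q$ injective is such a direct summand (because $\K^q$ is). Either repair would make your argument complete.
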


\begin{proof}
 For every $n\in\boZ$, the degree~$n$ component of the desired
morphism of complexes of quasi-coherent sheaves on $Z$ is
the composition
\begin{multline} \label{degree-n-qcoh-map}
 i^*\cHom_{X\qc}(\M^\bu,\K^\bu)^n=
 i^*\prod\nolimits_{q-p=n}\cHom_{X\qc}(\M^p,\K^q) \\
 \lrarrow\prod\nolimits_{q-p=n}i^*\cHom_{X\qc}(\M^p,\K^q) \\
 \lrarrow\prod\nolimits_{q-p=n}\cHom_{Z\qc}(i^!\M^p,i^!\K^q)
 =\cHom_{Z\qc}(i^!\M^\bu,i^!\K^\bu)^n
\end{multline}
of the morphisms induced by the natural morphisms from
Lemmas~\ref{star-product-lemma} and~\ref{star-qc-Hom-shriek-lemma}.

 Assuming that $X$ is semi-separated, $i(Z)$ is reasonable in $X$,
and $\K^\bu$ is a complex of injectives, the morphisms
$i^*\cHom_{X\qc}(\M^p,\K^q)\rarrow\cHom_{Z\qc}(i^!\M^p,i^!\K^q)$
are isomorphisms by Lemma~\ref{star-qc-Hom-shriek-lemma}.
 Assuming additionally that $X$ is a Noetherian scheme and $\M^\bu$
is also a complex of injectives, the quasi-coherent sheaves
$\cHom_{X\qc}(\M^p,\K^q)$ are flat cotorsion by
Lemmas~\ref{hom-tensor-flats-injectives}(d)
and~\ref{flat-and-cotorsion-lemma}(a).
 Hence the map $i^*\prod_{q-p=n}\cHom_{X\qc}(\M^p,\K^q)
\rarrow\prod_{q-p=n}i^*\cHom_{X\qc}(\M^p,\K^q)$ is an isomorphism
by Lemma~\ref{star-product-lemma}, and we are done.

 In the general case of the assumptions of the lemma,
put $\N=\bigoplus_{p\in\boZ}\M^p$ and $\cL=\prod_{q\in\boZ}\K^q$.
 Then $\N$ is a quasi-coherent sheaf and $\cL$ is an injective
quasi-coherent sheaf on~$X$.
 The functor $i^!\:X\qcoh\rarrow Z\qcoh$ preserves both the infinite
products (as a right adjoint functor) and coproducts (in fact, it
even preserves direct limits, since $Z$ is a reasonable closed
subscheme in~$X$).
 By Lemma~\ref{star-qc-Hom-shriek-lemma}, the natural morphism
$i^*\cHom_{X\qc}(\N,\cL)\rarrow\cHom_{Z\qc}(i^!\N,i^!\cL)$ is
an isomorphism of quasi-coherent sheaves on~$Z$.
 In every degree $n\in\boZ$, the morphism~\eqref{degree-n-qcoh-map}
is a direct summand of this isomorphism, hence also an isomorphism.

 Yet another approach is to notice that the proof of
Lemma~\ref{star-product-lemma} is actually applicable to any
family of quasi-coherent sheaves $\cP_\xi$ each of which is
a direct summand of a finite direct sum of direct images of
quasi-coherent sheaves from affine open subschemes in a fixed finite
affine open covering $X=\bigcup_\alpha U_\alpha$.
 Then it remains to observe that, for any quasi-coherent sheaf $\M$
and any injective quasi-coherent sheaf $\K$ on $X$, the quasi-coherent
sheaf $\cHom_{X\qc}(\M,\K)$ is a direct summand of such a direct sum
of direct images (since the quasi-coherent sheaf~$\K$ is).
\end{proof}

 Let $\X$ be a reasonable ind-semi-separated ind-scheme.
 Let $\rE^\bu\in\sC(\X\tors)$ be a complex of quasi-coherent
torsion sheaves on $\X$, and let $\rJ^\bu\in\sC(\X\tors_\inj)$ be
a complex of injective quasi-coherent torsion sheaves on~$\X$.
 Then the complex of pro-quasi-coherent pro-sheaves
$\fHom_{\X\qc}(\rE^\bu,\rJ^\bu)\in\sC(\X\pro)$ is constructed
as follows.

 For every reasonable closed subscheme $Z\subset\X$, put
$\fHom_{\X\qc}(\rE^\bu,\rJ^\bu)^{(Z)}=
\cHom_{Z\qc}(i^!\rE^\bu,i^!\rJ^\bu)=\cHom_{Z\qc}(\rE^\bu_{(Z)},
\rJ^\bu_{(Z)})$, where $i\:Z\rarrow X$ is the closed immersion morphism.
 According to Lemma~\ref{star-qc-Hom-shriek-for-complexes},
for every pair of reasonable closed subschemes $Y$, $Z\subset\X$
such that $Z\subset Y$, we have $\fHom_{\X\qc}(\rE^\bu,\rJ^\bu)^{(Z)}
\simeq i_{ZY}^*\fHom_{\X\qc}(\rE^\bu,\rJ^\bu)^{(Y)}$, as required in
the definition of a pro-quasi-coherent pro-sheaf (where
$i_{ZY}\:Z\rarrow Y$ is the closed immersion).
 This explains the meaning of the notation in
Theorem~\ref{ind-Noetherian-triangulated-equivalence-thm}.

\begin{proof}[Proof of
Theorem~\ref{ind-Noetherian-triangulated-equivalence-thm}]
 The equivalence $\sD^\co(\X\tors)\simeq\sK(\X\tors_\inj)$ is provided
by Corollary~\ref{ind-Noetherian-coderived-cor}.

 The tensor product functor $\ot_\X\:\X\flat\times\X\tors\rarrow
\X\tors$ was constructed in Sections~\ref{pro-in-torsion-action-subsecn}
and~\ref{flat-pro-sheaves-subsecn}.
 Here we switch the two arguments and write $\ot_\X\:\X\tors\times
\X\flat\rarrow\X\tors$.
 Given a complex of quasi-coherent torsion sheaves
$\rE^\bu\in\sC(\X\tors)$ and a complex of flat pro-quasi-coherent
pro-sheaves $\fF^\bu\in\sC(\X\flat)$, the complex of quasi-coherent
torsion sheaves $\rE^\bu\ot_\X\fF^\bu\in\sC(\X\tors)$ is constructed
by taking coproducts along the diagonals of the bicomplex of
quasi-coherent torsion sheaves $\rE^p\ot_\X\fF^q$, \ $p$, $q\in\boZ$.

 This construction defines a functor $\rE^\bu\ot_\X{-}\,\:
\sC(\X\flat)\rarrow\sC(\X\tors)$, which obviously descends to
a triangulated functor between the homotopy categories
$\rE^\bu\ot_\X\nobreak{-}\,\:\allowbreak\sK(\X\flat)
\rarrow\sK(\X\tors)$.
 By Proposition~\ref{flat-torsion-tensor-prop}, for any complex
$\fF^\bu\in\sC(\X\flat)$ and any closed subscheme $Z\subset\X$ with
the closed immersion morphism $i\:Z\rarrow\X$, we have
$i^!(\rE^\bu\ot_\X\fF^\bu)\simeq i^!\rE^\bu\ot_{\cO_Z}i^*\fF^\bu$.

 Now let us assume that $\rE^\bu$ is a complex of injective
quasi-coherent torsion sheaves, $\rE^\bu\in\sC(\X\tors_\inj)$.
 By Lemma~\ref{hom-tensor-flats-injectives}(b), the complex
$i^!\rE^\bu\ot_{\cO_Z}i^*\fF^\bu$ is a complex of injective
quasi-coherent sheaves on~$Z$ (recall that injectivity of
quasi-coherent sheaves on a Noetherian scheme is preserved by
coproducts).
 By Proposition~\ref{torsion-injectives-characterized}(b), it follows
that $\rE^\bu\ot_\X\fF^\bu$ is a complex of injective quasi-coherent
torsion sheaves on~$\X$.
 We have constructed a triangulated functor
\begin{equation} \label{tensor-with-complex-of-injective-torsion}
 \rE^\bu\ot_\X{-}\,\:\sK(\X\flat)\lrarrow\sK(\X\tors_\inj).
\end{equation}

 Let us check that the latter functor induces a well-defined
triangulated functor
\begin{equation} \label{derived-to-homotopy-tensoring-functor}
 \rE^\bu\ot_\X{-}\,\:\sD(\X\flat)\lrarrow\sK(\X\tors_\inj).
\end{equation}
 For this purpose, we need to show that the complex 
$\rE^\bu\ot_\X\fF^\bu\in\sK(\X\tors_\inj)$ is contractible whenever
a complex $\fF^\bu\in\sK(\X\flat)$ is acyclic with respect to
the exact category $\X\flat$.
 By Lemma~\ref{complex-of-injective-torsion-contractible}, it suffices
to check that the complex $i^!(\rE^\bu\ot_\X\fF^\bu)\simeq
i^!\rE^\bu\ot_{\cO_Z}i^*\fF^\bu\in\sK(Z\qcoh_\inj)$ is contractible.
 Here $i^!\rE^\bu$ is a complex of injective quasi-coherent sheaves
on $Z$ and $i^*\fF^\bu$ is an acyclic complex in the exact category
$Z\flat$.

 The latter assertion is essentially a part of
Theorem~\ref{Noetherian-scheme-triangulated-equivalence}.
 One can say that any acyclic complex in $Z\flat$ is coacyclic,
and it is easy to see that the tensor product of a coacyclic complex
in $Z\flat$ with any complex in $Z\qcoh$ is coacyclic; a coacyclic
complex of injectives is contractible.
 Alternatively, one can assume that $i^!\rE^\bu$ is homotopy equivalent
to a bounded complex in $Z\qcoh_\inj$; on any Noetherian scheme $Z$,
it is clear that the tensor product of an acyclic complex in $Z\flat$
with a bounded complex of injectives is contractible.
 For quite general results in this direction, see
Lemmas~\ref{qcoh-complexes-tensor-exactness}(d)
and~\ref{pro-torsion-complexes-tensor-exactness}(c) below.

 On the other hand, for any complex $\rE^\bu\in\sC(\X\tors)$,
the construction preceding this proof provides a functor
$\fHom_{\X\qc}(\rE^\bu,{-})\:\sC(\X\tors_\inj)\rarrow\sC(\X\pro)$,
which obviously descends to a triangulated functor between
the homotopy categories $\fHom_{\X\qc}(\rE^\bu,{-})\:\sK(\X\tors_\inj)
\rarrow\sK(\X\pro)$.
 By construction, for any complex $\rJ^\bu\in\sC(\X\tors_\inj)$ and
any closed subscheme $Z\subset X$ with the closed immersion morphism
$i\:Z\rarrow\X$, we have $i^*\fHom_{\X\qc}(\rE^\bu,\rJ^\bu)=
\cHom_{Z\qc}(i^!\rE^\bu,i^!\rJ^\bu)$.

 Once again, assume that $\rE^\bu$ is a complex of injective
quasi-coherent torsion sheaves on~$\X$.
 Then, by Lemma~\ref{internal-hom-of-complexes-flat},
the complex $\cHom_{Z\qc}(i^!\rE^\bu,i^!\rJ^\bu)$ is a complex of flat
quasi-coherent sheaves on~$Z$.
 Hence $\fHom_{\X\qc}(\rE^\bu,\rJ^\bu)$ is a complex of flat
pro-quasi-coherent pro-sheaves on~$\X$.
 We have constructed a triangulated functor
\begin{equation} \label{inner-hom-from-complex-of-injective-torsion}
 \fHom_{\X\qc}(\rE^\bu,{-})\:\sK(\X\tors_\inj)\lrarrow\sK(\X\flat).
\end{equation}

 Composing the latter functor with the canonical triangulated
Verdier quotient functor $\sK(\X\flat)\rarrow\sD(\X\flat)$, we obtain
a triangulated functor
\begin{equation} \label{homotopy-to-derived-inner-hom-functor}
 \fHom_{\X\qc}(\rE^\bu,{-})\:\sK(\X\tors_\inj)\lrarrow\sD(\X\flat).
\end{equation}
 It is straightforward to see that
the functor~\eqref{inner-hom-from-complex-of-injective-torsion}
is right adjoint to
the functor~\eqref{tensor-with-complex-of-injective-torsion}.
 Hence the functor~\eqref{homotopy-to-derived-inner-hom-functor}
is right adjoint to
the functor~\eqref{derived-to-homotopy-tensoring-functor}.

 It remains to show that
the functors~\eqref{derived-to-homotopy-tensoring-functor}
and~\eqref{homotopy-to-derived-inner-hom-functor} are mutually
inverse equivalences when $\rE^\bu=\rD^\bu$ is a dualizing complex
on~$\X$.
 For this purpose, we need to check that the adjunction morphisms
are isomorphisms.

 Let $\fF^\bu\in\sC(\X\flat)$ be a complex of flat pro-quasi-coherent
pro-sheaves on~$\X$.
 Then the adjunction $\fF^\bu\rarrow
\fHom_{\X\qc}(\rD^\bu,\>\rD^\bu\ot_\X\fF^\bu)$ is a natural morphism
in $\sC(\X\flat)$; we need to show that it is an isomorphism in
$\sD(\X\flat)$.
 By Lemma~\ref{flat-pro-sheaves-complex-acyclicity-criterion}, it
suffices to check that $i^*\fF^\bu\rarrow i^*\fHom_{\X\qc}(\rD^\bu,\>
\rD^\bu\ot_\X\fF^\bu)$ is an isomorphism in $\sD(Z\flat)$.
 According to the discussion above, we have isomorphisms of complexes
\begin{multline*}
 i^*\fHom_{\X\qc}(\rD^\bu,\>\rD^\bu\ot_\X\fF^\bu)\simeq
 \cHom_{Z\qc}(i^!\rD^\bu,\>i^!(\rD^\bu\ot_\X\fF^\bu)) \\ \simeq
 \cHom_{Z\qc}(i^!\rD^\bu,\>i^!\rD^\bu\ot_{\cO_Z}i^*\fF^\bu).
\end{multline*}
 So we have to show that the natural morphism
$\F^\bu\rarrow\cHom_{Z\qc}(i^!\rD^\bu,\>i^!\rD^\bu\ot_{\cO_Z}\F^\bu)$
is an isomorphism in $\sD(Z\flat)$ for every complex $\F^\bu$ of
flat quasi-coherent sheaves on~$Z$.

 By the definitions of a dualizing complex on~$\X$ and on~$Z$
(see Section~\ref{dualizing-definition-subsecn}), the complex of
injective quasi-coherent sheaves $i^!\rD^\bu$ on $Z$ is homotopy
equivalent to a bounded complex of injective quasi-coherent sheaves
$\D^\bu_Z$ (which is also a dualizing complex on~$Z$).
 So the complex of flat quasi-coherent sheaves
$\cHom_{Z\qc}(i^!\rD^\bu,\>i^!\rD^\bu\ot_{\cO_Z}\F^\bu)$ on $Z$ is
homotopy equivalent to the complex of flat quasi-coherent sheaves
$\cHom_{Z\qc}(\D_Z^\bu,\>\D_Z^\bu\ot_{\cO_Z}\F^\bu)$.
 Finally, the assertion that the natural morphism of complexes of
flat quasi-coherent sheaves $\F^\bu\rarrow
\cHom_{Z\qc}(\D_Z^\bu,\>\D_Z^\bu\ot_{\cO_Z}\F^\bu)$ is an isomorphism
in $\sD(Z\flat)$ is essentially a part of
Theorem~\ref{Noetherian-scheme-triangulated-equivalence} for
the semi-separated Noetherian scheme~$Z$.

 Similarly, let $\rJ^\bu\in\sC(\X\tors_\inj)$ be a complex of injective
quasi-coherent torsion sheaves on~$\X$.
 Then the adjunction $\rD^\bu\ot_\X\fHom_{\X\qc}(\rD^\bu,\rJ^\bu)\rarrow
\rJ^\bu$ is a natural morphism in $\sC(\X\tors_\inj)$; we have to show
that it is a homotopy equivalence.
 By Lemma~\ref{complex-of-injective-torsion-contractible}, it suffices
to check that $i^!(\rD^\bu\ot_\X\fHom_{\X\qc}(\rD^\bu,\rJ^\bu))\rarrow
i^!\rJ^\bu$ is a homotopy equivalence of complexes in $Z\qcoh_\inj$.
 According to the discussion above, we have isomorphisms of complexes
\begin{multline*}
 i^!(\rD^\bu\ot_\X\fHom_{\X\qc}(\rD^\bu,\rJ^\bu))\simeq
 i^!\rD^\bu\ot_{\cO_Z}i^*\fHom_{\X\qc}(\rD^\bu,\rJ^\bu) \\ \simeq
 i^!\rD^\bu\ot_{\cO_Z}\cHom_{Z\qc}(i^!\rD^\bu,i^!\rJ^\bu).
\end{multline*}
 So we have to show that the natural morphism $i^!\rD^\bu\ot_{\cO_Z}
\cHom_{Z\qc}(i^!\rD^\bu,\J^\bu)\rarrow\J^\bu$ is a homotopy equivalence
of complexes in $Z\qcoh_\inj$ for every complex $\J^\bu$ of injective
quasi-coherent sheaves on~$Z$.

 As above, the complex of injective quasi-coherent sheaves
$i^!\rD^\bu\ot_{\cO_Z}\cHom_{Z\qc}(i^!\rD^\bu,\allowbreak\J^\bu)$ on $Z$
is homotopy equivalent to the complex of injective quasi-coherent
sheaves $\D_Z^\bu\ot_{\cO_Z}\cHom_{Z\qc}(\D_Z^\bu,\J^\bu)$.
 Once again, the assertion that the natural morphism of complexes of
injective quasi-coherent sheaves $\D_Z^\bu\ot_{\cO_Z}
\cHom_{Z\qc}(\D_Z^\bu,\J^\bu)\rarrow\J^\bu$ is a homotopy equivalence
is essentially a part of
Theorem~\ref{Noetherian-scheme-triangulated-equivalence}.
\end{proof}

\Section{The Cotensor Product}  \label{cotensor-secn}

  The triangulated tensor structure on the coderived category
$\sD^\co(X\qcoh)$ of quasi-coherent sheaves on a Noetherian scheme
with a dualizing complex $\D^\bu$ was introduced
in~\cite[Propositions~6.2, 8.10, and~B.6]{Mur} and studied
in~\cite[Section~B.2.5]{EP}, where it was denoted by $\oc_{\D^\bu}$
and called the \emph{cotensor product} of complexes of quasi-coherent
sheaves on $X$ over the dualizing complex~$\D^\bu$.
 The aim of this section is to generalize this construction to
complexes of quasi-coherent torsion sheaves on an ind-Noetherian
ind-scheme with a  dualizing complex, and explain the connection
with the cotensor product of complexes of comodules over
a cocommutative coalgebra.

\subsection{Construction of cotensor product}
\label{cotensor-product-construction-subsecn}
 We start with a lemma about tensor products of complexes of
quasi-coherent sheaves on a scheme.

\begin{lem} \label{qcoh-complexes-tensor-exactness}
 Let $\M^\bu\in\sC(X\qcoh)$ be a complex of
quasi-coherent sheaves and $\F^\bu$, $G^\bu\in\sC(X\flat)$ be two
complexes of flat quasi-coherent sheaves on a scheme~$X$. \par
\textup{(a)} If the complex $\F^\bu$ is acyclic in $X\flat$,
then the complex $\F^\bu\ot_{\cO_X}\M^\bu$ is acyclic in $X\qcoh$. \par
\textup{(b)} If the complex $\F^\bu$ is acyclic in $X\flat$,
then the complex $\F^\bu\ot_{\cO_X}\G^\bu$ is acyclic in $X\flat$. \par
\textup{(c)} If the complex $\M^\bu$ is coacyclic in $X\qcoh$, then
the complex $\F^\bu\ot_{\cO_X}\M^\bu$ is coacyclic in $X\qcoh$. \par
\textup{(d)} If  the complex $\F^\bu$ is acyclic in $X\flat$ and
the scheme $X$ is Noetherian, then the complex $\F^\bu\ot_{\cO_X}\M^\bu$
is coacyclic in $X\qcoh$.
\end{lem}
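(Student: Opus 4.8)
The plan is to treat parts (a) and (b), which assert acyclicity and flatness of cocycles and are therefore local on $X$, by reducing to the affine case, and to treat (c) and (d), which involve the nonlocal notion of coacyclicity, by triangulated-category arguments. Throughout I will use that, since $\F^\bu$ is acyclic in $X\flat$, its sheaves of cocycles $Z^p=\ker(\F^p\to\F^{p+1})$ are flat, so that there are admissible short exact sequences $0\to Z^p\to\F^p\to Z^{p+1}\to0$ of flat quasi-coherent sheaves.

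For (a) I would work locally, so that $X=\Spec R$ and the sheaves become $R$-modules. Flatness of $Z^{p+1}$ keeps the cocycle sequences exact after tensoring with a single module $M$, and splicing these shows that $\F^\bu\ot_R M$ is acyclic with cocycles $Z^p\ot_R M$. For a general complex $\M^\bu$ I would pass to the bicomplex $\F^p\ot_{\cO_X}\M^q$, whose columns are exact by the single-module case, and note that the cocycle columns $Z^p\ot_{\cO_X}\M^q$ are exact as well; equivalently, a complex of flats acyclic in $X\flat$ is \emph{pure} acyclic, and the tensor product of a pure acyclic complex of flats with an arbitrary complex is acyclic. Part (b) then follows formally: by (a) the complex $\F^\bu\ot_{\cO_X}\G^\bu$ is acyclic, and it is a complex of flats; to see that its cocycles are flat I would check that it remains acyclic after tensoring with any quasi-coherent sheaf $\N$, which holds since $(\F^\bu\ot_{\cO_X}\G^\bu)\ot_{\cO_X}\N\simeq\F^\bu\ot_{\cO_X}(\G^\bu\ot_{\cO_X}\N)$ is acyclic by (a) again, and a complex of flats that stays acyclic under all such tensorings has flat cocycles.

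Part (c) is the cleanest. I would consider the class of complexes $\M^\bu\in\sK(X\qcoh)$ for which $\F^\bu\ot_{\cO_X}\M^\bu$ is coacyclic. The functor $\F^\bu\ot_{\cO_X}({-})\colon\sK(X\qcoh)\to\sK(X\qcoh)$ is triangulated and preserves coproducts, and, as the $\F^p$ are flat, it sends a termwise short exact sequence of complexes to a termwise short exact sequence and commutes with totalization. Hence this class is a triangulated subcategory of $\sK(X\qcoh)$, closed under coproducts, containing the totalizations of short exact sequences of complexes; it therefore contains all coacyclic complexes, which is exactly (c).

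Part (d) is where the real work lies. By (a) the complex $\F^\bu\ot_{\cO_X}\M^\bu$ is acyclic, but on a Noetherian scheme of infinite Krull dimension acyclicity does not imply coacyclicity, so the flat structure must be exploited. My plan is to reduce (d) to the assertion that \emph{an acyclic complex in the exact category $X\flat$ is already coacyclic in $X\flat$}: granting this, the functor ${-}\ot_{\cO_X}\M^\bu\colon\sK(X\flat)\to\sK(X\qcoh)$ carries totalizations of admissible short exact sequences in $X\flat$ to totalizations of short exact sequences in $X\qcoh$ (the flat cokernels survive tensoring) and preserves coproducts, hence sends complexes coacyclic in $X\flat$ to complexes coacyclic in $X\qcoh$, giving the claim. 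The main obstacle is thus precisely this statement about $X\flat$ on a Noetherian scheme; over a scheme of \emph{finite} Krull dimension it is immediate from Lemma~\ref{three-derived-categories-lemma}(c), but in general it requires the structure of pure acyclic complexes of flat quasi-coherent sheaves. An alternative route I would consider, using Proposition~\ref{coderived-and-homotopy-of-injectives} to characterize coacyclics as the left orthogonal of $\sK(X\qcoh_\inj)$, reduces (d) by adjunction to showing that $\cHom_{X\qc}(\F^\bu,\J^\bu)$ is contractible for every complex of injectives $\J^\bu$ (it is a complex of injectives by Lemma~\ref{hom-tensor-flats-injectives}(c)), which I would verify via the pure-injectivity of injective quasi-coherent sheaves, making $\cHom_{X\qc}({-},\J^\bu)$ exact on the cocycle sequences of $\F^\bu$, combined with a passage to the limit over the bounded-above truncations of $\F^\bu$ and Lemma~\ref{complex-of-injectives-contractible}.
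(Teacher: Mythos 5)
Your treatment of parts (a)--(c) is essentially the paper's: the one-term case of~(a) by splicing the pure exact cocycle sequences, part~(b) by testing flatness of cocycles against tensoring with an arbitrary sheaf $\N$ and applying~(a) to $\M^\bu=\G^\bu\ot_{\cO_X}\N$, and part~(c) by observing that $\F^\bu\ot_{\cO_X}{-}$ preserves termwise short exact sequences of complexes and coproducts. One caveat on~(a): exactness of the columns of a bicomplex unbounded in both directions does \emph{not} imply acyclicity of its direct-sum totalization (the standard staircase bicomplex has exact rows and columns but non-acyclic totalization), so your assembly step is not a proof as stated. The correct assembly---and what the paper does---is to write $\M^\bu$ as a direct limit of bounded complexes (silly truncation on one side, canonical truncation on the other) and reduce to the one-term case; your fallback appeal to the known fact about pure acyclic complexes of flats is legitimate, but that fact is proved by exactly this truncation argument.

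The genuine gap is in part~(d), which is asserted for an \emph{arbitrary} Noetherian scheme, possibly of infinite Krull dimension. Your first route reduces~(d) to the claim that every complex acyclic in $X\flat$ is coacyclic in $X\flat$; granted that claim, the deduction is fine (admissible exact sequences in $X\flat$ stay exact after tensoring with $\M^q$ because their flat cokernels survive), but the claim itself is available only in finite Krull dimension via Lemma~\ref{three-derived-categories-lemma}(c), and in the needed generality it is a substantial assertion that the paper neither uses nor establishes. Your second route correctly reduces~(d), using Corollary~\ref{ind-Noetherian-coderived-cor} and the semiorthogonality from Proposition~\ref{coderived-and-homotopy-of-injectives}(a), to contractibility of $\cHom_{X\qc}(\F^\bu,\J^\bu)$ for every complex of injectives $\J^\bu$; but this contractibility is the entire difficulty, and your sketch does not deliver it. If you test it against coherent sheaves $\cS$ via Lemma~\ref{complex-of-injectives-contractible}, adjunction turns the test into the vanishing of $\Hom_{\sK(X\qcoh)}(\cS\ot_{\cO_X}\F^\bu,\>\J^\bu)$, i.e.\ Hom from a merely acyclic complex into a complex of injectives---which is precisely the coacyclicity statement you set out to prove (circular), unless you import nontrivial purity theory in the style of~\cite{Sto}; and the proposed passage to the limit over truncations of $\F^\bu$ breaks down because silly truncations of $\F^\bu$ are no longer acyclic, while inverse limits of contractible complexes need not be contractible.

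The paper's argument runs the adjunction the other way. Using Corollary~\ref{ind-Noetherian-coderived-cor}, choose a morphism $\M^\bu\rarrow\J^\bu$ into a complex of injectives with coacyclic cone; by part~(c) it then suffices to show that $\F^\bu\ot_{\cO_X}\J^\bu$ is coacyclic, and in fact it is contractible: it is a complex of \emph{injective} quasi-coherent sheaves by Lemma~\ref{hom-tensor-flats-injectives}(b), contractibility of a complex of injectives on a Noetherian scheme is a local question, and in the affine case one uses the isomorphism $\Hom_R(L,\>F^\bu\ot_RJ^\bu)\simeq F^\bu\ot_R\Hom_R(L,J^\bu)$ for a finitely generated (hence finitely presented, $R$ being Noetherian) module $L$: the right-hand side is acyclic by part~(a), and Lemma~\ref{complex-of-injectives-contractible} finishes the proof. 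The idea you are missing is to move the finitely generated test object $L$, rather than $\F^\bu$, across the tensor--Hom adjunction; it is this step that makes part~(a) directly applicable and avoids any appeal to purity or to finite homological dimension.
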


\begin{proof}
 The assertions~(a\+-b) are essentially local and reduce to the case of
an affine scheme~$X$.
 In this context, both the assertions are explained by the observation
that over an (arbitrary associative) ring $R$, an acyclic (in $R\modl$)
complex of flat modules is homotopy flat if and only if it has flat 
modules of cocycles.

 Specifically, part~(a) is provable by representing $\M^\bu$ as
a direct limit of bounded complexes (with a silly truncation on the left
and a canonical truncation on the right).
 This reduces the question to the case of a one-term complex
$\M^\bu=\M$, which is obvious.
 To prove part~(b), it suffices to check that the complex $\F^\bu
\ot_{\cO_X}\G^\bu\ot_{\cO_X}\N$ is exact in $X\qcoh$ for every
sheaf $\N\in X\qcoh$.
 This follows from part~(a) applied to the complexes $\F^\bu$ and
$\M^\bu=\G^\bu\ot_{\cO_X}\N$.

 The assertions~(c\+-d) are essentially local, too (assuming that $X$
is either quasi-compact and semi-separated or else Noetherian).
 But this needs to be explained (we postpone this discussion to
Section~\ref{locality-of-coacyclicity-subsecn} of the appendix;
cf.~\cite[Remark~1.3]{EP}).
 Part~(c) is straightforward: it suffices to observe that the functor
$\F^\bu\ot_{\cO_X}{-}$ takes short exact sequences of complexes in
$X\qcoh$ to short exact sequences of complexes in $X\qcoh$ and preserves
coproducts of complexes in $X\qcoh$.

 Part~(d): by Corolary~\ref{ind-Noetherian-coderived-cor}, there exists
a complex of injective quasi-coherent sheaves $\J^\bu$ on $X$ endowed
with a morphism of complexes $\M^\bu\rarrow\J^\bu$ whose cone $\N^\bu$
is coacyclic in $X\qcoh$.
 By part~(c), the complex $\F^\bu\ot_{\cO_X}\N^\bu$ is coacyclic in
$X\qcoh$.
 It remains to check that the complex $\F^\bu\ot_{\cO_X}\J^\bu$ is
coacyclic in $X\qcoh$; in fact, this complex is contractible.
 This is the result of~\cite[Corollary~9.7(ii)]{Neem1}
(see~\cite[Theorem~8.6]{Neem1} for context) and~\cite[Lemma~8.2]{Mur}.

 Let us spell out some details, following~\cite{Neem1,Mur}.
 By Lemma~\ref{hom-tensor-flats-injectives}(b),
\,$\F^\bu\ot_{\cO_X}\J^\bu$ is a complex of injective quasi-coherent
sheaves on~$X$.
 A complex of injective objects is coacyclic if and only if it is
contractible, and if and only if its objects of cocycles are injective.
 Injectivity of a quasi-coherent sheaf on a Noetherian scheme is a local
property; hence the question is local and reduces to affine schemes.

 Let $F^\bu$ be a complex of flat modules over a Noetherian commutative
ring $R$ and $J^\bu$ be a complex of injective $R$\+modules.
 Assume that the complex $F^\bu$ is acyclic in $R\flat$.
 Let $L$ be a finitely generated (equivalently, finitely presentable)
$R$\+module.
 Then we have a natural isomorphism of complexes of $R$\+modules
$\Hom_R(L,\>F^\bu\ot_RJ^\bu)\simeq F^\bu\ot_R\Hom_R(L,J^\bu)$.
 By part~(a), the complex $F^\bu\ot_R\Hom_R(L,J^\bu)$ is acyclic.
 By Lemma~\ref{complex-of-injectives-contractible}, acyclicity
of the complexes $\Hom_R(L,\>F^\bu\ot_RJ^\bu)$ for all
finitely generated $R$\+modules $L$ implies contractibility of
the complex $F^\bu\ot_RJ^\bu$.
\end{proof}

 Let $\fP^\bu$ and $\fQ^\bu\in\sC(\X\pro)$ be two complexes of
pro-quasi-coherent pro-sheaves on an ind-scheme~$\X$.
 Then the complex $\fP^\bu\ot^\X\fQ^\bu\in\sC(\X\pro)$ is constructed
by totalizing the bicomplex $\fP^p\ot^\X\fQ^q$, \ $p$, $q\in\boZ$,
by taking infinite coproducts in $\X\pro$ along the diagonals of
the bicomplex.
 Since the full subcategory $\X\flat$ is closed under coproducts in
$\X\pro$ (see Section~\ref{colimits-of-pro-sheaves-subsecn}),
the tensor product of two complexes in $\X\flat$ is a complex in
$\X\flat$,
\begin{equation} \label{flat-pro-sheaves-tensor-structure-on-complexes}
 \ot^\X\:\sC(\X\flat)\times\sC(\X\flat)\lrarrow\sC(\X\flat).
\end{equation}

 Similarly, let $\fP^\bu\in\sC(\X\pro)$ be a complex of
pro-quasi-coherent pro-sheaves and $\rM^\bu\in\sC(X\tors)$ be a complex
of quasi-coherent torsion sheaves on a reasonable ind-scheme~$\X$.
 Then the complex $\fP^\bu\ot_\X\rM^\bu\in\sC(\X\tors)$ is constructed
by totalizing the bicomplex $\fP^p\ot_\X\rM^q$, \ $p$, $q\in\boZ$,
by taking infinite coproducts in $\X\tors$ along the diagonals of
the bicomplex (as in the construction of the tensor product functor in
the proof of Theorem~\ref{ind-Noetherian-triangulated-equivalence-thm}).
 In particular, we have
\begin{equation} \label{flat-torsion-module-structure-on-complexes}
 \ot_\X\:\sC(\X\flat)\times\sC(\X\tors)\lrarrow\sC(\X\tors).
\end{equation}

 As the tensor product on $\X\pro$ and the action of $\X\pro$ in
$\X\tors$ preserve coproducts in both the categories
(see Section~\ref{colimits-of-pro-sheaves-subsecn}), the above
tensor products of complexes are associative.
 So $\sC(\X\pro)$ is a tensor category, $\sC(\X\flat)\subset
\sC(\X\pro)$ is a tensor subcategory, and $\sC(\X\tors)$ is
a module category over $\sC(\X\pro)$.
 Hence, in particular, $\sC(\X\tors)$ is a module category over
$\sC(\X\flat)$.

 The tensor product
functors~(\ref{flat-pro-sheaves-tensor-structure-on-complexes}\+-%
\ref{flat-torsion-module-structure-on-complexes}) obviously descend
to the homotopy categories, providing tensor product functors
\begin{gather}
 \label{flat-pro-sheaves-tensor-structure-on-homotopy}
 \ot^\X\:\sK(\X\flat)\times\sK(\X\flat)\lrarrow\sK(\X\flat), \\
 \label{flat-torsion-module-structure-on-homotopy}
 \ot_\X\:\sK(\X\flat)\times\sK(\X\tors)\lrarrow\sK(\X\tors),
\end{gather}
making $\sK(\X\flat)$ a tensor triangulated category and
$\sK(\X\tors)$ a triangulated module category over $\sK(\X\flat)$.


\begin{lem} \label{pro-torsion-complexes-tensor-exactness}
\textup{(a)} Let\/ $\X$ be an ind-scheme and\/ $\fF^\bu$, $\fG^\bu\in
\sC(\X\flat)$ be two complexes of flat pro-quasi-coherent pro-sheaves
on\/~$\X$.
 Assume that the complex\/ $\fF^\bu$ is acyclic in\/ $\X\flat$.
 Then the complex\/ $\fF^\bu\ot^\X\fG^\bu$ is acyclic in\/ $\X\flat$.
\par
\textup{(b)} Let\/ $\X$ be a reasonable ind-scheme, $\fF^\bu\in
\sC(\X\flat)$ be a complex of flat pro-quasi-coherent pro-sheaves
on\/ $\X$, and $\rM^\bu\in\sC(\X\tors)$ be a complex of quasi-coherent
torsion sheaves on\/~$\X$.
 Assume that the complex\/ $\rM^\bu$ is coacyclic in\/ $\X\tors$.
 Then the complex\/ $\fF^\bu\ot_\X\rM^\bu$ is coacyclic in\/
$\X\tors$. \par
\textup{(c)} Let\/ $\X$ be an ind-Noetherian ind-scheme,
$\fF^\bu\in\sC(\X\flat)$ be a complex of flat pro-quasi-coherent
pro-sheaves on\/ $\X$, and $\rM^\bu\in\sC(\X\tors)$ be a complex of
quasi-coherent torsion sheaves on\/~$\X$.
 Assume that the complex\/ $\fF^\bu$ is acyclic in\/ $\X\flat$.
 Then the complex\/ $\fF^\bu\ot_\X\rM^\bu$ is coacyclic in\/
$\X\tors$.
\end{lem}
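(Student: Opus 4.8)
The plan is to deduce all three parts from the scheme-level statements in Lemma~\ref{qcoh-complexes-tensor-exactness} by restricting to the closed subschemes $X_\gamma$ in a representation $\X=\ilim_{\gamma\in\Gamma}X_\gamma$ of the ind-scheme. The common mechanism is that the restriction functors commute with the tensor products involved: one has $(\fP^\bu\ot^\X\fQ^\bu)^{(X_\gamma)}\simeq\fP^\bu{}^{(X_\gamma)}\ot_{\cO_{X_\gamma}}\fQ^\bu{}^{(X_\gamma)}$ by the definition of $\ot^\X$ in Section~\ref{pro-sheaves-subsecn}, and $i_\gamma^!(\fF^\bu\ot_\X\rM^\bu)\simeq i_\gamma^*\fF^\bu\ot_{\cO_{X_\gamma}}i_\gamma^!\rM^\bu$ by Proposition~\ref{flat-torsion-tensor-prop} (extended to complexes exactly as in the proof of Theorem~\ref{ind-Noetherian-triangulated-equivalence-thm}). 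Both isomorphisms are compatible with totalization because $i_\gamma^*$ is a left adjoint, hence preserves the coproducts used in the totalization, while $i_\gamma^!$ preserves direct limits over reasonable closed subschemes.

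For part~(a) I would simply combine this with Lemma~\ref{flat-pro-sheaves-complex-acyclicity-criterion}: that lemma says $\fF^\bu$ is acyclic in $\X\flat$ if and only if each $\fF^\bu{}^{(X_\gamma)}$ is acyclic in $X_\gamma\flat$, and likewise for $\fF^\bu\ot^\X\fG^\bu$; so the assertion follows termwise from Lemma~\ref{qcoh-complexes-tensor-exactness}(b). Part~(b) I would prove following the template of Lemma~\ref{qcoh-complexes-tensor-exactness}(c): the functor $\fF^\bu\ot_\X{-}$ is a triangulated endofunctor of $\sK(\X\tors)$ that preserves coproducts and sends short exact sequences of complexes to short exact sequences of complexes, since $\ot_\X$ is exact in the torsion argument for flat pro-sheaves (Section~\ref{flat-pro-sheaves-subsecn}). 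Therefore it carries the generators of the coacyclic subcategory to coacyclic complexes and preserves coacyclicity.

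Part~(c) is the step I expect to be the main obstacle, precisely because coacyclicity in $\X\tors$ is not visibly a local condition on the $X_\gamma$, so the direct restriction argument used for~(a) is unavailable. Here I would imitate the proof of Lemma~\ref{qcoh-complexes-tensor-exactness}(d). Since $\X$ is ind-Noetherian, Corollary~\ref{ind-Noetherian-coderived-cor} together with Proposition~\ref{coderived-and-homotopy-of-injectives}(b) supplies a complex $\rJ^\bu\in\sC(\X\tors_\inj)$ and a morphism $\rM^\bu\to\rJ^\bu$ whose cone $\rN^\bu$ is coacyclic in $\X\tors$. By part~(b) the complex $\fF^\bu\ot_\X\rN^\bu$ is coacyclic, so in the distinguished triangle obtained by tensoring $\rM^\bu\to\rJ^\bu\to\rN^\bu\to{}$ with $\fF^\bu$ it remains only to treat $\fF^\bu\ot_\X\rJ^\bu$.

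The crux is that $\fF^\bu\ot_\X\rJ^\bu$ is in fact contractible. First, tensoring a complex of injective torsion sheaves with a complex of flat pro-sheaves again produces a complex of injective torsion sheaves, by the argument in the proof of Theorem~\ref{ind-Noetherian-triangulated-equivalence-thm} (via Lemma~\ref{hom-tensor-flats-injectives}(b) and Proposition~\ref{torsion-injectives-characterized}(b)). Hence, by Lemma~\ref{complex-of-injective-torsion-contractible}, contractibility can be checked after applying $i_\gamma^!$, and $i_\gamma^!(\fF^\bu\ot_\X\rJ^\bu)\simeq i_\gamma^*\fF^\bu\ot_{\cO_{X_\gamma}}i_\gamma^!\rJ^\bu$ is the tensor product of an acyclic complex in $X_\gamma\flat$ (Lemma~\ref{flat-pro-sheaves-complex-acyclicity-criterion}) with a complex of injectives on the Noetherian scheme $X_\gamma$, which is contractible by the final contractibility computation inside the proof of Lemma~\ref{qcoh-complexes-tensor-exactness}(d). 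Since the coacyclic complexes form a triangulated subcategory of $\sK(\X\tors)$ and two vertices of the triangle are coacyclic, the third, $\fF^\bu\ot_\X\rM^\bu$, is coacyclic, which finishes part~(c).
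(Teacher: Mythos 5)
Your proposal is correct and follows essentially the same route as the paper's own proof: part (a) by restriction to the $X_\gamma$ via Lemma~\ref{flat-pro-sheaves-complex-acyclicity-criterion} and Lemma~\ref{qcoh-complexes-tensor-exactness}(b), part (b) by exactness and coproduct-preservation of $\fF^\bu\ot_\X{-}$, and part (c) by resolving $\rM^\bu$ by a complex of injectives (Corollary~\ref{ind-Noetherian-coderived-cor}), applying part (b) to the cone, and showing $\fF^\bu\ot_\X\rJ^\bu$ is a contractible complex of injectives via Proposition~\ref{flat-torsion-tensor-prop}, the proof of Lemma~\ref{qcoh-complexes-tensor-exactness}(d), and Lemma~\ref{complex-of-injective-torsion-contractible}. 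No gaps.
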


\begin{proof}
 Part~(a) follows from
Lemmas~\ref{flat-pro-sheaves-complex-acyclicity-criterion}
and~\ref{qcoh-complexes-tensor-exactness}(b).
 Part~(b) holds, because the functor $\fF^\bu\ot_\X{-}$ takes short
exact sequences of complexes in $\X\tors$ to short exact sequences
of complexes in $\X\tors$ and preserves coproducts of complexes in
$\X\tors$.
 Similarly one shows that the complex $\fF^\bu\ot_\X\rM^\bu$ is
coacyclic in $\X\tors$ whenever the complex $\fF^\bu$ is coacyclic
in $\X\flat$ and $\rM^\bu$ is an arbitrary complex in $\X\tors$.

 Part~(c): by Corollary~\ref{ind-Noetherian-coderived-cor},
there exists a complex of injective quasi-coherent torsion sheaves
$\rJ^\bu\in\sK(\X\tors_\inj)$ together with a morphism of complexes
$\rM^\bu\rarrow\rJ^\bu$ whose cone $\rN^\bu$ is coacyclic in $\X\tors$.
 By part~(b), the complex $\fF^\bu\ot_\X\rN^\bu$ is coacyclic in
$\X\tors$.
 It remains to check that the complex $\fF^\bu\ot_\X\rJ^\bu$ is
coacyclic in $\X\tors$.
 In fact, we will show that this complex is contractible.

 Indeed, according to the arguments in the beginning of the proof of
Theorem~\ref{ind-Noetherian-triangulated-equivalence-thm} in
Section~\ref{ind-Noetherian-triangulated-equivalence-subsecn},
\,$\fF^\bu\ot_\X\rJ^\bu$ is a complex of injective quasi-coherent
torsion sheaves on~$\X$.
 Furthermore, for any closed subscheme $Z\subset\X$ with the closed
immersion morphism $i\:Z\rarrow\X$,
Proposition~\ref{flat-torsion-tensor-prop} provides a natural
isomorphism of complexes of quasi-coherent sheaves
$i^!(\fF^\bu\ot_\X\rJ^\bu)\simeq i^*\fF^\bu\ot_{\cO_Z}i^!\rJ^\bu$
on~$Z$.
 In the situation at hand, the complex $i^*\fF^\bu$ is acyclic in
$Z\flat$, while $i^!\rJ^\bu$ is a complex of injective quasi-coherent
sheaves on~$Z$; hence, by (the proof of)
Lemma~\ref{qcoh-complexes-tensor-exactness}(d), the complex of injective
quasi-coherent sheaves $i^*\fF^\bu\ot_{\cO_Z}i^!\rJ^\bu$ on $Z$
is contractible.
 By Lemma~\ref{complex-of-injective-torsion-contractible}, it follows
that the complex $\fF^\bu\ot_\X\rJ^\bu$ in $\X\tors_\inj$
is contractible, too.
\end{proof}

 Let $\X$ be an ind-Noetherian ind-scheme.
 It is clear from Lemma~\ref{pro-torsion-complexes-tensor-exactness}
that the tensor product
functors~(\ref{flat-pro-sheaves-tensor-structure-on-homotopy}\+-%
\ref{flat-torsion-module-structure-on-homotopy}) descend to
the derived and coderived categories, providing tensor product functors
\begin{gather}
 \label{flat-pro-sheaves-tensor-structure-on-derived}
 \ot^\X\:\sD(\X\flat)\times\sD(\X\flat)\lrarrow\sD(\X\flat), \\
 \label{flat-torsion-module-structure-on-co-derived}
 \ot_\X\:\sD(\X\flat)\times\sD^\co(\X\tors)\lrarrow\sD^\co(\X\tors).
\end{gather}
 So $\sD(\X\flat)$ is a tensor triangulated category and
$\sD^\co(\X\tors)$ is a triangulated module category
over $\sD(\X\flat)$.

 Now let $\X$ be an ind-semi-separated ind-Noetherian ind-scheme with
a dualizing complex~$\rD^\bu$.
 Then the triangulated equivalence $\rD^\bu\ot_\X{-}\:\sD(\X\flat)
\rarrow\sD^\co(\X\tors)$ from
Theorem~\ref{ind-Noetherian-triangulated-equivalence-thm}
is an equivalence of module categories over $\sD(\X\flat)$.
 This follows from the associativity of the tensor products,
$$
 (\fF^\bu\ot^\X\fG^\bu)\ot_\X\rD^\bu\simeq
 \fF^\bu\ot_\X(\fG^\bu\ot_\X\rD^\bu)
$$
for all complexes of flat pro-quasi-coherent pro-sheaves $\fF^\bu$
and $\fG^\bu$ on~$\X$.

 Using the triangulated equivalence $\sD(\X\flat)\simeq
\sD^\co(\X\tors)$, we transfer the tensor structure of the category
$\sD(\X\flat)$ to the category $\sD^\co(\X\tors)$.
 The resulting functor
\begin{equation} \label{coderived-torsion-cotensor-product}
 \oc_{\rD^\bu}\:\sD^\co(\X\tors)\times\sD^\co(\X\tors)\lrarrow
 \sD^\co(\X\tors),
\end{equation}
defining a tensor triangulated category structure on $\sD^\co(\X\tors)$, 
is called the \emph{cotensor product} of complexes of quasi-coherent
torsion sheaves on $\X$ over the dualizing complex~$\rD^\bu$.
 Explicitly, we have
\begin{multline*}
 \rM^\bu\oc_{\rD^\bu}\rN^\bu=
 \rD^\bu\ot_\X(\fHom_{\X\qc}(\rD^\bu,\rK^\bu)
 \ot^\X\fHom_{\X\qc}(\rD^\bu,\rJ^\bu)) \\ \simeq
 \rM^\bu\ot_\X\fHom_{\X\qc}(\rD^\bu,\rJ^\bu)
\end{multline*}
in $\sD^\co(\X\tors)$ for any complexes $\rM^\bu$ and $\rN^\bu\in
\sK(\X\tors)$ endowed with morphisms with coacyclic cones
$\rM^\bu\rarrow\rK^\bu$ and $\rN^\bu\rarrow\rJ^\bu$ into complexes
$\rK^\bu$ and $\rJ^\bu\in\sK(\X\tors_\inj)$.
 The dualizing complex $\rD^\bu\in\sD^\co(\X\tors)$ is the unit object
of the tensor structure $\oc_{\rD^\bu}$ on $\sD^\co(\X\tors)$, since
$\rD^\bu$ corresponds to the unit object $\fO_\X\in\sD(\X\flat)$ under
the equivalence of categories $\sD(\X\flat)\simeq\sD^\co(\X\tors)$.

\begin{rems} \label{cotensor-right-derived-remark}
 The cotensor product functor~$\oc_{\rD^\bu}$
\,\eqref{coderived-torsion-cotensor-product} is ``similar to
a right derived functor'' in the following sense
(see Theorem~\ref{cotensor-shriek-tensor-thm} below for
a much more specific assertion).

\smallskip
 (1)~Given an additive category $\sA$, let us denote by
$\sK^{\le0}(\sA)$ and $\sK^{\ge0}(\sA)\subset\sK(\sA)$ the full
subcategories in the homotopy category consisting of complexes
concentrated in the nonpositive and nonnegative cohomological degrees,
respectively.
 For an abelian category $\sA$, the notation $\sD^{\le0}(\sA)$ and
$\sD^{\ge0}(\sA)$ is understood similarly.
 When $\sA$ is abelian, a complex $A^\bu\in\sD^+(\sA)$ is said to have
\emph{injective dimension\/~$\le0$} if $\Hom_{\sD^+(\sA)}
(C^\bu,A^\bu)=0$ for all bounded complexes $C^\bu\in\sD^{\ge0}(\sA)$.
 When $\sA$ is abelian with enough injective objects, a bounded below
complex in $\sA$ has injective dimension~$\le0$ if and only if it
is quasi-isomorphic to a bounded complex in $\sK^{\le0}(\sA_\inj)$.

 For any abelian category $\sA$ with exact functors of infinite
direct sum, the full subcategories of nonpositively and nonnegatively
situated complexes $\sD^{\co,\le0}(\sA)$ and $\sD^{\co,\ge0}(\sA)$
form a t\+structure (of the derived type) on the coderived category
$\sD^\co(\sA)$ \,\cite[Remark~4.1]{Psemi}, \cite[Proposition~5.5]{PS2}.
 Furthermore, if there are enough injective objects in $\sA$, then
for any complex $A^\bu\in\sK^{\ge0}(\sA)$ there is a complex
$J^\bu\in\sK^{\ge0}(\sA_\inj)$ together with a quasi-isomorphism
$A^\bu\rarrow J^\bu$ of complexes in~$\sA$.
 Since any bounded below acyclic complex is
coacyclic~\cite[Lemma~2.1]{Psemi} (cf.~\cite[Lemma~A.1.2(a)]{Pcosh}),
the morphism of complexes $A^\bu\rarrow J^\bu$ is an isomorphism in
$\sD^\co(\sA)$.
 So we have equivalences of categories $\sK^{\ge0}(\sA_\inj)\simeq
\sD^{\co,\ge0}(\sA)\simeq\sD^{\ge0}(\sA)$.

\smallskip
 (2)~In the context of the exposition above in this section, assume that
the dualizing complex $\rD^\bu$ is concentrated in the nonpositive
cohomological degrees, $\rD^\bu\in\sK^{\le0}(\X\tors_\inj)$.
 For any complexes $\rM^\bu$, $\rN^\bu\in\sK^{\ge0}(\X\tors)$,
one can choose a complex $\rJ^\bu\in\sK^{\ge0}(\X\tors_\inj)$
together with a morphism with (co)acyclic cone $\rN^\bu\rarrow\rJ^\bu$.
 Then one has $\fHom_{\X\qc}(\rD^\bu,\rJ^\bu)\in\sK^{\ge0}(\X\flat)$
and $\rM^\bu\ot_\X\fHom_{\X\qc}(\rD^\bu,\rJ^\bu)\in\sK^{\ge0}(\X\tors)$.
 Thus the functor~$\oc_{\rD^\bu}$
\,\eqref{coderived-torsion-cotensor-product} restricts to a functor
$$
 \oc_{\rD^\bu}\:\sD^{\co,\ge0}(\X\tors)\times\sD^{\co,\ge0}(\X\tors)
 \lrarrow\sD^{\co,\ge0}(\X\tors).
$$

\smallskip
 (3)~Let us explain why the assumption that
$\rD^\bu\in\sK^{\le0}(\X\tors_\inj)$ is mild and reasonable.
 Suppose that the ind-scheme $\X$ is of ind-finite type over
a Noetherian scheme $S$ with a dualizing complex~$\D^\bu$.
 Without loss of generality, one can assume $\D^\bu$ to be a bounded
complex of injective quasi-coherent sheaves on~$S$;
shifting if necessary, one can assume further that
$\D^\bu\in\sK^{\le0}(S\qcoh_\inj)$.
 Let $Z\subset\X$ be a closed subscheme; so we have a morphism of
finite type $Z\rarrow S$.

 The idea is to use the extraordinary inverse image functor in
order to lift $\D^\bu$ to a dualizing complex $\D^\bu_Z$ on~$Z$.
 Given a morphism of finite type between Noetherian schemes
$f\:Y\rarrow X$, the relevant functor is denoted by~$f^!$
in~\cite{Hart} and Deligne's appendix to~\cite{Hart} (where it was
first constructed, under mild assumptions on~$f$); in the terminology
and notation of~\cite{Pcosh}, it is called the \emph{extraordinary
inverse image functor in the sense of Deligne} and denoted by
$f^+\:\sD^+(X\qcoh)\rarrow\sD^+(Y\qcoh)$. 

 First of all one observes that the derived direct image functor
$\boR f_*\:\sD^+(Y\qcoh)\rarrow\sD^+(X\qcoh)$ has a right adjoint
functor; this functor is called the \emph{extraordinary inverse image
functor in the sense of Neeman} and denoted by~$f^!$ in~\cite{Pcosh}.
 Both the functor $\boR f_*$ and its right adjoint~$f^!$ are also
well-defined on the unbounded derived categories and the coderived
categories.
 In particular, for a closed immersion $i\:Z\rarrow Y$,
the triangulated functor $i^!=\boR i^!\:\sD^+(Y\qcoh)\rarrow
\sD^+(Z\qcoh)$ is simply the right derived functor of the left
exact functor $i^!\:Y\qcoh\rarrow Z\qcoh$.

 The functor~$f^+$ is essentially characterized by three properties:
for a composable pair of morphisms $f$ and~$g$, one has $(fg)^+\simeq
g^+f^+$; for an open immersion~$g$, one has $g^+=g^*$; for a proper
morphism~$f$, one has $f^+=f^!$.
 In particular, for a closed immersion~$i$ one has $i^+=\boR i^!$.
 For a smooth morphism~$f$, the functor~$f^+$ only differs from~$f^*$
by a shift and a twist~\cite[Chapter~III]{Hart}.

 The functor~$f^+$ takes dualizing complexes to dualizing
complexes~\cite[Proposition~V.2.4, Theorem~V.8.3 and Remark in
Section~V.8]{Hart}.
 More precisely, in our terminology one can say that, given a morphism
$f\:Y\rarrow X$ and a dualizing complex $\D_X^\bu$ on~$X$,
the object $f^+\D_X^\bu\in\sD^+(Y\qcoh)$ is quasi-isomorphic to
a dualizing complex (of injective quasi-coherent sheaves) $\D_Y^\bu$
on~$Y$.

 Concerning an unbounded version of the functor~$f^+$, it turns out
to be well-defined as a functor between the coderived categories
$f^+\:\sD^\co(X\qcoh)\rarrow\sD^\co(Y\qcoh)$, but \emph{not} as
a functor between the conventional unbounded derived categories
$\sD(X\qcoh)$ and $\sD(Y\qcoh)$ \,\cite{Gai}, \cite[Introduction
and Section~5.16]{Pcosh}.

 In the recent overview~\cite{Neem-rev}, the notation $f^\times$ is
used for the functor which we denote by~$f^!$, and the notation~$f^!$
is used for the functor which we denote by~$f^+$.

\smallskip
 (4)~The key observation for our purposes is that the functor~$f^+$
takes complexes of injective dimension~$\le0$ to complexes of injective
dimension~$\le0$.
 Indeed, the restriction to an open subscheme in a Noetherian scheme
preserves injective dimension, since it preserves injectivity.
 It remains to see that the right adjoint functor~$f^!$ to
the functor $\boR f_*$ preserves injective dimension.
 Indeed, if $\N^\bu\in\sD^{\ge0}(Y\qcoh)$ is a bounded complex and
$\K^\bu\in\sK^{\le0}(X\qcoh_\inj)$ is a bounded complex of injective
dimension~$\le0$, then
$$
 \Hom_{\sD^+(Y\qcoh)}(\N^\bu,f^!\K^\bu)\simeq
 \Hom_{\sD^+(X\qcoh)}(\boR f_*\N^\bu,\K^\bu)=0
$$
since $\boR f_*\N^\bu\in\sD^{\ge0}(X\qcoh)$.

\smallskip
 (5)~Returning to the situation at hand, we have a closed subscheme
$Z\subset\X$; denote by $p_Z\:Z\rarrow S$ the related morphism, and
let $\D_Z^\bu$ be a dualizing complex (of injective quasi-coherent
sheaves) on $Z$ quasi-isomorphic to $p_Z^+\D^\bu\in\sD^+(Z\qcoh)$.
 The argument above allows to have
$\D_Z^\bu\in\sK^{\le0}(Z\qcoh_\inj)$.
 For any pair of closed subschemes $Z\subset Y\subset\D^\bu$ with
the related open immersion $i\:Z\rarrow Y$, the complex $i^!\D_Y^\bu\in
\sK^{\le0}(Z\qcoh_\inj)$ is naturally homotopy equivalent to $\D_Z^\bu$,
since $p_Z=p_Yi$ and $i^+\simeq\boR i^!$.
 In this sense, the dualizing complexes $\D_Z^\bu$ on closed
subschemes $Z\subset\X$ agree with each other up to homotopy 
equivalence.

 Assuming additionally that $\X$ is an $\aleph_0$\+ind-scheme, one can
apply the construction of Example~\ref{aleph-zero-dualizing-glueing}
in order to produce a dualizing complex $\rD^\bu\in\sK^{\le0}(\X\tors)$
on $\X$ out of the dualizing complexes $\D_Z^\bu$ on the closed
subschemes $Z\subset\X$.
\end{rems}

\subsection{Ind-Artinian examples}
 The following example explains the terminology ``coderived category''
and ``cotensor product''.

\begin{exs} \label{cotensor-over-coalgebra}
 (1)~Let $\rC$ be a coassociative, counital coalgebra over
a field~$\kk$.
 Let $\rM$ be a right $\rC$\+comodule and $\rN$ be a left
$\rC$\+comodule.
 Then the \emph{cotensor product} $\rM\oc_\rC\rN$ is the $\kk$\+vector
space constructed as the kernel of the difference of the natural pair
of maps
$$
 \rM\ot_\kk\rN\,\rightrightarrows\,\rM\ot_\kk\rC\ot_\kk\rN.
$$
 Here one map $\rM\ot_\kk\rN\rarrow\rM\ot_\kk\rC\ot_\kk\rN$ is induced
by the right coaction map $\rM\rarrow\rM\ot_\kk\rC$ and the other one
by the left coaction map $\rN\rarrow\rC\ot_\kk\rN$.

\smallskip
 (2)~When the coalgebra $\rC$ is cocommutative, there is no difference
between left and right $\rC$\+comodules.
 Moreover, the cotensor product $\rM\oc_\rC\rN$ of two $\rC$\+comodules
$\rM$ and $\rN$ has a natural $\rC$\+comodule structure in this case.
 The cotensor product operation $\oc_\rC$ makes the abelian category of
$\rC$\+comodules $\rC\comodl$ an associative, commutative, and unital
tensor category with the unit object $\rC\in\rC\comodl$.

\smallskip
 (3)~For any coalgebra $\rC$ as in~(1), the categories $\rC\comodl$
and $\comodr\rC$ of left and right $\rC$\+comodules are locally
Noetherian (in fact, locally finite) Grothendieck abelian categories,
so Proposition~\ref{coderived-and-homotopy-of-injectives} with
Lemma~\ref{locally-Noetherian-coproducts-injective} are applicable.
 Hence the coderived categories of $\rC$\+comodules are equivalent to
the respective homotopy categories of (complexes of) injective
comodules, $\sD^\co(\rC\comodl)\simeq\sK(\rC\comodl_\inj)$ and
$\sD^\co(\comodr\rC)\simeq\sK(\comodrinj\rC)$.

 The left $\rC$\+comodule $\rC$ is an injective cogenerator of
$\rC\comodl$; moreover, a $\rC$\+comodule is injective if and only
if it is a direct summand of a direct sum of copies of
the $\rC$\+comodule~$\rC$.
 For a given left $\rC$\+comodule $\rJ$, the coproduct functor
${-}\oc_\rC\nobreak\rJ\:\comodr\rC\rarrow\kk\vect$ is exact
if and only if $\rJ$ is an injective left $\rC$\+comodule (and 
similarly for a right $\rC$\+comodule).
 Here $\kk\vect$ denotes the category of $\kk$\+vector spaces.

 The functor of cotensor product of complexes of comodules
$$
 \oc_\rC\:\sC(\comodr\rC)\times\sC(\rC\comodl)\lrarrow
 \sC(\kk\vect)
$$
is constructed in the obvious way (totalizing the bicomplex of cotensor
products by taking direct sums along the diagonals).
 To define the right derived functor of cotensor product
$$
 \oc_\rC^\boR\:\sD^\co(\comodr\rC)\times\sD^\co(\rC\comodl)\lrarrow
 \sD(\kk\vect),
$$
suppose that we are given a complex of right $\rC$\+comodules $\rM^\bu$
and a complex of left $\rC$\+comodules $\rN^\bu$.
 Let $\rM^\bu\rarrow\rK^\bu$ be a morphism in $\sC(\comodr\rC)$ from
$\rM^\bu$ to a complex of injective right $\rC$\+comodules $\rK^\bu$
such that the cone of $\rM^\bu\rarrow\rK^\bu$ is coacyclic in
$\comodr\rC$.
 Let $\rN^\bu\rarrow\rJ^\bu$ be a similar resolution of the complex
of left $\rC$\+comodules~$\rN^\bu$; so $\rJ^\bu\in\sC(\rC\comodl_\inj)$
and the cone is coacyclic in $\rC\comodl$.
 Then the derived cotensor product is defined as the object
$$
 \rM^\bu\oc^\boR_\rC\rN^\bu=\rK^\bu\oc_\rC\rJ^\bu\simeq
 \rM^\bu\oc_\rC\rJ^\bu\simeq\rK^\bu\oc_\rC\rN^\bu\,\in\,\sD(\kk\vect).
$$
 We refer to~\cite[Section~0.2]{Psemi} for a discussion.

\smallskip
 (4)~For a cocommutative coalgebra $\rC$ as in~(2), the functor of
cotensor product of complexes of comodules
$$
 \oc_\rC\:\sC(\rC\comodl)\times\sC(\rC\comodl)\lrarrow
 \sC(\rC\comodl)
$$
is constructed, once again, by totalizing the complex of cotensor
products by taking coproducts along the diagonals (notice that
the forgetful functor $\rC\comodl\rarrow\kk\vect$ preserves coproducts).
 The construction of the right derived functor of cotensor product
$$
 \oc_\rC^\boR\:\sD^\co(\rC\comodl)\times\sD^\co(\rC\comodl)\lrarrow
 \sD^\co(\rC\comodl)
$$
is similar to the one in~(3).
 In the same notation and the same conditions on the resolutions
$\rM^\bu\rarrow\rK^\bu$ and $\rN^\bu\rarrow\rJ^\bu$, we put
$$
 \rM^\bu\oc^\boR_\rC\rN^\bu=\rK^\bu\oc_\rC\rJ^\bu\simeq
 \rM^\bu\oc_\rC\rJ^\bu\simeq\rK^\bu\oc_\rC\rN^\bu
 \,\in\,\sD^\co(\rC\comodl).
$$
 The derived tensor product operation~$\oc_\rC^\boR$ makes
$\sD^\co(\rC\comodl)$ an associative, commutative, and unital tensor
triangulated category.

\smallskip
 (5)~For a cocommutative coalgebra $\rC$, let $\X=\Spi\rC^*$ be
the ind-Artinian ind-scheme corresponding to $\rC$ as per
the construction from Example~\ref{coalgebra-ind-scheme}(2)
and the discussion in Example~\ref{ind-Artinian-ind-schemes}(2).
 According to Section~\ref{torsion-ind-affine-subsecn}(4),
the abelian category $\X\tors$ of quasi-coherent torsion sheaves on $\X$
is equivalent to the abelian category of $\rC$\+comodules $\rC\comodl$.

 Notice that any Artinian scheme admits a dualizing injective
quasi-coherent sheaf, i.~e., a dualizing complex which is
a one-term complex of injective quasi-coherent sheaves.
 In particular, for any finite-dimensional cocommutative coalgebra $\E$
over~$\kk$, the quasi-coherent sheaf on $\Spec\E^*$ corresponding to
the injective $\E^*$\+module $\E$ is a dualizing complex on $\Spec\E^*$.
 It follows that the injective quasi-coherent torsion sheaf on
$\Spi\rC^*$ corresponding to the injective $\rC$\+comodule $\rC$ is
a dualizing complex on~$\Spi\rC^*$.

 Following the proof of
Theorem~\ref{ind-Noetherian-triangulated-equivalence-thm} specialized
to the particular case of a one-term dualizing complex of injectives
$\rD^\bu=\rC$, one can see that there is an equivalence of additive
categories of injective quasi-coherent torsion sheaves and flat
pro-quasi-coherent pro-sheaves on~$\X$, provided by the mutually
inverse functors $\fHom_{\X\qc}(\rC,{-})$ and $\rC\ot_\X{-}$,
\begin{equation} \label{spi-coalgebra-injective-flat-correspondence}
 \fHom_{\X\qc}(\rC,{-})\:\X\tors_\inj\,\simeq\,
 \X\flat\,:\!\rC\ot_\X{-}.
\end{equation}
 Moreover, this equivalence transforms short exact sequences in
$\X\flat$ into split short exact sequences in $\X\tors_\inj$; so
the exact category structure on $\X\flat$ is split.

\smallskip
 (6)~We refer to~\cite[Section~0.2.4]{Psemi} or~\cite[Section~1]{Prev}
for an introductory discussion of the category of left contramodules
$\rC\contra$ over a coassociative coalgebra~$\rC$.
  Contramodules over a coalgebra $\rC$ are the same thing as 
contramodules over the topological ring $\fR=\rC^*$ \,
\cite[Section~1.10]{Pweak}, \cite[Sections~2.1 and~2.3]{Prev}.

 The category $\rC\contra$ is abelian with enough projective objects.
 Denoting by $\rC\contra_\proj\subset\rC\contra$ the full subcategory
of projective $\rC$\+contramodules, one has a natural equivalence
of additive categories~\cite[Section~0.2.6]{Psemi},
\cite[Sections~1.2 and 3.1]{Prev}, \cite[Sections~5.1\+-5.2]{Pkoszul}
\begin{equation} \label{coalgebra-underived-co-contra-correspondence}
 \Hom_\rC(\rC,{-})\:\rC\comodl_\inj\,\simeq\,
 \rC\contra_\proj\,:\!\rC\ocn_\rC{-}.
\end{equation}
 Here $\Hom_\rC=\Hom_{\rC\comodl}$ denotes the $\kk$\+vector space of
morphisms in the abelian category $\rC\comodl$, while $\ocn_\rC$ is
the \emph{contratensor product} functor.

 For a cocommutative coalgebra $\rC$, it is not difficult to construct
a pair of adjoint functors between the additive categories $\X\pro$
and $\rC\contra$ (cf.\
Examples~\ref{countable-flat-pro-sheaves-flat-contramodules}).
 For a finite-dimensional subcoalgebra $\E\subset\rC$, let $X_\E
\subset\X$ denote the closed subscheme $\Spec\E^*\subset\Spi\rC^*$.
 For a finite-dimensional coalgebra $\E$, the category of
$\E$\+contramodules is naturally equivalent to the category
of $\E^*$\+modules.
 
 The left adjoint functor $\rC\contra\rarrow\X\pro$ assigns to
a $\rC$\+contramodule $\fG$ the pro-quasi-coherent pro-sheaf $\fP$
whose component $\fP^{(X_\E)}$ is the $\E^*$\+module produced as
the maximal quotient contramodule of $\fG$ whose $\rC$\+contramodule
structure comes from an $\E$\+contramodule structure.
 The right adjoint functor $\X\pro\rarrow\rC\contra$ assigns to
a pro-quasi-coherent pro-sheaf $\fP$ the projective limit
$\varprojlim_{\E\subset\rC}\fP^{(X_\E)}$.

 Furthermore, it is well-known that all flat modules over an Artinian
ring are projective.
 Moreover, all (contra)flat contramodules over a coalgebra over a field
are projective~\cite[Sections~0.2.9 and~A.3]{Psemi}.
 The functor $\rC\contra\rarrow\X\pro$ restricts to a functor
$\rC\contra_\proj\rarrow\X\flat$, which is obviously fully faithful.

 Comparing the two equivalences of additive
categories~\eqref{spi-coalgebra-injective-flat-correspondence}
and~\eqref{coalgebra-underived-co-contra-correspondence}
and taking into account the equivalence $\X\tors_\inj\simeq
\rC\comodl_\inj$ induced by the equivalence $\X\tors\simeq
\rC\comodl$, one can see that the functor $\rC\contra_\proj
\rarrow\X\flat$ is an equivalence of additive categories.
 Moreover, the projective limit functor $\X\pro\rarrow\rC\contra$
restricts to a functor $\X\flat\rarrow\rC\contra_\proj$, providing
the inverse equivalence.

 Similarly to
Example~\ref{countable-flat-pro-sheaves-flat-contramodules}(3),
the equivalences of categories $\X\tors\simeq\rC\comodl$ and
$\X\flat\simeq\rC\contra_\proj$ transform the tensor product functor
$\ot_\X\:\X\tors\times\X\flat\rarrow\X\tors$ into the contratensor
product functor $\ocn_\rC\:\rC\comodl\times\rC\contra\rarrow
\rC\comodl$ restricted to $\rC\contra_\proj\subset\rC\contra$.

\smallskip
 (7)~Let us explain why the equivalence of coderived categories
$\sD^\co(\X\tors)\simeq\sD^\co(\rC\comodl)$ induced by
the equivalence of abelian categories $\X\tors\simeq\rC\comodl$
transforms the cotensor product functor~$\oc_{\rD^\bu}$
\,\eqref{coderived-torsion-cotensor-product} from
Section~\ref{cotensor-product-construction-subsecn} for
the dualizing complex $\rD^\bu=\rC$ on the ind-scheme $\X=\Spi\rC^*$
into the right derived cotensor product functor $\oc_\rC^\boR$
from Example~\ref{cotensor-over-coalgebra}(4).

 Let $\E\subset\rC$ be a finite-dimensional subcoalgebra and
$i_\E\:X_\E\rarrow\X$ be the related immersion of the closed subscheme
into the ind-scheme.
 Then the functor $i_\E^!\:\X\tors\rarrow X_\E\qcoh$ corresponds,
under the equivalences of abelian categories $\X\tors\simeq
\rC\comodl$ and $X_\E\qcoh\simeq\E\comodl$, to the functor
$\rC\comodl\rarrow\E\comodl$ assigning to an $\rC$\+comodule $\rM$
its maximal subcomodule $\rM_{(\E)}\subset\rM$ whose $\rC$\+comodule
structure comes from an $\E$\+comodule structure.
 The $\E$\+comodule $\rM_{(\E)}$ can be computed as the cotensor
product
$$
 \rM_{(\E)}\simeq\E\oc_\rC\rM.
$$
 Consequently, for any two $\rC$\+comodules $\rM$ and $\rN$ one has
$$
 (\rM\oc_\rC\rN)_{(\E)}\simeq\E\oc_\rC\rM\oc_\rC\rN\simeq
 (\E\oc_\rC\rM)\oc_\E(\E\oc_\rC\rN)\simeq\rM_{(\E)}\oc_\E\rN_{(\E)}.
$$
 Together with the computations in the proof of
Theorem~\ref{ind-Noetherian-triangulated-equivalence-thm}, this
reduces the question to the case of a finite-dimensional coalgebra
$\E$ and related Artinian scheme $X_\E=\Spec\E^*$, for which
it means the following.

 Let $\M$ be an $\E$\+comodule and $\J$ be an injective $\E$\+comodule
(we recall once again that an $\E$\+comodule is the same thing as
an $\E^*$\+module).
 Then there are natural isomorphisms of $\E$\+comodules
\begin{multline*}
 \M\oc_\E\J\simeq\M\oc_\E(\E\ot_{\E^*}\Hom_{\E^*}(\E,\J)) \\
 \simeq(\M\oc_\E\E)\ot_{\E^*}\Hom_{\E^*}(\E,\J)
 \simeq\M\ot_{\E^*}\Hom_{\E^*}(\E,\J).
\end{multline*}
 Here the injective $\E$\+comodule $\E$ corresponds to the dualizing
(one-term) complex $\D^\bu=i_\E^!\rD^\bu$ on the scheme $X_\E$
under the equivalence of categories $X_\E\qcoh\simeq\E\comodl$,
where $\rD^\bu\in\X\tors$ corresponds to $\rC\in\rC\comodl$ under
$\X\tors\simeq\rC\comodl$, as per the discussion in~(5).
 Notice that the $\E^*$\+module $\Hom_{\E^*}(\E,\J)$ is projective
for any injective $\E^*$\+module~$\J$.

\smallskip
 (8)~Alternatively, one can avoid the reduction to finite-dimensional
coalgebras when establishing the comparison between $\oc_{\rD^\bu}$
and $\oc_\rC^\boR$, by using $\rC$\+contramodules and the discussion of
the equivalence $\X\flat\simeq\rC\contra_\proj$ in~(6).

 In this context, the desired comparison is expressed by the natural
isomorphisms of $\rC$\+comodules
\begin{multline*}
 \rM\oc_\rC\rJ\simeq\rM\oc_\rC(\rC\ocn_\rC\Hom_\rC(\rC,\rJ)) \\
 \simeq(\rM\oc_\rC\rC)\ocn_\rC\Hom_\rC(\rC,\rJ)
 \simeq\rM\ocn_\rC\Hom_\rC(\rC,\rJ),
\end{multline*}
which hold for any $\rC$\+comodule $\rM$ and any injective
$\rC$\+comodule~$\rJ$.
 We refer to~\cite[Proposition~5.2.1]{Psemi}
or~\cite[Proposition~3.1.1]{Prev} for a discussion on this kind of
associativity isomorphisms connecting the cotensor and contratensor
products.
\end{exs}

 Our next aim is to discuss the \emph{torsion product} functor
$\Tor^R_1({-},{-})$ for modules over a Dedekind domain~$R$.
 Let $Q$ denote the field of quotients of~$R$.
 The motivating example for us is the case of torsion abelian groups,
when $R=\boZ$ and $Q=\boQ$.
 We refer to the book~\cite[Section~V.6]{McL} or the overview~\cite{Kee}
for the explicit construction and discussion of the torsion products
of abelian groups (see also the classical paper~\cite{Nun}).
 Let us start the discussion in a more general context of a Noetherian
ring of Krull dimension~$1$ before specializing to Dedekind domains.

\begin{exs} \label{krulldim1-torsion-ind-scheme}
 (1)~Let $R$ be a Noetherian commutative ring of Krull dimension~$\le1$,
and let $S\subset R$ denote the complement to the union of all
the nonmaximal prime ideals in~$R$.
 Notice that all the nonmaximal prime ideals in $R$ are minimal,
hence there is only a finite number of nonmaximal prime ideals;
so an ideal $I\subset R$ does not intersect $S$ if and only if
it is contained in one of the nonmaximal prime ideals.
 If $I$ does intersect $S$, then $R/I$ is an Artinian ring.
 We refer to~\cite[Section~13]{Pcta} for a more detailed discussion
of this setting.

 Let $\Gamma$ denote the directed poset of all ideals in $R$ 
intersecting $S$, with respect to the reverse inclusion order.
 To every ideal $I\in\Gamma$, assign the Artinian scheme $X_I=
\Spec R/I$.
 Whenever $I''\subset I'$ are two ideals in $R$, with $I''$
intersecting $S$, there is a unique (surjective) ring homomorphism
$R/I''\rarrow R/I'$ forming a commutative triangle diagram with
the natural projections $R\rarrow R/I''$ and $R\rarrow R/I'$.
 Let $X_{I'}\rarrow X_{I''}$ be the related closed immersion of
Artinian schemes.
 The inductive system of schemes $(X_I)_{I\in\Gamma}$ represents
an ind-Artinian ind-scheme~$\X$.
 The ind-scheme $\X$ comes together with an ind-closed immersion
of ind-schemes $\X\rarrow\Spec R$ (in the sense of
Examples~\ref{ind-closed-immersion}).

\smallskip
 (2)~In the context of~(1), let $\fR$ be the topological ring
$\fR=\varprojlim_{I\in\Gamma}R/I$, with the topology of projective
limit of discrete rings~$R/I$.
 The topological ring $\fR$ can be computed as the topological
product $\fR=\prod_{m\subset R}\widehat R_m$, where $m$~ranges over
the maximal ideals of $R$ and $\widehat R_m$ is the completion of
the local ring~$R_m$.
 The complete local ring $\widehat R_m$ is endowed with
the $m$\+adic topology and the product $\prod_{m\subset R}\widehat R_m$
is endowed with the product topology.
 Then the ind-scheme $\X=\ilim_{I\in\Gamma}X_I$ from~(1) can be
described as $\X=\Spi\fR$, in the notation of
Example~\ref{topological-ring-ind-scheme}(1).

 An $R$\+module $M$ is said to be \emph{$S$\+torsion} if for every
$b\in M$ there exists $s\in S$ such that $sb=0$ in~$M$.
 Denote by $Q=S^{-1}R$ the localization of the ring $R$ at
the multiplicative subset~$S$.
 An $R$\+module $M$ is $S$\+torsion if and only if $Q\ot_RM=0$.
 Denote by $R\tors\subset R\modl$ the full subcategory of $S$\+torsion
$R$\+modules; clearly, $R\tors$ is a locally Noetherian (in fact,
locally finite) Grothendieck abelian category, which is closed under
subobjects, quotients, and extensions as a full subcategory in $R\modl$.
 The category of $S$\+torsion $R$\+modules is naturally equivalent to
the category of discrete $\fR$\+modules and to the category of
quasi-coherent torsion sheaves on $\X$, that is
$R\tors\simeq\fR\discr\simeq\X\tors$
(cf.\ Section~\ref{torsion-ind-affine-subsecn}(6)).

 The ind-scheme $\X$ is the disjoint union (coproduct) of
the ind-schemes $\Spi\widehat R_m$ over the maximal ideals $m\subset R$.
 The topological ring $\widehat R_m$ has a countable base of
neighborhoods of zero, so the category of flat pro-quasi-coherent
pro-sheaves on $\Spi\widehat R_m$ is equivalent to the category of
flat $\widehat R_m$\+contramodules by
Example~\ref{countable-flat-pro-sheaves-flat-contramodules}(2).
 Using the result of~\cite[Lemma~7.1(b)]{Pproperf} providing
an equivalence between the abelian category of $\fR$\+contramodules
$\fR\contra$ and the Cartesian product of the abelian categories
of $\widehat R_m$\+contramodules $\widehat R_m\contra$, one can
conclude that the category of flat pro-quasi-coherent pro-sheaves
on $\X$ is equivalent to the category of flat $\fR$\+contramodules,
$\X\flat\simeq\fR\flat$.
 The construction of this equivalence is similar to the one in
Example~\ref{countable-flat-pro-sheaves-flat-contramodules}(1).
 In fact, by~\cite[Corollary~8.4 or
Theorem~10.1(vi)$\Rightarrow$(iii)]{Pproperf}, all flat
$\fR$\+contramodules are projective, $\fR\flat=\fR\contra_\proj$.

\smallskip
 (3)~The following description of the category $\X\flat$ may be more
instructive.
 An $R$\+module $C$ is said to be \emph{$S$\+reduced} if it has
no submodules in which all the elements of $S$ act by invertible
operators; equivalently in our context, this means that
$\Hom_R(Q,C)=0$ \,\cite[Theorem~13.8(a)]{Pcta}.
 An $R$\+module $C$ is said to be \emph{$S$\+weakly cotorsion} if
$\Ext_R^1(Q,C)=0$; equivalently in our context, this means that $C$
is \emph{cotorsion}, that is $\Ext_R^1(F,C)=0$ for all flat
$R$\+modules~$F$ \,\cite[Theorem~13.9(b)]{Pcta}.

 Reduced cotorsion abelian groups were called ``co-torsion''
in~\cite{Harr}, and $R$\+modules $C$ satisfying $\Hom_R(Q,C)=0=
\Ext^1_R(Q,C)$ (for a domain $R$ and the multiplicative subset
$S=R\setminus\{0\}$) were called ``cotorsion'' in~\cite{Mat}.
 $R$\+modules $C$ satisfying $\Hom_R(Q,C)=0=\Ext^1_R(Q,C)$ are called
``$S$\+contramodules'' in~\cite{PMat,BP}.

 We claim that the category of flat pro-quasi-coherent pro-sheaves
$\X\flat$ is naturally equivalent to the category of flat
$S$\+reduced cotorsion $R$\+modules.
 The equivalence assigns to a flat pro-quasi-cohererent pro-sheaf
$\fF\in\X\flat$ the $R$\+module $\varprojlim_{I\in\Gamma}
\fF^{(X_I)}(X_I)$.
 Conversely, to a flat $S$\+reduced cotorsion $R$\+module $F$,
the flat pro-quasi-coherent pro-sheaf $\fF$ with the components
$\fF^{(X_I)}\in X_I\flat$ corresponding to the flat $R/I$\+modules
$F/IF$ is assigned.

 Indeed, by the result of~\cite[Corollary~13.13(b)]{Pcta}
or~\cite[Corollary~6.17]{BP}, taken together
with~\cite[Theorem~B.1.1]{Pweak} and~\cite[Lemma~7.1(b)]{Pproperf},
the category $S$\+reduced cotorsion $R$\+modules is abelian and
equivalent to the category of $\fR$\+contramodules (the equivalence
being provided by the forgetful functor $\fR\contra\rarrow R\modl$).
 It remains to notice that an $\widehat R_m$\+contramodule is flat,
or equivalently, projective (as a contramodule) if and only if it is
a flat $R$\+module~\cite[Corollary~10.3(a) or Theorem~10.5]{Pcta},
\cite[Corollary~B.8.2]{Pweak}.

 The equivalence of abelian categories $\X\tors\simeq R\tors\subset
R\modl$ and the fully faithful functor $\X\flat\simeq\fR\flat\rarrow
R\modl$ identify the tensor product functor $\ot_\X\:\X\tors\times
\X\flat \rarrow\X\flat$ with the restriction of the tensor product
functor $\ot_R\:R\tors\times R\modl\rarrow R\tors$ to the full
subcategory of flat $S$\+reduced cotorsion $R$\+modules in the second
argument.

\smallskip
 (4)~Choosing a dualizing complex $\D^\bu$ on $\Spec R$, one can use
the construction of Example~\ref{ind-closed-immersion}(3) to
obtain a dualizing complex $\rD^\bu$ on~$\X$.
 This construction allows to produce a dualizing complex which would
be a two-term complex of injective quasi-coherent torsion sheaves
on~$\X$.

 Here is how one can construct a dualizing one-term complex on~$\X$.
 Let $E\in R\modl$ be the direct sum of injective envelopes of
the simple $R$\+modules $R/m$, where $m$~ranges over the maximal
ideals of~$R$ (one copy of each).
 Then $E$ is an injective $S$\+torsion $R$\+module, $E\in R\tors_\inj$
(notice that an $S$\+torsion $R$\+module is injective in $R\modl$ if
and only if it is injective in $R\tors$, since $R$ is Noetherian).
 Let $\rE\in\X\tors_\inj$ be an injective quasi-coherent torsion
sheaf corresponding to $E$ under the equivalence $R\tors\simeq
\X\tors$.
 Then $\rD^\bu=\rE$ is a one-term dualizing complex on~$\X$.

 Indeed, let $i_I\:X_I\rarrow\X$ denote the natural closed immersion.
 Then the quasi-coherent sheaf $i_I^!\rE$ on $X_I$ corresponds to
an injective $R/I$\+module which is the direct sum of the injective
envelopes of the simple modules over the Artinian ring~$R/I$.
 Since, by Matlis duality~\cite[Theorem~3.7]{Mat0}, the injective
envelope of the residue field of a local Artinian ring is a dualizing
complex, $i_I^!\rE$ is a dualizing complex on~$X_I$.

\smallskip
 (5)~Similarly to Example~\ref{cotensor-over-coalgebra}(5), the proof
of Theorem~\ref{ind-Noetherian-triangulated-equivalence-thm} specialized
to the case of a one-term dualizing complex $\rD^\bu=\rE$ shows
that there is an equivalence of additive (split exact) categories
of injective quasi-coherent torsion sheaves and flat pro-quasi-coherent
pro-sheaves on~$\X$,
\begin{equation} \label{krulldim1-injective-flat-correspondence}
 \fHom_{\X\qc}(\rE,{-})\:\X\tors_\inj\,\simeq\,
 \X\flat\,:\!\rE\ot_\X{-}.
\end{equation}
 In view of the above interpretation of the categories $\X\tors_\inj$
and $\X\flat$ as full subcategories in $R\modl$, this means
an equivalence between the additive categories of injective $S$\+torsion
$R$\+modules and flat $S$\+reduced cotorsion $R$\+modules, provided
by the mutually inverse functors $\Hom_R(E,{-})$ and $E\ot_R{-}$.

 Let us \emph{warn} the reader that this equivalence of additive
subcategories in $R\modl$ is different from the Matlis equivalence
between the full subcategories of $S$\+divisible $S$\+torsion
$R$\+modules and $S$\+torsionfree $S$\+reduced cotorsion
$R$\+modules~\cite[Theorem~3.4]{Mat}, \cite[Corollary~5.2]{PMat},
which is given by a different pair of adjoint functors.
 The equivalence~\eqref{krulldim1-injective-flat-correspondence}
is induced by a \emph{dualizing} torsion module/complex $E$,
while the Matlis equivalence is induced by a \emph{dedualizing}
compex $R\rarrow Q$ (see~\cite[Introduction and Remark~4.10]{Pmgm}
for a discussion of dualizing and dedualizing complexes).
 For a Dedekind domain $R$, the two equivalences are the same.
\end{exs}

 Let $R$ be a Dedekind domain.
 In the context of Examples~\ref{krulldim1-torsion-ind-scheme},
we have $S=R\setminus\{0\}$; so $Q=S^{-1}R$ is the field of
quotients of~$R$.
 An $R$\+module $M$ is said to be \emph{torsion} if for every $m\in M$
there exists $r\in R$, \,$r\ne0$, such that $rm=0$ in~$M$; as above,
we denote by $R\tors\subset R\modl$ the full subcategory of torsion
$R$\+modules.

 Notice that $Q\ot_R\Tor^R_1(M,N)\simeq\Tor^R_1(Q\ot_RM,\>N)=0$ for
all $R$\+modules $M$ and $N$, so the $R$\+module $\Tor^R_1(M,N)$ is
always torsion.
 Partly following the notation in~\cite{Kee}, we put
$M\ct_{Q/R}N=\Tor^R_1(M,N)$ for all torsion $R$\+modules $M$ and~$N$;
so
$$
 \ct_{Q/R}=\Tor^R_1({-},{-})\:R\tors\times R\tors\lrarrow R\tors.
$$
 The subindex $Q/R$ in our notation $\ct_{Q/R}$ is explained by
the observation that there are natural isomorphisms $Q/R\ct_{Q/R}M
\simeq M\simeq M\ct_{Q/R}Q/R$ for all torsion $R$\+modules~$M$.

 Notice that $\Tor^R_2(M,N)=0$ for all $R$\+modules $M$ and $N$
(since $R$ is a Dedekind domain).
 It follows that $\ct_{Q/R}$ is a left exact functor.
 Furthermore, associativity of the derived tensor product functor
$\ot_R^\boL\:\sD(R\modl)\times\sD(R\modl)\rarrow\sD(R\modl)$ implies
associativity of the functor~$\ct_{Q/R}$, as
$$
 L\ct_{Q/R}(M\ct_{Q/R}N)\simeq H^{-2}(L\ot_R^\boL M\ot_R^\boL N)
 \simeq (L\ct_{Q/R}M)\ct_{Q/R}N
$$
for all torsion $R$\+modules $L$, $M$, and~$N$.
 So the category $R\tors$ with the functor $\ct_{Q/R}$ is
an associative, commutative, and unital tensor category with
the unit object~$Q/R$.

\begin{ex} \label{dedekind=coalgebra-example}
 Let $\kk$~be a field and $R=\kk[x]_{(x)}$ be the localization of
the ring of polynomials $\kk[x]$ in one variable~$x$ at the prime
ideal $(x)=x\kk[x]\subset\kk[x]$.
 So the field of rational functions $Q=\kk(x)=R[x^{-1}]$ can be obtained
by inverting the single element $x\in R$.
 Let $\rC$ be the coalgebra over~$\kk$ whose dual topological algebra
is $\rC^*=\kk[[x]]$, as mentioned in Examples~\ref{coalgebra-ind-scheme}.
 Then the category of torsion $R$\+modules is naturally equivalent to
the category of $\rC$\+comodules, $R\tors\simeq\rC\comodl$.
 This equivalence of abelian categories identifies the tensor structure
$\ct_{Q/R}$ on $R\tors$ with the tensor structure $\oc_\rC$ on
$\rC\comodl$.
\end{ex}

 Let $R$ be a Dedekind domain.
 Then the homological dimension of the abelian category $R\modl$ is
equal to~$1$, hence the homological dimension of the abelian category
$R\tors$ is equal to~$1$ as well.
 One easily concludes that the unbounded derived category $\sD(R\tors)$
is equivalent to the homotopy category of complexes of injecitves
$\sK(R\tors_\inj)$.
 Similarly, the derived category $\sD(R\modl)$ is equivalent to
the homotopy category $\sK(R\modl_\inj)$.
 It follows that $\sD(R\tors)$ is a full subcategory in $\sD(R\modl)$
(see~\cite[Theorem~6.6(a)]{PMat} for a more general result).

 The torsion $R$\+module $Q/R$ is an injective cogenerator of
$R\tors$; moreover, a torsion $R$\+module is injective if and only if
it is a direct summand of a direct sum of copies of~$Q/R$.
 In fact, the injective $R$\+module $Q/R$ is a direct sum of injective
envelopes of the simple $R$\+modules $R/m$, where $m$~ranges over
the maximal ideals of $R$ (one copy of each); so in the context of
Example~\ref{krulldim1-torsion-ind-scheme}(4) one can take $E=Q/R$
for a Dedekind domain`$R$.
 For a given torsion $R$\+module $J$, the torsion product functor
$J\ct_{Q/R}{-}\,\:R\tors\rarrow R\tors$ is exact if and only if $J$
is injective.

 The functor of torsion product of complexes of torsion modules
$\ct_{Q/R}\:\sC(R\tors)\times\sC(R\tors)\rarrow\sC(R\tors)$ is 
constructed in the obvious way (using the totalization by taking
the direct sums along the diagonals of the bicomplex).
 To define the right derived functor of torsion product
$$
 \ct_{Q/R}^\boR\:\sD(R\tors)\times\sD(R\tors)\lrarrow\sD(R\tors),
$$
suppose that we are given two complexes of torsion $R$\+modules
$M^\bu$ and $N^\bu$.
 Let $K^\bu$ and $J^\bu$ be complexes of torsion $R$\+modules endowed
with quasi-isomorphisms of complexes of torsion $R$\+modules
$M^\bu\rarrow K^\bu$ and $N^\bu\rarrow J^\bu$.
 Then the derived torsion product is defined as the object
$$
 M^\bu\ct_{Q/R}^\boR N^\bu = K^\bu\ct_{Q/R}J^\bu\simeq
 M^\bu\ct_{Q/R}J^\bu \simeq K^\bu\ct_{Q/R}N^\bu \in\sD(R\tors),
$$
similarly to Example~\ref{cotensor-over-coalgebra}(4).

 The right derived torsion product functor agrees with the left derived
tensor product: restricting the derived tensor product functor
$\ot_R^\boL\:\sD(R\modl)\times\sD(R\modl)\rarrow\sD(R\modl)$ to the full
subcategory $\sD(R\tors)\times\sD(R\tors)\subset\sD(R\modl)\times
\sD(R\modl)$, one obtains the functor $\ct_{Q/R}^\boR\:
\sD(R\tors)\times\sD(R\tors)\rarrow\sD(R\tors)$.
 This comparison holds, essentially, because one has $M\ot_RJ=0$ for
any torsion $R$\+module $M$ and any injective torsion $R$\+module~$J$.

\begin{prop}
 Let $R$ be a Dedekind domain and\/ $\X=\ilim_{I\in\Gamma}X_I
=\Spi\varprojlim_{I\in\Gamma}R/I$ be the related ind-Artinian ind-scheme
from Examples~\ref{krulldim1-torsion-ind-scheme}.
 (Here $\Gamma$ is the poset of all nonzero ideals in $R$ in the reverse
inclusion order, and $X_I=\Spec R/I$.)
 Let $\rD^\bu=\rE$ be the one-term dualizing complex of\/ $\X$ 
corresponding to the injective torsion $R$\+module $E=Q/R$.
 Then the equivalence of (co)derived categories\/ $\sD(\X\tors)\simeq
\sD(R\tors)$ induced by the equivalence of abelian categories\/ $\X\tors
\simeq R\tors$ transforms the cotensor product functor\/ $\oc_{\rD^\bu}$
\,\eqref{coderived-torsion-cotensor-product} from
Section~\ref{cotensor-product-construction-subsecn} into the right
derived torsion product functor~$\ct_{Q/R}^\boR$. \hfuzz=1.3pt
\end{prop}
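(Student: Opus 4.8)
The plan is to follow the model of Example~\ref{cotensor-over-coalgebra}(7), transporting the explicit description of the cotensor product through the chain of equivalences assembled in Examples~\ref{krulldim1-torsion-ind-scheme} and reducing the statement to a single natural isomorphism of $R$\+modules. Since the homological dimension of $R\tors$ equals~$1$, and hence so does that of $\X\tors$ via $\X\tors\simeq R\tors$, every acyclic complex in either category is coacyclic; thus $\sD^\co(\X\tors)=\sD(\X\tors)$ and $\sD^\co(R\tors)=\sD(R\tors)$, and a morphism with coacyclic cone is simply a quasi-isomorphism. For the special case $R=\kk[x]_{(x)}$ the assertion already follows by combining Example~\ref{dedekind=coalgebra-example} with Example~\ref{cotensor-over-coalgebra}(7), but for a general Dedekind domain I would give the argument below.

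First I would translate the defining formula for $\oc_{\rD^\bu}$. By \eqref{coderived-torsion-cotensor-product}, choosing a quasi-isomorphism $\rN^\bu\rarrow\rJ^\bu$ into a complex of injective quasi-coherent torsion sheaves, one has $\rM^\bu\oc_{\rD^\bu}\rN^\bu\simeq\rM^\bu\ot_\X\fHom_{\X\qc}(\rE,\rJ^\bu)$. Under the equivalence $\X\tors\simeq R\tors$ the complex $\rM^\bu$ corresponds to a complex of torsion modules $M^\bu$ and $\rJ^\bu$ to an injective resolution $J^\bu$ of $N^\bu$; by the equivalence \eqref{krulldim1-injective-flat-correspondence}, as read off in the module picture in Example~\ref{krulldim1-torsion-ind-scheme}(5), the complex $\fHom_{\X\qc}(\rE,\rJ^\bu)$ corresponds to the complex of flat $S$\+reduced cotorsion modules $\Hom_R(E,J^\bu)$; and by the last paragraph of Example~\ref{krulldim1-torsion-ind-scheme}(3) the functor $\ot_\X$ corresponds to $\ot_R$. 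Hence $\rM^\bu\oc_{\rD^\bu}\rN^\bu$ is carried to $M^\bu\ot_R\Hom_R(E,J^\bu)$.

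The crux is then the natural isomorphism, for a torsion module $M$ and an injective torsion module $J$,
\[
 M\ot_R\Hom_R(E,J)\simeq\Tor^R_1(M,J)=M\ct_{Q/R}J.
\]
To prove it I would write $J\simeq E\ot_R F$ with $F=\Hom_R(E,J)$ flat, using \eqref{krulldim1-injective-flat-correspondence}, and tensor the flat resolution $0\rarrow R\rarrow Q\rarrow E\rarrow0$ of $E=Q/R$ with $F$ to obtain a short exact sequence $0\rarrow F\rarrow Q\ot_R F\rarrow J\rarrow0$ of $R$\+modules. The associated long exact $\Tor$ sequence, together with $\Tor^R_1(M,Q\ot_R F)=0$ (the module $Q\ot_R F$ is a $Q$\+vector space, hence flat) and $M\ot_R(Q\ot_R F)=(M\ot_R Q)\ot_R F=0$ (as $M$ is torsion), yields $\Tor^R_1(M,J)\simeq M\ot_R F$ naturally. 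Applying this termwise to the bicomplex $M^p\ot_R\Hom_R(E,J^q)$, whose totalization by diagonal coproducts is exactly how both $M^\bu\ot_R\Hom_R(E,J^\bu)$ and $M^\bu\ct_{Q/R}J^\bu$ are formed, identifies the two complexes; and since $\ct_{Q/R}$ is exact in the injective-torsion variable, $M^\bu\ct_{Q/R}J^\bu$ computes $M^\bu\ct^\boR_{Q/R}N^\bu$ by its very definition. This gives the asserted transformation of functors.

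I expect the computation of the key isomorphism to be easy; the real work, and thus the step to verify most carefully, is that all of the identifications above are natural and compatible with the tensor structures — that the isomorphism $M\ot_R\Hom_R(E,J)\simeq\Tor^R_1(M,J)$ is compatible with the differentials of $M^\bu$ and $J^\bu$, and that the two totalization conventions (coproducts along the diagonals) agree — so that the termwise isomorphism assembles into an isomorphism of complexes functorial in $\rM^\bu$ and $\rN^\bu$.
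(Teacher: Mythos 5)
Your proof is correct, and its overall architecture is the same as the paper's: both arguments transport the defining formula for $\oc_{\rD^\bu}$ through the equivalences of Examples~\ref{krulldim1-torsion-ind-scheme}, and both reduce the whole statement to a single natural isomorphism $M\ct_{Q/R}J\simeq M\ot_R\Hom_R(Q/R,J)$ for a torsion module $M$ and an injective torsion module $J$, with the Harrison--Matlis isomorphism $J\simeq Q/R\ot_R\Hom_R(Q/R,J)$ as a shared ingredient. The difference lies in how this key isomorphism is established. The paper obtains it as the composition
$$
 M\ct_{Q/R}J \simeq M\ct_{Q/R}\bigl(Q/R\ot_R\Hom_R(Q/R,J)\bigr)
 \simeq \bigl(M\ct_{Q/R}Q/R\bigr)\ot_R\Hom_R(Q/R,J)
 \simeq M\ot_R\Hom_R(Q/R,J),
$$
where the middle step invokes the flat case of a separately proved associativity morphism~\eqref{torsion-tensor-assoc} relating $\ct_{Q/R}$ and $\ot_R$ (whose proof is itself a resolution-and-induction argument), and the last step is the unitality $M\ct_{Q/R}Q/R\simeq M$. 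You instead tensor the flat resolution $0\rarrow R\rarrow Q\rarrow Q/R\rarrow0$ with $F=\Hom_R(Q/R,J)$ and extract $\Tor_1^R(M,J)\simeq M\ot_RF$ from the long exact sequence of Tor, using $\Tor_1^R(M,\>Q\ot_RF)=0$ and $M\ot_RQ\ot_RF=0$; this computation is correct, and the naturality you rightly single out for scrutiny is immediate, since the connecting homomorphism of the Tor sequence is natural with respect to morphisms of short exact sequences, so the termwise isomorphisms are compatible with the differentials of $M^\bu$ and $J^\bu$ and assemble over the diagonal-coproduct totalizations. Your route is thus more elementary and self-contained, bypassing the general associativity lemma altogether; what the paper's route buys in exchange is the associativity statement~\eqref{torsion-tensor-assoc} itself, a reusable piece of structure that makes the comparison formally parallel to the contramodule computation for coalgebras in Example~\ref{cotensor-over-coalgebra}(8).
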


\begin{proof}
 One can argue similarly to Example~\ref{cotensor-over-coalgebra}(7),
reducing the question to the case of an Artinian scheme $R/I$, but
we prefer to spell out an argument in the spirit of
Example~\ref{cotensor-over-coalgebra}(8), working with special
classes of $R$\+modules and the ind-scheme $\X$ as a whole.
 In this context, the desired comparison is expressed by the composition
of the natural isomorphisms of torsion $R$\+modules
\begin{multline*}
 M\ct_{Q/R}J \simeq M\ct_{Q/R}(Q/R\ot_R\Hom_R(Q/R,J)) \\
 \simeq (M\ct_{Q/R}Q/R)\ot_R\Hom_R(Q/R,J)\simeq
 M\ot_R\Hom_R(Q/R,J),
\end{multline*}
which hold for any torsion $R$\+module $M$ and any injective torsion
$R$\+module~$J$.

 Here the natural isomorphism $J\simeq Q/R\ot_R\Hom_R(Q/R,J)$ for
a divisible torsion $R$\+module $J$ is due to
Harrison~\cite[Proposition~2.1]{Harr} and
Matlis~\cite[Theorem~3.4]{Mat}, while the middle (associativity)
isomorphism is provided by part~(b) of the next lemma.
\end{proof}

\begin{lem}
 Let $R$ be a Dedekind domain.
 Then, for any torsion $R$\+modules $M$ and $E$, and any
$R$\+module $P$, there is a natural homomorphism of torsion $R$\+modules
\begin{equation} \label{torsion-tensor-assoc}
 (M\ct_{Q/R}E)\ot_RP\lrarrow M\ct_{Q/R}(E\ot_RP),
\end{equation}
which is an isomorphism whenever either
\textup{(a)}~$M$~is injective, or \textup{(b)}~$P$~is flat.
\end{lem}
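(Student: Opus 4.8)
The plan is to recognize \eqref{torsion-tensor-assoc} as a special case of the tautological natural map $H_n(C_\bu)\ot_R P\rarrow H_n(C_\bu\ot_R P)$, valid for any complex $C_\bu$ of $R$\+modules and any $R$\+module $P$ (send the class of a cycle $z$ to the class of $z\ot p$), and then to settle the two isomorphism claims by evaluating this map on convenient resolutions. To build the map, choose a flat resolution $F_\bu\rarrow M$; since $R$ is a Dedekind domain it is hereditary, so one may even take $F_\bu$ of length one. As $F_\bu$ is flat, $H_n(F_\bu\ot_R E)=\Tor^R_n(M,E)$, and in particular $H_1(F_\bu\ot_R E)=M\ct_{Q/R}E$. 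Likewise $(F_\bu\ot_R E)\ot_R P=F_\bu\ot_R(E\ot_R P)$ computes $\Tor^R_n(M,E\ot_R P)$, so its $H_1$ is $M\ct_{Q/R}(E\ot_R P)$. Taking $C_\bu=F_\bu\ot_R E$ thus yields \eqref{torsion-tensor-assoc}. The cycle map is functorial in the complex $C_\bu$ and in $P$, and it is independent of the chosen resolution: a comparison map between two flat resolutions induces a quasi-isomorphism after $\ot_R E$ and after $\ot_R E\ot_R P$ (flat resolutions compute $\Tor$), and these fit into a commuting square with the two cycle maps, forcing them to agree. This also shows the map is natural in $M$ and $E$.

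Part~(b) is then immediate. If $P$ is flat, the functor ${-}\ot_R P$ is exact and hence commutes with the formation of homology of $C_\bu$; so $H_1(C_\bu)\ot_R P\rarrow H_1(C_\bu\ot_R P)$ is an isomorphism, which is exactly \eqref{torsion-tensor-assoc}.

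For part~(a) I would first reduce to the single test module $M=Q/R$. A torsion $R$\+module is injective precisely when it is a direct summand of a direct sum of copies of $Q/R$. Both $\Tor^R_1({-},E)$ and ${-}\ot_R P$ commute with arbitrary direct sums, and \eqref{torsion-tensor-assoc} is natural in $M$; hence for $M$ a direct summand of $G=\bigoplus Q/R$, the instance of the map for $M$ is a retract of the instance for $G$, and the latter is the direct sum of the instances for $Q/R$. It therefore suffices to prove the claim for $M=Q/R$, where I would use the flat resolution $0\rarrow R\rarrow Q\rarrow Q/R\rarrow0$. Then $C_\bu=(E\rarrow Q\ot_R E)$ in homological degrees $1$ and $0$; since $E$ is torsion we have $Q\ot_R E=0$, so $C_\bu$ is just $E$ placed in degree~$1$ and $H_1(C_\bu)=E$, recovering $Q/R\ct_{Q/R}E\simeq E$. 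Tensoring with $P$ again annihilates the degree\+$0$ term because $Q\ot_R E\ot_R P=0$, so $C_\bu\ot_R P$ is $E\ot_R P$ in degree~$1$ and $H_1(C_\bu\ot_R P)=E\ot_R P$. Under these identifications the map \eqref{torsion-tensor-assoc} is $z\ot p\mapsto z\ot p$, i.e.\ the identity of $E\ot_R P$, hence an isomorphism.

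The main obstacle is not any single computation but the bookkeeping ensuring that the abstractly defined map really \emph{is} the identity of $E\ot_R P$ in the case $M=Q/R$ (rather than merely having isomorphic source and target), and that it is genuinely compatible with the passage to infinite direct sums and direct summands used in the reduction. Once the map is pinned down to be natural in $M$ and independent of the resolution (so that the flat resolution $0\rarrow R\rarrow Q\rarrow Q/R\rarrow0$ may be used to evaluate it), both of these reduce to naturality of the cycle map, and the flatness case~(b) together with the $Q/R$ computation supply the two isomorphism statements.
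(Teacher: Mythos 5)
Your proof is correct, but it takes a genuinely different route from the paper's. Writing $l(M,P)$ and $r(M,P)$ for the left- and right-hand sides of~\eqref{torsion-tensor-assoc}, the paper resolves $P$ rather than $M$: it notes the obvious isomorphism $l(M,F)\simeq r(M,F)$ for free $F$, presents $P$ as the cokernel of a morphism $f\:F'\rarrow F''$ of free modules, and defines the map as $l(M,P)\simeq\coker(l(M,f))\simeq\coker(r(M,f))\rarrow r(M,P)$, using right exactness of $({-})\ot_RP$ on the left-hand side. Part~(a) is then immediate, because exactness of $M\ct_{Q/R}{-}$ for injective $M$ makes $\coker(r(M,f))\rarrow r(M,P)$ an isomorphism; and part~(b) is deduced \emph{from} part~(a) by writing $M$ as the kernel of a morphism $g\:K'\rarrow K''$ of injective torsion modules and combining left exactness of $\ct_{Q/R}$ with flatness of~$P$. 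You instead resolve $M$, identifying~\eqref{torsion-tensor-assoc} with the universal-coefficient cycle map $H_1(C_\bu)\ot_RP\rarrow H_1(C_\bu\ot_RP)$ for $C_\bu=F_\bu\ot_RE$; this makes~(b) a one-line standard fact, logically independent of~(a), while~(a) becomes a reduction to $M=Q/R$ (legitimate: the paper itself records that injective torsion modules are precisely the direct summands of direct sums of copies of $Q/R$, and a natural transformation between coproduct-preserving additive functors respects coproducts and retracts) followed by the explicit computation with $0\rarrow R\rarrow Q\rarrow Q/R\rarrow0$ and $Q\ot_RE=0$. The paper's route is more formal and more general---it never invokes the structure theory of injective torsion modules, and indeed the same argument is cited there as a special case of~\cite[Lemma~1.7.2(a)]{Pweak}---whereas your route makes the content concrete: (b)~is trivial, the isomorphism $Q/R\ct_{Q/R}E\simeq E$ falls out of the same computation, and the dependence of~(b) on~(a) disappears. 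The resolution-independence and naturality bookkeeping you flag is indeed the only delicate point, and your treatment of it is adequate; note also that your map agrees with the paper's, since both are natural in $P$, coincide on free modules, and the left-hand side is right exact in~$P$.
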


\begin{proof}
 This result is analogous to~\cite[Proposition~3.1.1]{Prev}
(cf.\ Example~\ref{dedekind=coalgebra-example}); it is also
a particular case of~\cite[Lemma~1.7.2(a)]{Pweak}.
 Denote the left-hand side of~\eqref{torsion-tensor-assoc} by
$l(M,P)$ and the right-hand side by $r(M,P)$.

 There is an obvious isomorphism~\eqref{torsion-tensor-assoc} when
$P=F$ is a free $R$\+module with a fixed set of free generators (since
the functor~$\ct_{Q/R}$ preserves direct sums).
 It is straightforward to check that this isomorphism is functorial
with respect to arbitrary morphisms of free $R$\+modules~$F$.

 Now let $P$ be the cokernel of a morphism of free $R$\+modules
$f\:F'\rarrow F''$.
 Then there is a natural isomorphism $\coker(l(M,f))\simeq
l(M,P)$ and a natural morphism $\coker(r(M,f))\rarrow r(M,P)$.
 As we already have a natural isomorphism of morphisms $l(M,f)
\simeq r(M,f)$, the desired morphism $l(M,P)\rarrow r(M,P)$
is obtained.
 Its functoriality is again straightforward.

 Now we can prove part~(a).
 Since $M$ is injective and therefore the functor $M\ct_{Q/R}{-}$
is exact, the natural morphism $\coker(r(M,f))\rarrow r(M,P)$ is
an isomorphism.
 Hence $l(M,P)\rarrow r(M,P)$ is an isomorphism.

 Let $M$ be the kernel of a morphism of injective torsion
$R$\+modules $g\:K'\rarrow K''$.
 Then there is a natural morphism $l(M,P)\rarrow\ker(l(g,P))$
and a natural isomorphism $r(M,P)\simeq\ker(r(g,P))$
(since the functor $\ct_{Q/R}$ is left exact).

 Now we can prove part~(b).
 Since $P$ is flat, the morphism $l(M,P)\rarrow\ker(l(g,P))$
is also an isomorphism.
 As we already know from part~(a) that the natural morphism of
morphisms $l(g,P)\rarrow r(g,P)$ is an isomorphism, it follows that
the morphism of torsion modules $l(M,P)\rarrow r(M,P)$
is an isomorphism.
\end{proof}

\begin{ex} \label{general-ind-Artinian-ind-scheme}
 Quite generally, let $\X$ be an ind-Artinian ind-scheme, and let
$\fR$ be the related pro-Artinian topological commutative ring such that
$\X=\Spi\fR$, as per Example~\ref{ind-Artinian-ind-schemes}(3).
 According to Section~\ref{torsion-ind-affine-subsecn}(6), the abelian
category $\X\tors$ is equivalent to the abelian category of discrete
$\fR$\+modules $\fR\discr$.

 The abelian category $\fR\discr$ does not have a natural injective
cogenerator which would correspond to a one-term dualizing complex
on~$\X$.
 Such an injective discrete $\fR$\+module, namely, the direct sum of
injective envelopes of all the simple objects in $\fR\discr$, does
exist, but it is only defined up to a nonunique isomorphism.

 In the memoir~\cite[Section~1]{Pweak}, the category of
\emph{$\fR$\+comodules} $\fR\comodl$ is defined in such a way that
it comes endowed with a natural injective cogenerator $\C(\fR)$,
similar to the injective cogenerator $\rC$ of the category of
comodules over a cocommutative coalgebra $\rC$, as in
Examples~\ref{cotensor-over-coalgebra}.
 Accordingly, there is a naturally defined cotensor product functor
$\oc_\fR\:\fR\comodl\times\fR\comodl\rarrow\fR\comodl$, making
$\fR\comodl$ a tansor category with the unit object~$\C(\fR)$.

 The choice of an injective object $\rC\in\fR\discr$ isomorphic to
a direct sum of injective cogenerators of simple discrete modules,
induces an equivalence of categories $\fR\discr\simeq\fR\comodl$
taking $\rC\in\fR\discr$ to $\C\in\fR\comodl$.
 So $\fR\discr$ becomes a tensor category with the unit object $\rC$;
the cotensor product operation $\oc_\fR=\oc_\rC$ defining this tensor
structure is described in~\cite[Section~1.9]{Pweak}.
 The injective object $\rC\in\fR\discr$ corresponds to a one-term
dualizing complex on~$\X$.

 Similarly to Example~\ref{cotensor-over-coalgebra}(6), the category
$\X\flat$ can be naturally identified with the category of flat,
or which is the same, projective contramodules over the topological
ring $\fR$, that is $\X\flat\simeq\fR\flat=\fR\contra_\proj$
(see~\cite[Section~2]{Pproperf} for the definition of a flat
$\fR$\+contramodule and~\cite[Lemma~1.9.1(a)]{Pweak}
or~\cite[Corollary~8.4 or Theorem~10.1(vi)$\Rightarrow$(iii)]{Pproperf}
for a proof that all flat contramodules are projective over
a pro-Artinian commutative topological ring~$\fR$).
 In particular, the exact category structure on $\X\flat$ is split.

 Having chosen a one-term dualizing complex $\rC\in\X\tors_\inj$, one
obtains an equivalence of additive categories
as in Example~\ref{cotensor-over-coalgebra}(5),
$$
 \fHom_{\X\qc}(\rC,{-})\:\X\tors_\inj\,\simeq\,
 \X\flat\,:\!\rC\ot_\X{-},
$$
which corresponds to the equivalence of additive categories
$\fR\comodl_\inj\simeq\fR\contra_\proj$
\,\cite[Proposition~1.5.1]{Pweak} under the identifications
$\X\tors\simeq\fR\comodl$ and $\X\flat\simeq\fR\contra_\proj$.

 Similarly to Example~\ref{cotensor-over-coalgebra}(4), one constructs
the right derived functor of cotensor product of discrete
$\fR$\+modules
$$
 \oc_\rC^\boR\:\sD^\co(\fR\discr)\times\sD^\co(\fR\discr)\lrarrow
 \sD^\co(\fR\discr).
$$
 The cotensor product functor $\oc_{\rD^\bu}\:\sD^\co(\X\tors)\times
\sD^\co(\X\tors)\rarrow\sD^\co(\X\tors)$
\,\eqref{coderived-torsion-cotensor-product}
from Section~\ref{cotensor-product-construction-subsecn}
for $\rD^\bu=\rC$ is transformed into the functor $\oc_\rC^\boR$
by the equivalence of coderived categories $\sD^\co(\X\tors)\simeq
\sD^\co(\fR\discr)$ induced by the equivalence of abelian
categories $\X\tors\simeq\fR\discr$.
\end{ex}

\begin{ex}
 For an ind-affine ind-Noetherian $\aleph_0$\+ind-scheme with
a dualizing complex~$\rD^\bu$, the construciton of the cotensor
product functor~$\oc_{\rD^\bu}$ in
Section~\ref{cotensor-product-construction-subsecn} agrees
with the one in~\cite[Section~D.3]{Pcosh}, as one can see by
comparing the two constructions in light of the discussion in
Examples~\ref{countable-flat-pro-sheaves-flat-contramodules}.
\end{ex}

\Section{Ind-Schemes of Ind-Finite Type and the $!$-Tensor Product}
\label{ind-finite-type-secn}

 Throughout this section, $\kk$~denotes a fixed ground field.
 Given two ind-schemes $\X'$ and $\X''$ (or two schemes $X'$ and $X''$)
over~$\kk$, we denote the fibered product $\X'\times_{\Spec\kk}\X''$
(or $X'\times_{\Spec\kk}X''$) simply by $\X'\times_\kk\X''$
(or $X'\times_\kk X''$) for brevity.
 (See Sections~\ref{ind-objects-subsecn}\+-\ref{ind-schemes-subsecn}
for a discussion of fibered products of ind-schemes.)
 
 Let $\X$ be an ind-separated ind-scheme of ind-finite type
over the field~$\kk$.
 The aim of this section is to describe the cotensor product functor
$\oc_{\rD^\bu}\:\sD^\co(\X\tors)\times\sD^\co(\X\tors)\rarrow
\sD^\co(\X\tors)$, for a suitable choice of the dualizing complex
$\rD^\bu$ on $\X$, as the derived $!$\+restriction to the diagonal
$\Delta_\X\:\X\rarrow\X\times_\kk\X$ of the external tensor product
on $\X\times_\kk\X$ of the two given complexes of quasi-coherent
torsion sheaves on~$\X$.

\subsection{External tensor product of quasi-coherent sheaves}
 Let $X'$ and $X''$ be two schemes over~$\kk$.
 Consider the Cartesian product $X'\times_\kk X''$, and let
$p'\:X'\times_\kk X''\rarrow X'$ and $p''\:X'\times_\kk X''\rarrow X''$
denote the natural projections.

 Let $\M'$ be a quasi-coherent sheaf over $X'$ and $\M'$ be
a quasi-coherent sheaf over~$X''$.
 Then the \emph{external tensor product} $\M'\bt_\kk\M''$ of
the quasi-coherent sheaves $\M'$ and $\M'' $ is a quasi-coherent
sheaf on $X'\times_\kk X''$ defined by the formula
$$
 \M'\bt_\kk\M''=p'{}^*\M'\ot_{\cO_{X'\times_\kk X''}}p''{}^*\M''.
$$

\begin{lem} \label{qcoh-flat-external-product}
 Let $\F'$ be a flat quasi-coherent sheaf over $X'$ and $\F''$ be
a flat quasi-coherent sheaf over~$X''$.
 Then $\F'\bt_\kk\F''$ is a flat quasi-coherent sheaf over
$X'\times_\kk X''$.
\end{lem}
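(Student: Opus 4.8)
The plan is to deduce flatness of $\F'\bt_\kk\F''$ from the behaviour of flatness under inverse image (already recorded in Section~\ref{flat-pro-sheaves-subsecn}) together with the elementary fact that a tensor product of two flat sheaves is flat, reducing the whole matter to a one-line statement about modules over a commutative ring.

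First I would unwind the definition $\F'\bt_\kk\F''=p'{}^*\F'\ot_{\cO_{X'\times_\kk X''}}p''{}^*\F''$, where $p'$ and $p''$ are the two projections from $X'\times_\kk X''$. Since $p'\:X'\times_\kk X''\rarrow X'$ is a morphism of schemes and the inverse image functor $p'{}^*\:X'\qcoh\rarrow(X'\times_\kk X'')\qcoh$ takes flat quasi-coherent sheaves to flat quasi-coherent sheaves, the sheaf $p'{}^*\F'$ is flat on $X'\times_\kk X''$; likewise $p''{}^*\F''$ is flat. Thus the desired statement is subsumed by the assertion that the $\cO$\+module tensor product of two flat quasi-coherent sheaves on a single scheme is again flat.

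It remains to verify this last assertion, and here there is no genuine obstacle, as the content is entirely formal. Flatness of a quasi-coherent sheaf is a local property, so I would pass to a covering of $X'\times_\kk X''$ by affine open subschemes and reduce to the affine case. There the claim becomes the familiar fact that if $P$ and $Q$ are flat modules over a commutative ring $R$, then $P\ot_RQ$ is a flat $R$\+module: indeed the functor $(P\ot_RQ)\ot_R({-})\simeq P\ot_R(Q\ot_R({-}))$ is a composition of two exact functors, hence exact. The only points requiring a moment's care are the bookkeeping identity $\F'\bt_\kk\F''=p'{}^*\F'\ot p''{}^*\F''$ and the appeal to the previously stated preservation of flatness under $p'{}^*$ and $p''{}^*$, after which the argument is complete.
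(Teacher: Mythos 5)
Your proposal is correct and follows exactly the paper's route: unwind $\F'\bt_\kk\F''=p'{}^*\F'\ot_{\cO_{X'\times_\kk X''}}p''{}^*\F''$, use that inverse images preserve flatness, and that the tensor product of two flat quasi-coherent sheaves is flat. The paper states this in one sentence; you merely spell out the (local, affine) verification of the last fact, which is a harmless elaboration of the same argument.
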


\begin{proof}
 Follows from immediately from the definition of the external tensor
product $\F'\bt_\kk\F''$ and the facts that the inverse images
$p'{}^*$, $p''{}^*$ and the tensor product
$\ot_{\cO_{X'\times_\kk X''}}$ preserve flatness
of quasi-coherent sheaves.
\end{proof}

\begin{lem} \label{qcoh-external-tensor-exact}
 The external tensor product functor
$$
 \bt_\kk\:X'\qcoh\times X''\qcoh\lrarrow (X'\times_\kk X'')\qcoh
$$
is exact and preserves coproducts (hence all colimits) in each of its
arguments.
\end{lem}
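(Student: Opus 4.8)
The plan is to reduce the statement to standard exactness and colimit-preservation properties of the inverse image and tensor product functors that are already in play. Recall that the external tensor product is defined by $\M'\bt_\kk\M''=p'{}^*\M'\ot_{\cO_{X'\times_\kk X''}}p''{}^*\M''$, so it is literally a composition of three operations: the inverse image $p'{}^*$ applied to the first argument, the inverse image $p''{}^*$ applied to the second argument, and the tensor product on $X'\times_\kk X''$. By symmetry it suffices to treat one argument, say the first, holding $\M''$ fixed; so I would fix a quasi-coherent sheaf $\M''$ on $X''$ and analyze the functor $\M'\longmapsto p'{}^*\M'\ot_{\cO_{X'\times_\kk X''}}p''{}^*\M''$ from $X'\qcoh$ to $(X'\times_\kk X'')\qcoh$.

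First I would record that the inverse image functor $p'{}^*$, being a left adjoint (to $p'_*$), is right exact and preserves all colimits, in particular coproducts; this is noted already in Section~\ref{torsion-direct-limits-subsecn}. The more delicate point is exactness on the \emph{left}. Here I would invoke flatness of the projection: the morphism $p'\:X'\times_\kk X''\rarrow X'$ is the base change of the structure morphism $X''\rarrow\Spec\kk$ along $X'\rarrow\Spec\kk$, and since every $\kk$\+scheme is flat over the field~$\kk$, the projection $p'$ is a flat morphism. Consequently $p'{}^*$ is an \emph{exact} functor on quasi-coherent sheaves, not merely right exact. Next, tensoring with the fixed quasi-coherent sheaf $p''{}^*\M''$ preserves coproducts and all colimits automatically (tensor product is a left adjoint in each variable). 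For exactness of this second step I would not need $p''{}^*\M''$ to be flat in general; rather, I would combine the two steps and argue that the composite functor $\M'\mapsto p'{}^*\M'\ot p''{}^*\M''$ sends a short exact sequence $0\to\M'_1\to\M'_2\to\M'_3\to0$ first to the short exact sequence $0\to p'{}^*\M'_1\to p'{}^*\M'_2\to p'{}^*\M'_3\to0$ (by exactness of $p'{}^*$), and then tensor with $p''{}^*\M''$.

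The step I expect to be the main obstacle — or at least the one requiring the most care — is the exactness of the final tensoring operation, since $\ot$ is only right exact in general. The clean way around this is to observe that the functor in question is exact precisely \emph{because} of the flatness built into the external product over a field. Concretely, $p'{}^*\M'\ot_{\cO}p''{}^*\M''$ can be computed locally, where $X'\times_\kk X''$ restricts to $\Spec(A\ot_\kk B)$ and the external product of $M'$ and $M''$ is $M'\ot_\kk M''$ (viewed as an $(A\ot_\kk B)$\+module); and $\ot_\kk$ with a fixed vector space is exact since every $\kk$\+module is flat. Thus exactness reduces, affine-locally, to exactness of $M'\mapsto M'\ot_\kk M''$ over the field~$\kk$, which is immediate. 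Assembling the local computations (using that exactness and coproducts of quasi-coherent sheaves are local properties on a concentrated scheme, as already used repeatedly in the excerpt) yields exactness and coproduct-preservation of $\bt_\kk$ in the first argument, and by the symmetric argument in the second, completing the proof.
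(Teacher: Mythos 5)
Your proof is correct and, at its decisive step, is the same as the paper's: both reduce to the affine case by locality and observe that there the external tensor product is $M'\ot_\kk M''$, which is exact in each argument (and visibly preserves coproducts) because $\kk$~is a field. The preliminary decomposition through $p'{}^*$, $p''{}^*$ and the flatness of the projections is correct but ends up redundant, since the affine-local computation you invoke to handle the tensoring step already proves the whole statement at once.
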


\begin{proof}
 The assertion is local in both $X'$ and $X''$, so it reduces to
the case of affine schemes, for which it means the following.
 Let $R'$ and $R''$ be two commutative $\kk$\+algebras.
 Let $M'$ be an $R'$\+module and $M''$ be an $R''$\+module.
 Then the functor assigning to $M'$ and $M''$
the $(R'\ot_\kk R'')$\+module
\begin{multline*}
 ((R'\ot_\kk R'')\ot_{R'}M')\ot_{R'\ot_\kk R''}
 ((R'\ot_\kk R'')\ot_{R''}M'') \\ \simeq
 (R''\ot_\kk M')\ot_{R'\ot_\kk R''}(R'\ot_\kk M'')
 \simeq M'\ot_\kk M''
\end{multline*}
is exact in each of the arguments.
 The preservation of coproducts is obvious.
\end{proof}

\begin{lem} \label{inverse-image-of-external-product}
 Let $f'\:Y'\rarrow X'$ and $f''\:Y''\rarrow X''$ be two morphisms
of schemes over\/~$\kk$, and let $f=f'\times_\kk f''\:
Y'\times_\kk Y''\rarrow X'\times_\kk X''$ be the induced morphism of
the Cartesian products.
 Let $\M'$ be a quasi-coherent sheaf on $X'$ and $\M''$ be
a quasi-coherent sheaf on~$X''$.
 Then there is a natural isomorphism
$$
 f^*(\M'\bt_\kk\M'')\simeq f'{}^*\M'\bt_\kk f''{}^*\M''
$$
of quasi-coherent sheaves on $Y'\times_\kk Y''$.
\end{lem}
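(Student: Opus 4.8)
The plan is to unwind the definition of the external tensor product and reduce the statement to two standard properties of the inverse image functor recorded or used in Section~\ref{qcoh-sheaves-and-functors-subsecn}: that $f^*$ is a tensor functor (it preserves $\ot_{\cO}$), and the pseudofunctoriality of inverse image, i.e.\ the natural isomorphism $(gh)^*\simeq h^*g^*$ for a composable pair of morphisms. No genuinely new input is needed; the content is entirely formal.

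First I would fix notation for the four projections. Write $p'$, $p''$ for the projections of $X'\times_\kk X''$ onto $X'$ and $X''$, and $q'$, $q''$ for the projections of $Y'\times_\kk Y''$ onto $Y'$ and $Y''$. By the very construction of $f=f'\times_\kk f''$ as a morphism of Cartesian products, the square with vertical maps $q'$, $p'$, top map $f$, and bottom map $f'$ commutes, and likewise for the primed-twice maps; that is, $p'f=f'q'$ and $p''f=f''q''$. Next, using that $f^*$ preserves tensor products of quasi-coherent sheaves, I would compute
$$
 f^*(\M'\bt_\kk\M'')
 =f^*\bigl(p'{}^*\M'\ot_{\cO_{X'\times_\kk X''}}p''{}^*\M''\bigr)
 \simeq f^*p'{}^*\M'\ot_{\cO_{Y'\times_\kk Y''}}f^*p''{}^*\M''.
$$
Then, applying pseudofunctoriality to the two commuting squares, I have $f^*p'{}^*\M'\simeq(p'f)^*\M'=(f'q')^*\M'\simeq q'{}^*f'{}^*\M'$, and likewise $f^*p''{}^*\M''\simeq q''{}^*f''{}^*\M''$. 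Substituting these isomorphisms into the display and recognizing the right-hand side as the external tensor product of $f'{}^*\M'$ and $f''{}^*\M''$ over $Y'\times_\kk Y''$ yields the desired isomorphism.

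I do not expect any real obstacle: the assertion is a direct consequence of tensor-functoriality and pseudofunctoriality of $f^*$. The only point deserving a (routine) word is the naturality and coherence of the composite isomorphism in $\M'$ and $\M''$, which holds because each constituent isomorphism is natural; I would simply note that it suffices to exhibit the isomorphism and leave the straightforward compatibility checks to the reader, as is customary for such base-change/projection-type identities.
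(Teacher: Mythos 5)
Your proof is correct and is essentially identical to the paper's: both unwind the definition of $\bt_\kk$, use that $f^*$ is a tensor functor, and apply the identities $p'f=f'q'$ and $p''f=f''q''$ together with pseudofunctoriality of inverse images to conclude.
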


\begin{proof}
 Let $p^{(s)}\:X'\times_\kk X''\rarrow X^{(s)}$ and
$q^{(s)}\:Y'\times_\kk Y''\rarrow\Spec Y^{(s)}$, \ $s=1$, $2$,
be the natural morphisms.
 Then one has
\begin{multline*}
 f^*(\M'\bt_\kk\M'')=
 f^*(p'{}^*\M'\ot_{\cO_{X'\times_\kk X''}}p''{}^*\M'')
 \simeq f^*p'{}^*\M'\ot_{\cO_{Y'\times_\kk Y''}} f^*p''{}^*\M'' \\
 \simeq q'{}^*f'{}^*\M'\ot_{\cO_{Y'\times_\kk Y''}}q''{}^*f''{}^*\M''
 =f'{}^*\M'\bt_\kk f''{}^*\M'',
\end{multline*}
since $p'f=f'q'$ and $p''f=f''q''$.
\end{proof}

\begin{lem} \label{direct-image-of-external-product}
 Let $f'\:Y'\rarrow X'$ and $f''\:Y''\rarrow X''$ be two affine
morphisms of schemes over\/~$\kk$, and let $f=f'\times_\kk f''\:
Y'\times_\kk Y''\rarrow X'\times_\kk X''$ be the induced morphism of
the Cartesian products.
 Let $\N'$ be a quasi-coherent sheaf on $Y'$ and $\N''$ be
a quasi-coherent sheaf on~$Y''$.
 Then there is a natural isomorphism
$$
 f_*(\N'\bt_\kk\N'')\simeq f'_*\N'\bt_\kk f''_*\N''
$$
of quasi-coherent sheaves on $X'\times_\kk X''$.
\end{lem}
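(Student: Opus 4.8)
The plan is to imitate the structure of the proof of Lemma~\ref{inverse-image-of-external-product}, but since the direct image does not distribute over tensor products as simply as the inverse image does, I would factor $f$ into two ``partial'' morphisms and push forward one factor at a time, applying the projection formula (Lemma~\ref{projection-formula}) and affine base change (Lemma~\ref{affine-flat-base-change}(a)) at each stage. Concretely, I would write
$$
 Y'\times_\kk Y''\overset a\rarrow X'\times_\kk Y''\overset b\rarrow
 X'\times_\kk X'',
$$
where $a=f'\times_\kk\id_{Y''}$ and $b=\id_{X'}\times_\kk f''$, so that $f=ba$. Both $a$ and $b$ are affine, being base changes of the affine morphisms $f'$ and~$f''$.

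First I would push forward along~$a$. Writing $q'$, $q''$ for the projections of $Y'\times_\kk Y''$ and $r'$, $r''$ for those of $X'\times_\kk Y''$, one checks that the square with vertices $Y'\times_\kk Y''$, $Y'$, $X'\times_\kk Y''$, $X'$ (horizontal arrows $q'$ and~$r'$, vertical arrows $a$ and~$f'$) is Cartesian, and that $q''=r''a$. Hence $q''{}^*\N''\simeq a^*r''{}^*\N''$, and the projection formula together with the base-change isomorphism $a_*q'{}^*\simeq r'{}^*f'_*$ give
$$
 a_*(\N'\bt_\kk\N'')=a_*(q'{}^*\N'\ot a^*r''{}^*\N'')\simeq
 a_*(q'{}^*\N')\ot r''{}^*\N''\simeq r'{}^*f'_*\N'\ot r''{}^*\N''
 =f'_*\N'\bt_\kk\N''.
$$
Then I would push forward along~$b$ by the symmetric computation. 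Here the relevant Cartesian square has vertices $X'\times_\kk Y''$, $Y''$, $X'\times_\kk X''$, $X''$, and one uses $r'=p'b$ (where $p'$, $p''$ are the projections of $X'\times_\kk X''$) to rewrite $r'{}^*(f'_*\N')\simeq b^*p'{}^*(f'_*\N')$; the projection formula and base change along the affine morphism~$f''$ then yield $b_*(f'_*\N'\bt_\kk\N'')\simeq f'_*\N'\bt_\kk f''_*\N''$. Composing the two steps, $f_*(\N'\bt_\kk\N'')=b_*a_*(\N'\bt_\kk\N'')\simeq f'_*\N'\bt_\kk f''_*\N''$, and since every isomorphism invoked is natural, so is the result.

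The steps are all formal, so there is no deep obstacle; the only thing requiring care is to verify that the two auxiliary squares are genuinely Cartesian and that the projections compose as claimed ($q''=r''a$ and $r'=p'b$), so that Lemmas~\ref{projection-formula} and~\ref{affine-flat-base-change}(a) apply with the correct affine morphisms in the correct roles. An alternative, more computational route would be to observe that the asserted isomorphism is local on $X'$ and~$X''$: the products $U'\times_\kk U''$ of affine open subschemes form a base of $X'\times_\kk X''$, and both sides commute with restriction to such opens by Lemma~\ref{inverse-image-of-external-product} and by flat base change (Lemma~\ref{affine-flat-base-change}(b)) along the flat open immersions. Since $f'$ and $f''$ are affine, the preimages of affine opens are again affine, reducing the statement to the case $X'=\Spec R'$, $X''=\Spec R''$, $Y'=\Spec S'$, $Y''=\Spec S''$. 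There the external tensor product corresponds to $N'\ot_\kk N''$ as computed in the proof of Lemma~\ref{qcoh-external-tensor-exact}, and both sides become the $(R'\ot_\kk R'')$\+module $N'\ot_\kk N''$ obtained by restriction of scalars along $R'\ot_\kk R''\rarrow S'\ot_\kk S''$, making the isomorphism transparent.
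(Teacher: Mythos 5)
Your proposal is correct, but your primary argument is genuinely different from the paper's proof; in fact, the ``alternative, more computational route'' you sketch at the end \emph{is} the paper's proof. The paper simply observes that the assertion is local in $X'$ and $X''$, reduces to affine schemes, and there identifies both sides with the $(R'\ot_\kk R'')$\+module $N'\ot_\kk N''$ obtained by restriction of scalars along $R'\ot_\kk R''\rarrow S'\ot_\kk S''$ --- exactly your closing tautology (you actually supply more justification for the localization step, via Lemma~\ref{inverse-image-of-external-product} and Lemma~\ref{affine-flat-base-change}(b), than the paper does). Your main route instead factors $f=ba$ through the partial morphisms $a=f'\times_\kk\id_{Y''}$ and $b=\id_{X'}\times_\kk f''$ and pushes forward one factor at a time using the projection formula (Lemma~\ref{projection-formula}) and affine base change (Lemma~\ref{affine-flat-base-change}(a)); I checked the two auxiliary squares are indeed Cartesian ($Y'\times_\kk Y''\simeq Y'\times_{X'}(X'\times_\kk Y'')$ and $X'\times_\kk Y''\simeq (X'\times_\kk X'')\times_{X''}Y''$) and the projection identities $q''=r''a$, $r'=p'b$ hold, so both applications of the lemmas are legitimate and the argument goes through. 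What each approach buys: the paper's localization is shorter and reduces everything to a transparent module-theoretic identity, at the cost of an ``essentially local'' claim that one must trust or verify; your factorization argument is purely formal and global, never leaves the category of quasi-coherent sheaves on the ambient schemes, makes naturality manifest, and isolates exactly which structural inputs (projection formula, affine base change) carry the statement --- a form of the argument that adapts more readily to settings where localization is awkward.
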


\begin{proof}
 The assertion is essentially local in $X'$ and $X''$, so it
reduces to the case of affine schemes, for which it means the following
very tautological observation (cf.\ the computation in the proof of
Lemma~\ref{qcoh-external-tensor-exact}).
 Let $R'\rarrow S'$ and $R''\rarrow S''$ be two homomorphisms of
commutative $\kk$\+algebras.
 Let $N'$ be an $S'$\+module and $N''$ be an $S''$\+module; then
$N'\ot_\kk N''$ is an $(S'\ot_\kk S'')$\+module.
 Consider the underlying $R'$\+module of $N'$ and the underlying
$R''$\+module of~$N''$; then the tensor product $N'\ot_\kk N''$
acquires the structure of an $(R'\ot_\kk R'')$\+module.
 The claim is that the latter $(R'\ot_\kk R'')$\+module structure
$N'\ot_\kk N''$ underlies the former $(S'\ot_\kk S'')$\+module structure
with respect to the ring homomorphism $R'\ot_\kk R''\rarrow
S'\ot_\kk S''$ (i.~e., the two $(R'\ot_\kk R'')$\+module structures
on $N'\ot_\kk N''$ coincide).
\end{proof}

\begin{lem} \label{product-of-reasonable}
 Let $Z'$ be a reasonable closed subscheme in a scheme $X'$ over\/~$\kk$
and $Z'$ be a reasonable closed subscheme in a scheme~$Z''$
over\/~$\kk$.
 Then $Z'\times_\kk Z''$ is a reasonable closed subscheme in
the scheme $X'\times_\kk X''$.
\end{lem}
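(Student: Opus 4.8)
The plan is to exhibit the closed immersion $Z'\times_\kk Z''\hookrightarrow X'\times_\kk X''$ as a composition of two closed immersions, each of which is a base change of one of the two given reasonable closed immersions, and then to invoke Lemma~\ref{base-change-composition-reasonable}. Concretely, I would factor it through the intermediate closed subscheme $Z'\times_\kk X''$, writing
$$ Z'\times_\kk Z''\;\hookrightarrow\;Z'\times_\kk X''\;\hookrightarrow\;X'\times_\kk X''. $$

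First I would observe that $Z'\times_\kk X''\simeq(X'\times_\kk X'')\times_{X'}Z'$ is the fibered product of the closed immersion $Z'\rarrow X'$ with the projection $p'\:X'\times_\kk X''\rarrow X'$; hence, since $Z'$ is a reasonable closed subscheme in $X'$, Lemma~\ref{base-change-composition-reasonable}(a) shows that $Z'\times_\kk X''$ is a reasonable closed subscheme in $X'\times_\kk X''$. Similarly, $Z'\times_\kk Z''\simeq(Z'\times_\kk X'')\times_{X''}Z''$ is the base change of the closed immersion $Z''\rarrow X''$ along the projection $Z'\times_\kk X''\rarrow X''$; as $Z''$ is reasonable in $X''$, the same Lemma~\ref{base-change-composition-reasonable}(a) shows that $Z'\times_\kk Z''$ is a reasonable closed subscheme in $Z'\times_\kk X''$.

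Having established that $Z'\times_\kk Z''$ is reasonable in $Z'\times_\kk X''$ and that $Z'\times_\kk X''$ is reasonable in $X'\times_\kk X''$, I would conclude by applying the composition statement Lemma~\ref{base-change-composition-reasonable}(b) to the chain $Z'\times_\kk Z''\subset Z'\times_\kk X''\subset X'\times_\kk X''$, which yields that $Z'\times_\kk Z''$ is a reasonable closed subscheme in $X'\times_\kk X''$, as desired.

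The argument is essentially formal once the two fibered-product identifications are in place, so there is no serious obstacle; the only points requiring (routine) care are verifying that each of the two morphisms in the factorization is indeed a closed immersion --- each being a base change of a closed immersion --- and checking the identities $Z'\times_\kk X''\simeq(X'\times_\kk X'')\times_{X'}Z'$ and $Z'\times_\kk Z''\simeq(Z'\times_\kk X'')\times_{X''}Z''$, which follow from the associativity and commutativity of fibered products over $\Spec\kk$. Alternatively, one could reduce directly to the affine case and note that if $I'\subset R'$ and $I''\subset R''$ are finitely generated ideals, then the kernel of $R'\ot_\kk R''\rarrow(R'/I')\ot_\kk(R''/I'')$ is generated by the (finite) images of the generators of $I'$ and $I''$; but the base-change route via Lemma~\ref{base-change-composition-reasonable} is cleaner.
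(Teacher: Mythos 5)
Your proof is correct and follows essentially the same route as the paper's: the paper likewise factors the closed immersion as $Z'\times_\kk Z''\rarrow Z'\times_\kk X''\rarrow X'\times_\kk X''$, identifies $Z'\times_\kk Z''=(Z'\times_\kk X'')\times_{X''}Z''$ (and implicitly $Z'\times_\kk X''=(X'\times_\kk X'')\times_{X'}Z'$), and applies Lemma~\ref{base-change-composition-reasonable}. Your write-up just spells out the two base-change identifications and the appeal to parts~(a) and~(b) more explicitly than the paper does.
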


\begin{proof}
 To deduce the assertion from
Lemma~\ref{base-change-composition-reasonable}, decompose the closed
immersion $Z'\times_\kk Z''\rarrow X'\times_\kk X''$ as
$Z'\times_\kk Z''\rarrow Z'\times_\kk X''\rarrow X'\times_\kk X''$
and notice that $Z'\times_\kk Z''=(Z'\times_\kk X'')\times_{X''}Z''$.
\end{proof}

\begin{lem} \label{shriek-of-external-product}
 Let $Z'$ be a reasonable closed subscheme in a scheme $X'$ over\/~$\kk$
and $Z'$ be a reasonable closed subscheme in a scheme~$Z''$
over\/~$\kk$.
 Let $i'\:Z'\rarrow X'$ and $i''\:Z''\rarrow X''$ be the closed
immersion morphisms, and let $i=i'\times_\kk i''\: Z'\times_\kk Z''
\rarrow X'\times_\kk X''$ be the induced closed immersion of
the Cartesian products.
 Let $\M'$ be a quasi-coherent sheaf on $X'$ and $\M''$ be
a quasi-coherent sheaf on~$X''$.
 Then there is a natural isomorphism
$$
 i^!(\M'\bt_\kk\M'')\simeq i'{}^!\M'\bt_\kk i''{}^!\M''
$$
of quasi-coherent sheaves on $Z'\times_\kk Z''$.
\end{lem}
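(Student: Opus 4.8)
The plan is to reduce the statement to a module computation over affine schemes, after first fixing the natural morphism whose invertibility must be checked. First I would observe that $Z'\times_\kk Z''$ is a reasonable closed subscheme of $X'\times_\kk X''$ by Lemma~\ref{product-of-reasonable}, so that $i^!$ is defined and is right adjoint to the direct image $i_*$ (Section~\ref{qcoh-sheaves-and-functors-subsecn}). This lets me build the comparison morphism by adjunction: a morphism $i'{}^!\M'\bt_\kk i''{}^!\M''\rarrow i^!(\M'\bt_\kk\M'')$ in $(Z'\times_\kk Z'')\qcoh$ is the same datum as a morphism $i_*(i'{}^!\M'\bt_\kk i''{}^!\M'')\rarrow\M'\bt_\kk\M''$ in $(X'\times_\kk X'')\qcoh$. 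Since $i=i'\times_\kk i''$ with $i'$, $i''$ affine (closed immersions are affine), Lemma~\ref{direct-image-of-external-product} supplies a natural isomorphism $i_*(i'{}^!\M'\bt_\kk i''{}^!\M'')\simeq i'_*i'{}^!\M'\bt_\kk i''_*i''{}^!\M''$; composing with the external product of the adjunction counits $i'_*i'{}^!\M'\rarrow\M'$ and $i''_*i''{}^!\M''\rarrow\M''$ yields the desired morphism. It remains to prove it is an isomorphism.

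Because $i^!$ for a reasonable closed subscheme is given by the local formula $i_*i^!\M=\cHom_{\cO_X}(i_*\cO_Z,\M)$, and the external tensor product is built from inverse images and $\ot$, the comparison morphism restricts correctly to the affine opens $U'\times_\kk U''\subset X'\times_\kk X''$ attached to affine opens $U'\subset X'$, $U''\subset X''$; these products cover $X'\times_\kk X''$, so invertibility may be checked on them. In affine terms, writing $X'=\Spec R'$, $X''=\Spec R''$, $Z'=\Spec R'/I'$, $Z''=\Spec R''/I''$ with $I'$, $I''$ finitely generated (this is exactly reasonableness), and $\M'\leftrightarrow M'$, $\M''\leftrightarrow M''$, the claim becomes that the natural map
\[
 M'[I']\ot_\kk M''[I'']\lrarrow(M'\ot_\kk M'')[J],\qquad J=I'\ot_\kk R''+R'\ot_\kk I'',
\]
is an isomorphism, where $M[I]=\{m:Im=0\}=\Hom_R(R/I,M)$ and $M'\bt_\kk M''$ corresponds to $M'\ot_\kk M''$ over $R'\ot_\kk R''$ (as computed in Lemma~\ref{qcoh-external-tensor-exact}). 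Here $J$ is generated by $\{a'_s\ot 1\}\cup\{1\ot a''_t\}$ for finite generating sets $(a'_s)$ of $I'$ and $(a''_t)$ of $I''$, so that annihilation by $J$ means annihilation by $I'\ot_\kk R''$ and by $R'\ot_\kk I''$ separately.

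The computation then runs in two steps, each using that every $\kk$-module is flat. Expressing $M'[I']=\ker\bigl(M'\rarrow(M')^{n}\bigr)$ as the kernel of multiplication by the finitely many generators of $I'$, and tensoring over $\kk$ with $M''$ (an exact operation), gives $(M'\ot_\kk M'')[I'\ot_\kk R'']\simeq M'[I']\ot_\kk M''$; the finiteness of the generating set is what turns the annihilator into a single kernel and lets it commute with $\ot_\kk$. Running the same argument inside $M'[I']\ot_\kk M''$ with the roles of the two factors reversed yields $(M'[I']\ot_\kk M'')[R'\ot_\kk I'']\simeq M'[I']\ot_\kk M''[I'']$, and composing the two identifications produces $(M'\ot_\kk M'')[J]\simeq M'[I']\ot_\kk M''[I'']$. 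A direct check shows these identifications are precisely the affine incarnation of the globally constructed comparison morphism, completing the proof.

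The main obstacle is concentrated in this affine computation, and specifically in the essential use of the finite generation (reasonableness) hypothesis: for an infinitely generated ideal the annihilator $M[I]$ is merely an infinite intersection of kernels, which need not commute with $\ot_\kk$, so the lemma would genuinely fail. Everything else — the locality of $i^!$ and of $\bt_\kk$, and the bookkeeping matching the local maps with the global one — is routine.
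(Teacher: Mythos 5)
Your proof is correct, and its mathematical core is the same as the paper's: reduce to the affine case and verify the isomorphism one tensor factor at a time. Your annihilator submodules $M[I]$ are literally the paper's $\Hom_R(R/I,M)$ --- the paper states the affine claim as $\Hom_{R'\ot_\kk R''}(S'\ot_\kk S'',M'\ot_\kk M'')\simeq\Hom_{R'}(S',M')\ot_\kk\Hom_{R''}(S'',M'')$ with $S'=R'/I'$, $S''=R''/I''$ --- and your kernel-plus-exactness argument is its Hom--tensor commutation step in different notation. You are more explicit than the paper about two things it compresses into ``the assertion is essentially local'': the construction of the global comparison map (via the $(i_*,i^!)$ adjunction and Lemma~\ref{direct-image-of-external-product}) and the bookkeeping matching the local maps with the global one. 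That is all fine.

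However, your closing diagnosis of where reasonableness is essential is mistaken. Over a field, arbitrary intersections of subspaces commute with $-\ot_\kk W$: writing $x=\sum_j v_j\ot w_j$ with $(w_j)$ linearly independent, one has $x\in V'\ot_\kk W$ for a subspace $V'\subset V$ if and only if all $v_j\in V'$, whence $\bigl(\bigcap_\alpha V_\alpha\bigr)\ot_\kk W=\bigcap_\alpha(V_\alpha\ot_\kk W)$. Consequently $(M'\ot_\kk M'')[I'\ot_\kk R'']=M'[I']\ot_\kk M''$ for an \emph{arbitrary} (not necessarily finitely generated) ideal $I'\subset R'$, and the affine-level statement does not fail without reasonableness. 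The hypothesis is genuinely used elsewhere: it guarantees that the quasi-coherent $i^!$ (the right adjoint to~$i_*$, which in general is produced by applying the coherator) agrees with the local formula $i_*i^!\M=\cHom_{\cO_X}(i_*\cO_Z,\M)$, and it is exactly this agreement that legitimizes your (and the paper's) reduction to the affine opens $U'\times_\kk U''$.
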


\begin{proof}
 The assertion is essentially local in $X'$ and $X''$, so it reduces
to affine schemes, for which it means the following
(cf.\ the computation in the proof of
Lemma~\ref{qcoh-external-tensor-exact}).
 Let $R'\rarrow S'$ and $R''\rarrow S''$ be two surjective homomorphisms
of commutative $\kk$\+algebras (with finitely generated kernel ideals).
 Let $M'$ be an $R'$\+module and $M''$ be an $R''$\+module.
 Then there is a natural isomorphism of $(S'\ot_\kk S'')$\+modules
$$
 \Hom_{R'\ot_\kk R''}(S'\ot_\kk S'',\>M'\ot_\kk M'')\simeq
 \Hom_{R'}(S',M')\ot_\kk\Hom_{R''}(S'',M'').
$$
 To obtain the latter isomorphism, one can notice firstly that
$\Hom_{R'\ot_\kk R''}(S'\ot_\kk\nobreak R'',\>\allowbreak M'\ot_\kk M'')
\simeq\Hom_{R'}(S',\>M'\ot_\kk M'')\simeq\Hom_{R'}(S',M')\ot_\kk M''$
and similarly $\Hom_{R'\ot_\kk R''}(R'\ot_\kk S'',\>M'\ot_\kk M'')\simeq
M'\ot_\kk\Hom_{R''}(S'',M'')$.
\end{proof}

 Let $\M'{}^\bu$ be a complex of quasi-coherent sheaves on $X'$ and
$\M''{}^\bu$ be a complex of quasi-coherent sheaves on~$X''$.
 Then the complex $\M'{}^\bu\bt_\kk\M''{}^\bu$ of quasi-coherent
sheaves on $X'\times_\kk X''$ is constructed by totalizing the bicomplex
of external tensor products using infinite coproducts along
the diagonals.

\begin{lem} \label{qcoh-complexes-external-tensor-exactness}
\textup{(a)} Let $\M'{}^\bu$ be a complex of quasi-coherent sheaves
on a scheme $X'$ and $\M''{}^\bu$ be a complex of quasi-coherent
sheaves on a scheme $X''$ over\/~$\kk$.
 Assume that the complex $\M'{}^\bu$ is acyclic (in $X'\qcoh$).
 Then the complex $\M'{}^\bu\bt_\kk\M''{}^\bu$ is acyclic (in
$(X'\times_\kk X'')\qcoh$). \par
\textup{(b)} Let $\F'{}^\bu$ be a complex of flat quasi-coherent
sheaves on a scheme $X'$ and $\F''{}^\bu$ be a complex of flat
quasi-coherent sheaves on a scheme $X''$ over\/~$\kk$.
 Assume that the complex $\F'{}^\bu$ is acyclic in $X'\flat$.
 Then the complex $\F'{}^\bu\bt_\kk\F''{}^\bu$ is acyclic in
$(X'\times_\kk X'')\flat$.
\end{lem}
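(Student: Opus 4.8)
The plan is to handle the two parts separately: in part~(a) I reduce the external tensor product to a single-sheaf situation where the exactness of $\bt_\kk$ over the field~$\kk$ applies directly, while in part~(b) I exhibit the external tensor product as an internal tensor product of flat pullbacks and invoke Lemma~\ref{qcoh-complexes-tensor-exactness}(b).

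For part~(a), I would first note that the assertion is local on $X'$ and $X''$, since acyclicity of a complex of quasi-coherent sheaves is tested on its cohomology sheaves, which is a local condition. This reduces the claim to affine schemes $X'=\Spec R'$ and $X''=\Spec R''$, where, as in the computation in the proof of Lemma~\ref{qcoh-external-tensor-exact}, the external tensor product is the totalization of the bicomplex $M'{}^\bu\ot_\kk M''{}^\bu$. Because $\kk$ is a field, the complex $M'{}^\bu$, being acyclic, is contractible as a complex of $\kk$\+vector spaces; a contracting homotopy $h$ on $M'{}^\bu$ induces the operator $h\ot\id$ on the totalization of $M'{}^\bu\ot_\kk M''{}^\bu$, and a short Koszul-sign computation shows that $h\ot\id$ is a contracting homotopy for the total complex. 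Hence $\M'{}^\bu\bt_\kk\M''{}^\bu$ is locally contractible, so its cohomology sheaves vanish and it is acyclic. (Alternatively, one can follow the method of Lemma~\ref{qcoh-complexes-tensor-exactness}(a): using that $\bt_\kk$ is exact in each argument and preserves filtered direct limits by Lemma~\ref{qcoh-external-tensor-exact}, represent $\M''{}^\bu$ as a filtered direct limit of bounded complexes and reduce by a finite filtration to a one-term second factor, for which exactness of ${-}\bt_\kk\M''$ commutes with cohomology.)

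For part~(b), the key observation is that the external tensor product is already an internal tensor product of pullbacks: by definition $\F'{}^\bu\bt_\kk\F''{}^\bu=p'{}^*\F'{}^\bu\ot_{\cO_{X'\times_\kk X''}}p''{}^*\F''{}^\bu$, and this is a complex of flat quasi-coherent sheaves by Lemma~\ref{qcoh-flat-external-product}. The projection $p'\:X'\times_\kk X''\rarrow X'$ is the base change of the structure morphism $X''\rarrow\Spec\kk$, which is flat; hence $p'$ is flat, so the inverse image $p'{}^*$ is exact, preserves flatness of quasi-coherent sheaves, and commutes with the formation of sheaves of cocycles. Consequently $p'{}^*\F'{}^\bu$ is acyclic in $(X'\times_\kk X'')\flat$, namely it is acyclic as a complex of quasi-coherent sheaves and all its cocycle sheaves are flat, while $p''{}^*\F''{}^\bu$ lies in $\sC((X'\times_\kk X'')\flat)$. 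Applying Lemma~\ref{qcoh-complexes-tensor-exactness}(b) on the scheme $X'\times_\kk X''$ to the pair $p'{}^*\F'{}^\bu$ and $p''{}^*\F''{}^\bu$ then gives that $\F'{}^\bu\bt_\kk\F''{}^\bu$ is acyclic in $(X'\times_\kk X'')\flat$.

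Neither part presents a genuine obstacle, but two routine points deserve care. In part~(a) the subtlety is that tensoring an \emph{unbounded} acyclic complex with an arbitrary complex need not preserve acyclicity over a general ring; it is precisely the field coefficients that rescue the argument, either through the contractibility of $M'{}^\bu$ or through the reduction to a one-term factor where the biexactness of $\bt_\kk$ applies. In part~(b) the point I would verify explicitly is that ``acyclic in $\flat$'' is preserved by the flat inverse image $p'{}^*$, which rests on the flatness of the projection $p'$ (whence exactness of $p'{}^*$ and its commutation with cocycles) together with the preservation of flatness recorded in Section~\ref{flat-pro-sheaves-subsecn}; granting this, part~(b) is immediate from the internal statement.
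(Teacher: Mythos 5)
Your proof is correct and follows essentially the same route as the paper: both parts localize (or reduce) to the key facts that over the field~$\kk$ the tensor product of an acyclic complex of vector spaces with any complex is acyclic (your contracting-homotopy computation is just the standard proof of this fact, which the paper cites without proof), and that part~(b) follows by writing $\F'{}^\bu\bt_\kk\F''{}^\bu=p'{}^*\F'{}^\bu\ot_{\cO_{X'\times_\kk X''}}p''{}^*\F''{}^\bu$ and invoking Lemma~\ref{qcoh-complexes-tensor-exactness}(b). The only cosmetic difference is in part~(b): you justify that $p'{}^*$ preserves acyclicity in $\flat$ via flatness of the projection~$p'$, whereas the paper uses the slightly more general observation (recorded in Section~\ref{flat-pro-sheaves-subsecn}) that the inverse image along \emph{any} morphism of schemes is exact as a functor between the exact categories of flat quasi-coherent sheaves; both justifications are valid here.
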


\begin{proof}
 Both the assertions~(a) and~(b) are local in $X'$ and $X''$, so they
reduce to the case of affine schemes.
 Then part~(a) follows from the fact that the tensor product of
an acyclic complex of $\kk$\+vector spaces with an arbitrary complex
of $\kk$\+vector spaces is an acyclic complex.
 Part~(b) can be proved directly using the definition of the external
tensor product of quasi-coherent sheaves as the tensor product of
inverse images with respect to the projection maps: the inverse image
under any morphism of schemes is exact as a functor between the exact
categories of flat quasi-coherent sheaves, and it remains to refer to
Lemma~\ref{qcoh-complexes-tensor-exactness}(b).
\end{proof}

 Let $X$ be a scheme over~$\kk$, and let $\M'$ and $\M''$ be two
quasi-coherent sheaves on~$X$.
 Then $\M'\bt_\kk\M''$ is a quasi-coherent sheaf on $X\times_\kk X$.
 Denote by $\Delta_X\:X\rarrow X\times_\kk X$ the diagonal morphism
(defined by the property that its compositions with both the projections
$X\times_\kk X\rightrightarrows X$ are the identity morphisms).

\begin{lem} \label{qcoh-tensor-as-restricted-external}
 For any two quasi-coherent sheaves $\M'$ and $\M''$ on a scheme $X$
over\/~$\kk$, there is a natural isomorphism
$$
 \M'\ot_{\cO_X}\M''\,\simeq\,\Delta_X^*(\M'\bt_\kk\M'')
$$
of quasi-coherent sheaves on~$X$.
\end{lem}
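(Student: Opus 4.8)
The plan is to simply unwind the definition of the external tensor product and exploit the fact, recorded in Section~\ref{qcoh-sheaves-and-functors-subsecn}, that the inverse image functor along any morphism of schemes is a tensor functor. Let $p'$, $p''\:X\times_\kk X\rarrow X$ denote the two projections. By definition,
$$
 \M'\bt_\kk\M''=p'{}^*\M'\ot_{\cO_{X\times_\kk X}}p''{}^*\M'',
$$
so that applying $\Delta_X^*$ and using that $\Delta_X^*\:(X\times_\kk X)\qcoh\rarrow X\qcoh$ takes $\ot_{\cO_{X\times_\kk X}}$ to $\ot_{\cO_X}$ yields a natural isomorphism
$$
 \Delta_X^*(\M'\bt_\kk\M'')\simeq
 \Delta_X^*p'{}^*\M'\ot_{\cO_X}\Delta_X^*p''{}^*\M''.
$$

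Next I would invoke the (pseudo)functoriality of inverse images, i.e.\ the natural isomorphism $(fg)^*\simeq g^*f^*$ for a composable pair of morphisms, applied to $p'\Delta_X$ and $p''\Delta_X$. The defining property of the diagonal morphism is precisely that $p'\Delta_X=\id_X=p''\Delta_X$; hence
$$
 \Delta_X^*p'{}^*\M'\simeq(p'\Delta_X)^*\M'=\id_X^*\M'\simeq\M',
$$
and likewise $\Delta_X^*p''{}^*\M''\simeq\M''$. Substituting these two isomorphisms into the display above gives the desired natural isomorphism $\Delta_X^*(\M'\bt_\kk\M'')\simeq\M'\ot_{\cO_X}\M''$.

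The only point requiring a word of care is naturality: I would observe that each of the isomorphisms used above (the tensor-functor compatibility of $\Delta_X^*$, the composition isomorphism for inverse images, and the unit isomorphism $\id_X^*\simeq\id$) is natural in its respective arguments, so their composite is natural in both $\M'$ and $\M''$ as claimed. I do not anticipate any genuine obstacle here; the statement is a formal consequence of the definition of $\bt_\kk$ together with the standard coherence properties of the pullback functors, and there is no local/affine reduction needed since everything is phrased in terms of the globally defined functors $p'{}^*$, $p''{}^*$, $\Delta_X^*$ and $\ot$.
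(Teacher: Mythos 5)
Your proof is correct and coincides with the paper's own argument: the paper first sketches a local/affine reduction but then gives, as an ``alternative,'' exactly your computation $\Delta_X^*(\M'\bt_\kk\M'')\simeq\Delta_X^*p'{}^*\M'\ot_{\cO_X}\Delta_X^*p''{}^*\M''\simeq(p'\Delta_X)^*\M'\ot_{\cO_X}(p''\Delta_X)^*\M''\simeq\M'\ot_{\cO_X}\M''$, using that inverse images are tensor functors. Nothing is missing.
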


\begin{proof}
 The assertion is essentially local in $X$, so it reduces to the case
of an affine scheme, for which it means the following.
 Let $R$ be a commutative $\kk$\+algebra, and let $M'$ and $M''$ be
two $R$\+modules.
 Then there is a natural isomorphism of $R$\+modules
$$
 M'\ot_RM''\,\simeq\,R\ot_{R\ot_\kk R}(M'\ot_\kk M''),
$$
where the diagonal ring homomorphism $R\ot_\kk R\rarrow R$ endows $R$
with an $(R\ot_\kk\nobreak R)$\+module structure.

 Alternatively, denoting by $p'$ and $p''\:X\times_\kk X\rarrow X$
the natural projections, one computes
\begin{multline*}
 \Delta_X^*(\M'\bt_\kk\M'') =
 \Delta_X^*(p'{}^*\M'\ot_{\cO_{X\times_\kk X}}p''{}^*\M'') \\
 \simeq\Delta_X^*p'{}^*\M'\ot_{\cO_X}\Delta_X^*p''{}^*\M''
 \simeq(p'\Delta_X)^*\M'\ot_{\cO_X}(p''\Delta_X)^*\M''
 \simeq\M'\ot_{\cO_X}\M''
\end{multline*}
using the fact that the tensor products of quasi-coherent sheaves
are preserved by the inverse images.
\end{proof}

\subsection{External tensor product of pro-sheaves}
\label{external-of-pro-subsecn}
 Let $\X'$ and $\X''$ be ind-schemes over~$\kk$.
 Let $\fP'$ be a pro-quasi-coherent pro-sheaf on $\X'$ and $\fP''$ be
a pro-quasi-coherent pro-sheaf on~$\X''$.
 For every pair of closed subschemes $Z'\subset\X'$ and $Z''\subset\X''$
put $\fQ^{(Z'\times_\kk Z'')}=\fP'{}^{(Z')}\bt_\kk\fP''{}^{(Z'')}
\in(Z'\times_\kk Z'')\qcoh$.
 Then it follows from Lemma~\ref{inverse-image-of-external-product}
that the collection of quasi-coherent sheaves
$\fQ^{(Z'\times_\kk Z'')}$ on the closed subschemes $Z'\times_\kk Z''
\subset\X'\times_\kk\X''$ defines a pro-quasi-coherent pro-sheaf $\fQ$
on the ind-scheme $\X'\times_\kk\X''$.

 Put $\fP'\bt_\kk\fP''=\fQ$.
 This construction defines the functor of \emph{external tensor product
of pro-quasi-coherent pro-sheaves}
\begin{equation} \label{external-product-pro-sheaves-eqn}
 \bt_\kk\:\X'\pro\times\X''\pro\lrarrow
 (\X'\times_\kk\X'')\pro.
\end{equation}
 It is clear from Lemma~\ref{qcoh-flat-external-product} that
the external tensor product of two flat pro-quasi-coherent pro-sheaves
is a flat pro-quasi-coherent pro-sheaf,
\begin{equation} \label{external-product-flat-pro-sheaves-eqn}
 \bt_\kk\:\X'\flat\times\X''\flat\lrarrow
 (\X'\times_\kk\X'')\flat.
\end{equation}
 Furthermore, it follows from
Lemma~\ref{qcoh-external-tensor-exact} that
the functor~\eqref{external-product-pro-sheaves-eqn}
preserves colimits in each of its argument, while
the functor~\eqref{external-product-flat-pro-sheaves-eqn}
is exact (as a functor between exact categories) and preserves
direct limits in each of its arguments.

 Let $\fP'{}^\bu$ be a complex of pro-quasi-coherent pro-sheaves on
$\X'$ and $\fP''{}^\bu$ be a complex of pro-quasi-coherent pro-sheaves
on~$\X''$.
 Then the complex $\fP'{}^\bu\bt_\kk\fP''{}^\bu$ of pro-quasi-coherent
pro-sheaves on $\X'\times_\kk\X''$ is constructed by totalizing
the bicomplex of external tensor products using infinite coproducts
along the diagonals.

\begin{lem} \label{flat-pro-complexes-external-tensor-exactness}
 Let\/ $\fF'{}^\bu$ be a complex of flat pro-quasi-coherent pro-sheaves
on an ind-scheme\/ $\X'$ and\/ $\fF''{}^\bu$ be a complex of flat
pro-quasi-coherent pro-sheaves on an ind-scheme\/ $\X''$ over\/~$\kk$.
 Assume that the complex\/ $\fF'{}^\bu$ is acyclic in\/ $\X'\flat$.
 Then the complex\/ $\fF'{}^\bu\bt_\kk\fF''{}^\bu$ is acyclic in
$(\X'\times_\kk\X'')\flat$.
\end{lem}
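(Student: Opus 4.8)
The plan is to reduce the assertion to the scheme-level statement of Lemma~\ref{qcoh-complexes-external-tensor-exactness}(b) by means of the componentwise acyclicity criterion of Lemma~\ref{flat-pro-sheaves-complex-acyclicity-criterion}. First I would fix representations $\X'=\ilim_{\gamma'\in\Gamma'}X'_{\gamma'}$ and $\X''=\ilim_{\gamma''\in\Gamma''}X''_{\gamma''}$ by inductive systems of closed immersions of schemes. Since a base change of a closed immersion is a closed immersion and a composition of closed immersions is a closed immersion, the schemes $X'_{\gamma'}\times_\kk X''_{\gamma''}$ form an inductive system of closed immersions representing $\X'\times_\kk\X''$. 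By Lemma~\ref{flat-pro-sheaves-complex-acyclicity-criterion}, it therefore suffices to prove that the complex of flat quasi-coherent sheaves $(\fF'{}^\bu\bt_\kk\fF''{}^\bu)^{(X'_{\gamma'}\times_\kk X''_{\gamma''})}$ is acyclic in $(X'_{\gamma'}\times_\kk X''_{\gamma''})\flat$ for every pair of indices $\gamma'\in\Gamma'$ and $\gamma''\in\Gamma''$.

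The next step is to identify this component explicitly. By the construction of the external tensor product of pro-sheaves in Section~\ref{external-of-pro-subsecn}, the component of $\fP'\bt_\kk\fP''$ on a product closed subscheme $Z'\times_\kk Z''$ is $\fP'{}^{(Z')}\bt_\kk\fP''{}^{(Z'')}$. Passing to complexes, the external tensor product $\fF'{}^\bu\bt_\kk\fF''{}^\bu$ is the totalization, formed by coproducts along the diagonals, of the bicomplex $\fF'{}^p\bt_\kk\fF''{}^q$. Since coproducts in $(\X'\times_\kk\X'')\pro$ are computed on each closed subscheme separately (Section~\ref{colimits-of-pro-sheaves-subsecn}) and the external tensor product of quasi-coherent sheaves on schemes preserves coproducts (Lemma~\ref{qcoh-external-tensor-exact}), restriction to the closed subscheme $X'_{\gamma'}\times_\kk X''_{\gamma''}$ commutes with this totalization. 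Hence
$$
 (\fF'{}^\bu\bt_\kk\fF''{}^\bu)^{(X'_{\gamma'}\times_\kk X''_{\gamma''})}
 \simeq (\fF'{}^\bu)^{(X'_{\gamma'})}\bt_\kk(\fF''{}^\bu)^{(X''_{\gamma''})},
$$
the external tensor product of complexes of flat quasi-coherent sheaves on the two schemes $X'_{\gamma'}$ and $X''_{\gamma''}$.

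It then remains to invoke the scheme case. Applying Lemma~\ref{flat-pro-sheaves-complex-acyclicity-criterion} to the hypothesis that $\fF'{}^\bu$ is acyclic in $\X'\flat$, I obtain that $(\fF'{}^\bu)^{(X'_{\gamma'})}$ is acyclic in $X'_{\gamma'}\flat$, while $(\fF''{}^\bu)^{(X''_{\gamma''})}$ is simply some complex in $X''_{\gamma''}\flat$. Now Lemma~\ref{qcoh-complexes-external-tensor-exactness}(b), applied to these two complexes over the schemes $X'_{\gamma'}$ and $X''_{\gamma''}$ over~$\kk$, shows directly that $(\fF'{}^\bu)^{(X'_{\gamma'})}\bt_\kk(\fF''{}^\bu)^{(X''_{\gamma''})}$ is acyclic in $(X'_{\gamma'}\times_\kk X''_{\gamma''})\flat$. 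By the displayed isomorphism and the criterion of Lemma~\ref{flat-pro-sheaves-complex-acyclicity-criterion}, this completes the proof.

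I do not anticipate a serious obstacle: the genuine mathematical content of the lemma resides entirely in the already-established scheme-level Lemma~\ref{qcoh-complexes-external-tensor-exactness}(b). The only point requiring care is the bookkeeping in the second paragraph, namely that restriction to a product closed subscheme intertwines the pro-sheaf external tensor product with the sheaf external tensor product and commutes with the totalizing coproducts; once this identification is in place, the componentwise acyclicity criterion delivers the result immediately.
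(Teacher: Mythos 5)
Your proposal is correct and follows essentially the same route as the paper's own (much terser) proof, which simply reduces to the scheme case via the componentwise acyclicity criterion of Lemma~\ref{flat-pro-sheaves-complex-acyclicity-criterion} and then invokes Lemma~\ref{qcoh-complexes-external-tensor-exactness}(b). The bookkeeping in your second paragraph, identifying the components of the external tensor product on the product closed subschemes and checking compatibility with the totalizing coproducts, is exactly the step the paper leaves implicit.
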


\begin{proof}
 The result of Lemma~\ref{flat-pro-sheaves-complex-acyclicity-criterion}
reduces the question to the case of schemes (rather than ind-schemes),
for which we have
Lemma~\ref{qcoh-complexes-external-tensor-exactness}(b).
\end{proof}

 It follows from
Lemma~\ref{flat-pro-complexes-external-tensor-exactness} that
the external tensor product is well-defined as a functor between
the derived categories of flat pro-quasi-coherent pro-sheaves,
\begin{equation} \label{derived-flat-external-product}
 \bt_\kk\:\sD(\X'\flat)\times\sD(\X''\flat)\lrarrow
 \sD((\X'\times_\kk\X'')\flat).
\end{equation}

 The next three lemmas do not depend on any flatness conditions.

\begin{lem} \label{pro-inverse-image-of-external-product}
 Let $f'\:\Y'\rarrow\X'$ and $f''\:\Y''\rarrow\X''$ be two morphisms
of ind-schemes over\/~$\kk$, and let
$f=f'\times_\kk f''\:\Y'\times_\kk\Y''\rarrow\X'\times_\kk\X''$ be
the induced morphism of the Cartesian products.
 Let\/ $\fP'$ be a pro-quasi-coherent pro-sheaf on\/ $\X'$ and\/
$\fP''$ be a pro-quasi-coherent pro-sheaf on\/~$\X''$.
 Then there is a natural isomorphism
$$
 f^*(\fP'\bt_\kk\fP'')\simeq f'{}^*\fP'\bt_\kk f''{}^*\fP''
$$
of pro-quasi-coherent pro-sheaves on\/ $\Y'\times_\kk\Y''$.
\end{lem}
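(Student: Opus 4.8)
The plan is to reduce the statement to the scheme-level isomorphism of Lemma~\ref{inverse-image-of-external-product} by evaluating both pro-sheaves on a cofinal family of closed subschemes. First I would recall that both the external tensor product and the inverse image of pro-quasi-coherent pro-sheaves are defined componentwise: the component of $\fP'\bt_\kk\fP''$ on a product closed subscheme $Z'\times_\kk Z''\subset\X'\times_\kk\X''$ is $\fP'{}^{(Z')}\bt_\kk\fP''{}^{(Z'')}$, while the component of $f^*\fQ$ on a closed subscheme $W\subset\Y'\times_\kk\Y''$ is $g^*(\fQ^{(Z)})$ for any closed subscheme $Z\subset\X'\times_\kk\X''$ through which the composition $W\rarrow\Y'\times_\kk\Y''\overset f\rarrow\X'\times_\kk\X''$ factorizes as $W\overset g\rarrow Z\rarrow\X'\times_\kk\X''$.

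The key observation is that the product closed subschemes $W'\times_\kk W''$, for $W'\subset\Y'$ and $W''\subset\Y''$ closed subschemes, are cofinal among all closed subschemes of $\Y'\times_\kk\Y''$. Indeed, writing $\Y'=\ilim_\gamma Y'_\gamma$ and $\Y''=\ilim_\delta Y''_\delta$, one has $\Y'\times_\kk\Y''=\ilim_{(\gamma,\delta)}(Y'_\gamma\times_\kk Y''_\delta)$, so every closed subscheme $W$ factorizes through some product $Y'_\gamma\times_\kk Y''_\delta$. Since an isomorphism of pro-sheaves is uniquely determined by its components on a cofinal family of closed subschemes (the structure isomorphisms of condition~(iii) recovering the remaining components), it suffices to produce a natural isomorphism of the two components on each product $W'\times_\kk W''$ and to check compatibility with the structure maps.

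So I would fix closed subschemes $W'\subset\Y'$ and $W''\subset\Y''$, and choose closed subschemes $Z'\subset\X'$, $Z''\subset\X''$ such that the compositions $W'\rarrow\Y'\overset{f'}\rarrow\X'$ and $W''\rarrow\Y''\overset{f''}\rarrow\X''$ factorize as $W'\overset{g'}\rarrow Z'\rarrow\X'$ and $W''\overset{g''}\rarrow Z''\rarrow\X''$. Then $f=f'\times_\kk f''$ carries $W'\times_\kk W''$ into $Z'\times_\kk Z''$ via $g=g'\times_\kk g''$, so the component of $f^*(\fP'\bt_\kk\fP'')$ on $W'\times_\kk W''$ is $g^*(\fP'{}^{(Z')}\bt_\kk\fP''{}^{(Z'')})$, whereas the component of $f'{}^*\fP'\bt_\kk f''{}^*\fP''$ there is $g'{}^*(\fP'{}^{(Z')})\bt_\kk g''{}^*(\fP''{}^{(Z'')})$. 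Lemma~\ref{inverse-image-of-external-product}, applied to the morphisms $g'$, $g''$ and the quasi-coherent sheaves $\fP'{}^{(Z')}$ on $Z'$ and $\fP''{}^{(Z'')}$ on $Z''$, provides exactly the required natural isomorphism between these two quasi-coherent sheaves on $W'\times_\kk W''$.

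The remaining work — which I expect to be the only genuinely fiddly point — is verifying that these component isomorphisms commute with the structure isomorphisms~(iii) for inclusions $W_1'\times_\kk W_1''\subset W_2'\times_\kk W_2''$, so that they assemble into an isomorphism of pro-sheaves. This is a diagram chase in which every structure map on both sides is built from scheme-level inverse images $i^*$; the compatibility then follows from the naturality of the isomorphism in Lemma~\ref{inverse-image-of-external-product}, together with the functoriality of the inverse image and of $\bt_\kk$ on schemes (cf.\ Lemma~\ref{qcoh-external-tensor-exact}). Since the auxiliary subschemes $Z'$ and $Z''$ can be enlarged compatibly, independence of these choices is automatic, and the resulting isomorphism is manifestly natural in $\fP'$ and~$\fP''$.
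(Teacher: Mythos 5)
Your proof is correct and follows exactly the route the paper takes: the paper's own proof is simply the observation that the claim ``follows immediately'' from the componentwise definitions of $\bt_\kk$ and $f^*$ for pro-quasi-coherent pro-sheaves together with Lemma~\ref{inverse-image-of-external-product}. Your write-up just makes explicit the cofinality of the product closed subschemes $W'\times_\kk W''$ and the compatibility with the structure isomorphisms, which is precisely the content implicit in the paper's one-line argument.
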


\begin{proof}
 Follows immediately from the definition of the external tensor product
of pro-quasi-coherent pro-sheaves (above),
the definition of the inverse image of pro-quasi-coherent pro-sheaves
(see Section~\ref{pro-sheaves-inverse-direct-subsecn}),
and Lemma~\ref{inverse-image-of-external-product}.
\end{proof}

\begin{lem} \label{pro-direct-image-of-external-product}
 Let $f'\:\Y'\rarrow\X'$ and $f''\:\Y''\rarrow\X''$ be two affine
morphisms of ind-schemes over\/~$\kk$, and let
$f=f'\times_\kk f''\:\Y'\times_\kk\Y''\rarrow\X'\times_\kk\X''$ be
the induced morphism of the Cartesian products.
 Let\/ $\fQ'$ be a pro-quasi-coherent pro-sheaf on\/ $\Y'$ and\/
$\fQ''$ be a pro-quasi-coherent pro-sheaf on\/~$\Y''$.
 Then there is a natural isomorphism
$$
 f_*(\fQ'\bt_\kk\fQ'')\simeq f'_*\fQ'\bt_\kk f''_*\fQ''
$$
of pro-quasi-coherent pro-sheaves on\/ $\X'\times_\kk\X''$.
\end{lem}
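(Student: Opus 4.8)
The plan is to reduce the assertion about ind-schemes to the corresponding statement about schemes, namely Lemma~\ref{direct-image-of-external-product}, by unwinding the definitions of the external tensor product and the direct image of pro-quasi-coherent pro-sheaves in terms of their evaluations on closed subschemes. The key structural fact I would rely on is that an affine morphism of ind-schemes, when pulled back to a closed subscheme of the base, becomes an affine morphism of schemes (Section~\ref{morphisms-of-ind-schemes-subsecn}), and that the formation of both sides of the desired isomorphism is entirely controlled by these fiberwise data.

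First I would fix closed subschemes $Z'\subset\X'$ and $Z''\subset\X''$, and set $W'=Z'\times_{\X'}\Y'$ and $W''=Z''\times_{\X''}\Y''$, with the induced affine morphisms $f'_{Z'}\:W'\rarrow Z'$ and $f''_{Z''}\:W''\rarrow Z''$ of schemes. The crucial geometric observation is that the closed subscheme $Z'\times_\kk Z''\subset\X'\times_\kk\X''$ has preimage $(Z'\times_\kk Z'')\times_{\X'\times_\kk\X''}(\Y'\times_\kk\Y'')\simeq W'\times_\kk W''$ under $f=f'\times_\kk f''$, so that the relevant morphism of schemes is precisely $f'_{Z'}\times_\kk f''_{Z''}\:W'\times_\kk W''\rarrow Z'\times_\kk Z''$. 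I would verify this compatibility of fibered products using the behavior of fibered products of ind-schemes described in Section~\ref{ind-objects-subsecn} and the definition of the product of morphisms.

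Next I would evaluate both sides of the claimed isomorphism on the closed subscheme $Z'\times_\kk Z''$. By the definition of the external tensor product of pro-sheaves (Section~\ref{external-of-pro-subsecn}) and the definition of the direct image (Section~\ref{pro-sheaves-inverse-direct-subsecn}), the left-hand side $f_*(\fQ'\bt_\kk\fQ'')$ has component $(f'_{Z'}\times_\kk f''_{Z''})_*\bigl(\fQ'{}^{(W')}\bt_\kk\fQ''{}^{(W'')}\bigr)$, while the right-hand side $f'_*\fQ'\bt_\kk f''_*\fQ''$ has component $f'_{Z'}{}_*(\fQ'{}^{(W')})\bt_\kk f''_{Z''}{}_*(\fQ''{}^{(W'')})$. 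At this point Lemma~\ref{direct-image-of-external-product}, applied to the affine morphisms of schemes $f'_{Z'}$ and $f''_{Z''}$ with the quasi-coherent sheaves $\fQ'{}^{(W')}$ and $\fQ''{}^{(W'')}$, supplies a natural isomorphism between these two quasi-coherent sheaves on $Z'\times_\kk Z''$.

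Finally I would check that these componentwise isomorphisms are compatible with the transition (structure) isomorphisms of type~(ii--iii) in the definition of a pro-quasi-coherent pro-sheaf, so that they assemble into a genuine isomorphism of pro-sheaves on $\X'\times_\kk\X''$; this amounts to confirming that the isomorphism of Lemma~\ref{direct-image-of-external-product} is natural with respect to the inverse-image (restriction) maps along closed immersions, which follows from the naturality built into that lemma together with Lemma~\ref{affine-flat-base-change}(a) governing base change for direct images along affine morphisms. The main obstacle is not any single hard computation but rather the bookkeeping of this last compatibility step: one must match the structure isomorphisms coming from the external tensor product construction on $\X'\times_\kk\X''$ against those induced separately on $\X'$ and $\X''$, which requires carefully tracking how the pullback $i_{Z'Z''}^*$ interacts with $\bt_\kk$ and with $f_*$. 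I expect this to be routine given Lemmas~\ref{inverse-image-of-external-product} and~\ref{affine-flat-base-change}(a), so the proof can legitimately be closed with the remark that the assertion reduces to the affine case handled in Lemma~\ref{direct-image-of-external-product}.
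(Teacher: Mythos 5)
Your proposal is correct and takes essentially the same route as the paper: the paper's own proof is a one-line reduction citing the definitions of $\bt_\kk$ and $f_*$ for pro-quasi-coherent pro-sheaves together with the scheme-level Lemma~\ref{direct-image-of-external-product}, which is exactly the componentwise unwinding you carry out. Your additional identification of the preimage of $Z'\times_\kk Z''$ as $W'\times_\kk W''$ and the compatibility check via Lemmas~\ref{inverse-image-of-external-product} and~\ref{affine-flat-base-change}(a) are precisely the routine details the paper leaves implicit.
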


\begin{proof}
 Follows from the definition of the external tensor product
of pro-quasi-coherent pro-sheaves, the definition of the direct
image of pro-quasi-coherent pro-sheaves
(see Section~\ref{pro-sheaves-inverse-direct-subsecn}),
and Lemma~\ref{direct-image-of-external-product}.
\end{proof}

 Let $\X$ be an ind-scheme over~$\kk$, and let $\fP'$ and $\fP''$ be
two pro-quasi-coherent pro-sheaves on~$\X$.
 Then $\fP'\bt_\kk\fP''$ is a pro-quasi-coherent pro-sheaf
on $\X\times_\kk\X$.
 Let $\Delta_\X\:\X\rarrow\X\times_\kk\X$ denote the diagonal morphism
of ind-schemes (defined by the property that its compositions with
both the projections $\X\times_\kk\X\rightrightarrows\X$ are
the identity morphisms).

\begin{lem} \label{pro-tensor-as-restricted-external}
 For any two pro-quasi-coherent pro-sheaves\/ $\fP'$ and\/ $\fP''$ on
an ind-scheme\/ $\X$ over\/~$\kk$, there is a natural isomorphism
$$
 \fP'\ot^\X\fP''\,\simeq\,\Delta_\X^*(\fP'\bt_\kk\fP'')
$$
of pro-quasi-coherent pro-sheaves on\/~$\X$.
\end{lem}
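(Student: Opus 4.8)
The plan is to verify the asserted isomorphism closed subscheme by closed subscheme, reducing everything to the scheme-level statement of Lemma~\ref{qcoh-tensor-as-restricted-external}. Fix a closed subscheme $Z\subset\X$. By the definition of the tensor product of pro-quasi-coherent pro-sheaves in Section~\ref{pro-sheaves-subsecn}, the left-hand side has component $(\fP'\ot^\X\fP'')^{(Z)}=\fP'{}^{(Z)}\ot_{\cO_Z}\fP''{}^{(Z)}$. For the right-hand side, the composition $Z\rarrow\X\overset{\Delta_\X}\rarrow\X\times_\kk\X$ factorizes through the closed subscheme $Z\times_\kk Z\subset\X\times_\kk\X$ via the diagonal morphism $\Delta_Z\:Z\rarrow Z\times_\kk Z$; so the definition of the inverse image in Section~\ref{pro-sheaves-inverse-direct-subsecn} together with the definition of the external tensor product in Section~\ref{external-of-pro-subsecn} gives $(\Delta_\X^*(\fP'\bt_\kk\fP''))^{(Z)}=\Delta_Z^*(\fP'{}^{(Z)}\bt_\kk\fP''{}^{(Z)})$. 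First I would make these component identifications precise and then apply Lemma~\ref{qcoh-tensor-as-restricted-external} to the scheme $Z$ with the quasi-coherent sheaves $\M'=\fP'{}^{(Z)}$ and $\M''=\fP''{}^{(Z)}$, obtaining a natural isomorphism $\phi_Z\:(\fP'\ot^\X\fP'')^{(Z)}\simeq(\Delta_\X^*(\fP'\bt_\kk\fP''))^{(Z)}$ between the two components.

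The remaining work, and the only point requiring care, will be to check that the family $(\phi_Z)_{Z\subset\X}$ is compatible with the transition (structure iso)morphisms of the two pro-quasi-coherent pro-sheaves, so that it constitutes an isomorphism in $\X\pro$. For a pair of closed subschemes $Z'\subset Z''\subset\X$ with closed immersion $i\:Z'\rarrow Z''$, the diagonal morphisms fit into the commutative square with vertical arrows $i$ and $i\times_\kk i$ and horizontal arrows $\Delta_{Z'}$ and $\Delta_{Z''}$, expressing the naturality of the diagonal. Chasing this square, the compatibility of $\phi_{Z'}$ with $\phi_{Z''}$ reduces to two facts: that the inverse image of quasi-coherent sheaves commutes with tensor products (which underlies the structure morphism of $\fP'\ot^\X\fP''$) and that it commutes with external tensor products, namely Lemma~\ref{inverse-image-of-external-product} (which underlies the structure isomorphism of $\fP'\bt_\kk\fP''$), together with the naturality of the isomorphism of Lemma~\ref{qcoh-tensor-as-restricted-external} under base change along~$i$.

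The hard part will be precisely this compatibility bookkeeping: one must ensure that the scheme-level isomorphism $\M'\ot_{\cO_X}\M''\simeq\Delta_X^*(\M'\bt_\kk\M'')$ is natural not only in the sheaves $\M'$ and $\M''$, but also under the inverse image along the closed immersion $i$, so that the two routes from the $Z''$-component to the $Z'$-component coincide. Since that isomorphism is built entirely from the canonical commutations of inverse images with tensor products (as in the computation at the end of the proof of Lemma~\ref{qcoh-tensor-as-restricted-external}), I expect the required square to commute, with the transitivity condition~(iv) holding automatically by canonicity. Thus the family $(\phi_Z)$ should assemble into the desired natural isomorphism $\fP'\ot^\X\fP''\simeq\Delta_\X^*(\fP'\bt_\kk\fP'')$ in $\X\pro$.
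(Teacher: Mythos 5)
Your proposal is correct and follows essentially the same route as the paper: the paper's proof consists precisely of the observation that the statement follows from Lemma~\ref{qcoh-tensor-as-restricted-external} together with the definitions of the functors $\ot^\X$, $\Delta_\X^*$, and $\bt_\kk$, which is exactly your componentwise reduction to the scheme case. The compatibility bookkeeping you flag as the "hard part" is indeed automatic from the canonicity of the scheme-level isomorphism, and the paper leaves it implicit.
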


\begin{proof}
 Follows from Lemma~\ref{qcoh-tensor-as-restricted-external}
and the definitions of the functors~$\ot^\X$ (see
Section~\ref{pro-sheaves-subsecn}), $\Delta_\X^*$ (see
Section~\ref{pro-sheaves-inverse-direct-subsecn}), and~$\bt_\kk$.
\end{proof}

\subsection{External tensor product of torsion sheaves}
\label{external-of-torsion-subsecn}
 Let $\X'=\ilim_{\gamma'\in\Gamma'}X'_{\gamma'}$ and
$\X''=\ilim_{\gamma''\in\Gamma''}X''_{\gamma''}$ be two reasonable
ind-schemes over~$\kk$, represented by inductive systems of closed
immersions of reasonable closed subschemes.
 Then $\X'\times_\kk\X''=
\ilim_{(\gamma',\gamma'')\in\Gamma'\times\Gamma''}
X'_{\gamma'}\times_\kk X''_{\gamma''}$ is a representation of
the ind-scheme $\X'\times_\kk\X''$ by an inductive system of closed
immersions of reasonable closed subschemes
(by Lemma~\ref{product-of-reasonable}).

 Let $\rM'$ be a quasi-coherent torsion sheaf on $\X'$ and $\rM''$ be
a quasi-coherent torsion sheaf on~$\X''$.
 For every pair of reasonable closed subschemes $Z'\subset\X'$ and
$Z''\subset\X''$ put $\rL_{(Z'\times_\kk Z'')}=\rM'_{(Z')}\bt_\kk
\rM''_{(Z'')}\in (Z'\times_\kk Z'')\qcoh$.
 Then it is clear from Lemma~\ref{shriek-of-external-product} that
the collection of quasi-coherent sheaves $\rL_{(Z'\times_\kk Z'')}$
on the reasonable closed subschemes $Z'\times_\kk Z''\subset
\X'\times_\kk\X''$ defines a quasi-coherent torsion sheaf $\rL$ on
$\X'\times_\kk\X''$.

 Put $\rM'\bt_\kk\rM''=\rL$.
 This construction defines the functor of \emph{external tensor product
of quasi-coherent torsion sheaves}
\begin{equation} \label{external-product-torsion-sheaves-eqn}
 \bt_\kk\:\X'\tors\times\X''\tors\lrarrow(\X'\times_\kk\X'')\tors.
\end{equation}

\begin{lem} \label{torsion-direct-image-of-external-product}
 Let $f'\:\Y'\rarrow\X'$ and $f''\:\Y''\rarrow\X''$ be two affine
morphisms of reasonable ind-schemes over\/~$\kk$, and let
$f=f'\times_\kk f''\:\Y'\times_\kk\Y''\rarrow\X'\times_\kk\X''$ be
the induced morphism of the Cartesian products.
 Let $\rN'$ be a quasi-coherent torsion sheaf on\/ $\Y'$ and $\rN''$
be a quasi-coherent torsion sheaf on\/~$\Y''$.
 Then there is a natural isomorphism
$$
 f_*(\rN'\bt_\kk\rN'')\simeq f'_*\rN'\bt_\kk f''_*\rN''
$$
of quasi-coherent torsion sheaves on\/ $\X'\times_\kk\X''$.
\end{lem}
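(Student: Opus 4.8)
The plan is to follow the pattern already established for the external tensor product of pro-sheaves (Lemmas~\ref{pro-inverse-image-of-external-product}\+-\ref{pro-direct-image-of-external-product}): reduce the assertion about torsion sheaves on ind-schemes to the corresponding assertion about quasi-coherent sheaves on schemes, namely Lemma~\ref{direct-image-of-external-product}, by unwinding the definitions of the external tensor product (Section~\ref{external-of-torsion-subsecn}) and of the direct image (Section~\ref{torsion-direct-images-subsecn}) of quasi-coherent torsion sheaves. First I would record that $f=f'\times_\kk f''$ is an affine morphism of ind-schemes, hence ``representable by schemes'', so that the direct image functor $f_*\:(\Y'\times_\kk\Y'')\tors\rarrow(\X'\times_\kk\X'')\tors$ is indeed defined; this follows by writing $f$ as the composition of the base change $\id_{\Y'}\times_\kk f''$ of~$f''$ along $\Y'\times_\kk\X''\rarrow\X''$ with the base change $f'\times_\kk\id_{\X''}$ of~$f'$ along $\X'\times_\kk\X''\rarrow\X'$, using that affine morphisms are stable under base change and composition.

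To compare the two torsion sheaves, it suffices to produce compatible natural isomorphisms of their components over a cofinal family of reasonable closed subschemes of $\X'\times_\kk\X''$. By Lemma~\ref{product-of-reasonable} and the representation of $\X'\times_\kk\X''$ recalled in Section~\ref{external-of-torsion-subsecn}, the subschemes of the form $Z'\times_\kk Z''$, with $Z'\subset\X'$ and $Z''\subset\X''$ reasonable closed subschemes, form such a family. So I would fix such $Z'$ and $Z''$ and evaluate both sides there, writing $W'=Z'\times_{\X'}\Y'$ and $W''=Z''\times_{\X''}\Y''$ for the corresponding affine schemes over $Z'$ and~$Z''$, with structure morphisms $f'_{Z'}\:W'\rarrow Z'$ and $f''_{Z''}\:W''\rarrow Z''$.

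The key step, and the one I expect to carry the real content, is the identification of fibered products
$$
 (Z'\times_\kk Z'')\times_{\X'\times_\kk\X''}(\Y'\times_\kk\Y'')
 \,\simeq\, W'\times_\kk W''.
$$
I would prove this by the universal property: a $T$\+point of the left-hand side is a pair consisting of a map $T\rarrow Z'\times_\kk Z''$ and a map $T\rarrow\Y'\times_\kk\Y''$ agreeing over $\X'\times_\kk\X''$, and agreement over a product base splits into agreement over $\X'$ and agreement over $\X''$ separately, which is precisely a $T$\+point of $W'\times_\kk W''$. Granting this, the definition of the external tensor product of torsion sheaves gives $(\rN'\bt_\kk\rN'')_{(W'\times_\kk W'')}\simeq\rN'_{(W')}\bt_\kk\rN''_{(W'')}$, so the component of the left-hand side at $Z'\times_\kk Z''$ becomes $(f'_{Z'}\times_\kk f''_{Z''})_*(\rN'_{(W')}\bt_\kk\rN''_{(W'')})$. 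Now Lemma~\ref{direct-image-of-external-product} rewrites this as $f'_{Z'}{}_*\rN'_{(W')}\bt_\kk f''_{Z''}{}_*\rN''_{(W'')}$, which by the definitions of the direct image and external tensor product of torsion sheaves is exactly $(f'_*\rN'\bt_\kk f''_*\rN'')_{(Z'\times_\kk Z'')}$, the corresponding component of the right-hand side.

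What remains, and the only delicate bookkeeping, is to check that these component isomorphisms are compatible with the transition morphisms defining the two torsion sheaf structures, so that they glue into an isomorphism in $(\X'\times_\kk\X'')\tors$. I expect this to be routine rather than conceptual: it follows from the naturality in Lemma~\ref{direct-image-of-external-product} together with the compatibilities of the structure maps already exploited in Lemma~\ref{shriek-of-external-product} and in the construction of $f_*$ for torsion sheaves. The main obstacle is thus confined to the fibered-product decomposition above; once that is in hand, the rest is an unwinding of definitions.
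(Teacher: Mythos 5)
Your proposal is correct and is essentially the paper's own argument: the paper's proof consists of a single sentence citing the definitions of the external tensor product and the direct image of quasi-coherent torsion sheaves together with Lemma~\ref{direct-image-of-external-product}, which is exactly the reduction you carry out. The details you spell out---the affineness of $f=f'\times_\kk f''$, the fibered-product identification $(Z'\times_\kk Z'')\times_{\X'\times_\kk\X''}(\Y'\times_\kk\Y'')\simeq W'\times_\kk W''$, and the compatibility with transition morphisms---are precisely what the paper leaves implicit in the word ``immediately''.
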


\begin{proof}
 Follows immediately from the definition of the external tensor product
of quasi-coherent torsion sheaves (above),
the definition of the direct image of quasi-coherent torsion sheaves
(see Section~\ref{torsion-direct-images-subsecn}),
and Lemma~\ref{direct-image-of-external-product}.
\end{proof}

 Similarly to the construction above, one defines the functor of
external tensor product of $\Gamma$\+systems
\begin{equation} \label{external-product-Gamma-systems-eqn}
 \bt_\kk\:(\X',\Gamma')\syst\times(\X',\Gamma'')\syst\lrarrow
 (\X'\times_\kk\X'',\>\Gamma'\times\Gamma'')\syst
\end{equation}
by setting $(\boM'\bt_\kk\boM'')_{(\gamma'\times\gamma'')}
=\boM'_{(\gamma')}\bt_\kk\boM''_{(\gamma'')}$ for any
$\Gamma'$\+system $\boM'$ on $\X'$, any $\Gamma''$\+system $\boM''$
on $\X''$, and any two indices $\gamma'\in\Gamma'$,
\,$\gamma''\in\Gamma''$.

\begin{lem} \label{torsion-and-Gamma-systems-external-product}
 Let\/ $\X'$ and\/ $\X''$ be reasonable ind-schemes over\/~$\kk$.
 Then \par
\textup{(a)} the functor of external tensor product\/ $\bt_\kk\:
\X'\tors\times\X''\tors\rarrow(\X'\times_\kk\X'')\tors$ preserves direct
limits (and in particular, coproducts) in each of the arguments; \par
\textup{(b)} the functors of external tensor product of\/
$\Gamma$\+systems~\eqref{external-product-Gamma-systems-eqn} and of
quasi-coherent torsion
sheaves~\eqref{external-product-torsion-sheaves-eqn} form a commutative
square diagram with the functors\/ $\boM^{(s)}\longmapsto\boM^{(s)}{}^+
\:(\X^{(s)},\Gamma^{(s)})\syst\rarrow\X^{(s)}\tors$, \ $s=1$,~$2$,
and\/ $\boL\longmapsto\boL^+\:
(\X'\times_\kk\X'',\>\allowbreak\Gamma'\times\nobreak\Gamma'')\syst
\rarrow(\X'\times_\kk\X'')\tors$.
\end{lem}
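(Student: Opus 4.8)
The plan is to reduce both parts to the scheme-level statements already proved, exploiting that all the functors in sight are computed componentwise on reasonable closed subschemes. Recall from Section~\ref{torsion-direct-limits-subsecn} that filtered direct limits in $\X'\tors$ are evaluated by $(\varinjlim_\theta\rM'_\theta)_{(Z')}=\varinjlim_\theta(\rM'_\theta)_{(Z')}$ in $Z'\qcoh$, and likewise on $\X'\times_\kk\X''$. For part~(a), I would evaluate the external tensor product on the product subschemes $X'_{\gamma'}\times_\kk X''_{\gamma''}$ that represent $\X'\times_\kk\X''$; by the definition~\eqref{external-product-torsion-sheaves-eqn}, the comparison morphism $\varinjlim_\theta(\rM'_\theta\bt_\kk\rM'')\rarrow(\varinjlim_\theta\rM'_\theta)\bt_\kk\rM''$ then restricts on each such subscheme to the canonical map $\varinjlim_\theta\bigl((\rM'_\theta)_{(X'_{\gamma'})}\bt_\kk\rM''_{(X''_{\gamma''})}\bigr)\rarrow\bigl(\varinjlim_\theta(\rM'_\theta)_{(X'_{\gamma'})}\bigr)\bt_\kk\rM''_{(X''_{\gamma''})}$, which is an isomorphism by Lemma~\ref{qcoh-external-tensor-exact}. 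Since a morphism of torsion sheaves that restricts to an isomorphism on every member of a representing inductive system is itself an isomorphism (Lemma~\ref{torsion-monomorphisms-characterized}), this proves preservation of direct limits in the first argument; the second argument is symmetric.

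For part~(b), the isomorphism to construct is $(\boM'\bt_\kk\boM'')^+\simeq\boM'{}^+\bt_\kk\boM''{}^+$ in $(\X'\times_\kk\X'')\tors$. Writing $i'_{\gamma'}\:X'_{\gamma'}\rarrow\X'$, $i''_{\gamma''}\:X''_{\gamma''}\rarrow\X''$, and $i_{(\gamma',\gamma'')}\:X'_{\gamma'}\times_\kk X''_{\gamma''}\rarrow\X'\times_\kk\X''$ for the natural closed immersions, the definition of the functor $({-})^+$ from Section~\ref{Gamma-systems-subsecn} together with~\eqref{external-product-Gamma-systems-eqn} gives
$$
(\boM'\bt_\kk\boM'')^+=\varinjlim\nolimits_{(\gamma',\gamma'')\in\Gamma'\times\Gamma''}i_{(\gamma',\gamma'')}{}_*\bigl(\boM'_{(\gamma')}\bt_\kk\boM''_{(\gamma'')}\bigr).
$$
On the other side, since $\Gamma'\times\Gamma''$ is directed and a colimit over a product of directed posets may be computed iteratively, part~(a) lets me pull $\bt_\kk$ past the two direct limits defining $\boM'{}^+$ and $\boM''{}^+$, yielding
$$
\boM'{}^+\bt_\kk\boM''{}^+=\varinjlim\nolimits_{(\gamma',\gamma'')\in\Gamma'\times\Gamma''}\bigl(i'_{\gamma'}{}_*\boM'_{(\gamma')}\bigr)\bt_\kk\bigl(i''_{\gamma''}{}_*\boM''_{(\gamma'')}\bigr).
$$
It then remains to identify the two inductive systems termwise. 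As closed immersions are affine morphisms (Section~\ref{morphisms-of-ind-schemes-subsecn}) and $i_{(\gamma',\gamma'')}=i'_{\gamma'}\times_\kk i''_{\gamma''}$, the needed termwise isomorphism is precisely Lemma~\ref{torsion-direct-image-of-external-product}.

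I do not expect either part to pose a genuine difficulty, since both collapse onto lemmas already in hand. The only point requiring mild care is the bookkeeping at the end of part~(b): I must check that the termwise isomorphisms furnished by Lemma~\ref{torsion-direct-image-of-external-product} are natural in the pair $(\gamma',\gamma'')$, so that they assemble into an isomorphism of the two $(\Gamma'\times\Gamma'')$-indexed inductive systems (and hence of their direct limits), and that the iterated-colimit identification is compatible with the transition maps induced by the structure morphisms of $\boM'$ and $\boM''$. This naturality is a routine diagram chase through the construction of the projection-formula isomorphism underlying Lemma~\ref{torsion-direct-image-of-external-product}.
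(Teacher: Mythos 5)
Your proof is correct and follows essentially the same route as the paper's: part~(a) is the componentwise description of direct limits from Section~\ref{torsion-direct-limits-subsecn} combined with Lemma~\ref{qcoh-external-tensor-exact}, and part~(b) unwinds the construction of $({-})^+$ into direct images along closed immersions (which commute with $\bt_\kk$ by Lemma~\ref{torsion-direct-image-of-external-product}, closed immersions being affine) and direct limits (handled by part~(a)). The paper states this argument in compressed form; your version merely makes the two inductive systems and their termwise identification explicit.
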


\begin{proof}
 Part~(a) follows from the description of direct limits of
quasi-coherent torsion sheaves in
Section~\ref{torsion-direct-limits-subsecn} together with
Lemma~\ref{qcoh-external-tensor-exact}.
 Part~(b) holds because the functors~$({-})^+$ are constructed
in terms of direct images (with respect to closed immersions
of reasonable closed subschemes into ind-schemes) and direct limits
of quasi-coherent torsion sheaves (see
Section~\ref{Gamma-systems-subsecn}).
 The external tensor products commute with the direct images by
Lemma~\ref{torsion-direct-image-of-external-product} and preserve
direct limits by part~(a).
\end{proof}

\begin{lem} \label{torsion-external-tensor-exact}
 For for any reasonable ind-schemes\/ $\X'$ and\/ $\X''$ over\/~$\kk$,
the functor of external tensor product\/ $\bt_\kk\:
\X'\tors\times\X''\tors\rarrow(\X'\times_\kk\X'')\tors$ is exact in
each of its arguments.
\end{lem}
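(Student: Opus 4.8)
The plan is to reduce the exactness of the external tensor product functor $\bt_\kk$ on quasi-coherent torsion sheaves to the corresponding statement for ordinary quasi-coherent sheaves, which has already been recorded. The key technical tool will be Lemma~\ref{torsion-and-Gamma-systems-external-product}, which expresses $\bt_\kk$ on torsion sheaves in terms of the external tensor product of $\Gamma$\+systems together with the functors $({-})^+$. Since exactness is a symmetric condition in the two arguments, I would fix a quasi-coherent torsion sheaf $\rM''$ on $\X''$ and prove that the functor $\rM'\longmapsto\rM'\bt_\kk\rM''$ is exact on $\X'\tors$; the other variable is handled by the same argument.

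First I would recall from Lemma~\ref{torsion-monomorphisms-characterized} and the discussion in Section~\ref{torsion-direct-limits-subsecn} that kernels, cokernels, and the exactness of a sequence in $\X\tors$ are detected by applying the functors $i_\gamma^!$, i.e.\ by passing to the components $\rL_{(Z)}$ over reasonable closed subschemes $Z\subset\X$. Concretely, a short sequence $0\rarrow\rL'\rarrow\rL\rarrow\rL''\rarrow0$ in $(\X'\times_\kk\X'')\tors$ is exact if and only if, for every pair of reasonable closed subschemes $Z'\subset\X'$ and $Z''\subset\X''$, the sequence of components over $Z'\times_\kk Z''$ is exact in $(Z'\times_\kk Z'')\qcoh$ (here I use that the subschemes $Z'\times_\kk Z''$ are cofinal among reasonable closed subschemes of the product, by Lemma~\ref{product-of-reasonable} and the representation of $\X'\times_\kk\X''$ given at the start of Section~\ref{external-of-torsion-subsecn}).

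Next, given a short exact sequence $0\rarrow\rM'_1\rarrow\rM'_2\rarrow\rM'_3\rarrow0$ in $\X'\tors$, I would apply the componentwise description $(\rM'\bt_\kk\rM'')_{(Z'\times_\kk Z'')}=\rM'_{(Z')}\bt_\kk\rM''_{(Z'')}$ from the construction of $\bt_\kk$. Exactness of the sequence $0\rarrow\rM'_1\rarrow\rM'_2\rarrow\rM'_3\rarrow0$ in $\X'\tors$ means, again by Lemma~\ref{torsion-monomorphisms-characterized}, that $0\rarrow(\rM'_1)_{(Z')}\rarrow(\rM'_2)_{(Z')}\rarrow(\rM'_3)_{(Z')}\rarrow0$ is exact in $Z'\qcoh$ for every reasonable $Z'\subset\X'$. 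Then I would invoke Lemma~\ref{qcoh-external-tensor-exact}, which asserts that the external tensor product of quasi-coherent sheaves on schemes $\bt_\kk\:Z'\qcoh\times Z''\qcoh\rarrow(Z'\times_\kk Z'')\qcoh$ is exact in each argument, to conclude that $0\rarrow(\rM'_1)_{(Z')}\bt_\kk\rM''_{(Z'')}\rarrow(\rM'_2)_{(Z')}\bt_\kk\rM''_{(Z'')}\rarrow(\rM'_3)_{(Z')}\bt_\kk\rM''_{(Z'')}\rarrow0$ is exact for each pair $(Z',Z'')$. By the cofinality remark above, this exactness over all $Z'\times_\kk Z''$ gives the exactness of $0\rarrow\rM'_1\bt_\kk\rM''\rarrow\rM'_2\bt_\kk\rM''\rarrow\rM'_3\bt_\kk\rM''\rarrow0$ in $(\X'\times_\kk\X'')\tors$.

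I do not anticipate a serious obstacle here: the whole point is that the component functors $i_\gamma^!$ on torsion sheaves already realize the torsion-sheaf external product as the scheme-level external product in each component, so the result is essentially a bookkeeping reduction to Lemma~\ref{qcoh-external-tensor-exact}. The only point demanding a little care is the verification that detecting exactness on the cofinal family of product subschemes $Z'\times_\kk Z''$ genuinely suffices to detect exactness in $(\X'\times_\kk\X'')\tors$; this follows from Lemma~\ref{torsion-monomorphisms-characterized} once one checks that every reasonable closed subscheme of $\X'\times_\kk\X''$ is contained in one of the form $Z'\times_\kk Z''$, which in turn uses the factorization-through-$X'_{\gamma'}\times_\kk X''_{\gamma''}$ property of morphisms from schemes into strict ind-schemes established in Section~\ref{ind-schemes-subsecn}.
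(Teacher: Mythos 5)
There is a genuine gap, and it occurs at the exact point where your reduction claims that exactness in $\X'\tors$ can be read off componentwise. You assert that exactness of $0\rarrow\rM'_1\rarrow\rM'_2\rarrow\rM'_3\rarrow0$ in $\X'\tors$ \emph{means} that $0\rarrow(\rM'_1)_{(Z')}\rarrow(\rM'_2)_{(Z')}\rarrow(\rM'_3)_{(Z')}\rarrow0$ is exact in $Z'\qcoh$ for every reasonable closed subscheme $Z'\subset\X'$, citing Lemma~\ref{torsion-monomorphisms-characterized}. That lemma does not say this: part~(a) characterizes monomorphisms, but part~(b) is deliberately one-directional (componentwise epi implies epi, \emph{not} conversely). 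In fact the component functors $\rM\longmapsto\rM_{(Z')}=i^!\rM$ are only left exact, as the paper warns explicitly at the end of Section~\ref{torsion-ind-affine-subsecn}. A concrete counterexample to your claim: for $\X'=\Spi\boZ_p$, the short exact sequence of $p$\+primary groups $0\rarrow\boZ/p\rarrow\boZ/p^2\rarrow\boZ/p\rarrow0$ has, on the component over $\Spec\boZ/p$ (the $p$\+torsion subgroups), the sequence $0\rarrow\boZ/p\rarrow\boZ/p\rarrow\boZ/p\rarrow0$ in which the second map is zero, hence not surjective. So a short exact sequence of torsion sheaves generally fails to be componentwise short exact, and your application of Lemma~\ref{qcoh-external-tensor-exact} to the component sequences has nothing to apply to. The fallback of using only left exactness of the components does not rescue the argument either, because epimorphy in $(\X'\times_\kk\X'')\tors$ is not a componentwise condition, so you could not conclude right exactness of the tensored sequence this way. (Your ``target direction'' --- componentwise exact implies exact --- is fine; it is the ``source direction'' that breaks.)

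The paper's proof avoids the component functors on torsion sheaves entirely, and interestingly it runs through exactly the tool you named in your opening paragraph but then never used: Lemma~\ref{torsion-and-Gamma-systems-external-product}(b). One first observes that the external tensor product of $\Gamma$\+systems~\eqref{external-product-Gamma-systems-eqn} is exact --- this \emph{is} a legitimate componentwise check, because for $\Gamma$\+systems (unlike for torsion sheaves) the forgetful functor to the product of the categories $X_\gamma\qcoh$ is exact and faithful, so Lemma~\ref{qcoh-external-tensor-exact} applies directly. One then descends: the functors $({-})^+$ exhibit $\X'\tors$, $\X''\tors$, and $(\X'\times_\kk\X'')\tors$ as Serre quotients of the respective categories of $\Gamma$\+systems by exact quotient functors (proof of Proposition~\ref{Giraud-subcategory-abstract-prop}), every short exact sequence in a Serre quotient lifts to a short exact sequence upstairs, and the external tensor products commute with $({-})^+$ by Lemma~\ref{torsion-and-Gamma-systems-external-product}(b); exactness of the torsion-sheaf external product follows. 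If you reorganize your argument along these lines --- componentwise check at the level of $\Gamma$\+systems, then descent along $({-})^+$ --- it becomes correct.
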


\begin{proof}
 Exactness of the functor of external tensor product of
$\Gamma$\+systems~\eqref{external-product-Gamma-systems-eqn} follows
immediately from Lemma~\ref{qcoh-external-tensor-exact}.
 To deduce exactness of the functor of external tensor product of
quasi-coherent torsion
sheaves~\eqref{external-product-torsion-sheaves-eqn}, it remains to
recall that the functors~$({-})^+$ represent the abelian categories
of quasi-coherent torsion sheaves as quotient categories of
the abelian categories of $\Gamma$\+systems by some Serre subcategories
(see the proof of Proposition~\ref{Giraud-subcategory-abstract-prop})
and use Lemma~\ref{torsion-and-Gamma-systems-external-product}(b).
\end{proof}

\begin{lem} \label{torsion-inverse-image-of-external-product}
 Let\/ $\X'$ and\/ $\X''$ be reasonable ind-schemes over\/~$\kk$, and
let $f'\:\Y'\rarrow\X'$ and $f''\:\Y''\rarrow\X''$ be morphisms of
ind-schemes which are ``representable by schemes''.
 Let $f=f'\times_\kk f''\:\Y'\times_\kk\Y''\rarrow\X'\times_\kk\X''$
be the induced morphism of the Cartesian products.
 Let $\rM'$ be a quasi-coherent torsion sheaf on\/ $\X'$ and $\rM''$
be a quasi-coherent torsion sheaf on\/~$\X''$.
 Then there is a natural isomorphism
$$
 f^*(\rM'\bt_\kk\rM'')\simeq f'{}^*\rM'\bt_\kk f''{}^*\rM''
$$
of quasi-coherent torsion sheaves on\/ $\Y'\times_\kk\Y''$.
\end{lem}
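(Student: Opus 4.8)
The plan is to follow the same two-step pattern used for Lemmas~\ref{torsion-inverse-image-commutes-with-plus} and~\ref{inverse-images-preserve-torsion-pro-tensor}: first establish the assertion at the level of $\Gamma$\+systems, where it reduces componentwise to the scheme-theoretic Lemma~\ref{inverse-image-of-external-product}, and then transport it to quasi-coherent torsion sheaves by means of the functors $({-})^+$.

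First I would fix representations $\X'=\ilim_{\gamma'\in\Gamma'}X'_{\gamma'}$ and $\X''=\ilim_{\gamma''\in\Gamma''}X''_{\gamma''}$ by inductive systems of closed immersions of reasonable closed subschemes, so that $\X'\times_\kk\X''=\ilim_{(\gamma',\gamma'')}X'_{\gamma'}\times_\kk X''_{\gamma''}$ as in Section~\ref{external-of-torsion-subsecn}. Since $f'$ and $f''$ are representable by schemes, so is $f=f'\times_\kk f''$: for any scheme $T$ over $\X'\times_\kk\X''$ one has $(\Y'\times_\kk\Y'')\times_{\X'\times_\kk\X''}T\simeq(\Y'\times_{\X'}T)\times_T(\Y''\times_{\X''}T)$, a fibered product of schemes over a scheme. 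In particular, writing $Y'_{\gamma'}=\Y'\times_{\X'}X'_{\gamma'}$ and $Y''_{\gamma''}=\Y''\times_{\X''}X''_{\gamma''}$, the schemes $Y'_{\gamma'}\times_\kk Y''_{\gamma''}$ form a representation of $\Y'\times_\kk\Y''$, and the componentwise functor $f^*$ on $(\X'\times_\kk\X'',\,\Gamma'\times\Gamma'')$\+systems is defined. The $\Gamma$\+system version of the lemma asserts a natural isomorphism $f^*(\boM'\bt_\kk\boM'')\simeq f'{}^*\boM'\bt_\kk f''{}^*\boM''$; since both the inverse image of $\Gamma$\+systems and the external tensor product~\eqref{external-product-Gamma-systems-eqn} are defined componentwise, evaluation at $(\gamma',\gamma'')$ reduces this to the isomorphism $(f'_{\gamma'}\times_\kk f''_{\gamma''})^*(\boM'_{(\gamma')}\bt_\kk\boM''_{(\gamma'')})\simeq f'_{\gamma'}{}^*\boM'_{(\gamma')}\bt_\kk f''_{\gamma''}{}^*\boM''_{(\gamma'')}$ provided by Lemma~\ref{inverse-image-of-external-product}.

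Next I would pass to torsion sheaves. The definition~\eqref{external-product-torsion-sheaves-eqn} shows that restriction to $\Gamma$\+systems commutes with the external tensor product, namely $(\rM'\bt_\kk\rM'')|_{\Gamma'\times\Gamma''}=\rM'|_{\Gamma'}\bt_\kk\rM''|_{\Gamma''}$, because the component at $(\gamma',\gamma'')$ is $\rM'_{(X'_{\gamma'})}\bt_\kk\rM''_{(X''_{\gamma''})}$. Then, using the definition $f^*(\rM)=(f^*(\rM|_\Gamma))^+$ of the inverse image on torsion sheaves from Section~\ref{torsion-inverse-images-subsecn}, I would compute
\[
 f^*(\rM'\bt_\kk\rM'')=\bigl(f^*(\rM'|_{\Gamma'}\bt_\kk\rM''|_{\Gamma''})\bigr)^+
 \simeq\bigl(f'{}^*(\rM'|_{\Gamma'})\bt_\kk f''{}^*(\rM''|_{\Gamma''})\bigr)^+,
\]
the last isomorphism being the $\Gamma$\+system statement just established. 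Finally, Lemma~\ref{torsion-and-Gamma-systems-external-product}(b) allows me to pull $({-})^+$ inside the external tensor product, producing $(f'{}^*(\rM'|_{\Gamma'}))^+\bt_\kk(f''{}^*(\rM''|_{\Gamma''}))^+$, which by the definition of $f'{}^*$ and $f''{}^*$ on torsion sheaves equals $f'{}^*\rM'\bt_\kk f''{}^*\rM''$.

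I do not anticipate a genuine obstacle, the lemma being structurally parallel to the proofs cited above. The only point requiring care is naturality: one must verify that the componentwise scheme-level isomorphisms of Lemma~\ref{inverse-image-of-external-product} are compatible with the transition morphisms of the $\Gamma$\+systems, so that they assemble into an isomorphism of $\Gamma$\+systems rather than a mere levelwise family. This amounts to a routine diagram chase resting on the naturality of the base-change and projection-formula isomorphisms (Lemmas~\ref{reasonable-base-change} and~\ref{projection-formula}) underlying Lemma~\ref{inverse-image-of-external-product}, together with the fact that the functors $({-})^+$ are built functorially from direct images and direct limits.
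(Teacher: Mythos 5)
Your proposal is correct and takes essentially the same route as the paper's own (very terse) proof: unwind the definition $f^*(\rM)=(f^*(\rM|_\Gamma))^+$ from Section~\ref{torsion-inverse-images-subsecn}, obtain the isomorphism at the level of $\Gamma$\+systems componentwise from the scheme-level statement, and then commute $({-})^+$ with the external tensor product via Lemma~\ref{torsion-and-Gamma-systems-external-product}(b). The compatibility with the transition morphisms that you flag at the end is precisely the point covered by the paper's citation of Lemma~\ref{direct-image-of-external-product} (the structure morphisms of $\Gamma$\+systems are expressed through direct images under closed immersions), rather than by Lemma~\ref{projection-formula}; but this is a matter of bookkeeping, not of substance.
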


\begin{proof}
 Follows from the definition of the inverse image of quasi-coherent
torsion sheaves (see Section~\ref{torsion-inverse-images-subsecn})
and Lemmas~\ref{direct-image-of-external-product}
and~\ref{torsion-and-Gamma-systems-external-product}(b).
\end{proof}

 Let us say that a closed immersion of ind-schemes $i\:\Z\rarrow\X$ is
\emph{reasonable} if, for any scheme $T$ and a morphism of schemes
$T\rarrow\X$, the morphism of schemes $\Z\times_\X T\rarrow T$ is
the closed immersion of a reasonable closed subscheme in~$T$.
 If $\X=\ilim_{\gamma\in\Gamma}X_\gamma$ is a representation of $\X$
by an inductive system of closed immersions of schemes, then
a closed immersion of ind-schemes $i\:\Z\rarrow\X$ is reasonable if
and only if, for every $\gamma\in\Gamma$, the fibered product
$\Z\times_\X X_\gamma$ is a reasonable closed subscheme in $X_\gamma$
(use Lemma~\ref{base-change-composition-reasonable}(a)).
 It is clear from Lemma~\ref{product-of-reasonable} that the Cartesian
product of reasonable closed immersions of ind-schemes over~$\kk$ is
a reasonable closed immersion.

\begin{lem} \label{torsion-shriek-of-external-product}
 Let $i'\:\Z'\rarrow\X'$ and $i''\:\Z''\rarrow\X''$ be reasonable closed
immersions of reasonable ind-schemes over\/~$\kk$, and let
$i=i'\times_\kk i''\:\Z'\times_\kk\Z''\rarrow\X'\times_\kk\X''$ be
the induced reasonable closed immersion of the Cartesian products.
 Let $\rM'$ be a quasi-coherent torsion sheaf on\/ $\X'$ and $\rM''$
be a quasi-coherent torsion sheaf on\/~$\X''$.
 Then there is a natural isomorphism
$$
 i^!(\rM'\bt_\kk\rM'')\simeq i'{}^!\rM'\bt_\kk i''{}^!\rM''
$$
of quasi-coherent torsion sheaves on\/ $\Z'\times_\kk\Z''$.
\end{lem}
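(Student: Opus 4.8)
The plan is to reduce the statement to the scheme-level result of Lemma~\ref{shriek-of-external-product} by evaluating both sides on a cofinal system of reasonable closed subschemes. First I would fix representations $\X'=\ilim_{\gamma'\in\Gamma'}X'_{\gamma'}$ and $\X''=\ilim_{\gamma''\in\Gamma''}X''_{\gamma''}$ by inductive systems of closed immersions of reasonable closed subschemes, and set $Z'_{\gamma'}=\Z'\times_{\X'}X'_{\gamma'}$ and $Z''_{\gamma''}=\Z''\times_{\X''}X''_{\gamma''}$. Since $i'$ and $i''$ are reasonable closed immersions, $Z'_{\gamma'}$ is a reasonable closed subscheme in $X'_{\gamma'}$ and $Z''_{\gamma''}$ is a reasonable closed subscheme in $X''_{\gamma''}$, with $\Z'=\ilim_{\gamma'}Z'_{\gamma'}$ and $\Z''=\ilim_{\gamma''}Z''_{\gamma''}$. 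The key geometric observation is the natural identification of fibered products $(\Z'\times_\kk\Z'')\times_{\X'\times_\kk\X''}(X'_{\gamma'}\times_\kk X''_{\gamma''})\simeq Z'_{\gamma'}\times_\kk Z''_{\gamma''}$, under which the restriction of $i=i'\times_\kk i''$ becomes $i'_{\gamma'}\times_\kk i''_{\gamma''}$, where $i'_{\gamma'}\:Z'_{\gamma'}\rarrow X'_{\gamma'}$ and $i''_{\gamma''}\:Z''_{\gamma''}\rarrow X''_{\gamma''}$ are the base changes of $i'$ and $i''$. By Lemma~\ref{product-of-reasonable}, the subschemes $Z'_{\gamma'}\times_\kk Z''_{\gamma''}$ form a representation of $\Z'\times_\kk\Z''$ by an inductive system of closed immersions of reasonable closed subschemes, so this ind-scheme is reasonable and it suffices to construct the desired isomorphism on this cofinal system.

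Next I would compute both sides on the subscheme $W=Z'_{\gamma'}\times_\kk Z''_{\gamma''}$. Using the description of the functor $i^!$ for a reasonable closed immersion of ind-schemes from Section~\ref{torsion-inverse-images-subsecn} (see also Examples~\ref{ind-closed-immersion}) together with the definition of the external tensor product of torsion sheaves in Section~\ref{external-of-torsion-subsecn}, the left-hand side evaluates to $(i'_{\gamma'}\times_\kk i''_{\gamma''})^!(\rM'_{(X'_{\gamma'})}\bt_\kk\rM''_{(X''_{\gamma''})})$. Likewise the right-hand side evaluates to $i'_{\gamma'}{}^!\rM'_{(X'_{\gamma'})}\bt_\kk i''_{\gamma''}{}^!\rM''_{(X''_{\gamma''})}$, since $(i'{}^!\rM')_{(Z'_{\gamma'})}=i'_{\gamma'}{}^!\rM'_{(X'_{\gamma'})}$ and analogously on the other factor. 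The scheme-level Lemma~\ref{shriek-of-external-product}, applied to the reasonable closed subschemes $Z'_{\gamma'}\subset X'_{\gamma'}$ and $Z''_{\gamma''}\subset X''_{\gamma''}$ and the quasi-coherent sheaves $\rM'_{(X'_{\gamma'})}$ and $\rM''_{(X''_{\gamma''})}$, then supplies a natural isomorphism between these two quasi-coherent sheaves on $W$.

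Finally I would assemble these pointwise isomorphisms into an isomorphism of quasi-coherent torsion sheaves on $\Z'\times_\kk\Z''$, and I expect the main obstacle to lie precisely here: one must verify that the isomorphisms furnished by Lemma~\ref{shriek-of-external-product} over the various $W$ commute with the structure maps of the two torsion sheaves, that is, with the canonical identifications along the transition closed immersions $j=j'\times_\kk j''\:W_\alpha\rarrow W_\beta$. Writing $i_\alpha=i'_{\alpha'}\times_\kk i''_{\alpha''}$ and $i_\beta=i'_{\beta'}\times_\kk i''_{\beta''}$, and $l=l'\times_\kk l''$ for the transition closed immersion $V_\alpha\rarrow V_\beta$ of the ambient subschemes $V_\bu=X'_\bu\times_\kk X''_\bu$, the two families of structure maps are governed, on the left-hand side, by the transitivity isomorphism $i_\alpha^!\,l^!\simeq j^!\,i_\beta^!$ coming from the relation $l\,i_\alpha=i_\beta\,j$, and on the right-hand side by the external products of the scheme-level structure isomorphisms of $i'{}^!\rM'$ and $i''{}^!\rM''$. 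The required compatibility then reduces to a diagram chase combining the naturality of the isomorphism of Lemma~\ref{shriek-of-external-product} in its sheaf arguments with its compatibility with $!$-restriction along reasonable closed immersions and the transitivity of the functors $({-})^!$ (whose scheme-level input is ultimately the naturality built into Lemma~\ref{shriek-star-tensor}). Once this is checked, the compatible family of pointwise isomorphisms determines the asserted natural isomorphism $i^!(\rM'\bt_\kk\rM'')\simeq i'{}^!\rM'\bt_\kk i''{}^!\rM''$ of torsion sheaves, with naturality in $\rM'$ and $\rM''$ inherited from that of Lemma~\ref{shriek-of-external-product}.
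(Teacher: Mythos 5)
Your proposal is correct and takes essentially the same route as the paper: the paper's proof simply states that the lemma follows from the definition of the external tensor product of torsion sheaves, the definition of the functor $i^!$ for a closed immersion of ind-schemes, and the scheme-level Lemma~\ref{shriek-of-external-product}, which is exactly the reduction you carry out. The only difference is that you spell out explicitly the cofinal system of product-type reasonable closed subschemes and the compatibility of the scheme-level isomorphisms with the transition maps, steps the paper treats as immediate consequences of the definitions and of the naturality of Lemma~\ref{shriek-of-external-product}.
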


\begin{proof}
 Follows immediately from the definition of the external tensor product
of quasi-coherent torsion sheaves, the definition of the functor~$i^!$
for a closed immersion of ind-schemes~$i$
(see Section~\ref{torsion-inverse-images-subsecn}), and
Lemma~\ref{shriek-of-external-product}.
\end{proof}

 Let $\rM'{}^\bu$ be a complex of quasi-coherent torsion sheaves on
$\X'$ and $\rM''{}^\bu$ be a complex of quasi-coherent torsion sheaves
on~$\X''$.
 Then the complex $\rM'{}^\bu\bt_\kk\rM''{}^\bu$ of quasi-coherent
torsion sheaves on $\X'\times_\kk\X''$ is constructed by totalizing
the bicomplex of external tensor products using infinite coproducts
along the diagonals.

\begin{lem}  \label{torsion-complexes-external-tensor-coacyclic}
 Let $\rM'{}^\bu$ be a complex of quasi-coherent torsion sheaves
on a reasonable ind-scheme\/ $\X'$ and $\rM''{}^\bu$ be a complex of
quasi-coherent torsion sheaves on a reasonable ind-scheme\/ $\X''$
over\/~$\kk$.
 Assume that the complex $\rM'{}^\bu$ is coacyclic (as a complex in\/
$\X'\tors$).
 Then the complex $\rM'{}^\bu\bt_\kk\rM''{}^\bu$ is coacyclic (as
a complex in $(\X'\times_\kk\X'')\tors$).
\end{lem}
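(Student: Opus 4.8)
The plan is to fix the complex $\rM''^\bu$ on $\X''$ and to treat the external product as a functor of its first argument, showing that this functor preserves coacyclicity — exactly in the spirit of Lemma~\ref{pro-torsion-complexes-tensor-exactness}(b). So first I would introduce the functor $\Phi={-}\bt_\kk\rM''^\bu\:\sC(\X'\tors)\rarrow\sC((\X'\times_\kk\X'')\tors)$, sending a complex $\rM'^\bu$ to the totalization (by coproducts along the diagonals) of the bicomplex $\rM'^p\bt_\kk\rM''^q$. This functor is additive and commutes with the shift, so it descends to a triangulated functor $\bar\Phi\:\sK(\X'\tors)\rarrow\sK((\X'\times_\kk\X'')\tors)$.

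Next I would verify the two key exactness/continuity properties of $\Phi$. That $\Phi$ carries short exact sequences of complexes to short exact sequences of complexes follows by inspecting the construction degreewise: the degree~$n$ term of $\Phi(\rM'^\bu)$ is $\coprod_{p+q=n}\rM'^p\bt_\kk\rM''^q$, each summand functor ${-}\bt_\kk\rM''^q$ is exact by Lemma~\ref{torsion-external-tensor-exact}, and coproducts are exact in the Grothendieck category $(\X'\times_\kk\X'')\tors$ (which is Grothendieck by Theorem~\ref{torsion-sheaves-abelian}, the ind-scheme $\X'\times_\kk\X''$ being reasonable by Lemma~\ref{product-of-reasonable}); hence a degreewise short exact sequence maps to a degreewise short exact sequence. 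That $\Phi$ preserves coproducts of complexes follows from Lemma~\ref{torsion-and-Gamma-systems-external-product}(a) together with the commutation of coproducts with totalization.

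With these properties in hand, I would conclude as follows. A Fubini-type reorganization of iterated totalizations by coproducts gives a natural isomorphism $\Phi(\mathrm{Tot}(S))\simeq\mathrm{Tot}(\Phi(S))$ for any short exact sequence of complexes $S$ in $\X'\tors$; since $\Phi(S)$ is again a short exact sequence of complexes, its totalization is coacyclic in $(\X'\times_\kk\X'')\tors$ by the very definition of coacyclicity. Thus the full preimage under $\bar\Phi$ of the subcategory of coacyclic complexes is a triangulated subcategory of $\sK(\X'\tors)$ that is closed under coproducts (by the coproduct-preservation of $\bar\Phi$) and contains the totalization of every short exact sequence of complexes. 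By the minimality in the definition of the coacyclic subcategory, this preimage contains all coacyclic complexes, which is precisely the assertion that $\rM'^\bu$ coacyclic implies $\rM'^\bu\bt_\kk\rM''^\bu$ coacyclic.

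Since none of the ingredients is deep, I do not expect a genuine obstacle; the only point requiring care is the bookkeeping that $\Phi$ commutes with totalization (the isomorphism $\Phi(\mathrm{Tot}(S))\simeq\mathrm{Tot}(\Phi(S))$) and that the degreewise coproducts appearing in the totalization remain exact — both of which are guaranteed by the exactness of coproducts in the relevant Grothendieck category and by Lemmas~\ref{torsion-external-tensor-exact} and~\ref{torsion-and-Gamma-systems-external-product}(a).
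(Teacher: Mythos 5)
Your proposal is correct and is essentially the paper's own argument spelled out in detail: the paper's proof consists precisely of citing Lemma~\ref{torsion-external-tensor-exact} (exactness of $\bt_\kk$ in each argument) and Lemma~\ref{torsion-and-Gamma-systems-external-product}(a) (preservation of coproducts), with the implicit reasoning that a coproduct-preserving functor taking short exact sequences of complexes to short exact sequences of complexes preserves coacyclicity. The Fubini/totalization bookkeeping and the minimality argument you make explicit are exactly what the paper leaves to the reader, mirroring its proof of Lemma~\ref{pro-torsion-complexes-tensor-exactness}(b).
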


\begin{proof}
 Follows from Lemmas~\ref{torsion-and-Gamma-systems-external-product}(a)
and~\ref{torsion-external-tensor-exact}.
\end{proof}

 It is clear from
Lemma~\ref{torsion-complexes-external-tensor-coacyclic} that
the external tensor product is well-defined as a functor between
the coderived categories of quasi-coherent torsion sheaves,
\begin{equation} \label{coderived-torsion-external-product}
 \bt_\kk\:\sD^\co(\X'\tors)\times\sD^\co(\X''\tors)\lrarrow
 \sD^\co((\X'\times_\kk\X'')\tors).
\end{equation}

\subsection{Derived restriction with supports}
\label{derived-supports-subsecn}
 Let $\X$ be an ind-Noetherian ind-scheme and $i\:\Z\rarrow\X$ be
a closed immersion of ind-schemes (then $\Z$ is also an ind-Noetherian
ind-scheme).
 The functor $i^!\:\X\tors\rarrow\Z\tors$ was defined in
Section~\ref{torsion-inverse-images-subsecn}.

 According to Corollary~\ref{ind-Noetherian-coderived-cor},
the inclusion of the full subcategory of injective quasi-coherent
torsion sheaves $\X\tors_\inj\rarrow\X\tors$ induces a triangulated
equivalence $\sK(\X\tors_\inj)\simeq\sD^\co(\X\tors)$.
 The right derived functor
$$
 \boR i^!\:\sD^\co(\X\tors)\lrarrow\sD^\co(\Z\tors)
$$
is constructed by applying the functor~$i^!$ to complexes of injective
quasi-coherent torsion sheaves on~$\X$.

 Notice that the right derived functor $\boR i^!$ preserves coproducts.
 Indeed, the underived functor $i^!\:\X\tors\rarrow\Z\tors$ preserves
coproducts for any reasonable closed immersion of reasonable ind-schemes
$i\:\Z\rarrow\X$; and coproducts of injective quasi-coherent torsion
sheaves are injective on an ind-Noetherian ind-scheme
(by Lemma~\ref{locally-Noetherian-coproducts-injective} and
Proposition~\ref{ind-Noetherian-torsion-locally-Noetherian}).

 Let $\X'$ and $\X''$ be ind-schemes of ind-finite type over~$\kk$
(then $\X'\times_\kk\X''$ is also an ind-scheme of ind-finite type).
 Let $i'\:\Z'\rarrow\X'$ and $i''\:\Z''\rarrow\X''$ be closed
immersions of ind-schemes; denote by $i=i'\times_\kk i''\:
\Z'\times_\kk\Z''\rarrow\X'\times_\kk\X''$ the induced closed immersion
of the Cartesian products.
 The aim of this Section~\ref{derived-supports-subsecn} is to prove
the following proposition (for another comparable result, see
Proposition~\ref{derived-supports-flat-pullback-prop} below).

\begin{prop} \label{derived-supports-external-product-prop}
 For any complexes of quasi-coherent torsion sheaves $\rM'{}^\bu$
on\/ $\X'$ and $\rM''{}^\bu$ on\/ $\X''$, there is a natural isomorphism
$$
 \boR i^!(\rM'{}^\bu\bt_\kk\rM''{}^\bu)\simeq
 \boR i'{}^!(\rM'{}^\bu)\bt_\kk\boR i''{}^!(\rM''{}^\bu)
$$
in the coderived category\/ $\sD^\co((\Z'\times_\kk\Z'')\tors)$.
\end{prop}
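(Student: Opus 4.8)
The plan is to compute $\boR i^!$ by resolving into injective quasi-coherent torsion sheaves and then to reduce the whole statement, by Lemma~\ref{torsion-shriek-of-external-product}, to the assertion that an external tensor product of complexes of injective torsion sheaves is again a complex of injective torsion sheaves. First I would invoke Corollary~\ref{ind-Noetherian-coderived-cor} to choose morphisms with coacyclic cones $\rM'{}^\bu\rarrow\rJ'{}^\bu$ and $\rM''{}^\bu\rarrow\rJ''{}^\bu$ into complexes $\rJ'{}^\bu\in\sK(\X'\tors_\inj)$ and $\rJ''{}^\bu\in\sK(\X''\tors_\inj)$; by the definition of $\boR i'{}^!$ and $\boR i''{}^!$ one then has $\boR i'{}^!(\rM'{}^\bu)\simeq i'{}^!\rJ'{}^\bu$ and $\boR i''{}^!(\rM''{}^\bu)\simeq i''{}^!\rJ''{}^\bu$. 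Applying Lemma~\ref{torsion-complexes-external-tensor-coacyclic} in each argument separately---first to the coacyclic cone of $\rM'{}^\bu\rarrow\rJ'{}^\bu$ externally tensored with $\rM''{}^\bu$, and then (by the evident analogue for the second argument) to $\rJ'{}^\bu$ externally tensored with the coacyclic cone of $\rM''{}^\bu\rarrow\rJ''{}^\bu$---shows that the natural map $\rM'{}^\bu\bt_\kk\rM''{}^\bu\rarrow\rJ'{}^\bu\bt_\kk\rJ''{}^\bu$ is an isomorphism in $\sD^\co((\X'\times_\kk\X'')\tors)$.

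Next I would identify the right-hand side of the proposition with $i^!$ of the external product of the resolutions. Since all the ind-schemes are ind-Noetherian and $i'$, $i''$ (hence $i=i'\times_\kk i''$) are reasonable closed immersions, Lemma~\ref{torsion-shriek-of-external-product} applies termwise; combined with the fact that $i^!$ preserves coproducts of injective torsion sheaves on an ind-Noetherian ind-scheme (as used in Section~\ref{derived-supports-subsecn}), it commutes with the coproduct totalization and yields a natural isomorphism of complexes
\[
 i^!(\rJ'{}^\bu\bt_\kk\rJ''{}^\bu)\simeq i'{}^!\rJ'{}^\bu\bt_\kk i''{}^!\rJ''{}^\bu
 =\boR i'{}^!(\rM'{}^\bu)\bt_\kk\boR i''{}^!(\rM''{}^\bu).
\]
It therefore remains to prove that $\boR i^!(\rM'{}^\bu\bt_\kk\rM''{}^\bu)\simeq i^!(\rJ'{}^\bu\bt_\kk\rJ''{}^\bu)$; in view of the isomorphism of the previous paragraph, this amounts to showing that the underived functor $i^!$ computes $\boR i^!$ on $\rJ'{}^\bu\bt_\kk\rJ''{}^\bu$.

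The crux, and the main obstacle, is to verify that $\rJ'{}^\bu\bt_\kk\rJ''{}^\bu$ is a complex of \emph{injective} quasi-coherent torsion sheaves on $\X'\times_\kk\X''$; granting this, $\boR i^!$ is computed by applying $i^!$ termwise and the proof is complete. Each term $\coprod_{p+q=n}\rJ'{}^p\bt_\kk\rJ''{}^q$ is a coproduct, which is injective once each summand is (coproducts of injectives are injective on an ind-Noetherian ind-scheme, by Lemma~\ref{locally-Noetherian-coproducts-injective} and Proposition~\ref{ind-Noetherian-torsion-locally-Noetherian}). So I must show that $\rJ'\bt_\kk\rJ''$ is injective for single injective torsion sheaves $\rJ'$, $\rJ''$. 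By Proposition~\ref{torsion-injectives-characterized}(b) it suffices to check, for every closed subscheme $X'_{\gamma'}\times_\kk X''_{\gamma''}\subset\X'\times_\kk\X''$, that the restriction $i^!(\rJ'\bt_\kk\rJ'')\simeq i'{}^!\rJ'\bt_\kk i''{}^!\rJ''$ (again Lemma~\ref{torsion-shriek-of-external-product}) is injective---that is, that the external tensor product over~$\kk$ of an injective quasi-coherent sheaf on the finite-type $\kk$-scheme $X'_{\gamma'}$ and an injective quasi-coherent sheaf on $X''_{\gamma''}$ is injective on $X'_{\gamma'}\times_\kk X''_{\gamma''}$.

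This last injectivity statement is where ind-finite type over a \emph{field} is indispensable. Since injectivity of a quasi-coherent sheaf on a Noetherian scheme is a local property, I would reduce it to the affine case, i.e.\ to the algebraic claim that for finitely generated commutative $\kk$-algebras $R'$, $R''$ (so that $R'\ot_\kk R''$ is Noetherian) the tensor product $E'\ot_\kk E''$ of an injective $R'$-module $E'$ and an injective $R''$-module $E''$ is injective over $R'\ot_\kk R''$. This rests on the structure theory of injectives over Noetherian rings: decomposing $E'$ and $E''$ into indecomposable injective hulls $E(R'/\p')$ and $E(R''/\p'')$, one uses the flatness over~$\kk$ of all modules in sight, together with the Noetherianity of $R'\ot_\kk R''$, to recognize the tensor product as an injective $R'\ot_\kk R''$-module. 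Over a general Noetherian base this fails, so it is precisely the field hypothesis that makes the argument go through; establishing this lemma is the only genuinely substantial point, the rest being the bookkeeping of the three reduction steps above.
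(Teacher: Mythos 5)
Your overall reduction scheme (resolve both arguments by complexes of injectives, identify $i^!(\rJ'{}^\bu\bt_\kk\rJ''{}^\bu)$ with the right-hand side via Lemma~\ref{torsion-shriek-of-external-product}, and then argue that the underived~$i^!$ computes $\boR i^!$ on $\rJ'{}^\bu\bt_\kk\rJ''{}^\bu$) is the same as the paper's, but the way you carry out the last step contains a genuine gap, and it is fatal as stated. Your crux claim---that $\rJ'\bt_\kk\rJ''$ is injective, which you reduce to the assertion that $E'\ot_\kk E''$ is an injective $(R'\ot_\kk R'')$\+module whenever $E'$, $E''$ are injective modules over finitely generated commutative $\kk$\+algebras $R'$, $R''$---is false. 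Take $R'=\kk[x]$, $R''=\kk[y]$, and $E'=\kk(x)$, $E''=\kk(y)$. Then $\Ext^1_{\kk[x,y]}(\kk[x,y]/(x-y),\>\kk(x)\ot_\kk\kk(y))\simeq(\kk(x)\ot_\kk\kk(y))/(x-y)(\kk(x)\ot_\kk\kk(y))\ne0$: if $1\ot1=(x-y)u$ with $u$ in the subring $\kk(x)\ot_\kk\kk(y)\subset\kk(x,y)$, one could write $a(x)b(y)=(x-y)p(x,y)$ with nonzero polynomials $a$, $b$, and setting $y=x$ would give $a(x)b(x)=0$, a contradiction; so $\kk(x)\ot_\kk\kk(y)$ is not even divisible, hence not injective. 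The paper warns about precisely this right after stating the proposition: the external tensor product of two injective quasi-coherent (torsion) sheaves is usually \emph{not} injective, which is why the proof ``requires some work''. Note that this counterexample does not contradict Lemma~\ref{Ext-of-external-tensor-product-modules}, because $\kk[x,y]/(x-y)$ is not of the form $M'\ot_\kk M''$.

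What the field hypothesis actually buys---and what the paper proves---is the weaker statement that $J'\ot_\kk J''$ has vanishing higher Ext against finitely generated modules of external product form $M'\ot_\kk M''$ (Lemma~\ref{Ext-of-external-tensor-product-modules}); and this suffices because $i^!$ only probes such modules, being built from $\Hom_{R'\ot_\kk R''}(S'\ot_\kk S'',{-})$ for the product closed subscheme. Accordingly, instead of claiming injectivity of $\rJ'{}^\bu\bt_\kk\rJ''{}^\bu$, the paper takes a further injective resolution $\rJ'\bt_\kk\rJ''\rarrow\rK^\bu$ and proves that $H^0(i^!\rK^\bu)\simeq i'{}^!\rJ'\bt_\kk i''{}^!\rJ''$ while $H^n(i^!\rK^\bu)=0$ for $n>0$ (Lemmas~\ref{support-external-product-exactness}, \ref{torsion-subscheme-support-external-product-exactness}, \ref{torsion-support-external-product-exactness}, and~\ref{torsion-support-sum-of-external-products-exactness}); in other words, $\rJ'\bt_\kk\rJ''$ is adjusted to $\boR i^!$ without being a complex of injectives. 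A bicomplex totalization argument (Lemma~\ref{derived-supports-external-product-lemma}) then yields the proposition. Your proof becomes correct if you replace the injectivity claim by this adjustedness statement; everything before that point can stay as written.
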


 The proof of Proposition~\ref{derived-supports-external-product-prop}
requires some work because the external tensor product of two
injective quasi-coherent (torsion) sheaves is usually \emph{not}
an injective quasi-coherent (torsion) sheaf.
 In fact, the assertion of the proposition follows almost immediately
from the next lemma (together with
Lemma~\ref{torsion-shriek-of-external-product}).

\begin{lem} \label{derived-supports-external-product-lemma}
 Let $\rJ'{}^\bu$ be a complex of injective quasi-coherent torsion
sheaves on\/ $\X'$ and $\rJ''{}^\bu$ be a complex of injective
quasi-coherent torsion sheaves on\/~$\X''$.
 Let $r\:\rJ'{}^\bu\bt_\kk\rJ''{}^\bu\rarrow\rK^\bu$ be a morphism of
complexes of quasi-coherent torsion sheaves on\/ $\X'\times_\kk\X''$
such that $\rK^\bu$ is a complex of injective quasi-coherent
torsion sheaves and the cone of\/~$r$ is a coacyclic complex of
quasi-coherent torsion sheaves on\/ $\X'\times_\kk\X''$.
 Then the induced morphism of complexes of quasi-coherent torsion
sheaves on\/ $\Z'\times_\kk\Z''$
$$
 i^!(r)\:i^!(\rJ'{}^\bu\bt_\kk\rJ''{}^\bu)\lrarrow i^!\rK^\bu
$$
has coacyclic cone.
\end{lem}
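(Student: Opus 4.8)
The plan is to reduce the statement to a K\"unneth-type computation of $\boR i^!$ on external products over the field~$\kk$, carried out on the scheme components and reassembled.

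First I would record the elementary reductions. Since the mapping cone is formed degreewise, any additive functor applied termwise commutes with it; hence the cone of $i^!(r)$ is canonically $i^!(C^\bu)$, where $C^\bu=\mathrm{cone}(r)$ is coacyclic by hypothesis. Writing $\rL^\bu=\rJ'{}^\bu\bt_\kk\rJ''{}^\bu$, the functor $i^!$ sends injective quasi-coherent torsion sheaves to injective ones (Proposition~\ref{torsion-injectives-characterized}(a)), so $i^!\rK^\bu$ is a complex of injectives; by Corollary~\ref{ind-Noetherian-coderived-cor} it therefore represents $\boR i^!$ of the class of $\rL^\bu$ in $\sD^\co((\X'\times_\kk\X'')\tors)$. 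Thus the assertion is equivalent to saying that the natural morphism $i^!\rL^\bu\to i^!\rK^\bu$ is an isomorphism in $\sD^\co((\Z'\times_\kk\Z'')\tors)$, i.e.\ that the external product $\rL^\bu$ is \emph{adapted} to $\boR i^!$. By Lemma~\ref{torsion-shriek-of-external-product} the underived value is $i^!\rL^\bu\cong i'{}^!\rJ'{}^\bu\bt_\kk i''{}^!\rJ''{}^\bu$, an external product of two complexes of injective torsion sheaves. Finally, I would note that the conclusion is independent of the choice of $r$: any two coderived-injective resolutions of $\rL^\bu$ are homotopy equivalent under $\rL^\bu$, so it suffices to verify the statement for one convenient choice of $\rK^\bu$.

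Next I would pass to the scheme components. Representing $\X'=\ilim X'_{\gamma'}$, $\X''=\ilim X''_{\gamma''}$ and $\Z'=\ilim Z'_{\gamma'}$, $\Z''=\ilim Z''_{\gamma''}$ by inductive systems of closed immersions of schemes of finite type over~$\kk$, I would \emph{choose} coderived-injective resolutions compatibly with restriction to components, gluing component-wise resolutions as in Example~\ref{aleph-zero-dualizing-glueing} (using the strictification results cited there). Concretely, I want $\rK^\bu$ on $\X'\times_\kk\X''$ together with a coderived-injective resolution $s\colon i^!\rL^\bu\to\mathfrak I^\bu$ on $\Z'\times_\kk\Z''$ such that restricting along each component $V=X'_{\gamma'}\times_\kk X''_{\gamma''}$, resp.\ $W=Z'_{\gamma'}\times_\kk Z''_{\gamma''}$, again yields coderived-injective resolutions. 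Since $i^!\rK^\bu$ is injective, $i^!(r)$ factors, uniquely up to homotopy by the semiorthogonality of Proposition~\ref{coderived-and-homotopy-of-injectives}, as $i^!(r)\simeq t\circ s$ with $t\colon\mathfrak I^\bu\to i^!\rK^\bu$ a morphism of complexes of injectives. Then $i^!(r)$ is a coderived isomorphism iff $t$ is a homotopy equivalence, and by Lemma~\ref{complex-of-injective-torsion-contractible} the latter may be tested by applying the restrictions $k^!_{\gamma'\gamma''}$ to the components~$W$.

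The base case is the scheme computation on a single component. Here $V$ and $W$ are of finite type over~$\kk$, and the identity $k^!_{\gamma'\gamma''}\,i^!=i^!_{\gamma'\gamma''}\,l^!_{\gamma'\gamma''}$ together with the compatibility of the chosen resolutions reduces the claim to the statement that, on a finite-type scheme $V$, the external product of two complexes of injective quasi-coherent sheaves is adapted to $\boR i^!$ for the closed immersion $i=i'\times_\kk i''$, so that the underived value $i^!(\J'{}^\bu\bt_\kk\J''{}^\bu)\cong i'{}^!\J'{}^\bu\bt_\kk i''{}^!\J''{}^\bu$ (Lemma~\ref{shriek-of-external-product}) computes $\boR i^!$. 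I would prove this on the flat side: a finite-type scheme over~$\kk$ carries a dualizing complex, so by Theorem~\ref{Noetherian-scheme-triangulated-equivalence} one has $\sD^\co(V\qcoh)\simeq\sD(V\flat)$, where $\sD(V\flat)=\sD^\co(V\flat)=\sD^\abs(V\flat)$ by finite homological dimension (Lemma~\ref{three-derived-categories-lemma}). Choosing the dualizing complex on $V$ to be the external product of dualizing complexes on $X'_{\gamma'}$ and $X''_{\gamma''}$, the equivalence carries external products of injective complexes to external products of flat complexes (Lemma~\ref{qcoh-flat-external-product}) and carries $\boR i^!$ to the exact inverse-image functor $i^*$ (via Proposition~\ref{flat-torsion-tensor-prop}). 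Since $\bt_\kk$ is exact on flats (Lemma~\ref{qcoh-complexes-external-tensor-exactness}(b)) and $i^*$ of an external product is the external product of the $i'{}^*,i''{}^*$ (Lemma~\ref{inverse-image-of-external-product}), every operation in sight is exact, and the desired K\"unneth isomorphism, hence the adaptedness, is immediate. Feeding this back, $k^!_{\gamma'\gamma''}(t)$ is a coderived isomorphism of injective complexes on~$W$, hence a homotopy equivalence, which is exactly what the component test requires.

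The main obstacle, and the reason the proof is not formal, is the bookkeeping of coacyclicity across the ind-scheme. External products of injectives are \emph{not} injective, and restriction of torsion sheaves to a scheme component is only left exact, so neither does coacyclicity transfer freely under restriction nor is it detectable componentwise for an arbitrary complex. The two devices that circumvent this are: first, the strictification and gluing that produce resolutions compatible with component restriction, which confines every use of the non-exact restriction to \emph{complexes of injectives}, where Lemma~\ref{complex-of-injective-torsion-contractible} legitimately detects contractibility; and second, the passage to the flat side on each finite-type component, where finite homological dimension turns the otherwise delicate coacyclicity into plain exactness and makes the K\"unneth formula transparent. Arranging the gluing in the second step so that it is genuinely compatible with the functors $i^!$, $l^!_{\gamma'\gamma''}$ and $k^!_{\gamma'\gamma''}$ is the delicate point I expect to consume most of the work.
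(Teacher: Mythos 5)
There are two genuine gaps here, one at the scheme level and one in the passage to ind-schemes. At the scheme level, your flat-side argument establishes at most that $\boR i^!(\rJ'{}^\bu\bt_\kk\rJ''{}^\bu)$ and $i'{}^!\rJ'{}^\bu\bt_\kk i''{}^!\rJ''{}^\bu$ are abstractly isomorphic as objects of the coderived category; but the lemma asserts that the specific comparison morphism $i^!(\rJ'{}^\bu\bt_\kk\rJ''{}^\bu)\rarrow i^!\rK^\bu$ has coacyclic cone, i.e., that the external product is \emph{adapted} to $\boR i^!$, and an abstract isomorphism of the two endpoints does not yield this. If you push the duality formalism as far as it goes---replacing $\rJ'{}^\bu\bt_\kk\rJ''{}^\bu$ up to homotopy equivalence by $(\D'{}^\bu\bt_\kk\D''{}^\bu)\ot\fG^\bu$ with $\fG^\bu$ a complex of flats (Theorem~\ref{Noetherian-scheme-triangulated-equivalence} together with Lemma~\ref{torsion-pro-action-external-product}) and then applying the projection formula of Proposition~\ref{flat-torsion-tensor-prop}---you only reduce the claim to the bounded-below case of the external product of two dualizing complexes, and there the assertion that the underived $i^!$ of the complex agrees with $i^!$ of a bounded-below injective resolution is precisely the vanishing $\Ext^n_{R'\ot_\kk R''}(M'\ot_\kk M'',\>J'\ot_\kk J'')=0$ for $n>0$, that is, Lemma~\ref{Ext-of-external-tensor-product-modules} (used through Lemma~\ref{support-external-product-exactness}). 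This Noetherian K\"unneth computation is the mathematical core of the proof; your claim that ``every operation in sight is exact, hence the adaptedness is immediate'' does not produce it, because the duality equivalence controls objects of the coderived category, not the behaviour of the underived functor~$i^!$ on complexes that are not yet known to be adjusted to it.

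Second, the gluing step is unsupported. Example~\ref{aleph-zero-dualizing-glueing} and Proposition~\ref{aleph-zero-torsion-morphism-strictification-prop} concern $\aleph_0$\+ind-schemes (sequential inductive systems), whereas the present lemma is stated for arbitrary ind-schemes of ind-finite type over~$\kk$; moreover, the strictification results require Hom-vanishing hypotheses that hold for dualizing complexes (Lemma~\ref{dualizing-no-negative-selfext}) but not for arbitrary unbounded complexes of injectives, and the compatibility homotopy equivalences between restricted resolutions on different components that your gluing presupposes are themselves instances of the very statement being proven. The paper's proof avoids constructing any compatible system of resolutions: it takes a single Cartan--Eilenberg-type resolution of $\rJ'{}^\bu\bt_\kk\rJ''{}^\bu$, resolving each term---which is a coproduct $\coprod_{p+q=n}\rJ'{}^p\bt_\kk\rJ''{}^q$, whence the need for Lemma~\ref{torsion-support-sum-of-external-products-exactness} on coproducts, also absent from your plan---then proves that $i^!$ applied to each bounded-below column remains exact by the $\Gamma$\+systems argument of Lemma~\ref{torsion-subscheme-support-external-product-exactness} (the successive cokernels form $\Gamma$\+systems whose structure maps are isomorphisms, hence come from genuine quasi-coherent torsion sheaves), and finally concludes because the coproduct totalization of a bounded-below acyclic complex of complexes is coacyclic. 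This reduction of the unbounded problem to bounded-below columns is exactly the device your proposal lacks, and without it neither the componentwise test via Lemma~\ref{complex-of-injective-torsion-contractible} nor the proposed gluing can be carried out.
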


 The proof of Lemma~\ref{derived-supports-external-product-lemma} will
be given below near the end of Section~\ref{derived-supports-subsecn}.

\begin{lem} \label{Ext-of-external-tensor-product-modules}
 Let $R'$ and $R''$ be Noetherian commutative\/ $\kk$\+algebras,
$M'$ be a finitely generated $R'$\+module, $M''$ be a finitely
generated $R''$\+module, $N'$ be an $R'$\+module, and $N''$ be
an $R''$\+module.
 Then for every $n\ge0$ there is a natural isomorphism of\/
$\kk$\+vector spaces
$$
 \Ext^n_{R'\ot_\kk R''}(M'\ot_\kk M'',\>N'\ot_\kk N'')\simeq
 \bigoplus\nolimits_{p+q=n}
 \Ext^p_{R'}(M',N')\ot_\kk\Ext^q_{R''}(M'',N'').
$$
 In particular, for any injective $R'$\+module $J'$ and any
injective $R''$\+module $J''$ one has\/ $\Ext^n_{R'\ot_\kk R''}
(M'\ot_\kk M'',\>J'\ot_\kk J'')=0$ for all $n>0$.
\end{lem}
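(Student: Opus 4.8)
The plan is to reduce the statement to a double application of the Künneth formula over the field~$\kk$. First I would choose a resolution $P'_\bu\rarrow M'$ of $M'$ by finitely generated free $R'$\+modules and a resolution $P''_\bu\rarrow M''$ of $M''$ by finitely generated free $R''$\+modules; such resolutions exist because $R'$ and $R''$ are Noetherian while $M'$ and $M''$ are finitely generated. Forming the termwise tensor product over~$\kk$ and totalizing, one obtains a complex $P^\bu=\mathrm{Tot}(P'_\bu\ot_\kk P''_\bu)$ of $(R'\ot_\kk R'')$\+modules whose terms $P'_p\ot_\kk P''_q=(R'\ot_\kk R'')^{ab}$ (for $P'_p=R'{}^a$, $P''_q=R''{}^b$) are finitely generated free $(R'\ot_\kk R'')$\+modules. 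Since every $\kk$\+module is flat, the homological Künneth formula over~$\kk$ computes the homology of $P^\bu$ as the tensor product of the homologies of $P'_\bu$ and $P''_\bu$; as both of the latter are concentrated in degree~$0$ with values $M'$ and $M''$, the complex $P^\bu\rarrow M'\ot_\kk M''$ is a free resolution of the $(R'\ot_\kk R'')$\+module $M'\ot_\kk M''$. Hence the Ext groups in question are the cohomology of the complex $\Hom_{R'\ot_\kk R''}(P^\bu,\>N'\ot_\kk N'')$.

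Next I would identify this Hom complex. For finitely generated free modules $P'_p=R'{}^a$ and $P''_q=R''{}^b$, the natural morphism
\begin{equation*}
 \Hom_{R'}(P'_p,N')\ot_\kk\Hom_{R''}(P''_q,N'')
 \lrarrow\Hom_{R'\ot_\kk R''}(P'_p\ot_\kk P''_q,\>N'\ot_\kk N'')
\end{equation*}
is an isomorphism, as both sides are naturally isomorphic to $(N'\ot_\kk N'')^{ab}$. It is precisely here that the finite generation of $M'$ and $M''$ (hence of the terms of the chosen resolutions) is essential: for infinitely generated free modules this interchange of $\Hom$ and $\ot_\kk$ would fail. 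Consequently $\Hom_{R'\ot_\kk R''}(P^\bu,\>N'\ot_\kk N'')$ is canonically isomorphic to the total complex of the cochain bicomplex $\Hom_{R'}(P'_\bu,N')\ot_\kk\Hom_{R''}(P''_\bu,N'')$, in which each total degree involves only finitely many summands since the resolutions are nonnegatively graded. Applying the Künneth formula over~$\kk$ a second time, together with $\Ext^p_{R'}(M',N')=H^p(\Hom_{R'}(P'_\bu,N'))$ and the analogous identity for the second factor, yields
\begin{equation*}
 \Ext^n_{R'\ot_\kk R''}(M'\ot_\kk M'',\>N'\ot_\kk N'')\simeq
 \bigoplus_{p+q=n}\Ext^p_{R'}(M',N')\ot_\kk\Ext^q_{R''}(M'',N'').
\end{equation*}

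The routine points that remain are the naturality of the two Künneth isomorphisms and of the $\Hom$\+$\ot_\kk$ interchange, and the independence of the construction from the choices of resolutions; these follow from the standard functoriality of free resolutions and of the Künneth map. The concluding vanishing assertion is then immediate: when $J'$ and $J''$ are injective and $p+q=n>0$, each summand on the right-hand side has a factor $\Ext^p_{R'}(M',J')$ with $p>0$ or $\Ext^q_{R''}(M'',J'')$ with $q>0$, and both vanish by injectivity, so $\Ext^n_{R'\ot_\kk R''}(M'\ot_\kk M'',\>J'\ot_\kk J'')=0$. I expect the only genuine obstacle to be the two steps that rely on the field and finiteness hypotheses, namely verifying via the homological Künneth formula that $P^\bu$ is a resolution, and verifying that the $\Hom$\+$\ot_\kk$ interchange is an isomorphism exactly because the resolving modules are finitely generated; everything else is formal.
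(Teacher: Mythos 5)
Your proof is correct and follows essentially the same route as the paper's: finitely generated free (resp.\ projective) resolutions of $M'$ and $M''$, the observation that their tensor product over~$\kk$ resolves $M'\ot_\kk M''$, the $\Hom$--$\ot_\kk$ interchange for finitely generated free modules, and the K\"unneth formula over the field~$\kk$ to compute the cohomology. The only cosmetic difference is that the paper works with finitely generated projective resolutions and leaves the K\"unneth-type verifications implicit, whereas you spell them out.
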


\begin{proof}
 The assumption of commutativity of the rings $R'$ and $R''$ is
actually not needed.
 One starts with the observation that, for any finitely generated
projective $R'$\+module $P'$ and any finitely generated projective
$R''$\+module $P''$ there is a natural isomorphism of Hom spaces
$$
 \Hom_{R'\ot_\kk R''}(P'\ot_\kk P'',\>N'\ot_\kk N'')\simeq
 \Hom_{R'}(P',N')\ot_\kk\Hom_{R''}(P'',N'').
$$
 Now let $P'_\bu\rarrow M'$ be a resolution of $M'$ by finitely
generated projective $R'$\+modules and $P''_\bu\rarrow M''$ be
a resolution of $M''$ by finitely generated projective $R''$\+modules.
 Then the tensor product of two complexes $P'_\bu\ot_\kk P''_\bu$ is
a resolution of $M'\ot_\kk M''$ by (finitely generated) projective
$(R'\ot_\kk R'')$\+modules.
 It remains to compute
$$
 \Hom_{R'\ot_\kk R''}(P'_\bu\ot_\kk P''_\bu,\>N'\ot_\kk N'')\simeq
 \Hom_{R'}(P'_\bu,N')\ot_\kk\Hom_{R''}(P''_\bu,N'')
$$
and recall that for any two complexes of $\kk$\+vector spaces
$C'{}^\bu$ and $C''{}^\bu$ one has $H^n(C'{}^\bu\ot_\kk C''{}^\bu)
\simeq\bigoplus_{p+q=n}H^p(C'{}^\bu)\ot_\kk H^q(C''{}^\bu)$.
\end{proof}

\begin{lem} \label{support-external-product-exactness}
 Let $X'$ and $X''$ be schemes of finite type over\/~$\kk$, and let
$i'\:Z'\rarrow X'$ and $i''\:Z''\rarrow X''$ be closed immersions
of schemes.
 Denote by $i\:Z'\times_\kk Z''\rarrow X'\times_\kk X''$ the induced
closed immersion of the Cartesian products.
 Let $\J'$ be an injective quasi-coherent sheaf on $X'$ and $\J''$ be
an injective quasi-coherent sheaf on~$X''$.
 Let $\J'\bt_\kk\J''\rarrow\K^\bu$ be an injective resolution of
the quasi-coherent sheaf $\J'\bt_\kk\J''$ on the scheme
$X'\times_\kk X''$.
 Then one has
$$
 H^0(i^!\K^\bu)\simeq i'{}^!\J'\bt_\kk i''{}^!\J''
 \quad\text{and}\quad
 H^n(i^!\K^\bu)=0 \text{ \ for\/ $n>0$}.
$$
\end{lem}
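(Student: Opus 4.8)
The plan is to interpret the cohomology sheaves $H^n(i^!\K^\bu)$ as the right derived functors $\boR^n i^!(\J'\bt_\kk\J'')$ of the restriction-with-supports functor $i^!\:(X'\times_\kk X'')\qcoh\rarrow(Z'\times_\kk Z'')\qcoh$, and then to compute these derived functors locally, reducing to the module-theoretic $\Ext$ computation of Lemma~\ref{Ext-of-external-tensor-product-modules}. Since $\K^\bu$ is a complex of injective quasi-coherent sheaves quasi-isomorphic to $\J'\bt_\kk\J''$, the complex $i^!\K^\bu$ represents $\boR i^!(\J'\bt_\kk\J'')$, so $H^n(i^!\K^\bu)\simeq\boR^n i^!(\J'\bt_\kk\J'')$. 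As $X'$ and $X''$ are of finite type over~$\kk$, the Cartesian product $X'\times_\kk X''$ is a Noetherian scheme, and all the closed subschemes involved are reasonable, so the notions of $i^!$ at play are the honest quasi-coherent ones.

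First I would reduce to the affine case. Cover $X'$ and $X''$ by affine opens $\Spec R'$ and $\Spec R''$ with $R'$, $R''$ finitely generated (hence Noetherian) commutative $\kk$\+algebras; the products $\Spec(R'\ot_\kk R'')$ then form an affine open cover of $X'\times_\kk X''$. On a Noetherian scheme the restriction of an injective quasi-coherent sheaf to an open subscheme is again injective, so $\K^\bu$ restricts to a complex of injective quasi-coherent sheaves on each $\Spec(R'\ot_\kk R'')$, resolving the restriction of $\J'\bt_\kk\J''$. Moreover, since open immersions are flat, Lemma~\ref{reasonable-shriek-flat-star-commutation} (together with Lemma~\ref{reasonable-base-change}(a)) shows that $i^!$ commutes with restriction to these opens. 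Writing $S'=R'/I'$ and $S''=R''/I''$ for the coordinate rings of $Z'$ and $Z''$ over the chosen charts, the sheaves $\J'$, $\J''$ correspond to injective modules $J'$, $J''$, and the computation of $\boR^n i^!(\J'\bt_\kk\J'')$ over $\Spec(S'\ot_\kk S'')$ reduces to the $(S'\ot_\kk S'')$\+module $\Ext^n_{R'\ot_\kk R''}(S'\ot_\kk S'',\>J'\ot_\kk J'')$.

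Now Lemma~\ref{Ext-of-external-tensor-product-modules}, applied with the finitely generated (indeed cyclic) modules $M'=S'$, $M''=S''$ and $N'=J'$, $N''=J''$, yields a natural isomorphism
$$
 \Ext^n_{R'\ot_\kk R''}(S'\ot_\kk S'',\>J'\ot_\kk J'')\simeq
 \bigoplus_{p+q=n}\Ext^p_{R'}(S',J')\ot_\kk\Ext^q_{R''}(S'',J'').
$$
Because $J'$ and $J''$ are injective, every summand with $p>0$ or $q>0$ vanishes, leaving only $p=q=0$. This gives $\boR^n i^!(\J'\bt_\kk\J'')=0$ for $n>0$ and identifies $H^0$ locally with $\Hom_{R'}(S',J')\ot_\kk\Hom_{R''}(S'',J'')$, which is exactly the local description of $i'{}^!\J'\bt_\kk i''{}^!\J''$. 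The naturality of the isomorphism in Lemma~\ref{Ext-of-external-tensor-product-modules} ensures these local identifications agree on overlaps and glue to the asserted global isomorphism. The main obstacle I anticipate lies not in the algebra—which is entirely supplied by Lemma~\ref{Ext-of-external-tensor-product-modules}—but in the bookkeeping needed to verify that $\boR^n i^!$ genuinely commutes with restriction to the affine charts: that the \emph{global} injective resolution $\K^\bu$ restricts to a usable complex of injectives and that $i^!$ base-changes correctly along the open immersions, so that the local $\Ext$-modules are indeed the stalks of the quasi-coherent cohomology sheaves $H^n(i^!\K^\bu)$.
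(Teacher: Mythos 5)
Your proof is correct and follows essentially the same route as the paper: reduce the statement to the affine case (using that injectivity of quasi-coherent sheaves is a local property on Noetherian schemes and that $i^!$ commutes with restriction to open subschemes by Lemma~\ref{reasonable-shriek-flat-star-commutation}), and then invoke Lemma~\ref{Ext-of-external-tensor-product-modules} with $M'=S'$, $M''=S''$, $N'=J'$, $N''=J''$. The only difference is cosmetic: for the $H^0$ identification the paper argues globally, using left exactness of $i^!$ (as a right adjoint) together with the already-established Lemma~\ref{shriek-of-external-product}, which avoids the gluing-over-overlaps bookkeeping you perform with the local $\Hom$ computations.
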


\begin{proof}
 To compute $H^0(i^!\K^\bu)$, it suffices to observe that
the functor~$i^!$ is left exact (as a right adjoint), so
$H^0(i^!\K^\bu)\simeq i^!(\J'\bt_\kk\J'')\simeq
i'{}^!\J'\bt_\kk i''{}^!\J''$ by Lemma~\ref{shriek-of-external-product}.
 The vanishing assertion in local in $X'$ and $X''$ (notice that all
the schemes involved are Noetherian, so injectivity of a quasi-coherent
sheaf is a local property), so it reduces to the case of affine
schemes, for which it means the following.

 Let $R'\rarrow S'$ and $R''\rarrow S''$ be surjective homomorphisms
of finitely generated commutative $\kk$\+algebras, and let $J'$ and
$J''$ be injective modules over $R'$ and $R''$, respectively.
 Let $K^\bu$ be an injective resolution of the $(R'\ot_\kk R'')$\+module
$J'\ot_\kk J''$.
 Then the complex $\Hom_{R'\ot_\kk R''}(S'\ot_\kk S'',\>K^\bu)$ has
vanishing cohomology in the positive cohomological degrees.
 This is a particular case of
Lemma~\ref{Ext-of-external-tensor-product-modules}.
\end{proof}

\begin{lem} \label{torsion-subscheme-support-external-product-exactness}
 Let\/ $\X'$ and\/ $\X''$ be ind-schemes of ind-finite type
over\/~$\kk$, and let $Z'\subset\X'$ and $Z''\subset\X''$ be closed
subschemes with the closed immersion morphisms $Z'\rarrow\X'$ and
$Z''\rarrow\X''$.
 Denote by $i\:Z'\times_\kk Z''\rarrow\X'\times_\kk\X''$ the induced
closed immersion of the Cartesian products.
 Let $\rJ'$ be an injective quasi-coherent torsion sheaf on\/ $\X'$ and
$\rJ''$ be an injective quasi-coherent torsion sheaf on\/~$\X''$.
 Let $\rJ'\bt_\kk\rJ''\rarrow\rK^\bu$ be an injective resolution of
the quasi-coherent torsion sheaf $\rJ'\bt_\kk\rJ''$ on the ind-scheme\/
$\X'\times_\kk\X''$.
 Then one has
$$
 H^0(i^!\rK^\bu)\simeq i'{}^!\rJ'\bt_\kk i''{}^!\rJ''
 \quad\text{and}\quad
 H^n(i^!\rK^\bu)=0 \text{ \ for\/ $n>0$}.
$$
\end{lem}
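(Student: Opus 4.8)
The plan is to deduce this ind-scheme statement from its scheme-theoretic counterpart, Lemma~\ref{support-external-product-exactness}, by computing $i^!$ of a conveniently chosen injective resolution of $\rJ'\bt_\kk\rJ''$ that is adapted to the external product structure. First I observe that $Z'$ and $Z''$ are schemes, so $Z'\times_\kk Z''$ is a scheme and $i\:Z'\times_\kk Z''\rarrow\X'\times_\kk\X''$ is the closed immersion of a (reasonable) closed subscheme; thus $i^!=({-})_{(Z'\times_\kk Z'')}$, and by Proposition~\ref{torsion-injectives-characterized}(a) the complex $i^!\rK^\bu$ consists of injective quasi-coherent sheaves on $Z'\times_\kk Z''$. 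Since $i^!$ is left exact, applying it to the exact sequence $0\rarrow\rJ'\bt_\kk\rJ''\rarrow\rK^0\rarrow\rK^1$ gives $H^0(i^!\rK^\bu)\simeq i^!(\rJ'\bt_\kk\rJ'')\simeq i'{}^!\rJ'\bt_\kk i''{}^!\rJ''$, the last isomorphism by Lemma~\ref{torsion-shriek-of-external-product}. So the entire content is the vanishing $H^n(i^!\rK^\bu)=0$ for $n>0$; and because the cohomology of $i^!\rK^\bu$ is independent of the choice of injective resolution (these being unique up to homotopy), I may replace $\rK^\bu$ by any resolution of my choosing.

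To build a good one, write $\X'=\ilim_{\gamma'}X'_{\gamma'}$ and $\X''=\ilim_{\gamma''}X''_{\gamma''}$, put $W_{\gamma'\gamma''}=X'_{\gamma'}\times_\kk X''_{\gamma''}$ with closed immersion $m_{\gamma'\gamma''}\:W_{\gamma'\gamma''}\rarrow\X'\times_\kk\X''$, and set $\E_{\gamma'\gamma''}=\rJ'_{(X'_{\gamma'})}\bt_\kk\rJ''_{(X''_{\gamma''})}$, an external product of \emph{injective} quasi-coherent sheaves on the finite-type $\kk$\+scheme $W_{\gamma'\gamma''}$ (Proposition~\ref{torsion-injectives-characterized}(a)). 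By the very definition of the external product of torsion sheaves and the $({-})^+$ description of Section~\ref{Gamma-systems-subsecn}, one has $\rJ'\bt_\kk\rJ''\simeq\varinjlim_{\gamma',\gamma''}m_{\gamma'\gamma''*}\E_{\gamma'\gamma''}$. Now choose injective resolutions $\E_{\gamma'\gamma''}\rarrow\K^\bu_{\gamma'\gamma''}$ in $W_{\gamma'\gamma''}\qcoh$. For a pair of indices $(\gamma',\gamma'')\le(\delta',\delta'')$ with scheme immersion $j\:W_{\gamma'\gamma''}\rarrow W_{\delta'\delta''}$, Lemma~\ref{support-external-product-exactness} applied to this inclusion of \emph{two terms} shows that $j^!\K^\bu_{\delta'\delta''}$ is again an injective resolution of $\E_{\gamma'\gamma''}$; this compatibility is exactly what lets the scheme-level resolutions be assembled into a directed system of complexes $m_{\gamma'\gamma''*}\K^\bu_{\gamma'\gamma''}$ of torsion sheaves. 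Put $\rK^\bu=\varinjlim_{\gamma',\gamma''}m_{\gamma'\gamma''*}\K^\bu_{\gamma'\gamma''}$. Exactness of filtered colimits shows $\rK^\bu$ resolves $\rJ'\bt_\kk\rJ''$, while Proposition~\ref{torsion-injectives-characterized}(b), combined with the fact that filtered colimits of injectives are injective in the locally Noetherian category $(\X'\times_\kk\X'')\tors$ (Proposition~\ref{ind-Noetherian-torsion-locally-Noetherian} and Lemma~\ref{locally-Noetherian-coproducts-injective}), shows that $\rK^\bu$ is a complex of injective quasi-coherent torsion sheaves.

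Then I restrict to $Z'\times_\kk Z''$. Since $i^!$ preserves filtered direct limits for a reasonable closed subscheme, and since $i^!m_{\gamma'\gamma''*}=\iota_{\gamma'\gamma''}^!$ for the induced scheme immersions $\iota_{\gamma'\gamma''}\:Z'\times_\kk Z''\rarrow W_{\gamma'\gamma''}$ (valid once $Z'\times_\kk Z''\subset W_{\gamma'\gamma''}$, using $m^!m_*=\id$), I get $i^!\rK^\bu\simeq\varinjlim_{\gamma',\gamma''}\iota_{\gamma'\gamma''}^!\K^\bu_{\gamma'\gamma''}$. A second application of Lemma~\ref{support-external-product-exactness}, now to the closed immersion $Z'\times_\kk Z''\subset W_{\gamma'\gamma''}$, shows that each $\iota_{\gamma'\gamma''}^!\K^\bu_{\gamma'\gamma''}$ has cohomology concentrated in degree $0$. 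As cohomology commutes with filtered colimits, $H^n(i^!\rK^\bu)=\varinjlim_{\gamma',\gamma''}H^n(\iota_{\gamma'\gamma''}^!\K^\bu_{\gamma'\gamma''})=0$ for every $n>0$, giving the required vanishing (and reconfirming $H^0\simeq i'{}^!\rJ'\bt_\kk i''{}^!\rJ''$).

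The step I expect to be the main obstacle is the \emph{strictification}: converting the scheme-level resolutions $\K^\bu_{\gamma'\gamma''}$, which a priori are compatible only up to homotopy, into an honest directed system of complexes of torsion sheaves over the poset $\Gamma'\times\Gamma''$ so that the colimit $\rK^\bu$ is a genuine complex of injectives resolving $\rJ'\bt_\kk\rJ''$. For an $\aleph_0$\+ind-scheme this can be arranged by a straightforward induction along a chain of closed subschemes; in general one invokes the strictification results of the paper (as in Example~\ref{aleph-zero-dualizing-glueing}). Everything else is formal bookkeeping: two applications of the scheme-theoretic Lemma~\ref{support-external-product-exactness}, the exactness of filtered colimits, and the standard interchange of $i^!$ and cohomology with filtered colimits over a reasonable ind-Noetherian base.
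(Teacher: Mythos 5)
Your reduction at the start is sound: the cohomology of $i^!\rK^\bu$ is indeed independent of the choice of injective resolution (resolutions are unique up to homotopy equivalence, and $i^!$ is additive), the $H^0$ computation via left exactness of~$i^!$ and Lemma~\ref{torsion-shriek-of-external-product} is exactly right, and the final colimit computation (using that $i^!$ commutes with direct limits over a reasonable closed subscheme, that $i^!m_{\gamma'\gamma''\,*}=\iota_{\gamma'\gamma''}^!$ on the cofinal set of indices with $Z'\times_\kk Z''\subset W_{\gamma'\gamma''}$, and that cohomology commutes with filtered colimits) would close the argument \emph{if} the resolution $\rK^\bu=\varinjlim_{\gamma',\gamma''}m_{\gamma'\gamma''\,*}\K^\bu_{\gamma'\gamma''}$ existed. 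But the step you flag as ``the main obstacle'' is a genuine gap, not bookkeeping. The connecting maps $j_*\K^\bu_{\gamma'\gamma''}\rarrow\K^\bu_{\delta'\delta''}$ are produced by the comparison theorem for resolutions, hence are defined only up to chain homotopy; to form the colimit you need them to commute \emph{strictly} for all triples of indices. The strictification tools in the paper (Lemma~\ref{aleph-zero-torsion-morphism-strictification-lemma}, Proposition~\ref{aleph-zero-torsion-morphism-strictification-prop}, Example~\ref{aleph-zero-dualizing-glueing}) apply only to inductive systems indexed by the natural numbers, i.e.\ to $\aleph_0$\+ind-schemes, where one can choose the maps one at a time along the chain with no coherence conditions to verify. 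The lemma you are proving, however, is stated for arbitrary ind-schemes of ind-finite type, whose indexing posets $\Gamma'\times\Gamma''$ may be uncountable, and a general directed poset contains no cofinal chain. Rectifying a homotopy-coherent system of complexes over an arbitrary directed poset into a strict one is a real problem (it can perhaps be attacked by transfinite induction using lifting properties of bounded-below complexes of injectives, but that is substantive work which neither you nor the paper supplies). So your proof is complete only in the $\aleph_0$ case.

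It is worth seeing how the paper sidesteps this entirely: instead of building a convenient resolution, it takes the \emph{given} resolution $\rK^\bu$ on $\X'\times_\kk\X''$ and proves, by induction on the cohomological degree, that the functor $\rM\longmapsto\rM|_\Gamma$ keeps the sequence $0\rarrow\rJ'\bt_\kk\rJ''\rarrow\rK^0\rarrow\rK^1\rarrow\dotsb$ exact. The mechanism is that the cokernel $\Gamma$\+systems $\boM^n$ of the restricted complexes have structure maps $\boM^n_{(\gamma)}\rarrow i_{\gamma\delta}^!\boM^n_{(\delta)}$ which are isomorphisms by the scheme-level Lemma~\ref{support-external-product-exactness}, hence each $\boM^n$ is of the form $\rM^n|_\Gamma$ for an honest quasi-coherent torsion sheaf $\rM^n$, and exactness propagates degree by degree. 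Because the system being restricted comes from a single complex on the ind-scheme, it is automatically strict, and no coherence problem ever arises. If you want to salvage your approach in full generality, you would need either to prove a strictification statement over arbitrary directed posets or to reorganize the construction so that the directed system is obtained by restricting one global object --- which is, in effect, the paper's proof.
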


\begin{proof}
 The computation of $H^0$ is similar to the one in
Lemma~\ref{support-external-product-exactness} (use
Lemma~\ref{torsion-shriek-of-external-product}).
 To prove the higher cohomology vanishing, choose inductive systems
of closed immersions of schemes of finite type over~$\kk$ representing
the ind-schemes $\X'$ and $\X''$, and consider the related inductive
system representing the ind-scheme $\X'\times_\kk\X''$, as in
the beginning of Section~\ref{external-of-torsion-subsecn}.
 Put $\X=\X'\times_\kk\X''$ and $\Gamma=\Gamma'\times\Gamma''$.
 For any biindex $\gamma=(\gamma',\gamma'')\in\Gamma$, put
$X_\gamma=X'_{\gamma'}\times X''_{\gamma''}$.
 We can always assume that there exist $\gamma_0'\in\Gamma'$ and
$\gamma_0''\in\Gamma''$ such as $Z'=X'_{\gamma_0'}\subset\X'$ and
$Z''=X''_{\gamma_0''}\subset\X''$.

 Our aim is to show that the exact sequence of quasi-coherent torsion
sheaves $0\rarrow\rJ'\bt_\kk\rJ''\rarrow\rK^0\rarrow\rK^1\rarrow\rK^2
\rarrow\dotsb$ on $\X$ remains exact after applying the functor
$\rM\longmapsto\rM|_\Gamma\:\X\tors\rarrow(\X,\Gamma)\syst$.
 Notice that, by the definitions of the external tensor products,
we have $(\rJ'\bt_\kk\rJ'')|_\Gamma=
\rJ'|_{\Gamma'}\bt_\kk\rJ''|_{\Gamma''}$.

 Denote by $\boM^1$ the cokernel of the morphism of $\Gamma$\+systems
$(\rJ'\bt_\kk\rJ'')|_\Gamma\rarrow\rK^0|_\Gamma$.
 Let $\gamma=(\gamma',\gamma'')\le\delta=(\delta',\delta'')$ be two
biindices (where $\gamma'\le\delta'\in\Gamma'$ and
$\gamma''\le\delta''\in\Gamma'')$.
 Denote the related transition maps in the inductive systems by
$i'_{\gamma'\delta'}\:X'_{\gamma'}\rarrow X'_{\delta'}$ and
$i''_{\gamma''\delta''}\:X''_{\gamma''}\rarrow X''_{\delta''}$, and put
$i_{\gamma\delta}=i'_{\gamma'\delta'}\times_\kk i''_{\gamma''\delta''}\:
X_\gamma\rarrow X_\delta$.

 By Proposition~\ref{torsion-injectives-characterized}(a),
the quasi-coherent sheaves $\rK^n_{(X_\delta)}$ on the scheme
$X_\delta$ are injective for all $n\ge0$.
 By Lemma~\ref{support-external-product-exactness}, the short exact
sequence $0\rarrow(\rJ'\bt_\kk\nobreak\rJ'')_{(X_\delta)}\allowbreak
\rarrow\rK^0_{(X_\delta)}\rarrow\boM^1_{(\delta)}\rarrow0$ of
quasi-coherent sheaves on $X_\delta$ remains exact after applying
the functor~$i_{\gamma\delta}^!$.
 Hence the structure map $\boM^1_{(\gamma)}\rarrow
i_{\gamma\delta}^!\boM^1_{(\delta)}$ in the $\Gamma$\+system $\boM^1$
is an isomorphism (of quasi-coherent sheaves on~$X_\gamma$).

 As this holds for all $\gamma\le\delta\in\Gamma$, we can conclude
that the collection of quasi-coherent sheaves $\rM^1_{(X_\gamma)}=
\boM^1_{(\gamma)}$ defines a quasi-coherent torsion sheaf $\rM^1$
on~$\X$.
 So we have $\boM^1=\rM^1|_\Gamma$ and $\rM^1=\boM^1{}^+$.
 In other words, this means that the adjunction morphism $\boM^1\rarrow
\boM^1{}^+|_\Gamma$ is an isomorphism of $\Gamma$\+systems.
 Notice that the quasi-coherent torsion sheaf $\boM^1{}^+$ on $\X$ is,
by the definition, the cokernel of the monomorphism of quasi-coherent
torsion sheaves $\rJ'\bt_\kk\rJ''\rarrow\rK^0$.
 We have shown that the short exact sequence of quasi-coherent torsion
sheaves $0\rarrow\rJ'\bt_\kk\rJ''\rarrow\rK^0\rarrow\boM^1{}^+\rarrow0$
on $\X$ remains exact after applying the functor
$\rM\longmapsto\rM|_\Gamma$.

 Denote by $\boM^2$ the cokernel of the morphism of $\Gamma$\+systems
$\rK^0|_\Gamma\rarrow\rK^1|_\Gamma$; as we have seen, this is the same
thing as the cokernel of the morphism of $\Gamma$\+systems
$\boM^1\rarrow\rK^1|_\Gamma$.
 By Lemma~\ref{support-external-product-exactness}, the exact
sequence $0\rarrow(\rJ'\bt_\kk\nobreak\rJ'')_{(X_\delta)}\allowbreak
\rarrow\rK^0_{(X_\delta)}\rarrow\rK^1_{(X_\delta)}\rarrow
\boM^2_{(\delta)}\rarrow0$ of quasi-coherent sheaves on $X_\delta$
remains exact after applying the functor~$i_{\gamma\delta}^!$.
 Hence the structure map $\boM^2_{(\gamma)}\rarrow
i_{\gamma\delta}^!\boM^2_{(\delta)}$ in the $\Gamma$\+system $\boM^2$
is an isomorphism.
 As this holds for all $\gamma\le\delta\in\Gamma$, we can conclude
that the adjunction morphism $\boM^2\rarrow\boM^2{}^+|_\Gamma$ is
an isomorphism of $\Gamma$\+systems.

 Notice that the quasi-coherent torsion sheaf $\boM^2{}^+$ on $\X$ is,
by the definition, the cokernel of the morphism of quasi-coherent
torsion sheaves $\rK^0\rarrow\rK^1$.
 We have shown that the exact sequence of quasi-coherent torsion sheaves
$0\rarrow\rJ'\bt_\kk\nobreak\rJ''\allowbreak\rarrow\rK^0\rarrow\rK^1
\rarrow\boM^2{}^+\rarrow0$ on $\X$ remains exact after applying
the functor $\rM\longmapsto\rM|_\Gamma$.
 Proceeding in this way, we prove the desired preservation of exactness
by induction in the cohomological degree.
\end{proof}

\begin{lem} \label{torsion-support-external-product-exactness}
 Let\/ $\X'$ and\/ $\X''$ be ind-schemes of ind-finite type
over\/~$\kk$, and let $i'\:\Z'\rarrow\X'$ and $i''\:\Z''\rarrow\X''$
be closed immersions of ind-schemes.
 Denote by $i\:\Z'\times_\kk\Z''\rarrow\X'\times_\kk\X''$ the induced
closed immersion of the Cartesian products.
 Let $\rJ'$ be an injective quasi-coherent torsion sheaf on\/ $\X'$ and
$\rJ''$ be an injective quasi-coherent torsion sheaf on\/~$\X''$.
 Let $\rJ'\bt_\kk\rJ''\rarrow\rK^\bu$ be an injective resolution of
the quasi-coherent torsion sheaf $\rJ'\bt_\kk\rJ''$ on the ind-scheme\/
$\X'\times_\kk\X''$.
 Then one has
$$
 H^0(i^!\rK^\bu)\simeq i'{}^!\rJ'\bt_\kk i''{}^!\rJ''
 \quad\text{and}\quad
 H^n(i^!\rK^\bu)=0 \text{ \ for\/ $n>0$}.
$$
\end{lem}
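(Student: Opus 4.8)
The plan is to deduce the assertion from the already-established closed-subscheme case, Lemma~\ref{torsion-subscheme-support-external-product-exactness}, by passing to $\Gamma$\+systems. First I would fix representations $\X'=\ilim_{\gamma'\in\Gamma'}X'_{\gamma'}$ and $\X''=\ilim_{\gamma''\in\Gamma''}X''_{\gamma''}$ by inductive systems of closed immersions of schemes of finite type over~$\kk$, and set $Z'_{\gamma'}=\Z'\times_{\X'}X'_{\gamma'}$ and $Z''_{\gamma''}=\Z''\times_{\X''}X''_{\gamma''}$. Since $i'$ and $i''$ are ind-closed immersions, these are closed subschemes of $X'_{\gamma'}$ and $X''_{\gamma''}$ (hence closed subschemes of $\X'$ and~$\X''$), with $\Z'=\ilim_{\gamma'}Z'_{\gamma'}$ and $\Z''=\ilim_{\gamma''}Z''_{\gamma''}$. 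By Lemma~\ref{product-of-reasonable} this yields a representation $\Z'\times_\kk\Z''=\ilim_{(\gamma',\gamma'')}W_{(\gamma',\gamma'')}$ with $W_{(\gamma',\gamma'')}=Z'_{\gamma'}\times_\kk Z''_{\gamma''}$ by closed immersions of reasonable closed subschemes; writing $\Gamma=\Gamma'\times\Gamma''$, this is the $\Gamma$\+system index set on $\Z'\times_\kk\Z''$ (all the ind-schemes here are ind-Noetherian, hence reasonable).

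The goal is to show that the augmented complex of quasi-coherent torsion sheaves
$$
 0\lrarrow i'{}^!\rJ'\bt_\kk i''{}^!\rJ''\lrarrow i^!\rK^0\lrarrow i^!\rK^1\lrarrow\dotsb
$$
on $\Z'\times_\kk\Z''$ is exact, which is exactly the content of the lemma. The augmentation is obtained by applying~$i^!$ to the augmentation $\rJ'\bt_\kk\rJ''\rarrow\rK^0$ and using the isomorphism $i^!(\rJ'\bt_\kk\rJ'')\simeq i'{}^!\rJ'\bt_\kk i''{}^!\rJ''$ of Lemma~\ref{torsion-shriek-of-external-product} (applicable since $i'$, $i''$ are reasonable closed immersions). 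I would prove exactness by applying the fully faithful left exact functor $\rM\longmapsto\rM|_\Gamma$ and verifying exactness of the resulting complex of $\Gamma$\+systems.

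The key step is the computation of the $(\gamma',\gamma'')$\+component of $(i^!\rK^\bu)|_\Gamma$. By the definition of~$i^!$ for an ind-closed immersion (Examples~\ref{ind-closed-immersion}(1)), one has $(i^!\rK^n)_{(W_{(\gamma',\gamma'')})}=j_W^!\rK^n$, where $j_W=j'_{\gamma'}\times_\kk j''_{\gamma''}$ is the closed immersion of $W_{(\gamma',\gamma'')}$ into $\X'\times_\kk\X''$, and $j'_{\gamma'}\:Z'_{\gamma'}\rarrow\X'$, $j''_{\gamma''}\:Z''_{\gamma''}\rarrow\X''$ are the composite closed immersions $Z'_{\gamma'}\hookrightarrow\Z'\overset{i'}\rarrow\X'$ and $Z''_{\gamma''}\hookrightarrow\Z''\overset{i''}\rarrow\X''$. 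Thus the component at $(\gamma',\gamma'')$ is precisely the complex $j_W^!\rK^\bu$ analyzed in Lemma~\ref{torsion-subscheme-support-external-product-exactness} for the closed subschemes $Z'_{\gamma'}\subset\X'$ and $Z''_{\gamma''}\subset\X''$, using the same injective sheaves $\rJ'$, $\rJ''$ and the same injective resolution~$\rK^\bu$. That lemma gives $H^0(j_W^!\rK^\bu)\simeq j'_{\gamma'}{}^!\rJ'\bt_\kk j''_{\gamma''}{}^!\rJ''$ and $H^n(j_W^!\rK^\bu)=0$ for $n>0$, and the $H^0$ matches the $(\gamma',\gamma'')$\+component of the augmentation term $i'{}^!\rJ'\bt_\kk i''{}^!\rJ''$ by the definitions of the external tensor product and of $i'{}^!$, $i''{}^!$. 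Hence the component complex is exact for every biindex.

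Finally, since the forgetful functors taking a $\Gamma$\+system to its component at $\gamma$ are exact and jointly detect exactness (Proposition~\ref{Gamma-systems-abelian}), component-wise exactness shows that the complex of $\Gamma$\+systems $(i^!\rK^\bu)|_\Gamma$ augmented by $(i'{}^!\rJ'\bt_\kk i''{}^!\rJ'')|_\Gamma$ is exact in $(\Z'\times_\kk\Z'',\Gamma)\syst$. Applying the exact functor $({-})^+$ (Proposition~\ref{Giraud-subcategory-abstract-prop}) and using that $({-})^+\circ({-})|_\Gamma$ is naturally isomorphic to the identity—so that $(\rM|_\Gamma)^+\simeq\rM$ for every torsion sheaf~$\rM$ (Lemmas~\ref{torsion-sheaves-Gamma-systems-adjunction} and~\ref{torsion-sheaves-as-Giraud-subcat})—recovers the augmented complex of torsion sheaves above and shows it is exact. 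I expect the main subtlety to be exactly this last maneuver: because $({-})|_\Gamma$ is only left exact, one cannot read the higher cohomology sheaves $H^n(i^!\rK^\bu)$ off the components directly; routing the argument through exactness of a complex of $\Gamma$\+systems and then applying the exact reflection $({-})^+$ is what circumvents this, and the point requiring care is the verification that $({-})^+$ returns the original torsion-sheaf complex.
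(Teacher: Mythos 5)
Your proof is correct and takes essentially the same route as the paper: both arguments reduce the claim to Lemma~\ref{torsion-subscheme-support-external-product-exactness}, applied to the composite closed immersions $Z'_{\gamma'}\times_\kk Z''_{\gamma''}\rarrow\X'\times_\kk\X''$ arising from the product-form closed subschemes of $\Z'\times_\kk\Z''$. The paper compresses the concluding step into the words ``it follows that $H^n(i^!\rK^\bu)=0$ for $n>0$'', and the justification you spell out---componentwise exactness of the restricted complex of $\Gamma$\+systems, then the exact reflector $({-})^+$ together with the counit isomorphism $(\rM|_\Gamma)^+\simeq\rM$---is precisely the mechanism implicit in that phrase (and already at work in the paper's proof of the closed-subscheme case), correctly circumventing the fact that $({-})|_\Gamma$ and the functors~$k^!$ are only left exact.
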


\begin{proof}
 The computation of $H^0$ is similar to the one in
Lemmas~\ref{support-external-product-exactness}
and~\ref{torsion-subscheme-support-external-product-exactness}.
 The functor~$i^!$ is left exact as a right adjoint, and it
remains to use Lemma~\ref{torsion-shriek-of-external-product}.
 To prove the vanishing assertion, choose closed subschemes
$Z'\subset\Z'$ and $Z''\subset\Z''$ with the closed immersion
morphisms $k'\:Z'\rarrow\Z'$ and $k''\:Z''\rarrow\Z''$.
 Denote by $k=k'\times_\kk k''\:Z'\times_\kk Z''\rarrow
\Z'\times_\kk\Z''$ the induced closed immersion of
the Cartesian products.
 Then by
Lemma~\ref{torsion-subscheme-support-external-product-exactness}
we have $H^n(k^!i^!\rK^\bu)=0$ for $n>0$, and it follows that
$H^n(i^!\rK^\bu)=0$ for $n>0$ as well.
\end{proof}

\begin{lem} \label{torsion-support-sum-of-external-products-exactness}
 In the notation of
Lemma~\ref{torsion-support-external-product-exactness},
let $(\rJ'_\theta)_{\theta\in\Theta}$ be a family of injective
quasi-coherent torsion sheaves on\/ $\X'$ and
$(\rJ''_\theta)_{\theta\in\Theta}$ be a family of injective
quasi-coherent torsion sheaves on\/ $\X''$, indexed by the same
set\/~$\Theta$.
 Let\/ $\coprod_{\theta\in\Theta}\rJ'_\theta\bt_\kk\rJ''_\theta\rarrow
\rK^\bu$ be an injective resolution of the coproduct of quasi-coherent
torsion sheaves $\rJ'_\theta\bt_\kk\rJ''_\theta$ on the ind-scheme
$\X'\times_\kk\X''$.
 Then one has
$$
 H^0(i^!\rK^\bu)\simeq\coprod\nolimits_{\theta\in\Theta}
 i'{}^!\rJ'_\theta\bt_\kk i''{}^!\rJ''_\theta
 \quad\text{and}\quad
 H^n(i^!\rK^\bu)=0 \text{ \ for\/ $n>0$}.
$$
\end{lem}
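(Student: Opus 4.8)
The plan is to reduce the coproduct statement to the single-term Lemma~\ref{torsion-support-external-product-exactness} by constructing a convenient injective resolution of the coproduct and then invoking exactness of coproducts. First I would recall that, since $\X'\times_\kk\X''$ is an ind-Noetherian ind-scheme, the category $(\X'\times_\kk\X'')\tors$ is locally Noetherian by Proposition~\ref{ind-Noetherian-torsion-locally-Noetherian}, so by Lemma~\ref{locally-Noetherian-coproducts-injective} the class of injective quasi-coherent torsion sheaves is closed under coproducts. Consequently, choosing for each $\theta\in\Theta$ an injective resolution $\rJ'_\theta\bt_\kk\rJ''_\theta\rarrow\rK_\theta^\bu$, the termwise coproduct $\coprod_{\theta}\rK_\theta^\bu$ is again a complex of injective quasi-coherent torsion sheaves, and the augmentation $\coprod_\theta(\rJ'_\theta\bt_\kk\rJ''_\theta)\rarrow\coprod_\theta\rK_\theta^\bu$ is an injective resolution of the coproduct, coproducts being exact in the Grothendieck category.

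Next I would observe that $i^!\rK^\bu$ does not depend, up to quasi-isomorphism, on the choice of the injective resolution $\rK^\bu$ of $\coprod_\theta(\rJ'_\theta\bt_\kk\rJ''_\theta)$. Indeed, by Proposition~\ref{coderived-and-homotopy-of-injectives}(a) together with Corollary~\ref{ind-Noetherian-coderived-cor}, any two injective resolutions of the same quasi-coherent torsion sheaf are isomorphic in $\sK(\X\tors_\inj)$, hence homotopy equivalent as complexes of injectives; the additive termwise functor $i^!$ preserves such homotopy equivalences. Moreover coacyclic complexes are acyclic by Lemma~\ref{three-derived-categories-lemma}(b), so the resulting identification in $\sD^\co$ is a genuine quasi-isomorphism. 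Therefore I may compute $H^n(i^!\rK^\bu)$ using the specific resolution $\rK^\bu=\coprod_\theta\rK_\theta^\bu$ built above.

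Finally, since the functor $i^!\:(\X'\times_\kk\X'')\tors\rarrow(\Z'\times_\kk\Z'')\tors$ preserves coproducts, being the $i^!$ of a reasonable closed immersion of reasonable ind-schemes, and since coproducts are exact in the Grothendieck category $(\Z'\times_\kk\Z'')\tors$, I obtain natural isomorphisms $i^!\rK^\bu\simeq\coprod_\theta i^!\rK_\theta^\bu$ and hence $H^n(i^!\rK^\bu)\simeq\coprod_\theta H^n(i^!\rK_\theta^\bu)$. Applying Lemma~\ref{torsion-support-external-product-exactness} to each index $\theta$ gives $H^0(i^!\rK_\theta^\bu)\simeq i'{}^!\rJ'_\theta\bt_\kk i''{}^!\rJ''_\theta$ and $H^n(i^!\rK_\theta^\bu)=0$ for $n>0$; taking the coproduct over $\theta\in\Theta$ then yields exactly the two asserted formulas.

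The only step that genuinely uses the ind-Noetherian hypothesis, and which I regard as the crux, is the closure of the injectives under coproducts: this is what makes the coproduct of the individual resolutions a single global injective resolution and thus permits the reduction to the one-term case. Everything else is a formal consequence of the exactness of coproducts and the well-definedness of $\boR i^!$ on the coderived category. Equivalently, the entire argument can be phrased as the assertion that $\boR i^!$ preserves coproducts, as already noted in Section~\ref{derived-supports-subsecn}, combined with the degree-$0$ concentration supplied by Lemma~\ref{torsion-support-external-product-exactness}.
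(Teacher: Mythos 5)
Your proposal is correct and takes essentially the same route as the paper: the paper's proof consists of the single observation that $\boR i^!$ preserves coproducts (justified, as in Section~\ref{derived-supports-subsecn}, by the closure of injectives under coproducts on an ind-Noetherian ind-scheme together with the coproduct-preservation of the underived~$i^!$), which reduces the claim to Lemma~\ref{torsion-support-external-product-exactness}. Your argument is precisely the detailed unwinding of that one-line reduction, as you yourself note in your final paragraph.
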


\begin{proof}
 Essentially, the claim is that the right derived functor
$$
 \boR i^!\:\sD^\co((\X'\times_\kk\X'')\tors)\lrarrow
 \sD^\co((\Z'\times_\kk\Z'')\tors)
$$
preserves coproducts (as mentioned above in the beginning of
Section~\ref{derived-supports-subsecn}).
 This reduces the question to
Lemma~\ref{torsion-support-external-product-exactness}.
\end{proof}

\begin{proof}[Proof of
Lemma~\ref{derived-supports-external-product-lemma}]
 Given two complexes $\rJ'{}^\bu$ and $\rJ''{}^\bu$, a related complex
$\rK^\bu$ is defined uniquely up to a homotopy equivalence (by
Proposition~\ref{coderived-and-homotopy-of-injectives}(a)); so it
suffices to prove the assertion of the lemma for one specific choice
of the complex~$\rK^\bu$.
 We will use the complex $\rK^\bu$ provided by the construction on
which the proof of
Proposition~\ref{coderived-and-homotopy-of-injectives}(b) is based.

 Let $\rJ'{}^\bu\bt_\kk\rJ''{}^\bu\rarrow\rL^{0,\bu}$ be a monomorphism
of complexes in $(\X'\times_\kk\X'')\tors$ such that $\rL^{0,\bu}$ is
a complex of injective quasi-coherent torsion sheaves.
 Denote by $\rM^{1,\bu}$ the cokernel of this morphism of complexes,
and let $\rM^{1,\bu}\rarrow\rL^{1,\bu}$ be a monomorphism of complexes
in which $\rL^{1,\bu}$ is a complex of injectives.
 Proceeding in this way, we construct a bounded below complex of 
complexes of injective quasi-coherent torsion sheaves $\rL^{\bu,\bu}$
together with a quasi-isomorphism $\rJ'{}^\bu\bt_\kk\rJ''{}^\bu\rarrow
\rL^{\bu,\bu}$ of complexes of complexes in $(\X'\times_\kk\X'')\tors$.
 Let us emphasize that the notation $\rJ'{}^\bu\bt_\kk\rJ''{}^\bu$ here
stands for the total complex of the bicomplex of tensor products, while
$\rL^{\bu,\bu}$ is a bicomplex (not totalized yet).
 In every cohomological degree~$n$, the complex $\rL^{\bu,n}$ is
an injective resolution of the quasi-coherent torsion sheaf
$(\rJ'{}^\bu\bt_\kk\nobreak\rJ''{}^\bu)^n$.
 The complex $\rK^\bu$ is then constructed by totalizing the bicomplex
$\rL^{\bu,\bu}$ using infinite coproducts along the diagonals.

 Recall that the functor $i^!\:(\X'\times_\kk\X'')\tors\rarrow
(\Z'\times_\kk\Z'')\tors$ preserves coproducts.
 In every cohomological degree~$n$, applying~$i^!$ to the complex
$0\rarrow(\rJ'{}^\bu\bt_\kk\rJ''{}^\bu)^n\rarrow
\rL^{0,n}\rarrow\rL^{1,n}\rarrow\dotsb$ produces an acyclic complex
in $(\Z'\times_\kk\Z'')\tors$
by Lemma~\ref{torsion-support-sum-of-external-products-exactness}.
 It remains to point out that the coproduct totalization of an acyclic
bounded below complex of complexes is a coacyclic
complex~\cite[Lemma~2.1]{Psemi}.
\end{proof}

\begin{proof}[Proof of
Proposition~\ref{derived-supports-external-product-prop}]
 It is relevant that the external tensor product is well-defined as
a functor between the coderived categories (by
Lemma~\ref{torsion-complexes-external-tensor-coacyclic}).
 Choose a complex of injective quasi-coherent torsion sheaves
$\rJ'{}^\bu$ on $\X'$ and a complex of injective quasi-coherent
torsion sheaves $\rJ''{}^\bu$ on $\X''$ together with morphisms
$\rM'{}^\bu\rarrow\rJ'{}^\bu$ and $\rM''{}^\bu\rarrow\rJ''{}^\bu$
with coacyclic cones.
 Then it remains to apply
Lemma~\ref{derived-supports-external-product-lemma} and
take Lemma~\ref{torsion-shriek-of-external-product} into account.
\end{proof}

\subsection{Rigid dualizing complexes} \label{rigid-subsecn}
 The notion of a rigid dualizing complex was introduced originally
in~\cite[Definition~8.1]{VdB} and studied in~\cite{YZ,AIL} and many
other papers (see also~\cite{Yek} and~\cite{Sha}).
 Without going into details, we will formulate a simple definition
of a rigid dualizing complex on an ind-scheme of ind-finite type over
a field~$\kk$ in the form suitable for our purposes.

\begin{lem} \label{external-product-dualizing}
 Let\/ $\X'$ and\/ $\X''$ be ind-semi-separated ind-schemes of
ind-finite type over\/~$\kk$.
 Let $\rD'{}^\bu$ be a dualizing complex on\/ $\X'$ and $\rD''{}^\bu$
be a dualizing complex on\/~$\X''$.
 Let $\rE^\bu$ be a complex of injective quasi-coherent torsion
sheaves on\/ $\X'\times_\kk\X''$ endowed with a morphism of complexes
$\rD'{}^\bu\bt_\kk\rD''{}^\bu\rarrow\rE^\bu$ with coacyclic cone.
 Then $\rE^\bu$ is a dualizing complex on\/ $\X'\times_\kk\X''$.
\end{lem}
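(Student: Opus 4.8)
The plan is to verify condition~(iv) from Section~\ref{dualizing-definition-subsecn} for the complex of injective quasi-coherent torsion sheaves $\rE^\bu$ on $\X=\X'\times_\kk\X''$. By Lemma~\ref{support-restriction-of-dualizing-complex} and the remark following the definition of a dualizing complex on an ind-scheme, it suffices to check that $i^!\rE^\bu$ is a dualizing complex on $Z$ for the closed subschemes $Z$ belonging to a chosen representing inductive system of $\X$. As recalled at the beginning of Section~\ref{external-of-torsion-subsecn}, the ind-scheme $\X'\times_\kk\X''$ is represented by the products $X'_{\gamma'}\times_\kk X''_{\gamma''}$; so I would reduce to the case $Z=Z'\times_\kk Z''$ and $i=i'\times_\kk i''$, where $i'\:Z'\rarrow\X'$ and $i''\:Z''\rarrow\X''$ are closed immersions of (semi-separated, finite-type) schemes. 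Since $\rE^\bu\in\sC(\X\tors_\inj)$, the complex $i^!\rE^\bu$ lies in $\sC(Z\qcoh_\inj)$ by Proposition~\ref{torsion-injectives-characterized}(a).

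Next I would identify $i^!\rE^\bu$ in the coderived category. The morphism $\rD'{}^\bu\bt_\kk\rD''{}^\bu\rarrow\rE^\bu$ has coacyclic cone, so by Lemma~\ref{derived-supports-external-product-lemma} the induced morphism $i^!(\rD'{}^\bu\bt_\kk\rD''{}^\bu)\rarrow i^!\rE^\bu$ has coacyclic cone as well. Combining this with Proposition~\ref{derived-supports-external-product-prop}, I obtain an isomorphism $i^!\rE^\bu\simeq\D'_{Z'}\bt_\kk\D''_{Z''}$ in $\sD^\co(Z\qcoh)$, where $\D'_{Z'}=i'{}^!\rD'{}^\bu$ and $\D''_{Z''}=i''{}^!\rD''{}^\bu$ are, by condition~(iv) for $\rD'{}^\bu$ and $\rD''{}^\bu$, dualizing complexes on the semi-separated Noetherian schemes $Z'$ and $Z''$. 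Thus $i^!\rE^\bu$ is a complex of injective quasi-coherent sheaves serving as a coderived injective resolution of the external product $\D'_{Z'}\bt_\kk\D''_{Z''}$, and the whole statement reduces to the scheme-level assertion: the external tensor product of two dualizing complexes on semi-separated schemes of finite type over $\kk$, resolved by injectives, is a dualizing complex on the product.

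To prove this scheme-level assertion I would check conditions~(i)--(iii) directly. Replacing $\D'_{Z'}$ and $\D''_{Z''}$ by homotopy equivalent bounded complexes of injectives (a homotopy equivalence is preserved by $\bt_\kk$, since this functor is exact by Lemma~\ref{qcoh-external-tensor-exact}), the external product $\D'_{Z'}\bt_\kk\D''_{Z''}$ becomes a bounded complex. For condition~(ii), the exactness of $\bt_\kk$ in each argument yields a K\"unneth isomorphism $\cH^n(\D'_{Z'}\bt_\kk\D''_{Z''})\simeq\bigoplus_{p+q=n}\cH^p(\D'_{Z'})\bt_\kk\cH^q(\D''_{Z''})$ with no higher terms, and the external product of coherent sheaves is coherent. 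Condition~(i) then follows because $Z'\times_\kk Z''$ is Noetherian of finite Krull dimension (being of finite type over a field), so a bounded complex with coherent cohomology has finite injective dimension and is quasi-isomorphic to a bounded complex of injectives; the uniqueness of coderived injective resolutions up to homotopy (Proposition~\ref{coderived-and-homotopy-of-injectives}) then makes $i^!\rE^\bu$ homotopy equivalent to such a bounded complex.

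The main obstacle is condition~(iii), the homothety quasi-isomorphism. I would establish a K\"unneth formula for the internal Hom of external products, namely a natural isomorphism $\cHom_{(Z'\times_\kk Z'')\qc}(\D'_{Z'}\bt_\kk\D''_{Z''},\D'_{Z'}\bt_\kk\D''_{Z''})\simeq\cHom_{Z'\qc}(\D'_{Z'},\D'_{Z'})\bt_\kk\cHom_{Z''\qc}(\D''_{Z''},\D''_{Z''})$ in the derived category. Granting this, condition~(iii) for $\D'_{Z'}$ and $\D''_{Z''}$ gives that each factor is quasi-isomorphic to $\cO_{Z'}$, respectively $\cO_{Z''}$, whence the right-hand side is quasi-isomorphic to $\cO_{Z'}\bt_\kk\cO_{Z''}=\cO_{Z'\times_\kk Z''}$, which is exactly condition~(iii) for the product. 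The isomorphism itself is local, so it reduces to the affine case, where it becomes the statement that, over a field, $\boR\Hom$ out of a finitely generated module factors as a tensor product over the two factors; this is precisely the content of Lemma~\ref{Ext-of-external-tensor-product-modules}, via finitely generated projective resolutions on each factor and the degreewise splitting of the cohomology of a tensor product of complexes of $\kk$\+vector spaces. Care is needed to pass from these $\Hom$/$\Ext$ statements to the internal $\cHom_{\qc}$, to globalize over a finite affine (semi-separated) covering, and to handle the products-along-diagonals totalizations together with the injectivity of the second argument, so that $\cHom_{\qc}$ indeed computes the derived functor.
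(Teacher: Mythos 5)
Your reduction to the scheme case---via Lemma~\ref{derived-supports-external-product-lemma} and Proposition~\ref{derived-supports-external-product-prop}, identifying $i^!\rE^\bu$ with $\D'_{Z'}\bt_\kk\D''_{Z''}$ in $\sD^\co(Z\qcoh)$ for the closed subschemes $Z=Z'\times_\kk Z''$---is exactly the paper's proof; the paper then simply declares the resulting scheme-level statement (that the external tensor product of two dualizing complexes on semi-separated finite-type $\kk$\+schemes is a dualizing complex) to be straightforward, and stops there.

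The gap is in your verification of that scheme-level statement, specifically condition~(i) of Section~\ref{dualizing-definition-subsecn}. You assert that on a Noetherian scheme of finite Krull dimension every bounded complex with coherent cohomology has finite injective dimension. This is false: over $\kk[x]/(x^2)$ (Krull dimension~$0$) the module~$\kk$ has infinite injective dimension; more to the point, the structure sheaf $\cO_Z$ of any finite-type $\kk$\+scheme satisfies your conditions~(ii) and~(iii) yet is a dualizing complex only when $Z$ is Gorenstein. So conditions~(ii) and~(iii)---the only ones you actually verify---do not suffice, and finite injective dimension of $\D'_{Z'}\bt_\kk\D''_{Z''}$ is precisely the nontrivial content of the scheme-level assertion. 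Nor can one argue termwise: the external product of injective modules over the two factors is in general not injective over the tensor product ring (for instance, $\kk(x)\ot_\kk\kk(y)$ is not an injective $\kk[x,y]$\+module, since multiplication by $x-y$ is not surjective on it), so $\D'_{Z'}\bt_\kk\D''_{Z''}$ need not be a bounded complex of injectives, and Lemma~\ref{Ext-of-external-tensor-product-modules} only provides Ext-vanishing against external products $M'\ot_\kk M''$, which is not a class of test modules detecting injective dimension. The standard correct route for condition~(i) is the one the paper itself recalls in Section~\ref{rigid-subsecn}, Example~(0) (cf.\ Remarks~\ref{cotensor-right-derived-remark}(3--4)): the extraordinary inverse image $f^+$ takes dualizing complexes to dualizing complexes and commutes with external tensor products, so $p'{}^+(\kk)\bt_\kk p''{}^+(\kk)\simeq(p'\times_\kk p'')^+(\kk)$ is dualizing on $Z'\times_\kk Z''$; the case of arbitrary dualizing complexes then follows because, on each connected component, a dualizing complex differs from the rigid one $p^+(\kk)$ only by a shift and a twist by an invertible sheaf, operations which are compatible with $\bt_\kk$ and preserve the property of being dualizing.
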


\begin{proof}
 Proposition~\ref{derived-supports-external-product-prop}
(for closed subschemes $Z'\subset\X'$ and $Z''\subset\X''$)
reduces the question to schemes, for which it is straightforward.
\end{proof}

 Let $\X$ be an ind-scheme over~$\kk$.
 Denote by $\Delta\:\X\rarrow\X\times_\kk\X$ the diagonal morphism.
 Then the ind-scheme $\X$ is ind-separated (as defined in
Section~\ref{dualizing-definition-subsecn}) if and only if
the morphism~$\Delta$ is a closed immersion of ind-schemes.

 Let $\X$ be an ind-separated ind-scheme of ind-finite type over~$\kk$.
 An object $\rE^\bu\in\sD^\co(\X\tors)$ in the coderived category of
quasi-coherent torsion sheaves on $\X$ is said to be \emph{rigid}
if it is endowed with an isomorphism $\rE^\bu\simeq
\boR\Delta^!(\rE^\bu\bt_\kk\rE^\bu)$ (called the \emph{rigidifying
isomorphism}) in the coderived category $\sD^\co(\X\tors)$.
 A dualizing complex $\rD^\bu\in\sC(\X\tors_\inj)$ on $\X$ is said
to be \emph{rigid} if it is rigid (i.~e., has been endowed with
a rigidifying isomorphism) as an object of $\sD^\co(\X\tors)$.

 Our aim is to explain how to produce a rigid dualizing complex on
an $\aleph_0$\+ind-scheme from rigid dualizing complexes on schemes,
in the spirit of Example~\ref{aleph-zero-dualizing-glueing}.
 For this purpose, we need to start with some preliminary work.

\begin{lem} \label{coderived-isomorphisms-characterized}
 Let $\X=\ilim_{\gamma\in\Gamma}X_\gamma$ be an ind-Noetherian
ind-scheme represented by an inductive system of closed immersions
of (Noetherian) schemes.
 Denote by $i_\gamma\:X_\gamma\rarrow\X$ the closed immersion
morphisms.
 Let $f\:\rM^\bu\rarrow\rN^\bu$ be a morphism in the coderived
category\/ $\sD^\co(\X\tors)$.
 Then the morphism~$f$ is an isomorphism if and only if the morphism\/
$\boR i_\gamma^!(f)\:\boR i_\gamma^!\rM^\bu\rarrow\boR i_\gamma^!
\rN^\bu$ is an isomorphism in\/ $\sD^\co(X_\gamma\qcoh)$ for every\/
$\gamma\in\Gamma$.
\end{lem}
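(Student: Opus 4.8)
The plan is to pass to the cone of $f$ and reduce the statement to a vanishing criterion for a single object of the coderived category, which can then be settled by representing that object by a complex of injective quasi-coherent torsion sheaves and invoking Lemma~\ref{complex-of-injective-torsion-contractible}.

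First I would use that a morphism in a triangulated category is an isomorphism if and only if its cone is a zero object. Completing $f$ to a distinguished triangle $\rM^\bu\rarrow\rN^\bu\rarrow\rC^\bu\rarrow\rM^\bu[1]$ in $\sD^\co(\X\tors)$, the morphism $f$ is an isomorphism if and only if $\rC^\bu\simeq0$, that is, $\rC^\bu$ is coacyclic. Since $\boR i_\gamma^!$ is a triangulated functor, it carries this triangle to a distinguished triangle whose third vertex is the cone of $\boR i_\gamma^!(f)$; hence $\boR i_\gamma^!(f)$ is an isomorphism if and only if $\boR i_\gamma^!(\rC^\bu)\simeq0$ in $\sD^\co(X_\gamma\qcoh)$. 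Thus the lemma reduces to the assertion that an object $\rC^\bu\in\sD^\co(\X\tors)$ vanishes if and only if $\boR i_\gamma^!(\rC^\bu)\simeq0$ for every $\gamma\in\Gamma$.

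Next, using the equivalence $\sK(\X\tors_\inj)\simeq\sD^\co(\X\tors)$ of Corollary~\ref{ind-Noetherian-coderived-cor}, I would represent $\rC^\bu$ by a complex of injective quasi-coherent torsion sheaves $\rJ^\bu\in\sK(\X\tors_\inj)$. By the construction of $\boR i_\gamma^!$ recalled in Section~\ref{derived-supports-subsecn}, this derived functor is computed by applying $i_\gamma^!$ termwise to $\rJ^\bu$; and by Proposition~\ref{torsion-injectives-characterized}(a) the result $i_\gamma^!\rJ^\bu$ is a complex of injective quasi-coherent sheaves on $X_\gamma$. Under the corresponding equivalence $\sK(X_\gamma\qcoh_\inj)\simeq\sD^\co(X_\gamma\qcoh)$, vanishing in the coderived category of a complex of injectives amounts to its contractibility. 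So the reduced statement becomes: the complex $\rJ^\bu$ is contractible if and only if $i_\gamma^!\rJ^\bu$ is contractible for every $\gamma\in\Gamma$.

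The \emph{only if} direction is immediate, since $i_\gamma^!$ is an additive functor and therefore takes a contractible complex, applied termwise together with its contracting homotopy, to a contractible complex. The \emph{if} direction is precisely the content of Lemma~\ref{complex-of-injective-torsion-contractible}, the glueing lemma asserting that a complex of injective quasi-coherent torsion sheaves all of whose $!$\+restrictions to the closed subschemes $X_\gamma$ are contractible is itself contractible. This last appeal is where the substantive work resides; everything else is the formal manipulation of cones together with the dictionaries between the coderived categories and the homotopy categories of injectives, so I expect no serious obstacle beyond invoking that lemma correctly.
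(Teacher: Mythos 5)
Your proposal is correct and follows essentially the same route as the paper's own proof: reduce via the cone to a vanishing criterion, represent the object by a complex of injective quasi-coherent torsion sheaves using Corollary~\ref{ind-Noetherian-coderived-cor}, and conclude by Lemma~\ref{complex-of-injective-torsion-contractible}. The paper's argument is just a terser version of yours, leaving the cone reduction and the contractibility dictionary implicit.
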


\begin{proof}
 It suffices to show that $\boR i_\gamma^!\rL^\bu=0$ for $\rL^\bu\in
\sD^\co(\X\tors)$ and all $\gamma\in\Gamma$ implies $\rL^\bu=0$ in
$\sD^\co(\X\tors)$.
 Indeed, by Corollary~\ref{ind-Noetherian-coderived-cor}, there exists
a complex of injective quasi-coherent torsion sheaves $\rJ^\bu\in
\sK(\X\tors_\inj)$ such that $\rL^\bu\simeq\rJ^\bu$ in
$\sD^\co(\X\tors)$.
 Then it remains to apply
Lemma~\ref{complex-of-injective-torsion-contractible}.
\end{proof}

\begin{prop} \label{aleph-zero-torsion-morphism-strictification-prop}
 Let $\X=\ilim\,(X_0\to X_1\to X_2\to\dotsb)$ be an ind-Noetherian
$\aleph_0$\+ind-scheme represented by an inductive system of closed
immersions of (Noetherian) schemes indexed by the poset of nonnegative
integers.
 Let $i_n\:X_n\rarrow X_{n+1}$ and $k_n\:X_n\rarrow\X$, \,$n\ge0$,
denote the closed immersion morphisms.
 Let $\rM^\bu$ and $\rN^\bu\in\sD^\co(\X\tors)$ be two objects in
the coderived category of quasi-coherent torsion sheaves.
 Suppose that, for every $n\ge0$, we are given a morphism
$f_n\:\boR k_n^!\rM^\bu\rarrow\boR k_n^!\rN^\bu$ in\/
$\sD^\co(X_n\qcoh)$ such that, for every $n\ge0$, the morphisms~$f_n$
and\/ $\boR i_n^!(f_{n+1})$ form a commutative square diagram with
the natural isomorphisms\/ $\boR k_n^!\rM^\bu\simeq
\boR i_n\boR k_{n+1}^!\rM^\bu$ and\/ $\boR k_n^!\rN^\bu\simeq
\boR i_n\boR k_{n+1}^!\rN^\bu$.
 Then \par
\textup{(a)} there exists a morphism $f\:\rM^\bu\rarrow\rN^\bu$ in\/
$\sD^\co(\X\tors)$ such that $f_n=\boR k_n^!(f)$ for every $n\ge0$; \par
\textup{(b)} assuming additionally that\/
$\Hom_{\sD^\co(X_n\qcoh)}(\boR k_n^!\rM^\bu,\boR k_n^!\rN^\bu[-1])=0$
for all\/ $n\ge0$, the morphism~$f$ with these properties
is also unique.
\end{prop}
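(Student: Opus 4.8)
The plan is to exhibit $\rM^\bu$ as a homotopy colimit in the coderived category and then read off both statements from the resulting Milnor $\varprojlim^1$-exact sequence. First I would invoke Corollary~\ref{ind-Noetherian-coderived-cor} to replace $\rM^\bu$ and $\rN^\bu$ by homotopy equivalent complexes of injective quasi-coherent torsion sheaves, so that $\boR k_n^!\rM^\bu=k_n^!\rM^\bu=:\rM^\bu_{(n)}$ and $\boR k_n^!\rN^\bu=k_n^!\rN^\bu=:\rN^\bu_{(n)}$ are computed termwise and are complexes of injective quasi-coherent sheaves on $X_n$ (by Proposition~\ref{torsion-injectives-characterized}(a)). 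By Lemma~\ref{torsion-sheaves-as-Giraud-subcat}, together with the adjunction of Lemma~\ref{torsion-sheaves-Gamma-systems-adjunction}, the counit $\varinjlim_n k_n{}_*k_n^!\rM^\bu\rarrow\rM^\bu$ is an isomorphism, so $\rM^\bu=\varinjlim_n k_n{}_*\rM^\bu_{(n)}$ as complexes in $\X\tors$. The telescope short exact sequence $0\rarrow\bigoplus_n k_n{}_*\rM^\bu_{(n)}\rarrow\bigoplus_n k_n{}_*\rM^\bu_{(n)}\rarrow\rM^\bu\rarrow0$ is termwise exact, hence its totalization is coacyclic and the sequence becomes a distinguished triangle in $\sD^\co(\X\tors)$; since its first map is $1$ minus the shift, this exhibits $\rM^\bu$ as the homotopy colimit of the tower $(k_n{}_*\rM^\bu_{(n)})_n$.

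Next I would apply the contravariant functor $\Hom_{\sD^\co(\X\tors)}({-},\rN^\bu)$ to this homotopy-colimit triangle. Coproducts in the source become products, and the triangle gives a long exact sequence, so the usual argument produces the short exact sequence
\[ 0\rarrow\varprojlim\nolimits^1_n B_n^{-1}\rarrow\Hom_{\sD^\co(\X\tors)}(\rM^\bu,\rN^\bu)\rarrow\varprojlim\nolimits_n B_n^0\rarrow0, \]
where $B_n^j=\Hom_{\sD^\co(\X\tors)}(k_n{}_*\rM^\bu_{(n)},\rN^\bu[j])$ with transition maps induced by the telescope maps $k_n{}_*\rM^\bu_{(n)}\rarrow k_{n+1}{}_*\rM^\bu_{(n+1)}$. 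The key computation is $B_n^j\simeq A_n^j:=\Hom_{\sD^\co(X_n\qcoh)}(\boR k_n^!\rM^\bu,\boR k_n^!\rN^\bu[j])$: because $\rN^\bu[j]$ is a complex of injectives, right orthogonality of complexes of injectives to coacyclic complexes (Proposition~\ref{coderived-and-homotopy-of-injectives}(a)) gives $\Hom_{\sD^\co(\X\tors)}(k_n{}_*\rM^\bu_{(n)},\rN^\bu[j])=\Hom_{\sK(\X\tors)}(k_n{}_*\rM^\bu_{(n)},\rN^\bu[j])$; the homotopy-level adjunction $k_n{}_*\dashv k_n^!$ rewrites this as $\Hom_{\sK(X_n\qcoh)}(\rM^\bu_{(n)},\rN^\bu_{(n)}[j])$, which equals $A_n^j$ again by Proposition~\ref{coderived-and-homotopy-of-injectives}(a) and Corollary~\ref{ind-Noetherian-coderived-cor} for the scheme $X_n$. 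A routine adjunction chase, using $k_n=k_{n+1}i_n$ and the counit $i_n{}_*i_n^!\rarrow\id$, identifies the transition maps $B_{n+1}^j\rarrow B_n^j$ with $\boR i_n^!\:A_{n+1}^j\rarrow A_n^j$, so $\varprojlim_n B_n^0\simeq\varprojlim_n A_n^0$ is exactly the group of compatible families $(f_n)$ in the statement. Finally, the triangle identity $k_n^!(\iota_n)\circ\eta=\id$ for the structure map $\iota_n\:k_n{}_*\rM^\bu_{(n)}\rarrow\rM^\bu$ shows that the right-hand map of the sequence sends a morphism $f$ to the family $(\boR k_n^!(f))_n$.

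With this exact sequence in hand both assertions follow at once. Surjectivity of the map $\Hom_{\sD^\co(\X\tors)}(\rM^\bu,\rN^\bu)\rarrow\varprojlim_n A_n^0$ lifts any compatible family $(f_n)$ to a morphism $f$ with $\boR k_n^!(f)=f_n$, proving~(a). Under the extra hypothesis $A_n^{-1}=0$ for all $n$, the whole tower $(A_n^{-1})_n$ vanishes, whence $\varprojlim^1_n A_n^{-1}=0$ and the same map is injective, proving the uniqueness in~(b). I expect the main obstacle to be the bookkeeping around the homotopy-colimit presentation: verifying that the termwise-exact telescope sequence indeed yields the homotopy-colimit triangle in $\sD^\co(\X\tors)$, and then carefully matching the adjunction isomorphism $B_n^j\simeq A_n^j$ with its transition maps and with the evaluation map $f\mapsto(\boR k_n^!(f))_n$, so that $\varprojlim_n B_n^0$ is correctly identified with the data actually appearing in the proposition. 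The homological input, namely the Milnor sequence itself, is formal once this identification is set up.
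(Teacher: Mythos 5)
Your proof is correct, but it takes a genuinely different route from the paper's. The paper first replaces $\rM^\bu$, $\rN^\bu$ by complexes of injective quasi-coherent torsion sheaves and then reduces the Proposition to a chain-level strictification statement (Lemma~\ref{aleph-zero-torsion-morphism-strictification-lemma}): by induction on~$n$, the homotopy-compatible system of morphisms of complexes is replaced by a strictly compatible one, the inductive step using injectivity of the terms of $k_{n+1}^!\rJ^\bu$ to extend a chain homotopy along the monomorphism $i_n{}_*k_n^!\rL^\bu\rarrow k_{n+1}^!\rL^\bu$; part~(b) is handled by the same method one homotopy level higher. You instead exhibit $\rM^\bu$ as the homotopy colimit of the telescope on $(k_n{}_*k_n^!\rM^\bu)_n$ (the telescope sequence is termwise exact in the Grothendieck category $\X\tors$, so its totalization is coacyclic), apply $\Hom_{\sD^\co(\X\tors)}({-},\rN^\bu)$, and read off both assertions from the Milnor $\varprojlim$--$\varprojlim^1$ sequence. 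Both arguments rest on the same two ingredients, namely $\rM^\bu\simeq\varinjlim_n k_n{}_*k_n^!\rM^\bu$ and the right orthogonality of complexes of injectives to coacyclic complexes, but the packaging differs, and each buys something: your route is more formal and gives a sharper form of~(b), since the obstruction to uniqueness is identified as $\varprojlim^1_n\Hom_{\sD^\co(X_n\qcoh)}(\boR k_n^!\rM^\bu,\boR k_n^!\rN^\bu[-1])$, so uniqueness already holds whenever this tower has vanishing $\varprojlim^1$ (e.g.\ is Mittag-Leffler), not only when it vanishes; the paper's route, by contrast, produces a genuine morphism of complexes whose restrictions agree on the nose in $\sK(X_n\qcoh)$, and this stronger chain-level output is what gets reused elsewhere in the paper (Example~\ref{aleph-zero-dualizing-glueing}, Section~\ref{rigid-subsecn}). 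Two points to tighten in your write-up: (i)~the identification $\Hom_{\sD^\co(\X\tors)}(k_n{}_*\rM^\bu_{(n)},\rN^\bu[j])\simeq\Hom_{\sK(\X\tors)}(k_n{}_*\rM^\bu_{(n)},\rN^\bu[j])$ needs the orthogonality statement for an \emph{arbitrary} complex in the first argument, since $k_n{}_*\rM^\bu_{(n)}$ is generally not a complex of injectives in $\X\tors$; this is exactly the ``slightly more precise version'' of Proposition~\ref{coderived-and-homotopy-of-injectives}(a) that the paper itself invokes at this very spot, so the fact is available, but the citation as you state it does not literally cover it; (ii)~the adjunction chase identifying the transition maps of your inverse system with $\boR i_n^!$ conjugated by the natural isomorphisms $\boR k_n^!\simeq\boR i_n^!\,\boR k_{n+1}^!$ is the one place where real bookkeeping is required and should be written out, since the compatibility hypothesis of the Proposition is phrased through precisely those isomorphisms.
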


 The proof of the proposition is based on the next lemma.

\begin{lem} \label{aleph-zero-torsion-morphism-strictification-lemma}
 In the assumptions and notation of the proposition, let\/
$\rL^\bu\in\sK(\X\tors)$ be a complex of quasi-coherent torsion sheaves
and $\rJ^\bu\in\sK(\X\tors_\inj)$ be a complex of injective
quasi-coherent torsion sheaves on\/~$\X$.
 Suppose that, for every $n\ge0$, we are given a morphism
$g_n\:k_n^!\rL^\bu\rarrow k_n^!\rJ^\bu$ in\/ $\sK(X_n\qcoh)$ such that,
for every $n\ge0$, the morphisms~$g_n$ and\/ $i_n^!(g_{n+1})$ form
a commutative square diagram with the natural isomorphisms\/
$k_n^!\rL^\bu\simeq i_n^! k_{n+1}^!\rL^\bu$ and\/
$k_n^!\rJ^\bu\simeq i_n k_{n+1}^!\rJ^\bu$.
 Then \par
\textup{(a)} there exists a morphism $g\:\rL^\bu\rarrow\rJ^\bu$ in\/
$\sK(\X\tors)$ such that $g_n=k_n^!(g)$\/ in $\sK(X_n\qcoh)$
for every $n\ge0$; \par
\textup{(b)} assuming additionally that\/
$\Hom_{\sK(X_n\qcoh)}(k_n^!\rL^\bu,k_n^!\rJ^\bu[-1])=0$ for all\/
$n\ge0$, the morphism~$g$ with these properties is also unique.
\end{lem}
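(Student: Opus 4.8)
The plan is to compute the group $\Hom_{\sK(\X\tors)}(\rL^\bu,\rJ^\bu)$ by means of a Milnor-type $\varprojlim$--$\varprojlim^1$ exact sequence attached to the sequential presentation of $\rL^\bu$, and then to read off both assertions from exactness. First I would record the presentation of $\rL^\bu$ as a filtered colimit along the $X_n$. For every torsion sheaf $\rM$ one has $\rM\cong\varinjlim_n k_n{}_*k_n^!\rM$ (this is the isomorphism $(\rM|_\Gamma)^+\cong\rM$ of Section~\ref{Gamma-systems-subsecn}, applied to the cofinal system $X_0\to X_1\to\dotsb$), so termwise $\rL^\bu\cong\varinjlim_n A_n^\bu$ with $A_n^\bu:=k_n{}_*k_n^!\rL^\bu$, the transition maps $\alpha_n\:A_n^\bu\rarrow A_{n+1}^\bu$ being $k_{n+1}{}_*$ of the counit $i_n{}_*i_n^!\rarrow\id$ under the identifications $k_n{}_*=k_{n+1}{}_*i_n{}_*$ and $k_n^!=i_n^!k_{n+1}^!$. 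This yields the telescope short exact sequence of complexes in $\X\tors$
\[
 0\lrarrow\bigoplus_n A_n^\bu\xrightarrow{\,1-s\,}\bigoplus_n A_n^\bu\lrarrow\rL^\bu\lrarrow0,
\]
with $s$ the shift assembled from the $\alpha_n$. It is exact in every cohomological degree ($1-s$ is always a monomorphism, and its cokernel is the colimit). I would stress that I do \emph{not} claim it to be termwise split: since $\rL^\bu$ is an arbitrary complex of torsion sheaves, the $A_n^\bu$ need not be injective, so $\rL^\bu$ is not visibly a homotopy colimit in $\sK(\X\tors)$.

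The key step is to apply the Hom-complex functor $\Hom^\bu_{\X\tors}({-},\rJ^\bu)$ (with $H^m\Hom^\bu_{\X\tors}({-},\rJ^\bu)=\Hom_{\sK(\X\tors)}({-},\rJ^\bu[m])$) to this degreewise-exact sequence. Because every $\rJ^q$ is injective, each $\Hom_{\X\tors}({-},\rJ^q)$ is exact, so $\Hom^\bu_{\X\tors}({-},\rJ^\bu)$ carries the telescope sequence to a short exact sequence of complexes of abelian groups
\[
 0\lrarrow\Hom^\bu_{\X\tors}(\rL^\bu,\rJ^\bu)\lrarrow\prod_n\Hom^\bu_{\X\tors}(A_n^\bu,\rJ^\bu)\xrightarrow{\,1-s^*\,}\prod_n\Hom^\bu_{\X\tors}(A_n^\bu,\rJ^\bu)\lrarrow0.
\]
This is exactly where injectivity of $\rJ^\bu$ enters, replacing the termwise-splitting device used in Example~\ref{aleph-zero-dualizing-glueing}. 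Taking cohomology and using that cohomology commutes with products of complexes of abelian groups, the long exact sequence degenerates into the Milnor sequence
\[
 0\lrarrow\varprojlim\nolimits^{1}_{n}\Hom_{\sK(\X\tors)}(A_n^\bu,\rJ^\bu[-1])\lrarrow\Hom_{\sK(\X\tors)}(\rL^\bu,\rJ^\bu)\lrarrow\varprojlim\nolimits_{n}\Hom_{\sK(\X\tors)}(A_n^\bu,\rJ^\bu)\lrarrow0.
\]
Finally the adjunction $k_n{}_*\dashv k_n^!$, which descends from $\X\tors$ and $X_n\qcoh$ to the homotopy categories, gives natural isomorphisms $\Hom_{\sK(\X\tors)}(A_n^\bu,\rJ^\bu[m])\cong\Hom_{\sK(X_n\qcoh)}(k_n^!\rL^\bu,k_n^!\rJ^\bu[m])$, rewriting the two outer terms precisely as the groups in the statement.

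With this exact sequence in hand, both assertions are immediate. For (a), the compatibility hypothesis on the $g_n$ says exactly that, under these identifications, the transition map $\Hom_{\sK(X_{n+1}\qcoh)}(k_{n+1}^!\rL^\bu,k_{n+1}^!\rJ^\bu)\rarrow\Hom_{\sK(X_n\qcoh)}(k_n^!\rL^\bu,k_n^!\rJ^\bu)$---which is induced by $\alpha_n$ and which the triangle identities identify with $\psi\mapsto i_n^!(\psi)$---sends $g_{n+1}\mapsto g_n$. Thus $(g_n)_n$ is an element of the right-hand $\varprojlim$, and by surjectivity it lifts to a morphism $g\:\rL^\bu\rarrow\rJ^\bu$ in $\sK(\X\tors)$. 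Moreover the projection $\Hom_{\sK(\X\tors)}(\rL^\bu,\rJ^\bu)\rarrow\Hom_{\sK(X_n\qcoh)}(k_n^!\rL^\bu,k_n^!\rJ^\bu)$ is precomposition with the structure map $A_n^\bu\rarrow\rL^\bu$ (the counit) followed by the adjunction, which the same triangle identity identifies with $g\mapsto k_n^!(g)$; hence $k_n^!(g)=g_n$, as required. For (b), the extra vanishing $\Hom_{\sK(X_n\qcoh)}(k_n^!\rL^\bu,k_n^!\rJ^\bu[-1])=0$ makes the entire inverse system defining the $\varprojlim^1$ term zero, so that term vanishes and the projection onto $\varprojlim$ is an isomorphism; uniqueness of $g$ follows.

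The only genuinely delicate point, and the one I would handle most carefully, is the passage to the Milnor sequence: the naive route through homotopy colimits is unavailable, since the telescope is not termwise split for a general $\rL^\bu$. The device that saves the argument is to apply the Hom-complex into $\rJ^\bu$ directly to the degreewise-exact telescope sequence, its exactness being guaranteed precisely by $\rJ^\bu$ being a complex of injectives. The remaining work is the bookkeeping identifying the transition and projection maps of the inverse system with $i_n^!({-})$ and $k_n^!({-})$, which is a routine but careful unwinding of the adjunction $k_n{}_*\dashv k_n^!$ and its compatibility with the factorizations $k_n{}_*=k_{n+1}{}_*i_n{}_*$ and $k_n^!\cong i_n^!k_{n+1}^!$.
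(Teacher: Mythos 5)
Your proof is correct, but it takes a genuinely different route from the paper's. The paper argues by induction on~$n$, rectifying the homotopy-compatible system $(g_n)$ into a strictly compatible system of morphisms of complexes $f_n=g_n+d(h_n)$: at each step the discrepancy homotopy is transported to the subcomplex $i_n{}_*k_n^!\rL^\bu\subset k_{n+1}^!\rL^\bu$ and extended to all of $k_{n+1}^!\rL^\bu$ using injectivity of the terms of $k_{n+1}^!\rJ^\bu$; the strictified system then assembles into~$g$ through the presentation $\rL^\bu\simeq\varinjlim_n k_n{}_*k_n^!\rL^\bu$, and part~(b) is proved by running the same induction one homotopy level higher. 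You use the same colimit presentation, but instead form the telescope short exact sequence and apply $\Hom^\bu_{\X\tors}({-},\rJ^\bu)$ to it; exactness is preserved precisely because the terms of $\rJ^\bu$ are injective, and the Milnor $\varprojlim$--$\varprojlim^1$ sequence follows. So both arguments invoke injectivity of $\rJ^\bu$ at the decisive moment, only in different ways: extension of homotopies along a monomorphism versus exactness of $\Hom_{\X\tors}({-},\rJ^q)$; and your workaround for the non-split telescope (the point you rightly flag as delicate) is valid, with the deferred adjunction bookkeeping checking out via the compatibility of counits for the composed adjunction $k_n{}_*=k_{n+1}{}_*i_n{}_*$, \ $k_n^!\simeq i_n^!k_{n+1}^!$. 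What your route buys: parts~(a) and~(b) fall out of a single exact sequence, the full ambiguity in~$g$ is identified as a $\varprojlim^1$ group, and uniqueness in~(b) holds under the formally weaker hypothesis that $\varprojlim^1_n\Hom_{\sK(X_n\qcoh)}(k_n^!\rL^\bu,k_n^!\rJ^\bu[-1])$ vanishes, rather than each of these groups separately. What the paper's route buys: it is elementary and self-contained (no $\varprojlim$--$\varprojlim^1$ formalism), and it yields the slightly finer conclusion that the homotopy classes~$g_n$ admit representatives~$f_n$ commuting strictly in the category of complexes, not merely the existence of a global morphism~$g$ in the homotopy category.
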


\begin{proof}
 Let $\rL^\bu\in\sC(\X\tors)$ and $\rJ^\bu\in\sC(\X\tors_\inj)$ be
our two complexes.
 The lemma claims that, given a system of morphisms $g_n\:k_n^!\rL^\bu
\rarrow k_n^!\rJ^\bu$ which is compatible up to chain homotopy,
one can replace every morphism~$g_n$ by a homotopic morphism
$f_n\:k_n^!\rL^\bu\rarrow k_n^!\rJ^\bu$ in $\sC(X_n\qcoh)$ such
that the morphisms~$f_n$ are compatible with each other in the strict
sense, i.~e., the square diagrams become commutative in the abelian
category of complexes $\sC(X_n\qcoh)$.

 Part~(a): proceeding by induction in~$n\ge0$, suppose that we have 
constructed chain homotopies $h_m\:k_m^!\rL^\bu\rarrow k_m^!\rJ^\bu[-1]$
for all $m\le n$ such that the morphisms $f_m=g_m+d(h_m)$ satisfy
$f_m=i_m^!(f_{m+1})$ in $\sC(X_m\qcoh)$ for all $m<n$.
 For $n=0$ (the induction base), it suffices to take $h_0=0$.
 Let us construct~$h_{n+1}$.
 
 By assumption, we have $f_n=i_n^!g_{n+1}$ in $\sK(X_n\qcoh)$; in
other words, there exists a chain homotopy $u_n\:k_n^!\rL^\bu\rarrow
k_n^!\rJ^\bu[-1]$ such that $f_n=i_n^!g_{n+1}+d(u_n)$
in $\sC(X_n\qcoh)$.
 Consider the map $i_n{}_*u_n\:i_n{}_*k_n^!\rL^\bu\rarrow
i_n{}_*k_n^!\rJ^\bu[-1]$ and compose it with the adjunction morphism
$i_n{}_*k_n^!\rJ^\bu[-1]=i_n{}_*i_n^!k_{n+1}^!\rJ^\bu[-1]\rarrow
k_{n+1}^!\rJ^\bu[-1]$.
 The resulting map $t_n\:i_n{}_*k_n^!\rL^\bu\rarrow
k_{n+1}^!\rJ^\bu[-1]$ is a morphism of graded objects in $X_{n+1}\qcoh$,
where the target $k_{n+1}^!\rJ^\bu[-1]$ is a graded object in
$X_{n+1}\qcoh_\inj$.

$$
\xymatrix{
 k_{n+1}^!\rL^\bu
 \ar[rr]^{g_{n+1}} \ar@{==>}@/_1.75pc/[rr]^{h_{n+1}}
 && k_{n+1}^!\rJ^\bu \\ \\
 i_n{}_*k_n^!\rL^\bu
 \ar[rr]^{i_n{}_*f_n} \ar@{=>}@/_0.7pc/[rruu]^{t_n} \ar@{^{(}->}[uu]
 && i_n{}_*k_n^!\rJ^\bu \ar@{^{(}->}[uu]
}
$$

 Now the adjunction morphism $i_n{}_*k_n^!\rL^\bu=
i_n{}_*i_n^!k_{n+1}^!\rL^\bu\rarrow k_{n+1}^!\rL^\bu$ is
a monomorphism, so it makes $i_n{}_*k_n^!\rL^\bu$ a graded subobject
(in fact, a subcomplex) in $k_{n+1}^!\rL^\bu$.
 By injectivity of $k_{n+1}^!\rJ^\bu$, the map~$t_n$ can be extended to
a morphism of graded objects $h_{n+1}\:k_{n+1}^!\rL^\bu\rarrow
k_{n+1}^!\rJ^\bu[-1]$ in $X_{n+1}\qcoh$, which provides the desired
chain homotopy for which the morphisms of complexes $f_{n+1}=g_{n+1}
+d(h_{n+1})\:\allowbreak k_{n+1}^!\rL^\bu\rarrow k_{n+1}^!\rJ^\bu$
satisfy $f_n=i_n^!f_{n+1}$ in $\sC(X_n\qcoh)$.

 The proof part~(b) is similar.
 Suppose that we are given a morphism $g\:\rL^\bu\rarrow\rJ^\bu$ in
$\sK(\X\tors)$ such that $k_n^!(g)=0$ in the homotopy category
$\sK(X_n\qcoh)$ for every $n\ge0$.
 Then there exist chain homotopies $h_n\:k_n^!\rL^\bu\rarrow
k_n^!\rJ^\bu[-1]$ such that $k_n^!(g)=d(h_n)$ in the category
of complexes $\sC(X_n\qcoh)$.
 The difference $h_n-i_n^!(h_{n+1})$ satisfies $d(h_n-i_n^!(h_{n+1}))
=d(h_n)-i_n^!(d(h_{n+1}))=k_n^!(g)-i_n^!k_{n+1}^!(g)=0$; so
$h_n-i_n^!(h_{n+1})$ is a morphism $k_n^!\rL^\bu\rarrow
k_n^!\rJ^\bu[-1]$ in the category of complexes $\sC(X_n\qcoh)$.
 By the assumption of part~(b), this morphism must be homotopic
to zero.
 So the chain homotopies~$h_n$ ``agree with each other up to a chain
homotopy of the next degree''.

 Arguing as in part~(a) and using the assumption that
$k_{n+1}^!\rJ^\bu$ are complexes of injective quasi-coherent sheaves
on $X_{n+1}$, one proceeds by induction in~$n$, constructing chain
homotopies $r_{n+1}\:k_{n+1}^!\rL^\bu\rarrow k_{n+1}^!\rJ^\bu[-2]$
such that the chain homotopies $q_{n+1}=h_{n+1}+d(r_{n+1})$ satisfy
$q_n=i_n^!q_{n+1}$ in $\sC(X_n\qcoh)$ for all $n\ge0$.
 The point is that the discrepancy between $q_n$ and~$i_n^!h_{n+1}$
can be lifted from a chain homotopy $k_n^!\rL^\bu\rarrow
k_n^!\rJ^\bu[-2]$ to a chain homotopy $k_{n+1}^!\rL^\bu\rarrow
k_{n+1}^!\rJ^\bu[-2]$.
\end{proof}

\begin{proof}[Proof of
Proposition~\ref{aleph-zero-torsion-morphism-strictification-prop}]
 Use Corollary~\ref{ind-Noetherian-coderived-cor} in order to find
two complexes of injective quasi-coherent torsion sheaves $\rL^\bu$
and $\rJ^\bu$ on $\X$ together with a morphism $\rM^\bu\rarrow\rL^\bu$
and $\rN^\bu\rarrow\rJ^\bu$ with coacyclic cones.
 Furthermore, notice that, for any scheme $X$ and any complexes
$\cL^\bu\in\sK(X\qcoh_\inj)$ and $\J^\bu\in\sK(X\qcoh_\inj)$,
the natural morphism of Hom groups $\Hom_{\sK(X\qcoh)}(\cL^\bu,\J^\bu)
\rarrow\Hom_{\sD^\co(X\qcoh)}(\cL^\bu,\J^\bu)$ is an isomorphism.
 The latter assertion holds by
Proposition~\ref{coderived-and-homotopy-of-injectives}(a)
(a slightly more precise version of
Proposition~\ref{coderived-and-homotopy-of-injectives}(a), which is
also valid in any abelian/exact category with exact coproducts,
tells that the same holds for any $\cL^\bu\in\sK(X\qcoh)$; but we
still need $\rL^\bu\in\sK(\X\tors_\inj)$ in order to have
$\boR k_n^!\rL^\bu=k_n^!\rL^\bu$).
 These observations reduce the assertions of the proposition
to those of
Lemma~\ref{aleph-zero-torsion-morphism-strictification-lemma}.
\end{proof}

\begin{exs}
 (0)~We refer to Remark~\ref{cotensor-right-derived-remark}(3) for
the discussion of the extraordinary inverse image functors and
the notation $f^+\:\sD^+(X\qcoh)\rarrow\sD^+(Y\qcoh)$ for a morphism
of finite type between Noetherian schemes $f\:Y\rarrow X$.

 The functor~$f^+$ commutes with external tensor products.
 In particular, given a morphism of finite type $p\:X\rarrow\Spec\kk$,
consider the morphism $p\times_\kk p\:X\times_\kk X\rarrow\Spec\kk$,
and denote simply by~$\kk$ the quasi-coherent sheaf on $\Spec\kk$
corresponding to the $\kk$\+vector space~$\kk$.
 Then there is a natural isomorphism $(p\times_\kk p)^+(\kk)\simeq
p^+(\kk)\bt_\kk\nobreak p^+(\kk)$ in $\sD^+((X\times_\kk X)\qcoh)$.
 In fact, $p^+(\kk)$ is a complex of quasi-coherent sheaves with
bounded cohomology sheaves, which are coherent sheaves on $X$,
and the complex $p^+(\kk)$ also has finite injective dimension.

 Assume that the scheme $X$ is separated, and choose a bounded
complex $\D^\bu\in\sK^\bb(X\qcoh_\inj)$ of injective quasi-coherent
sheaves on $X$ quasi-isomorphic to~$p^+(\kk)$.
 Then $\D^\bu$ is a dualizing complex on~$X$.
 Moreover, $\D_n^\bu$ is a \emph{rigid} dualizing complex, i.~e.,
denoting the diagonal morphism by $\Delta_X\:X\rarrow X\times_\kk X$,
there is a natural isomorphism $\D^\bu\simeq
\boR\Delta_X^!(\D^\bu\bt_\kk\D^\bu)$ in $\sD^\bb(X\qcoh)\subset
\sD^+(X\qcoh)\subset\sD^\co(X\qcoh)$.

\smallskip
 (1)~Let $\X=\ilim\,(X_0\to X_1\to X_2\to\dotsb)$ be
an ind-separated $\aleph_0$\+ind-scheme of ind-finite type
over~$\kk$, represented by an inductive system of closed immersions
of schemes indexed by the nonnegative integers.
 For every $n\ge0$, denote by $p_n\:X_n\rarrow\Spec\kk$ the structure
morphism of the scheme $X_n$ over~$\kk$, and let $\D_n^\bu\in
\sK^\bb(X_n\qcoh_\inj)$ denote a bounded complex of injective
quasi-coherent sheaves on $X_n$ quasi-isomorphic to~$p_n^+(\kk)$.
 Then $\D_n^\bu$ is a rigid dualizing complex on~$X_n$.

 In the above notation $i_n\:X_n\rarrow X_{n+1}$ for the transition
morphisms in the inductive system of schemes, there are also natural
isomorphisms $p_n^+(\kk)\simeq\boR i_n^!p_{n+1}^+(\kk)$ in
$\sD^+(X_n\qcoh)$, which lead to natural homotopy equivalences
$\D_n^\bu\simeq i_n^!\D_{n+1}^\bu$ in $\sK^\bb(X_n\qcoh_\inj)$.
 Using the construction of Example~\ref{aleph-zero-dualizing-glueing},
we produce a dualizing complex $\rD^\bu\in\sK(\X\tors_\inj)$ together
with natural homotopy equivalences $\D_n^\bu\simeq k_n^!\rD^\bu$
(where $k_n\:X_n\rarrow\X$ are the closed immersions).

\smallskip
 (2)~For every $n\ge0$, choose a bounded complex of injective
quasi-coherent sheaves $\E_n^\bu$ on $X_n\times_\kk X_n$
quasi-isomorphic to $(p_n\times_\kk p_n)^+(\kk)$.
 So $\E_n^\bu$ is a dualizing complex on $X_n\times_\kk X_n$ naturally
isomorphic to the complex $\D_n^\bu\bt_\kk\D_n^\bu$ in
the derived category $\sD^\bb((X_n\times_\kk X_n)\qcoh)$.
 We also have natural homotopy equivalences $\Delta_n^!(\E_n^\bu)
\simeq\D_n^\bu$ in $\sK^\bb(X_n\qcoh_\inj)$ (where $\Delta_n\:
X_n\rarrow X_n\times_\kk X_n$ is the diagonal morphism) and
$\E_n^\bu\simeq (i_n\times_\kk i_n)^!\E_{n+1}^\bu$ in
$\sK^\bb((X_n\times_\kk X_n)\qcoh_\inj)$.

 Using the construction of Example~\ref{aleph-zero-dualizing-glueing},
we can produce a dualizing complex $\rE^\bu\in
\sK((\X\times_\kk\X)\tors_\inj)$ together with natural homotopy
equivalences $\E_n^\bu\simeq (k_n\times_\kk k_n)^!\rE^\bu$.
 Now Lemma~\ref{aleph-zero-torsion-morphism-strictification-lemma}(a)
allows to extend these data to a morphism
$\rD^\bu\rarrow\Delta^!\rE^\bu$ in $\sK(\X\tors_\inj)$
(where $\Delta\:\X\rarrow\X\times_\kk\X$ is the diagonal).
 Using the same lemma, we also obtain a morphism
$\rD^\bu\bt_\kk\rD^\bu\rarrow\rE^\bu$ in $\sK((\X\times_\kk\X)\tors)$.
 Moreover,
Lemma~\ref{aleph-zero-torsion-morphism-strictification-lemma}(b)
combined with Lemma~\ref{dualizing-no-negative-selfext} show that
the relevant morphisms in the homotopy categories of quasi-coherent
torsion sheaves are uniquely defined.

 Alternatively, one can use
Proposition~\ref{aleph-zero-torsion-morphism-strictification-prop}(a)
together with Proposition~\ref{derived-supports-external-product-prop}.
 This allows to avoid mentioning the specific complexes $\E_n^\bu$
and~$\rE^\bu$, constructing directly a morphism
$\rD^\bu\rarrow\boR\Delta^!(\rD^\bu\bt_\kk\rD^\bu)$ in the coderived
category $\sD^\co(\X\tors)$ instead.
 This morphism in the coderived category is uniquely defined by
Proposition~\ref{aleph-zero-torsion-morphism-strictification-prop}(b)
combined with Lemma~\ref{dualizing-no-negative-selfext}.

 By Lemma~\ref{coderived-isomorphisms-characterized}, both
the morphisms $\rD^\bu\rarrow\Delta^!\rE^\bu$ and
$\rD^\bu\bt_\kk\rD^\bu\rarrow\rE^\bu$ are isomorphisms in the respective
coderived categories $\sD^\co(\X\tors)$ and
$\sD^\co((\X\times_\kk\X)\tors)$.
 Thus $\rD^\bu\in\sK(\X\tors_\inj)$ is a rigid dualizing complex
on~$\X$.
\end{exs}

\subsection{Covariant duality commutes with external tensor products}
\label{covariant-external-commute-subsecn}
 Let $\X$ be an ind-Noetherian ind-scheme, and
let $\rM^\bu\in\sC(\X\tors)$ be a complex of quasi-coherent torsion
sheaves on~$\X$.
 For any complex of flat pro-quasi-coherent pro-sheaves $\fF^\bu$
on $\X$, put
$$
 \Phi_{\rM^\bu}(\fF^\bu)=\rM^\bu\ot_\X\fF^\bu\,\in\,\sC(\X\tors).
$$
 According to
formula~\eqref{flat-torsion-module-structure-on-co-derived} from
Section~\ref{cotensor-product-construction-subsecn},
the functor $\Phi_{\rM^\bu}$ induces a well-defined triangulated
functor from the derived category of flat pro-quasi-coherent
pro-sheaves to the coderived category of quasi-coherent torsion sheaves,
$$
 \Phi_{\rM^\bu}\:\sD(\X\flat)\lrarrow\sD^\co(\X\tors).
$$
 Furthermore, any morphism $\rM^\bu\rarrow\rN^\bu$ in the coderived
category $\sD^\co(\X\tors)$ induces a morphism of functors
$\Phi_{\rM^\bu}\rarrow\Phi_{\rN^\bu}$, and any isomorphism $\rM^\bu
\simeq\rN^\bu$ in $\sD^\co(\X\tors)$ induces an isomorphism of
triangulated functors $\Phi_{\rM^\bu}\simeq\Phi_{\rN^\bu}$.

\begin{lem} \label{torsion-pro-action-external-product}
 Let\/ $\X'$ and\/ $\X''$ be reasonable ind-schemes over\/~$\kk$.
 Let $\rM'$ and $\rM''$ be quasi-coherent torsion sheaves on\/
$\X'$ and\/ $\X''$, and let\/ $\fP'$ and\/ $\fP''$ be
pro-quasi-coherent pro-sheaves on\/ $\X'$ and\/ $\X''$, respectively.
 Then there is a natural isomorphism
$$
 (\rM'\bt_\kk\rM'')\ot_{\X'\times_\kk\X''}(\fP'\bt_\kk\fP'')
 \simeq(\rM'\ot_{\X'}\fP')\bt_\kk(\rM''\ot_{\X''}\fP'')
$$
of quasi-coherent torsion sheaves on\/ $\X'\times_\kk\X''$.
\end{lem}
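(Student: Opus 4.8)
The plan is to reduce the assertion to the level of $\Gamma$-systems, where both the external tensor product and the action of pro-quasi-coherent pro-sheaves are computed termwise on reasonable closed subschemes, and then to combine the resulting scheme-level identity with Lemma~\ref{torsion-and-Gamma-systems-external-product}(b). First I would fix representations $\X'=\ilim_{\gamma'\in\Gamma'}X'_{\gamma'}$ and $\X''=\ilim_{\gamma''\in\Gamma''}X''_{\gamma''}$ by inductive systems of closed immersions of reasonable closed subschemes, so that $\X'\times_\kk\X''=\ilim_{(\gamma',\gamma'')\in\Gamma'\times\Gamma''}X'_{\gamma'}\times_\kk X''_{\gamma''}$ is the corresponding representation of the product (using Lemma~\ref{product-of-reasonable}); write $\Gamma=\Gamma'\times\Gamma''$ and $X_\gamma=X'_{\gamma'}\times_\kk X''_{\gamma''}$ for $\gamma=(\gamma',\gamma'')$.

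The computational core is the following identity for quasi-coherent sheaves on schemes: for quasi-coherent sheaves $\F'$, $\M'$ on $X'_{\gamma'}$ and $\F''$, $\M''$ on $X''_{\gamma''}$, there is a natural isomorphism
$$
 (\F'\bt_\kk\F'')\ot_{\cO_{X_\gamma}}(\M'\bt_\kk\M'')\simeq
 (\F'\ot_{\cO_{X'_{\gamma'}}}\M')\bt_\kk(\F''\ot_{\cO_{X''_{\gamma''}}}\M'').
$$
I would prove this by unwinding the definition $\F'\bt_\kk\F''=p'{}^*\F'\ot p''{}^*\F''$ of the external tensor product and using that the inverse image functors $p'{}^*$ and $p''{}^*$ are tensor functors, together with the associativity and commutativity of $\ot_{\cO_{X_\gamma}}$; this step is entirely routine.

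Next I would phrase the intermediate claim at the level of $\Gamma$-systems: for a $\Gamma'$-system $\boM'$ on $\X'$ and a $\Gamma''$-system $\boM''$ on $\X''$, there is a natural isomorphism
$$
 (\fP'\bt_\kk\fP'')\ot_\X(\boM'\bt_\kk\boM'')\simeq
 (\fP'\ot_{\X'}\boM')\bt_\kk(\fP''\ot_{\X''}\boM'')
$$
of $\Gamma$-systems on $\X'\times_\kk\X''$. By the definitions of the action of pro-sheaves on $\Gamma$-systems (Section~\ref{pro-in-torsion-action-subsecn}), of the external tensor product of pro-sheaves (Section~\ref{external-of-pro-subsecn}), and of the external tensor product of $\Gamma$-systems (Section~\ref{external-of-torsion-subsecn}), both sides have the component at $\gamma=(\gamma',\gamma'')$ given by the two sides of the scheme-level identity applied to $\F'=\fP'{}^{(X'_{\gamma'})}$, $\M'=\boM'_{(\gamma')}$ and $\F''=\fP''{}^{(X''_{\gamma''})}$, $\M''=\boM''_{(\gamma'')}$. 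Thus the scheme-level isomorphism supplies the termwise identification. The point that requires care, and which I expect to be the main obstacle, is checking that these termwise isomorphisms are compatible with the $\Gamma$-system structure maps---those built from the direct images $i_*$, the projection-formula isomorphisms of Lemma~\ref{projection-formula}, and the inverse-image structure isomorphisms of the pro-sheaves---so that they assemble into an isomorphism of $\Gamma$-systems. This compatibility is the main piece of bookkeeping, but it is formal, following from the naturality of all the isomorphisms involved.

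Finally I would descend from $\Gamma$-systems to quasi-coherent torsion sheaves. By definition $(\fP'\bt_\kk\fP'')\ot_{\X'\times_\kk\X''}(\rM'\bt_\kk\rM'')=\bigl((\fP'\bt_\kk\fP'')\ot_\X(\rM'\bt_\kk\rM'')|_\Gamma\bigr)^+$, and $(\rM'\bt_\kk\rM'')|_\Gamma\simeq(\rM'|_{\Gamma'})\bt_\kk(\rM''|_{\Gamma''})$ directly from the construction of the external tensor product of torsion sheaves. Applying the intermediate claim with $\boM'=\rM'|_{\Gamma'}$ and $\boM''=\rM''|_{\Gamma''}$, the left-hand side becomes $\bigl((\fP'\ot_{\X'}\rM'|_{\Gamma'})\bt_\kk(\fP''\ot_{\X''}\rM''|_{\Gamma''})\bigr)^+$, and Lemma~\ref{torsion-and-Gamma-systems-external-product}(b) identifies this with $(\fP'\ot_{\X'}\rM'|_{\Gamma'})^+\bt_\kk(\fP''\ot_{\X''}\rM''|_{\Gamma''})^+=(\rM'\ot_{\X'}\fP')\bt_\kk(\rM''\ot_{\X''}\fP'')$, which is the right-hand side. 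Naturality of the resulting isomorphism follows from the naturality of each of the preceding steps.
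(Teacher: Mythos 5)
Your proposal is correct and follows essentially the same route as the paper's own proof: reduce to the level of $\Gamma$-systems (where the paper calls the termwise isomorphism ``obvious'' and you spell out the scheme-level identity and structure-map compatibility), then descend to quasi-coherent torsion sheaves via the definition of $\ot_\X$ through the functor~$({-})^+$ and Lemma~\ref{torsion-and-Gamma-systems-external-product}(b). The only difference is the level of detail, not the argument.
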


\begin{proof}
 Let $\X'=\ilim_{\gamma'\in\Gamma'}X'_{\gamma'}$ and
$\X''=\ilim_{\gamma''\in\Gamma''}X''_{\gamma''}$ be representations
of $\X'$ and $\X''$ by inductive systems of closed immersions of
reasonable closed subschemes.
 Then $\X'\times_\kk\X''=\ilim_{(\gamma',\gamma'')\in
\Gamma'\times\Gamma''}X'_{\gamma'}\times_\kk X''_{\gamma''}$
is a similar representation of the ind-scheme $\X\times_\kk\X''$
(see Section~\ref{external-of-torsion-subsecn}).
 To prove the lemma, one first observes that a similar isomorphism
obviously holds for $\Gamma$\+systems in place of quasi-coherent
torsion sheaves.
 Let $\boM'$ be a $\Gamma'$\+system on $\X'$ and $\boM''$ be
a $\Gamma''$\+system on~$\X''$.
 Then the external tensor product $\boM'\bt_\kk\boM''$, as defined in
Section~\ref{external-of-torsion-subsecn}, is
a $(\Gamma'\times\Gamma'')$\+system on~$\X'\times_\kk\X''$.
 One has a natural isomorphism of $(\Gamma'\times\Gamma'')$\+systems
$$
 (\fP'\bt_\kk\fP'')\ot_{\X'\times_\kk\X''}(\boM'\bt_\kk\boM'')
 \simeq(\fP'\ot_{\X'}\boM')\bt_\kk(\fP''\ot_{\X''}\boM'').
$$
 In order to deduce the desired isomorphism of quasi-coherent torsion
sheaves, it remains to recall the definition of the functor
$\ot_\X\:\X\pro\times\X\tors\rarrow\X\tors$ in
Section~\ref{pro-in-torsion-action-subsecn} and
use Lemma~\ref{torsion-and-Gamma-systems-external-product}(b).
\end{proof}

 Let $\X'$ and $\X''$ be ind-schemes of ind-finite type over~$\kk$,
and let $\rM'{}^\bu$ and $\rM''{}^\bu$ be complexes of quasi-coherent
torsion sheaves on $\X'$ and~$\X''$.
 Then it follows from Lemma~\ref{torsion-pro-action-external-product}
that, for any complexes flat pro-quasi-coherent pro-sheaves $\fP'{}^\bu$
and $\fP''{}^\bu$ on $\X'$ and $\X''$, the natural isomorphism
$$
 \Phi_{\rM'{}^\bu\subbt_\kk\rM''{}^\bu}(\fP'{}^\bu\bt_\kk\fP''{}^\bu)
 \simeq
 \Phi_{\rM'{}^\bu}(\fP'{}^\bu)\bt_\kk\Phi_{\rM''{}^\bu}(\fP''{}^\bu)
$$
holds in the category of complexes of quasi-coherent torsion sheaves
on $\X'\times_\kk\X''$.

\begin{cor} \label{covariant-duality-external-product-cor}
 Let\/ $\X'$ and\/ $\X''$ be ind-semi-separated ind-schemes of
ind-finite type over\/~$\kk$.
 Let $\rD'{}^\bu$ and $\rD''{}^\bu$ be dualizing complexes on\/ $\X'$
and\/ $\X''$, respectively, and let $\rE^\bu$ be the related dualizing
complex on\/ $\X'\times_\kk\X''$, as in
Lemma~\ref{external-product-dualizing}.
 Then the triangulated equivalences\/ $\sD(\X'\flat)\simeq
\sD^\co(\X''\tors)$, \ $\sD(\X''\flat)\simeq\sD(\X''\tors)$, and\/
$\sD((\X'\times_\kk\X'')\flat)\simeq\sD((\X'\times_\kk\X'')\tors)$
from Theorem~\ref{ind-Noetherian-triangulated-equivalence-thm},
induced by the dualizing complexes $\rD'{}^\bu$, $\rD''{}^\bu$,
and $\rE^\bu$, form a commutative square diagram with the external
tensor product functors\/~$\bt_\kk$
\,\eqref{derived-flat-external-product}
and~\eqref{coderived-torsion-external-product}
from Sections~\ref{external-of-pro-subsecn}\+-%
\ref{external-of-torsion-subsecn}.
\end{cor}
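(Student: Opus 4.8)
The plan is to reduce the commutativity of the square to two facts already assembled in the paragraphs preceding the statement: the complex-level identity following from Lemma~\ref{torsion-pro-action-external-product}, and the invariance of the functor $\Phi$ under isomorphisms of its first argument in the coderived category. First I would recall that, by the construction in Theorem~\ref{ind-Noetherian-triangulated-equivalence-thm}, the equivalence $\sD(\X\flat)\rarrow\sD^\co(\X\tors)$ attached to a dualizing complex $\rD^\bu$ is precisely the functor $\fF^\bu\longmapsto\rD^\bu\ot_\X\fF^\bu=\Phi_{\rD^\bu}(\fF^\bu)$. Hence, for complexes of flat pro-quasi-coherent pro-sheaves $\fF'{}^\bu$ on $\X'$ and $\fF''{}^\bu$ on $\X''$, the two composites around the square are $\Phi_{\rE^\bu}(\fF'{}^\bu\bt_\kk\fF''{}^\bu)$ (right, then down) and $\Phi_{\rD'{}^\bu}(\fF'{}^\bu)\bt_\kk\Phi_{\rD''{}^\bu}(\fF''{}^\bu)$ (down, then right), both viewed as objects of $\sD^\co((\X'\times_\kk\X'')\tors)$. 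The statement thus amounts to a natural isomorphism between these two expressions, compatible with the triangulated structure.

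Next I would invoke the isomorphism of complexes displayed just after Lemma~\ref{torsion-pro-action-external-product}, namely
$$
 \Phi_{\rD'{}^\bu\subbt_\kk\rD''{}^\bu}(\fF'{}^\bu\bt_\kk\fF''{}^\bu)
 \simeq\Phi_{\rD'{}^\bu}(\fF'{}^\bu)\bt_\kk\Phi_{\rD''{}^\bu}(\fF''{}^\bu),
$$
which already matches the ``down, then right'' composite. It then remains only to replace $\rD'{}^\bu\bt_\kk\rD''{}^\bu$ by $\rE^\bu$ inside $\Phi$. For this I would use that, by the very definition of the related dualizing complex in Lemma~\ref{external-product-dualizing}, the complexes $\rD'{}^\bu\bt_\kk\rD''{}^\bu$ and $\rE^\bu$ are connected by a morphism of complexes with coacyclic cone, so that $\rE^\bu\simeq\rD'{}^\bu\bt_\kk\rD''{}^\bu$ in $\sD^\co((\X'\times_\kk\X'')\tors)$. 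By the functoriality of $\Phi$ in its first argument established at the beginning of Section~\ref{covariant-external-commute-subsecn} --- any isomorphism in the coderived category induces an isomorphism of the triangulated functors $\sD(\X\flat)\rarrow\sD^\co(\X\tors)$ --- this produces a canonical isomorphism of triangulated functors $\Phi_{\rE^\bu}\simeq\Phi_{\rD'{}^\bu\subbt_\kk\rD''{}^\bu}$. Composing the two isomorphisms yields the desired one and identifies the two composites.

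The only genuine verification, and the point I expect to require the most care, is that the resulting isomorphism is natural in $\fF'{}^\bu$ and $\fF''{}^\bu$ and commutes with shifts and cones, so that the square commutes as a diagram of triangulated functors rather than merely objectwise. This is where I would spend the effort: each intermediate isomorphism must be checked to be triangulated and natural. The complex-level identity from Lemma~\ref{torsion-pro-action-external-product} is functorial and preserves the coproduct totalizations used to build $\bt_\kk$ and $\ot_\X$ on complexes, hence descends through the homotopy categories to the (co)derived categories; the isomorphism $\Phi_{\rE^\bu}\simeq\Phi_{\rD'{}^\bu\subbt_\kk\rD''{}^\bu}$ is an isomorphism of triangulated functors by its construction. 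No analytic input beyond the exactness lemmas ensuring that $\bt_\kk$ descends to the (co)derived categories (Lemmas~\ref{flat-pro-complexes-external-tensor-exactness} and~\ref{torsion-complexes-external-tensor-coacyclic}) enters, so once these compatibilities are recorded the corollary follows formally.
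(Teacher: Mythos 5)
Your proof is correct and follows essentially the same route as the paper, whose own proof is simply ``follows immediately from the preceding discussion'': namely, identifying the equivalences with the functors $\Phi_{\rD'{}^\bu}$, $\Phi_{\rD''{}^\bu}$, $\Phi_{\rE^\bu}$, applying the complex-level isomorphism derived from Lemma~\ref{torsion-pro-action-external-product}, and then replacing $\rD'{}^\bu\bt_\kk\rD''{}^\bu$ by $\rE^\bu$ via the isomorphism of triangulated functors $\Phi_{\rE^\bu}\simeq\Phi_{\rD'{}^\bu\subbt_\kk\rD''{}^\bu}$ coming from the coderived-category isomorphism in Lemma~\ref{external-product-dualizing}. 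Your closing remarks on naturality and compatibility with the triangulated structure only make explicit what the paper leaves implicit.
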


\begin{proof}
 Follows immediately from the preceding discussion.
\end{proof}

\subsection{The cotensor product as the $!$-tensor product}
 The definition of a reasonable closed immersion of ind-schemes,
which is used in the second assertion of the following lemma, was given
in Section~\ref{external-of-torsion-subsecn}.

\begin{lem} \label{ind-scheme-shriek-star-tensor}
 Let $i\:\Z\rarrow\X$ be a closed immersion of reasonable ind-schemes.
 Let\/ $\fF$ be a pro-quasi-coherent pro-sheaf on\/ $\X$ and
$\rM$ be a quasi-coherent torsion sheaf on\/~$\X$.
 Then there is a natural morphism of quasi-coherent torsion sheaves
on\/~$\Z$
$$
 i^*\fF\ot_\Z i^!\rM\lrarrow i^!(\fF\ot_\X\rM),
$$
which is an isomorphism whenever $i$~is a reasonable closed immersion
and\/ $\fF$ is a flat pro-quasi-coherent pro-sheaf on~$\X$.
\end{lem}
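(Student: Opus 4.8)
The plan is to reduce the statement to the level of $\Gamma$-systems and then to the affine case, exactly as in the proof of the underlying Lemma~\ref{shriek-star-tensor} for schemes. First I would construct the natural morphism in general. Fix a representation $\X=\ilim_{\gamma\in\Gamma}X_\gamma$ by closed immersions of reasonable closed subschemes, and put $Z_\gamma=\Z\times_\X X_\gamma$, so that $\Z=\ilim_{\gamma\in\Gamma}Z_\gamma$ with $i_\gamma\:Z_\gamma\rarrow X_\gamma$ the induced closed immersions. By Proposition~\ref{flat-torsion-tensor-prop}, for each reasonable closed subscheme $W\subset\Z$ lying over a reasonable closed subscheme $Y\subset\X$, one has $(i^!(\fF\ot_\X\rM))_{(W)}=k^!(\fF\ot_\X\rM)_{(Y)}=k^!(\fF^{(Y)}\ot_{\cO_Y}\rM_{(Y)})$, where $k\:W\rarrow Y$ is the closed immersion. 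On the other hand, $(i^*\fF\ot_\Z i^!\rM)_{(W)}=(i^*\fF)^{(W)}\ot_{\cO_W}(i^!\rM)_{(W)}=k^*\fF^{(Y)}\ot_{\cO_W}k^!\rM_{(Y)}$. Thus in each component the desired morphism is precisely the scheme-level natural morphism $k^*\fF^{(Y)}\ot_{\cO_W}k^!\rM_{(Y)}\rarrow k^!(\fF^{(Y)}\ot_{\cO_Y}\rM_{(Y)})$ supplied by Lemma~\ref{shriek-star-tensor}. I would verify that these component morphisms are compatible with the structure morphisms of quasi-coherent torsion sheaves (the $i^!$-transition isomorphisms of a torsion sheaf), so that they assemble into a morphism of quasi-coherent torsion sheaves on $\Z$; this compatibility is a routine diagram chase using naturality of the morphism in Lemma~\ref{shriek-star-tensor} and the definitions in Section~\ref{pro-in-torsion-action-subsecn}.

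Next I would prove the isomorphism claim under the two hypotheses that $i$ is a reasonable closed immersion and $\fF$ is flat. Here the point is that a morphism of quasi-coherent torsion sheaves on $\Z$ is an isomorphism if and only if it is an isomorphism in each component $(-)_{(W)}$ over every reasonable closed subscheme $W\subset\Z$ (this follows from Lemma~\ref{torsion-monomorphisms-characterized}, applied to the kernel and cokernel). So it suffices to check that each component morphism $k^*\fF^{(Y)}\ot_{\cO_W}k^!\rM_{(Y)}\rarrow k^!(\fF^{(Y)}\ot_{\cO_Y}\rM_{(Y)})$ is an isomorphism. Now the reasonableness of the closed immersion $i\:\Z\rarrow\X$ is exactly what guarantees that $W=\Z\times_\X Y$ is a \emph{reasonable} closed subscheme of $Y$, which is one of the two hypotheses needed to invoke the isomorphism assertion of Lemma~\ref{shriek-star-tensor}. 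The flatness of $\fF$ as a pro-quasi-coherent pro-sheaf means precisely that $\fF^{(Y)}$ is a flat quasi-coherent sheaf on $Y$ for every closed subscheme $Y$, supplying the second hypothesis. With both hypotheses in place, Lemma~\ref{shriek-star-tensor} gives the componentwise isomorphism, and the global statement follows.

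The main obstacle I anticipate is the bookkeeping in the first paragraph: carefully matching the indexing of reasonable closed subschemes $W\subset\Z$ against $Y\subset\X$ and confirming that the component morphisms respect the torsion-sheaf gluing data. One subtlety is that a given $W\subset\Z$ may factor through $\Z\rarrow\X$ via several different $Y\subset\X$, and one must check that the morphism produced via Lemma~\ref{shriek-star-tensor} is independent of this choice, or equivalently that it is compatible with the transition morphisms as $Y$ varies. This is where naturality of Lemma~\ref{shriek-star-tensor} and the isomorphism $\fF^{(W)}\simeq i_{WY}^*\fF^{(Y)}$ (part of the definition of a pro-quasi-coherent pro-sheaf) are used together with the transition isomorphisms $\rM_{(W)}\simeq k^!\rM_{(Y)}$ of the torsion sheaf. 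Once this is organized, everything else reduces cleanly to the already-established affine computation inside Lemma~\ref{shriek-star-tensor}, so no genuinely new calculation is required.
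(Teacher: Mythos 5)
Your second paragraph (the isomorphism assertion) is essentially the paper's own argument and is sound, but your first paragraph --- the construction of the natural morphism for an \emph{arbitrary} pro-quasi-coherent pro-sheaf $\fF$ --- has a genuine gap. You invoke Proposition~\ref{flat-torsion-tensor-prop} to write $(\fF\ot_\X\rM)_{(Y)}=\fF^{(Y)}\ot_{\cO_Y}\rM_{(Y)}$ and $(i^*\fF\ot_\Z i^!\rM)_{(W)}=k^*\fF^{(Y)}\ot_{\cO_W}k^!\rM_{(Y)}$, but that proposition requires $\fF$ to be \emph{flat}. For a general $\fF$, the tensor product is defined as $\fF\ot_\X\rM=(\fF\ot_\X\rM|_\Gamma)^+$ (Section~\ref{pro-in-torsion-action-subsecn}), and its component over a reasonable closed subscheme $Y\subset\X$ is the direct limit $\varinjlim\nolimits_{\gamma\in\Gamma:\,Y\subset X_\gamma}i_{Y,\gamma}^!\bigl(\fF^{(X_\gamma)}\ot_{\cO_{X_\gamma}}\rM_{(X_\gamma)}\bigr)$ of Section~\ref{Gamma-systems-subsecn}, not $\fF^{(Y)}\ot_{\cO_Y}\rM_{(Y)}$; the same problem affects the left-hand side on~$\Z$. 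So the componentwise maps you propose to glue are maps between the wrong objects, and the assembly is not a routine diagram chase --- it does not even start. This matters because the first assertion of the lemma is precisely about non-flat $\fF$ (flatness and reasonableness enter only in the isomorphism claim), and it is the non-flat case that gets used elsewhere, e.g.\ for module structures over pro-quasi-coherent algebras.

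The paper constructs the morphism by adjunction, with no flatness assumption: given any morphism $\rL\rarrow i^*\fF\ot_\Z i^!\rM$ in $\Z\tors$, apply $i_*$ and use the projection formula $i_*(i^*\fF\ot_\Z i^!\rM)\simeq\fF\ot_\X i_*i^!\rM$ of Lemma~\ref{torsion-pro-projection-formula-second}, compose with the map $\fF\ot_\X i_*i^!\rM\rarrow\fF\ot_\X\rM$ induced by the counit $i_*i^!\rM\rarrow\rM$, and then pass back through the $(i_*,i^!)$ adjunction to obtain $\rL\rarrow i^!(\fF\ot_\X\rM)$; taking $\rL=i^*\fF\ot_\Z i^!\rM$ and the identity map gives the natural morphism. (This is also exactly how the general morphism is produced in the scheme-level Lemma~\ref{shriek-star-tensor} itself; the affine-local reduction there serves only the isomorphism assertion, contrary to your opening claim.) Once the morphism is obtained this way, your second paragraph goes through in essentially the paper's form: under the hypotheses that $i$ is reasonable and $\fF$ is flat, one applies Proposition~\ref{flat-torsion-tensor-prop} twice --- once on $\Z$ for $k\:Z\rarrow\Z$ and once on $\X$ for the composite closed immersion $ik$, whose image is reasonable in $\X$ --- to identify $k^!$ of both sides with $k^*i^*\fF\ot_{\cO_Z}k^!i^!\rM$, so that $k^!$ of the morphism is an isomorphism for every reasonable closed subscheme $Z\subset\Z$.
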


\begin{proof}
 This is a generalization of
Lemma~\ref{shriek-star-tensor}
and Proposition~\ref{flat-torsion-tensor-prop}.
 To construct the desired morphism, let $\rL$ be an arbitrary
quasi-coherent torsion sheaf on $\Z$, and let $\rL\rarrow
i^*\fF\ot_\Z i^!\rM$ be a morphism in $\Z\tors$.
 Applying the direct image functor~$i_*$, we produce a morphism
$i_*\rL\rarrow i_*(i^*\fF\ot_\Z i^!\rM)\simeq\fF\ot_\X i_*i^!\rM$
in $\X\tors$ (where the isomorphism holds by
Lemma~\ref{torsion-pro-projection-formula-second} below).
 Composing with the morphism $\fF\ot_\X i_*i^!\rM\rarrow\fF\ot_\X\rM$
induced by the adjunction morphism $i_*i^!\rM\rarrow\rM$, we obtain
a morphism $i_*\rL\rarrow\fF\ot_X\rM$ in $\X\tors$, which corresponds
by adjunction to a morphism $\rL\rarrow i^!(\fF\ot_\X\rM)$ in $\Z\tors$.

 To prove isomorphism assertion, we let $Z\subset\Z$ be a reasonable
closed subscheme with the closed immersion morphism $k\:Z\rarrow\Z$,
and apply the functor~$k^!$ to the morphism in question.
 Notice that $ik(Z)$ is a reasonable closed subscheme in~$\X$.
 By Proposition~\ref{flat-torsion-tensor-prop} applied to
the closed subscheme $Z\subset\Z$, we have
$$
 k^!(i^*\fF\ot_\Z i^!\rM)\simeq k^*i^*\fF\ot_{\cO_Z}k^!i^!\rM.
$$
 Using the same proposition applied to the closed subscheme $ik(Z)
\subset\X$, we compute
$$
 k^!i^!(\fF\ot_\X\rM)\simeq(ik)^!(\fF\ot_\X\rM)\simeq
 (ik)^*\fF\ot_{\cO_Z}(ik)^!\rM\simeq
 k^*i^*\fF\ot_{\cO_Z}k^!i^!\rM,
$$
so the functor~$k^!$ transforms the morphism in question into
an isomorphism.
 As this holds for every reasonable closed subscheme $Z\subset\Z$,
the assertion follows.
\end{proof}

 Let $f\:\Y\rarrow\X$ be a morphism of ind-schemes.
 Then the inverse image functor $f^*\:\X\flat\rarrow\Y\flat$ is
exact (see Section~\ref{flat-pro-sheaves-subsecn}), so it induces
a triangulated functor between the derived categories
$$
 f^*\:\sD(\X\flat)\lrarrow\sD(\Y\flat).
$$

\begin{prop} \label{covariant-duality-shriek-star-prop}
 Let\/ $\X$ be an ind-semi-separated ind-Noetherian scheme and
$i\:\Z\rarrow\X$ be a closed immersion of ind-schemes.
 Let $\rD^\bu$ be a dualizing complex on\/~$\X$; then $i^!\rD^\bu$
is a dualizing complex on\/~$\Z$ (cf.\
Example~\ref{ind-closed-immersion}(2)).
 Then the triangulated equivalences\/ $\sD(\X\flat)\simeq
\sD^\co(\X\tors)$ and\/ $\sD(\Z\flat)\simeq\sD^\co(\Z\tors)$
from Theorem~\ref{ind-Noetherian-triangulated-equivalence-thm},
induced by the dualizing complexes $\rD^\bu$ and\/ $i^!\rD^\bu$,
transform the inverse image functor
$$
 i^*\:\sD(\X\flat)\lrarrow\sD(\Z\flat)
$$
into the right derived functor
$$
 \boR i^!\:\sD^\co(\X\tors)\lrarrow\sD^\co(\Z\tors)
$$
from Section~\ref{derived-supports-subsecn}.
\end{prop}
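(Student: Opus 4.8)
The plan is to check that the square of functors from Theorem~\ref{ind-Noetherian-triangulated-equivalence-thm} commutes by evaluating the two composites on a complex of flat pro-quasi-coherent pro-sheaves. Since the horizontal functors of Theorem~\ref{ind-Noetherian-triangulated-equivalence-thm} are equivalences, it suffices to produce a natural isomorphism
$$
 \boR i^!(\rD^\bu\ot_\X\fF^\bu)\,\simeq\,(i^!\rD^\bu)\ot_\Z i^*\fF^\bu
$$
in $\sD^\co(\Z\tors)$, functorial in a complex $\fF^\bu\in\sC(\X\flat)$ and compatible with the triangulated structure. Here $i^*\fF^\bu$ lies in $\sC(\Z\flat)$ because the inverse image carries $\X\flat$ into $\Z\flat$, and the right-hand side is by definition the value of the bottom functor $(i^!\rD^\bu)\ot_\Z{-}$ on $i^*\fF^\bu$, with $i^!\rD^\bu$ a dualizing complex on $\Z$ by Example~\ref{ind-closed-immersion}(2).

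First I would recall, from the proof of Theorem~\ref{ind-Noetherian-triangulated-equivalence-thm}, that $\rD^\bu\ot_\X\fF^\bu$ is a complex of \emph{injective} quasi-coherent torsion sheaves on $\X$. Since $\X$ is ind-Noetherian, the closed immersion $i$ is reasonable (each fibered product $\Z\times_\X X_\gamma$ is a closed, hence reasonable, subscheme of the Noetherian scheme $X_\gamma$), and the functor $i^!\:\X\tors\rarrow\Z\tors$ is right adjoint to the exact functor $i_*$, so it carries injectives to injectives. Consequently the right derived functor $\boR i^!$ of Section~\ref{derived-supports-subsecn} is computed on $\rD^\bu\ot_\X\fF^\bu$ by the underived, termwise functor $i^!$, and the outcome $i^!(\rD^\bu\ot_\X\fF^\bu)$ already lies in $\sK(\Z\tors_\inj)$.

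Next I would invoke Lemma~\ref{ind-scheme-shriek-star-tensor}. For each pair of indices the flatness of $\fF^q$ and the reasonableness of $i$ give natural isomorphisms $i^!(\rD^p\ot_\X\fF^q)\simeq i^*\fF^q\ot_\Z i^!\rD^p$ of quasi-coherent torsion sheaves on $\Z$. Because $i$ is a reasonable closed immersion of ind-Noetherian ind-schemes, the functor $i^!$ preserves coproducts (Section~\ref{derived-supports-subsecn}); hence $i^!$ commutes with the coproduct totalization used to form $\rD^\bu\ot_\X\fF^\bu$, and the termwise isomorphisms assemble into an isomorphism of complexes
$$
 i^!(\rD^\bu\ot_\X\fF^\bu)\,\simeq\,(i^!\rD^\bu)\ot_\Z i^*\fF^\bu .
$$
The right-hand side is precisely the image of $i^*\fF^\bu$ under $(i^!\rD^\bu)\ot_\Z{-}$, so this identifies the top-then-right composite of the square with the left-then-bottom composite, as required.

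The remaining points are routine: naturality in $\fF^\bu$ follows from the naturality of the morphism in Lemma~\ref{ind-scheme-shriek-star-tensor} and of the totalization, while compatibility with shifts and cones is automatic, every functor involved being additive and triangulated. The one step meriting care is the compatibility of the termwise isomorphisms with the totalization — that $i^!$ genuinely commutes with the diagonal coproducts and respects the differential signs — and this is exactly what reduces, via the preservation of coproducts by $i^!$ together with Lemma~\ref{ind-scheme-shriek-star-tensor}, to a matter of bookkeeping rather than a substantive obstacle.
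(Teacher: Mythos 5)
Your proof is correct and follows essentially the same route as the paper's: the paper's (very terse) proof consists precisely of citing Lemma~\ref{ind-scheme-shriek-star-tensor} together with the fact that $\rD^\bu\ot_\X\fF^\bu$ is a complex of injective quasi-coherent torsion sheaves, which are exactly your two main ingredients. The additional details you supply (reasonableness of~$i$, preservation of injectives and coproducts by~$i^!$, compatibility with the coproduct totalization) are the correct and implicit bookkeeping behind the paper's one-sentence argument.
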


\begin{proof}
 Follows from Lemma~\ref{ind-scheme-shriek-star-tensor} together
with the fact that $\rD^\bu\ot_\X\fF^\bu$ is a complex of injective
quasi-coherent torsion sheaves on $\X$ for every complex
$\fF^\bu\in\sC(\X\flat)$ (which was explained in the proof
of Theorem~\ref{ind-Noetherian-triangulated-equivalence-thm}).
\end{proof}

 The following theorem is the main result of
Section~\ref{ind-finite-type-secn}.

\begin{thm} \label{cotensor-shriek-tensor-thm}
 Let\/ $\X$ be an ind-separated ind-scheme of ind-finite type
over\/~$\kk$, and let $\rD^\bu$ be a rigid dualizing complex on\/~$\X$
(in the sense of Section~\ref{rigid-subsecn}).
 Let\/ $\Delta\:\X\rarrow\X\times_\kk\X$ be the diagonal morphism.
 Then for any two complexes of quasi-coherent torsion sheaves $\rM^\bu$
and $\rN^\bu$ on\/ $\X$ there is a natural isomorphism
\begin{equation} \label{cotensor-shriek-tensor-iso-eqn}
 \rM^\bu\oc_{\rD^\bu}\rN^\bu\,\simeq\,
 \boR\Delta^!(\rM^\bu\bt_\kk\rN^\bu)
\end{equation}
in the coderived category\/ $\sD^\co(\X\tors)$.
\end{thm}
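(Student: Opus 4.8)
The plan is to unwind the construction of the cotensor product from Section~\ref{cotensor-product-construction-subsecn} and then feed it through the three structural compatibilities established earlier: Lemma~\ref{pro-tensor-as-restricted-external}, Proposition~\ref{covariant-duality-shriek-star-prop}, and Corollary~\ref{covariant-duality-external-product-cor}. Recall that $\oc_{\rD^\bu}$ is defined by transporting the tensor product $\ot^\X$ on $\sD(\X\flat)$ along the covariant duality equivalence $\rD^\bu\ot_\X{-}\:\sD(\X\flat)\rarrow\sD^\co(\X\tors)$ of Theorem~\ref{ind-Noetherian-triangulated-equivalence-thm}, whose quasi-inverse is $\fHom_{\X\qc}(\rD^\bu,{-})$. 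So I would first choose complexes $\rK^\bu$, $\rJ^\bu\in\sK(\X\tors_\inj)$ together with morphisms $\rM^\bu\rarrow\rK^\bu$ and $\rN^\bu\rarrow\rJ^\bu$ having coacyclic cones, set $\fF^\bu=\fHom_{\X\qc}(\rD^\bu,\rK^\bu)$ and $\fG^\bu=\fHom_{\X\qc}(\rD^\bu,\rJ^\bu)$ in $\sC(\X\flat)$, and record that $\rM^\bu\oc_{\rD^\bu}\rN^\bu\simeq\rD^\bu\ot_\X(\fF^\bu\ot^\X\fG^\bu)$ while $\rD^\bu\ot_\X\fF^\bu\simeq\rM^\bu$ and $\rD^\bu\ot_\X\fG^\bu\simeq\rN^\bu$ in $\sD^\co(\X\tors)$.

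The next step is to express the internal tensor product of flat pro-sheaves as the inverse image of their external tensor product along the diagonal. By Lemma~\ref{pro-tensor-as-restricted-external} one has $\fF^p\ot^\X\fG^q\simeq\Delta^*(\fF^p\bt_\kk\fG^q)$ for all $p$, $q\in\boZ$. Since the internal product, the external product, and the functor $\Delta^*\:(\X\times_\kk\X)\flat\rarrow\X\flat$ are all formed by totalizing the respective bicomplexes with coproducts along the diagonals, and $\Delta^*$ is exact and preserves coproducts (Sections~\ref{flat-pro-sheaves-subsecn} and~\ref{colimits-of-pro-sheaves-subsecn}), these termwise isomorphisms assemble into an isomorphism $\fF^\bu\ot^\X\fG^\bu\simeq\Delta^*(\fF^\bu\bt_\kk\fG^\bu)$ of complexes in $\X\flat$, hence in $\sD(\X\flat)$. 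In particular $\rM^\bu\oc_{\rD^\bu}\rN^\bu\simeq\rD^\bu\ot_\X\Delta^*(\fF^\bu\bt_\kk\fG^\bu)$.

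The heart of the argument is the third step, where rigidity enters. By Lemma~\ref{external-product-dualizing} the complex $\rD^\bu\bt_\kk\rD^\bu$ underlies a dualizing complex $\rE^\bu$ on $\X\times_\kk\X$, and the rigidifying isomorphism identifies $\rD^\bu\simeq\boR\Delta^!(\rD^\bu\bt_\kk\rD^\bu)\simeq\boR\Delta^!\rE^\bu$ in $\sD^\co(\X\tors)$; that is, $\rD^\bu$ \emph{is} the dualizing complex $\Delta^!\rE^\bu$ attached to $\rE^\bu$ by restriction along the closed immersion $\Delta$. Consequently the covariant duality on $\X$ induced by $\rD^\bu$ agrees with the one induced by $\Delta^!\rE^\bu$ (an isomorphism of the first arguments inducing an isomorphism of the functors $\rD^\bu\ot_\X{-}$ and $\Delta^!\rE^\bu\ot_\X{-}$, as in Section~\ref{covariant-external-commute-subsecn}), so Proposition~\ref{covariant-duality-shriek-star-prop} applies to the pair $(\rE^\bu,\,\Delta^!\rE^\bu)$ and transforms $\Delta^*$ into $\boR\Delta^!$. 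Combining this with Corollary~\ref{covariant-duality-external-product-cor}, which says the covariant dualities intertwine $\bt_\kk$ with $\bt_\kk$, I obtain the chain
\begin{align*}
 \rM^\bu\oc_{\rD^\bu}\rN^\bu
 &\simeq \rD^\bu\ot_\X\Delta^*(\fF^\bu\bt_\kk\fG^\bu)
 \simeq \boR\Delta^!\bigl(\rE^\bu\ot_{\X\times_\kk\X}(\fF^\bu\bt_\kk\fG^\bu)\bigr) \\
 &\simeq \boR\Delta^!\bigl((\rD^\bu\ot_\X\fF^\bu)\bt_\kk(\rD^\bu\ot_\X\fG^\bu)\bigr)
 \simeq \boR\Delta^!(\rM^\bu\bt_\kk\rN^\bu)
\end{align*}
in $\sD^\co(\X\tors)$, which is the desired isomorphism~\eqref{cotensor-shriek-tensor-iso-eqn}.

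I expect the main obstacle to be not any single deep input but the verification that all the intermediate isomorphisms are \emph{natural} in $\rM^\bu$ and $\rN^\bu$ and independent of the auxiliary resolutions $\rK^\bu$, $\rJ^\bu$ (which are unique only up to homotopy by Proposition~\ref{coderived-and-homotopy-of-injectives}). In particular, one must check that the rigidifying identification $\rD^\bu\simeq\Delta^!\rE^\bu$ is compatible with the functorial isomorphisms supplied by Proposition~\ref{covariant-duality-shriek-star-prop} and Corollary~\ref{covariant-duality-external-product-cor}, and that the termwise isomorphisms of Lemma~\ref{pro-tensor-as-restricted-external} genuinely commute with the bicomplex differentials before totalization. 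These are bookkeeping points rather than conceptual ones, but they are where the care is needed.
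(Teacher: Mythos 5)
Your proof is correct and follows essentially the same route as the paper's own argument: both rest on Lemma~\ref{pro-tensor-as-restricted-external} to rewrite $\ot^\X$ as $\Delta^*({-}\bt_\kk{-})$, then on Corollary~\ref{covariant-duality-external-product-cor} and Proposition~\ref{covariant-duality-shriek-star-prop} to transport this through the covariant duality equivalences, with rigidity supplying the identification $\rD^\bu\simeq\boR\Delta^!\rE^\bu$ needed to apply the proposition with $\rD^\bu$ itself. Your version merely unwinds the paper's compressed statement into an explicit chain of isomorphisms, which is a faithful (and slightly more detailed) rendering of the same proof.
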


\begin{proof}
 Notice the natural isomorphism of complexes of (flat)
pro-quasi-coherent pro-sheaves $\fF^\bu\ot^\X\fG^\bu\simeq
\Delta^*(\fF^\bu\bt_\kk\fG^\bu)$ for all complexes of (flat)
pro-quasi-coherent pro-sheaves $\fF^\bu$ and $\fG^\bu$ on~$\X$
(see Lemma~\ref{pro-tensor-as-restricted-external}).
 By Corollary~\ref{covariant-duality-external-product-cor}
(applied to the ind-schemes $\X'=\X=\X''$) and
Proposition~\ref{covariant-duality-shriek-star-prop}
(applied to the closed immersion $\Delta\:\X\rarrow\X\times_\kk\X$),
it follows that the triangulated equivalence $\sD(\X\flat)\simeq
\sD^\co(\X\tors)$ induced by $\rD^\bu$ transforms the tensor product
functor~\eqref{flat-pro-sheaves-tensor-structure-on-derived}
from Section~\ref{cotensor-product-construction-subsecn} into
the $!$\+tensor product in the right-hand side
of~\eqref{cotensor-shriek-tensor-iso-eqn}.
 But this coincides with the definition of the left-hand side
of~\eqref{cotensor-shriek-tensor-iso-eqn} given in
Section~\ref{cotensor-product-construction-subsecn}
(see~\eqref{coderived-torsion-cotensor-product}).
\end{proof}

 As a byproduct of Theorem~\ref{cotensor-shriek-tensor-thm},
we see that a rigid dualizing complex $\rD^\bu$ on $\X$ is unique up
to a natural isomorphism in the assumptions of the theorem (because
it can be recovered as the unit object of the tensor structure
on $\sD^\co(\X\tors)$ given by the right-hand side
of~\eqref{cotensor-shriek-tensor-iso-eqn}).

\Section{$\X$-Flat Pro-Quasi-Coherent Pro-Sheaves on~$\bY$}
\label{X-flat-on-Y-secn}

 In this section we consider a flat affine morphism of ind-schemes
$\pi\:\bY\rarrow\X$.
 Eventually we will assume that $\X$ is ind-semi-separated,
ind-Noetherian, and endowed with a dualizing complex~$\rD^\bu$.

\subsection{Semiderived category of torsion sheaves}
\label{semiderived-subsecn}
 Let $f\:\Y\rarrow\X$ be a morphism of ind-schemes which is
``representable by schemes'' in the sense of
Section~\ref{morphisms-of-ind-schemes-subsecn}.
 Assume that the ind-scheme $\X$ is reasonable; following
the discussion in Section~\ref{reasonable-subsecn}, the ind-scheme $\Y$
is then reasonable as well.

 Moreover, let $\X=\ilim_{\gamma\in\Gamma}X_\gamma$ be a representation
of $\X$ by an inductive system of closed immersions of reasonable
closed subschemes.
 Put $Y_\gamma=X_\gamma\times_\X\Y$;
then $Y_\gamma$ are reasonable closed subschemes in $\Y$, and
$\Y=\varinjlim_{\gamma\in\Gamma}Y_\gamma$.

 By Theorem~\ref{torsion-sheaves-abelian}, we have Grothendieck
abelian categories of quasi-coherent torsion sheaves $\Y\tors$
and $\X\tors$.
 Furthermore, there is the direct image functor
$f_*\:\Y\tors\rarrow\X\tors$ constructed in
Section~\ref{torsion-direct-images-subsecn}.
 According to Lemma~\ref{torsion-direct-inverse-adjunction}(b),
the functor~$f_*$ has a left adjoint functor
$f^*\:\X\tors\rarrow\Y\tors$.

 Furthermore, following the discussion in
Section~\ref{torsion-inverse-images-subsecn}, there is also a pair
of adjoint functors of direct and inverse images of $\Gamma$\+systems
on $\X$ and $\Y$, with the inverse image functor
$f^*\:(\X,\Gamma)\syst\rarrow(\Y,\Gamma)\syst$ left adjoint to
the direct image functor $f_*\:(\Y,\Gamma)\syst\rarrow(\X,\Gamma)\syst$.

\begin{lem} \label{representable-by-schemes-direct-image}
 Let $f\:\Y\rarrow\X$ be a morphism of reasonable ind-schemes which
is ``representable by schemes''.
 Then \par
\textup{(a)} the functor $f_*\:\Y\tors\rarrow\X\tors$ preserves
direct limits (and in particular, coproducts); \par
\textup{(b)} the functors of direct image of\/ $\Gamma$\+systems
$f_*\:(\Y,\Gamma)\syst\rarrow(\X,\Gamma)\syst$ and of quasi-coherent
torsion sheaves $f_*\:\Y\tors\rarrow\X\tors$ form a commutative
square diagram with the functors\/ $\boN\longmapsto\boN^+\:
(\Y,\Gamma)\syst\rarrow\Y\tors$ and\/ $\boM\longmapsto\boM^+\:
(\X,\Gamma)\syst\rarrow\X\tors$.
\end{lem}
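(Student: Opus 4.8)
The plan is to establish part~(a) first and then derive part~(b) from it. The guiding observation is that the direct limits in $\Y\tors$ and $\X\tors$, the direct image functor~$f_*$, and the functors~$({-})^+$ are all built sectionwise out of the corresponding scheme-level operations, so everything reduces to facts about quasi-coherent sheaves on concentrated schemes. For part~(a), I would fix an inductive system $(\rN_\theta)_{\theta\in\Theta}$ in $\Y\tors$ over a directed poset with direct limit $\rN=\varinjlim_\theta\rN_\theta$, form the canonical comparison morphism $\varinjlim_\theta f_*\rN_\theta\rarrow f_*\rN$, and check that it is an isomorphism by evaluating on each reasonable closed subscheme $Z\subset\X$. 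Writing $W=Z\times_\X\Y$, which is a reasonable closed subscheme of $\Y$ by the discussion in Section~\ref{reasonable-subsecn} (as $f$ is representable by schemes), and $f_Z\colon W\rarrow Z$ for the induced morphism of concentrated schemes, the computation runs as
$$
 (f_*\rN)_{(Z)}=f_Z{}_*(\rN_{(W)})=f_Z{}_*\bigl(\varinjlim\nolimits_\theta(\rN_\theta)_{(W)}\bigr)
 \simeq\varinjlim\nolimits_\theta f_Z{}_*\bigl((\rN_\theta)_{(W)}\bigr)=\varinjlim\nolimits_\theta(f_*\rN_\theta)_{(Z)},
$$
where the first and last equalities use the definitions of~$f_*$ and of sectionwise direct limits (Sections~\ref{torsion-direct-images-subsecn} and~\ref{torsion-direct-limits-subsecn}), and the middle isomorphism is precisely the fact, recalled at the start of Section~\ref{torsion-direct-limits-subsecn}, that the direct image along a morphism of concentrated schemes preserves direct limits. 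The identification is compatible with the structure maps $i_{Z'Z''}{}_*$, so it assembles into an isomorphism of quasi-coherent torsion sheaves; the same argument covers coproducts.

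For part~(b), the content is a natural isomorphism $f_*(\boN^+)\simeq(f_*\boN)^+$ for a $\Gamma$-system $\boN$ on $\Y$. Starting from $\boN^+=\varinjlim_{\gamma\in\Gamma}j_\gamma{}_*\boN_{(\gamma)}$, with $j_\gamma\colon Y_\gamma\rarrow\Y$ the closed immersion, I would apply part~(a) to move $f_*$ inside the filtered direct limit over~$\Gamma$, and then use the identity $f_*\,j_\gamma{}_*\simeq i_\gamma{}_*\,f_\gamma{}_*$, where $f_\gamma\colon Y_\gamma\rarrow X_\gamma$ and $i_\gamma\colon X_\gamma\rarrow\X$. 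This identity holds because $Y_\gamma=X_\gamma\times_\X\Y$ makes the square commute, so both composites are the direct image along $Y_\gamma\rarrow\X$; it can be verified sectionwise from the construction of~$f_*$ if one does not wish to invoke functoriality of the torsion direct image directly. Chaining the isomorphisms gives
$$
 f_*(\boN^+)\simeq\varinjlim\nolimits_\gamma f_*\,j_\gamma{}_*\boN_{(\gamma)}
 \simeq\varinjlim\nolimits_\gamma i_\gamma{}_*\,f_\gamma{}_*\boN_{(\gamma)}
 =\varinjlim\nolimits_\gamma i_\gamma{}_*(f_*\boN)_{(\gamma)}=(f_*\boN)^+,
$$
using $(f_*\boN)_{(\gamma)}=f_\gamma{}_*\boN_{(\gamma)}$ from the definition of the direct image of $\Gamma$-systems.

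I do not anticipate a genuine difficulty: the entire argument is bookkeeping with sectionwise definitions, and naturality is automatic since every isomorphism used is natural. The step requiring the most care is the reduction in part~(a), where one must confirm that $W=Z\times_\X\Y$ is indeed a reasonable closed subscheme of $\Y$ (so that $\rN_{(W)}$ is defined and the colimit formula of Section~\ref{torsion-direct-limits-subsecn} applies) and that $f_Z\colon W\rarrow Z$ is a morphism of concentrated schemes, which is what makes the recalled preservation-of-direct-limits fact applicable. Once part~(a) is in place, part~(b) is essentially formal.
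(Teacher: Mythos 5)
Your proof is correct and follows essentially the same route as the paper's: part~(a) by combining the sectionwise descriptions of direct limits and direct images of torsion sheaves with the fact that direct images along morphisms of concentrated schemes preserve direct limits, and part~(b) by unwinding the colimit construction of $({-})^+$, commuting $f_*$ past the direct limit via part~(a), and using $f_*\,j_\gamma{}_*\simeq i_\gamma{}_*\,f_\gamma{}_*$. Your write-up merely spells out in formulas what the paper states in prose.
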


\begin{proof}
 Part~(a) follows from the description of direct limits of
quasi-coherent torsion sheaves in
Section~\ref{torsion-direct-limits-subsecn} and the description
of direct images in
Section~\ref{torsion-direct-images-subsecn}, together with the fact
that the direct image functors $f_\gamma{}_*\:Y_\gamma\qcoh\rarrow
X_\gamma\qcoh$ for the morphisms of (concentrated) schemes
$f_\gamma\:Y_\gamma\rarrow X_\gamma$ preserve direct limits.
 To prove part~(b), one observes that the functors~$({-})^+$ are
constructed in terms of the functors of direct image (with respect
to the closed immersions $Y_\gamma\rarrow\Y$ and $X_\gamma\rarrow\X$)
and direct limit in $\Y\tors$ and $\X\tors$ (see
Section~\ref{Gamma-systems-subsecn}).
 Direct images obviously commute with direct images, and they preserve
direct limits by part~(a).
\end{proof}

\begin{lem} \label{affine-torsion-direct-image}
 For any affine morphism of reasonable ind-schemes $f\:\Y\rarrow\X$,
the direct image functor $f_*\:\Y\tors\rarrow\X\tors$ is exact and
faithful.
\end{lem}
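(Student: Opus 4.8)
The plan is to split the statement into its two assertions and to push everything down to the level of $\Gamma$\+systems, where the direct image is computed componentwise and the relevant scheme-theoretic facts apply verbatim. First I would observe that $f_*\colon\Y\tors\rarrow\X\tors$ is a right adjoint, by Lemma~\ref{torsion-direct-inverse-adjunction}(b), hence automatically left exact; so for exactness the whole task reduces to showing that $f_*$ preserves epimorphisms. Choosing a representation $\X=\ilim_{\gamma\in\Gamma}X_\gamma$ by reasonable closed subschemes and setting $Y_\gamma=X_\gamma\times_\X\Y$, the induced morphisms of schemes $f_\gamma\colon Y_\gamma\rarrow X_\gamma$ are affine, so each $f_{\gamma*}\colon Y_\gamma\qcoh\rarrow X_\gamma\qcoh$ is exact and faithful. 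Since exactness and faithfulness of morphisms of $\Gamma$\+systems are tested componentwise (the forgetful functor to $\prod_\gamma X_\gamma\qcoh$ being exact and faithful, as in Proposition~\ref{Gamma-systems-abelian}), I would conclude that the direct image of $\Gamma$\+systems $f_*\colon(\Y,\Gamma)\syst\rarrow(\X,\Gamma)\syst$ is an exact, faithful functor.

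The key step is then to transport right exactness across the two Giraud/Serre quotient presentations $(-)^+$ furnished by Theorem~\ref{torsion-sheaves-abelian} and Proposition~\ref{Giraud-subcategory-abstract-prop}. I would use the commutative square of Lemma~\ref{representable-by-schemes-direct-image}(b), namely the natural isomorphism $(f_*\boM)^+\simeq f_*(\boM^+)$, together with exactness of the functors $(-)^+$. From this square one reads off that $f_*$ carries the Serre subcategory $\ker\bigl((-)^+\colon(\Y,\Gamma)\syst\rarrow\Y\tors\bigr)$ into the analogous subcategory over~$\X$: if $\boL^+=0$ then $(f_*\boL)^+\simeq f_*(\boL^+)=0$. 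Now, given an epimorphism $\rN\rarrow\rN''$ in $\Y\tors$, I would apply $(-)|_\Gamma$ to obtain a morphism $\rN|_\Gamma\rarrow\rN''|_\Gamma$ of $\Gamma$\+systems (not necessarily an epimorphism) whose cokernel $\boL$ satisfies $\boL^+=\coker(\rN\rarrow\rN'')=0$, using exactness of $(-)^+$ and the isomorphism $(\rN|_\Gamma)^+\simeq\rN$ of Lemma~\ref{torsion-sheaves-as-Giraud-subcat}. Applying the exact functor $f_*$ on $\Gamma$\+systems and then the exact functor $(-)^+$ over~$\X$, and invoking $(f_*\boL)^+=0$, I would conclude that $f_*\rN\rarrow f_*\rN''$ is an epimorphism in $\X\tors$. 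This gives right exactness, hence exactness of $f_*$.

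For faithfulness, I would use that an exact functor between abelian categories is faithful as soon as it reflects zero objects. So suppose $f_*\rN=0$. By the construction of the direct image in Section~\ref{torsion-direct-images-subsecn} (base change along $X_\gamma\times_\X\Y=Y_\gamma$, cf.\ Lemma~\ref{reasonable-base-change}(a)), one has $i_\gamma^!(f_*\rN)\simeq f_{\gamma*}(\rN_{(Y_\gamma)})$ for every $\gamma\in\Gamma$. Hence $f_{\gamma*}(\rN_{(Y_\gamma)})=0$, and faithfulness of the affine direct images $f_{\gamma*}$ forces $\rN_{(Y_\gamma)}=0$ for all $\gamma$; since a quasi-coherent torsion sheaf with vanishing components on a representing system is zero (as in the proof of Lemma~\ref{torsion-monomorphisms-characterized}(a)), we obtain $\rN=0$.

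I expect the main obstacle to be precisely the right exactness of $f_*$. The tempting shortcut---testing epimorphisms via the functors $i_\gamma^!$ and reducing to $f_{\gamma*}k_\gamma^!$, where $k_\gamma\colon Y_\gamma\rarrow\Y$ is the closed immersion---breaks down because $k_\gamma^!=(-)_{(Y_\gamma)}$ is only left exact, so it need not carry epimorphisms of torsion sheaves on $\Y$ to epimorphisms on $Y_\gamma$; this is exactly the asymmetry recorded in Lemma~\ref{torsion-monomorphisms-characterized}, whose part~(b) is one-directional. Working at the level of $\Gamma$\+systems, where $f_*$ is genuinely exact, and then descending through the Serre quotient is what circumvents this difficulty.
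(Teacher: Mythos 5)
Your proof is correct and follows essentially the same route as the paper's: componentwise exactness and faithfulness of the affine scheme-level direct images~$f_\gamma{}_*$ yield exactness and faithfulness of $f_*$ on $\Gamma$\+systems, and exactness then descends to the torsion sheaf categories through the Serre quotient presentation given by the functors~$({-})^+$ together with the commutative square of Lemma~\ref{representable-by-schemes-direct-image}(b). The differences are purely organizational (you get left exactness from the adjunction of Lemma~\ref{torsion-direct-inverse-adjunction}(b) and route faithfulness through reflection of zero objects, while the paper deduces both assertions directly from the same scheme-level facts), and your closing remark correctly pinpoints why a naive componentwise test of epimorphisms via the left exact functors~$i_\gamma^!$ would not work.
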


\begin{proof}
 The faithfulness assertion follows immediately from the fact that
the direct image functors of quasi-coherent sheaves $f_\gamma{}_*\:
Y_\gamma\qcoh\rarrow X_\gamma\qcoh$ are faithful for affine morphisms
of schemes $f_\gamma\:Y_\gamma\rarrow X_\gamma$.
 To check exactness, notice that exactness of the functor of direct
image of $\Gamma$\+systems $f_*\:(\Y,\Gamma)\syst\rarrow
(\X,\Gamma)\syst$ follows immediately from exactness of
the functors~$f_\gamma{}_*$ for affine morphisms of schemes~$f_\gamma$.
 To deduce exactness of the functor $f_*\:\Y\tors\rarrow\X\tors$,
it remains to recall that the functors~$({-})^+$ represent the abelian
categories of quasi-coherent torsion sheaves as quotient categories of
the abelian categories of $\Gamma$\+systems by some Serre subcategories
(see the proof of Proposition~\ref{Giraud-subcategory-abstract-prop})
and use Lemma~\ref{representable-by-schemes-direct-image}(b).
\end{proof}

\begin{lem} \label{flat-torsion-sheaves-inverse-image}
 For any flat morphism of reasonable ind-schemes $f\:\Y\rarrow\X$,
the inverse image functor $f^*\:\X\tors\rarrow\Y\tors$ is exact.
\end{lem}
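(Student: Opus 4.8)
The plan is to reduce the statement to the corresponding exactness property of the inverse image functor for $\Gamma$-systems, and thence to the already-established fact that the inverse image functor between categories of quasi-coherent sheaves on schemes is exact for flat morphisms. The overall strategy mirrors the proof of Lemma~\ref{affine-torsion-direct-image}, using the representation of $\X\tors$ as a Giraud quotient of $(\X,\Gamma)\syst$ via the functor $({-})^+$.

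First I would fix a representation $\X=\ilim_{\gamma\in\Gamma}X_\gamma$ of $\X$ by an inductive system of closed immersions of reasonable closed subschemes, and put $Y_\gamma=\Y\times_\X X_\gamma$, so that $\Y=\ilim_{\gamma\in\Gamma}Y_\gamma$ and the morphisms $f_\gamma\:Y_\gamma\rarrow X_\gamma$ are flat (by the characterization of flat morphisms of ind-schemes in Section~\ref{morphisms-of-ind-schemes-subsecn}). The key observation is that the inverse image functor of $\Gamma$-systems $f^*\:(\X,\Gamma)\syst\rarrow(\Y,\Gamma)\syst$ is computed termwise by $(f^*\boM)_{(\gamma)}=f_\gamma^*\boM_{(\gamma)}$ (see Section~\ref{torsion-inverse-images-subsecn}), and hence is exact, because each $f_\gamma^*\:X_\gamma\qcoh\rarrow Y_\gamma\qcoh$ is exact for a flat morphism of schemes $f_\gamma$ (flatness being precisely the condition making the inverse image exact, as recalled at the start of Section~\ref{flat-pro-sheaves-subsecn}). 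Here I would use that the forgetful functor $(\X,\Gamma)\syst\rarrow\prod_\gamma X_\gamma\qcoh$ is exact and faithful, reflecting exactness.

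Next I would pass from $\Gamma$-systems to torsion sheaves. Recall from the proof of Proposition~\ref{Giraud-subcategory-abstract-prop} and the proof of Theorem~\ref{torsion-sheaves-abelian} that the functor $({-})^+\:(\X,\Gamma)\syst\rarrow\X\tors$ is exact and represents $\X\tors$ as the quotient of $(\X,\Gamma)\syst$ by a Serre subcategory, and similarly for $\Y$. The definition of the torsion inverse image is $f^*(\rM)=(f^*(\rM|_\Gamma))^+$. Since $({-})^+$ is exact, $({-})|_\Gamma$ is exact and fully faithful (Lemma~\ref{torsion-sheaves-as-Giraud-subcat}), and $f^*$ on $\Gamma$-systems is exact by the previous step, I would conclude exactness of $f^*\:\X\tors\rarrow\Y\tors$ by composing these exact functors. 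More precisely, given a short exact sequence in $\X\tors$, applying $({-})|_\Gamma$ yields a short exact sequence of $\Gamma$-systems, applying $f^*$ keeps it exact, and applying $({-})^+$ returns an exact sequence in $\Y\tors$; one then identifies the result with $f^*$ applied to the original sequence, using that $({-})^+$ composed with $({-})|_\Gamma$ is the identity up to natural isomorphism on torsion sheaves.

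I do not anticipate a serious obstacle here, since this is a routine transport of exactness across the Giraud quotient, entirely parallel to Lemma~\ref{affine-torsion-direct-image}. The one point requiring a little care is verifying that $f^*$ on $\Gamma$-systems genuinely commutes with $({-})^+$ in the sense needed, i.e.\ that the construction $f^*(\rM)=(f^*(\rM|_\Gamma))^+$ is compatible with taking kernels and cokernels; this follows because $f^*$ on $\Gamma$-systems preserves the Serre subcategory of $\Gamma$-systems $\boM$ with $\boM^+=0$ (compare Lemma~\ref{torsion-inverse-image-commutes-with-plus}, which records that $f^*$ commutes with $({-})^+$), so it descends to an exact functor on the quotient categories. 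I would therefore note that the descended functor coincides with $f^*\:\X\tors\rarrow\Y\tors$ and is exact, completing the argument.
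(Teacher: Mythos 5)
Your overall strategy is the paper's: prove exactness of~$f^*$ on $\Gamma$\+systems termwise from flatness of the morphisms of schemes~$f_\gamma$, and then transport exactness to the torsion sheaf categories through the Serre quotient functors~$({-})^+$. However, the middle of your argument contains a false step. You claim that $({-})|_\Gamma$ is exact, citing Lemma~\ref{torsion-sheaves-as-Giraud-subcat}, and then propose to conclude by composing the three ``exact'' functors $({-})|_\Gamma$, \ $f^*$, and~$({-})^+$. But that lemma only asserts that $({-})|_\Gamma$ is fully faithful and that the composite endofunctor $\boM\longmapsto\boM^+|_\Gamma$ is left exact; the inclusion of a Giraud subcategory is left exact but \emph{not} exact in general (this is stated explicitly at the end of the proof of Proposition~\ref{Giraud-subcategory-abstract-prop}), and the paper warns in Section~\ref{torsion-ind-affine-subsecn} that the functors $\rM\longmapsto\rM_{(Z)}$ are left but not necessarily right exact. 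So ``applying $({-})|_\Gamma$ yields a short exact sequence of $\Gamma$\+systems'' is simply wrong, and the three-functor composition argument does not go through.

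Fortunately, your final paragraph contains the correct argument, which is also the paper's: since $f^*$ on $\Gamma$\+systems is exact and takes the Serre subcategory of $\Gamma$\+systems annihilated by~$({-})^+$ into the corresponding Serre subcategory for~$\Y$ (by Lemma~\ref{torsion-inverse-image-commutes-with-plus}, as $\boM^+=0$ implies $(f^*\boM)^+\simeq f^*(\boM^+)=0$), it descends to an exact functor between the quotient categories; and the descended functor is identified with $f^*\:\X\tors\rarrow\Y\tors$ via $f^*(\rM)=(f^*(\rM|_\Gamma))^+$, since $\rM\simeq(\rM|_\Gamma)^+$. This descent argument, parallel to the proof of Lemma~\ref{affine-torsion-direct-image}, should \emph{replace} the flawed composition argument rather than serve as a supplementary remark; with that substitution your proof is complete and agrees with the paper's.
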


\begin{proof}
 The functor of inverse image of $\Gamma$\+systems
$f^*\:(\X,\Gamma)\syst\rarrow(\Y,\Gamma)\syst$ is exact,
because the functors of inverse image of quasi-coherent sheaves
$f_\gamma^*\:X_\gamma\qcoh\allowbreak\rarrow Y_\gamma\qcoh$
are exact for flat morphisms of schemes~$f_\gamma$.
 Exactness of the functor $f^*\:\X\tors\rarrow\Y\tors$ now follows
from the construction of this functor in
Section~\ref{torsion-inverse-images-subsecn}, similarly to
the proof of Lemma~\ref{affine-torsion-direct-image}.
\end{proof}

\begin{rem} \label{simplified-flat-torsion-inverse-image}
 The argument above is sufficient to prove
Lemma~\ref{flat-torsion-sheaves-inverse-image}, but in fact
one can say more.
 The functor $f^*\:\X\tors\rarrow\Y\tors$ was constructed in
a relatively complicated way, using $\Gamma$\+systems and
the functors~$({-})^+$, in Section~\ref{torsion-inverse-images-subsecn}
in order to include not necessarily flat morphisms~$f$.

 For a flat morphism of reasonable ind-schemes $f\:\Y\rarrow\X$ and
a quasi-coherent torsion sheaf $\rM$ on $\X$, one can simply put
$\rN_{(Y_\gamma)}=f_\gamma^*\rM_{(X_\gamma)}\in Y_\gamma\qcoh$ for all
$\gamma\in\Gamma$.
 Then it follows from
Lemma~\ref{reasonable-shriek-flat-star-commutation} that there are
natural isomorphisms of quasi-coherent sheaves
$k_{\gamma\delta}^!\rN_{(Y_\delta)}=k_{\gamma\delta}^!f_\delta^*
\rM_{(X_\delta)}\simeq f_\gamma^*i_{\gamma\delta}^!\rM_{(X_\delta)}
\simeq f_\gamma^*\rM_{(X_\gamma)}=\rN_{(Y_\gamma)}$ for
all $\gamma<\delta\in\Gamma$, where $i_{\gamma\delta}\:X_\gamma\rarrow
X_\delta$ and $k_{\gamma\delta}\:Y_\gamma\rarrow Y_\delta$ are
the closed immersions.
 So the collection of quasi-coherent sheaves
$(\rN_{(Y_\gamma)})_{\gamma\in\Gamma}$ defines a quasi-coherent
torsion sheaf $\rN$ on~$\Y$.
 It is easy to see that $\rN=f^*\rM$.
\end{rem}

\begin{lem} \label{torsion-pro-projection-formula-lemma}
 Let $f\:\Y\rarrow\X$ be an affine morphism of reasonable ind-schemes.
 Let $\rM$ be a quasi-coherent torsion sheaf on\/ $\X$ and\/ $\bfQ$
be a pro-quasi-coherent pro-sheaf on\/~$\Y$.
 Then there is a natural isomorphism 
$$
 \rM\ot_\X f_*\bfQ\simeq f_*(f^*\rM\ot_\Y\bfQ),
$$
of quasi-coherent torsion sheaves on\/~$\X$.
\end{lem}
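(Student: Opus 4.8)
The plan is to prove this projection formula by reducing it to the analogous projection formula already established for $\Gamma$-systems, then descending to quasi-coherent torsion sheaves via the functors $({-})^+$. First I would fix representations $\X=\ilim_{\gamma\in\Gamma}X_\gamma$ and $\Y=\ilim_{\gamma\in\Gamma}Y_\gamma$ with $Y_\gamma=X_\gamma\times_\X\Y$, so that the affine morphism $f$ is represented by affine morphisms of schemes $f_\gamma\:Y_\gamma\rarrow X_\gamma$. The natural morphism in question should be constructed at the level of $\Gamma$-systems, where for a $\Gamma$-system $\boM$ on $\X$ and a pro-quasi-coherent pro-sheaf $\bfQ$ on $\Y$ one expects a termwise isomorphism
\begin{equation*}
 \boM_{(\gamma)}\ot_{\cO_{X_\gamma}}f_\gamma{}_*\bfQ^{(Y_\gamma)}
 \simeq f_\gamma{}_*(f_\gamma^*\boM_{(\gamma)}\ot_{\cO_{Y_\gamma}}
 \bfQ^{(Y_\gamma)}).
\end{equation*}
This is exactly the ordinary quasi-coherent projection formula of Lemma~\ref{projection-formula}, valid because $f_\gamma$ is affine. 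So the key computation is already available scheme-by-scheme.

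The main structural step is to check that these termwise isomorphisms assemble into an isomorphism of $\Gamma$-systems $\boM\ot_\X f_*\bfQ\simeq f_*(f^*\boM\ot_\Y\bfQ)$ on $\X$, compatibly with the transition structure morphisms. Here I would invoke the description of the action functor $\ot_\X\:\X\pro\times(\X,\Gamma)\syst\rarrow(\X,\Gamma)\syst$ from Section~\ref{pro-in-torsion-action-subsecn}, the direct image of pro-sheaves $f_*$ from Section~\ref{pro-sheaves-inverse-direct-subsecn}, the direct image of $\Gamma$-systems from Section~\ref{torsion-inverse-images-subsecn}, and the inverse image $f^*$ of $\Gamma$-systems. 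The compatibility of the projection-formula isomorphism with the structure morphisms $i_{\gamma\delta}{}_*$ should follow from the naturality built into Lemma~\ref{projection-formula} together with the base-change isomorphisms (Lemmas~\ref{reasonable-base-change} and~\ref{affine-flat-base-change}) that already underlie the definitions of these functors; this bookkeeping is the part requiring care but no genuinely new idea.

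Finally I would pass from $\Gamma$-systems to quasi-coherent torsion sheaves by applying the functor $({-})^+$. The essential inputs are that $f_*$ commutes with $({-})^+$, which is Lemma~\ref{representable-by-schemes-direct-image}(b); that $f^*$ commutes with $({-})^+$, which is built into the definition $f^*(\rM)=(f^*(\rM|_\Gamma))^+$ and follows from Lemma~\ref{torsion-direct-inverse-adjunction}; and that the action $\ot_\X$ descends along $({-})^+$, as established in the discussion following Lemma~\ref{tensor-null-Gamma-systems}. Writing $\rM=(\rM|_\Gamma)^+$ and applying the $\Gamma$-system isomorphism to $\boM=\rM|_\Gamma$, one transports the isomorphism through $({-})^+$: the left side becomes $\rM\ot_\X f_*\bfQ$ and the right side becomes $f_*(f^*\rM\ot_\Y\bfQ)$, using that all three operations ($f_*$, $f^*$, and the pro-sheaf action) commute with $({-})^+$. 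The hard part will be verifying the transition-morphism compatibility cleanly, since it threads through the projection formula and several base-change lemmas simultaneously; once that naturality is secured, the descent along $({-})^+$ is formal.
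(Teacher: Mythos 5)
Your proposal is correct and follows essentially the same route as the paper's proof: reduce to the ordinary projection formula for the affine morphisms of schemes $f_\gamma$ (Lemma~\ref{projection-formula}) at the level of $\Gamma$-systems, then descend along the functors $({-})^+$ using Lemma~\ref{representable-by-schemes-direct-image}(b), Lemma~\ref{torsion-inverse-image-commutes-with-plus}, and the definitions of $\ot_\X$ and~$\ot_\Y$. The only difference is presentational: the paper constructs the $\Gamma$-system morphism $\boM\ot_\X f_*\bfQ\rarrow f_*(f^*\boM\ot_\Y\bfQ)$ by adjunction (via Lemma~\ref{inverse-images-preserve-torsion-pro-tensor} and the counit $f^*f_*\bfQ\rarrow\bfQ$), which makes the compatibility with the transition morphisms automatic, whereas you assemble it termwise and must verify that compatibility by hand.
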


\begin{proof}
 Similarly to Lemma~\ref{projection-formula}, the natural morphism
\begin{equation} \label{torsion-pro-projection-formula-morphism}
 \rM\ot_\X f_*\bfQ\lrarrow f_*(f^*\rM\ot_\Y\bfQ)
\end{equation}
is adjoint to the composition $f^*(\rM\ot_\X f_*\bfQ)\simeq
f^*\rM\ot_\Y f^*f_*\bfQ\rarrow f^*\rM\ot_\Y\bfQ$ of the isomorphism
$f^*(\rM\ot_\X f_*\bfQ)\simeq f^*\rM\ot_\Y f^*f_*\bfQ$
provided by Lemma~\ref{inverse-images-preserve-torsion-pro-tensor}
and the morphism $f^*\rM\ot_\Y f^*f_*\bfQ\rarrow f^*\rM\ot_\Y\bfQ$
induced by the adjunction morphism $f^*f_*\bfQ\rarrow\bfQ$.
 Similarly one constructs, for any $\Gamma$\+system $\boM$ on $\X$,
a natural morphism of $\Gamma$\+systems $\boM\ot_\X f_*\bfQ\rarrow
f_*(f^*\boM\ot_\Y\bfQ)$, which is an isomorphism essentially
by Lemma~\ref{projection-formula}.

 To show that~\eqref{torsion-pro-projection-formula-morphism} is
an isomorphism, one computes
\begin{multline*}
 \rM\ot_\X f_*\bfQ=(\rM|_\Gamma\ot_\X f^*\bfQ)^+\simeq
 (f_*(f^*(\rM|_\Gamma)\ot_\Y\bfQ))^+ \\ \simeq
 f_*((f^*(\rM|_\Gamma)\ot_\Y\bfQ)^+)\simeq
 f_*((f^*(\rM|_\Gamma))^+\ot_\Y\bfQ) \\ \simeq
 f_*(f^*((\rM|_\Gamma)^+)\ot_\Y\bfQ)\simeq
 f_*(f^*\rM\ot_\Y\bfQ)
\end{multline*}
using the definitions of the functors $\ot_\X\:\X\pro\times\X\tors
\rarrow\X\tors$ and $\ot_\Y\:\Y\pro\times\Y\tors\rarrow\Y\tors$,
and also Lemmas~\ref{torsion-inverse-image-commutes-with-plus}
and~\ref{representable-by-schemes-direct-image}(b).
 The point is that both the direct and inverse image functors,
as well as the tensor products in question, commute with
the functors~$({-})^+$.
\end{proof}

 Let $\pi\:\bY\rarrow\X$ be an affine morphism of reasonable
ind-schemes.
 The \emph{$\bY/\X$\+semiderived category} (or more precisely,
\emph{semicoderived category}) $\sD_\X^\si(\bY\tors)$ of quasi-coherent
torsion sheaves on $\bY$ is defined as the triangulated quotient
category of the homotopy category $\sK(\bY\tors)$ by the thick 
subcategory of all complexes of quasi-coherent torsion sheaves
$\bM^\bu$ on $\bY$ such that \emph{the complex of quasi-coherent
torsion sheaves\/ $\pi_*\bM^\bu$ on\/ $\X$ is coacyclic}
(in the sense of Section~\ref{coderived-subsecn}).

 Notice that the functor $\pi_*\:\bY\tors\rarrow\X\tors$ takes coacyclic
complexes to coacyclic complexes (by
Lemmas~\ref{representable-by-schemes-direct-image}(a)
and~\ref{affine-torsion-direct-image}), and a complex $\bM^\bu$
in $\bY\tors$ is acyclic if and only if the complex $\pi_*\bM^\bu$ is
acyclic in $\X\tors$ (also by Lemma~\ref{affine-torsion-direct-image}).
 Hence $\sD_\X^\si(\bY\tors)$ is an intermediate Verdier quotient
category between the derived category $\sD(\bY\tors)$ and
the coderived category $\sD^\co(\bY\tors)$, i.~e., there are natural
Verdier quotient functors
$$
 \sD^\co(\bY\tors)\,\twoheadrightarrow\,\sD_\X^\si(\bY\tors)
 \,\twoheadrightarrow\,\sD(\bY\tors).
$$

\begin{rem} \label{semiderived-compact-generators}
 A functor is called \emph{conservative} if it takes nonisomorphisms
to nonisomorphisms.
 A triangulated functor is conservative if and only if it takes nonzero
objects to nonzero objects.

 Let $\pi\:\bY\rarrow\X$ be an affine morphism of ind-schemes.
 Then it is clear from the definition of the semiderived category
that the functor $\pi_*\:\bY\tors\rarrow\X\tors$ induces
a well-defined, conservative triangulated functor
$\pi_*\:\sD_\X^\si(\bY\tors)\rarrow\sD^\co(\X\tors)$.
 Since the direct image functor~$\pi_*$ takes coproducts
in $\bY\tors$ to coproducts in $\X\tors$ by
Lemma~\ref{representable-by-schemes-direct-image}(a), one
can easily show using~\cite[Lemma~3.2.10]{Neem} that coproducts
exists in the semiderived category $\sD_\X^\si(\bY\tors)$
and the direct image functor takes coproducts in $\sD_\X^\si(\bY\tors)$
to coproducts in $\sD^\co(\X\tors)$.

 Let $\pi\:\bY\rarrow\X$ be a flat morphism of ind-schemes.
 Then the inverse image functor $\pi^*\:\X\tors\rarrow\bY\tors$ is exact
by Lemma~\ref{flat-torsion-sheaves-inverse-image}; being a left adjoint,
it also preserves coproducts.
 Hence the functor~$\pi^*$ takes coacyclic complexes to coacyclic
complexes.

 Now assume that the morphism $\pi\:\bY\rarrow\X$ is both flat
and affine.
 Then it follows that the functor~$\pi^*$ takes coacyclic complexes
to complexes whose direct image under~$\pi$ is coacyclic.
 Hence it induces a well-defined triangulated functor of inverse image
$\pi^*\:\sD^\co(\X\tors)\rarrow\sD_\X^\si(\bY\tors)$, which is left
adjoint to the triangulated functor of direct image $\pi_*\:
\sD_\X^\si(\bY\tors)\rarrow\sD^\co(\X\tors)$.

 Any triangulated functor left adjoint to a coproduct-preserving
triangulated functor takes compact objects to compact objects.
 Moreover, if $\sC$ and $\sD$ are triangulated categories with
coproducts, $G\:\sD\rarrow\sC$ is a conservative triangulated functor
preserving coproducts, $F\:\sC\rarrow\sD$ is a triangulated functor
left adjoint to $G$, and the triangulated category $\sC$ is compactly
generated with a set of compact generators $\sS\subset\sC$, then
the category $\sD$ is compactly generated, too, with the set of objects
$\{F(S)\in \sD\mid S\in\sS\}$ being a set of compact generators
for~$\sD$.

 Assume additionally that $\X$ is an ind-Noetherian ind-scheme.
 Then, by Remark~\ref{coderived-compact-generators}, the coderived
category $\sD^\co(\X\tors)$ is compactly generated, with all
the (representatives of isomorphism classes of) coherent torsion sheaves
$\rM$ on $\X$ forming a set of compact generators
for $\sD^\co(\X\tors)$.
 Therefore, the semiderived category $\sD_\X^\si(\bY\tors)$ is also
compactly generated.
 The inverse images $\pi^*(\rM)\in\bY\tors\subset\sD_\X^\si(\bY\tors)$
of coherent torsion sheaves on $\X$ form a set of compact generators
of $\sD_\X^\si(\bY\tors)$.
\end{rem}

 Let us say that a quasi-coherent torsion sheaf $\bK$ on $\bY$ is
\emph{$\X$\+injective} if the quasi-coherent torsion sheaf $\pi_*\bK$
on $\X$ is injective.
 We will denote the full subcategory of $\X$\+injective quasi-coherent
torsion sheaves on $\bY$ by $\bY\tors_{\X\dinj}\subset\bY\tors$.

 In particular, given an affine morphism of schemes $f\:\bnY\rarrow X$,
a quasi-coherent sheaf $\bcK$ on $\bnY$ is said to be
\emph{$X$\+injective} if the quasi-coherent sheaf $f_*\bcK$ on $X$ is
injective.
 The full subcategory of $X$\+injective quasi-coherent sheaves is
denoted by $\bnY\qcoh_{X\dinj}\subset\bnY\qcoh$.

\begin{lem} \label{injective-over-base}
\textup{(a)} For any affine morphism of reasonable ind-schemes\/
$\pi\:\bY\rarrow\X$, the full subcategory\/ $\bY\tors_{\X\dinj}$
is closed under extensions and cokernels of monomorphisms
in\/ $\bY\tors$. \par
\textup{(b)} If $\pi\:\bY\rarrow\X$ is a flat affine morphism of
reasonable ind-schemes, then any injective quasi-coherent torsion
sheaf on\/ $\bY$ is\/ $\X$\+injective, that is\/ $\bY\tors_\inj
\subset\bY\tors_{\X\dinj}$.
\end{lem}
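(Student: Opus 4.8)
The plan is to reduce both parts to the exactness and adjunction properties of the direct and inverse image functors established in Lemmas~\ref{affine-torsion-direct-image}, \ref{flat-torsion-sheaves-inverse-image}, and~\ref{torsion-direct-inverse-adjunction}(b), combined with two elementary facts valid in any abelian category: injective objects are closed under extensions, and the right adjoint of an exact functor preserves injectives.

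For part~(a), the key input is that the direct image functor $\pi_*\:\bY\tors\rarrow\X\tors$ is exact by Lemma~\ref{affine-torsion-direct-image}. Given a short exact sequence $0\rarrow\bK'\rarrow\bK\rarrow\bK''\rarrow0$ in $\bY\tors$, I would apply $\pi_*$ to obtain the short exact sequence $0\rarrow\pi_*\bK'\rarrow\pi_*\bK\rarrow\pi_*\bK''\rarrow0$ in $\X\tors$. To prove closure under extensions, I assume $\pi_*\bK'$ and $\pi_*\bK''$ are injective; the monomorphism $\pi_*\bK'\rarrow\pi_*\bK$ then splits, exhibiting $\pi_*\bK$ as a direct summand of $\pi_*\bK'\oplus\pi_*\bK''$, so $\pi_*\bK$ is injective and $\bK\in\bY\tors_{\X\dinj}$. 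To prove closure under cokernels of monomorphisms, I assume instead that $\pi_*\bK'$ and $\pi_*\bK$ are injective; since $\pi_*\bK'$ is injective, the monomorphism $\pi_*\bK'\rarrow\pi_*\bK$ splits, so $\pi_*\bK''$ is a direct summand of the injective object $\pi_*\bK$ and hence injective, giving $\bK''\in\bY\tors_{\X\dinj}$.

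For part~(b), the key inputs are that the inverse image functor $\pi^*\:\X\tors\rarrow\bY\tors$ is exact for a flat morphism by Lemma~\ref{flat-torsion-sheaves-inverse-image}, and that $\pi^*$ is left adjoint to $\pi_*$ by Lemma~\ref{torsion-direct-inverse-adjunction}(b). I would then invoke the standard principle that the right adjoint of an exact functor preserves injective objects: for any injective quasi-coherent torsion sheaf $\bK$ on $\bY$, the functor $\Hom_{\X\tors}({-},\pi_*\bK)\simeq\Hom_{\bY\tors}(\pi^*({-}),\bK)$ is the composite of the exact functor $\pi^*$ with the exact functor $\Hom_{\bY\tors}({-},\bK)$, hence is itself exact. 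Therefore $\pi_*\bK$ is injective in $\X\tors$, which means precisely that $\bK$ is $\X$-injective, so $\bY\tors_\inj\subset\bY\tors_{\X\dinj}$. Both parts are formal consequences of the previously established structural results, so I do not anticipate any substantial obstacle; the only points requiring care are to track the direction of the adjunction correctly in part~(b) and to note that flatness of $\pi$ is exactly what guarantees exactness of $\pi^*$, which is what drives the argument.
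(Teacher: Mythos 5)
Your proof is correct and follows essentially the same route as the paper: part~(a) reduces to exactness of $\pi_*$ (Lemma~\ref{affine-torsion-direct-image}) plus the standard fact that injective objects in an abelian category are closed under extensions and cokernels of monomorphisms (which you verify by the usual splitting argument), and part~(b) is the standard principle that $\pi_*$, being right adjoint to the exact functor $\pi^*$ (Lemmas~\ref{flat-torsion-sheaves-inverse-image} and~\ref{torsion-direct-inverse-adjunction}(b)), preserves injectives. No gaps.
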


\begin{proof}
 Part~(a) holds because the functor $\pi_*\:\bY\tors\rarrow\X\tors$
is exact and the full subcategory $\X\tors_\inj\subset\X\tors$ is
closed under extensions and cokernels of monos.
 Part~(b) claims that the functor~$\pi_*$ preserves injectives;
this is so because $\pi_*$~is right adjoint to the functor~$\pi^*$,
which is exact by Lemma~\ref{flat-torsion-sheaves-inverse-image}.
\end{proof}

 The assertions of Lemma~\ref{injective-over-base} can be expressed
by saying that $\bY\tors_{\X\dinj}$ is a \emph{coresolving
subcategory} in $\bY\tors$.
 In particular, being a full subcategory closed under extensions,
$\bY\tors_{\X\dinj}$ inherits an exact category structure from
the abelian category $\bY\tors$.
 So one can form the derived category $\sD(\bY\tors_{\X\dinj})$
(cf.~\cite[paragraphs preceding Theorem~5.2]{Pfp} for a discussion).

\begin{lem} \label{injective-over-base-acyclicity-criterion}
 Let\/ $\pi\:\bY\rarrow\X$ be an affine morphism of reasonable
ind-schemes.
 Then a complex $\bK^\bu\in\sC(\bY\tors_{\X\dinj})$ is acyclic in\/
$\bY\tors_{\X\dinj}$ if and only if the complex\/ $\pi_*\bK^\bu$ in
$\X\tors_\inj$ is contractible, and if and only if the complex\/
$\pi_*\bK^\bu$ is coacyclic in\/ $\X\tors$.
\end{lem}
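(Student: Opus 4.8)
The plan is to use the direct image functor $\pi_*\:\bY\tors\rarrow\X\tors$, which is exact and faithful by Lemma~\ref{affine-torsion-direct-image}, as the bridge between the exact category $\bY\tors_{\X\dinj}$ and the split exact category $\X\tors_\inj$. Since $\bK^\bu$ is a complex of $\X$\+injective quasi-coherent torsion sheaves, $\pi_*\bK^\bu$ is by definition a complex of injective quasi-coherent torsion sheaves on $\X$. The statement then factors into three links: acyclicity of $\bK^\bu$ in $\bY\tors_{\X\dinj}$ is equivalent to acyclicity of $\pi_*\bK^\bu$ in $\X\tors_\inj$; the latter is equivalent to contractibility of $\pi_*\bK^\bu$; and contractibility is equivalent to coacyclicity in $\X\tors$.

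For the first and main link, I would use that $\bY\tors_{\X\dinj}$ is a coresolving subcategory of $\bY\tors$ (Lemma~\ref{injective-over-base}(a)), so its induced exact structure has as conflations precisely the short exact sequences of $\bY\tors$ whose three terms all lie in $\bY\tors_{\X\dinj}$. Consequently, $\bK^\bu$ is acyclic in $\bY\tors_{\X\dinj}$ if and only if it is acyclic in the abelian category $\bY\tors$ and all its quasi-coherent torsion sheaves of cocycles $Z^n$ are $\X$\+injective. Now apply $\pi_*$: being exact, it commutes with kernels and images, so $Z^n(\pi_*\bK^\bu)\simeq\pi_* Z^n(\bK^\bu)$; being exact and faithful, it both preserves and reflects exactness of complexes. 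Moreover, by the very definition of $\X$\+injectivity, $Z^n(\bK^\bu)\in\bY\tors_{\X\dinj}$ precisely when $\pi_* Z^n(\bK^\bu)$ is an injective object of $\X\tors$. Combining these observations, $\bK^\bu$ is acyclic in $\bY\tors_{\X\dinj}$ if and only if $\pi_*\bK^\bu$ is acyclic in $\X\tors$ with cocycles being injective objects, i.e.\ if and only if $\pi_*\bK^\bu$ is acyclic in the exact category $\X\tors_\inj$.

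The two remaining links are soft. Any short exact sequence of injective objects in a Grothendieck category splits, so the exact structure of $\X\tors_\inj$ is split; an acyclic complex in a split exact category decomposes into split conflations and is therefore contractible, while conversely a contractible complex is acyclic in any exact structure. This gives the equivalence with contractibility. Finally, a contractible complex is coacyclic, whereas by Proposition~\ref{coderived-and-homotopy-of-injectives}(a) the functor $\sK(\X\tors_\inj)\rarrow\sD^\co(\X\tors)$ is fully faithful, so a coacyclic complex of injective quasi-coherent torsion sheaves is zero in $\sK(\X\tors_\inj)$, i.e.\ is contractible; this yields the equivalence of contractibility with coacyclicity in $\X\tors$. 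I expect the only genuine bookkeeping to sit in the first link---carefully matching the exact-category acyclicity criterion for the coresolving subcategory $\bY\tors_{\X\dinj}$ with that for $\X\tors_\inj$ across $\pi_*$, and verifying that nothing beyond $\X$ being reasonable (so that $\X\tors$ is a Grothendieck category with exact coproducts, as needed for Proposition~\ref{coderived-and-homotopy-of-injectives}(a)) is used.
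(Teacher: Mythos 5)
Your proposal is correct and follows essentially the same route as the paper's (much terser) proof: both rest on the exactness and faithfulness of $\pi_*$ from Lemma~\ref{affine-torsion-direct-image}, the standard fact that a complex of injectives in a Grothendieck category is contractible exactly when it is acyclic with injective cocycles (which is what your split-exact-structure argument for $\X\tors_\inj$ amounts to), and Proposition~\ref{coderived-and-homotopy-of-injectives}(a) for the equivalence of contractibility and coacyclicity. The extra bookkeeping you supply for the first link---matching acyclicity in the coresolving subcategory $\bY\tors_{\X\dinj}$ with acyclicity of $\pi_*\bK^\bu$ with injective cocycles---is exactly what the paper leaves implicit.
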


\begin{proof}
 The first assertion holds because the functor $\pi_*\:\bY\tors\rarrow
\X\tors$ is exact and faithful, and a complex of injective objects in
$\X\tors$ is contractible if and only if its cocycle objects are
injective.
 Furthermore, a complex of injectives is contractible if and only if
it is coacyclic
(by Proposition~\ref{coderived-and-homotopy-of-injectives}(a)).
\end{proof}

 The next proposition is a generalization of
Corollary~\ref{ind-Noetherian-coderived-cor}.
 It should be also compared to~\cite[Theorems~5.1(a) and~5.2(a)]{Pfp}.

\begin{prop} \label{over-ind-Noetherian-semiderived-prop}
 Let\/ $\X$ be an ind-Noetherian ind-scheme and\/ $\pi\:\bY\rarrow\X$
be a flat affine morphism of ind-schemes.
 Then the inclusion of exact categories\/ $\bY\tors_{\X\dinj}\rarrow
\bY\tors$ induces an equivalence of triangulated categories\/
$\sD(\bY\tors_{\X\dinj})\simeq\sD_\X^\si(\bY\tors)$.
\end{prop}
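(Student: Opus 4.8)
The plan is to prove that the triangulated functor
$$
\Psi\colon\sD(\bY\tors_{\X\dinj})\lrarrow\sD_\X^\si(\bY\tors)
$$
induced by the inclusion $\bY\tors_{\X\dinj}\rarrow\bY\tors$ is an equivalence, by exhibiting it as a comparison of two Verdier quotients of homotopy categories. Write $\sA\subset\sK(\bY\tors)$ for the thick subcategory of complexes $\bM^\bu$ with $\pi_*\bM^\bu$ coacyclic in $\X\tors$, so that $\sD_\X^\si(\bY\tors)=\sK(\bY\tors)/\sA$ by definition. The homotopy category $\mathsf{L}:=\sK(\bY\tors_{\X\dinj})$ is a full triangulated subcategory of $\sK(\bY\tors)$, and by Lemma~\ref{injective-over-base-acyclicity-criterion} a complex in $\sC(\bY\tors_{\X\dinj})$ is acyclic in the exact category $\bY\tors_{\X\dinj}$ if and only if its $\pi_*$-image is coacyclic; hence $\sD(\bY\tors_{\X\dinj})=\mathsf{L}/\sB$ with $\sB=\sA\cap\mathsf{L}$. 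Thus $\Psi$ is the canonical functor $\mathsf{L}/\sB\rarrow\sK(\bY\tors)/\sA$ attached to the inclusion $\mathsf{L}\subset\sK(\bY\tors)$ together with the identity $\sB=\sA\cap\mathsf{L}$, and the statement reduces to the general principle that such a functor is an equivalence once every object of $\sK(\bY\tors)$ admits a \emph{coresolution} into $\mathsf{L}$ with cone in $\sA$. This is the mechanism behind \cite[Theorems~5.1(a) and~5.2(a)]{Pfp}; I would either cite it or reprove it by the cofinality argument in the third paragraph.

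The key technical step is the construction of these coresolutions. Given $\bM^\bu\in\sC(\bY\tors)$, the category $\bY\tors$ is Grothendieck (Theorem~\ref{torsion-sheaves-abelian}) and so has enough injectives; I would choose a termwise injective resolution of $\bM^\bu$, that is, a bicomplex that is exact and bounded below in the resolution direction with injective entries, and totalize it using coproducts, as in the construction underlying Proposition~\ref{coderived-and-homotopy-of-injectives}(b). This yields a complex $\bK^\bu$ with a morphism $\bM^\bu\rarrow\bK^\bu$. The crucial point is that, although $\bY$ need not be ind-Noetherian (its fibers are infinite-dimensional), the entries of $\bK^\bu$ are \emph{coproducts} of injective quasi-coherent torsion sheaves, and such coproducts are $\X$-injective: injective torsion sheaves on $\bY$ are $\X$-injective by Lemma~\ref{injective-over-base}(b), while $\X$-injectivity is preserved by coproducts because $\pi_*$ preserves coproducts (Lemma~\ref{representable-by-schemes-direct-image}(a)) and injectives are closed under coproducts in the locally Noetherian category $\X\tors$ (Proposition~\ref{ind-Noetherian-torsion-locally-Noetherian} with Lemma~\ref{locally-Noetherian-coproducts-injective}). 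Hence $\bK^\bu\in\sC(\bY\tors_{\X\dinj})=\mathsf{L}$. The cone of $\bM^\bu\rarrow\bK^\bu$ is the coproduct-totalization of an exact bicomplex bounded below in the resolution direction; such a totalization is coacyclic in $\bY\tors$ (it is assembled by coproducts and extensions from bounded-below acyclic complexes, which are coacyclic \cite[Lemma~2.1]{Psemi}), hence a fortiori $\pi_*$-coacyclic. This produces the required coresolution, and in particular gives essential surjectivity of $\Psi$.

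Fully faithfulness then follows from the coresolution by a cofinality argument for the calculus of fractions. For $\bK^\bu,\bK'{}^\bu\in\mathsf{L}$, the group $\Hom_{\sK(\bY\tors)/\sA}(\bK^\bu,\bK'{}^\bu)$ is the filtered colimit of $\Hom_{\sK(\bY\tors)}(\bK^\bu,\bN^\bu)$ over morphisms $\bK'{}^\bu\rarrow\bN^\bu$ whose cone lies in $\sA$. Given such a morphism, coresolving $\bN^\bu$ into $\mathsf{L}$ and composing yields a morphism $\bK'{}^\bu\rarrow\bN'{}^\bu$ with $\bN'{}^\bu\in\mathsf{L}$; by the octahedral axiom its cone lies in $\sA$, and it lies in $\mathsf{L}$ since both $\bK'{}^\bu$ and $\bN'{}^\bu$ do, hence in $\sB=\sA\cap\mathsf{L}$. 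Thus the fractions with target in $\mathsf{L}$ (and cone in $\sB$) are cofinal, so $\Hom_{\sK(\bY\tors)/\sA}(\bK^\bu,\bK'{}^\bu)=\varinjlim\Hom_{\mathsf{L}}(\bK^\bu,\bN'{}^\bu)=\Hom_{\mathsf{L}/\sB}(\bK^\bu,\bK'{}^\bu)$, which is precisely fully faithfulness of $\Psi$.

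I expect the main obstacle to be the coresolution step for unbounded complexes: the naive target, complexes of honest injectives, is unavailable because injectives in $\bY\tors$ need not be closed under coproducts when $\bY$ is not ind-Noetherian. This is exactly why one must pass to the larger coresolving subcategory $\bY\tors_{\X\dinj}$ of $\X$-injective sheaves and check that the coproduct-totalization lands there with coacyclic cone. Everything else is formal, modulo invoking the abstract fraction/cofinality lemma (alternatively citable from \cite{Pfp}).
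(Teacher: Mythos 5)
Your proposal is correct and follows essentially the same route as the paper's proof: well-definedness via Lemma~\ref{injective-over-base-acyclicity-criterion}, reduction to the existence of coresolutions of arbitrary complexes by complexes in $\sC(\bY\tors_{\X\dinj})$ with $\pi_*$-coacyclic cone, and the same construction of these coresolutions by totalizing termwise injective resolutions, using that injectives of $\bY\tors$ are $\X$-injective and that $\bY\tors_{\X\dinj}$ is closed under coproducts (since $\pi_*$ preserves coproducts and $\X\tors$ is locally Noetherian), with the cone coacyclic by~\cite[Lemma~2.1]{Psemi}. The only difference is that you reprove the abstract Verdier-quotient comparison inline by the calculus-of-fractions cofinality argument, where the paper simply cites~\cite[Lemma~1.6(b)]{Pkoszul}; your inline argument is a correct unpacking of that cited lemma.
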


\begin{proof}
 It follows from Lemma~\ref{injective-over-base-acyclicity-criterion}
that the triangulated functor $\sD(\bY\tors_{\X\dinj})
\rarrow\sD_\X^\si(\bY\tors)$ induced by the inclusion
$\bY\tors_{\X\dinj}\rarrow\bY\tors$ is well-defined.
 Moreover, by~\cite[Lemma~1.6(b)]{Pkoszul}, in order to show that
this triangulated functor is an equivalence it suffices to check
that for any complex $\bM^\bu\in\sC(\bY\tors)$ there exists a complex
$\bK^\bu\in\sC(\bY\tors_{\X\dinj})$ together with a morphism of
complexes $\bM^\bu\rarrow\bK^\bu$ whose cone becomes coacyclic after
applying~$\pi_*$.

 In fact, one can even make the cone of $\bM^\bu\rarrow\bK^\bu$
coacyclic in $\bY\tors$.
 One only needs to observe that, since the direct image functor~$\pi_*$
preserves coproducts and the class of injective objects in $\X\tors$
is closed under coproducts (as $\X$ is ind-Noetherian;
see Lemma~\ref{locally-Noetherian-coproducts-injective}
and Proposition~\ref{ind-Noetherian-torsion-locally-Noetherian}),
the full subcategory $\bY\tors_{\X\dinj}\subset\X\tors$ is closed
under coproducts.

 Besides, there are enough injective objects in a Grothendieck
category $\bY\tors$ and all of them belong to $\bY\tors_{\X\dinj}$;
so any complex $\bM^\bu\in\sC(\bY\tors)$ admits a termwise monic
morphism into a complex $\bJ^{0,\bu}\in\sC(\bY\tors_{\X\dinj})$.
 The cokernel $\bJ^{0,\bu}/\bM^\bu$, in turn, can be embedded into
a complex $\bJ^{1,\bu}\in\sC(\bY\tors_{\X\dinj})$, etc.
 Totalizing the bicomplex $\bJ^{\bu,\bu}$ by taking countable
coproducts along the diagonals, one produces the desired complex
$\bK^\bu$ together with a morphism $\bM^\bu\rarrow\bK^\bu$, whose
cone is coacyclic by~\cite[Lemma~2.1]{Psemi}.
 We refer to~\cite[proof of Theorem~3.7]{Pkoszul}
or~\cite[proof of Proposition~A.3.1(b)]{Pcosh} for further details
of this argument.
\end{proof}

\subsection{Pro-sheaves flat over the base}
\label{pro-flat-over-base-subsecn}
 Let $f\:\bnY\rarrow X$ be an affine morphism of schemes.
 A quasi-coherent sheaf $\bcG$ on $\bnY$ is said to be \emph{$X$\+flat}
(or \emph{flat over~$X$}) if the quasi-coherent sheaf $f_*\bcG$ on $X$
is flat.
 We will denote the full subcategory of $X$\+flat quasi-coherent sheaves
on $\bnY$ by $\bnY_X\flat\subset\bnY\qcoh$.

\begin{lem} \label{flat-over-scheme-tensor-product}
 Let $f\:\bnY\rarrow X$ be an affine morphism of schemes.
 Then, for any flat quasi-coherent sheaf\/ $\bcF$ on\/ $\bnY$ and 
any $X$\+flat quasi-coherent sheaf\/ $\bcG$ on\/ $\bnY$,
the quasi-coherent sheaf\/ $\bcF\ot_{\cO_\bnY}\bcG$ on\/ $\bnY$
is $X$\+flat.
\end{lem}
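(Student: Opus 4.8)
The plan is to reduce the assertion to a statement about modules over commutative rings and then settle it by a routine associativity-of-tensor-product argument. First I would observe that the claim is local on $X$: since $f$ is affine, the preimage $f^{-1}(U)$ of any affine open $U\subset X$ is again affine, and the formation of $f_*$ commutes with restriction to $U$. Consequently flatness of $\bcF$, $X$-flatness of $\bcG$, and $X$-flatness of $\bcF\ot_{\cO_\bnY}\bcG$ (all of which are defined either directly by local conditions or through the local construction $f_*$ for an affine $f$) can be checked after restricting to $f^{-1}(U)$ over $U$. Thus it suffices to treat the affine case $X=\Spec R$, $\bnY=\Spec S$ for a ring homomorphism $R\to S$, in which $\bcF$ corresponds to a flat $S$-module $F$ and $\bcG$ to an $S$-module $G$ whose underlying $R$-module is flat. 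The goal then becomes purely algebraic: to show that $F\ot_S G$ is a flat $R$-module.

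Next I would verify this directly from the definition of flatness. Given an arbitrary injection of $R$-modules $M'\hookrightarrow M$, the key device is the associativity isomorphism $(F\ot_S G)\ot_R M\simeq F\ot_S(G\ot_R M)$, in which $G\ot_R M$ is regarded as an $S$-module via the $S$-action on $G$; this comes from $G\ot_R M\simeq G\ot_S(S\ot_R M)$ together with associativity of tensor products over $S$. Since $G$ is flat over $R$, the map $G\ot_R M'\to G\ot_R M$ is injective, and it is a homomorphism of $S$-modules. Since $F$ is flat over $S$, applying $F\ot_S({-})$ preserves this injection. Therefore $(F\ot_S G)\ot_R M'\to(F\ot_S G)\ot_R M$ is injective, which is exactly the flatness of $F\ot_S G$ over $R$.

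I do not expect any genuine obstacle: the result is elementary. The only point demanding care is the bookkeeping of the two module structures in the associativity isomorphism, namely keeping straight that the $S$-action used to form $F\ot_S({-})$ and the $R$-action used to test flatness are matched correctly through $R\to S$. This is the same sort of base-change manipulation already exploited in Lemma~\ref{affine-flat-base-change} and in the projection formula (Lemma~\ref{projection-formula}), so in writing the full proof I would invoke that standard pattern rather than spell out the canonical isomorphisms in detail.
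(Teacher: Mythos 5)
Your proposal is correct and takes essentially the same route as the paper: reduce by locality (using that $f$~is affine) to the case of a ring homomorphism $R\rarrow S$, where the statement becomes the purely algebraic fact that $F\ot_S G$ is a flat $R$\+module whenever $F$ is a flat $S$\+module and $G$ is an $S$\+module that is flat over~$R$. The paper's proof stops at stating this algebraic fact, while you additionally spell out its routine verification via the associativity isomorphism $(F\ot_S G)\ot_R M\simeq F\ot_S(G\ot_R M)$ --- which is precisely the argument the paper leaves implicit.
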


\begin{proof}
 The assertion is local in $X$, so it reduces to the case of affine
schemes, for which it means the following.
 Let $R\rarrow\bnS$ be a homomorphism of commutative rings, let
$F$ be a flat $\bnS$\+module, and let $G$ be an $R$\+flat
$\bnS$\+module (i.~e., an $\bnS$\+module whose underlying $R$\+module
is flat).
 Then $F\ot_\bnS G$ is an $R$\+flat $\bnS$\+module.
\end{proof}

 Clearly, the full subcategory $\bnY_X\flat$ is closed under extensions
in the abelian category $\bnY\qcoh$ (since the functor~$f_*$ is exact
and the full subcategory $X\flat\subset X\qcoh$ is closed under
extensions); so it inherits an exact category structure.
 The full subcategory $\bnY_X\flat$ is closed under direct limits in
$\bnY\qcoh$ (because the functor~$f_*$ preserves direct limits).
 When the morphism~$f$ is (affine and) flat, any flat quasi-coherent
sheaf on $\bnY$ is $X$\+flat, as the functor~$f_*$ takes flat
quasi-coherent sheaves on $\bnY$ to flat quasi-coherent sheaves on~$X$;
so $\bnY\flat\subset\bnY_X\flat\subset\bnY\qcoh$.

 For a flat affine morphism of schemes $f\:\bnY\rarrow X$,
the equivalence of categories from
Lemma~\ref{qcoh-modules-via-qcoh-algebras-described} restricts to
an equivalence between the category $\bnY_X\flat$ of $X$\+flat
quasi-coherent sheaves on $\bnY$ and the category of module objects
over the algebra object $f_*\cO_\bnY$ in the tensor category $X\flat$.
 This is an equivalence of exact categories (with the exact structure
on the category of module objects over $f_*\cO_\bnY$ in $X\flat$
coming from the exact structure on $X\flat$).

\begin{lem} \label{qcoh-flat-over-base}
 Let $f\:\bnY\rarrow X$ be an affine morphism and $h\:Z\rarrow X$ be
a morphism of schemes.
 Consider the pullback diagram
$$
\xymatrix{
 Z\times_X\bnY \ar[r]^-k \ar[d]^-g & \bnY \ar[d]^-f \\
 Z \ar[r]^-h & X
}
$$
and put\/ $\bnW=Z\times_X\bnY$.
 Then \par
\textup{(a)} for any $X$\+flat quasi-coherent sheaf\/ $\bcG$ on\/ $\bnY$,
the quasi-coherent sheaf\/ $k^*\bcG$ on\/  $\bnW$ is $Z$\+flat; \par
\textup{(b)} the functor $k^*\:\bnY_X\flat\rarrow\bnW_Z\flat$ takes
short exact sequences to short exact sequences; \par
\textup{(c)} assuming that $h$~is a flat affine morphism, for any
$Z$\+flat quasi-coherent sheaf\/ $\bcH$ on\/ $\bnW$,
the quasi-coherent sheaf\/ $k_*\bcH$ on\/ $\bnY$ is $X$\+flat.
\end{lem}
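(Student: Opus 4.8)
The plan is to reduce all three assertions to elementary facts about flatness of modules, using the flat/affine base-change isomorphisms already at our disposal, with each reduction being local over the appropriate base so that one may pass to commutative rings and modules. The two structural inputs are Lemma~\ref{affine-flat-base-change}(a), which provides a natural isomorphism $h^*f_*\simeq g_*k^*$ precisely because $f$ is affine, and the behaviour of flat quasi-coherent sheaves under inverse images recorded at the start of Section~\ref{flat-pro-sheaves-subsecn} (namely, $h^*$ carries $X\flat$ into $Z\flat$, and a short exact sequence of flat sheaves stays exact after applying~$h^*$). Note also that $g$, being a base change of the affine morphism $f$, is itself affine, so $g_*$ is exact and faithful.

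For part~(a), by definition $k^*\bcG$ is $Z$-flat if and only if $g_*k^*\bcG$ is flat on~$\bnW$'s base $Z$. Applying the base-change isomorphism of Lemma~\ref{affine-flat-base-change}(a) gives $g_*k^*\bcG\simeq h^*f_*\bcG$. Since $\bcG$ is $X$-flat, the sheaf $f_*\bcG$ is flat on $X$; as $h^*$ preserves flatness of quasi-coherent sheaves, $h^*f_*\bcG$ is flat on $Z$, which is exactly the claim.

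For part~(b), recall that a short exact sequence in $\bnY_X\flat$ is a sequence $0\to\bcG'\to\bcG\to\bcG''\to0$ that is exact in $\bnY\qcoh$ with all three terms $X$-flat. The content is left exactness of $k^*$ on such a sequence, since $k^*$ is only right exact a priori. I would compose with $g_*$ and again invoke $g_*k^*\simeq h^*f_*$: because $f$ is affine, $f_*$ is exact and sends the sequence to a short exact sequence of flat quasi-coherent sheaves on $X$; the flat inverse image $h^*$ then preserves exactness of short exact sequences of flat sheaves (Section~\ref{flat-pro-sheaves-subsecn}); hence $g_*k^*$ applied to the original sequence is short exact. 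Since $g_*$ is exact and faithful, it reflects exactness, so $k^*$ of the sequence is short exact in $\bnW\qcoh$, and its terms lie in $\bnW_Z\flat$ by part~(a). Equivalently, one can reduce directly to rings: writing $X=\Spec R$, $\bnY=\Spec\bnS$, $Z=\Spec T$, an $X$-flat sheaf corresponds to an $\bnS$-module $G$ flat over $R$, the functor $k^*$ corresponds to $T\otimes_R G$, and exactness follows from $\Tor_1^R(T,G'')=0$.

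For part~(c), with $h$ flat and affine, I would exploit the commutativity $fk=hg$ of the pullback square, which yields $f_*k_*=(fk)_*=(hg)_*=h_*g_*$. Now $g_*\bcH$ is flat on $Z$ because $\bcH$ is $Z$-flat, and for a flat affine morphism $h$ the functor $h_*$ preserves flatness: locally $h$ corresponds to a flat ring homomorphism $R\to T$, and a $T$-module flat over $T$ is flat over $R$ since $T$ is flat over $R$ and flatness is transitive. Hence $f_*k_*\bcH=h_*g_*\bcH$ is flat on $X$, that is, $k_*\bcH$ is $X$-flat. The steps I expect to be the main obstacle are the left-exactness argument in~(b) — where one must be careful that it is the flatness of the right-hand term $\bcG''$ that forces exactness — and, in~(c), the observation that $h_*$ preserves flatness, which rests on the flat-over-flat transitivity just noted.
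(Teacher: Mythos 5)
Your proof is correct, and it is close to the paper's, but organized with a different emphasis. The paper's primary argument for parts~(a) and~(b) is the affine-local reduction that you mention only as an aside: both assertions are local in $X$ and $Z$, and over rings they amount to the statement that for ring homomorphisms $R\to\bnS$ and $R\to T$ the functor $(T\ot_R\bnS)\ot_\bnS{-}\simeq T\ot_R{-}$ takes $R$-flat $\bnS$-modules to $T$-flat $(T\ot_R\bnS)$-modules and preserves exactness of short exact sequences of such modules (because $\Tor^R_1(T,{-})$ vanishes on $R$-flat modules); the base-change route via Lemma~\ref{affine-flat-base-change}(a) appears in the paper only as an alternative proof of part~(a). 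You instead make the isomorphism $g_*k^*\simeq h^*f_*$ primary for both~(a) and~(b), which for~(b) requires the extra observation that the exact faithful functor $g_*$ (for the affine morphism~$g$) reflects exactness of sequences. That observation is true and worth spelling out — for an exact faithful functor, the homology of a complex vanishes if and only if its image does — and it buys you a globalization-free argument, whereas the paper's route is more elementary but relies on the (routine) locality of the assertions. Your part~(c) coincides with the paper's proof: from $fk=hg$ one gets $f_*k_*=h_*g_*$, and $h_*$ preserves flatness for a flat affine morphism~$h$ by transitivity of flatness of modules.
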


\begin{proof}
 Parts~(a\+-b): both the assertions are local in $X$ and $Z$, so they
reduce to the case of affine schemes, for which they mean the following.
 Let $R\rarrow\bnS$ and $R\rarrow T$ be two morphisms of commutative
rings.
 Let $\bnN$ be an $\bnS$\+module which is flat over~$R$;
then the $(T\ot_R\bnS)$\+module $(T\ot_R\bnS)\ot_\bnS\bnN$ is flat
over~$T$.
 Let $0\rarrow\bnL\rarrow\bnM\rarrow\bnN\rarrow0$ be a short exact
sequence of $R$\+flat $\bnS$\+modules; then $0\rarrow
(T\ot_R\bnS)\ot_\bnS\bnL\rarrow(T\ot_R\bnS)\ot_\bnS\bnM\rarrow
(T\ot_R\bnS)\ot_\bnS\bnN\rarrow0$ is a short exact sequence of
$T$\+flat $(T\ot_R\bnS)$\+modules.
 Alternatively, part~(a) follows from
Lemma~\ref{affine-flat-base-change}(a) and the fact that the functor
$h^*\:X\qcoh\rarrow Z\qcoh$ takes flat quasi-coherent sheaves on $X$
to flat quasi-coherent sheaves on~$Z$.
 Part~(c): the assertion follows from the fact that the functor
$h_*\:Z\qcoh\rarrow X\qcoh$ takes flat quasi-coherent sheaves on $Z$
to flat quasi-coherent sheaves on~$X$ (for a flat affine morphism~$h$).
\end{proof}

 Let $\pi\:\bY\rarrow\X$ be an affine morphism of ind-schemes.
 We refer to Section~\ref{pro-sheaves-inverse-direct-subsecn} for
the construction of the functors of inverse and direct image of
pro-quasi-coherent pro-sheaves, $\pi^*\:\X\pro\rarrow\bY\pro$ and
$\pi_*\:\bY\pro\rarrow\X\pro$.

 A pro-quasi-coherent pro-sheaf $\bfG$ on $\bY$ is said to be
\emph{$\X$\+flat} if the pro-quasi-coherent pro-sheaf $\pi_*\bfG$
on~$\X$ is flat in the sense of Section~\ref{flat-pro-sheaves-subsecn},
that is $\pi_*\bfG\in\X\flat\subset\X\pro$.
 Explicitly, this means that, for every closed subscheme $Z\subset\X$,
denoting by $\pi_Z$ the related morphism $\bnW=Z\times_\X\bY\rarrow Z$,
the quasi-coherent sheaf $\bfG^{(\bnW)}$ on $\bnW$ is $Z$\+flat.
 Given a representatation $\X=\ilim_{\gamma\in\Gamma}X_\gamma$ of
the ind-scheme $\X$ by an inductive system of closed immersions
$(X_\gamma)_{\gamma\in\Gamma}$, it suffices to check the latter
condition for the closed subschemes $Z=X_\gamma$ (cf.\
Lemma~\ref{qcoh-flat-over-base}(a)).
 We will denote the full subcategory of $\X$\+flat pro-quasi-coherent
pro-sheaves by $\bY_\X\flat\subset\bY\pro$.

\begin{lem} \label{flat-pro-over-ind-scheme-tensor-product}
 Let\/ $\pi\:\bY\rarrow\X$ be an affine morphism of ind-schemes.
 Then, for any flat pro-quasi-coherent pro-sheaf\/ $\bfF$ on\/ $\bY$
and any\/ $\X$\+flat pro-quasi-coherent pro-sheaf\/ $\bfG$ on\/ $\bY$,
the pro-quasi-coherent pro-sheaf\/ $\bfF\ot^\bY\bfG$ on\/ $\bY$ is\/
$\X$\+flat.
\end{lem}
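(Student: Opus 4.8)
The plan is to reduce the statement to the scheme-level Lemma~\ref{flat-over-scheme-tensor-product} by evaluating the pro-sheaves on a cofinal family of closed subschemes. First I would fix a representation $\X=\ilim_{\gamma\in\Gamma}X_\gamma$ by an inductive system of closed immersions of schemes and set $\bnY_\gamma=X_\gamma\times_\X\bY$, with structure morphism $\pi_\gamma\:\bnY_\gamma\rarrow X_\gamma$. Since $\pi$ is affine, each $\bnY_\gamma$ is a scheme and $\pi_\gamma$ is an affine morphism of schemes; moreover, $\bnY_\gamma$ is a closed subscheme of $\bY$, because the projection $\bnY_\gamma=X_\gamma\times_\X\bY\rarrow\bY$ is the base change along $\pi$ of the closed immersion $X_\gamma\rarrow\X$, and base changes of closed immersions of ind-schemes are again closed immersions. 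By the explicit description of $\X$\+flatness recalled just before the statement (which, by the observation following the definition of an $\X$\+flat pro-sheaf, may be tested on the subschemes $Z=X_\gamma$ of any chosen representation), it then suffices to prove that the quasi-coherent sheaf $(\bfF\ot^\bY\bfG)^{(\bnY_\gamma)}$ on $\bnY_\gamma$ is $X_\gamma$\+flat for every $\gamma\in\Gamma$.

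Next I would unwind the definition of the tensor product of pro-quasi-coherent pro-sheaves from Section~\ref{pro-sheaves-subsecn}, which gives, directly from the defining rule applied to the closed subscheme $\bnY_\gamma\subset\bY$, the identification $(\bfF\ot^\bY\bfG)^{(\bnY_\gamma)}=\bfF^{(\bnY_\gamma)}\ot_{\cO_{\bnY_\gamma}}\bfG^{(\bnY_\gamma)}$. The two factors are exactly the inputs required by Lemma~\ref{flat-over-scheme-tensor-product} for the affine morphism $\pi_\gamma$: the sheaf $\bfF^{(\bnY_\gamma)}$ is flat on $\bnY_\gamma$ because $\bfF$ is a flat pro-quasi-coherent pro-sheaf and $\bnY_\gamma$ is a closed subscheme of $\bY$; and the sheaf $\bfG^{(\bnY_\gamma)}$ is $X_\gamma$\+flat because $\bfG$ is $\X$\+flat, this being precisely the explicit $\X$\+flatness condition for $\bfG$ applied to the closed subscheme $Z=X_\gamma\subset\X$.

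Applying Lemma~\ref{flat-over-scheme-tensor-product} to $\pi_\gamma\:\bnY_\gamma\rarrow X_\gamma$, with $\bcF=\bfF^{(\bnY_\gamma)}$ and $\bcG=\bfG^{(\bnY_\gamma)}$, then yields that $\bfF^{(\bnY_\gamma)}\ot_{\cO_{\bnY_\gamma}}\bfG^{(\bnY_\gamma)}$ is $X_\gamma$\+flat. As this holds for every $\gamma$, the pro-sheaf $\bfF\ot^\bY\bfG$ is $\X$\+flat, as claimed. There is no genuine homological difficulty here; the only point requiring a moment's care is the identification of $\bnY_\gamma$ as a closed subscheme of $\bY$ (so that the flatness hypothesis on $\bfF$ applies to the component $\bfF^{(\bnY_\gamma)}$) together with the verification that this family suffices to detect $\X$\+flatness. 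Once that bookkeeping is set up, the argument is a clean pointwise reduction to the scheme case.
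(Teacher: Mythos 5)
Your proof is correct and is precisely the argument the paper intends: the paper's proof is the one-line reduction ``follows from Lemma~\ref{flat-over-scheme-tensor-product} and the construction of the tensor product functor $\ot^\bY$ in Section~\ref{pro-sheaves-subsecn},'' which you have simply spelled out in full, including the (correct) bookkeeping that $\bnY_\gamma=X_\gamma\times_\X\bY$ is a closed subscheme of $\bY$ and that $\X$\+flatness may be tested on the subschemes $X_\gamma$ of a chosen representation.
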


\begin{proof}
 Follows from Lemma~\ref{flat-over-scheme-tensor-product}
and the construction of the tensor product functor $\ot^\bY$ in
Section~\ref{pro-sheaves-subsecn}.
\end{proof}

 The full subcategory $\bY_\X\flat$ is closed under direct limits
(in particular, coproducts) in $\bY\pro$.
 When the morphism~$\pi$ is affine and flat, the functor $\pi_*\:\bY\pro
\rarrow\X\pro$ takes flat pro-quasi-coherent pro-sheaves on $\bY$ to
flat pro-quasi-coherent pro-sheaves on $\X$ (see
Section~\ref{flat-pro-sheaves-subsecn}); so $\bY\flat\subset\bY_\X\flat
\subset\bY\pro$.

 Let $0\rarrow\bfF\rarrow\bfG\rarrow\bfH\rarrow0$ be a short sequence
of $\X$\+flat pro-quasi-coherent pro-sheaves on~$\bY$.
 We say that this is an (\emph{admissible}) short \emph{exact} sequence
in $\bY_\X\flat$ if, for every closed subscheme $Z\subset\X$ and
the related closed subscheme $\bnW=Z\times_\X\nobreak\bY\subset\bY$,
the sequence of quasi-coherent sheaves $0\rarrow\bfF^{(\bnW)}\rarrow
\bfG^{(\bnW)}\rarrow\bfH^{(\bnW)}\rarrow0$ is exact in the abelian
category $\bnW\qcoh$.
 It suffices to check this condition for the closed subschemes
$Z=X_\gamma\subset X$, \,$\gamma\in\Gamma$, belonging any fixed
representation $\X=\ilim_{\gamma\in\Gamma}X_\gamma$ of
the ind-scheme $\X$ by an inductive system of closed immersions
of schemes.

\begin{prop}
 For any affine morphism of ind-schemes\/ $\pi\:\bY\rarrow\X$,
the category\/ $\bY_\X\flat$ of\/ $\X$\+flat pro-quasi-coherent
pro-sheaves on\/ $\bY$, endowed with the class of admissible short
exact sequences as defined above, is an exact category.
\end{prop}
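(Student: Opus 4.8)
The plan is to follow the template of the proof of Proposition~\ref{flat-pro-sheaves-exact-category}, replacing the categories $Z\flat$ of flat quasi-coherent sheaves on closed subschemes $Z\subset\X$ by the categories $\bnW_Z\flat$ of $Z$-flat quasi-coherent sheaves on the closed subschemes $\bnW_Z=Z\times_\X\bY\subset\bY$, where $\pi_Z\:\bnW_Z\rarrow Z$ denotes the base-changed (affine) morphism. First I would dispose of the scheme case. For an affine morphism of schemes $f\:\bnY\rarrow X$, the full subcategory $\bnY_X\flat\subset\bnY\qcoh$ is closed under extensions (as recorded in the text preceding the proposition, since $f_*$ is exact and $X\flat\subset X\qcoh$ is closed under extensions), so it carries the inherited exact structure. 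Moreover, for a morphism $h\:Z\rarrow X$ with base change $k\:\bnW=Z\times_X\bnY\rarrow\bnY$, Lemma~\ref{qcoh-flat-over-base}(a) shows that $k^*$ carries $X$-flat sheaves to $Z$-flat sheaves, and Lemma~\ref{qcoh-flat-over-base}(b) shows that the resulting functor $k^*\:\bnY_X\flat\rarrow\bnW_Z\flat$ is exact. Note that neither part requires $h$ to be flat, so the argument applies to an arbitrary affine~$\pi$.

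Next I would introduce the category of \emph{generalized $\X$-flat pro-quasi-coherent pro-sheaves} on $\bY$, defined by items (i)--(iii) of Section~\ref{pro-sheaves-subsecn} together with the $\X$-flatness condition, but with the transitivity condition (iv) dropped; here the closed subschemes $\bnW_Z$, as $Z$ ranges over the closed subschemes of $\X$, are cofinal among the closed subschemes of $\bY$, so it suffices to record the data on them. Exactly as in Proposition~\ref{flat-pro-sheaves-exact-category}, this category is obtained by applying the construction of~\cite[Example~A.5(4)]{Partin} to the pair of exact functors
$$
 \prod\nolimits_{Z\subset\X}\bnW_Z\flat\lrarrow
 \prod\nolimits_{Z\subset Y\subset\X}\bnW_Z\flat\times
 \prod\nolimits_{Z\subset Y\subset\X}\bnW_Z\flat\llarrow
 \prod\nolimits_{Z\subset Y\subset\X}\bnW_Z\flat,
$$
where the two components of the leftmost functor send $(\bcG_Z)_{Z\subset\X}$ to $(\bcG_Z)_{Z\subset Y\subset\X}$ and to $(k_{ZY}^*\bcG_Y)_{Z\subset Y\subset\X}$ respectively (with $k_{ZY}\:\bnW_Z\rarrow\bnW_Y$ the base change of the closed immersion $Z\rarrow Y$), and the rightmost functor is diagonal. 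All categories in the diagram are exact, with termwise exact structures on the products, and all functors are exact by the scheme case, so the construction produces an exact category.

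Finally, I would cut out the genuine $\X$-flat pro-quasi-coherent pro-sheaves as the full subcategory defined by the transitivity condition (iv), and check that this full subcategory is closed under admissible subobjects and admissible quotients in the ambient exact category of generalized ones; then~\cite[Example~A.5(3)(b)]{Partin} endows it with the inherited exact structure, which is precisely the one in the statement. The verification of closedness is the one genuinely computational point: writing condition (iv) as the cocycle identity among the structure isomorphisms $k_{ZY}^*\bfG^{(\bnW_Y)}\simeq\bfG^{(\bnW_Z)}$, and using that a morphism of generalized pro-sheaves commutes with these isomorphisms, one shows that the cocycle identity descends along a termwise monomorphism (cancelling the monic component maps on the left) and along a termwise epimorphism (cancelling, on the right, the epimorphisms $k_{ZY}^*(\phi_Y)$, which are epic because each $k_{ZY}^*$ is exact by Lemma~\ref{qcoh-flat-over-base}(b)). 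I expect this last bookkeeping to be the only real obstacle; everything else is a transcription of the already-established scheme-level facts and of the proof of Proposition~\ref{flat-pro-sheaves-exact-category}.
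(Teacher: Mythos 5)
Your proposal is correct and follows essentially the same route as the paper: the paper's own proof simply says the argument mirrors that of Proposition~\ref{flat-pro-sheaves-exact-category}, invoking Lemma~\ref{qcoh-flat-over-base}(a\+-b) and the observation that pro-quasi-coherent pro-sheaves on $\bY$ can be recorded on the cofinal family of closed subschemes $\bnW=Z\times_\X\bY$. Your write-up in fact supplies more detail than the paper (notably the cancellation argument verifying closure under admissible subobjects and quotients, which the paper leaves implicit), and you correctly flag the key point that Lemma~\ref{qcoh-flat-over-base}(a\+-b) needs no flatness hypothesis on the base-change morphism.
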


\begin{proof}
 The argument is similar to the proof of
Proposition~\ref{flat-pro-sheaves-exact-category} and based on
Lemma~\ref{qcoh-flat-over-base}(a\+-b).
 It is helpful to notice that a pro-quasi-coherent pro-sheaf $\bfP$
on $\bY$ can be defined as a collection of quasi-coherent sheaves
$\bfP^{(\bnW)}\in\bnW\qcoh$ on the closed subschemes $\bnW\subset\bY$
of the form $\bnW=Z\times_\X\bY$ (where $Z$ ranges over the closed
subschemes in~$X$), endowed with the (iso)morphisms and satisfying
the compatibilies listed in items~(i\+-iv) of
Section~\ref{pro-sheaves-subsecn}.
\end{proof}

 Given an affine morphism of ind-schemes $\pi\:\bY\rarrow\X$,
where the ind-scheme $\X=\ilim_{\gamma\in\Gamma}X_\gamma$ is
represented by an inductive system of closed immersions
$(X_\gamma)_{(\gamma\in\Gamma)}$, we put
$\bnY_\gamma=X_\gamma\times_X\bY$.
 Then $\bY=\ilim_{\gamma\in\Gamma}\bnY_\gamma$ is a representation
of $\bY$ by an inductive system of closed immersions.

\begin{lem} \label{flat-over-base-component-acyclicity-criterion}
 A complex of\/ $\X$\+flat pro-quasi-coherent pro-sheaves\/ $\bfG^\bu$
on\/ $\bY$ is acyclic (as a complex in\/ $\bY_\X\flat$) if and only if,
for every\/ $\gamma\in\Gamma$, the complex of $X_\gamma$\+flat
quasi-coherent sheaves\/ $\bfG^\bu{}^{(X_\gamma)}$ on $\bnY_\gamma$
is acyclic (as a complex in $(\bnY_\gamma)_{X_\gamma}\flat$).
\end{lem}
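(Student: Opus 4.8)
The plan is to follow the pattern of Lemma~\ref{flat-pro-sheaves-complex-acyclicity-criterion}, of which the present statement is the relative analogue (the case $\bY=\X$, $\pi=\id$ recovers it). The first step is to unravel what acyclicity means in each of the two exact categories. Since $(\bnY_\gamma)_{X_\gamma}\flat$ is a full subcategory of the abelian category $\bnY_\gamma\qcoh$ closed under extensions, a complex in it is acyclic (in the inherited exact structure) if and only if it is acyclic in $\bnY_\gamma\qcoh$ \emph{and} all its sheaves of cocycles, computed in $\bnY_\gamma\qcoh$, are $X_\gamma$-flat; this is the same criterion recalled for $X\flat$ in the discussion preceding Lemma~\ref{flat-pro-sheaves-complex-acyclicity-criterion}. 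Likewise, by the definition of the admissible exact sequences in $\bY_\X\flat$, the complex $\bfG^\bu$ is acyclic in $\bY_\X\flat$ precisely when each differential factors through a cocycle object so as to produce conflations whose components over every $\bnY_\gamma$, $\gamma\in\Gamma$, are short exact in $\bnY_\gamma\qcoh$.

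The ``only if'' direction is then immediate: restriction to the component over $\bnY_\gamma$ is exact by the very definition of the admissible exact sequences in $\bY_\X\flat$, hence sends an acyclic complex to an acyclic complex.

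For the ``if'' direction I would construct the cocycle objects of $\bfG^\bu$ directly in $\bY_\X\flat$. For each $\gamma\in\Gamma$ and each $n$ set $\bcG^n_\gamma=\ker\bigl(d^n\colon(\bfG^n)^{(X_\gamma)}\rarrow(\bfG^{n+1})^{(X_\gamma)}\bigr)\in\bnY_\gamma\qcoh$; by the assumption and the criterion above these are $X_\gamma$-flat and fit into short exact sequences $0\rarrow\bcG^n_\gamma\rarrow(\bfG^n)^{(X_\gamma)}\rarrow\bcG^{n+1}_\gamma\rarrow0$ in $\bnY_\gamma\qcoh$. The crucial point is to check that these kernel sheaves assemble into a pro-quasi-coherent pro-sheaf, i.e.\ that for $\gamma<\delta$ the structure isomorphism $k_{\gamma\delta}^*(\bfG^n)^{(X_\delta)}\simeq(\bfG^n)^{(X_\gamma)}$ carries $k_{\gamma\delta}^*\bcG^n_\delta$ onto $\bcG^n_\gamma$ (here $k_{\gamma\delta}\colon\bnY_\gamma\rarrow\bnY_\delta$ is the closed immersion). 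This is where $X_\delta$-flatness of the cocycles enters: because $\bcG^{n+1}_\delta$ and $\bcG^{n+2}_\delta$ are flat over $X_\delta$, the relevant $\Tor^{X_\delta}_1$ vanishes, so $k_{\gamma\delta}^*$ keeps the two short exact sequences above exact and preserves the factorization of $d^n$ through its image; comparing kernels yields $k_{\gamma\delta}^*\bcG^n_\delta\simeq\bcG^n_\gamma$, which is again $X_\gamma$-flat by Lemma~\ref{qcoh-flat-over-base}(a). By Section~\ref{pro-sheaves-subsecn}, specifying these components and structure isomorphisms on the schemes $\bnY_\gamma$ determines a pro-quasi-coherent pro-sheaf $\bfP^n$ on $\bY$, which is $\X$-flat since all its components are flat over the base.

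Finally, the short exact sequences $0\rarrow\bfP^n\rarrow\bfG^n\rarrow\bfP^{n+1}\rarrow0$ are componentwise exact over every $\bnY_\gamma$, hence are admissible short exact sequences in $\bY_\X\flat$, and they exhibit $\bfG^\bu$ as an acyclic complex in $\bY_\X\flat$. I expect the gluing step of the previous paragraph---verifying via flat base change and $\Tor$-vanishing that the termwise kernels form a genuine $\X$-flat pro-sheaf---to be the only non-formal part of the argument; everything else reduces, as in Lemma~\ref{flat-pro-sheaves-complex-acyclicity-criterion}, to the componentwise description of the exact structure.
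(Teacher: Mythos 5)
Your proof is correct, and its skeleton is the same as the paper's (whose own proof is only a four-line sketch): the ``only if'' direction by componentwise exactness of the admissible conflations, and the ``if'' direction by assembling the termwise cocycle sheaves into $\X$\+flat pro-quasi-coherent pro-sheaves, thereby exhibiting the required conflations in $\bY_\X\flat$. Where you genuinely diverge is in the justification of the gluing step $k_{\gamma\delta}^*\bcG^n_\delta\simeq\bcG^n_\gamma$. The paper's stated mechanism is that, for an \emph{acyclic} complex, the cocycle objects can be computed as cokernels, $\bcG^n_\delta\simeq\coker(d^{n-2}_\delta)$; cokernels commute with the right-exact functor $k_{\gamma\delta}^*$, and since the restricted complex is acyclic by hypothesis, its cocycles are likewise cokernels, so the identification is immediate --- no Tor enters, and Lemma~\ref{qcoh-flat-over-base}(a--b) is only needed to know that the cocycles lie in the exact subcategories and the conflations restrict exactly. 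You instead prove directly, via vanishing of $\Tor_1$ over $X_\delta$ against the $X_\delta$\+flat cocycles $\bcG^{n+1}_\delta$ and $\bcG^{n+2}_\delta$, that $k_{\gamma\delta}^*$ preserves the two relevant short exact sequences, and then compare kernels. This is in substance an inline re-proof of Lemma~\ref{qcoh-flat-over-base}(b), whose own proof in the affine case is the same Tor-vanishing. Both routes buy the same conclusion; the paper's cokernel observation is marginally slicker in that it uses the acyclicity hypothesis at both $\gamma$ and~$\delta$ and needs no flatness for the identification itself, while your argument proves slightly more --- that restriction automatically preserves acyclicity in the exact-category sense, so the hypothesis at~$\gamma$ is partially redundant. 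One point you leave implicit, as does the paper: the transition isomorphisms you construct for the cocycle pro-sheaves must satisfy the transitivity condition~(iv) of Section~\ref{pro-sheaves-subsecn} for triples $\beta<\gamma<\delta$; this follows from the functoriality of the kernel identification, so it is not a gap, but it deserves a sentence.
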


\begin{proof}
 This is a generalization of
Lemma~\ref{flat-pro-sheaves-complex-acyclicity-criterion}, provable
in the same way.
 The ``only if'' assertion is obvious.
 To prove the ``if'', one needs to observe that the functor assigning to
an acyclic complex of quasi-coherent sheaves its sheaves of cocycles
can be expressed as a kind of cokernel in the category of quasi-coherent
sheaves, and as such, commutes with inverse images (whenever
the latter preserve acyclicity of a given complex).
 Then one needs also to use Lemma~\ref{qcoh-flat-over-base}(a\+-b).
\end{proof}

 For any affine morphism of ind-schemes\/ $\pi\:\bY\rarrow\X$,
the direct image functor $\pi_*\:\bY\pro\rarrow\X\pro$ takes
the full subcategory $\bY_\X\flat\subset\bY\pro$ into the full
subcategory $\X\flat\subset\X\pro$.
 The resulting direct image functor
$$
 \pi_*\:\bY_\X\flat\lrarrow\X\flat
$$
is an exact functor between exact categories (i.~e, it takes short
exact sequences to short exact sequences).
 This follows immediately from the observation that the direct image
functor $\pi_Z{}_*\:\bnW_Z\flat\rarrow Z\flat$ for the affine morphism
$\pi_Z\:\bnW\rarrow Z$ is exact (which is true because the functor
$\pi_Z{}_*\:\bnW\qcoh\rarrow Z\qcoh$ is exact).

 For a flat affine morphism of ind-schemes\/ $\pi\:\bY\rarrow\X$,
the inclusion functor $\bY\flat\rarrow\bY_\X\flat$ is exact; in fact,
a short sequence in $\bY\flat$ is exact if and only if it is exact
in $\bY_\X\flat$.
 Furthermore, there is an exact inverse image functor
$$
 \pi^*\:\X\flat\lrarrow\bY\flat\,\subset\,\bY_\X\flat,
$$
which is left adoint to~$\pi_*$.

\begin{lem} \label{flat-over-base-direct-image-acyclicity-criterion}
 Let\/ $\pi\:\bY\rarrow\X$ be an affine morphism of ind-schemes.
 Then a complex\/ $\bfG^\bu\in\sC(\bY_\X\flat)$ is acyclic in\/
$\bY_\X\flat$ if and only if the complex\/ $\pi_*\bfG^\bu$ is acyclic
in\/ $\X\flat$.
\end{lem}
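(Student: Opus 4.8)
The plan is to reduce the statement to the case of ordinary schemes by means of the two componentwise acyclicity criteria already at our disposal, and then to dispose of the scheme case using the exactness and faithfulness of the direct image functor of an affine morphism. To set this up, I would fix a representation $\X=\ilim_{\gamma\in\Gamma}X_\gamma$ of $\X$ by an inductive system of closed immersions of schemes, put $\bnY_\gamma=X_\gamma\times_\X\bY$, and denote by $\pi_\gamma\colon\bnY_\gamma\rarrow X_\gamma$ the affine morphisms of schemes into which $\pi$ decomposes.

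First I would invoke Lemma~\ref{flat-over-base-component-acyclicity-criterion}, by which $\bfG^\bu$ is acyclic in $\bY_\X\flat$ if and only if the complex $\bfG^\bu{}^{(X_\gamma)}$ is acyclic in $(\bnY_\gamma)_{X_\gamma}\flat$ for every $\gamma\in\Gamma$, together with Lemma~\ref{flat-pro-sheaves-complex-acyclicity-criterion}, by which $\pi_*\bfG^\bu$ is acyclic in $\X\flat$ if and only if the complex $(\pi_*\bfG^\bu)^{(X_\gamma)}$ is acyclic in $X_\gamma\flat$ for every $\gamma\in\Gamma$. Since the construction of the direct image of pro-quasi-coherent pro-sheaves in Section~\ref{pro-sheaves-inverse-direct-subsecn} gives $(\pi_*\bfG^\bu)^{(X_\gamma)}=\pi_\gamma{}_*(\bfG^\bu{}^{(X_\gamma)})$, the whole assertion reduces to the following scheme-level claim, to be applied to each $\pi_\gamma$: for an affine morphism of schemes $f\colon\bnY\rarrow X$ and a complex $\bcG^\bu\in\sC(\bnY_X\flat)$, the complex $\bcG^\bu$ is acyclic in $\bnY_X\flat$ if and only if $f_*\bcG^\bu$ is acyclic in $X\flat$.

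To prove this scheme-level claim I would use that the functor $f_*\colon\bnY\qcoh\rarrow X\qcoh$ of an affine morphism is exact and faithful -- hence reflects exactness of complexes of quasi-coherent sheaves and, being exact, commutes with the formation of cocycle sheaves -- and that it carries $X$-flat sheaves to flat sheaves by the very definition of $X$-flatness, so that it restricts to an exact functor $f_*\colon\bnY_X\flat\rarrow X\flat$. The ``only if'' direction is then immediate, since an exact functor between exact categories sends acyclic complexes to acyclic complexes. For the ``if'' direction, assume $f_*\bcG^\bu$ is acyclic in $X\flat$; recall that this means precisely that $f_*\bcG^\bu$ is acyclic as a complex in $X\qcoh$ and that all of its cocycle sheaves are flat. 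Letting $\mathcal Z^n\subset\bcG^n$ be the cocycle sheaves of $\bcG^\bu$ computed in $\bnY\qcoh$, reflection of exactness shows that $\bcG^\bu$ is acyclic in $\bnY\qcoh$, while $f_*\mathcal Z^n$ is the corresponding cocycle sheaf of $f_*\bcG^\bu$, hence flat on $X$; thus each $\mathcal Z^n$ is $X$-flat, i.e.\ lies in $\bnY_X\flat$. The short sequences $0\rarrow\mathcal Z^n\rarrow\bcG^n\rarrow\mathcal Z^{n+1}\rarrow0$ are therefore short exact in $\bnY\qcoh$ with all terms $X$-flat, which is exactly the condition for them to be admissible short exact sequences in $\bnY_X\flat$, and assembling them exhibits $\bcG^\bu$ as acyclic in $\bnY_X\flat$.

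The routine reductions carry most of the weight here, and the one place demanding genuine care is this last ``if'' direction at the level of schemes, where two separate data must be transported across $f_*$ simultaneously: the exactness of the complex and the flatness of its cocycle sheaves. Both rest on the combination of exactness and faithfulness of the affine direct image (which yields reflection of short exactness) with its commutation with cocycles; once these are secured, the flatness-of-cocycles description of acyclicity in $X\flat$ supplies the $X$-flatness of the $\mathcal Z^n$, which is precisely what is needed to recognize the conflations of $\bcG^\bu$ inside the exact category $\bnY_X\flat$.
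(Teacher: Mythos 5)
Your proof is correct and follows essentially the same route as the paper: both reduce componentwise to the scheme case via Lemmas~\ref{flat-pro-sheaves-complex-acyclicity-criterion} and~\ref{flat-over-base-component-acyclicity-criterion} together with the identification $(\pi_*\bfG^\bu)^{(X_\gamma)}=\pi_\gamma{}_*(\bfG^\bu{}^{(\bnY_\gamma)})$ from the construction of the direct image, and then settle the scheme-level claim using exactness and faithfulness of the affine direct image functor. Your last paragraph merely spells out in detail (reflection of exactness, commutation with cocycles, $X$-flatness of the cocycle sheaves) what the paper compresses into the single sentence that $f_*\:\bnY\qcoh\rarrow X\qcoh$ is exact and faithful.
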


\begin{proof}
 Follows from Lemmas~\ref{flat-pro-sheaves-complex-acyclicity-criterion}
and~\ref{flat-over-base-component-acyclicity-criterion} together with
the fact that, for an affine morphism of schemes $f\:\bnY\rarrow X$,
a complex $\G^\bu$ in $\bnY_X\flat$ is acyclic if and only if
the complex $f_*\G^\bu$ is acyclic in $X\flat$.
 The latter assertion holds because the functor $f_*\:\bnY\qcoh\rarrow
X\qcoh$ is exact and faithful.
\end{proof}

 For a flat affine morphism of ind-schemes $\pi\:\bY\rarrow\X$,
the equivalence of categories from
Proposition~\ref{pro-torsion-modules-via-pro-algebras-described}(a)
restricts to an equivalence between the category $\bY_\X\flat$ of
$\X$\+flat pro-quasi-coherent pro-sheaves on $\bY$ and the category
of module objects over the algebra object $\pi_*\fO_\bY$ in
the tensor category $\X\flat$.
 This is an equivalence of exact categories (with the exact structure
on the category of module objects over $\pi_*\fO_\bY$ in $\X\flat$
coming from the exact structure on $\X\flat$).

\subsection{The triangulated equivalence}
\label{relative-triangulated-equivalence-subsecn}
 The following theorem, generalizing
Theorem~\ref{ind-Noetherian-triangulated-equivalence-thm}, is the main
result of Section~\ref{X-flat-on-Y-secn}.

\begin{thm} \label{relative-triangulated-equivalence-thm}
 Let\/ $\X$ be an ind-semi-separated ind-Noetherian ind-scheme with
a dualizing complex\/ $\rD^\bu$, and let\/ $\pi\:\bY\rarrow\X$ be
a flat affine morphism of ind-schemes.
 Then there is a natural equivalence of triangulated categories\/
$\sD_\X^\si(\bY\tors)\simeq\sD(\bY_\X\flat)$, provided by mutually
inverse triangulated functors\/ $\fHom_{\bY\qc}(\pi^*\rD^\bu,{-})\:
\allowbreak\sD(\bY\tors_{\X\dinj})\rarrow\sD(\bY_\X\flat)$ and\/
$\pi^*\rD^\bu\ot_\bY{-}\,\:\sD(\bY_\X\flat)\rarrow
\sD(\bY\tors_{\X\dinj})$. \emergencystretch=1em
\end{thm}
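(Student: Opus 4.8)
The plan is to follow the strategy of the proof of Theorem~\ref{ind-Noetherian-triangulated-equivalence-thm}, reducing the relative statement to the absolute one over $\X$ by means of the exact and faithful direct image functor~$\pi_*$. First I would invoke Proposition~\ref{over-ind-Noetherian-semiderived-prop} to replace $\sD_\X^\si(\bY\tors)$ by the derived category $\sD(\bY\tors_{\X\dinj})$ of the exact category of $\X$\+injective quasi-coherent torsion sheaves; this is the relative analogue of the identification $\sD^\co(\X\tors)\simeq\sK(\X\tors_\inj)$ of Corollary~\ref{ind-Noetherian-coderived-cor} that underlies the absolute case. The two functors are then constructed at the level of complexes as in Section~\ref{ind-Noetherian-triangulated-equivalence-subsecn}: the tensor functor $\pi^*\rD^\bu\ot_\bY{-}$ on $\sC(\bY_\X\flat)$, and the internal Hom $\fHom_{\bY\qc}(\pi^*\rD^\bu,{-})$ on $\sC(\bY\tors_{\X\dinj})$ defined component-wise by $\cHom_{\bnW\qc}(k^!\pi^*\rD^\bu,\>k^!\bK^\bu)$ over the closed subschemes $\bnW=Z\times_\X\bY\subset\bY$ (where $Z\subset\X$ ranges over the closed subschemes and $k\:\bnW\rarrow\bY$ is the closed immersion).

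The key computational input is a pair of projection formulas intertwining these functors with their absolute counterparts on~$\X$. For the tensor product I would use Lemma~\ref{torsion-pro-projection-formula-lemma} to obtain a natural isomorphism $\pi_*(\pi^*\rD^\bu\ot_\bY\bfG^\bu)\simeq\rD^\bu\ot_\X\pi_*\bfG^\bu$; since $\pi_*\bfG^\bu$ is a complex of flat pro-quasi-coherent pro-sheaves on $\X$ (as $\bfG^\bu$ is $\X$\+flat), the absolute functor~\eqref{tensor-with-complex-of-injective-torsion} shows the right-hand side is a complex of injective quasi-coherent torsion sheaves on~$\X$, whence $\pi^*\rD^\bu\ot_\bY\bfG^\bu$ is a complex of $\X$\+injective torsion sheaves. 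Dually, the internal Hom pro-sheaf satisfies $\pi_*\fHom_{\bY\qc}(\pi^*\rD^\bu,\bK^\bu)\simeq\fHom_{\X\qc}(\rD^\bu,\pi_*\bK^\bu)$ (glued from the scheme-level Lemma~\ref{qcoh-internal-Hom-of-complexes-projection}); as $\pi_*\bK^\bu$ is a complex of injectives on $\X$, the absolute functor~\eqref{inner-hom-from-complex-of-injective-torsion} (which rests on Lemma~\ref{internal-hom-of-complexes-flat}) shows the right-hand side is flat, so $\fHom_{\bY\qc}(\pi^*\rD^\bu,\bK^\bu)$ is $\X$\+flat. Using the acyclicity criteria of Lemmas~\ref{injective-over-base-acyclicity-criterion} and~\ref{flat-over-base-direct-image-acyclicity-criterion}, the same projection formulas, and the well-definedness of the absolute functors~\eqref{derived-to-homotopy-tensoring-functor} and~\eqref{homotopy-to-derived-inner-hom-functor}, I would then check that both functors descend to the derived categories $\sD(\bY_\X\flat)$ and $\sD(\bY\tors_{\X\dinj})$.

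For the equivalence itself, the crucial observation is that the functors $\pi_*\:\sD(\bY\tors_{\X\dinj})\rarrow\sD^\co(\X\tors)$ and $\pi_*\:\sD(\bY_\X\flat)\rarrow\sD(\X\flat)$ are \emph{conservative} --- this is exactly the content of Lemmas~\ref{injective-over-base-acyclicity-criterion} and~\ref{flat-over-base-direct-image-acyclicity-criterion}. After verifying the adjointness of the two relative functors formally (as in the absolute case), it suffices to show the unit and counit are isomorphisms, and by conservativity this may be tested after applying~$\pi_*$. The two projection formulas identify the $\pi_*$\+images of these adjunction morphisms with the unit and counit of the absolute adjunction $\rD^\bu\ot_\X{-}\dashv\fHom_{\X\qc}(\rD^\bu,{-})$, which are isomorphisms by Theorem~\ref{ind-Noetherian-triangulated-equivalence-thm}. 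This completes the argument modulo the projection-formula and well-definedness inputs.

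The main obstacle I expect is the construction of the relative internal Hom $\fHom_{\bY\qc}(\pi^*\rD^\bu,\bK^\bu)$ and its projection formula when $\bK^\bu$ is merely $\X$\+injective rather than genuinely injective: the component-wise definition yields a pro-quasi-coherent pro-sheaf only after one proves the relative analogue of Lemma~\ref{star-qc-Hom-shriek-for-complexes}, namely that
\[
 k'^*\cHom_{\bnW'\qc}(\pi_{Z'}^*\D_{Z'}^\bu,\>\bcK'^\bu)\lrarrow
 \cHom_{\bnW\qc}(k'^!\pi_{Z'}^*\D_{Z'}^\bu,\>k'^!\bcK'^\bu)
\]
is an isomorphism for a base change $k'\:\bnW\rarrow\bnW'$ of a closed immersion $Z\subset Z'$ and a $Z'$\+injective complex $\bcK'^\bu$ on $\bnW'$. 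I would reduce this to the affine case, where the flat base change isomorphism of Lemma~\ref{reasonable-shriek-flat-star-commutation}, which identifies $k'^!\pi_{Z'}^*\D_{Z'}^\bu$ with $\pi_Z^*\D_Z^\bu$, together with the tensor--hom adjunction, expresses both sides purely in terms of $\cO_{Z'}$\+modules; this collapses the claim to the affine statement of the \emph{absolute} Lemma~\ref{star-qc-Hom-shriek-lemma} applied over the base $Z'$ to the genuinely injective $\cO_{Z'}$\+module $\pi_{Z'*}\bcK'$, the fibre directions entering only through the already established flat and affine base change formulas. The $\X$\+flatness of the resulting pro-sheaf then follows by combining the scheme-level projection formula of Lemma~\ref{qcoh-internal-Hom-of-complexes-projection} with Lemma~\ref{internal-hom-of-complexes-flat}.
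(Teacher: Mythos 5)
Your proposal is correct and follows essentially the same route as the paper's proof: the identification $\sD_\X^\si(\bY\tors)\simeq\sD(\bY\tors_{\X\dinj})$ via Proposition~\ref{over-ind-Noetherian-semiderived-prop}, the two projection formulas (Lemma~\ref{torsion-pro-projection-formula-lemma} and Lemma~\ref{internal-Hom-of-torsion-projection-formula}), well-definedness via the acyclicity criteria, and the reduction of the adjunction isomorphisms to Theorem~\ref{ind-Noetherian-triangulated-equivalence-thm} by conservativity of the direct image functors are all exactly the steps of the paper's argument. The ``main obstacle'' you single out is precisely the paper's Lemma~\ref{relative-star-qc-Hom-shriek-for-complexes} (resting on Lemmas~\ref{relative-star-qc-Hom-shriek} and~\ref{relative-star-product}), and your sketch of its proof---pushing down by the faithful exact affine direct image, invoking the internal-Hom projection formula, and applying the absolute comparison lemma on the base using injectivity of the pushed-down sheaf---matches the paper's treatment.
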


 The notation $\fHom_{\bY\qc}({-},{-})$ will be explained below, and
the proof of Theorem~\ref{relative-triangulated-equivalence-thm}
will be given below in this
Section~\ref{relative-triangulated-equivalence-subsecn}.
 The next two lemmas are not needed for this proof and are included here
mostly for completeness of the exposition and to help the reader
feel more comfortable (they will be useful, however, in
Section~\ref{semiderived-equivalence-and-base-change-subsecn}).
 The subsequent three lemmas play a more important role, and among
them Lemma~\ref{relative-star-qc-Hom-shriek-for-complexes} is essential.

\begin{lem} \label{relative-shriek-star-tensor}
 Let $f\:\bnY\rarrow X$ be a flat affine morphism of schemes and
$Z\subset X$ be a reasonable closed subscheme with the closed
immersion morphism $i\:Z\rarrow X$.
 Consider the pullback diagram
$$
\xymatrix{
 Z\times_X\bnY \ar[r]^-k \ar[d]^-g & \bnY \ar[d]^-f \\
 Z \ar[r]^-i & X
}
$$
and put\/ $\bnW=Z\times_X\bnY$.
 Let $\M$ be a quasi-coherent sheaf on $X$ and\/ $\bcG$ be
an $X$\+flat quasi-coherent sheaf on\/~$\bnY$; put\/
$\bcN=f^*\M\in\bnY\qcoh$.
 Then the natural morphism of quasi-coherent sheaves on\/~$\bnW$
$$
 k^*\bcG\ot_{\cO_\bnW}k^!\bcN\lrarrow k^!(\bcG\ot_{\cO_\bnY}\bcN)
$$
from Lemma~\ref{shriek-star-tensor} is an isomorphism.
\end{lem}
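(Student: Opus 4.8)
The plan is to prove the isomorphism by pushing everything forward along the affine morphism $g$ and reducing to Lemma~\ref{shriek-star-tensor} applied over the base $X$. Since $g\:\bnW\rarrow Z$ is affine, the direct image functor $g_*\:\bnW\qcoh\rarrow Z\qcoh$ is exact and faithful, hence conservative; so it suffices to show that the image under $g_*$ of the natural morphism $\theta\:k^*\bcG\ot_{\cO_\bnW}k^!\bcN\rarrow k^!(\bcG\ot_{\cO_\bnY}\bcN)$ of Lemma~\ref{shriek-star-tensor} is an isomorphism.

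First I would compute $g_*$ of the target. By Lemma~\ref{reasonable-base-change}(a) one has $g_*k^!\simeq i^!f_*$, and since $\bcN=f^*\M$, the projection formula (Lemma~\ref{projection-formula}, using that $f$ is affine) gives $f_*(\bcG\ot_{\cO_\bnY}f^*\M)\simeq f_*\bcG\ot_{\cO_X}\M$; hence $g_*k^!(\bcG\ot_{\cO_\bnY}\bcN)\simeq i^!(f_*\bcG\ot_{\cO_X}\M)$. For the source I would first rewrite $k^!\bcN=k^!f^*\M\simeq g^*i^!\M$ by Lemma~\ref{reasonable-shriek-flat-star-commutation}, which applies because $f$ is flat and $Z\subset X$ is reasonable; the projection formula for the affine morphism $g$ then yields $g_*(k^*\bcG\ot_{\cO_\bnW}g^*i^!\M)\simeq g_*k^*\bcG\ot_{\cO_Z}i^!\M$, and the affine base change $g_*k^*\simeq i^*f_*$ (Lemma~\ref{affine-flat-base-change}(a), with $h=i$) identifies this with $i^*f_*\bcG\ot_{\cO_Z}i^!\M$. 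The key input is now Lemma~\ref{shriek-star-tensor} itself, applied to the closed immersion $i\:Z\rarrow X$ with the quasi-coherent sheaf $f_*\bcG$, which is \emph{flat} on $X$ precisely because $\bcG$ is $X$-flat: it provides a natural isomorphism $i^*f_*\bcG\ot_{\cO_Z}i^!\M\simeq i^!(f_*\bcG\ot_{\cO_X}\M)$, the reasonableness of $Z$ being exactly what is needed. Composing, I obtain $g_*(\text{source})\simeq g_*(\text{target})$.

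The hard part will not be the existence of these isomorphisms but verifying that their composite really is $g_*\theta$: both $\theta$ and the morphism of Lemma~\ref{shriek-star-tensor} over $X$ are defined by adjunction, so this amounts to a compatibility of unit/counit maps with the base change and projection isomorphisms above. Rather than chase that diagram abstractly, I would settle it by the paper's usual device of locality. The statement is local in $X$, and since $f$ is affine it reduces to commutative algebra: write $X=\Spec R$, $\bnY=\Spec\bnS$ with $\bnS$ a flat $R$-algebra, $Z=\Spec R/I$ with $I\subset R$ finitely generated, $\bcG$ an $R$-flat $\bnS$-module $G$, and $\bcN=\bnS\ot_RM$. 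In this setting both sides are canonically $G\ot_R\Hom_R(R/I,M)$ and $\theta$ is the identity: for the target one uses the adjunction $\Hom_\bnS(\bnS/I\bnS,\,G\ot_\bnS(\bnS\ot_RM))\simeq\Hom_R(R/I,\,G\ot_RM)$ followed by $\Hom_R(R/I,\,G\ot_RM)\simeq G\ot_R\Hom_R(R/I,M)$, while for the source one uses the analogous flat base change $\Hom_\bnS(\bnS/I\bnS,\bnS\ot_RM)\simeq\bnS\ot_R\Hom_R(R/I,M)$ for the flat algebra $\bnS$. The only substantive point, and the reason $\bcG$ need not be flat on $\bnY$ (so that Lemma~\ref{shriek-star-tensor} does not apply directly to $k$), is that $R$-flatness of $G$ together with finite generation of $I$ makes $\Hom_R(R/I,-)$ commute past $G\ot_R-$; this follows by applying the exact functor $G\ot_R-$ to the presentation $R^n\rarrow R\rarrow R/I\rarrow0$ and identifying $\Hom_R(R/I,-)$ with the kernel of multiplication by the generators of $I$.
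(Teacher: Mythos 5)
Your proposal is correct, and the step that actually carries it is the same as the paper's proof: the paper likewise reduces by locality to the affine case ($X=\Spec R$, \ $\bnY=\Spec\bnS$ with $\bnS$ a flat $R$\+algebra, $Z=\Spec R/I$ with $I$ finitely generated, $\bnG$ an $R$\+flat $\bnS$\+module), and establishes the isomorphism through precisely your two ingredients, namely the flat base change $\bnS\ot_R\Hom_R(R/I,M)\simeq\Hom_R(R/I,\>\bnS\ot_RM)$ (this is the paper's ``first arrow'', cf.\ Lemma~\ref{reasonable-shriek-flat-star-commutation}) and the commutation $\bnG\ot_R\Hom_R(R/I,M)\simeq\Hom_R(R/I,\>\bnG\ot_RM)$, which holds exactly because $\bnG$ is $R$\+flat and $I$ is finitely generated (your kernel-of-multiplication argument is a correct way to see this). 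Your preliminary global argument---pushing forward along the conservative functor $g_*$ and identifying $g_*$ of the two sides via $g_*k^!\simeq i^!f_*$, \ $g_*k^*\simeq i^*f_*$, the projection formula, and Lemma~\ref{shriek-star-tensor} applied to $i$ and the flat quasi-coherent sheaf $f_*\bcG$ on $X$---is a genuinely different and conceptually appealing framing: it makes transparent why $X$\+flatness of $\bcG$ (rather than flatness on $\bnY$) is the correct hypothesis, by shifting the application of Lemma~\ref{shriek-star-tensor} down to the base. However, as you yourself note, that route only proves the lemma once the composite of those abstract isomorphisms is identified with $g_*\theta$, and since you resolve this by retreating to the local computation, the proof you actually complete coincides with the one in the paper; the global picture remains a useful heuristic rather than an independent argument.
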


\begin{proof}
 The assertion is local in $X$, so it reduces to the case of affine
schemes, for which it means the following.
 Let $R\rarrow\bnS$ be a homomorphism of commutative rings such that
$\bnS$ is a flat $R$\+module and $R\rarrow T$ be a surjective
homomorphism of commutative rings with a finitely generated
kernel ideal.
 Let $M$ be an $R$\+module and $\bnG$ be an $R$\+flat $\bnS$\+module.
 Then the natural homomorphism of $(T\ot_R\bnS)$\+modules
\begin{multline*}
 ((T\ot_R\bnS)\ot_\bnS\bnG)\ot_{(T\ot_R\bnS)}
 \Hom_\bnS (T\ot_R\bnS,\>\bnS\ot_R M) \\
 = \bnG\ot_\bnS\Hom_\bnS (T\ot_R\bnS,\>\bnS\ot_R M)
 = \bnG\ot_\bnS\Hom_R(T,\>\bnS\ot_R M) \\
 \lrarrow\Hom_R(T,\>\bnG\ot_RM)
 = \Hom_\bnS(T\ot_R\bnS,\>\bnG\ot_\bnS(\bnS\ot_RM))
\end{multline*}
is an isomorphism, because both the maps
\begin{multline*}
 \bnG\ot_\bnS\Hom_R(T,\>\bnS\ot_R M) \llarrow
 \bnG\ot_\bnS(\bnS\ot_R\Hom_R(T,M)) \\
 =\bnG\ot_R\Hom_R(T,M)\lrarrow \Hom_R(T,\>\bnG\ot_RM)
\end{multline*}
are isomorphisms (cf.\
Lemma~\ref{reasonable-shriek-flat-star-commutation} for
the first arrow).
\end{proof}

\begin{lem} \label{relative-scheme-ind-scheme-shriek-star-tensor}
 Let\/ $\pi\:\bY\rarrow\X$ be a flat affine morphism of reasonable
ind-schemes and $Z\subset\X$ be a reasonable closed subscheme with
the closed immersion morphism $i\:Z\rarrow\X$.
 Put\/ $\bnW=Z\times_\X\bY$ and denote by $k\:\bnW\rarrow\bY$
the natural closed immersion.
 Let $\rM$ be a quasi-coherent torsion sheaf on\/ $\X$
and\/ $\bfG$ be an\/ $\X$\+flat pro-quasi-coherent pro-sheaf
on\/~$\bY$; put\/ $\bN=\pi^*\rM\in\bY\tors$.
 Then there is a natural isomorphism $k^!(\bfG\ot_\bY\bN)\simeq
k^*\bfG\ot_{\cO_\bnW}k^!\bN=\bfG^{(\bnW)}\ot_{\cO_\bnW}\bN_{(\bnW)}$
in\/ $\bnW\qcoh$.
\end{lem}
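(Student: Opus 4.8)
The plan is to imitate the (very short) proof of Proposition~\ref{flat-torsion-tensor-prop}, replacing the appeal to Lemma~\ref{shriek-star-tensor} by the relative isomorphism of Lemma~\ref{relative-shriek-star-tensor}, of which the present statement is the ind-scheme incarnation. The natural morphism in question is the one produced by Lemma~\ref{ind-scheme-shriek-star-tensor}, applied to the reasonable closed immersion $k\:\bnW\rarrow\bY$ (recall from Section~\ref{reasonable-subsecn} that $\bnW=Z\times_\X\bY$ is a reasonable closed subscheme of $\bY$ because $\pi$ is affine, hence representable by schemes, and $Z\subset\X$ is reasonable), to the pro-sheaf $\bfG$, and to the torsion sheaf $\bN$; the task is to prove that it is an isomorphism. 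The essential feature that makes this possible, even though $\bfG$ is only assumed $\X$\+flat, is that $\bN=\pi^*\rM$ is an inverse image, so that all of its components are inverse images of quasi-coherent sheaves from the base --- precisely the hypothesis under which Lemma~\ref{relative-shriek-star-tensor} applies.

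Concretely, I would fix a representation $\X=\ilim_{\gamma\in\Gamma}X_\gamma$ by an inductive system of closed immersions of reasonable closed subschemes, put $\bnY_\gamma=X_\gamma\times_\X\bY$ with flat affine structure morphisms $f_\gamma\:\bnY_\gamma\rarrow X_\gamma$ and transition closed immersions $k_{\gamma\delta}\:\bnY_\gamma\rarrow\bnY_\delta$; each $k_{\gamma\delta}$ sits in a Cartesian square over the closed immersion $i_{\gamma\delta}\:X_\gamma\rarrow X_\delta$ along the flat affine morphism $f_\delta$. Here $\bfG^{(\bnY_\gamma)}$ is $X_\gamma$\+flat (as $\bfG$ is $\X$\+flat) and $\bN_{(\bnY_\gamma)}=f_\gamma^*\rM_{(X_\gamma)}$ (by Remark~\ref{simplified-flat-torsion-inverse-image}). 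Lemma~\ref{relative-shriek-star-tensor}, applied with $X=X_\delta$, $\bnY=\bnY_\delta$, $Z=X_\gamma$, then provides isomorphisms
\[
 k_{\gamma\delta}^!\bigl(\bfG^{(\bnY_\delta)}\ot_{\cO_{\bnY_\delta}}\bN_{(\bnY_\delta)}\bigr)\,\simeq\,k_{\gamma\delta}^*\bfG^{(\bnY_\delta)}\ot_{\cO_{\bnY_\gamma}}k_{\gamma\delta}^!\bN_{(\bnY_\delta)}\,\simeq\,\bfG^{(\bnY_\gamma)}\ot_{\cO_{\bnY_\gamma}}\bN_{(\bnY_\gamma)},
\]
where the second isomorphism uses the structure isomorphisms of the pro-sheaf $\bfG$ and of the torsion sheaf $\bN$. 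These are exactly the compatibility isomorphisms~(iii) of Section~\ref{qcoh-torsion-sheaves-subsecn}, and the transitivity~(iv) is routine; hence the rule $\bnY_\gamma\longmapsto\bfG^{(\bnY_\gamma)}\ot_{\cO_{\bnY_\gamma}}\bN_{(\bnY_\gamma)}$ defines a quasi-coherent torsion sheaf $\bN'$ on~$\bY$.

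It then remains to recognize $\bN'$ as $\bfG\ot_\bY\bN$ and to read off the value at $\bnW$. By the construction of the action functor in Section~\ref{pro-in-torsion-action-subsecn}, the $\Gamma$\+system $\bfG\ot_\bY(\bN|_\Gamma)$ has components $\bfG^{(\bnY_\gamma)}\ot_{\cO_{\bnY_\gamma}}\bN_{(\bnY_\gamma)}=\bN'_{(\bnY_\gamma)}$ with the same structure maps, so it coincides with $\bN'|_\Gamma$; applying $({-})^+$ and invoking that the counit $(\bN'|_\Gamma)^+\rarrow\bN'$ is an isomorphism (Lemma~\ref{torsion-sheaves-as-Giraud-subcat}), one obtains $\bfG\ot_\bY\bN=(\bfG\ot_\bY(\bN|_\Gamma))^+\simeq\bN'$. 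Now I would choose $\gamma$ with $Z\subset X_\gamma$ and let $\ell\:\bnW\rarrow\bnY_\gamma$ be the induced closed immersion, which is the base change of $Z\rarrow X_\gamma$ along $f_\gamma$. Since $\bnW$ is a reasonable closed subscheme of $\bY$, one has $k^!(\bfG\ot_\bY\bN)\simeq\bN'_{(\bnW)}=\ell^!\bN'_{(\bnY_\gamma)}=\ell^!\bigl(\bfG^{(\bnY_\gamma)}\ot_{\cO_{\bnY_\gamma}}\bN_{(\bnY_\gamma)}\bigr)$, and a final application of Lemma~\ref{relative-shriek-star-tensor} to $\ell$ rewrites the right-hand side as $\ell^*\bfG^{(\bnY_\gamma)}\ot_{\cO_\bnW}\ell^!\bN_{(\bnY_\gamma)}=\bfG^{(\bnW)}\ot_{\cO_\bnW}\bN_{(\bnW)}$, which is $k^*\bfG\ot_{\cO_\bnW}k^!\bN$.

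The main obstacle I anticipate is not any single computation but the bookkeeping ensuring that Lemma~\ref{relative-shriek-star-tensor} is genuinely applicable at every step: one must verify that each closed immersion occurring ($k_{\gamma\delta}$ and $\ell$) is the base change of a reasonable closed immersion of the base along a flat affine morphism, and that the torsion sheaf always appears in inverse-image form. This is exactly where both hypotheses are consumed --- the flat affineness of $\pi$ supplies the Cartesian squares and the $X_\gamma$\+flatness of the components $\bfG^{(\bnY_\gamma)}$, while the assumption $\bN=\pi^*\rM$ supplies the inverse-image form of $\bN_{(\bnY_\gamma)}$, without which the unrelativized Lemma~\ref{shriek-star-tensor} (which would demand $\bfG$ to be flat rather than merely $\X$\+flat) could not be invoked. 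A lighter final point is to confirm that the isomorphism assembled above coincides with the natural morphism of Lemma~\ref{ind-scheme-shriek-star-tensor}; this follows from the naturality of each constituent structure isomorphism and of the adjunction counit.
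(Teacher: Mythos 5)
Your proof is correct and is precisely the argument the paper intends: its own proof consists of the single remark that one argues as in Proposition~\ref{flat-torsion-tensor-prop}, replacing Lemma~\ref{shriek-star-tensor} by Lemma~\ref{relative-shriek-star-tensor}, which is exactly what you carry out. Your write-up merely makes explicit the bookkeeping (the $\Gamma$-system compatibilities, the identification via $({-})^+$, and the evaluation at $\bnW$) that the paper leaves implicit.
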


\begin{proof}
 The argument is similar to the proof of
Proposition~\ref{flat-torsion-tensor-prop}
and uses Lemma~\ref{relative-shriek-star-tensor}.
\end{proof}

\begin{lem} \label{relative-star-qc-Hom-shriek}
 Let $f\:\bnY\rarrow X$ be a flat affine morphism of semi-separated
schemes and $Z\subset X$ be a reasonable closed subscheme with
the closed immersion morphism $i\:Z\rarrow X$.
 Consider the pullback diagram
\begin{equation} \label{bold-pullback-diagram}
\begin{gathered}
\xymatrix{
 Z\times_X\bnY \ar[r]^-k \ar[d]^-g & \bnY \ar[d]^-f \\
 Z \ar[r]^-i & X
}
\end{gathered}
\end{equation}
and put\/ $\bnW=Z\times_X\bnY$.
 Let $\M$ be a quasi-coherent sheaf on $X$ and\/ $\bcK$ be
an $X$\+injective quasi-coherent sheaf on\/~$\bnY$; put\/
$\bcN=f^*\M\in\bnY\qcoh$.
 Then the natural morphism of quasi-coherent sheaves on\/~$\bnW$
$$
 k^*\cHom_{\bnY\qc}(\bcN,\bcK)\lrarrow\cHom_{\bnW\qc}(k^!\bcN,k^!\bcK)
$$
from Lemma~\ref{star-qc-Hom-shriek-lemma} is an isomorphism.
\end{lem}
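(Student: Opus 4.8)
The plan is to avoid both the reduction to the affine case and the reduction to injective sheaves of the form $j_*\J$ used in Lemma~\ref{star-qc-Hom-shriek-lemma}, and instead to reduce the present assertion directly to that lemma applied over the base~$X$. First I would record that the morphism in question already exists, for the closed immersion $k\:\bnW\rarrow\bnY$, by the first (unconditional) assertion of Lemma~\ref{star-qc-Hom-shriek-lemma}; here $\bnW=Z\times_X\bnY$ is a reasonable closed subscheme of the semi-separated scheme $\bnY$ by Lemma~\ref{base-change-composition-reasonable}(a). The only thing to prove is that this morphism~$\phi$ is an isomorphism. Since $g\:\bnW\rarrow Z$ is an affine morphism, the direct image functor $g_*\:\bnW\qcoh\rarrow Z\qcoh$ is exact and faithful, hence conservative; so it suffices to show that $g_*\phi$ is an isomorphism of quasi-coherent sheaves on~$Z$.

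Next I would compute $g_*$ of the source and target of~$\phi$ using the projection formula for the quasi-coherent internal Hom. For the source, I would apply Lemma~\ref{affine-flat-base-change}(a) (so that $g_*k^*\simeq i^*f_*$) and then Lemma~\ref{qcoh-internal-Hom-projection} to the affine morphism~$f$ and the pullback sheaf $\bcN=f^*\M$, obtaining $g_*k^*\cHom_{\bnY\qc}(\bcN,\bcK)\simeq i^*f_*\cHom_{\bnY\qc}(f^*\M,\bcK)\simeq i^*\cHom_{X\qc}(\M,f_*\bcK)$. For the target, I would first rewrite $k^!\bcN=k^!f^*\M\simeq g^*i^!\M$ by the flat base-change isomorphism of Lemma~\ref{reasonable-shriek-flat-star-commutation}, then apply Lemma~\ref{qcoh-internal-Hom-projection} to the morphism~$g$, and finally use $g_*k^!\simeq i^!f_*$ from Lemma~\ref{reasonable-base-change}(a); this yields $g_*\cHom_{\bnW\qc}(k^!\bcN,k^!\bcK)\simeq\cHom_{Z\qc}(i^!\M,g_*k^!\bcK)\simeq\cHom_{Z\qc}(i^!\M,i^!f_*\bcK)$.

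Under these identifications, $g_*\phi$ becomes the natural morphism $i^*\cHom_{X\qc}(\M,f_*\bcK)\rarrow\cHom_{Z\qc}(i^!\M,i^!f_*\bcK)$ attached by Lemma~\ref{star-qc-Hom-shriek-lemma} to the closed immersion $i\:Z\rarrow X$ and the sheaves $\M$ and $f_*\bcK$ on~$X$. Since $X$ is quasi-compact and semi-separated, $i(Z)$ is reasonable, and $f_*\bcK$ is injective --- this last being exactly the hypothesis that $\bcK$ is $X$-injective --- the second assertion of Lemma~\ref{star-qc-Hom-shriek-lemma} gives that this morphism is an isomorphism; hence $g_*\phi$, and therefore~$\phi$, is an isomorphism. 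The hard part will be to verify that $g_*\phi$ really coincides, through the chain of natural isomorphisms above, with the Lemma~\ref{star-qc-Hom-shriek-lemma} morphism for~$i$, and not merely with some abstractly isomorphic map: this is a diagram chase tracing the adjunction used to construct~$\phi$ through the projection-formula and base-change isomorphisms, which I would carry out by testing against an arbitrary quasi-coherent sheaf on~$\bnW$ (respectively on~$Z$) via the defining adjunctions of $\cHom_{\bnW\qc}$, $k^!$, $i^!$, and~$g_*$.
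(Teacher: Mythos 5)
Your proposal is correct and follows essentially the same route as the paper's own proof: reduce to the base via the exact and faithful functor $g_*$, compute the source via Lemmas~\ref{affine-flat-base-change}(a) and~\ref{qcoh-internal-Hom-projection}, compute the target via Lemmas~\ref{reasonable-shriek-flat-star-commutation}, \ref{qcoh-internal-Hom-projection}, and~\ref{reasonable-base-change}(a), and conclude by Lemma~\ref{star-qc-Hom-shriek-lemma} applied to $i$, $\M$, and the injective sheaf $f_*\bcK$. The compatibility check you flag as the remaining ``hard part'' is treated as implicit in the paper's proof, so your version is, if anything, slightly more scrupulous.
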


\begin{proof}
 Since the direct image functor $g_*\:\bnW\qcoh\rarrow Z\qcoh$ is exact
and faithful, it suffices to check that the morphism in question becomes
an isomorphism after applying~$g_*$.
 We have
$$
 g_*k^*\cHom_{\bnY\qc}(f^*\M,\bcK)\simeq
 i^*f_*\cHom_{\bnY\qc}(f^*\M,\bcK)\simeq
 i^*\cHom_{X\qc}(\M,f_*\bcK)
$$
by Lemmas~\ref{affine-flat-base-change}(a)
and~\ref{qcoh-internal-Hom-projection}.
 On the other hand,
\begin{multline*}
 g_*\cHom_{\bnW\qc}(k^!f^*\M,k^!\bcK)\simeq
 g_*\cHom_{\bnW\qc}(g^*i^!\M,k^!\bcK) \\ \simeq
 \cHom_{Z\qc}(i^!\M,g_*k^!\bcK)\simeq
 \cHom_{Z\qc}(i^!\M,i^!f_*\bcK)
\end{multline*}
by Lemmas~\ref{reasonable-shriek-flat-star-commutation},
\ref{qcoh-internal-Hom-projection},
and~\ref{reasonable-base-change}(a).
 The assertion now follows from Lemma~\ref{star-qc-Hom-shriek-lemma},
as the quasi-coherent sheaf $f_*\bcK$ on $X$ is injective
by assumption.
\end{proof}

\begin{lem} \label{relative-star-product}
 Let $f\:\bnY\rarrow X$ be an affine morphism of semi-separated
schemes and $Z\subset X$ be a reasonable closed subscheme with
the closed immersion morphism $i\:Z\rarrow X$.
 Consider the pullback diagram~\eqref{bold-pullback-diagram}
with\/ $\bnW=Z\times_X\bnY$.
 Let $(\bcQ_\xi)_{\xi\in\Xi}$ be a family of quasi-coherent sheaves
on\/~$\bnY$ such that the quasi-coherent sheaves $f_*\bcQ$ on $X$
are flat cotorsion.
 Then the natural morphism of quasi-coherent sheaves on\/~$\bnW$
$$
 k^*\prod\nolimits_{\xi\in\Xi}\bcQ_\xi
 \lrarrow\prod\nolimits_{\xi\in\Xi}k^*\bcQ_\xi
$$
from Lemma~\ref{star-product-lemma} is an isomorphism.
\end{lem}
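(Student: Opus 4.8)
The plan is to reduce the statement to Lemma~\ref{star-product-lemma} on the base scheme $X$ by pushing everything forward along the affine morphism $g\:\bnW\rarrow Z$, exactly in the style of the proof of Lemma~\ref{relative-star-qc-Hom-shriek}. First I would observe that $g$, being a base change of the affine morphism $f$ along the closed immersion $i$, is itself affine; hence the direct image functor $g_*\:\bnW\qcoh\rarrow Z\qcoh$ is exact and faithful (the same fact already used for Lemma~\ref{relative-star-qc-Hom-shriek}). Consequently it suffices to prove that the morphism in question becomes an isomorphism after applying~$g_*$. Since $g_*$ is a right adjoint, it preserves infinite products, so $g_*\prod_{\xi\in\Xi}k^*\bcQ_\xi\simeq\prod_{\xi\in\Xi}g_*k^*\bcQ_\xi$.

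Next I would compute both sides using the base-change isomorphism $i^*f_*\simeq g_*k^*$ provided by Lemma~\ref{affine-flat-base-change}(a), together with the fact that the right adjoint functor~$f_*$ also preserves products. On the target side this gives
$$
 g_*\prod\nolimits_{\xi\in\Xi}k^*\bcQ_\xi\simeq
 \prod\nolimits_{\xi\in\Xi}i^*f_*\bcQ_\xi,
$$
while on the source side it gives
$$
 g_*k^*\prod\nolimits_{\xi\in\Xi}\bcQ_\xi\simeq
 i^*f_*\prod\nolimits_{\xi\in\Xi}\bcQ_\xi\simeq
 i^*\prod\nolimits_{\xi\in\Xi}f_*\bcQ_\xi.
$$
Under these identifications, the morphism obtained by applying~$g_*$ to the relative comparison map should become precisely the natural comparison morphism
$$
 i^*\prod\nolimits_{\xi\in\Xi}f_*\bcQ_\xi\lrarrow
 \prod\nolimits_{\xi\in\Xi}i^*f_*\bcQ_\xi
$$
from Lemma~\ref{star-product-lemma}, now applied to the family of quasi-coherent sheaves $(f_*\bcQ_\xi)_{\xi\in\Xi}$ on~$X$.

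Finally I would invoke Lemma~\ref{star-product-lemma} itself: the scheme $X$ is quasi-compact and semi-separated, the image $i(Z)$ is a reasonable closed subscheme, and by hypothesis each $f_*\bcQ_\xi$ is a flat cotorsion quasi-coherent sheaf on~$X$; hence the displayed comparison morphism on $X$ is an isomorphism, and the desired assertion on $\bnW$ follows by faithfulness of~$g_*$.

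The hard part will be the bookkeeping in the middle paragraph: one must check that the natural morphism from the first assertion of Lemma~\ref{star-product-lemma} (the canonical map $k^*\prod\rarrow\prod k^*$, defined for any functor via the universal property of products) is carried by the base-change isomorphism $g_*k^*\simeq i^*f_*$ to the corresponding canonical map on~$X$. This is a compatibility of natural transformations with the projection morphisms of the products, and it holds because $g_*k^*\simeq i^*f_*$ is an isomorphism \emph{of functors} and the product-preservation isomorphisms for~$f_*$ and~$g_*$ commute with the structural projections. Once this naturality is verified, everything else is a formal application of the cited lemmas.
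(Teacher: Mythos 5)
Your proposal is correct and follows essentially the same route as the paper's own proof: reduce along the exact, faithful direct image functor $g_*$, use that the right adjoints $f_*$ and $g_*$ preserve infinite products together with the base-change isomorphism $g_*k^*\simeq i^*f_*$ from Lemma~\ref{affine-flat-base-change}(a), and then apply Lemma~\ref{star-product-lemma} to the family $(f_*\bcQ_\xi)_{\xi\in\Xi}$ of flat cotorsion sheaves on~$X$. The only difference is that you spell out the naturality bookkeeping (compatibility of the comparison maps with the product projections under the identifications), which the paper leaves implicit.
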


\begin{proof}
 As in the previous proof, it suffices to show that the morphism in
question becomes an isomorphism after applying~$g_*$.
 Since the direct image functors~$f_*$ and~$g_*$, being right adjoints,
preserve infinite products of quasi-coherent sheaves, and
$g_*k^*\simeq i^*f_*$ by Lemma~\ref{affine-flat-base-change}(a),
the desired assertion follows from Lemma~\ref{star-product-lemma}.
\end{proof}

\begin{lem} \label{relative-star-qc-Hom-shriek-for-complexes}
 Let $f\:\bnY\rarrow X$ be an affine morphism of semi-separated
schemes and $Z\subset X$ be a reasonable closed subscheme with
the closed immersion morphism $i\:Z\rarrow X$.
 Consider the pullback diagram~\eqref{bold-pullback-diagram}
with\/ $\bnW=Z\times_X\bnY$.
 Let $\M^\bu$ be a complex of quasi-coherent sheaves on $X$ and\/
$\bcK^\bu$ be a complex of $X$\+injective quasi-coherent sheaves
on\/~$\bnY$; put\/ $\bcN^\bu=f^*\M^\bu\in\sC(\bnY\qcoh)$.
 Then the natural morphism of complexes of quasi-coherent sheaves
on\/~$\bnW$
$$
 k^*\cHom_{\bnY\qc}(\bcN^\bu,\bcK^\bu)\lrarrow
 \cHom_{\bnW\qc}(k^!\bcN^\bu,k^!\bcK^\bu)
$$
from Lemma~\ref{star-qc-Hom-shriek-for-complexes} is an isomorphism.
\end{lem}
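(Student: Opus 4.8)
The plan is to imitate the proof of Lemma~\ref{star-qc-Hom-shriek-for-complexes}, supplying its relative counterparts where the absolute statements were invoked. The morphism in question is built degreewise: in cohomological degree~$n$ it is the composition~\eqref{degree-n-qcoh-map}, with $i$, $\M^p$, $\K^q$ replaced by $k$, $\bcN^p$, $\bcK^q$, of the product-commutation morphism of Lemma~\ref{star-product-lemma} followed by the product over $q-p=n$ of the internal-Hom-commutation morphisms of Lemma~\ref{star-qc-Hom-shriek-lemma}. By Lemma~\ref{relative-star-qc-Hom-shriek} the second factor is an isomorphism termwise, since $\bcK^q$ is $X$\+injective and $\bcN^p=f^*\M^p$. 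The first factor, however, need not be an isomorphism termwise: by Lemmas~\ref{qcoh-internal-Hom-projection} and~\ref{flat-and-cotorsion-lemma}(a) the sheaf $f_*\cHom_{\bnY\qc}(\bcN^p,\bcK^q)\simeq\cHom_{X\qc}(\M^p,f_*\bcK^q)$ is cotorsion but not flat in general, so Lemma~\ref{relative-star-product} does not apply to the individual terms.

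To get around this I would reduce to a single pair of sheaves, as in the general case at the end of the proof of Lemma~\ref{star-qc-Hom-shriek-for-complexes}. Set $\N=\bigoplus_{p\in\boZ}\M^p\in X\qcoh$ and $\bcM=\prod_{q\in\boZ}\bcK^q\in\bnY\qcoh$. As $f^*$ is a left adjoint, $f^*\N=\bigoplus_p\bcN^p$ is again an inverse image from~$X$; as $f_*$ is a right adjoint, $f_*\bcM=\prod_q f_*\bcK^q$ is a product of injective quasi-coherent sheaves on~$X$, hence injective, so $\bcM$ is $X$\+injective. Lemma~\ref{relative-star-qc-Hom-shriek}, applied with $\M=\N$ and $\bcK=\bcM$, then gives that the natural morphism
$$
 k^*\cHom_{\bnY\qc}(f^*\N,\bcM)\lrarrow\cHom_{\bnW\qc}(k^!f^*\N,\>k^!\bcM)
$$
is an isomorphism of quasi-coherent sheaves on~$\bnW$.

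It then remains to recognise each degree-$n$ component as a direct summand of this single isomorphism. The internal Hom turns the coproduct $f^*\N=\bigoplus_p\bcN^p$ and the product $\bcM=\prod_q\bcK^q$ into $\cHom_{\bnY\qc}(f^*\N,\bcM)\simeq\prod_{p,q}\cHom_{\bnY\qc}(\bcN^p,\bcK^q)$. On the target, $k^!$ preserves products as a right adjoint and preserves coproducts because $\bnW=Z\times_X\bnY$ is a reasonable closed subscheme of $\bnY$ by Lemma~\ref{base-change-composition-reasonable}(a), so that $k^!$ preserves direct limits; hence $\cHom_{\bnW\qc}(k^!f^*\N,k^!\bcM)\simeq\prod_{p,q}\cHom_{\bnW\qc}(k^!\bcN^p,k^!\bcK^q)$. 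Grouping the bigraded factors according to $q-p=n$ exhibits the degree-$n$ component of our morphism as a direct summand of the isomorphism above, and a retract of an isomorphism is an isomorphism.

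The step I expect to be the main obstacle is the naturality bookkeeping underlying the phrase ``direct summand'': one must verify that the degreewise morphism assembled from Lemmas~\ref{star-product-lemma} and~\ref{star-qc-Hom-shriek-lemma} genuinely matches the corresponding summand of the single-sheaf morphism of Lemma~\ref{relative-star-qc-Hom-shriek}, which comes down to compatibility of all these constructions with the canonical inclusions and projections of $\bigoplus_p\M^p$ and $\prod_q\bcK^q$ and with the way $f^*$, $f_*$, $k^*$, and $k^!$ handle the relevant (co)products. The conceptual point is that the reduction absorbs the termwise-defective product-commutation map into the one application of Lemma~\ref{relative-star-qc-Hom-shriek}, where the flatness of $f$ (entering through Lemma~\ref{reasonable-shriek-flat-star-commutation}) supplies the isomorphism $k^!f^*\N\simeq g^*i^!\N$ that makes the whole computation go through.
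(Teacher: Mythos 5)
Your proof is correct, and it is essentially the argument the paper itself flags as its \emph{alternative} route: a direct proof modeled on the proof of Lemma~\ref{star-qc-Hom-shriek-for-complexes} and based on the relative one-sheaf Lemma~\ref{relative-star-qc-Hom-shriek}. You supply the detail the paper leaves implicit: without extra hypotheses the termwise product-commutation map is not covered by Lemma~\ref{relative-star-product} (the sheaves $f_*\cHom_{\bnY\qc}(\bcN^p,\bcK^q)\simeq\cHom_{X\qc}(\M^p,f_*\bcK^q)$ are only known to be flat cotorsion when $X$ is Noetherian and $\M^\bu$ is a complex of injectives, which is exactly the special case the paper singles out), and your reduction to the single pair $\N=\bigoplus_p\M^p$, \ $\bcM=\prod_q\bcK^q$ followed by the direct-summand argument is precisely the relative transplant of the ``general case'' paragraph in the proof of Lemma~\ref{star-qc-Hom-shriek-for-complexes}. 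The paper's primary proof takes a different, shorter route: since $g_*\:\bnW\qcoh\rarrow Z\qcoh$ is exact and faithful, it suffices to check that the morphism becomes an isomorphism after applying~$g_*$; Lemmas~\ref{affine-flat-base-change}(a), \ref{qcoh-internal-Hom-of-complexes-projection}, \ref{reasonable-shriek-flat-star-commutation}, and~\ref{reasonable-base-change}(a) then identify the resulting morphism with the morphism of Lemma~\ref{star-qc-Hom-shriek-for-complexes} for the complexes $\M^\bu$ and $f_*\bcK^\bu$ on~$X$, so all the product/coproduct bookkeeping is inherited from the absolute lemma rather than redone on $\bnY$ and~$\bnW$, as your argument does. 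Finally, note that both your argument and the paper's use flatness of~$f$ (through Lemma~\ref{relative-star-qc-Hom-shriek}, ultimately through Lemma~\ref{reasonable-shriek-flat-star-commutation}), whereas the statement as printed only says ``affine''; this hypothesis is present in Lemma~\ref{relative-star-qc-Hom-shriek} and in every application of the present lemma, so the omission is on the paper's side and not a defect of your proof---indeed you were right to flag exactly where flatness enters.
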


\begin{proof}
 This is provable similarly to
Lemma~\ref{relative-star-qc-Hom-shriek}, using
Lemma~\ref{qcoh-internal-Hom-of-complexes-projection} in
order to reduce the assertion to
Lemma~\ref{star-qc-Hom-shriek-for-complexes}.
 Alternatively, a direct proof, similar to the proof of
Lemma~\ref{star-qc-Hom-shriek-for-complexes} and based on the result
of Lemma~\ref{relative-star-qc-Hom-shriek}, is possible;
in particular, assuming additionally that $X$ is Noetherian
and $\M^\bu$ is a complex of injective quas-coherent sheaves on $X$,
one can use Lemma~\ref{relative-star-product}.
 (For this purpose, one observes that
$f_*\cHom_{\bnY\qc}(f^*\M^p,\bcK^q)\simeq\cHom_{X\qc}(\M^p,f_*\bcK^q)$
are flat cotorsion quasi-coherent sheaves on $X$
by Lemmas~\ref{hom-tensor-flats-injectives}(d) and
\ref{flat-and-cotorsion-lemma}(a).)
\end{proof}

 Let $\pi\:\bY\rarrow\X$ be a flat affine morphism of reasonable
ind-semi-separated ind-schemes.
 Let $\rE^\bu\in\sC(\X\tors)$ be a complex of quasi-coherent torsion
sheaves on $\X$, and let $\bK^\bu\in\sC(\bY\tors_{\X\dinj})$ be
a complex of $\X$\+injective quasi-coherent torsion sheaves on~$\bY$.
 Then the complex $\fHom_{\bY\qc}(\pi^*\rE^\bu,\bK^\bu)\in
\sC(\bY\pro)$ of pro-quasi-coherent pro-sheaves on $\bY$ is constructed
as follows.

 For every reasonable closed subscheme $Z\subset\X$, consider
the pullback diagram
$$
\xymatrix{
 Z\times_\X\bY \ar[r]^-k \ar[d]^-{\pi_Z} & \bY \ar[d]^-\pi \\
 Z \ar[r]^-i & \X
}
$$
so $\bnW=Z\times_\X\bY$ is a reasonable closed subscheme in~$\bY$.
 Put
\begin{multline*}
\fHom_{\bY\qc}(\pi^*\rE^\bu,\bK^\bu)^{(\bnW)}=
\cHom_{\bnW\qc}(k^!\pi^*\rE^\bu,k^!\bK^\bu) \\ \simeq
\cHom_{\bnW\qc}(\pi_Z^*i^!\rE^\bu,k^!\bK^\bu)=
\cHom_{\bnW\qc}(\pi_Z^*\rE^\bu_{(Z)},\bK^\bu_{(\bnW)}),
\end{multline*}
where the middle isomorphism holds by
Remark~\ref{simplified-flat-torsion-inverse-image}.
 According to Lemma~\ref{relative-star-qc-Hom-shriek-for-complexes},
for every pair of reasonable closed subschemes $Z'\subset Z''\subset\X$
and the related closed subschemes $\bnW'\subset\bnW''\subset\bY$, \
$\bnW^{(s)}=Z^{(s)}\times_\X\bY$ with the natural closed immersion
$k_{\bnW'\bnW''}\:\bnW'\rarrow\bnW''$, we have
$$
 \fHom_{\bY\qc}(\pi^*\rE^\bu,\bK^\bu)^{(\bnW')}\,\simeq\,
 k_{\bnW'\bnW''}^*\fHom_{\bY\qc}(\pi^*\rE^\bu,\bK^\bu)^{(\bnW'')}
$$
as required for the construction of a pro-quasi-coherent pro-sheaf.
 This explains the meaning of the notation in
Theorem~\ref{relative-triangulated-equivalence-thm}.

\begin{lem} \label{internal-Hom-of-torsion-projection-formula}
 Let\/ $\pi\:\bY\rarrow\X$ be a flat affine morphism of reasonable
ind-semi-separated ind-schemes.
 Let $\rE^\bu$ be a complex of quasi-coherent torsion sheaves on\/ $\X$,
and let $\bK^\bu$ be a complex of\/ $\X$\+injective quasi-coherent
torsion sheaves on\/~$\bY$.
 Then there is a natural isomorphism
$$
 \pi_*\fHom_{\bY\qc}(\pi^*\rE^\bu,\bK^\bu)\simeq
 \fHom_{\X\qc}(\rE^\bu,\pi_*\bK^\bu)
$$
of complexes of pro-quasi-coherent pro-sheaves on\/~$\X$ (where
the functor\/ $\fHom_{\X\qc}$ in the right-hand side was defined before
the proof of Theorem~\ref{ind-Noetherian-triangulated-equivalence-thm}
in Section~\ref{ind-Noetherian-triangulated-equivalence-subsecn}).
\end{lem}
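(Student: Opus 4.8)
The plan is to prove the isomorphism componentwise over the reasonable closed subschemes $Z\subset\X$, and then to check that these componentwise isomorphisms are compatible with the pro-structure transition maps, so that they assemble into an isomorphism of pro-quasi-coherent pro-sheaves on $\X$. First I would fix a reasonable closed subscheme $Z\subset\X$ and form the pullback square with $\bnW=Z\times_\X\bY$, closed immersion $k\:\bnW\rarrow\bY$, structure morphism $\pi_Z\:\bnW\rarrow Z$, and closed immersion $i\:Z\rarrow\X$. By the construction of the direct image of pro-sheaves (Section~\ref{pro-sheaves-inverse-direct-subsecn}) and the definition of $\fHom_{\bY\qc}$ given just before this lemma, the left-hand side has components
$$
 \bigl(\pi_*\fHom_{\bY\qc}(\pi^*\rE^\bu,\bK^\bu)\bigr)^{(Z)}\simeq
 \pi_Z{}_*\,\cHom_{\bnW\qc}\bigl(\pi_Z^*\rE^\bu_{(Z)},\,\bK^\bu_{(\bnW)}\bigr),
$$
where I use the middle isomorphism $k^!\pi^*\rE^\bu\simeq\pi_Z^*\rE^\bu_{(Z)}$ (Remark~\ref{simplified-flat-torsion-inverse-image}) and $\bK^\bu_{(\bnW)}=k^!\bK^\bu$. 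On the right-hand side, Lemma~\ref{reasonable-base-change}(a) computes $i^!(\pi_*\bK^\bu)\simeq\pi_Z{}_*k^!\bK^\bu=\pi_Z{}_*\bK^\bu_{(\bnW)}$, which is an injective quasi-coherent sheaf on $Z$ precisely because $\bK^\bu$ is $\X$-injective (this is also what makes $\fHom_{\X\qc}(\rE^\bu,\pi_*\bK^\bu)$ well-defined, since $\pi_*\bK^\bu\in\sC(\X\tors_\inj)$), so
$$
 \fHom_{\X\qc}(\rE^\bu,\pi_*\bK^\bu)^{(Z)}=
 \cHom_{Z\qc}\bigl(\rE^\bu_{(Z)},\,\pi_Z{}_*\bK^\bu_{(\bnW)}\bigr).
$$

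The componentwise identification is then nothing but the projection formula for the quasi-coherent internal Hom. Applying Lemma~\ref{qcoh-internal-Hom-of-complexes-projection} to the morphism of schemes $\pi_Z\:\bnW\rarrow Z$, the complex $\M^\bu=\rE^\bu_{(Z)}$ on $Z$, and the complex $\N^\bu=\bK^\bu_{(\bnW)}$ on $\bnW$, I obtain a natural isomorphism
$$
 \pi_Z{}_*\cHom_{\bnW\qc}\bigl(\pi_Z^*\rE^\bu_{(Z)},\,\bK^\bu_{(\bnW)}\bigr)\simeq
 \cHom_{Z\qc}\bigl(\rE^\bu_{(Z)},\,\pi_Z{}_*\bK^\bu_{(\bnW)}\bigr),
$$
which matches the two displayed components. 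Since the projection formula isomorphism of Lemma~\ref{qcoh-internal-Hom-projection} is built out of adjunction morphisms, naturality of this identification in $\rE^\bu$ and $\bK^\bu$ is automatic from the naturality of the underlying scheme-level isomorphisms.

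The remaining and genuinely substantive step is to verify compatibility with the transition maps, which is what packages the scheme-level projection formula into an isomorphism of pro-sheaves on $\X$. Given reasonable closed subschemes $Z'\subset Z''\subset\X$ with $\bnW^{(s)}=Z^{(s)}\times_\X\bY$ and the induced closed immersions, both sides carry the inverse-image structure isomorphisms~(iii) of a pro-quasi-coherent pro-sheaf, and I must check that the componentwise isomorphisms above intertwine them. This reduces to a diagram chase combining the naturality of Lemma~\ref{qcoh-internal-Hom-of-complexes-projection} with respect to the base-change square relating $\pi_{Z'}$ and $\pi_{Z''}$, together with Lemma~\ref{reasonable-star-qc-Hom-shriek-for-complexes}, which describes how $\fHom_{\bY\qc}(\pi^*\rE^\bu,\bK^\bu)^{(\bnW'')}$ restricts along $k_{\bnW'\bnW''}$ to the component over $\bnW'$. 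I expect this bookkeeping to be the main obstacle: the individual isomorphisms are immediate, but confirming that they are strictly compatible with the restriction (iso)morphisms on both sides—rather than merely compatible up to an uncontrolled scalar or sign—requires carefully tracking the base-change and projection-formula isomorphisms through the square, so that the collection of componentwise maps indeed constitutes a single morphism of complexes of pro-quasi-coherent pro-sheaves on $\X$.
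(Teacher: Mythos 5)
Your proof is correct and takes essentially the same approach as the paper's: the componentwise identification over reasonable closed subschemes $Z\subset\X$ via the projection formula of Lemma~\ref{qcoh-internal-Hom-of-complexes-projection}, combined with $k^!\pi^*\rE^\bu\simeq\pi_Z^*\rE^\bu_{(Z)}$ (Remark~\ref{simplified-flat-torsion-inverse-image}) and $i^!\pi_*\simeq\pi_Z{}_*k^!$, is precisely the paper's computation, and the transition-map compatibility you flag as the remaining step is what the paper dispatches by simply recalling the construction of $\pi_*\:\bY\pro\rarrow\X\pro$, since it follows from the naturality of these scheme-level isomorphisms. One small correction: the lemma you cite for the restriction maps of $\fHom_{\bY\qc}(\pi^*\rE^\bu,\bK^\bu)$ carries a nonexistent label; the intended reference is Lemma~\ref{relative-star-qc-Hom-shriek-for-complexes}.
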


\begin{proof}
 In the notation above, we have
\begin{multline*}
 \pi_Z{}_*\fHom_{\bY\qc}(\pi^*\rE^\bu,\bK^\bu)^{(\bnW)}\simeq
 \pi_Z{}_*\cHom_{\bnW\qc}(\pi_Z^*i^!\rE^\bu,k^!\bK^\bu)\\ \simeq
 \cHom_{Z\qc}(i^!\rE^\bu,\pi_Z{}_*k^!\bK^\bu)=
 \cHom_{Z\qc}(i^!\rE^\bu,i^!\pi_*\bK^\bu)=
 \fHom_{\X\qc}(\rE^\bu,\pi_*\bK^\bu)^{(Z)},
\end{multline*}
where the second isomorphism is provided by
Lemma~\ref{qcoh-internal-Hom-of-complexes-projection}, while
the isomorphism $\pi_Z{}_*k^!\simeq i^!\pi_*$ holds by
the construction of the functor $\pi_*\:\bY\tors\rarrow\X\tors$
in Section~\ref{torsion-direct-images-subsecn}.
 Having this computation done, it remains to recall the construction of
the functor $\pi_*\:\bY\pro\rarrow\X\pro$
from Section~\ref{pro-sheaves-inverse-direct-subsecn}.
\end{proof}

\begin{proof}[Proof of
Theorem~\ref{relative-triangulated-equivalence-thm}]
 The argument follows the idea of the proof
of~\cite[Theorem~5.6]{Pfp}, which is a module version.

 The equivalence $\sD_\X^\si(\bY\tors)\simeq\sD(\bY\tors_{\X\dinj})$
is provided by Proposition~\ref{over-ind-Noetherian-semiderived-prop}.

 The tensor product functor $\ot_\bY\:\bY\tors\times\bY\pro\rarrow
\bY\tors$ was constructed in
Section~\ref{pro-in-torsion-action-subsecn}.
 Here we restrict it to the full subcategory of $\X$\+flat
pro-quasi-coherent pro-sheaves $\bY_\X\flat\subset\bY\pro$, obtaining
a functor $\ot_\bY\:\bY\tors\times\bY_\X\flat\rarrow\bY\tors$.
 This functor is extended to complexes similarly to the construction
in (the proof of)
Theorem~\ref{ind-Noetherian-triangulated-equivalence-thm},
using the coproduct totalization of bicomplexes.
 Given a complex of quasi-coherent torsion sheaves $\brE^\bu$ on $\bY$,
we obtain the functor
$$
 \brE^\bu\ot_\bY{-}\,\:\sC(\bY_\X\flat)\lrarrow\sC(\bY\tors),
$$
which obviously descends to a triangulated functor between the homotopy
categories $\brE^\bu\ot_\bY{-}\,\:\sK(\bY_\X\flat)\rarrow\sK(\bY\tors)$.

 Now let us assume that $\brE^\bu=\pi^*\rE^\bu$, where
$\rE^\bu\in\sC(\X\tors_\inj)$ is a complex of injective quasi-coherent
torsion sheaves on~$\X$.
 Let $\bfG^\bu$ be a complex of $\X$\+flat pro-quasi-coherent
pro-sheaves on~$\bY$.
 By Lemma~\ref{torsion-pro-projection-formula-lemma}, we have
$$
 \pi_*(\pi^*\rE^\bu\ot_\bY\bfG^\bu)\simeq\rE^\bu\ot_\X\pi_*\bfG^\bu
$$
(recall that the functor~$\pi_*$ preserves coproducts
by Lemma~\ref{representable-by-schemes-direct-image}(a)).
 Following the proof of
Theorem~\ref{ind-Noetherian-triangulated-equivalence-thm},
\,$\rE^\bu\ot_\X\pi_*\bfG^\bu$ is a complex of injective quasi-coherent
torsion sheaves on $\X$ (since $\pi_*\bfG^\bu$ is a complex of flat
pro-quasi-coherent pro-sheaves on~$\X$).
 So $\pi^*\rE^\bu\ot_\bY\bfG^\bu$ is a complex of $\X$\+injective
quasi-coherent torsion sheaves on~$\bY$.
 We have constructed a triangulated functor
\begin{equation} \label{tensor-with-lifted-complex-of-injectives}
 \pi^*\rE^\bu\ot_\bY{-}\,\:\sK(\bY_\X\flat)
 \lrarrow\sK(\bY\tors_{\X\dinj}).
\end{equation}

 Let us check that the latter functor induces a well-defined
triangulated functor
\begin{equation} \label{derived-to-derived-tensoring-functor}
 \pi^*\rE^\bu\ot_\bY{-}\,\:\sD(\bY_\X\flat)
 \lrarrow\sD(\bY\tors_{\X\dinj}).
\end{equation}
 We need to show that the complex $\pi^*\rE^\bu\ot_\bY\bfG^\bu$ is
acyclic with respect to $\bY\tors_{\X\dinj}$ whenever a complex
$\bfG^\bu$ is acyclic with respect to $\bY_\X\flat$.
 By Lemma~\ref{injective-over-base-acyclicity-criterion}, it suffices
to check that the complex $\pi_*(\pi^*\rE^\bu\ot_\bY\bfG^\bu)\simeq
\rE^\bu\ot_\X\pi_*\bfG^\bu$ in $\X\tors_\inj$ is contractible.
 By Lemma~\ref{flat-over-base-direct-image-acyclicity-criterion},
the complex $\pi_*\bfG^\bu$ is acyclic in $\X\flat$.
 Now the functor~\eqref{derived-to-derived-tensoring-functor} is
well-defined because
the functor~\eqref{derived-to-homotopy-tensoring-functor} from
(the proof of) Theorem~\ref{ind-Noetherian-triangulated-equivalence-thm}
is well-defined.

 On the other hand, for any complex $\rE^\bu\in\sC(\X\tors)$,
the construction before this proof provides a functor
$$
 \fHom_{\bY\qc}(\pi^*\rE^\bu,{-})\:
 \sC(\bY\tors_{\X\dinj})\lrarrow\sC(\bY\pro),
$$
which obviously descends to a triangulated functor between the homotopy
categories $\fHom_{\bY\qc}(\pi^*\rE^\bu,{-})\:\sK(\bY\tors_{\X\dinj})
\rarrow\sK(\bY\pro)$.

 Assume that $\rE^\bu$ is a complex of injective quasi-coherent
torsion sheaves on~$\X$, and let $\bK^\bu$ be a complex of
$\X$\+injective quasi-coherent torsion sheaves on~$\bY$.
 Then, according to the proof of
Theorem~\ref{ind-Noetherian-triangulated-equivalence-thm},
\,$\fHom_{\X\qc}(\rE^\bu,\pi_*\bK^\bu)$ is a complex of flat
pro-quasi-coherent pro-sheaves on~$\X$.
 By Lemma~\ref{internal-Hom-of-torsion-projection-formula},
it follows that $\fHom_{\bY\qc}(\pi^*\rE^\bu,\bK^\bu)$ is a complex
of $\X$\+flat pro-quasi-coherent pro-sheaves on~$\bY$.
 We have constructed a triangulated functor
\begin{equation} \label{inner-hom-from-lifted-complex-of-injectives}
 \fHom_{\bY\qc}(\pi^*\rE^\bu,{-})\:\sK(\bY\tors_{\X\dinj})
 \lrarrow\sK(\bY_\X\flat).
\end{equation}

 Let us check that the latter functor induces a well-defined
triangulated functor
\begin{equation} \label{derived-to-derived-internal-Hom}
 \fHom_{\bY\qc}(\pi^*\rE^\bu,{-})\:\sD(\bY\tors_{\X\dinj})
 \lrarrow\sD(\bY_\X\flat).
\end{equation}
 We need to show that the complex $\fHom_{\bY\qc}(\pi^*\rE^\bu,\bK^\bu)$
is acyclic with respect to $\bY_\X\flat$ whenever a complex $\bK^\bu$
is acyclic with respect to $\bY\tors_{\X\dinj}$.
 By Lemma~\ref{flat-over-base-direct-image-acyclicity-criterion},
it suffices to check that the complex
$\pi_*\fHom_{\bY\qc}(\pi^*\rE^\bu,\bK^\bu)\simeq
\fHom_{\X\qc}(\rE^\bu,\pi_*\bK^\bu)$ is acyclic in $\X\flat$.
 By Lemma~\ref{injective-over-base-acyclicity-criterion}, the complex
$\pi_*\bK^\bu$ is contractible in $\X\tors_\inj$, and the assertion
follows.

 It is straightforward to see that
the functor~\eqref{inner-hom-from-lifted-complex-of-injectives}
is right adjoint to
the functor~\eqref{tensor-with-lifted-complex-of-injectives}.
 Hence the functor~\eqref{derived-to-derived-internal-Hom} is
right adjoint to
the functor~\eqref{derived-to-derived-tensoring-functor}.
 It remains to show that
the functors~\eqref{derived-to-derived-tensoring-functor}
and~\eqref{derived-to-derived-internal-Hom} are mutually inverse
equivalences when $\rE^\bu=\rD^\bu$ is a dualizing complex on~$\X$.
 For this purpose, it suffices to check that the adjunction morphisms
are isomorphisms.

 Now, similarly to the proof of~\cite[Theorem~5.6]{Pfp}, we consider
the direct image (``forgetful'') functors
\begin{align*}
 \pi_*\:\sD_\X^\si(\bY\tors)&\lrarrow\sD^\co(\X\tors), \\
 \pi_*\:\sD(\bY\tors_{\X\dinj})&\lrarrow\sK(\X\tors_\inj),
\end{align*}
and
$$
 \pi_*\:\sD(\bY_\X\flat)\lrarrow\sD(\X\flat),
$$
which are well-defined, and moreover, conservative (by the definition
of the semiderived category or)
by Lemmas~\ref{injective-over-base-acyclicity-criterion}
and~\ref{flat-over-base-direct-image-acyclicity-criterion}.

 As we have already seen in the above discussion, by
Lemmas~\ref{torsion-pro-projection-formula-lemma}
and~\ref{internal-Hom-of-torsion-projection-formula}, the direct
image functors transform
the functors~\eqref{derived-to-derived-tensoring-functor}
and~\eqref{derived-to-derived-internal-Hom}
into the functors~\eqref{derived-to-homotopy-tensoring-functor}
and~\eqref{homotopy-to-derived-inner-hom-functor} from (the proof of)
Theorem~\ref{ind-Noetherian-triangulated-equivalence-thm}.
 In other words, there are commutative diagrams of
triangulated functors
$$
\xymatrix{
 \sD(\bY_\X\flat) \ar[rr]^-{\pi^*\rD^\subbu\ot_\bY{-}}
 \ar[d]_-{\pi_*}
 && \sD(\bY\tors_{\X\dinj}) \ar@{=}[r] \ar[d]^-{\pi_*} 
 & \sD_\X^\si(\bY\tors) \ar[d]^-{\pi_*} \\
 \sD(\X\flat) \ar[rr]_-{\rD^\subbu\ot_\X{-}}
 && \sK(\X\tors_\inj) \ar@{=}[r] 
 & \sD^\co(\X\tors)
}
$$
and
$$
\xymatrix{
 \sD_\X^\si(\bY\tors) \ar@{=}[r] \ar[d]_-{\pi_*}
 & \sD(\bY\tors_{\X\dinj})
 \ar[rrr]^-{\fHom_{\bY\qc}(\pi^*\rD^\subbu,{-})}
 \ar[d]_-{\pi_*}
 &&& \sD(\bY_\X\flat) \ar[d]^-{\pi_*} \\
 \sD^\co(\X\tors) \ar@{=}[r]
 & \sK(\X\tors_\inj) \ar[rrr]_-{\fHom_{\X\qc}(\rD^\subbu,{-})}
 &&& \sD(\X\flat)
}
$$

 The direct image functors also transform the adjunction morphisms for
the pair of adjoint functors $\pi^*\rD^\bu\ot_\bY{-}$ and
$\fHom_{\bY\qc}(\pi^*\rD^\bu,{-})$ into the adjunction morphisms for
the pair of adjoint functors $\rD^\bu\ot_\X{-}$ and
$\fHom_{\X\qc}(\rD^\bu,{-})$.
 Since the latter pair of adjunction morphisms are isomorphisms in
the respective derived/homotopy categories by
Theorem~\ref{ind-Noetherian-triangulated-equivalence-thm}, and
the direct image functors are conservative, the former pair of
adjunction morphisms are isomorphisms in the derived categories of
the respective exact categories, too.
\end{proof}

\Section{The Semitensor Product}  \label{semitensor-secn}

 The aim of this section is to construct the semitensor product
operation
$$
 \os_{\pi^*\rD^\bu}\:\sD_\X^\si(\bY\tors)\times
 \sD_\X^\si(\bY\tors)\lrarrow\sD_\X^\si(\bY\tors)
$$
on the $\bY/\X$\+semiderived category of quasi-coherent torsion sheaves
on $\bY$, making $\sD_\X^\si(\bY\tors)$ a tensor triangulated
category.
 The inverse image $\pi^*\rD^\bu$ on $\bY$ of the dualizing complex
$\rD^\bu$ on $\X$ is the unit object of this tensor structure.

 We follow the approach of~\cite[Section~6]{Pfp}, with suitable
modifications.
 We also explain how to correct a small mistake in the exposition
in~\cite[Section~6]{Pfp}.

\subsection{Underived tensor products in the relative context}
\label{underived-tensor-subsecn}
 We start with a sequence of lemmas extending
Lemma~\ref{pro-torsion-complexes-tensor-exactness} to the relative
situation.

\begin{lem} \label{tensor-with-inverse-pro-lemma}
 Let\/ $\pi\:\bY\rarrow\X$ be an affine morphism of ind-schemes.
 Let\/ $\fF^\bu\in\sC(\X\flat)$ be a complex of flat pro-quasi-coherent
pro-sheaves on\/ $\X$ and\/ $\bfG^\bu$ be a complex of\/ $\X$\+flat
pro-quasi-coherent pro-sheaves on\/ $\bY$ which is acyclic as
a complex in\/ $\bY_\X\flat$.
 Then the complex\/ $\pi^*\fF^\bu\ot^\bY\bfG^\bu$ of\/ $\X$\+flat
pro-quasi-coherent pro-sheaves on\/ $\bY$ is also acyclic as
a complex in\/ $\bY_\X\flat$.
\end{lem}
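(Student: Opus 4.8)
The plan is to reduce the assertion to the corresponding statement for pro-sheaves on the base $\X$, which is already available as Lemma~\ref{pro-torsion-complexes-tensor-exactness}(a), by pushing everything forward along~$\pi$.

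First I would check that the complex $\pi^*\fF^\bu\ot^\bY\bfG^\bu$ really is a complex in $\bY_\X\flat$, so that acyclicity in this exact category makes sense. For any morphism of ind-schemes the inverse image functor $\pi^*$ takes $\X\flat$ into $\bY\flat$ (Section~\ref{flat-pro-sheaves-subsecn}), so $\pi^*\fF^\bu$ is a complex of flat pro-quasi-coherent pro-sheaves on~$\bY$. Since $\bfG^\bu$ is a complex of $\X$\+flat pro-sheaves, Lemma~\ref{flat-pro-over-ind-scheme-tensor-product} (applied to the entries of the totalized bicomplex) shows that $\pi^*\fF^\bu\ot^\bY\bfG^\bu$ is indeed a complex in $\bY_\X\flat$.

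The heart of the argument is the following chain. By Lemma~\ref{flat-over-base-direct-image-acyclicity-criterion}, a complex in $\bY_\X\flat$ is acyclic if and only if its direct image under~$\pi_*$ is acyclic in $\X\flat$; so it suffices to prove that $\pi_*(\pi^*\fF^\bu\ot^\bY\bfG^\bu)$ is acyclic in $\X\flat$. Here I would invoke the projection formula~\eqref{pro-sheaves-projection-formula-eqn}, which for the affine morphism~$\pi$ gives a natural isomorphism $\pi_*(\pi^*\fP\ot^\bY\fQ)\simeq\fP\ot^\X\pi_*\fQ$ for single pro-sheaves. Because $\pi_*$ preserves coproducts (Section~\ref{colimits-of-pro-sheaves-subsecn}) and both tensor products of complexes are formed by coproduct totalization of bicomplexes, this isomorphism is compatible with the totalizations and the differentials, yielding a natural isomorphism of complexes $\pi_*(\pi^*\fF^\bu\ot^\bY\bfG^\bu)\simeq\fF^\bu\ot^\X\pi_*\bfG^\bu$ in $\sC(\X\flat)$.

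It then remains to observe that $\pi_*\bfG^\bu$ is acyclic in $\X\flat$: this is again Lemma~\ref{flat-over-base-direct-image-acyclicity-criterion}, now applied to the acyclic complex $\bfG^\bu$. Since $\fF^\bu$ is a complex of flat pro-sheaves on~$\X$ and the tensor structure on $\X\pro$ is commutative, Lemma~\ref{pro-torsion-complexes-tensor-exactness}(a) (with the two arguments interchanged) shows that $\fF^\bu\ot^\X\pi_*\bfG^\bu$ is acyclic in $\X\flat$. Transporting this back along the isomorphism above completes the proof. The only point requiring genuine care is the passage from the single-object projection formula to an isomorphism of complexes, but this is routine given that $\pi_*$ commutes with the coproducts used in the totalization, so no real obstacle arises. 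An alternative, slightly more hands-on route would bypass $\pi_*$ and instead restrict to each closed subscheme $\bnY_\gamma=X_\gamma\times_\X\bY$ via Lemma~\ref{flat-over-base-component-acyclicity-criterion}, reducing to the exactness statement for a single affine morphism of schemes; but the direct-image argument is cleaner.
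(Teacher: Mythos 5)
Your proposal is correct and follows essentially the same route as the paper's own proof: reduce via Lemma~\ref{flat-over-base-direct-image-acyclicity-criterion} to acyclicity of the direct image, apply the projection formula~\eqref{pro-sheaves-projection-formula-eqn} to identify $\pi_*(\pi^*\fF^\bu\ot^\bY\bfG^\bu)$ with $\fF^\bu\ot^\X\pi_*\bfG^\bu$, and conclude by Lemma~\ref{pro-torsion-complexes-tensor-exactness}(a). Your extra care about compatibility of the projection formula with the coproduct totalizations, and about swapping the arguments using commutativity, only makes explicit what the paper leaves implicit.
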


\begin{proof}
 First of all, $\pi^*\fF^\bu$ is a complex of flat pro-quasi-coherent
pro-sheaves on $\bY$ according to the discussion in
Section~\ref{flat-pro-sheaves-subsecn}, hence
$\pi^*\fF^\bu\ot^\bY\bfG^\bu$ is a complex of $\X$\+flat
pro-quasi-coherent pro-sheaves on $\bY$ by
Lemma~\ref{flat-pro-over-ind-scheme-tensor-product}.
 (Notice that the full subcategory of $\X$\+flat pro-quasi-coherent
pro-sheaves is closed under direct limits, and in particular, under
coproducts in $\bY\pro$, as one can see from the discussion in
Section~\ref{colimits-of-pro-sheaves-subsecn}.)

 By Lemma~\ref{flat-over-base-direct-image-acyclicity-criterion}, in
order to show that the complex $\pi^*\fF^\bu\ot^\bY\bfG^\bu$ is
acyclic in $\bY_\X\flat$, it suffices to check that the complex
$\pi_*(\pi^*\fF^\bu\ot^\bY\bfG^\bu)$ is acyclic in $\X\flat$.
 By the projection
formula~\eqref{pro-sheaves-projection-formula-eqn} from
Section~\ref{pro-sheaves-inverse-direct-subsecn}, we have
$\pi_*(\pi^*\fF^\bu\ot^\bY\bfG^\bu)\simeq\fF^\bu\ot^\X\pi_*\bfG^\bu$.
 Again by Lemma~\ref{flat-over-base-direct-image-acyclicity-criterion},
the complex $\pi_*\bfG^\bu$ is acyclic in $\X\flat$.
 It remains to refer to
Lemma~\ref{pro-torsion-complexes-tensor-exactness}(a).
\end{proof}

 Similarly one can prove that the complex $\pi^*\fF^\bu\ot^\bY\bfG^\bu$
is acyclic in $\bY_\X\flat$ whenever a complex $\fF^\bu$ is acyclic
in $\X\flat$ and $\bfG^\bu\in\sC(\bY_\X\flat)$ is an arbitrary complex.

 The next lemma is another version of projection formula for
the action of pro-quasi-coherent pro-sheaves in quasi-coherent
torsion sheaves (cf.\
Lemma~\ref{torsion-pro-projection-formula-lemma}).

\begin{lem} \label{torsion-pro-projection-formula-second}
 Let $f\:\Y\rarrow\X$ be an affine morphism of reasonable ind-schemes.
 Let\/ $\fP$ be a pro-quasi-coherent pro-sheaf on\/ $\X$ and
$\N$ be a quasi-coherent torsion sheaf on\/~$\Y$.
 Then there is a natural isomorphism
$$
 \fP\ot_\X f_*\N\simeq f_*(f^*\fP\ot_\Y\N)
$$
of quasi-coherent torsion sheaves on\/~$\X$.
\end{lem}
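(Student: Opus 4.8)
The plan is to mimic the proof of Lemma~\ref{torsion-pro-projection-formula-lemma}: reduce the asserted isomorphism of quasi-coherent torsion sheaves on $\X$ to the ordinary projection formula for schemes (Lemma~\ref{projection-formula}) by passing through $\Gamma$\+systems and the functors $({-})^+$. To set up notation, I would fix a representation $\X=\ilim_{\gamma\in\Gamma}X_\gamma$ of $\X$ by an inductive system of closed immersions of reasonable closed subschemes, put $Y_\gamma=X_\gamma\times_\X\Y$, and denote by $f_\gamma\:Y_\gamma\rarrow X_\gamma$ the induced affine morphisms of schemes.

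First I would establish the statement at the level of $\Gamma$\+systems. Unwinding the definitions of the functors involved---the direct image of torsion sheaves (Section~\ref{torsion-direct-images-subsecn}), the inverse image of pro-sheaves (Section~\ref{pro-sheaves-inverse-direct-subsecn}), and the action of pro-sheaves on $\Gamma$\+systems (Section~\ref{pro-in-torsion-action-subsecn})---the $\gamma$\+component of the $\Gamma$\+system $\fP\ot_\X(f_*\N)|_\Gamma$ on $\X$ is $\fP^{(X_\gamma)}\ot_{\cO_{X_\gamma}}f_\gamma{}_*(\N_{(Y_\gamma)})$. Applying the ordinary projection formula for the affine morphism $f_\gamma$ (Lemma~\ref{projection-formula}), together with the identification $f_\gamma^*\fP^{(X_\gamma)}=(f^*\fP)^{(Y_\gamma)}$ coming from the definition of $f^*$ on pro-sheaves, this component is naturally isomorphic to $f_\gamma{}_*\bigl((f^*\fP)^{(Y_\gamma)}\ot_{\cO_{Y_\gamma}}\N_{(Y_\gamma)}\bigr)$, which is exactly the $\gamma$\+component of the direct image of $\Gamma$\+systems $f_*(f^*\fP\ot_\Y\N|_\Gamma)$. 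The one thing requiring verification is that these componentwise isomorphisms commute with the transition (structure) morphisms of the two $\Gamma$\+systems; this is routine and amounts to the naturality of the projection-formula isomorphism.

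Then I would apply the functor $({-})^+$ to the resulting isomorphism of $\Gamma$\+systems on $\X$. On the left-hand side, $(\fP\ot_\X(f_*\N)|_\Gamma)^+=\fP\ot_\X f_*\N$ by the very definition of the action $\ot_\X\:\X\pro\times\X\tors\rarrow\X\tors$. On the right-hand side, I would invoke Lemma~\ref{representable-by-schemes-direct-image}(b), asserting that the direct image of $\Gamma$\+systems commutes with $({-})^+$, to get $\bigl(f_*(f^*\fP\ot_\Y\N|_\Gamma)\bigr)^+\simeq f_*\bigl((f^*\fP\ot_\Y\N|_\Gamma)^+\bigr)=f_*(f^*\fP\ot_\Y\N)$, the last equality being again the definition of $\ot_\Y$. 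Concatenating these identifications produces the desired natural isomorphism $\fP\ot_\X f_*\N\simeq f_*(f^*\fP\ot_\Y\N)$.

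I do not anticipate a serious obstacle here: the argument is entirely formal once the four functors are correctly unraveled. The only delicate point is the bookkeeping---checking that the componentwise projection-formula isomorphisms assemble into a genuine morphism of $\Gamma$\+systems, after which the passage to torsion sheaves via $({-})^+$ is forced by the definitions and Lemma~\ref{representable-by-schemes-direct-image}(b). Naturality in both $\fP$ and $\N$ follows from the naturality of each ingredient used.
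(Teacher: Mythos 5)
Your proposal is correct and follows essentially the same route as the paper: both arguments reduce the statement to the scheme-level projection formula (Lemma~\ref{projection-formula}) applied to the affine morphisms $f_\gamma\:Y_\gamma\rarrow X_\gamma$ at the level of $\Gamma$-systems, and then pass to quasi-coherent torsion sheaves using the definitions of $\ot_\X$, $\ot_\Y$ and the commutation of $f_*$ with $({-})^+$ from Lemma~\ref{representable-by-schemes-direct-image}(b). The only organizational difference is that the paper first constructs the comparison morphism $\fP\ot_\X f_*\N\rarrow f_*(f^*\fP\ot_\Y\N)$ globally by adjunction (using Lemma~\ref{inverse-images-preserve-torsion-pro-tensor}), which makes the transition-morphism compatibility you flag as "the only delicate point" automatic, and only afterwards verifies componentwise that it is an isomorphism.
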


\begin{proof}
 The argument is similar to (but simpler than) the proof of
Lemma~\ref{torsion-pro-projection-formula-lemma}.
 The natural morphism
\begin{equation} \label{torsion-pro-second-projection-morphism}
 \fP\ot_\X f_*\N\lrarrow f_*(f^*\fP\ot_\Y\N)
\end{equation}
is adjoint to the composition $f^*(\fP\ot_\X f_*\N)\simeq
f^*\fP\ot_\Y f^*f_*\N\rarrow f^*\fP\ot_\Y\N$ of the isomorphism
$f^*(\fP\ot_\X f_*\N)\simeq f^*\fP\ot_\Y f^*f_*\N$ provided by
Lemma~\ref{inverse-images-preserve-torsion-pro-tensor} and
the morphism $f^*\fP\ot_\Y f^*f_*\N\rarrow f^*\fP\ot_\Y\N$
induced by the adjunction morphism $f^*f_*\N\rarrow\N$.

 In the notation of Section~\ref{semiderived-subsecn}, we consider
$\Gamma$\+systems of quasi-coherent sheaves on $\X$ and on~$\Y$.
 Similarly to the above one constructs, for any $\Gamma$\+system $\boN$
on $\Y$, a natural morphism of $\Gamma$\+systems $\fP\ot_\X f_*\boN
\rarrow f_*(f^*\fP\ot_\Y\boN)$, which is an isomorphism essentially
by Lemma~\ref{projection-formula}.

 To show that~\eqref{torsion-pro-second-projection-morphism} is
an isomorphism, it remains to compute
\begin{multline*}
 \fP\ot_\X f_*\N\simeq\fP\ot_\X f_*((\N|_\Gamma)^+)\simeq
 \fP\ot_\X (f_*(\N|_\Gamma))^+ \\ \simeq (\fP\ot_\X f_*(\N|_\Gamma))^+
 \simeq (f_*(f^*\fP\ot_\Y\N|_\Gamma))^+ \simeq
 f_*((f^*\fP\ot_\Y\N|_\Gamma)^+) \\
 \simeq f_*(f^*\fP\ot_\Y(\N_\Gamma)^+)\simeq f_*(f^*\fP\ot_\Y\N),
\end{multline*}
using the definitions of the functors $\ot_\X\:\X\pro\times\X\tors
\rarrow\X\tors$ and $\ot_\Y\:\Y\pro\times\Y\tors\rarrow\Y\tors$,
and also Lemma~\ref{representable-by-schemes-direct-image}(b).
 The point is that the direct image and tensor product functors
in question commute with the functors~$({-})^+$.
\end{proof}

\begin{lem} \label{inverse-of-pro-tensor-with-torsion}
 Let\/ $\pi\:\bY\rarrow\X$ be an affine morphism of reasonable
ind-schemes.
 Let\/ $\fF^\bu\in\sC(\X\flat)$ be a complex of flat pro-quasi-coherent
pro-sheaves on\/ $\X$ and $\bN^\bu$ be a complex of quasi-coherent
torsion sheaves on\/ $\bY$ such that the complex of quasi-coherent
torsion sheaves\/ $\pi_*\bN^\bu$ on\/ $\X$ is coacyclic.
 Then the complex\/ $\pi^*\fF^\bu\ot_\bY\bN^\bu$ of quasi-coherent
torsion sheaves on\/ $\bY$ also has the property that its direct image\/
$\pi_*(\pi^*\fF^\bu\ot_\bY\bN^\bu)$ is a coacyclic complex of
quasi-coherent torsion sheaves on\/~$\X$.
\end{lem}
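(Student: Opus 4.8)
The plan is to reduce the assertion to Lemma~\ref{pro-torsion-complexes-tensor-exactness}(b) by means of the projection formula. First I would observe that $\pi^*\fF^\bu$ is a complex of flat pro-quasi-coherent pro-sheaves on $\bY$, since the inverse image functor $\pi^*\:\X\flat\rarrow\bY\flat$ takes flat pro-sheaves to flat pro-sheaves (Section~\ref{flat-pro-sheaves-subsecn}); consequently $\pi^*\fF^\bu\ot_\bY\bN^\bu$ is a well-defined complex of quasi-coherent torsion sheaves on $\bY$, constructed as the coproduct totalization of the bicomplex $\pi^*\fF^p\ot_\bY\bN^q$.

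The key step is to establish a natural isomorphism of complexes of quasi-coherent torsion sheaves on $\X$
$$
 \pi_*(\pi^*\fF^\bu\ot_\bY\bN^\bu)\simeq\fF^\bu\ot_\X\pi_*\bN^\bu.
$$
By Lemma~\ref{torsion-pro-projection-formula-second}, applied to the pro-quasi-coherent pro-sheaf $\fF^p$ on $\X$ and the quasi-coherent torsion sheaf $\bN^q$ on $\bY$, there are natural isomorphisms $\pi_*(\pi^*\fF^p\ot_\bY\bN^q)\simeq\fF^p\ot_\X\pi_*\bN^q$ in $\X\tors$ for all $p$, $q\in\boZ$. The complex $\pi^*\fF^\bu\ot_\bY\bN^\bu$ is the totalization of the bicomplex $\pi^*\fF^p\ot_\bY\bN^q$ using coproducts along the diagonals; since the direct image functor $\pi_*\:\bY\tors\rarrow\X\tors$ preserves coproducts by Lemma~\ref{representable-by-schemes-direct-image}(a), applying $\pi_*$ commutes with this totalization, and the outcome is the coproduct totalization of the bicomplex $\fF^p\ot_\X\pi_*\bN^q$. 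Recalling that the action functor $\ot_\X\:\X\pro\times\X\tors\rarrow\X\tors$ also preserves coproducts (Section~\ref{colimits-of-pro-sheaves-subsecn}), this totalization is precisely $\fF^\bu\ot_\X\pi_*\bN^\bu$, yielding the displayed isomorphism.

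Finally, since $\pi_*\bN^\bu$ is coacyclic in $\X\tors$ by hypothesis and $\fF^\bu\in\sC(\X\flat)$, Lemma~\ref{pro-torsion-complexes-tensor-exactness}(b) shows that $\fF^\bu\ot_\X\pi_*\bN^\bu$ is coacyclic in $\X\tors$. Combined with the isomorphism of the previous paragraph, this proves that $\pi_*(\pi^*\fF^\bu\ot_\bY\bN^\bu)$ is coacyclic, as desired.

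The only point requiring genuine care — the main, if modest, obstacle — is verifying that $\pi_*$ commutes with the coproduct totalization of the bicomplex, i.e.\ that the termwise projection-formula isomorphisms of Lemma~\ref{torsion-pro-projection-formula-second} assemble into an isomorphism of total complexes. This is exactly where the preservation of coproducts by both $\pi_*$ and $\ot_\X$ enters; once that is recorded, the coacyclicity statement follows formally from the already-established Lemma~\ref{pro-torsion-complexes-tensor-exactness}(b).
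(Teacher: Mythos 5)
Your proof is correct and follows essentially the same route as the paper's: the projection formula of Lemma~\ref{torsion-pro-projection-formula-second} combined with preservation of coproducts by~$\pi_*$ (Lemma~\ref{representable-by-schemes-direct-image}(a)) gives the isomorphism $\pi_*(\pi^*\fF^\bu\ot_\bY\bN^\bu)\simeq\fF^\bu\ot_\X\pi_*\bN^\bu$, after which Lemma~\ref{pro-torsion-complexes-tensor-exactness}(b) finishes the argument. Your added care about the termwise isomorphisms assembling into an isomorphism of coproduct totalizations is exactly the point the paper's parenthetical remark about coproducts is meant to cover.
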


\begin{proof}
 By Lemma~\ref{torsion-pro-projection-formula-second}, we have
$\pi_*(\pi^*\fF^\bu\ot_\bY\bN^\bu)\simeq
\fF^\bu\ot_\bY\pi_*\bN^\bu$ (recall that the direct image functor
$\pi_*\:\bY\tors\rarrow\X\tors$ preserves coproducts by
Lemma~\ref{representable-by-schemes-direct-image}(a)).
 So it remains to refer to
Lemma~\ref{pro-torsion-complexes-tensor-exactness}(b).
\end{proof}

 Similarly one can prove, assuming that $\X$ is an ind-Noetherian
ind-scheme and using
Lemma~\ref{pro-torsion-complexes-tensor-exactness}(c),
that $\pi_*(\pi^*\fF^\bu\ot_\bY\bN^\bu)$ is a coacyclic complex
of quasi-coherent torsion sheaves on $\X$ whenever $\fF^\bu$ is
an acyclic complex in $\X\flat$ and $\bN^\bu\in\sC(\bY\tors)$ is
an arbitrary complex.

\begin{lem} \label{pro-tensor-with-inverse-of-torsion}
 Let\/ $\X$ be an ind-Noetherian ind-scheme, and let\/
$\pi\:\bY\rarrow\X$ be an affine morphism of ind-schemes.
 Let $\rM^\bu\in\sC(\X\tors)$ be a complex of quasi-coherent torsion
sheaves on\/ $\X$ and\/ $\bfG^\bu$ be a complex of\/ $\X$\+flat
pro-quasi-coherent pro-sheaves on\/ $\bY$ which is acyclic as
a complex in\/ $\bY_\X\flat$.
 Then the complex\/ $\bfG^\bu\ot_\bY\pi^*\rM^\bu$ of quasi-coherent
torsion sheaves on\/ $\bY$ has the property that its direct image\/
$\pi_*(\bfG^\bu\ot_\bY\pi^*\rM^\bu)$ is a coacyclic complex of
quasi-coherent torsion sheaves on\/~$\X$.
\end{lem}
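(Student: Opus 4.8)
The argument is parallel to the proof of Lemma~\ref{inverse-of-pro-tensor-with-torsion}, with the roles of the two acyclicity hypotheses interchanged; the essential difference is that it invokes Lemma~\ref{pro-torsion-complexes-tensor-exactness}(c) in place of part~(b), which is exactly why the ind-Noetherian assumption on~$\X$ is now required. The plan is first to rewrite the direct image of the tensor product by means of the projection formula of Lemma~\ref{torsion-pro-projection-formula-lemma}. Since $\X$ is ind-Noetherian it is reasonable, and $\bY$ is reasonable because $\pi$ is affine (hence representable by schemes), so that lemma applies: taking the torsion sheaf $\rM$ on~$\X$ and the pro-quasi-coherent pro-sheaf $\bfG$ on~$\bY$, it provides a natural isomorphism
\begin{equation*}
 \rM\ot_\X\pi_*\bfG\,\simeq\,\pi_*(\pi^*\rM\ot_\bY\bfG)
 \,=\,\pi_*(\bfG\ot_\bY\pi^*\rM)
\end{equation*}
of quasi-coherent torsion sheaves on~$\X$. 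All the functors occurring here preserve coproducts---the direct image functor $\pi_*$ by Lemma~\ref{representable-by-schemes-direct-image}(a), and the tensor products by the discussion in Section~\ref{colimits-of-pro-sheaves-subsecn}---so this natural isomorphism extends to complexes after totalizing the relevant bicomplexes by coproducts along the diagonals, yielding a natural isomorphism $\pi_*(\bfG^\bu\ot_\bY\pi^*\rM^\bu)\simeq\rM^\bu\ot_\X\pi_*\bfG^\bu$ of complexes in $\X\tors$.

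It then remains to show that the right-hand side is coacyclic in $\X\tors$. By assumption $\bfG^\bu$ is acyclic as a complex in $\bY_\X\flat$, so Lemma~\ref{flat-over-base-direct-image-acyclicity-criterion} shows that $\pi_*\bfG^\bu$ is acyclic as a complex in $\X\flat$. Now I would apply Lemma~\ref{pro-torsion-complexes-tensor-exactness}(c), which is applicable precisely because $\X$ is ind-Noetherian: with $\fF^\bu=\pi_*\bfG^\bu$ (an acyclic complex in $\X\flat$) and the arbitrary complex $\rM^\bu$ in $\X\tors$, it gives that $\pi_*\bfG^\bu\ot_\X\rM^\bu=\rM^\bu\ot_\X\pi_*\bfG^\bu$ is coacyclic in $\X\tors$. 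Combined with the isomorphism of the previous paragraph, this proves that $\pi_*(\bfG^\bu\ot_\bY\pi^*\rM^\bu)$ is coacyclic, as desired.

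No step here presents a genuine obstacle; this lemma is a formal consequence of the projection formula together with Lemma~\ref{pro-torsion-complexes-tensor-exactness}(c). The only point requiring a little care is the passage of the projection-formula isomorphism from single sheaves to complexes, which is routine once the coproduct-preservation of the functors $\pi_*$, $\ot_\X$, and $\ot_\bY$ has been recorded.
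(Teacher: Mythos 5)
Your proposal is correct and follows essentially the same route as the paper's proof: apply the projection formula of Lemma~\ref{torsion-pro-projection-formula-lemma} to identify $\pi_*(\bfG^\bu\ot_\bY\pi^*\rM^\bu)$ with $\pi_*\bfG^\bu\ot_\X\rM^\bu$, then invoke Lemma~\ref{pro-torsion-complexes-tensor-exactness}(c). The only difference is that you spell out two steps the paper leaves implicit (the acyclicity of $\pi_*\bfG^\bu$ in $\X\flat$ via Lemma~\ref{flat-over-base-direct-image-acyclicity-criterion}, and the coproduct-preservation needed to pass the isomorphism to complexes), which is fine.
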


\begin{proof}
 By Lemma~\ref{torsion-pro-projection-formula-lemma}, we have
$\pi_*(\bfG^\bu\ot_\bY\pi^*\rM^\bu)\simeq\pi_*\bfG^\bu\ot_\X\rM^\bu$,
so it remains to refer to
Lemma~\ref{pro-torsion-complexes-tensor-exactness}(c).
\end{proof}

 Similarly one can prove, using 
Lemma~\ref{pro-torsion-complexes-tensor-exactness}(b), that
$\pi_*(\bfG^\bu\ot_\bY\pi^*\rM^\bu)$ is a coacyclic complex of
quasi-coherent torsion sheaves on $\X$ whenever $\rM^\bu$ is
a coacyclic complex in $\X\tors$ and $\bfG^\bu\in\sC(\bY_\X\flat)$
is an arbitrary complex.

 Let $\pi\:\bY\rarrow\X$ be an affine morphism of ind-schemes.
 Consider the functor of tensor product composed with inverse image
$$
 \pi^*({-})\ot^\bY{-}\,\:\sC(\X\flat)\times\sC(\bY_\X\flat)\lrarrow
 \sC(\bY_\X\flat).
$$
 It follows from Lemma~\ref{tensor-with-inverse-pro-lemma} and
the subsequent discussion that this functor descends to a triangulated
functor of two arguments
\begin{equation} \label{derived-lifted-flat-flat-tensor}
 \pi^*({-})\ot^\bY{-}\,\:\sD(\X\flat)\times\sD(\bY_\X\flat)\lrarrow
 \sD(\bY_\X\flat).
\end{equation}

 Let $\X$ be an ind-Noetherian ind-scheme, and let $\bY\rarrow\X$
be an affine morphism of schemes.
 Consider the two functors of tensor product composed with inverse
image
\begin{align*}
 \pi^*({-})\ot_\bY{-}\,\:\sC(\X\flat)\times\sC(\bY\tors)
 &\lrarrow\sC(\bY\tors), \\
 {-}\ot_\bY\pi^*({-})\:\sC(\bY_\X\flat)\times\sC(\X\tors)
 &\lrarrow\sC(\bY\tors).
\end{align*}
 It follows from Lemmas~\ref{inverse-of-pro-tensor-with-torsion}\+-%
\ref{pro-tensor-with-inverse-of-torsion} and the discussion that
these functors descend to triangulated functors of two arguments
\begin{align}
 \pi^*({-})\ot_\bY{-}\,\:\sD(\X\flat)\times\sD_\X^\si(\bY\tors)
 &\lrarrow\sD_\X^\si(\bY\tors),
 \label{derived-lifted-flat-torsion-tensor}\\
 {-}\ot_\bY\pi^*({-})\:\sD(\bY_\X\flat)\times\sD^\co(\X\tors)
 &\lrarrow\sD_\X^\si(\bY\tors).
 \label{derived-flat-lifted-torsion-tensor}
\end{align}

\subsection{Relatively homotopy flat resolutions}
\label{relatively-homotopy-flat-subsecn}
 Let $\pi\:\bY\rarrow\X$ be an affine morphism of reasonable
ind-schemes.
 Recall once again that, for any flat pro-quasi-coherent pro-sheaf
$\bfF$ on $\bY$ and any $\X$\+flat pro-quasi-coherent pro-sheaf
$\bfQ$ on $\bY$, the pro-quasi-coherent pro-sheaf $\bfF\ot^\bY\bfQ$
on $\bY$ is $\X$\+flat
(see Lemma~\ref{flat-pro-over-ind-scheme-tensor-product}).

 We will say that a complex $\bfF^\bu\in\sC(\bY\flat)$ of flat
pro-quasi-coherent pro-sheaves on $\bY$ is \emph{relatively
homotopy flat} if the following two conditions hold:
\begin{enumerate}
\renewcommand{\theenumi}{\roman{enumi}}
\item for any complex $\bfQ^\bu\in\sC(\bY_\X\flat)$ which is acyclic in
$\bY_\X\flat$, the complex $\bfF^\bu\ot^\bY\bfQ^\bu$ is acyclic in
$\bY_\X\flat$;
\item for any complex $\bN^\bu\in\sC(\bY\tors)$ such that the complex
$\pi_*\bN^\bu$ is coacyclic in $\X\tors$, the complex
$\pi_*(\bfF^\bu\ot_\bY\bN^\bu)$ is coacyclic in $\X\tors$.
\end{enumerate}

\begin{lem} \label{relatively-flat-complexes-lemma}
 Let\/ $\pi\:\bY\rarrow\X$ be an affine morphism of reasonable
ind-schemes.
 Then \par
\textup{(a)} the relatively homotopy flat complexes form a full
triangulated subcategory closed under coproducts in\/ $\sK(\bY\flat)$;
\par
\textup{(b)} for any complex\/ $\fP^\bu\in\sC(\X\flat)$ of flat
pro-quasi-coherent pro-sheaves on\/ $\X$, the complex\/
$\pi^*\fP^\bu\in\sC(\bY\flat)$ of flat pro-quasi-coherent pro-sheaves
on\/ $\bY$ is relatively homotopy flat.
\end{lem}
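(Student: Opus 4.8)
The plan is to recognize both defining conditions of relative homotopy flatness as preimages of thick subcategories under fixed triangulated functors, so that part~(a) becomes formal, while part~(b) is a direct citation of the two lemmas proved just above in Section~\ref{underived-tensor-subsecn}.

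For part~(a), I would first fix a complex $\bfQ^\bu\in\sC(\bY_\X\flat)$ and observe that the assignment $\bfF^\bu\longmapsto\bfF^\bu\ot^\bY\bfQ^\bu$ defines a triangulated functor $\sK(\bY\flat)\rarrow\sK(\bY_\X\flat)$; the target is correct because the tensor product of a flat pro-sheaf on $\bY$ with an $\X$\+flat pro-sheaf is again $\X$\+flat by Lemma~\ref{flat-pro-over-ind-scheme-tensor-product}. Similarly, fixing a complex $\bN^\bu\in\sC(\bY\tors)$, the assignment $\bfF^\bu\longmapsto\pi_*(\bfF^\bu\ot_\bY\bN^\bu)$ is a triangulated functor $\sK(\bY\flat)\rarrow\sK(\X\tors)$. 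Both functors preserve coproducts, since $\ot^\bY$, $\ot_\bY$, and $\pi_*$ all do (Section~\ref{colimits-of-pro-sheaves-subsecn}). Now the acyclic complexes in $\bY_\X\flat$ form a thick subcategory of $\sK(\bY_\X\flat)$ closed under coproducts (coproducts being exact in $\bY_\X\flat$, as admissibility is checked componentwise in $\bnW\qcoh$), and the coacyclic complexes in $\X\tors$ form, by definition, a coproduct-closed triangulated subcategory of $\sK(\X\tors)$. Since the preimage of a coproduct-closed thick subcategory under a coproduct-preserving triangulated functor is again a coproduct-closed triangulated subcategory, the class of $\bfF^\bu$ satisfying condition~(i) for a given $\bfQ^\bu$, and likewise the class satisfying condition~(ii) for a given $\bN^\bu$, are each triangulated subcategories of $\sK(\bY\flat)$ closed under coproducts.

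The relatively homotopy flat complexes are precisely the intersection of all these subcategories as $\bfQ^\bu$ and $\bN^\bu$ range over the relevant classes, and an arbitrary intersection of coproduct-closed triangulated subcategories is again of this kind; this would finish part~(a). Part~(b) is then immediate: for condition~(i), given $\fP^\bu\in\sC(\X\flat)$ and any complex $\bfQ^\bu\in\sC(\bY_\X\flat)$ acyclic in $\bY_\X\flat$, Lemma~\ref{tensor-with-inverse-pro-lemma} asserts exactly that $\pi^*\fP^\bu\ot^\bY\bfQ^\bu$ is acyclic in $\bY_\X\flat$; and for condition~(ii), given a complex $\bN^\bu\in\sC(\bY\tors)$ with $\pi_*\bN^\bu$ coacyclic in $\X\tors$, Lemma~\ref{inverse-of-pro-tensor-with-torsion} asserts exactly that $\pi_*(\pi^*\fP^\bu\ot_\bY\bN^\bu)$ is coacyclic. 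Hence $\pi^*\fP^\bu$ is relatively homotopy flat.

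Since the genuine analytic content was already absorbed into Lemmas~\ref{tensor-with-inverse-pro-lemma} and~\ref{inverse-of-pro-tensor-with-torsion}, I do not expect a real obstacle here. The only point demanding any care is the verification that tensoring within $\sK(\bY\flat)$ lands in $\sK(\bY_\X\flat)$, so that ``acyclicity in $\bY_\X\flat$'' is the meaningful target condition; this is exactly what Lemma~\ref{flat-pro-over-ind-scheme-tensor-product} supplies, and everything else is formal manipulation of thick subcategories and the already-established coproduct-preservation of the relevant functors.
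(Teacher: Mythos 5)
Your proposal is correct and follows essentially the same route as the paper: the paper likewise fixes $\bfQ^\bu$ and $\bN^\bu$, observes that the resulting tensor functors out of $\sK(\bY\flat)$ are triangulated and preserve coproducts into target classes (acyclic complexes in $\bY_\X\flat$, respectively complexes with coacyclic direct image) that are closed under coproducts, and deduces part~(b) directly from Lemmas~\ref{tensor-with-inverse-pro-lemma} and~\ref{inverse-of-pro-tensor-with-torsion}. Your packaging of the argument as taking preimages of coproduct-closed triangulated subcategories under coproduct-preserving triangulated functors, and intersecting, is only a cosmetic reformulation of the same proof (note the statement only requires a triangulated subcategory closed under coproducts, so your stronger claim of thickness for the acyclic complexes in $\bY_\X\flat$ is unnecessary and can be dropped).
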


\begin{proof}
 Part~(a): clearly, any complex in $\bY\flat$ which is homotopy
equivalent to a relatively homotopy flat complex is also relatively
homotopy flat.
 Furthermore, for any complex $\bfQ^\bu\in\sK(\bY_\X\flat)$,
the functor ${-}\ot^\bY\bfQ^\bu\:\sK(\bY\flat)\rarrow\sK(\bY_\X\flat)$
is a triangulated functor preserving coproducts (see
Section~\ref{colimits-of-pro-sheaves-subsecn}), and the class of
all short exact sequences (hence the class of all acyclic complexes)
in $\bY_\X\flat$ is closed under coproducts.
 Similarly, for any complex $\bN^\bu\in\sK(\bY\tors)$,
the functor ${-}\ot_\bY\bN^\bu\:\sK(\bY\flat)\rarrow\sK(\bY\tors)$
is a triangulated functor preserving coproducts, and the class all
complexes in $\bY\tors$ whose direct images are coacyclic in $\X\tors$
is closed under coproducts (by
Lemma~\ref{representable-by-schemes-direct-image}(a)).
 Therefore, the class of all relatively homotopy flat complexes
is closed under shifts, cones, and coproducts in $\sK(\bY\flat)$.

 Part~(b) follows immediately from
Lemmas~\ref{tensor-with-inverse-pro-lemma}
and~\ref{inverse-of-pro-tensor-with-torsion}.
\end{proof}

 The next two abstract category-theoretic lemmas are well-known.

\begin{lem} \label{bounded-above-total-complex-lemma}
 Let\/ $\sB$ be an additive category with countable coproducts, and
let\/ $\dotsb\rarrow B_2^\bu\rarrow B_1^\bu\rarrow B_0^\bu\rarrow0$ be
a bounded above complex of complexes in\/~$\sB$.
 Let $T^\bu$ be the total complex of the bicomplex $B^\bu_\bu$
constructed by taking infinite coproducts along the diagonals.
 Then the complex $T^\bu\in\sK(\sB)$ is homotopy equivalent to
a complex which can be obtained from the complexes $B_n^\bu$, \,$n\ge0$,
using the operations of shift, cone, and countable coproduct.
\end{lem}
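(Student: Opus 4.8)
The plan is to realize $T^\bu$ as the homotopy colimit of its partial totalizations, in the spirit of the telescope argument already used in Example~\ref{aleph-zero-dualizing-glueing}. Write the given bounded above complex of complexes as a bicomplex with columns $B_n^\bu$, $n\ge0$, and outer differentials $B_n^\bu\rarrow B_{n-1}^\bu$ lowering the index~$n$. For each $N\ge0$ let $T_{\le N}^\bu$ denote the total complex (formed by taking coproducts along the diagonals) of the sub-bicomplex spanned by the columns $B_0^\bu,\dotsc,B_N^\bu$. Since the outer differential lowers~$n$, these columns are closed under the total differential, so $T_{\le N}^\bu$ is a genuine subcomplex of $T^\bu$; and because the terms of the total complex are coproducts, each inclusion $T_{\le N-1}^\bu\hookrightarrow T_{\le N}^\bu$ is a termwise split monomorphism whose quotient is the shifted column $B_N^\bu[N]$ (up to the usual sign conventions).

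First I would handle the finite partial totalizations. The termwise split short exact sequence $0\rarrow T_{\le N-1}^\bu\rarrow T_{\le N}^\bu\rarrow B_N^\bu[N]\rarrow0$ is a distinguished triangle in $\sK(\sB)$, exhibiting $T_{\le N}^\bu$ as the cone of a morphism $B_N^\bu[N-1]\rarrow T_{\le N-1}^\bu$. Starting from $T_{\le0}^\bu=B_0^\bu$ and proceeding by induction on~$N$, this shows that every $T_{\le N}^\bu$ is obtained from $B_0^\bu,\dotsc,B_N^\bu$ using only shifts and cones.

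Next I would pass to the colimit. Since the transition maps $T_{\le N}^\bu\rarrow T_{\le N+1}^\bu$ are termwise split monomorphisms and $T^\bu=\varinjlim_N T_{\le N}^\bu$ is their union (computed degreewise), the telescope sequence
$$
 0\rarrow\bigoplus\nolimits_{N\ge0}T_{\le N}^\bu
 \xrightarrow{\ \id-\mathrm{shift}\ }
 \bigoplus\nolimits_{N\ge0}T_{\le N}^\bu\rarrow T^\bu\rarrow0
$$
is itself termwise split; note that only countable coproducts are involved, as required. Hence it is a distinguished triangle in $\sK(\sB)$, and so $T^\bu$ is homotopy equivalent to the cone of the endomorphism $\id-\mathrm{shift}$ of the countable coproduct $\bigoplus_{N}T_{\le N}^\bu$. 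Combining this with the previous paragraph, $T^\bu$ is built from the complexes $B_n^\bu$ by shift, cone, and countable coproduct, which is the assertion.

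The main obstacle will be the bookkeeping around termwise splitness. Unlike in Example~\ref{aleph-zero-dualizing-glueing}, where splitness of the telescope sequence was immediate from injectivity of the terms, here $\sB$ is an arbitrary additive category and I must verify directly that $\id-\mathrm{shift}$ is a degreewise split monomorphism with cokernel $T^\bu$. After choosing termwise complements $T_{\le N}^m\cong T_{\le N-1}^m\oplus C_N^m$, this reduces to the tautological splitting of the telescope of a sequence of split injections of objects of~$\sB$, with cokernel identified with $T^\bu=\bigoplus_N C_N^\bu$; once the complements are fixed the verification is routine. I would also need to pin down the shift and sign conventions in the first paragraph so that the subquotients are genuinely $B_N^\bu[N]$, but this is standard for totalizations of bicomplexes.
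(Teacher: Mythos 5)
Your proposal is correct and follows essentially the same route as the paper's own proof: your partial totalizations $T_{\le N}^\bu$ are exactly the complexes $C_N^\bu$ in the paper, and both arguments conclude by observing that the telescope sequence $0\rarrow\coprod_{N\ge0}C_N^\bu\rarrow\coprod_{N\ge0}C_N^\bu\rarrow T^\bu\rarrow0$ is termwise split, so that $T^\bu$ is homotopy equivalent to the cone of a map between countable coproducts of the~$C_N^\bu$. The only difference is that you spell out the inductive cone decomposition of the finite totalizations via the termwise split sequences $0\rarrow T_{\le N-1}^\bu\rarrow T_{\le N}^\bu\rarrow B_N^\bu[N]\rarrow0$, which the paper states as an observation without proof.
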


\begin{proof}
 Denote by $C_n^\bu$ the total complex of the finite complex of
complexes $B_n^\bu\rarrow B_{n-1}^\bu\rarrow\dotsb\rarrow B_1^\bu
\rarrow B_0^\bu$.
 Then there is a natural termwise split monomorphism of complexes
$C_n^\bu\rarrow T^\bu$ for every $n\ge0$, and the complex $T^\bu$
is the direct limit of the sequence of its termwise split
subcomplexes~$C_n^\bu$.
 Hence the telescope sequence $0\rarrow\coprod_{n\ge0}C_n^\bu
\rarrow\coprod_{n\ge0}C_n^\bu\rarrow T^\bu\rarrow0$ is a termwise
split short exact sequence of complexes in~$\sB$.
 It follows that the complex $T^\bu$ is homotopy equivalent to
the cone of the morphism of complexes $\coprod_{n\ge0}C_n^\bu
\rarrow\coprod_{n\ge0}C_n^\bu$.
 It remains to observe that every complex $C_n^\bu$ can be obtained
from the complexes $B_0^\bu$,~\dots, $B_n^\bu$ using the operations
of shift and cone finitely many times.
\end{proof}

\begin{lem} \label{bar-complex-lemma}
 Let\/ $\sA$ and $\sB$ be additive categories, $F\:\sA\rarrow\sB$
be a functor, and $G\:\sB\rarrow\sA$ be a functor right adjoint to~$F$.
 Denote the adjunction morphisms by $\mu\:FG\rarrow\Id_\sB$
and $\eta\:\Id_\sA\rarrow GF$.
 Then, for any object $B\in\sB$, there is a ``bar-complex'' in\/~$\sB$
$$
 \dotsb\rarrow B_n=(FG)^{n+1}(B)\rarrow\dotsb\rarrow
 FGFG(B)\rarrow FG(B)\rarrow B\rarrow0
$$
whose differential $d_n\:B_n\rarrow B_{n-1}$ is the alternating sum
of $n+1$ morphisms
$$
 d_n=\sum\nolimits_{i=0}^n(-1)^i\,
 (FG)^i\mu(FG)^{n-i}(B)\:(FG)^{n+1}(B)\lrarrow(FG)^n(B).
$$
 Applying the functor~$G$ to the above complex produces a contractible
complex in\/ $\sA$ with the contracting homotopy $h_n=\eta(GF)^nG(B)\:
(GF)^nG(B)\rarrow(GF)^{n+1}G(B)$.
\end{lem}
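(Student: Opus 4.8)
The statement is purely formal: it depends only on the adjunction data, namely the naturality of the counit $\mu\:FG\rarrow\Id_\sB$ and the unit $\eta\:\Id_\sA\rarrow GF$, together with the triangle identities $\mu F\cdot F\eta=\Id_F$ and $G\mu\cdot\eta G=\Id_G$. The plan is to recognize the displayed complex as the bar construction attached to the comonad $FG$ on $\sB$, and to establish its two properties by the classical face-map computation and an extra-degeneracy argument.

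First I would write $T=FG\:\sB\rarrow\sB$ and, for a fixed object $B\in\sB$, introduce the face morphisms $\partial_i=T^i\mu\,T^{n-i}(B)\:T^{n+1}(B)\rarrow T^n(B)$ for $0\le i\le n$, so that $d_n=\sum_{i=0}^n(-1)^i\partial_i$. The essential point is the collection of simplicial face identities $\partial_i\partial_j=\partial_{j-1}\partial_i$ for $i<j$: the two occurrences of $\mu$ appearing in $\partial_i$ and $\partial_j$ act on disjoint factors of $T^{n+1}$, so they commute past one another by naturality of~$\mu$. Granting these identities, the relation $d_{n-1}d_n=0$ follows from the standard reindexing of the double sum $\sum_{i,j}(-1)^{i+j}\partial_i\partial_j$, whose terms cancel in pairs; this is routine and I would only indicate it.

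For the contractibility after applying~$G$, I would use the identity $GT^k=(GF)^kG$ to identify $G(B_n)$ with $(GF)^{n+1}G(B)$ and the augmentation term $G(B)$ with $(GF)^0G(B)$, so that $h_n=\eta(GF)^nG(B)$ indeed raises degree by one (sending the term $(GF)^nG(B)$ to $(GF)^{n+1}G(B)$). The task is then to verify the contracting-homotopy identity $G(d_n)\,h_n+h_{n-1}\,G(d_{n-1})=\id$ on each term. This is again an extra-degeneracy computation: naturality of~$\eta$ lets each $h$ slide past the face maps $G\partial_i$, so that the sum telescopes, while the single surviving term is converted into the identity morphism of the term in question by the triangle identity $G\mu\cdot\eta G=\Id_G$. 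At the bottom of the complex this same triangle identity gives $G\mu(B)\circ\eta G(B)=\id_{G(B)}$.

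The main obstacle is not conceptual—everything is formal—but bookkeeping: keeping track of which copy of $F$ or $G$ each instance of $\mu$ or $\eta$ acts upon, getting the signs right in the alternating sums, and aligning the index of $h_n$ (which sends the degree $n-1$ term to the degree $n$ term, with $G(B)$ placed in degree~$-1$) with the indices of the differentials. I would carry out this sign-and-index verification once in detail and leave the remaining pairwise cancellations to the reader.
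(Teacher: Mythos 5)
Your proposal is correct and is essentially the paper's own argument: the paper's proof consists precisely of the remark that everything follows by a straightforward computation from the triangle identity $(G\mu)\circ(\eta G)=\id_{GFG}$, which is exactly the identity your extra-degeneracy argument hinges on (together with naturality of $\mu$ and~$\eta$). Your write-up merely organizes the same bookkeeping in the standard simplicial language of face identities and an extra degeneracy, which is a fine way to present it.
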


\begin{proof}
 The proof is a straightforward computation using the identity
$(G\mu)\circ(\eta G)=\id_{GFG}$.
\end{proof}

\begin{prop} \label{relatively-homotopy-flat-resolution}
 Let\/ $\pi\:\bY\rarrow\X$ be a flat affine morphism of reasonable
ind-schemes.
 Then for any complex\/ $\bfP^\bu\in\sC(\bY_\X\flat)$ of\/ $\X$\+flat
pro-quasi-coherent pro-sheaves on\/ $\bY$ there exists a relatively
homotopy flat complex\/ $\bfF^\bu$ of flat pro-quasi-coherent
pro-sheaves on\/ $\bY$ together with a morphism of complexes\/
$\bfF^\bu\rarrow\bfP^\bu$ whose cone is acyclic in\/ $\bY_\X\flat$.
\end{prop}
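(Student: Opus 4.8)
The plan is to construct $\bfF^\bu$ as the coproduct-totalization of a bar-resolution of $\bfP^\bu$ relative to the adjunction $(\pi^*,\pi_*)$. For a flat affine morphism $\pi\:\bY\rarrow\X$, the inverse image restricts to an exact functor $\pi^*\:\X\flat\rarrow\bY\flat\subset\bY_\X\flat$, which is left adjoint to the exact direct image $\pi_*\:\bY_\X\flat\rarrow\X\flat$ (Sections~\ref{flat-pro-sheaves-subsecn} and~\ref{pro-flat-over-base-subsecn}). Taking $F=\pi^*$ and $G=\pi_*$ in Lemma~\ref{bar-complex-lemma} and applying the construction termwise to the complex $\bfP^\bu$, I obtain a bicomplex with columns $\bfH_n^\bu=(\pi^*\pi_*)^{n+1}(\bfP^\bu)$, \ $n\ge0$, carrying the bar differential $\bfH_n^\bu\rarrow\bfH_{n-1}^\bu$ and the differential inherited from $\bfP^\bu$, together with an augmentation $\bfH_0^\bu\rarrow\bfP^\bu$. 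Since the bar direction is bounded above, I let $\bfF^\bu$ be the total complex formed by taking coproducts along the diagonals. As $\pi^*$ carries $\X\flat$ into $\bY\flat$ and the full subcategory $\bY\flat$ is closed under coproducts in $\bY\pro$ (Section~\ref{colimits-of-pro-sheaves-subsecn}), each $\bfH_n^\bu$ lies in $\sC(\bY\flat)$, hence so does $\bfF^\bu$; the augmentation induces a morphism $\bfF^\bu\rarrow\bfP^\bu$.

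To see that the cone of $\bfF^\bu\rarrow\bfP^\bu$ is acyclic in $\bY_\X\flat$, I would invoke Lemma~\ref{flat-over-base-direct-image-acyclicity-criterion} and check that $\pi_*$ of this cone is acyclic in $\X\flat$. The cone is, up to shift, the totalization of the augmented bicomplex $[\dotsb\rarrow\bfH_1^\bu\rarrow\bfH_0^\bu\rarrow\bfP^\bu]$, and since $\pi_*$ preserves coproducts (Section~\ref{colimits-of-pro-sheaves-subsecn}) it commutes with this totalization. For each fixed cohomological degree~$m$, the resulting column $\dotsb\rarrow\pi_*\bfH_1^m\rarrow\pi_*\bfH_0^m\rarrow\pi_*\bfP^m\rarrow0$ is precisely the complex obtained by applying $G=\pi_*$ to the bar-complex of the object $\bfP^m$, which is contractible by Lemma~\ref{bar-complex-lemma}. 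The contracting homotopy there is $h_n=\eta(GF)^nG$, which is natural in the object and therefore commutes with the differential coming from $\bfP^\bu$; hence these homotopies assemble into a contracting homotopy for the total complex, so $\pi_*$ of the cone is contractible, in particular acyclic in $\X\flat$.

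Finally, to verify that $\bfF^\bu$ is relatively homotopy flat, I would rewrite $\bfH_n^\bu=\pi^*\fG_n^\bu$ with $\fG_n^\bu=\pi_*(\pi^*\pi_*)^n(\bfP^\bu)\in\sC(\X\flat)$, noting that $\pi_*$ carries $\bY_\X\flat$ into $\X\flat$ by the definition of $\X$\+flatness and $\pi^*$ carries $\X\flat$ back into $\bY\flat$. By Lemma~\ref{relatively-flat-complexes-lemma}(b) each $\bfH_n^\bu$ is relatively homotopy flat. By Lemma~\ref{bounded-above-total-complex-lemma}, the total complex $\bfF^\bu$ of the bounded-above complex of complexes $(\bfH_n^\bu)_{n\ge0}$ is homotopy equivalent to a complex obtained from the $\bfH_n^\bu$ by the operations of shift, cone, and coproduct. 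Since the relatively homotopy flat complexes form a full triangulated subcategory of $\sK(\bY\flat)$ closed under coproducts by Lemma~\ref{relatively-flat-complexes-lemma}(a), it follows that $\bfF^\bu$ is relatively homotopy flat, completing the construction.

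I expect the main obstacle to be the bookkeeping in the second paragraph: confirming that the naturality of the bar contracting homotopy $h_n$ really makes it compatible with the internal differential of $\bfP^\bu$, with the correct sign conventions on the total complex, so that the degreewise contractions glue into a genuine contraction of the totalized complex rather than merely yielding acyclicity in each cohomological degree separately.
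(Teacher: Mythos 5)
Your proposal is correct and follows essentially the same route as the paper's proof: the bar-resolution along the adjunction $(\pi^*,\pi_*)$, totalized by coproducts along the diagonals, with acyclicity of the cone checked after applying $\pi_*$ via Lemma~\ref{flat-over-base-direct-image-acyclicity-criterion}, and relative homotopy flatness of $\bfF^\bu$ deduced from Lemmas~\ref{bounded-above-total-complex-lemma} and~\ref{relatively-flat-complexes-lemma} exactly as you do. The sign/naturality bookkeeping you flag at the end is handled in the paper by applying Lemma~\ref{bar-complex-lemma} directly with $\sA=\sC(\X\flat)$ and $\sB=\sC(\bY_\X\flat)$ taken to be categories of complexes, so that the contracting homotopy $h_n=\eta(GF)^nG(\bfP^\bu)$ is by construction a morphism of complexes (it commutes with the inner differential by naturality of~$\eta$), and the contraction of the totalized complex is then automatic.
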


\begin{proof}
 Notice first of all that $\bY\flat\subset\bY_\X\flat$, since
the morphism~$\pi$ assumed to be flat.
 In the context of Lemma~\ref{bar-complex-lemma}, put
$\sA=\sC(\X\flat)$, \ $\sB=\sC(\bY_\X\flat)$, \ $F=\pi^*\:\sC(\X\flat)
\rarrow\sC(\bY\flat)\subset\sC(\bY_\X\flat)$, and $G=\pi_*\:
\sC(\bY_\X\flat)\rarrow\sC(\X\flat)$.
 Applying the construction of the lemma to the object $B=\bfP^\bu
\in\sC(\bY_\X\flat)=\sB$, we obtain a bicomplex
$$
 \dotsb\rarrow(\pi^*\pi_*)^{n+1}(\bfP^\bu)\rarrow\dotsb\rarrow
 \pi^*\pi_*\pi^*\pi_*\bfP^\bu\rarrow\pi^*\pi_*\bfP^\bu\rarrow\bfP^\bu
 \rarrow0
$$
in $\bY_\X\flat$.
 The differentials are the alternating sums of the maps induced
by the adjunction morphism $\pi^*\pi_*\rarrow\Id$.

 Let $\bfF^\bu$ be the total complex of the truncated bicomplex
$\dotsb\rarrow\pi^*\pi_*\pi^*\pi_*\bfP^\bu\rarrow\pi^*\pi_*\bfP^\bu
\rarrow0$, constructed by taking infinite coproducts along
the diagonals.
 By the last assertion of Lemma~\ref{bar-complex-lemma}, the cone
$\bfH^\bu$ of the morphism $\bfF^\bu\rarrow\bfP^\bu$ is a complex
in $\bY_\X\flat$ which becomes contractible after applying
the direct image functor $\pi_*\:\bY_\X\flat\rarrow\X\flat$.
 By Lemma~\ref{flat-over-base-direct-image-acyclicity-criterion},
it follows that the complex $\bfH^\bu$ is acyclic in $\bY_\X\flat$,
as desired.

 On the other hand, by Lemma~\ref{bounded-above-total-complex-lemma},
the complex $\bfF^\bu$ can be obtained from the complexes
$(\pi^*\pi_*)^{n+1}(\bfP^\bu)$, \,$n\ge0$, using the operations of
shift, cone, countable coproduct, and the passage to a homotopy
equivalent complex.
 The complex $\pi_*(\pi^*\pi_*)^n(\bfP^\bu)$ is a complex of flat
pro-quasi-coherent pro-sheaves on~$\X$; hence, by
Lemma~\ref{relatively-flat-complexes-lemma}(b), the complex
$(\pi^*\pi_*)^{n+1}(\bfP^\bu)$ is a relatively homotopy flat complex
of flat pro-quasi-coherent pro-sheaves on~$\bY$.
 According to Lemma~\ref{relatively-flat-complexes-lemma}(a), it follows
that the complex $\bfF^\bu$ is relatively homotopy flat.
\end{proof}

 Now we have to work out the torsion sheaf side of the story.
 We will say that a complex $\brG^\bu\in\sC(\bY\tors)$ of quasi-coherent
torsion sheaves on $\bY$ is \emph{homotopy\/ $\bY/\X$\+flat} if,
for any complex $\bfP^\bu\in\sC(\bY_\X\flat)$ which is acyclic in
$\bY_\X\flat$, the complex $\bfP^\bu\ot_\bY\brG^\bu$ of quasi-coherent
torsion sheaves on $\bY$ has the property that its direct image
$\pi_*(\bfP^\bu\ot_\bY\brG^\bu)$ is coacyclic in $\X\tors$.

\begin{lem} \label{homotopy-Y/X-flat-complexes-lemma}
 Let\/ $\X$ be an ind-Noetherian ind-scheme, and let\/
$\pi\:\bY\rarrow\X$ be an affine morphism of ind-schemes.
 Then \par
\textup{(a)} the homotopy\/ $\bY/\X$\+flat complexes form a full
triangulated subcategory closed under coproducts in\/ $\sK(\bY\tors)$;
\par
\textup{(b)} for any complex $\rM^\bu\in\sC(\X\tors)$ of
quasi-coherent torsion sheaves on\/ $\X$, the complex\/
$\pi^*\rM^\bu\in\sC(\bY\tors)$ of quasi-coherent torsion sheaves
on\/ $\bY$ is homotopy\/ $\bY/\X$\+flat.
\end{lem}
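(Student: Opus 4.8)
The plan is to handle the two parts separately, obtaining part~(b) almost for free from the results already established and proving part~(a) by the same formal mechanism that was used for Lemma~\ref{relatively-flat-complexes-lemma}(a). For part~(b), I would unwind the definition: to say that $\pi^*\rM^\bu$ is homotopy $\bY/\X$-flat means precisely that $\pi_*(\bfP^\bu\ot_\bY\pi^*\rM^\bu)$ is coacyclic in $\X\tors$ for every complex $\bfP^\bu\in\sC(\bY_\X\flat)$ that is acyclic in $\bY_\X\flat$. This is exactly the conclusion of Lemma~\ref{pro-tensor-with-inverse-of-torsion}, applied with $\bfG^\bu=\bfP^\bu$; note that the ind-Noetherian hypothesis on $\X$ imposed here is the very hypothesis under which that lemma is stated. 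So part~(b) requires no additional argument beyond citing Lemma~\ref{pro-tensor-with-inverse-of-torsion}.

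For part~(a), I would fix a complex $\bfP^\bu\in\sC(\bY_\X\flat)$ acyclic in $\bY_\X\flat$ and study the endofunctor $\bfP^\bu\ot_\bY{-}\:\sK(\bY\tors)\rarrow\sK(\bY\tors)$. Tensoring with a fixed complex is a triangulated functor, and it preserves coproducts by the discussion in Section~\ref{colimits-of-pro-sheaves-subsecn} (the action functor $\ot_\bY$ preserves coproducts in each argument). Composing with $\pi_*\:\sK(\bY\tors)\rarrow\sK(\X\tors)$, which is triangulated and preserves coproducts by Lemma~\ref{representable-by-schemes-direct-image}(a), yields a coproduct-preserving triangulated functor $\brG^\bu\longmapsto\pi_*(\bfP^\bu\ot_\bY\brG^\bu)$. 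Since the coacyclic complexes form, by definition, a triangulated subcategory of $\sK(\X\tors)$ closed under coproducts, the class of those $\brG^\bu$ for which $\pi_*(\bfP^\bu\ot_\bY\brG^\bu)$ is coacyclic is the preimage of this subcategory, hence is itself a triangulated subcategory of $\sK(\bY\tors)$ closed under coproducts (and visibly closed under homotopy equivalence). The homotopy $\bY/\X$-flat complexes are the intersection of these classes as $\bfP^\bu$ ranges over all such acyclic complexes, and an intersection of triangulated subcategories closed under coproducts retains both properties; this establishes part~(a).

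I do not expect a genuine obstacle, since all the substantive content is already packaged in the cited results. The only point deserving a moment's care is the purely bookkeeping verification that tensoring with a fixed complex and then applying $\pi_*$ really does descend to a triangulated, coproduct-preserving functor between the homotopy categories, so that the standard ``preimage of a coproduct-closed triangulated subcategory'' mechanism applies; this is routine given the preservation properties in Section~\ref{colimits-of-pro-sheaves-subsecn} and Lemma~\ref{representable-by-schemes-direct-image}(a). One should also remember that coacyclicity is a homotopy-invariant notion, which is what allows the whole argument to be carried out at the level of $\sK$ rather than $\sC$.
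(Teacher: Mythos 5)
Your proposal is correct and follows essentially the same route as the paper: part~(b) is exactly the citation of Lemma~\ref{pro-tensor-with-inverse-of-torsion}, and part~(a) is the same argument the paper gives, namely that $\bfP^\bu\ot_\bY{-}$ followed by $\pi_*$ is a coproduct-preserving triangulated functor and the homotopy $\bY/\X$-flat complexes are the (intersection of) preimages of the coproduct-closed triangulated subcategory of coacyclic complexes. The only cosmetic difference is that you spell out the preimage mechanism that the paper leaves implicit.
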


\begin{proof}
 Part~(a): clearly, any complex in $\bY\tors$ which is homotopy
equivalent to a homotopy $\bY/\X$\+flat compex is also homotopy
$\bY/\X$\+flat.
 Furthermore, for any complex $\bfP^\bu\in\sK(\bY_\X\flat)$,
the functor $\bfP^\bu\ot_\bY{-}\,\:\sK(\bY\tors)\rarrow\sK(\bY\tors)$
is a triangulated functor preserving coproducts, and the class of
all complexes in $\bY\tors$ whose direct images are coacyclic in
$\X\tors$ is closed under coproducts.
 Therefore, the class of all homotopy $\bY/\X$\+flat complexes is
closed under shifts, cones, and coproducts in $\sK(\bY\tors)$.
 These arguments do not need the assumption that $\X$ is ind-Noetherian
yet.
 Part~(b) follows immediately from
Lemma~\ref{pro-tensor-with-inverse-of-torsion} (which depends on
the ind-Noetherianity assumption).
\end{proof}

\begin{prop} \label{homotopy-Y/X-flat-resolution}
 Let\/ $\X$ be an ind-Noetherian ind-scheme, and let\/
$\pi\:\bY\rarrow\X$ be an affine morphism of ind-schemes.
 Then for any complex\/ $\bN^\bu\in\sC(\bY\tors)$ of quasi-coherent
torsion sheaves on\/ $\bY$ there exists a homotopy\/ $\bY/\X$\+flat
complex\/ $\brG^\bu$ of quasi-coherent torsion sheaves on\/ $\bY$
together with a morphism of complexes\/ $\brG^\bu\rarrow\bN^\bu$ whose
cone has the property that its direct image is coacyclic in\/ $\X\tors$.
\end{prop}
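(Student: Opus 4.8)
The plan is to follow the proof of Proposition~\ref{relatively-homotopy-flat-resolution} almost verbatim, trading flat pro-quasi-coherent pro-sheaves on $\bY$ for quasi-coherent torsion sheaves and using the adjunction of Lemma~\ref{torsion-direct-inverse-adjunction}(b) in place of the pro-sheaf adjunction. First I would apply the bar-complex construction of Lemma~\ref{bar-complex-lemma} to the adjoint pair $F=\pi^*\:\sC(\X\tors)\rarrow\sC(\bY\tors)$ and $G=\pi_*\:\sC(\bY\tors)\rarrow\sC(\X\tors)$, taking the object $B=\bN^\bu\in\sC(\bY\tors)$. This produces a bar complex of complexes
\[
 \dotsb\rarrow(\pi^*\pi_*)^{n+1}(\bN^\bu)\rarrow\dotsb\rarrow
 \pi^*\pi_*\bN^\bu\rarrow\bN^\bu\rarrow0
\]
in $\bY\tors$, whose differentials are the alternating sums of the maps induced by the counit $\pi^*\pi_*\rarrow\Id$. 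I would then let $\brG^\bu$ be the coproduct-totalization (infinite coproducts along the diagonals) of the truncated bicomplex $\dotsb\rarrow\pi^*\pi_*\pi^*\pi_*\bN^\bu\rarrow\pi^*\pi_*\bN^\bu\rarrow0$, equipped with its natural augmentation $\brG^\bu\rarrow\bN^\bu$ coming from the counit $\pi^*\pi_*\bN^\bu\rarrow\bN^\bu$.

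Next I would show that the cone $\brH^\bu$ of $\brG^\bu\rarrow\bN^\bu$ has coacyclic direct image. Up to homotopy equivalence, $\brH^\bu$ is the coproduct-totalization of the full bar bicomplex. By the last assertion of Lemma~\ref{bar-complex-lemma}, applying $G=\pi_*$ to the bar complex yields a contractible complex of complexes in $\X\tors$, the outer contracting homotopy $h_n=\eta(\pi_*\pi^*)^n\pi_*(\bN^\bu)$ being a morphism of complexes. Since $\pi_*$ preserves coproducts (Lemma~\ref{representable-by-schemes-direct-image}(a)), the complex $\pi_*\brH^\bu$ is the coproduct-totalization of this contractible complex of complexes; as $h_n$ commutes with the inner differential, it totalizes to a contracting homotopy of $\pi_*\brH^\bu$. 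Hence $\pi_*\brH^\bu$ is contractible, and in particular coacyclic in $\X\tors$, since any contractible complex is homotopy equivalent to zero and coacyclic complexes are closed under such equivalence.

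Finally I would verify that $\brG^\bu$ is homotopy $\bY/\X$-flat. By Lemma~\ref{bounded-above-total-complex-lemma}, $\brG^\bu$ is homotopy equivalent to a complex obtained from the complexes $B_n=(\pi^*\pi_*)^{n+1}(\bN^\bu)$, $n\ge0$, using the operations of shift, cone, and countable coproduct. Each $B_n$ equals $\pi^*$ applied to the complex $\pi_*(\pi^*\pi_*)^n(\bN^\bu)$ of quasi-coherent torsion sheaves on $\X$, so it is homotopy $\bY/\X$-flat by Lemma~\ref{homotopy-Y/X-flat-complexes-lemma}(b); it is here that the ind-Noetherianity of $\X$ is used, via Lemma~\ref{pro-tensor-with-inverse-of-torsion}. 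Because the homotopy $\bY/\X$-flat complexes form a triangulated subcategory of $\sK(\bY\tors)$ closed under coproducts (Lemma~\ref{homotopy-Y/X-flat-complexes-lemma}(a)), the complex $\brG^\bu$ is homotopy $\bY/\X$-flat, which completes the argument. Note that, unlike Proposition~\ref{relatively-homotopy-flat-resolution}, flatness of $\pi$ is not needed, as everything here takes place on the torsion-sheaf side where $\pi^*$ and the adjunction are available for an arbitrary affine (hence representable-by-schemes) morphism.

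The main obstacle I expect is the single nonformal point in the second paragraph: confirming that $\pi_*\brH^\bu$ is genuinely contractible, i.e. that the outer contracting homotopy of Lemma~\ref{bar-complex-lemma} survives the coproduct-totalization. This rests on $\pi_*$ commuting with the coproducts used in the totalization and on the contracting homotopy being a chain map for the inner differential, so that it anticommutes correctly with the total differential. This is precisely the bookkeeping (with its attendant signs) that made the analogous step work in Proposition~\ref{relatively-homotopy-flat-resolution}, and I anticipate no new difficulty beyond tracking those signs.
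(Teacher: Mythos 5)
Your proposal is correct and coincides with the paper's own proof: the same bar-complex construction of Lemma~\ref{bar-complex-lemma} applied to the adjoint pair $(\pi^*,\pi_*)$ on $\sC(\X\tors)$ and $\sC(\bY\tors)$, the same coproduct-totalization of the truncated bar bicomplex, the same use of Lemma~\ref{bounded-above-total-complex-lemma} together with Lemma~\ref{homotopy-Y/X-flat-complexes-lemma}(a,b). The contractibility point you flag is handled exactly as you describe (the contracting homotopy of Lemma~\ref{bar-complex-lemma} survives totalization since $\pi_*$ preserves coproducts), and your observation that flatness of~$\pi$ is not needed here is also consistent with the paper.
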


\begin{proof}
 In the context of Lemma~\ref{bar-complex-lemma}, put
$\sA=\sC(\X\tors)$, \ $\sB=\sC(\bY\tors)$, \ $F=\pi^*\:\sC(\X\tors)
\rarrow\sC(\bY\tors)$, and $G=\pi_*\:\sC(\bY\tors)\rarrow\sC(\X\tors)$.
 Applying the construction of the lemma to the object $B=\bN^\bu
\in\sC(\bY\tors)=\sB$, we obtain a bicomplex
$$
 \dotsb\rarrow(\pi^*\pi_*)^{n+1}(\bN^\bu)\rarrow\dotsb\rarrow
 \pi^*\pi_*\pi^*\pi_*\bN^\bu\rarrow\pi^*\pi_*\bN^\bu\rarrow\bN^\bu
 \rarrow0
$$
in $\bY\tors$.
 The differentials are the alternating sums of the maps induced
by the adjunction morphism $\pi^*\pi_*\rarrow\Id$.

 Let $\brG^\bu$ be the total complex of the truncated bicomplex
$\dotsb\rarrow\pi^*\pi_*\pi^*\pi_*\bN^\bu\rarrow\pi^*\pi_*\bN^\bu
\rarrow0$, constructed by taking infinite coproducts along
the diagonals.
 By the last assertion of Lemma~\ref{bar-complex-lemma}, the cone
of the morphism $\brG^\bu\rarrow\bN^\bu$ is a complex in $\bY\tors$
which becomes contractible (hence coacyclic) after applying
the direct image functor $\pi_*\:\bY\tors\rarrow\X\tors$.

 On the other hand, by Lemma~\ref{bounded-above-total-complex-lemma},
the complex $\brG^\bu$ can be obtained from the complexes
$(\pi^*\pi_*)^{n+1}(\bN^\bu)$, \,$n\ge0$, using the operations of
shift, cone, countable coproduct, and the passage to a homotopy
equivalent complex.
 By Lemma~\ref{homotopy-Y/X-flat-complexes-lemma}(b), the complex
$(\pi^*\pi_*)^{n+1}(\bN^\bu)$ is a homotopy $\bY/\X$\+flat complex
of quasi-coherent torsion sheaves on~$\bY$.
 According to Lemma~\ref{homotopy-Y/X-flat-complexes-lemma}(a), it
follows that the complex $\brG^\bu$ is homotopy $\bY/\X$\+flat.
\end{proof}

\subsection{Left derived tensor products for pro-sheaves
flat over a base} \label{left-derived-tensor-products-subsecn}
 Let $\pi\:\bY\rarrow\X$ be a flat affine morphism of reasonable
ind-schemes.
 The left derived functor of tensor product of $\X$\+flat
pro-quasi-coherent pro-sheaves on~$\bY$
\begin{equation} \label{derived-tensor-of-pro-sheaves}
 \ot^{\bY,\boL}\:\sD(\bY_\X\flat)\times\sD(\bY_\X\flat)\lrarrow
 \sD(\bY_\X\flat)
\end{equation}
is constructed in the following way.

 Let $\bfP^\bu$ and $\bfQ^\bu\in\sC(\bY_\X\flat)$ be two complexes
of $\X$\+flat pro-quasi-coherent pro-sheaves on~$\bY$.
 Using Proposition~\ref{relatively-homotopy-flat-resolution}, choose
two morphisms of complexes of $\X$\+flat pro-quasi-coherent pro-sheaves
$\bfF^\bu\rarrow\bfP^\bu$ and $\bfG^\bu\rarrow\bfQ^\bu$ such that
the cones of both morphisms are acyclic in $\bY_\X\flat$, and
both the complexes $\bfF^\bu$ and $\bfG^\bu\in\sC(\bY\flat)$ are
relatively homotopy flat complexes of flat pro-quasi-coherent
pro-sheaves on~$\bY$.
 Then, by condition~(i) in the definition of a relatively homotopy
flat complex, both the induced morphisms
$$
 \bfP^\bu\ot^\bY\bfG^\bu\llarrow\bfF^\bu\ot^\bY\bfG^\bu\lrarrow
 \bfF^\bu\ot^\bY\bfQ^\bu
$$
have cones acyclic in $\bY_\X\flat$.
 So we put
$$
 \bfP^\bu\ot^{\bY,\boL}\bfQ^\bu=\bfF^\bu\ot^\bY\bfG^\bu\simeq
 \bfP^\bu\ot^\bY\bfG^\bu\simeq\bfF^\bu\ot^\bY\bfQ^\bu
 \,\in\,\sD(\bY_\X\flat).
$$

 Using the definition of a relatively homotopy flat complex again,
the complex $\bfP^\bu\ot^\bY\bfG^\bu$ is acyclic in $\bY_\X\flat$
whenever the complex $\bfP^\bu$ is acyclic in $\bY_\X\flat$, and
the complex $\bfF^\bu\ot^\bY\bfQ^\bu$ is acyclic in $\bY_\X\flat$
whenever the complex $\bfQ^\bu$ is.
 So the derived functor $\ot^{\bY,\boL}$ is well-defined.
 We refer to~\cite[Lemma~2.7]{Psemi} for an abstract formulation of
this kind of construction of balanced derived functors of two
arguments (which is applicable in a much more general context of
two-sided derived functors).

\begin{rem}
 If one is only interested in the derived functor $\ot^{\bY,\boL}$
defined above (and \emph{not} in the derived functor $\ot_\bY^\boL$,
which we will define immediately below), then one can harmlessly drop
condition~(ii) from the definition of a relatively homotopy flat
complex.
 Then the assumption that the ind-schemes $\X$ and $\bY$ are reasonable
is not needed in the above construction.
\end{rem}

\begin{rem}
 Let us emphasize that the underived tensor product $\bfP\ot^\bY\bfQ$
of two $\X$\+flat pro-quasi-coherent pro-sheaves $\bfP$ and $\bfQ$ on
$\bY$ need \emph{not} be $\X$\+flat.
 It is only the derived tensor product $\bfP^\bu\ot^\bY\bfQ^\bu$ of
two complexes of $\X$\+flat pro-quasi-coherent pro-sheaves $\bfP^\bu$
and $\bfQ^\bu$ that is well-defined as an object of the derived category
of $\X$\+flat pro-quasi-coherent pro-sheaves $\sD(\bY_\X\flat)$.
 This is the reason why we had to assume our relatively homotopy flat
complexes to be complexes of flat (and not just $\X$\+flat)
pro-quasi-coherent pro-sheaves in the definition given
in Section~\ref{relatively-homotopy-flat-subsecn}.

 This subtlety was overlooked in the exposition
in~\cite[Section~6]{Pfp}.
 In the context of~\cite{Pfp}, a commutative ring homomorphism
$A\rarrow R$ was considered, with the assumption that $R$ is a flat
$A$\+module.
 Then the tensor product of two $A$\+flat $R$\+modules, taken over~$R$,
need not be $A$\+flat.
 It is only the tensor product of an $R$\+flat $R$\+module and
an $A$\+flat $R$\+module that is always $A$\+flat.
 To correct the mistake, one needs to include the assumption of
termwise flatness over $R$ into the definition of a ``relatively
homotopy $R$\+flat complex'' in the proof
of~\cite[Proposition~6.3]{Pfp} and the formulation
of~\cite[Lemma~6.4]{Pfp}.
\end{rem}

 Let $\X$ be an ind-Noetherian ind-scheme, and let $\pi\:\bY\rarrow\X$
be a flat affine morphism of ind-schemes.
 The left derived functor of tensor product of $\X$\+flat
pro-quasi-coherent pro-sheaves and quasi-coherent torsion sheaves
on~$\bY$
\begin{equation} \label{derived-tensor-of-pro-and-torsion}
 \ot_\bY^\boL\:\sD(\bY_\X\flat)\times\sD_\X^\si(\bY\tors)\lrarrow
 \sD_\X^\si(\bY\tors)
\end{equation}
is constructed in the following way.

 Let $\bfP^\bu\in\sC(\bY_\X\flat)$ be a complex of $\X$\+flat
pro-quasi-coherent pro-sheaves and $\bN^\bu\in\sC(\bY\tors)$ be
a complex of quasi-coherent torsion sheaves on~$\bY$.
 Using Proposition~\ref{relatively-homotopy-flat-resolution},
choose a morphism of complexes of $\X$\+flat pro-quasi-coherent
pro-sheaves $\bfF^\bu\rarrow\bfP^\bu$ whose cone is acyclic in
$\bY_\X\flat$, while $\bfF^\bu\in\sC(\bY\flat)$ is a relatively
homotopy flat complex of flat pro-quasi-coherent pro-sheaves on~$\bY$.
 Using Proposition~\ref{homotopy-Y/X-flat-resolution}, choose
a morphism of complexes of quasi-coherent torsion sheaves $\brG^\bu
\rarrow\bN^\bu$ whose cone has the property that its direct image
is coacyclic in $\X\tors$, while $\brG^\bu\in\sC(\bY\tors)$ is
a homotopy $\bY/\X$\+flat complex of quasi-coherent torsion sheaves
on~$\bY$.
 Then, by condition~(ii) in the definition of a relatively homotopy
flat complex, and by the definition of a homotopy $\bY/\X$\+flat
complex, both the induced morphisms
$$
 \bfP^\bu\ot_\bY\brG^\bu\llarrow\bfF^\bu\ot_\bY\brG^\bu\lrarrow
 \bfF^\bu\ot_\bY\bN^\bu
$$
have cones whose direct images are coacyclic in $\X\tors$.
 So we put
$$
 \bfP^\bu\ot_\bY^\boL\bN^\bu=\bfF^\bu\ot_\bY\brG^\bu\simeq
 \bfP^\bu\ot_\bY\brG^\bu\simeq\bfF^\bu\ot_\bY\bN^\bu
 \,\in\,\sD_\X^\si(\bY\tors).
$$

 Using the definition of a homotopy $\bY/\X$\+flat complex again,
the complex $\pi_*(\bfP^\bu\ot_\bY\brG^\bu)$ is coacyclic in $\X\flat$
whenever the complex $\bfP^\bu$ is acyclic in $\bY_\X\flat$.
 Using condition~(ii) from the definition of a relatively homotopy flat
complex, the complex $\pi_*(\bfF^\bu\ot_\bY\bN^\bu)$ is coacyclic
in $\X\flat$ whenever the complex $\pi_*\bN^\bu$ is coacyclic in
$\X\flat$.
 Thus the derived functor~$\ot_\bY^\boL$ is well-defined.
 This construction of a derived functor of two arguments is also
a particular case of~\cite[Lemma~2.7]{Psemi}.

 The derived functor $\ot^{\bY,\boL}$
\,\eqref{derived-tensor-of-pro-sheaves} defines an (associative,
commutative, and unital) tensor triangulated category structure on
the derived category $\sD(\bY_\X\flat)$ of the exact category of
$\X$\+flat pro-quasi-coherent pro-sheaves on~$\bY$.
 The ``pro-structure pro-sheaf'' $\fO_\bY\in\bY\flat\subset
\bY_\X\flat\subset\sD(\bY_\X\flat)$ is the unit object.
 (Notice that the one-term complex $\fO_\bY$ is a relatively homotopy
flat complex of flat pro-quasi-coherent pro-sheaves on~$\bY$.)

 The derived functor~$\ot_\bY^\boL$
\,\eqref{derived-tensor-of-pro-and-torsion} defines a structure of
triangulated module category over the tensor triangulated category
$\sD(\bY_\X\flat)$ on the $\bY/\X$\+semiderived category
$\sD_\X^\si(\bY\tors)$ of quasi-coherent torsion sheaves on~$\bY$.

\subsection{Construction of semitensor product}
\label{construction-of-semitensor-subsecn}
 In this section, $\X$ is an ind-semi-separated ind-Noetherian
ind-scheme with a dualizing complex $\rD^\bu$, and $\pi\:\bY\rarrow\X$
is a flat affine morphism of ind-schemes.

 The following lemma may help the reader feel more comfortable.

\begin{lem} \label{relative-or-Y/X-homotopy-flatness-lemma}
\textup{(a)} In the assumptions above, condition~\textup{(ii)} from
the definition of a relatively homotopy flat complex of
flat pro-quasi-coherent pro-sheaves in
Section~\ref{relatively-homotopy-flat-subsecn} implies
condition~\textup{(i)}. \par
\textup{(b)} In the same assumptions, let\/ $\bfF^\bu$ be a relatively
homotopy flat complex of flat pro-quasi-coherent pro-sheaves on\/~$\bY$.
 Then the complex\/ $\pi^*\rD^\bu\ot_\bY\bfF^\bu$ of quasi-coherent
torsion sheaves on\/ $\bY$ is homotopy\/ $\bY/\X$\+flat.
\end{lem}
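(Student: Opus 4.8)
The plan is to reduce both assertions to the triangulated equivalence of Theorem~\ref{ind-Noetherian-triangulated-equivalence-thm}, using the projection formula of Lemma~\ref{torsion-pro-projection-formula-lemma} and the acyclicity criterion of Lemma~\ref{flat-over-base-direct-image-acyclicity-criterion} to pass freely between the exact category $\bY_\X\flat$ of $\X$\+flat pro-quasi-coherent pro-sheaves on $\bY$, the exact category $\X\flat$, and the coderived category $\sD^\co(\X\tors)$. The central mechanism is that a complex $\fG^\bu\in\sC(\X\flat)$ is acyclic in $\X\flat$ if and only if $\rD^\bu\ot_\X\fG^\bu$ is coacyclic in $\X\tors$: the ``only if'' is Lemma~\ref{pro-torsion-complexes-tensor-exactness}(c), and the ``if'' follows from the conservativity of the equivalence $\rD^\bu\ot_\X{-}\:\sD(\X\flat)\rarrow\sD^\co(\X\tors)$ of Theorem~\ref{ind-Noetherian-triangulated-equivalence-thm}. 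Combined with Lemma~\ref{flat-over-base-direct-image-acyclicity-criterion}, this translates the intrinsic condition~(i) on $\bY$ into a coacyclicity statement downstairs on $\X$ of exactly the shape occurring in condition~(ii).

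For part~(a), I would assume condition~(ii) for $\bfF^\bu$, fix a complex $\bfQ^\bu\in\sC(\bY_\X\flat)$ acyclic in $\bY_\X\flat$, and aim to prove $\bfF^\bu\ot^\bY\bfQ^\bu$ acyclic in $\bY_\X\flat$ (note first, via Lemma~\ref{flat-pro-over-ind-scheme-tensor-product}, that this tensor product is indeed $\X$\+flat). The idea is to produce an auxiliary torsion complex to which condition~(ii) applies: set $\bN^\bu=\pi^*\rD^\bu\ot_\bY\bfQ^\bu$. By Lemma~\ref{torsion-pro-projection-formula-lemma} one has $\pi_*\bN^\bu\simeq\rD^\bu\ot_\X\pi_*\bfQ^\bu$, and since $\pi_*\bfQ^\bu$ is acyclic in $\X\flat$ by Lemma~\ref{flat-over-base-direct-image-acyclicity-criterion}, Lemma~\ref{pro-torsion-complexes-tensor-exactness}(c) shows $\pi_*\bN^\bu$ is coacyclic. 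Then condition~(ii) gives that $\pi_*(\bfF^\bu\ot_\bY\bN^\bu)$ is coacyclic. The key algebraic step is the associativity and commutativity of the module-category structure, giving $\bfF^\bu\ot_\bY\bN^\bu\simeq\pi^*\rD^\bu\ot_\bY(\bfF^\bu\ot^\bY\bfQ^\bu)$, so a second application of the projection formula yields $\pi_*(\bfF^\bu\ot_\bY\bN^\bu)\simeq\rD^\bu\ot_\X\pi_*(\bfF^\bu\ot^\bY\bfQ^\bu)$. Its coacyclicity forces $\pi_*(\bfF^\bu\ot^\bY\bfQ^\bu)$ to be acyclic in $\X\flat$ by Theorem~\ref{ind-Noetherian-triangulated-equivalence-thm}, hence $\bfF^\bu\ot^\bY\bfQ^\bu$ is acyclic in $\bY_\X\flat$ by Lemma~\ref{flat-over-base-direct-image-acyclicity-criterion}, which is condition~(i).

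For part~(b), I would take $\bfF^\bu$ relatively homotopy flat and an arbitrary $\bfP^\bu\in\sC(\bY_\X\flat)$ acyclic in $\bY_\X\flat$, and verify the defining coacyclicity for $\pi^*\rD^\bu\ot_\bY\bfF^\bu$. Using the same associativity manipulation together with Lemma~\ref{torsion-pro-projection-formula-lemma}, I would rewrite $\pi_*\bigl(\bfP^\bu\ot_\bY(\pi^*\rD^\bu\ot_\bY\bfF^\bu)\bigr)\simeq\pi_*\bigl(\pi^*\rD^\bu\ot_\bY(\bfP^\bu\ot^\bY\bfF^\bu)\bigr)\simeq\rD^\bu\ot_\X\pi_*(\bfP^\bu\ot^\bY\bfF^\bu)$. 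Condition~(i) for the relatively homotopy flat complex $\bfF^\bu$ (available directly from the definition, and guaranteed in any case by part~(a)) shows $\bfP^\bu\ot^\bY\bfF^\bu$ is acyclic in $\bY_\X\flat$, so $\pi_*(\bfP^\bu\ot^\bY\bfF^\bu)$ is acyclic in $\X\flat$ by Lemma~\ref{flat-over-base-direct-image-acyclicity-criterion}, and Lemma~\ref{pro-torsion-complexes-tensor-exactness}(c) makes $\rD^\bu\ot_\X\pi_*(\bfP^\bu\ot^\bY\bfF^\bu)$ coacyclic in $\X\tors$, as required.

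The routine but delicate part, and the step I expect to demand the most care, is the bookkeeping of the three intertwined tensor operations ($\ot^\bY$ on pro-sheaves, the action $\ot_\bY$ on torsion sheaves, and $\ot_\X$ downstairs) and the verification that the single-object projection-formula and associativity isomorphisms pass to complexes through the coproduct totalizations without direction errors. No genuinely new idea is needed beyond the equivalence of Theorem~\ref{ind-Noetherian-triangulated-equivalence-thm}; the real content lies in choosing the auxiliary torsion complex $\bN^\bu=\pi^*\rD^\bu\ot_\bY\bfQ^\bu$ in part~(a) so that condition~(ii) becomes applicable.
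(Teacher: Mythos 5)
Your proof is correct and is essentially the paper's own argument: you choose the very same auxiliary torsion complex $\pi^*\rD^\bu\ot_\bY\bfQ^\bu$ to which condition~(ii) is applied, and the same associativity rewriting $\bfF^\bu\ot_\bY(\pi^*\rD^\bu\ot_\bY\bfQ^\bu)\simeq\pi^*\rD^\bu\ot_\bY(\bfF^\bu\ot^\bY\bfQ^\bu)$, with acyclicity detected through the equivalence induced by the dualizing complex. The only (cosmetic) difference is that where the paper quotes the relative equivalence of Theorem~\ref{relative-triangulated-equivalence-thm} directly, you unfold it into its proof ingredients---the projection formula of Lemma~\ref{torsion-pro-projection-formula-lemma} followed by the absolute equivalence of Theorem~\ref{ind-Noetherian-triangulated-equivalence-thm} over $\X$ and the conservativity of~$\pi_*$---which is precisely how that relative theorem is established in the paper.
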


\begin{proof}
 Both the assertions are based on the associativity property of
the action of the tensor category $\sC(\bY\pro)$ in its module category
$\sC(\bY\tors)$.

 Part~(a): let $\bfG^\bu\in\sC(\bY\flat)$ be a complex of flat
pro-quasi-coherent pro-sheaves on $\bY$ satisfying~(ii), and let
$\bfP^\bu\in\sC(\bY_\X\flat)$ be a complex of $\X$\+flat
pro-quasi-coherent pro-sheaves on $\bY$ which is acyclic
in $\bY_\X\flat$.
 Then, according to the proof of
Theorem~\ref{relative-triangulated-equivalence-thm} in
Section~\ref{relative-triangulated-equivalence-subsecn},
the complex of quasi-coherent torsion sheaves
$\pi^*\rD^\bu\ot_\bY\bfP^\bu$ has the property that its direct
image $\pi_*(\pi^*\rD^\bu\ot_\bY\bfP^\bu)$ is coacyclic (in fact,
contractible) in $\X\tors$.
 By condition~(ii), it follows that the complex
$$
 (\pi^*\rD^\bu\ot_\bY\bfP^\bu)\ot_\bY\bfG^\bu\simeq
 \pi^*\rD^\bu\ot_\bY(\bfP^\bu\ot^\bY\bfG^\bu)
$$
has the same property, i.~e., the compex
$\pi_*(\pi^*\rD^\bu\ot_\bY\bfP^\bu\ot^\bY\bfG^\bu)$ is
coacyclic in $\X\tors$.
 Now the assertion of
Theorem~\ref{relative-triangulated-equivalence-thm} implies
that the complex $\bfP^\bu\ot^\bY\bfG^\bu$ is acyclic in
$\bY_\X\flat$, since the corresponding object vanishes in
$\sD_\X^\si(\bY\tors)$.

 Part~(b): let $\bfQ^\bu\in\sC(\bY_\X\flat)$ a complex of
$\X$\+flat pro-quasi-coherent pro-sheaves on $\bY$ which is acyclic
in $\bY_\X\flat$.
 Then we have
$$
 (\pi^*\rD^\bu\ot_\bY\bfF^\bu)\ot_\bY\bfQ^\bu\simeq
 \pi^*\rD^\bu\ot_\bY(\bfF^\bu\ot^\bY\bfQ^\bu)
$$
in $\sC(\bY\tors)$.
 By condition~(i), the complex $\bfF^\bu\ot^\bY\bfQ^\bu$ is
acyclic in $\bY_\X\flat$.
 According to the proof of
Theorem~\ref{relative-triangulated-equivalence-thm}, it follows
that the complex $\pi_*(\pi^*\rD^\bu\ot_\bY\bfF^\bu\ot^\bY\bfQ^\bu)$
is coacyclic in $\X\tors$, as desired.
\end{proof}

 The triangulated equivalence $\pi^*\rD^\bu\ot_\bY{-}\,\:
\sD(\bY_\X\flat)\rarrow\sD_\X^\si(\bY\tors)$ is an equivalence of
module categories over the tensor category $\sD(\bY_\X\flat)$.
 Indeed, let $\bfP^\bu$ and $\bfQ^\bu$ be two complexes of $\X$\+flat
pro-quasi-coherent pro-sheaves on $\bY$, and let
$\bfF^\bu\rarrow\bfP^\bu$ and $\bfG^\bu\rarrow\bfQ^\bu$ be two
morphisms of complexes with the cones acyclic in $\bY_\X\flat$
and relatively homotopy flat complexes of flat pro-quasi-coherent
pro-sheaves $\bfF^\bu$ and $\bfG^\bu$ on~$\bY$.
 Then the desired natural isomorphism
$$
 (\bfP^\bu\ot^{\bY,\boL}\bfQ^\bu)\ot_\bY\pi^*\rD^\bu\,\simeq\,
 \bfP^\bu\ot_\bY^\boL(\bfQ^\bu\ot_\bY\pi^*\rD^\bu)
$$
in the semiderived category $\sD_\X^\si(\bY\tors)$ is represented
by any one of the associativity isomorphisms
\begin{align*}
 (\bfF^\bu\ot^\bY\bfQ^\bu)\ot_\bY\pi^*\rD^\bu &\,\simeq\,
 \bfF^\bu\ot_\bY(\bfQ^\bu\ot_\bY\pi^*\rD^\bu), \\
 (\bfP^\bu\ot^\bY\bfG^\bu)\ot_\bY\pi^*\rD^\bu &\,\simeq\,
 \bfP^\bu\ot_\bY(\bfG^\bu\ot_\bY\pi^*\rD^\bu),
\end{align*}
or
$$
 (\bfF^\bu\ot^\bY\bfG^\bu)\ot_\bY\pi^*\rD^\bu\,\simeq\,
 \bfF^\bu\ot_\bY(\bfG^\bu\ot_\bY\pi^*\rD^\bu)
$$
in the category of complexes $\sC(\bY\tors)$.
 Notice that the complex $\bfG^\bu\ot_\bY\pi^*\rD^\bu$ is homotopy
$\bY/\X$\+flat by
Lemma~\ref{relative-or-Y/X-homotopy-flatness-lemma}(b).

 Using the triangulated equivalence $\sD(\bY_\X\flat)\simeq
\sD_\X^\si(\bY\tors)$, we transfer the tensor structrue of the category
$\sD(\bY_\X\flat)$ to the category $\sD_\X^\si(\X\tors)$.
 The resulting functor
\begin{equation} \label{semiderived-torsion-semitensor-product}
 \os_{\pi^*\rD^\bu}\:\sD_\X^\si(\bY\tors)\times
 \sD_\X^\si(\bY\tors)\lrarrow\sD_\X^\si(\bY\tors),
\end{equation}
defining a tensor triangulated category structure on
the semiderived category $\sD_\X^\si(\bY\tors)$, is called
the \emph{semitensor product} of complexes of quasi-coherent torsion
sheaves on $\bY$ over the inverse image $\pi^*\rD^\bu$ of
the dualizing complex~$\rD^\bu$.
 The object $\pi^*\rD^\bu\in\sD_\X^\si(\bY\tors)$ is the unit object
of the tensor structure~$\os_{\pi^*\rD^\bu}$ on $\sD_\X^\si(\bY\tors)$,
since $\pi^*\rD^\bu$ corresponds to the unit object
$\fO_\bY\in\sD(\bY_\X\flat)$ under the equivalence of categories
$\sD(\bY_\X\flat)\simeq\sD_\X^\si(\bY\tors)$.

 Explicitly, let $\bM^\bu$ and $\bN^\bu\in\sK(\bY\tors)$ be two
complexes endowed with morphisms $\bM^\bu\rarrow\bK^\bu$ and
$\bN^\bu\rarrow\bJ^\bu$ into complexes $\bK^\bu$ and $\bJ^\bu\in
\sK(\bY\tors_{\X\dinj})$ with cones whose direct images are
coacyclic in $\X\tors$.
 Then one has
\begin{multline*}
 \bM^\bu\os_{\pi^*\rD^\bu}\bN^\bu=
 \pi^*\rD^\bu\ot_\bY(\fHom_{\bY\qc}(\pi^*\rD^\bu,\bK^\bu)
 \ot^{\bY,\boL}\fHom_{\bY\qc}(\pi^*\rD^\bu,\bJ^\bu)) \\ \simeq
 \fHom_{\bY\qc}(\pi^*\rD^\bu,\bK^\bu)\ot_\bY^\boL\bN^\bu \simeq
 \fHom_{\bY\qc}(\pi^*\rD^\bu,\bJ^\bu)\ot_\bY^\boL\bM^\bu.
\end{multline*}
 Let $\brF^\bu\rarrow\bM^\bu$ and $\brG^\bu\rarrow\bN^\bu$ be two
morphisms in $\sK(\bY\tors)$ whose cones' direct images are coacyclic
in $\X\tors$, while $\brF^\bu$ and $\brG^\bu$ are homotopy
$\bY/\X$\+flat complexes of quasi-coherent torsion sheaves on~$\bY$.
 Then the semiderived category object
$\bM^\bu\os_{\pi^*\rD^\bu}\bN^\bu\in\sD_\X^\si(\bY\tors)$ is
represented by any one of the two complexes
$$
 \fHom_{\bY\qc}(\pi^*\rD^\bu,\bK^\bu)\ot_\bY\brG^\bu
 \quad\text{or}\quad
 \brF^\bu\ot_\bY\fHom_{\bY\qc}(\pi^*\rD^\bu,\bJ^\bu)
$$
of quasi-coherent torsion sheaves on~$\bY$.

\subsection{The ind-Artinian base example}
\label{ind-artinian-base-subsecn}
 In this section we discuss flat affine morphisms of ind-schemes
$\pi\:\bY\rarrow\X$, where $\X$ is an ind-Artinian ind-scheme of
ind-finite type over a field (as in
Examples~\ref{cotensor-over-coalgebra}).

\smallskip
 (1)~Let $\rC$ be a coassociative coalgebra over a field~$\kk$.
 A (\emph{semiassociative, semiunital}) \emph{semialgebra} $\bS$ over
$\rC$ is defined as an associative, unital algebra object in
the (associative, unital, noncommutative) tensor category of
$\rC$\+$\rC$\+\emph{bicomodules}.

 Let $\bS$ be a semialgebra over~$\rC$.
 A \emph{left semimodule} over $\bS$ is defined as a module object in
the module category of left $\rC$\+comodules over the algebra object
$\bS$ in the tensor category of $\rC$\+$\rC$\+bicomodules.
 \emph{Right semimodules} are defined similarly~\cite[Sections~0.3.2
and~1.3.1]{Psemi}, \cite[Section~2.6]{Prev}.

 The category of left $\bS$\+semimodules, which we will denote by
$\bS\simodl$, is abelian whenever $\bS$ is an injective
right $\rC$\+comodule.
 In this case, $\bS\simodl$ is a Grothendieck abelian category.
 The category of right $\bS$\+semimodules is denoted by $\simodr\bS$.

\smallskip
 (2)~We are interested in \emph{semicommutative} semialgebras, which
are a particular case of~(1).
 Let $\rC$ be a cocommutative coalgebra over~$\kk$.
 Then, following Example~\ref{cotensor-over-coalgebra}(2),
the category of $\rC$\+comodules $\rC\comodl$ is an associative,
commutative, and unital tensor category with respect to the cotensor
product operation~$\oc_\rC$.
 In fact, $\rC\comodl$ is a tensor subcategory in the tensor category
of $\rC$\+$\rC$\+bicomodules (consisting of those bicomodules in
which the left and right $\rC$\+coactions agree).

 A \emph{semicommutative semialgebra} $\bS$ over $\rC$ is defined as
a commutative (associative, and unital) algebra object in the tensor
category $\rC\comodl$.
 In other words, $\bS$ is an $\rC$\+comodule endowed with
a \emph{semiunit map} $\rC\rarrow\bS$ and a \emph{semimultiplication
map} $\bS\oc_\rC\bS\rarrow\bS$.
 Both the maps must be $\rC$\+comodule morphisms, and the usual
associativity, commutativity, and unitality equations should be 
satisfied.

 Over a semicommutative semialgebra $\bS$, there is no difference
between left and right semimodules.

\smallskip
 (3)~More specifically, we are interested in \emph{$\rC$\+injective}
semicommutative semialgebras $\bS$ over~$\rC$.
 So the underlying $\rC$\+comodule of $\bS$ is assumed to be injective.
 Injective $\rC$\+comodules form a tensor subcategory in $\rC\comodl$.

 For a cocommutative coalgebra $\rC$, the category of
$\rC$\+contramodules is also naturally an (associative, commutative,
and unital) tensor category with respect to the operation of
\emph{contramodule tensor product} $\ot^\rC=\ot^{\rC^*}$
\cite[Section~1.6]{Pweak}.
 The free $\rC$\+contramodule with one generator $\rC^*$ is
the unit object.
 The full subcategory of projective $\rC$\+contramodules
$\rC\contra_\proj\subset\rC\contra$ is a tensor subcategory.

 The equivalence between the additive categories of injective
$\rC$\+comodules and projective $\rC$\+contramodules
$\rC\comodl_\inj\simeq\rC\contra_\proj$
(see formula~\eqref{coalgebra-underived-co-contra-correspondence}
in Example~\ref{cotensor-over-coalgebra}(6)) is an equivalence of
tensor categories.

\smallskip
 (4)~Let $\X=\Spi\rC^*$ be the ind-Artinian ind-scheme corresponding
to the coalgebra $\rC$, as in Examples~\ref{coalgebra-ind-scheme}(2),
\ref{ind-Artinian-ind-schemes}(2), and~\ref{cotensor-over-coalgebra}(5).
 According to Example~\ref{cotensor-over-coalgebra}(6), there is
an equivalence of additive categories $\X\flat\simeq\rC\contra_\proj$.
 Moreover, similarly to
Example~\ref{countable-flat-pro-sheaves-flat-contramodules}(4)
(cf.\ Example~\ref{general-ind-Artinian-ind-scheme}),
this is an equivalence of tensor categories.

 We are interested in transformations of commutative algebra objects
under the equivalences of tensor categories above.
 Let $\bS$ be a $\rC$\+injective semicommutative semialgebra over
$\rC$, let $\bfS$ be the corresponding commutative algebra object in
the tensor category $\rC\contra_\proj$, and let $\bfA$ be
the corresponding commutative algebra object in the tensor category
$\X\flat$.
 Then the anti-equivalence of categories from
Proposition~\ref{ind-schemes-affine-morphisms-pro-qcoh-algebras}
assigns to $\bfA$ an ind-scheme $\bY=\Spi_\X\bfA$ together with a flat
affine morphism of ind-schemes $\pi\:\bY\rarrow\X$.
 Let us describe the ind-scheme $\bY$ and the morphism~$\pi$
more explicitly. 

 For any algebra object $\fS$ in the tensor category $\rC\contra$,
precomposing the multiplication morphism $\fS\ot^{\rC^*}\fS\rarrow\fS$
with the natural map $\fS\ot_{\rC^*}\fS\rarrow\fS\ot^{\rC^*}\fS$ allows
to define the underlying $\rC^*$\+algebra structure on~$\fS$.
 Moreover, any projective $\rC$\+contramodule $\fF$ has a natural
underlying structure of a complete, separated topological module
over a topological ring~$\rC^*$; for a free $\rC$\+contramodule
$\fF=\Hom_\kk(\rC,V)$ spanned by a $\kk$\+vector space $V$, this is
the usual topology on the Hom space of infinite-dimensional vector
spaces.
 For an algebra object $\fS$ in the tensor category $\rC\contra_\proj$,
this topology makes $\fS$ a complete, separated topological ring with
a base of neighborhoods of zero formed by open (two-sided) ideals.
 The unit morphism of $\fS$ provides a continuous ring homomorphism
$\rC^*\rarrow\fS$.

 In the situation at hand, $\bfS=\Hom_\rC(\rC,\bS)=
\Hom_{\rC^*}(\rC,\bS)$ is the $\kk$\+vector space of $\rC$\+comodule
(or equivalently, $\rC^*$\+module) homomorphisms $\rC\rarrow\bS$.
 The topology on $\bfS$ is the \emph{finite topology} of the Hom
module: the annihilators of finite-dimensional subspaces (equivalently,
finite-dimensional subcoalgebras) in $\rC$ form a base of neighborhoods
of zero in $\Hom_\rC(\rC,\bS)$.
 The semiunit map $\rC\rarrow\bS$ induces the unit map
$\rC^*\simeq\Hom_\rC(\rC,\rC)\rarrow\Hom_\rC(\rC,\bS)$, which is
a continuous ring homomorphism $\rC^*\rarrow\bfS$.
 Then one has $\bY=\Spi\bfS$ in the notation of
Example~\ref{topological-ring-ind-scheme}(1); the flat affine morphism
$\pi\:\bY\rarrow\X$ corresponds to the homomorphism of topological
rings $\rC^*\rarrow\bfS$.

 For any cocommutative coalgebra $\rC$ over~$\kk$ and the corresponding
ind-Artinian ind-scheme $\X=\Spi\rC^*$, we have constructed a natural
anti-equivalence between the category of $\rC$\+injective
semicommutative semalgebras $\bS$ over $\rC$ and the category of
ind-schemes $\bY$ endowed with a flat affine morphism $\bY\rarrow\X$.

\smallskip
 (5)~We keep the notation of~(4).
 According to
Proposition~\ref{pro-torsion-modules-via-pro-algebras-described}(b),
the abelian category $\bY\tors$ is equivalent to the category of
module objects in the module category $\X\tors$ over the algebra
object $\bfA$ in the tensor category $\X\flat$.
 Following Section~\ref{torsion-ind-affine-subsecn}(4), we have
an equivalence of categories $\X\tors\simeq\rC\comodl$.
 Hence the category $\bY\tors$ is equivalent to the category of
module objects in the module category $\rC\comodl$ over
the algebra object $\bfS$ in the tensor category $\rC\contra_\proj$.
 Here the action of $\rC\contra_\proj$ in $\rC\comodl$ is given
by the contratensor product functor~$\ocn_\rC$ (see the last paragraph
of Example~\ref{cotensor-over-coalgebra}(6)).

 The equivalence of tensor categories $\rC\comodl_\inj\simeq
\rC\contra_\proj$ transforms the action of $\rC\comodl_\inj$ in
$\rC\comodl$ (by the cotensor product) into the action of
$\rC\contra_\proj$ in $\rC\comodl$ (by the contratensor product).
 This is clear from the associativity isomorphism connecting
the cotensor and contratensor products~\cite[Proposition~5.2.1]{Psemi},
\cite[Proposition~3.1.1]{Prev}, which was already mentioned in
Example~\ref{cotensor-over-coalgebra}(8).
 Taken together, the constructions above combine into a natural
equivalence between the abelian category of quasi-coherent torsion
sheaves on $\bY$ and the abelian category of $\bS$\+semimodules,
$\bY\tors\simeq\bS\simodl$.

 A quasi-coherent torsion sheaf on $\bY$ is $\X$\+injective if and
only if the corresponding $\bS$\+semimodule is injective \emph{as
a $\rC$\+comodule}.
 We will denote the full subcategory of semimodules whose underlying
comodules are injective by $\bS\simodl_{\rC\dinj}\subset\bS\simodl$.
 So the equivalence of abelian categories $\bY\tors\simeq\bS\simodl$
restricts to an equivalence of full subcategories $\bY\tors_{\X\dinj}
\simeq\bS\simodl_{\rC\dinj}$.
 The latter one is obviously an equivalence of exact categories (with
the exact category structures inherited from the ambient abelian
categories).

\smallskip
 (6)~According to
Proposition~\ref{pro-torsion-modules-via-pro-algebras-described}(a)
and the discussion in Section~\ref{pro-flat-over-base-subsecn},
the exact category $\bY_\X\flat$ of $\X$\+flat pro-quasi-coherent
pro-sheaves on $\bY$ is equivalent to the exact category of module
objects over the algebra object $\bfA$ in the tensor category $\X\flat$.
 As the tensor category $\X\flat$ is equivalent to the tensor category
$\rC\contra_\proj$ by~(4), it follows that the exact category
$\bY_\X\flat$ is equivalent to the exact category of module objects
over the algebra object $\bfS$ in the tensor category $\rC\contra_\proj$.
 Here the exact structure on $\X\flat\simeq\rC\contra_\proj$ is split,
but the exact structure on the category of module objects is not;
rather, a short sequence of module objects is exact if and only if it
becomes split exact after the module structures are forgotten.

 One can see that specifying a module structure over $\bfS$ on
a given projective $\rC$\+contramodule $\fF$ is equivalent to
specifying a \emph{$\bS$\+semicontramodule} structure on $\fF$
(see~\cite[Sections~0.3.5 and~3.3.1]{Psemi}
or~\cite[Section~2.6]{Prev} for the definition).
 So the exact category $\bY_\X\flat$ is equivalent to the exact
category of $\rC$\+projective $\bS$\+semicontramodules.
 Semicontramodules over an $\rC$\+injective semialgebra $\bS$ form
an abelian category $\bS\sicntr$; we will denote the full subcategory
of $\rC$\+injective semicontramodules by
$\bS\sicntr_{\rC\dproj}\subset\bS\sicntr$.
 The exact structure on $\bS\sicntr_{\rC\dproj}$ is inherited from
the abelian category $\bS\sicntr$.
 So we have an equivalence of exact categories
$\bY_\X\flat\simeq\bS\sicntr_{\rC\dproj}$.

\smallskip
 (7)~It is explained in~\cite[Section~10.3]{PS} that the datum of
an $\bS$\+semicontramodule structure on a given vector space (or
$\rC$\+contramodule) is equivalent to the datum of a contramodule
structure over the topological ring $\bfS$, that is $\bS\sicntr\simeq
\bfS\contra$.
 It follows that the exact category $\bY_\X\flat$ is equivalent to
the exact category of $\bfS$\+contramodules which are projective
\emph{as contramodules over the topological ring\/ $\rC^*$}, i.~e.,
$\bY_\X\flat\simeq\bfS\contra_{\rC^*\dproj}$.
 The latter equivalence restricts to an equivalence between the category
of flat pro-quasi-coherent pro-sheaves on $\bY$ and the category of flat
$\bfS$\+contramodules (in the sense of~\cite[Section~2]{Pproperf}),
$\bY\flat\simeq\bfS\flat$.

 Furthermore, the datum of an $\bS$\+semimodule structure on a given
vector space (or $\rC$\+comodule) is equivalent to the datum of
a discrete $\bfS$\+module structure, $\bS\simodl\simeq\bfS\discr$
\,\cite[Remark~10.9]{PS}.
 Consequently, we have a natural equivalence of abelian categories
$\bY\tors\simeq\bfS\discr$.

 The equivalences of categories $\bY\flat\simeq\bfS\flat$ and
$\bY_\X\flat\simeq\bfS\contra_{\rC^*\dproj}$ from~(7) transform
the tensor product functor $\ot^\bY\:\bY\flat\times\bY_\X\flat\rarrow
\bY_\X\flat$ into the contramodule tensor product functor
$\ot^\bfS\:\bfS\contra\times\bfS\contra\rarrow\bfS\contra$ (as defined
in~\cite[Section~1.6]{Pweak}), restricted to the full subcategories
$\bfS\flat\subset\bfS\contra_{\rC^*\dproj}\subset\bfS\contra$.
 The equivalences of categories $\bY_\X\flat\simeq
\bfS\contra_{\rC^*\dproj}$ and $\bY\tors\simeq\bfS\discr$ from~(7)
transform the tensor product functor $\ot_\bY\:\bY_\X\flat\times
\bY\tors\rarrow\bY\tors$ into the contratensor product functor
$\ocn_\bfS\:\bfS\contra\times\bfS\discr\rarrow\bfS\discr$ restricted
to $\bfS\contra_{\rC^*\dproj}\subset\bfS\contra$ (see the discussion
and references in
Example~\ref{countable-flat-pro-sheaves-flat-contramodules}(3)).

 The equivalences of categories $\bY_\X\flat\simeq
\bS\sicntr_{\rC\dproj}$ and $\bY\tors\simeq\bS\simodl$ from~(6)
and~(5) transform the tensor product functor $\ot_\bY\:\bY_\X\flat
\times\bY\tors\rarrow\bY\tors$ into the functor of
\emph{contratensor product of semimodules and semicontramodules}
$\Ocn_\bS\:\bS\sicntr\times\bS\simodl\rarrow\bS\simodl$ (constructed
in~\cite[Sections~0.3.7 and~6.1]{Psemi}), restricted to the full
subcategory $\bS\sicntr_{\rC\dproj}\subset\bS\sicntr$.

\smallskip
 (8)~As explained in Example~\ref{cotensor-over-coalgebra}(5),
the $\rC$\+comodule $\rC$ corresponds to a one-term dualizing complex
of injective quasi-coherent torsion sheaves $\rD^\bu=\rC$ on
the ind-Artinian ind-scheme $\X=\Spi\rC^*$.

 Following the proof of
Theorem~\ref{relative-triangulated-equivalence-thm} specialized to
the particular case of a one-term dualizing complex of injectives
$\rD^\bu=\rC$ on $\X$, one can see that there is an equivalence
between the exact categories of $\X$\+injective quasi-coherent
torsion sheaves and $\X$\+flat pro-quasi-coherent pro-sheaves on $\bY$,
provided by the mutually inverse functors
$\fHom_{\bY\qc}(\pi^*\rC,{-})$ and $\pi^*\rC\ot_\bY{-}$,
\begin{equation} \label{over-base-injective-flat-correspondence}
 \fHom_{\bY\qc}(\pi^*\rC,{-})\:\bY\tors_{\X\dinj}
 \,\simeq\,\bY_\X\flat\,:\!\pi^*\rC\ot_\bY{-}.
\end{equation}

 The equivalence of exact categories $\bY\tors_{\X\dinj}\simeq
\bS\simodl_{\rC\dinj}$ and $\bY_\X\flat\simeq\bS\sicntr_{\rC\dproj}$
from items~(5) and~(6) form a commutative square diagram with
the equivalence~\eqref{over-base-injective-flat-correspondence}
and the equivalence of exact categories
\begin{equation} \label{over-semialgebra-underived-co-contra}
 \Hom_\bS(\bS,{-})\:\bS\simodl_{\rC\dinj}\,\simeq\,
 \bS\sicntr_{\rC\dproj}\,:\!\bS\Ocn_\bS{-},
\end{equation}
which was constructed in~\cite[Sections~0.3.7 and~6.2]{Psemi}
and discussed in~\cite[Section~3.5]{Prev}.
 Here $\Hom_\bS=\Hom_{\bS\simodl}$ denoted the vector space of morphisms
in the abelian category of $\bS$\+semimodules, while $\Ocn_\bS$ is
the contratensor product functor mentioned in~(7) above.
 Notice that the quasi-coherent torsion sheaf $\pi^*\rC=\pi^*\rD^\bu$
on $\bY$ corresponds to the $\bS$\+semimodule $\bS$ under
the equivalence of categories $\bY\tors\simeq\bS\simodl$
(or $\bY\tors_{\X\dinj}\simeq\bS\simodl_{\rC\dinj}$).

\smallskip
 (9)~Given a semiassociative semialgebra $\bS$ over a coassociative
coalgebra $\rC$, the \emph{semitensor product} $\bM\os_\bS\bN$ of
a right $\bS$\+semimodule $\bM$ and a left $\bS$\+semimodule $\bN$ is
the $\kk$\+vector space constructed as the cokernel of the difference
of the natural pair of maps
$$
 \bM\oc_\rC\bS\oc_\rC\bN\,\rightrightarrows\,\bM\oc_\rC\bN.
$$
 Here one map is induced by the right semiaction map $\bM\oc_\rC\bS
\rarrow\bM$ and the other one by the left semiaction map
$\bS\oc_\rC\bN\rarrow\bN$ \cite[Sections~0.3.2 and~1.4.1\+-2]{Psemi}
(cf.\ the definition of the cotensor product~$\oc_\rC$ in
Example~\ref{cotensor-over-coalgebra}(1) above).

 Let $\bS$ be a semicommutative semialgebra over a cocommutative
coalgebra~$\rC$; assume that $\bS$ is an injective $\rC$\+comodule.
 Then the semitensor product $\bM\os_\bS\bN$ of two $\bS$\+semimodules
$\bM$ and $\bN$ has a natural $\bS$\+semimodule
structure~\cite[Section~1.4.4]{Psemi}.
 The semitensor product operation $\os_\bS$ on the category $\bS\simodl$
is commutative and unital; the $\bS$\+semimodule $\bS$ is the unit 
object.

 However, one needs to impose some additional assumptions in order to
make sure that the semitensor product is associative.
 The semitensor product of any three $\rC$\+injective $\bS$\+semimodules
is associative~\cite[Proposition~1.4.4(a)]{Psemi}, but the full
subcategory $\bS\simodl_{\rC\dinj}\subset\bS\simodl$ is \emph{not} 
preserved by~$\os_\bS$, generally speaking.
 The full subcategory of so-called \emph{semiflat} $\bS$\+semimodules
(defined in~\cite[Section~1.4.2]{Psemi}) is a commutative, associative,
and unital tensor category with respect to the semitensor product
over~$\bS$.

\smallskip
 (10)~For a semiassociative semialgebra $\bS$ over a coassociative
coalgebra $\rC$, the double-sided derived functor of semitensor product
$$
 \SemiTor_\bS\:\sD^\si(\simodr\bS)\times\sD^\si(\bS\simodl)
 \lrarrow\sD(\kk\vect)
$$
is constructed in~\cite[Section~2.7]{Psemi}.
 Here $\sD^\si(\bS\simodl)=\sD^\si_\rC(\bS\simodl)$ is the semiderived
(or the ``semicoderived'') category of left $\bS$\+semimodules
\emph{relative to\/~$\rC$}, i.~e., the triangulated quotient category
of the homotopy category $\sK(\bS\simodl)$ by the thick subcategory of
complexes that are coacyclic as complexes of $\rC$\+comodules.
 The semiderived category $\sD^\si(\simodr\bS)=\sD^\si_\rC(\simodr\bS)$
is defined similarly.

 A construction of the double-sided derived functor of semitensor
product of \emph{bisemimodules}, taking values in a semiderived
category of bisemimodules, can be found in~\cite[Section~2.9]{Psemi}.
 As one can use (strongly semiflat complexes of) semiflat bisemimodules
in the construction of this derived functor, the double-sided derived
functor of semitensor product of bisemimodules is associative.

 Similarly, for a semicommutative semialgebra $\bS$ over a cocommutative
coalgebra $\rC$, one can construct the double-sided derived functor
of semitensor product
\begin{equation} \label{double-sided-derived-semitensor}
 \os_\bS^\boD\:\sD^\si(\bS\simodl)\times\sD^\si(\bS\simodl)
 \lrarrow\sD^\si(\bS\simodl),
\end{equation}
which defines a structure of associative, commutative, and unital
tensor category on the triangulated category $\sD^\si(\bS\simodl)$.
 The one-term complex of $\bS$\+semimodules $\bS$ is the unit object.

 The equivalence of abelian categories $\bY\tors\simeq\bS\simodl$ from
item~(5) forms a commutative square diagram with the equivalence of
abelian categories $\X\tors\simeq\rC\comodl$, the direct image functor
$\pi_*\:\bY\tors\rarrow\X\tors$, and the forgetful functor $\bS\simodl
\rarrow\rC\comodl$.
 Therefore, a triangulated equivalence of the semiderived categories
$\sD^\si_\X(\bY\tors)\simeq\sD^\si(\bS\simodl)$ is induced.

 The result of~\cite[Corollary~6.6(b)]{Psemi} together with
the discussion in items~(7\+-8) above shows that the triangulated
equivalence $\sD^\si_\X(\bY\tors)\simeq\sD^\si(\bS\simodl)$
transforms the semitensor product functor~$\os_{\pi^*\rD^\bu}$
\,\eqref{semiderived-torsion-semitensor-product} from
Section~\ref{construction-of-semitensor-subsecn} for the dualizing
complex $\rD^\bu=\rC$ on $\X$ into the double-sided derived
functor~$\os_\bS^\boD$ \,\eqref{double-sided-derived-semitensor}.

\Section{Flat Affine Ind-Schemes over Ind-Schemes of Ind-Finite Type}
\label{flat-affine-over-ind-finite-type-secn}

 In this section, as in Section~\ref{ind-finite-type-secn},
\,$\kk$~denotes a fixed ground field.
 Let $\X$ be an ind-separated ind-scheme of ind-finite type over~$\kk$,
and let $\pi\:\bY\rarrow\X$ be a flat affine morphism of schemes.
 Consider the diagonal morphism $\Delta_\bY\:\bY\rarrow
\bY\times_\kk\bY$; the morphism~$\Delta_\bY$ factorizes naturally into 
the composition
$$
 \bY\overset\delta\lrarrow\bY\times_\X\bY
 \overset\eta\lrarrow\bY\times_\kk\bY.
$$
 We denote the two morphisms involved by $\delta=\delta_{\bY/\X}\:
\bY\rarrow\bY\times_\X\bY$ and $\eta=\eta_{\bY/\X}\:\bY\times_\X\bY
\rarrow\bY\times_\kk\bY$.

 Let $\rD^\bu$ be a rigid dualizing complex on~$\X$
(as defined in Section~\ref{rigid-subsecn}). 
 The aim of this section is to describe the semitensor product functor
$\os_{\pi^*\rD^\bu}\:\sD_\X^\si(\bY\tors)\times\sD_\X^\si(\bY\tors)
\rarrow\sD_\X^\si(\bY\tors)$ as the composition of the left derived
$*$\+restriction and the right derived $!$\+restriction of the external
tensor product on $\bY\times_\kk\bY$ to the closed immersions
$\delta_{\bY/\X}$ and~$\eta_{\bY/\X}$, resprectively; that is
$$
 \bM^\bu\os_{\pi^*\rD^\bu}\bN^\bu \,=\,
 \boL\delta^*\,\boR\eta^!(\bM^\bu\bt_\kk\bN^\bu)
$$
for any two complexes of quasi-coherent torsion sheaves $\bM^\bu$
and $\bN^\bu$ on~$\bY$.

\subsection{Derived inverse image of pro-sheaves}
\label{pro-derived-inverse-image-subsecn}
 Suppose that we are given a commutative square diagram of morphisms
of ind-schemes
\begin{equation} \label{square-of-ind-schemes}
\begin{gathered}
\xymatrix{
 \bW \ar[r]^g \ar[d]_\rho & \bY \ar[d]^\pi \\
 \Z \ar[r]^f & \X
}
\end{gathered}
\end{equation}
 Assume that the morphisms~$\pi$ and~$\rho$ are flat and affine,
and the ind-scheme $\X$ is ind-Noetherian.
 The aim of this Section~\ref{pro-derived-inverse-image-subsecn} is to
construct the left derived functor of inverse image
\begin{equation} \label{pro-left-derived-inverse-image-eqn}
 \boL g^*\:\sD(\bY_\X\flat)\lrarrow\sD(\bW_\Z\flat)
\end{equation}
acting from the derived category of the exact category of $\X$\+flat
pro-quasi-coherent pro-sheaves on $\bY$ to the derived category of
the exact category of $\Z$\+flat pro-quasi-coherent
pro-sheaves on~$\bW$.

\begin{lem} \label{relatively-homotopy-flat-and-acyclic-lemma}
 Let\/ $\X$ be an ind-Noetherian ind-scheme, and let $\pi\:\bY\rarrow\X$
be a flat affine morphism of ind-schemes.
 Let\/ $\bfF^\bu$ be a relatively homotopy flat complex of
flat pro-quasi-coherent pro-sheaves on\/ $\bY$ (as defined in
Section~\ref{relatively-homotopy-flat-subsecn}).
 Assume that the complex\/ $\bfF^\bu$ is acyclic in the exact
category\/ $\bY_\X\flat$.
 Then, for any complex of quasi-coherent torsion sheaves $\bM^\bu$
on\/ $\bY$, the complex of quasi-coherent torsion sheaves\/
$\pi_*(\bfF^\bu\ot_\bY\bM^\bu)$ on\/ $\X$ is coacyclic.
\end{lem}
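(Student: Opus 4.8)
The plan is to reduce the arbitrary complex $\bM^\bu$ to a homotopy $\bY/\X$-flat complex by means of Proposition~\ref{homotopy-Y/X-flat-resolution}, and then to play off the two clauses in the definition of relative homotopy flatness against the two ends of the resulting triangle, using the standing acyclicity hypothesis on $\bfF^\bu$ precisely on the homotopy $\bY/\X$-flat end. First I would apply Proposition~\ref{homotopy-Y/X-flat-resolution} to $\bM^\bu\in\sC(\bY\tors)$: this yields a homotopy $\bY/\X$-flat complex $\brG^\bu$ of quasi-coherent torsion sheaves on $\bY$ together with a morphism $\brG^\bu\rarrow\bM^\bu$ whose cone $\brH^\bu$ has coacyclic direct image $\pi_*\brH^\bu$ in $\X\tors$. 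For the fixed complex $\bfF^\bu\in\sC(\bY\flat)$, the functor $\bfF^\bu\ot_\bY{-}\:\sK(\bY\tors)\rarrow\sK(\bY\tors)$ is triangulated (being additive and compatible with shifts and cones, via the coproduct-totalization construction), so it carries the triangle $\brG^\bu\rarrow\bM^\bu\rarrow\brH^\bu\rarrow\brG^\bu[1]$ to a distinguished triangle in $\sK(\bY\tors)$; applying the additive functor $\pi_*$ (exact and coproduct-preserving by Lemmas~\ref{affine-torsion-direct-image} and~\ref{representable-by-schemes-direct-image}(a)), which induces a triangulated functor $\sK(\bY\tors)\rarrow\sK(\X\tors)$, produces a distinguished triangle
\begin{equation*}
 \pi_*(\bfF^\bu\ot_\bY\brG^\bu)\rarrow\pi_*(\bfF^\bu\ot_\bY\bM^\bu)
 \rarrow\pi_*(\bfF^\bu\ot_\bY\brH^\bu)\rarrow
 \pi_*(\bfF^\bu\ot_\bY\brG^\bu)[1]
\end{equation*}
in $\sK(\X\tors)$.

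Next I would verify that both outer terms are coacyclic. For the right-hand term, I invoke condition~(ii) in the definition of a relatively homotopy flat complex (Section~\ref{relatively-homotopy-flat-subsecn}): since $\pi_*\brH^\bu$ is coacyclic by construction, this condition, applied with $\bN^\bu=\brH^\bu$, gives that $\pi_*(\bfF^\bu\ot_\bY\brH^\bu)$ is coacyclic in $\X\tors$. For the left-hand term, the key observation is that $\bfF^\bu$, being a complex of flat pro-quasi-coherent pro-sheaves with $\pi$ flat, is a fortiori a complex in the larger exact category $\bY_\X\flat$ (recall $\bY\flat\subset\bY_\X\flat$ from Section~\ref{pro-flat-over-base-subsecn}), and by hypothesis it is acyclic there. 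Hence I may feed $\bfF^\bu$ as the complex $\bfP^\bu$ into the defining property of the homotopy $\bY/\X$-flat complex $\brG^\bu$, which asserts exactly that $\pi_*(\bfP^\bu\ot_\bY\brG^\bu)$ is coacyclic in $\X\tors$. This is the sole point at which the assumption that $\bfF^\bu$ is acyclic in $\bY_\X\flat$ enters.

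Finally, because the coacyclic complexes form a triangulated subcategory of $\sK(\X\tors)$ — in particular satisfying the two-out-of-three property along distinguished triangles — the coacyclicity of the two outer terms forces the middle term $\pi_*(\bfF^\bu\ot_\bY\bM^\bu)$ to be coacyclic, which is the assertion. I expect the only genuinely delicate point to be the correct matching of the hypotheses to the two ends of the triangle: condition~(ii) of relative homotopy flatness disposes of the cone $\brH^\bu$ (whose direct image is coacyclic), while the homotopy $\bY/\X$-flatness of the resolution $\brG^\bu$, combined crucially with the acyclicity of $\bfF^\bu$ in $\bY_\X\flat$, disposes of the resolution itself; note that condition~(i) of relative homotopy flatness is not needed here.
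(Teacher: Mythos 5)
Your proposal is correct and follows essentially the same route as the paper's own proof: both resolve $\bM^\bu$ via Proposition~\ref{homotopy-Y/X-flat-resolution}, dispose of the resolution term $\pi_*(\bfF^\bu\ot_\bY\brG^\bu)$ using the homotopy $\bY/\X$-flatness of $\brG^\bu$ together with the acyclicity of $\bfF^\bu$ in $\bY_\X\flat$, dispose of the cone term via condition~(ii) of relative homotopy flatness, and conclude by the two-out-of-three property for coacyclic complexes. Your explicit spelling-out of the distinguished triangle under $\bfF^\bu\ot_\bY{-}$ and $\pi_*$ is merely a more detailed rendering of what the paper leaves implicit.
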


\begin{proof}
 This assertion is implicit in the construction of the derived
functor~$\ot_\bY^\boL$ \,\eqref{derived-tensor-of-pro-and-torsion}
in Section~\ref{left-derived-tensor-products-subsecn}.
 Explicitly, by Proposition~\ref{homotopy-Y/X-flat-resolution} there
exists a homotopy $\bY/\X$\+flat complex $\brG^\bu$ of quasi-coherent
torsion sheaves on $\bY$ together with a morphism of complexes
$\brG^\bu\rarrow\bM^\bu$ whose cone $\bN^\bu$ has the property that
its direct image under~$\pi$ is coacyclic in $\X\tors$.
 Then the complex $\pi_*(\bfF^\bu\ot_\bY\brG^\bu)$ is coacyclic in
$\X\tors$, since the complex $\bfF^\bu$ is acyclic in $\bY_\X\flat$
and the complex $\brG^\bu$ is homotopy $\bY/\X$\+flat.
 Furthermore, the complex $\pi_*(\bfF^\bu\ot_\bY\bN^\bu)$ is also
coacyclic in $\X\tors$, since $\bfF^\bu$ be a relatively homotopy flat
complex of flat pro-quasi-coherent pro-sheaves on $\bY$ and
the complex $\pi_*(\bN^\bu)$ is coacyclic in $\X\tors$ (see
condition~(ii) in Section~\ref{relatively-homotopy-flat-subsecn}).
 It follows that the complex $\pi_*(\bfF^\bu\ot_\bY\bM^\bu)$ is
coacyclic in $\X\tors$.
\end{proof}

\begin{prop} \label{relatively-homotopy-flat-and-acyclic-prop}
 Let\/ $\X$ be an ind-Noetherian ind-scheme, and let $\pi\:\bY\rarrow\X$
be a flat affine morphism of ind-schemes.
 Let\/ $\bfF^\bu$ be a relatively homotopy flat complex of
flat pro-quasi-coherent pro-sheaves on\/~$\bY$.
 Assume that the complex\/ $\bfF^\bu$ is acyclic in the exact
category\/ $\bY_\X\flat$ of\/ $\X$\+flat pro-quasi-coherent pro-sheaves
on\/~$\bY$.
 Then the complex\/ $\bfF^\bu$ is also acyclic in the exact category\/
$\bY\flat$ of flat pro-quasi-coherent pro-sheaves on\/~$\bY$.
\end{prop}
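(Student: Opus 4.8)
The plan is to deduce the proposition directly from Lemma~\ref{relatively-homotopy-flat-and-acyclic-lemma}, which already absorbs the relative homotopy flatness hypothesis in full, so that no further appeal to relative flatness is needed here. Fix a representation $\X=\ilim_{\gamma\in\Gamma}X_\gamma$ by closed immersions of Noetherian schemes, put $\bnY_\gamma=X_\gamma\times_\X\bY$, and write $k_\gamma\:\bnY_\gamma\rarrow\bY$, $f_\gamma\:\bnY_\gamma\rarrow X_\gamma$, $i_\gamma\:X_\gamma\rarrow\X$ for the natural morphisms, so that $\pi k_\gamma=i_\gamma f_\gamma$. By Lemma~\ref{flat-pro-sheaves-complex-acyclicity-criterion} it suffices to prove that the component $\F^\bu:=k_\gamma^*\bfF^\bu$, a complex of flat quasi-coherent sheaves on the scheme $\bnY_\gamma$, is acyclic in $\bnY_\gamma\flat$ for every $\gamma$. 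The hypothesis that $\bfF^\bu$ is acyclic in $\bY_\X\flat$ gives, through Lemma~\ref{flat-over-base-component-acyclicity-criterion}, that $\F^\bu$ is at least acyclic as a complex of quasi-coherent sheaves (with $X_\gamma$\+flat sheaves of cocycles); what remains is to upgrade the cocycles from $X_\gamma$\+flat to flat over~$\bnY_\gamma$.

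The first step is to show that $\F^\bu\ot_{\cO_{\bnY_\gamma}}\cG$ is acyclic for every quasi-coherent sheaf $\cG$ on $\bnY_\gamma$. Regarding $\cG$ as a quasi-coherent torsion sheaf on the scheme $\bnY_\gamma$, I form the torsion sheaf $k_\gamma{}_*\cG$ on $\bY$. The projection formula of Lemma~\ref{torsion-pro-projection-formula-second}, applied termwise to the affine (closed) immersion $k_\gamma$ and then totalized, yields a natural isomorphism $\bfF^\bu\ot_\bY k_\gamma{}_*\cG\simeq k_\gamma{}_*(\F^\bu\ot_{\cO_{\bnY_\gamma}}\cG)$, since $k_\gamma^*\bfF^\bu=\F^\bu$. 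Pushing forward and using $\pi k_\gamma=i_\gamma f_\gamma$ gives $\pi_*(\bfF^\bu\ot_\bY k_\gamma{}_*\cG)\simeq i_\gamma{}_*f_\gamma{}_*(\F^\bu\ot_{\cO_{\bnY_\gamma}}\cG)$. By Lemma~\ref{relatively-homotopy-flat-and-acyclic-lemma} the left-hand side is coacyclic, hence acyclic, in $\X\tors$. As $i_\gamma{}_*$ is exact and faithful (Lemma~\ref{closed-subschemes-torsion-subcategories}(a)) and $f_\gamma{}_*$ is exact and faithful (an affine morphism of schemes), it follows that $\F^\bu\ot_{\cO_{\bnY_\gamma}}\cG$ is acyclic on $\bnY_\gamma$.

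The second step is the purely local fact that an acyclic complex $\F^\bu$ of flat quasi-coherent sheaves which remains acyclic after $\ot_{\cO_{\bnY_\gamma}}\cG$ for every $\cG$ must have flat cocycles. Since flatness and acyclicity are local, I would restrict to an affine open $U=\Spec S\subset\bnY_\gamma$; every $S$\+module $M$ is the restriction to $U$ of a quasi-coherent sheaf on $\bnY_\gamma$ (take the direct image of the associated sheaf along the open immersion $U\hookrightarrow\bnY_\gamma$, which is quasi-coherent as $\bnY_\gamma$ is concentrated), so $\F^\bu|_U\ot_S M$ is acyclic for all $M$. Because $\F^\bu|_U$ is itself acyclic, each left truncation $\dotsb\rarrow\F^{n-1}|_U\rarrow Z^n|_U\rarrow0$ is a flat resolution of the cocycle $Z^n|_U$; hence $\Tor_1^S(Z^n|_U,M)$ is computed as a cohomology module of $\F^\bu|_U\ot_S M$, which vanishes for all $M$. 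Thus every cocycle of $\F^\bu|_U$ is flat, so the cocycles of $\F^\bu$ are flat and $\F^\bu$ is acyclic in $\bnY_\gamma\flat$, as required. (This is the standard characterization of pure acyclic complexes of flat modules; compare the affine remark in the proof of Lemma~\ref{qcoh-complexes-tensor-exactness}.)

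The main obstacle is this second step: acyclicity of a complex of flats does not by itself force flat cocycles, and it is precisely the stronger information that acyclicity is preserved by all tensor products ${}\ot\cG$ — extracted in the first step from relative homotopy flatness via Lemma~\ref{relatively-homotopy-flat-and-acyclic-lemma} — that makes the argument go through. I expect the only delicate bookkeeping to be the compatibility of the projection-formula isomorphism with the coproduct totalizations defining the tensor products of complexes, which is routine since all the functors involved preserve coproducts.
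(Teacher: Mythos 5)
Your proof is correct and follows essentially the same route as the paper's: reduce to the scheme components $\bnY_\gamma$ via Lemma~\ref{flat-pro-sheaves-complex-acyclicity-criterion}, use the projection formula of Lemma~\ref{torsion-pro-projection-formula-second} together with Lemma~\ref{relatively-homotopy-flat-and-acyclic-lemma} and exactness/faithfulness of the affine direct images to show that $k_\gamma^*\bfF^\bu\ot_{\cO_{\bnY_\gamma}}\cG$ is acyclic for every quasi-coherent sheaf $\cG$, and conclude flatness of the cocycles. The only differences are cosmetic: you factor $\pi_*k_\gamma{}_*$ as $i_\gamma{}_*f_\gamma{}_*$ where the paper applies $k_*$ and then $\pi_*$ directly, and you spell out the Tor-computation behind the final ``tensor-acyclic implies flat cocycles'' step, which the paper leaves implicit.
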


\begin{proof}
 Let $X\subset\X$ be a closed subscheme with the closed immersion
morphism $i\:X\rarrow\X$.
 Put $\bnY=X\times_\X\bY$, and denote by $k\:\bnY\rarrow\bY$
the natural closed immersion.
 In view of Lemma~\ref{flat-pro-sheaves-complex-acyclicity-criterion},
it suffices to show that the complex $k^*\bfF^\bu$ is
acyclic in $\bnY\flat$.
 For this purpose, we will show that the complex of quasi-coherent
sheaves $k^*\bfF^\bu\ot_{\cO_\bnY}\bcN$ on $\bnY$ is acyclic for
any quasi-coherent sheaf $\bcN$ on~$\bnY$.

 Since $\bnY$ is a scheme, we can consider $\bcN$ as a quasi-coherent
torsion sheaf on~$\bnY$; then $k^*\bfF^\bu\ot_{\cO_\bnY}\bcN=
k^*\bfF^\bu\ot_\bnY\bcN$ is viewed as a complex of quasi-coherent
torsion sheaves on~$\bnY$.
 As the functor $k_*\:\bnY\qcoh\rarrow\bY\tors$ is exact and faithful
(see Lemma~\ref{closed-subschemes-torsion-subcategories}(a)),
acyclicity of this complex is equivalent to acyclicity of the complex
$k_*(k^*\bfF^\bu\ot_\bnY\bcN)$ in $\bY\tors$.
 By Lemma~\ref{torsion-pro-projection-formula-second}, we have
an isomorphism in $\sC(\bY\tors)$
$$
 k_*(k^*\bfF^\bu\ot_\bnY\bcN)\simeq\bfF^\bu\ot_\bY k_*\bcN.
$$

 By Lemma~\ref{relatively-homotopy-flat-and-acyclic-lemma},
the complex $\pi_*(\bfF^\bu\ot_\bY k_*\bcN)$ is coacyclic, hence
acyclic, in $\X\tors$.
 As the functor $\pi_*\:\bY\tors\rarrow\X\tors$ is exact and faithful
by Lemma~\ref{affine-torsion-direct-image}, it follows that the complex
$\bfF^\bu\ot_\bY k_*\bcN$ is acyclic in $\bY\tors$.
\end{proof}

 The following corollary plays the key role.

\begin{cor} \label{relatively-homotopy-flat-and-acyclic-cor}
 In the context of diagram~\eqref{square-of-ind-schemes},
let\/ $\bfF^\bu$ be a relatively homotopy flat complex of
flat pro-quasi-coherent pro-sheaves on\/~$\bY$.
 Assume that the complex\/ $\bfF^\bu$ is acyclic in the exact
category\/ $\bY_\X\flat$.
 Then the complex $g^*\bfF^\bu$ of flat pro-quasi-coherent
pro-sheaves on\/ $\bW$ is acyclic in the exact category\/
$\bW_\Z\flat$.
\end{cor}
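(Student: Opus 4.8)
The plan is to derive this corollary quickly from Proposition~\ref{relatively-homotopy-flat-and-acyclic-prop}, which carries the essential content; what remains is bookkeeping with the two relevant exact structures on pro-quasi-coherent pro-sheaves. The crucial point to exploit is that Proposition~\ref{relatively-homotopy-flat-and-acyclic-prop} upgrades acyclicity of $\bfF^\bu$ from the \emph{relative} exact category $\bY_\X\flat$ to the \emph{absolute} exact category $\bY\flat$ of flat pro-quasi-coherent pro-sheaves on $\bY$. Once this absolute acyclicity is in hand, the remaining task is to transport it along the inverse image functor $g^*$ and then reinterpret the result inside the relative exact structure over $\Z$.

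First I would invoke Proposition~\ref{relatively-homotopy-flat-and-acyclic-prop}: since $\X$ is ind-Noetherian, $\pi\:\bY\rarrow\X$ is flat and affine, and $\bfF^\bu$ is a relatively homotopy flat complex of flat pro-quasi-coherent pro-sheaves which is acyclic in $\bY_\X\flat$, the proposition yields that $\bfF^\bu$ is already acyclic in $\bY\flat$. Next I would apply the inverse image functor $g^*\:\bY\pro\rarrow\bW\pro$ associated with the top arrow $g\:\bW\rarrow\bY$ of diagram~\eqref{square-of-ind-schemes}. By the discussion in Section~\ref{flat-pro-sheaves-subsecn}, $g^*$ carries $\bY\flat$ into $\bW\flat$ and is \emph{exact} with respect to these absolute exact category structures; hence it preserves acyclicity, so that $g^*\bfF^\bu$ is acyclic in $\bW\flat$. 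In particular, the cocycle objects of $g^*\bfF^\bu$ are genuine flat pro-quasi-coherent pro-sheaves on $\bW$, and the conflations witnessing its acyclicity all live in $\bW\flat$.

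Finally, I would pass from the absolute to the relative exact structure over $\Z$. Since $\rho\:\bW\rarrow\Z$ is flat and affine, one has $\bW\flat\subset\bW_\Z\flat$, and by the fact recorded in Section~\ref{pro-flat-over-base-subsecn} a short sequence of flat pro-quasi-coherent pro-sheaves on $\bW$ is exact in $\bW\flat$ if and only if it is exact in $\bW_\Z\flat$. Applying this criterion to the conflations witnessing acyclicity of $g^*\bfF^\bu$ in $\bW\flat$ --- whose cocycle objects already lie in $\bW\flat\subset\bW_\Z\flat$ --- shows that $g^*\bfF^\bu$ is acyclic in $\bW_\Z\flat$, which is the assertion.

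I do not expect a serious obstacle here, precisely because the only genuinely difficult step --- upgrading relative acyclicity of $\bfF^\bu$ on $\bY$ to absolute acyclicity --- has already been settled in Proposition~\ref{relatively-homotopy-flat-and-acyclic-prop}. The single point demanding care is not to conflate the two exact structures: the functor $g^*$ is exact for the \emph{absolute} structures $\bY\flat$ and $\bW\flat$, so I must be sure to feed it the stronger (absolute) acyclicity of $\bfF^\bu$ rather than merely $\bY_\X\flat$-acyclicity, and only after pulling back do I reinterpret the resulting absolute acyclicity over $\bW$ inside $\bW_\Z\flat$ via the ``if and only if'' criterion for a flat affine base.
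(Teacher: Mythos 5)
Your proposal is correct and follows essentially the same route as the paper's proof: invoke Proposition~\ref{relatively-homotopy-flat-and-acyclic-prop} to upgrade to acyclicity in $\bY\flat$, apply the exact functor $g^*\:\bY\flat\rarrow\bW\flat$, and conclude acyclicity in $\bW_\Z\flat$. Your extra care in the last step (spelling out why $\bW\flat$-acyclicity yields $\bW_\Z\flat$-acyclicity via the compatibility of the two exact structures) is exactly the justification the paper leaves implicit in its phrase ``hence also in $\bW_\Z\flat$.''
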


\begin{proof}
 By Proposition~\ref{relatively-homotopy-flat-and-acyclic-prop},
the complex $\bfF^\bu$ is acyclic in $\bY\flat$.
 As the direct image functor $g^*\:\bY\flat\rarrow\bW\flat$ is exact,
it follows immediately that the complex $g^*\bfF^\bu$ is acyclic
in $\bW\flat$, hence also in $\bW_\Z\flat$.
\end{proof}

 Using Proposition~\ref{relatively-homotopy-flat-resolution}
and Corollary~\ref{relatively-homotopy-flat-and-acyclic-cor},
the left derived functor~\eqref{pro-left-derived-inverse-image-eqn}
can be constructed following the general approach of a ``derived
functor in the sense of Deligne''~\cite[1.2.1--2]{Del},
\cite[Lemma~6.5.2]{Psemi}.

 Let $\bfP^\bu$ be a complex of $\X$\+flat pro-quasi-coherent
pro-sheaves on~$\bY$.
 By Proposition~\ref{relatively-homotopy-flat-resolution}, there exists
a relatively homotopy flat complex of flat pro-quasi-coherent
pro-sheaves $\bfF^\bu$ on $\bY$ together with a morphism of
complexes $\bfF^\bu\rarrow\bfP^\bu$ whose cone is acyclic
in $\bY_\X\flat$.
 Put
$$
 \boL g^*(\bfP^\bu)=g^*(\bfF^\bu)\,\in\,\sD(\bW_\Z\flat).
$$
 Notice that $g^*(\bfF^\bu)$ is a complex of flat pro-quasi-coherent
pro-sheaves on $\bW$ (see
Section~\ref{flat-pro-sheaves-subsecn}); hence it is also a complex
of $\Z$\+flat pro-quasi-coherent pro-sheaves on $\bW$, as
the morphism~$\rho$ is flat and affine by assumption
(as per the discussion in Section~\ref{pro-flat-over-base-subsecn}).

 Let $a\:\bfP^\bu\rarrow\bfQ^\bu$ be a morphism of complexes of
$\X$\+flat pro-quasi-coherent pro-sheaves on~$\bY$, and let
$\bfF^\bu\rarrow\bfP^\bu$ and $\bfG^\bu\rarrow\bfQ^\bu$ be two
morphisms in $\sC(\bY_\X\flat)$ with the cones acyclic in $\bY_\X\flat$
such that both the complexes $\bfF^\bu$ and $\bfG^\bu$ are relatively
homotopy flat complexes of flat pro-quasi-coherent pro-sheaves on~$\bY$.
 In order to construct the induced morphism
$$
 \boL g^*(a)\:g^*(\bfF^\bu)\lrarrow g^*(\bfG^\bu)
$$
in $\sD(\bW_\Z\flat)$, choose a complex $\bfR^\bu$ in $\bY_\X\flat$
together with morphisms $\bfR^\bu\rarrow\bfF^\bu$ and $\bfR^\bu
\rarrow\bfG^\bu$ in $\sC(\bY_\X\flat)$ such that the diagram
$\bfR^\bu\rarrow\bfF^\bu\rarrow\bfP^\bu\rarrow\bfQ^\bu$ and
$\bfR^\bu\rarrow\bfG^\bu\rarrow\bfQ^\bu$ is commutative in
$\sK(\bY_\X\flat)$ and the cone of the morphism
$\bfR^\bu\rarrow\bfF^\bu$ is acyclic in $\bY_\X\flat$.
 Using Proposition~\ref{relatively-homotopy-flat-resolution}, choose
a relatively homotopy flat complex of flat pro-quasi-coherent
pro-sheaves $\bfH^\bu$ on $\bY$ together with a morphism of complexes
$\bfH^\bu\rarrow\bfR^\bu$ whose cone is acyclic in $\bY_\X\flat$.

 Then the cone $\bfS^\bu$ of the composition $s\:\bfH^\bu\rarrow
\bfR^\bu\rarrow\bfF^\bu$ is a relatively homotopy flat complex of flat
pro-quasi-coherent pro-sheaves on $\bY$ which is acyclic
in $\bY_\X\flat$.
 By Corollary~\ref{relatively-homotopy-flat-and-acyclic-cor},
the complex $g^*(\bfS^\bu)$ is acyclic in $\bW_\Z\flat$.
 Denote by~$b$ the composition $\bfH^\bu\rarrow\bfR^\bu\rarrow\bfG^\bu$.
 Now the fraction formed by the morphism $g^*(b)\:g^*(\bfH^\bu)\rarrow
g^*(\bfG^\bu)$ and the isomorphism $g^*(s)\:g^*(\bfH^\bu)\rarrow
g^*(\bfF^\bu)$ represents the desired morphism $\boL g^*(a)\:
g^*(\bfF^\bu)\lrarrow g^*(\bfG^\bu)$ in $\sD(\bW_\Z\flat)$.

\subsection{Derived inverse image of torsion sheaves}
\label{torsion-derived-inverse-image-subsecn}
 Suppose that we are given a commutative triangle diagram of morphisms
of ind-schemes
\begin{equation} \label{triangle-of-ind-schemes}
\begin{gathered}
\xymatrix{
 \bW \ar[rr]^g \ar[rd]_\rho && \bY \ar[ld]^\pi \\
 & \X
}
\end{gathered}
\end{equation}
 Assume that the morphisms $\pi$ and~$\rho$ are flat and affine,
and the ind-scheme $\X$ is ind-Noetherian.
 The aim of Section~\ref{torsion-derived-inverse-image-subsecn} is
to construct the left derived functor of inverse image
\begin{equation} \label{torsion-left-derived-inverse-image-eqn}
 \boL g^*\:\sD_\X^\si(\bY\tors)\lrarrow\sD_\X^\si(\bW\tors)
\end{equation}
acting from the $\bY/\X$\+semiderived category of 
quasi-coherent torsion sheaves on $\bY$ to
the $\bW/\X$\+semiderived category of quasi-coherent
torsion sheaves on~$\bW$.

\begin{lem} \label{homotopy-Y/X-flat-and-semiacyclic-lemma}
 Let\/ $\X$ be an ind-Noetherian ind-scheme, and let $\pi\:\bY\rarrow\X$
be a flat affine morphism of ind-schemes.
 Let\/ $\brG^\bu$ be a homotopy\/ $\bY/\X$\+flat complex of
quasi-coherent torsion sheaves on\/ $\bY$ (as defined in
Section~\ref{relatively-homotopy-flat-subsecn}).
 Assume that the complex\/ $\pi_*(\brG^\bu)$ of quasi-coherent
torsion sheaves on\/ $\X$ is coacyclic.
 Then, for any complex of\/ $\bY/\X$\+flat pro-quasi-coherent
pro-sheaves\/ $\bfP^\bu$ on\/ $\bY$, the complex of quasi-coherent
torsion sheaves\/ $\pi_*(\bfP^\bu\ot_\bY\brG^\bu)$ on\/ $\X$
is coacyclic.
\end{lem}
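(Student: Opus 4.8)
The plan is to reduce the arbitrary complex $\bfP^\bu\in\sC(\bY_\X\flat)$ to a relatively homotopy flat one, for which the assertion is immediate from condition~(ii) in the definition of relative homotopy flatness, and to absorb the discrepancy using the homotopy $\bY/\X$\+flatness of $\brG^\bu$. First I would invoke Proposition~\ref{relatively-homotopy-flat-resolution} (which applies since an ind-Noetherian ind-scheme is reasonable, whence so is $\bY$) to choose a relatively homotopy flat complex of flat pro-quasi-coherent pro-sheaves $\bfF^\bu$ on $\bY$ together with a morphism $\bfF^\bu\rarrow\bfP^\bu$ in $\sC(\bY_\X\flat)$ whose cone $\bfS^\bu$ is acyclic in $\bY_\X\flat$. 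Because $\pi$ is flat and affine we have $\bY\flat\subset\bY_\X\flat$, so $\bfF^\bu$ and the cone $\bfS^\bu$ indeed live in $\sC(\bY_\X\flat)$.

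Next I would handle the two outer terms of the triangle $\bfF^\bu\rarrow\bfP^\bu\rarrow\bfS^\bu\rarrow\bfF^\bu[1]$ separately after tensoring with $\brG^\bu$. On the one hand, applying condition~(ii) from the definition of a relatively homotopy flat complex in Section~\ref{relatively-homotopy-flat-subsecn} to $\bfF^\bu$ together with the complex $\brG^\bu$ --- whose direct image $\pi_*\brG^\bu$ is coacyclic by hypothesis --- shows that $\pi_*(\bfF^\bu\ot_\bY\brG^\bu)$ is coacyclic in $\X\tors$. On the other hand, since $\bfS^\bu$ is acyclic in $\bY_\X\flat$ and $\brG^\bu$ is homotopy $\bY/\X$\+flat, the very definition of homotopy $\bY/\X$\+flatness yields that $\pi_*(\bfS^\bu\ot_\bY\brG^\bu)$ is coacyclic as well. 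Thus the two hypotheses available to us cover exactly the two distinct kinds of terms arising in the triangle.

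Finally I would assemble these via the triangulated formalism. The functor ${-}\ot_\bY\brG^\bu$ of tensoring with the fixed complex $\brG^\bu$ is triangulated (it preserves cones and shifts, as already used for analogous functors in Lemmas~\ref{relatively-flat-complexes-lemma}(a) and~\ref{homotopy-Y/X-flat-complexes-lemma}(a)), so it carries the triangle above into a triangle $\bfF^\bu\ot_\bY\brG^\bu\rarrow\bfP^\bu\ot_\bY\brG^\bu\rarrow\bfS^\bu\ot_\bY\brG^\bu\rarrow$ in $\sK(\bY\tors)$; applying the exact functor $\pi_*$ (Lemma~\ref{affine-torsion-direct-image}) produces a triangle in $\sK(\X\tors)$ whose first and third vertices are coacyclic. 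As the coacyclic complexes form a triangulated subcategory of $\sK(\X\tors)$ (Section~\ref{coderived-subsecn}), the two-out-of-three property forces the middle vertex $\pi_*(\bfP^\bu\ot_\bY\brG^\bu)$ to be coacyclic, which is the claim. I expect the only point requiring genuine care to be the bookkeeping of the first step: one must secure a single resolution $\bfF^\bu$ that is simultaneously relatively homotopy flat and whose defect $\bfS^\bu$ is acyclic in $\bY_\X\flat$, and to match these two properties to the two available coacyclicity statements; once this matching is in place, the remainder is the routine triangulated argument.
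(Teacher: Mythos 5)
Your proposal is correct and follows essentially the same route as the paper's own proof: resolve $\bfP^\bu$ by a relatively homotopy flat complex $\bfF^\bu$ via Proposition~\ref{relatively-homotopy-flat-resolution}, handle $\bfF^\bu\ot_\bY\brG^\bu$ by condition~(ii) of relative homotopy flatness, handle the cone by the homotopy $\bY/\X$\+flatness of $\brG^\bu$, and conclude by the two-out-of-three property for coacyclic complexes. The only difference is that you spell out the triangulated bookkeeping that the paper leaves implicit in its final ``it follows''.
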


\begin{proof}
 Similarly to Lemma~\ref{relatively-homotopy-flat-and-acyclic-lemma},
this assertion is implicit in the construction of the derived
functor~$\ot_\bY^\boL$ \,\eqref{derived-tensor-of-pro-and-torsion}
in Section~\ref{left-derived-tensor-products-subsecn}.
 Explicitly, by Proposition~\ref{relatively-homotopy-flat-resolution}
there exists a relatively homotopy flat complex of flat
pro-quasi-coherent pro-sheaves\/ $\bfF^\bu$ on\/ $\bY$ together with
a morphism of complexes\/ $\bfF^\bu\rarrow\bfP^\bu$ whose cone
$\bfQ^\bu$ is acyclic in $\bY_\X\flat$.
 Then the complex $\pi_*(\bfF^\bu\ot_\bY\brG^\bu)$ is coacyclic in
$\X\tors$, since $\bfF^\bu$ is a relatively homotopy flat complex of
flat pro-quasi-coherent pro-sheaves on $\bY$ and the complex
$\pi_*(\brG^\bu)$ is coacyclic in $\X\tors$.
 Furthermore, the complex $\pi_*(\bfQ^\bu\ot_\bY\brG^\bu)$ is coacyclic
in $\X\tors$, since the complex $\bfQ^\bu$ is acyclic in $\bY_\X\flat$
and $\brG^\bu$ is a homotopy $\bY/\X$\+flat complex of quasi-coherent
torsion sheaves on\/~$\bY$.
 It follows that the complex $\pi_*(\bfP^\bu\ot_\bY\brG^\bu)$ is
coacyclic in $\X\tors$.
\end{proof}

 The next proposition is the key technical assertion.

\begin{prop} \label{homotopy-Y/X-flat-and-semiacyclic-prop}
 In the context of the diagram~\eqref{triangle-of-ind-schemes}, let\/
$\brF^\bu$ be a homotopy\/ $\bY/\X$\+flat complex of quasi-coherent
torsion sheaves on\/~$\bY$.
 Assume that the complex\/ $\pi_*(\brF^\bu)$ of quasi-coherent torsion
sheaves on\/ $\X$ is coacyclic.
 Then the complex $\rho_*g^*(\brF^\bu)$ of quasi-coherent torsion
sheaves on\/ $\X$ is coacyclic, too.
\end{prop}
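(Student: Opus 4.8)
The plan is to reduce the assertion to Lemma~\ref{homotopy-Y/X-flat-and-semiacyclic-lemma} by a projection-formula computation, in parallel with the way Corollary~\ref{relatively-homotopy-flat-and-acyclic-cor} serves as the pro-sheaf counterpart of Proposition~\ref{relatively-homotopy-flat-and-acyclic-prop}. First I would record the basic bookkeeping. The morphism~$g$ is affine: it is a morphism over~$\X$ between two ind-schemes affine over~$\X$, so by the cancellation property of affine morphisms (since $\pi g=\rho$ is affine and $\pi$ is affine, hence separated) the morphism~$g$ is itself affine. Moreover $\bY$ and $\bW$ are reasonable, being affine over the ind-Noetherian (hence reasonable) ind-scheme~$\X$. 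Since direct images of quasi-coherent torsion sheaves compose, I would then write $\rho_*g^*\brF^\bu\simeq\pi_*g_*g^*\brF^\bu$.

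The key step is to rewrite $g_*g^*\brF^\bu$ as an action of a pro-sheaf. Applying the projection formula of Lemma~\ref{torsion-pro-projection-formula-lemma} to the affine morphism~$g$, with the quasi-coherent torsion sheaf~$\brF^\bu$ on the base~$\bY$ and the unit pro-sheaf $\fO_\bW$ on~$\bW$, yields a natural isomorphism $\brF^\bu\ot_\bY g_*\fO_\bW\simeq g_*(g^*\brF^\bu\ot_\bW\fO_\bW)=g_*g^*\brF^\bu$, which extends termwise to the complex~$\brF^\bu$. Here $g_*\fO_\bW$ is a pro-quasi-coherent pro-sheaf on~$\bY$, which I claim is $\X$-flat: indeed $\pi_*(g_*\fO_\bW)\simeq\rho_*\fO_\bW$ (using $\pi_*g_*\simeq\rho_*$), and $\rho_*\fO_\bW$ is a flat pro-quasi-coherent pro-sheaf on~$\X$ because $\rho$ is a flat affine morphism (Proposition~\ref{ind-schemes-affine-morphisms-pro-qcoh-algebras}). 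Hence $g_*\fO_\bW$, viewed as a one-term complex, belongs to $\sC(\bY_\X\flat)$.

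It then remains to combine these observations. Setting $\bfP^\bu=g_*\fO_\bW\in\sC(\bY_\X\flat)$ and $\brG^\bu=\brF^\bu$, the hypotheses of Lemma~\ref{homotopy-Y/X-flat-and-semiacyclic-lemma} are satisfied: the complex $\brF^\bu$ is homotopy $\bY/\X$-flat and $\pi_*\brF^\bu$ is coacyclic. Therefore the complex $\pi_*(g_*\fO_\bW\ot_\bY\brF^\bu)$ is coacyclic in~$\X\tors$. In view of the isomorphism $\rho_*g^*\brF^\bu\simeq\pi_*(\brF^\bu\ot_\bY g_*\fO_\bW)$ established above, this is exactly the required conclusion.

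The main obstacle I anticipate is the bookkeeping around the morphism~$g$: verifying that $g$ is affine (so that $g_*$ on pro-sheaves is defined and the projection formula applies) and, more importantly, recognizing that although $g$ itself need not be flat, the pro-sheaf $g_*\fO_\bW$ is nevertheless $\X$-flat, since its pushforward to~$\X$ is $\rho_*\fO_\bW$ and $\rho$ is flat affine. Once $g_*\fO_\bW$ is identified as an object of $\bY_\X\flat$, the projection formula converts the statement about the inverse image~$g^*$ into a statement about the action functor~$\ot_\bY$, which is precisely what Lemma~\ref{homotopy-Y/X-flat-and-semiacyclic-lemma} is designed to control; the remaining verifications (composition of direct images, termwise naturality of the projection formula) are routine.
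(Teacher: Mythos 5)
Your proof is correct and follows essentially the same route as the paper's: establish that $g$ is affine, rewrite $\rho_*g^*(\brF^\bu)\simeq\pi_*(g_*(\fO_\bW)\ot_\bY\brF^\bu)$ via the projection formula of Lemma~\ref{torsion-pro-projection-formula-lemma}, observe that $g_*(\fO_\bW)$ is $\X$\+flat because $\pi_*g_*(\fO_\bW)\simeq\rho_*(\fO_\bW)$ with $\rho$~flat and affine, and conclude by Lemma~\ref{homotopy-Y/X-flat-and-semiacyclic-lemma}. The only cosmetic difference is that you invoke the cancellation property for affine morphisms to see that $g$~is affine, where the paper reduces to schemes over an affine open covering of the base (citing the same Stacks Project tag).
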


\begin{proof}
 Let us show that the morphism of ind-schemes $g\:\bW\rarrow\bY$ is
affine (whenever the morphisms~$\pi$ and $\rho=\pi g$ are).
 First of all, the morphism~$g$ is ``representable by schemes''
(since the morphisms~$\pi$ and $\rho=\pi g$ are).
 Indeed, let $X\subset\X$ be a closed subscheme.
 Then $\bnY=X\times_\X\bY$ is a closed subscheme in~$\bY$ (and any
closed subscheme in $\bY$ is contained in a closed subscheme of
this form).
 Therefore, $\bnW=\bnY\times_\bY\bW=X\times_\X\bW$ is a closed
subscheme in~$\bW$.

 We know that the morphisms of schemes $\bnY\rarrow X$ and
$\bnW\rarrow X$ are affine, and we have to show that the morphism
$\bnW\rarrow\bnY$ is.
 Let $X=\bigcup_\alpha U_\alpha$ be an affine open covering
of~$X$.
 Then $\bnY=\bigcup_\alpha (U_\alpha\times_X\bnY)$ is an affine open
covering of~$\bnY$; and the schemes $(U_\alpha\times_X\bnY)
\times_\bnY\bnW=U_\alpha\times_X\bnW$ are affine
(cf.~\cite[Tag~01SG]{SP}).

 Now we have $\rho_*g^*(\brF^\bu)\simeq
\pi_*g_*g^*(\brF^\bu)$, since $\rho=\pi g$.
 Furthermore, $g^*\brF^\bu\simeq\fO_\bW\ot_\bW g^*\brF^\bu$, where
$\fO_\bW\in\bW\flat$ is the ``pro-structure pro-sheaf'' on~$\bW$.
 By Lemma~\ref{torsion-pro-projection-formula-lemma},
$$
 g_*g^*(\brF^\bu)\simeq g_*(\fO_\bW\ot_\bW g^*\brF^\bu)
 \simeq g_*(\fO_\bW)\ot_\bY\brF^\bu.
$$
 Notice that the pro-quasi-coherent pro-sheaf $g_*(\fO_\bW)$ on
$\bY$ is $\X$\+flat, because $\pi_*g_*(\fO_\bW)\simeq\rho_*(\fO_\bW)$
and the morphism of ind-schemes $\rho\:\bW\rarrow\X$ is flat
by assumption, hence the direct image functor $\rho_*\:\bW\pro\rarrow
\X\pro$ takes flat pro-quasi-coherent pro-sheaves on $\bW$
to flat pro-quasi-coherent pro-sheaves on~$\X$.
 Applying Lemma~\ref{homotopy-Y/X-flat-and-semiacyclic-lemma}, we
conclude that the complex $\rho_*g^*(\brF^\bu)\simeq
\pi_*(g_*(\fO_\bW)\ot_\bY\brF^\bu)$ is coacyclic in $\X\tors$.
\end{proof}

 Similarly to Section~\ref{pro-derived-inverse-image-subsecn}, we use
Propositions~\ref{homotopy-Y/X-flat-resolution}
and~\ref{homotopy-Y/X-flat-and-semiacyclic-prop} in order to construct
the left derived functor~\eqref{torsion-left-derived-inverse-image-eqn}
following the general approach of~\cite[1.2.1--2]{Del}
and~\cite[Lemma~6.5.2]{Psemi}.

 Let $\bM^\bu$ be a complex of quasi-coherent torsion sheaves on~$\bY$.
 By Proposition~\ref{homotopy-Y/X-flat-resolution}, there exists
a homotopy $\bY/\X$\+flat complex of quasi-coherent torsion sheaves
$\brF^\bu$ on $\bY$ together with a morphism of complexes
$\brF^\bu\rarrow\bM^\bu$ whose cone has the property that its direct
image is coacyclic in $\X\tors$.
 Put
$$
 \boL g^*(\bM^\bu)=g^*(\brF^\bu)\,\in\,\sD_\X^\si(\bW\tors).
$$
 The action of the functor $\boL g^*$ on morphisms in
$\sD_\X^\si(\bY\tors)$ is constructed in the same way as in
Section~\ref{pro-derived-inverse-image-subsecn}
(we omit the obvious details).

\subsection{Derived restriction with supports in the relative context}
\label{relative-derived-supports-subsecn}
 Suppose that we are given a pullback diagram of morphisms of
ind-schemes (so $\bW=\Z\times_\X\bY$)
\begin{equation} \label{pullback-of-ind-schemes}
\begin{gathered}
\xymatrix{
 \bW \ar[r]^k \ar[d]_\rho & \bY \ar[d]^\pi \\
 \Z \ar[r]^i & \X
}
\end{gathered}
\end{equation}
 Assume that the morphism~$\pi$ (hence also the morphism~$\rho$) is
flat and affine, the morphism~$i$ (hence also the morphism~$k$) is
a closed immersion, and the ind-scheme $\X$ (hence also
the ind-scheme~$\Z$) is ind-Noetherian.
 The aim of Section~\ref{relative-derived-supports-subsecn} is to
construct the right derived functor
\begin{equation} \label{relative-derived-supports-eqn}
 \boR k^!\:\sD_\X^\si(\bY\tors)\lrarrow\sD_\Z^\si(\bW\tors)
\end{equation}
acting from the $\bY/\X$\+semiderived category of quasi-coherent torsion
sheaves on $\bY$ to the $\bW/\Z$\+semiderived category of quasi-coherent
torsion sheaves on~$\bW$.

\begin{lem} \label{torsion-shriek-base-change}
 In the context of the diagram~\eqref{pullback-of-ind-schemes},
there is a natural isomorphism $i^!\pi_*\simeq\rho_*k^!$ of functors\/
$\bY\tors\rarrow\Z\tors$.
\end{lem}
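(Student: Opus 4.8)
The plan is to reduce the statement to the scheme-level base change isomorphism of Lemma~\ref{reasonable-base-change}(a) by passing to the defining components of all four functors. Since $\X$ is ind-Noetherian, I would fix a representation $\X=\ilim_{\gamma\in\Gamma}X_\gamma$ by an inductive system of closed immersions of Noetherian (hence reasonable) closed subschemes. Because $i\:\Z\rarrow\X$ is a closed immersion of ind-schemes and $\pi$ is affine, setting $Z_\gamma=\Z\times_\X X_\gamma$, \,$\bnY_\gamma=\bY\times_\X X_\gamma$, and $\bnW_\gamma=\bW\times_\X X_\gamma$ produces representations $\Z=\ilim Z_\gamma$, \,$\bY=\ilim\bnY_\gamma$, \,$\bW=\ilim\bnW_\gamma$ by closed immersions of reasonable closed subschemes, together with, for each $\gamma$, a pullback square of schemes
$$
\xymatrix{
 \bnW_\gamma \ar[r]^-{k_\gamma} \ar[d]_-{\rho_\gamma} & \bnY_\gamma \ar[d]^-{\pi_\gamma} \\
 Z_\gamma \ar[r]^-{i_\gamma} & X_\gamma
}
$$
in which $\bnW_\gamma\simeq Z_\gamma\times_{X_\gamma}\bnY_\gamma$ and $\pi_\gamma$, $\rho_\gamma$, $i_\gamma$, $k_\gamma$ are the evident restrictions.

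Next I would invoke the componentwise descriptions established earlier. By the construction of the direct image in Section~\ref{torsion-direct-images-subsecn}, one has $(\pi_*\bN)_{(X_\gamma)}=\pi_{\gamma*}\bN_{(\bnY_\gamma)}$ and $(\rho_*\bL)_{(Z_\gamma)}=\rho_{\gamma*}\bL_{(\bnW_\gamma)}$; by the description of $i^!$ and $k^!$ for closed immersions in Section~\ref{torsion-inverse-images-subsecn}, one has $(i^!\rM)_{(Z_\gamma)}=i_\gamma^!\rM_{(X_\gamma)}$ and $(k^!\bN)_{(\bnW_\gamma)}=k_\gamma^!\bN_{(\bnY_\gamma)}$. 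Composing these, for every $\gamma$ I obtain
$$
 (i^!\pi_*\bN)_{(Z_\gamma)}=i_\gamma^!\pi_{\gamma*}\bN_{(\bnY_\gamma)}
 \qquad\text{and}\qquad
 (\rho_*k^!\bN)_{(Z_\gamma)}=\rho_{\gamma*}k_\gamma^!\bN_{(\bnY_\gamma)}.
$$
Applying Lemma~\ref{reasonable-base-change}(a) to the pullback square above, with the reasonable closed subscheme $Z_\gamma\subset X_\gamma$, yields a natural isomorphism $i_\gamma^!\pi_{\gamma*}\simeq\rho_{\gamma*}k_\gamma^!$ of functors $\bnY_\gamma\qcoh\rarrow Z_\gamma\qcoh$, hence an isomorphism between the $Z_\gamma$-components of $i^!\pi_*\bN$ and $\rho_*k^!\bN$, natural in~$\bN$.

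It then remains to check that these componentwise isomorphisms assemble into an isomorphism of quasi-coherent torsion sheaves on~$\Z$. Since a torsion sheaf, and a morphism of torsion sheaves, is determined by its components on the cofinal family $\{Z_\gamma\}$ together with compatibility with the structure morphisms (as per Section~\ref{qcoh-torsion-sheaves-subsecn}), this compatibility is the real content, and I expect it to be the main — though routine — obstacle. Concretely, for $\gamma<\delta$ one must verify that the base change isomorphisms for $\gamma$ and for $\delta$ commute with the transition maps of the respective inductive systems; this follows from the naturality of the isomorphism of Lemma~\ref{reasonable-base-change}(a) in the ambient scheme, using the functoriality of its construction by adjunction.

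Alternatively, I would sidestep the slightly delicate compatibility of $i^!$ with transition maps by passing to left adjoints. The functor $i^!\pi_*$ is right adjoint to $\pi^*i_*$, and $\rho_*k^!$ is right adjoint to $k_*\rho^*$ (using that $i^!$ is right adjoint to $i_*$, that $k^!$ is right adjoint to $k_*$, and Lemma~\ref{torsion-direct-inverse-adjunction}(b)). By uniqueness of adjoints it therefore suffices to produce a natural isomorphism $\pi^*i_*\simeq k_*\rho^*$ of functors $\Z\tors\rarrow\bY\tors$. Using the simplified componentwise description of $\pi^*$ and $\rho^*$ for flat morphisms from Remark~\ref{simplified-flat-torsion-inverse-image}, this reduces to $\pi_\gamma^*i_{\gamma*}\simeq k_{\gamma*}\rho_\gamma^*$, which is exactly Lemma~\ref{reasonable-base-change}(b); this route is more transparent precisely because $\pi^*$, $\rho^*$, $i_*$, $k_*$ all commute readily with the transition morphisms of the inductive systems.
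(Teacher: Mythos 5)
Your argument is correct, but it takes a genuinely different route from the paper's, and the comparison is instructive. The paper's proof never invokes the scheme-level base change of Lemma~\ref{reasonable-base-change} at all: it evaluates both sides on an \emph{arbitrary} reasonable closed subscheme $Z\subset\Z$, observes that $Z$ is also a reasonable closed subscheme of $\X$ (embedded via~$i$) and that $\bnW=Z\times_\Z\bW=Z\times_\X\bY$ is a reasonable closed subscheme of both $\bW$ and $\bY$ (embedded via~$k$), and then notes that the chain
$$
 (i^!\pi_*\bN)_{(Z)}\simeq(\pi_*\bN)_{(Z)}\simeq\rho_Z{}_*(\bN_{(\bnW)})
 \simeq\rho_Z{}_*((k^!\bN)_{(\bnW)})\simeq(\rho_*k^!\bN)_{(Z)}
$$
consists entirely of instances of the definitions of $i^!$, $\pi_*$, $k^!$, $\rho_*$ from Sections~\ref{torsion-inverse-images-subsecn} and~\ref{torsion-direct-images-subsecn}: the component of a $!$\+restriction along a closed immersion of ind-schemes at a subscheme $Z\subset\Z$ is literally the component of the original sheaf at $i(Z)$, so no scheme-level shriek functor ever appears, the compatibility of the resulting system of isomorphisms is immediate, and the lemma in fact holds under much weaker hypotheses ($\pi$ merely ``representable by schemes'', $i$~a reasonable closed immersion of reasonable ind-schemes --- no flatness, affineness, or Noetherianness is needed). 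Your route instead fixes a representation of $\X$ and computes components at the subschemes $Z_\gamma=\Z\times_\X X_\gamma$, where the formula $(i^!\rM)_{(Z_\gamma)}=i_\gamma^!\rM_{(X_\gamma)}$ brings a genuine scheme-level functor~$i_\gamma^!$ into play; this is precisely what forces you to invoke Lemma~\ref{reasonable-base-change}(a) and then to verify coherence of those base-change isomorphisms with the transition maps of the inductive systems --- a real (if standard) pasting argument that you only sketch, and which is the price of testing on the $Z_\gamma$'s rather than on reasonable closed subschemes of $\Z$ itself. Your alternative via uniqueness of right adjoints, reducing to $\pi^*i_*\simeq k_*\rho^*$ and Lemma~\ref{reasonable-base-change}(b), is a sensible way to lighten that burden (and uses the flatness of~$\pi$ through Remark~\ref{simplified-flat-torsion-inverse-image}); but the cleanest argument is the paper's purely definitional one.
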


\begin{proof}
 Weaker assumptions than stated above are sufficient for the validity
of this lemma, which is an ind-scheme version of
Lemma~\ref{reasonable-base-change}(a).
 It suffices to assume that $\bW=\Z\times_\X\bY$, the morphism~$\pi$
(hence also~$\rho$) is ``representable by schemes'', and $i$
(hence also~$k$) is a reasonable closed immersion.

 Indeed, let $\bN$ be a quasi-coherent torsion sheaf on~$\bY$,
and let $Z\subset\Z$ be a reasonable closed subscheme.
 Put $\bnW=Z\times_\Z\bW$; so $\bnW$ is a reasonable closed subscheme
in~$\bW$.
 The scheme $Z$ can be also viewed as a reasonable closed subscheme
in $\X$, embedded via~$i$; and the scheme $\bnW$ can be viewed as
a reasonable closed subscheme in $\bY$, embedded via~$k$.
 Denote by $\rho_Z\:\bnW\rarrow Z$ the natural morphism.
 In order to obtain the desired isomorphism $i^!\pi_*\bN\simeq
\rho_*k^!\bN$ in $\Z\tors$, one constructs a compatible system of
isomorphisms in $Z\qcoh$
$$
 (i^!\pi_*\bN)_{(Z)}\simeq(\pi_*\bN)_{(Z)}\simeq\rho_Z{}_*(\bN_{(\bnW)})
 \simeq\rho_Z{}_*((k^!\bN)_{(\bnW)})\simeq(\rho_*k^!\bN)_{(Z)}
$$
for all the reasonable closed subschemes $Z\subset\Z$.
 (See the definition of the direct image functor for quasi-coherent
torsion sheaves in Section~\ref{torsion-direct-images-subsecn}.)
\end{proof}

 By Proposition~\ref{over-ind-Noetherian-semiderived-prop}, we have
natural equivalences of triangulated categories
$\sD_\X^\si(\bY\tors)\simeq\sD(\bY\tors_{\X\dinj})$ and
$\sD_\Z^\si(\bW\tors)\simeq\sD(\bW\tors_{\Z\dinj})$.
 We will use these triangulated equivalences in order to construct
the right derived functor~\eqref{relative-derived-supports-eqn}.

\begin{lem} \label{support-exact-between-injective-over-base}
 In the context of the diagram~\eqref{pullback-of-ind-schemes},
the functor $k^!\:\bY\tors\rarrow\bW\tors$ restricts to an exact
functor between exact categories\/ $\bY\tors_{\X\dinj}\rarrow
\bW\tors_{\Z\dinj}$.
\end{lem}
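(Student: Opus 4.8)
The plan is to deduce both halves of the statement --- that $k^!$ carries $\bY\tors_{\X\dinj}$ into $\bW\tors_{\Z\dinj}$, and that the restricted functor is exact --- from the base-change isomorphism $i^!\pi_*\simeq\rho_*k^!$ of Lemma~\ref{torsion-shriek-base-change}, used in conjunction with the exactness and faithfulness of the affine direct image functors $\pi_*$ and $\rho_*$ provided by Lemma~\ref{affine-torsion-direct-image}. The recurring idea is to test everything after applying $\rho_*$, transporting the problem on $\bW$ to a problem about the functor $i^!$ applied to injectives on $\X$.

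First I would verify that $k^!$ lands in the prescribed subcategory. Let $\bK\in\bY\tors_{\X\dinj}$, so that $\pi_*\bK$ is an injective object of $\X\tors$. By Lemma~\ref{torsion-shriek-base-change} there is a natural isomorphism $\rho_*k^!\bK\simeq i^!\pi_*\bK$. Since $i\:\Z\rarrow\X$ is a closed immersion, the functor $i^!\:\X\tors\rarrow\Z\tors$ is right adjoint to the exact functor $i_*$ (as recorded in the proof of Proposition~\ref{torsion-injectives-characterized}(a)), hence it preserves injectivity. Therefore $\rho_*k^!\bK\simeq i^!\pi_*\bK$ is injective in $\Z\tors$, which is precisely the assertion that $k^!\bK$ is $\Z$-injective.

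Next I would address exactness. By Lemma~\ref{injective-over-base} the exact structure on $\bY\tors_{\X\dinj}$ is inherited from $\bY\tors$, so an admissible short exact sequence is a short exact sequence $0\rarrow\bK'\rarrow\bK\rarrow\bK''\rarrow0$ in $\bY\tors$ whose three terms are all $\X$-injective. Applying $k^!$, the fact that this is a right adjoint gives left-exactness for free, so the only issue is surjectivity of $k^!\bK\rarrow k^!\bK''$. Here I would pass to the base: because $\rho$ is affine, the functor $\rho_*$ is exact and faithful, so it suffices to prove exactness of $0\rarrow\rho_*k^!\bK'\rarrow\rho_*k^!\bK\rarrow\rho_*k^!\bK''\rarrow0$ in $\Z\tors$, which by the base-change isomorphism is the sequence $0\rarrow i^!\pi_*\bK'\rarrow i^!\pi_*\bK\rarrow i^!\pi_*\bK''\rarrow0$.

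The hard part is thus the right-hand exactness after applying the merely left-exact functor $i^!$, and I expect to resolve it by a splitting argument. Applying the exact functor $\pi_*$ to the original sequence produces a short exact sequence $0\rarrow\pi_*\bK'\rarrow\pi_*\bK\rarrow\pi_*\bK''\rarrow0$ in $\X\tors$ all of whose terms are injective; such a sequence is automatically split, since the monomorphism $\pi_*\bK'\rarrow\pi_*\bK$ splits by injectivity of $\pi_*\bK'$. An additive functor carries a split short exact sequence to a split short exact sequence, so $i^!$ yields an exact sequence $0\rarrow i^!\pi_*\bK'\rarrow i^!\pi_*\bK\rarrow i^!\pi_*\bK''\rarrow0$. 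By the faithful exactness of $\rho_*$, the sequence $0\rarrow k^!\bK'\rarrow k^!\bK\rarrow k^!\bK''\rarrow0$ is then exact in $\bW\tors$; combined with the first step, all of its terms lie in $\bW\tors_{\Z\dinj}$, so it is an admissible short exact sequence there. This establishes that $k^!$ restricts to an exact functor $\bY\tors_{\X\dinj}\rarrow\bW\tors_{\Z\dinj}$.
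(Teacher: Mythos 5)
Your proposal is correct and follows essentially the same route as the paper's own proof: both halves rest on the base-change isomorphism $\rho_*k^!\simeq i^!\pi_*$, the preservation of injectives by $i^!$ (as a right adjoint to the exact functor~$i_*$), the observation that $\pi_*$ turns an admissible sequence of $\X$-injective sheaves into a \emph{split} exact sequence of injectives, and the faithful exactness of $\rho_*$ to descend exactness back to $\bW\tors$. The only cosmetic difference is that you spell out the reflection of exactness by the faithful exact functor $\rho_*$, which the paper leaves implicit.
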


\begin{proof}
 The functor $i^!\:\X\tors\rarrow\Z\tors$, being right adjoint to
an exact functor $i_*\:\Z\tors\rarrow\X\tors$, takes injective objects
to injective objects.
 In view of Lemma~\ref{torsion-shriek-base-change}, it follows that
the functor $k^!\:\bY\tors\rarrow\bW\tors$ takes $\X$\+injective
quasi-coherent torsion sheaves to $\Z$\+injective ones.

 Let $0\rarrow\bL\rarrow\bM\rarrow\bN\rarrow0$ be a short exact sequence
of $\X$\+injective quasi-coherent torsion sheaves on~$\bY$.
 Then $0\rarrow\pi_*\bL\rarrow\pi_*\bM\rarrow\pi_*\bN\rarrow0$ is
a split short exact sequence of quasi-coherent torsion sheaves on~$\X$.
 Hence $0\rarrow i^!\pi^*\bL\rarrow i^*\pi_*\bM\rarrow i^!\pi_*\bN
\rarrow0$ is a split short exact sequence of quasi-coherent torsion
sheaves on~$\Z$.
 As $\rho_*k^!\simeq i^!\pi_*$, it follows that $0\rarrow k^!\bL\rarrow
k^!\bM\rarrow k^!\bN\rarrow0$ is a short exact sequence of
$\Z$\+injective quasi-coherent torsion sheaves on $\bW$ (because
the functor $\rho_*\:\bW\tors\rarrow\Z\tors$ is exact and faithful
by Lemma~\ref{affine-torsion-direct-image}).
\end{proof}

 In view of Lemma~\ref{support-exact-between-injective-over-base},
the functor $k^!\:\bY\tors_{\X\dinj}\rarrow\bW\tors_{\Z\dinj}$ induces
a well-defined triangulated functor between the derived categories
of the two exact categories,
$$
 k^!\:\sD(\bY\tors_{\X\dinj})\lrarrow\sD(\bW\tors_{\Z\dinj}).
$$
 Using the triangulated equivalences
$\sD_\X^\si(\bY\tors)\simeq\sD(\bY\tors_{\X\dinj})$ and
$\sD_\Z^\si(\bW\tors)\simeq\sD(\bW\tors_{\Z\dinj})$, we obtain
the desired right derived
functor~\eqref{relative-derived-supports-eqn}.
 This construction is also a particular case of
the construction of~\cite[1.2.1--2]{Del} or~\cite[Lemma~6.5.2]{Psemi},
which was used in Sections~\ref{pro-derived-inverse-image-subsecn}\+-%
\ref{torsion-derived-inverse-image-subsecn}.

\subsection{Composition of derived inverse images of pro-sheaves}
 Suppose that we are given a composable pair of commutative square
diagrams of morphisms of ind-schemes
\begin{equation} \label{two-squares-of-ind-schemes}
\begin{gathered}
\xymatrix{
 \bV \ar[r]^h \ar[d]_\sigma &
 \bW \ar[r]^g \ar[d]_\rho & \bY \ar[d]^\pi \\
 \U \ar[r]^t & \Z \ar[r]^f & \X
}
\end{gathered}
\end{equation}
 Assume that the morphisms~$\pi$, $\rho$, and~$\sigma$ are flat and
affine, and the ind-schemes $\X$ and $\Z$ are ind-Noetherian.
 Consider the composition of left derived
functors~\eqref{pro-left-derived-inverse-image-eqn}
constructed in Section~\ref{pro-derived-inverse-image-subsecn}
$$
 \sD(\bY_\X\flat)\overset{\boL g^*}\lrarrow
 \sD(\bW_\Z\flat)\overset{\boL h^*}\lrarrow
 \sD(\bV_\U\flat).
$$

\begin{prop} \label{composition-of-derived-inverse-images}
 There is a natural isomorphism\/ $\boL h^*\circ\boL g^*\simeq
\boL(gh)^*$ of triangulated functors\/ $\sD(\bY_\X\flat)\rarrow
\sD(\bV_\U\flat)$.
\end{prop}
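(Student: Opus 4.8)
The plan is to exploit the explicit form of the relatively homotopy flat resolutions used in the construction of $\boL g^*$ in Section~\ref{pro-derived-inverse-image-subsecn}, together with the closure properties of Lemma~\ref{relatively-flat-complexes-lemma}. Since the underived inverse images of pro-quasi-coherent pro-sheaves satisfy $h^*g^*\simeq(gh)^*$ (inverse image being defined componentwise from the scheme-level $f^*$, which is contravariantly functorial), it suffices to show that $g^*$ carries a complex suitable for computing $\boL g^*$ to a complex suitable for computing $\boL h^*$; that is, that $g^*$ takes relatively $\bY/\X$-homotopy flat complexes of flat pro-quasi-coherent pro-sheaves on $\bY$ (the objects adjusted for $\boL g^*$) to relatively $\bW/\Z$-homotopy flat complexes of flat pro-quasi-coherent pro-sheaves on $\bW$ (the objects adjusted for $\boL h^*$).

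The key step is therefore the claim that if $\bfP^\bu\in\sC(\bY_\X\flat)$ and $\bfF^\bu\rarrow\bfP^\bu$ is the relatively homotopy flat resolution built in the proof of Proposition~\ref{relatively-homotopy-flat-resolution}, then $g^*\bfF^\bu$ is relatively $\bW/\Z$-homotopy flat. Rather than verify conditions~(i--ii) of the definition directly for $g^*\bfF^\bu$ (which seems intractable for an arbitrary relatively homotopy flat complex), I would use that $\bfF^\bu$ is the coproduct-totalization of the bar-resolution bicomplex of Lemma~\ref{bar-complex-lemma}, whose terms are $(\pi^*\pi_*)^{n+1}(\bfP^\bu)=\pi^*\bigl((\pi_*\pi^*)^n\pi_*\bfP^\bu\bigr)$ with $(\pi_*\pi^*)^n\pi_*\bfP^\bu\in\sC(\X\flat)$. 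Applying $g^*$ and using the commutativity $\pi g=f\rho$ of the right square gives
\[
 g^*(\pi^*\pi_*)^{n+1}(\bfP^\bu)\simeq\rho^*f^*\bigl((\pi_*\pi^*)^n\pi_*\bfP^\bu\bigr),
\]
and since $f^*$ preserves flatness of pro-sheaves (Section~\ref{flat-pro-sheaves-subsecn}), the pro-sheaf $f^*\bigl((\pi_*\pi^*)^n\pi_*\bfP^\bu\bigr)$ lies in $\sC(\Z\flat)$. By Lemma~\ref{relatively-flat-complexes-lemma}(b), applied to the flat affine morphism $\rho\:\bW\rarrow\Z$ over the ind-Noetherian base $\Z$, each term $g^*(\pi^*\pi_*)^{n+1}(\bfP^\bu)$ is relatively $\bW/\Z$-homotopy flat.

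It remains to propagate this from the terms to the totalization. By Lemma~\ref{bounded-above-total-complex-lemma}, the complex $\bfF^\bu$ is obtained from the terms $(\pi^*\pi_*)^{n+1}(\bfP^\bu)$ by shifts, cones, countable coproducts, and passage to a homotopy equivalent complex; since $g^*$ is additive and preserves coproducts, $g^*\bfF^\bu$ is obtained in the same way from the complexes $g^*(\pi^*\pi_*)^{n+1}(\bfP^\bu)$. As the relatively $\bW/\Z$-homotopy flat complexes form a full triangulated subcategory of $\sK(\bW\flat)$ closed under coproducts by Lemma~\ref{relatively-flat-complexes-lemma}(a), it follows that $g^*\bfF^\bu$ is relatively $\bW/\Z$-homotopy flat, which proves the claim.

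Granting the claim, the assembly is immediate. The complex $g^*\bfF^\bu$ is a complex of flat pro-quasi-coherent pro-sheaves on $\bW$ (inverse image preserves flatness) and is adjusted for $\boL h^*$, so it may serve as its own resolution, whence $\boL h^*(\boL g^*\bfP^\bu)=\boL h^*(g^*\bfF^\bu)=h^*g^*\bfF^\bu=(gh)^*\bfF^\bu=\boL(gh)^*\bfP^\bu$; here the first and last equalities hold because $\bfF^\bu$ is the chosen relatively $\bY/\X$-homotopy flat representative computing both $\boL g^*$ and $\boL(gh)^*$ (the outer rectangle being again a commutative square over the ind-Noetherian base $\X$ with flat affine verticals $\pi$ and $\sigma$). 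Naturality in $\bfP^\bu$ follows from the functoriality built into the Deligne-style construction of these derived functors. The main obstacle is precisely the verification that $g^*\bfF^\bu$ is relatively $\bW/\Z$-homotopy flat, and the reduction to the explicit bar resolution is exactly what circumvents it: it replaces the opaque defining conditions by the transparent closure properties of Lemma~\ref{relatively-flat-complexes-lemma}.
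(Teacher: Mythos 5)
Your proposal is correct and follows essentially the same route as the paper's own proof: both recognize that preservation of relative homotopy flatness under $g^*$ cannot be established for an arbitrary relatively homotopy flat complex, and both circumvent this by choosing the resolution of Proposition~\ref{relatively-homotopy-flat-resolution} (built from inverse images of complexes in $\sC(\X\flat)$ by cones, coproducts, and homotopy equivalences) and using the isomorphism $g^*\pi^*\simeq\rho^*f^*$ together with Lemma~\ref{relatively-flat-complexes-lemma} to see that $g^*\bfF^\bu$ is relatively homotopy flat over~$\Z$. Your termwise treatment of the bar resolution via Lemma~\ref{bounded-above-total-complex-lemma} is just a more explicit rendering of the same argument.
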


\begin{proof}
 The related underived isomorphism is obvious: clearly, one has
$h^*g^*\bfP^\bu\simeq(gh)^*\bfP^\bu$ in $\sC(\bV\pro)$ for any
complex of pro-quasi-coherent pro-sheaves $\bfP^\bu$ on~$\bY$.

 The best possible argument for the proof of the composition of
derived functors being isomorphic to the derived functor of
the composition would be to show that the functor~$g^*$ takes
complexes adjusted to $\boL g^*$ and $\boL(gh)^*$ to complexes
adjusted to~$\boL h^*$.
 In our context, this would mean showing that, for any relatively
homotopy flat complex of flat pro-quasi-coherent pro-sheaves
$\bfF^\bu$ on $\bY$ (relative to the morphism $\pi\:\bY\rarrow\X$),
the complex of flat pro-quasi-coherent pro-sheaves $g^*(\bfF^\bu)$
on $\bW$ is also relatively homotopy flat
(relative to the morphism $\rho\:\bW\rarrow\Z$).

 However, we do \emph{not} know how to prove this preservation of
relative homotopy flatness in full generality.
 Instead, we will show that, given a complex $\bfP^\bu$ in
$\bY_\X\flat$, a relatively homotopy flat complex $\bfF^\bu$ endowed
with a morphism $\bfF^\bu\rarrow\bfP^\bu$ with the cone acyclic in
$\bY_\X\flat$ \emph{can be chosed in such a way that} the complex
of flat pro-quasi-coherent pro-sheaves $g^*(\bfF^\bu)$ on $\bW$
is a relatively homotopy flat complex.

 Let us recall the construction of the relatively homotopy flat
resolutions from Proposition~\ref{relatively-homotopy-flat-resolution}.
 All the complexes of flat pro-quasi-coherent pro-sheaves on $\bY$
which can be obtained from the inverse images of complexes of flat
pro-quasi-coherent pro-sheaves on $\X$ using the operations of cone,
infinite coproduct, and the passage to a homotopy equivalent complex,
are relatively homotopy flat
(by Lemma~\ref{relatively-flat-complexes-lemma});
and any complex $\bfP^\bu\in\sC(\bY_\X\flat)$ admits a relatively
homotopy flat resolution of this form.

 Assume that the complex $\bfF^\bu$ is homotopy equivalent to
a complex of flat pro-quasi-coherent pro-sheaves on $\bY$ obtained
from inverse images of complexes of flat pro-quasi-coherent
pro-sheaves on $\X$ using the operations of cone and infinite coproduct.
 Then the complex $g^*(\bfF^\bu)$ is homotopy equivalent to a complex
of flat pro-quasi-coherent pro-sheaves on $\bW$ similarly obtained from
the inverse images of complexes of flat pro-quasi-coherent pro-sheaves
on~$\Z$.
 This follows from the natural isomorphism $g^*\pi^*\simeq\rho^*f^*$
of functors $\X\flat\rarrow\bW\flat$ and the commutation of inverse
images of (flat) pro-quasi-coherent pro-sheaves with the coproducts.
\end{proof}

\subsection{External tensor products in the relative context}
\label{relative-external-products-subsecn}
 Let $\X'$ and $\X''$ be ind-schemes over~$\kk$, and let $\pi'\:\bY'
\rarrow\X'$ and $\pi''\:\bY''\rarrow\X''$ be affine morphisms
of schemes.
 Let $\pi'\times_\kk\pi''\:\bY'\times_\kk\bY''\rarrow\X'\times_\kk\X''$
be the induced morphism of the Cartesian products.
 Clearly, $\pi'\times_\kk\pi''$ is an affine morphism of ind-schemes.

 The functor of external tensor product of pro-quasi-coherent
pro-sheaves~\eqref{external-product-pro-sheaves-eqn}
$$
 \bt_\kk\:\bY'\pro\times\bY''\pro\lrarrow(\bY'\times_\kk\bY'')\pro
$$
was constructed in Section~\ref{external-of-pro-subsecn}.

\begin{lem} \label{direct-image-of-external-product-of-pro-sheaves}
 Let\/ $\bfQ'$ be a pro-quasi-coherent pro-sheaf on\/ $\bY'$ and\/
$\bfQ''$ be a pro-quasi-coherent pro-sheaf on\/~$\bY''$.
 Then there is a natural isomorphism
$$
 (\pi'\times_\kk\pi'')_*(\bfQ'\bt_\kk\bfQ'')\,\simeq\,
 \pi'_*\bfQ'\bt_\kk\pi''_*\bfQ''
$$
of pro-quasi-coherent pro-sheaves on\/ $\X'\times_\kk\X''$.
\end{lem}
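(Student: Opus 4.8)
The statement is the special case of Lemma~\ref{pro-direct-image-of-external-product} obtained by taking the two affine morphisms of ind-schemes there to be $\pi'$ and $\pi''$ (substituting $\bY'$, $\bY''$, $\pi'$, $\pi''$, $\bfQ'$, $\bfQ''$ for $\Y'$, $\Y''$, $f'$, $f''$, $\fQ'$, $\fQ''$). So the shortest route is simply to invoke that lemma, and the plan is to record this reduction. For the reader who prefers to see the argument unwound directly in the present notation, I would re-derive it from the definitions as follows.

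First I would fix representations $\X'=\ilim_{\gamma'\in\Gamma'}X'_{\gamma'}$ and $\X''=\ilim_{\gamma''\in\Gamma''}X''_{\gamma''}$ by inductive systems of closed immersions, so that $\X'\times_\kk\X''=\ilim X'_{\gamma'}\times_\kk X''_{\gamma''}$ as at the beginning of Section~\ref{external-of-torsion-subsecn}. Since a pro-quasi-coherent pro-sheaf on $\X'\times_\kk\X''$ is determined by its components on the product closed subschemes $Z'\times_\kk Z''$ (these being cofinal among all closed subschemes of $\X'\times_\kk\X''$), it suffices to produce natural isomorphisms of the two sides after evaluation on each such $Z'\times_\kk Z''$, together with compatibility with the structure (transition) morphisms of the pro-sheaves. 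Next I would unwind the definitions: the external product of Section~\ref{external-of-pro-subsecn} gives $(\bfQ'\bt_\kk\bfQ'')^{(\bnW')}$ evaluated on a product closed subscheme as $\bfQ'^{(\ldots)}\bt_\kk\bfQ''^{(\ldots)}$, where the relevant closed subschemes of $\bY'$ and $\bY''$ are $Z'\times_{\X'}\bY'$ and $Z''\times_{\X''}\bY''$; applying $(\pi'\times_\kk\pi'')_*$ (Section~\ref{pro-sheaves-inverse-direct-subsecn}) and using the identification $(Z'\times_\kk Z'')\times_{\X'\times_\kk\X''}(\bY'\times_\kk\bY'')\simeq(Z'\times_{\X'}\bY')\times_\kk(Z''\times_{\X''}\bY'')$ reduces the evaluated left-hand side to the direct image, along the affine morphism of schemes $(Z'\times_{\X'}\bY')\times_\kk(Z''\times_{\X''}\bY'')\rarrow Z'\times_\kk Z''$, of an external product of quasi-coherent sheaves. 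The evaluated right-hand side is the corresponding external product of the two direct images, and the two agree by the scheme-level Lemma~\ref{direct-image-of-external-product}.

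The only point requiring genuine verification — and it is mild bookkeeping rather than a real obstacle — is the compatibility of these component isomorphisms with the transition morphisms, i.e.\ the commutativity of the squares assembled from the base-change isomorphisms of Lemma~\ref{affine-flat-base-change}(a) and from the naturality asserted in Lemma~\ref{direct-image-of-external-product}. This is precisely the compatibility already subsumed in the proof of Lemma~\ref{pro-direct-image-of-external-product}, so in the final text I expect simply to cite that lemma, noting that it applies verbatim to the affine morphisms $\pi'$ and $\pi''$.
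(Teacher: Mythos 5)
Your proposal is correct and matches the paper's route: the paper's own proof consists of the single line ``Follows from Lemma~\ref{direct-image-of-external-product}'', i.e.\ exactly the unwinding-to-schemes reduction you sketch, which is also how Lemma~\ref{pro-direct-image-of-external-product} was proved. Your shortcut of citing Lemma~\ref{pro-direct-image-of-external-product} verbatim (the statement is literally a special case of it, with $f'=\pi'$, $f''=\pi''$ affine) is equally valid and, if anything, slightly more economical.
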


\begin{proof}
 Follows from Lemma~\ref{direct-image-of-external-product}.
\end{proof}

\begin{lem} \label{X-flat-on-Y-external-product}
\textup{(a)} Let\/ $\bfG'$ be an\/ $\X'$\+flat pro-quasi-coherent
pro-sheaf on\/ $\bY'$ and\/ $\bfG''$ be an\/ $\X''$\+flat
pro-quasi-coherent pro-sheaf on\/~$\bY''$.
 Then\/ $\bfG'\bt_\kk\bfG''$ is an $(\X'\times_\kk\X'')$\+flat
pro-quasi-coherent pro-sheaf on\/ $\bY'\times_\kk\bY''$. \par
\textup{(b)} The external tensor product functor
\begin{equation} \label{external-product-X-flat-on-Y}
 \bt_\kk\:\bY'_{\X'}\flat\times\bY''_{\X''}\flat
 \lrarrow(\bY'\times_\kk\bY'')_{(\X'\times_\kk\X'')}\flat
\end{equation}
is exact (as a functor between exact categories) and preserves
direct limits (in particular, coproducts).
\end{lem}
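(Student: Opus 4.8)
The plan is to reduce both parts to facts already established for flat pro-quasi-coherent pro-sheaves on the base ind-schemes, using the compatibility of direct images with external tensor products recorded in Lemma~\ref{direct-image-of-external-product-of-pro-sheaves}.

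For part~(a), I would compute the direct image of $\bfG'\bt_\kk\bfG''$ under the affine morphism $\pi'\times_\kk\pi''\:\bY'\times_\kk\bY''\rarrow\X'\times_\kk\X''$. By Lemma~\ref{direct-image-of-external-product-of-pro-sheaves}, there is a natural isomorphism
\[
 (\pi'\times_\kk\pi'')_*(\bfG'\bt_\kk\bfG'')\,\simeq\,
 \pi'_*\bfG'\bt_\kk\pi''_*\bfG''.
\]
By the definition of $\X'$\+flatness and $\X''$\+flatness, the pro-quasi-coherent pro-sheaves $\pi'_*\bfG'$ and $\pi''_*\bfG''$ are flat on $\X'$ and $\X''$, respectively. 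Hence, by the external tensor product functor~\eqref{external-product-flat-pro-sheaves-eqn} of Section~\ref{external-of-pro-subsecn} (which rests on Lemma~\ref{qcoh-flat-external-product}), the right-hand side is a flat pro-quasi-coherent pro-sheaf on $\X'\times_\kk\X''$. Thus the direct image of $\bfG'\bt_\kk\bfG''$ is flat, which is exactly the assertion that $\bfG'\bt_\kk\bfG''$ is $(\X'\times_\kk\X'')$\+flat. In particular, part~(a) shows that the functor~\eqref{external-product-X-flat-on-Y} is well-defined.

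For the exactness in part~(b), I would exploit the fact that the direct image functor $\pi_*\:\bY_\X\flat\rarrow\X\flat$ is exact (Section~\ref{pro-flat-over-base-subsecn}) and, by Lemma~\ref{flat-over-base-direct-image-acyclicity-criterion}, reflects admissible exactness. Fixing $\bfG''\in\bY''_{\X''}\flat$ and taking an admissible short exact sequence in $\bY'_{\X'}\flat$, I would apply $(\pi'\times_\kk\pi'')_*$ to its external product with $\bfG''$; by Lemma~\ref{direct-image-of-external-product-of-pro-sheaves} this yields the external product of the direct-image sequence on $\X'$ with the flat pro-sheaf $\pi''_*\bfG''$ on $\X''$. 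Since $\pi'_*$ is exact on $\bY'_{\X'}\flat$ and the external tensor product $\bt_\kk\:\X'\flat\times\X''\flat\rarrow(\X'\times_\kk\X'')\flat$ is exact in each argument (Section~\ref{external-of-pro-subsecn}), the resulting sequence is exact in $(\X'\times_\kk\X'')\flat$; reflecting this back through the direct image gives admissibility in the target. Symmetry handles the second argument.

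Finally, the preservation of direct limits would follow by observing that direct limits in $\bY_\X\flat$ are computed as in $\bY\pro$, since $\bY_\X\flat$ is closed under direct limits in $\bY\pro$ (Section~\ref{pro-flat-over-base-subsecn}); and the external tensor product functor~\eqref{external-product-pro-sheaves-eqn} on all pro-quasi-coherent pro-sheaves already preserves colimits in each argument (Section~\ref{external-of-pro-subsecn}, from Lemma~\ref{qcoh-external-tensor-exact}). Combined with part~(a), this gives the preservation of direct limits by~\eqref{external-product-X-flat-on-Y}. I do not anticipate any genuine obstacle: every step is a reduction to the already-proven absolute statements via Lemma~\ref{direct-image-of-external-product-of-pro-sheaves}, and the only point requiring a little care is keeping straight that both the exact structure and the direct limits on $\bY_\X\flat$ are detected by the direct image onto the base.
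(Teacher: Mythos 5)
Your proof is correct, and part~(a) is exactly the paper's argument: the paper proves~(a) by combining Lemma~\ref{direct-image-of-external-product-of-pro-sheaves} with formula~\eqref{external-product-flat-pro-sheaves-eqn}, just as you do. Your treatment of direct limits in part~(b) also matches the paper in substance (closure of $\bY_\X\flat$ under direct limits in $\bY\pro$ plus colimit-preservation of~$\bt_\kk$ on pro-sheaves). Where you genuinely diverge is the exactness claim in~(b): the paper verifies it directly from the definitions of the exact structures, since admissible exactness in $\bY_\X\flat$ is by definition a componentwise condition on the closed subschemes $\bnW=Z\times_\X\bY$, and the components of an external product are external products of quasi-coherent sheaves, so Lemma~\ref{qcoh-external-tensor-exact} finishes immediately. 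You instead push everything down to the base: apply $(\pi'\times_\kk\pi'')_*$, identify the result via Lemma~\ref{direct-image-of-external-product-of-pro-sheaves} as $\pi'_*E\bt_\kk\pi''_*\bfG''$, use exactness of $\pi'_*$ and of $\bt_\kk$ on $\X'\flat\times\X''\flat$, and then reflect exactness back through the direct image. This is valid, but note it rests on two small points you should make explicit: first, that $\pi'\times_\kk\pi''$ is again affine (stated in the paper), so that Lemma~\ref{flat-over-base-direct-image-acyclicity-criterion} applies; and second, that a short sequence in $\bY_\X\flat$ is admissible exact if and only if the associated three-term complex is acyclic in the exact category $\bY_\X\flat$ (equivalently, that $\pi_*$ reflects exactness because, componentwise, the direct image under an affine morphism of schemes is exact and faithful)---the cited lemma is literally about acyclicity of complexes, not about short exact sequences. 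The paper's componentwise check is shorter and avoids these reductions; your route buys uniformity, in that every step is an appeal to already-established statements about the base rather than an unwinding of the definitions, and it makes transparent why no flatness of $\pi'$, $\pi''$ is needed.
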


\begin{proof}
 Part~(a) follows from
Lemma~\ref{direct-image-of-external-product-of-pro-sheaves}
and formula~\eqref{external-product-flat-pro-sheaves-eqn}.
 Part~(b) follows directly from the definitions of the exact
structures involved and Lemma~\ref{qcoh-external-tensor-exact}.
\end{proof}

\begin{lem} \label{flat-over-base-complexes-external-tensor-exactness}
 Let\/ $\bfG'{}^\bu$ be a complex of\/ $\X'$\+flat pro-quasi-coherent
pro-sheaves on\/ $\bY'$ and\/ $\bfG''{}^\bu$ be a complex of\/
$\X''$\+flat pro-quasi-coherent pro-sheaves on\/~$\bY''$.
 Assume that the complex\/ $\bfG'{}^\bu$ is acyclic in\/
$\bY'_{\X'}\flat$.
 Then the complex\/ $\bfG'{}^\bu\bt_\kk\bfG''{}^\bu$ is acyclic in
$(\bY'\times_\kk\bY'')_{(\X'\times_\kk\X'')}\flat$.
\end{lem}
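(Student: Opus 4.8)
The plan is to reduce everything to the already-established facts about external tensor products over the base ind-schemes, using the direct-image acyclicity criterion to transport the problem from $\bY'\times_\kk\bY''$ down to $\X'\times_\kk\X''$. First I would record that $\bfG'{}^\bu\bt_\kk\bfG''{}^\bu$ is genuinely a complex in the exact category $(\bY'\times_\kk\bY'')_{(\X'\times_\kk\X'')}\flat$: this is immediate from Lemma~\ref{X-flat-on-Y-external-product}(a), which guarantees that the external product of an $\X'$-flat pro-sheaf and an $\X''$-flat pro-sheaf is $(\X'\times_\kk\X'')$-flat.

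Next I would invoke Lemma~\ref{flat-over-base-direct-image-acyclicity-criterion} for the affine morphism $\pi'\times_\kk\pi''\:\bY'\times_\kk\bY''\rarrow\X'\times_\kk\X''$: the complex $\bfG'{}^\bu\bt_\kk\bfG''{}^\bu$ is acyclic in $(\bY'\times_\kk\bY'')_{(\X'\times_\kk\X'')}\flat$ if and only if its direct image $(\pi'\times_\kk\pi'')_*(\bfG'{}^\bu\bt_\kk\bfG''{}^\bu)$ is acyclic in $(\X'\times_\kk\X'')\flat$. The key computation is then the identification
$$
 (\pi'\times_\kk\pi'')_*(\bfG'{}^\bu\bt_\kk\bfG''{}^\bu)\,\simeq\,
 \pi'_*\bfG'{}^\bu\bt_\kk\pi''_*\bfG''{}^\bu
$$
of complexes of pro-quasi-coherent pro-sheaves on $\X'\times_\kk\X''$, which is the complex-level version of Lemma~\ref{direct-image-of-external-product-of-pro-sheaves}. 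Finally, since $\bfG'{}^\bu$ is acyclic in $\bY'_{\X'}\flat$, Lemma~\ref{flat-over-base-direct-image-acyclicity-criterion} (now for $\pi'$) shows that $\pi'_*\bfG'{}^\bu$ is acyclic in $\X'\flat$; hence by Lemma~\ref{flat-pro-complexes-external-tensor-exactness} the external product $\pi'_*\bfG'{}^\bu\bt_\kk\pi''_*\bfG''{}^\bu$ is acyclic in $(\X'\times_\kk\X'')\flat$. Reading the direct-image criterion backwards then yields the desired acyclicity of $\bfG'{}^\bu\bt_\kk\bfG''{}^\bu$.

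The only point requiring genuine care—and the one I would expect to be the main obstacle, modest as it is—is upgrading Lemma~\ref{direct-image-of-external-product-of-pro-sheaves} from single pro-sheaves to complexes. Both the external tensor product of complexes and the direct image are defined so as to interact well with the coproduct totalization of bicomplexes: the external product of complexes is formed by taking coproducts along the diagonals, and the direct image functor for pro-quasi-coherent pro-sheaves preserves coproducts (Section~\ref{colimits-of-pro-sheaves-subsecn}). I would therefore note that the termwise isomorphism of Lemma~\ref{direct-image-of-external-product-of-pro-sheaves} is natural and commutes with the totalization, so it assembles into an isomorphism of total complexes, compatible with differentials. Once this bookkeeping is in place, the argument is a direct chaining of the cited lemmas, and no further analysis of the pro-sheaf structure is needed.
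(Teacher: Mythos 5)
Your proof is correct and follows exactly the same route as the paper, which reduces the statement to Lemma~\ref{flat-pro-complexes-external-tensor-exactness} via Lemmas~\ref{flat-over-base-direct-image-acyclicity-criterion} and~\ref{direct-image-of-external-product-of-pro-sheaves}; your write-up simply spells out the details (including the compatibility of the direct-image isomorphism with the coproduct totalization) that the paper leaves implicit.
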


\begin{proof}
 The results of
Lemmas~\ref{flat-over-base-direct-image-acyclicity-criterion}
and~\ref{direct-image-of-external-product-of-pro-sheaves} reduce
the question to
Lemma~\ref{flat-pro-complexes-external-tensor-exactness}.
\end{proof}

 It follows from
Lemma~\ref{flat-over-base-complexes-external-tensor-exactness} that
the external tensor product of pro-quasi-coherent pro-sheaves which
are flat over the base is well-defined as a functor between
the derived categories of the respective exact categories,
\begin{equation} \label{derived-flat-over-base-external-product}
 \bt_\kk\:\sD(\bY'_{\X'}\flat)\times\sD(\bY''_{\X''}\flat)\lrarrow
 \sD((\bY'\times_\kk\bY'')_{(\X'\times_\kk\X'')}\flat).
\end{equation}

 Now assume additionally that the ind-schemes $\X'$ and $\X''$ are
reasonable (then so are the ind-schemes $\bY'$ and~$\bY''$).
 The functor of external tensor product of quasi-coherent torsion
sheaves~\eqref{external-product-torsion-sheaves-eqn}
$$
 \bt_\kk\:\bY'\tors\times\bY''\tors\lrarrow(\bY'\times_\kk\bY'')\tors
$$
was constructed in Section~\ref{external-of-torsion-subsecn}.

\begin{lem} \label{torsion-complexes-external-tensor-semiacyclic}
 Let $\bN^{\prime\bu}$ be a complex of quasi-coherent torsion sheaves
on\/ $\bY'$ and $\bN^{\prime\prime\bu}$ be a complex of quasi-coherent
torsion sheaves on\/~$\bY''$.
 Assume that the complex\/ $\pi'_*(\bN^{\prime\bu})$ of quasi-coherent
torsion sheaves on\/ $\X'$ is coacyclic.
 Then the complex $(\pi'\times_\kk\pi'')_*
(\bN^{\prime\bu}\bt_\kk\bN^{\prime\prime\bu})$ of quasi-coherent
torsion sheaves on\/ $\X'\times_\kk\X''$ is coacyclic, too.
\end{lem}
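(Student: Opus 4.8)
The plan is to reduce the assertion to the ``absolute'' coacyclicity statement of Lemma~\ref{torsion-complexes-external-tensor-coacyclic} by commuting the direct image functor past the external tensor product. The whole argument rests on a single natural isomorphism of complexes of quasi-coherent torsion sheaves on $\X'\times_\kk\X''$,
$$
 (\pi'\times_\kk\pi'')_*(\bN^{\prime\bu}\bt_\kk\bN^{\prime\prime\bu})
 \,\simeq\,
 \pi'_*(\bN^{\prime\bu})\bt_\kk\pi''_*(\bN^{\prime\prime\bu}),
$$
after which the conclusion is immediate.

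First I would establish this isomorphism. On the level of individual quasi-coherent torsion sheaves it is exactly Lemma~\ref{torsion-direct-image-of-external-product}, applied to the affine morphisms $\pi'$ and $\pi''$ (recall that at this point in Section~\ref{relative-external-products-subsecn} the ind-schemes $\X'$ and $\X''$, hence also $\bY'$ and $\bY''$, are assumed reasonable, so all the functors involved are defined). To promote this to complexes, I would recall that the external tensor product of two complexes is formed by totalizing the bicomplex of termwise external tensor products using coproducts along the diagonals. The direct image functor $(\pi'\times_\kk\pi'')_*$ preserves coproducts of quasi-coherent torsion sheaves by Lemma~\ref{representable-by-schemes-direct-image}(a) (an affine morphism being in particular representable by schemes), and the external tensor product preserves coproducts in each argument by Lemma~\ref{torsion-and-Gamma-systems-external-product}(a); hence both functors commute with the coproduct totalization, and the termwise isomorphism of Lemma~\ref{torsion-direct-image-of-external-product} assembles into the displayed isomorphism of total complexes.

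With the isomorphism in hand, the proof concludes at once: by hypothesis $\pi'_*(\bN^{\prime\bu})$ is a coacyclic complex in $\X'\tors$, so Lemma~\ref{torsion-complexes-external-tensor-coacyclic}, applied with $\rM^{\prime\bu}=\pi'_*(\bN^{\prime\bu})$ and $\rM^{\prime\prime\bu}=\pi''_*(\bN^{\prime\prime\bu})$, shows that $\pi'_*(\bN^{\prime\bu})\bt_\kk\pi''_*(\bN^{\prime\prime\bu})$ is coacyclic in $(\X'\times_\kk\X'')\tors$. Transporting coacyclicity back along the isomorphism gives the claim. I expect no serious obstacle here: the only point requiring any care is the verification that the direct image and the external tensor product both commute with the coproduct totalization defining $\bt_\kk$ on complexes, and this is a routine consequence of the coproduct-preservation lemmas cited above.
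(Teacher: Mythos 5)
Your proof is correct and follows essentially the same route as the paper, whose proof simply cites Lemma~\ref{torsion-direct-image-of-external-product} (direct images commute with external tensor products) together with Lemma~\ref{torsion-complexes-external-tensor-coacyclic}. You have merely made explicit the routine verification that the termwise isomorphism passes to the coproduct totalizations, which the paper leaves implicit.
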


\begin{proof}
 Follows immediately from
Lemmas~\ref{torsion-direct-image-of-external-product}
and~\ref{torsion-complexes-external-tensor-coacyclic}.
\end{proof}

 It is clear from
Lemma~\ref{torsion-complexes-external-tensor-semiacyclic} that
the external tensor product is well-defined as a functor between
the semiderived categories of quasi-coherent torsion sheaves,
\begin{equation} \label{semiderived-torsion-external-product}
 \bt_\kk\:\sD_{\X'}^\si(\bY'\tors)\times\sD_{\X''}^\si(\bY''\tors)
 \lrarrow\sD_{(\X'\times_\kk\X'')}^\si((\bY'\times_\kk\bY'')\tors).
\end{equation}

\medskip
 We have not used the assumption of flatness of morphisms~$\pi'$
and~$\pi''$ in this Section~\ref{relative-external-products-subsecn}
yet, but it is worth noticing that the morphism of ind-schemes
$\pi'\times_\kk\pi''$ is flat whenever the morphisms $\pi'$ and~$\pi''$
are.
 This follows from the similar property of morphisms of schemes
over~$\kk$.

\subsection{Derived tensor product of pro-sheaves as derived restriction
to the diagonal} \label{derived-tensor-restriction-subsecn}
 Let $\X$ be an ind-scheme of ind-finite type over~$\kk$,
and let $\pi\:\bY\rarrow\X$ be a flat affine morphism of ind-schemes.
 Then the following commutative square diagram of morphisms of
ind-schemes
$$
\xymatrix{
 \bY \ar[rr]^-{\Delta_\bY} \ar[d]_\pi
 && \bY\times_\kk\bY \ar[d]^{\pi\times_\kk\pi} \\
 \X \ar[rr]^-{\Delta_\X} && \X\times_\kk\X
}
$$
is a particular case of the diagram~\eqref{square-of-ind-schemes}
from Section~\ref{pro-derived-inverse-image-subsecn}.

 Let $\bfP^\bu$ and $\bfQ^\bu$ be two complexes of $\X$\+flat
pro-quasi-coherent pro-sheaves on~$\bY$.
 Our aim is to construct a natural isomorphism
\begin{equation}
\label{derived-tensor-as-derived-restriction-eqn}
 \bfP^\bu\ot^{\bY,\boL}\bfQ^\bu\,\simeq\,
 \boL\Delta_\bY^*(\bfP^\bu\bt_\kk\bfQ^\bu)
\end{equation}
in the derived category $\sD(\bY_\X\flat)$.
 Here the derived functor of tensor product~$\ot^{\bY,\boL}$
\,\eqref{derived-tensor-of-pro-sheaves} was constructed in
Section~\ref{left-derived-tensor-products-subsecn},
the functor of external tensor product~$\bt_\kk$ was discussed in
Section~\ref{relative-external-products-subsecn}, and the derived
functor of inverse image $\boL\Delta_\bY^*$ was defined in
Section~\ref{pro-derived-inverse-image-subsecn}.
 Recall that the isomorphism of underived functors
$\bfP^\bu\ot^\bY\bfQ^\bu\,\simeq\,\Delta_\bY^*(\bfP^\bu\bt_\kk\bfQ^\bu)$
is provided by Lemma~\ref{pro-tensor-as-restricted-external}.
 The following proposition shows that the derived functors agree.

\begin{prop}
\label{derived-tensor-as-derived-restriction-prop}
 For any two complexes\/ $\bfP^\bu$ and\/ $\bfQ^\bu\in\sD(\bY_\X\flat)$,
a natural isomorphism of left derived
functors~\eqref{derived-tensor-as-derived-restriction-eqn}
holds in\/ $\sD(\bY_\X\flat)$.
\end{prop}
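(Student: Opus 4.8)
The plan is to compute both sides of~\eqref{derived-tensor-as-derived-restriction-eqn} by means of one and the same pair of relatively homotopy flat resolutions, chosen of the \emph{special form} produced in the proof of Proposition~\ref{relatively-homotopy-flat-resolution}. First I would apply Proposition~\ref{relatively-homotopy-flat-resolution} to choose morphisms $\bfF^\bu\rarrow\bfP^\bu$ and $\bfG^\bu\rarrow\bfQ^\bu$ in $\sC(\bY_\X\flat)$ with cones acyclic in $\bY_\X\flat$, arranging that $\bfF^\bu$ and $\bfG^\bu$ are not merely relatively homotopy flat complexes of flat pro-quasi-coherent pro-sheaves on $\bY$, but are obtained from the inverse images $\pi^*$ of complexes of flat pro-quasi-coherent pro-sheaves on $\X$ using the operations of shift, cone, and countable coproduct (and the passage to a homotopy equivalent complex). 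By the construction of $\ot^{\bY,\boL}$ in Section~\ref{left-derived-tensor-products-subsecn}, the left-hand side is then represented by $\bfF^\bu\ot^\bY\bfG^\bu$.

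Next I would show that $\bfF^\bu\bt_\kk\bfG^\bu$ is a relatively homotopy flat resolution of $\bfP^\bu\bt_\kk\bfQ^\bu$ on $\bY\times_\kk\bY$ relative to $\X\times_\kk\X$ (the morphism $\pi\times_\kk\pi$ being flat and affine, as noted at the end of Section~\ref{relative-external-products-subsecn}). That the structure morphism $\bfF^\bu\bt_\kk\bfG^\bu\rarrow\bfP^\bu\bt_\kk\bfQ^\bu$ has cone acyclic in $(\bY\times_\kk\bY)_{(\X\times_\kk\X)}\flat$ follows from exactness of the external tensor product in each variable (Lemma~\ref{X-flat-on-Y-external-product}(b)) together with Lemma~\ref{flat-over-base-complexes-external-tensor-exactness}, applied to the cones of $\bfF^\bu\rarrow\bfP^\bu$ and $\bfG^\bu\rarrow\bfQ^\bu$ one variable at a time.

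The relative homotopy flatness of $\bfF^\bu\bt_\kk\bfG^\bu$ is the crux, and here the special form of the resolutions is essential. Since the external tensor product commutes with inverse images (Lemma~\ref{pro-inverse-image-of-external-product}, which gives $\pi^*\fF^\bu\bt_\kk\pi^*\fG^\bu\simeq(\pi\times_\kk\pi)^*(\fF^\bu\bt_\kk\fG^\bu)$) and is exact in each variable and coproduct-preserving (Lemma~\ref{X-flat-on-Y-external-product}), the complex $\bfF^\bu\bt_\kk\bfG^\bu$ is obtained, via shift, cone, and coproduct, from complexes of the form $(\pi\times_\kk\pi)^*(\fF^\bu\bt_\kk\fG^\bu)$, where $\fF^\bu$ and $\fG^\bu$ are complexes of flat pro-quasi-coherent pro-sheaves on $\X$. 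By formula~\eqref{external-product-flat-pro-sheaves-eqn}, $\fF^\bu\bt_\kk\fG^\bu$ is a complex of flat pro-quasi-coherent pro-sheaves on $\X\times_\kk\X$, so each such building block is relatively homotopy flat by Lemma~\ref{relatively-flat-complexes-lemma}(b) (applied to the morphism $\pi\times_\kk\pi$); Lemma~\ref{relatively-flat-complexes-lemma}(a) then yields that $\bfF^\bu\bt_\kk\bfG^\bu$ is relatively homotopy flat. Consequently, by the construction of $\boL\Delta_\bY^*$ in Section~\ref{pro-derived-inverse-image-subsecn}, the right-hand side is represented by $\Delta_\bY^*(\bfF^\bu\bt_\kk\bfG^\bu)$.

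Finally I would identify the two representatives. The underived isomorphism $\Delta_\bY^*(\bfF^\bu\bt_\kk\bfG^\bu)\simeq\bfF^\bu\ot^\bY\bfG^\bu$ of complexes of flat pro-quasi-coherent pro-sheaves on $\bY$ is provided by Lemma~\ref{pro-tensor-as-restricted-external}, applied termwise and extended to the coproduct totalizations (using that $\Delta_\bY^*$, being an inverse image functor, preserves coproducts and hence commutes with totalization). Naturality in $\bfP^\bu$ and $\bfQ^\bu$ follows from the functoriality of all the constructions, verified on the special resolutions as in Section~\ref{pro-derived-inverse-image-subsecn}. The main obstacle, exactly as in the proof of Proposition~\ref{composition-of-derived-inverse-images}, is that I do not expect to be able to prove that the external tensor product of \emph{arbitrary} relatively homotopy flat complexes is relatively homotopy flat; this is circumvented by working throughout with resolutions of the special form built from inverse images.
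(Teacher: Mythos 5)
Your proof is correct and takes essentially the same route as the paper: it uses the special-form resolutions from Proposition~\ref{relatively-homotopy-flat-resolution} (built from inverse images via cones and coproducts) precisely because relative homotopy flatness of an external tensor product of arbitrary relatively homotopy flat complexes is not known, and then concludes via Lemmas~\ref{pro-inverse-image-of-external-product}, \ref{X-flat-on-Y-external-product}(b), \ref{relatively-flat-complexes-lemma}, and~\ref{pro-tensor-as-restricted-external}, exactly as in the paper.
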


\begin{proof}
 Let $\bfF^\bu$ and $\bfG^\bu$ be two relatively homotopy flat complexes
of flat pro-quasi-coherent pro-sheaves on $\bY$ endowed with two 
morphisms of complexes $\bfF^\bu\rarrow\bfP^\bu$ and $\bfG^\bu\rarrow
\bfQ^\bu$ with the cones acyclic in $\bY_\X\flat$.
 Then the external tensor product $\bfF^\bu\bt_\kk\bfG^\bu$ is a complex
of flat pro-quasi-coherent pro-sheaves on $\bY\times_\kk\bY$ by
formula~\eqref{external-product-flat-pro-sheaves-eqn} from
Section~\ref{external-of-pro-subsecn}, and the cone of the morphism
$\bfF^\bu\bt_\kk\bfG^\bu\rarrow\bfP^\bu\bt_\kk\bfQ^\bu$ is acyclic in
the exact category $(\bY\times_\kk\bY)_{(\X\times_\kk\X)}\flat$ by
Lemma~\ref{flat-over-base-complexes-external-tensor-exactness}.

 A problem similar to the one in the proof of
Proposition~\ref{composition-of-derived-inverse-images} arises here.
 The best possible argument for a proof of the desired isomorphism
would be to show that the complex $\bfF^\bu\bt_\kk\bfG^\bu$ is
a relatively homotopy flat complex of flat pro-quasi-coherent
pro-sheaves on $\bY\times_\kk\bY$ (relative to the morphism
$\pi\times_\kk\pi\:\bY\times_\kk\bY\rarrow\X\times_\kk\X$).
 Then the isomorphism of the derived functors would follow immediately
from (their constructions and) the isomorphism of the underived ones.
 However, we do \emph{not} know how to prove this relative homotopy
flatness in full generality.
 Instead, we will show that the relatively homotopy flat resolutions
$\bfF^\bu$ and $\bfG^\bu$ of any two given complexes $\bfP^\bu$ and
$\bfQ^\bu$ \emph{can be chosen in such a way that} the external tensor
product $\bfF^\bu\bt_\kk\bfG^\bu$ is a relatively homotopy flat complex.

 Once again we recall the construction of the relatively homotopy flat
resolutions from Proposition~\ref{relatively-homotopy-flat-resolution}.
 All the complexes of flat pro-quasi-coherent pro-sheaves on $\bY$
which can be obtained from the inverse images of complexes of flat
pro-quasi-coherent pro-sheaves on $\X$ using the operations of cone,
infinite coproduct, and the passage to a homotopy equivalent complex,
are relatively homotopy flat; and any complex
$\bfP^\bu\in\sC(\bY_\X\flat)$ admits a relatively homotopy flat
resolution of this form.

 Now assume that both the complexes $\bfF^\bu$ and $\bfG^\bu$ are
homotopy equivalent to complexes of flat pro-quasi-coherent pro-sheaves
on $\bY$ obtained from the inverse images of complexes of flat
pro-quasi-coherent pro-sheaves on $\X$ using the operations of cone
and infinite coproduct.
 Then the complex $\bfF^\bu\bt_\kk\bfG^\bu$ is homotopy equivalent to
a complex of flat pro-quasi-coherent pro-sheaves on $\bY\times_\kk\bY$
similarly obtained from the inverse images of complexes of flat
pro-quasi-coherent pro-sheaves on $\X\times_\kk\X$.
 This follows from the commutation of external tensor products with
the inverse images (Lemma~\ref{pro-inverse-image-of-external-product}),
the commutation of external tensor products with the coproducts
(see Section~\ref{external-of-pro-subsecn} and
Lemma~\ref{X-flat-on-Y-external-product}(b)), and the preservation
of flatness by the external tensor products of pro-quasi-coherent
pro-sheaves on~$\X$
(formula~\eqref{external-product-flat-pro-sheaves-eqn}).
\end{proof}

\subsection{Semiderived equivalence and change of fiber}
\label{semiderived-equivalence-and-change-of-fiber-subsecn}
 Let us return to the setting of the commutative triangle diagram
of morphisms of ind-schemes~\eqref{triangle-of-ind-schemes} from
Section~\ref{torsion-derived-inverse-image-subsecn}
$$
\xymatrix{
 \bW \ar[rr]^g \ar[rd]_\rho && \bY \ar[ld]^\pi \\
 & \X
}
$$
where the morphisms $\pi$ and~$\rho$ are flat and affine, and
the ind-scheme $\X$ is ind-Noetherian.
 Assume further that $\X$ is ind-semi-separated and endowed with
a dualizing complex~$\rD^\bu$.
 Then Theorem~\ref{relative-triangulated-equivalence-thm} provides
triangulated equivalences
$$
 \pi^*\rD^\bu\ot_\bY{-}\,\:
 \sD(\bY_\X\flat)\,\simeq\,\sD_\X^\si(\bY\tors)
 \,:\!\boR\fHom_{\bY\qc}(\pi^*\rD^\bu,{-})
$$
and
$$
 \rho^*\rD^\bu\ot_\bW{-}\,\:
 \sD(\bW_\X\flat)\,\simeq\,\sD_\X^\si(\bW\tors)
 \,:\!\boR\fHom_{\bW\qc}(\rho^*\rD^\bu,{-}).
$$
 Here the notation $\boR\fHom$ stands for the fact that the functors
$\fHom$ should be applied to complexes of \emph{$\X$\+injective}
quasi-coherent torsion sheaves (while the functors $\ot_\bY$
and $\ot_\bW$ are applied to arbitrary complexes of $\X$\+flat
pro-quasi-coherent pro-sheaves).

\begin{prop} \label{semiderived-equivalence-change-of-fiber-prop}
 In the context above, the triangulated equivalences\/
$\sD(\bY_\X\flat)\simeq\sD_\X^\si(\bY\tors)$ and\/
$\sD(\bW_\X\flat)\simeq\sD_\X^\si(\bW\tors)$ from
Theorem~\ref{relative-triangulated-equivalence-thm} transform
the left derived functor\/ $\boL g^*\:\sD(\bY_\X\flat)\rarrow
\sD(\bW_\X\flat)$ \,\eqref{pro-left-derived-inverse-image-eqn}
from Section~\ref{pro-derived-inverse-image-subsecn}
into the left derived functor\/ $\boL g^*\:\sD_\X^\si(\bY\tors)\rarrow
\sD_\X^\si(\bW\tors)$ \,\eqref{torsion-left-derived-inverse-image-eqn}
from Section~\ref{torsion-derived-inverse-image-subsecn}.
\end{prop}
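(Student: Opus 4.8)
The plan is to evaluate both composite functors on a single, carefully chosen representative and to reduce the comparison to the projection-formula isomorphism of Lemma~\ref{inverse-images-preserve-torsion-pro-tensor}. Fix a complex $\bfP^\bu\in\sC(\bY_\X\flat)$. By Proposition~\ref{relatively-homotopy-flat-resolution} I would choose a relatively homotopy flat complex $\bfF^\bu$ of flat pro-quasi-coherent pro-sheaves on $\bY$ together with a morphism $\bfF^\bu\rarrow\bfP^\bu$ whose cone $\bfH^\bu$ is acyclic in $\bY_\X\flat$. By the construction of the derived functor in Section~\ref{pro-derived-inverse-image-subsecn}, the object $\boL g^*(\bfP^\bu)\in\sD(\bW_\X\flat)$ is represented by the complex $g^*(\bfF^\bu)$ of flat pro-quasi-coherent pro-sheaves on $\bW$. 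Since the equivalence $\rho^*\rD^\bu\ot_\bW{-}$ of Theorem~\ref{relative-triangulated-equivalence-thm} is defined termwise on $\sC(\bW_\X\flat)$, the top-right composite (first $\boL g^*$, then $\rho^*\rD^\bu\ot_\bW{-}$) sends $\bfP^\bu$ to $\rho^*\rD^\bu\ot_\bW g^*(\bfF^\bu)$.

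Next I would identify this complex. First note that $g$ is an affine morphism, hence ``representable by schemes'', exactly as shown in the proof of Proposition~\ref{homotopy-Y/X-flat-and-semiacyclic-prop}; thus Lemma~\ref{inverse-images-preserve-torsion-pro-tensor} applies to~$g$. Using $\rho=\pi g$, so that $\rho^*\rD^\bu=g^*\pi^*\rD^\bu$, and applying Lemma~\ref{inverse-images-preserve-torsion-pro-tensor} termwise to complexes, I obtain a natural isomorphism
\[
 \rho^*\rD^\bu\ot_\bW g^*(\bfF^\bu)
 \,=\,g^*\pi^*\rD^\bu\ot_\bW g^*(\bfF^\bu)
 \,\simeq\,g^*\bigl(\pi^*\rD^\bu\ot_\bY\bfF^\bu\bigr)
\]
of complexes of quasi-coherent torsion sheaves on $\bW$.

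It then remains to recognize $g^*(\pi^*\rD^\bu\ot_\bY\bfF^\bu)$ as the value of the left-bottom composite $\boL g^*(\pi^*\rD^\bu\ot_\bY\bfP^\bu)$ (first $\pi^*\rD^\bu\ot_\bY{-}$, then $\boL g^*$). The key point is that $\pi^*\rD^\bu\ot_\bY\bfF^\bu$ is an adapted resolution for the torsion-sheaf functor $\boL g^*$ of Section~\ref{torsion-derived-inverse-image-subsecn}. Indeed, by Lemma~\ref{relative-or-Y/X-homotopy-flatness-lemma}(b) the complex $\pi^*\rD^\bu\ot_\bY\bfF^\bu$ is homotopy $\bY/\X$-flat; and the morphism $\pi^*\rD^\bu\ot_\bY\bfF^\bu\rarrow\pi^*\rD^\bu\ot_\bY\bfP^\bu$ induced by $\bfF^\bu\rarrow\bfP^\bu$ has cone $\pi^*\rD^\bu\ot_\bY\bfH^\bu$. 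By the projection formula (Lemma~\ref{torsion-pro-projection-formula-lemma}) and Lemma~\ref{flat-over-base-direct-image-acyclicity-criterion}, its direct image $\pi_*(\pi^*\rD^\bu\ot_\bY\bfH^\bu)\simeq\rD^\bu\ot_\X\pi_*\bfH^\bu$ is the tensor product of the dualizing complex with a complex acyclic in $\X\flat$, hence a contractible (in particular coacyclic) complex of injective quasi-coherent torsion sheaves on $\X$, by the contractibility established in the proof of Theorem~\ref{ind-Noetherian-triangulated-equivalence-thm}. Thus $\pi^*\rD^\bu\ot_\bY\bfF^\bu\rarrow\pi^*\rD^\bu\ot_\bY\bfP^\bu$ is a homotopy $\bY/\X$-flat resolution, so $\boL g^*(\pi^*\rD^\bu\ot_\bY\bfP^\bu)=g^*(\pi^*\rD^\bu\ot_\bY\bfF^\bu)$ by the very definition of the derived functor. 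Combining this with the displayed isomorphism yields the asserted identification on objects.

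The homological inputs are thus entirely supplied by the lemmas already in hand; the main thing to watch---and where I expect the real work to lie---is \emph{naturality}. Both $\boL g^*$ functors are constructed as derived functors in the sense of Deligne (\cite[1.2.1--2]{Del}, \cite[Lemma~6.5.2]{Psemi}), so one must verify that the object-wise isomorphism above is independent of the chosen resolution $\bfF^\bu$ and compatible with morphisms in $\sD(\bY_\X\flat)$, hence defines an isomorphism of triangulated functors rather than merely of their values. The cleanest route is to exploit that a \emph{single} relatively homotopy flat resolution $\bfF^\bu$ simultaneously computes $\boL g^*$ on the pro-sheaf side and, after tensoring with $\pi^*\rD^\bu$, computes $\boL g^*$ on the torsion-sheaf side; since the isomorphism of Lemma~\ref{inverse-images-preserve-torsion-pro-tensor} is natural in both arguments, the comparison of the two Deligne constructions through these common adapted objects is functorial by the uniqueness of derived functors. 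This bookkeeping, together with checking that the comparison is compatible with the triangulated structure, is the only nonroutine part of the argument.
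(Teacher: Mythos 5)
Your proposal is correct and follows essentially the same route as the paper's own proof: the underived comparison via $\rho^*\rD^\bu\simeq g^*\pi^*\rD^\bu$ and Lemma~\ref{inverse-images-preserve-torsion-pro-tensor}, then replacement of $\bfP^\bu$ by a relatively homotopy flat resolution $\bfF^\bu$, with Lemma~\ref{relative-or-Y/X-homotopy-flatness-lemma}(b) ensuring that $\pi^*\rD^\bu\ot_\bY\bfF^\bu$ is an adapted object for the torsion-sheaf side. Your unfolding of why the cone $\pi^*\rD^\bu\ot_\bY\bfH^\bu$ has coacyclic direct image (projection formula plus the contractibility argument from Theorem~\ref{ind-Noetherian-triangulated-equivalence-thm}) is exactly what the paper's citation of the proof of Theorem~\ref{relative-triangulated-equivalence-thm} abbreviates, and your closing remarks on naturality of the Deligne-style derived functors are a sound (if routine) supplement that the paper leaves implicit.
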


\begin{proof}
 Let $\bfP^\bu$ be a complex of $\X$\+flat pro-quasi-coherent
pro-sheaves on~$\bY$.
 We have to construct a natural isomorphism
$$
 \rho^*\rD^\bu\ot_\bW \boL g^*(\bfP^\bu)\,\simeq\,
 \boL g^*(\pi^*\rD^\bu\ot_\bY\bfP^\bu)
$$
in the semiderived category $\sD_\X^\si(\bW\tors)$.
 Notice first of all that the related isomorphism for underived
functors
$$
 \rho^*\rD^\bu\ot_\bW g^*(\bfP^\bu)\simeq
 g^*\pi^*\rD^\bu\ot_\bW g^*(\bfP^\bu)\simeq
 g^*(\pi^*\rD^\bu\ot_\bY\bfP^\bu)
$$
holds because $\rho^*\rD^\bu\simeq g^*\pi^*\rD^\bu$ (since $\rho=\pi g$)
and by Lemma~\ref{inverse-images-preserve-torsion-pro-tensor}.
 (It was explained in the proof of
Proposition~\ref{homotopy-Y/X-flat-and-semiacyclic-prop}
that the morphism~$g$ is ``representable by schemes''---in fact,
affine.)

 To prove the desired isomorphism for derived functors, replace
the complex $\bfP^\bu$ with a relatively homotopy flat complex of
flat pro-quasi-coherent pro-sheaves $\bfF^\bu$ on $\bY$ endowed
with a morphism of complexes $\bfF^\bu\rarrow\bfP^\bu$ with
the cone acyclic in $\bY_\X\flat$.
 Then it remains to recall that the complex $\pi^*\rD^\bu\ot_\bY
\bfF^\bu$ of quasi-coherent torsion sheaves on $\bY$ is homotopy
$\bY/\X$\+flat by
Lemma~\ref{relative-or-Y/X-homotopy-flatness-lemma}(b).
 So we have
$$
 \rho^*\rD^\bu\ot_\bW \boL g^*(\bfP^\bu)=
 \rho^*\rD^\bu\ot_\bW g^*(\bfF^\bu)\simeq
 g^*(\pi^*\rD^\bu\ot_\bY\bfF^\bu)=
 \boL g^*(\pi^*\rD^\bu\ot_\bY\bfP^\bu)
$$
in $\sD_\X^\si(\bW\tors)$.
 Here the rightmost equality holds because the morphism of complexes
of quasi-coherent torsion sheaves $\pi^*\rD^\bu\ot_\bY\bfF^\bu\rarrow
\pi^*\rD^\bu\ot_\bY\bfP^\bu$ on $\bY$ has a cone whose direct image
under~$\pi$ is coacyclic in $\X\tors$.
 Indeed, the functor $\pi^*\rD^\bu\ot_\bY{-}$ takes complexes acyclic
in $\bY_\X\flat$ to complexes with the direct image coacyclic in
$\X\tors$, according to the proof of
Theorem~\ref{relative-triangulated-equivalence-thm}.
 The notation $\rho^*\rD^\bu\ot_\bW \boL g^*(\bfP^\bu)$ with
a derived category object $\boL g^*(\bfP^\bu)$ is well-defined
for the same reason.
\end{proof}

\subsection{Semiderived equivalence and base change}
\label{semiderived-equivalence-and-base-change-subsecn}
 Now we return to the setting of a pullback diagram of morphisms
of ind-schemes similar to~\eqref{pullback-of-ind-schemes} from
Section~\ref{relative-derived-supports-subsecn}
(so $\bW=\Z\times_\X\bY$).
$$
\xymatrix{
 \bW \ar[r]^k \ar[d]_\rho & \bY \ar[d]^\pi \\
 \Z \ar[r]^i & \X
}
$$

 We start with an ind-scheme version of
Lemma~\ref{reasonable-shriek-flat-star-commutation}.

\begin{lem} \label{torsion-reasonable-shriek-flat-star-commutation}
 In the diagram above, assume that $i$~is a reasonable closed
immersion of reasonable ind-schemes and $\pi$~is a flat morphism.
 Then there is a natural isomorphism $\rho^*i^!\simeq k^!\pi^*$ of
functors\/ $\X\tors\rarrow\bW\tors$.
\end{lem}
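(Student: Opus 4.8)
The plan is to reduce the assertion to the scheme-level statement of Lemma~\ref{reasonable-shriek-flat-star-commutation} by evaluating both functors on the reasonable closed subschemes of $\bW$ belonging to a convenient inductive representation. Fix a representation $\X=\ilim_{\gamma\in\Gamma}X_\gamma$ of $\X$ by an inductive system of closed immersions of reasonable closed subschemes, and put $Z_\gamma=\Z\times_\X X_\gamma$, \ $\bnY_\gamma=X_\gamma\times_\X\bY$, and $\bnW_\gamma=X_\gamma\times_\X\bW$. Since $\pi$ is flat it is representable by schemes, so the $\bnY_\gamma$ are reasonable closed subschemes in $\bY$ and $\bY=\ilim_\gamma\bnY_\gamma$; likewise $\rho$ (a base change of $\pi$) is flat, $k$ (a base change of $i$) is a reasonable closed immersion, and $\bW$ is a reasonable ind-scheme with $\bW=\ilim_\gamma\bnW_\gamma$. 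Because $i$ is a reasonable closed immersion, each $Z_\gamma\subset X_\gamma$ and each $\bnW_\gamma\subset\bnY_\gamma$ is a reasonable closed subscheme. For every $\gamma$ we thus obtain a pullback diagram of schemes
$$
\xymatrix{
 \bnW_\gamma \ar[r]^-{k_\gamma} \ar[d]_-{\rho_\gamma}
 & \bnY_\gamma \ar[d]^-{\pi_\gamma} \\
 Z_\gamma \ar[r]^-{i_\gamma} & X_\gamma
}
$$
in which $\pi_\gamma$ is flat and $i_\gamma$ is the closed immersion of a reasonable closed subscheme.

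Next I would spell out the two functors in terms of this system. By Remark~\ref{simplified-flat-torsion-inverse-image}, flatness of $\pi$ and $\rho$ gives $(\pi^*\rM)_{(\bnY_\gamma)}\simeq\pi_\gamma^*\rM_{(X_\gamma)}$ and $(\rho^*\rL)_{(\bnW_\gamma)}\simeq\rho_\gamma^*\rL_{(Z_\gamma)}$ for $\rM\in\X\tors$ and $\rL\in\Z\tors$, while the construction of $i^!$ and $k^!$ for closed immersions of ind-schemes in Section~\ref{torsion-inverse-images-subsecn} gives $(i^!\rM)_{(Z_\gamma)}=i_\gamma^!\rM_{(X_\gamma)}$ and $(k^!\rN)_{(\bnW_\gamma)}=k_\gamma^!\rN_{(\bnY_\gamma)}$ for $\rN\in\bY\tors$. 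Combining these identifications yields
$$
 (\rho^*i^!\rM)_{(\bnW_\gamma)}\simeq\rho_\gamma^*i_\gamma^!\rM_{(X_\gamma)},
 \qquad
 (k^!\pi^*\rM)_{(\bnW_\gamma)}\simeq k_\gamma^!\pi_\gamma^*\rM_{(X_\gamma)}.
$$
Lemma~\ref{reasonable-shriek-flat-star-commutation}, applied to the pullback diagram above, now provides a natural isomorphism $\rho_\gamma^*i_\gamma^!\simeq k_\gamma^!\pi_\gamma^*$ of functors $X_\gamma\qcoh\rarrow\bnW_\gamma\qcoh$, hence a termwise isomorphism $(\rho^*i^!\rM)_{(\bnW_\gamma)}\simeq(k^!\pi^*\rM)_{(\bnW_\gamma)}$ that is natural in~$\rM$.

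The remaining, and principal, point is to check that these termwise isomorphisms are compatible with the structure maps of the quasi-coherent torsion sheaves, so that they assemble into a single isomorphism in $\bW\tors$ rather than a mere collection of isomorphisms on the $\bnW_\gamma$. Concretely, for $\beta<\gamma$ with closed immersion $k_{\beta\gamma}\:\bnW_\beta\rarrow\bnW_\gamma$, one must verify that the square relating the $\beta$- and $\gamma$-level isomorphisms through $k_{\beta\gamma}^!$ commutes; by the discussion in Section~\ref{qcoh-torsion-sheaves-subsecn} it then suffices to work within the chosen inductive system. I expect this to be the only nontrivial step, and I would settle it by tracing through the explicit construction of the isomorphism in Lemma~\ref{reasonable-shriek-flat-star-commutation}: in the affine case that isomorphism is built from the evident associativity and base-change isomorphisms for tensor products and $\Hom$ of modules, all of which are natural and compatible with further flat base change and with $!$-restriction to a smaller reasonable closed subscheme. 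Granting this naturality, the squares commute, the isomorphisms glue to an isomorphism $\rho^*i^!\rM\simeq k^!\pi^*\rM$ in $\bW\tors$, and its naturality in $\rM$ is inherited from the scheme-level statement, completing the proof.
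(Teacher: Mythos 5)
Your reduction to the scheme level is sound and the termwise identifications you write down are correct; but the step you defer --- compatibility of the isomorphisms of Lemma~\ref{reasonable-shriek-flat-star-commutation} with the transition maps of the inductive system --- is genuine work in your setup, and it is precisely what the paper's proof is engineered to avoid. The paper does not fix an inductive system $(X_\gamma)_{\gamma\in\Gamma}$ of $\X$ at all. Instead, for each reasonable closed subscheme $Z\subset\Z$ it exploits the hypothesis that $i$~is a \emph{reasonable} closed immersion in order to view $Z$ itself as a reasonable closed subscheme of $\X$ (embedded via~$i$), and likewise to view $\bnW=Z\times_\Z\bW$ as a reasonable closed subscheme of $\bY$ (embedded via~$k$). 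With this choice of factorizations, the definition of the functors $i^!$ and $k^!$ from Section~\ref{torsion-inverse-images-subsecn} gives $(i^!\rM)_{(Z)}=\rM_{(Z)}$ and $(k^!\pi^*\rM)_{(\bnW)}=(\pi^*\rM)_{(\bnW)}$ on the nose, while Remark~\ref{simplified-flat-torsion-inverse-image} computes both $(\rho^*i^!\rM)_{(\bnW)}$ and $(\pi^*\rM)_{(\bnW)}$ as the inverse image of $\rM_{(Z)}$ along one and the same morphism of schemes $\rho_Z\:\bnW\rarrow Z$. Thus the chain
$$
 (\rho^*i^!\rM)_{(\bnW)}\simeq\rho_Z^*((i^!\rM)_{(Z)})\simeq
 \rho_Z^*(\rM_{(Z)})\simeq(\pi^*\rM)_{(\bnW)}\simeq(k^!\pi^*\rM)_{(\bnW)}
$$
consists entirely of definitional identifications and instances of the Remark; no scheme-level base-change isomorphism with a nontrivial $i_\gamma^!$ ever appears, and the compatibility of the system over varying $Z$ is transparent. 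Your route, by contrast, invokes Lemma~\ref{reasonable-shriek-flat-star-commutation} with nontrivial $i_\gamma^!$ and $k_\gamma^!$ at every level~$\gamma$, so the commutativity of the squares relating consecutive levels has to be traced through the adjunction-theoretic construction of that isomorphism. This can indeed be done (the isomorphism is natural and compatible with $!$\+restriction to smaller reasonable closed subschemes and with further flat base change), but as written your argument stops at ``granting this naturality,'' so the step you yourself identify as the principal point remains unverified --- and the comparison with the paper shows that it can be made to disappear altogether by choosing the factorizations through $Z$ and $\bnW$ themselves.
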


\begin{proof}
 The (essentially obvious) argument is based on
Remark~\ref{simplified-flat-torsion-inverse-image} (which, in turn,
is based on Lemma~\ref{reasonable-shriek-flat-star-commutation}).
 We use the notation similar to the proof of
Lemma~\ref{torsion-shriek-base-change}.
 Let $Z\subset\Z$ be a reasonable closed subscheme; then $Z$ can be
also viewed as a reasonable closed subscheme in $\X$, embedded via~$i$.
 Put $\bnW=Z\times_\Z\bW$; then $\bnW$ is a reasonable closed subscheme
in $\bW$, and the scheme $\bnW$ can be also viewed as a reasonable
closed subscheme in $\bY$, embedded via~$k$.
 Denote by $\rho_Z\:\bnW\rarrow Z$ the natural morphism.
 For any quasi-coherent torsion sheaf $\rM$ on $\X$, the desired 
isomorphism of quasi-coherent torsion sheaves $\rho^*i^!\rM\simeq
k^!\pi^*\rM$ on $\bW$ is provided by the compatible system of
isomorphisms
$$
 (\rho^*i^!\rM)_{(\bnW)}\simeq \rho_Z^*((i^!\rM)_{(Z)})\simeq
 \rho_Z^*(\rM_{(Z)})\simeq(\pi^*\rM)_{(\bnW)}
 \simeq(k^!\pi^*\rM)_{(\bnW)},
$$
of quasi-coherent sheaves on~$W$.
 Here the first and third isomorphisms hold by
Remark~\ref{simplified-flat-torsion-inverse-image}, while the second
and third ones are the definition of~$i^!$ and~$k^!$.
\end{proof}

 The next lemma is a generalization of
Lemma~\ref{relative-scheme-ind-scheme-shriek-star-tensor}.

\begin{lem} \label{relative-ind-scheme-shriek-star-tensor}
 In the diagram above, assume that $i$~is a reasonable closed immersion
of reasonable ind-schemes and $\pi$~is a flat affine morphism.
 Let $\rM$ be a quasi-coherent torsion sheaf on\/ $\X$ and\/ $\bfG$
be an\/ $\X$\+flat pro-quasi-coherent pro-sheaf on\/~$\bY$; put
$\bN=\pi^*\rM\in\bY\tors$.
 Then there is a natural isomorphism
$$
 k^!(\bfG\ot_\bY\bN)\simeq k^*\bfG\ot_\bW k^!\bN
$$
of quasi-coherent torsion sheaves on\/~$\bW$.
\end{lem}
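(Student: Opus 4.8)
The plan is to reduce the assertion to the already-established scheme-level statement of Lemma~\ref{relative-scheme-ind-scheme-shriek-star-tensor} by testing both sides of the natural comparison morphism against reasonable closed subschemes of~$\bW$. First I would invoke Lemma~\ref{ind-scheme-shriek-star-tensor}, applied to the closed immersion $k\colon\bW\rarrow\bY$ of reasonable ind-schemes (here $\bY$ is reasonable because $\pi$ is representable by schemes, and $\bW$ is reasonable for the same reason), together with the pro-quasi-coherent pro-sheaf $\bfG$ and the quasi-coherent torsion sheaf $\bN$ on~$\bY$. This produces the natural morphism $k^*\bfG\ot_\bW k^!\bN\rarrow k^!(\bfG\ot_\bY\bN)$ whose invertibility is to be proved. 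Note that Lemma~\ref{ind-scheme-shriek-star-tensor} by itself gives an isomorphism only under the stronger hypothesis that $\bfG$ be flat rather than merely $\X$\+flat; so the $\X$\+flatness of $\bfG$ together with the fact that $\bN=\pi^*\rM$ is pulled back from $\X$ is exactly what must be exploited.

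Choosing a representation $\Z=\ilim_{\gamma}Z_\gamma$ by reasonable closed subschemes, I obtain a representation $\bW=\ilim_\gamma\bnW_\gamma$ with $\bnW_\gamma=Z_\gamma\times_\Z\bW=Z_\gamma\times_\X\bY$, each $Z_\gamma$ being simultaneously a reasonable closed subscheme of $\Z$ and, via~$i$, of~$\X$. By Lemma~\ref{torsion-monomorphisms-characterized}(a,b) a morphism of quasi-coherent torsion sheaves on $\bW$ is an isomorphism as soon as its image under each functor $l_\gamma^!=({-})_{(\bnW_\gamma)}$ is an isomorphism, where $l_\gamma\colon\bnW_\gamma\rarrow\bW$ is the closed immersion. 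So it remains to compute both sides after applying~$l_\gamma^!$. For the target, I would use $l_\gamma^!k^!=(kl_\gamma)^!$ and apply Lemma~\ref{relative-scheme-ind-scheme-shriek-star-tensor} with base $\X$, reasonable closed subscheme $Z_\gamma\subset\X$, and morphism~$\pi$, obtaining $(kl_\gamma)^!(\bfG\ot_\bY\bN)\simeq\bfG^{(\bnW_\gamma)}\ot_{\cO_{\bnW_\gamma}}\bN_{(\bnW_\gamma)}$. For the source, I would first rewrite $k^!\bN=k^!\pi^*\rM\simeq\rho^*i^!\rM$ by the preceding Lemma~\ref{torsion-reasonable-shriek-flat-star-commutation}, so that $k^!\bN$ is pulled back from $\Z$ along the flat affine morphism~$\rho$; and I would note that $k^*\bfG$ is $\Z$\+flat, since its component over $\bnW_\gamma=Z_\gamma\times_\Z\bW$ is $\bfG^{(\bnW_\gamma)}$, which is $Z_\gamma$\+flat by the $\X$\+flatness of $\bfG$. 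These two observations place us exactly in the setting of Lemma~\ref{relative-scheme-ind-scheme-shriek-star-tensor}, now with base $\Z$, reasonable closed subscheme $Z_\gamma\subset\Z$, morphism $\rho$, pro-sheaf $k^*\bfG$, and pulled-back torsion sheaf $k^!\bN=\rho^*i^!\rM$; it yields $l_\gamma^!(k^*\bfG\ot_\bW k^!\bN)\simeq(k^*\bfG)^{(\bnW_\gamma)}\ot_{\cO_{\bnW_\gamma}}(k^!\bN)_{(\bnW_\gamma)}$, which after the identifications $(k^*\bfG)^{(\bnW_\gamma)}=\bfG^{(\bnW_\gamma)}$ and $(k^!\bN)_{(\bnW_\gamma)}=\bN_{(\bnW_\gamma)}$ agrees with the restriction of the target.

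The routine-but-delicate point, which I expect to be the main obstacle, is to verify that under these two independent invocations of Lemma~\ref{relative-scheme-ind-scheme-shriek-star-tensor} (one over the base $\X$, one over the base $\Z$) the restriction $l_\gamma^!$ of the \emph{natural} comparison morphism genuinely corresponds to the comparison isomorphism furnished locally, rather than merely to some isomorphism between abstractly isomorphic objects. This amounts to tracing the adjunction construction of the morphism from Lemma~\ref{ind-scheme-shriek-star-tensor} through the base-change isomorphism $\rho^*i^!\simeq k^!\pi^*$ and the projection-formula identities, and checking compatibility with the closed immersions~$l_\gamma$. Once this naturality is confirmed, each $l_\gamma^!$ of the morphism is an isomorphism, and Lemma~\ref{torsion-monomorphisms-characterized}(a,b) finishes the proof.
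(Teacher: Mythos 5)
Your proposal is correct, but it is organized quite differently from the paper's own proof, even though both share the same computational core, namely the application of the scheme-level Lemma~\ref{relative-scheme-ind-scheme-shriek-star-tensor} over the base $\X$ to identify $(k^!(\bfG\ot_\bY\bN))_{(\bnW_\gamma)}=(\bfG\ot_\bY\bN)_{(\bnW_\gamma)}$ with $\bfG^{(\bnW_\gamma)}\ot_{\cO_{\bnW_\gamma}}\bN_{(\bnW_\gamma)}= (k^*\bfG)^{(\bnW_\gamma)}\ot_{\cO_{\bnW_\gamma}}(k^!\bN)_{(\bnW_\gamma)}$. The divergence is in how one passes from this componentwise identification to the global isomorphism. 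You first build the abstract comparison morphism via Lemma~\ref{ind-scheme-shriek-star-tensor} and then test it with the functors~$l_\gamma^!$ using Lemma~\ref{torsion-monomorphisms-characterized}(a,b); this forces you to compute the components of the \emph{source} as well, which you do by a second invocation of Lemma~\ref{relative-scheme-ind-scheme-shriek-star-tensor}, now over the base $\Z$, after rewriting $k^!\bN\simeq\rho^*i^!\rM$ (Lemma~\ref{torsion-reasonable-shriek-flat-star-commutation}) and observing that $k^*\bfG$ is $\Z$\+flat; and it leaves you with the compatibility check you rightly flag, namely that $l_\gamma^!$ of the adjunction-constructed morphism agrees with the local comparison isomorphisms (this is a routine but genuinely necessary diagram chase through the projection formulas). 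The paper avoids both the a priori global morphism and the second invocation: having computed the components of the left-hand side, it observes that the resulting collection of quasi-coherent sheaves is exactly the $\Gamma$\+system $k^*\bfG\ot_\bW(k^!\bN)|_\Gamma$, whose plus is by definition $k^*\bfG\ot_\bW k^!\bN$; since this $\Gamma$\+system is isomorphic to the system of components of a torsion sheaf, it already satisfies the torsion-sheaf condition, and the functor~$({-})^+$, being left adjoint to the fully faithful restriction functor (Lemmas~\ref{torsion-sheaves-Gamma-systems-adjunction} and~\ref{torsion-sheaves-as-Giraud-subcat}), leaves it unchanged, which yields the isomorphism directly. What your route buys is a concrete natural morphism known in advance, whose invertibility is the assertion (useful if one later needs its functoriality in $\bfG$ and~$\rM$); what the paper's route buys is economy: a single scheme-level computation plus the $\Gamma$\+system formalism, with the only compatibility to verify being that of the local isomorphisms with the transition maps, rather than with a separately constructed global morphism.
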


\begin{proof}
 In the notation $Z\subset\Z$, \ $\bnW=Z\times_\Z\bW\subset\bW$,
and $\rho_Z\:\bnW\rarrow Z$ from the proofs of
Lemmas~\ref{torsion-shriek-base-change}
and~\ref{torsion-reasonable-shriek-flat-star-commutation},
we compute
$$
 (k^!(\bfG\ot_\bY\bN))_{(\bnW)}\simeq(\bfG\ot_\bY\bN)_{(\bnW)}
 \simeq \bfG^{(\bnW)}\ot_{\cO_\bnW}\bN_{(\bnW)}\simeq
 (k^*\bfG)^{(\bnW)}\ot_{\cO_\bnW}(k^!\bN)_{(\bnW)}
$$
using Lemma~\ref{relative-scheme-ind-scheme-shriek-star-tensor}
for the middle isomorphism.
 Hence the collection of quasi-coherent sheaves
$(k^*\bfG)^{(\bnW)}\ot_{\cO_\bnW}(k^!\bN)_{(\bnW)}$ on the reasonable
closed subschemes $\bnW\subset\bW$ defines a quasi-coherent torsion
sheaf on $\bW$, and it follows easily that this quasi-coherent
torsion sheaf is naturally isomorphic to the tensor product
$k^*\bfG\ot_\bW k^!\bN$.
\end{proof}

 Now we assume, in the diagram above, that $\pi$~is a flat affine
morphism, $i$~is a closed immersion, and the ind-scheme\/ $\X$ is
ind-semi-separated, ind-Noetherian, and endowed with a dualizing
complex~$\rD^\bu$.
 Then $i^!\rD^\bu$ is a dualizing complex on\/ $\Z$
(cf.\ Example~\ref{ind-closed-immersion}(2)).
 So Theorem~\ref{relative-triangulated-equivalence-thm} provides
triangulated equivalences
$$
 \pi^*\rD^\bu\ot_\bY{-}\,\:
 \sD(\bY_\X\flat)\,\simeq\,\sD_\X^\si(\bY\tors)
 \,:\!\boR\fHom_{\bY\qc}(\pi^*\rD^\bu,{-})
$$
and
$$
 \rho^*i^!\rD^\bu\ot_\bW{-}\,\:
 \sD(\bW_\Z\flat)\,\simeq\,\sD_\Z^\si(\bW\tors)
 \,:\!\boR\fHom_{\bW\qc}(\rho^*i^!\rD^\bu,{-}).
$$

\begin{prop} \label{semiderived-equivalence-base-change-prop}
 In the context above, the triangulated equivalences\/
$\sD(\bY_\X\flat)\simeq\sD_\X^\si(\bY\tors)$ and\/
$\sD(\bW_\Z\flat)\simeq\sD_\Z^\si(\bW\tors)$ from
Theorem~\ref{relative-triangulated-equivalence-thm} transform
the left derived functor\/ $\boL k^*\:\sD(\bY_\X\flat)\rarrow
\sD(\bW_\X\flat)$ \,\eqref{pro-left-derived-inverse-image-eqn}
from Section~\ref{pro-derived-inverse-image-subsecn}
into the right derived functor\/ $\boR k^!\:\sD_\X^\si(\bY\tors)\rarrow
\sD_\X^\si(\bW\tors)$ \,\eqref{relative-derived-supports-eqn}
from Section~\ref{relative-derived-supports-subsecn}.
\end{prop}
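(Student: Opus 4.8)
The plan is to follow the pattern of the proofs of Propositions~\ref{covariant-duality-shriek-star-prop} and~\ref{semiderived-equivalence-change-of-fiber-prop}: reduce the asserted compatibility of derived functors to a natural \emph{underived} isomorphism, and then propagate it through a single, carefully chosen resolution that simultaneously computes both derived functors. First I would record the underived identity. Let $\bfG$ be an $\X$\+flat pro-quasi-coherent pro-sheaf on $\bY$. Applying Lemma~\ref{relative-ind-scheme-shriek-star-tensor} with $\rM=\rD^\bu$ (so that $\bN=\pi^*\rD^\bu$) and then the base-change isomorphism $k^!\pi^*\simeq\rho^*i^!$ of Lemma~\ref{torsion-reasonable-shriek-flat-star-commutation}, one obtains a natural isomorphism
$$
 k^!(\pi^*\rD^\bu\ot_\bY\bfG)\,\simeq\,k^*\bfG\ot_\bW k^!\pi^*\rD^\bu
 \,\simeq\,k^*\bfG\ot_\bW\rho^*i^!\rD^\bu
$$
of quasi-coherent torsion sheaves on $\bW$. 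Since all functors involved are additive and the isomorphism is natural in $\bfG$, it extends termwise to complexes; thus for any complex $\bfF^\bu\in\sC(\bY\flat)$ (which is a complex of $\X$\+flat pro-sheaves, as $\pi$ is flat) there is a natural isomorphism of complexes $k^!(\pi^*\rD^\bu\ot_\bY\bfF^\bu)\simeq\rho^*i^!\rD^\bu\ot_\bW k^*\bfF^\bu$ of $\Z$\+injective quasi-coherent torsion sheaves on $\bW$.

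Next I would feed a single relatively homotopy flat resolution through both composites. Given $\bfP^\bu\in\sC(\bY_\X\flat)$, use Proposition~\ref{relatively-homotopy-flat-resolution} to choose a relatively homotopy flat complex $\bfF^\bu$ of flat pro-quasi-coherent pro-sheaves on $\bY$ with a morphism $\bfF^\bu\rarrow\bfP^\bu$ whose cone is acyclic in $\bY_\X\flat$. By the construction of $\boL k^*$ in Section~\ref{pro-derived-inverse-image-subsecn}, one has $\boL k^*(\bfP^\bu)=k^*\bfF^\bu$, so the left-hand composite $\rho^*i^!\rD^\bu\ot_\bW\boL k^*(\bfP^\bu)$ is represented by $\rho^*i^!\rD^\bu\ot_\bW k^*\bfF^\bu$. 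On the other hand, recall from the proof of Theorem~\ref{relative-triangulated-equivalence-thm} that $\pi^*\rD^\bu\ot_\bY\bfF^\bu$ and $\pi^*\rD^\bu\ot_\bY\bfP^\bu$ are complexes of $\X$\+injective quasi-coherent torsion sheaves, and that the induced morphism between them has cone whose direct image under $\pi$ is coacyclic; hence both represent $\pi^*\rD^\bu\ot_\bY\bfP^\bu$ in $\sD_\X^\si(\bY\tors)\simeq\sD(\bY\tors_{\X\dinj})$. Since $\boR k^!$ is computed by applying the underived exact functor $k^!\:\bY\tors_{\X\dinj}\rarrow\bW\tors_{\Z\dinj}$ to complexes of $\X$\+injective torsion sheaves (Section~\ref{relative-derived-supports-subsecn}, Lemma~\ref{support-exact-between-injective-over-base}), the right-hand composite $\boR k^!(\pi^*\rD^\bu\ot_\bY\bfP^\bu)$ is represented by $k^!(\pi^*\rD^\bu\ot_\bY\bfF^\bu)$. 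The underived isomorphism of the first step now identifies the two representatives, and naturality of all the constructions makes this an isomorphism of triangulated functors.

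The argument is almost entirely formal once the two preceding lemmas are in hand; the step I expect to be the main obstacle is the bookkeeping that one and the same resolution $\bfF^\bu$ computes $\boL k^*$ on the pro-sheaf side and, after applying the equivalence $\pi^*\rD^\bu\ot_\bY{-}$, computes $\boR k^!$ on the torsion-sheaf side. This works precisely because $\pi^*\rD^\bu\ot_\bY\bfF^\bu$ is automatically a complex of $\X$\+injective torsion sheaves, so $\boR k^!$ degenerates to the plain $k^!$ with no further resolution needed --- the relative analogue of the ``no resolution on the injective side'' phenomenon exploited in Proposition~\ref{covariant-duality-shriek-star-prop}. To close the gap one must check that replacing $\bfP^\bu$ by $\bfF^\bu$ does not alter $\boR k^!$ of $\pi^*\rD^\bu\ot_\bY\bfP^\bu$: this follows since $k^!$ sends a complex of $\X$\+injectives whose $\pi_*$ is coacyclic (indeed contractible) to a complex of $\Z$\+injectives whose $\rho_*$ is coacyclic, using $\rho_*k^!\simeq i^!\pi_*$ from Lemma~\ref{torsion-shriek-base-change} together with the fact that $i^!$ preserves injectivity and contractibility. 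I would conclude, as in Proposition~\ref{semiderived-equivalence-change-of-fiber-prop}, by noting that the conservative direct-image functors intertwine the adjunctions, so the isomorphism obtained above is the expected natural one.
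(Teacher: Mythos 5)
Your proof is correct and follows essentially the same route as the paper's own argument: the same underived isomorphism from Lemmas~\ref{torsion-reasonable-shriek-flat-star-commutation} and~\ref{relative-ind-scheme-shriek-star-tensor}, the same relatively homotopy flat resolution $\bfF^\bu\rarrow\bfP^\bu$, and the same observation that $\pi^*\rD^\bu\ot_\bY\bfF^\bu$ and $\pi^*\rD^\bu\ot_\bY\bfP^\bu$ are complexes of $\X$\+injective torsion sheaves, so that $\boR k^!$ reduces to the underived~$k^!$. Your closing verification via $\rho_*k^!\simeq i^!\pi_*$ merely spells out a detail the paper leaves implicit.
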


\begin{proof}
 Let $\bfP^\bu$ be a complex of $\X$\+flat pro-quasi-coherent
pro-sheaves on~$\bY$.
 We have to construct a natural isomorphism
$$
 \rho^*i^!\rD^\bu\ot_\bW\boL k^*(\bfP^\bu)\,\simeq\,
 \boR k^!(\pi^*\rD^\bu\ot_\bY\bfP^\bu)
$$
in the semiderived category $\sD_\Z^\si(\bW\tors)$.
 The related isomorphism for underived functors
$$
 \rho^*i^!\rD^\bu\ot_\bW k^*(\bfP^\bu)\,\simeq\,
 k^!\pi^*\rD^\bu\ot_\bW k^*(\bfP^\bu)\,\simeq\,
 k^!(\pi^*\rD^\bu\ot_\bY\bfP^\bu)
$$
holds by Lemmas~\ref{torsion-reasonable-shriek-flat-star-commutation}
and~\ref{relative-ind-scheme-shriek-star-tensor}.

 To prove the desired isomorphism for derived functors, replace
the complex $\bfP^\bu$ with a relatively homotopy flat complex
of flat pro-quasi-coherent pro-sheaves $\bfF^\bu$ endowed with
a morphism of complexes $\bfF^\bu\rarrow\bfP^\bu$ which is
an isomorphism in $\sD(\bY_\X\flat)$.
 Then it remains to recall that, according to the proof of
Theorem~\ref{relative-triangulated-equivalence-thm}, the complex
$\pi^*\rD^\bu\ot_\bY\bfF^\bu$ (as well as the complex
$\pi^*\rD^\bu\ot_\bY\bfP^\bu$) is a complex of $\X$\+injective
quasi-coherent torsion sheaves on~$\bY$.
\end{proof}

\subsection{Semiderived equivalence and external tensor product}
\label{semiderived-and-external-subsecn}
 This Section~\ref{semiderived-and-external-subsecn} is a relative
version of Section~\ref{covariant-external-commute-subsecn}.
 Let $\X$ be an ind-Noetherian ind-scheme and $\pi\:\bY\rarrow\X$
be an affine morphism of schemes.
 Let $\rM^\bu\in\sC(\X\tors)$ be a complex of quasi-coherent torsion
sheaves on~$\X$.
 For any complex of $\X$\+flat pro-quasi-coherent pro-sheaves $\bfG^\bu$
on $\bY$, put
$$
 \bPhi_{\rM^\bu}(\bfG^\bu)=\pi^*(\rM^\bu)\ot_\bY\bfG^\bu
 \,\in\,\sC(\bY\tors).
$$
 According to formula~\eqref{derived-flat-lifted-torsion-tensor}
from Section~\ref{underived-tensor-subsecn}, the functor
$\bPhi_{\rM^\bu}$ induces a well-defined triangulated triangulated
functor
$$
 \bPhi_{\rM^\bu}\:\sD(\bY_\X\flat)\lrarrow\sD_\X^\si(\bY\tors).
$$
 Furthermore, any morphism $\rM^\bu\rarrow\rN^\bu$ in the coderived
category $\sD^\co(\X\tors)$ induces a morphism of triangulated functors
$\bPhi_{\rM^\bu}\rarrow\bPhi_{\rN^\bu}$, which is an isomorphism
of functors whenever the morphism $\rM^\bu\rarrow\rN^\bu$ is
an isomorphism in $\sD^\co(\X\tors)$.

\begin{prop} \label{semiderived-equivalence-external-tensor-prop}
 Let\/ $\X'$ and\/ $\X''$ be ind-semi-separated ind-schemes of
ind-finite type over\/~$\kk$, and let\/ $\pi'\:\bY'\rarrow\X'$ and
$\pi''\:\bY''\rarrow\X''$ be flat affine morphisms of ind-schemes.
 Consider the induced morphism of the Cartesian products
$\pi=\pi'\times_\kk\nobreak\pi''\:\allowbreak
\bY'\times_\kk\bY''\rarrow\X'\times_\kk\X''$.
 Let $\rD'{}^\bu$ and $\rD''{}^\bu$ be dualizing complexes on\/
$\X'$ and\/ $\X''$, respectively, and let $\rE^\bu$ be the related
dualizing complex on\/ $\X'\times_\kk\X''$, as in
Lemma~\ref{external-product-dualizing}.
 Then the triangulated equivalences
\begin{alignat*}{2}
 \pi'{}^*\rD'{}^\bu\ot_{\bY'}{-}\, & \:
 & \sD(\bY'_{\X'}\flat) & \,\simeq\,\sD_{\X'}^\si(\bY'\tors), \\
 \pi''{}^*\rD''{}^\bu\ot_{\bY''}{-}\, & \:
 & \sD(\bY''_{\X''}\flat) & \,\simeq\,
 \sD_{\X''}^\si(\bY''\tors),
\end{alignat*}
and
$$
 \pi^*\rE^\bu\ot_{(\bY'\times_\kk\bY'')}{-}\,
 \:\sD((\bY'\times_\kk\bY'')_{(\X'\times_\kk\X'')}\flat)
 \,\simeq\,
 \sD_{(\X'\times_\kk\X'')}((\bY'\times_\kk\bY'')\tors)
$$
from Theorem~\ref{relative-triangulated-equivalence-thm}
form a commutative square diagram with the external tensor product
functors\/~$\bt_\kk$ \,\eqref{derived-flat-over-base-external-product}
and~\eqref{semiderived-torsion-external-product} from
Section~\ref{relative-external-products-subsecn}.
\end{prop}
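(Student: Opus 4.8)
The plan is to reduce the statement to a single natural isomorphism of underived functors at the level of complexes, in parallel with the treatment of the absolute case in Corollary~\ref{covariant-duality-external-product-cor}. Write $\X=\X'\times_\kk\X''$, \ $\bY=\bY'\times_\kk\bY''$, and $\pi=\pi'\times_\kk\pi''$; recall from the end of Section~\ref{relative-external-products-subsecn} that $\pi$ is again flat affine, while $\X$ is ind-semi-separated, ind-Noetherian, and of ind-finite type, so that Theorem~\ref{relative-triangulated-equivalence-thm} applies and $\rE^\bu$ exists by Lemma~\ref{external-product-dualizing}. By the definitions in Section~\ref{semiderived-and-external-subsecn}, the equivalence $\pi^*\rE^\bu\ot_\bY{-}$ of Theorem~\ref{relative-triangulated-equivalence-thm} is precisely the functor $\bPhi_{\rE^\bu}$, while $\pi'{}^*\rD'{}^\bu\ot_{\bY'}{-}=\bPhi_{\rD'{}^\bu}$ and $\pi''{}^*\rD''{}^\bu\ot_{\bY''}{-}=\bPhi_{\rD''{}^\bu}$. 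So it suffices to show that $\bPhi$ commutes with external tensor products and that $\bPhi_{\rE^\bu}\simeq\bPhi_{\rD'{}^\bu\bt_\kk\rD''{}^\bu}$.

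First I would establish, for any complexes of torsion sheaves $\rM'{}^\bu$ on $\X'$ and $\rM''{}^\bu$ on $\X''$ and any complexes of $\X'$\+flat and $\X''$\+flat pro-sheaves $\bfG'{}^\bu$, $\bfG''{}^\bu$, a natural isomorphism
$$ \bPhi_{\rM'{}^\bu\bt_\kk\rM''{}^\bu}(\bfG'{}^\bu\bt_\kk\bfG''{}^\bu) \simeq \bPhi_{\rM'{}^\bu}(\bfG'{}^\bu)\bt_\kk\bPhi_{\rM''{}^\bu}(\bfG''{}^\bu) $$
in the category of complexes $\sC(\bY\tors)$. Unwinding the definition $\bPhi_{\rM^\bu}(\bfG^\bu)=\pi^*(\rM^\bu)\ot_\bY\bfG^\bu$, the left-hand side equals $\pi^*(\rM'{}^\bu\bt_\kk\rM''{}^\bu)\ot_\bY(\bfG'{}^\bu\bt_\kk\bfG''{}^\bu)$. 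The plan is to rewrite $\pi^*(\rM'{}^\bu\bt_\kk\rM''{}^\bu)\simeq\pi'{}^*\rM'{}^\bu\bt_\kk\pi''{}^*\rM''{}^\bu$ using Lemma~\ref{torsion-inverse-image-of-external-product}, and then to apply Lemma~\ref{torsion-pro-action-external-product} (with $\bY'$ and $\bY''$ playing the role of the reasonable ind-schemes there) to separate the tensor product over the external product into the external product of the tensor products. Both steps are natural isomorphisms of complexes compatible with the coproduct totalizations, so they combine into the asserted isomorphism.

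Next I would descend to the derived and semiderived categories. The point is that no resolutions intervene here: the external tensor product on the pro-sheaf side is directly well-defined on $\sD(\bY_\X\flat)$ by Lemma~\ref{flat-over-base-complexes-external-tensor-exactness} (functor~\eqref{derived-flat-over-base-external-product}), the external tensor product on the torsion side is directly well-defined on the semiderived categories by Lemma~\ref{torsion-complexes-external-tensor-semiacyclic} (functor~\eqref{semiderived-torsion-external-product}), and each functor $\bPhi_{\rD^\bu}$ is the termwise functor~\eqref{derived-to-derived-tensoring-functor} from the proof of Theorem~\ref{relative-triangulated-equivalence-thm}. Hence the complex-level isomorphism of the previous paragraph descends verbatim to a commutative square of triangulated functors between the relevant derived and semiderived categories.

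Finally, specializing $\rM'{}^\bu=\rD'{}^\bu$ and $\rM''{}^\bu=\rD''{}^\bu$ yields the commutative square involving $\bPhi_{\rD'{}^\bu}$, $\bPhi_{\rD''{}^\bu}$, and $\bPhi_{\rD'{}^\bu\bt_\kk\rD''{}^\bu}$. It remains to replace the last functor by $\bPhi_{\rE^\bu}$, and this is the one step that is not purely formal. By Lemma~\ref{external-product-dualizing} there is an isomorphism $\rD'{}^\bu\bt_\kk\rD''{}^\bu\simeq\rE^\bu$ in $\sD^\co(\X\tors)$, and the functoriality of $\bPhi$ in its subscript, recorded at the beginning of Section~\ref{semiderived-and-external-subsecn}, promotes it to an isomorphism of triangulated functors $\bPhi_{\rD'{}^\bu\bt_\kk\rD''{}^\bu}\simeq\bPhi_{\rE^\bu}$. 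I expect the main thing to watch to be exactly this last identification: confirming that the object-level isomorphism of dualizing complexes on the product, which lives in $\sD^\co(\X\tors)$ and ultimately rests on Proposition~\ref{derived-supports-external-product-prop}, genuinely induces the required isomorphism between the two equivalences; everything else is a formal descent of complex-level identities.
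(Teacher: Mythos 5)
Your proposal is correct and follows essentially the same route as the paper's own proof: the complex-level isomorphism $\bPhi_{\rM'{}^\bu\subbt_\kk\rM''{}^\bu}(\bfQ'{}^\bu\bt_\kk\bfQ''{}^\bu)\simeq\bPhi_{\rM'{}^\bu}(\bfQ'{}^\bu)\bt_\kk\bPhi_{\rM''{}^\bu}(\bfQ''{}^\bu)$ obtained from Lemmas~\ref{torsion-inverse-image-of-external-product} and~\ref{torsion-pro-action-external-product}, followed by specializing to the dualizing complexes and invoking the functoriality of $\bPhi$ in its subscript to replace $\bPhi_{\rD'{}^\bu\subbt_\kk\rD''{}^\bu}$ by $\bPhi_{\rE^\bu}$ via the coderived-category isomorphism $\rD'{}^\bu\bt_\kk\rD''{}^\bu\simeq\rE^\bu$. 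The step you flag as ``the one to watch'' is handled in the paper exactly as you propose, by the observation at the beginning of Section~\ref{semiderived-and-external-subsecn} that an isomorphism in $\sD^\co(\X\tors)$ induces an isomorphism of the triangulated functors~$\bPhi$.
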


\begin{proof}
 Let $\rM'{}^\bu$ and $\rM''{}^\bu$ be complexes of quasi-coherent
torsion sheaves on $\X'$ and~$\X''$.
 Then it follows from
Lemmas~\ref{torsion-inverse-image-of-external-product}
and~\ref{torsion-pro-action-external-product} that, for any complex
of $\X'$\+flat pro-quasi-coherent pro-sheaves $\bfQ'{}^\bu$ on $\bY'$
and any complex of $\X''$\+flat pro-quasi-coherent pro-sheaves
$\bfQ''{}^\bu$ on $\bY''$, there is a natural isomorphism
$$
 \bPhi_{\rM'{}^\bu\subbt_\kk\rM''{}^\bu}
 (\bfQ'{}^\bu\bt_\kk\bfQ''{}^\bu)\,\simeq\,
 \bPhi_{\rM'{}^\bu}(\bfQ'{}^\bu)\bt_\kk
 \bPhi_{\rM''{}^\bu}(\bfQ''{}^\bu)
$$
in the category of complexes of quasi-coherent torsion sheaves on
$\bY'\times_\kk\bY''$.
 It remains to take $\rM'{}^\bu=\rD'{}^\bu$ and $\rM''{}^\bu=
\rD''{}^\bu$, and observe that the isomorphism $\rD'{}^\bu\bt_\kk
\nobreak\rD''{}^\bu\rarrow\rE^\bu$ in the coderived category
$\sD^\co((\X'\times_\kk\X'')\tors)$ induces an isomorphism of
triangulated functors $\bPhi_{\rD'{}^\bu\subbt_\kk\rD''{}^\bu}
\rarrow\bPhi_{\rE^\bu}$, as per the discussion above.
\end{proof}

\subsection{The semitensor product computed}
 Let us recall the setting and notation from the introductory paragraphs
of Section~\ref{flat-affine-over-ind-finite-type-secn}.
 Let $\X$ be an ind-separated ind-scheme of ind-finite type over
a field~$\kk$, and let $\pi\:\bY\rarrow\X$ be a flat affine morphism
of schemes.
 Then the diagonal morphism $\Delta_\bY\:\bY\rarrow\bY\times_\kk\bY$
decomposes as
$$
 \bY\xrightarrow{\delta_{\bY/\X}}\bY\times_\X\bY
 \xrightarrow{\eta_{\bY/\X}}\bY\times_\kk\bY.
$$
 Both $\delta=\delta_{\bY/\X}$ and $\eta=\eta_{\bY/\X}$ are closed
immersions of ind-schemes (see~\cite[Tags~01S7, 01KU(1), 01KR]{SP} for
scheme versions of these assertions).

 In fact, there is a commutative square diagram
(a particular case of~\eqref{square-of-ind-schemes})
\begin{equation} \label{diagornal-square-of-ind-schemes}
\begin{gathered}
\xymatrix{
 \bY \ar[rrrr]^-{\Delta_\bY} \ar[rrd]_\pi
 &&&& \bY\times_\kk\bY \ar[d]^{\pi\times_\kk\pi} \\
 && \X \ar[rr]^-{\Delta_\X} && \X\times_\kk\X
}
\end{gathered}
\end{equation}
which is composed of a commutative triangle diagram
(a particular case of~\eqref{triangle-of-ind-schemes})
\begin{equation} \label{diagonal-triangle-of-ind-schemes}
\begin{gathered}
\xymatrix{
 \bY \ar[rr]^-{\delta_{\bY/\X}} \ar[rrd]_\pi
 && \bY\times_\X\bY \ar[d]^{\pi\times_\X\pi} \\
 && \X
}
\end{gathered}
\end{equation}
and a pullback diagram
(a particular case of~\eqref{pullback-of-ind-schemes})
\begin{equation} \label{diagonal-pullback-of-ind-schemes}
\begin{gathered}
\xymatrix{
 \bY\times_\X\bY \ar[rr]^{\eta_{\bY/\X}} \ar[d]^{\pi\times_\X\pi}
 && \bY\times_\kk\bY \ar[d]^{\pi\times_\kk\pi} \\
 \X \ar[rr]^{\Delta_\X} && \X\times_\kk\X
}
\end{gathered}
\end{equation}
 The morphism~$\Delta_\X$ is a closed immersion of ind-schemes.
 The morphisms of ind-schemes $\pi\times_\kk\pi$ and $\pi\times_\X\pi$
are flat and affine.

\begin{thm}
 In the context above, let\/ $\rD^\bu$ be a rigid dualizing complex on
the ind-scheme\/~$\X$ (in the sense of Section~\ref{rigid-subsecn}).
 Then, for any two complexes of quasi-coherent torsion sheaves
$\bM^\bu$ and $\bN^\bu\in\sD_\X^\si(\bY\tors)$, there is a natural
isomorphism
\begin{equation} \label{semitensor-product-computed-eqn}
 \bM^\bu\os_{\pi^*\rD^\bu}\bN^\bu\,\simeq\,
 \boL\delta^*\,\boR\eta^!(\bM^\bu\bt_\kk\bN^\bu)
\end{equation}
in the\/ $\bY/\X$\+semiderived category\/ $\sD_\X^\si(\bY\tors)$ of
quasi-coherent torsion sheaves on\/~$\bY$.
 Here the semitensor product functor\/~$\os_{\pi^*\rD^\bu}$ was defined
in formula~\eqref{semiderived-torsion-semitensor-product}
in Section~\ref{construction-of-semitensor-subsecn}.
 The external tensor product functor\/~$\bt_\kk$ was defined in
formula~\eqref{semiderived-torsion-external-product} in
Section~\ref{relative-external-products-subsecn},
the right derived functor\/ $\boR\eta^!$ was defined in
Section~\ref{relative-derived-supports-subsecn},
and the left derived functor\/ $\boL\delta^*$ was defined in
Section~\ref{torsion-derived-inverse-image-subsecn}.
\end{thm}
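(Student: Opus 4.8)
The plan is to transport the entire computation to the side of $\X$\+flat pro-quasi-coherent pro-sheaves via the covariant duality equivalences of Theorem~\ref{relative-triangulated-equivalence-thm}, where the semitensor product becomes the derived tensor product $\ot^{\bY,\boL}$, and then to recognize the latter as a derived restriction to the diagonal. By the very definition of the semitensor product in Section~\ref{construction-of-semitensor-subsecn}, the equivalence $\pi^*\rD^\bu\ot_\bY{-}\:\sD(\bY_\X\flat)\simeq\sD_\X^\si(\bY\tors)$ carries $\os_{\pi^*\rD^\bu}$ into $\ot^{\bY,\boL}$. So it suffices to work out the image of the right-hand side of~\eqref{semitensor-product-computed-eqn} under these equivalences and match it against $\ot^{\bY,\boL}$.

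On the pro-sheaf side, I would first invoke Proposition~\ref{derived-tensor-as-derived-restriction-prop} to rewrite $\bfP^\bu\ot^{\bY,\boL}\bfQ^\bu\simeq\boL\Delta_\bY^*(\bfP^\bu\bt_\kk\bfQ^\bu)$, and then factor $\boL\Delta_\bY^*\simeq\boL\delta^*\,\boL\eta^*$ using Proposition~\ref{composition-of-derived-inverse-images}. Concretely, the two squares required by~\eqref{two-squares-of-ind-schemes} are assembled from the triangle~\eqref{diagonal-triangle-of-ind-schemes} (the base morphisms $\pi$ and $\pi\times_\X\pi$ over the identity on $\X$) and the pullback~\eqref{diagonal-pullback-of-ind-schemes} (the base morphism $\Delta_\X$, with $\pi\times_\X\pi$ and $\pi\times_\kk\pi$); all the vertical morphisms are flat affine and the bases $\X$ and $\X\times_\kk\X$ are ind-Noetherian, so the proposition applies and gives $\bfP^\bu\ot^{\bY,\boL}\bfQ^\bu\simeq\boL\delta^*\,\boL\eta^*(\bfP^\bu\bt_\kk\bfQ^\bu)$ in $\sD(\bY_\X\flat)$.

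Next I would translate each of the three operations on the right back into torsion sheaves, matching dualizing complexes at each stage. The external tensor product $\bt_\kk$ is matched by Proposition~\ref{semiderived-equivalence-external-tensor-prop}, using the dualizing complex $\rD^\bu$ on each copy of $\X$ and the complex $\rE^\bu$ on $\X\times_\kk\X$ supplied by Lemma~\ref{external-product-dualizing}. The functor $\boL\eta^*$ is matched by Proposition~\ref{semiderived-equivalence-base-change-prop} applied to the pullback~\eqref{diagonal-pullback-of-ind-schemes}: it transforms $\boL\eta^*$ into $\boR\eta^!$, where the equivalence upstairs over $\X\times_\kk\X$ uses $\rE^\bu$ and the equivalence downstairs over $\X$ uses $\boR\Delta_\X^!\rE^\bu$. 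Finally $\boL\delta^*$ is matched by Proposition~\ref{semiderived-equivalence-change-of-fiber-prop} applied to the triangle~\eqref{diagonal-triangle-of-ind-schemes}, which transforms it into $\boL\delta^*$ on torsion sheaves, with all the intervening equivalences over $\X$ induced by $\rD^\bu$. Stringing these together yields $\boL\delta^*\,\boR\eta^!(\bM^\bu\bt_\kk\bN^\bu)$ as the image of $\ot^{\bY,\boL}$, which is the desired isomorphism~\eqref{semitensor-product-computed-eqn}.

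The main obstacle, and the place where rigidity of $\rD^\bu$ is indispensable, is the compatibility of the duality equivalences at the intermediate ind-scheme $\bY\times_\X\bY$. The $\boR\eta^!$\+translation equips $\sD_\X^\si((\bY\times_\X\bY)\tors)\simeq\sD((\bY\times_\X\bY)_\X\flat)$ with the dualizing complex $\boR\Delta_\X^!\rE^\bu$ on $\X$, whereas the $\boL\delta^*$\+translation uses $\rD^\bu$ directly. These two equivalences must be identified for the composition to make sense, and this identification is exactly the rigidifying isomorphism: by Corollary~\ref{covariant-duality-external-product-cor} one has $\rE^\bu\simeq\rD^\bu\bt_\kk\rD^\bu$ in $\sD^\co((\X\times_\kk\X)\tors)$, and rigidity (Section~\ref{rigid-subsecn}) provides $\rD^\bu\simeq\boR\Delta_\X^!(\rD^\bu\bt_\kk\rD^\bu)\simeq\boR\Delta_\X^!\rE^\bu$. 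I expect the delicate point to be checking that this isomorphism of dualizing complexes induces a genuine $2$\+commutative identification of the intermediate equivalences (so that the composite of the two translation squares is well defined and natural), rather than merely an abstract isomorphism of objects; once that coherence is in place, the remaining steps are formal consequences of the propositions already established.
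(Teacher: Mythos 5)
Your proposal is correct and follows essentially the same route as the paper's own proof: transport to $\sD(\bY_\X\flat)$, apply Propositions~\ref{derived-tensor-as-derived-restriction-prop} and~\ref{composition-of-derived-inverse-images}, and translate back through Propositions~\ref{semiderived-equivalence-change-of-fiber-prop}, \ref{semiderived-equivalence-base-change-prop}, and~\ref{semiderived-equivalence-external-tensor-prop}, with rigidity supplying the identification $\rD^\bu\simeq\boR\Delta_\X^!\rE^\bu$ at the intermediate ind-scheme $\bY\times_\X\bY$. The coherence point you flag at the end is resolved as in Sections~\ref{covariant-external-commute-subsecn}--\ref{semiderived-and-external-subsecn}: rigidity actually yields a homotopy equivalence $\rD^\bu\simeq\Delta_\X^!\rE^\bu$ of complexes in $\X\tors_\inj$, and any isomorphism in the coderived category induces an isomorphism of the associated tensor functors $\bPhi$, so the two intermediate duality equivalences are identified as functors.
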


\begin{proof}
 Let $\bK^\bu$ and $\bJ^\bu$ be two complexes of $\X$\+injective
quasi-coherent torsion sheaves on $\bY$ endowed with morphisms of
complexes $\bM^\bu\rarrow\bK^\bu$ and $\bN^\bu\rarrow\bJ^\bu$ with
the cones whose direct images under~$\pi$ are coacyclic in $\X\tors$.
 Then $\bfF^\bu=\fHom_{\bY\qc}(\pi^*\rD^\bu,\bK^\bu)$ and
$\bfG^\bu=\fHom_{\bY\qc}(\pi^*\rD^\bu,\bJ^\bu)$ are two complexes
in $\bY_\X\flat$ corresponding to $\bM^\bu$ and $\bN^\bu$,
respectively, under the equivalence of categories
$\sD(\bY_\X\flat)\simeq\sD_\X^\si(\bY\tors)$ from
Theorem~\ref{relative-triangulated-equivalence-thm}; so natural
isomorphisms $\pi^*\rD^\bu\ot_\bY\bfF^\bu\rarrow\bK^\bu\larrow\bM^\bu$
and $\pi^*\rD^\bu\ot_\bY\bfG^\bu\rarrow\bJ^\bu\larrow\bN^\bu$ exist in
$\sD_\X^\si(\bY\tors)$.
 By the definition, we have
$$
 \bM^\bu\os_{\pi^*\rD^\bu}\bN^\bu=
 \pi^*\rD^\bu\ot_\bY(\bfF^\bu\ot^{\bY,\boL}\bfG^\bu).
$$

 Let $\rE^\bu$ be a (dualizing) complex of injective quasi-coherent
torsion sheaves on $\X\times_\kk\X$ endowed with a morphism of
complexes $\rD^\bu\bt_\kk\rD^\bu\rarrow\rE^\bu$ with the cone
coacyclic in $(\X\times_\kk\X)\tors$.
 Since $\rD^\bu$ is assumed to be a rigid dualizing complex, we have
a homotopy equivalence $\rD^\bu\simeq\Delta_\X^!\rE^\bu$ of
complexes in $\X\tors_\inj$.
 Now we compute
\begin{multline*}
 \pi^*\rD^\bu\ot_\bY(\bfF^\bu\ot^{\bY,\boL}\bfG^\bu)
 \,\overset{\ref{derived-tensor-as-derived-restriction-prop}}\simeq\,
 \pi^*\rD^\bu\ot_\bY\boL\Delta_\bY^*(\bfF^\bu\bt_\kk\bfG^\bu) \\
 \,\overset{\ref{composition-of-derived-inverse-images}}\simeq\,
 \pi^*\rD^\bu\ot_\bY\boL\delta^*\,\boL\eta^*(\bfF^\bu\bt_\kk\bfG^\bu)
 \,\overset{\ref{semiderived-equivalence-change-of-fiber-prop}}\simeq\,
 \boL\delta^*\bigl((\pi\times_\X\pi)^*(\rD^\bu)\ot_{(\bY\times_\X\bY)}
 \boL\eta^*(\bfF^\bu\bt_\kk\bfG^\bu)\bigr) \\
 \,\overset{\ref{semiderived-equivalence-base-change-prop}}\simeq\,
 \boL\delta^*\,\boR\eta^!\bigl((\pi\times_\kk\pi)^*(\rE^\bu)
 \ot_{(\bY\times_\kk\bY)}(\bfF^\bu\bt_\kk\bfG^\bu)\bigr) \\
 \,\overset{\ref{semiderived-equivalence-external-tensor-prop}}\simeq\,
 \boL\delta^*\,\boR\eta^!\bigl((\pi^*\rD^\bu\ot_\bY\bfF^\bu)
 \bt_\kk(\pi^*\rD^\bu\ot_\bY\bfG^\bu)\bigr)
 \,\overset{\ref{torsion-complexes-external-tensor-semiacyclic}}\simeq\,
 \boL\delta^*\,\boR\eta^!(\bM^\bu\bt_\kk\bN^\bu),
\end{multline*}
where the numbers over the isomorphism signs indicate the relevant
propositions and lemma where the natural isomorphisms are
established.
\end{proof}

\Section{Invariance under Postcomposition with a Smooth Morphism}
\label{weakly-smooth-postcomposition-secn}

 Let $\X$ be an ind-semi-separated ind-Noetherian ind-scheme, and
let $\tau\:\X'\rarrow\X$ be a smooth affine morphism of finite type.
 Let $\pi'\:\bY\rarrow\X'$ be a flat affine morphism, and let
$\pi\:\bY\rarrow\X$ denote the composition $\pi=\tau\pi'$.
 Let $\rD^\bu$ be a dualizing complex on $\X$; then $\rD'{}^\bu=
\tau^*\rD^\bu$ is a dualizing complex on~$\X'$.

 The aim of this section is to show that the constructions of
Sections~\ref{X-flat-on-Y-secn}\+-\ref{semitensor-secn}, including
the semiderived category of quasi-coherent torsion sheaves on $\bY$
and the semitensor product operation on it, are preserved by
the passage from the flat affine morphism $\pi'\:\bY\rarrow\X'$ to
the flat affine morphism $\pi\:\bY\rarrow\X$.

\subsection{Weakly smooth morphisms} \label{weakly-smooth-subsecn}
 We refer to~\cite[Tag~01V4]{SP} for a discussion of smooth morphisms
of schemes.
 For the purposes of this section, a slightly weaker condition is
suffient; we call it \emph{weak smoothness}.
 Essentially, a morphism of schemes is said to be weakly smooth if it
is flat with regular fibers of bounded Krull dimension.

 Let $X$ be a scheme and $x\in X$ be a point.
 Denote by $\kappa_X(x)$ the residue field of the point~$x$ on~$X$.
 Abusing notation, we will denote simply by~$x$ the one-point
scheme $\Spec\kappa_X(x)$.
 Then we have a natural morphism of schemes $x\rarrow X$.

 Let $f\:Y\rarrow X$ be a morphism of schemes and $x\in X$ be
a point.
 Then the scheme $Y_x=x\times_XY$ is called the \emph{fiber} of~$f$
over~$x$.
 Given an integer $d\ge0$, we will say that the morphism~$f$ is
\emph{weakly smooth of relative dimension~$\le d$} if $f$~is flat
\emph{and} for every point $x\in X$ the fiber $Y_x$ is a regular
Noetherian scheme of Krull dimension~$\le d$.

 By~\cite[Tags~01VB and~00TT]{SP}, any smooth morphism of schemes
is weakly smooth.
 According to~\cite[Tag~01V8]{SP}, any weakly smooth morphism of
finite type between Noetherian schemes over a field of
characteristic~$0$ is smooth.
 This is not true in finite characteristic because of nonseparability
issues (an inseparable finite field extension is the simplest
example of a nonsmooth flat morphism with regular fibers).

 Notice that weak smoothness is \emph{not} preserved by base change,
generally speaking (e.~g., base changes of inseparable finite
field extensions can have nilpotent elements in the fibers).
 However, some base changes do preserve it.
 Let us say that a morphism of schemes $Z\rarrow X$ \emph{does not
extend the residue fields} if for every point $z\in Z$ and its
image $x\in X$ the induced field extension $\kappa_X(x)\rarrow
\kappa_Z(z)$ is an isomorphism.
 In particular, locally closed immersions of schemes do not extend
the residue fields.
 Clearly, if a morphism $Y\rarrow X$ is weakly smooth of relative
dimension~$\le d$ and a morphism $Z\rarrow X$ does not extend
the residue fields, then the morphism $Z\times_XY\rarrow Z$ is also
weakly smooth of relative dimension~$\le d$.

 Any closed immersion of schemes is injective as a map of
the underlying sets of points.
 So any strict ind-scheme $\X$, represented by an inductive system
of closed immersions of schemes $\X=\ilim_{\gamma\in\Gamma}X_\gamma$,
gives rise to an inductive system of injective maps of the underlying
sets.
 The inductive limit of this inductive system of sets is called
the \emph{underlying set of points of\/~$\X$}.
 Given a point $x\in\X$, the residue field $\kappa=\kappa_\X(x)$
is well-defined, because the closed immersions $X_\gamma\rarrow
X_\delta$, \,$\gamma<\delta\in\Gamma$, do not extend the residue
fields (so one can take any $\gamma\in\Gamma$ such that $x\in X_\gamma
\subset\X$ and put $\kappa_\X(x)=\kappa_{X_\gamma}(x)$).
 Denoting the scheme $\Spec\kappa_\X(x)$ simply by~$x$, we have
a morphism of ind-schemes $x\rarrow\X$.

 Let $f\:\Y\rarrow\X$ be a morphism of ind-schemes with is
``representable by schemes''.
 Then, for every point $x\in X$, the fiber $x\times_\X\Y$ is
a scheme.
 As above, we will say that the morphism~$f$ is \emph{weakly smooth
of relative dimension~$\le d$} if $f$~is flat and for every point
$x\in X$ the scheme $x\times_\X\Y$ is Noetherian and regular of Krull
dimension~$\le d$.
 Clearly, the morphism of ind-schemes~$f$ is weakly smooth of
relative dimension~$\le d$ if and only if, for every
$\gamma\in\Gamma$, the morphism of schemes
$f_\gamma\:Y_\gamma=X_\gamma\times_\X\Y\rarrow X_\gamma$ is
weakly smooth of relative dimension~$\le d$.

 Let $\X$ be an ind-Noetherian ind-scheme and $f\:\Y\rarrow\X$ be
a morphism of ind-schemes.
 One says that $f$~is a \emph{morphism of finite type} if for
every Noetherian scheme $T$ and every morphism of ind-schemes
$T\rarrow\X$ the fibered product $T\times_\X\Y$ is a scheme
\emph{and} the morphism of schemes $T\times_\X\Y\rarrow T$ is
of finite type.
 It suffices to check these conditions for the closed subschemes
$T=X_\gamma$ appearing in a given representation of $\X$ by
an inductive system of closed immersions of schemes.

\subsection{Flat and injective dimension under weakly smooth
morphisms} \label{flat-injective-dimensions-weakly-smooth-subsecn}
 Let $\M$ be a quasi-coherent sheaf on a scheme~$X$, and $d\ge0$ be
an integer.
 One says that the \emph{injective dimension} of $\M$ does not
exceed~$d$ if there exists an exact sequence $0\rarrow\M\rarrow\J^0
\rarrow\J^1\dotsb\rarrow\J^d\rarrow0$ of quasi-coherent sheaves
on $X$ with injective quasi-coherent sheaves~$\J^i$.
 On a (locally) Noetherian scheme $X$, injectivity of quasi-coherent
sheaves is a local property~\cite[Proposition~II.7.17 and
Theorem~II.7.18]{Hart}, hence so is the injective dimension:
the injective dimension of $\M$ is equal to the supremum of
the injective dimensions of the $\cO_X(U_\alpha)$\+modules
$\M(U_\alpha)$, where $X=\bigcup_\alpha U_\alpha$ is any given affine
open covering of the scheme~$X$.

 Let $\M$ be a quasi-coherent sheaf on a quasi-compact, semi-separated
scheme~$X$.
 According to~\cite[Section~2.4]{Mur0} or~\cite[Lemma~A.1]{EP}, every
quasi-coherent sheaf on $X$ is a quotient sheaf of a flat
quasi-coherent sheaf.
 One says that the \emph{flat dimension} of $\M$ does not exceed~$d$
if there exists an exact sequence $0\rarrow \F_d\rarrow \F_{d-1}
\rarrow\dotsb\rarrow\F_0\rarrow\M\rarrow0$ of quasi-coherent sheaves
on $X$ with flat quasi-coherent sheaves~$\F_i$.
 Since flatness of quasi-coherent sheaves is a local property, so is
the flat dimension: in the same notation as above, the flat dimension
of $\M$ is equal to the supremum of the flat dimensions of
the $\cO_X(U_\alpha)$\+modules $\M(U_\alpha)$.

 The following relative form of Hilbert's syzygy theorem is
essentially well-known.

\begin{prop} \label{relative-hilberts-syzygy}
 Let $R$ be an associative ring and $S=R[x_1,\dotsc,x_d]$ be the ring
of polynomials in $d$~variables with the coefficients in~$R$.
 Then, for any $S$\+module $N$: \par
\textup{(a)} the flat dimension of $N$ as an $S$\+module does not
exceed $d$~plus the flat dimension of $N$ as an $R$\+module; \par
\textup{(b)} the projective dimension of $N$ as an $S$\+module does not
exceed $d$~plus the projective dimension of $N$ as an $R$\+module; \par
\textup{(c)} the injective dimension of $N$ as an $S$\+module does not
exceed $d$~plus the injective dimension of $N$ as an $R$\+module.
\end{prop}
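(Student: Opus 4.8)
The plan is to reduce all three statements to the case $d=1$ and then iterate. Writing $S=R[x_1,\dots,x_d]=T[x_d]$ with $T=R[x_1,\dots,x_{d-1}]$, I would argue by induction on the number of variables: granting that for a single central variable each of the three dimensions of an $S$-module over $S=R[x]$ exceeds the corresponding dimension over $R$ by at most one, I would apply this to the pair $T\subset S$ and combine it with the inductive hypothesis for $R\subset T$, obtaining the bound ``$+\,d$'' in each case. Thus the entire content lies in the one-variable step, which I treat next.

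The engine for the case $S=R[x]$ is a pair of short exact sequences of $S$-modules, in which $N$ is regarded as an $R$-module by restriction of scalars. For the flat and projective dimensions I would use the standard (``Koszul'') presentation
\[ 0\to S\ot_R N\xrightarrow{\ \alpha\ } S\ot_R N\xrightarrow{\ \beta\ } N\to0, \]
where $\beta(s\ot n)=sn$ and $\alpha(s\ot n)=sx\ot n-s\ot xn$. For the injective dimension I would use the dual (``coinduced'') sequence
\[ 0\to N\xrightarrow{\ \gamma\ }\Hom_R(S,N)\xrightarrow{\ \delta\ }\Hom_R(S,N)\to0, \]
where $\gamma$ is the unit of the adjunction, $\gamma(n)\colon s\mapsto sn$. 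Identifying $\Hom_R(S,N)$ with sequences $(n_i)_{i\ge0}$ in $N$ via the $R$-basis $\{x^i\}$ of $S$, and writing $x_N$ for the (central) action of $x$ on $N$, the map $\delta$ sends $(n_i)_{i\ge0}$ to $(n_{i+1}-x_N n_i)_{i\ge0}$; one checks directly in this model that $\gamma$ is a monomorphism, that $\ker\delta=\operatorname{im}\gamma$, and that $\delta$ is surjective (solving the recursion $n_{i+1}-x_N n_i=m_i$).

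Then I would invoke the change-of-rings behavior of the outer terms. Because $S$ is free over $R$, extension of scalars $S\ot_R(-)$ is exact and carries flat (resp.\ projective) $R$-modules to flat (resp.\ projective) $S$-modules; tensoring an $R$-flat (resp.\ $R$-projective) resolution of the restricted module $N$ therefore shows the flat (resp.\ projective) dimension of $S\ot_R N$ over $S$ is at most that of $N$ over $R$. Dually, $\Hom_R(S,-)$ is right adjoint to the exact restriction functor, hence preserves injective objects, and it is exact because $S$ is $R$-projective; so the injective dimension of $\Hom_R(S,N)$ over $S$ is at most the injective dimension of $N$ over $R$. Feeding these bounds into the two displayed sequences and applying the elementary dimension-shifting inequalities — for the first sequence the flat/projective dimension of the quotient is $\le\max$ of the dimension of the middle term and one plus that of the subobject, and for the second the injective dimension of the subobject is $\le\max$ of the dimension of the middle term and one plus that of the quotient, with the two outer terms equal in each sequence — yields the desired increment of $1$, completing the induction.

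I expect the main obstacle to be the injective-dimension part: pinning down the correct $S$-module structure on the coinduced module $\Hom_R(S,N)$ and verifying exactness of the dual sequence, together with the fact that coinduction from $R$ to $S$ preserves injectivity (which rests on exactness of restriction and, via $R$-projectivity of $S$, of $\Hom_R(S,-)$). The flat and projective cases are formally parallel and essentially routine once the standard presentation and the exactness of $S\ot_R(-)$ are in hand.
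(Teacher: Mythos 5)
Your proof is correct, and it is genuinely more self-contained than what the paper does: the paper's ``proof'' consists of two citations, deferring parts~(b) and~(c) to a spectral-sequence argument of Eilenberg--Rosenberg--Zelinsky (spectral sequence~(I) and Theorem~6 of that paper) and parts~(a) and~(b) to an induction on~$d$ based on \cite[Proposition~7.5.2]{MR}. Your treatment of the flat and projective dimensions -- induction on the number of variables plus the characteristic sequence $0\to S\ot_RN\to S\ot_RN\to N\to0$ and the fact that $S\ot_R({-})$ preserves flats and projectives -- is essentially the argument that the reference to \cite{MR} encapsulates, so there you are unpacking rather than replacing the paper's route. Where you genuinely diverge is part~(c): instead of the spectral sequence of \cite{ERZ}, you dualize the same idea, using the coinduced sequence $0\to N\to\Hom_R(S,N)\to\Hom_R(S,N)\to0$ together with the facts that $\Hom_R(S,{-})$ is exact (as $S$ is $R$-free) and preserves injectives (as the right adjoint of the exact restriction functor), followed by the standard dimension shift. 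All the ingredients check out: both sequences are exact (your explicit model of $\Hom_R(S,N)$ as sequences $(n_i)$ with $x$ acting by shift makes the verification of $\ker\delta=\operatorname{im}\gamma$ and surjectivity of~$\delta$ transparent, and centrality of~$x$ gives $S$-linearity of $\gamma$ and~$\delta$), the change-of-rings bounds are right, and the induction over $S=T[x_d]$ with $T=R[x_1,\dotsc,x_{d-1}]$ assembles the increments correctly even for noncommutative~$R$. What your approach buys is a uniform, elementary, citation-free proof of all three parts, at the cost of a page of verification; what the paper's approach buys is brevity, at the cost of sending the reader to two external sources with rather different machinery.
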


\begin{proof}
 Parts~(b) and~(c) follow directly from~\cite[spectral sequence~(I)
in Section~2 and Theorem~6 in Section~4]{ERZ}.
 Parts~(a) and~(b) are provable by straightforward induction in~$d$
using~\cite[Proposition~7.5.2]{MR}.
\end{proof}

 The aim of this
Section~\ref{flat-injective-dimensions-weakly-smooth-subsecn} is to
prove the following partial generalization of
Proposition~\ref{relative-hilberts-syzygy}(a,c).

\begin{prop} \label{flat-injective-dimensions-direct-image}
 Let $X$ be a Noetherian scheme and $f\:Y\rarrow X$ be an affine
morphism of schemes.
 Assume that the morphism~$f$ is weakly smooth of relative
dimension~$\le d$.
 Then, for any quasi-coherent sheaf\/ $\N$ on $Y$: \par
\textup{(a)} assuming that the scheme $X$ is semi-separated,
the flat dimension of\/ $\N$ does not exceed $d$~plus
the flat dimension of the quasi-coherent sheaf $f_*\N$ on~$X$; \par
\textup{(b)} assuming that $f$~is a morphism of finite type,
the injective dimension of\/ $\N$ does not exceed $d$~plus
the injective dimension of the quasi-coherent sheaf $f_*\N$ on~$X$.
\end{prop}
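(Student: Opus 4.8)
The plan is to reduce the assertion to a ring-theoretic statement about a flat homomorphism and then to bound the homological dimensions fiberwise. First I would invoke the facts recorded earlier in this Section~\ref{flat-injective-dimensions-weakly-smooth-subsecn} that, over a Noetherian scheme, the flat dimension (on a semi-separated scheme) and the injective dimension are both local properties; combined with affineness of~$f$, this reduces the problem to the following situation. Let $R$ be a Noetherian commutative ring, let $R\rarrow S$ be a flat ring homomorphism corresponding to~$f$, let $N$ be the $S$\+module corresponding to~$\N$, and let $f_*\N$ correspond to $N$ with its $R$\+module structure obtained by restriction of scalars. Weak smoothness of relative dimension~$\le d$ (Section~\ref{weakly-smooth-subsecn}) translates into the condition that every fiber $S\ot_R\kappa(\p)$, \,$\p\in\Spec R$, is a regular Noetherian ring of Krull dimension~$\le d$, hence, by Serre's theorem, a ring of global dimension~$\le d$. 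In part~(b) the finite type hypothesis makes $S$ itself a Noetherian ring, so injective dimension over $S$ may be computed by localizing at primes and testing against finitely generated modules.

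The core of the argument is a change-of-rings estimate for the flat map $R\rarrow S$, namely
\[
 \operatorname{fd}_S(N)\le d+\operatorname{fd}_R(N)
 \qquad\text{and}\qquad
 \operatorname{id}_S(N)\le d+\operatorname{id}_R(N).
\]
I would derive these from change-of-rings spectral sequences of the same type that underlie Proposition~\ref{relative-hilberts-syzygy} in the polynomial case, now supplied with the fiberwise regularity input in place of the explicit polynomial syzygies. For a flat extension the relevant relative derived functors are computed by the relative bar complex, and the summand ``$d$'' in each inequality is the \emph{relative} homological dimension of the pair $(S,R)$. Composing such a relative estimate with a flat, respectively injective, resolution of $N$ over~$R$ produces the displayed inequalities; transporting them back through the affine dictionary and taking suprema over an affine open covering then yields parts~(a) and~(b).

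It therefore remains to bound the relative homological dimension of $(S,R)$ by~$d$, and here I would localize. For a prime $\mathfrak q\subset S$ lying over $\p\subset R$, the induced map $R_\p\rarrow S_{\mathfrak q}$ is flat and local, and its closed fiber $(S\ot_R\kappa(\p))_{\mathfrak q}$ is a localization of the regular fiber, hence a regular local ring of Krull dimension $e\le d$, of global dimension~$e$. The relative homological dimension of a flat local homomorphism is governed by the global dimension of its closed fiber: lifting a regular system of parameters of the closed fiber to elements $x_1,\dotsc,x_e\in\mathfrak q S_{\mathfrak q}$, flatness over $R_\p$ guarantees that they form an $S_{\mathfrak q}$\+regular sequence and that $S_{\mathfrak q}/(x_1,\dotsc,x_e)$ stays flat over $R_\p$ with closed fiber a field, a map of relative dimension~$0$; the $e\le d$ steps account for the summand~$d$. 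Since flat dimension localizes over an arbitrary ring (as $\Tor$ commutes with localization) and injective dimension localizes over a Noetherian ring, taking the supremum over $\mathfrak q$ restores the global inequalities.

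The step I expect to be the main obstacle is exactly this control of the relative dimension by the fiber dimension outside the polynomial case, and it must be handled differently in the two parts. In part~(a) no finite type hypothesis is available, so $S$ need not be Noetherian; one cannot appeal to a polynomial presentation at all and must argue through $\Tor_S$ and the relative bar complex directly, using that the fibers are Noetherian and regular by hypothesis even when $S$ is not. In part~(b) the injective-dimension change of rings is more delicate than the flat one, because $\Ext$ is less robust under localization than $\Tor$; it is precisely the finite type, hence Noetherian, assumption on~$S$ that permits the reduction to finitely generated test modules and to primes on which the foregoing local computation rests.
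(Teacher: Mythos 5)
Your strategy---localize at primes of $S$, lift a regular system of parameters of the closed fiber, and induct on its length, with the summand~$d$ accounted for by a ``relative homological dimension'' of the pair $(S,R)$---is the natural classical attempt, but it has a genuine gap exactly at the point you flag as the main obstacle, and the remedy you indicate (``argue through the relative bar complex directly'') is not an argument. The whole local toolkit you invoke is valid only for finitely generated modules over Noetherian local rings, whereas here $\N$ is an arbitrary quasi-coherent sheaf (and in part~(a) the ring $S$ need not even be Noetherian). Concretely: (i)~the assertion that the lifted elements $x_1,\dotsc,x_e$ form a regular sequence with $S_{\mathfrak q}/(x_1,\dotsc,x_e)$ flat over $R_\p$ is the local criterion of flatness, which needs $S_{\mathfrak q}$ Noetherian---unavailable in part~(a); moreover you also need the sequence to be regular on the module $G=N_{\mathfrak q}$ itself, which is yet another unjustified local-criterion statement. (ii)~Much more fatally, even in the Noetherian case, killing a regular sequence only controls $\Tor^{S_{\mathfrak q}}_*(V,G)$ for test modules $V$ restricted from the quotient rings, and for a module $G$ that is not finitely generated this says nothing about its flat dimension: flat dimension of non-finitely-generated modules over a Noetherian local ring is \emph{not} detected by $\Tor$ against the residue field or the closed fiber. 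For $S=\kk[[x,y]]$ and $G$ the field of fractions of $S/(x)$, one has $\Tor^S_i(\kk,G)=0$ for all $i\ge1$, while the flat dimension of $G$ equals~$1$. For the same reason your base case (``closed fiber a field implies relative dimension~$0$'', i.e., $R$\+flat implies $S$\+flat for arbitrary modules) is not a standard fact; the classical criterion covers finitely generated modules only. Finally, what your relative-bar-complex framework would actually require is a proof that the relative homological dimension of $(S,R)$ is at most~$d$; this neither localizes in any evident way nor follows from the fiberwise global dimension bound, so the appeal to it is circular. (A smaller point: one cannot ``compose the relative estimate with a flat resolution of $N$ over~$R$,'' since such a resolution consists of $R$\+modules rather than $S$\+modules; the correct reduction is to take an $S$\+flat resolution of $N$ and observe that its $e$\+th syzygy is $R$\+flat, where $e$~is the $R$\+flat dimension of~$N$.)

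The paper's proof is built precisely to avoid every use of local criteria and finite generation. After reducing to an $R$\+flat $S$\+module $G$ (respectively, an $R$\+injective $K$), it proceeds by Noetherian induction on~$R$: if $R$ has zero-divisors $ab=0$, test modules are split into an $a$\+torsion part and a part killed by~$b$, and the question descends to $R/aR\rarrow S/aS$ and $R/bR\rarrow S/bS$; if $R$ is a domain with fraction field~$Q$, then $\Tor^S_{>d}(M,G)$ is $R$\+torsion because $Q\ot_RS$ is regular of global dimension~$\le d$, and it has no $a$\+torsion for any $a\ne0$ by the mod-$a$ d\'evissage of Lemma~\ref{distinguished-triangle-quotient-by-element} combined with the induction hypothesis applied to $R/aR\rarrow S/aS$; hence it vanishes. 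Part~(b) is additionally routed through Grothendieck's generic freeness (Lemma~\ref{generic-freeness}), part~(a), and colocalization at an element
(Lemmas~\ref{ext-colocalization-by-element-lemma}
and~\ref{colocalization-by-element-vanishing-lemma}), the finite generation being placed on the test module $M$ rather than on~$K$. If one wanted to salvage a localization-style argument, this is conceivable for part~(b) alone, using Bass's characterization of injective dimension over Noetherian rings via $\Ext$ against residue fields together with a Koszul resolution and adjunction; but no analogue of Bass's theorem exists for the flat dimension of non-finitely-generated modules, so part~(a)---on which the paper's proof of part~(b) also relies---cannot be obtained along your lines.
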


 For any commutative ring $R$, an element $a\in R$, and an $R$\+module
$M$, we denote by ${}_aM=\ker(M\overset a\rarrow M)$ the submodule
of all elements annihilated by~$a$ in~$M$.
 So ${}_aM$ is a module over the ring $R/aR$.
 
\begin{lem} \label{quotient-by-element}
 Let $R$ be a commutative ring and $a\in R$ be an element.
 Then \par
\textup{(a)} for every flat $R$\+module $F$, the $R/aR$\+module $F/aF$
is flat; \par
\textup{(b)} for every injective $R$\+module $J$, the $R/aR$\+module
${}_aJ$ is injective.
\end{lem}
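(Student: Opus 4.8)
The plan is to identify each construction as a standard base-change functor along the quotient homomorphism $R\rarrow R/aR$ and to conclude by adjunction; neither part requires anything beyond the familiar facts that base change preserves flatness and that coinduction preserves injectivity.

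For part~(a), I would first record the natural isomorphism of $R/aR$\+modules $F/aF\simeq R/aR\ot_R F$. The claim then becomes the assertion that base change preserves flatness. To see this, I would use the associativity isomorphism $M\ot_{R/aR}(R/aR\ot_R F)\simeq M\ot_R F$, which is natural in any $R/aR$\+module $M$. Since $F$ is flat over $R$, the right-hand side is an exact functor of $M$ on the category $R\modl$; restriction of scalars along $R\rarrow R/aR$ is exact and faithful, so exactness on $R\modl$ yields exactness on $(R/aR)\modl$, which is precisely flatness of $F/aF$ over $R/aR$.

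For part~(b), I would dually identify ${}_aJ\simeq\Hom_R(R/aR,J)$ as an $R/aR$\+module (an element of $J$ is annihilated by $a$ exactly when the corresponding homomorphism $R/aR\rarrow J$ is defined). The functor $\Hom_R(R/aR,{-})$ is right adjoint to the restriction of scalars functor $(R/aR)\modl\rarrow R\modl$, which is exact, and a right adjoint of an exact functor preserves injective objects. Concretely, the Hom-tensor adjunction supplies a natural isomorphism $\Hom_{R/aR}(M,\Hom_R(R/aR,J))\simeq\Hom_R(M,J)$ for every $R/aR$\+module $M$ (using $M\ot_{R/aR}(R/aR)\simeq M$), and the right-hand side is exact in $M$ because $J$ is injective over $R$; hence ${}_aJ$ is injective over $R/aR$.

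The main point requiring even a little care is the transfer of exactness between $R\modl$ and $(R/aR)\modl$ in both arguments, but this is immediate from exactness and faithfulness of restriction of scalars along the surjection $R\rarrow R/aR$. Accordingly I do not anticipate any genuine obstacle; the proof is a short application of the two adjunctions above, and it will be used in the inductive proof of Proposition~\ref{flat-injective-dimensions-direct-image} via dévissage along the fiber coordinates.
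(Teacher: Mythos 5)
Your proof is correct and follows essentially the same route as the paper: identify $F/aF\simeq R/aR\ot_RF$ as base change and ${}_aJ\simeq\Hom_R(R/aR,J)$ as coinduction along $R\rarrow R/aR$, then invoke preservation of flatness by base change and of injectivity by coinduction. The paper simply cites these two standard facts, whereas you additionally spell out their adjunction proofs; there is no substantive difference.
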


\begin{proof}
 For any ring homomorphism $R\rarrow T$, the functor $F\longmapsto
T\ot_RF$ takes flat $R$\+modules to flat $T$\+modules, and
the functor $J\longmapsto\Hom_R(T,J)$ takes injective $R$\+modules
to injective $T$\+modules.
 It remains to apply these observations to the ring homomorphism
$R\rarrow R/aR=T$ in order to deduce the assertions of the lemma.
\end{proof}

\begin{lem} \label{resolution-quotient-by-element}
 Let $R\rarrow S$ be a homomorphism of commutative rings such that
$S$ is a flat $R$\+module.
 Let $a\in R$ be an element.
 Then \par
\textup{(a)} for any flat resolution $F_\bu$ of
an $R$\+flat $S$\+module $G$, the complex $F_\bu/aF_\bu$ is
a flat resolution of an ($R/aR$\+flat) $S/aS$\+module $G/aG$; \par
\textup{(b)} for any injective resolution $J^\bu$ of
an $R$\+injective $S$\+module $K$, the complex ${}_aJ^\bu$ is
an injective resolution of an ($R/aR$\+injective)
$S/aS$\+module~${}_aK$.
\end{lem}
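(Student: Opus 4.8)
The plan is to reduce both parts to the vanishing of the appropriate derived functor over $R$, exploiting two transitivity observations that hold precisely because $S$ is a flat $R$-module: flat $S$-modules are flat over $R$, and injective $S$-modules are injective over $R$. Once these are in place, the homology of the reduced complexes is computed by $\Tor^R$ (respectively $\Ext_R$) against $R/aR$, which vanishes in positive degrees by the assumed $R$-flatness of $G$ (respectively $R$-injectivity of $K$). The remaining assertions — that the terms are flat (respectively injective) over $S/aS$, and that $G/aG$ and ${}_aK$ are flat (respectively injective) over $R/aR$ — will then follow from Lemma~\ref{quotient-by-element} applied to the rings $S$ and $R$, together with base change.

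For part~(a), first I would record the identification $F_\bu/aF_\bu\simeq F_\bu\ot_R R/aR\simeq F_\bu\ot_S S/aS$, using $S\ot_R R/aR\simeq S/aS$. Since each $F_i$ is a flat $S$-module and $S$ is a flat $R$-module, each $F_i$ is a flat $R$-module; hence $F_\bu$ is a flat resolution of $G$ \emph{over $R$}, and the homology of $F_\bu\ot_R R/aR$ computes $\Tor_i^R(G,R/aR)$. As $G$ is $R$-flat, one has $\Tor_i^R(G,R/aR)=0$ for $i>0$ and $\Tor_0^R(G,R/aR)=G/aG$, so $F_\bu/aF_\bu$ is a resolution of $G/aG$. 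That each term $F_i/aF_i\simeq F_i\ot_S S/aS$ is a flat $S/aS$-module is Lemma~\ref{quotient-by-element}(a) for the ring $S$ (equivalently, base change of flatness along $S\rarrow S/aS$), while $G/aG\simeq G\ot_R R/aR$ is $R/aR$-flat by base change along $R\rarrow R/aR$.

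Part~(b) is dual. Here I would invoke the adjunction $\Hom_R(M,J)\simeq\Hom_S(S\ot_R M,J)$ for an $S$-module $J$: since $S\ot_R{-}$ is exact (flatness of $S$ over $R$) and $\Hom_S({-},J)$ is exact for injective $J$, every injective $S$-module is injective over $R$. Thus $J^\bu$ is an injective resolution of $K$ over $R$, and the homology of ${}_aJ^\bu\simeq\Hom_R(R/aR,J^\bu)$ computes $\Ext_R^i(R/aR,K)$. Because $K$ is $R$-injective, this vanishes for $i>0$ and equals $\Hom_R(R/aR,K)={}_aK$ for $i=0$, so ${}_aJ^\bu$ is a resolution of ${}_aK$. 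Each term ${}_aJ^i\simeq\Hom_S(S/aS,J^i)$ is an injective $S/aS$-module by Lemma~\ref{quotient-by-element}(b) for the ring $S$, and ${}_aK$ is $R/aR$-injective by the same lemma for the ring $R$.

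The only genuinely delicate points are the two transitivity facts — that flatness and injectivity over $S$ descend to flatness and injectivity over $R$ — since without them one could not identify the homology of the reduced complexes with a derived functor over $R$ and then apply the hypotheses on $G$ and $K$. Both are standard consequences of $S$ being $R$-flat, so I expect no real obstacle here; the rest is bookkeeping with base change and the already-established Lemma~\ref{quotient-by-element}.
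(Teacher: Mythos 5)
Your proof is correct and follows essentially the same route as the paper's: use flatness of $S$ over $R$ to see that the given resolution is also a flat (resp.\ injective) resolution over $R$, observe that the reduced complex then computes $\Tor^R_*(G,R/aR)$ (resp.\ $\Ext_R^*(R/aR,K)$), which is concentrated in degree~$0$ by the hypothesis on $G$ (resp.~$K$), and finish with Lemma~\ref{quotient-by-element}. The only difference is that the paper spells out part~(b) and leaves part~(a) implicit, whereas you treat both.
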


\begin{proof}
 Let us prove part~(b).
 Since $S$ is a flat $R$\+module, the underlying $R$\+module of any
injective $S$\+module is injective.
 So, viewed as a complex of $R$\+modules, $J^\bu$ is an injective
resolution of an injective $R$\+module~$K$.
 Hence the complex ${}_aJ^\bu=\Hom_R(R/aR,J^\bu)$ is exact, i.~e.,
it is a resolution of the $R$\+module~${}_aK$.
 The $R/aR$\+module ${}_aK$ is injective by
Lemma~\ref{quotient-by-element}(b), and the terms of the complex
${}_aJ^\bu$ are injective $S/aS$\+modules by the same lemma.
\end{proof}

\begin{lem} \label{distinguished-triangle-quotient-by-element}
 Let $S$ be a commutative ring and $a\in S$ be a nonzero-dividing
(regular) element. \par
\textup{(a)} Let $M$ and $N$ be two $S$\+modules for which the maps
$M\overset a\rarrow M$ and $N\overset a\rarrow N$ are injective.
 Then there is a distinguished triangle
$$
 M\ot_\S^\boL N\overset a\lrarrow M\ot_S^\boL N\lrarrow
 M/aM\ot_{S/aS}^\boL N/aN\lrarrow M\ot_S^\boL N[1]
$$
in the derived category of $S$\+modules. \par
\textup{(b)} Let $M$ and $K$ be two $S$\+modules such that
the map $M\overset a\rarrow M$ is injective and the map
$K\overset a\rarrow K$ is surjective.
 Then there is a distinguished triangle
\begin{multline*}
 \boR\Hom_S(M,K)[-1]\lrarrow\boR\Hom_{S/aS}(M/aM,\,{}_aK) \\
 \lrarrow\boR\Hom_S(M,K)\overset a\lrarrow\boR\Hom_S(M,K)
\end{multline*}
in the derived category of $S$\+modules.
\end{lem}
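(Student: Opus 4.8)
The plan is to prove both parts by resolving $S/aS$ through the two-term Koszul complex $[\,S\overset a\rarrow S\,]$ afforded by the hypothesis that $a$ is a nonzerodivisor in $S$, and then tensoring (for part~(a)) or applying $\Hom$ (for part~(b)) against homotopy-flat, respectively homotopy-injective, resolutions of the modules involved. In both cases the displayed triangle will come from a short exact sequence of complexes, and the work lies in identifying the third vertex.

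For part~(a), first I would choose homotopy-flat resolutions $P\overset\sim\rarrow M$ and $Q\overset\sim\rarrow N$ over $S$, so that $P\ot_S Q$ represents $M\ot_S^\boL N$. Because $a$ is a nonzerodivisor in $S$, tensoring the exact sequence $0\rarrow S\overset a\rarrow S\rarrow S/aS\rarrow0$ with the complex of flat modules $P\ot_S Q$ keeps the multiplication map degreewise injective, giving a short exact sequence of complexes $0\rarrow P\ot_S Q\overset a\rarrow P\ot_S Q\rarrow S/aS\ot_S(P\ot_S Q)\rarrow0$, hence a distinguished triangle $M\ot_S^\boL N\overset a\rarrow M\ot_S^\boL N\rarrow S/aS\ot_S^\boL(M\ot_S^\boL N)\rarrow M\ot_S^\boL N[1]$. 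It then remains to identify the third term with $M/aM\ot_{S/aS}^\boL N/aN$. Here I would invoke the chain-level natural isomorphism $S/aS\ot_S(P\ot_S Q)\cong(P/aP)\ot_{S/aS}(Q/aQ)$, the preservation of homotopy-flatness under the base change $S\to S/aS$, and the quasi-isomorphisms $P/aP\simeq M/aM$ and $Q/aQ\simeq N/aN$ in the derived category of $S/aS$-modules. These last two are exactly where the injectivity of $a$ on $M$ and on $N$ is used, since they force $\Tor^S_{>0}(S/aS,M)=0=\Tor^S_{>0}(S/aS,N)$.

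For part~(b), I would run the dual argument. The injectivity of $a$ on $M$ gives $0\rarrow M\overset a\rarrow M\rarrow M/aM\rarrow0$; choosing a homotopy-injective resolution $K\overset\sim\rarrow I$ over $S$ and applying $\Hom_S(-,I)$ yields a short exact sequence of complexes $0\rarrow\Hom_S(M/aM,I)\rarrow\Hom_S(M,I)\overset a\rarrow\Hom_S(M,I)\rarrow0$, and hence a distinguished triangle $\boR\Hom_S(M,K)[-1]\rarrow\boR\Hom_S(M/aM,K)\rarrow\boR\Hom_S(M,K)\overset a\rarrow\boR\Hom_S(M,K)$. Then I would identify $\boR\Hom_S(M/aM,K)$ with $\boR\Hom_{S/aS}(M/aM,{}_aK)$ through the derived restriction--coinduction adjunction $\boR\Hom_S(M/aM,K)\simeq\boR\Hom_{S/aS}(M/aM,\boR\Hom_S(S/aS,K))$, the derived form of the module-level adjunction $\Hom_S(L,K)\cong\Hom_{S/aS}(L,\Hom_S(S/aS,K))$ for $S/aS$-modules $L$ (justified by noting that $\Hom_S(S/aS,I)$ is homotopy-injective over $S/aS$). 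Finally, computing $\boR\Hom_S(S/aS,K)$ through the Koszul resolution $[\,S\overset a\rarrow S\,]$ produces $[\,K\overset a\rarrow K\,]$, whose degree-one cohomology $K/aK$ vanishes precisely because $a$ acts surjectively on $K$; this leaves $\boR\Hom_S(S/aS,K)\simeq{}_aK$ in degree $0$ and completes the identification. (Note that the surjectivity of $a$ on $K$, together with $a$ a nonzerodivisor in $S$, is what makes $\boR\Hom_S(S/aS,K)$ collapse to an honest module.)

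The hard part will be bookkeeping rather than conceptual: ensuring that, after replacing the third vertices by the isomorphic objects $M/aM\ot_{S/aS}^\boL N/aN$ and $\boR\Hom_{S/aS}(M/aM,{}_aK)$, the remaining maps of the triangle are literally the displayed multiplication-by-$a$ and connecting morphisms, and keeping straight which hypothesis does what. I would stress that the collapse of the derived objects $S/aS\ot_S^\boL M$ and $\boR\Hom_S(S/aS,K)$ to the modules $M/aM$ and ${}_aK$ is exactly what the injectivity (resp.\ surjectivity) assumptions provide; without them the third vertices would be genuine two-term complexes and the stated triangles would fail. All the natural isomorphisms used---chain-level monoidality of base change, the tensor--Hom and restriction--coinduction adjunctions, and the stability of homotopy-flatness and homotopy-injectivity---are standard and require no finiteness or boundedness assumptions, so no hypotheses beyond those stated are needed.
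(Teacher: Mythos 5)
Your proposal is correct and takes essentially the same approach as the paper: in both, the triangle arises by applying $\boR\Hom_S({-},K)$ (resp.\ the derived tensor product) to the triangle $M\overset a\rarrow M\rarrow M/aM\rarrow M[1]$, and the third vertex is identified via the change-of-rings adjunction together with the collapse $\boR\Hom_S(S/aS,K)\simeq{}_aK$ (resp.\ $S/aS\ot_S^\boL M\simeq M/aM$, \ $S/aS\ot_S^\boL N\simeq N/aN$) forced by the hypotheses on~$a$. The only differences are cosmetic: the paper writes out only part~(b), working with an explicit injective resolution $J^\bu$ of $K$ and the underived adjunction $\Hom_S(M/aM,J^\bu)\simeq\Hom_{S/aS}(M/aM,\,{}_aJ^\bu)$ after checking that ${}_aJ^\bu$ is an injective resolution of ${}_aK$ over $S/aS$, which is exactly the content of your derived-adjunction-plus-Koszul argument.
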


\begin{proof}
 Let us prove part~(b).
 Firstly, there is a distinguished triangle $M\overset a\rarrow M
\rarrow M/aM\rarrow M[1]$ in $\sD(S\modl)$.
 Applying $\boR\Hom_S({-},K)$, we obtain
\begin{multline*}
 \boR\Hom_S(M,K)[-1]\lrarrow\boR\Hom_S(M/aM,K) \\
 \lrarrow\boR\Hom_S(M,K)\overset a\lrarrow\boR\Hom_S(M,K).
\end{multline*}
 It remains to construct an isomorphism
$$
 \boR\Hom_S(M/aM,K)\simeq
 \boR\Hom_{S/aS}(M/aM,\,{}_aK)
$$
in $\sD(S\modl)$.
 For this purpose, choose an injective resolution $J^\bu$ of
the $S$\+module~$K$.
 Notice that, for any injective $S$\+module $J$, the map
$J\overset a\rarrow J$ is surjective, because it can be obtained
by applying the functor $\Hom_S({-},J)$ to an injective
$S$\+module morphism $S\overset a\rarrow S$.
 So $J^\bu$ is a resolution of an $a$\+divisible $S$\+module $K$
by $a$\+divisible $S$\+modules.
 It follows that the complex ${}_aJ^\bu$ is exact, i.~e., it is
a resolution of the $S$\+module~${}_aK$.
 By Lemma~\ref{quotient-by-element}(b), \,${}_aJ^\bu$ is an injective
resolution of the $S/aS$\+module~${}_aK$.
 Finally, we use the isomorphism of complexes of $S$\+modules
(in fact, $S/aS$\+modules) $\Hom_S(M/aM,J^\bu)\simeq
\Hom_{S/aS}(M/aM,\,{}_aJ^\bu)$.
\end{proof}

\begin{proof}[Proof of
Proposition~\ref{flat-injective-dimensions-direct-image}(a)]
 A result bearing some similarity with, but still quite different
from our assertion can be found in~\cite[Lemma~2.7]{AFH}.

 The question is local in $X$, so it reduces to affine schemes,
for which it means the following.
 Let $R\rarrow S$ be a homomorphism of commutative rings such that
the ring $R$ is Noetherian, the $R$\+module $S$ is flat, and for every
prime ideal $\p\subset R$, the fiber ring $\kappa_R(\p)\ot_RS$ is
Noetherian and regular of Krull dimension~$\le d$.
 Here $\kappa_R(\p)=R_\p/R_\p\p$ denotes the residue field of
the prime ideal~$\p$ in~$R$.
 Then, for any $S$\+module $N$, the flat dimension of the $S$\+module
$N$ does not exceed $d$~plus the flat dimension of the $R$\+module~$N$.

 Since $S$ is a flat $R$\+module, all flat $S$\+modules are also flat
as $R$\+modules.
 Let $F_\bu$ be a flat resolution of the $S$\+module~$N$; for every
$n\ge0$, denote by $\Omega^nN$ the cokernel of the $S$\+module morphism
$F_{n+1}\rarrow F_n$ (so, in particular, $\Omega^0N=N$).
 Suppose that the flat dimension~$e$ of the $R$\+module $N$ is finite.
 Then $G=\Omega^eN$ is a flat $R$\+module.
 It remains to show that the flat dimension of the $S$\+module $G$
does not exceed~$d$; then it will follow that $\Omega^dG=
\Omega^{d+e}N$ is a flat $S$\+module, so the flat dimension of
the $S$\+module $N$ does not exceed $d+e$.
 Thus, in order to prove the desired assertion, it suffices to consider
the case of an $R$\+flat $S$\+module~$G$.

 The argument proceeds by Noetherian induction in the ring~$R$.
 So we assume that the desired assertion holds of all the quotient
rings of $R$ by nonzero ideals.
 Let $G$ be an $R$\+flat $S$\+module.
 We consider two cases separately.

 Case~I. Suppose that $R$ has zero-divisors.
 Let $a$ and $b\in R$ be a pair of nonzero elements for which $ab=0$.

 By Lemma~\ref{quotient-by-element}(a), \,$G/aG$ is an $R/aR$\+flat
$S/aS$\+module and $G/bG$ is an $R/bR$\+flat $S/bS$\+module.
 Furthermore, the ring $S/aS$ is a flat $R/aR$\+module, and it is
clear that the fiber rings of the ring homomorphism $R/aR\rarrow S/aS$
are regular of Krull dimension~$\le d$ (and similarly for
$R/bR\rarrow S/bS$).
 By the assumption of Noetherian induction, the flat dimensions of
the $S/aS$\+module $G/aG$ and the $S/bS$\+module $G/bG$ do not
exceed~$d$.

 We have to show that $\Tor^S_{d+1}(M,G)=0$ for all $S$\+modules~$M$.
 Consider the short exact sequence of $S$\+modules
$0\rarrow aM\rarrow M\rarrow M/aM\rarrow 0$.
 The $R$\+module $aM$ is annihilated by~$b$ and the $R$\+module
$M/aM$ is annihilated by~$a$.
 So the problem reduces to $R$\+modules $M$ for which either $aM=0$
or $bM=0$.

 Suppose that $aM=0$.
 Let $F_\bu$ be a flat resolution of the $S$\+module~$G$;
then, by Lemma~\ref{resolution-quotient-by-element}(a), \,$F_\bu/aF_\bu$
is a flat resolution of the $S/aS$\+module~$G/aG$.
 Hence $\Tor^S_i(M,G)\simeq\Tor^{S/aS}_i(M,G/aG)=0$ for $i>d$,
as desired.

 Case~II. Suppose that $R$ is an integral domain, and denote by $Q$
the field of fractions of~$R$.
 For any $S$\+module $M$, we have $Q\ot_R\Tor^S_i(M,G)\simeq
\Tor^{Q\ot_RS}_i(Q\ot_RM,\>Q\ot_RG)$ for all $i\ge0$.
 By assumption, $Q\ot_RS$ is a regular Noetherian ring of Krull
dimension~$\le d$, so the global dimension of $Q\ot_RS$ does not
exceed~$d$.
 Hence $Q\ot_R\Tor^S_i(M,G)=0$ for all $i>d$, and it follows that
$\Tor^S_i(M,G)$ is a torsion $R$\+module.

 Let $a\in R$ be a nonzero element.
 In order to show that $\Tor^S_i(M,G)=0$ for $i>d$, it suffices to
prove that there are no nonzero elements annihilated by~$a$ in
$\Tor^S_i(M,G)$.

 Let $0\rarrow\Omega M\rarrow F\rarrow M\rarrow0$ be a short exact
sequence of $S$\+modules with a flat $S$\+module~$F$.
 If $d\ge1$, then we have $\Tor^S_{d+1}(M,G)\simeq
\Tor^S_d(\Omega M,G)$.
 When $d=0$, the $S$\+module $\Tor^S_1(M,G)$ is the kernel of
the morphism $\Omega M\ot_SG\rarrow F\ot_SG$.
 Since $F$ is a flat $S$\+module and $G$ is a flat $R$\+module,
the $R$\+module $F\ot_SG$ is flat; in particular, it contains no
nonzero elements annihilated by~$a$.
 So the submodules of elements annihilated by~$a$ in
the $S$\+modules $\Tor^S_1(M,G)$ and $\Omega M\ot_SG$ are
naturally isomorphic.
 In both cases, it remains to show that there are no nonzero elements
annihilated by~$a$ in $\Tor^S_d(\Omega M,G)$.

 Both the $R$\+modules $\Omega M$ and $G$ contain no nonzero elements
annihilated by~$a$ (since $\Omega M$ is a submodule in a flat
$R$\+module~$F$).
 The same applies to the $R$\+module~$S$.
 By Lemma~\ref{distinguished-triangle-quotient-by-element}(a),
we have a distinguished triangle in $\sD(S\modl)$
$$
 \Omega M\ot_S^\boL G\overset a\lrarrow\Omega M\ot_S^\boL G
 \lrarrow(\Omega M/a\Omega M)\ot_{S/aS}^\boL G/aG\lrarrow
 \Omega M\ot_S^\boL G[1].
$$
 It follows from the related long exact sequence of cohomology
modules that $\Tor^{S/aS}_{d+1}(\Omega M/a\Omega M,G/aG)\ne0$
whenever there are any nonzero elements annihilated by~$a$ in
$\Tor^S_d(\Omega M,G)$.

 Finally, similarly to Case~I, \,$G/aG$ is an $R/aR$\+flat
$S/aS$\+module, the ring $S/aS$ is a flat $R/aR$\+module, and
the fiber rings of the ring homomorphism $R/aR\rarrow S/aS$
are regular of Krull dimension~$\le d$.
 By the assumption of Noetherian induction, the flat dimension
of the $S/aS$\+module $G/aG$ does not exceed~$d$, hence
$\Tor^{S/aS}_{d+1}(\Omega M/a\Omega M,G/aG)=0$ and we are done.
\end{proof}

 For any element $a$ in a ring $R$, we denote by $R[a^{-1}]$
the localization of the ring $R$ at the multiplicative subset
$\{1,a,a^2,a^3,\dotsc\}\subset R$.
 For any $R$\+module $E$, we put $E[a^{-1}]=R[a^{-1}]\ot_RE$.

\begin{lem}[Grothendieck's generic freeness] \label{generic-freeness}
 Let $R$ be a Noetherian commutative integral domain, $S$ be a finitely
generated commutative $R$\+algebra, and $M$ be a finitely generated
$S$\+module.
 Then there exists a nonzero element $a\in R$ such that $M[a^{-1}]$ is
a free $R[a^{-1}]$\+module.
\end{lem}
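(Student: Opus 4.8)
The plan is to prove the lemma by reducing to the case of a polynomial algebra and then inducting on the number of variables. First I would observe that, since $M$ is a finitely generated module over the finitely generated $R$-algebra $S = R[x_1,\dots,x_n]/I$, it is also finitely generated as a module over the polynomial ring $P = R[x_1,\dots,x_n]$ via the surjection $P \twoheadrightarrow S$; freeness over $R[a^{-1}]$ is insensitive to this change, so one may assume $S = P$. It then suffices to establish the following claim $C(n)$ by induction on $n$: for every Noetherian commutative integral domain $R$ and every finitely generated $R[x_1,\dots,x_n]$-module $M$, there is a nonzero $a \in R$ with $M[a^{-1}]$ free over $R[a^{-1}]$.

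The first reduction inside the inductive step uses d\'evissage. Any finitely generated module over the Noetherian ring $P$ admits a finite filtration $0 = M_0 \subset M_1 \subset \dots \subset M_k = M$ with subquotients $M_i/M_{i-1} \cong P/\mathfrak{p}_i$ for prime ideals $\mathfrak{p}_i \subset P$. If a single nonzero $a \in R$ makes each $(P/\mathfrak{p}_i)[a^{-1}]$ free over $R[a^{-1}]$, then, since localization is exact and a free module is projective, each localized short exact sequence $0 \to (M_{i-1})[a^{-1}] \to (M_i)[a^{-1}] \to (M_i/M_{i-1})[a^{-1}] \to 0$ splits, so $M[a^{-1}]$ is free by induction on $i$. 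Taking $a$ to be the product of elements handling the finitely many $\mathfrak{p}_i$, this reduces $C(n)$ to the case $M = P/\mathfrak{p} =: T$, a domain of finite type over $R$. The base case $n = 0$ is then immediate from the same d\'evissage: a finitely generated $R$-module reduces to quotients $R/\mathfrak{q}$, which are either $R$ itself (free) or annihilated after inverting a nonzero element of $\mathfrak{q}$.

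Next I would split into cases according to the contraction $\mathfrak{p} \cap R$. If $\mathfrak{p} \cap R$ contains a nonzero element $a$, then $a = 0$ in $T$ while $a$ becomes invertible in $T[a^{-1}]$, forcing $T[a^{-1}] = 0$, which is free. If $\mathfrak{p} \cap R = 0$, then $R \hookrightarrow T$; when moreover $\mathfrak{p} = 0$ one has $T = P$, already free over $R$, so $a = 1$ works. The remaining, and main, case is $\mathfrak{p} \neq 0$ with $\mathfrak{p} \cap R = 0$. Writing $Q$ for the fraction field of $R$, one has $Q \otimes_R T \cong Q[x_1,\dots,x_n]/\mathfrak{p}\,Q[x]$ with $\mathfrak{p}\,Q[x]$ a nonzero prime, so that $d := \dim(Q \otimes_R T)$ satisfies $d < n$. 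By Noether normalization over $Q$ there are elements $y_1,\dots,y_d \in T$ whose images are algebraically independent over $Q$ and over which $Q \otimes_R T$ is finite; clearing the denominators in the finitely many integral equations for a generating set then yields a nonzero $a \in R$ such that $T[a^{-1}]$ is a finitely generated module over the genuine polynomial ring $R[a^{-1}][y_1,\dots,y_d]$.

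The crux is to invoke the inductive hypothesis at this point. Since $d < n$, the claim $C(d)$ applies to the Noetherian domain $R[a^{-1}]$ and the finitely generated $R[a^{-1}][y_1,\dots,y_d]$-module $T[a^{-1}]$, producing a nonzero element of $R[a^{-1}]$ and hence, after clearing its denominator, a nonzero $c \in R$ with $T[c^{-1}]$ free over $R[c^{-1}]$, which closes the induction. I expect the main obstacle to be organizing this injective-domain case so that the recursion is manifestly well-founded: the essential points are the strict inequality $d < n$ (which rests on $\mathfrak{p}\,Q[x] \neq 0$, i.e.\ that a nonzero element of $\mathfrak{p}$ survives localization at $R \setminus \{0\}$ because $P$ is a domain) and the "generic finiteness" step, where one must verify that finitely many denominators drawn from $R$ suffice to make $T[a^{-1}]$ finite over the polynomial subring. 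Both are routine once the bookkeeping is arranged, but they carry the real content of the argument.
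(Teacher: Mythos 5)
Your proof is correct, but there is nothing in the paper to compare it against line by line: the paper's ``proof'' of Lemma~\ref{generic-freeness} is just the citation to \cite[Lemme~6.9.2]{Gr} and \cite[Theorem~24.1]{Mats}. What you have written is, in substance, the classical argument underlying those references: reduction to the polynomial algebra $P=R[x_1,\dotsc,x_n]$, d\'evissage of a finitely generated $P$\+module into cyclic subquotients $P/\mathfrak{p}$ (with the splitting trick that a localized extension with free quotient splits, and the observation that a further localization of a free module stays free, so one element~$a$ serves all the finitely many primes at once), the trichotomy according to $\mathfrak{p}\cap R$, Noether normalization over $Q=\operatorname{Frac}(R)$ combined with spreading out, and strong induction on~$n$ made possible by the strict drop $d<n$. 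All the individual steps check out: the dimension drop rests, as you note, on $\mathfrak{p}Q[x]$ being a nonzero prime (primes of $P$ disjoint from $R\setminus\{0\}$ extend to primes of $Q[x]$, and nonzero elements of $\mathfrak{p}$ survive because $P$ is a domain); and the two clearing-of-denominators steps (moving the normalizing elements $y_i$ into the image of $T$, and spreading out the finiteness of $Q\otimes_RT$ over $Q[y_1,\dotsc,y_d]$ to finiteness of $T[a^{-1}]$ over $R[a^{-1}][y_1,\dotsc,y_d]$) are routine, though both of them, as well as the transfer of the module relations from $Q\otimes_RT$ back to $T[a^{-1}]$, implicitly use the injectivity of $T[a^{-1}]\rarrow Q\otimes_RT$, which holds precisely because $\mathfrak{p}\cap R=0$ makes the nonzero elements of $R$ act by nonzerodivisors on the domain~$T$. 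The net effect of your proposal is to make the lemma self-contained where the paper delegates it to the literature; this is a reasonable trade: the paper buys brevity, you buy independence from EGA~IV and Matsumura at the cost of about a page of standard commutative algebra.
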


\begin{proof}
 This is~\cite[Lemme~6.9.2]{Gr} or~\cite[Theorem~24.1]{Mats}.
\end{proof}

\begin{lem} \label{ext-colocalization-by-element-lemma}
 Let $S$ be a commutative ring and $a\in S$ be an element.
 Let $M$ and $K$ be $S$\+modules such that\/ $\Ext^1_S(S[a^{-1}],K)=0$.
 Then for every $i\ge0$ there is a natural surjective $S$\+module map
$$
 \Ext_{S[a^{-1}]}^i(M[a^{-1}],\Hom_S(S[a^{-1}],K))\lrarrow
 \Hom_S(S[a^{-1}],\Ext_S^i(M,K)).
$$
\end{lem}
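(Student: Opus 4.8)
The plan is to factor the desired map as a natural isomorphism followed by a natural surjection, exploiting two standard facts about the localization $T=S[a^{-1}]$ regarded as an $S$\+algebra: that it has projective dimension $\le1$ over $S$, and that both $T$ and $M[a^{-1}]=T\ot_SM$ arise as telescopes of multiplication by~$a$.

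First I would carry out a \emph{colocalization step}. The coextension-of-scalars functor $\Hom_S(T,{-})$ is right adjoint to the exact restriction functor along $S\rarrow T$, hence it takes injective $S$\+modules to injective $T$\+modules. Choosing an injective resolution $K\rarrow J^\bu$ in $S\modl$, the complex $\Hom_S(T,J^\bu)$ is thus a complex of injective $T$\+modules whose cohomology modules are $\Ext^i_S(T,K)$. The telescope presentation $0\rarrow\bigoplus_{n\ge0}S\rarrow\bigoplus_{n\ge0}S\rarrow T\rarrow0$ exhibits $T$ as an $S$\+module of projective dimension $\le1$, so $\Ext^j_S(T,K)=0$ for all $j\ge2$; combined with the hypothesis $\Ext^1_S(T,K)=0$, this shows that $\Hom_S(T,J^\bu)$ is a genuine injective resolution of the $T$\+module $\Hom_S(T,K)$. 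Applying the adjunction isomorphism $\Hom_T(M[a^{-1}],\Hom_S(T,J))\simeq\Hom_S(M[a^{-1}],J)$ termwise, I would conclude that
$$\Ext^i_T(M[a^{-1}],\Hom_S(T,K))\,\simeq\,\Ext^i_S(M[a^{-1}],K)$$
naturally in $M$ and~$K$.

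Next comes the \emph{Milnor exact sequence step}. Writing $M[a^{-1}]=\varinjlim(M\overset a\rarrow M\overset a\rarrow\cdots)$, I would apply $\boR\Hom_S({-},K)$ to the telescope short exact sequence $0\rarrow\bigoplus_{n\ge0}M\rarrow\bigoplus_{n\ge0}M\rarrow M[a^{-1}]\rarrow0$ and use $\Ext^i_S(\bigoplus_nM,K)=\prod_n\Ext^i_S(M,K)$ to extract
$$0\rarrow\varprojlim\nolimits^1_n\Ext^{i-1}_S(M,K)\rarrow\Ext^i_S(M[a^{-1}],K)\rarrow\varprojlim\nolimits_n\Ext^i_S(M,K)\rarrow0,$$
where the inverse systems have transition maps given by multiplication by~$a$. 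Finally I would identify $\varprojlim_n\Ext^i_S(M,K)$ with $\Hom_S(S[a^{-1}],\Ext^i_S(M,K))$ — indeed the latter is $\Hom_S(\varinjlim(S\overset a\rarrow S\overset a\rarrow\cdots),\Ext^i_S(M,K))=\varprojlim_n\Ext^i_S(M,K)$ — and compose the resulting natural surjection with the isomorphism of the colocalization step.

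The routine verifications (the two telescope presentations, the $\varprojlim^1$ sequence, and the transition-map bookkeeping in the identification of the inverse limit) are all standard. The one place that genuinely uses the hypothesis, and which I regard as the crux, is the colocalization step: the vanishing $\Ext^1_S(T,K)=0$, together with the automatic vanishing in degrees $\ge2$, is exactly what makes $\Hom_S(T,J^\bu)$ an injective $T$\+resolution and collapses what would otherwise be a Grothendieck spectral sequence $\Ext^p_T(M[a^{-1}],\Ext^q_S(T,K))\Rightarrow\Ext^{p+q}_S(M[a^{-1}],K)$ into the clean isomorphism above. The only other point requiring care is to keep both maps natural in $M$ and $K$, so that the composite is a natural transformation.
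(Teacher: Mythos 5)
Your proof is correct and takes essentially the same route as the paper's own argument, just written out with explicit resolutions instead of derived-category language. Your colocalization step is the paper's combination of $\boR\Hom_S(S[a^{-1}],K)=\Hom_S(S[a^{-1}],K)$ (forced by $\operatorname{pd}_S S[a^{-1}]\le1$ and the hypothesis) with the adjunction isomorphism $\boR\Hom_{S[a^{-1}]}(M[a^{-1}],\boR\Hom_S(S[a^{-1}],K))\simeq\boR\Hom_S(S[a^{-1}],\boR\Hom_S(M,K))$, and your telescope/Milnor $\varprojlim^1$ sequence is exactly the paper's universal-coefficients short exact sequence for the homological-dimension-$\le1$ functor $\boR\Hom_S(S[a^{-1}],{-})$, since $\Hom_S(S[a^{-1}],E)=\varprojlim_n E$ and $\Ext^1_S(S[a^{-1}],E)={\varprojlim}^1_n E$ for the inverse system with transition maps given by multiplication by~$a$.
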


\begin{proof}
 The key observation is that the projective dimension of
the $S$\+module $S[a^{-1}]$ cannot exceed~$1$
(see~\cite[proof of Lemma~2.1]{Pcta} or~\cite[Lemma~1.9]{PMat}).
 Therefore, our assumption implies that $\boR\Hom_S(S[a^{-1}],K)
=\Hom_S(S[a^{-1}],K)$.
 Furthermore, one clearly has
$$
 \boR\Hom_{S[a^{-1}]}(M[a^{-1}],\boR\Hom_S(S[a^{-1}],K))
 \simeq\boR\Hom_S(S[a^{-1}],\boR\Hom_S(M,K)).
$$
 Finally, since $\boR\Hom_S(S[a^{-1}],{-})$ is a derived functor
of homological dimension~$\le1$, for any complex of $S$\+modules
$C^\bu$ and integer $i\in\boZ$ there is a natural short exact
sequence of $S$\+modules
\begin{multline*}
 0\lrarrow\Ext_S^1(S[a^{-1}],H^{i-1}(C^\bu)) \\ \lrarrow
 H^i\boR\Hom_S(S[a^{-1}],C^\bu)\lrarrow\Hom_S(S[a^{-1}],H^i(C^\bu))
 \lrarrow0.
\end{multline*}
 Taking $C^\bu=\boR\Hom_S(M,K)$ and combining these observations,
we obtain, in the situation at hand, a natural short exact sequence
of $S$\+modules
\begin{multline*}
 0\lrarrow\Ext_S^1(S[a^{-1}],\Ext_S^{i-1}(M,K)) \lrarrow
 \Ext_{S[a^{-1}]}^i(M[a^{-1}],\Hom_S(S[a^{-1}],K)) \\ \lrarrow
 \Hom_S(S[a^{-1}],\Ext_S^i(M,K))\lrarrow0.
\end{multline*}
\end{proof}

\begin{lem} \label{colocalization-by-element-vanishing-lemma}
 Let $S$ be a commutative ring, $a\in S$ be an element, and $E$ be
an $S$\+module.
 Suppose that the map $a\:E\rarrow E$ is surjective.
 Then the map\/ $\Hom_S(S[a^{-1}],E)\rarrow E$ induced by
the localization map $S\rarrow S[a^{-1}]$ is surjective.
 In particular, if\/ $\Hom_S(S[a^{-1}],E)=0$, then $E=0$.
\end{lem}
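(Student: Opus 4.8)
The plan is to make the $S$-module $S[a^{-1}]$ transparent by writing it as the direct limit of the tower
$$
 S \overset{a}\rarrow S \overset{a}\rarrow S \overset{a}\rarrow \dotsb,
$$
in which every transition map is multiplication by~$a$; concretely $S[a^{-1}] = \varinjlim_n S$, with the copy of $S$ in position~$n$ sending its generator $1$ to $a^{-n} \in S[a^{-1}]$. The localization map $S \rarrow S[a^{-1}]$ is then the structure map of this colimit out of the zeroth term.

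Applying $\Hom_S({-},E)$ and using that $\Hom$ carries a direct limit to an inverse limit, I would identify
$$
 \Hom_S(S[a^{-1}], E) \,\simeq\,
 \varprojlim\bigl(\dotsb \overset{a}\rarrow E \overset{a}\rarrow E
 \overset{a}\rarrow E\bigr),
$$
the inverse limit of the tower of multiplication-by-$a$ maps on~$E$; an element is a sequence $(e_n)_{n\ge0}$ in $E$ with $a e_{n+1} = e_n$ for all~$n$. Under this identification the map $\Hom_S(S[a^{-1}],E) \rarrow E$ induced by $S \rarrow S[a^{-1}]$ is precisely the projection $(e_n)_n \longmapsto e_0$ onto the zeroth term. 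Equivalently, and perhaps more directly, I would observe that giving an $S$-linear map $\phi\: S[a^{-1}] \rarrow E$ amounts to choosing the images $e_n = \phi(a^{-n})$ subject to the relations $a e_{n+1} = e_n$, and that $\phi(1) = e_0$.

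The one substantive step is the lifting. Given an arbitrary $e_0 \in E$, I would use surjectivity of $a\: E \rarrow E$ to choose inductively elements $e_{n+1}$ with $a e_{n+1} = e_n$, producing a compatible sequence $(e_n)_n$ and hence an $S$-homomorphism $\phi$ with $\phi(1) = e_0$. This exhibits $e_0$ in the image of the evaluation map, proving surjectivity. The final ``in particular'' assertion is then immediate: if $\Hom_S(S[a^{-1}],E) = 0$, its image in $E$ is zero, yet the map is surjective, whence $E = 0$.

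I expect no serious obstacle. The only points meriting a little care are the (dependent) choice used to build the infinite sequence $(e_n)$, which is harmless, and the remark that surjectivity onto the bottom term of an inverse limit needs only surjectivity of the transition maps, so that no Mittag-Leffler or $\varprojlim^1$ considerations intervene. One could also bypass the limit language entirely and simply construct $\phi$ directly as in the second description above.
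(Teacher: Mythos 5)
Your proposal is correct and is essentially the paper's own proof: the paper likewise takes an arbitrary $e_0=e\in E$, inductively chooses $e_n$ with $ae_n=e_{n-1}$ using surjectivity of $a\:E\rarrow E$, and observes that such a compatible sequence defines an $S$\+module morphism $S[a^{-1}]\rarrow E$ hitting~$e$. Your direct-limit/inverse-limit framing merely spells out explicitly why the sequence determines the homomorphism, which the paper leaves implicit.
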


\begin{proof}
 Given an element $e\in E$, put $e_0=e$, and for every $n\ge1$
choose an element $e_n\in E$ such that $ae_n=e_{n-1}$.
 Then the sequence of elements $e_0$, $e_1$, $e_2$,~\dots~$\in E$
defines the desired $S$\+module morphism $S[a^{-1}]\rarrow E$. 
\end{proof}

\begin{proof}[Proof of
Proposition~\ref{flat-injective-dimensions-direct-image}(b)]
 The question is local in $X$, so it reduces to affine schemes, for
which it means the following.
 Let $R\rarrow S$ be a homomorphism of Noetherian commutative rings
such that $S$ is a finitely generated $R$\+algebra, the $R$\+module
$S$ is flat, and and for every prime ideal $\p\subset R$, the fiber
ring $\kappa_R(\p)\ot_RS$ is regular of Krull dimension~$\le d$.
 Then for any $S$\+module $N$, the injective dimension of
the $S$\+module $N$ does not exceed $d$~plus the injective dimension
of the $R$\+module~$N$.

 Since $S$ is a flat $R$\+module, all injective $S$\+modules are
also injective as $R$\+modules.
 Arguing similarly to the proof of part~(a) above, one reduces
the question to the case of an $R$\+injective $S$\+module $K$,
for which one has to prove that its injective dimension as
an $S$\+module does not exceed~$d$.

 As in part~(a), the argument proceeds by Noetherian induction in~$R$
(so we assume that the desired assertion holds for all the quotient
rings of $R$ by nonzero ideals).

 Case~I. Suppose that $R$ has zero-divisors.
 Let $a$ and $b\in R$ be a pair of nonzero elements for which $ab=0$.
 By Lemma~\ref{quotient-by-element}(b), \,${}_aK$ is
an $R/aR$\+injective $S/aS$\+module and ${}_bK$ is
an $R/bR$\+injective $S/bS$\+module.
 By the assumption of Noetherian induction, the flat dimensions of
the $S/aS$\+module ${}_aK$ and the $S/bS$\+module ${}_bK$ do not
exceed~$d$.

 We have to show that $\Ext_S^{d+1}(M,K)=0$ for all $S$\+modules~$M$.
 Using the short exact sequence of $S$\+modules $0\rarrow aM\rarrow
M\rarrow M/aM\rarrow0$, the question is reduced to $R$\+modules $M$
for which either $aM=0$ or $bM=0$.

 Suppose that $aM=0$.
 Let $J^\bu$ be an injective resolution of the $S$\+module~$K$;
then, by Lemma~\ref{resolution-quotient-by-element}(b),
\,${}_aJ^\bu$ is an injective resolution of the $S/aS$\+module~${}_aK$.
 Hence $\Ext_S^i(M,K)\simeq\Ext_{S/aS}^i(M,{}_aK)=0$ for $i>d$,
as desired.

 Case~II. Suppose that $R$ is an integral domain.
 It suffices to show that $\Ext_S^{d+1}(M,K)=0$ for all finitely
generated $S$\+modules~$M$.
 By Lemma~\ref{generic-freeness}, there exists a nonzero element
$a\in R$ such that the $R[a^{-1}]$\+module $M[a^{-1}]$ is free.
 According to the assertion of part~(a), it follows that the flat
dimension of the $S[a^{-1}]$\+module $M[a^{-1}]$ does not
exceed~$d$.
 Since $S[a^{-1}]$ is a Noetherian ring and $M[a^{-1}]$ is
a finitely generated $S[a^{-1}]$\+module, the projective dimension of
the $S[a^{-1}]$\+module $M[a^{-1}]$ is equal to its flat dimension
(as all finitely presented flat modules are projective).
 Thus the projective dimension also does not exceed~$d$.

 We have shown that $\Ext_{S[a^{-1}]}^i(M[a^{-1}],L)=0$ for all
$S[a^{-1}]$\+modules $L$ and all $i>d$.
 Let us apply this observation to the $S[a^{-1}]$\+module
$L=\Hom_S(S[a^{-1}],K)$.
 Notice that $\Ext^1_S(S[a^{-1}],K)\simeq\Ext^1_R(R[a^{-1}],K)=0$,
since $K$ is an injective $R$\+module.
 Using Lemma~\ref{ext-colocalization-by-element-lemma}, we can conclude
from $\Ext_{S[a^{-1}]}^i(M[a^{-1}],\Hom_S(S[a^{-1}],K))=0$
that $\Hom_S(S[a^{-1}],\Ext_S^i(M,K))=0$ for $i>d$.

 In view of Lemma~\ref{colocalization-by-element-vanishing-lemma}, it
now suffices to show that the map $a\:\Ext_S^{d+1}(M,K)\rarrow
\Ext_S^{d+1}(M,K)$ is surjective, i.~e., the $S$\+module
$\Ext_S^{d+1}(M,K)$ is $a$\+divisible.
 From this point on, the argument again proceeds similarly (or rather,
dually) to the proof of part~(a).

 Let $0\rarrow\Omega M\rarrow P\rarrow M\rarrow0$ be a short exact
sequence of $S$\+modules with a projective $S$\+module~$P$.
 If $d\ge1$, then the we have $\Ext_S^{d+1}(M,K)\simeq
\Ext_S^d(\Omega M,K)$.
 When $d=0$, the $S$\+module $\Ext_S^1(M,K)$ is the cokernel of
the morphism $\Hom_S(P,K)\rarrow\Hom_S(\Omega M,K)$.
 Since $P$ is a projective $S$\+module and $K$ is an injective
$R$\+module, the $R$\+module $\Hom_S(P,K)$ is injective; in particular,
it is $a$\+divisible (as $a$~is a nonzero-divisor in~$R$).
 So the quotient modules of $\Ext_S^1(M,K)$ and $\Hom_S(\Omega M,K)$
by the action of~$a$ are naturally isomorphic; in particular,
the $S$\+module $\Ext_S^1(M,K)$ is $a$\+divisible if and only if
the $S$\+module $\Hom_S(\Omega M,K)$ is.
 Thus, in both cases $d\ge1$ or $d=0$, it remains to show that
the $S$\+module $\Ext_S^d(\Omega M,K)$ is $a$\+divisible.
{\hbadness=1225\par}

 Both the $R$\+modules $S$ and $\Omega M$ contain no nonzero elements
annihilated by~$a$ (since $S$ is a flat $R$\+module and $\Omega M$
is a submodule of a projective $S$\+module), while the $R$\+module $K$
is $a$\+divisible (since it is injective).
 By Lemma~\ref{distinguished-triangle-quotient-by-element}(b),
we have a distinguished triangle in $\sD(S\modl)$
\begin{multline*}
 \boR\Hom_S(\Omega M,K)[-1]\lrarrow
 \boR\Hom_{S/aS}(\Omega M/a\Omega M,\,{}_aK) \\
 \lrarrow\boR\Hom_S(\Omega M,K)\overset a\lrarrow
 \boR\Hom_S(\Omega M,K).
\end{multline*}
 It follows from the related long exact sequence of cohomology modules
that $\Ext_{S/aS}^{d+1}(\Omega M/a\Omega M,\,{}_aK)\ne0$ whenever
the map $a\:\Ext_S^d(\Omega M,K)\rarrow\Ext_S^d(\Omega M,K)$ is not
surjective.

 Finally, similarly to the proof of part~(a), the assumption of
Noetherian induction is applicable to the ring homomorphism
$R/aR\rarrow S/aS$ and the $S/aS$\+module ${}_aK$, which is
$R/aR$\+injective by Lemma~\ref{quotient-by-element}(b).
 Hence $\Ext_{S/aS}^{d+1}(\Omega M/a\Omega M,\,{}_aK)=0$ and
we are done.
\end{proof}

\subsection{Preservation of the derived category of pro-sheaves}
\label{weakly-smooth-derived-pro-sheaves-preservation-subsecn}
 Let $\sE$ be an exact category.
 Assume that the additive category $\sE$ is \emph{weakly idempotent
complete}, i.~e., it contains the kernels of its split epimorphisms,
or equivalently, the cokernels of its split monomorphisms.
 A full subcategory $\sF\subset\sE$ is said to be \emph{resolving}
if the following conditions are satisfied:
\begin{enumerate}
\renewcommand{\theenumi}{\roman{enumi}}
\item $\sF$ is closed under extensions in $\sE$, i.~e., for
any admissible short exact sequence $0\rarrow E'\rarrow E\rarrow E''
\rarrow0$ in $\sE$ with $E'$, $E''\in\sF$ one has $E\in\sF$;
\item $\sF$ is closed under the kernels of admissible epimorphisms
in $\sE$, i.~e., for any admissible short exact sequence
$0\rarrow E'\rarrow E\rarrow E''\rarrow0$ in $\sE$ with $E$,
$E''\in\sF$ one has $E'\in\sF$;
\item for any object $E\in\sE$ there exists an object $F\in\sF$
together with an admissible epimorphism $F\rarrow E$ in~$\sE$.
\end{enumerate}

 Let $\sF\subset\sE$ be a resolving subcategory and $d\ge0$ be
an integer.
 One says that the \emph{$\sF$\+resolution dimension} of an object
$E\in\sE$ does not exceed~$d$ if there exists an exact sequence
$0\rarrow F_d\rarrow F_{d-1}\rarrow\dotsb\rarrow F_0\rarrow E\rarrow0$
in $\sE$ with $F_i\in\sF$.
 According to~\cite[Proposition~2.3(1)]{Sto0}
or~\cite[Corollary~A.5.2]{Pcosh}, the $\sF$\+resolution dimension
of an object $E\in\sE$ does not depend on the choice of a resolution.

 Since a resolving subcategory $\sF\subset\sE$ is closed under
extensions by~(i), it inherits an exact category structure from
the ambient exact category~$\sF$.

\begin{prop} \label{unbounded-derived-finite-resol-dim-prop}
 Let\/ $\sE$ be a weakly idempotent-complete exact category and\/
$\sF\subset\sE$ be a resolving subcategory such that, for a certain
finite integer $d\ge0$, the\/ $\sF$\+resolution dimensions of
all the objects in\/ $\sE$ do not exceed~$d$.
 Then the triangulated functor between the derived categories\/
$\sD(\sF)\rarrow\sD(\sE)$ induced by the exact inclusion of exact
categories\/ $\sF\rarrow\sE$ is an equivalence of triangulated
categories.
\end{prop}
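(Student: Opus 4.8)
The plan is to prove that the triangulated functor $\Phi\colon\sD(\sF)\rarrow\sD(\sE)$ induced by the inclusion is both essentially surjective and fully faithful, deriving both properties from two lemmas that convert the uniform bound on $\sF$\+resolution dimension into statements about unbounded complexes. The first lemma (\emph{reflection of acyclicity}) asserts that a complex $F^\bu\in\sC(\sF)$ which is acyclic as a complex in the exact category $\sE$ is already acyclic as a complex in $\sF$, i.e., all its objects of cocycles lie in $\sF$. To prove it I would fix a cocycle object $Z^n=\ker(F^n\rarrow F^{n+1})$ and read the leftward tail $\dotsb\rarrow F^{n+d-2}\rarrow F^{n+d-1}\rarrow Z^{n+d}\rarrow0$ as an $\sF$\+resolution of $Z^{n+d}$; since every object of $\sE$ has $\sF$\+resolution dimension $\le d$, the well-definedness of resolution dimension (\cite[Proposition~2.3(1)]{Sto0}, \cite[Corollary~A.5.2]{Pcosh}) shows that the $d$\+th syzygy of $Z^{n+d}$, which is exactly $Z^n$, must belong to $\sF$. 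As $n$ is arbitrary, every cocycle lies in $\sF$, so $F^\bu$ is acyclic in~$\sF$.

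The second lemma (\emph{resolution lemma}) asserts that for every complex $E^\bu\in\sC(\sE)$ there is a complex $F^\bu\in\sC(\sF)$ together with a morphism $F^\bu\rarrow E^\bu$ whose cone is acyclic in $\sE$. For bounded above complexes this is the standard induction using property~(iii) of a resolving subcategory; the point is to handle doubly unbounded complexes, and this is precisely where finiteness of $d$ is indispensable. I would construct a Cartan--Eilenberg-type resolution of $E^\bu$ by a bicomplex $F^{\bu,\bu}$ of objects of $\sF$ in which the resolution direction is confined to $d+1$ consecutive degrees (possible because the resolution dimensions of the cocycle, boundary, and cohomology objects are all bounded by $d$), and then pass to the total complex formed by taking direct sums along the diagonals. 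Since each total degree is then a \emph{finite} direct sum of objects of $\sF$, the total complex lies in $\sC(\sF)$ (the subcategory $\sF$ is closed under finite direct sums, being closed under extensions), and the comparison spectral sequence, which is bounded in the resolution direction, shows that the augmentation to $E^\bu$ is an $\sE$\+quasi-isomorphism.

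Granting these two lemmas, essential surjectivity of $\Phi$ is immediate from the resolution lemma. For full faithfulness I would run the usual calculus-of-fractions argument. A morphism in $\sD(\sE)$ between the images of two complexes $F^\bu$, $G^\bu\in\sC(\sF)$ is represented by a roof $F^\bu\xleftarrow{s}H^\bu\xrightarrow{f}G^\bu$ with $s$ an $\sE$\+quasi-isomorphism and $H^\bu\in\sC(\sE)$; applying the resolution lemma to $H^\bu$ I may assume $H^\bu\in\sC(\sF)$, and then the reflection-of-acyclicity lemma guarantees that the cone of the $\sF$\+complex morphism $s$, being $\sE$\+acyclic, is $\sF$\+acyclic, so that $s$ is already a quasi-isomorphism in $\sF$ and the roof lives in $\sD(\sF)$; this proves fullness. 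For faithfulness, a morphism of $\sF$\+complexes annihilated by $\Phi$ becomes null-homotopic after composition with some $\sE$\+quasi-isomorphism into its source, and after resolving that source in $\sF$ and invoking the reflection lemma once more, it is killed by an $\sF$\+quasi-isomorphism, hence already vanishes in $\sD(\sF)$.

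The main obstacle I anticipate is the resolution lemma for unbounded complexes: one must verify that the length\+$d$ Cartan--Eilenberg resolution can be assembled compatibly across all degrees of $E^\bu$ and that its totalization recovers $E^\bu$ despite the complex being unbounded on both sides. The uniform bound $d$ is exactly what keeps the totalization locally finite in each degree and the comparison spectral sequence convergent; without it the argument collapses, which is why the finiteness hypothesis cannot be dropped. Throughout, weak idempotent-completeness of $\sE$ is used to ensure that the inherited exact structure on $\sF$ and the syzygy manipulations are legitimate and that the cited well-definedness of resolution dimension applies.
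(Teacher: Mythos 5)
Your overall architecture---a reflection-of-acyclicity lemma for complexes in $\sF$, an existence lemma for $\sE$\+quasi-isomorphisms onto arbitrary complexes of $\sE$ from complexes of $\sF$, and then the Verdier-localization formalities---is exactly the standard route; note that the paper itself gives no argument here but only cites \cite[Proposition~A.5.6]{Pcosh}, whose proof has this same two-lemma shape. Your first lemma is proved correctly: applying the independence-of-resolution result \cite[Proposition~2.3(1)]{Sto0}, \cite[Corollary~A.5.2]{Pcosh} to the exact sequence $0\rarrow Z^n\rarrow F^n\rarrow\dotsb\rarrow F^{n+d-1}\rarrow Z^{n+d}\rarrow0$ does show $Z^n\in\sF$, and your deduction of fullness and faithfulness from the two lemmas is the usual calculus of fractions and is fine.

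The gap is in your proof of the resolution lemma. You build a Cartan--Eilenberg-type resolution, justified ``because the resolution dimensions of the cocycle, boundary, and cohomology objects are all bounded by~$d$'', and you check the quasi-isomorphism property with ``the comparison spectral sequence''. But $\sE$ is only an exact category: an arbitrary complex $E^\bu\in\sC(\sE)$ has no cocycle, boundary, image, or cohomology objects at all (kernels and cokernels need not exist in $\sE$), so there is nothing to resolve in the Cartan--Eilenberg sense, and spectral-sequence arguments are likewise unavailable; as written, this step fails in the stated generality. The repair is to resolve termwise rather than \`a la Cartan--Eilenberg. Using condition~(iii) of a resolving subcategory, choose admissible epimorphisms $G^n\rarrow E^n$ with $G^n\in\sF$, and assemble them into a termwise admissible epimorphism $F_0^\bu\rarrow E^\bu$, where $F_0^\bu=\bigoplus_n D^n(G^n)$ and $D^n(G)$ denotes the complex with $G$ placed in degrees~$n$ and~$n+1$ and the identity differential; the terms $G^m\oplus G^{m-1}$ of $F_0^\bu$ lie in $\sF$, and the degreewise maps are admissible epimorphisms because one of the two components already is. Let $K_1^\bu$ be the termwise kernel and iterate $d$~times; the termwise kernel $K_d^\bu$ at the last step lies in $\sC(\sF)$ by the very same dimension-shifting result you already used in your first lemma, since each $K_d^n$ is a $d$\+th syzygy of~$E^n$. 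This yields a termwise exact sequence of complexes $0\rarrow K_d^\bu\rarrow F_{d-1}^\bu\rarrow\dotsb\rarrow F_0^\bu\rarrow E^\bu\rarrow0$ with all terms except $E^\bu$ in $\sC(\sF)$; totalizing along the finite direction gives a complex in $\sC(\sF)$ (each degree is a finite direct sum) mapping to $E^\bu$, and the cone of this map is the totalization of a finite termwise-exact complex of complexes, hence acyclic in $\sE$---no spectral sequence needed. With this replacement your proof is complete, and it becomes essentially the proof of the cited \cite[Proposition~A.5.6]{Pcosh}.
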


\begin{proof}
 This is a part of~\cite[Proposition~A.5.6]{Pcosh}.
\end{proof}

 Let $\tau\:\X'\rarrow\X$ and $\pi'\:\bY\rarrow\X'$ be flat affine
morphisms of ind-semi-separated ind-schemes.
 Put $\pi=\tau\pi'\:\bY\rarrow\X$.
 Recall the notation $\bY_\X\flat$ for the exact category of $\X$\+flat
pro-quasi-coherent pro-sheaves on~$\bY$ (as defined in
Section~\ref{pro-flat-over-base-subsecn}).

\begin{lem} \label{pro-flat-over-base-weakly-smooth-resol-dim}
\textup{(a)} The exact category\/ $\bY_{\X'}\flat$ is a resolving full
subcategory in the exact category\/ $\bY_\X\flat$.
 The exact category structure of\/ $\bY_{\X'}\flat$ is inherited from
the exact category structure of\/ $\bY_\X\flat$. \par
\textup{(b)} Assume additionally that the ind-scheme\/ $\X$ is
ind-Noetherian and the morphism\/ $\tau\:\X'\rarrow\X$ is weakly smooth
of relative dimension~$\le d$ (in the sense of
Section~\ref{weakly-smooth-subsecn}) for some finite integer~$d$.
 Then the resolution dimension of any object of the exact category\/
$\bY_\X\flat$ with respect to the resolving subcategory\/
$\bY_{\X'}\flat$ does not exceed~$d$ (in other words, the\/
$\X'$\+flat dimension of any\/ $\X$\+flat pro-quasi-coherent pro-sheaf
on\/ $\bY$ does not exceed~$d$).
\end{lem}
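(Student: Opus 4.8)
The plan is to reduce everything to schemes and module categories via a base-change device that reconciles the two relative flatness conditions. Recall that $\bfG\in\bY\pro$ lies in $\bY_\X\flat$ (resp.\ $\bY_{\X'}\flat$) exactly when $\pi_*\bfG$ (resp.\ $\pi'_*\bfG$) is flat. Since $\pi=\tau\pi'$ and $\tau$ is flat affine, $\tau_*$ preserves flatness of pro-sheaves (Section~\ref{flat-pro-sheaves-subsecn}), so $\pi_*\bfG=\tau_*\pi'_*\bfG$ is flat whenever $\pi'_*\bfG$ is; this gives $\bY_{\X'}\flat\subset\bY_\X\flat$, and $\bY_\X\flat$ is weakly idempotent-complete because $\X$-flatness passes to direct summands. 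The device is as follows: given a closed subscheme $Z'\subset\X'$, fix $\gamma$ so that the composite $Z'\rarrow\X'\rarrow\X$ factors through $Z=X_\gamma$, put $Z''=Z\times_\X\X'$ (a closed subscheme of $\X'$ containing $Z'$, by Lemma~\ref{closed-immersion-two-morphisms-lemma}), and set $\bnW=Z\times_\X\bY=Z''\times_{\X'}\bY$ and $\bnW'=Z'\times_{\X'}\bY=Z'\times_{Z''}\bnW$, so that $j\:\bnW'\rarrow\bnW$ is the base change of the closed immersion $Z'\rarrow Z''$ along the flat affine morphism $\bnW\rarrow Z''$. Componentwise $*$-compatibility gives $\bfG^{(\bnW')}\simeq j^*\bfG^{(\bnW)}$, and Lemma~\ref{qcoh-flat-over-base}(a,b) says that $j^*$ sends $Z''$-flat sheaves to $Z'$-flat sheaves and preserves their short exact sequences. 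Conversely each $\bnW=Z\times_\X\bY$ also equals $Z''\times_{\X'}\bY$, so the $\bnW$'s occur among the components testing $\X'$-flatness.

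For part~(a) I verify the three resolving axioms. For closure under extensions and under kernels of admissible epimorphisms, I start from an admissible short exact sequence $0\rarrow\bfF'\rarrow\bfF\rarrow\bfF''\rarrow0$ in $\bY_\X\flat$ with two terms in $\bY_{\X'}\flat$. For each $Z'$ I form $Z$, $Z''$, $\bnW$, $\bnW'$; pushing the sequence of components over $\bnW$ forward along the affine morphism $\bnW\rarrow Z''$ yields a short exact sequence over $Z''$ with two $Z''$-flat terms, whence the third is $Z''$-flat (flat modules are closed under extensions and under kernels of surjections onto flats). Thus the remaining term of the original sequence is $Z''$-flat over $\bnW$, so by $j^*$ (Lemma~\ref{qcoh-flat-over-base}(a)) it is $Z'$-flat over $\bnW'$; as $Z'$ is arbitrary it lies in $\bY_{\X'}\flat$. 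For the existence of admissible epimorphisms, I use the counit $\pi^*\pi_*\bfG\rarrow\bfG$: it is componentwise surjective, its source is flat (as $\pi_*\bfG$ is flat and $\pi^*$ preserves flatness) and hence in $\bY\flat\subset\bY_{\X'}\flat$, and its kernel $\bfK$ is $\X$-flat since applying $\pi_*$ and the projection formula~\eqref{pro-sheaves-projection-formula-eqn} presents $\pi_*\bfK$ as the (flat) kernel of a surjection of flat pro-sheaves onto $\pi_*\bfG$. Finally, running the $\bnW/\bnW'$ comparison in both directions shows that a sequence in $\bY_{\X'}\flat$ is admissible there iff it is admissible in $\bY_\X\flat$, so the exact structure is inherited.

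For part~(b), iterate the admissible epimorphisms of~(a) to build a resolution $\dotsb\rarrow\bfF_1\rarrow\bfF_0\rarrow\bfG\rarrow0$ with $\bfF_i\in\bY\flat\subset\bY_{\X'}\flat$ and syzygies in $\bY_\X\flat$; it suffices to show the $d$-th syzygy $\bfK$ lies in $\bY_{\X'}\flat$, for this exhibits a length-$d$ resolution (cf.\ the resolution-dimension theory of \cite[Corollary~A.5.2]{Pcosh}). Fix $Z'$ and form $Z$, $Z''$, $\bnW$, $\bnW'$. The restricted complex of components over $\bnW$ is exact (acyclicity in $\bY_\X\flat$ is componentwise, Lemma~\ref{flat-over-base-component-acyclicity-criterion}), with terms $\bfF_i^{(\bnW)}$ that are $Z''$-flat and $\bfG^{(\bnW)}$ that is $Z$-flat. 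Working affine-locally with $A=\cO(Z)$, $A'=\cO(Z'')$, $B=\cO(\bnW)$ --- where $A'$ is weakly smooth of relative dimension $\le d$ over $A$, being the residue-field-non-extending base change of $\tau$ (Section~\ref{weakly-smooth-subsecn}), and $B$ is flat over $A'$ --- I apply Proposition~\ref{flat-injective-dimensions-direct-image}(a) to $\Spec A'\rarrow\Spec A$ and the module $N=\bfG^{(\bnW)}$: since $N$ is flat over $A$, its flat dimension over $A'$ is at most $d$. The $\bfF_i^{(\bnW)}$ being $A'$-flat, the $d$-th syzygy $\bfK^{(\bnW)}$ of the exact complex is then $A'$-flat, i.e.\ $Z''$-flat; hence $\bfK^{(\bnW')}\simeq j^*\bfK^{(\bnW)}$ is $Z'$-flat. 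As $Z'$ is arbitrary, $\bfK\in\bY_{\X'}\flat$, so the resolution dimension is $\le d$.

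The main obstacle is precisely the mismatch between the two flatness conditions, tested over incomparable families of closed subschemes $Z\subset\X$ and $Z'\subset\X'$; the device $Z''=Z\times_\X\X'$ with $\bnW=Z\times_\X\bY=Z''\times_{\X'}\bY$ is what links them, and the quantitative heart of~(b) is the relative Hilbert-syzygy bound of Proposition~\ref{flat-injective-dimensions-direct-image}(a). I should keep in mind that $\tau_*$ does \emph{not} reflect flatness, so one cannot simply test the converse assertions through the direct image $\pi_*$; the componentwise base-change reduction is essential.
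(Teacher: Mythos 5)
Your proof is correct and follows essentially the same route as the paper's: the inclusion $\bY_{\X'}\flat\subset\bY_\X\flat$ via $\tau_*$ preserving flatness, reduction to schemes through the closed subschemes $Z''=Z\times_\X\X'$ and $\bnW=Z\times_\X\bY$ (the paper works directly with the cofinal family $X'_\gamma=X_\gamma\times_\X\X'$, \ $\bnY_\gamma=X_\gamma\times_\X\bY$, which is the same cofinality device that you unwind via $j^*$ and Lemma~\ref{qcoh-flat-over-base}), admissible epimorphisms from the counit $\pi^*\pi_*\bfG\rarrow\bfG$, and, for part~(b), Proposition~\ref{flat-injective-dimensions-direct-image}(a) applied to the direct image of the component of $\bfG$ along $\bnW\rarrow Z''$ followed by the standard syzygy argument. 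The only cosmetic deviation is that in part~(b) you construct one specific bar resolution and bound its $d$\+th syzygy, whereas the paper shows the $d$\+th kernel of an arbitrary partial resolution with terms in $\bY_{\X'}\flat$ already lies in $\bY_{\X'}\flat$; both yield the resolution dimension bound.
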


\begin{proof}
 Part~(a): let $\bfG$ be an $\X'$\+flat pro-quasi-coherent pro-sheaf
on~$\bY$; so $\pi'_*\bfG$ is a flat pro-quasi-coherent pro-sheaf
on~$\X'$.
 Since $\tau\:\X'\rarrow\X$ is a flat affine morphism, the functor
$\tau_*\:\X'\pro\rarrow\X\pro$ takes flat pro-quasi-coherent pro-sheaves
on $\X'$ to flat pro-quasi-coherent pro-sheaves on~$\X$.
 Hence $\pi_*\bfG=\tau_*\pi'_*\bfG$ is a flat pro-quasi-coherent
pro-sheaf on~$\X$; so the pro-quasi-coherent pro-sheaf $\bfG$ on $\bY$
is $\X$\+flat.
 This proves the inclusion $\bY_{\X'}\flat\subset\bY_\X\flat$.
 
 Let $\X=\ilim_{\gamma\in\Gamma}X_\gamma$ be a representation of $\X$
by an inductive system of closed immersions of ind-schemes.
 Put $X'_\gamma=X_\gamma\times_\X\X'$ and
$\bnY_\gamma=X_\gamma\times_\X\bY$; then $\X'=\ilim_{\gamma\in\Gamma}
X'_\gamma$ and $\bY=\ilim_{\gamma\in\Gamma}\bnY_\gamma$ are similar
representations of $\X'$ and~$\bY$.
 We have flat affine morphisms of schemes $\bnY_\gamma\rarrow X'_\gamma
\rarrow X_\gamma$.

 A pro-quasi-coherent pro-sheaf $\bfG$ on $\bY$ is $\X$\+flat (resp.,
$\X'$\+flat) if and only if the quasi-coherent sheaf
$\bfG^{(\bnY_\gamma)}$ on $\bnY_\gamma$ is $X_\gamma$\+flat (resp.,
$X'_\gamma$\+flat) for every $\gamma\in\Gamma$.
 Furthermore, a short sequence $0\rarrow\bfF\rarrow\bfG\rarrow\bfH
\rarrow0$ is exact in $\bY_\X\flat$ (resp., in
$\bY_{\X'\flat}$) if and only if the short sequence
$0\rarrow\bfF^{(\bnY_\gamma)}\rarrow\bfG^{(\bnY_\gamma)}\rarrow
\bfH^{(\bnY_\gamma)}\rarrow0$ is exact in
$(\bnY_\gamma)_{X_\gamma}\flat$ (resp., in
$(\bnY_\gamma)_{X'_\gamma}\flat$) for every $\gamma\in\Gamma$.

 The full subcategory $(\bnY_\gamma)_{X'_\gamma}\flat\subset
(\bnY_\gamma)_{X_\gamma}\flat$ is closed under extensions and
the kernels of admissible epimorphisms, because the full subcategory
$X'_\gamma\flat\subset X'_\gamma\qcoh$ is.
 It follows that the full subcategory
$\bY_{\X'}\flat\subset\bY_\X\flat$ is closed under extensions and
the kernels of admissible epimorphisms, too.
 The exact category structure on $(\bnY_\gamma)_{X'_\gamma}\flat$
is inherited from that on $(\bnY_\gamma)_{X_\gamma}\flat$ (since both
are inherited from the abelian category $\bnY\qcoh$).
 It follows that the exact category structure of $\bY_{\X'}\flat$ is
inherited from $\bY_\X\flat$.

 We still have not used the assumption that the morphism~$\pi'$
(hence also~$\pi$) is flat; now we need to use it in order to construct
an admissible epimorphism onto any $\X$\+flat pro-quasi-coherent
pro-sheaf on $\bY$ from an $\X'$\+flat one.
 Indeed, let $\bfG$ be an $\X$\+flat pro-quasi-coherent pro-sheaf
on~$\bY$.
 Then the adjunction morphism $\pi^*\pi_*\bfG\rarrow\bfG$ is
an admissible epimorphism in $\bY_\X\flat$ (cf.\
Lemma~\ref{bar-complex-lemma}), and the pro-quasi-coherent pro-sheaf
$\pi^*\pi_*\bfG$ on $\bY$ is even flat, hence $\X'$\+flat.

 Part~(b): let $\bfG$ be an $\X$\+flat pro-quasi-coherent pro-sheaf
on $\bY$, and let $0\rarrow\bfF_d\rarrow\bfF_{d-1}\rarrow\dotsb
\rarrow\bfF_0\rarrow\bfG\rarrow0$ be an exact sequence in
$\bY_\X\flat$ with $\bfF_i\in\bY_{\X'}\flat$ for all $0\le i<d$.
 We need to show that $\bfF_d\in\bY_{\X'}\flat$.
 In the notation above, it suffices to check that
$\bfF_d^{(\bnY_\gamma)}\in(\bnY_\gamma)_{X'_\gamma}\flat$.
 We know that $\bfF_i^{(\bnY_\gamma)}\in(\bnY_\gamma)_{X'_\gamma}\flat$
for $0\le i<d$ and $\bfG^{(\bnY_\gamma)}\in
(\bnY_\gamma)_{X_\gamma}\flat$.

 Introduce the notation $\tau_\gamma\:X'_\gamma\rarrow X_\gamma$,
\ $\pi'_\gamma\:\bnY_\gamma\rarrow X'_\gamma$, and
$\pi_\gamma\:\bnY_\gamma\rarrow X_\gamma$ for the relevant affine
morphisms of schemes.
 We need to show that $\pi'_\gamma{}_*\bfF_d^{(\bnY_\gamma)}\in
X'_\gamma\flat$.
 Put $\F_i=\pi'_\gamma{}_*\bfF_i^{(\bnY_\gamma)}$ and
$\G=\pi'_\gamma{}_*\bfG^{(\bnY_\gamma)}\in X'_\gamma\qcoh$.
 Then we have an exact sequence $0\rarrow\F_d\rarrow\F_{d-1}\rarrow
\dotsb\rarrow\F_0\rarrow\G\rarrow0$ of quasi-coherent sheaves
on~$X'_\gamma$.
 We know that $\F_i\in X'_\gamma\flat$ for $0\le i<d$ and
$\tau_\gamma{}_*\G=\tau_\gamma{}_*\pi'_\gamma{}_*\bfG^{(\bnY_\gamma)}
=\pi_\gamma{}_*\bfG^{(\bnY_\gamma)}\in X_\gamma\flat$.

 Applying Proposition~\ref{flat-injective-dimensions-direct-image}(a)
to the quasi-coherent sheaf $\G$ on $X'_\gamma$ and the morphism of
schemes $\tau_\gamma\:X'_\gamma\rarrow X_\gamma$, which is affine and
weakly smooth of relative dimension~$\le d$ by assumptions, while
the scheme $X_\gamma$ is Noetherian and semi-separated, we conclude that
$\F_d\in X'_\gamma\flat$.
\end{proof}

\begin{cor} \label{weakly-smooth-derived-pro-sheaves-preservation-cor}
 Let\/ $\X$ be an ind-semi-separated ind-Noetherian ind-scheme,
$\tau\:\X'\rarrow\X$ be an affine morphism which is weakly smooth
of relative dimension~$\le d$, and\/ $\pi'\:\bY\rarrow\X'$ be
a flat affine morphism of ind-schemes.
 Put\/ $\pi=\tau\pi'$.
 Then the exact inclusion of exact categories\/ $\bY_{\X'}\flat\rarrow
\bY_\X\flat$ induces a triangulated equivalence between the derived
categories\/ $\sD(\bY_{\X'}\flat)\simeq\sD(\bY_\X\flat)$.
\end{cor}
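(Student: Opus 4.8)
The plan is to deduce this corollary by a direct application of Proposition~\ref{unbounded-derived-finite-resol-dim-prop} to the exact category $\sE=\bY_\X\flat$ and its full subcategory $\sF=\bY_{\X'}\flat$. Three hypotheses of that proposition need to be checked: that $\bY_\X\flat$ is weakly idempotent-complete, that $\bY_{\X'}\flat$ is a resolving subcategory of it, and that the $\bY_{\X'}\flat$-resolution dimensions of all objects of $\bY_\X\flat$ are bounded by a common finite integer. Once all three are in place, the conclusion is immediate.

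First I would observe that the exact category $\bY_\X\flat$ is weakly idempotent-complete. Indeed, according to the discussion in Section~\ref{pro-flat-over-base-subsecn}, the full subcategory $\bY_\X\flat\subset\bY\pro$ is closed under direct limits, and in particular under countable coproducts. Any additive category closed under such direct limits splits its idempotents (an idempotent splitting is the colimit of the sequential diagram with the idempotent as transition map), so $\bY_\X\flat$ is idempotent-complete, hence in particular weakly idempotent-complete; thus it satisfies the standing assumption of Proposition~\ref{unbounded-derived-finite-resol-dim-prop}.

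The remaining two hypotheses are exactly the content of Lemma~\ref{pro-flat-over-base-weakly-smooth-resol-dim}. Part~(a) of that lemma says that $\bY_{\X'}\flat$ is a resolving full subcategory of $\bY_\X\flat$, with its exact structure inherited from that of $\bY_\X\flat$. Part~(b), under the present assumptions that $\X$ is ind-Noetherian and $\tau$ is weakly smooth of relative dimension $\le d$, says that the resolution dimension of any object of $\bY_\X\flat$ with respect to $\bY_{\X'}\flat$ does not exceed $d$. With the weak idempotent-completeness established above and these two properties supplied by the lemma, Proposition~\ref{unbounded-derived-finite-resol-dim-prop} applies and yields that the triangulated functor $\sD(\bY_{\X'}\flat)\rarrow\sD(\bY_\X\flat)$ induced by the exact inclusion is an equivalence of triangulated categories, which is the assertion of the corollary.

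Since both of the substantive ingredients---the resolving property and the uniform finite bound on resolution dimension---are already established in Lemma~\ref{pro-flat-over-base-weakly-smooth-resol-dim} (the latter resting on the relative syzygy estimate of Proposition~\ref{flat-injective-dimensions-direct-image}(a)), there is no genuine obstacle at this stage; the only point requiring a word of care is the verification of weak idempotent-completeness, which is immediate from the closure of $\bY_\X\flat$ under coproducts. The whole argument is therefore a formal assembly of the preceding lemma and proposition, and I would present it as such.
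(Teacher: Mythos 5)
Your proof is correct and is essentially the paper's own argument: the paper's proof of this corollary consists precisely of citing Lemma~\ref{pro-flat-over-base-weakly-smooth-resol-dim} together with Proposition~\ref{unbounded-derived-finite-resol-dim-prop}. Your explicit verification of weak idempotent-completeness (via closure of $\bY_\X\flat$ under sequential direct limits in $\bY\pro$) fills in a hypothesis the paper leaves implicit, and the argument you give for it is sound.
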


\begin{proof}
 Follows from Lemma~\ref{pro-flat-over-base-weakly-smooth-resol-dim}
and Proposition~\ref{unbounded-derived-finite-resol-dim-prop}.
\end{proof}

\subsection{Preservation of the semiderived category of
torsion sheaves} \label{weakly-smooth-semiderived-preservation-subsecn}
 Let $\X$ be an ind-Noetherian ind-scheme, and let $\tau\:\X'\rarrow\X$
be an affine morphism of ind-schemes.
 Assume that the morphism~$\tau$ is of finite type and weakly smooth of
relative dimension~$\le d$ (in the sense of
Section~\ref{weakly-smooth-subsecn}) for some finite integer~$d$.
 Consider the direct image functor $\tau_*\:\X'\tors\rarrow\X\tors$.

\begin{prop} \label{weakly-smooth-direct-image-reflects-coacyclicity}
 A complex of quasi-coherent torsion sheaves $\rM^\bu$ on\/ $\X'$ is
coacyclic if and only if the complex of quasi-coherent torsion sheaves\/
$\tau_*\rM^\bu$ on\/ $\X$ is coacyclic.
\end{prop}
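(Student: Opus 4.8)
The plan is to prove the two implications separately; the ``only if'' is formal and the ``if'' carries all the content. For the ``only if'' direction I would use that the direct image functor $\tau_*\colon\X'\tors\rarrow\X\tors$ is exact (Lemma~\ref{affine-torsion-direct-image}, since $\tau$ is affine) and preserves coproducts (Lemma~\ref{representable-by-schemes-direct-image}(a)). Such a functor carries totalizations of short exact sequences of complexes to totalizations of short exact sequences, cones to cones, and coproducts to coproducts; as the coacyclic complexes form the minimal triangulated subcategory of $\sK(\X'\tors)$ containing the totalizations of short exact sequences and closed under coproducts, $\tau_*$ automatically takes coacyclic complexes to coacyclic complexes.

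For the ``if'' direction, note first that weak smoothness entails flatness, so $\tau$ is flat and affine; hence $\tau^*$ is exact (Lemma~\ref{flat-torsion-sheaves-inverse-image}), its right adjoint $\tau_*$ preserves injectives, and $\tau_*$ is also exact and faithful. Assuming $\tau_*\rM^\bu$ coacyclic, I would use Corollary~\ref{ind-Noetherian-coderived-cor} and Proposition~\ref{coderived-and-homotopy-of-injectives}(b) to choose a morphism $\rM^\bu\rarrow\rJ^\bu$ into a complex of injective quasi-coherent torsion sheaves on $\X'$ with coacyclic cone, so that $\rM^\bu$ is coacyclic iff $\rJ^\bu$ is. Applying $\tau_*$ and the ``only if'' direction to the cone, the hypothesis yields that $\tau_*\rJ^\bu$ is coacyclic; since $\tau_*$ preserves injectives, $\tau_*\rJ^\bu$ is a coacyclic complex of injectives, hence contractible by Proposition~\ref{coderived-and-homotopy-of-injectives}(a). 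The problem is thereby reduced to showing that a complex $\rJ^\bu\in\sK(\X'\tors_\inj)$ with $\tau_*\rJ^\bu$ contractible is itself contractible.

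To descend this to schemes I would fix a representation $\X=\ilim_{\gamma\in\Gamma}X_\gamma$ by closed immersions of Noetherian schemes, set $X'_\gamma=X_\gamma\times_\X\X'$, so that $\X'=\ilim_{\gamma\in\Gamma}X'_\gamma$ and each $\tau_\gamma\colon X'_\gamma\rarrow X_\gamma$ is affine, of finite type, and weakly smooth of relative dimension $\le d$ (weak smoothness being preserved under base change along the residue-field-preserving closed immersion $i_\gamma\colon X_\gamma\rarrow\X$, as in Section~\ref{weakly-smooth-subsecn}). By Lemma~\ref{complex-of-injective-torsion-contractible} it then suffices to prove that $(i'_\gamma)^!\rJ^\bu$ is contractible for every $\gamma$, where $i'_\gamma\colon X'_\gamma\rarrow\X'$ is the closed immersion. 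Using the base-change isomorphism $i_\gamma^!\tau_*\simeq\tau_{\gamma*}(i'_\gamma)^!$ of Lemma~\ref{torsion-shriek-base-change} together with contractibility of $\tau_*\rJ^\bu$, the complex $\tau_{\gamma*}\bigl((i'_\gamma)^!\rJ^\bu\bigr)$ is contractible on $X_\gamma$, while $(i'_\gamma)^!\rJ^\bu$ is a complex of injective quasi-coherent sheaves on $X'_\gamma$ (Proposition~\ref{torsion-injectives-characterized}(a)). So the whole statement reduces to the scheme-level assertion, which I would isolate as a lemma: if $f\colon Y\rarrow X$ is an affine, finite-type morphism of Noetherian schemes that is weakly smooth of relative dimension $\le d$, and $\J^\bu$ is a complex of injective quasi-coherent sheaves on $Y$ with $f_*\J^\bu$ contractible, then $\J^\bu$ is contractible.

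For this scheme-level lemma the argument runs as follows. Since $f_*$ is exact and faithful, contractibility (hence acyclicity) of $f_*\J^\bu$ forces $\J^\bu$ to be acyclic; since $f$ is flat affine, $f_*$ preserves injectives, so $f_*\J^\bu$ is a contractible complex of injectives, whence each of its cocycle sheaves is a direct summand of an injective and is therefore injective. Thus $f_*$ of each cocycle sheaf $Z^n$ of $\J^\bu$ is injective, and Proposition~\ref{flat-injective-dimensions-direct-image}(b) bounds the injective dimension of $Z^n$ on $Y$ by $d$ plus that of $f_*Z^n$ on $X$, namely by~$d$. Because $\J^\bu$ is acyclic, $0\rarrow Z^n\rarrow\J^n\rarrow\J^{n+1}\rarrow\dotsb$ is an injective coresolution of $Z^n$, so finiteness of the injective dimension makes the $d$\+th cosyzygy $Z^{n+d}$ injective; as $n$ is arbitrary, all cocycle sheaves of $\J^\bu$ are injective, and an acyclic complex of injectives with injective cocycles is contractible. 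The main obstacle is concentrated precisely in the injective-dimension bound invoked here, that is, in Proposition~\ref{flat-injective-dimensions-direct-image}(b), whose proof by Noetherian induction in Section~\ref{flat-injective-dimensions-weakly-smooth-subsecn} is the genuinely hard input; everything else is a formal assembly of the exactness, adjunction, base-change, and homotopy-category facts already established.
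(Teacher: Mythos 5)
Your proposal is correct and follows essentially the same route as the paper's proof: the formal ``only if'' via exactness and preservation of coproducts, reduction to a complex of injectives $\rJ^\bu$ with $\tau_*\rJ^\bu$ contractible, descent to the closed subschemes via Lemma~\ref{complex-of-injective-torsion-contractible} and the base-change identity $i_\gamma^!\tau_*\simeq\tau_{\gamma*}(i'_\gamma)^!$ (which the paper extracts directly from the construction of~$\tau_*$), and finally the scheme-level argument where Proposition~\ref{flat-injective-dimensions-direct-image}(b) bounds the injective dimension of the cocycle sheaves, forcing them to be injective. The only cosmetic difference is that you spell out the cosyzygy step that the paper leaves implicit.
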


\begin{proof}
 It is clear from Lemmas~\ref{representable-by-schemes-direct-image}(a)
and~\ref{affine-torsion-direct-image} that the functor~$\tau_*$
(for any affine morphism of reasonable ind-schemes~$\tau$) takes
coacyclic complexes to coacyclic complexes.

 To prove the converse, assume that the complex $\tau_*\rM^\bu$ is
coacyclic in $\X\tors$.
 The ind-scheme $\X'$ is ind-Noetherian, so
Corollary~\ref{ind-Noetherian-coderived-cor} is applicable and
there exists a complex of injective quasi-coherent torsion sheaves
$\rJ^\bu$ on $\X'$ together with a morphism of complexes $\rM^\bu
\rarrow\rJ^\bu$ with a coacyclic cone.
 Then the cone of the morphism $\tau_*\rM^\bu\rarrow\tau_*\rJ^\bu$ is
also coacyclic, as we have already seen.
 Hence the complex $\tau_*\rJ^\bu$ is coacyclic in $\X\tors$, and
it remains to show that the complex $\rJ^\bu$ is coacyclic
in $\X'\tors$.

 Furthermore, for a flat morphism of reasonable ind-schemes
$\tau\:\X'\rarrow\X$ the functor $\tau_*\:\X'\tors\rarrow\X\tors$
takes injectives to injectives, since its left adjoint functor
$\tau^*\:\X\tors\rarrow\X'\tors$ is exact
(by Lemma~\ref{flat-torsion-sheaves-inverse-image}).
 So $\tau_*\rJ^\bu$ is a coacyclic complex of injectives in $\X\tors$,
and it follows that $\tau_*\rJ^\bu$ is a contractible complex.

 Therefore, for any closed subscheme $Z\subset\X$, the complex
$(\tau_*\rJ^\bu)_{(Z)}$ of quasi-coherent sheaves on $Z$ is
a contractible complex of injectives, too.
 Put $Z'=Z\times_\X\X'$ and denote by $\tau_Z\:Z'\rarrow Z$
the natural morphism; then we have $(\tau_*\rJ^\bu)_{(Z)}
=\tau_Z{}_*\rJ^\bu_{(Z')}$.
 Now $\rJ^\bu_{(Z')}$ is a complex of injective quasi-coherent
sheaves on~$Z'$.
 Since the complex $\tau_Z{}_*\rJ^\bu_{(Z')}$ is acyclic and
the functor $\tau_Z{}_*\:Z'\qcoh\rarrow Z\qcoh$ is exact and faithful
(the morphism~$\tau_Z$ being affine), it follows that $\rJ^\bu_{(Z')}$
is an acyclic complex.

 Denote by $\rK^n\in Z'\qcoh$ the quasi-coherent sheaves of cocycles
of the acyclic complex $\rJ^\bu_{(Z')}$.
 Then $\tau_Z{}_*\rK^n\in Z\qcoh$ are the quasi-coherent sheaves of
cocycles of the contractible complex of injective quasi-coherent
sheaves $\tau_Z{}_*\rJ^\bu_{(Z')}$ on~$Z$.
 Hence the quasi-coherent sheaves $\tau_Z{}_*\rK^n$ on $Z$ are
injective.

 By assumptions, the morphism of schemes $\tau_Z\:Z'\rarrow Z$ is
affine, weakly smooth of relative dimension~$\le d$, and of finite
type, while $Z$ is a Noetherian scheme.
 Applying Proposition~\ref{flat-injective-dimensions-direct-image}(b)
to the morphism~$\tau$ and each of the quasi-coherent sheaves $\rK^n$
on $Z'$, we see that they have finite injective dimensions in $Z'\qcoh$.
 As these are the objects of cocycles of an acyclic complex of
injectives $\rJ^\bu_{(Z')}$, we can conclude that the quasi-coherent
sheaves $\rK^n$ are injective and the complex $\rJ^\bu_{(Z')}$ is
contractible.

 Finally, Lemma~\ref{complex-of-injective-torsion-contractible} tells
that the complex of injective quasi-coherent torsion sheaves $\rJ^\bu$
on $\X'$ is contractible.
\end{proof}

\begin{cor} \label{weakly-smooth-semiderived-preservation-cor}
 Let\/ $\X$ be an ind-Noetherian ind-scheme, $\tau\:\X'\rarrow\X$ be
an affine morphism of finite type which is weakly smooth of relative
dimension~$\le d$, and\/ $\pi'\:\bY\rarrow\X'$ be a flat affine
morphism of ind-schemes.
 Put\/ $\pi=\tau\pi'$.
 Then the\/ $\bY/\X'$\+semiderived category of quasi-coherent torsion
sheaves on\/ $\bY$ coincides with the\/ $\bY/\X$\+semiderived category,
$\sD_{\X'}^\si(\bY\tors)=\sD_\X^\si(\bY\tors)$.
\end{cor}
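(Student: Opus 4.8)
The plan is to observe that the two triangulated categories in the statement are Verdier quotients of one and the same homotopy category $\sK(\bY\tors)$, and to reduce the claim to the coincidence of the two subcategories being quotiented out. First I would note that $\pi=\tau\pi'$ is affine, being a composition of the affine morphisms $\tau$ and~$\pi'$; hence the $\bY/\X$\+semiderived category $\sD_\X^\si(\bY\tors)$ is well-defined (as in Section~\ref{semiderived-subsecn}), alongside the $\bY/\X'$\+semiderived category $\sD_{\X'}^\si(\bY\tors)$. By the definition of the semiderived category, $\sD_\X^\si(\bY\tors)$ is the triangulated quotient of $\sK(\bY\tors)$ by the thick subcategory $\mathsf{T}_\X$ of all complexes $\bM^\bu$ for which $\pi_*\bM^\bu$ is coacyclic in $\X\tors$, while $\sD_{\X'}^\si(\bY\tors)$ is the quotient of the same $\sK(\bY\tors)$ by the thick subcategory $\mathsf{T}_{\X'}$ of all $\bM^\bu$ for which $\pi'_*\bM^\bu$ is coacyclic in $\X'\tors$. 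It therefore suffices to prove the equality of subcategories $\mathsf{T}_\X=\mathsf{T}_{\X'}$ inside $\sK(\bY\tors)$.

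The second step is to invoke the functoriality of the direct image of quasi-coherent torsion sheaves with respect to composition of affine morphisms (Section~\ref{torsion-direct-images-subsecn}), which supplies a natural isomorphism $\pi_*\simeq\tau_*\pi'_*$ of functors $\bY\tors\rarrow\X\tors$, and hence of the induced triangulated functors on $\sK(\bY\tors)$. Consequently, for any complex $\bM^\bu\in\sK(\bY\tors)$ one has $\pi_*\bM^\bu\simeq\tau_*(\pi'_*\bM^\bu)$ in $\sK(\X\tors)$.

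The final and decisive step is to apply Proposition~\ref{weakly-smooth-direct-image-reflects-coacyclicity} to the complex $\rM^\bu=\pi'_*\bM^\bu\in\sC(\X'\tors)$. Since $\tau$ is an affine morphism of finite type which is weakly smooth of relative dimension~$\le d$ and $\X$ is ind-Noetherian, that proposition asserts that $\pi'_*\bM^\bu$ is coacyclic in $\X'\tors$ if and only if $\tau_*(\pi'_*\bM^\bu)\simeq\pi_*\bM^\bu$ is coacyclic in $\X\tors$. This is precisely the statement that $\bM^\bu\in\mathsf{T}_{\X'}$ if and only if $\bM^\bu\in\mathsf{T}_\X$; thus the two thick subcategories coincide, the two Verdier quotients are equal, and $\sD_{\X'}^\si(\bY\tors)=\sD_\X^\si(\bY\tors)$.

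The entire nontrivial content of this corollary is concentrated in Proposition~\ref{weakly-smooth-direct-image-reflects-coacyclicity}, which itself rests on the finite-injective-dimension estimate of Proposition~\ref{flat-injective-dimensions-direct-image}(b) for weakly smooth morphisms. Once that proposition is in hand, the present statement follows formally from the definitions and the functoriality of direct images, so I do not anticipate any genuine obstacle here; the only point demanding a moment's care is to confirm that both semiderived categories are formed as quotients of the same ambient category $\sK(\bY\tors)$, so that comparing the quotients reduces cleanly to comparing the subcategories annihilated by the respective direct image functors.
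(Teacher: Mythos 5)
Your proposal is correct and follows essentially the same route as the paper's own proof: both reduce the statement to the identity $\pi_*\bN^\bu\simeq\tau_*\pi'_*\bN^\bu$ of complexes in $\X\tors$ and then apply Proposition~\ref{weakly-smooth-direct-image-reflects-coacyclicity} to conclude that $\pi'_*\bN^\bu$ is coacyclic in $\X'\tors$ if and only if $\pi_*\bN^\bu$ is coacyclic in $\X\tors$. Your additional remarks (that $\pi$ is affine as a composition of affine morphisms, and that the two semiderived categories are Verdier quotients of the same homotopy category $\sK(\bY\tors)$, so it suffices to compare the thick subcategories) merely make explicit what the paper leaves implicit.
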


\begin{proof}
 Let $\bN^\bu$ be a complex of quasi-coherent torsion sheaves on~$\bY$.
 Then we have an isomorphism of complexes $\pi_*\bN^\bu\simeq
\tau_*\pi'_*\bN^\bu$ in $\X\tors$.
 By Proposition~\ref{weakly-smooth-direct-image-reflects-coacyclicity},
it follows that the complex $\pi'_*\bN^\bu$ is coacyclic in $\X'\tors$
if and only if the complex $\pi_*\bN^\bu$ is coacyclic in $\X\tors$.
\end{proof}

\subsection{Derived restriction with supports commutes
with flat pulback} \label{derived-supports-pullback-subsecn}
 Let $f\:\Y\rarrow\X$ be a flat morphism of ind-Noetherian ind-schemes,
and let $i\:\Z\rarrow\X$ be a closed immersion of ind-schemes.
 Consider the pullback diagram (so $\W=\Z\times_\X\Y$)
$$
\xymatrix{
 \W \ar[r]^k \ar[d]_g & \Y \ar[d]^f \\
 \Z \ar[r]^i & \X
}
$$
 Since the morphisms $i$ and~$k$ are closed immersions, the ind-schemes
$\Z$ and $\W$ are also ind-Noetherian.

 The right derived functor $\boR i^!\:\sD^\co(\X\tors)\rarrow
\sD^\co(\Z\tors)$ was defined in Section~\ref{derived-supports-subsecn},
and similarly there is the right derived functor
$\boR k^!\:\sD^\co(\Y\tors)\rarrow\sD^\co(\W\tors)$.

 The morphisms of ind-schemes~$f$ and~$g$ are flat, so the inverse
image functors $f^*\:\X\tors\rarrow\Y\tors$ and $g^*\:\Z\tors\rarrow
\W\tors$ are exact by Lemma~\ref{flat-torsion-sheaves-inverse-image}.
 The functors $f^*$ and~$g^*$ are also left adjoints by
Lemma~\ref{torsion-direct-inverse-adjunction}(b), so they preserve
coproducts.
 Hence the functors $f^*$ and~$g^*$ take coacyclic complexes to
coacyclic complexes, and consequently induce well-defined inverse
image functors between the coderived categories,
$$
 f^*\:\sD^\co(\X\tors)\lrarrow\sD^\co(\Y\tors)
$$
and similarly $g^*\:\sD^\co(\Z\tors)\rarrow\sD^\co(\Y\tors)$.

 The aim of this Section~\ref{derived-supports-pullback-subsecn}
is to prove the following proposition (which is to be compared with
Proposition~\ref{derived-supports-external-product-prop}).

\begin{prop} \label{derived-supports-flat-pullback-prop}
 There is a natural isomorphism\/ $g^*\circ\boR i^!\simeq
\boR k^!\circ f^*$ of triangulated functors\/ $\sD^\co(\X\tors)\rarrow
\sD^\co(\W\tors)$.
\end{prop}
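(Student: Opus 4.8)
The plan is to compute both functors through injective resolutions on the base $\X$, exploiting the underived commutation $g^*i^!\simeq k^!f^*$ already available from Lemma~\ref{torsion-reasonable-shriek-flat-star-commutation} (applied with the reasonable closed immersion $i$ and the flat morphism $f$; recall that every closed immersion into an ind-Noetherian ind-scheme is reasonable, and that $g$ is flat and $k$ a closed immersion as base changes of $f$ and~$i$). First I would fix $\rM^\bu\in\sD^\co(\X\tors)$ and, using Corollary~\ref{ind-Noetherian-coderived-cor}, represent it by a complex of injective quasi-coherent torsion sheaves $\rJ^\bu\in\sK(\X\tors_\inj)$, so that $\boR i^!\rM^\bu=i^!\rJ^\bu$; this is a complex of injectives on $\Z$ by Proposition~\ref{torsion-injectives-characterized}(a). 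Since $g^*$ is exact (Lemma~\ref{flat-torsion-sheaves-inverse-image}) and preserves coproducts, it descends to the coderived categories, and $g^*\boR i^!\rM^\bu$ is represented by the complex $g^*i^!\rJ^\bu$, which by the underived commutation is isomorphic, as a complex on $\W$, to $k^!f^*\rJ^\bu$. On the other side, $f^*$ likewise descends to the coderived categories, so $\boR k^!f^*\rM^\bu\simeq\boR k^!f^*\rJ^\bu$. Thus everything reduces to the single assertion that the complex $f^*\rJ^\bu$ is \emph{adapted} for $\boR k^!$, i.e.\ the natural morphism $k^!f^*\rJ^\bu\rarrow\boR k^!f^*\rJ^\bu$ is an isomorphism in $\sD^\co(\W\tors)$.

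The heart of the matter is therefore a vanishing lemma in the spirit of Lemma~\ref{torsion-support-external-product-exactness}: for any injective quasi-coherent torsion sheaf $\rJ$ on $\X$ and any injective resolution $f^*\rJ\rarrow\rK^\bu$ on $\Y$, one has $H^0(k^!\rK^\bu)\simeq g^*i^!\rJ$ and $H^n(k^!\rK^\bu)=0$ for $n>0$. I would prove this by reduction to closed subschemes, exactly as in Lemma~\ref{torsion-subscheme-support-external-product-exactness}: applying the restriction functors $j^!$ for closed immersions $j\:W'\rarrow\W$ of Noetherian closed subschemes reduces the question to the affine scheme level. There it becomes the flat base-change statement for $\boR\Hom_R(R/I,-)$: if $R$ is Noetherian, $I\subset R$ is a finitely generated ideal, $S$ is a flat $R$-algebra, and $J$ is an injective $R$-module, then $\boR\Hom_S(S/IS,\>S\ot_RJ)$ is concentrated in degree $0$ and naturally isomorphic to $S\ot_R\Hom_R(R/I,J)$. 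This is checked by resolving $R/I$ by finitely generated free $R$-modules $P_\bu$ (possible since $R$ is Noetherian), noting that $S\ot_RP_\bu$ is a finite free resolution of $S/IS$ by flatness, and using $\Hom_S(S\ot_RP_n,\>S\ot_RJ)\simeq S\ot_R\Hom_R(P_n,J)$ together with $\Ext^{>0}_R(R/I,J)=0$; the underived part recovers Lemma~\ref{reasonable-shriek-flat-star-commutation}.

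With this vanishing in hand, the adaptedness of $f^*\rJ^\bu$ would follow by the Cartan--Eilenberg argument used for Lemma~\ref{derived-supports-external-product-lemma}: one builds a bounded-below bicomplex $\rL^{\bu,\bu}$ termwise injectively resolving $f^*\rJ^\bu$, totalizes by coproducts along the diagonals to obtain $\rK^\bu$ (as in Proposition~\ref{coderived-and-homotopy-of-injectives}(b)), and applies $k^!$. The vanishing lemma makes each column $k^!\rL^{\bu,n}$ a resolution of $k^!f^*\rJ^n=g^*i^!\rJ^n$, so $k^!\rK^\bu$ is the coproduct totalization of a bounded-below complex of complexes whose associated single complex computes $k^!f^*\rJ^\bu$; by \cite[Lemma~2.1]{Psemi} the cone of $k^!f^*\rJ^\bu\rarrow k^!\rK^\bu$ is coacyclic, which is exactly the required adaptedness. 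Chaining the identifications $g^*\boR i^!\rM^\bu\simeq g^*i^!\rJ^\bu\simeq k^!f^*\rJ^\bu\simeq\boR k^!f^*\rJ^\bu\simeq\boR k^!f^*\rM^\bu$ then yields the desired natural isomorphism, naturality in $\rM^\bu$ being routine. The main obstacle I anticipate is the globalization bookkeeping in the vanishing lemma --- tracking the $\Gamma$-system structure and the compatibility of the restriction functors across the pullback square, as in Lemma~\ref{torsion-subscheme-support-external-product-exactness} --- rather than the affine computation, which is the standard flat base change for $\boR\Hom$ with a finitely presented first argument.
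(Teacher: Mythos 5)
Your proposal is correct and follows essentially the same route as the paper's proof: representing the object by a complex of injectives, invoking the underived commutation of Lemma~\ref{torsion-reasonable-shriek-flat-star-commutation}, proving the key vanishing statement by the affine flat base-change computation for Ext (the paper's Lemma~\ref{flat-extension-of-scalars-Ext}) globalized through the $\Gamma$-system bookkeeping, and concluding by the Cartan--Eilenberg coproduct-totalization argument via~\cite[Lemma~2.1]{Psemi}. The vanishing lemma you formulate is precisely the paper's Lemma~\ref{torsion-support-flat-pullback-exactness}, established there by the same two-step reduction (closed subschemes, then the affine case) that you describe.
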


 The underived version of the natural isomorphism from
Proposition~\ref{derived-supports-flat-pullback-prop} is provided by
Lemma~\ref{torsion-reasonable-shriek-flat-star-commutation}.
 Given the underived version, the assertion of the proposition follows
almost immediately from the next lemma.

\begin{lem} \label{derived-supports-flat-pullback-lemma}
 Let $\rJ^\bu$ be a complex of injective quasi-coherent torsion sheaves
on\/ $\X$, and let $r\:f^*\rJ^\bu\rarrow\rK^\bu$ be a morphism of
complexes of quasi-coherent torsion sheaves on\/ $\Y$ such $\rK^\bu$
is a complex of injective quasi-coherent torsion sheaves and
the cone of\/~$r$ is a coacyclic complex of quasi-coherent torsion
sheaves on\/~$\Y$.
 Then the induced morphism of complexes of quasi-coherent torsion
sheaves on\/~$\W$
$$
 k^!(r)\:k^!f^*\rJ^\bu\lrarrow k^!\rK^\bu
$$
has coacyclic cone.
\end{lem}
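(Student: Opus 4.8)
The plan is to follow closely the proof of Lemma~\ref{derived-supports-external-product-lemma}, replacing the external-tensor-product computations of Section~\ref{derived-supports-subsecn} by a flat-base-change computation. Since, given $\rJ^\bu$, the pair $(\rK^\bu,r)$ is determined uniquely up to homotopy equivalence by Proposition~\ref{coderived-and-homotopy-of-injectives}(a), and $k^!$ takes homotopy equivalences to homotopy equivalences, it suffices to establish the assertion for one convenient choice of $(\rK^\bu,r)$. I would use the resolution furnished by the construction underlying the proof of Proposition~\ref{coderived-and-homotopy-of-injectives}(b): build a bounded-below complex of complexes $\rL^{\bu,\bu}$ of injective quasi-coherent torsion sheaves on $\Y$ together with a quasi-isomorphism $f^*\rJ^\bu\rarrow\rL^{\bu,\bu}$ of complexes of complexes, where in each cohomological degree $n$ the complex $\rL^{\bu,n}$ is an injective resolution of the single quasi-coherent torsion sheaf $f^*\rJ^n$ on $\Y$; then $\rK^\bu$ is the coproduct totalization of $\rL^{\bu,\bu}$.

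The key step is the analogue of Lemma~\ref{torsion-support-external-product-exactness}: for an injective quasi-coherent torsion sheaf $\rJ$ on $\X$ and an injective resolution $f^*\rJ\rarrow\rK^\bu$ on $\Y$, one has a natural isomorphism $H^0(k^!\rK^\bu)\simeq k^!f^*\rJ\simeq g^*i^!\rJ$ and $H^n(k^!\rK^\bu)=0$ for all $n>0$. The computation of $H^0$ is immediate: the functor $k^!$ is left exact, being a right adjoint, so $H^0(k^!\rK^\bu)\simeq k^!f^*\rJ$, and the second isomorphism is Lemma~\ref{torsion-reasonable-shriek-flat-star-commutation}. For the higher vanishing I would repeat the $\Gamma$-system argument proving Lemma~\ref{torsion-subscheme-support-external-product-exactness}: choosing a representation $\X=\ilim_{\gamma\in\Gamma}X_\gamma$ by closed immersions of Noetherian schemes and the induced representations of $\Y$, $\Z$, and $\W$, one shows by induction on the cohomological degree that the exact sequence $0\rarrow f^*\rJ\rarrow\rK^0\rarrow\rK^1\rarrow\dotsb$ of quasi-coherent torsion sheaves on $\Y$ remains exact after applying the restriction functor $\bM\longmapsto\bM|_\Gamma$; combined with the scheme-level higher vanishing this yields $H^n(k^!\rK^\bu)=0$ for $n>0$. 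The reduction to the scheme level, and then to the affine level, is local and standard.

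The affine statement is the flat-base-change analogue of Lemma~\ref{Ext-of-external-tensor-product-modules}, and is where the injectivity of $\rJ$ and the flatness of $f$ enter. Concretely, let $R\rarrow S$ be a homomorphism of Noetherian commutative rings with $S$ flat over $R$ (this is $f$), let $I\subset R$ be an ideal (the kernel of $R\rarrow R/I$ defining $i$), and let $J$ be an injective $R$-module. Resolving the finitely generated $R$-module $R/I$ by finite free $R$-modules $P_\bu$ and using flatness of $S$, the complex $S\ot_R P_\bu$ is a finite free resolution of $S/IS=S\ot_R(R/I)$, whence a Hom-tensor adjunction gives $\Hom_S(S\ot_R P_\bu,\,S\ot_R J)\simeq S\ot_R\Hom_R(P_\bu,J)$ and therefore $\Ext^n_S(S/IS,\,S\ot_R J)\simeq S\ot_R\Ext^n_R(R/I,J)$, which vanishes for $n>0$ since $J$ is injective. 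With the key step in hand, I would conclude exactly as in Lemma~\ref{derived-supports-external-product-lemma}: since $k^!$ preserves coproducts ($k$ being a reasonable closed immersion of ind-Noetherian ind-schemes), it commutes with the coproduct totalization, so the cone of $k^!(r)$ is the coproduct totalization of the augmented complex of complexes $0\rarrow k^!f^*\rJ^\bu\rarrow k^!\rL^{0,\bu}\rarrow k^!\rL^{1,\bu}\rarrow\dotsb$; by the key step this is an acyclic bounded-below complex of complexes, so its coproduct totalization is coacyclic by~\cite[Lemma~2.1]{Psemi}.

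The main obstacle is the higher-vanishing step, and within it specifically the passage from schemes to ind-schemes. The subtlety, exactly as in Lemma~\ref{torsion-subscheme-support-external-product-exactness}, is that the restriction functor $\bM\longmapsto\bM|_\Gamma$ is only left exact, so one cannot simply restrict the injective resolution and conclude that it stays exact; the degreewise induction, showing that each cokernel $\Gamma$-system is in fact the restriction of a genuine quasi-coherent torsion sheaf so that exactness propagates, is the delicate point. By contrast, the affine flat-base-change computation is routine, and the identification of $H^0$ together with the final coacyclicity argument are formal.
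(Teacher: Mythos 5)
Your proof is correct and is essentially the paper's own argument: the same reduction to a specific choice of $\rK^\bu$ via Proposition~\ref{coderived-and-homotopy-of-injectives}, the same bicomplex resolution $\rL^{\bu,\bu}$ totalized by coproducts, the same key vanishing statement (the paper's Lemma~\ref{torsion-support-flat-pullback-exactness}, proved there via the $\Gamma$\+system induction of Lemma~\ref{torsion-subscheme-support-flat-pullback-exactness} and the affine flat-base-change Ext computation of Lemma~\ref{flat-extension-of-scalars-Ext}), and the same conclusion using the preservation of coproducts by~$k^!$ together with~\cite[Lemma~2.1]{Psemi}. The only difference is organizational: the paper isolates these supporting steps as separate lemmas preceding the proof, whereas you rederive them inline.
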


 The proof of Lemma~\ref{derived-supports-flat-pullback-lemma}
will be given below near the end of
Section~\ref{derived-supports-pullback-subsecn}.

\begin{lem} \label{flat-extension-of-scalars-Ext}
 Let $R$ be a Noetherian commutative ring and $R\rarrow S$ be
a homomorphism of commutative rings such that $S$ is a flat $R$\+module.
 Let $M$ be a finitely generated $R$\+module and $N$ be an $R$\+module.
 Then for every $n\ge0$ there is a natural isomorphism of $S$\+modules
$$
 \Ext_S^n(S\ot_RM,\>S\ot_RN)\simeq S\ot_R\Ext_R^n(M,N).
$$
 In particular, for any injective $R$\+module $J$ one has\/
$\Ext_S^n(S\ot_RM,\>S\ot_RJ)=0$ for all $n>0$.
\end{lem}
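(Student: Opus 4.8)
The plan is to reduce everything to a standard projective-resolution computation by exploiting the flatness of $S$ over $R$ and the finite generation of~$M$. First I would choose a resolution $P_\bu\rarrow M$ of the $R$\+module $M$ by finitely generated projective (equivalently, finitely generated free) $R$\+modules; this is possible because $R$ is Noetherian and $M$ is finitely generated, so all syzygies are again finitely generated. Applying the functor $S\ot_R{-}$ to $P_\bu$ produces a complex $S\ot_RP_\bu$ of finitely generated projective $S$\+modules, and since $S$ is a flat $R$\+module, this complex is a resolution of $S\ot_RM$. Thus $S\ot_RP_\bu\rarrow S\ot_RM$ is a legitimate projective resolution computing $\Ext_S^n(S\ot_RM,\>S\ot_RN)$.

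The key step is then the natural isomorphism of complexes of $S$\+modules
$$
 \Hom_S(S\ot_RP_\bu,\>S\ot_RN)\simeq S\ot_R\Hom_R(P_\bu,N).
$$
For each individual term this is the elementary adjunction/base-change identity $\Hom_S(S\ot_RP,\>S\ot_RN)\simeq\Hom_R(P,\>S\ot_RN)\simeq S\ot_R\Hom_R(P,N)$, valid because $P$ is a finitely generated projective (hence finitely presented) $R$\+module, so that $\Hom_R(P,{-})$ commutes with the flat base change $S\ot_R{-}$. I would check that these termwise isomorphisms are compatible with the differentials, which is a routine naturality verification. Having identified the two complexes, I would take cohomology. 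Because $S$ is flat over~$R$, the functor $S\ot_R{-}$ is exact and therefore commutes with taking cohomology of the complex $\Hom_R(P_\bu,N)$, yielding
$$
 \Ext_S^n(S\ot_RM,\>S\ot_RN)\simeq H^n(S\ot_R\Hom_R(P_\bu,N))
 \simeq S\ot_R H^n(\Hom_R(P_\bu,N))=S\ot_R\Ext_R^n(M,N),
$$
which is exactly the claimed isomorphism. The final assertion about injective $J$ is then immediate: if $J$ is an injective $R$\+module then $\Ext_R^n(M,J)=0$ for all $n>0$, so $\Ext_S^n(S\ot_RM,\>S\ot_RJ)\simeq S\ot_R\Ext_R^n(M,J)=0$.

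The only genuinely delicate point---which I would flag as the main obstacle---is making sure that $\Hom_R(P,{-})$ commutes with $S\ot_R{-}$, i.e.\ that the natural map $S\ot_R\Hom_R(P,N)\rarrow\Hom_S(S\ot_RP,\>S\ot_RN)$ is an isomorphism; this requires $P$ to be finitely presented over~$R$, which is where the Noetherian hypothesis on $R$ and the finite generation of $M$ enter decisively (without them the natural map need not be bijective for infinitely generated modules). Everything else is formal: the flatness of $S$ guarantees both that $S\ot_RP_\bu$ is still a resolution and that $S\ot_R{-}$ commutes with the cohomology. I would remark, as the statement itself notes, that commutativity of $R$ and $S$ plays no role in this argument.
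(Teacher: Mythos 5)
Your proposal is correct and follows essentially the same route as the paper: a resolution of $M$ by finitely generated projective $R$\+modules, base change along the flat map $R\rarrow S$ to get a projective resolution of $S\ot_RM$, the termwise adjunction/base-change isomorphism $\Hom_S(S\ot_RP,\>S\ot_RN)\simeq\Hom_R(P,\>S\ot_RN)\simeq S\ot_R\Hom_R(P,N)$, and flatness of $S$ again to commute $S\ot_R{-}$ with cohomology. Your identification of the finite presentation requirement as the crux, and the remark that commutativity is irrelevant, both match the paper's own commentary.
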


\begin{proof}
 The assumption of commutativity of $S$ can be actually dropped (then
one obtains $S$\+$S$\+bimodule isomorphisms).
 We observe that for any finitely generated $R$\+module $P$ there are
natural isomorphisms of $S$\+modules $\Hom_S(S\ot_RP,\>S\ot_RN)\simeq
\Hom_R(P,\>S\ot_RN)\simeq S\ot_R\Hom_R(P,N)$.
 Now let $P_\bu\rarrow M$ be a resolution of $M$ by finitely generated
projective $R$\+modules.
 The $S\ot_RP_\bu\rarrow S\ot_RM$ is a resolution of $S\ot_RM$ by
finitely generated projective $S$\+modules.
 The isomorphism of complexes of $S$\+modules $\Hom_S(S\ot_RP_\bu,\>
S\ot_RN)\simeq S\ot_R\Hom_R(P_\bu,N)$ induces the desired isomorphism
for the Ext modules.
\end{proof}

\begin{lem} \label{support-flat-pullback-exactness}
 Let $f\:Y\rarrow X$ be a flat morphism of Noetherian schemes, and
let $i\:Z\rarrow X$ be a closed immersion of schemes.
 Put $W=Z\times_XY$, and denote by $g\:W\rarrow Z$ and $k\:W\rarrow Y$
the natural morphisms.
 Let $\J$ be an injective quasi-coherent sheaf on $X$ and $f^*\J
\rarrow\K^\bu$ be an injective resolution of the quasi-coherent sheaf
$f^*\J$ on~$Y$.
 Then one has
$$
 H^0(k^!\K^\bu)\simeq g^*i^!\J
 \quad\text{and}\quad
 H^n(k^!\K^\bu)=0 \text{ \ for\/ $n>0$}.
$$
\end{lem}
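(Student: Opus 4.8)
The plan is to follow the pattern of Lemma~\ref{support-external-product-exactness}, computing $H^0$ by a general adjunction argument and proving the higher vanishing by reduction to the affine module statement of Lemma~\ref{flat-extension-of-scalars-Ext}. First I would compute $H^0$. Since $k\:W\rarrow Y$ is a closed immersion into a Noetherian (hence reasonable) scheme, the quasi-coherent functor $k^!\:Y\qcoh\rarrow W\qcoh$ is right adjoint to the exact functor $k_*$ and is therefore left exact. Applying $k^!$ termwise to the left-exact initial segment $0\rarrow f^*\J\rarrow\K^0\rarrow\K^1$ of the injective resolution yields $H^0(k^!\K^\bu)\simeq k^!f^*\J$. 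Now $Z\subset X$ is a reasonable closed subscheme of the Noetherian scheme $X$ and $f$ is flat, so Lemma~\ref{reasonable-shriek-flat-star-commutation} provides a natural isomorphism $k^!f^*\simeq g^*i^!$, giving $H^0(k^!\K^\bu)\simeq g^*i^!\J$ as required.

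For the vanishing of $H^n(k^!\K^\bu)$ in degrees $n>0$, I would argue that the assertion is local in $X$ (and hence in $Z$, $Y$, and $W$). This is legitimate because all the schemes in sight are Noetherian, so injectivity of quasi-coherent sheaves is a local property, and the quasi-coherent $k^!$ for the reasonable closed immersion $k$ agrees with the $\cO$-module functor $\cHom_{\cO_Y}(k_*\cO_W,{-})$, which commutes with restriction to affine open subschemes. Thus both $\K^\bu$ and the formation of $k^!\K^\bu$ together with its cohomology are compatible with passing to affine opens of $Y$ lying over affine opens of $X$. On such a chart $U=\Spec R\subset X$, $V=\Spec S\subset Y$ with $S$ a flat $R$-module, the closed subscheme becomes $\Spec R/I$ for a (finitely generated) ideal $I\subset R$, and $W\cap V=\Spec S/IS$ with $S/IS\simeq S\ot_R R/I$. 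The injective sheaf $\J|_U$ corresponds to an injective $R$-module $J$, so $f^*\J|_V$ corresponds to $S\ot_R J$, and $k^!\K^\bu|_{W\cap V}$ computes $\Ext_S^\bu(S/IS,\>S\ot_R J)=\Ext_S^\bu(S\ot_R R/I,\>S\ot_R J)$.

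Finally I would invoke Lemma~\ref{flat-extension-of-scalars-Ext} with $M=R/I$ (finitely generated, as $R/I$ is cyclic) and $N=J$: it gives a natural isomorphism $\Ext_S^n(S\ot_R R/I,\>S\ot_R J)\simeq S\ot_R\Ext_R^n(R/I,J)$. Since $J$ is injective, this vanishes for $n>0$, proving the desired higher vanishing; in degree zero it recovers $H^0\simeq S\ot_R\Hom_R(R/I,J)\simeq S/IS\ot_{R/I}\Hom_R(R/I,J)$, which corresponds to $g^*i^!\J$ and so matches the adjunction computation above.

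The step I expect to be the only real point requiring care is the reduction to the affine case in the second paragraph: one must confirm that restricting the injective resolution $\K^\bu$ of $f^*\J$ to affine opens still correctly computes $k^!\K^\bu$ and its cohomology sheaves. This is precisely the locality argument already used in the proof of Lemma~\ref{support-external-product-exactness}, resting on the local nature of injectivity over Noetherian schemes and the local description of the quasi-coherent $k^!$ for a reasonable closed immersion, so I anticipate no genuine difficulty. The remaining ingredients—the adjunction for $H^0$, the commutation $k^!f^*\simeq g^*i^!$, and the affine $\Ext$ computation—are supplied directly by Lemmas~\ref{reasonable-shriek-flat-star-commutation} and~\ref{flat-extension-of-scalars-Ext}.
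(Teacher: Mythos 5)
Your proposal is correct and follows essentially the same route as the paper's own proof: the $H^0$ computation via left exactness of~$k^!$ combined with Lemma~\ref{reasonable-shriek-flat-star-commutation}, and the higher vanishing by localizing to affine charts (using that injectivity is local over Noetherian schemes) and applying Lemma~\ref{flat-extension-of-scalars-Ext} to the cyclic module $R/I$, which is exactly the paper's reduction with $T=R/I$. The extra care you take in justifying the locality of $k^!$ via its agreement with $\cHom_{\cO_Y}(k_*\cO_W,{-})$ for a reasonable closed immersion is sound and merely spells out what the paper leaves implicit.
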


\begin{proof}
 To compute $H^0(k^!\K^\bu)$, one can observe that the functor~$k^!$
is left exact (as a right adjoint), so $H^0(k^!\K^\bu)\simeq k^!f^*\J
\simeq g^*i^!\J$ by Lemma~\ref{reasonable-shriek-flat-star-commutation}.
 The cohomology vanishing assertion is local in $X$ and in $Y$ (since
injectivity of a quasi-coherent sheaf on a Noetherian scheme is a local
property), so it reduces to the case of affine schemes, for which it
means the following.

 Let $R\rarrow S$ be a homomorphism of Noetherian commutative rings
such that $S$ is a flat $R$\+module, and let $R\rarrow T$ be
a surjective homomorphism of commutative rings.
 Let $J$ be an injective $R$\+module, and let $K^\bu$ be an injective
resolution of the $S$\+module $S\ot_RJ$.
 Then the complex $\Hom_S(S\ot_RT,\>K^\bu)$ has vanishing cohomology
in the positive cohomological degrees.
 This is a particular case of Lemma~\ref{flat-extension-of-scalars-Ext}.
\end{proof}

\begin{lem} \label{torsion-subscheme-support-flat-pullback-exactness}
 Let $f\:\Y\rarrow\X$ be a flat morphism of ind-Noetherian ind-schemes,
and let $Z\subset\X$ be a closed subscheme with the closed immersion
morphism $i\:Z\rarrow\X$.
 Put $W=Z\times_\X\Y$ (so $W$ is a closed subscheme in\/~$\Y$), and
denote by $f_Z\:W\rarrow Z$ and $k\:W\rarrow\Y$ the natural morphisms.
 Let $\rJ$ be an injective quasi-coherent torsion sheaf on\/ $\X$
and $f^*\rJ\rarrow\rK^\bu$ be an injective resolution of
the quasi-coherent torsion sheaf $f^*\rJ$ on\/~$\Y$.
 Then one has
$$
 H^0(k^!\rK^\bu)\simeq f_Z^*i^!\rJ
 \quad\text{and}\quad
 H^n(k^!\rK^\bu)=0 \text{ \ for\/ $n>0$}.
$$
\end{lem}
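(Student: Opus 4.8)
The plan is to follow the template of Lemma~\ref{torsion-subscheme-support-external-product-exactness} essentially verbatim, replacing the external-product input Lemma~\ref{support-external-product-exactness} by the flat-pullback input Lemma~\ref{support-flat-pullback-exactness}, and the external-product commutation Lemma~\ref{shriek-of-external-product} by the flat-base commutation Lemma~\ref{reasonable-shriek-flat-star-commutation}. First I would fix a representation $\X=\ilim_{\gamma\in\Gamma}X_\gamma$ by closed immersions of Noetherian schemes, put $Y_\gamma=\Y\times_\X X_\gamma$ (so $\Y=\ilim_{\gamma\in\Gamma}Y_\gamma$ and the $f_\gamma\:Y_\gamma\rarrow X_\gamma$ are flat morphisms of Noetherian schemes), and choose $\gamma_0$ with $Z\subset X_{\gamma_0}$; for $\gamma\ge\gamma_0$ one has $W=Z\times_\X\Y=Z\times_{X_\gamma}Y_\gamma$, a closed subscheme of $Y_\gamma$ with closed immersion $k_\gamma\:W\rarrow Y_\gamma$. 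The computation of $H^0$ is immediate: since $k^!$ is left exact (being a right adjoint), $H^0(k^!\rK^\bu)\simeq k^!(f^*\rJ)\simeq f_Z^*\,i^!\rJ$ by Lemma~\ref{torsion-reasonable-shriek-flat-star-commutation}.

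The substance is the vanishing $H^n(k^!\rK^\bu)=0$ for $n>0$, and here the main obstacle is exactly the one faced in Lemma~\ref{torsion-subscheme-support-external-product-exactness}: the component functor $\rM\mapsto\rM_{(Y_\gamma)}=i_{Y_\gamma}^!\rM$ is only left exact, so the termwise restriction of the injective resolution $0\rarrow f^*\rJ\rarrow\rK^0\rarrow\rK^1\rarrow\dotsb$ to the scheme $Y_\gamma$ need not be exact a priori. The central step would be to show that this sequence remains exact after applying the functor $\rM\mapsto\rM|_\Gamma\:\Y\tors\rarrow(\Y,\Gamma)\syst$. I would argue by induction on the cohomological degree: writing $\boM^1$ for the cokernel of $(f^*\rJ)|_\Gamma\rarrow\rK^0|_\Gamma$ in $(\Y,\Gamma)\syst$, I would check that for every $\gamma\le\delta$ the structure map $\boM^1_{(\gamma)}\rarrow k_{\gamma\delta}^!\boM^1_{(\delta)}$ is an isomorphism, whence $\boM^1=(\boM^1{}^+)|_\Gamma$ is the restriction of a genuine torsion sheaf and the short exact sequence $0\rarrow f^*\rJ\rarrow\rK^0\rarrow\boM^1{}^+\rarrow0$ survives $(-)|_\Gamma$; one then repeats with $\boM^2$, and so on.

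The crucial input at each stage is the higher-cohomology vanishing supplied by Lemma~\ref{support-flat-pullback-exactness}. Concretely, $\rK^n_{(Y_\delta)}$ is injective by Proposition~\ref{torsion-injectives-characterized}(a), and $(f^*\rJ)_{(Y_\delta)}\simeq f_\delta^*\bigl(\rJ_{(X_\delta)}\bigr)$ by Remark~\ref{simplified-flat-torsion-inverse-image}, where $\rJ_{(X_\delta)}=i_{X_\delta}^!\rJ$ is injective on $X_\delta$ by Proposition~\ref{torsion-injectives-characterized}(b). Applying Lemma~\ref{support-flat-pullback-exactness} to the flat morphism $f_\delta\:Y_\delta\rarrow X_\delta$, the closed subscheme $X_\gamma\subset X_\delta$, and the injective sheaf $\rJ_{(X_\delta)}$, I obtain $\boR^m k_{\gamma\delta}^!\bigl(f_\delta^*\rJ_{(X_\delta)}\bigr)=0$ for $m>0$; the long exact sequence of $k_{\gamma\delta}^!$ applied to $0\rarrow(f^*\rJ)_{(Y_\delta)}\rarrow\rK^0_{(Y_\delta)}\rarrow\boM^1_{(\delta)}\rarrow0$ (with $\rK^0_{(Y_\delta)}$ injective) then shows this sequence stays exact under $k_{\gamma\delta}^!$. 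Combined with the identifications $k_{\gamma\delta}^!(f^*\rJ)_{(Y_\delta)}\simeq(f^*\rJ)_{(Y_\gamma)}$ (from Lemma~\ref{reasonable-shriek-flat-star-commutation} together with transitivity of $i^!$) and $k_{\gamma\delta}^!\rK^0_{(Y_\delta)}\simeq\rK^0_{(Y_\gamma)}$, this yields that $\boM^1_{(\gamma)}\rarrow k_{\gamma\delta}^!\boM^1_{(\delta)}$ is an isomorphism, closing the induction step.

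Once this reduction is established, the sequence $0\rarrow(f^*\rJ)_{(Y_\gamma)}\rarrow\rK^0_{(Y_\gamma)}\rarrow\rK^1_{(Y_\gamma)}\rarrow\dotsb$ is exact in $Y_\gamma\qcoh$, so $\rK^\bu_{(Y_\gamma)}$ is an honest injective resolution of $f_\gamma^*\rJ_{(X_\gamma)}$. For $\gamma\ge\gamma_0$ I would then conclude by invoking Lemma~\ref{support-flat-pullback-exactness} once more, now for $f_\gamma\:Y_\gamma\rarrow X_\gamma$ and the closed subscheme $Z\subset X_\gamma$: since $k^!\rK^\bu=k_\gamma^!\bigl(\rK^\bu_{(Y_\gamma)}\bigr)$, the lemma delivers both $H^0(k^!\rK^\bu)\simeq f_Z^*\,i^!\rJ$ and $H^n(k^!\rK^\bu)=0$ for $n>0$. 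I expect the whole difficulty to be concentrated in the $\Gamma$-system reduction of the second paragraph, and specifically in verifying that the restriction functors carry the chosen injective resolution to a degreewise resolution; the final paragraph and the $H^0$ computation are then formal consequences of the already-proved scheme-level statement and the flat base-change commutation isomorphisms.
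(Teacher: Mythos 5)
Your proposal is correct and follows essentially the same route as the paper's own proof: the same $\Gamma$-system reduction showing that the injective resolution stays exact after applying $\rN\mapsto\rN|_\Gamma$, by induction on cohomological degree with Lemma~\ref{support-flat-pullback-exactness} as the key scheme-level input at each step, modelled on the proof of Lemma~\ref{torsion-subscheme-support-external-product-exactness}. The only (harmless) deviations are cosmetic: the paper normalizes $Z=X_{\gamma_0}$ so that the conclusion drops out of the restricted exact sequence directly, whereas you keep $Z\subset X_{\gamma_0}$ and apply the scheme-level lemma once more at the end, and you unwind via the long exact sequence of $\boR^\bu k_{\gamma\delta}^!$ what the paper cites in one line.
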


\begin{proof}
 The computation of $H^0$ is similar to the one in
Lemma~\ref{support-flat-pullback-exactness}
(use Remark~\ref{simplified-flat-torsion-inverse-image}).
 To prove the higher cohomology vanishing, choose an inductive system
of closed immersions of schemes $(X_\gamma)_{\gamma\in\Gamma}$
representing the ind-scheme $\X$, so
$\X=\ilim_{\gamma\in\Gamma}X_\gamma$.
 We can always assume that there exists $\gamma_0\in\Gamma$ such that
$Z=X_{\gamma_0}$.
 Put $Y_\gamma=X_\gamma\times_\X\Y$; then $\Y=\ilim_{\gamma\in\Gamma}
Y_\gamma$ is a representation of $\Y$ by an inductive system of
closed immersions of schemes.
 Let $f_\gamma\:Y_\gamma\rarrow X_\gamma$ denote the natural morphism.

 Our aim is to show that the exact sequence of quasi-coherent torsion
sheaves $0\rarrow f^*\rJ\rarrow\rK^0\rarrow\rK^1\rarrow\rK^2\rarrow
\dotsb$ remains exact after applying the functor $\rN\longmapsto
\rN|_\Gamma\:\Y\tors\rarrow(\Y,\Gamma)\syst$.
 Notice that, by Remark~\ref{simplified-flat-torsion-inverse-image},
we have $(f^*\rJ)|_\Gamma\simeq f^*(\rJ|_\Gamma)$, or in other
words, $(f^*\rJ)_{(Y_\gamma)}=f_\gamma^*\rJ_{(X_\gamma)}$.

 Denote by $\boN^1\in(\Y,\Gamma)\syst$ the cokernel of the morphism
of $\Gamma$\+systems $(f^*\rJ)|_\Gamma\allowbreak\rarrow\rK^0|_\Gamma$.
 Let $\gamma<\delta\in\Gamma$ be a pair of indices.
 Denote the related transition maps in the inductive systems by
$i_{\gamma\delta}\:X_\gamma\rarrow X_\delta$ and
$k_{\gamma\delta}\:Y_\gamma\rarrow Y_\delta$.

 The quasi-coherent sheaves $\rK^n_{(Y_\delta)}$ on the scheme
$Y_\delta$ are injective for all $n\ge0$, since the quasi-coherent
torsion sheaves $\rK^n$ on the ind-scheme $\Y$ are injective.
 By Lemma~\ref{support-flat-pullback-exactness}, the short exact
sequence $0\rarrow f_\delta^*\rJ_{(X_\delta)}\rarrow\rK^0_{(Y_\delta)}
\rarrow\boN^1_{(\delta)}\rarrow0$ remains exact after applying
the functor~$k_{\gamma\delta}^!$.
 Hence the structure map $\boN^1_{(\gamma)}\rarrow k_{\gamma\delta}^!
\boN^1_{(\delta)}$ in the $\Gamma$\+system $\boN^1$ is an isomorphism
(of quasi-coherent sheaves on~$Y_\gamma$).

 As this holds for all $\gamma<\delta\in\Gamma$, we can conclude that
the collection of quasi-coherent sheaves $\rN^1_{(Y_\gamma)}=
\boN^1_{(\gamma)}$, \,$\gamma\in\Gamma$ defines a quasi-coherent
torsion sheaf $\rN^1$ on~$\Y$.
 So we have $\boN^1=\rN^1|_\Gamma$ and $\rN^1=\boN^1{}^+$; in other
words, this means that the adjunction morphism $\boN^1\rarrow
\boN^1{}^+|_\Gamma$ is an isomorphism of $\Gamma$\+systems.
 Notice that the quasi-coherent torsion sheaf $\boN^1{}^+$ on $\Y$
is, by the definition, the cokernel of the monomorphism of
quasi-coherent torsion sheaves $f^*\rJ\rarrow\rK^0$.
 We have shown that the short exact sequence of quasi-coherent
torsion sheaves $0\rarrow f^*\rJ\rarrow\rK^0\rarrow\boN^1{}^+\rarrow0$
on $\Y$ remains exact after applying the functor $\rN\longmapsto
\rN|_\Gamma$.

 The argument finishes similarly to the proof of
Lemma~\ref{torsion-subscheme-support-external-product-exactness},
proceeding step by step up the cohomological degree and using
Lemma~\ref{support-flat-pullback-exactness} on every step.
\end{proof}

\begin{lem} \label{torsion-support-flat-pullback-exactness}
 Let $f\:\Y\rarrow\X$ be a flat morphism of ind-Noetherian ind-schemes,
and let $i\:\Z\subset\X$ be a closed immersion of schemes.
 Put $\W=\Z\times_\X\Y$, and denote by $g\:\W\rarrow\Z$ and
$k\:\W\rarrow\Y$ the natural morphisms.
 Let $\rJ$ be an injective quasi-coherent torsion sheaf on\/ $\X$
and $f^*\rJ\rarrow\rK^\bu$ be an injective resolution of
the quasi-coherent torsion sheaf $f^*\rJ$ on\/~$\Y$.
 Then one has
$$
 H^0(k^!\rK^\bu)\simeq g^*i^!\rJ
 \quad\text{and}\quad
 H^n(k^!\rK^\bu)=0 \text{ \ for\/ $n>0$}.
$$
\end{lem}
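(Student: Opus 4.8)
The plan is to imitate the proof of Lemma~\ref{torsion-support-external-product-exactness}, deducing the assertion for the closed immersion of ind-schemes $i\:\Z\rarrow\X$ from its already-proven special case for closed \emph{subschemes}, Lemma~\ref{torsion-subscheme-support-flat-pullback-exactness}. Two structural observations come first. Since $\rK^\bu$ is a complex of injective quasi-coherent torsion sheaves on $\Y$ and $k\:\W\rarrow\Y$ is a closed immersion of ind-schemes, Proposition~\ref{torsion-injectives-characterized}(a) shows that $k^!\rK^\bu$ is itself a complex of injective quasi-coherent torsion sheaves on $\W$; and, being a right adjoint, the functor $k^!$ is left exact.

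The degree-zero computation is then immediate: as $\rK^\bu$ resolves $f^*\rJ$, we have $f^*\rJ=\ker(\rK^0\rarrow\rK^1)$, and left exactness of $k^!$ gives $H^0(k^!\rK^\bu)=k^!\ker(\rK^0\rarrow\rK^1)=k^!f^*\rJ$. First I would invoke the ind-scheme flat base-change isomorphism $k^!f^*\simeq g^*i^!$ of Lemma~\ref{torsion-reasonable-shriek-flat-star-commutation} (equivalently Remark~\ref{simplified-flat-torsion-inverse-image}) to identify this with $g^*i^!\rJ$.

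For the higher vanishing I would reduce to reasonable closed subschemes. Every reasonable closed subscheme $Z\subset\Z$ is, via $i$, a reasonable closed subscheme of $\X$, and $W=Z\times_\X\Y$ is a reasonable closed subscheme of both $\W$ and $\Y$; such $W$ are cofinal among the reasonable closed subschemes of $\W$. Writing $m_W\:W\rarrow\W$ for the closed immersion, the composite $km_W\:W\rarrow\Y$ is precisely the natural closed immersion of $W=Z\times_\X\Y$ into $\Y$ to which Lemma~\ref{torsion-subscheme-support-flat-pullback-exactness} applies (with the closed subscheme $Z\subset\X$). That lemma therefore shows that $(k^!\rK^\bu)_{(W)}=m_W^!k^!\rK^\bu=(km_W)^!\rK^\bu$ is a complex of quasi-coherent sheaves on $W$ with vanishing cohomology in the positive degrees.

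The hard part will be the passage from this component-wise vanishing to the vanishing of the torsion sheaves $H^n(k^!\rK^\bu)$ for $n>0$: the restriction functors $({-})_{(W)}=m_W^!$ are only left exact, so one cannot simply commute them past the cohomology of a complex. I would resolve this on the level of $\Gamma$-systems, exactly as in the reduction of Lemma~\ref{torsion-support-external-product-exactness} to Lemma~\ref{torsion-subscheme-support-external-product-exactness}. Choosing a representation $\W=\ilim_\gamma W_\gamma$ by closed subschemes of the form above, the previous step says precisely that the complex of $\Gamma$-systems $(k^!\rK^\bu)|_\Gamma$ has vanishing cohomology in the positive degrees, the component functors on $(\W,\Gamma)\syst$ being exact (proof of Proposition~\ref{Gamma-systems-abelian}). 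Now the functor $({-})^+\:(\W,\Gamma)\syst\rarrow\W\tors$ is exact (proof of Theorem~\ref{torsion-sheaves-abelian} via Proposition~\ref{Giraud-subcategory-abstract-prop}) and the counit $(\rN|_\Gamma)^+\rarrow\rN$ is an isomorphism (Lemma~\ref{torsion-sheaves-as-Giraud-subcat}); since $({-})^+$ is exact it commutes with $H^n$, whence $H^n(k^!\rK^\bu)\simeq(H^n((k^!\rK^\bu)|_\Gamma))^+=0$ for $n>0$. I expect the only genuine work to be writing this $\Gamma$-system bookkeeping cleanly.
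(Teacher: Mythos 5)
Your proof is correct and follows essentially the same route as the paper's: the $H^0$ identification via left exactness of $k^!$ together with Lemma~\ref{torsion-reasonable-shriek-flat-star-commutation}, and the higher cohomology vanishing by reduction to the closed-subscheme case of Lemma~\ref{torsion-subscheme-support-flat-pullback-exactness} applied to the closed subschemes $Z\subset\Z\subset\X$ and $W=Z\times_\X\Y$. The only difference is one of detail: where the paper compresses the passage from the componentwise vanishing $H^n(m_W^!k^!\rK^\bu)=0$ to the vanishing $H^n(k^!\rK^\bu)=0$ in $\W\tors$ into the words ``it follows,'' you correctly supply the implicit $\Gamma$\+system justification (componentwise computation of cohomology of $\Gamma$\+systems, exactness of $({-})^+$, and the counit isomorphism $(\rN|_\Gamma)^+\simeq\rN$).
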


\begin{proof}
 The computation of $H^0$ is similar to the one in
Lemmas~\ref{support-flat-pullback-exactness}
and~\ref{torsion-subscheme-support-flat-pullback-exactness}.
 The functor~$k^!$ is left exact as a right adjoint, and it remains
to use Lemma~\ref{torsion-reasonable-shriek-flat-star-commutation}.
 To prove the vanishing assertion, choose a closed subscheme
$Z\subset\Z$ with the closed immersion morphism $j\:Z\rarrow\Z$.
 Put $W=Z\times_\Z\W=Z\times_\X\Y$, and denote by $l\:W\rarrow\W$
the natural closed immersion.
 Then by Lemma~\ref{torsion-subscheme-support-flat-pullback-exactness}
we have $H^n(l^!k^!\rK^\bu)=0$ for $n>0$, and it follows that
$H^n(k^!\rK^\bu)=0$ for $n>0$ as well.
\end{proof}

\begin{proof}[Proof of Lemma~\ref{derived-supports-flat-pullback-lemma}]
 The argument is similar to the proof of
Lemma~\ref{derived-supports-external-product-lemma}.
 Given a complex $\rJ^\bu$ of quasi-coherent torsion sheaves on $\X$,
the related complex $\rK^\bu$ of quasi-coherent torsion sheaves on $\Y$
is defined uniquely up to a homotopy equivalence
(by Proposition~\ref{coderived-and-homotopy-of-injectives}(a)); so it
suffices to prove the assertion of the lemma for one specific choice
of the complex~$\rK^\bu$.
 We will use the complex $\rK^\bu$ provided by the construction on
which the proof of
Proposition~\ref{coderived-and-homotopy-of-injectives}(b) is based.

 Let $f^*\rJ^\bu\rarrow\rL^{0,\bu}$ be a monomorphism of complexes in
$\Y\tors$ such that $\rL^{0,\bu}$ is a complex of injective
quasi-coherent torsion sheaves.
 Denote by $\rN^{1,\bu}$ the cokernel of this morphism of complexes, and
let $\rN^{1,\bu}\rarrow\rL^{1,\bu}$ be a monomorphism of complexes in
which $\rL^{1,\bu}$ is a complex of injectives.
 Proceeding in this way, we construct a bounded below complex of
complexes of injective quasi-coherent torsion sheaves $\rL^{\bu,\bu}$
together with a quasi-isomorphism $f^*\rJ^\bu\rarrow\rL^{\bu,\bu}$ of
complexes of complexes in $\Y\tors$.
 In every cohomological degree~$n$, the complex $\rL^{\bu,n}$ is
an injective resolution of the quasi-coherent torsion sheaf $f^*\rJ^n$
on~$\Y$.
 The complex $\rK^\bu$ is then constructed by totalizing the bicomplex
$\rL^{\bu,\bu}$ using infinite coproducts along the diagonals.

 Recall that the functor $k^!\:\Y\tors\rarrow\W\tors$ preserves
coproducts.
 In every cohomological degree~$n$, applying~$k^!$ to the complex
$0\rarrow f^*\rJ^n\rarrow\rL^{0,n}\rarrow\rL^{1,n}\rarrow\dotsb$
produces an acyclic complex in $\W\tors$ by
Lemma~\ref{torsion-support-flat-pullback-exactness}.
 It remains to use the fact that the coproduct totalization of
an acyclic bounded below complex of complexes is a coacyclic
complex~\cite[Lemma~2.1]{Psemi}.
\end{proof}

\begin{proof}[Proof of
Proposition~\ref{derived-supports-flat-pullback-prop}]
 Let $\rM^\bu$ be a complex of quasi-coherent torsion sheaves on~$\X$.
 Choose a complex of injective quasi-coherent torsion sheaves $\rJ^\bu$
on $\X$ together with a morphism $\rM^\bu\rarrow\rJ^\bu$ with
a coacyclic cone.
 Then the cone of the morphism $f^*\rM^\bu\rarrow f^*\rJ^\bu$ is
a coacyclic complex of quasi-coherent torsion sheaves on~$\Y$.
 Choose a complex of injective quasi-coherent torsion sheaves $\rK^\bu$
on $\Y$ together with a morphism $f^*\rJ^\bu\rarrow\rK^\bu$ with
a coacyclic cone.
 By Lemma~\ref{derived-supports-flat-pullback-lemma}, the cone of
the morphism $k^!f^*\rJ^\bu\rarrow k^!\rK^\bu$ is a coacyclic complex
of quasi-coherent torsion sheaves on~$\W$.
 Thus the complex $k^!f^*\rJ^\bu$ represents the object
$\boR k^!\circ f^*(\rM^\bu)$ in the coderived category
$\sD^\co(\W\tors)$.

 On the other hand, the complex $i^!\rJ^\bu$ represents the object
$\boR i^!(\rM^\bu)\in\sD^\co(\Z\tors)$, hence the complex
$g^*i^!\rJ^\bu$ represents the object $g^*\circ\boR i^!(\rM^\bu)\in
\sD^\co(\W\tors)$.
 It remains to recall the isomorphism $g^*i^!\rJ^\bu\simeq
k^!f^*\rJ^\bu$ of complexes of quasi-coherent torsion sheaves on $\W$
provided by Lemma~\ref{torsion-reasonable-shriek-flat-star-commutation}.
\end{proof}

\subsection{Preservation of the semiderived equivalence}
 Let $\tau\:X'\rarrow X$ be a morphism of semi-separated Noetherian
schemes.
 Assume that the morphism~$\tau$ is of finite type and weakly smooth
of relative dimension~$\le d$.
 Consider the inverse image functor $\tau^*\:X\qcoh\rarrow X'\qcoh$.

\begin{lem} \label{scheme-weakly-smooth-pullback-of-dualizing}
 Let\/ $\D^\bu$ be a dualizing complex of quasi-coherent sheaves on~$X$.
 Choose a complex of injective quasi-coherent sheaves\/ $\D'{}^\bu$
on $X'$ together with a morphism of complexes\/ $\tau^*\D^\bu\rarrow
\D'{}^\bu$ with a coacyclic cone.
 Then\/ $\D'{}^\bu$ is a dualizing complex of quasi-coherent sheaves
on~$X'$.
\end{lem}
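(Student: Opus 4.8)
The plan is to verify the three conditions (i)--(iii) from the definition of a dualizing complex in Section~\ref{dualizing-definition-subsecn} for $\D'{}^\bu$. Since being a dualizing complex is local on $X'$ and $\tau$, being of finite type, restricts to an affine weakly smooth morphism of finite type $U'\rarrow U$ over any affine opens $U'\subset X'$, $U\subset X$ with $\tau(U')\subset U$, I would reduce throughout to the affine situation of a flat finite-type homomorphism $R\rarrow S$ with regular fibers of Krull dimension $\le d$ and a dualizing complex $\D^\bu$ over $R$. By condition~(i) for $\D^\bu$ I may assume from the start that $\D^\bu$ is a bounded complex of injective quasi-coherent sheaves. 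As $\tau$ is flat, $\tau^*$ is exact, so $\tau^*\D^\bu$ is a bounded complex with $H^n(\tau^*\D^\bu)\simeq\tau^*H^n(\D^\bu)$; since $\tau$ is of finite type and the $H^n(\D^\bu)$ are coherent, these pullbacks are coherent, and as the cone of $\tau^*\D^\bu\rarrow\D'{}^\bu$ is coacyclic (hence acyclic) the two complexes have the same cohomology sheaves. This settles condition~(ii).

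For condition~(i) I must show that $\D'{}^\bu$ is homotopy equivalent to a bounded complex of injectives, i.e.\ that $\tau^*\D^\bu$ has finite injective dimension on $X'$. The key local observation is that over a Noetherian ring the tensor product of a flat module and an injective module is again injective: for an injective $R$-module $E$ the $R$-module $S\ot_R E$ is injective (immediate from Baer's criterion, resolving $R/I$ by finite free modules and using flatness of $S$). Hence $\tau^*E=S\ot_R E$ has $R$-injective direct image to $X$, and Proposition~\ref{flat-injective-dimensions-direct-image}(b), applied to the affine weakly smooth finite-type morphism, bounds its injective dimension on $X'$ by $d$. Thus every term of the bounded complex $\tau^*\D^\bu$ has injective dimension $\le d$, so the whole bounded complex has finite injective dimension; choosing a bounded injective resolution $\tau^*\D^\bu\overset{\sim}{\rarrow}I^\bu$ and comparing in $\sK(X'\qcoh_\inj)\simeq\sD^\co(X'\qcoh)$ (Corollary~\ref{ind-Noetherian-coderived-cor}) identifies $\D'{}^\bu$ with $I^\bu$ up to homotopy, which gives~(i).

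For condition~(iii) the plan is to establish a flat base-change isomorphism for the quasi-coherent internal Hom and then pull back the homothety quasi-isomorphism for $\D^\bu$. Using the bounded injective resolution $I^\bu\simeq\D'{}^\bu$ from the previous step I compute $\cHom_{X'\qc}(\D'{}^\bu,\D'{}^\bu)\simeq\cHom_{X'\qc}(I^\bu,I^\bu)$, which by Lemmas~\ref{hom-tensor-flats-injectives} and~\ref{internal-hom-of-complexes-flat} is a bounded complex of flat (cotorsion) quasi-coherent sheaves. I then claim a natural isomorphism $\tau^*\cHom_{X\qc}(\D^\bu,\D^\bu)\simeq\cHom_{X'\qc}(I^\bu,I^\bu)$, compatible with the homothety maps. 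Affine-locally this reads $S\ot_R\boR\Hom_R(\D^\bu,\D^\bu)\simeq\boR\Hom_S(S\ot_R\D^\bu,\,S\ot_R\D^\bu)$; since $\D^\bu$ has coherent cohomology over the Noetherian ring $R$ it is pseudo-coherent, so I resolve it by a bounded-above complex of finite free $R$-modules and run the argument of Lemma~\ref{flat-extension-of-scalars-Ext} termwise, the first argument being finite free after base change. Granting this, condition~(iii) for $\D^\bu$ states that $\cO_X\rarrow\cHom_{X\qc}(\D^\bu,\D^\bu)$ is a quasi-isomorphism of complexes in $X\flat$ (Lemma~\ref{dualizing-homothety-quasi-isomorphism-as-flats}); applying the exact functor $\tau^*$ and using $\tau^*\cO_X=\cO_{X'}$ turns the homothety map for $\D'{}^\bu$ into a quasi-isomorphism, as required.

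The main obstacle I expect is condition~(iii). One must be careful that the flat base-change isomorphism for internal Hom remains valid even though the terms of $\D^\bu$ are injective sheaves, hence far from coherent; this is exactly why it is the \emph{first} argument, with its bounded coherent cohomology, that has to be replaced by a finite free (pseudo-coherent) complex before base change. A related delicate point is that every identification in the chain must be natural with respect to the homothety/unit maps, so that the quasi-isomorphism genuinely transports from $X$ to $X'$. By contrast, conditions~(i) and~(ii) are either direct applications of already-established results (Proposition~\ref{flat-injective-dimensions-direct-image} and the internal-Hom lemmas) or rest on the elementary fact that flat $\ot$ injective is injective over a Noetherian ring.
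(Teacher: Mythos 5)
Your proposal is correct and takes essentially the same route as the paper's own proof: reduce by locality to a flat finite-type homomorphism $R\rarrow S$ of Noetherian rings, and verify conditions (i)--(iii) separately. The three key ingredients coincide with the paper's --- for (i), the fact that flat $\ot$ injective is injective over a Noetherian ring combined with Proposition~\ref{flat-injective-dimensions-direct-image}(b); for (ii), exactness of the flat pullback; and for (iii), the base-change isomorphism $\boR\Hom_S(S\ot_RM^\bu,\>S\ot_RN^\bu)\simeq S\ot_R\boR\Hom_R(M^\bu,N^\bu)$, established just as in the paper by replacing the first argument with a bounded above complex of finitely generated projectives, generalizing Lemma~\ref{flat-extension-of-scalars-Ext}.
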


\begin{proof}
 It is helpful to keep in mind that a bounded below complex is coacyclic
if and only if it is acyclic, and any (unbounded) coacyclic complex of
injectives is contractible.
 Up to the homotopy equivalence, one can assume both $\D^\bu$ and
$\D'{}^\bu$ to be bounded below, and then it suffices that the cone
be acyclic.

 The assertion is local in both $X$ and $X'$, so it reduces to the case
of affine schemes, for which it consists of three independent
observations corresponding to the three conditions~(i\+-iii) in
the definition of a dualizing complex in
Section~\ref{dualizing-definition-subsecn}.
 In each case, we consider a homomorphism of Noetherian commutative
rings $R\rarrow S$ such that $S$ is a flat $R$\+module.

 (1)~Assume that $S$ is a finitely generated $R$\+algebra and
for every prime ideal $\p\subset R$, the fiber ring
$\kappa(\p)\ot_RS$ is regular of Krull dimension~$\le d$. 
 If $D^\bu$ is a bounded complex of injective $R$\+modules, then
the complex of $S$\+modules $S\ot_R D^\bu$ is quasi-isomorphic to
a bounded complex of injective $S$\+modules.

 Indeed, it suffices to show that, for every injective $R$\+module $J$,
the $S$\+module $S\ot_RJ$ has finite injective dimension.
 Notice that the $R$\+module $S\ot_RJ$ is injective (since
the $R$\+module $S$ is flat and $R$ is a Noetherian ring); hence
the assertion follows from
Proposition~\ref{flat-injective-dimensions-direct-image}(b).

 In fact, according to~\cite[Theorem~1]{Fox} (see
also~\cite[Corollary~1]{FT}), it suffices to assume the fiber rings
to be Gorenstein (of bounded Krull dimension) rather than regular,
and the assumption that $S$ is a finitely generated $R$\+algebra
can be dropped.
 The assumptions on the morphism~$\tau$ before the formulation of
the lemma can be weakened accordingly.

 (2)~If $D^\bu$ is a complex of $R$\+modules with finitely generated
cohomology $R$\+modules, then $S\ot_RD^\bu$ is a complex of $S$\+modules
with finitely generated cohomology $S$\+modules.
 This is obvious for a flat $R$\+algebra~$S$.

 (3)~If $D^\bu$ is a bounded complex of $R$\+modules with finitely
generated cohomology $R$\+modules and the homothety map
$R\rarrow\boR\Hom_R(D^\bu,D^\bu)$ is a quasi-isomorphism, then
the homothety map $S\rarrow\boR\Hom_S(S\ot_RD^\bu,\>S\ot_RD^\bu)$
is a quasi-isomorphism, too.

 More generally, if $M^\bu$ is a bounded above complex of $R$\+modules
with finitely generated cohomology modules and $N^\bu$ is a bounded
below complex of $R$\+modules, then there is a natural isomorphism
$\boR\Hom_S(S\ot_RM^\bu,\>S\ot_RN^\bu)\simeq S\ot_R
\boR\Hom_R(M^\bu,N^\bu)$ in the derived category of $S$\+modules.
 This is a straightforward generalization of
Lemma~\ref{flat-extension-of-scalars-Ext}; one just needs to
replace $M^\bu$ with a quasi-isomorphic bounded above complex of
finitely generated projective $R$\+modules.
\end{proof}

 Let $\tau\:\X'\rarrow\X$ be an morphism of ind-semi-separated
ind-Noetherian ind-schemes.
 Assume that the morphism~$\tau$ is of finite type and weakly smooth of
relative dimension~$\le d$.
 Consider the inverse image functor $\tau^*\:\X\tors\rarrow\X'\tors$.

\begin{lem} \label{weakly-smooth-pullback-of-dualizing-torsion}
 Let\/ $\rD^\bu$ be a dualizing complex of quasi-coherent torsion
sheaves on\/~$\X$.
 Choose a complex of injective quasi-coherent torsion sheaves\/
$\rD'{}^\bu$ on\/ $\X'$ together with a morphism of complexes\/
$\tau^*\rD^\bu\rarrow\rD'{}^\bu$ with a coacyclic cone.
 Then\/ $\rD'{}^\bu$ is a dualizing complex of quasi-coherent torsion
sheaves on\/~$\X'$.
\end{lem}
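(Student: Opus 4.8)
The plan is to verify condition~(iv) of the definition of a dualizing complex (Section~\ref{dualizing-definition-subsecn}) for $\rD'{}^\bu$ by reducing it, closed subscheme by closed subscheme, to the already-established scheme-level statement Lemma~\ref{scheme-weakly-smooth-pullback-of-dualizing}. First I would fix a representation $\X=\ilim_{\gamma\in\Gamma}X_\gamma$ of $\X$ by closed immersions of semi-separated Noetherian schemes and set $X'_\gamma=X_\gamma\times_\X\X'$. Because $\tau$ is affine, hence representable by schemes, each $X'_\gamma$ is a semi-separated Noetherian scheme and $\X'=\ilim_{\gamma\in\Gamma}X'_\gamma$ is a representation of $\X'$ by closed immersions (Section~\ref{morphisms-of-ind-schemes-subsecn}). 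By Lemma~\ref{support-restriction-of-dualizing-complex} and the remark following the definition in Section~\ref{dualizing-definition-subsecn}, it suffices to prove that $i'_\gamma{}^!\rD'{}^\bu$ is a dualizing complex on $X'_\gamma$ for every $\gamma$, where $i'_\gamma=k_\gamma\:X'_\gamma\rarrow\X'$ denotes the closed immersion.

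Next, for each $\gamma$ I would consider the pullback square formed by the closed immersions $i_\gamma\:X_\gamma\rarrow\X$ and $k_\gamma\:X'_\gamma\rarrow\X'$ together with the flat morphism $\tau\:\X'\rarrow\X$ and its base change $g_\gamma\:X'_\gamma\rarrow X_\gamma$. The morphism $g_\gamma$ is the base change of $\tau$ along the closed immersion $i_\gamma$, and closed immersions do not extend residue fields, so $g_\gamma$ is again affine, of finite type, and weakly smooth of relative dimension~$\le d$ (Section~\ref{weakly-smooth-subsecn}). Applying Proposition~\ref{derived-supports-flat-pullback-prop} with $f=\tau$ and $i=i_\gamma$ gives a natural isomorphism $g_\gamma^*\,\boR i_\gamma^!\simeq\boR k_\gamma^!\,\tau^*$ of functors $\sD^\co(\X\tors)\rarrow\sD^\co(X'_\gamma\qcoh)$. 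I would evaluate both sides at $\rD^\bu$. Since $\rD^\bu$ is a complex of injective quasi-coherent torsion sheaves, $\boR i_\gamma^!\rD^\bu=i_\gamma^!\rD^\bu=:\D_\gamma^\bu$, which is a dualizing complex on $X_\gamma$ by condition~(iv) for $\rD^\bu$. On the other side, $\rD'{}^\bu$ is a complex of injective torsion sheaves equipped with a coacyclic-cone morphism $\tau^*\rD^\bu\rarrow\rD'{}^\bu$, so it is a coderived injective resolution of $\tau^*\rD^\bu$; hence $\boR k_\gamma^!(\tau^*\rD^\bu)\simeq k_\gamma^!\rD'{}^\bu$. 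This yields an isomorphism $g_\gamma^*\D_\gamma^\bu\simeq k_\gamma^!\rD'{}^\bu$ in $\sD^\co(X'_\gamma\qcoh)$.

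Finally, I would note that $k_\gamma^!\rD'{}^\bu$ is a complex of injective quasi-coherent sheaves on $X'_\gamma$ by Proposition~\ref{torsion-injectives-characterized}(a), that $g_\gamma^*$ is computed by the exact underived inverse image functor (flat $g_\gamma$), and that $\sD^\co(X'_\gamma\qcoh)\simeq\sK(X'_\gamma\qcoh_\inj)$ by Corollary~\ref{ind-Noetherian-coderived-cor}. Thus the isomorphism above lifts (via Proposition~\ref{coderived-and-homotopy-of-injectives}) to an actual morphism $g_\gamma^*\D_\gamma^\bu\rarrow k_\gamma^!\rD'{}^\bu$ with coacyclic cone into a complex of injectives. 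Lemma~\ref{scheme-weakly-smooth-pullback-of-dualizing}, applied to the affine, finite-type, weakly smooth morphism $g_\gamma$ and the dualizing complex $\D_\gamma^\bu$ on $X_\gamma$, then shows that $k_\gamma^!\rD'{}^\bu=i'_\gamma{}^!\rD'{}^\bu$ is a dualizing complex on $X'_\gamma$. As this holds for all $\gamma$, condition~(iv) is satisfied and $\rD'{}^\bu$ is a dualizing complex on $\X'$. I expect the main obstacle to be the bookkeeping in this last step: carefully matching the coderived-category isomorphism produced by Proposition~\ref{derived-supports-flat-pullback-prop} with the hypothesis of the scheme-level lemma, i.e.\ confirming that $k_\gamma^!\rD'{}^\bu$ really is (homotopy equivalent to) a coderived injective resolution of $g_\gamma^*\D_\gamma^\bu$, which hinges on the injectivity preservation of $k_\gamma^!$ and the homotopy-invariance of the dualizing-complex property among complexes of injectives.
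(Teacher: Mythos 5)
Your proof is correct and follows essentially the same route as the paper: reduce condition~(iv) to closed subschemes pulled back from $\X$, apply the base-change isomorphism of Proposition~\ref{derived-supports-flat-pullback-prop} to $\rD^\bu$, and invoke the scheme-level Lemma~\ref{scheme-weakly-smooth-pullback-of-dualizing}. Your extra bookkeeping in the last step (lifting the coderived-category isomorphism to a genuine morphism of complexes with coacyclic cone via Proposition~\ref{coderived-and-homotopy-of-injectives}(a), using that $k_\gamma^!\rD'{}^\bu$ is a complex of injectives) is exactly the point the paper leaves implicit, and it is handled correctly.
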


\begin{proof}
 Let $Z\subset\X$ be a closed subscheme; put $Z'=Z\times_\X\X'$.
 Let $i\:Z\rarrow\X$ and $k\:Z'\rarrow\X'$ denote the closed
immersion morphisms, and let $\tau_Z\:Z'\rarrow Z$ be the natural
morphism (which is of finite type and weakly smooth of relative
dimension~$\le d$).
 By Proposition~\ref{derived-supports-flat-pullback-prop}, we have
a natural isomorphism $\tau_Z^*i^!\rD^\bu=\tau_Z^*\boR i^!\rD^\bu
\simeq\boR k^!\tau^*\rD^\bu=k^!\rD'{}^\bu$ in $\sD^\co(Z'\qcoh)$.
 Since $\rD^\bu$ is a dualizing complex on $\X'$, the complex
of quasi-coherent sheaves $i^!\rD^\bu$ is a dualizing complex on~$Z$.
 Now Lemma~\ref{scheme-weakly-smooth-pullback-of-dualizing} applied
to the morphism of schemes~$\tau_Z$ tells that $k^!\rD'{}^\bu$ is
a dualizing complex on~$Z'$, which by the definition means
that $\rD'{}^\bu$ is a dualizing complex on~$\X'$ (see condition~(iv)
in Section~\ref{dualizing-definition-subsecn} and the discussion
after it).
\end{proof}

 Let $\X$ be an ind-semi-separated ind-Noetherian ind-scheme,
$\tau\:\X'\rarrow\X$ be an affine morphism of finite type which is
weakly smooth of relative dimension~$\le d$, and
$\pi'\:\bY\rarrow\X'$ be a flat affine morphism of ind-schemes.
 Put $\pi=\tau\pi'$.

 Let $\rD^\bu$ be a dualizing complex on $\X$ and
$\rD'{}^\bu$ be the related dualizing complex on $\X'$, as per
the rule of Lemma~\ref{weakly-smooth-pullback-of-dualizing-torsion}.
 Then Theorem~\ref{relative-triangulated-equivalence-thm} provides
triangulated equivalences
$$
 \pi^*\rD^\bu\ot_\bY{-}\,\:
 \sD(\bY_\X\flat)\,\simeq\,\sD_\X^\si(\bY\tors)
$$
and
$$
 \pi'^*\rD'{}^\bu\ot_\bY{-}\,\:
 \sD(\bY_{\X'}\flat)\,\simeq\,\sD_{\X'}^\si(\bY\tors).
$$
 
\begin{prop} \label{weakly-smooth-semiderived-equivalences-preserved}
 In the context above, the triangulated equivalences\/
$\sD(\bY_{\X'}\flat)\allowbreak\simeq\sD_{\X'}^\si(\bY\tors)$ and\/
$\sD(\bY_\X\flat)\simeq\sD_\X^\si(\bY\tors)$ from
Theorem~\ref{relative-triangulated-equivalence-thm} form
a commutative square diagram with the triangulated equivalences\/
$\sD(\bY_{\X'}\flat)\simeq\sD(\bY_\X\flat)$ and\/
$\sD_{\X'}^\si(\bY\tors)\simeq\sD_\X^\si(\bY\tors)$ from
Corollaries~\ref{weakly-smooth-derived-pro-sheaves-preservation-cor}
and~\ref{weakly-smooth-semiderived-preservation-cor}.
\end{prop}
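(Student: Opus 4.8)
The plan is to prove the commutativity of the square by tracing through the definitions of all four triangulated equivalences and checking that they agree on objects. The key point is that both equivalences $\sD(\bY_{\X'}\flat)\simeq\sD(\bY_\X\flat)$ and $\sD_{\X'}^\si(\bY\tors)\simeq\sD_\X^\si(\bY\tors)$ are induced by \emph{identity-like} functors: the former by the exact inclusion of exact categories $\bY_{\X'}\flat\rarrow\bY_\X\flat$ (whose underlying objects and morphisms are literally the same pro-quasi-coherent pro-sheaves on $\bY$), and the latter by the identity functor on complexes $\sK(\bY\tors)$ descended to the two semiderived categories (which, as Corollary~\ref{weakly-smooth-semiderived-preservation-cor} shows, actually coincide). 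So the content of the proposition is that, for a complex $\bfG^\bu$ of $\X'$\+flat pro-quasi-coherent pro-sheaves on $\bY$ (viewed equally as a complex of $\X$\+flat pro-sheaves), the two objects $\pi'^*\rD'{}^\bu\ot_\bY\bfG^\bu$ and $\pi^*\rD^\bu\ot_\bY\bfG^\bu$ of $\sK(\bY\tors)$ represent the same object under the coincidence $\sD_{\X'}^\si(\bY\tors)=\sD_\X^\si(\bY\tors)$.

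First I would reduce to comparing the two inner $\fHom$/$\ot$ constructions on objects. Since both vertical equivalences in the square are given by the same formula $\pi^{(\prime)*}\rD^{(\prime)\bu}\ot_\bY{-}$ differing only in the dualizing complex used and the morphism $\pi$ versus $\pi'$, the natural thing is to produce a natural isomorphism between the two resulting functors $\sD(\bY_{\X'}\flat)\rarrow\sD_{\X'}^\si(\bY\tors)=\sD_\X^\si(\bY\tors)$. The crucial input is the relation between the dualizing complexes: by construction (Lemma~\ref{weakly-smooth-pullback-of-dualizing-torsion}), $\rD'{}^\bu$ is obtained from $\tau^*\rD^\bu$ by passing to a complex of injectives with a coacyclic cone. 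Applying the (exact) inverse image $\pi'^*$ and using $\pi=\tau\pi'$, I would obtain a natural morphism $\pi^*\rD^\bu=\pi'^*\tau^*\rD^\bu\rarrow\pi'^*\rD'{}^\bu$ in $\sC(\bY\tors)$ whose cone, after applying $\pi_*=\tau_*\pi'_*$, becomes coacyclic in $\X\tors$ (here I would use that $\pi'^*$ is exact by Lemma~\ref{flat-torsion-sheaves-inverse-image}, that it preserves coproducts, and Proposition~\ref{weakly-smooth-direct-image-reflects-coacyclicity} to pass between coacyclicity over $\X'$ and over $\X$). Thus $\pi^*\rD^\bu$ and $\pi'^*\rD'{}^\bu$ become isomorphic in $\sD_\X^\si(\bY\tors)=\sD_{\X'}^\si(\bY\tors)$.

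Granting this, I would tensor with $\bfG^\bu$: the natural map $\pi^*\rD^\bu\ot_\bY\bfG^\bu\rarrow\pi'^*\rD'{}^\bu\ot_\bY\bfG^\bu$ has a cone which is $\bfG^\bu$ tensored with the cone of $\pi^*\rD^\bu\rarrow\pi'^*\rD'{}^\bu$. To see that this cone represents zero in the common semiderived category, I would take $\bfG^\bu$ to be a relatively homotopy flat complex of \emph{flat} pro-sheaves (legitimate by Proposition~\ref{relatively-homotopy-flat-resolution}, since the equivalence of Corollary~\ref{weakly-smooth-derived-pro-sheaves-preservation-cor} lets me resolve over $\X'$ and then view the resolution over $\X$), and invoke condition~(ii) in the definition of relative homotopy flatness together with the proof of Theorem~\ref{relative-triangulated-equivalence-thm}: tensoring a complex whose $\pi_*$\+direct image is coacyclic with a relatively homotopy flat complex keeps the $\pi_*$\+direct image coacyclic. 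This yields the desired natural isomorphism of functors and hence commutativity of the square.

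The main obstacle I anticipate is the bookkeeping around \emph{which} base the various coacyclicity and relative-flatness conditions are taken over, and making the resolutions compatible. Specifically, a relatively homotopy flat resolution is constructed relative to a fixed morphism, so I must check that a resolution built relative to $\pi'\:\bY\rarrow\X'$ still behaves correctly relative to $\pi\:\bY\rarrow\X$; the bridge is that flat pro-sheaves on $\bY$ are flat regardless of the base, and that $\pi_*=\tau_*\pi'_*$ with $\tau_*$ exact and preserving flatness (used already in Lemma~\ref{pro-flat-over-base-weakly-smooth-resol-dim}). Provided I consistently use Proposition~\ref{weakly-smooth-direct-image-reflects-coacyclicity} to translate coacyclicity over $\X'$ into coacyclicity over $\X$ and back, the two notions of ``relatively homotopy flat'' and ``homotopy $\bY/\X$\+flat'' align, and the remaining verifications are the routine compatibilities that the preceding propositions were designed to supply.
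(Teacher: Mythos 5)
Your proposal is correct, and its first half is exactly the paper's argument: both reduce the commutativity of the square to comparing $\pi^*\rD^\bu=\pi'{}^*\tau^*\rD^\bu$ with $\pi'{}^*\rD'{}^\bu$ via the given morphism $\tau^*\rD^\bu\rarrow\rD'{}^\bu$ whose cone is coacyclic in $\X'\tors$. Where you diverge is in the final vanishing step. The paper finishes in one stroke by citing the functor $\bPhi_{({-})}$ of Section~\ref{semiderived-and-external-subsecn}, i.e.\ the already-established bilinear functor~\eqref{derived-flat-lifted-torsion-tensor}: ${-}\ot_\bY\pi'{}^*({-})$ is well defined on $\sD(\bY_{\X'}\flat)\times\sD^\co(\X'\tors)$, so a morphism that is an isomorphism in $\sD^\co(\X'\tors)$ induces an isomorphism of functors $\bPhi_{\tau^*\rD^\bu}\simeq\bPhi_{\rD'{}^\bu}$. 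That well-definedness holds for \emph{arbitrary} complexes $\bfG^\bu\in\sC(\bY_{\X'}\flat)$, with no resolutions, because of the projection formula (Lemma~\ref{torsion-pro-projection-formula-lemma}) combined with Lemma~\ref{pro-torsion-complexes-tensor-exactness}(b): $\pi'_*(\bfG^\bu\ot_\bY\pi'{}^*\rC^\bu)\simeq\pi'_*\bfG^\bu\ot_{\X'}\rC^\bu$ is coacyclic whenever $\rC^\bu$ is coacyclic in $\X'\tors$, since $\pi'_*\bfG^\bu$ is a complex of flat pro-sheaves. You instead pass to a relatively homotopy flat resolution of $\bfG^\bu$ and invoke condition~(ii) of relative homotopy flatness; this does close (the intermediate claims you need---that acyclicity in $\bY_{\X'}\flat$ and in $\bY_\X\flat$ agree, and that $\pi$- and $\pi'$-relative notions align---are established via Corollary~\ref{weakly-smooth-derived-pro-sheaves-preservation-cor} and Proposition~\ref{weakly-smooth-direct-image-reflects-coacyclicity}, and are in fact spelled out in the paper's proofs of Propositions~\ref{weakly-smooth-pro-sheaves-tensor-structure-preserved} and~\ref{weakly-smooth-torsion-module-structure-preserved}), but it re-derives machinery that is only genuinely needed later, for the preservation of the \emph{tensor products}, where both arguments of $\ot^{\bY,\boL}$ vary. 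The ``main obstacle'' you anticipate---bookkeeping over which base the flatness and coacyclicity conditions are taken---is thus an obstacle of your own making: for this proposition the projection formula lets you handle an arbitrary $\bfG^\bu$ directly, and the whole resolution apparatus can be bypassed.
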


\begin{proof}
 Notice the natural isomorphism $\pi^*\rD^\bu\simeq\pi'{}^*\tau^*
\rD^\bu$ of complexes of quasi-coherent torsion sheaves on~$\bY$.
 We are given a morphism $\tau^*\rD^\bu\rarrow\rD'{}^\bu$ of
complexes of quasi-coherent torsion sheaves on $\X'$, whose cone
is coacyclic in $\X'\tors$.
 According to the discussion in the beginning of
Section~\ref{semiderived-and-external-subsecn}, based on the existence
of a well-defined functor~\eqref{derived-flat-lifted-torsion-tensor}
from Section~\ref{underived-tensor-subsecn}, the triangulated
functors $\bPhi_{\tau^*\rD^\bu}=\pi'{}^*\tau^*\rD^\bu\ot_\bY{-}\,\:
\sD(\bY_{\X'}\flat)\rarrow\sD_{\X'}^\si(\bY\tors)$
and $\bPhi_{\rD'{}^\bu}=\pi'{}^*\rD'{}^\bu\ot_\bY{-}\,\:
\sD(\bY_{\X'}\flat)\rarrow\sD_{\X'}^\si(\bY\tors)$ are naturally
isomorphic. \hbadness=1075
\end{proof}

\subsection{Preservation of the semitensor product}
 Let $\X$ be an ind-semi-separated ind-Noetherian ind-scheme,
$\tau\:\X'\rarrow\X$ be an affine morphism of finite type which is
weakly smooth of relative dimension~$\le d$, and
$\pi'\:\bY\rarrow\X'$ be a flat affine morphism of ind-schemes.
 Put $\pi=\tau\pi'$.

 A construction from Section~\ref{left-derived-tensor-products-subsecn}
(see formula~\eqref{derived-tensor-of-pro-sheaves}) defines the left
derived tensor product functors
$$
 \ot^{\bY,\boL}=\ot^{\bY/\X,\boL}\:
 \sD(\bY_\X\flat)\times\sD(\bY_\X\flat)\lrarrow\sD(\bY_\X\flat)
$$
and
$$
 \ot^{\bY,\boL}=\ot^{\bY/\X',\boL}\:
 \sD(\bY_{\X'}\flat)\times\sD(\bY_{\X'}\flat)\lrarrow
 \sD(\bY_{\X'}\flat),
$$
endowing the derived categories $\sD(\bY_\X\flat)$ and
$\sD(\bY_{\X'}\flat)$ with tensor triangulated category structures.

\begin{prop}
\label{weakly-smooth-pro-sheaves-tensor-structure-preserved}
 In the context above, the triangulated equivalence\/
$\sD(\bY_{\X'}\flat)\simeq\sD(\bY_\X\flat)$ from
Corollary~\ref{weakly-smooth-derived-pro-sheaves-preservation-cor}
is an equivalence of tensor triangulated categories.
\end{prop}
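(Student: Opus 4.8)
The plan is to exploit the fact that the equivalence $\Theta\:\sD(\bY_{\X'}\flat)\rarrow\sD(\bY_\X\flat)$ of Corollary~\ref{weakly-smooth-derived-pro-sheaves-preservation-cor} is induced by the exact inclusion $\bY_{\X'}\flat\rarrow\bY_\X\flat$, while both derived tensor products $\ot^{\bY/\X',\boL}$ and $\ot^{\bY/\X,\boL}$ are derived from the single underived bifunctor $\ot^\bY\:\sC(\bY\pro)\times\sC(\bY\pro)\rarrow\sC(\bY\pro)$. Accordingly, I would produce, for any two complexes $\bfP^\bu$, $\bfQ^\bu\in\sC(\bY_{\X'}\flat)$, a \emph{single} pair of resolutions $\bfF^\bu\rarrow\bfP^\bu$ and $\bfG^\bu\rarrow\bfQ^\bu$ that is simultaneously admissible for computing both derived tensor products. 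Once this is done, both $\bfP^\bu\ot^{\bY/\X,\boL}\bfQ^\bu$ and $\bfP^\bu\ot^{\bY/\X',\boL}\bfQ^\bu$ are represented by the identical complex $\bfF^\bu\ot^\bY\bfG^\bu\in\sC(\bY\flat)$ (recall the tensor product of flat pro-sheaves is flat), and since $\Theta$ is the inclusion this furnishes the desired natural isomorphism $\Theta(\bfP^\bu\ot^{\bY/\X',\boL}\bfQ^\bu)\simeq\Theta(\bfP^\bu)\ot^{\bY/\X,\boL}\Theta(\bfQ^\bu)$, realized as an identity of representing complexes.

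The resolutions I would use are the bar-resolutions of Proposition~\ref{relatively-homotopy-flat-resolution} taken relative to $\pi\:\bY\rarrow\X$, so that $\bfF^\bu$ and $\bfG^\bu$ are built out of the complexes $\pi^*(\sC(\X\flat))$ by the operations of cone, countable coproduct, and passage to a homotopy equivalent complex. Such complexes are relatively homotopy flat over $\X$ by Lemma~\ref{relatively-flat-complexes-lemma}. The key point is that they are \emph{also} relatively homotopy flat over $\X'$: since $\pi=\tau\pi'$ we have $\pi^*=\pi'{}^*\tau^*$, and the inverse image $\tau^*$ carries $\X\flat$ into $\X'\flat$ (inverse images of flat pro-sheaves are flat, Section~\ref{flat-pro-sheaves-subsecn}); hence every term $\pi^*\fP$ with $\fP\in\X\flat$ lies in $\pi'{}^*(\X'\flat)$, and Lemma~\ref{relatively-flat-complexes-lemma}(b) applied to $\pi'$, together with part~(a) of the same lemma, shows that $\bfF^\bu$ and $\bfG^\bu$ are relatively homotopy flat over $\X'$ as well. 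Thus both resolutions qualify as relatively homotopy flat on both sides.

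What remains is to check the cone condition on both sides: by construction the cones of $\bfF^\bu\rarrow\bfP^\bu$ and $\bfG^\bu\rarrow\bfQ^\bu$ are acyclic in $\bY_\X\flat$, and I must promote this to acyclicity in $\bY_{\X'}\flat$. This is where I expect the main subtlety, since the naive hope that acyclicity in $\bY_\X\flat$ pulls back to acyclicity in $\bY_{\X'}\flat$ is false for arbitrary complexes. The elegant way around it is to invoke conservativity of the \emph{already-established} equivalence $\Theta$: the cones are complexes in $\sC(\bY_{\X'}\flat)$ (as $\bfF^\bu,\bfG^\bu\in\sC(\bY\flat)\subseteq\sC(\bY_{\X'}\flat)$), being acyclic in $\bY_\X\flat$ means they vanish in $\sD(\bY_\X\flat)$, and since $\Theta$ is an equivalence — hence conservative — they already vanish in $\sD(\bY_{\X'}\flat)$, i.e.\ are acyclic there. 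Finally, the unit object $\fO_\bY$ lies in $\bY\flat$, is the unit of both tensor structures, and satisfies $\Theta(\fO_\bY)=\fO_\bY$; and the associativity and unit constraints are transported identically, since everything is computed by the honestly associative and unital bifunctor $\ot^\bY$ on $\sC(\bY\flat)$ and $\Theta$ is an inclusion. The verification that the comparison isomorphism is natural and independent of the chosen resolutions is routine, following the standard balanced-derived-functor bookkeeping of \cite[Lemma~2.7]{Psemi}.
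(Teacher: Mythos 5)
Your proof is correct, and its overall skeleton (a single pair of resolutions computing both derived tensor products, with the comparison isomorphism then being an identity of representing complexes) is the same as the paper's; but the crucial admissibility step is justified differently. The paper proves the stronger, general statement that \emph{every} $\pi$\+relatively homotopy flat complex of flat pro-quasi-coherent pro-sheaves is $\pi'$\+relatively homotopy flat: condition~(i) of Section~\ref{relatively-homotopy-flat-subsecn} transfers because, by Corollary~\ref{weakly-smooth-derived-pro-sheaves-preservation-cor}, a complex in $\bY_{\X'}\flat$ is acyclic there if and only if it is acyclic in $\bY_\X\flat$, while condition~(ii) transfers by Proposition~\ref{weakly-smooth-direct-image-reflects-coacyclicity}. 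You instead verify $\pi'$\+admissibility only for the bar-resolutions of Proposition~\ref{relatively-homotopy-flat-resolution}, using the factorization $\pi^*=\pi'{}^*\tau^*$, the inclusion $\tau^*(\X\flat)\subset\X'\flat$, and Lemma~\ref{relatively-flat-complexes-lemma} applied to~$\pi'$; this is exactly the ``choose the resolutions wisely'' device the paper itself employs in Propositions~\ref{composition-of-derived-inverse-images} and~\ref{derived-tensor-as-derived-restriction-prop}, it is fully adequate here, and it has the advantage of not invoking Proposition~\ref{weakly-smooth-direct-image-reflects-coacyclicity} at all. Your treatment of the cones via conservativity of $\Theta$ is in substance the same fact the paper extracts from Corollary~\ref{weakly-smooth-derived-pro-sheaves-preservation-cor}, modulo the standard caveat that vanishing in the derived category of an exact category yields acyclicity only when the exact category is idempotent complete --- which does hold for $\bY_{\X'}\flat$, and the paper elides this point at the same level of detail. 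What the paper's stronger formulation buys is reusability: the general preservation of relative homotopy flatness is cited again in the proof of Proposition~\ref{weakly-smooth-torsion-module-structure-preserved}, whereas with your version one would have to repeat the building-block argument there (which does work, since those resolutions are likewise assembled from inverse images).
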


\begin{proof}
 We just have to show that the two constructions of derived functors
of tensor product agree.
 The definition of a relatively homotopy flat complex of flat
pro-quasi-coherent pro-sheaves on $\bY$ was given in
Section~\ref{relatively-homotopy-flat-subsecn}.
 To distinguish the two settings in the situation at hand, let us
speak about \emph{$\pi$\+relatively homotopy flat complexes} and
\emph{$\pi'$\+relatively homotopy flat complexes}.

 The key observation is that every $\pi$\+relatively homotopy flat
complex is $\pi'$\+relatively homotopy flat.
 Indeed, we have $\bY_{\X'}\flat\subset\bY_\X\flat$.
 Furthermore, it follows from
Corollary~\ref{weakly-smooth-derived-pro-sheaves-preservation-cor}
that a complex in $\bY_{\X'}\flat$ is acyclic if and only if it is
acyclic in $\bY_\X\flat$.
 Hence, given a complex $\bfF^\bu$ in $\bY\flat$, condition~(i)
from Section~\ref{relatively-homotopy-flat-subsecn} holds
for $\bfF^\bu$ with respect to the morphism $\pi'\:\bY\rarrow\X'$
whenever it holds with respect to the morphism $\pi\:\bY\rarrow\X$.
 Concerning condition~(ii), it is clear from
Proposition~\ref{weakly-smooth-direct-image-reflects-coacyclicity}
that this condition holds for a given complex $\bfF^\bu\in\sC(\bY\flat)$
with respect to the morphism~$\pi'$ if and only if it holds with
respect to the morphism~$\pi$.

 Now let $\bfP^\bu$ and $\bfQ^\bu\in\sC(\bY_{\X'}\flat)$ be two
complexes of $\X'$\+flat (hence also $\X\flat$) pro-quasi-coherent
pro-sheaves on~$\bY$.
 Using Proposition~\ref{relatively-homotopy-flat-resolution}, choose
two morphisms of complexes of $\X$\+flat pro-quasi-coherent pro-sheaves
$\bfF^\bu\rarrow\bfP^\bu$ and $\bfG^\bu\rarrow\bfQ^\bu$ such that
the cones of both morphisms are acyclic in $\bY_\X\flat$, and
both the complexes $\bfF^\bu$ and $\bfG^\bu\in\sC(\bY\flat)$ are
$\pi$\+relatively homotopy flat complexes of flat pro-quasi-coherent
pro-sheaves on~$\bY$.
 Then both the morphisms can be also viewed as morphisms of complexes
of $\X'$\+flat pro-quasi-coherent pro-sheaves on $\bY$, the cones of
both the morphisms are also acyclic in $\bY_{\X'}\flat$, and
both the complexes $\bfF^\bu$ and $\bfG^\bu$ are also
$\pi'$\+relatively homotopy flat.
 Thus the tensor product $\bfF^\bu\ot^\bY\bfG^\bu$ computes both
the derived functors $\bfP^\bu\ot^{\bY/\X,\boL}\bfQ^\bu$ and
$\bfP^\bu\ot^{\bY/\X',\boL}\bfQ^\bu$.
\end{proof}

 Furthermore, another construction from
Section~\ref{left-derived-tensor-products-subsecn}
(see formula~\eqref{derived-tensor-of-pro-and-torsion})
defines the left derived tensor product functors
$$
 \ot_\bY^\boL=\ot_{\bY/\X}^\boL\:
 \sD(\bY_\X\flat)\times\sD_\X^\si(\bY\tors)
 \lrarrow\sD_\X^\si(\bY\tors)
$$
and
$$
 \ot_\bY^\boL=\ot_{\bY/\X'}^\boL\:
 \sD(\bY_{\X'}\flat)\times\sD_{\X'}^\si(\bY\tors)
 \lrarrow\sD_{\X'}^\si(\bY\tors)
$$
endowing the semiderived categories $\sD_\X^\si(\bY\tors)$ and
$\sD_{\X'}^\si(\bY\tors)$ with triangulated module category
structures over the tensor categories $\sD(\bY_\X\flat)$ and
$\sD(\bY_{\X'}\flat)$, respectively.

\begin{prop} \label{weakly-smooth-torsion-module-structure-preserved}
 In the context above, the triangulated equivalences\/
$\sD(\bY_{\X'}\flat)\allowbreak\simeq\sD(\bY_\X\flat)$ and\/
$\sD_{\X'}^\si(\bY\tors)\simeq\sD_\X^\si(\bY\tors)$ from
Corollaries~\ref{weakly-smooth-derived-pro-sheaves-preservation-cor}
and~\ref{weakly-smooth-semiderived-preservation-cor} preserve
the module category structures on\/ $\sD_\X^\si(\bY\tors)$ and\/
$\sD_{\X'}^\si(\bY\tors)$ over the tensor categories\/
$\sD(\bY_\X\flat)$ and\/ $\sD(\bY_{\X'}\flat)$.
\end{prop}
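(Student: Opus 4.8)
The plan is to follow the proof of Proposition~\ref{weakly-smooth-pro-sheaves-tensor-structure-preserved} and show that a single choice of resolutions computes both left derived tensor products $\ot_{\bY/\X}^\boL$ and $\ot_{\bY/\X'}^\boL$. On the level of complexes the two equivalences under consideration are the inclusion $\sC(\bY_{\X'}\flat)\rarrow\sC(\bY_\X\flat)$ and the identity of $\sC(\bY\tors)$ (recall from Corollary~\ref{weakly-smooth-semiderived-preservation-cor} that $\sD_{\X'}^\si(\bY\tors)=\sD_\X^\si(\bY\tors)$ as Verdier quotients of $\sK(\bY\tors)$). So it suffices to produce, for any $\bfP^\bu\in\sC(\bY_{\X'}\flat)$ and any $\bN^\bu\in\sC(\bY\tors)$, a natural isomorphism $\bfP^\bu\ot_{\bY/\X'}^\boL\bN^\bu\simeq\bfP^\bu\ot_{\bY/\X}^\boL\bN^\bu$ in $\sD_\X^\si(\bY\tors)$; the compatibility with the associativity and unit constraints will then be automatic, since those constraints are induced by the associativity and unitality of the underived action of $\sC(\bY\pro)$ on $\sC(\bY\tors)$, which refer to neither $\X$ nor $\X'$.

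First I would record two preservation facts. The statement that every $\pi$\+relatively homotopy flat complex of flat pro-quasi-coherent pro-sheaves on $\bY$ is $\pi'$\+relatively homotopy flat is exactly the key observation in the proof of Proposition~\ref{weakly-smooth-pro-sheaves-tensor-structure-preserved}. Analogously, I claim that every homotopy $\bY/\X$\+flat complex $\brG^\bu\in\sC(\bY\tors)$ is homotopy $\bY/\X'$\+flat. Indeed, let $\bfP^\bu\in\sC(\bY_{\X'}\flat)$ be acyclic in $\bY_{\X'}\flat$; then it is acyclic in $\bY_\X\flat$ as well (an admissible short exact sequence in the resolving subcategory $\bY_{\X'}\flat$ is admissible in $\bY_\X\flat$), so $\pi_*(\bfP^\bu\ot_\bY\brG^\bu)$ is coacyclic in $\X\tors$ by homotopy $\bY/\X$\+flatness of $\brG^\bu$. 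Since $\pi_*=\tau_*\pi'_*$, Proposition~\ref{weakly-smooth-direct-image-reflects-coacyclicity} then shows that $\pi'_*(\bfP^\bu\ot_\bY\brG^\bu)$ is coacyclic in $\X'\tors$, which is precisely homotopy $\bY/\X'$\+flatness.

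Next I would choose the resolutions with respect to the morphism~$\pi$. By Proposition~\ref{relatively-homotopy-flat-resolution} there is a $\pi$\+relatively homotopy flat complex $\bfF^\bu$ of flat pro-quasi-coherent pro-sheaves on $\bY$ with a morphism $\bfF^\bu\rarrow\bfP^\bu$ whose cone is acyclic in $\bY_\X\flat$, and by Proposition~\ref{homotopy-Y/X-flat-resolution} there is a homotopy $\bY/\X$\+flat complex $\brG^\bu$ with a morphism $\brG^\bu\rarrow\bN^\bu$ whose cone has $\pi$\+direct image coacyclic in $\X\tors$. By the two preservation facts, $\bfF^\bu$ is also $\pi'$\+relatively homotopy flat and $\brG^\bu$ is also homotopy $\bY/\X'$\+flat. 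Moreover the cone of $\bfF^\bu\rarrow\bfP^\bu$, being a complex in $\bY_{\X'}\flat$ acyclic in $\bY_\X\flat$, is acyclic in $\bY_{\X'}\flat$, and the cone of $\brG^\bu\rarrow\bN^\bu$ has $\pi'$\+direct image coacyclic in $\X'\tors$ by Proposition~\ref{weakly-smooth-direct-image-reflects-coacyclicity}. Hence the single pair $(\bfF^\bu,\brG^\bu)$ meets the hypotheses in the construction~\eqref{derived-tensor-of-pro-and-torsion} of both $\ot_{\bY/\X}^\boL$ and $\ot_{\bY/\X'}^\boL$, so that both derived tensor products are represented by $\bfF^\bu\ot_\bY\brG^\bu$, which gives the desired isomorphism.

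The one genuinely delicate point is the assertion that a complex with terms in $\bY_{\X'}\flat$ which is acyclic in $\bY_\X\flat$ is already acyclic in $\bY_{\X'}\flat$. This is where the finiteness of the $\X'$\+flat resolution dimension of $\X$\+flat pro-sheaves (Lemma~\ref{pro-flat-over-base-weakly-smooth-resol-dim}) enters, through Corollary~\ref{weakly-smooth-derived-pro-sheaves-preservation-cor}: the inclusion induces an equivalence $\sD(\bY_{\X'}\flat)\simeq\sD(\bY_\X\flat)$, which is in particular conservative and therefore reflects the zero object, i.e.\ reflects acyclicity of complexes with terms in $\bY_{\X'}\flat$. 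Everything else amounts to threading the two sets of defining conditions through the one underived tensor product, exactly as in the tensor-structure case of Proposition~\ref{weakly-smooth-pro-sheaves-tensor-structure-preserved}.
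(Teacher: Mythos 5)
Your proposal is correct and follows essentially the same route as the paper's own proof: both establish that every $\pi$\+relatively homotopy flat complex is $\pi'$\+relatively homotopy flat and every homotopy $\bY/\X$\+flat complex is homotopy $\bY/\X'$\+flat (via Proposition~\ref{weakly-smooth-direct-image-reflects-coacyclicity} and the acyclicity comparison coming from Corollary~\ref{weakly-smooth-derived-pro-sheaves-preservation-cor}), and then observe that a single pair of resolutions $\bfF^\bu\rarrow\bfP^\bu$, \,$\brG^\bu\rarrow\bN^\bu$ chosen with respect to~$\pi$ computes both derived functors $\ot_{\bY/\X}^\boL$ and $\ot_{\bY/\X'}^\boL$. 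Your explicit isolation of the delicate point (that acyclicity in $\bY_\X\flat$ of a complex with terms in $\bY_{\X'}\flat$ implies acyclicity in $\bY_{\X'}\flat$, by conservativity of the triangulated equivalence) is exactly the ingredient the paper invokes, via the proof of Proposition~\ref{weakly-smooth-pro-sheaves-tensor-structure-preserved}.
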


\begin{proof}
 The argument is similar to the proof of
Proposition~\ref{weakly-smooth-pro-sheaves-tensor-structure-preserved}.
 We just have to show that the two constructions of derived functors
of tensor product agree.
 The definition of a homotopy $\bY/\X$\+flat complex of quasi-coherent
torsion sheaves on $\bY$ was given in
Section~\ref{relatively-homotopy-flat-subsecn}; similarly one
defines the homotopy $\bY/\X'$\+flat complexes.

 The key observation is that every homotopy $\bY/\X$\+flat complex in
$\bY\tors$ is $\bY/\X'\flat$.
 This holds because $\bY_{\X'}\flat\subset\bY_\X\flat$
a complex in $\bY_{\X'}\flat$ is acyclic if and only if it is
acyclic in $\bY_\X\flat$, while a complex in $\X'\tors$ is coacyclic
if and only if its direct image is coacyclic in $\X\tors$.

 Now let $\bfP^\bu$ be a complex of $\X'$\+flat pro-quasi-coherent
pro-sheaves and $\bN^\bu$ be a complex of quasi-coherent torsion
sheaves on~$\bY$.
 Using Proposition~\ref{relatively-homotopy-flat-resolution}, choose
a morphism of complexes of $\X$\+flat pro-quasi-coherent pro-sheaves
$\bfF^\bu\rarrow\bfP^\bu$ whose cone is acyclic in $\bY_\X\flat$,
while $\bfF^\bu\in\sC(\bY\flat)$ is a $\pi$\+relatively homotopy flat
complex of flat pro-quasi-coherent pro-sheaves on~$\bY$.
 Then the same morphism can be also viewed as a morphism of complexes
of $\X'$\+flat pro-quasi-coherent pro-sheaves on $\bY$, its cone is
also acyclic in $\bY_{\X'}\flat$, and the complex $\bfF^\bu$
is also $\pi'$\+relatively homotopy flat (as pointed out in
the proof of
Proposition~\ref{weakly-smooth-pro-sheaves-tensor-structure-preserved}).

 Using Proposition~\ref{homotopy-Y/X-flat-resolution}, choose
a morphism of complexes of quasi-coherent torsion sheaves $\brG^\bu
\rarrow\bN^\bu$ whose cone has the property that its direct image
is coacyclic in $\X\tors$, while $\brG^\bu\in\sC(\bY\tors)$ is
a homotopy $\bY/\X$\+flat complex of quasi-coherent torsion sheaves
on~$\bY$.
 Then the direct image of the cone is also coacyclic in $\X'\tors$,
and $\brG^\bu$ is also a homotopy $\bY/\X'$\+flat complex.
 Thus the tensor product $\bfF^\bu\ot_\bY\brG^\bu$ computes both
the derived functors $\bfP^\bu\ot_{\bY/\X}^\boL\bN^\bu$ and
$\bfP^\bu\ot_{\bY/\X'}^\boL\bN^\bu$.
\end{proof}

\begin{thm} \label{weakly-smooth-semitensor-product-preservation}
 Let\/ $\X$ be an ind-semi-separated ind-Noetherian ind-scheme,
$\tau\:\X'\rarrow\X$ be an affine morphism of finite type which is
weakly smooth of relative dimension~$\le d$, and\/
$\pi'\:\bY\rarrow\X'$ be a flat affine morphism of ind-schemes.
 Put\/ $\pi=\tau\pi'$.
 Let\/ $\rD^\bu$ be a dualizing complex on\/ $\X$ and\/
$\rD'{}^\bu$ be the related dualizing complex on\/ $\X'$, as per
the rule of Lemma~\ref{weakly-smooth-pullback-of-dualizing-torsion}.
 Then the triangulated equivalence\/
$\sD_{\X'}^\si(\bY\tors)\simeq\sD_\X^\si(\bY\tors)$ from
Corollary~\ref{weakly-smooth-semiderived-preservation-cor}
is an equivalence of tensor triangulated categories with
the semitensor product operations
$$
 \os_{\pi'{}^*\rD'{}^\bu}\:\sD_{\X'}^\si(\bY\tors)\times
 \sD_{\X'}^\si(\bY\tors)\lrarrow\sD_{\X'}^\si(\bY\tors)
$$
and
$$
 \os_{\pi^*\rD^\bu}\:\sD_\X^\si(\bY\tors)\times
 \sD_\X^\si(\bY\tors)\lrarrow\sD_\X^\si(\bY\tors).
$$
as in formula~\eqref{semiderived-torsion-semitensor-product}
from Section~\ref{construction-of-semitensor-subsecn}.
\end{thm}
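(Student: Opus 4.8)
The plan is to deduce the statement from the three compatibility results already established in this section, by transporting everything along the triangulated equivalences of Theorem~\ref{relative-triangulated-equivalence-thm} and performing a diagram chase. Recall from Section~\ref{construction-of-semitensor-subsecn} that the semitensor product on $\sD_\X^\si(\bY\tors)$ is \emph{defined} as the transport of the derived tensor product $\ot^{\bY/\X,\boL}$ on $\sD(\bY_\X\flat)$ along the equivalence $\Psi_\X=\pi^*\rD^\bu\ot_\bY{-}\,\:\sD(\bY_\X\flat)\rarrow\sD_\X^\si(\bY\tors)$, and likewise $\os_{\pi'{}^*\rD'{}^\bu}$ is the transport of $\ot^{\bY/\X',\boL}$ along $\Psi_{\X'}=\pi'{}^*\rD'{}^\bu\ot_\bY{-}\,\:\sD(\bY_{\X'}\flat)\rarrow\sD_{\X'}^\si(\bY\tors)$. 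Denote by $\Phi\:\sD(\bY_{\X'}\flat)\rarrow\sD(\bY_\X\flat)$ the equivalence of Corollary~\ref{weakly-smooth-derived-pro-sheaves-preservation-cor} and by $\Xi\:\sD_{\X'}^\si(\bY\tors)\rarrow\sD_\X^\si(\bY\tors)$ the identification of Corollary~\ref{weakly-smooth-semiderived-preservation-cor} (which is in fact an equality of triangulated categories). The point is that the two semitensor structures live on one and the same underlying category, and I want a natural isomorphism between them.

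First I would record the three ingredients in this notation. Proposition~\ref{weakly-smooth-semiderived-equivalences-preserved} gives a natural isomorphism $\Xi\circ\Psi_{\X'}\simeq\Psi_\X\circ\Phi$ of functors $\sD(\bY_{\X'}\flat)\rarrow\sD_\X^\si(\bY\tors)$. Proposition~\ref{weakly-smooth-pro-sheaves-tensor-structure-preserved} says that $\Phi$ is a tensor equivalence, so that $\Phi(P\ot^{\bY/\X',\boL}Q)\simeq\Phi(P)\ot^{\bY/\X,\boL}\Phi(Q)$ naturally. (Proposition~\ref{weakly-smooth-torsion-module-structure-preserved} provides the compatible statement for the module actions, which I expect to need only to check coherence, or as an alternative route through the mixed derived tensor product $\ot_\bY^\boL$.)

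Then I would carry out the chase. Writing an arbitrary pair of objects of $\sD_{\X'}^\si(\bY\tors)$ as $\bM\simeq\Psi_{\X'}(P)$ and $\bN\simeq\Psi_{\X'}(Q)$ with $P$, $Q\in\sD(\bY_{\X'}\flat)$, the definition of $\os_{\pi'{}^*\rD'{}^\bu}$ gives $\bM\os_{\pi'{}^*\rD'{}^\bu}\bN\simeq\Psi_{\X'}(P\ot^{\bY/\X',\boL}Q)$. Applying $\Xi$ and using the two displayed isomorphisms in turn,
\[
 \Xi(\bM\os_{\pi'{}^*\rD'{}^\bu}\bN)\simeq\Psi_\X\Phi(P\ot^{\bY/\X',\boL}Q)
 \simeq\Psi_\X\bigl(\Phi(P)\ot^{\bY/\X,\boL}\Phi(Q)\bigr)
 \simeq\Psi_\X\Phi(P)\os_{\pi^*\rD^\bu}\Psi_\X\Phi(Q),
\]
where the last step is the definition of $\os_{\pi^*\rD^\bu}$; applying $\Xi\Psi_{\X'}\simeq\Psi_\X\Phi$ once more rewrites the right-hand side as $\Xi(\bM)\os_{\pi^*\rD^\bu}\Xi(\bN)$. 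This exhibits $\Xi$ as a tensor functor on objects. Preservation of the unit is immediate from the same square together with the fact that $\Phi$ carries the structure pro-sheaf $\fO_\bY$ to $\fO_\bY$: indeed $\Xi(\pi'{}^*\rD'{}^\bu)=\Xi\Psi_{\X'}(\fO_\bY)\simeq\Psi_\X\Phi(\fO_\bY)=\Psi_\X(\fO_\bY)=\pi^*\rD^\bu$.

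The routine-but-essential remaining task, and the step I expect to require the most care, is to promote this object-level computation to a genuine isomorphism of tensor triangulated structures: one must check that the composite isomorphism is natural in $\bM$ and $\bN$ and that it is compatible with the associativity and unit constraints. This reduces to the naturality and coherence of each constituent isomorphism --- those of Propositions~\ref{weakly-smooth-semiderived-equivalences-preserved} and~\ref{weakly-smooth-pro-sheaves-tensor-structure-preserved} and of the defining transports --- all of which are ultimately built from the associativity isomorphisms for the underived tensor action of $\sC(\bY\pro)$ on $\sC(\bY\tors)$. I do not anticipate a genuine obstruction here, since all the hard analytic input (the flat- and injective-dimension estimates of Section~\ref{flat-injective-dimensions-weakly-smooth-subsecn}, and the coacyclicity reflection of Proposition~\ref{weakly-smooth-direct-image-reflects-coacyclicity}) has already been absorbed into the cited propositions.
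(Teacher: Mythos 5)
Your proposal is correct and is precisely the paper's argument: the paper's own proof is a one-line citation of Proposition~\ref{weakly-smooth-semiderived-equivalences-preserved} together with Proposition~\ref{weakly-smooth-pro-sheaves-tensor-structure-preserved} (or~\ref{weakly-smooth-torsion-module-structure-preserved}), and your diagram chase --- transporting both semitensor structures through the equivalences of Theorem~\ref{relative-triangulated-equivalence-thm} and combining the commutative square $\Xi\circ\Psi_{\X'}\simeq\Psi_\X\circ\Phi$ with the tensor-equivalence property of~$\Phi$ --- is exactly the unpacking of that citation, including the observation that the module-action compatibility gives an alternative route.
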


\begin{proof}
 Follows from
Proposition~\ref{weakly-smooth-semiderived-equivalences-preserved}
together with
Proposition~\ref{weakly-smooth-pro-sheaves-tensor-structure-preserved}
or~\ref{weakly-smooth-torsion-module-structure-preserved}.
\end{proof}

\Section{Some Infinite-Dimensional Geometric Examples}
\label{geometric-examples-secn}

 In this section we discuss several examples illustrating the nature
of infinite-dimensional algebro-geometric objects for which
the constructions and results of Sections~\ref{X-flat-on-Y-secn}\+-%
\ref{weakly-smooth-postcomposition-secn} are designed.
 All the examples below in this section will be those of flat affine
morphisms of ind-schemes $\pi\:\bY\rarrow\X$, where $\X$ is
an ind-separated ind-scheme of ind-finite type over a field~$\kk$.

\subsection{The Tate affine space example} \label{tate-space-subsecn}
 The following example, while geometrically very simple (a trivial
bundle), has unusual and attractive invariance properties.
 The idea (following~\cite[Example~7.11.2(ii)]{BD2}
or~\cite[Example~1.3(1)]{Rich}) is to consider vector spaces as a kind
of affine schemes or, as it may happen, ind-affine ind-schemes.

\smallskip
 (0)~Let $X$ be a scheme over a field~$\kk$.
 Then, in the algebro-geometric parlance, by a ``$\kk$\+point on~$X$''
one means a morphism $\Spec\kk\rarrow X$ of schemes over~$\kk$ (i.~e.,
a section of the structure morphism $X\rarrow\Spec\kk$).
 More generally, given a commutative $\kk$\+algebra $K$, by
a ``$K$\+point on~$X$'' one means a morphism $\Spec K\rarrow X$
of schemes over~$\kk$ (i.~e., a morphism forming a commutative
triangle diagram with the structure morphisms $\Spec K\rarrow\Spec\kk$
and $X\rarrow\Spec\kk$).

 Similarly one can speak of $\kk$\+points or $K$\+points on an ind-scheme
$\X$ over~$\kk$ (considering morphisms of ind-schemes in lieu of
the morphisms of schemes).
 An ind-scheme $\X$ over~$\kk$ is determined by its ``functor of
points'', assigning to a commutative $\kk$\+algebra $K$ the set $\X(K)$
of all $K$\+points on~$\X$ and to every homomorphism of commutative
$\kk$\+algebras $K'\rarrow K''$ the induced map $\X(K')\rarrow\X(K'')$
(cf.\ the discussion in the first paragraph of
Section~\ref{ind-schemes-subsecn}).

\smallskip
 (1)~Let $V$ be a finite-dimensional $\kk$\+vector space.
 We would like to assign a $\kk$\+scheme $X_V$ to $V$ in such a way
that $\kk$\+points on $X$ would correspond bijectively to the elements
of~$V$.
 More generally, for a commutative $\kk$\+algebra $K$, the $K$\+points
on $X$ will correspond bijectively to elements of the tensor
product $K\ot_\kk V$.

 Here is the (obvious) construction.
 Consider the dual vector space $V^*=\Hom_\kk(V,k)$, and consider
the symmetric algebra $\Sym_\kk(V^*)$.
 By the definition, $\Sym_\kk(V^*)$ is the commutative $\kk$\+algebra
freely generated by the $\kk$\+vector space~$V^*$.
 Put $X_V=\Spec\Sym_\kk(V^*)$.

\smallskip
 (2)~Let $V$ be an infinite-dimensional $\kk$\+vector space.
 Then there is \emph{no} natural construction of a $\kk$\+scheme whose
$\kk$\+points would correspond to the elements of~$V$.
 More precisely, the functor assigning to an affine scheme $\Spec K$
over~$\kk$ the set of all elements of $K\ot_\kk V$ is \emph{not}
representable by a scheme over~$K$.
 However, it is representable by an ind-affine ind-scheme~$\X_V$
of ind-finite type over~$\kk$.

 Denote by $\Gamma$ the directed poset of all finite-dimensional
vector subspaces $U\subset V$, ordered by inclusion.
 Then the ind-scheme $\X_V$ is given by the $\Gamma$\+indexed
inductive system of the affine schemes $X_U$ constructed in~(1),
that is $\X_V=\ilim_{U\in\Gamma}X_U$.

\smallskip
 (3)~Let $V$ be a \emph{linearly compact} $\kk$\+vector space, i.~e.,
a complete, separated topological $\kk$\+vector space in which open
vector subspaces of finite codimension form a base of neighborhoods
of zero.
 Denote by $V^*$ the vector space of all continuous linear maps
$V\rarrow\kk$.
 Then $V^*$ is an (infinite-dimensional) discrete vector space;
the vector space $V$ can be recovered as the dual vector space
$V=\Hom_\kk(V^*,k)$ to the discrete vector space $V^*$, with
the natural topology on such dual space.

 Consider the symmetric algebra $\Sym_\kk(V^*)$ (which can be defined
as the commutative $\kk$\+algebra freely generated by $V^*$, or as
the direct limit of the symmetric algebras of finite-dimensional
subspaces of~$V^*$).
 Put $Y_V=\Spec\Sym_\kk(V^*)$.
 This is the infinite-dimensional affine scheme corresponding to
a linearly compact $\kk$\+vector space.
 For any commutative $\kk$\+algebra $K$, the set of $K$\+points on $Y_V$
is naturally bijective to the set of all $\kk$\+linear maps
$V^*\rarrow K$.

\smallskip
 (4)~Let $V$ be a \emph{locally linearly compact} (or \emph{Tate})
$\kk$\+vector space, i.~e., a (complete, separated) topological vector
space admitting a linearly compact open subspace.
 Then the corresponding ind-affine ind-scheme $\Y_V$ (\emph{not}
of ind-finite type) over~$\kk$ is constructed as follows.

 Denote by $\Gamma$ the directed poset of all linearly compact open
subspaces $U\subset V$, ordered by inclusion.
 Then the ind-scheme $\Y_V$ is defined as $\Y_V=\ilim_{U\in\Gamma}Y_U$.
 Here, for any pair of linearly compact open subspaces $U'\subset U''
\subset V$, the inclusion map $U'\rarrow U''$ induces a surjective
map of the dual discrete vector spaces $U''{}^*\rarrow U'{}^*$.
 This linear map, in turn induces a surjective homomorphism of
commutative $\kk$\+algebras $\Sym_\kk(U''{}^*)\rarrow\Sym_\kk(U'{}^*)$,
which corresponds to the natural closed immersion of affine schemes
$Y_{U'}\rarrow Y_{U''}$ appearing in the inductive system.

\smallskip
 (5)~Now let $V$ be a locally linearly compact $\kk$\+vector space and
$W\subset V$ be a fixed linearly compact open subspace.
 Then the quotient space $V/W$ is discrete in the induced topology.
 Put $\X=\X_{V/W}$ and $\bY=\Y_V$, in the notation of~(2) and~(4).

 Then $\X$ is an ind-affine ind-scheme of ind-finite type over~$\kk$;
hence, in particular, $\X$ is ind-semi-separated and ind-Noetherian.
 The surjective linear map $V\rarrow V/W$ induces a flat affine
morphism of ind-schemes $\pi\:\bY\rarrow\X$.
 The fibers of~$\pi$ over the $\kk$\+points of $\X$ are
infinite-dimensional affine schemes isomorphic to~$Y_W$.

\smallskip
 (6)~Finally, let us construct a dualizing complex on
the ind-Noetherian ind-scheme $\X=\X_{V/W}$.
 Let $U\subset V/W$ be a finite-dimensional vector subspace.
 Then $\Sym_\kk(U)$ is a regular commutative ring of Krull dimension
$\dim_\kk U$ (in fact, the ring of polynomials in $\dim_\kk U$ variables
over~$\kk$).
 Choose a (finite, if one wishes) injective resolution $E_U^\bu$
of the free $\Sym_\kk(U)$\+module $\Sym_\kk(U)$, and let $\E_U^\bu$
be the corresponding complex of injective quasi-coherent sheaves
on~$X_U$.
 So $\E_U^\bu$ is an injective resolution of the structure sheaf
$\cO_{X_U}$ in the category $X_U\qcoh$.

 Put $\D_U^\bu=\E_U^\bu\ot_\kk\Lambda_\kk^{\dim_\kk U}(U^*)
[\dim_\kk U]$.
 Here $\Lambda_\kk^{\dim_\kk U}(U^*)[\dim_\kk U]$ is a complex of
$\kk$\+vector spaces whose only term is the one-dimensional top exterior
power of the vector space $U^*$, placed in the cohomological
degree~$-\dim_\kk U$.
 Then $\D_U^\bu$ is a dualizing complex on~$X_U$.
 Moreover, for any pair of finite-dimensional subspaces $U'\subset U''
\subset V/W$ and the related closed immersion of affine schemes
$i_{U'U''}\:X_{U'}\rarrow X_{U''}$, there is a natural homotopy
equivalence $\D_{U'}^\bu\simeq i_{U'U''}^!\D_{U''}^\bu$ of complexes
of injective quasi-coherent sheaves on $X_{U'}$.
 
 It remains to glue the system of dualizing complexes $\D_U^\bu$ on
the schemes $X_U$ into a dualizing complex (of injective quasi-coherent
torsion sheaves) $\rD^\bu$ on the ind-scheme~$\X_{V/W}$.
 For this purpose, we assume that $V/W$ is a vector space of at most
countable dimension over~$\kk$ (so $\X=\X_{V/W}$ and $\bY=\Y_V$ are
$\aleph_0$\+ind-schemes).
 Then the construction of Example~\ref{aleph-zero-dualizing-glueing}
does the job.

\smallskip
 (7)~Notice that, in the context of~(6), the complex $\D_U^\bu$ has
its only cohomology sheaf situated in the negative cohomological
degree~$-\dim_\kk U$.
 Assuming that $V/W$ is countably infinite-dimensional vector space
and looking into the construction of 
Example~\ref{aleph-zero-dualizing-glueing} keeping exactness of
the functors of direct image (under a closed immersion) and
direct limit in mind, one can see that $\rD^\bu$ is an \emph{acyclic}
complex of quasi-coherent torsion sheaves on~$\X_{V/W}$.
 The only cohomology sheaf just runs to the cohomological
degree~$-\infty$ and disappears in the direct limit as $\dim_\kk U$
grows to infinity.
 Hence $\pi^*\rD^\bu$ is an acyclic complex of quasi-coherent torsion
sheaves on~$\Y_V$.
 These acyclic complexes, representing quite nontrivial objects in
the coderived and semiderived categories, are the unit objects of
the respective tensor structures given by the cotensor and semitensor
product operations!

\smallskip
 (8)~At last, let us discuss the invariance properties of
the constructions above with respect to replacing a linearly compact
open subspace $W\subset V$ with another linearly compact open subspace.

 Let $W'$ and $W''\subset V$ be two linearly compact open subspaces.
 Then $W=W'+W''\subset V$ is a linearly compact open subspace as well.
 The quotient spaces $W/W'$ and $W/W''$ are finite-dimensional, so
the morphisms of ind-schemes of ind-finite type $\X'=\X_{V/W'}\rarrow
\X_{V/W}=\X$ and $\X''=\X_{V/W''}\rarrow\X_{V/W}=\X$ induced by
the surjective linear maps of discrete vector spaces $V/W'\rarrow V/W$
and $V/W''\rarrow V/W$ are affine and weakly smooth (in fact, smooth)
of finite relative dimension.
 Hence Corollary~\ref{weakly-smooth-semiderived-preservation-cor} tells
that the class of morphisms in $\sC(\bY\tors)$ or $\sK(\bY\tors)$
which are inverted in order to construct the semiderived category
$\sD_{\X'}^\si(\bY\tors)$ coincides with the class of morphisms of
complexes which are inverted in order to construct the semiderived
category $\sD_{\X''}^\si(\bY\tors)$.
 In other words, the semiderived category of quasi-coherent torsion
sheaves on the ind-scheme $\bY=\Y_V$ is determined by the locally
linearly compact topological vector space $V$, and \emph{does not
depend on the choice of a linearly compact open subspace $W\subset V$}.

\smallskip
 (9)~The invariance property of the semitensor product
operation~$\os_{\pi^*\rD^\bu}$ on $\sD_\X^\si(\bY\tors)$ is only
slightly more complicated.
 The \emph{relative dimension} $\dim W'/W''\in\boZ$ is defined as
the difference $\dim_\kk\widetilde W/W''-\dim_\kk\widetilde W/W'=
\dim_\kk W'/\overline W-\dim_\kk W''/\overline W$, where $\widetilde W$
and $\overline W\subset V$ are arbitrary linearly compact open
subspaces such that $\overline W\subset W'\cap W''$ and
$W'+W''\subset\widetilde W$.
 The \emph{relative determinant} $\det W'/W''\in\kk\vect$ is
a one-dimensional $\kk$\+vector space defined as
$\det V'/W''=\det_\kk(\widetilde W/W'')\ot_\kk
\det_\kk(\widetilde W/W')^*=\det_\kk(W'/\overline W)\ot_\kk
(\det_\kk W''/\overline W)^*$, where
$\det_\kk(U)=\Lambda_\kk^{\dim_\kk U}(U)$ for a finite-dimensional
$\kk$\+vector space~$U$.
 Here the equality signs mean natural isomorphisms.

 Then the functor $(\det W'/W'')[\dim W'/W'']\ot_\kk\Id\:
\sD_{\X'}^\si(\bY\tors)\rarrow\sD_{\X''}^\si(\bY\tors)$ is
a tensor triangulated equivalence between the two tensor triangulated
categories, with the tensor structures given by the constructions above.
 In particular, this functor takes the unit object to the unit object.
 Here $(\det W'/W'')[\dim W'/W'']$ is a complex of $\kk$\+vector spaces
in which the one-dimensional vector space $\det W'/W''$ sits in
the cohomological degree~$-\dim W'/W''$ (and the components in
all the other cohomological degrees vanish).
 This is the conclusion one obtains from
Theorem~\ref{weakly-smooth-semitensor-product-preservation}.

\begin{ex} \label{laurent-series-example}
 Let us spell out in coordinate notation an important particular case
of the above example.
 Let $\kk((t))$ be the $\kk$\+vector space of formal Laurent power
series in a variable~$t$ over the field~$\kk$, endowed with the usual
topology in which the vector subspaces $t^n\kk[[t]]\subset\kk((t))$,
\,$n\in\boZ$, form a base of neighborhoods of zero.
 Then $V=\kk((t))$ is a locally linearly compact topological vector
space over~$\kk$.

 A generic element of $\kk((t))$ has the form $f(t)=\sum_{n=-N}^\infty
x_nt^n$, where $N\in\boZ$ and $x_n\in\kk$ for all $n\in\boZ$.
 So $x_n\:V=\kk((t))\rarrow\kk$ are continuous linear functions; we will
consider them as coordinates on the ind-scheme~$\Y_V$.

 Let $W\subset V$ be the linearly compact open subspace $W=\kk[[t]]
\subset\kk((t))$.
 Then the $\aleph_0$\+ind-scheme $\X=\X_{V/W}$ can be described, in
terms of the anti-equivalence of categories from
Example~\ref{topological-ring-ind-scheme}(2), as
$\X_{V/W}=\Spi\fA$, where $\fA$ is the topological commutative ring
$\fA=\varprojlim_{n>0}\kk[x_{-n},\dotsc,x_{-1}]$ with the topology
of projective limit of the discrete polynomial rings
$A_n=\kk[x_{-n},\dotsc,x_{-1}]$.
 Here the transition map $A_{n+1}\rarrow A_n$ in the projective
system takes $x_{-n-1}$ to~$0$ and $x_{-i}$ to~$x_{-i}$ for $i\le n$.

 The $\aleph_0$\+ind-scheme $\bY=\Y_V$ can be similarly described, in
terms of the same anti-equivalence of categories, as
$\Y_V=\Spi\fR$, where $\fR$ is the topological commutative ring
$\fR=\varprojlim_{n>0}\kk[x_{-n},\dotsc,x_{-1},x_0,x_1,\dotsc]$ with
the topology of projective limit of the discrete ring of polynomials
in infinitely many variables
$R_n=\kk[x_{-n},\dotsc,x_{-1},x_0,x_1,\dotsc]$.
 The transition map $R_{n+1}\rarrow R_n$ in the projective system
of rings takes $x_{-n-1}$ to~$0$ and $x_i$ to~$x_i$ for $i\ge-n$.
 The flat affine morphism of ind-schemes $\pi\:\Y_V\rarrow\X_{V/W}$
corresponds to the natural injective continuous ring homomorphism
$\fA\rarrow\fR$, which can be obtained as the projective limit
of the natural subring inclusions $A_n\rarrow R_n$.

 Following the discussion in
Section~\ref{torsion-ind-affine-subsecn}(6), the Grothendieck
abelian category $\X\tors$ of quasi-coherent torsion sheaves on $\X$
is equivalent to the category $\fA\discr$ of discrete $\fA$\+modules.
 The abelian category $\fA\discr$ can be simply described in explicit
terms as the category of modules $\rM$ over the ring of polynomials in
infinitely many variables $A=\kk[\dotsc,x_{-3},x_{-2},x_{-1}]$ having
the property that for every $b\in\rM$ there exists $n>0$ such that
$x_{-i}b=0$ for all $i>n$.
 The Grothendieck abelian category $\bY\tors$ of quasi-coherent
torsion sheaves on $\bY$ is equivalent to the category $\fR\discr$
of discrete $\fR$\+modules, which explicitly means modules $\bN$ over
the ring of polynomials in doubly infinitely many variables
$R=\kk[\dotsc,x_{-2},x_{-1},x_0,x_1,\dotsc]$ with the property that for
every $b\in\bN$ there exists $n>0$ such that $x_{-i}b=0$ for all $i>n$
(while no condition is imposed on $x_ib$ for $i\ge0$).

 So the obvious forgetful functor $R\modl\rarrow A\modl$ induced
by the subring inclusion $A\rarrow R$ takes discrete $\fR$\+modules
to discrete $\fA$\+modules.
 The forgetful functor $\fR\discr\rarrow\fA\discr$ corresponds, under
the above equivalences between the sheaf and module categories, to
the direct image functor $\pi_*\:\bY\tors\rarrow\X\tors$, in terms of
which the semiderived category $\sD_\X^\si(\bY\tors)$ is defined.

 Finally, the dualizing complex $\rD^\bu$ on $\X$ as per
the construction in~(6) is a complex of injective discrete modules
over $\fA$ (concentrated, if one wishes, in the nonpositive
cohomological degrees; cf.\
Remarks~\ref{cotensor-right-derived-remark}(3\+-5))
with the following property.
 For every $n>0$, the subcomplex $\D_n^\bu$ of all elements annihilated
by \dots,~$x_{-n-3}$, $x_{-n-2}$,~$x_{-n-1}$ in $\rD^\bu$ is a complex
of injective $A_n$\+modules homotopy equivalent to an injective
resolution of the free $A_n$\+module with one generator
$\kk[x_{-n},\dotsc,x_{-1}]\,dx_{-n}\wedge\dotsb\wedge dx_{-1}$
shifted cohomologically by~$[n]$.

 As explained in~(8\+-9), the results of
Section~\ref{weakly-smooth-postcomposition-secn} imply that
the semiderived category $\sD_\X^\si(\bY\tors)$ is preserved by
all the continuous \emph{linear} coordinate changes in the topological
vector space $V=\kk((t))$.
 This means all the bijective continuous $\kk$\+linear maps $\kk((t))
\rarrow\kk((t))$ with continuous inverse maps.
 Moreover, the semitensor product operation on $\sD_\X^\si(\bY\tors)$
is also preserved by such coordinate changes, up to dimensional
cohomological shifts and determinantal twists.
\end{ex}

\begin{qst}
 In the context of Example~\ref{laurent-series-example}, one can
write $\sD_\X^\si(\bY\tors)=\sD_\fA^\si(\fR\discr)$, referring
to the equivalences of abelian categories $\bY\tors\simeq\fR\discr$
and $\X\tors\simeq\fA\discr$.
 Is the semiderived category $\sD_\fA^\si(\fR\discr)$ preserved by
arbitrary continuous \emph{polynomial} coordinate changes, i.~e., all
the automorphisms of the topological ring~$\fR$\,?
\end{qst}

\subsection{Cotangent bundle to discrete projective space}
\label{cotangent-bundle-subsecn}
 Let $V$ be an infinite-dimensional discrete $\kk$\+vector space.
 Then the \emph{projectivization} of $V$ is an ind-scheme $\fP(V)$
of ind-finite type over~$\kk$, defined informally as the space of
all one-dimensional vector subspaces in~$V$
(cf.~\cite[Example~1.3(2)]{Rich}).
 Any one-dimensional vector subspace $L\in V$ corresponds to
a $\kk$\+point $l\:\Spec\kk\rarrow\fP(V)$.

 The tangent space to $\fP(V)$ at the point~$l$ can be computed as
$T_l\fP(V)=\Hom_\kk(L,V/L)$; so it is a discrete $\kk$\+vector space
(as one would expect).
 Accordingly, the cotangent space $T_l^*\fP(V)=\Hom_\kk(V/L,L)$ is
a linearly compact $\kk$\+vector space.
 Following Section~\ref{tate-space-subsecn}(3), there is
an infinite-dimensional affine scheme corresponding to $T_l^*\fP(V)$,
described as $Y_{T_l^*\fP(V)}=\Spec\Sym_\kk(T_l\fP(V))$.
 So, denoting by $\bY$ the total space of the cotangent bundle to
$\X=\fP(V)$, one would expect the fibration $\pi\:\bY\rarrow\X$ to be
a flat affine morphism of ind-schemes.
 In order to show that this is indeed the case, let us explain how
to formalize this informal discussion.

\smallskip
 (1)~Let $U$ be a finite-dimensional $\kk$\+vector space.
 Then the projectivization $P(U)$ is defined as the projective
spectrum of the graded ring $\Sym_\kk(U^*)$, i.~e.,
$P(U)=\Proj\Sym_\kk(U^*)$ (where the grading on the commutative
$\kk$\+algebra $\Sym_\kk(U^*)$ is defined by the rule that the elements
of $U^*\subset\Sym_\kk(U^*)$ have degree~$1$).
 This means that the scheme points of $P(U)$ correspond bijectively
to homogeneous prime ideals in the graded ring $R=\Sym_\kk(U^*)$
not containing (in other words, different from) the ideal
$\bigoplus_{n>0}R_n\subset R$ of all elements of positive degree.

 For every element $r\in R_1$, the subset in $\Proj R$ consisting
of all homogeneous prime ideals not containing~$r$ is an affine
open subscheme in $\Proj R$ naturally isomorphic to
$\Spec R[r^{-1}]_0$, where $R[r^{-1}]$ is the $\boZ$\+graded ring
obtained by inverting the element $r\in R_1$ and $R[r^{-1}]_0
\subset R[r^{-1}]$ is the subring of all elements of degree~$0$.

 Let $U''$ be a finite-dimensional $\kk$\+vector space and
$U'\subset U''$ be a vector subspace.
 Then the surjective $\kk$\+linear map $U''{}^*\rarrow U'{}^*$
induces a surjective homomorphism of graded rings
$R''=\Sym_\kk(U''{}^*)\rarrow\Sym_\kk(U'{}^*)=R'$.
 Hence the induced closed immersion of projective spectra
$P(U')=\Proj R'\rarrow\Proj R''=P(U'')$.
 
\smallskip
 (2)~Let $M$ be a graded module over the graded ring $R=\Sym_\kk(U^*)$.
 Then a quasi-coherent sheaf $\widetilde M$ over $P(U)=\Proj R$ is
assigned to $M$ in the following way.
 For every element $r\in R_1$, the restriction of $\widetilde M$ to
the affine open subscheme $\Spec R[r^{-1}]_0\subset\Proj R$
is the quasi-coherent sheaf over $\Spec R[r^{-1}]_0$ corresponding to
the $R[r^{-1}]_0$\+module $M[r^{-1}]_0$.
 Here $M[r^{-1}]=R[r^{-1}]\ot_RM$ and $M[r^{-1}]_0$ is the degree~$0$
component of the $\boZ$\+graded module $M[r^{-1}]$.

 The functor $M\longmapsto\widetilde M$ from the abelian category
of graded $R$\+modules to the abelian category of quasi-coherent
sheaves on $\Proj R$ is exact and preserves coproducts; it is also
a tensor functor between the two tensor categories.
 In particular, the functor $M\longmapsto\widetilde M$ takes flat
graded $R$\+modules $F$ to flat quasi-coherent sheaves
$\widetilde F$ on $\Proj R$.
 One has $\widetilde M=0$ if and only if, for every $b\in M$,
there exists $n\ge1$ such that $R_ib=0$ in $M$ for all $i\ge n$.
 The functor $M\longmapsto\widetilde M$ also takes finitely generated
graded $R$\+modules to coherent sheaves on $P(U)=\Proj R$.

 For any graded $R$\+module $M$ and any integer $n\in\boZ$, denote
by $M(n)$ the graded $R$\+module with the components $M(n)_i=M_{n+i}$
(and the same action of $R$ as in~$M$).
 The \emph{tautological line bundle} $\cO_{P(V)}(-1)$ on $P(V)$ is
defined informally by the rule that the line $L$ is the fiber of
$\cO_{P(V)}(-1)$ over a $\kk$\+point $l\:\Spec\kk\rarrow P(V)$
corresponding to a one-dimensional $\kk$\+vector subspace $L\subset V$.
 More formally, the quasi-coherent sheaf $\cO_{P(V)}(n)$ on $P(V)$
(for any $n\in\boZ$) corresponds to the graded $R$\+module $R(n)$.
 Accordingly, one has $\widetilde{M(n)}=\widetilde M(n)$ for any
graded $R$\+module $M$, where $\M(n)=\cO_{P(V)}(n)\ot_{\cO_{P(V)}}\M$
for any quasi-coherent sheaf $\M$ on~$P(V)$.

 Let $U'\subset U''$ be a vector subspace in a finite-dimensional
$\kk$\+vector space, and let $R''\rarrow R'$ be the related surjective
morphism of graded rings, as in~(1).
 Denote by $i_{U'U''}\:P(U')\rarrow P(U'')$ the related closed
immersion of projective spaces over~$\kk$.
 Let $M''$ be a graded $R''$\+module and $\widetilde{M''}$ be
the related quasi-coherent sheaf on~$P(U'')$.
 Then the quasi-coherent sheaf $i_{U'U''}^*\widetilde{M''}$ on
$P(U')$ corresponds to the graded $R'$\+module $R'\ot_{R''}M''$.

\smallskip
 (3)~Let $V$ be a discrete $\kk$\+vector space.
 Denote by $\Gamma$ the directed poset of all finite-dimensional
vector subspaces $U\subset V$, ordered by inclusion.
 The \emph{projectivization} of $V$ is defined as the ind-scheme
$\fP(V)=\ilim_{U\in\Gamma}P(U)$, where the transition maps
$i_{U'U''}\:P(U')\rarrow P(U'')$ are the ones defined above in~(1\+-2).

 Our aim is to construct a flat pro-quasi-coherent pro-sheaf
$\fT$ on $\fP(V)$ corresponding to the tangent bundle.
 So, for any $\kk$\+point $l\:\Spec\kk\rarrow\fP(V)$ and the related
one-dimensional vector subspace $L\subset V$, the fiber of $\fT$
over~$l$ should be the discrete $\kk$\+vector space
$\Hom_\kk(L,V/L)\simeq (L^*\ot_\kk V)/\kk$.

 Let $U\subset V$ be a finite-dimensional vector subspace.
 We start with constructing the restriction of $\fT$ onto the closed
subscheme $P(U)\subset\fP(V)$.
 Denoting by $i_U\:P(U)\rarrow\fP(V)$ the closed immersion morphism,
we would like to construct the quasi-coherent sheaf $\cT_U=i_U^*\fT$
on the scheme $P(U)$.

 The infinite-dimensional vector bundle $V(1)$ on $P(U)$ corresponds
to the graded module $V\ot_\kk R(1)$ over the graded ring
$R=\Sym_\kk(U^*)$.
 The degree~$-1$ component of this graded module is the vector space
$V$, and the degree~$0$ component of the vector space $V\ot_\kk U^*$.
 The latter vector space contains a canonical element
$e\in V\ot_\kk U^*\simeq\Hom_\kk(U,V)$ corresponding to the identity
map $U\rarrow V$.

 Let $f_U\:R\rarrow V\ot_\kk R(1)$ be the graded $R$\+module map
taking the free generator $1\in R$ to the element $e\in V\ot_\kk U^*$.
 Denote by $T_U=\coker(f_U)$ the cokernel of~$f_U$ taken in the category
of graded $R$\+modules.
 By definition, the quasi-coherent sheaf $\cT_U$ on $P(U)=\Proj R$
corresponds to the graded $R$\+module~$T_U$.

\smallskip
 (4)~The graded $R$\+module $T_U$ is \emph{not} flat, but it is
``flat up to torsion $R$\+modules'', in a suitable sense;
so the quasi-coherent sheaf $\cT_U$ on $P(U)$ is flat.
 More precisely, the map $f_U\:R\rarrow V\ot_\kk R(1)$ factorizes as
the composition $R\rarrow U\ot_\kk R(1)\rarrow V\ot_\kk R(1)$, where
the split monomorphism of free graded $R$\+modules
$U\ot_\kk R(1)\rarrow V\ot_\kk R(1)$ is induced by the inclusion of
$\kk$\+vector spaces $U\rarrow V$.
 The cokernel of the morphism $R\rarrow U\ot_\kk R(1)$ is
finitely generated graded $R$\+module; the corresponding coherent
sheaf on $P(U)$ is the locally free sheaf corresponding to
the tangent bundle to $P(U)$.

 To see algebraically that the cokernel of the graded $R$\+module
morphism $R\rarrow U\ot_\kk R(1)$ corresponds to a locally free
coherent (or at least, a flat quasi-coherent) sheaf on $P(U)$, one
can consider the Koszul complex
$$
 0\rarrow R\lrarrow U\ot_\kk R(1)\rarrow
 \Lambda^2_\kk(U)\ot_\kk R(2)\rarrow\dotsb\rarrow
 \Lambda^{\dim_\kk U}(U)\ot_\kk R(\dim_\kk U),
$$
where $\Lambda^n_\kk(U)$ are the exterior powers of
the $\kk$\+vector space~$U$.
 This complex is a free graded resolution of the graded $R$\+module
$\Lambda^{\dim_\kk U}(U)(\dim_\kk U)$, which is a one-dimensional
$\kk$\+vector space viewed as a graded $R$\+module concentrated in
the single degree~$-\dim_\kk U$.
 The exact functor $M\longmapsto\widetilde M$ annihilates this graded
$R$\+module, so this functor takes the Koszul complex to an exact finite
complex of locally free coherent sheaves on~$P(U)$.
 Accordingly, all the graded $R$\+modules of cycles and boundaries of
the Koszul complex are also taken to locally free coherent sheaves
on $P(U)$ by the functor $M\longmapsto\widetilde M$.

\smallskip
 (5)~Let $U'\subset U''\subset V$ be two finite-dimensional
subspaces in our discrete $\kk$\+vector space~$V$.
 Then the functor $R'\ot_{R''}{-}$ takes the graded $R''$\+module
morphism~$f_{U''}$ to the graded $R'$\+module morphism~$f_{U'}$.
 Hence we have a natural isomorphism of graded $R'$\+modules
$T_{U'}\simeq R'\ot_{R''}T_{U''}$, and consequently a natural
isomorphism $\cT_{U'}\simeq i_{U'U''}^*\cT_{U''}$ of flat
quasi-coherent sheaves on~$P(U')$.

 Now the rule $\fT^{(P(U))}=\cT_U$ for all $U\in\Gamma$ defines
the desired flat pro-quasi-coherent pro-sheaf $\fT$ on
the ind-scheme~$\fP(V)$.

\smallskip
 (6)~Similarly to the symmetric algebra of a vector space, one can
define the symmetric algebra of a module over a commutative ring~$S$.
 Given an $S$\+module $N$, the symmetric algebra $\Sym_S(N)$ can
be defined as the commutative $S$\+algebra freely generated by
the $S$\+module~$N$.
 When $F$ is a free $S$\+module, $\Sym_S(F)$ is a free $S$\+module,
too; and the (nonadditive) functor $N\longmapsto\Sym_S(N)$ preserves
direct limits; so when $F$ is a flat $S$\+module, $\Sym_S(F)$ is
a flat $S$\+algebra.

 Furthermore, given a quasi-coherent sheaf $\N$ on a scheme $Z$,
one defines the quasi-coherent commutative algebra $\cSym_Z(\N)$ on $Z$
(in the sense of Section~\ref{pro-qcoh-algebras-subsecn}) by
the rule $\cSym_Z(\N)(W)=\Sym_{\cO(W)}(\N(W))$ for all the affine
open subschemes $W\subset Z$.
 Clearly, $\cSym_Z(\F)$ is a flat quasi-coherent commutative algebra
on $Z$ whenever $\F$ is a flat quasi-coherent sheaf on~$Z$.
 For every morphism of schemes $f\:Z'\rarrow Z''$ and a quasi-coherent
sheaf $\N''$ on $Z''$, one has a natural isomorphism
$\cSym_{Z'}(f^*\N'')\simeq f^*\cSym_{Z''}(\N'')$ of quasi-coherent
algebras on~$Z'$.

\smallskip
 (7)~In the context of~(3\+-5), we put $\cA_U=\cSym_{P(U)}\cT_U$
for every finite-dimensional vector subspace $U$ in the given
discrete $\kk$\+vector space~$V$.
 So $\cA_U$ is a flat quasi-coherent algebra on the projective
space~$P(U)$.
 Then the rule $\fA^{(P(U))}=\cA_U$ defines a flat pro-quasi-coherent 
commutative algebra $\fA$ on the ind-scheme $\X=\fP(V)$.
 The ind-scheme $\bY$ together with the flat affine morphism
of ind-schemes $\pi\:\bY\rarrow\X$ corresponds to
the pro-quasi-coherent algebra $\fA$ on $\X$ via the construction of
Proposition~\ref{ind-schemes-affine-morphisms-pro-qcoh-algebras}.
 This is the desired flat affine morphism of ind-schemes corresponding
to the cotangent bundle on the ind-projective space~$\fP(V)$.

\smallskip
 (8)~Assuming that the dimension of~$V$ is at most countable, one
can construct a dualizing complex $\rD^\bu$ on $\X=\fP(V)$ following
the approach of Remarks~\ref{cotensor-right-derived-remark}(3\+-5).
 Notice that, similarly to Section~\ref{tate-space-subsecn}(7),
the dualizing complex $\rD^\bu$ is an \emph{acyclic} complex of
quasi-coherent torsion sheaves on $\X$ whenever $V$ is a $\kk$\+vector
space of (countably) infinite dimension.

\subsection{Universal fibration of quadratic cones in linearly
compact vector space}  \label{quadratic-cones-subsecn}
 Let $W$ be an infinite-dimensional discrete $\kk$\+vector space and
$W^*$ be the dual linearly compact $\kk$\+vector space.
 Then the elements of the vector space $\Sym_\kk^2(W)$ correspond to 
continuous quadratic functions $q\:W^*\rarrow\kk$.
 It is worth noticing that any such quadratic function actually
factorizes through a finite-dimensional discrete quotient vector
space of~$W^*$.

 The zero locus $Y_q$ of any nonzero continuous quadratic function
$q\:W^*\rarrow\kk$ is an infinite dimensional affine scheme, or more
specifically a closed subscheme in the affine scheme $Y_{W^*}$
corresponding to the linearly compact topological vector space $W^*$
under the construction of Section~\ref{tate-space-subsecn}(3).
 This closed subscheme is an infinite-dimensional quadratic cone.
 Such quadratic cones $Y_q\subset Y_{W^*}$ are parametrized by nonzero
continuous quadratic functions $q\:W^*\rarrow\kk$ viewed up to
a multiplication by a scalar from~$\kk$.
 In other words, the space of parameters of the quadratic cones
in $W^*$ is the ind-scheme $\fP(V)$ from
Section~\ref{cotangent-bundle-subsecn}(3),
where the infinite-dimensional discrete $\kk$\+vector space $V$ is
constructed as $V=\Sym^2_\kk(W)$.

 This informal discussion suggests that there should be a flat affine
morphism of ind-schemes $\pi\:\bY\rarrow\X=\fP(V)$ whose fibers over
the $\kk$\+points of $\fP(V)$ are the quadratic cones~$Y_q$.
 The aim of this section is to spell out a precise construction of
the ind-scheme~$\bY$ and the morphism~$\pi$.

\smallskip
 (0)~Let us first return to the discussion of the ind-scheme $\fP(V)$
from Section~\ref{cotangent-bundle-subsecn}(3).
 The approach hinted at in Section~\ref{tate-space-subsecn}(0)
suggests to describe schemes and ind-schemes by their ``functors of
points'', i.~e., the functors they represent on the category of
affine schemes.
 For our present purposes, let us restrict ourselves to \emph{field
extensions} $\kk\rarrow K$.
 One easly observes that, for any such field extension, the set
$\X(K)$ of $K$\+points in $\X=\fP(V)$ is naturally bijective to
the set of all one-dimensional vector subspaces $L\subset K\ot_\kk V$
in the $K$\+vector space $K\ot_\kk V$.
 (The case of an arbitrary commutative $\kk$\+algebra $K$ is
considerably more complicated; see~\cite[Example~1.3(2)]{Rich}.)

\smallskip
 (1)~Let $W$ be a discrete $\kk$\+vector space.
 The discrete vector space $\Sym_\kk^2(W)$ can be defined as
the degree~$2$ component of the graded $\kk$\+algebra $\Sym_\kk(W)$,
that is, the vector subspace in the $\kk$\+algebra $\Sym_\kk(W)$
spanned by the products $w'w''$ with $w'$, $w''\in W$.
 Put $V=\Sym_\kk^2(W)$.
 
 Let $U\subset V$ be a finite-dimensional vector subspace.
 Consider the graded ring $R=\Sym_\kk(U^*)$.
 Then the tensor product $\Sym_\kk(W)\ot_\kk R$ is a bigraded
commutative $R$\+algebra.
 For the purposes of applying the functor $M\longmapsto\widetilde M$
(see Section~\ref{cotangent-bundle-subsecn}(2)) we will consider
the grading on $\Sym_\kk(W)\ot_\kk R$ induced by the grading of~$R$.
 Then the functor $M\longmapsto\widetilde M$ takes the graded
$R$\+algebra $\Sym_\kk(W)\ot_\kk R$ to the quasi-coherent algebra
$\Sym_\kk(W)\ot_\kk\cO_{P(U)}$ on the scheme~$P(U)$.
 This quasi-coherent algebra corresponds to the affine morphism of
schemes $Y_{W^*}\times_\kk P(U)\rarrow P(U)$, where $Y_{W^*}=
\Spec\Sym_\kk(W)$ (as in Section~\ref{tate-space-subsecn}(3)).

\smallskip
 (2)~Consider the free graded module $\Sym_\kk(W)\ot_\kk R(-1)$ over
the graded algebra $\Sym_\kk(W)\ot_\kk R$.
 Applying the functor $M\longmapsto\widetilde M$ to this graded
module produces the quasi-coherent module 
$\Sym_\kk(W)\ot_\kk\cO_{P(U)}(-1)$ over the quasi-coherent algebra
$\Sym_\kk(W)\ot_\kk\cO_{P(U)}$ on~$P(U)$.

 The free graded module $\Sym_\kk(W)\ot_\kk R(-1)$ over the graded
algebra $\Sym_\kk(W)\ot_\kk R$ is spanned by the element~$1$ sitting in
degree~$1$ (in the grading induced by the grading of~$R$).
 The degree~$0$ component of the graded algebra $\Sym_\kk(W)\ot_\kk R$
is the algebra $\Sym_\kk(W)$, and the degree~$1$ component is
the vector space $\Sym_\kk(W)\ot_\kk U^*\simeq\Hom_\kk(U,\Sym_\kk(W))$.
 The natural injective $\kk$\+linear map $U\rarrow V\simeq
\Sym_\kk^2(W)\rarrow\Sym_\kk(W)$ defines a canonical element
$e\in\Sym_\kk(W)\ot_\kk U^*$.

 Let $f_U\:\Sym_\kk(W)\ot_\kk R(-1)\rarrow\Sym_\kk(W)\ot_\kk R$
be the morphism of graded $R$\+modules taking the generator
$1\in\Sym_\kk(W)\ot_\kk R(-1)$ to the element
$e\in\Sym_\kk(W)\ot_\kk U^*$.
 The morphism~$f_U$ is injective because the ring $\Sym_\kk(W)\ot_\kk R
\simeq\Sym_\kk(W\oplus U^*)$ has no zero-divisors.
 Denote by $C_U$ the cokernel of~$f_U$ taken in the category of
graded $R$\+modules.
 So $C_U$ is naturally a graded $R$\+algebra (namely, the quotient
algebra of $\Sym_\kk(W)\ot_\kk R$ by the ideal generated by
the homogeneous element~$e$).
 By the definition, the quasi-coherent algebra $\C_U$ on $P(U)=
\Proj R$ is obtained by applying the functor $M\longmapsto\widetilde M$
to the graded $R$\+algebra~$C_U$.

\smallskip
 Our next aim is to show that the quasi-coherent sheaf $\C_U$ on
$P(U)$ is flat.

\begin{lem} \label{mono-of-locally-free-lemma}
 Let $f\:\G\rarrow\F$ be a monomorphism of locally free coherent
sheaves on a Noetherian scheme~$X$.
 Assume that, for every field~$K$ and any morphism of schemes
$i\:\Spec K\rarrow X$, the morphism of coherent sheaves on\/ $\Spec K$
(i.~e., of $K$\+vector spaces) $i^*f\:i^*\G\rarrow i^*\F$ is injective.
 Then the cokernel $\F/f(\G)$ of the monomorphism~$f$ is a locally
free sheaf on~$X$.
\end{lem}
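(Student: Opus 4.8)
The plan is to reduce the statement to a standard local-algebra computation and then invoke the local criterion of flatness. Since being locally free is a Zariski-local property, I would first pass to an affine open $\Spec R\subset X$ small enough that the restrictions of $\G$ and $\F$ become free; here $R$ is a Noetherian commutative ring and $f$ is a morphism of finite free $R$-modules $f\:G\rarrow F$ fitting into a short exact sequence $0\rarrow G\rarrow F\rarrow Q\rarrow0$, where $Q$ is the module of sections of $\F/f(\G)$. The goal becomes to prove that $Q$ is a projective (equivalently, locally free) $R$-module. The key reformulation of the hypothesis is that, taking $K=\kappa(\p)$ to be the residue field at a prime $\p\subset R$ and $i\:\Spec\kappa(\p)\rarrow\Spec R$ the canonical morphism, the induced map $\kappa(\p)\ot_R f\:\kappa(\p)\ot_R G\rarrow\kappa(\p)\ot_R F$ is injective for every $\p$. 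Allowing arbitrary field extensions $K$ of $\kappa(\p)$ adds nothing, since a field extension is faithfully flat, so it suffices to test injectivity on residue fields.

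Next I would localize at an arbitrary prime $\p$ and run the long exact sequence of $\Tor^{R_\p}(\kappa(\p),{-})$ applied to the localized short exact sequence $0\rarrow G_\p\rarrow F_\p\rarrow Q_\p\rarrow0$. Because $F_\p$ is free over $R_\p$, one has $\Tor_1^{R_\p}(\kappa(\p),F_\p)=0$, and the connecting homomorphism therefore embeds $\Tor_1^{R_\p}(\kappa(\p),Q_\p)$ into the kernel of the map $\kappa(\p)\ot_{R_\p}G_\p\rarrow\kappa(\p)\ot_{R_\p}F_\p$. By the reformulated hypothesis this latter map is injective, whence $\Tor_1^{R_\p}(\kappa(\p),Q_\p)=0$.

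Finally, $Q_\p$ is a finitely generated module over the Noetherian local ring $R_\p$ with $\Tor_1^{R_\p}(\kappa(\p),Q_\p)=0$, so the local criterion of flatness gives that $Q_\p$ is free over $R_\p$. As this holds at every prime, the coherent sheaf $\F/f(\G)$ has free stalks, hence is flat; being finitely presented on a Noetherian scheme, it is therefore locally free, as claimed.

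As for obstacles, there is essentially no deep step here: the whole content is the vanishing of $\Tor_1$ on fibers together with the local flatness criterion. The only points requiring a little care are the initial reduction from testing against all fields $K$ to testing against the residue fields $\kappa(\p)$ (handled by faithful flatness of field extensions), and the final passage from \emph{all stalks are free} to \emph{the sheaf is locally free}, which relies on coherence (finite presentation) over the Noetherian scheme $X$. Neither of these is a genuine obstacle; the argument is entirely routine once the short exact sequence and the fiberwise injectivity are in place.
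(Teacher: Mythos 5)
Your proof is correct. The paper itself does not spell out an argument for this lemma: it declares it well-known and refers to the literature (Christensen--Iyengar~\cite{CI} and Avramov--Foxby~\cite{AF}), pointing out in addition that the statement remains true with flat quasi-coherent sheaves in place of locally free coherent ones. Your argument---reduce to a small affine open $\Spec R$ where $\G$ and $\F$ are free, observe that the hypothesis need only be used at the residue fields $\kappa(\p)$, deduce $\Tor_1^{R_\p}(\kappa(\p),Q_\p)=0$ from the long exact sequence since $\Tor_1^{R_\p}(\kappa(\p),F_\p)=0$, and conclude by the local criterion of flatness together with finite presentation---is precisely the standard proof of the coherent case, and it is complete as written. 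One point worth making explicit: your argument uses coherence in an essential way at two places (the implication ``$\Tor_1^{R_\p}(\kappa(\p),Q_\p)=0 \Rightarrow Q_\p$ free'' requires $Q_\p$ to be finitely generated over the Noetherian local ring $R_\p$, and the passage from free stalks to Zariski-local freeness requires finite presentation). Consequently, your proof does \emph{not} extend to the flat quasi-coherent strengthening mentioned in the paper's proof; that generalization is exactly what forces the appeal to the deeper injectivity tests of~\cite{CI} and~\cite{AF}, which is the route the paper takes.
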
 

\begin{proof}
 This lemma is well-known, so we restrict ourselves to pointing out
that it remains true for flat quasi-coherent sheaves in lieu of
locally free coherent ones.
 If $f\:\G\rarrow\F$ is a monomorphism of flat quasi-coherent sheaves
on a Noetherian scheme $X$ and, for every $i\:\Spec K\rarrow X$ as in
the lemma, the morphism $i^*f\:i^*\G\rarrow i^*\F$ is injective,
then $\F/f(\G)$ is a flat quasi-coherent sheaf on~$X$.
 This is (a particular case of) the result of~\cite[Remark~2.3]{CI},
which is based on~\cite[Proposition~5.3.F]{AF}; it also follows
from~\cite[Theorem~1.1]{CI}.
\end{proof}

 (3)~One can start from observing that, for any finite-dimensional
subspace $U\subset\Sym_\kk^2(W)$, there exists a finite-dimensional
subspace $\overline W\subset W$ such that
$U\subset\Sym_\kk^2(\overline W)$.
 The map~$f_U$ is the direct limit of the similar maps~$\bar f_U$
with the vector space~$W$ replaced by its finite-dimensional
subspaces~$\overline W$ satisfying this inclusion.
 Furthermore, the morphism of free graded $R$\+modules $\bar f_U\:
\Sym_\kk(\overline W)\ot_\kk R(-1)\rarrow\Sym_\kk(\overline W)
\ot_\kk R$ is the direct sum of the morphisms of finitely generated
free graded $R$\+modules $\Sym_\kk^n(\overline W)\ot_\kk R(-1)\rarrow
\Sym_\kk^{n+2}(\overline W)\ot_\kk R$, where $n=0$, $1$, $2$,~\dots{}
 These observations make Lemma~\ref{mono-of-locally-free-lemma}
applicable as it is stated (for locally free coherent sheaves),
and one does not even need the more general version of it
suggested in the proof.

 Let $K$ be a field and $l\:\Spec K\rarrow P(U)$ be a morphism of
schemes.
 Then the composition $\Spec K\rarrow P(U)\rarrow\Spec\kk$ makes~$\kk$
a subfield in~$K$.
 As mentioned above in~(0), the morphism~$l$ corresponds to
a one-dimensional vector subspace $L\subset K\ot_\kk U\subset
K\ot_\kk\Sym^2_\kk(\overline W)\simeq\Sym^2_K(K\ot_\kk \overline W)
\subset K\ot_\kk V$.
 So any nonzero vector $q\in L$ defines a quadratic function
$q\:(K\ot_\kk W)^*\rarrow K$, which factorizes as
$(K\ot_\kk W)^*\twoheadrightarrow (K\ot_\kk\overline W)^*
\overset{\bar q}\rarrow K$.
 Applying the functor $M\longmapsto\widetilde M$ and then
the functor~$i^*$ to the morphism of graded $R$\+modules
$\bar f_U\:\Sym_\kk(\overline W)\ot_\kk R(-1)\rarrow
\Sym_\kk(\overline W)\ot_\kk R$, one obtains the morphism of
$K$\+vector spaces $\Sym_K(K\ot_\kk \overline W)\ot_KL\rarrow
\Sym_K(K\ot_\kk \overline W)$ taking a tensor $a\ot_K\bar q$ with
$a\in\Sym_K^n(K\ot_\kk\overline W)$ and $\bar q\in L\subset
\Sym^2_K(K\ot_\kk \overline W)$ to the vector $a\bar q\in
\Sym_K^{n+2}(K\ot_\kk \overline W)$ (where the multiplication is
performed in the symmetric algebra $\Sym_K(K\ot_\kk \overline W)$).
 This map of $K$\+vector spaces is injective.

\smallskip
 (4)~According to Lemma~\ref{mono-of-locally-free-lemma}, it follows
that the quasi-coherent algebra $\C_U$ on the projective space $P(U)$
is flat.
 Now let $U'\subset U''\subset V$ be two finite-dimensional subspaces
in the $\kk$\+vector space $V=\Sym_\kk^2(W)$.
 Put $R'=\Sym_\kk(U'{}^*)$ and $R''=\Sym_\kk(U''{}^*)$.
 As explained in Section~\ref{cotangent-bundle-subsecn}(1\+-2),
we have a surjective morphism of graded rings $R''\rarrow R'$
inducing a closed immersion of the projective spectra
$i_{U'U''}\:P(U')\rarrow P(U'')$.

 Similarly to Section~\ref{cotangent-bundle-subsecn}(5), the functor
$R'\ot_{R''}{-}$ takes the morphism of graded $R''$\+modules~$f_{U''}$
to the morphism of graded $R'$\+modules~$f_{U'}$.
 Hence we have a natural isomorphism of graded $R'$\+modules, and in
fact of graded commutative $R'$\+algebras,
$C_{U'}\simeq R'\ot_{R''}C_{U''}$, and consequently a natural
isomorphism $\C_{U'}\simeq i_{U'U''}^* \C_{U''}$ of flat quasi-coherent
commutative algebras on~$P(U')$.

 Finally, the rule $\fC^{(P(U))}=\C_U$ defines a flat pro-quasi-coherent
commutative algebra $\fC$ on the ind-scheme $\X=\fP(V)$.
 The ind-scheme $\bY$ together with the flat affine morphism of
ind-schemes $\pi\:\bY\rarrow\X$ corresponds to the pro-quasi-coherent
algebra $\fC$ on $\X$ via the construction of
Proposition~\ref{ind-schemes-affine-morphisms-pro-qcoh-algebras}.
 This is the desired flat affine morphism of ind-schemes corresponding
to the $\fP(V)$\+parametric family of quadratic cones in
the linearly compact topological vector space~$W^*$.

\smallskip
 (5)~To make the setting for possible application of the constructions
and results of Sections~\ref{X-flat-on-Y-secn}\+-%
\ref{flat-affine-over-ind-finite-type-secn} complete, it remains to
specify a dualizing complex on the ind-scheme $\X=\fP(V)$.
 It was mentioned in Section~\ref{cotangent-bundle-subsecn}(8) how this
can be done (assuming the dimension of the $\kk$\+vector space $W$,
hence also $V$, is at most countable).

\subsection{Loop group of an affine algebraic group}
\label{loop-group-subsecn}
 This family of examples, suggested to us by an anonymous referee,
is a far-reaching generalization of Example~\ref{tate-space-subsecn}.
 It is technically more complicated than the preceding examples in
this section, so we restrict our discussion to a brief sketch.

\smallskip
 (0)~Let us start with the particular case of the general linear group
$\GL_{n,\kk}$.
 Consider the $\kk$\+vector space $\Mat_n(\kk)$ of $n\times n$
matrices with the entries in~$\kk$ (for some integer $n\ge0$).
 As explained in Section~\ref{tate-space-subsecn}(1), one can assign
to $\Mat_n(\kk)$ an affine scheme $\Mat_{n,\kk}$ over~$\kk$ such that,
for every commutative $\kk$\+algebra $K$, the $K$\+points on
$\Mat_{n,\kk}$ correspond bijectively to elements of the tensor
product $\Mat_n(K)=K\ot_\kk\Mat_n(\kk)$, i.~e., the $n\times n$
matrices with the entries in~$K$.
 Let $\GL_n(K)\subset\Mat_n(K)$ denote the group of \emph{invertible}
$n\times n$ matrices with the entries in~$K$.
 The $\kk$\+scheme $\GL_{n,\kk}$ is an affine open subscheme in
$\Mat_{n,\kk}$ such that, for every $K$, the $K$\+points on
$\GL_{n,\kk}$ correspond bijectively to the elements of $\GL_n(K)$.
 Explicitly, the affine open subvariety $\GL_{n,\kk}\subset\Mat_{n,\kk}$
is the complement to the closed subvariety in the $n^2$\+dimensional affine space over~$\kk$ defined by the single equation $\det(A)=0$,
where $\det(A)$ is the determinant of an $n\times n$ matrix.
 The multiplication and inverse element maps $\GL_n(K)\times
\GL_n(K)\rarrow\GL_n(K)$ and $\GL_n(K)\rarrow\GL_n(K)$ are given
by polynomials in the matrix entries $a_{i,j}$ and the inverse
determinant $(\det A)^{-1}$, where $A=(a_{i,j})_{i,j=1}^n\in\GL_n(K)$;
so they are induced by morphisms of schemes $\GL_{n,\kk}\times_\kk
\GL_{n,\kk}\rarrow\GL_{n,\kk}$ and $\GL_{n,\kk}\rarrow\GL_{n,\kk}$.
 The latter morphisms define the structure of an \emph{affine
algebraic group} on the affine algebraic variety $\GL_{n,\kk}$
over~$\kk$.

\smallskip
 (1)~Let $\kk((t))$ be the field of formal Laurent power series in
a variable~$t$ over a field~$\kk$, and let $\kk[[t]]\subset\kk((t))$
be its subring of formal Taylor power series.
 Taking $K=\kk((t))$ or $K=\kk[[t]]$ in the constructions above,
one obtains the group of points $\GL_n(\kk((t)))$ and its subgroup
$\GL_n(\kk[[t]])$.
 We claim that there is an affine scheme (of infinite type) over~$\kk$,
denoted formally by $\GL_{n,\kk}[[t]]$, whose set of $\kk$\+points
is $\GL_n(\kk[[t]])$.
 Furthermore, there is a naturally defined ind-affine ind-scheme
(of ind-infinite type) over~$\kk$, denoted formally by
$\GL_{n,\kk}((t))$, whose set of $\kk$\+points is $\GL_n(\kk((t)))$.

 To be more precise, one needs to describe the sets of $K$\+points
on $\GL_{n,\kk}[[t]]$ and $\GL_{n,\kk}((t))$ for any commutative
$\kk$\+algebra~$K$.
 Let $K[[t]]=\varprojlim_{m\ge1} K[t]/t^mK[t]$ denote the ring of
formal Taylor power series with the coefficients in $K$, and let
$K((t))=\varinjlim_{m\ge0} t^{-m}K[[t]]$ be the ring of formal Laurent
power series with the coefficients in~$K$.
 Then the set of $K$\+points on $\GL_{n,\kk}[[t]]$ is $\GL_n(K[[t]])$
and the set of $K$\+points on $\GL_{n,\kk}((t))$ is $\GL_n(K((t)))$.

 Explicitly, one can describe the scheme $\GL_{n,\kk}[[t]]$ and
the ind-scheme $\GL_{n,\kk}((t))$ as follows.
 Let $A=(a_{i,j})_{i,j=1}^n\in\Mat_n(\kk((t)))$ be an $n\times n$
matrix whose entries $a_{i,j}=\sum_{l=-m}^\infty a_{i,j;l}t^l$,
$\,m\in\boZ$, are formal Laurent power series in the variable~$t$.
 This notation presumes the coefficients $a_{i,j;l}\in\kk$, but
let us actually consider $a_{i,j;l}$ as abstract variables.
 Then the determinant $\det(A)$ is a formal Laurent power series
whose coefficients are polynomials in~$a_{i,j;l}$.
 Now $\GL_{n,\kk}[[t]]$ is the spectrum of the commutative
$\kk$\+algebra freely generated by the variables $a_{i,j;l}$,
\,$1\le i,j\le n$, \,$l\ge0$, with the formally adjoined inverse
element to the coefficient at $t=0$ of the formal Taylor power series
$\det(A)$, where $A=(a_{i,j})_{i,j=1}^n$ and 
$a_{i,j}=\sum_{l=0}^\infty a_{i,j;l}t^l$.

 The ind-scheme $\GL_{n,\kk}((t))$ can be represented by
the following inductive system of infinite-dimensional
$\kk$\+schemes $\ilim_{m\ge0} Y_m$.
 For every integer $m\ge0$, the affine scheme $Y_m$ is the spectrum
of the commutative $\kk$\+algebra generated by two groups of variables
$a_{i,j;l}$ and $b_{i,j;l}$, \,$1\le i,j\le n$, \,$l\ge-m$, with
the imposed relations that the product of the two matrices of formal
Laurent power series $A$ and $B=(b_{i,j})_{i,j=1}^n$, where $b_{i,j}=
\sum_{l=-m}^\infty b_{i,j;l}t^l$, is equal to the identity matrix.

 Furthermore, both the affine scheme $\GL_{n,\kk}[[t]]$ and
the ind-affine ind-scheme $\GL_{n,\kk}((t))$ are endowed with
the group (ind-)scheme structures, with the multiplication morphisms
$\GL_{n,\kk}[[t]]\times_\kk\GL_{n,\kk}[[t]]\rarrow\GL_{n,\kk}[[t]]$
and $\GL_{n,\kk}((t))\times\GL_{n,\kk}((t))\rarrow\GL_{n,\kk}((t))$,
and the inverse element morphisms $\GL_{n,\kk}[[t]]\rarrow
\GL_{n,\kk}[[t]]$ and $\GL_{n,\kk}((t))\rarrow\GL_{n,\kk}((t))$.
 These group (ind-)scheme structures induce the group structures on
the sets of points $\GL_{n,\kk}[[t]](K)=\GL_n(K[[t]])$ and
$\GL_{n,\kk}((t))(K)=\GL_n(K((t)))$ mentioned above in this discussion.
{\hbadness=1150\par}

\smallskip
 (2)~Consider the set $\GL_n(\kk((t)))/\GL_n(\kk[[t]])$ of all left
cosets of the group $\GL_n(\kk((t)))$ with respect to its subgroup
$\GL_n(\kk[[t]])$.
 We claim that there is a naturally defined ind-projective ind-scheme
of ind-finite type $\X$ over~$\kk$ whose set of $\kk$\+points is
$\GL_n(\kk((t)))/\GL_n(\kk[[t]])$.
 Moreover, for any \emph{field} $K$ over $\kk$, the set $\X(K)$ is
naturally bijective to $\GL_n(K((t)))/\GL_n(K[[t]])$.

 The ind-scheme $\X$ can be constructed as follows  (for a reference,
see, e.~g., \cite[Section~VII.1]{Kum}; cf.~\cite[Sections~7.11.17
and~7.15.1]{BD2})
 Consider the free $\kk((t))$\+module with $n$~generators
$V=k((t))^n$.
 A \emph{$\kk[[t]]$\+lattice} in $V$ is a finitely generated
$\kk[[t]]$\+submodule $L\subset V$ such that the natural map
$\kk((t))\ot_{\kk[[t]]}L\rarrow V$ is an isomorphism.
 We observe that the set of cosets $\GL_n(\kk((t)))/\GL_n(\kk[[t]])$
is naturally bijective to the set of all $\kk[[t]]$\+lattices in~$V$.
 Indeed, $\GL_n(k((t)))$ is the group of all automorphisms of
the $\kk((t))$\+module~$V$; so the group $\GL_n(\kk((t)))$ acts
on $V$, and consequently it also acts on the set of all
$\kk[[t]]$\+lattices in~$V$.
 One can easily see that this action is transitive (because
all $\kk[[t]]$\+lattices in $V$ are isomorphic to each other as
$\kk[[t]]$\+modules, the ring $k[[t]]$ being a principal ideal domain).
 Furthermore, let $L_0=\kk[[t]]^n\subset\kk((t))^n$ be a chosen
``standard'' $\kk[[t]]$\+lattice in~$V$.
 Then the subgroup $\GL_n(\kk[[t]])\subset\GL_n(\kk((t)))$ is
the stabilizer of the element $L_0$ in the action of $\GL_n(\kk((t)))$
on the set of all $\kk[[t]]$\+lattices.
 Hence the map taking a coset $g\GL_n(\kk[[t]])\in
\GL_n(\kk((t)))/\GL_n(\kk[[t]])$ to the lattice $g(L_0)\subset V$
provides a bijection between the set of all left cosets of
$\GL_n(\kk((t)))$ modulo $\GL_n(\kk[[t]])$ and the set of all
$\kk[[t]]$\+lattices in~$V$.

 Now we notice that for any $\kk[[t]]$\+lattice $L\subset V$ there
exists an integer $m\ge0$ such that $t^m L_0\subset L\subset t^{-m}L_0$.
 For any submodule $E$ of the $\kk[t]/t^{2m}\kk[t]$\+module
$t^{-m}L_0/t^mL_0$, the full preimage of $E$ in $t^{-m}L_0\subset V$
is a $\kk[[t]]$\+lattice in~$V$.
 Consider the Grassmann variety $G_m$ of all vector subspaces in
the finite-dimensional $\kk$\+vector space $t^{-m}L_0/t^mL_0$, and
let $X_m\subset G_m$ be the subvariety defined by the equations
that the vector subspace in $t^{-m}L_0/t^mL_0$ is
a $\kk[t]/t^{2m}\kk[t]$\+submodule.
 So $G_m$ is a disconnected projective algebraic variety over~$\kk$
with smooth components, while $X_m\subset G_m$ is a closed
projective subvariety.
 Now the set of all $\kk$\+points on $X_m$ is bijective to the set of
all $\kk[[t]]$\+lattices $L\subset V$ for which $t^m L_0\subset L
\subset t^{-m}L_0$.
 The desired ind-scheme of ind-finite type $\X$ over~$\kk$ is
represented by the inductive system $\ilim_{m\ge0}X_m$.

\smallskip
 (3)~Alternatively, there is a natural way to produce
the ind-scheme $\X$ as a closed ind-subscheme in
an ind-projective space of ind-finite type.
 The free $\kk((t))$\+module $V=\kk((t))^n$ has a natural topology
which allows to view it as a locally linearly compact topological
vector space in the sense of
Section~\ref{tate-space-subsecn}(4).
 For any such vector space $V$, the infinite-dimensional discrete
vector space of \emph{semi-infinite exterior forms}
$\Lambda^{\infty/2+*}(V)$ is constructed as follows.

 Choose a linearly compact open vector subspace $W\subset V$.
 For every integer $m\in\boZ$, put $\Lambda^{\infty/2+m}_W(V)
=\varinjlim_{W'\subset W}\Lambda_\kk^{\dim_\kk W/W'+m}(V/W')\ot_\kk
\Lambda_\kk^{\dim_\kk W/W'}(W/W')^*$, where $W'$ ranges over
the directed poset of all open vector subspaces of~$W$.
 Here $\Lambda_\kk^{\dim_\kk W/W'}(W/W')^*$ is the dual
one-dimensional $\kk$\+vector space to the top exterior power
$\Lambda_\kk^{\dim_\kk W/W'}(W/W')$ of the finite-dimensional
$\kk$\+vector space $W/W'$, and the direct limit is taken over
the injective linear maps
\begin{multline*}
\Lambda_\kk^{\dim_\kk W/W'+m}(V/W')\ot_\kk
\Lambda_\kk^{\dim_\kk W/W'}(W/W')^* \\ \lrarrow
\Lambda_\kk^{\dim_\kk W/W''+m}(V/W'')\ot_\kk
\Lambda_\kk^{\dim_\kk W/W''}(W/W'')^*
\end{multline*}
induced by the natural isomorphisms
$$
 \Lambda_\kk^{\dim_\kk W/W''}(W/W'')\simeq
 \Lambda_\kk^{\dim_\kk W/W'}(W/W')\ot_\kk
 \Lambda_\kk^{\dim_\kk W'/W''}(W'/W'')
$$
and the natural injective linear maps
$$
 \Lambda_\kk^{\dim_\kk W/W'+m}(V/W')\ot_\kk
 \Lambda_\kk^{\dim_\kk W'/W''}(W'/W'')\lrarrow
 \Lambda_\kk^{\dim_\kk W/W''+m}(V/W'')
$$
defined for all open vector subspaces $W''\subset W'\subset W$.
 Finally, put $\Lambda^{\infty/2+*}_W(V)=\bigoplus_{m\in\boZ}
\Lambda^{\infty/2+m}_W(V)$.

 The vector space of semi-infinite forms $\Lambda^{\infty/2+*}(V)$
is not quite well-defined, in that it depends on the choice of
a linearly compact open subspace $W\subset V$ (reflected by
the more precise notation above).
 Replacing $W$ by a different linearly compact open vector
subspace $W'\subset V$ leads to the vector space
$\Lambda^{\infty/2+*}(V)$ getting twisted by (taking the tensor
product with) a one-dimensional vector space.
 Consequently, the \emph{projectivization} of 
$\Lambda^{\infty/2+*}(V)$, that is the set of all one-dimensional
$\kk$\+vector subspaces in $\Lambda^{\infty/2+*}(V)$, is perfectly
well-defined.

 To any discrete vector space $U$ over~$\kk$ one can naturally assign
an ind-scheme $\fP(U)$ of ind-finite type such that the $\kk$\+points
on $\fP(U)$ correspond bijectively to the one-dimensional vector
subspaces in $U$, as explained
in Section~\ref{cotangent-bundle-subsecn}(3)
and~\cite[Example~1.3(2)]{Rich}.
 The ind-scheme $\fP(U)$ is represented by the inductive system
of closed immersions of finite-dimensional projective spaces
corresponding to the finite-dimensional vector subspaces of~$U$.
 In the situation at hand, we take $U=\Lambda^{\infty/2+*}_W(V)$,
where $V=\kk((t))^n$ and $W=\kk[[t]]^n$.

 The group $\GL_V$ of all $\kk$\+linear automorphisms of $V$
acts naturally on the set of points $\fP(U)(\kk)$ of
the ind-projective space $\fP(U)$.
 In particular, this action can be resricted onto the subgroup
$\GL_n(\kk((t)))\subset\GL_V$ all $\kk((t))$\+linear automorphisms
of $V=\kk((t))^n$.
 The one-dimensional vector subspace $\Lambda^0_\kk(V/W)\ot_\kk
\Lambda_\kk^0(W/W)^*\subset U$ represents a chosen fixed point
$l_0\in\fP(U)(\kk)$.
 The subgroup $\GL_n(\kk[[t]])\subset\GL_n(\kk((t)))$ is
the stabilizer of the point~$l_0$ in the action
of $\GL_n(\kk((t)))$ on $\fP(U)(\kk)$.
 Hence the map $g\GL_n(\kk[[t]])\longmapsto g(l_0)$ embeds
the set of all cosets $\GL_n(\kk((t)))/\GL_n(\kk[[t]])$ into
the set of $\kk$\+points of the ind-scheme $\fP(U)$.
 This embedding is induced by a closed immersion of ind-schemes
$\X\rarrow\fP(U)$.

\smallskip
 (4)~Put $\bY=\GL_{n,\kk}((t))$.
 Then the natural surjective map of the sets of $\kk$\+points
$\bY(\kk)=\GL_n(\kk((t)))\rarrow\GL_n(\kk((t)))/\GL_n(\kk[[t]])=
\X(\kk)$ is induced by a naturally defined flat affine morphism of
ind-schemes $\pi\:\bY\rarrow\X$.
 Following the approach of
Remarks~\ref{cotensor-right-derived-remark}(3\+-5), one can
construct a dualizing complex $\rD^\bu$ on the ind-scheme~$\X$.
 So the constructions and results of Sections~\ref{X-flat-on-Y-secn}\+-%
\ref{flat-affine-over-ind-finite-type-secn} are applicable
to this example.

 More generally, a \emph{congruence subgroup} in the affine group
scheme $\GL_{n,\kk}[[t]]$ is a group subscheme
$H\subset\GL_{n,\kk}[[t]]$ of the following kind.
 For any integer $m\ge1$, there exists an affine algebraic group
$\GL_{n,\kk}^{(m)}$ over~$\kk$ such that, for any commutative
$\kk$\+algebra $K$, the group of $K$\+points on $\GL_{n,\kk}^{(m)}$
is naturally  isomorphic to $\GL_n(K[t]/t^mK[t])$.
 Let $\GL_{n,\kk}[[t]]_{(m)}$ denote the kernel of the natural
surjective morphism of affine group schemes $\GL_{n,\kk}[[t]]
\rarrow\GL_{n,\kk}^{(m)}$.
 Then a \emph{congruence subgroup} $H\subset\GL_{n,\kk}[[t]]$ is
a closed group subscheme intermediate between $\GL_{n,\kk}[[t]]$ and
$\GL_{n,\kk}[[t]]_{(m)}$ for some $m\ge1$, that is
$\GL_{n,\kk}[[t]]_{(m)}\subset H\subset\GL_{n,\kk}[[t]]$.

 For any congruence subgroup $H\subset\GL_{n,\kk}[[t]]$, there is
a naturally defined ind-scheme of ind-finite type $\X'$ over~$\kk$
whose set of $\kk$\+points is $\GL_n(\kk((t)))/H(\kk)$.
 The natural surjective map of the sets of $\kk$\+points
$\bY(\kk)=\GL_n(\kk((t)))\rarrow\GL_n(\kk((t)))/H(\kk)=\X'(\kk)$ is
induced by a naturally defined flat affine morphism of ind-schemes
$\pi'\:\bY\rarrow\X'$.
 The natural surjective map $\X'(\kk)=\GL_n(\kk((t)))/H(\kk)
\allowbreak\rarrow\GL_n(\kk((t)))/\GL_n(\kk[[t]])=\X(\kk)$ is induced
by a naturally defined smooth morphism of finite type between the two
ind-schemes of ind-finite type $\tau\:\X'\rarrow\X$.
 So the results of Section~\ref{weakly-smooth-postcomposition-secn}
are applicable, showing that the theories associated with
the two morphisms $\pi\:\bY\rarrow\X$ and $\pi'\:\bY\rarrow\X'$ are
essentially equivalent.
 In other words, one take the quotient of $\GL_{n,\kk}((t))$ by
one's preferred congruence subgroup $H\subset\GL_{n,\kk}[[t]]$
instead of the quotient by the whole group scheme $\GL_{n,\kk}[[t]]$,
and the constructions of Sections~\ref{X-flat-on-Y-secn}\+-%
\ref{flat-affine-over-ind-finite-type-secn} will remain
essentially unchanged.

\smallskip
 (5)~Now let $G$ be an affine algebraic group over~$\kk$.
 Without loss of generality, one can assume $G$ to be a closed
algebraic subvariety in $\GL_{n,\kk}$ containing the unit element and preserved by the multiplication and inverse element morphisms of
$\GL_{n,\kk}$ (for some integer $n\ge0$).
 For any commutative $\kk$\+algebra $K$, the set of $K$\+points
$G(K)$ on the algebraic group $G$ has a natural group structure.
 The same applies to the sets of $K$\+points on infinite-dimensional
group schemes and group ind-schemes.

 We claim that there is an affine group scheme (of infinite type)
over~$\kk$, denoted formally by $G[[t]]$, whose group of $K$\+points
is $G(K[[t]])$ for every~$K$.
 Furthermore, there is an ind-affine group ind-scheme (of ind-infinite
type) over~$\kk$, denoted formally by $G((t))$, whose group of
$K$\+points is $G(K((t)))$.

 Explicitly, one can describe the group scheme $G[[t]]$ and
the group ind-scheme $G((t))$ as follows.
 We assume that $G$ is a closed group subscheme in $\GL_{n,\kk}$.
 As in~(1), we consider matrices $A=(a_{i,j})_{i,j=1}^n$ of
formal power series $a_{i,j}=\sum_l a_{i,j;l}t^l$, where $a_{i,j;l}$
are abstract variables.
 Then $G[[t]]$ is the spectrum of the commutative $\kk$\+algebra
generated by the variables $a_{i,j;l}$, \,$1\le i,j\le n$, \,$l\ge0$,
with the formally adjoined inverse element to the coefficient at $t=0$
of the formal Taylor power series $\det(A)$, where $A$ is the matrix
with the entries $a_{i,j}=\sum_{l=0}^\infty a_{i,j;l}t^l$, and
the imposed equations telling that the matrix $A$ satisfies
the equations on the points of the closed subvariety
$G\subset\GL_{n,\kk}$.
 So substituting $A=(a_{i,j})_{i,j=1}^n$ into the equations on
the matrix entries defining $G$ inside $\GL_{n,\kk}$ produces
the equations which are to be imposed on the coefficients~$a_{i,j;l}$.

 The ind-scheme $G((t))$ can be represented by the following
inductive system of infinite-dimensional affine $\kk$\+schemes
$\ilim_{m\ge0}Y_m$.
 For every integer $m\ge0$, the $\kk$\+scheme $Y_m$ is the spectrum
of the commutative $\kk$\+algebra generated by two groups of
variables $a_{i,j;l}$ and $b_{i,j;l}$, \,$1\le i,j\le n$,
\,$l\ge-m$, with the imposed relations that the product of
the two matrixes $A$ and $B=(b_{i,j})_{i,j=1}^n$, where
$a_{i,j}=\sum_{l=-m}^\infty a_{i,j}t^l$ and
$b_{i,j}=\sum_{l=-m}^\infty b_{i,j}t^l$, is equal to the identity
matrix, and that the matrices $A$ and $B$ satisfy the equations
on the points of the closed subvariety $G\subset\GL_{n,\kk}$.
 So, once again, the equations on the coefficients $a_{i,j;l}$ and
$b_{i,j;l}$ produced by substituting $A=(a_{i,j})_{i,j=1}^n$ and
$B=(b_{i,j})_{i,j=1}^n$ into the equations on the matrix entries
defining $G$ inside $\GL_{n,\kk}$ must be satisfied.

 The matrix multiplication, expressed by polynomials in
the coefficients $a_{i,j;l}$, defines the multiplications morphisms
$G[[t]]\times_\kk G[[t]]\rarrow G[[t]]$ and $G((t))\times_\kk
G((t))\rarrow G((t))$.
 The inverse element morphism for the group ind-scheme $G((t))$ is
constructed by taking a pair of matrices $(A,B)$ to the pair $(B,A)$.

\smallskip
 (6)~As in~(3), we consider the set of left cosets
$G(k((t)))/G(k[[t]])$.
 There is a naturally defined ind-scheme of ind-finite type $\X$
over~$\kk$ whose set of $\kk$\+points is $G(k((t)))/G(k[[t]])$.
 Moreover, for any \emph{field} $K$ over~$\kk$, the set $\X(K)$
is naturally bijective to $G(K((t)))/G(K[[t]])$.
 The natural injective maps $G(K((t)))/G(K[[t]])\rarrow
\GL_n(K((t)))/\GL_n(K[[t]])$ are induced by a naturaly locally
closed immersion of the related ind-schemes of ind-finite type.

 Put $\bY=G((t))$.
 Then the natural surjective map of the sets of $\kk$\+points
$\bY(\kk)=G(\kk((t)))\rarrow G(\kk((t)))/G(\kk[[t]])=\X(\kk)$
is induced by a naturally defined flat affine morphism of
ind-schemes $\pi\:\bY\rarrow\X$.
 Following Remarks~\ref{cotensor-right-derived-remark}(3\+-5), one can
construct a dualizing complex $\rD^\bu$ on the ind-scheme~$\X$;
so the constructions and results of Sections~\ref{X-flat-on-Y-secn}\+-%
\ref{flat-affine-over-ind-finite-type-secn} are applicable
to this example as well.

 Similarly to~(4), one can consider \emph{congruence subgroups}
$G[[t]]_{(m)}\subset H\subset G[[t]]$, where $G[[t]]_{(m)}$ is
the kernel of the natural surjective morphism of affine group
schemes $G[[t]]\rarrow G^{(m)}$, and $G^{(m)}$ the affine
algebraic group over~$\kk$ such that, for any commutative
$K$\+algebra $K$, the group of $K$\+points on $G^{(m)}$ is
naturally isomorphic to $G(K[t]/t^mK[t])$.
 For any congruence subgroup $H\subset G[[t]]$, there is 
a naturally defined ind-scheme of ind-finite type $\X'$ over~$\kk$
whose set of $\kk$\+points is $G(\kk((t)))/H(\kk)$.
 The ind-scheme $\X'$ comes together with the related flat affine
morphism of ind-schemes $\pi'\:\bY\rarrow\X'$.
 Just as in~(4), there is a commutative triangle diagram of
morphisms of ind-schemes $\bY\rarrow\X'\rarrow\X$ with
a smooth morphism of finite type between the two ind-schemes
of ind-finite type $\tau\:\X'\rarrow\X$, inducing, via the passage
to the sets of $\kk$\+points, the commutative triangle diagram of
the sets of cosets $G(\kk((t)))\rarrow G(\kk((t)))/H(\kk)\rarrow
G(\kk((t)))/G(\kk[[t]])$.
 So the results of Section~\ref{weakly-smooth-postcomposition-secn}
are applicable, showing that the constructions of
Sections~\ref{X-flat-on-Y-secn}\+-%
\ref{flat-affine-over-ind-finite-type-secn} produce essentially
the same functors for the two morphisms $\pi'\:\bY\rarrow\X'$
and $\pi\:\bY\rarrow\X$.

\appendix
\bigskip
\section*{Appendix.  The Semiderived Category for a Nonaffine
Morphism}
\medskip
\setcounter{section}{1}
\setcounter{thm}{0}
\setcounter{equation}{0}

 Let $\X$ be an ind-semi-separated ind-Noetherian ind-scheme, $\bY$ be
an ind-semi-separated ind-scheme, and $\pi\:\bY\rarrow\X$ be
a \emph{nonaffine} flat morphism of ind-schemes.
 The aim of this appendix is to spell out a definition of
the $\bY/\X$\+semiderived category $\sD_\X^\si(\bY\tors)$ of
quasi-coherent torsion sheaves on $\bY$ in this context.

\subsection{Becker's coderived category}
 In this appendix, we will use a different approach to the definition
of the coderived category than in the main body of the paper
(cf.\ Section~\ref{coderived-subsecn}).
 The relevant references for Becker's coderived category
are~\cite{Kra,Neem2,Bec,Sto,PS5}.

 Let $\sE$ be an exact category with enough injective objects.
 Notice that in any such exact category the infinite coproduct functors
are exact (if the coproducts exist).
 A complex $E^\bu$ in $\sE$ is said to be \emph{Becker coacyclic}
(or ``coacyclic in the sense of Becker'') if, for any complex of
injective objects $J^\bu$ in $\sE$, the complex of morphisms
$\Hom_\sE(E^\bu,J^\bu)$ is acyclic (as a complex of abelian groups).

\begin{lem} \label{becker-coacyclic-lemma}
\textup{(a)} The totalization of any short exact sequence of complexes
in\/ $\sE$ is a Becker coacyclic complex. \par
\textup{(b)} The coproduct of any family of Becker coacyclic complexes,
if it exists in\/ $\sK(\sE)$, is a Becker coacyclic complex. \par
\textup{(c)} Consequently, if the infinite coproducts exist in\/ $\sE$,
then any coacyclic complex in the sense of
Section~\ref{coderived-subsecn} is also coacyclic
in the sense of Becker.
\end{lem}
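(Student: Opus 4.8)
The plan is to reformulate the defining condition in homotopy-categorical terms and then verify the three closure properties directly. Recall that for any complexes $E^\bu$ and $J^\bu$ in $\sE$ the cohomology of the complex of morphisms computes maps in the homotopy category, $H^n\Hom_\sE(E^\bu,J^\bu)\simeq\Hom_{\sK(\sE)}(E^\bu,J^\bu[n])$. Hence a complex $E^\bu$ is Becker coacyclic if and only if $\Hom_{\sK(\sE)}(E^\bu,J^\bu[n])=0$ for every complex of injectives $J^\bu\in\sC(\sE_\inj)$ and every $n\in\boZ$. In other words, the full subcategory of Becker coacyclic complexes is the intersection, over all $J^\bu\in\sC(\sE_\inj)$, of the kernels of the cohomological functors $\Hom_{\sK(\sE)}({-},J^\bu)$; as such it is automatically a thick triangulated subcategory of $\sK(\sE)$. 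This observation makes part~(c) formal once parts~(a) and~(b) are in place.

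For part~(a), I would let $0\rarrow A^\bu\rarrow B^\bu\rarrow C^\bu\rarrow0$ be a termwise short exact sequence of complexes in $\sC(\sE)$ and let $T^\bu$ be its totalization, and fix a complex of injectives $J^\bu$. Since $T^\bu$ is the total complex of a bicomplex with only three rows, each term $T^k$ is a finite direct sum, so applying the Hom complex functor $\Hom_\sE({-},J^\bu)$ row by row exhibits $\Hom_\sE(T^\bu,J^\bu)$ as the totalization of $0\rarrow\Hom_\sE(C^\bu,J^\bu)\rarrow\Hom_\sE(B^\bu,J^\bu)\rarrow\Hom_\sE(A^\bu,J^\bu)\rarrow0$. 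Because every $J^m$ is injective, $\Hom_\sE({-},J^m)$ is exact on $\sE$, so this is a termwise short exact sequence of complexes of abelian groups. The totalization of a short exact sequence of complexes is acyclic (the columns of the associated double complex are exact, so the total complex has vanishing cohomology); hence $\Hom_\sE(T^\bu,J^\bu)$ is acyclic and $T^\bu$ is Becker coacyclic.

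For part~(b), I would argue directly from the universal property of the coproduct in $\sK(\sE)$, which bypasses any comparison between coproducts in $\sK(\sE)$ and in $\sC(\sE)$. If $E^\bu=\coprod_\alpha E_\alpha^\bu$ is a coproduct in $\sK(\sE)$ of Becker coacyclic complexes, then for every $J^\bu\in\sC(\sE_\inj)$ and every $n\in\boZ$ one has $\Hom_{\sK(\sE)}(E^\bu,J^\bu[n])\simeq\prod_\alpha\Hom_{\sK(\sE)}(E_\alpha^\bu,J^\bu[n])=0$, each factor vanishing by the coacyclicity of $E_\alpha^\bu$ and the product of zero groups being zero. By the criterion of the first paragraph, $E^\bu$ is Becker coacyclic.

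Part~(c) is then formal. The coacyclic complexes in the sense of Section~\ref{coderived-subsecn} form, by definition, the smallest triangulated subcategory of $\sK(\sE)$ that contains the totalizations of short exact sequences and is closed under coproducts. Part~(a) shows that all these totalizations are Becker coacyclic, part~(b) shows that the Becker coacyclic complexes are closed under coproducts, and the first paragraph shows that they constitute a triangulated subcategory; minimality of the Section~\ref{coderived-subsecn} class then yields the desired inclusion. The only point requiring genuine care is the acyclicity claim inside part~(a): one must check that the three-row total complex of a termwise exact sequence is acyclic and that forming the Hom complex commutes with the finite direct sums building $T^\bu$. Both are routine, but they are exactly where the injectivity of the $J^m$ and the finiteness of the number of rows enter, so I expect this to be the one step worth writing out rather than asserting.
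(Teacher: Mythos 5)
Your proof is correct, and it takes essentially the approach the paper relies on: the paper's own proof of this lemma is just a citation (parts~(a)--(b) are declared a straightforward generalization of~\cite[Lemma~9.1]{PS5}, closely related to Proposition~\ref{coderived-and-homotopy-of-injectives}(a), with part~(c) following immediately), and your argument --- reformulating Becker coacyclicity as the vanishing of $\Hom_{\sK(\sE)}(E^\bu,J^\bu[n])$ for all $J^\bu\in\sC(\sE_\inj)$ and $n\in\boZ$, then using injectivity of the $J^m$ for the termwise exactness in~(a) and the universal property of coproducts in $\sK(\sE)$ for~(b) --- is precisely the standard mechanism behind that citation. Your identification of the acyclicity of the finite-row Hom totalization as the one step worth writing out is also the right call.
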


\begin{proof}
 Parts~(a\+-b) are (a straightforward generalization of)
\cite[Lemma~9.1]{PS5}; they are closely related to
Proposition~\ref{coderived-and-homotopy-of-injectives}(a).
 Part~(c) follows immediately from~(a\+-b).
\end{proof}

\begin{lem} \label{becker-coacyclic-acyclic}
 If the category\/ $\sE$ is abelian with the abelian exact structure,
then any coacyclic complex in\/ $\sE$ is acyclic.
\end{lem}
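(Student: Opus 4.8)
The plan is to test the Becker coacyclicity hypothesis against the very special complexes of injectives obtained by placing a single injective object in one cohomological degree, and to show that this alone forces all cohomology objects of $E^\bu$ to vanish. Since in the abelian exact structure \emph{acyclic} means precisely \emph{having vanishing cohomology objects}, this will suffice.

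First I would fix an injective object $J\in\sE$ and regard it as the complex $J[0]$ concentrated in degree~$0$; because the zero object is injective, $J[0]$ is a complex of injective objects, so the assumption that $E^\bu$ is Becker coacyclic applies and tells us that the total Hom complex $\Hom_\sE(E^\bu,J[0])$ is acyclic. Unwinding the definition of the Hom complex, its term in degree~$n$ is $\Hom_\sE(E^{-n},J)$ and its differential is, up to sign, precomposition with the differential of~$E^\bu$. Thus the statement is precisely that the cochain complex $p\mapsto\Hom_\sE(E^p,J)$ obtained by applying the (exact, contravariant) functor $\Hom_\sE({-},J)$ to $E^\bu$ is exact, and this holds for every injective object~$J$.

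Next I would fix an integer~$n$ and prove $H^n(E^\bu)=0$ by the standard cocycle-extension argument. Write $Z^n$ for the kernel of $d_E^n\:E^n\rarrow E^{n+1}$ and $B^n=\operatorname{im}(d_E^{n-1})$, so that $H^n(E^\bu)=Z^n/B^n$. Using that $\sE$ has enough injectives, choose a monomorphism $\iota\:H^n(E^\bu)\rarrow J$ into an injective object, and let $\phi\:Z^n\rarrow J$ be the composite of the canonical epimorphism $Z^n\rarrow H^n(E^\bu)$ with~$\iota$; by construction $\phi$ annihilates~$B^n$. Since $J$ is injective and $Z^n\rarrow E^n$ is a monomorphism, the map $\phi$ extends to a morphism $\psi\:E^n\rarrow J$. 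As $\phi$ kills $B^n=\operatorname{im}(d_E^{n-1})$, the morphism $\psi$ is a cocycle of the complex $\Hom_\sE(E^\bu,J)$; by the exactness established in the previous paragraph it is a coboundary, i.e.\ $\psi=\chi\circ d_E^n$ for some $\chi\:E^{n+1}\rarrow J$ (up to sign). Restricting to $Z^n=\ker d_E^n$ gives $\phi=\psi|_{Z^n}=0$, and since $\iota$ is a monomorphism while $Z^n\rarrow H^n(E^\bu)$ is an epimorphism, this forces $H^n(E^\bu)=0$.

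I do not expect a serious obstacle here: the only genuine content is the observation that Becker coacyclicity, tested merely against injective objects concentrated in a single degree, already encodes exactness of $\Hom_\sE({-},J)$ applied to $E^\bu$, after which the vanishing of cohomology is the familiar Baer-style extension argument (compare Lemma~\ref{injectives-in-Grothendieck-characterized}). The abelian exact structure hypothesis is used only to identify acyclicity with vanishing of the cohomology objects, and the existence of enough injective objects, which is part of the standing hypotheses underlying the Becker construction, is exactly what makes the embedding $\iota\:H^n(E^\bu)\rarrow J$ available.
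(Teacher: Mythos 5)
Your proof is correct and takes exactly the paper's own route: the paper's entire proof is the single observation that a complex $E^\bu$ in $\sE$ is acyclic if and only if the complex of abelian groups $\Hom_\sE(E^\bu,J)$ is acyclic for every injective object $J\in\sE$ viewed as a one-term complex of injectives. Your cocycle-extension argument merely supplies the standard proof (using enough injectives, which is indeed a standing hypothesis of the Becker setup) of the nontrivial direction of that criterion, which the paper leaves implicit.
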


\begin{proof}
 A complex $E^\bu$ in $\sE$ is acyclic if and only if the complex of
abelian groups $\Hom_\sE(E^\bu,J)$ is acyclic for any injective object
$J\in\sE$ (viewed as a one-term complex of injective objects).
\end{proof}

 The \emph{Becker coderived category} $\sD^\bco(\sE)$ is defined as
the triangulated quotient category of the homotopy category $\sK(\sE)$
by the thick subcategory of Becker coacyclic complexes.
 It is clear from Lemma~\ref{becker-coacyclic-lemma}(c) that Becker's
coderived category of an exact category $\sE$ with infinite coproducts
and enough injectives is (at worst) a triangulated quotient category
of the coderived category in the sense of 
Section~\ref{coderived-subsecn}.
 So there is a triangulated Verdier quotient functor\/
$\sD^\co(\sE)\rarrow\sE^\bco(\sE)$ forming a commutative triangle
diagram with the triangulated Verdier quotient functors
$\sK(\sE)\rarrow\sD^\co(\sE)$ and $\sK(\sE)\rarrow\sD^\bco(\sE)$.
 When $\sE$ is abelian, it follows from
Lemma~\ref{becker-coacyclic-acyclic} that there is also a triangulated
Verdier quotient functor $\sD^\bco(\sE)\rarrow\sD(\sE)$.
 
\begin{prop} \label{two-coderived-categories-comparison}
 Let\/ $\sE$ be an exact category with infinite coproducts and enough
injective objects such that the full subcategory of injective objects\/
$\sE_\inj$ is preserved by the infinite coproducts in\/~$\sE$ (e.~g.,
this holds for any locally Noetherian Grothendieck abelian category
with the abelian exact structure).
 Then the canonical functor\/ $\sD^\co(\sE)\rarrow\sD^\bco(\sE)$ is
a triangulated equivalence.
\end{prop}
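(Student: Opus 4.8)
The plan is to reduce the assertion to an identification of two classes of acyclic complexes inside the homotopy category $\sK(\sE)$. Concretely, I would show that the coacyclic complexes in the sense of Section~\ref{coderived-subsecn} and the Becker coacyclic complexes coincide as (thick, triangulated) subcategories of $\sK(\sE)$. Coacyclicity implies Becker coacyclicity by Lemma~\ref{becker-coacyclic-lemma}(c), so we have an inclusion of triangulated subcategories together with the induced Verdier quotient functor $\sD^\co(\sE)\rarrow\sD^\bco(\sE)$; once the two subcategories are seen to be equal, the source and target of this functor are literally the same quotient of $\sK(\sE)$ and the functor is an equivalence. Thus everything comes down to proving that every Becker coacyclic complex is coacyclic in the sense of Section~\ref{coderived-subsecn}.

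First I would reformulate Becker coacyclicity in orthogonality terms. For a complex $E^\bu$ and a complex of injectives $J^\bu$, the degree~$n$ cohomology of $\Hom_\sE(E^\bu,J^\bu)$ is $\Hom_{\sK(\sE)}(E^\bu,J^\bu[n])$, so $E^\bu$ is Becker coacyclic precisely when $\Hom_{\sK(\sE)}(E^\bu,J^\bu)=0$ for all $J^\bu\in\sK(\sE_\inj)$; that is, the Becker coacyclic complexes are the left orthogonal ${}^\perp\sK(\sE_\inj)$ in $\sK(\sE)$, a thick triangulated subcategory.

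Then, given a Becker coacyclic complex $E^\bu$, I would invoke the second assertion of Proposition~\ref{coderived-and-homotopy-of-injectives}(b)---whose hypotheses are exactly those of the present proposition---to produce a morphism $E^\bu\rarrow J^\bu$ into a complex $J^\bu\in\sC(\sE_\inj)$ whose cone $C^\bu$ is coacyclic in the sense of Section~\ref{coderived-subsecn}, hence also Becker coacyclic by Lemma~\ref{becker-coacyclic-lemma}(c). In the distinguished triangle $E^\bu\rarrow J^\bu\rarrow C^\bu\rarrow E^\bu[1]$ the outer two terms are Becker coacyclic, so $J^\bu$ is Becker coacyclic as well. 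But $J^\bu$ is a complex of injectives, so Becker coacyclicity applied to the pair $(J^\bu,J^\bu)$ yields $\Hom_{\sK(\sE)}(J^\bu,J^\bu)=0$; hence $\id_{J^\bu}=0$ in $\sK(\sE)$ and $J^\bu$ is contractible. A contractible complex is zero in $\sK(\sE)$, hence lies in the (triangulated) subcategory of complexes coacyclic in the sense of Section~\ref{coderived-subsecn}; running the triangle once more, now with both $J^\bu$ and $C^\bu$ in that subcategory, gives that $E^\bu$ is coacyclic in the sense of Section~\ref{coderived-subsecn}, as required.

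The argument is essentially formal once Proposition~\ref{coderived-and-homotopy-of-injectives}(b) is granted, so there is no single heavy computation; the main obstacle is bookkeeping rather than substance. I expect the delicate points to be verifying that the orthogonality reformulation, the thickness of the Becker coacyclic subcategory, and the ``contractible complex of injectives'' step are all applied under the correct hypotheses---namely exactness of infinite coproducts in $\sE$ (automatic here) and closure of $\sE_\inj$ under coproducts, which is precisely where the locally Noetherian assumption enters through Lemma~\ref{locally-Noetherian-coproducts-injective} and feeds into Proposition~\ref{coderived-and-homotopy-of-injectives}(b).
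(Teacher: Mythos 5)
Your proof is correct and is essentially the paper's argument: the paper's own proof consists of the single citation ``Follows from Proposition~\ref{coderived-and-homotopy-of-injectives}(b)'', and your orthogonality reformulation of Becker coacyclicity, combined with the resolution $E^\bu\rarrow J^\bu$ with coacyclic cone from that proposition and the observation that a Becker coacyclic complex of injectives is contractible, is precisely the intended unpacking of that citation.
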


\begin{proof}
 Follows from Proposition~\ref{coderived-and-homotopy-of-injectives}(b).
 The conditions on the exact category $\sE$ can be relaxed a bit;
see condition~($*$) in~\cite[Section~3.7]{Pkoszul} or (even more
generally) the results of~\cite[Section~A.6]{Pcosh}.
\end{proof}

 In is an \emph{open problem} whether the functor $\sD^\bco(\sE)\rarrow
\sD^\co(\sE)$ is a triangulated equivalence for every Grothendieck
abelian category $\sE$ (with the abelian exact structure), or even
for the category of modules over an arbitrary ring.
 See, e.~g., \cite[Example~2.5(3)]{Pps} for a discussion.
 The advantage of Becker's coderived category, though, is that it is
known to work well for all Grothendieck abelian categories.

\begin{thm} \label{becker-coderived-theorem}
 Let\/ $\sA$ be a Grothendieck abelian category and\/ $\sA_\inj
\subset\sA$ be its full subcategory of injective objects.
 Then the composition\/ $\sK(\sA_\inj)\rarrow\sK(\sA)\rarrow
\sD^\bco(\sA)$ of the inclusion functor\/ $\sK(\sA_\inj)\rarrow\sK(\sA)$
and the Verdier quotient functor\/ $\sK(\sA)\rarrow\sD^\bco(\sA)$ is
a triangulated equivalence\/ $\sK(\sA_\inj)\simeq\sD^\bco(\sA)$.
\end{thm}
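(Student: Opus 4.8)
The plan is to prove the two halves of the asserted equivalence separately. Write $\Psi\:\sK(\sA_\inj)\rarrow\sD^\bco(\sA)$ for the composite functor. I would first establish that $\Psi$ is fully faithful --- this is purely formal and uses only the definition of Becker coacyclicity --- and then that $\Psi$ is essentially surjective, which is the step where the Grothendieck hypothesis on $\sA$ enters in an essential way.

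For full faithfulness, the point I would exploit is that every complex $J^\bu\in\sC(\sA_\inj)$ lies in the right orthogonal of the thick subcategory of Becker coacyclic complexes inside $\sK(\sA)$. Indeed, if $E^\bu$ is Becker coacyclic, then the total Hom-complex $\Hom_\sA(E^\bu,J^\bu)$ is acyclic by definition, and its degree-$n$ cohomology is $\Hom_{\sK(\sA)}(E^\bu,J^\bu[n])$; hence $\Hom_{\sK(\sA)}(E^\bu,J^\bu[n])=0$ for all $n\in\boZ$. Since $\sK(\sA_\inj)$ is a triangulated subcategory of $\sK(\sA)$ and the Becker coacyclic complexes form a thick subcategory, I would then invoke the standard lemma that objects right-orthogonal to the subcategory being inverted map fully faithfully into the Verdier quotient (as in the proof of Proposition~\ref{coderived-and-homotopy-of-injectives}(a), or \cite[Chapter~9]{Neem}) to conclude that $\Psi$ is fully faithful.

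The substantive step is essential surjectivity: I must show that every complex $E^\bu$ over $\sA$ becomes isomorphic in $\sD^\bco(\sA)$ to a complex of injectives, and for this it suffices to produce a morphism $E^\bu\rarrow J^\bu$ in $\sC(\sA)$ with $J^\bu\in\sC(\sA_\inj)$ and Becker coacyclic cone. The plan is to invoke the complete cotorsion pair in the abelian category of complexes $\sC(\sA)$ whose right-hand class is $\sC(\sA_\inj)$ and whose left-hand class is the class of Becker coacyclic complexes --- equivalently, the fibrant and trivial objects of the injective coderived model structure on $\sC(\sA)$. For an arbitrary Grothendieck abelian category this cotorsion pair exists and is complete by the deconstructibility of the two classes together with a transfinite small-object argument; I would cite \cite{Bec}, \cite{Sto}, and \cite{PS5} for this input. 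The ``enough injectives'' property of a complete cotorsion pair then furnishes a short exact sequence $0\rarrow E^\bu\rarrow J^\bu\rarrow W^\bu\rarrow0$ of complexes with $J^\bu\in\sC(\sA_\inj)$ and $W^\bu$ Becker coacyclic. By Lemma~\ref{becker-coacyclic-lemma}(a) the totalization of this short exact sequence is Becker coacyclic, so $E^\bu\rarrow J^\bu\rarrow W^\bu\rarrow E^\bu[1]$ is a distinguished triangle in $\sD^\bco(\sA)$; since $W^\bu\cong0$ in $\sD^\bco(\sA)$, the morphism $E^\bu\rarrow J^\bu$ is an isomorphism there, which gives essential surjectivity.

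The main obstacle will be exactly this last construction --- the existence of the Becker-fibrant replacement $E^\bu\rarrow J^\bu$, that is, the completeness of the cotorsion pair. Full faithfulness is formal, but one cannot obtain essential surjectivity by a naive termwise injective resolution of $E^\bu$: such a resolution need not have Becker coacyclic cokernel unless the cocycle objects are controlled. This is precisely where local presentability of $\sA$, the existence of a generator, and the deconstructibility machinery are needed, and it is also the reason the statement holds for \emph{every} Grothendieck category --- in contrast to Proposition~\ref{coderived-and-homotopy-of-injectives}(b), whose proof required the additional hypothesis that $\sA_\inj$ be closed under coproducts.
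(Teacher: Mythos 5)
Your proof is correct, but the comparison with ``the paper's proof'' is somewhat moot: the paper offers no argument of its own for this theorem, only a citation to \cite{Kra2}, \cite{Neem2}, and \cite{PS5}. What you have written is the standard cotorsion-pair proof from the circle of references you invoke (\cite{Bec}, \cite{Sto}, \cite{PS5}), so it expands, rather than deviates from, the route the paper points to. Your decomposition is the right one: full faithfulness is indeed formal, since Becker coacyclicity of $E^\bu$ means precisely that $\Hom_{\sK(\sA)}(E^\bu,J^\bu[n])=0$ for every $J^\bu\in\sK(\sA_\inj)$ and every $n\in\boZ$, so complexes of injectives lie in the right orthogonal of the thick subcategory being annihilated and the usual Verdier-quotient lemma applies; and essential surjectivity follows from the approximation sequence $0\rarrow E^\bu\rarrow J^\bu\rarrow W^\bu\rarrow0$ together with Lemma~\ref{becker-coacyclic-lemma}(a), exactly as you say. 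Two caveats. First, your phrase ``deconstructibility of the two classes'' is inaccurate: it is the \emph{left} class (the Becker coacyclic complexes) that one proves deconstructible; the class $\sC(\sA_\inj)$ is not deconstructible but is recovered as the $\Ext^1$-orthogonal of the left class, and identifying that orthogonal with the complexes of injectives is itself part of the work in the cited sources. Second, be aware that the completeness of this cotorsion pair carries essentially the entire weight of the theorem: each of \cite{Bec}, \cite{Sto}, \cite{PS5} deduces $\sK(\sA_\inj)\simeq\sD^\bco(\sA)$ from it by the very argument you give, so your proposal is a faithful reduction to the literature rather than an independent proof --- which is the same logical status as the paper's bare citation, only with the formal part made explicit. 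The genuinely different proofs are those of \cite{Kra2} and \cite{Neem2}, which do not proceed through cotorsion pairs.
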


\begin{proof}
 This result can be found in~\cite[Corollary~5.13]{Kra2},
\cite[Theorem~3.13]{Neem2}, or~\cite[Corollary~9.4]{PS5}.
\end{proof}

\begin{lem} \label{becker-coacyclicity-preserved}
 Let\/ $\sA$ and\/ $\sB$ be Grothendieck abelian categories, and let
$F\:\sA\rarrow\sB$ be an exact functor which has a right adjoint
(equivalently, $F$ is exact and preserves coproducts).
 Then the functor $F$ takes Becker coacyclic complexes in\/ $\sA$
to Becker coacyclic complexes in\/~$\sB$.
\end{lem}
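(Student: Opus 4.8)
The plan is to transport the whole question across the adjunction, reducing the assertion for $\sB$ to the hypothesis for $\sA$. First I would fix a right adjoint $G\colon\sB\rarrow\sA$ of the functor $F$; its existence is exactly the content of the parenthetical equivalence, since an exact coproduct-preserving functor between Grothendieck abelian categories admits a right adjoint by the special adjoint functor theorem. The first key observation is that $G$ carries injective objects to injective objects. Indeed, for any injective $K\in\sB$ there is a natural isomorphism of functors $\Hom_\sA(-,G(K))\simeq\Hom_\sB(F(-),K)$, and the right-hand side is the composite of the exact functor $F$ with the exact functor $\Hom_\sB(-,K)$ (exact because $K$ is injective), hence exact; so $G(K)$ is injective in $\sA$. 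This is the standard fact that the right adjoint of an exact functor preserves injectivity.

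Next I would unwind the definition of Becker coacyclicity. Let $E^\bu$ be a Becker coacyclic complex in $\sA$; one must show that for every complex of injective objects $K^\bu$ in $\sB$ the total $\Hom$-complex $\Hom_\sB(F(E^\bu),K^\bu)$ of abelian groups is acyclic. The adjunction supplies natural isomorphisms $\Hom_\sB(F(E^p),K^q)\simeq\Hom_\sA(E^p,G(K^q))$ for all $p$ and $q$. These are compatible with the differentials, which are assembled from $d_{E^\bu}$, $d_{K^\bu}$, and functoriality; taking products along the diagonals of the underlying bicomplex, they glue into an isomorphism of complexes of abelian groups $\Hom_\sB(F(E^\bu),K^\bu)\simeq\Hom_\sA(E^\bu,G(K^\bu))$.

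Finally, by the first observation $G(K^\bu)$ is a complex of injective objects in $\sA$, so the right-hand side is acyclic precisely because $E^\bu$ is Becker coacyclic in $\sA$. Hence the left-hand side is acyclic as well, and $F(E^\bu)$ is Becker coacyclic in $\sB$, as required.

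I do not expect any serious obstacle: the argument is entirely formal. The only two points that deserve a moment of care are the verification that the adjunction isomorphism respects the $\Hom$-complex differential (so that one genuinely obtains an isomorphism of complexes, not merely a degreewise isomorphism), and the preservation of injectives by $G$; both are routine, the latter being the standard exactness computation recorded above.
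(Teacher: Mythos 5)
Your proof is correct and follows essentially the same route as the paper: pass to the right adjoint $G$, observe that $G$ preserves injectives because $F$ is exact, and conclude via the adjunction isomorphism of $\Hom$-complexes $\Hom_\sB(F(E^\bu),K^\bu)\simeq\Hom_\sA(E^\bu,G(K^\bu))$. The only difference is that you spell out the routine verifications (injectivity of $G(K)$, compatibility with differentials) which the paper leaves implicit.
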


\begin{rem} \label{also-true-for-pco}
 An analogue of Lemma~\ref{becker-coacyclicity-preserved} holds for
coacyclic complexes in the sense of
Section~\ref{coderived-subsecn} under weaker assumptions: any
exact functor preserving coproducts, acting between exact categories
with exact coproducts, preserves coacyclicity.
 This assertion, following immediately from the definitions, was
mentioned and used many times throughout the main body of this paper.
\end{rem}

\begin{proof}[Proof of Lemma~\ref{becker-coacyclicity-preserved}]
 It is a particular case of the Special Adjoint Functor Theorem that
a functor between cocomplete abelian categories having sets of
generators is a left adjoint if and only if it preserves colimits
(equivalently, is right exact and preserves coproducts).
 This explains the equivalent reformulation of the lemma's assumptions
in the parentheses.
 Now let $G\:\sB\rarrow\sA$ be the right adjoint functor to~$F$.
 Since the functor $F$ is exact, the functor $G$ takes injectives
to injectives.
 Let $A^\bu$ be a complex in $\sA$ and $J^\bu$ be a complex of
injective objects in~$\sB$.
 Then the isomorphism of complexes of abelian groups
$\Hom_\sB(F(A^\bu),J^\bu)\simeq\Hom_\sA(A^\bu,G(J^\bu))$ shows that
the complex $F(A^\bu)$ is coacyclic in $\sB$ whenever a complex
$A^\bu$ is coacyclic in~$\sA$.
\end{proof}

\subsection{Locality of coacyclity on schemes}
\label{locality-of-coacyclicity-subsecn}
 Let $\X$ be an reasonable ind-scheme.
 Then the category $\X\tors$ of quasi-coherent torsion sheaves on $\X$
is a Grothendieck abelian category (by
Theorem~\ref{torsion-sheaves-abelian}).
 So it makes sense to speak about Becker coacyclic complexes in
$\X\tors$ and the Becker coderived category $\sD^\bco(\X\tors)$.

\begin{lem} \label{flat-inverse-image-preserves-becker-coacyclicity}
 Let $f\:\Y\rarrow\X$ be a flat morphism of reasonable ind-schemes.
 Then the inverse image functor $f^*\:\X\tors\rarrow\Y\tors$ takes
Becker coacyclic complexes in\/ $\X\tors$ to Becker coacyclic
complexes in\/ $\Y\tors$.
\end{lem}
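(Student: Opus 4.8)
The plan is to deduce the assertion directly from the abstract Lemma~\ref{becker-coacyclicity-preserved}, applied to the functor $F=f^*\:\X\tors\rarrow\Y\tors$. That lemma guarantees preservation of Becker coacyclicity for any exact functor between Grothendieck abelian categories that admits a right adjoint, so all that remains is to verify these three hypotheses in the present situation.

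First I would observe that the source and target categories are Grothendieck abelian. The ind-scheme $\X$ is reasonable by assumption; moreover, a flat morphism of ind-schemes is in particular ``representable by schemes'' (by the very definition in Section~\ref{morphisms-of-ind-schemes-subsecn}), so $\Y$ is reasonable as well, by the discussion in Section~\ref{reasonable-subsecn}. Hence Theorem~\ref{torsion-sheaves-abelian} applies to both $\X$ and $\Y$, showing that $\X\tors$ and $\Y\tors$ are Grothendieck abelian categories.

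Next I would check exactness and the existence of a right adjoint for $f^*$. Since $f$ is flat, the inverse image functor $f^*\:\X\tors\rarrow\Y\tors$ is exact by Lemma~\ref{flat-torsion-sheaves-inverse-image}. As $f$ is representable by schemes, Lemma~\ref{torsion-direct-inverse-adjunction}(b) tells us that $f^*$ is left adjoint to the direct image functor $f_*\:\Y\tors\rarrow\X\tors$; in particular $f^*$ admits a right adjoint. With all three hypotheses in place, Lemma~\ref{becker-coacyclicity-preserved} yields the desired conclusion.

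There is essentially no obstacle here: the entire content has been packaged into the abstract Lemma~\ref{becker-coacyclicity-preserved}, and the proof reduces to assembling the exactness, adjunction, and Grothendieck properties already established earlier in the paper. The only point requiring the slightest care is confirming that $\Y$ is reasonable, so that $\Y\tors$ is genuinely a Grothendieck category and the functor $f^*$ is defined at all; this is ensured by flatness forcing representability by schemes.
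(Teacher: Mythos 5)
Your proposal is correct and follows exactly the paper's own argument: exactness of $f^*$ from Lemma~\ref{flat-torsion-sheaves-inverse-image}, the right adjoint $f_*$ from Lemma~\ref{torsion-direct-inverse-adjunction}(b), and then the abstract Lemma~\ref{becker-coacyclicity-preserved}. Your extra remark that $\Y$ is reasonable is harmless but not needed, since the lemma's hypothesis already declares both ind-schemes reasonable.
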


\begin{proof}
 The functor~$f^*$ is exact by
Lemma~\ref{flat-torsion-sheaves-inverse-image}
and has a right adjoint by
Lemma~\ref{torsion-direct-inverse-adjunction}(b),
so it remains to apply
Lemma~\ref{becker-coacyclicity-preserved}.
\hbadness=1400
\end{proof}

\begin{lem} \label{affine-direct-image-preserves-becker-coacyclicity}
 Let $f\:\Y\rarrow\X$ be an affine morphism of reasonable ind-schemes.
 Then the direct image functor $f_*\:\Y\tors\rarrow\X\tors$ takes
Becker coacyclic complexes in\/ $\Y\tors$ to Becker coacyclic
complexes in\/ $\X\tors$.
\end{lem}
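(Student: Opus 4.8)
The plan is to invoke Lemma~\ref{becker-coacyclicity-preserved}, exactly as in the proof of Lemma~\ref{flat-inverse-image-preserves-becker-coacyclicity} immediately above. That lemma reduces the assertion to verifying two properties of the direct image functor $f_*\:\Y\tors\rarrow\X\tors$: that it is an exact functor between Grothendieck abelian categories, and that it preserves coproducts (equivalently, admits a right adjoint).

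First I would recall that both $\Y\tors$ and $\X\tors$ are Grothendieck abelian categories by Theorem~\ref{torsion-sheaves-abelian}; here the ind-scheme $\Y$ is reasonable whenever $\X$ is and $f$ is representable by schemes, as noted in Section~\ref{reasonable-subsecn}. Next, exactness of $f_*$ for an affine morphism of reasonable ind-schemes is precisely the content of Lemma~\ref{affine-torsion-direct-image}. Finally, since any affine morphism is in particular ``representable by schemes'' in the sense of Section~\ref{morphisms-of-ind-schemes-subsecn} (the fibered product $\Y\times_\X T$ is a scheme for every scheme $T$ over~$\X$), Lemma~\ref{representable-by-schemes-direct-image}(a) shows that $f_*$ preserves direct limits, and in particular coproducts. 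With both hypotheses in hand, Lemma~\ref{becker-coacyclicity-preserved} applies directly and yields the claim.

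Rather than a genuine obstacle, the one point to keep track of is that the two required properties of $f_*$ issue from two different hypotheses: exactness rests on affineness (the direct image functors $f_{\gamma}{}_*\:Y_\gamma\qcoh\rarrow X_\gamma\qcoh$ are exact for affine morphisms of schemes, which propagates to $\Gamma$\+systems and then to torsion sheaves), whereas preservation of coproducts needs only representability by schemes. Both are in force under the stated assumptions, so no real difficulty arises; the lemma is simply the affine direct-image counterpart of the flat inverse-image statement in Lemma~\ref{flat-inverse-image-preserves-becker-coacyclicity}, obtained by the same one-line reduction.
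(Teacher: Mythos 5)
Your proposal is correct and follows exactly the paper's own argument: exactness of $f_*$ from Lemma~\ref{affine-torsion-direct-image}, preservation of coproducts from Lemma~\ref{representable-by-schemes-direct-image}(a), and then Lemma~\ref{becker-coacyclicity-preserved}. The extra remarks you include (reasonableness of $\Y$, Grothendieck-ness of the categories, and the observation that the two hypotheses rest on different parts of the assumptions) are accurate but not needed beyond what the paper states.
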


\begin{proof}
 The functor~$f_*$ is exact by
Lemma~\ref{affine-torsion-direct-image} and preserves coproducts
by Lemma~\ref{representable-by-schemes-direct-image}(a), so
Lemma~\ref{becker-coacyclicity-preserved} is applicable.
\hbadness=1375
\end{proof}

 The ideas of the formulations and proofs of the next two lemmas
can be found in~\cite[Remark~1.3]{EP} (where a more complicated setting
of quasi-coherent curved DG\+modules is considered).

\begin{lem}
 Let $X=\bigcup_\alpha U_\alpha$ be an open covering of a Noetherian
scheme~$X$.
 Let $j_\alpha\:U_\alpha\rarrow X$ denote the open immersion morphisms.
 Then a complex of quasi-coherent sheaves $\M^\bu$ on $X$ is Becker
coacyclic if and only if, for every~$\alpha$, the complex of
quasi-coherent sheaves $j_\alpha^*\M^\bu$ on $U_\alpha$ is Becker
coacyclic. 
\end{lem}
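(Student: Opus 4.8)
The plan is to prove the two implications separately, the nontrivial one being ``if''. The ``only if'' direction is immediate: an open immersion is flat, so by Lemma~\ref{flat-inverse-image-preserves-becker-coacyclicity} the inverse image functor $j_\alpha^*$ takes Becker coacyclic complexes on $X$ to Becker coacyclic complexes on $U_\alpha$.

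For the converse, I would first reduce to a finite cover: since $X$ is Noetherian it is quasi-compact, so finitely many of the $U_\alpha$, say $U_1,\dots,U_n$, already cover $X$. For a nonempty $\sigma\subseteq\{1,\dots,n\}$ write $U_\sigma=\bigcap_{i\in\sigma}U_i$; each $U_\sigma$ is open in some $U_i$, so Lemma~\ref{flat-inverse-image-preserves-becker-coacyclicity} again shows that $\M^\bu|_{U_\sigma}$ is Becker coacyclic. To prove $\M^\bu$ Becker coacyclic I must check that the Hom complex $\Hom^\bu_{X\qcoh}(\M^\bu,\J^\bu)$ is acyclic for every complex of injective quasi-coherent sheaves $\J^\bu$ on $X$. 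As the $\M^p$ and $\J^q$ are quasi-coherent, this complex is the complex of global sections $\Gamma(X,\K^\bu)$ of the total internal-Hom complex $\K^\bu=\cHom^\bu_{\cO_X}(\M^\bu,\J^\bu)$ of $\cO_X$-modules.

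The key observation is a flasqueness property. Since $X$ is Noetherian, every injective quasi-coherent sheaf $\J^q$ is injective as an $\cO_X$-module; for such $\J^q$ and any $\cO_X$-module $\M^p$ the sheaf $\cHom_{\cO_X}(\M^p,\J^q)$ is flasque, because restriction to an open $U$ is the surjection $\Hom_{\cO_X}(\M^p,\J^q)\twoheadrightarrow\Hom_{\cO_X}(j_!j^*\M^p,\J^q)$ coming from injectivity of $\J^q$ and the subobject inclusion $j_!j^*\M^p\rarrow\M^p$. Taking products, each term $\K^q$ is flasque. I would then form the Čech double complex of $\K^\bu$ for the cover $\{U_1,\dots,U_n\}$, with $(p,q)$-term $\prod_{|\sigma|=p+1}\Gamma(U_\sigma,\K^q)$, augmented in Čech degree $-1$ by $\Gamma(X,\K^q)$. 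Flasqueness makes every augmented row (fixed $q$) a finite exact Čech complex, while each column (fixed $p$) equals $\prod_{|\sigma|=p+1}\Hom^\bu_{U_\sigma\qcoh}(\M^\bu|_{U_\sigma},\J^\bu|_{U_\sigma})$ and is acyclic, since $\M^\bu|_{U_\sigma}$ is Becker coacyclic and $\J^\bu|_{U_\sigma}$ is a complex of injectives on $U_\sigma$ (restriction of an injective to an open subscheme of a Noetherian scheme stays injective). The double complex is bounded in the Čech direction, so both of its spectral sequences converge: exactness of the rows forces the total complex to be acyclic, and acyclicity of the columns $p\ge0$ then identifies the cohomology of the augmentation column $\Gamma(X,\K^\bu)=\Hom^\bu_{X\qcoh}(\M^\bu,\J^\bu)$ with that of the (acyclic) total complex. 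Hence this Hom complex is acyclic, as required.

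The main obstacle is to isolate the right vanishing input so as to avoid direct images along the open immersions $j_\sigma\colon U_\sigma\rarrow X$, which are \emph{not} affine (nor even exact on direct images) unless $X$ is semi-separated, so that Lemma~\ref{affine-direct-image-preserves-becker-coacyclicity} is unavailable. The device that circumvents this is precisely the flasqueness of $\cHom_{\cO_X}(-,\J^q)$ for injective $\J^q$: it replaces the naive strategy of pushing Becker coacyclic complexes forward (which fails for non-affine immersions) by a purely cohomological vanishing on each $\K^q$, and thereby makes the statement hold for an arbitrary Noetherian $X$. Beyond this, the only care needed is in the bookkeeping of the two spectral sequences and in checking their convergence, which is guaranteed by the boundedness of the finite cover in the Čech direction.
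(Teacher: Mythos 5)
Your proof is correct, and it takes a genuinely different route from the paper's. The paper argues through the homotopy category of injectives: by Theorem~\ref{becker-coderived-theorem} (or Proposition~\ref{coderived-and-homotopy-of-injectives}(b)) one chooses a morphism $\M^\bu\rarrow\J^\bu$ into a complex of injective quasi-coherent sheaves with Becker coacyclic cone; by Lemma~\ref{flat-inverse-image-preserves-becker-coacyclicity} and the hypothesis, each $j_\alpha^*\J^\bu$ is then a coacyclic complex of injectives on $U_\alpha$ (injectivity of quasi-coherent sheaves being local on Noetherian schemes), hence contractible; since contractibility of a complex of injectives means acyclicity plus injectivity of the sheaves of cocycles, and both properties are local, $\J^\bu$ itself is contractible and $\M^\bu$ is coacyclic. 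You instead verify the definition of Becker coacyclicity directly: you test $\M^\bu$ against an arbitrary complex of injectives $\J^\bu$, identify $\Hom_{X\qcoh}(\M^\bu,\J^\bu)$ with $\Gamma(X,\K^\bu)$ for the internal Hom complex $\K^\bu=\cHom_{X\qc}(\M^\bu,\J^\bu)$ of $\cO_X$\+modules, and combine the flasqueness of $\cHom_{\cO_X}({-},\J^q)$ for injective $\J^q$ with a finite \v Cech double complex. Both arguments rest on the same Noetherian input (\cite[Proposition~II.7.17 and Theorem~II.7.18]{Hart}: injective quasi-coherent sheaves are injective as $\cO_X$\+modules, equivalently injectivity is local), but the paper's proof additionally invokes the Krause--Neeman--Becker theorem $\sK(X\qcoh_\inj)\simeq\sD^\bco(X\qcoh)$, whereas yours does not, trading that categorical machinery for classical sheaf theory (exactness of the augmented \v Cech complex of a flasque sheaf). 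The paper's route is shorter given that Theorem~\ref{becker-coderived-theorem} is needed elsewhere anyway; yours is more self-contained on the categorical side, and your closing observation is exactly on target: replacing direct images along open immersions by flasqueness is what makes the argument work for Noetherian $X$ without semi-separatedness, in contrast with Proposition~\ref{locality-of-coacyclicity-prop}, where the non-Noetherian setting forces the semi-separatedness assumption and the use of affine direct images. Two small points of bookkeeping: for flasqueness you should note that your surjectivity argument applies to restrictions between arbitrary opens $U\subset V$ (replace $X$ by $V$, using that $\J^q|_V$ is again an injective $\cO_V$\+module), not only to restrictions from $X$; and the identification $\Gamma(X,\K^\bu)=\Hom_{X\qcoh}(\M^\bu,\J^\bu)$ uses that infinite products of sheaves are computed sectionwise, so that $\Gamma$ commutes with the product totalization --- both are immediate, but worth saying.
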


\begin{proof}
 In fact, by Proposition~\ref{two-coderived-categories-comparison},
there is no difference between the Becker coacyclicity and
coacyclicity in the sense of Section~\ref{coderived-subsecn} in
the assumptions of this lemma.
 The functors~$j_\alpha^*$ preserve coacyclicity by
Lemma~\ref{flat-inverse-image-preserves-becker-coacyclicity};
so the ``only if'' assertion is clear.

 To prove the ``if'', one says that either by
Proposition~\ref{coderived-and-homotopy-of-injectives}(b) or
by Theorem~\ref{becker-coderived-theorem} there exists a complex of
injective quasi-coherent sheaves $\J^\bu$ on $X$ together with
a morphism of complexes $\M^\bu\rarrow\J^\bu$ with a coacyclic cone.
 By Lemma~\ref{flat-inverse-image-preserves-becker-coacyclicity},
the cones of the morphisms $j_\alpha^*\M^\bu\rarrow j_\alpha^*\J^\bu$
are coacyclic as well.
 If the complexes $j_\alpha^*\M^\bu$ are coacyclic, then it follows
that so are the complexes $j_\alpha^*\J^\bu$.

 On a Noetherian scheme, injectivity of quasi-coherent sheaves is
a local property; so $j_\alpha^*\J^\bu$ is a complex of injective
quasi-coherent sheaves on~$U_\alpha$.
 Any coacyclic complex of injectives is contractible; so
$j_\alpha^*\J^\bu$ is a contractible complex.
 Finally, a complex of injective objects is contractible if and only
if it is acyclic and its objects of cocycles are injective.
 As injectivity of sheaves and acyclicity of complexes are local
properties on $X$, we can conclude that the complex $\J^\bu\in
\sC(X\qcoh_\inj)$ is contractible.
 It follows that the complex $\M^\bu\in\sC(X\qcoh)$ is coacyclic.
\end{proof}

\begin{prop} \label{locality-of-coacyclicity-prop}
 Let $X=\bigcup_\alpha U_\alpha$ be an open covering of a quasi-compact
semi-separated scheme~$X$.
 Let $j_\alpha\:U_\alpha\rarrow X$ denote the open immersion morphisms.
 Then a complex of quasi-coherent sheaves $\M^\bu$ on $X$ is Becker
coacyclic if and only if, for every~$\alpha$, the complex of
quasi-coherent sheaves $j_\alpha^*\M^\bu$ on $U_\alpha$ is Becker
coacyclic. 
\end{prop}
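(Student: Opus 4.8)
The plan is to reduce to a finite affine cover and run a \v Cech argument, working throughout with Becker coacyclicity: the proof of the preceding (Noetherian) lemma cannot be imitated here, since on a non-Noetherian scheme injectivity of quasi-coherent sheaves is not a local property and the comparison of Proposition~\ref{two-coderived-categories-comparison} is unavailable. The ``only if'' direction is immediate, because each open immersion $j_\alpha$ is flat and hence $j_\alpha^*$ preserves Becker coacyclicity by Lemma~\ref{flat-inverse-image-preserves-becker-coacyclicity}. For the ``if'' direction I would first refine the cover: since $X$ is quasi-compact, choose a finite affine open cover $X=\bigcup_{i=1}^n V_i$ in which each $V_i$ is contained in some $U_{\alpha(i)}$. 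Factoring the restriction $\M^\bu|_{V_i}$ as the flat pullback of $j_{\alpha(i)}^*\M^\bu$ along the open immersion $V_i\rarrow U_{\alpha(i)}$ and applying Lemma~\ref{flat-inverse-image-preserves-becker-coacyclicity} again, the hypothesis shows that each $j_i^*\M^\bu$ is Becker coacyclic on $V_i$. Thus it suffices to treat a finite affine cover, and here semi-separatedness of $X$ is exactly what guarantees that every finite intersection $V_I=V_{i_0}\cap\dotsb\cap V_{i_p}$ is again affine, so that each inclusion $j_I\:V_I\rarrow X$ is an affine morphism.

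Next I would form the (finite, since the cover is finite) \v Cech resolution $0\rarrow\M^\bu\rarrow C^{0,\bu}\rarrow\dotsb\rarrow C^{n,\bu}\rarrow0$, where $C^{p,\bu}=\bigoplus_{i_0<\dotsb<i_p}(j_I)_*\,j_I^*\M^\bu$ with $I=\{i_0<\dotsb<i_p\}$. This augmented complex of complexes of quasi-coherent sheaves is exact in each bidegree, as the \v Cech complex of a quasi-coherent sheaf for a finite affine cover of a semi-separated scheme is a resolution (exactness being checked on the underlying $\cO_X$-modules). Each term $C^{p,\bu}$ is Becker coacyclic: the complex $j_I^*\M^\bu$ is the flat pullback of $j_{i_0}^*\M^\bu$ along $V_I\rarrow V_{i_0}$, hence Becker coacyclic by Lemma~\ref{flat-inverse-image-preserves-becker-coacyclicity}; and $(j_I)_*$ preserves Becker coacyclicity by Lemma~\ref{affine-direct-image-preserves-becker-coacyclicity}, since $j_I$ is affine.

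To conclude I would pass to the Becker coderived category $\sD^\bco(X\qcoh)$. Splitting the bounded exact complex of complexes $0\rarrow\M^\bu\rarrow C^{0,\bu}\rarrow\dotsb\rarrow C^{n,\bu}\rarrow0$ into short exact sequences of complexes, Lemma~\ref{becker-coacyclic-lemma}(a) shows that the totalization of each such short exact sequence is Becker coacyclic, so each short exact sequence becomes a distinguished triangle in the Verdier quotient $\sD^\bco(X\qcoh)$. As every $C^{p,\bu}$ vanishes in $\sD^\bco(X\qcoh)$, splicing these triangles from the right forces $\M^\bu\simeq0$ in $\sD^\bco(X\qcoh)$, i.e.\ $\M^\bu$ is Becker coacyclic. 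Alternatively, one may note that the cone of the augmentation $\M^\bu\rarrow\mathrm{Tot}(C^{\bullet,\bu})$ is the totalization of a bounded exact complex of complexes, hence coacyclic in the sense of Section~\ref{coderived-subsecn} by \cite[Lemma~2.1]{Psemi} and therefore Becker coacyclic by Lemma~\ref{becker-coacyclic-lemma}(c); while $\mathrm{Tot}(C^{\bullet,\bu})$ is an iterated cone of the Becker coacyclic complexes $C^{p,\bu}$, and Becker coacyclic complexes form a thick subcategory, so $\M^\bu$ is Becker coacyclic.

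The main point, and the only place where real care is needed, is that all the functors used must preserve \emph{Becker} (not merely ordinary) coacyclicity; this is precisely what Lemmas~\ref{flat-inverse-image-preserves-becker-coacyclicity} and~\ref{affine-direct-image-preserves-becker-coacyclicity} supply, and it is the reason the \v Cech terms land in the Becker coacyclic class. The two structural hypotheses are used sharply: semi-separatedness makes every $j_I$ affine, so that $(j_I)_*$ is exact and coproduct-preserving, and quasi-compactness keeps the resolution finite, so that the totalization argument and the triangle splicing involve only finitely many cones. Once the cover has been arranged in this way, the totalization-of-a-short-exact-sequence property from Lemma~\ref{becker-coacyclic-lemma}(a) serves as the exact substitute for the ``injectivity is local'' step of the Noetherian case, and no further obstacle remains.
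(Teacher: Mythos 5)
Your proof is correct and follows essentially the same route as the paper's: reduce by quasi-compactness to a finite affine covering, use semi-separatedness to make all intersections affine, observe via Lemmas~\ref{flat-inverse-image-preserves-becker-coacyclicity} and~\ref{affine-direct-image-preserves-becker-coacyclicity} that the \v Cech terms $j_I{}_*\,j_I^*\M^\bu$ are Becker coacyclic, and conclude by thickness of the subcategory of Becker coacyclic complexes in $\sK(X\qcoh)$. Your explicit splicing of the exact \v Cech bicomplex into short exact sequences (or the alternative cone-of-the-augmentation argument) merely fills in the step that the paper compresses into a single appeal to Lemma~\ref{becker-coacyclic-lemma}(a).
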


\begin{proof}
 Similarly to the previous lemma, the functors of restriction to open
subschemes preserve coacyclicity by
Lemma~\ref{flat-inverse-image-preserves-becker-coacyclicity}.
 So the ``only if'' assertion is obvious.
 Moreover, refining the covering if necessary and using
the quasi-compactness, we can assume that $X=\bigcup_\alpha U_\alpha$
is a finite affine open covering of~$X$.
 
 Choose a linear order on the set of indices~$\alpha$, and for any
subset of indices $\alpha_1<\dotsb<\alpha_k$ denote by
$j_{\alpha_1,\dotsc,\alpha_k}\:\bigcap_{s=1}^k U_{\alpha_k}\rarrow X$
the open immersion of the intersection of the open subschemes
$U_1$,~\dots, $U_k$ in~$X$.
 Since the scheme $X$ is semi-separated, the open subscheme 
$\bigcap_{s=1}^k U_{\alpha_k}\subset X$ is affine for all $k>0$,
and the open immersion of any affine open subscheme into $X$ is
an affine morphism of schemes; so
the morphism~$j_{\alpha_1,\dotsc,\alpha_k}$ is affine for all $k\ge0$.

 For any quasi-coherent sheaf $\N$ on $X$, the \v Cech complex
$$
 0\lrarrow\N\lrarrow\bigoplus\nolimits_\alpha j_\alpha{}_*j_\alpha^*\N
 \lrarrow\bigoplus\nolimits_{\alpha<\beta}
 j_{\alpha,\beta}{}_{\,*}\,j_{\alpha,\beta}^*\N\lrarrow\dotsb\lrarrow0
$$
is an acyclic finite complex of quasi-coherent sheaves on~$X$
(to show the acyclicity, one observes that the restriction of
the \v Cech complex to each of the open subschemes $U_\alpha\subset X$
is contractible).
 Therefore, given a complex of quasi-coherent sheaves $\M^\bu$ on
$X$, we have an acyclic finite complex of complexes
\begin{equation} \label{cech-bicomplex}
 0\lrarrow\M^\bu\lrarrow
 \bigoplus\nolimits_\alpha j_\alpha{}_*j_\alpha^*\M^\bu
 \lrarrow\bigoplus\nolimits_{\alpha<\beta}
 j_{\alpha,\beta}{}_{\,*}\,j_{\alpha,\beta}^*\M^\bu
 \lrarrow\dotsb\lrarrow0.
\end{equation}

 Now, if the complex $j_\alpha^*\M^\bu$ is coacyclic for every~$\alpha$,
then the complex $j_{\alpha_1,\dotsc,\alpha_k}^*\M^\bu$ is coacyclic
for all~$\alpha_1<\dotsb<\alpha_k$ with $k>0$ (as the restriction to
an open subscheme preserves coacyclicity).
 Then, by Lemma~\ref{affine-direct-image-preserves-becker-coacyclicity},
the complex $j_{\alpha_1,\dotsc,\alpha_k}{}_*\,
j_{\alpha_1,\dotsc,\alpha_k}^*\M^\bu$ is a coacyclic complex of
quasi-coherent sheaves on~$X$.

 Finally, the total complex of the bicomplex~\eqref{cech-bicomplex}
is coacyclic by Lemma~\ref{becker-coacyclic-lemma}(a).
 Since the Becker coacyclic complexes form a full triangulated
subcategory in $\sK(X\qcoh)$, we can conclude that the complex
$\M^\bu$ is Becker coacyclic.
\end{proof}

\begin{rem} \label{locality-of-coacyclicity-also-true-for-pco}
 All the results of this Section~\ref{locality-of-coacyclicity-subsecn}
are also valid for the coacyclicity in the sense
of Section~\ref{coderived-subsecn} in lieu of the coacyclicity in
the sense of Becker (cf.\ Remark~\ref{also-true-for-pco}).
 The advantage of Becker's coderived categories for the purposes of
the present appendix is that Theorem~\ref{becker-coderived-theorem} is
available for them (specifically, in application to the categories
$\bY\tors$ for non-ind-Noetherian ind-schemes~$\bY$).
\end{rem}

\subsection{The semiderived category for a nonaffine morphism
of schemes} \label{semiderived-nonaffine-scheme-morphism-subsecn}
 In this section, unlike in the rest of the paper, we do \emph{not}
automatically assume all the schemes to be concentrated
(i.~e., quasi-compact and semi-separated).
 We start with a series of lemmas before proceeding to the key
definition.

\begin{lem} \label{countable-direct-limit-preserves-coacyclicity}
 Let\/ $\sA$ be an abelian category with countable coproducts and
enough injective objects, and let $M_0^\bu\rarrow M_1^\bu\rarrow
M_2^\bu\rarrow\dotsb$ be an inductive system of complexes in\/ $\sA$,
indexed by the poset of nonnegative integers.
 Assume that the complex $M_n^\bu$ is Becker coacyclic in\/ $\sA$
for every $n\ge0$.
 Then the complex\/ $\varinjlim_{n\ge0}M_n^\bu$ is Becker coacyclic
in\/ $\sA$ as well.
\end{lem}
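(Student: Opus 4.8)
The plan is to reduce the assertion to the two closure properties recorded in Lemma~\ref{becker-coacyclic-lemma} by means of the standard telescope presentation of a sequential colimit. Write $L^\bu=\varinjlim_{n\ge0}M_n^\bu$, and let $s\:\bigoplus_{n}M_n^\bu\rarrow\bigoplus_{n}M_n^\bu$ be the morphism acting on the $n$\+th summand as the transition map $M_n^\bu\rarrow M_{n+1}^\bu$ followed by the coproduct insertion of the $(n+1)$\+st summand. One then has the telescope short exact sequence of complexes in $\sA$
\[
 0\lrarrow\bigoplus_{n\ge0}M_n^\bu\xrightarrow{\,1-s\,}
 \bigoplus_{n\ge0}M_n^\bu\lrarrow L^\bu\lrarrow0 .
\]
This sequence is exact in every degree; its exactness is a general fact valid in any abelian category with countable coproducts (the colimit of a sequential diagram is the cokernel of $1-s$, and $1-s$ is a monomorphism). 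The countable coproducts are used both to form $P^\bu:=\bigoplus_{n\ge0}M_n^\bu$ and to have $L^\bu$ at our disposal.

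First I would record the two inputs. Since each $M_n^\bu$ is Becker coacyclic, the coproduct $P^\bu$ is Becker coacyclic by Lemma~\ref{becker-coacyclic-lemma}(b). Next, the totalization $T^\bu$ of the displayed short exact sequence (the total complex of the three-row bicomplex $P^\bu\rarrow P^\bu\rarrow L^\bu$) is Becker coacyclic by Lemma~\ref{becker-coacyclic-lemma}(a). It then remains to extract coacyclicity of $L^\bu$ itself from these two facts.

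For this I would use that the Becker coacyclic complexes form a thick triangulated subcategory of $\sK(\sA)$ closed under coproducts: a complex $M^\bu$ is Becker coacyclic precisely when $\Hom_{\sK(\sA)}(M^\bu,J^\bu[k])=0$ for every $k\in\boZ$ and every $J^\bu\in\sK(\sA_\inj)$, so this class is the left orthogonal ${}^\perp\sK(\sA_\inj)$ and is automatically triangulated and closed under coproducts, hence under cones. Consequently the cone $C^\bu$ of the chain map $1-s\:P^\bu\rarrow P^\bu$, being the cone of a morphism between two Becker coacyclic complexes, is Becker coacyclic. The projection $P^\bu\rarrow L^\bu$ induces a natural chain map $\phi\:C^\bu\rarrow L^\bu$ (zero on the shifted source copy of $P^\bu$, the projection on the target copy), and a direct inspection identifies $\mathrm{Cone}(\phi)$ with the totalization $T^\bu$ up to a shift. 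The resulting distinguished triangle
\[
 C^\bu\overset{\phi}{\lrarrow}L^\bu\lrarrow T^\bu\lrarrow C^\bu[1]
\]
in $\sK(\sA)$ has two Becker coacyclic vertices $C^\bu$ and $T^\bu$, whence $L^\bu$ is Becker coacyclic as well.

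The one step demanding genuine care — and the main potential obstacle — is the identification of $\mathrm{Cone}(\phi)$ with the totalization $T^\bu$ of the short exact sequence, i.e.\ the sign and shift bookkeeping that makes Lemma~\ref{becker-coacyclic-lemma}(a) directly applicable; everything else is formal. An alternative that sidesteps the explicit cone computation is to apply $\Hom_{\sK(\sA)}({-},J^\bu)$ to the telescope sequence and use the resulting Milnor $\varprojlim$\+$\varprojlim^1$ exact sequence together with the vanishing of $\Hom_{\sK(\sA)}(M_n^\bu,J^\bu[k])$ for all $n$ and $k$. I would keep the triangle argument as the primary route, since it relies only on the two parts of Lemma~\ref{becker-coacyclic-lemma} already at hand.
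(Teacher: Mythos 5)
Your proof is correct and takes essentially the same route as the paper's: the telescope short exact sequence $0\rarrow\coprod_n M_n^\bu\rarrow\coprod_n M_n^\bu\rarrow\varinjlim_n M_n^\bu\rarrow0$, Lemma~\ref{becker-coacyclic-lemma}(b) for the coproduct, Lemma~\ref{becker-coacyclic-lemma}(a) for the totalization, and the fact that the Becker coacyclic complexes form a triangulated subcategory of $\sK(\sA)$ (the left orthogonal to $\sK(\sA_\inj)$). The only difference is that you spell out the cone/shift bookkeeping which the paper compresses into the phrase ``full triangulated subcategory,'' and your parenthetical claim that $1-s$ is a monomorphism is precisely the same assertion the paper makes implicitly by calling the telescope sequence short exact.
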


\begin{proof}
 The complex $\coprod_{n\ge0}M_n^\bu$ is Becker coacyclic in $\sA$
by Lemma~\ref{becker-coacyclic-lemma}(b).
 The total complex of the short exact sequence of complexes
$0\rarrow\coprod_{n\ge0}M_n^\bu\rarrow\coprod_{n\ge0}M_n^\bu
\rarrow\varinjlim_{n\ge0}M_n^\bu$ is Becker coacyclic by
Lemma~\ref{becker-coacyclic-lemma}(a).
 Since the Becker coacyclic complexes form a full triangulated
subcategory in $\sK(\sA)$, it follows that the complex
$\varinjlim_{n\ge0}M_n^\bu$ is also Becker coacyclic.
\end{proof}

 Let $X=\Spec R$ be an affine scheme.
 The \emph{principal affine open subschemes} in $X$ are the open
subschemes $\Spec R[r^{-1}]\subset R$ corresponding to the morphisms
$R\rarrow R[r^{-1}]$ of localization by multiplicative subsets
generated by a single element in~$R$.
 The principal affine open subschemes form a base of neighborhoods
of zero in $\Spec R$, and the intersection of any two principal
affine open subschemes is a principal affine open subscheme,
$\Spec R[r_1^{-1}]\times_{\Spec R}\Spec R[r_2^{-1}]=
\Spec R[(r_1r_2)^{-1}]$.
 Furthermore, for any morphism of affine schemes $\Spec S\rarrow
\Spec R$, the full preimage of any principal affine open subscheme
in $\Spec R$ is a principal affine open subscheme in $\Spec S$.

\begin{lem} \label{inverting-element-upstairs-preserves-semiacyclicity}
 Let $R\rarrow S$ be a morphism of commutative rings and $N^\bu$
be a complex of $S$\+modules.
 Let $s\in S$ be an element.
 Assume that $N^\bu$ is Becker coacyclic as a complex of $R$\+modules.
 Then $S[s^{-1}]\ot_S N^\bu$ is also Becker coacyclic as a complex of
$R$\+modules.
\end{lem}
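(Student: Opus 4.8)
The plan is to exhibit $S[s^{-1}]\ot_SN^\bu$ as a countable filtered colimit of copies of $N^\bu$ and then to invoke Lemma~\ref{countable-direct-limit-preserves-coacyclicity} in the abelian category $\sA=R\modl$. The point is that localization at a single element is a telescope, and Becker coacyclicity of a complex of $R$-modules is preserved under such telescopes.

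First I would recall the standard presentation of the localization as a colimit over the poset of nonnegative integers, $S[s^{-1}]=\varinjlim_{n\ge0}\bigl(S\overset{s}{\rarrow}S\overset{s}{\rarrow}S\rarrow\dotsb\bigr)$, in which every transition map is multiplication by~$s$. Since the functor $({-})\ot_SN^\bu$ is a left adjoint, it preserves colimits, so $S[s^{-1}]\ot_SN^\bu\simeq\varinjlim_{n\ge0}N^\bu$, where the transition maps are again multiplication by~$s$, now regarded as chain endomorphisms of $N^\bu$ (these are $S$-module chain maps, since $s$ is central and commutes with the $S$-linear differential). Here it is important to observe that the colimit computed in $\sC(S\modl)$ agrees with the colimit of the underlying complexes of $R$-modules: restriction of scalars along $R\rarrow S$ is a left adjoint to coinduction $\Hom_R(S,{-})$, hence cocontinuous, so $S[s^{-1}]\ot_SN^\bu$, viewed as a complex of $R$-modules, is precisely the colimit $\varinjlim_{n\ge0}N^\bu$ formed in $\sC(R\modl)$.

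Next I would note that every term of this $\boZ_{\ge0}$-indexed inductive system is a copy of the complex $N^\bu$, which is Becker coacyclic as a complex of $R$-modules by hypothesis. The category $\sA=R\modl$ has countable coproducts and enough injective objects, so Lemma~\ref{countable-direct-limit-preserves-coacyclicity} applies to the system $\bigl(N^\bu\overset{s}{\rarrow}N^\bu\overset{s}{\rarrow}\dotsb\bigr)$ and yields that its colimit is Becker coacyclic in $R\modl$. Combining this with the identification of the previous paragraph, $S[s^{-1}]\ot_SN^\bu$ is Becker coacyclic as a complex of $R$-modules, which is the assertion.

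I do not expect any genuine obstacle: the argument is a direct reduction to Lemma~\ref{countable-direct-limit-preserves-coacyclicity}, with the remaining ingredients being only the telescope description of localization and the commutation of $\ot_S$ with filtered colimits. The sole point deserving a moment's care is the identification of the colimit taken in $S\modl$ with the colimit of underlying $R$-modules, which is exactly the cocontinuity of restriction of scalars.
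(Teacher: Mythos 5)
Your proposal is correct and is essentially identical to the paper's own proof: the paper likewise describes $S[s^{-1}]\ot_S N^\bu$ as the direct limit of the telescope $N^\bu\overset{s}\rarrow N^\bu\overset{s}\rarrow N^\bu\rarrow\dotsb$ of complexes of $R$\+modules and then invokes Lemma~\ref{countable-direct-limit-preserves-coacyclicity}. Your extra remark on the cocontinuity of restriction of scalars is a fine (if routine) point of care that the paper leaves implicit.
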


\begin{proof}
 The complex of $R$\+modules $S[s^{-1}]\ot_S N^\bu$ can be described
as the direct limit of the sequence of morphisms of complexes of
$R$\+modules $N^\bu\overset s\rarrow N^\bu\overset s\rarrow N^\bu
\rarrow\dotsb$, so
Lemma~\ref{countable-direct-limit-preserves-coacyclicity}
is applicable.
\end{proof}

\begin{lem} \label{affine-covered-by-principal-affines-to-affine}
 Let\/ $\bnY=\bigcup_\alpha\bnV_\alpha$ be an affine scheme covered by
principal affine open subschemes.
 Denote by $j_\alpha\:\bnV_\alpha\rarrow\bnY$ the open immersion
morphisms.
 Let $f\:\bnY\rarrow X$ be a morphism of affine schemes, and
let $\bcN^\bu$ be a complex of quasi-coherent sheaves on\/~$\bnY$.
 Then the complex $f_*\bcN^\bu$ of quasi-coherent sheaves on $X$ is
Becker coacyclic if and only if, for every~$\alpha$, the complex
$f_*j_\alpha{}_*j_\alpha^*\bcN^\bu$ of quasi-coherent sheaves on $X$
is Becker coacyclic.
\end{lem}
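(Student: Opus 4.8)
The plan is to translate everything into commutative algebra and then run a \v{C}ech argument. Write $X=\Spec R$ and $\bnY=\Spec S$, so that the affine morphism $f$ corresponds to a ring homomorphism $R\rarrow S$ and the complex $\bcN^\bu$ corresponds to a complex $N^\bu$ of $S$\+modules. Each principal affine open subscheme has the form $\bnV_\alpha=\Spec S[s_\alpha^{-1}]$ for some $s_\alpha\in S$, and the condition $\bnY=\bigcup_\alpha\bnV_\alpha$ means that the elements $s_\alpha$ generate the unit ideal of~$S$. Since $\bnV_\alpha\rarrow\bnY$ and $\bnY\rarrow X$ are affine morphisms of affine schemes, the composition $f j_\alpha$ is affine, and the quasi-coherent sheaf $f_*j_\alpha{}_*j_\alpha^*\bcN^\bu$ on $X$ is the one associated with the complex of $R$\+modules underlying $S[s_\alpha^{-1}]\ot_S N^\bu$; likewise $f_*\bcN^\bu$ is the complex of $R$\+modules underlying $N^\bu$. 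Thus the statement to be proved becomes: $N^\bu$ is Becker coacyclic as a complex of $R$\+modules if and only if $S[s_\alpha^{-1}]\ot_S N^\bu$ is Becker coacyclic as a complex of $R$\+modules for every~$\alpha$.

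The ``only if'' direction is then immediate: it is precisely the content of Lemma~\ref{inverting-element-upstairs-preserves-semiacyclicity} applied to the ring homomorphism $R\rarrow S$, the element $s_\alpha\in S$, and the complex $N^\bu$. For the ``if'' direction, I would first use quasi-compactness of the affine scheme $\bnY$ to pass to a finite subcover, choosing finitely many elements $s_{\alpha_1},\dotsc,s_{\alpha_k}$ that generate the unit ideal of~$S$. The key observation is that every nonempty finite intersection $\bnV_{\alpha_{i_0}}\cap\dotsb\cap\bnV_{\alpha_{i_p}}$ is again a principal affine open, namely $\Spec S[(s_{\alpha_{i_0}}\dotsm s_{\alpha_{i_p}})^{-1}]$, and that $S[(s_{\alpha_{i_0}}\dotsm s_{\alpha_{i_p}})^{-1}]\ot_S N^\bu$ is obtained from $S[s_{\alpha_{i_0}}^{-1}]\ot_S N^\bu$ by successively inverting the images of $s_{\alpha_{i_1}},\dotsc,s_{\alpha_{i_p}}$. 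Applying Lemma~\ref{inverting-element-upstairs-preserves-semiacyclicity} repeatedly---at each stage with the intermediate localization of $S$ in place of $S$ and the next element in place of~$s$, coacyclicity being measured relative to $R$ throughout---I conclude that every intersection term $f_*j_{\dotsc*}j_{\dotsc}^*\bcN^\bu$ is Becker coacyclic as a complex of quasi-coherent sheaves on $X$, starting from the hypothesis that the $p=0$ terms are.

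It remains to assemble these into the coacyclicity of $f_*\bcN^\bu$ via the \v{C}ech complex, exactly as in the proof of Proposition~\ref{locality-of-coacyclicity-prop}. Since $\bnY$ is affine, all the intersections are affine and the augmented \v{C}ech complex of complexes
\[
 0\lrarrow\bcN^\bu\lrarrow
 \bigoplus\nolimits_i j_{\alpha_i}{}_*j_{\alpha_i}^*\bcN^\bu
 \lrarrow\bigoplus\nolimits_{i<j}
 j_{\alpha_i,\alpha_j}{}_{\,*}\,j_{\alpha_i,\alpha_j}^*\bcN^\bu
 \lrarrow\dotsb\lrarrow0
\]
is exact in each cohomological degree. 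Applying the exact functor $f_*$ and breaking the resulting finite termwise-exact complex of complexes on $X$ into short exact sequences of complexes, each of which totalizes to a Becker coacyclic complex by Lemma~\ref{becker-coacyclic-lemma}(a), I obtain that $f_*\bcN^\bu$ is connected to an iterated extension of the intersection terms $f_*j_{\dotsc*}j_{\dotsc}^*\bcN^\bu$ through Becker coacyclic complexes. Since the Becker coacyclic complexes form a thick subcategory of $\sK(X\qcoh)$ and all the intersection terms are Becker coacyclic by the previous step, it follows that $f_*\bcN^\bu$ is Becker coacyclic.

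The one point demanding genuine care---the main obstacle---is the bootstrapping in the second paragraph: promoting coacyclicity from the single localizations $S[s_{\alpha_i}^{-1}]\ot_S N^\bu$ to the multiple localizations indexing the higher \v{C}ech terms. This is where the self-improving nature of Lemma~\ref{inverting-element-upstairs-preserves-semiacyclicity} (which permits the base ring to vary while keeping $R$ fixed as the ring over which coacyclicity is measured) is essential, and where one must take care that each intersection term is recognized as an iterated localization of an already-coacyclic complex rather than being treated afresh. Everything else is a routine translation between the sheaf and module pictures together with the standard \v{C}ech bookkeeping already carried out in Proposition~\ref{locality-of-coacyclicity-prop}.
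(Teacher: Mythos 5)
Your proof is correct and follows essentially the same route as the paper's: the ``only if'' direction and the coacyclicity of the higher intersection terms both come from Lemma~\ref{inverting-element-upstairs-preserves-semiacyclicity} (the paper inverts the product of the remaining elements in one step inside $\bnV_{\alpha_1}$, where you iterate one element at a time---an immaterial difference), and the ``if'' direction is then assembled from the \v{C}ech complex, exactness of $f_*$, Lemma~\ref{becker-coacyclic-lemma}(a), and thickness of the Becker coacyclic subcategory. Your explicit breaking of the direct image of the \v{C}ech complex into short exact sequences is just a spelled-out version of the paper's final step.
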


\begin{proof}
 If the complex $f_*\bcN^\bu$ is Becker coacyclic in $X\qcoh$, then
Lemma~\ref{inverting-element-upstairs-preserves-semiacyclicity} tells
that the complex $f_*j_\alpha{}_*j_\alpha^*\bcN^\bu$ is Becker
coacyclic in $X\qcoh$ for every~$\alpha$.

 Conversely, using quasi-compactness of the affine scheme~$\bnY$,
we can assume that the set of indices~$\alpha$ is finite.
 Choose a linear order on these indices, and for any subset of
indices $\alpha_1<\dotsb<\alpha_k$ denote by
$j_{\alpha_1,\dotsc,\alpha_k}\:\bigcap_{s=1}^k\bnV_{\alpha_k}\rarrow 
\bnY$ the open immersion.
 Assume that the complex $f_*j_\alpha{}_*j_\alpha^*\bcN^\bu$ is
Becker coacyclic in $X\qcoh$ for every~$\alpha$.
 Then Lemma~\ref{inverting-element-upstairs-preserves-semiacyclicity}
tells that the complex $f_*j_{\alpha_1,\dotsc,\alpha_k}{}_*\,
j_{\alpha_1,\dotsc,\alpha_k}^*\bcN^\bu$ is coacyclic in $X\qcoh$ for
all $\alpha_1<\dotsc<\alpha_k$ and $k>0$
(since $\bigcap_{s=1}^k\bnV_{\alpha_k}$ is a principal affine open
subscheme in~$\bnV_{\alpha_1}$).

 Consider the \v Cech complex of complexes~\eqref{cech-bicomplex}
for the complex of quasi-coherent sheaves $\bcN^\bu$ on the affine
scheme $\bnY$ with its open covering by the affine open subschemes
$\bnV_\alpha\subset \bnY$:
\begin{equation} \label{cech-bicomplex-on-Y}
 0\lrarrow\bcN^\bu\lrarrow
 \bigoplus\nolimits_\alpha j_\alpha{}_*j_\alpha^*\bcN^\bu
 \lrarrow\bigoplus\nolimits_{\alpha<\beta}
 j_{\alpha,\beta}{}_{\,*}\,j_{\alpha,\beta}^*\bcN^\bu
 \lrarrow\dotsb\lrarrow0.
\end{equation}
 The direct image functor~$f_*$ for the morphism of affine schemes
$f\:\bnY\rarrow X$ is exact, so it takes the exact complex of
complexes~\eqref{cech-bicomplex-on-Y} in $\bnY\qcoh$ to an exact
complex of complexes in $X\qcoh$:
\begin{equation} \label{direct-image-of-cech-bicomplex}
 0\rarrow f_*\bcN^\bu\rarrow
 \bigoplus\nolimits_\alpha f_*j_\alpha{}_*j_\alpha^*\bcN^\bu
 \rarrow\bigoplus\nolimits_{\alpha<\beta}
 f_*j_{\alpha,\beta}{}_{\,*}\,j_{\alpha,\beta}^*\bcN^\bu
 \rarrow\dotsb\rarrow0.
\end{equation}

 Finally, the total complex of
the bicomplex~\eqref{direct-image-of-cech-bicomplex} is Becker
coacyclic in $X\qcoh$ by Lemma~\ref{becker-coacyclic-lemma}(a),
and we have seen that all the terms of this complex of complexes
except possibly the leftmost one are Becker coacyclic.
 It follows that the leftmost term $f_*\bcN^\bu$ is Becker coacyclic
in $X\qcoh$, too.
\end{proof}

\begin{lem} \label{affine-covered-by-affines-to-affine}
 Let\/ $\bnY=\bigcup_\alpha\bnV_\alpha$ be an affine scheme covered by
(not necessarily principal) affine open subschemes.
 Denote by $j_\alpha\:\bnV_\alpha\rarrow\bnY$ the open immersion morphisms.
 Let $f\:\bnY\rarrow X$ be a morphism of affine schemes, and
let $\bcN^\bu$ be a complex of quasi-coherent sheaves on\/~$\bnY$.
 Then the complex $f_*\bcN^\bu$ of quasi-coherent sheaves on $X$ is
Becker coacyclic if and only if, for every~$\alpha$, the complex
$f_*j_\alpha{}_*j_\alpha^*\bcN^\bu$ of quasi-coherent sheaves on $X$
is Becker coacyclic.
\end{lem}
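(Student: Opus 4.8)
The plan is to reduce the statement to the already-established principal-affine version, Lemma~\ref{affine-covered-by-principal-affines-to-affine}, exploiting the fact that the principal affine open subschemes of $\bnY$ form a base of its topology. Concretely, for each index~$\alpha$ I would choose a covering of the affine open subscheme $\bnV_\alpha$ by principal affine open subschemes $\bnW_{\alpha,\beta}$ of $\bnY$; the totality $\{\bnW_{\alpha,\beta}\}_{\alpha,\beta}$ is then a covering of $\bnY$ by principal affine open subschemes. The elementary geometric inputs are that each $\bnW_{\alpha,\beta}$, being a principal affine open of $\bnY$ contained in $\bnV_\alpha$, is also a principal affine open of $\bnV_\alpha$, and that if $j'_{\alpha\beta}\:\bnW_{\alpha,\beta}\rarrow\bnV_\alpha$ denotes the open immersion, then $j_\alpha j'_{\alpha\beta}$ is the open immersion $\bnW_{\alpha,\beta}\rarrow\bnY$. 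Both directions will then follow by combining Lemma~\ref{affine-covered-by-principal-affines-to-affine} once and Lemma~\ref{inverting-element-upstairs-preserves-semiacyclicity} once, taken in the two opposite orders.

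For the \emph{if} direction, suppose $f_*j_\alpha{}_*j_\alpha^*\bcN^\bu$ is Becker coacyclic for every~$\alpha$. Fixing~$\alpha$ and applying Lemma~\ref{inverting-element-upstairs-preserves-semiacyclicity} with $R=\cO(X)$ and $S=\cO(\bnV_\alpha)$, acting on the complex of $S$-modules corresponding to $j_\alpha^*\bcN^\bu$ viewed over~$R$ through $fj_\alpha$, I obtain that $f_*(j_\alpha j'_{\alpha\beta})_*(j_\alpha j'_{\alpha\beta})^*\bcN^\bu$ is coacyclic for every~$\beta$, since localizing $\bnV_\alpha$ at a single element preserves coacyclicity of the pushforward to~$X$. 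As $j_\alpha j'_{\alpha\beta}$ is the inclusion $\bnW_{\alpha,\beta}\rarrow\bnY$, these are precisely the terms associated with the principal-affine covering of~$\bnY$, and Lemma~\ref{affine-covered-by-principal-affines-to-affine} then yields that $f_*\bcN^\bu$ is coacyclic. For the \emph{only if} direction, suppose $f_*\bcN^\bu$ is coacyclic. Lemma~\ref{inverting-element-upstairs-preserves-semiacyclicity} with $R=\cO(X)$ and $S=\cO(\bnY)$ gives that $f_*(j_{\bnW_{\alpha,\beta}})_*(j_{\bnW_{\alpha,\beta}})^*\bcN^\bu$ is coacyclic for all $\alpha$,~$\beta$; then applying Lemma~\ref{affine-covered-by-principal-affines-to-affine} to the affine scheme $\bnV_\alpha$ equipped with its principal-affine covering $\{\bnW_{\alpha,\beta}\}_\beta$, the morphism $fj_\alpha$, and the complex $j_\alpha^*\bcN^\bu$ shows that $f_*j_\alpha{}_*j_\alpha^*\bcN^\bu$ is coacyclic for each~$\alpha$.

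I expect the only real friction to be bookkeeping rather than any genuine obstacle: one must keep straight which affine scheme plays the role of the base in each invocation of Lemma~\ref{inverting-element-upstairs-preserves-semiacyclicity} and Lemma~\ref{affine-covered-by-principal-affines-to-affine}, and verify carefully the two elementary facts recorded above about principal affine opens forming a base and restricting along open immersions of affines. It is also worth noting explicitly that every scheme in sight is affine, hence quasi-compact, so that the finite subcoverings implicitly used inside the proof of Lemma~\ref{affine-covered-by-principal-affines-to-affine} are available, and that the identifications of iterated direct and inverse images rest on the functoriality of these functors together with the identity $(fj_\alpha)_*=f_*j_\alpha{}_*$; here each $j_\alpha$ is an affine morphism, being an open immersion of an affine scheme into the affine, hence separated, scheme~$\bnY$, so that $j_\alpha{}_*$ preserves quasi-coherence. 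No coacyclicity input beyond the two cited lemmas should be required.
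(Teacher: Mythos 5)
Your proof is correct and follows essentially the same route as the paper: refine each $\bnV_\alpha$ by principal affine opens of $\bnY$ (which are then also principal in $\bnV_\alpha$), and combine the two directions of Lemma~\ref{affine-covered-by-principal-affines-to-affine} with the localization Lemma~\ref{inverting-element-upstairs-preserves-semiacyclicity} in the two opposite orders. The only cosmetic difference is that where you invoke Lemma~\ref{inverting-element-upstairs-preserves-semiacyclicity} directly, the paper cites the ``only if'' direction of Lemma~\ref{affine-covered-by-principal-affines-to-affine}, which is proved by exactly that localization lemma, so the arguments coincide.
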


\begin{proof}
 As the principal affine open subschemes form a base of the topology
of~$\bnY$, every affine open subscheme $\bnV_\alpha\subset\bnY$ in our
covering can be represented as a (finite) union
$\bnV_\alpha=\bigcup_\theta\bnW_{\alpha,\theta}$ of some principal
affine open subschemes $\bnW_{\alpha,\theta}\subset\bnY$.
 If $\bnW_{\alpha,\theta}\subset\bnV_\alpha\subset\bnY$ are affine
open subschemes in an affine scheme and $\bnW_{\alpha,\theta}$ is
a principal affine open subscheme in $\bnY$, then $\bnW_{\alpha,\theta}$
is also a principal affine open subscheme in~$\bnV_\alpha$.

 Denote by $k_{\alpha,\theta}\:\bnW_{\alpha,\theta}\rarrow\bnV_\alpha$
the open immersion morphisms, and put $l_{\alpha,\theta}=
j_\alpha k_{\alpha,\theta}\:\bnW_{\alpha,\theta}\rarrow\bnY$.
 Now if the complex $f_*\bcN$ is Becker coacyclic in $X\qcoh$, then
the complexes $f_*l_{\alpha,\theta}{}_{\,*}\,l_{\alpha,\theta}^*\bcN^\bu$
are Becker coacyclic in $X\qcoh$ by
Lemma~\ref{affine-covered-by-principal-affines-to-affine} (as
$\bnW_{\alpha,\theta}$ are principal affine open subschemes in~$\bnY$).
 Since $\bnV_\alpha=\bigcup_\theta\bnW_{\alpha,\theta}$ is a covering
of an affine scheme by its principle affine open subschemes,
Lemma~\ref{affine-covered-by-principal-affines-to-affine} implies
that the complex $f_*j_\alpha{}_*j_\alpha^*\bcN^\bu$ is Becker
coacyclic in $X\qcoh$.

 Conversely, if the complexes $f_*j_\alpha{}_*j_\alpha^*\bcN^\bu$
are Becker coacyclic in $X\qcoh$, then so are the complexes
$f_*l_{\alpha,\theta}{}_{\,*}\,l_{\alpha,\theta}^*\bcN^\bu$
(by the same Lemma~\ref{affine-covered-by-principal-affines-to-affine},
as $\bnW_{\alpha,\theta}$ are principal affine open subschemes
in~$\bnV_\alpha$).
 Since $\bnY=\bigcup_{\alpha,\theta}\bnW_{\alpha,\theta}$ is a covering
of an affine scheme by its principal affine open subschemes, yet another
application of Lemma~\ref{affine-covered-by-principal-affines-to-affine}
allows to conclude that the complex $f_*\bcN^\bu$ is Becker coacyclic
in $X\qcoh$.
\end{proof}

\begin{lem} \label{two-coverings-by-affines-to-affine}
 Let\/ $\bnY$ be a scheme, and let\/ $\bigcup_\beta \bnV_\beta=\bnY=
\bigcup_\gamma\bnW_\gamma$ be two affine open coverings of\/~$\bnY$.
 Denote by $j_\beta\:\bnV_\beta\rarrow\bnY$ and $k_\gamma\:\bnW_\gamma
\rarrow\bnY$ the open immersion morphisms.
 Let $X$ be an affine scheme, $f\:\bnY\rarrow X$ be a morphism of
schemes, and let $\bcN^\bu$ be a complex of quasi-coherent sheaves
on\/~$\bnY$.
 Then the complexes of quasi-coherent sheaves $f_*j_\beta{}_*
j_\beta^*\bcN^\bu$ on $X$ are Becker coacyclic for all~$\beta$
if and only if the complexes of quasi-coherent sheaves
$f_*k_\gamma{}_*k_\gamma^*\bcN^\bu$ on $X$ are Becker coacycl
for all~$\gamma$.
\end{lem}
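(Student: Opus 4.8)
The plan is to reduce the statement to the two preceding lemmas with affine targets, namely Lemma~\ref{affine-covered-by-affines-to-affine} and Lemma~\ref{inverting-element-upstairs-preserves-semiacyclicity}, by passing to a common refinement of the two coverings consisting of affine opens that are \emph{simultaneously} principal affine opens of the $\bnV_\beta$ and affine opens of the $\bnW_\gamma$. By symmetry it suffices to prove one implication, so I would assume that $f_*j_\beta{}_*j_\beta^*\bcN^\bu$ is Becker coacyclic for every $\beta$ and deduce, for a fixed index $\gamma$, that $f_*k_\gamma{}_*k_\gamma^*\bcN^\bu$ is Becker coacyclic. Writing $f_\gamma=fk_\gamma\:\bnW_\gamma\rarrow X$, which is a morphism of affine schemes, I would note that $f_*k_\gamma{}_*k_\gamma^*\bcN^\bu=f_{\gamma*}(k_\gamma^*\bcN^\bu)$, so the goal becomes an application of Lemma~\ref{affine-covered-by-affines-to-affine} to $\bnW_\gamma$, the morphism $f_\gamma$, and the complex $k_\gamma^*\bcN^\bu$ for a suitable affine open covering of $\bnW_\gamma$.

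The covering I would use is obtained as follows. Since the $\bnV_\beta$ cover $\bnY$ and the principal affine open subschemes form a base of the topology of each $\bnV_\beta$, the principal affine opens of the various $\bnV_\beta$ together form a base of the topology of $\bnY$; hence the open subscheme $\bnW_\gamma$ admits a covering $\bnW_\gamma=\bigcup_\delta\bnU_\delta$ in which each $\bnU_\delta$ is a principal affine open subscheme of some $\bnV_{\beta(\delta)}$. Let $l_\delta\:\bnU_\delta\rarrow\bnY$, $n_\delta\:\bnU_\delta\rarrow\bnV_{\beta(\delta)}$, and $p_\delta\:\bnU_\delta\rarrow\bnW_\gamma$ denote the open immersions, so that $l_\delta=j_{\beta(\delta)}n_\delta=k_\gamma p_\delta$. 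On one hand, each $\bnU_\delta$ is an affine open subscheme of $\bnW_\gamma$ (being an affine open of $\bnY$ contained in the affine scheme $\bnW_\gamma$), so $\{\bnU_\delta\}$ is a legitimate affine open covering for the intended application of Lemma~\ref{affine-covered-by-affines-to-affine}, and functoriality of direct and inverse images gives $f_{\gamma*}(p_\delta{}_*p_\delta^*k_\gamma^*\bcN^\bu)=f_*l_\delta{}_*l_\delta^*\bcN^\bu$. On the other hand, $n_\delta{}_*n_\delta^*$ is the localization of $j_{\beta(\delta)}^*\bcN^\bu$ at the element of $\cO(\bnV_{\beta(\delta)})$ cutting out the principal affine open $\bnU_\delta$, so from the hypothesis that $f_{\beta(\delta)*}(j_{\beta(\delta)}^*\bcN^\bu)$ is Becker coacyclic and Lemma~\ref{inverting-element-upstairs-preserves-semiacyclicity} (applied to $\cO(X)\rarrow\cO(\bnV_{\beta(\delta)})$) I would conclude that $f_*l_\delta{}_*l_\delta^*\bcN^\bu$ is Becker coacyclic for every $\delta$. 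Combining the two observations, Lemma~\ref{affine-covered-by-affines-to-affine} then yields that $f_{\gamma*}(k_\gamma^*\bcN^\bu)=f_*k_\gamma{}_*k_\gamma^*\bcN^\bu$ is Becker coacyclic, as desired.

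The main point to get right — and the only place where care is needed — is the choice of the common refinement. It would be tempting to cover $\bnW_\gamma$ by the intersections $\bnV_\beta\cap\bnW_\gamma$, but these need \emph{not} be affine, since no semi-separatedness of $\bnY$ is assumed in this section. The device of refining instead by principal affine opens of the $\bnV_\beta$ circumvents this difficulty: such opens are small enough to be handled by Lemma~\ref{inverting-element-upstairs-preserves-semiacyclicity} on the $\bnV_\beta$ side, and, being affine opens of $\bnY$ sitting inside the affine scheme $\bnW_\gamma$, they are automatically affine opens of $\bnW_\gamma$ on the target side. Once this refinement is in place, the rest is bookkeeping of the identities $l_\delta=j_{\beta(\delta)}n_\delta=k_\gamma p_\delta$ and the resulting equalities $f_*l_\delta{}_*l_\delta^*=f_{\gamma*}p_\delta{}_*p_\delta^*k_\gamma^*=f_{\beta(\delta)*}n_\delta{}_*n_\delta^*j_{\beta(\delta)}^*$.
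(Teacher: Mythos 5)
Your proof is correct and follows essentially the same route as the paper: both arguments construct a common affine refinement of the two coverings (thereby circumventing the fact that the intersections $\bnV_\beta\cap\bnW_\gamma$ need not be affine in the absence of semi-separatedness of~$\bnY$) and then transfer Becker coacyclicity down from the $\bnV_\beta$ and back up to $\bnW_\gamma$. The only cosmetic difference is that the paper covers each intersection $\bnV_\beta\cap\bnW_\gamma$ by arbitrary affine opens and applies Lemma~\ref{affine-covered-by-affines-to-affine} in both directions, whereas you refine by principal affine opens of the $\bnV_\beta$ and handle the downward step by invoking Lemma~\ref{inverting-element-upstairs-preserves-semiacyclicity} directly---the same mechanism, since the downward direction of Lemma~\ref{affine-covered-by-affines-to-affine} is itself deduced from that localization lemma.
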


\begin{proof}
 For every pair of indices $\beta$ and~$\gamma$ choose an affine
open covering $\bnV_\beta\cap\bnW_\gamma =\bigcup_\theta
\bnU_{\beta,\gamma,\theta}$ of the open subscheme
$\bnV_\beta\cap\bnW_\gamma\subset\bnY$.
 Then $X=\bigcup_{\beta,\gamma,\theta}\bnU_{\beta,\gamma,\theta}$ is
an affine open covering of the scheme $X$, for every $\beta$,
$\gamma$, $\theta$ one has $\bnU_{\beta,\gamma,\theta}\subset
\bnV_\beta$ and $\bnU_{\beta,\gamma,\theta}\subset\bnW_\gamma$,
for every~$\beta$ one has $\bnV_\beta=\bigcup_{\gamma,\theta}
\bnU_{\beta,\gamma,\theta}$, and for every~$\gamma$ one has
$\bnW_\gamma=\bigcup_{\beta,\theta}\bnU_{\beta,\gamma,\theta}$.

 Denote by $l_{\beta,\gamma,\theta}\:\bnU_{\beta,\gamma,\theta}\rarrow
\bnY$ the open immersion morphisms.
 Assume that the complexes $f_*j_\beta{}_*j_\beta^*\bcN^\bu$ are Becker
coacyclic in $X\qcoh$ for all~$\beta$.
 Then, by Lemma~\ref{affine-covered-by-affines-to-affine} applied
to the affine open subscheme $\bnU_{\beta,\gamma,\theta}$ in
the affine scheme $\bnV_\beta$, the complexes
$f_*l_{\beta,\gamma,\theta}{}_{\,*}\,l_{\beta,\gamma,\theta}^*\bcN^\bu$
are coacyclic in $X\qcoh$ for all~$\beta$, $\gamma$,~$\theta$.
 By the same Lemma~\ref{affine-covered-by-affines-to-affine} applied
to the affine open covering $\bnW_\gamma=\bigcup_{\beta,\theta}
\bnU_{\beta,\gamma,\theta}$ of the affine scheme $\bnW_\gamma$, it follows
that the complexes $f_*k_\gamma{}_*k_\gamma^*\bcN^\bu$ are Becker
coacyclic in $X\qcoh$ for all~$\gamma$.
\end{proof}

 Let $X$ be a semi-separated scheme.
 Then, for any affine scheme $U$, any morphism of schemes $U\rarrow X$
is affine.
 For any two affine schemes $U$ and $\bnV$ and any morphisms of schemes
$U\rarrow X$ and $\bnV\rarrow X$, the scheme $U\times_X\bnV$ is affine.

\begin{lem} \label{affine-morphism-affine-covering-of-the-base}
 Let $f\:\bnY\rarrow X$ be an affine morphism of quasi-compact
semi-separated schemes.
 Let $X=\bigcup_\alpha U_\alpha$ be an affine open covering of~$X$.
 Put\/ $\bnV_\alpha=U_\alpha\times_X\bnY$; then\/ $\bnY=
\bigcup_\alpha\bnV_\alpha$ is an affine open covering of\/~$\bnY$.
 Consider the pullback diagram
$$
\xymatrix{
 \bnV_\alpha \ar[r]^-{k_\alpha} \ar[d]_-{f_\alpha} & \bnY \ar[d]^-f \\
 U_\alpha \ar[r]^-{j_\alpha} & X
}
$$
 Let $\bcN^\bu$ be a complex of quasi-coherent sheaves on\/~$\bnY$.
 Then the following conditions are equivalent:
\begin{enumerate}
\renewcommand{\theenumi}{\alph{enumi}}
\item the complex $f_*\bcN^\bu$ is Becker coacyclic in $X\qcoh$;
\item the complexes $f_*k_\alpha{}_*k_\alpha^*\bcN^\bu$ are
Becker coacyclic in $X\qcoh$ for all~$\alpha$;
\item the complexes $f_\alpha{}_*k_\alpha^*\bcN^\bu$ are Becker
coacyclic in $U_\alpha\qcoh$ for all~$\alpha$.
\end{enumerate}
\end{lem}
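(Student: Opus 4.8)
The plan is to single out condition~(c) as a hub and prove the two equivalences (a)$\Leftrightarrow$(c) and (b)$\Leftrightarrow$(c) separately, in each case reducing to facts already established in the excerpt: the base change isomorphism of Lemma~\ref{affine-flat-base-change}(a), the locality of Becker coacyclicity of Proposition~\ref{locality-of-coacyclicity-prop}, and the preservation of Becker coacyclicity under flat inverse images (Lemma~\ref{flat-inverse-image-preserves-becker-coacyclicity}) and affine direct images (Lemma~\ref{affine-direct-image-preserves-becker-coacyclicity}). The two structural identities I would extract from the pullback square at the outset are $j_\alpha^*f_*\simeq f_\alpha{}_*k_\alpha^*$ (base change, valid since $f$ is affine) and $f_*k_\alpha{}_*\simeq j_\alpha{}_*f_\alpha{}_*$ (coming from the commutativity $fk_\alpha=j_\alpha f_\alpha$ of the square). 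Both hold at the level of complexes by applying the underlying isomorphisms of functors degreewise.

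First I would treat (a)$\Leftrightarrow$(c). Applying the base change isomorphism termwise to $\bcN^\bu$ yields a natural isomorphism of complexes $j_\alpha^*(f_*\bcN^\bu)\simeq f_\alpha{}_*k_\alpha^*\bcN^\bu$ on the affine scheme $U_\alpha$. Since $X$ is quasi-compact and semi-separated and $X=\bigcup_\alpha U_\alpha$ is an open covering, Proposition~\ref{locality-of-coacyclicity-prop} tells us that $f_*\bcN^\bu$ is Becker coacyclic in $X\qcoh$ if and only if each restriction $j_\alpha^*(f_*\bcN^\bu)$ is Becker coacyclic in $U_\alpha\qcoh$. Rewriting each restriction via the base change isomorphism, the latter is precisely the assertion that $f_\alpha{}_*k_\alpha^*\bcN^\bu$ is Becker coacyclic for every~$\alpha$, i.e.\ condition~(c).

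Next I would treat (b)$\Leftrightarrow$(c). The identity $f_*k_\alpha{}_*k_\alpha^*\bcN^\bu\simeq j_\alpha{}_*\bigl(f_\alpha{}_*k_\alpha^*\bcN^\bu\bigr)$ reduces the question, for each fixed~$\alpha$, to comparing Becker coacyclicity of the complex $\M^\bu:=f_\alpha{}_*k_\alpha^*\bcN^\bu$ on $U_\alpha$ with Becker coacyclicity of its direct image $j_\alpha{}_*\M^\bu$ on $X$. Here $j_\alpha$ is an affine morphism, because $U_\alpha$ is affine and $X$ is semi-separated. Consequently $j_\alpha{}_*$ preserves Becker coacyclicity by Lemma~\ref{affine-direct-image-preserves-becker-coacyclicity}, which gives (c)$\Rightarrow$(b); and conversely $j_\alpha^*$ preserves Becker coacyclicity by Lemma~\ref{flat-inverse-image-preserves-becker-coacyclicity} (open immersions are flat), while $j_\alpha^*j_\alpha{}_*\simeq\Id$ for an open immersion, so applying $j_\alpha^*$ to $j_\alpha{}_*\M^\bu$ recovers $\M^\bu$ and yields (b)$\Rightarrow$(c). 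Combining the two equivalences gives (a)$\Leftrightarrow$(b)$\Leftrightarrow$(c).

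The argument is a reassembly of earlier results, so I expect no serious obstacle; the only points demanding care are bookkeeping ones. I would check that the base change and pushforward isomorphisms, stated for single quasi-coherent sheaves, genuinely lift to isomorphisms of complexes---routine naturality, since both functors are applied degreewise and commute with the differentials. I would also verify the hypotheses of the invoked results in the precise form needed: that $f$ is affine (so Lemma~\ref{affine-flat-base-change}(a) applies), that $X$ is quasi-compact and semi-separated (so Proposition~\ref{locality-of-coacyclicity-prop} applies to the complex $f_*\bcN^\bu$ with the covering $\{U_\alpha\}$), and that each $j_\alpha$ is affine (so Lemma~\ref{affine-direct-image-preserves-becker-coacyclicity} applies to it). The genuine content of the statement is concentrated in Proposition~\ref{locality-of-coacyclicity-prop}, which is already available.
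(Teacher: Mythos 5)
Your proof is correct and follows essentially the same route as the paper's: both prove (a)$\Leftrightarrow$(c) via the base change isomorphism $j_\alpha^*f_*\bcN^\bu\simeq f_\alpha{}_*k_\alpha^*\bcN^\bu$ combined with Proposition~\ref{locality-of-coacyclicity-prop}, and (b)$\Leftrightarrow$(c) via $f_*k_\alpha{}_*\simeq j_\alpha{}_*f_\alpha{}_*$ together with the observation that an affine open immersion~$j_\alpha$ has both $j_\alpha{}_*$ and $j_\alpha^*$ preserving Becker coacyclicity (Lemmas~\ref{affine-direct-image-preserves-becker-coacyclicity} and~\ref{flat-inverse-image-preserves-becker-coacyclicity}), with $j_\alpha^*j_\alpha{}_*\simeq\Id$. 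The only difference is that you spell out a few routine verifications the paper leaves implicit.
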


\begin{proof}
 (a)\,$\Longleftrightarrow$\,(c) In view of the natural isomorphism
$j_\alpha^*f_*\bcN^\bu\simeq f_\alpha{}_*k_\alpha^*\bcN^\bu$ of
complexes of quasi-coherent sheaves on~$U_\alpha$, the assertion
follows from Proposition~\ref{locality-of-coacyclicity-prop}.

 (b)\,$\Longleftrightarrow$\,(c) We have
$f_*k_\alpha{}_*k_\alpha^*\bcN^\bu\simeq
j_\alpha{}_*f_\alpha{}_*k_\alpha^*\bcN^\bu$, since $fk_\alpha=
j_\alpha f_\alpha$.
 It remains to observe that, for any affine open immersion
$j\:U\rarrow X$, a complex of quasi-coherent sheaves $\M^\bu$ on $U$
is Becker coacyclic if and only if the complex of quasi-coherent
sheaves $j_*\M^\bu$ on $X$ is Becker coacyclic.
 This follows from
Lemmas~\ref{flat-inverse-image-preserves-becker-coacyclicity}
and~\ref{affine-direct-image-preserves-becker-coacyclicity}.
\end{proof}

\begin{prop} \label{scheme-morphism-relatively-affine-covering-prop}
 Let $X$ be a quasi-compact semi-separated scheme, and let
$f\:\bnY\rarrow X$ be a morphism of schemes.
 Let $X=\bigcup_\alpha U_\alpha$ be an affine open covering of $X$,
and let\/ $\bigcup_\beta\bnV_\beta=\bnY=\bigcup_\gamma\bnW_\gamma$ be
two open coverings of\/ $\bnY$ such that the compositions\/
$\bnV_\beta\rarrow\bnY\rarrow X$ and\/ $\bnW_\gamma\rarrow\bnY\rarrow X$
are affine morphisms.

 Denote by $j_\alpha\:U_\alpha\rarrow X$, \ $k_\beta\:\bnV_\beta\rarrow
\bnY$, and $l_\gamma\:\bnW_\gamma\rarrow\bnY$ the open immersion morphisms.
 Furthermore, put\/ $\bnS_{\alpha,\beta}=U_\alpha\times_X\bnV_\beta$
and\/ $\bnT_{\alpha,\gamma}=U_\alpha\times_X\bnW_\gamma$, and
denote by $g_{\alpha,\beta}\:\bnS_{\alpha,\beta}\rarrow\bnY$ and
$h_{\alpha,\gamma}\:\bnT_{\alpha,\gamma}\rarrow\bnY$ the natural
open immersions, and by $f'_{\alpha,\beta}\:\bnS_{\alpha,\beta}\rarrow
U_\alpha$ and $f''_{\alpha,\gamma}\:\bnT_{\alpha,\gamma}\rarrow
U_\alpha$ the natural morphisms of affine schemes.

 Let $\bcN^\bu$ be a complex of quasi-coherent sheaves on\/~$\bnY$.
 Then the following conditions are equivalent:
\begin{enumerate}
\renewcommand{\theenumi}{\alph{enumi}}
\item the complexes $f_*k_\beta{}_*k_\beta^*\bcN^\bu$ are Becker coacyclic
in $X\qcoh$ for all~$\beta$;
\item the complexes $f_*g_{\alpha,\beta}{}_{\,*}\,
g_{\alpha,\beta}^*\bcN^\bu$ are Becker coacyclic in $X\qcoh$ for
all~$\alpha$ and~$\beta$;
\item the complexes $f'_{\alpha,\beta}{}_{\,*}\,
g_{\alpha,\beta}^*\bcN^\bu$ are Becker coacyclic in $U_\alpha\qcoh$ for
all~$\alpha$ and~$\beta$;
\item the complexes $f''_{\alpha,\gamma}{}_{\,*}\,
h_{\alpha,\gamma}^*\bcN^\bu$ are Becker coacyclic in $U_\alpha\qcoh$ for
all~$\alpha$ and~$\gamma$;
\item the complexes $f_*h_{\alpha,\gamma}{}_{\,*}\,
h_{\alpha,\gamma}^*\bcN^\bu$ are Becker coacyclic in $X\qcoh$ for
all~$\alpha$ and~$\gamma$;
\item the complexes $f_*l_\gamma{}_*l_\gamma^*\bcN^\bu$ are Becker
coacyclic in $X\qcoh$ for all~$\gamma$.
\end{enumerate}
\end{prop}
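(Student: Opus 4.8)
The plan is to deduce the six-fold equivalence from two applications of Lemma~\ref{affine-morphism-affine-covering-of-the-base} together with one application of Lemma~\ref{two-coverings-by-affines-to-affine}, organized as the chain
$$(\mathrm a)\Longleftrightarrow(\mathrm b)\Longleftrightarrow(\mathrm c)\Longleftrightarrow(\mathrm d)\Longleftrightarrow(\mathrm e)\Longleftrightarrow(\mathrm f).$$
The two outer triples $(\mathrm a)$--$(\mathrm c)$ and $(\mathrm d)$--$(\mathrm f)$ are the two ``halves'' coming from the coverings $\{\bnV_\beta\}$ and $\{\bnW_\gamma\}$ respectively, while the middle link $(\mathrm c)\Longleftrightarrow(\mathrm d)$ is where the two coverings get compared, fibrewise over the affine pieces $U_\alpha$ of the base.

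First I would establish $(\mathrm a)\Longleftrightarrow(\mathrm b)\Longleftrightarrow(\mathrm c)$. Fix $\beta$ and consider the composite $fk_\beta\:\bnV_\beta\rarrow X$, which is affine by hypothesis. Since an affine morphism is quasi-compact and separated and $X$ is quasi-compact and semi-separated, the scheme $\bnV_\beta$ is itself quasi-compact and semi-separated, so Lemma~\ref{affine-morphism-affine-covering-of-the-base} applies to $fk_\beta$, the affine open covering $X=\bigcup_\alpha U_\alpha$, and the complex $k_\beta^*\bcN^\bu$ on $\bnV_\beta$. Here the base change $U_\alpha\times_X\bnV_\beta=\bnS_{\alpha,\beta}$ is affine (being affine over the affine scheme $U_\alpha$), the open immersion $k'_{\alpha,\beta}\:\bnS_{\alpha,\beta}\rarrow\bnV_\beta$ satisfies $k_\beta k'_{\alpha,\beta}=g_{\alpha,\beta}$, and the structure morphism $\bnS_{\alpha,\beta}\rarrow U_\alpha$ is $f'_{\alpha,\beta}$. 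Unwinding the three equivalent conditions of that lemma under these identifications---using $(fk_\beta)_*=f_*k_{\beta}{}_*$ and $g_{\alpha,\beta}^*=(k'_{\alpha,\beta})^*k_\beta^*$---turns them into ``$f_*k_{\beta}{}_*k_\beta^*\bcN^\bu$ coacyclic'', ``$f_*g_{\alpha,\beta}{}_*g_{\alpha,\beta}^*\bcN^\bu$ coacyclic (all $\alpha$)'', and ``$f'_{\alpha,\beta}{}_*g_{\alpha,\beta}^*\bcN^\bu$ coacyclic in $U_\alpha\qcoh$ (all $\alpha$)''. Taking the conjunction over all $\beta$ yields exactly $(\mathrm a)\Longleftrightarrow(\mathrm b)\Longleftrightarrow(\mathrm c)$. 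The symmetric argument applied to $fl_\gamma\:\bnW_\gamma\rarrow X$ and the complex $l_\gamma^*\bcN^\bu$, with $\bnT_{\alpha,\gamma}=U_\alpha\times_X\bnW_\gamma$ in the role of $\bnS_{\alpha,\beta}$, gives $(\mathrm f)\Longleftrightarrow(\mathrm e)\Longleftrightarrow(\mathrm d)$.

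It remains to prove the middle equivalence $(\mathrm c)\Longleftrightarrow(\mathrm d)$, which I would obtain one index $\alpha$ at a time. Fix $\alpha$ and set $\bnY_\alpha=U_\alpha\times_X\bnY=f^{-1}(U_\alpha)$, with its structure morphism $f_\alpha\:\bnY_\alpha\rarrow U_\alpha$ to the affine scheme $U_\alpha$. Base-changing the coverings $\bigcup_\beta\bnV_\beta=\bnY=\bigcup_\gamma\bnW_\gamma$ along $U_\alpha\rarrow X$ produces two open coverings $\bigcup_\beta\bnS_{\alpha,\beta}=\bnY_\alpha=\bigcup_\gamma\bnT_{\alpha,\gamma}$ of $\bnY_\alpha$, both of which are affine open coverings since each $\bnS_{\alpha,\beta}$ and each $\bnT_{\alpha,\gamma}$ is affine over $U_\alpha$. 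Applying Lemma~\ref{two-coverings-by-affines-to-affine} to these two coverings of $\bnY_\alpha$, the affine scheme $U_\alpha$, the morphism $f_\alpha$, and the complex $\bcN^\bu|_{\bnY_\alpha}$---and noting that the open immersion $\bnS_{\alpha,\beta}\rarrow\bnY_\alpha$ followed by $f_\alpha$ is $f'_{\alpha,\beta}$, while its pullback of $\bcN^\bu|_{\bnY_\alpha}$ is $g_{\alpha,\beta}^*\bcN^\bu$, and likewise for $\bnT_{\alpha,\gamma}$, $f''_{\alpha,\gamma}$, $h_{\alpha,\gamma}$---shows that $f'_{\alpha,\beta}{}_*g_{\alpha,\beta}^*\bcN^\bu$ is coacyclic in $U_\alpha\qcoh$ for all $\beta$ if and only if $f''_{\alpha,\gamma}{}_*h_{\alpha,\gamma}^*\bcN^\bu$ is coacyclic in $U_\alpha\qcoh$ for all $\gamma$. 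Conjoining over all $\alpha$ gives $(\mathrm c)\Longleftrightarrow(\mathrm d)$, completing the chain.

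The conceptual content is entirely concentrated in the middle step $(\mathrm c)\Longleftrightarrow(\mathrm d)$ through Lemma~\ref{two-coverings-by-affines-to-affine}; everything else is the reduction machinery of Lemma~\ref{affine-morphism-affine-covering-of-the-base}. Accordingly, the main thing to get right is not any single hard estimate but the careful bookkeeping of base changes and the identification of functors: verifying that $\bnV_\beta$ and $\bnW_\gamma$ are quasi-compact and semi-separated (so that both lemmas apply), that each $\bnS_{\alpha,\beta}$ and $\bnT_{\alpha,\gamma}$ is affine, and that the various composites of open immersions with structure morphisms match $g_{\alpha,\beta}$, $f'_{\alpha,\beta}$, $h_{\alpha,\gamma}$, and $f''_{\alpha,\gamma}$ on the nose. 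These are routine once the commuting pullback squares are drawn, so I expect no genuine obstacle beyond this diagram-chasing.
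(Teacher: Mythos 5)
Your proof is correct and follows exactly the paper's own argument: Lemma~\ref{affine-morphism-affine-covering-of-the-base} applied to $fk_\beta$ and $fl_\gamma$ gives (a)$\Leftrightarrow$(b)$\Leftrightarrow$(c) and (d)$\Leftrightarrow$(e)$\Leftrightarrow$(f), and Lemma~\ref{two-coverings-by-affines-to-affine} applied fibrewise to $U_\alpha\times_X\bnY\rarrow U_\alpha$ with the two induced affine open coverings gives (c)$\Leftrightarrow$(d). The additional bookkeeping you supply (quasi-compactness and semi-separatedness of $\bnV_\beta$, affineness of $\bnS_{\alpha,\beta}$ and $\bnT_{\alpha,\gamma}$, identification of the composed morphisms) is exactly what the paper leaves implicit.
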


\begin{proof}
 Conditions~(a\+-c) are equivalent to each other by
Lemma~\ref{affine-morphism-affine-covering-of-the-base} applied
to the affine morphism of schemes $f k_\beta\:\bnV_\beta\rarrow X$ and
the complex of quasi-coherent sheaves $k_\beta^*\bcN^\bu$
on~$\bnV_\beta$.
 Similarly, conditions~(d\+-f) are equivalent to each other by
Lemma~\ref{affine-morphism-affine-covering-of-the-base} applied
to the affine morphism of schemes $f l_\gamma\:\bnW_\gamma\rarrow X$
and the complex of quasi-coherent sheaves $l_\gamma^*\bcN^\bu$
on~$\bnW_\gamma$.
 Finally, the equivalence (c)\,$\Longleftrightarrow$\,(d) is provided
by Lemma~\ref{two-coverings-by-affines-to-affine} applied to
the morphism of schemes $U_\alpha\times_X\bnY\rarrow U_\alpha$
into the affine scheme $U_\alpha$, the restriction of the complex
of quasi-coherent sheaves $\bcN^\bu$ on $\bnY$ to the open subscheme
$U_\alpha\times_X\bnY\subset\bnY$, and the two affine coverings
$\bigcup_\beta\bnS_{\alpha,\beta}=U_\alpha\times_X\bnY=
\bigcup_\gamma\bnT_{\alpha,\gamma}$ of the scheme
$U_\alpha\times_X\bnY$.
\end{proof}

 Now we can formulate the promised definition.
 Let $f\:\bnY\rarrow X$ be a morphism of schemes; assume that
the scheme $X$ is quasi-compact and semi-separated.
 Let $\bcN^\bu$ be a complex of quasi-coherent sheaves on~$\bnY$.

 Choose a covering of the scheme $\bnY$ by open subschemes
$\bnV_\beta\subset\bnY$ such that the compositions $\bnV_\beta\rarrow
\bnY\rarrow X$ are affine morphisms of schemes (for example, any
affine open covering of $\bnY$ satisfies this condition).
 Denote by $k_\beta\:\bnV_\beta\rarrow\bnY$ the open immersion
morphisms.

 We will say that the complex $\bcN^\bu$ is \emph{semiacyclic}
(or more precisely \emph{$\bnY/X$\+semi\-acyclic}) if, for every
index~$\beta$, the complex $f_*k_\beta{}_*k_\beta^*\bcN^\bu$ of
quasi-coherent sheaves on $X$ (that is, the direct image to $X$
of the restriction of $\bcN^\bu$ to~$\bnV_\beta$) is Becker
coacyclic in $X\qcoh$.
 According to
Proposition~\ref{scheme-morphism-relatively-affine-covering-prop}%
\,(a)\,$\Leftrightarrow$\,(f), this property does not depend on
the choice of an open covering $\bnY=\bigcup_\beta\bnV_\beta$.

\begin{rem} \label{semiacyclicity-of-qcoh-stronger-than-acylicity}
 The reader should be \emph{warned} that our terminology is misleading.
 The semiacyclicity of a complex in $\bnY\qcoh$ is by design
an intermediate property between the acyclicity and the Becker
coacyclicity.
 Any Becker coacyclic complex in $\bnY\qcoh$ is $\bnY/X$\+semiacyclic
(by Lemmas~\ref{flat-inverse-image-preserves-becker-coacyclicity}
and~\ref{affine-direct-image-preserves-becker-coacyclicity}).

 On the other hand, any $\bnY/X$\+semiacyclic complex $\bcN^\bu$ in
$\bnY\qcoh$ is acyclic.
 Iindeed, it suffices to check that the restriction of $\bcN^\bu$ to
$\bnV_\beta$ is acyclic for every~$\beta$.
 Notice that any Becker coacyclic complex in $X\qcoh$ is acyclic
by Lemma~\ref{becker-coacyclic-acyclic}.
 Now acyclicity of the complex $f_*k_\beta{}_*k_\beta^*\bcN^\bu$
in $X\qcoh$ implies acyclicity of the complex $k_\beta^*\bcN^\bu$ in
$\bnV_\beta\qcoh$, since the direct image functor
$f_*k_\beta{}_*=(fk_\beta)_*\:\bnV_\beta\qcoh\rarrow X\qcoh$ for
an affine morphism of schemes $fk_\beta\:\bnV_\beta\rarrow X$ is
exact and faithful.

 So the semiacyclicity is a \emph{stronger} property than
the acyclicity.
\end{rem}

\begin{rem} \label{semiderived-nonaffine-of-schemes-also-true-for-pco}
 Similarly to Section~\ref{locality-of-coacyclicity-subsecn}
(cf.\ Remark~\ref{locality-of-coacyclicity-also-true-for-pco})
all the results of this
Section~\ref{semiderived-nonaffine-scheme-morphism-subsecn} are
equally valid for the coacyclicity in the sense of
Section~\ref{coderived-subsecn} in lieu of coacyclicity in
the sense of Becker.
 Moreover, when the scheme $X$ is Noetherian, the two coacyclicity
notions involved are equivalent to each other by
Proposition~\ref{two-coderived-categories-comparison}.
\end{rem}

 Finally, we can define the \emph{semiderived category} (or
the \emph{$\bnY/X$\+semiderived category}) $\sD_X^\si(\bnY\qcoh)$ of
quasi-coherent sheaves on $\bnY$ as the triangulated quotient category
of the homotopy category $\sK(\bnY\qcoh)$ by the thick subcategory
of $\bnY/X$\+semiacyclic complexes.
 In view of the previous remark, this definition agrees with
the definition in Section~\ref{semiderived-subsecn}
\emph{assuming that $X$ is an semi-separated Noetherian scheme}.

 Indeed, the definition in Section~\ref{semiderived-subsecn}
(specialized from ind-schemes to schemes) presumes
the morphism $f\:\bnY\rarrow X$ to be affine.
 In this case, one can use the open covering of $\bnY$ consisting
of a single open subscheme $\bnV=\bnY$ for the purposes of
the definition above in this section.
 Then it is clear that the two definitions are the same.

\subsection{Direct images of restrictions of injective sheaves}
 The aim of this section is to prove the following technical
lemma.

\begin{lem} \label{direct-image-of-restriction-injective-lem}
 Let $X$ be a Noetherian scheme, $Y$ be a quasi-compact semi-separated
scheme, and $f\:Y\rarrow X$ be a flat morphism of schemes.
 Let $V\subset Y$ be an open subscheme with the open immersion
morphism $k\:V\rarrow Y$.
 Assume that the composition $fk\:V\rarrow X$ is an affine morphism
of schemes.
 Let $\J$ be an injective quasi-coherent sheaf on~$Y$.
 Then the quasi-coherent sheaf $f_*k_*k^*\J$ on $X$ is injective.
\end{lem}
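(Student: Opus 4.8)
The plan is to reduce the statement to a purely module-theoretic assertion over the Noetherian base and then to invoke Lazard's theorem together with the fact that over a Noetherian ring filtered colimits of injectives are injective. First I would record the easy structural observations: since $f$ is flat and $k$ is an open immersion, the composition $fk\:V\rarrow X$ is flat, and it is affine by hypothesis; hence $(fk)^*=k^*f^*$ is exact and $f_*k_*k^*\J=(fk)_*(k^*\J)$ is the direct image, along an affine morphism, of the restriction $\J|_V=k^*\J$. Because $X$ is Noetherian, injectivity of a quasi-coherent sheaf is local, so I may work over affine opens of $X$; I fix a finite affine open covering $X=\bigcup_i U_i$ with $U_i=\Spec R_i$ (here I also use that $X$ is semi-separated, which holds in the applications of the lemma).

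The key reduction uses the structure of injectives on the quasi-compact semi-separated scheme $Y$ recalled in the proof of Lemma~\ref{star-qc-Hom-shriek-lemma}. I would choose a finite affine open covering $Y=\bigcup_\alpha W_\alpha$, $W_\alpha=\Spec S_\alpha$, \emph{refining} $\{f^{-1}(U_i)\}$, so that each $W_\alpha$ lies in some $f^{-1}(U_{i(\alpha)})$ and the flat affine morphism $W_\alpha\rarrow U_{i(\alpha)}$ corresponds to a \emph{flat ring homomorphism} $R_{i(\alpha)}\rarrow S_\alpha$. Then $\J$ is a direct summand of a finite direct sum $\bigoplus_\alpha j_{\alpha*}\J_\alpha$, where $j_\alpha\:W_\alpha\rarrow Y$ are the open immersions and $\J_\alpha$ is an injective $S_\alpha$-module. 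Since $f_*k_*k^*$ is additive and (finite direct sums of) direct summands of injectives are injective, it suffices to treat $\J=j_{\alpha*}\J_\alpha$. Flat base change (Lemma~\ref{affine-flat-base-change}) along the square relating $k$ and $j_\alpha$ gives $f_*k_*k^*j_{\alpha*}\J_\alpha\simeq h_{\alpha*}(\J_\alpha|_{V\cap W_\alpha})$, where $h_\alpha\:V\cap W_\alpha\rarrow X$ is affine and flat and factors through $U_{i(\alpha)}$. As $U_{i(\alpha)}\rarrow X$ is an affine open immersion, its direct image preserves injectivity, so I am reduced to proving that the direct image of $\J_\alpha|_{V\cap W_\alpha}$ along $V\cap W_\alpha\rarrow U_{i(\alpha)}$ is injective on $\Spec R_{i(\alpha)}$.

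Writing $R=R_{i(\alpha)}$, $S=S_\alpha$, $J=J_\alpha$, and $V\cap W_\alpha=\Spec A$ (affine, being affine over the affine $U_{i(\alpha)}$), this direct image is the sheaf associated to the $R$-module $N=A\ot_SJ$, where $R\rarrow S$ is flat and $S\rarrow A$ is flat (the latter because $V\cap W_\alpha\hookrightarrow W_\alpha$ is an open immersion). The heart of the proof is now threefold: flatness of $R\rarrow S$ makes the injective $S$-module $J$ injective as an $R$-module (restriction of scalars along a flat map preserves injectives); by Lazard's theorem the flat $S$-module $A$ is a filtered colimit $A\simeq\varinjlim_i S^{n_i}$ of finite free $S$-modules, so $N\simeq\varinjlim_i J^{n_i}$ is a filtered colimit of finite direct sums of copies of $J$, hence of injective $R$-modules; and, because $R$ is Noetherian, a filtered colimit of injective $R$-modules is again injective (by the Baer criterion as in Lemma~\ref{injectives-in-Grothendieck-characterized} and Lemma~\ref{locally-Noetherian-coproducts-injective}, the point being that ideals of $R$ are finitely generated, so $\Hom_R(I,-)$ commutes with filtered colimits). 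Thus $N$ is an injective $R$-module and the associated sheaf is injective, which finishes the argument.

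The main obstacle is exactly the temptation to argue that $\J|_V$ is injective on $V$ --- which would let the affineness and flatness of $fk$ finish the proof immediately --- but this is \emph{false} in the non-Noetherian situation, since localizations and open restrictions of injective modules over non-Noetherian rings need not be injective. The content of the lemma is that injectivity is only required \emph{after} descending to the Noetherian base $R$, where the Lazard presentation of $A$ exhibits $N$ as a filtered colimit of injectives and the Noetherian hypothesis restores injectivity. The other point demanding care is organizational: the covering of $Y$ must be chosen to refine the pullback of an affine covering of $X$, for otherwise one only obtains a flat map into an open subscheme of $\Spec S_\alpha$ rather than a flat ring homomorphism $R\rarrow S_\alpha$, and the crucial injectivity of $J$ over $R$ would be unavailable.
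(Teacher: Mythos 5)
Your argument is correct in substance, and its core coincides with the paper's: both proofs rest on the same two pillars, namely (i)~the decomposition of an injective quasi-coherent sheaf on the quasi-compact semi-separated scheme $Y$ as a direct summand of a finite direct sum $\bigoplus_\alpha j_\alpha{}_*\J_\alpha$, with the affine open subschemes $W_\alpha\subset Y$ chosen small enough to map into affine open subschemes of~$X$, and (ii)~the affine computation (Lemma~\ref{affine-direct-image-of-flat-injective-tensor} in the paper) combining Lazard's theorem, the fact that restriction of scalars along a flat ring homomorphism preserves injectivity of modules, and the closure of the class of injective modules over a Noetherian ring under filtered direct limits. Where you differ is the middle step. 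The paper first proves that $k\:V\rarrow Y$ is an affine morphism, applies the projection formula (Lemma~\ref{projection-formula}) to write $f_*k_*k^*\J\simeq f_*(k_*\cO_V\ot_{\cO_Y}\J)$ with $k_*\cO_V$ a \emph{flat} quasi-coherent sheaf, and then invokes the standalone Proposition~\ref{direct-image-of-flat-injective-tensor-prop} on direct images of a flat sheaf tensored with an injective sheaf; you instead commute $k^*$ past the pushforwards $j_\alpha{}_*$ by flat base change (Lemma~\ref{affine-flat-base-change}) and land directly in the affine situation $N=A\ot_SJ$. The two reductions are equivalent---your $A$ is precisely a flat $S$\+algebra, so your module computation is literally the paper's affine lemma---but the paper's detour buys a more general statement (injectivity of $f_*(\F\ot_{\cO_Y}\J)$ for an \emph{arbitrary} flat~$\F$), while your route is shorter for the specific assertion at hand.

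Two points of justification need repair, both minor and fixable. First, you invoke semi-separatedness of~$X$; this is not a hypothesis of the lemma, and in fact nothing in your argument (or the paper's) requires it, so you should simply delete that remark rather than restrict the statement. Second, and more importantly, your parenthetical justification that $V\cap W_\alpha$ is affine (``being affine over the affine $U_{i(\alpha)}$'') is circular as stated; the correct reason uses the semi-separatedness of~$Y$, which is the hypothesis you never explicitly cite. Since $fk$ is an affine morphism, $(fk)^{-1}(U_{i(\alpha)})$ is an affine open subscheme of~$Y$; as $W_\alpha\subset f^{-1}(U_{i(\alpha)})$, one has $V\cap W_\alpha=(fk)^{-1}(U_{i(\alpha)})\cap W_\alpha$, an intersection of two affine open subschemes of the semi-separated scheme~$Y$, hence affine. (Equivalently, one can first show, as the paper does, that $k\:V\rarrow Y$ is an affine morphism, whence $V\cap W_\alpha=V\times_YW_\alpha$ is affine.) With these corrections your proof is complete.
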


 We will deduce Lemma~\ref{direct-image-of-restriction-injective-lem}
from the next proposition.

\begin{prop} \label{direct-image-of-flat-injective-tensor-prop}
 Let $X$ be a Noetherian scheme, $Y$ be a quasi-compact semi-separated
scheme, and $f\:Y\rarrow X$ be a flat morphism of schemes.
 Let $\J$ be an injective quasi-coherent sheaf on $Y$ and $\F$ be
a flat quasi-coherent sheaf on~$Y$.
 Then the quasi-coherent sheaf $f_*(\F\ot_{\cO_Y}\J)$ on $X$ is
injective.
\end{prop}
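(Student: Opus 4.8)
The plan is to reduce the statement to an affine chart of $Y$ by exploiting the special form of injective quasi-coherent sheaves, and then to compute on that chart using Lazard's description of flat modules. The crucial point to keep in mind from the outset is that, since $Y$ is only assumed quasi-compact and semi-separated (and \emph{not} Noetherian), the sheaf $\F\ot_{\cO_Y}\J$ need not be injective on $Y$ (contrast Lemma~\ref{hom-tensor-flats-injectives}(b)); consequently one cannot argue locally on $X$ in the naive way, because restricting $\J$ to an open subscheme of a non-Noetherian $Y$ destroys injectivity. The remedy is to perform the reduction on the $Y$-side \emph{before} touching $X$.

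First I would recall, as in the proof of Lemma~\ref{star-qc-Hom-shriek-lemma}, that over the quasi-compact scheme $Y$ every injective quasi-coherent sheaf is a direct summand of a finite direct sum $\bigoplus_\alpha j_\alpha{}_*\J_\alpha$, where $j_\alpha\:U_\alpha\rarrow Y$ is the immersion of an affine open subscheme (an \emph{affine} morphism, since $Y$ is semi-separated) and $\J_\alpha$ is injective on $U_\alpha$. The functor $\mathcal G\longmapsto f_*(\F\ot_{\cO_Y}\mathcal G)$ is additive, hence commutes with finite direct sums and preserves direct summands; and a direct summand of a finite direct sum of injectives is injective. Thus it suffices to treat the case $\J=j_*\J_U$ with $j\:U=\Spec S\rarrow Y$ an affine open immersion and $\J_U$ injective on $U$. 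For such a $\J$, the projection formula (Lemma~\ref{projection-formula}, applied to the affine morphism $j$) gives a natural isomorphism $\F\ot_{\cO_Y}j_*\J_U\simeq j_*(j^*\F\ot_{\cO_U}\J_U)$, whence $f_*(\F\ot_{\cO_Y}\J)\simeq g_*(\F_U\ot_{\cO_U}\J_U)$, where $g=fj\:U\rarrow X$ is flat, $\F_U=j^*\F$ is flat on $U$, and $\J_U$ is injective on $U$. Here $g$ is quasi-compact and quasi-separated (its source is affine and its target is Noetherian), so $g_*$ preserves quasi-coherence and, by the discussion in Section~\ref{torsion-direct-limits-subsecn}, commutes with filtered direct limits.

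The affine computation then proceeds as follows. By Lazard's theorem, the flat $S$-module $\F_U$ is a direct limit $\varinjlim_i P_i$ of finitely generated free $S$-modules over a directed poset, so that $\F_U\ot_{\cO_U}\J_U\simeq\varinjlim_i(P_i\ot_{\cO_U}\J_U)$ with each $P_i\ot_{\cO_U}\J_U$ a finite direct sum of copies of $\J_U$. Applying $g_*$ and using that it commutes with direct limits, I would obtain $g_*(\F_U\ot_{\cO_U}\J_U)\simeq\varinjlim_i\,(g_*\J_U)^{\oplus n_i}$. Now $g_*\J_U$ is injective on $X$, because $g$ is flat and hence $g_*$ is right adjoint to the exact functor $g^*$; finite direct sums of injectives are injective; and, since $X$ is Noetherian, a direct limit of injective quasi-coherent sheaves is again injective (the same fact used in Example~\ref{aleph-zero-dualizing-glueing}, cf.\ Lemma~\ref{locally-Noetherian-coproducts-injective}). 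Therefore $g_*(\F_U\ot_{\cO_U}\J_U)$ is injective on $X$, which finishes the argument.

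The main obstacle, and the reason the steps must be arranged in this order, is exactly the failure of $\F\ot_{\cO_Y}\J$ to be injective on the non-Noetherian scheme $Y$: this blocks both a direct appeal to Lemma~\ref{hom-tensor-flats-injectives}(b) and any attempt to localize over $X$ while keeping $\J$ injective. Reducing the injective $\J$ to an affine chart first is what transports the problem onto the affine scheme $U$, where the Noetherianity of the \emph{base} $X$ (rather than of $Y$) can finally be brought to bear, through Lazard's theorem and the closure of the class of injectives under direct limits. In this sense the proposition is the quasi-compact semi-separated relative generalization of Lemma~\ref{hom-tensor-flats-injectives}(b).
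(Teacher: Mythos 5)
Your proposal is correct, and it follows essentially the same route as the paper's proof: the same key observation that an injective quasi-coherent sheaf on the quasi-compact semi-separated scheme $Y$ is a direct summand of a finite direct sum of direct images $j_\alpha{}_*\J_\alpha$ from affine open subschemes, the same projection-formula reduction $\F\ot_{\cO_Y}j_*\J_U\simeq j_*(j^*\F\ot_{\cO_U}\J_U)$, and the same Lazard-theorem core. The only divergence is in the endgame: the paper additionally refines the covering so that each affine chart $W_\alpha\subset Y$ maps into an affine open subscheme $U_\alpha\subset X$, and then concludes by a purely affine-algebraic lemma (flat $\ot$ injective over $R\to S$ flat with $R$ Noetherian) followed by pushforward along the open immersion $U_\alpha\rarrow X$; you instead run the Lazard direct-limit argument directly over the Noetherian scheme $X$, writing $g_*(\F_U\ot_{\cO_U}\J_U)\simeq\varinjlim_i(g_*\J_U)^{\oplus n_i}$ and invoking that $g_*$ commutes with filtered direct limits (a morphism of concentrated schemes) and that injective quasi-coherent sheaves on a Noetherian scheme are closed under filtered direct limits---a fact the paper itself uses, e.g., in Example~\ref{aleph-zero-dualizing-glueing}. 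Your version saves the covering-refinement step at the cost of these two scheme-level facts, while the paper's keeps the final computation entirely at the level of modules over Noetherian rings; both are complete and correct.
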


 The following particular cases of
Proposition~\ref{direct-image-of-flat-injective-tensor-prop}
are easy.
 If $\F=\cO_Y$, then the assertion of the proposition holds because
the direct image functor~$f_*$ for a flat morphism of schemes
$f\:Y\rarrow X$ preserves injectivity of quasi-coherent sheaves.
 In this case, there is no need to assume that the scheme $X$ is
Noetherian.
 If $X=Y$ and $f=\id$ is the identity morphism, then the result
reduces to Lemma~\ref{hom-tensor-flats-injectives}(b) (for which
the Noetherianity assumption is essential).

\begin{lem} \label{affine-direct-image-of-flat-injective-tensor}
 Let $f\:Y\rarrow X$ a flat morphism of affine schemes, where
the affine scheme $X$ in Noetherian.
 Let $\J$ be an injective quasi-coherent sheaf on $Y$ and $\F$ be
a flat quasi-coherent sheaf on~$Y$.
 Then the quasi-coherent sheaf $f_*(\F\ot_{\cO_Y}\J)$ on $X$ is
injective. 
\end{lem}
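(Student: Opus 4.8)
The plan is to translate the statement into commutative algebra and reduce the injectivity of $f_*(\F\ot_{\cO_\bnY}\J)$ to a single clean Ext computation. Writing $X=\Spec R$ and $\bnY=\Spec S$, flatness of $f$ means that $S$ is a flat $R$\+module, while $\J$ and $\F$ correspond to an injective $S$\+module $J$ and a flat $S$\+module $F$; the quasi-coherent sheaf $f_*(\F\ot_{\cO_\bnY}\J)$ corresponds to $F\ot_S J$ regarded as an $R$\+module, and ``injective on the Noetherian scheme $X$'' means injective as an $R$\+module. So the whole task is to show that $F\ot_S J$ is an injective $R$\+module.

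First I would record two preliminary observations. Since $S$ is flat over $R$, the functor $\Hom_R({-},J)\simeq\Hom_S(S\ot_R{-},\>J)$ is the composition of the exact functor $S\ot_R{-}$ with the exact functor $\Hom_S({-},J)$ (the latter exact because $J$ is $S$\+injective); hence $J$ is also injective as an $R$\+module. Second, because $R$ is Noetherian, Baer's criterion says that an $R$\+module is injective as soon as $\Ext^1_R(R/I,{-})=0$ for every ideal $I\subseteq R$; it will therefore suffice to prove $\Ext^i_R(N,\>F\ot_S J)=0$ for every finitely generated $R$\+module $N$ and every $i\ge1$, which is what the computation below yields.

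The heart of the argument is then a direct computation carried out \emph{over $R$}. Using that $R$ is Noetherian, I would choose a resolution $P_\bullet\rarrow N$ of $N$ by finitely generated free $R$\+modules. For each $P_i\simeq R^{n_i}$ there is a natural identification $\Hom_R(P_i,\>F\ot_S J)\simeq(F\ot_S J)^{n_i}\simeq F\ot_S(J^{n_i})\simeq F\ot_S\Hom_R(P_i,J)$, and since the differentials of $P_\bullet$ are given by matrices over $R$ whose action on $F\ot_S J$ coincides with $\mathrm{id}_F$ tensored with their action on $J$, these identifications assemble into an isomorphism of complexes of $R$\+modules $\Hom_R(P_\bullet,\>F\ot_S J)\simeq F\ot_S\Hom_R(P_\bullet,J)$. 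As $F$ is flat over $S$, the functor $F\ot_S{-}$ commutes with passage to cohomology, so $\Ext^i_R(N,\>F\ot_S J)\simeq F\ot_S\Ext^i_R(N,J)$. Because $J$ is injective over $R$, the groups $\Ext^i_R(N,J)$ vanish for $i\ge1$, and therefore so do the groups $\Ext^i_R(N,\>F\ot_S J)$; this establishes injectivity.

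The point I would flag as the crux is precisely the choice of resolution together with the first preliminary observation: the computation works because $N$ is resolved by finitely generated free modules \emph{over the Noetherian ring $R$}, so that each $\Hom_R(P_i,{-})$ is a finite direct sum through which the flat functor $F\ot_S{-}$ can be pulled. The essential subtlety — and the reason one cannot simply invoke the $R=S$ statement of Lemma~\ref{hom-tensor-flats-injectives}(b) — is that $S$ is \emph{not} assumed Noetherian, so one must avoid resolving $F\ot_S J$ or $S\ot_R N$ over $S$, whose higher syzygies over $S$ need not be finitely generated. Keeping all of the homological algebra over $R$, with $S$ and $F$ entering only through the flat tensor factor, is exactly what makes the hypothesis ``$X$ Noetherian'' (but $\bnY$ arbitrary quasi-compact) both sufficient and natural here.
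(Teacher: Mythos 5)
Your proof is correct, but it takes a genuinely different route from the paper's. The paper argues by decomposing $\F$: by Lazard's theorem, the flat $S$\+module $F$ is a filtered direct limit of finitely generated free $S$\+modules, so $F\ot_SJ$ is a filtered direct limit of finite direct sums $J^{n_\alpha}$; each of these is $R$\+injective (by the same restriction-of-scalars observation you make), and over the Noetherian ring $R$ the class of injective modules is closed under direct limits. You instead resolve the \emph{test module}: Baer's criterion reduces injectivity to $\Ext^1_R(R/I,\>F\ot_SJ)=0$, you resolve a finitely generated $R$\+module $N$ by finitely generated free $R$\+modules (using Noetherianity of~$R$), pull the exact functor $F\ot_S{-}$ through $\Hom_R(P_\bullet,{-})$ term by term, and conclude $\Ext^i_R(N,\>F\ot_SJ)\simeq F\ot_S\Ext^i_R(N,J)=0$ for $i\ge1$. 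Both arguments share the observation that $J$ is $R$\+injective and both exploit Noetherianity of $R$ while keeping all homological work away from the possibly non-Noetherian ring $S$ — your closing remark on this point is exactly right. What each buys: the paper's argument is shorter modulo two standard facts (Lazard's theorem and the closure of injectives under direct limits over Noetherian rings), whereas yours is more self-contained, needing only Baer's criterion, exactness of flat tensor, and elementary Hom--tensor interchange over finite free modules. One cosmetic point: Baer's criterion itself holds over an arbitrary ring; Noetherianity of $R$ is not needed for it, but only for the existence of resolutions of $N$ by \emph{finitely generated} free modules, which is where your argument genuinely uses it.
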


\begin{proof}
 In algebraic language, the assertion means the following.
 Let $R\rarrow S$ be a homomorphism of commutative rings such that
$S$ is a flat $R$\+module.
 Assume that the ring $R$ is Noetherian.
 Let $J$ be an injective $S$\+module and $F$ be a flat $S$\+module.
 Then the $R$\+module $F\ot_SJ$ is injective.

 Indeed, it suffices to observe that $F$ is a (filtered) direct limit of
finitely generated free $S$\+modules, $J$ is an injective $R$\+module
(since $S$ is a flat $R$\+module and $J$ is an injective $S$\+module),
and the class of all injective $R$\+modules is closed under direct
limits in $R\modl$ (since $R$ is a Noetherian ring).
\end{proof}

\begin{lem} \label{through-affine-direct-image-of-tensor}
 Let $X$ be a Noetherian scheme, $Y$ be an affine scheme, and
$f\:Y\rarrow X$ be a flat morphism of schemes.
 Assume that there exists an affine open subscheme $U\subset X$
such that the morphism~$f$ factorizes as $Y\rarrow U\rarrow X$.
 Let $\J$ be an injective quasi-coherent sheaf on $Y$ and $\F$ be
a flat quasi-coherent sheaf on~$Y$.
 Then the quasi-coherent sheaf $f_*(\F\ot_{\cO_Y}\J)$ on $X$ is
injective.
\end{lem}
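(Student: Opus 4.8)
The plan is to exploit the given factorization $Y\overset{g}\rarrow U\overset{j}\rarrow X$, where $j$ is the inclusion of the affine open subscheme $U\subset X$ and $g$ is the induced morphism, in order to reduce the assertion to the all-affine case already settled in Lemma~\ref{affine-direct-image-of-flat-injective-tensor}. Since $f=jg$ and direct image is functorial, one has $f_*=j_*g_*$; so it suffices to show that $g_*(\F\ot_{\cO_Y}\J)$ is an injective quasi-coherent sheaf on $U$ and that the open pushforward $j_*$ carries an injective quasi-coherent sheaf on $U$ to an injective quasi-coherent sheaf on $X$.

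First I would check that $g\:Y\rarrow U$ is a flat morphism of affine schemes with Noetherian target. The scheme $U$ is affine by assumption and Noetherian as an open subscheme of the Noetherian scheme $X$; the scheme $Y$ is affine by assumption. Flatness of $g$ is immediate from the local flatness criterion: for every point $y\in Y$ with image $u=g(y)$, the open immersion $j$ identifies $\cO_{U,u}$ with $\cO_{X,f(y)}$, so the local homomorphism $\cO_{U,u}\rarrow\cO_{Y,y}$ induced by $g$ coincides with the homomorphism $\cO_{X,f(y)}\rarrow\cO_{Y,y}$ induced by $f$, which is flat because $f$ is. Thus Lemma~\ref{affine-direct-image-of-flat-injective-tensor} applies to $g$ (with $\F\ot_{\cO_Y}\J$ being precisely the tensor product of a flat and an injective quasi-coherent sheaf on $Y$ required by that lemma), yielding that $g_*(\F\ot_{\cO_Y}\J)$ is an injective quasi-coherent sheaf on $U$.

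It then remains to see that $j_*$ preserves injectivity of quasi-coherent sheaves. The inverse image $j^*\:X\qcoh\rarrow U\qcoh$ of an open immersion is exact, and, since $X$ is Noetherian and $U$ is quasi-compact, the morphism $j$ is quasi-compact and quasi-separated, so the pushforward of a quasi-coherent sheaf along $j$ is again quasi-coherent. Hence $j_*\:U\qcoh\rarrow X\qcoh$ is right adjoint to the exact functor $j^*$, and therefore takes injective objects to injective objects. Applying this to $g_*(\F\ot_{\cO_Y}\J)$ gives that $f_*(\F\ot_{\cO_Y}\J)=j_*g_*(\F\ot_{\cO_Y}\J)$ is injective in $X\qcoh$, as desired.

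I expect no serious obstacle: the argument is a clean two-step reduction. The only point demanding care is the flatness of $g$, where the identification $\cO_{U,u}=\cO_{X,f(y)}$ coming from the open immersion $j$ must be invoked to transport the flatness hypothesis from $f$ to $g$; the preservation of injectivity under $j_*$ and $g_*$ is then routine adjunction-and-exactness bookkeeping.
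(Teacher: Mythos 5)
Your proof is correct and follows essentially the same route as the paper's: factor $f$ through the open immersion $U\rarrow X$, apply Lemma~\ref{affine-direct-image-of-flat-injective-tensor} to the flat morphism of affine schemes $Y\rarrow U$ (with Noetherian target~$U$), and then use that the pushforward along the flat open immersion preserves injectivity. The extra details you supply (the local-ring verification of flatness of $g$ and the adjunction bookkeeping for $j_*$) are exactly the points the paper leaves implicit, so there is nothing to correct.
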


\begin{proof}
 Denote the morphisms involved by $g\:Y\rarrow U$ and $h\:U\rarrow X$.
 Applying Lemma~\ref{affine-direct-image-of-flat-injective-tensor} to
the flat morphism of affine schemes $g\:Y\rarrow U$  with a Noetherian
scheme $U$, we see that the quasi-coherent sheaf
$g_*(\F\ot_{\cO_Y}\J)$ on $U$ is injective.
 It remains to say that the direct image functor
$h_*\:U\qcoh\rarrow X\qcoh$ preserves injectivity,
as an open immersion~$h$ is a flat morphism.
\end{proof}

\begin{proof}[Proof of
Proposition~\ref{direct-image-of-flat-injective-tensor-prop}]
 Let $Y=\bigcup_\alpha W_\alpha$ be a finite affine open covering
of the scheme~$Y$.
 Denote by $l_\alpha\:W_\alpha\rarrow Y$ the open immersions.

 The key observation is that any injective quasi-coherent sheaf $\J$
on $Y$ is a direct summand of a direct sum $\bigoplus_\alpha
l_\alpha{}_*\K_\alpha$, where $\K_\alpha$ are some injective
quasi-coherent sheaves on~$W_\alpha$.
 Indeed, one easily observes that there are enough injective
quasi-coherent sheaves of this particular form, that is, any
quasi-coherent sheaf $\M$ on $Y$ is a subobject of a quasi-coherent
sheaf of the form $\bigoplus_\alpha l_\alpha{}_*\K_\alpha$, where
$\K_\alpha\in W_\alpha\qcoh_\inj$.
 (It suffices to choose injective quasi-coherent sheaves $\K_\alpha$
in such a way that $l_\alpha^*\M$ is a subobject of~$\K_\alpha$
for every~$\alpha$.)

 As we are free to choose our finite affine open covering
$Y=\bigcup_\alpha W_\alpha$ of the scheme $Y$, we can make
the affine open subschemes $W_\alpha\subset Y$ as small as we wish.
 Specifically, we can assume that for every~$\alpha$ there exists
an affine open subscheme $U_\alpha\subset X$ such that
$f(W_\alpha)\subset U_\alpha$.

 Hence the question reduces to the following.
 We can assume that $\J=l_*\K$, where $l_*\:W\rarrow Y$ is
the immersion of an affine open subscheme and $\K\in W\qcoh_\inj$.
 Moreover, we can have $f(W)\subset U$ for some affine open subscheme
$U\subset X$.
 In this context, we have to prove that, for any flat quasi-coherent
sheaf $\F$ on $Y$, the quasi-coherent sheaf $f_*(\F\ot_{\cO_Y}l_*\K)$
on $X$ is injective.

 By Lemma~\ref{projection-formula}, we have $\F\ot_{\cO_Y}l_*\K
\simeq l_*(l^*\F\ot_{\cO_W}\K)$ in $Y\qcoh$ (as the morphism~$l$ is
affine, since the scheme $Y$ is semi-separated by assumption).
 Hence $f_*(\F\ot_{\cO_Y}l_*\K)\simeq f_*l_*(l^*\F\ot_{\cO_W}\K)$
in $X\qcoh$.

 It remains to apply Lemma~\ref{through-affine-direct-image-of-tensor}
to the flat morphism of schemes $fl\:W\rarrow X$, the injective
quasi-coherent sheaf $\K$ on $W$, and the flat quasi-coherent sheaf
$l^*\F$ on~$W$.
 Here $X$ is a Noetherian scheme, $W$ is an affine scheme, and
the morphism~$fl$ factorizes as $W\rarrow U\rarrow X$ for
an affine open subscheme $U\subset X$.
\end{proof}

\begin{proof}[Proof of
Lemma~\ref{direct-image-of-restriction-injective-lem}]
 We have the assumption that the morphism $fk\:V\rarrow X$ is affine.
 Let us deduce from this assumption that the morphism $k\:V\rarrow Y$
is affine.

 Let $W\subset Y$ be an affine open subscheme.
 We have to show that $W\times_YV$ is an affine scheme.
 Indeed, $W\times_XV$ is an affine scheme, since $W$ is an affine scheme
and $V\rarrow X$ is an affine morphism (so $W\times_XV\rarrow W$ is
an affine morphism as a base change of an affine morphism).
 As $W\times_YV=Y\times_{Y\times Y}(W\times_XV)$ and the morphism
$Y\rarrow Y\times Y=Y\times_{\Spec\boZ}Y$ is affine (the scheme
$Y$ being semi-separated by assumption), it follows that
$W\times_YV$ is an affine scheme.

 Finally, by Lemma~\ref{projection-formula} we have
$k_*k^*\J\simeq k_*(\cO_V\ot_{\cO_V}k^*\J)\simeq
k_*\cO_V\ot_{\cO_Y}\J$ in $Y\qcoh$.
 Hence $f_*k_*k^*\J\simeq f_*(k_*\cO_V\ot_{\cO_Y}\J)$ in $X\qcoh$.
 The quasi-coherent sheaf $\F=k_*\cO_V$ on $Y$ is flat,
as the direct image with respect to a flat affine morphism of schemes
takes flat quasi-coherent sheaves to flat quasi-coherent sheaves.
 Thus Proposition~\ref{direct-image-of-flat-injective-tensor-prop} is
applicable.

 Notice that we have never used the assumption that $V$ is an open
subscheme in~$Y$.
 It suffices that $k\:V\rarrow Y$ be a flat morphism.
\end{proof}

\subsection{The semiderived category for a morphism of ind-schemes}
 The definition of the semiderived category in this section resembles
the ones in~\cite[Section~4.3]{Pweak} (where the context is very
different).
 We start with a consistency lemma involving only schemes.

\begin{lem} \label{nonaffine-semiacyclicity-consistency-lemma}
 Let $i\:Z\rarrow X$ be a closed immersion of semi-separated Noetherian
schemes, and let $f\:\bnY\rarrow X$ be a flat morphism of quasi-compact
semi-separated schemes.
 Consider the pullback diagram (so\/ $\bnW=Z\times_X\bnY$)
$$
\xymatrix{
 \bnW \ar[r]^-k \ar[d]_-g & \bnY \ar[d]^-f \\
 Z \ar[r]^-i & X
}
$$
 Let $\bcJ^\bu$ be a\/ $\bnY/X$\+semiacyclic complex of injective
quasi-coherent sheaves on\/~$\bnY$.
 Then $k^!\bcJ^\bu$ is a\/ $\bnW/Z$\+semiacyclic complex of (injective)
quasi-coherent sheaves on\/~$\bnW$.
\end{lem}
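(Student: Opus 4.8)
The plan is to deduce the $\bnW/Z$\+semiacyclicity of $k^!\bcJ^\bu$ from the $\bnY/X$\+semiacyclicity of $\bcJ^\bu$ by a base-change computation on a conveniently chosen affine covering, the decisive point being that the complexes of quasi-coherent sheaves produced on $X$ are not merely Becker coacyclic but complexes of \emph{injectives}.

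First I would dispose of the easy (parenthetical) part. Since $X$ is Noetherian, $Z\subset X$ is a reasonable closed subscheme, and by Lemma~\ref{base-change-composition-reasonable}(a) so is $\bnW=Z\times_X\bnY\subset\bnY$; hence $k^!$ is unambiguous, and all the base-change lemmas below apply. Being right adjoint to the exact functor $k_*$, the functor $k^!\:\bnY\qcoh\rarrow\bnW\qcoh$ carries injectives to injectives, so $k^!\bcJ^\bu$ is a complex of injective quasi-coherent sheaves on~$\bnW$. Next I would fix coverings. Choose an affine open covering $\bnY=\bigcup_\beta\bnV_\beta$ fine enough that each composition $fk_\beta\:\bnV_\beta\rarrow X$ is affine (take $f(\bnV_\beta)$ inside an affine open of~$X$), with open immersions $k_\beta\:\bnV_\beta\rarrow\bnY$. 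Put $\bnW_\beta=\bnV_\beta\times_{\bnY}\bnW=\bnV_\beta\times_XZ$; these give an affine open covering $\bnW=\bigcup_\beta\bnW_\beta$, with open immersions $m_\beta\:\bnW_\beta\rarrow\bnW$, closed immersions $l_\beta\:\bnW_\beta\rarrow\bnV_\beta$ (a base change of~$k$), and structure maps $g_\beta\:\bnW_\beta\rarrow Z$ that are affine over $Z$ (being base changes of the affine $f_\beta=fk_\beta$), where $gm_\beta=g_\beta$. By Proposition~\ref{scheme-morphism-relatively-affine-covering-prop}, semiacyclicity may be tested on these coverings, so it suffices to show, for each $\beta$, that $g_\beta{}_*\,m_\beta^*(k^!\bcJ^\bu)$ is Becker coacyclic in $Z\qcoh$.

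The core is a chain of base-change identifications. Applying Lemma~\ref{reasonable-shriek-flat-star-commutation} to the Cartesian square with flat morphism $k_\beta$ and reasonable closed immersion $k$ yields $m_\beta^*k^!\simeq l_\beta^!k_\beta^*$, so $g_\beta{}_*\,m_\beta^*(k^!\bcJ^\bu)\simeq g_\beta{}_*\,l_\beta^!(k_\beta^*\bcJ^\bu)$. Applying Lemma~\ref{reasonable-base-change}(a) to the Cartesian square $\bnW_\beta=\bnV_\beta\times_XZ$ (reasonable closed immersion $i\:Z\rarrow X$) gives $g_\beta{}_*\,l_\beta^!\simeq i^!f_\beta{}_*$, whence the complex in question is isomorphic to $i^!\bigl(f_*k_\beta{}_*k_\beta^*\bcJ^\bu\bigr)$, using $f_\beta{}_*=f_*k_\beta{}_*$. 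Now I would analyze $f_*k_\beta{}_*k_\beta^*\bcJ^\bu$: by the $\bnY/X$\+semiacyclicity hypothesis (tested on the covering $\{\bnV_\beta\}$) it is Becker coacyclic in $X\qcoh$, while by Lemma~\ref{direct-image-of-restriction-injective-lem} it is \emph{also} a complex of injective quasi-coherent sheaves on $X$ (here $fk_\beta$ is affine, $X$ is Noetherian, and $\bcJ^\bu$ is termwise injective). A Becker coacyclic complex of injectives in the Grothendieck category $X\qcoh$ is contractible (Theorem~\ref{becker-coderived-theorem}, or Proposition~\ref{coderived-and-homotopy-of-injectives}(a)), so $f_*k_\beta{}_*k_\beta^*\bcJ^\bu$ is contractible; since $i^!$ is additive it preserves contractibility, and a contractible complex is Becker coacyclic. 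This gives the desired coacyclicity for every $\beta$, proving the lemma.

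The main obstacle is that $i^!$, being a right adjoint, is only left exact and does not preserve Becker coacyclicity by any adjunction argument of the type of Lemma~\ref{becker-coacyclicity-preserved}. The device that circumvents this is precisely the observation enabled by Lemma~\ref{direct-image-of-restriction-injective-lem}: the intermediate complex on $X$ is a complex of injectives, so its coacyclicity is upgraded to contractibility, which is then transported through the additive functor $i^!$ without difficulty.
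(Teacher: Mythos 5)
Your proposal is correct and follows essentially the same route as the paper's proof: the same choice of a covering of $\bnY$ by opens affine over $X$ inducing a covering of $\bnW$ by opens affine over $Z$, the same two base-change identifications (Lemma~\ref{reasonable-shriek-flat-star-commutation} and Lemma~\ref{reasonable-base-change}(a)) giving $g_\beta{}_*\,m_\beta^*(k^!\bcJ^\bu)\simeq i^!\bigl(f_*k_\beta{}_*k_\beta^*\bcJ^\bu\bigr)$, and the same decisive upgrade, via Lemma~\ref{direct-image-of-restriction-injective-lem}, from Becker coacyclicity to contractibility of the complex of injectives on $X$, which is then transported through the merely additive functor~$i^!$. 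Your explicit appeals to Proposition~\ref{scheme-morphism-relatively-affine-covering-prop} (covering-independence) and to the injectivity of $k^!\bcJ^\bu$ are points the paper leaves implicit, but the argument is the same.
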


\begin{proof}
 Let $\bnY=\bigcup_\beta\bnV_\beta$ be an open covering of the scheme
$\bnY$ such that the compositions $\bnV_\beta\rarrow\bnY\rarrow X$
are affine morphisms of schemes.
 Put $\bnT_\beta=\bnV_\beta\times_\bnY\bnW=\bnV_\beta\times_XZ$.
 Then $\bnW=\bigcup_\beta\bnT_\beta$ is an open covering of the scheme
$\bnW$ such that the compositions $\bnT_\beta\rarrow\bnW\rarrow Z$
are affine morphisms of schemes.
 Denote by $j_\beta\:\bnV_\beta\rarrow\bnY$ and $l_\beta\:\bnT_\beta
\rarrow\bnW$ the open immersion morphisms.

 By the definition, the condition that $\bcJ^\bu$ is
a $\bnY/X$\+semiacyclic complex of quasi-coherent sheaves on $\bnY$
means that the complex of quasi-coherent sheaves
$f_*j_\beta{}_*j_\beta^*\bcJ^\bu$ on $X$ is Becker coacyclic for
every~$\beta$.
 Since $\bcJ^\bu$ is a complex of injective quasi-coherent sheaves
on~$\bnY$, Lemma~\ref{direct-image-of-restriction-injective-lem}
implies that $f_*j_\beta{}_*j_\beta^*\bcJ^\bu$ is a complex of
injective quasi-coherent sheaves on~$X$.
 Any Becker coacyclic complex of injectives is contractible.
 We have shown that $f_*j_\beta{}_*j_\beta^*\bcJ^\bu$ is a contractible
complex of (injective) quasi-coherent sheaves on $X$ for every~$\beta$.

 Consider two pullback diagrams
$$
\xymatrix{
 \bnT_\beta \ar[r]^-{k_\beta} \ar[d]_-{l_\beta}
 & \bnV_\beta \ar[d]^-{j_\beta} \\
 \bnW \ar[r]^-k & \bnY
}
\qquad
\xymatrix{
 \bnT_\beta \ar[r]^-{k_\beta} \ar[d]_-{gl_\beta}
 & \bnV_\beta \ar[d]^-{fj_\beta} \\
 Z \ar[r]^-i & X
}
$$
 By Lemma~\ref{reasonable-shriek-flat-star-commutation} applied to
the leftmost diagram (taking into account
Lemma~\ref{base-change-composition-reasonable}(a)), we have
$l_\beta^*k^!\bcJ^\bu\simeq k_\beta^!j_\beta^*\bcJ^\bu$ in
$\sC(\bnT_\beta\qcoh)$.
 By Lemma~\ref{reasonable-base-change}(a) applied to the rightmost
diagram, we have $g_*l_\beta{}_*k_\beta^!j_\beta^*\bcJ^\bu\simeq
i^!f_*j_\beta{}_*j_\beta^*\bcJ^\bu$ in $\sC(Z\qcoh)$.
 Combining these isomorphisms together, we obtain
$g_*l_\beta{}_*l_\beta^*k^!\bcJ^\bu\simeq
g_*l_\beta{}_*k_\beta^!j_\beta^*\bcJ^\bu\simeq 
i^!f_*j_\beta{}_*j_\beta^*\bcJ^\bu$ in $\sC(Z\qcoh)$.

 Since $f_*j_\beta{}_*j_\beta^*\bcJ^\bu$ is a contractible complex of
(injective) quasi-coherent sheaves on $X$, the complex
$i^!f_*j_\beta{}_*j_\beta^*\bcJ^\bu$ is a contractible complex of
(injective) quasi-coherent sheaves on~$Z$.
 Thus the complex $g_*l_\beta{}_*l_\beta^*k^!\bcJ^\bu$ is
a contractible, hence Becker coacyclic, complex of (injective)
quasi-coherent sheaves on~$Z$.
 By the definition, this means that $k^!\bcJ^\bu$ is
a $\bnW/Z$\+semiacyclic complex of quasi-coherent sheaves on~$\bnW$.
\end{proof}

 Let $\X$ be an ind-semi-separated ind-Noetherian ind-scheme,
$\bY$ be an (ind-quasi-compact) ind-semi-separated ind-scheme,
and $\pi\:\bY\rarrow\X$ be a flat (but not necessarily affine)
morphism of ind-schemes.
 Our aim is to define the $\bY/\X$\+semiderived category
$\sD_\X^\si(\bY\tors)$ of quasi-coherent torsion sheaves on~$\bnY$.

 Firstly, let $\bJ^\bu$ be a complex of injective quasi-coherent
torsion sheaves on~$\bY$.
 We will say that $\bJ^\bu$ is a \emph{$\bY/\X$\+semiacyclic complex}
if, for every closed subscheme $Z\subset\X$ with the closed immersion
morphism $i\:Z\rarrow\X$ and the related closed subscheme
$\bnW=Z\times_\X\bY\subset\bY$ with the closed immersion morphism
$k\:\bnW\rarrow\bY$, the complex $k^!\bJ^\bu$ of injective
quasi-coherent sheaves on $\bnW$ is $\bnW/Z$\+semiacyclic in the sense
of the definition in
Section~\ref{semiderived-nonaffine-scheme-morphism-subsecn}.

 As we have seen in the proof of
Lemma~\ref{nonaffine-semiacyclicity-consistency-lemma}, this means
that, for every open subscheme $\bnV\subset\bnW$ such that
the composition $\bnV\rarrow\bnW\rarrow Z$ is an affine morphism,
the complex of (injective) quasi-coherent sheaves
$\pi_Z{}_*j_*j^*k^!\bJ^\bu$ on $Z$ must be contractible.
 Here the notation for morphisms is $\pi_Z\:\bnW\rarrow Z$ and
$j\:\bnV\rarrow\bnW$.
 It suffices to check this condition for open subschemes
$\bnV\subset\bnW$ belonging to a chosen covering of $\bnW$ by
open subschemes that are affine over~$Z$.

 Furthermore, let $\X=\ilim_{\gamma\in\Gamma}X_\gamma$ be some
chosen representation of $\X$ by an inductive system of closed
immersions of schemes.
 Then Lemma~\ref{nonaffine-semiacyclicity-consistency-lemma} tells
that it suffices to check the $\bnW/Z$\+semiacyclicity condition
above for the closed subschemes $Z=X_\gamma\subset\X$,
where $\gamma\in\Gamma$.

 Now we want to explain what it means for a complex of not necessarily
injective quasi-coherent torsion sheaves $\bN^\bu$ on $\bY$ to be
$\bY/\X$\+semiacyclic.
 Here we are going to use Theorem~\ref{becker-coderived-theorem}
(so it is important that we are working with the Becker coderived
categories).
 By Theorem~\ref{becker-coderived-theorem}, there exists a morphism
of complexes of quasi-coherent torsion sheaves $\bN^\bu\rarrow\bJ^\bu$
on $\bY$ whose cone is Becker coacyclic in $\bY\tors$, while
$\bJ^\bu$ is a complex of injective quasi-coherent torsion sheaves
on~$\bY$.
 The complex $\bN^\bu$ in $\bY\tors$ is said to be
\emph{$\bY/\X$\+semiacyclic} if the complex $\bJ^\bu$ in
$\bY\tors_\inj$ is $\bY/\X$\+semiacyclic in the sense of the previous
definition (which is applicable to the complexes of injectives only).

 In other words, the full subcategory of $\bY/\X$\+semiacyclic
complexes in $\sK(\bY\tors)$ is, by the definition, the minimal
thick subcategory in $\sK(\bY\tors)$ containing \emph{both}
the Becker coacyclic complexes and the $\bY/\X$\+semiacyclic complexes
of injective quasi-coherent torsion sheaves on~$\bY$.

\begin{rem}
 Similarly to
Remark~\ref{semiacyclicity-of-qcoh-stronger-than-acylicity},
we observe that any $\bY/\X$\+semiacyclic complex in $\bY\tors$
is acyclic.
 Indeed, all the Becker coacyclic complexes in $\bY\tors$ are acyclic
by Lemma~\ref{becker-coacyclic-acyclic}.
 Now let $\bJ^\bu$ be a $\bY/\X$\+semiacyclic complex of injective
quasi-coherent torsion sheaves on~$\bY$.
 Then, by Remark~\ref{semiacyclicity-of-qcoh-stronger-than-acylicity},
the ind-scheme $\bY$ can be represented by
an inductive system of closed immersions of schemes $\bY=
\ilim_{\gamma\in\Gamma}\bnY_\gamma$ such that, denoting by
$k_\gamma\:\bnY_\gamma\rarrow\bY$ the closed immersion morphisms,
the complexes of quasi-coherent sheaves $k_\gamma^!\bJ^\bu$ on
$\bnY_\gamma$ are ($\bnY_\gamma/X_\gamma$\+semiacyclic, hence)
acyclic for all~$\gamma$.
 As the direct image functors $k_\gamma{}_*\:\bnY_\gamma\qcoh\rarrow
\bY\tors$ are exact, it follows that the complex
$\bJ^\bu=\varinjlim_{\gamma\in\Gamma}k_\gamma{}_*k_\gamma^!\bJ^\bu$
is acyclic in $\bY\tors$.
\end{rem}

 The \emph{semiderived category} (or the $\bY/\X$\+semiderived category)
$\sD_\X^\si(\bY\tors)$ of quasi-coherent torsion sheaves on $\bY$ is
defined as the triangulated quotient category of the homotopy category
$\sK(\bY\tors)$ by the thick subcategory of $\bY/\X$\+semiacyclic
complexes.
 The following lemma tells that this definition agrees with the one
in Section~\ref{semiderived-subsecn} when both are applicable.

\begin{lem}
 Let\/ $\X$ be an ind-semi-separated ind-Noetherian ind-scheme and\/
$\pi\:\bY\rarrow\X$ be a flat affine morphism of ind-schemes.
 Then a complex\/ $\bN^\bu$ of quasi-coherent torsion sheaves on\/
$\bY$ is\/ $\bY/\X$\+semiacyclic if and only if the complex\/
$\pi_*\bN^\bu$ of quasi-coherent torsion sheaves on\/ $\X$
is coacyclic.
\end{lem}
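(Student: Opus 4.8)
The plan is to reduce everything to the case of a complex of injectives and then identify the two notions through the base-change isomorphism $i^!\pi_*\simeq\pi_{Z*}k^!$. First I would invoke Theorem~\ref{becker-coderived-theorem} to choose a morphism $\bN^\bu\rarrow\bJ^\bu$ into a complex of injective quasi-coherent torsion sheaves on $\bY$ whose cone is Becker coacyclic. By the very definition of $\bY/\X$\+semiacyclicity for non-injective complexes, $\bN^\bu$ is $\bY/\X$\+semiacyclic if and only if $\bJ^\bu$ is. On the other side, $\pi_*$ carries Becker coacyclic complexes to Becker coacyclic complexes by Lemma~\ref{affine-direct-image-preserves-becker-coacyclicity}, and since $\X\tors$ is a locally Noetherian Grothendieck category (Proposition~\ref{ind-Noetherian-torsion-locally-Noetherian}) the Becker coacyclicity and the coacyclicity of Section~\ref{coderived-subsecn} coincide on $\X$ by Proposition~\ref{two-coderived-categories-comparison}; hence $\pi_*$ of the cone is coacyclic on $\X$, so $\pi_*\bN^\bu$ is coacyclic if and only if $\pi_*\bJ^\bu$ is. This reduces the claim to the single assertion that for $\bJ^\bu\in\sC(\bY\tors_\inj)$ the complex $\bJ^\bu$ is $\bY/\X$\+semiacyclic if and only if $\pi_*\bJ^\bu$ is coacyclic on $\X$.

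For this core step I would exploit the affineness of $\pi$. For a closed subscheme $Z\subset\X$ with $i\:Z\rarrow\X$, put $\bnW=Z\times_\X\bY$ with $k\:\bnW\rarrow\bY$ and $\pi_Z\:\bnW\rarrow Z$; then $\pi_Z$ is a flat affine morphism, so $\bnW$ is covered by the single open subscheme $\bnW$ itself, which is affine over $Z$. Thus the $\bnW/Z$\+semiacyclicity of the complex $k^!\bJ^\bu$ (which is a complex of injectives, since $k^!$ is right adjoint to the exact functor $k_*$; cf.\ Proposition~\ref{torsion-injectives-characterized}(a)) reduces to the single requirement that $\pi_{Z*}k^!\bJ^\bu$ be Becker coacyclic on $Z$. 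By Lemma~\ref{torsion-shriek-base-change} there is a natural isomorphism $\pi_{Z*}k^!\bJ^\bu\simeq i^!\pi_*\bJ^\bu$. Now $\pi_*\bJ^\bu$ is a complex of injective quasi-coherent torsion sheaves on $\X$ by Lemma~\ref{injective-over-base}(b), so $\pi_{Z*}k^!\bJ^\bu\simeq i^!\pi_*\bJ^\bu$ is a complex of injectives on the Noetherian scheme $Z$, for which Becker coacyclicity coincides with contractibility. Using the reduction to the subschemes $Z=X_\gamma$ afforded by Lemma~\ref{nonaffine-semiacyclicity-consistency-lemma}, the condition that $\bJ^\bu$ be $\bY/\X$\+semiacyclic becomes the condition that $i_\gamma^!(\pi_*\bJ^\bu)$ be contractible for every $\gamma$.

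It then remains to recognize this as the coacyclicity of $\pi_*\bJ^\bu$: since $\pi_*\bJ^\bu$ is a complex of injectives on the ind-Noetherian ind-scheme $\X$, Proposition~\ref{coderived-and-homotopy-of-injectives}(a) shows it is coacyclic if and only if it is contractible, while Lemma~\ref{complex-of-injective-torsion-contractible} (with the trivial converse) shows it is contractible if and only if $i_\gamma^!(\pi_*\bJ^\bu)$ is contractible for every $\gamma$. Chaining these equivalences closes the argument. I expect the main obstacle to be the careful bookkeeping around the two distinct coacyclicity theories: the appendix's semiacyclicity is built on \emph{Becker} coacyclicity (needed because $\bY$ is only ind-semi-separated, not ind-Noetherian), whereas the affine definition in Section~\ref{semiderived-subsecn} uses the coacyclicity of Section~\ref{coderived-subsecn}. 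The proof must push the Becker coacyclic cone through $\pi_*$, land on the ind-Noetherian base $\X$ where the two theories agree, and only there compare with the notion used in the main text; keeping track of which category each complex lives in, and of why $\pi_*$ and $k^!$ both land in the injective subcategories, is exactly where the care is required.
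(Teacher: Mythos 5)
Your proposal is correct and follows essentially the same route as the paper's own proof: reduce to a complex of injectives $\bJ^\bu$ using Theorem~\ref{becker-coderived-theorem} together with Lemma~\ref{affine-direct-image-preserves-becker-coacyclicity} and Proposition~\ref{two-coderived-categories-comparison}, then identify the semiacyclicity condition with contractibility of $i_\gamma^!\pi_*\bJ^\bu$ via the base-change isomorphism $i_\gamma^!\pi_*\simeq\pi_\gamma{}_*k_\gamma^!$, the injectivity of $\pi_*\bJ^\bu$ (Lemma~\ref{injective-over-base}(b)), and Lemma~\ref{complex-of-injective-torsion-contractible}. The only cosmetic difference is that the paper derives the base-change isomorphism directly from the construction of $\pi_*$ rather than citing Lemma~\ref{torsion-shriek-base-change}, and writes the two implications separately instead of chaining equivalences.
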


\begin{proof}
 First of all, let us mention once again that there is no difference
between the two notions of coacyclicity for complexes of quasi-coherent
torsion sheaves on an ind-Noetherian ind-scheme $\X$ (by
Proposition~\ref{two-coderived-categories-comparison}).
 Furthermore, the direct image functor~$\pi_*$ takes Becker coacyclic
complexes in $\bY\tors$ to coacyclic complexes in $\X\tors$ by
Lemma~\ref{affine-direct-image-preserves-becker-coacyclicity}.
 Hence it suffices to consider the case of a complex of injective
quasi-coherent torsion sheaves $\bJ^\bu\in\sC(\bY\tors_\inj)$.

 Let $\X=\ilim_{\gamma\in\Gamma}X_\gamma$ be a representation of $\X$
by an inductive system of closed immersions of schemes.
 Put $\bnY_\gamma=X_\gamma\times_\X\bY$; then $\bY=\ilim_{\gamma\in
\Gamma}\bnY_\gamma$ is a representation of $\bY$ by an inductive
system of closed immersions of schemes.
 Denote by $i_\gamma\:X_\gamma\rarrow\X$ and $k_\gamma\:\bnY_\gamma
\rarrow\bY$ the closed immersion morphisms, and by
$\pi_\gamma\:\bnY_\gamma\rarrow X_\gamma$ the natural flat affine
morphisms of schemes.
 By the definition of the functor $\pi_*\:\bY\tors\rarrow\X\tors$
(see Section~\ref{torsion-direct-images-subsecn}), we have a natural 
isomorphism $i_\gamma^!\pi_*\simeq\pi_\gamma{}_*k_\gamma^!$ of functors
$\bY\tors\rarrow X_\gamma\qcoh$ for every $\gamma\in\Gamma$.

 Assume that the complex $\pi_*\bJ^\bu$ is coacyclic in $\X\tors$.
 By Lemma~\ref{injective-over-base}(b), the functor~$\pi_*$ takes
injective objects to injective objects; so $\pi_*\bJ^\bu$ is a complex
of injectives in $\X\tors$.
 Hence the complex $\pi_*\bJ^\bu\in\sC(\X\tors_\inj)$ is contractible.
 It follows that the complex $i_\gamma^!\pi_*\bJ^\bu$ is a contractible
complex of injective quasi-coherent sheaves on~$X_\gamma$.
 Therefore, so is the complex~$\pi_\gamma{}_*k_\gamma^!\bJ^\bu\in
\sC(X_\gamma\qcoh_\inj)$; in particular, the complex
$\pi_\gamma{}_*k_\gamma^!\bJ^\bu$ is coacyclic.
 Following the discussion of $\bnY/X$\+semiacyclicity in the end of
Section~\ref{semiderived-nonaffine-scheme-morphism-subsecn}, this
means that the complex $k_\gamma^!\bJ^\bu\in\sC(\bnY_\gamma\qcoh)$ is
$\bnY_\gamma/X_\gamma$\+semiacyclic.
 By the definition, we can conclude that the complex $\bJ^\bu\in
\sC(\bY/\X\tors_\inj)$ is $\bY/\X$\+semiacyclic.

 Conversely, assume that the complex $\bJ^\bu$ is $\bY/\X$\+semiacyclic.
 Then the complex $\pi_\gamma{}_*k_\gamma^!\bJ^\bu$ is a (Becker)
coacyclic complex of injective objects in $X_\gamma\qcoh$, so it is
a contractible complex of injective objects.
 Hence the complex $i_\gamma^!\pi_*\bJ^\bu$ is contractible in
$X_\gamma\qcoh_\inj$.
 We also know from the previous paragraph that $\pi_*\bJ^\bu$ is
a complex of injective objects in $\X\tors$.
 Using Lemma~\ref{complex-of-injective-torsion-contractible}, we
conclude that $\pi_*\bJ^\bu$ is a contractible (hence coacyclic)
complex in $\X\tors$.
\end{proof}

\begin{rem}
 One can think of this appendix as pointing a direction for possible
generalization of the results of Sections~\ref{X-flat-on-Y-secn}\+-%
\ref{weakly-smooth-postcomposition-secn} to nonaffine morphisms of
ind-schemes $\pi\:\bY\rarrow\X$, but this a long way.
 To begin with, it is not obvious (and needs to be checked) that
the full triangulated subcategory of $\bY/\X$\+semiacyclic complexes
in $\sK(\bY\tors)$ (in the sense of the definition in this appendix)
is closed under coproducts.
 The difficulty arises from the fact that the full subcategory of
injective objects in $\bY\tors$ is not closed under coproducts.
 Perhaps more importantly, it is not clear how to extend
the constructions of resolutions in
Section~\ref{relatively-homotopy-flat-subsecn} to nonaffine
morphisms~$\pi$, or what to replace them with in the nonaffine case.
 So the semi-infinite algebraic geometry of quasi-coherent torsion
sheaves for nonaffine morphisms of ind-schemes $\pi\:\bY\rarrow\X$
remains a challenge.
\end{rem}

\bigskip

\end{document}